\documentclass[a4paper,oneside,11pt]{article}
\usepackage{xcolor}
\usepackage[a4paper]{geometry}
\usepackage{aeguill}                
\usepackage{graphicx}
\usepackage{amsmath,amsfonts,amssymb,amsthm}
\usepackage{pstricks,comment} 
\usepackage{epstopdf,enumerate}
\DeclareGraphicsRule{.tif}{png}{.png}{`convert #1 `dirname #1`/`basename #1 .tif`.png}
\usepackage{srcltx}
\usepackage[utf8]{inputenc} 
\usepackage[english]{babel} 
\usepackage{hyperref} 
\hypersetup{
    colorlinks=true,
    linkcolor=blue,
    filecolor=blue,      
    urlcolor=blue,
    citecolor=blue
  }

\title{Measure equivalence rigidity of $\Out(F_N)$}
\author{Vincent Guirardel and Camille Horbez}

\usepackage[all]{xy}
\usepackage{bbm}

\newcommand{\un}{\mathbbm{1}}

\newcommand{\ie}{i.~e.\ }
\newcommand{\calk}{\mathcal{K}}
\newcommand{\Prob}{\mathrm{Prob}}
\newcommand{\imp}{\Rightarrow}
\newcommand{\ra}{\rightarrow}
\newcommand{\m}{^{-1}}
\newcommand{\dunion}{\sqcup}
\newcommand{\Dunion}{\bigsqcup}
\newcommand{\eps}{\varepsilon}
\renewcommand{\epsilon}{\varepsilon}

\newcommand{\calf}{\mathcal{F}}

\newcommand{\calm}{\mathcal{M}}
\newcommand{\calt}{\mathcal{T}}

\newcommand{\caly}{\mathcal{Y}}
\newcommand{\calb}{\mathcal{B}}
\newcommand{\calq}{\mathcal{Q}}
\newcommand{\calh}{\mathcal{H}}
\newcommand{\calp}{\mathcal{P}}
\newcommand{\cala}{\mathcal{A}}
\newcommand{\calx}{\mathcal{X}}
\newcommand{\cald}{\mathcal{D}}
\newcommand{\calo}{\mathcal{O}}
\newcommand{\calc}{\mathcal{C}}
\newcommand{\cals}{\mathcal{S}}
\newcommand{\Z}{\mathcal{Z}}
\newcommand{\bbR}{\mathbb{R}}
\newcommand{\bbP}{\mathbb{P}}

\newcommand{\bbN}{\mathbb{N}}
\newcommand{\bbZ}{\mathbb{Z}}

\newcommand{\actson}{\curvearrowright}
\newcommand{\es}{\emptyset}
\newcommand{\grp}[1]{\langle #1 \rangle}
\newcommand{\ngrp}[1]{\langle\langle #1 \rangle\rangle}

\newcommand{\rank}{\mathrm{rank}}

\makeatletter
\edef\@tempa#1#2{\def#1{\mathaccent\string"\noexpand\accentclass@#2 }}
\@tempa\rond{017}
\makeatother
\newcommand{\cale}{\mathcal{E}}
\newcommand{\Stab}{\mathrm{Stab}}
\newcommand{\Mod}{\mathrm{Mod}}

\newcommand{\id}{\mathrm{id}}
\newcommand{\Out}{\mathrm{Out}}
\newcommand{\Aut}{\mathrm{Aut}}
\newcommand{\Inn}{\mathrm{Inn}}

\newcommand{\AT}{\mathcal{AT}}

\newcommand{\calz}{\mathcal{Z}}
\newcommand{\caln}{\mathcal{N}}

\usepackage{mathrsfs}
\usepackage{euscript}
\newcommand{\TT}{\EuScript{T}}

\newtheorem{de}{Definition} [section]
\newtheorem{theo}[de]{Theorem} 
\newtheorem{prop}[de]{Proposition}
\newtheorem{lemma}[de]{Lemma}

\newtheorem{cor}[de]{Corollary}

\newtheorem{claim}[de]{Claim}
\newtheorem*{claim*}{Claim}

\theoremstyle{remark}
\newtheorem{rk}[de]{Remark}
\newtheorem{ex}[de]{Example}

\newcommand{\Name}{\text{Name}}

\newcommand{\ad}{\mathrm{ad}}
\newcommand{\calg}{\mathcal{G}}

\newcommand{\Fix}{\mathrm{Fix}}

\newcommand{\zmax}{{\mathcal{Z}_{\mathrm{max}}}}
\newcommand{\Zmax}{{\mathcal{Z}_{\mathrm{max}}}}
\newcommand{\IA}{\mathrm{IA}_N(\bbZ/3\bbZ)}

\newcommand{\Tw}{\mathrm{Tw}}

\newcommand{\FF}{\mathrm{FF}}
\newcommand{\stab}{\mathrm{Stab}}

\newcommand{\ATa}{\AT^{\mathrm{a}}}

\newcommand{\baro}{\overline{\mathcal{O}}}
\newcommand{\PAT}{\mathbb{P}\AT}
\newcommand{\PATsim}{\PAT\!/{\scriptstyle\sim}}
\newcommand{\PATa}{\mathbb{P}\ATa}

\newcommand{\PaPATsim}{\calp_{<\infty}^{\mathrm{amen}}(\PAT\!/{\scriptstyle\sim})}
\newcommand{\PnaPATsim}{\calp_{<\infty}^{\mathrm{na}}(\PAT\!/{\scriptstyle\sim})}
\newcommand{\PaPAT}{\calp_{<\infty}^{\mathrm{amen}}(\PAT)}
\newcommand{\PaPATN}{\calp_{=N}^{\mathrm{amen}}(\PAT)}
\newcommand{\amen}{\mathrm{amen}}
\newcommand{\PnaPAT}{\calp_{<\infty}^{\mathrm{na}}(\PAT)}

\newcommand{\PATnasimd}{\calp_{\le 2}^{\mathrm{na}}(\PAT\!/{\scriptstyle \sim})}

\newcommand{\Da}{(\cald^N)_{\mathrm{amen}}}

\newcommand{\FS}{\mathrm{FS}}
\newcommand{\NS}{\mathrm{FS}^{\mathrm{ns}}}
\newcommand{\FSii}{\mathrm{FS}^{2,2^+}}

\newcommand{\ZmaxS}{\Zmax\mathrm{S}}

\newcommand{\Pbaro}{\mathbb{P}\baro}

\newcommand{\Inc}{\mathrm{Inc}}

\newcommand{\lk}{\mathrm{lk}}
\newcommand{\ia}{\mathrm{IA}_k(\mathbb{Z}/3\mathbb{Z})}

\newcommand{\Isom}{\mathrm{Isom}}
\newcommand{\Conv}{\mathrm{Conv}}

\newcommand{\ol}{\overline}

\newcommand{\FSG}{\mathbb{FS}}
\newcommand{\FFG}{\mathbb{FF}}
\newcommand{\NSG}{\FSG^{\mathrm{ns}}}
\newcommand{\Maps}{\mathrm{Inj}(\NSG\to\FSG)}
\newcommand{\Turn}{\mathrm{Turn}}
\newcommand{\Cay}{\mathrm{Cay}}
\newcommand{\afs}{almost free factor system}
\newcommand{\HF}{{\hat\calf}}
\newcommand{\AHF}{A_{\HF}}
\newcommand{\THF}{T_{\HF}}
\newcommand{\onto}{\twoheadrightarrow}
\newcommand{\semidirect}{\ltimes}
\newcommand{\UZ}{U^{z}}
\newcommand{\Uun}{U^{1}}
\newcommand{\Ud}{U^{d}}
\newcommand{\normal}{\triangleleft}
\newcommand{\Inj}{\mathrm{Inj}}

\renewcommand{\phi}{\varphi}

\renewcommand{\sqsupseteq}{\succeq}
\renewcommand{\sqsubseteq}{\preceq}
\newcommand{\sqsupsetneq}{\succneqq}
\newcommand{\sqsubsetneq}{\precneqq}

\renewcommand{\subset}{\subseteq}
\renewcommand{\supset}{\supseteq}

\newcommand{\PI}{\hyperlink{Pi}{\ensuremath{(P_1)}}}
\newcommand{\PImax}{\hyperlink{Pimax}{\ensuremath{(P_1^{\max})}}}
\newcommand{\PII}{\hyperlink{Pii}{\ensuremath{(P_2)}}}
\newcommand{\PIII}{\hyperlink{Piii}{\ensuremath{(P_3)}}}

\begin{document}
\maketitle

\begin{abstract}
  We prove that for every $N\ge 3$, the group $\Out(F_N)$ of outer automorphisms of a free group of rank $N$ is superrigid from the point of view of measure equivalence: any countable group that is measure equivalent to $\Out(F_N)$, is in fact virtually isomorphic to $\Out(F_N)$.

  We introduce three new constructions of canonical splittings associated to a subgroup of $\Out(F_N)$ of independent interest.
  They encode respectively the collection of invariant free splittings, invariant cyclic splittings, and maximal invariant free factor systems.
  Our proof also relies on the following improvement of an amenability result by Bestvina and the authors: given a free factor system $\calf$ of $F_N$, the action of $\Out(F_N,\calf)$ (the subgroup of $\Out(F_N)$ that preserves $\calf$) on the space of relatively arational trees with amenable stabilizer is a Borel amenable action.
\end{abstract}

\section{Introduction}

Measure equivalence theory is concerned with classifying countable groups up to the following equivalence relation.

\begin{de}[{Gromov \cite[Definition~0.5.$\mathrm{E}_1$]{Gro}}]
Two countable groups $\Gamma_1$ and $\Gamma_2$ are \emph{measure equivalent} if there exists a measure-preserving action of $\Gamma_1\times\Gamma_2$ by Borel automorphisms on a standard measure space $\Sigma$ (i.e.\ a Polish space equipped with its Borel $\sigma$-algebra and endowed with a nonzero $\sigma$-finite Borel measure) such that for every $i\in\{1,2\}$, the $\Gamma_i$-action on $\Sigma$ is free and has a finite measure fundamental domain, i.e.\ there exists a Borel subset $X_i\subseteq\Sigma$ of finite measure such that $$\Sigma=\coprod_{\gamma\in \Gamma_i}\gamma X_i.$$
\end{de}

A typical example is that two lattices in the same locally compact second countable group are always measure equivalent (see \cite[Example~0.5.$\mathrm{E}_2$]{Gro}).
This notion can be viewed as a measure-theoretic analogue to the notion of quasi-isometry between finitely generated groups.
Indeed, a theorem of Gromov \cite[0.2.$\mathrm{C}'_2$]{Gro} asserts that two finitely generated groups $\Gamma_1$ and $\Gamma_2$ are quasi-isometric if and only if they are \emph{topologically equivalent}, i.e.\ there exists a continuous action of $\Gamma_1\times\Gamma_2$ on some locally compact space $X$, such that for every $i\in\{1,2\}$, the $\Gamma_i$-action on $X$ is properly discontinuous and cocompact. We insist however that this remains purely at the level of analogy: there is no implication in either direction between measure equivalence and quasi-isometry.

An alternative point of view on measure equivalence comes from studying measure-preserving group actions on standard probability spaces,
where one forgets everything but the partition into orbits.
Indeed, by works of Furman \cite{Fur2} and Gaboriau \cite[Theorem~2.3]{Gab2}, two countable groups are measure equivalent if and only if they admit stably orbit equivalent free actions on standard probability spaces by measure-preserving Borel automorphisms -- see Definition~\ref{deintro:oe} below. 

An easy way in which two countable groups $\Gamma_1$ and $\Gamma_2$ can be measure equivalent is if they are \emph{virtually isomorphic}. This means that there exist finite-index subgroups $\Lambda_i\subseteq\Gamma_i$ and finite normal subgroups $F_i\unlhd \Lambda_i$ for all $i\in\{1,2\}$, such that $\Lambda_1/F_1$ is isomorphic to $\Lambda_2/F_2$. A  countable group $\Gamma$ is said to be \emph{ME-superrigid} if every countable group which is measure equivalent to $\Gamma$ is in fact virtually isomorphic to $\Gamma$. 

Given $N\in\mathbb{N}$, we let $F_N$ be a free group of rank $N$, and $\Out(F_N)$ be its outer automorphism group. The group $\Out(F_N)$ has been subject to intense study from the point of view of geometric group theory over the past decades, often in analogy to mapping class groups of surfaces. The main theorem of the present paper is the following.

\begin{theo}[see Theorem \ref{thm_superrigid}]\label{theo:intro-me}
  For every $N\ge 3$, the group $\Out(F_N)$ is ME-superrigid:
 every countable group which is measure equivalent to $\Out(F_N)$, is in fact virtually isomorphic to $\Out(F_N)$.
\end{theo}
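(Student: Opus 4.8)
The plan is to follow the template for measure equivalence superrigidity pioneered by Kida in the mapping class group setting, adapted to $\Out(F_N)$, in three stages. \textbf{Step 1 (from coupling to cocycle).} Given a countable group $\Lambda$ that is measure equivalent to $\Out(F_N)$, one first passes to a stable orbit equivalence: there is an ergodic probability-measure-preserving action $\Lambda\curvearrowright X$ together with a measure equivalence cocycle $\alpha:\Lambda\times X\to\Out(F_N)$, equivalently an ergodic measured groupoid over $X$ equipped with a strict cocycle to $\Out(F_N)$, which faithfully records the coupling. All the difficulty is concentrated in showing that $\alpha$ is cohomologous to a cocycle with very restricted image; the rest is formal.

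\textbf{Step 2 (cocycle superrigidity via canonical splittings).} This is the core, and it is where the three new canonical splitting constructions announced above enter. To a subgroup $H\leq\Out(F_N)$ one attaches, canonically and in a sufficiently Borel fashion, its maximal invariant free factor system and — when these exist — its canonical invariant free splitting and invariant cyclic splitting; these assemble into a hierarchy. I would argue by induction on the complexity of the maximal invariant free factor system $\calf$ associated fiberwise to the measured groupoid. At each stage, equivariance of the construction yields a measurable $\Out(F_N)$-equivariant map from $X$ to a \emph{countable discrete} $\Out(F_N)$-set — of free factor systems, of free splittings, or of cyclic splittings — along which $\alpha$ is reduced to a cocycle valued in the corresponding smaller stabilizer. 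When $\calf$ is as large as possible, the relevant subgroup is, up to finite index, a peripheral piece (a product of smaller outer automorphism and mapping-type groups) and one recurses. When $\calf$ is trivial one is in the relatively arational situation; here I would invoke the improved amenability theorem stated in the abstract — Borel amenability of the action of $\Out(F_N,\calf)$ on the space of relatively arational trees with amenable stabilizer — to produce, by a Furstenberg–Adams–Kida type argument, an equivariant measurable map from $X$ to probability measures on a compactification of that space, then, using that stabilizers of such trees are small, to push it to a map to finite subsets, and thence, via the canonical cyclic and free splittings attached to an arational tree, back into the discrete combinatorial picture. Interleaving these reductions produces an equivariant measurable map from $X$ into a single $\Out(F_N)$-orbit inside a combinatorial complex whose automorphism group is exactly $\Out(F_N)$ — the $\Out(F_N)$-analogue of Ivanov's theorem on the curve complex — and straightening it shows that $\alpha$ is cohomologous to an honest injective homomorphism $\rho:\Lambda_0\to\Out(F_N)$ on a finite-index subgroup $\Lambda_0\leq\Lambda$, modulo a finite kernel.

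\textbf{Step 3 (conclusion).} Once $\alpha$ is untwisted to such a $\rho$, comparing fundamental domains and using finiteness of the coupling index forces $\rho(\Lambda_0)$ to have finite index in $\Out(F_N)$; since for $N\geq 3$ the group $\Out(F_N)$ has trivial center, no nontrivial finite normal subgroup and no direct-product decomposition, and since its abstract commensurator is itself, this upgrades to a virtual isomorphism between $\Lambda$ and $\Out(F_N)$, which is the assertion of the theorem.

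\textbf{Main obstacle.} I expect the bulk of the work to lie in Step 2, in two intertwined places. First, the three canonical splitting constructions must be carried out with enough Borel regularity that they genuinely interact with the measured groupoid, and one must establish the rigidity of the resulting combinatorial complex — that its automorphism group is exactly $\Out(F_N)$ — which is itself a substantial combinatorial statement. Second, the induction on complexity must be organized so that it terminates and so that the relatively arational base case, handled through the amenability theorem, feeds back consistently into the discrete picture; coordinating the free-splitting, cyclic-splitting and free-factor-system layers into one coherent descent, rather than any single estimate, is where the real difficulty is concentrated.
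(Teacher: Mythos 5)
There is a genuine gap, and it sits at the very start: you propose to take the single ME cocycle $\alpha:\Lambda\times X\to\Out(F_N)$ of the unknown group $\Lambda$ and straighten it directly, by an induction on maximal invariant free factor systems of subgroupoids. But the entire technical machinery you invoke (canonical invariant splittings read off a subgroupoid, the Borel amenability of the action on arational trees with amenable stabilizer, the Adams-type argument, the Ivanov/Pandit-type graph rigidity) only produces anything when one can \emph{compare two} action-type cocycles, both valued in $\IA$. The actual proof does not touch the $\Lambda$-cocycle at all at this stage: following Furman and Kida, ME-superrigidity is first reduced, formally, to the statement that every \emph{self}-coupling of a finite-index subgroup of $\Out(F_N)$ factors through the standard coupling, equivalently that for any measured groupoid $\calg$ any two action-type cocycles $\rho,\rho':\calg\to\IA$ are $\Out(F_N)$-cohomologous. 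One then characterizes, in purely groupoid-theoretic terms (with no reference to the cocycle), the $\rho$-stabilizers of non-separating free splittings and the compatibility relation, re-reads these characterizations via $\rho'$, and thereby assigns to almost every point $y$ an injection of the non-separating free splitting graph into the free splitting graph; Pandit's theorem that this graph has automorphism group $\Out(F_N)$ then yields the conjugating map $\phi(y)$. With only one $\Out(F_N)$-valued cocycle and nothing known about $\Lambda$, there is no second cocycle against which to match stabilizers, so no candidate for $\phi(y)$ ever appears; and proving directly that an ME cocycle of an arbitrary $\Lambda$ untwists to a homomorphism is essentially the theorem itself, not a step toward it.

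A secondary problem is your Step 2 induction ``on the complexity of the maximal invariant free factor system, recursing into peripheral pieces'': this is not how the canonical constructions are used, and it is not clear it can be organized to terminate or to feed back into a single rigid complex. In the paper the free-splitting, $\Zmax$-splitting and dynamical-decomposition constructions are packaged into the notion of a nice splitting, whose invariance passes to (stable) normalizers; this, together with the amenability theorem on arational trees, is used to verify groupoid properties $(P_1^{\max})$, $(P_2)$, $(P_3)$ that (almost) characterize stabilizers of one-edge non-separating free splittings, and a separate compatibility characterization — there is no descent through peripheral subgroups of $\Out(F_N)$. Your Step 3 is also slightly off in its logic: the passage from the untwisted comparison to virtual isomorphism is Kida's coupling-rigidity-plus-ICC argument (commensurator rigidity is a corollary of the two-cocycle statement, not an input), though this is minor compared to the missing reduction to self-couplings.
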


Notice that this does not hold when $N=2$, as $\Out(F_2)$ is virtually free, and the class of groups that are measure equivalent to a free group is wide, see e.g.\ \cite{Gab,BTW}. To our knowledge, the topological counterpart to Theorem~\ref{theo:intro-me}, i.e.\ knowing whether $\Out(F_N)$ is quasi-isometrically rigid, is still wide open. Actually, the geometry of $\Out(F_N)$ equipped with the word metric is still very mysterious, in large part due to the lack of a satisfying analogue for $\Out(F_N)$ of the Masur--Minsky theory \cite{MM} yielding a distance formula for mapping class groups of surfaces. 

\subsection{Some history}

\subsubsection{Measure equivalence rigidity and flexibility} 

We would like to mention here some previously known results that are milestones in the (still growing) theory of measure equivalence rigidity. We refer the reader to \cite{Sha-survey,Gab-survey,Fur-survey} for general surveys on measured group theory.

A first striking flexibility result in the classification of groups up to measure equivalence (in an orthogonal direction to that of rigidity results) was the proof by Ornstein and Weiss \cite{OW}, following previous work of Dye \cite{Dye1,Dye2} (and further extended by Connes, Feldman and Weiss \cite{CFW}), that all  countably infinite amenable groups are measure equivalent.
 More precisely, any two free ergodic measure-preserving actions of countably infinite amenable groups on standard probability spaces are orbit equivalent. In contrast, work by Hjorth, Gaboriau, Lyons, Popa, Ioana and Epstein
shows that any non-amenable countable group has uncountably many  ergodic actions that are  pairwise not orbit equivalent \cite{Hjorth,GabPopa,GabLyons,Ioana,Epstein}.
 
Still on the flexibility side, Gaboriau showed that many groups are measure equivalent to free groups \cite{Gab} (see also \cite{BTW}, showing more generally that every finitely generated group with the same elementary theory as a free group is measure equivalent to a  finitely generated free group). 
 
A groundbreaking rigidity result was the proof by Furman \cite{Fur1}, building on earlier work of Zimmer and most notably on Zimmer's celebrated cocycle superrigidity theorem \cite{Zim3,Zim4}, that any countable group which is measure equivalent to a lattice in a higher rank simple connected Lie group $G$ with finite center, is in fact virtually isomorphic to a lattice in $G$.  

Another remarkable result by Gaboriau shows that  $\ell^2$-Betti numbers are (projective) invariants of measure equivalence \cite{Gab_L2}.
 By \cite{GN}, the $\ell^2$-Betti numbers of $\Out(F_N)$ do not vanish in degree equal to the virtual cohomological dimension $2N-3$, which gives a short proof that $\Out(F_N)$ is not measure equivalent to $\Out(F_{N'})$ if $N'\neq N$.   
Monod and Shalom \cite{MS} used bounded cohomology techniques to establish striking measure equivalence rigidity results for products of negatively curved groups.

Later, Kida established that all mapping class groups of non-exceptional finite-type orientable surfaces are ME-superrigid \cite{Kid2}. Kida's approach to measure equivalence rigidity of mapping class groups has had a large influence on our work: the general strategy of our proof is similar to his,
though many new difficulties arose for $\Out(F_N)$.

Kida's theorem has then inspired further ME-superrigidity results \cite{Kid4,CK,HH}. Also, there are situations of groups that fail to be ME-superrigid, but for which one can still get interesting measure equivalence classification results within a given class, like Baumslag--Solitar groups \cite{HR} or right-angled Artin groups \cite{HH2}.  

In another direction, superrigidity results have been obtained for certain particular actions with few restrictions on the acting group: this started with Popa's remarkable cocycle superrigidity theorem for Bernoulli actions of Property (T) groups \cite{Pop}, leading to many further developments. 

We finally mention that measure equivalence theory has strong connections with the theory of von Neumann algebras, see e.g.\ \cite{Sin}. In combination to the aforementioned work of Kida, the notion of \emph{proper proximality} of a countable group, recently introduced by Boutonnet, Ioana and Peterson \cite{BIP}, led to strong rigidity results for von Neumann algebras associated to weakly compact (e.g.\ profinite)  free probability measure-preserving ergodic actions of mapping class groups \cite{HHL}. A natural question that arises from our work is to which extent an ergodic action of $\Out(F_N)$ is determined by its associated von Neumann algebra.

\subsubsection{Rigidity in $\Out(F_N)$} 

Rigidity of $\Out(F_N)$ also has a long history, starting with works of Khramtsov \cite{Khr} and of Bridson and Vogtmann \cite{BV} establishing that all automorphisms of $\Out(F_N)$ with $N\ge 3$ are inner. This algebraic rigidity statement is in close relation to more geometric rigidity statements concerning the symmetries of various spaces equipped with natural $\Out(F_N)$-actions, starting with the spine of reduced Outer space \cite{BV2}. Later, Farb and Handel proved \cite{FH} that $\Out(F_N)$ is its own abstract commensurator for all $N\ge 4$, i.e.\ every isomorphism between finite-index subgroups of $\Out(F_N)$ is given by a conjugation. A recent new proof of this result by Wade and the second named author \cite{HW} extends it to the $N=3$ case and allows to compute the abstract commensurator of various interesting subgroups of $\Out(F_N)$  such as its Torelli subgroup. 
The form in which we prove ME-rigidity (namely Theorem \ref{theo:intro-two-cocycles})
implies commensurator rigidity (see Remark~\ref{rk_commensurator}).
Thus, our result actually recovers the Farb--Handel theorem.
In the same way that Kida's theorem is a broad generalization of Ivanov's commensurator rigidity theorem as explained in \cite[p.~29--33]{Kid-survey},
the general approach of the present paper can also be viewed in a sense as 
a 
generalization of the strategy used in \cite{HW} (see Section~\ref{sec:intro-commensurateur}). 

\subsection{Some consequences}
We now mention two applications of the techniques we develop in the present paper.

\subsubsection{Orbit equivalence rigidity of ergodic actions of $\Out(F_N)$}

Using standard techniques from measured group theory originating from work of Furman \cite{Fur2}, Theorem~\ref{theo:intro-me} can be used to deduce a strong rigidity result regarding the orbit structure of ergodic actions of $\Out(F_N)$ on standard probability spaces. Before we state it, we recall the following definition.

\begin{de}\label{deintro:oe}
Let $\Gamma_1$ and $\Gamma_2$ be two countable groups, and for every $i\in\{1,2\}$, let $\Gamma_i\actson X_i$ be an ergodic measure-preserving action by Borel automorphisms on a standard probability space $X_i$. The two actions $\Gamma_1\actson X_1$ and $\Gamma_2\actson X_2$ are \emph{orbit equivalent} if there exists a measure space isomorphism $f:X_1\to X_2$ such that for a.e.\ $x\in X_1$, one has $f(\Gamma_1\cdot x)=\Gamma_2\cdot f(x)$.

More generally, the actions $\Gamma_1\actson X_1$ and $\Gamma_2\actson X_2$ are \emph{stably orbit equivalent} if there exist Borel subsets $Y_i\subseteq X_i$ of positive measure and a measure-scaling\footnote{i.e.\ $f$ induces a measure space isomorphism after rescaling the measures to turn $Y_1$ and $Y_2$ into probability spaces}
isomorphism $f:Y_1\to Y_2$ such that for a.e.\ $y\in Y_1$, one has $f((\Gamma_1\cdot y)\cap Y_1)=(\Gamma_2\cdot f(y))\cap Y_2$. 
\end{de}

An easy example of orbit equivalent actions is when the actions $\Gamma_1\actson X_1$ and $\Gamma_2\actson X_2$ are \emph{conjugate}. This means that there exists a group isomorphism $\rho:\Gamma_1\to\Gamma_2$ and a measure space isomorphism $f:X_1\to X_2$ such that for every $\gamma\in\Gamma_1$ and a.e.\ $x\in X_1$, one has $f(\gamma\cdot x_1)=\rho(\gamma)\cdot f(x_1)$. 

\begin{theo}[OE-superrigidity, see Theorem \ref{theo:oe-out(fn)}]\label{theo:intro-oe}
Let $N\ge 3$. Let $\Gamma\subset\Out(F_N)$ be a finite index subgroup, and $\Lambda$ a countable group with no non-trivial finite normal subgroup.
Consider two standard probability spaces $X_1,X_2$ and two free measure-preserving actions $\Gamma\actson X_1$, $\Lambda\actson X_2$.
Assume that these actions are aperiodic, i.e.\ that every finite index subgroup acts ergodically.

If the actions $\Gamma\actson X_1$ and $\Lambda\actson X_2$ are stably orbit equivalent, then they are conjugate.
\end{theo}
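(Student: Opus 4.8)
The plan is to derive Theorem~\ref{theo:intro-oe} from the measure equivalence rigidity of Theorem~\ref{theo:intro-me} through the standard dictionary between stable orbit equivalence and measure equivalence, following Furman \cite{Fur2,Fur1} and Kida \cite{Kid2}. First I would recall that a stable orbit equivalence between the free measure-preserving actions $\Gamma\actson X_1$ and $\Lambda\actson X_2$ canonically produces a measure equivalence coupling $\Sigma$ of $\Gamma$ and $\Lambda$, together with the associated measurable (Zimmer) cocycle $\alpha\colon\Gamma\times X_1\to\Lambda$, well defined up to cohomology; and, by the same dictionary, the two actions are conjugate precisely when $\alpha$ is cohomologous to a cocycle induced by a group isomorphism $\Gamma\to\Lambda$. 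So the whole task reduces to \emph{untwisting} the cocycle $\alpha$.

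The crux is the second step, where I would invoke not merely the bare conclusion of Theorem~\ref{theo:intro-me} but the structure produced by its proof. Exactly as for mapping class groups in Kida's work, the argument does not simply assert the existence of an abstract virtual isomorphism: from the coupling $\Sigma$ it constructs a homomorphism $\rho\colon\Lambda\to\Out(F_N)$ with finite kernel and finite-index image --- here one uses that $\Comm(\Out(F_N))=\Out(F_N)$ for $N\ge 3$, which is recovered along the way (Remark~\ref{rk_commensurator}) --- and identifies $\Sigma$, up to a null set, with the measure equivalence coupling of $\Gamma$ and $\Lambda$ induced by $\rho$ and the inclusion $\Gamma\hookrightarrow\Out(F_N)$. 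Equivalently, the cocycle $\alpha$ is cohomologous to a cocycle induced by a virtual isomorphism between $\Gamma$ and $\Lambda$.

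Third, I would eliminate the remaining finiteness defects using the two extra hypotheses. Since $\ker\rho$ is a finite normal subgroup of $\Lambda$, it is trivial, so $\rho$ embeds $\Lambda$ onto a finite-index subgroup of $\Out(F_N)$ commensurable with $\Gamma$; passing to a common finite-index subgroup yields finite-index subgroups $\Lambda_0\le\Lambda$, $\Gamma_0\le\Gamma$ and an isomorphism $\Lambda_0\cong\Gamma_0$ compatible with $\alpha$. At this point aperiodicity enters: since every finite-index subgroup of $\Gamma$ acts ergodically on $X_1$ and every finite-index subgroup of $\Lambda$ acts ergodically on $X_2$, the standard ergodicity bookkeeping of \cite{Fur1,Kid2} forces the stable orbit equivalence to have index $1$ and forces both finite-index subgroups to be the whole groups; thus $\rho$ restricts to an isomorphism identifying $\Gamma$ with $\Lambda$, and $\alpha$ is cohomologous to the induced cocycle. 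Translating back through Furman's dictionary produces a measure space isomorphism $f\colon X_1\to X_2$ conjugating the two actions along this isomorphism, which is the desired conclusion.

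The main obstacle is entirely concentrated in the second step: one must make sure that the proof of Theorem~\ref{theo:intro-me} is genuinely functorial in the coupling, i.e.\ that it outputs the homomorphism $\rho$ and the isomorphism of couplings as explicit measurable data built from $\Sigma$, rather than a mere existence statement --- this is precisely what makes the untwisting of $\alpha$ possible. The first step is a black box taken from \cite{Fur2,Gab2}, and the third step, although it requires careful tracking of indices and repeated appeals to ergodicity of finite-index subgroups, is routine measured-group-theory bookkeeping in the style of \cite{Fur1,Kid2}.
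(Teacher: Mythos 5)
Your proposal follows essentially the same route as the paper: the paper proves that $\Out(F_N)$ is ICC and rigid with respect to action-type cocycles, which yields exactly the coupling rigidity (tautness) statement you need (Proposition~\ref{prop:oe-me}), and then invokes the Furman--Kida dictionary (Theorem~\ref{theo:oe}, i.e.\ \cite{Fur2,Kid3}) to untwist the cocycle and obtain virtual conjugacy, finally upgrading to genuine conjugacy via the same observation you make, namely that an action induced from a proper finite-index subgroup is never aperiodic and finite kernels are killed by the absence of non-trivial finite normal subgroups. So your argument is correct and matches the paper's proof in both structure and key inputs.
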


\subsubsection{Lattice embeddings of $\Out(F_N)$ into locally compact groups}

As mentioned earlier, a typical example of measure equivalent groups comes from two lattices in a common locally compact second countable group. A consequence of our work is that there are no interesting lattice embeddings of $\Out(F_N)$ into any locally compact second countable group.

\begin{theo}[see proof page~\pageref{proof:lattice}]\label{theo:lattice-embeddings}
Let $N\ge 3$, and let $\Gamma$ be a finite-index subgroup of $\Out(F_N)$. Let $H$ be a locally compact second countable group (equipped with its left Haar measure), and let $\sigma:\Gamma\to H$ be an injective homomorphism, such that $\sigma(\Gamma)$ is a lattice in $H$ (i.e.\ $\sigma(\Gamma)$ is discrete in $H$ and has finite Haar covolume). 

Then there exists a continuous homomorphism $\Phi_0:H\to\Out(F_N)$ with compact kernel such that for all $\gamma\in\Gamma$, one has $\Phi_0(\sigma(\gamma))=\gamma$.
\end{theo}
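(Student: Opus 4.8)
The plan is to derive this statement from the ME-superrigidity theorem (Theorem~\ref{theo:intro-me}) together with the self-commensuration properties of $\Out(F_N)$, following the by-now standard route from a lattice embedding to a continuous homomorphism (as in Furman's work and its subsequent uses in the mapping class group setting by Kida). First I would observe that if $\sigma(\Gamma)$ is a lattice in $H$, then $\Gamma$ (via $\sigma$) and any other lattice in $H$ are measure equivalent, and more to the point $H$ itself, acting on $H/\sigma(\Gamma)$, provides a measure equivalence coupling of $\Gamma$ with itself that is equivariant for the $H$-action. Concretely, consider the space $\Sigma = H$ with Haar measure, on which $\Gamma \times \Gamma$ acts by $(\gamma_1,\gamma_2)\cdot h = \sigma(\gamma_1) h \sigma(\gamma_2)^{-1}$: this is a self-coupling of $\Gamma$, and it carries a commuting action of $H$ by left translations. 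Since $\Gamma$ is a finite-index subgroup of $\Out(F_N)$, which by Theorem~\ref{theo:intro-me} is ME-superrigid, and more precisely (invoking the stronger structural output of the main theorem, cf.\ the treatment of self-couplings in the body of the paper) every self-coupling of $\Out(F_N)$ is, up to a finite kernel, induced from the tautological one, I would use this to produce a homomorphism out of $H$.

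The key steps, in order: (1) Package the lattice embedding as a self-coupling of $\Gamma$ with a commuting $H$-action, as above. (2) Apply the rigidity statement for self-couplings of $\Out(F_N)$ proved in the course of establishing Theorem~\ref{theo:intro-me}: there is an a.e.\ defined, essentially unique measurable map $\Sigma \to \Out(F_N)$ (or to the abstract commensurator, which for $N\ge 3$ equals $\Out(F_N)$ by Farb--Handel, recovered here as Remark~\ref{rk_commensurator}) intertwining the two $\Gamma$-actions with left and right multiplication. (3) Use essential uniqueness of this map to transport the commuting $H$-action: since $H$ commutes with the $\Gamma\times\Gamma$-action and the map to $\Out(F_N)$ is canonical, each $h\in H$ must act on the target by an element $\Phi_0(h)$ of the commensurator of $\Out(F_N)$, i.e.\ of $\Out(F_N)$ itself, giving a homomorphism $\Phi_0 : H \to \Out(F_N)$ as an abstract group map. (4) Upgrade $\Phi_0$ to a continuous homomorphism: a measurable homomorphism between locally compact second countable groups is automatically continuous, and the target $\Out(F_N)$ is countable discrete, so continuity forces $\ker\Phi_0$ to be open, hence of finite covolume complement; combined with the fact that $\sigma(\Gamma)$ is a lattice and $\Phi_0\circ\sigma = \id_\Gamma$ (which pins down $\Phi_0$ on a lattice), one gets that $\ker\Phi_0$ is compact. (5) Verify $\Phi_0(\sigma(\gamma)) = \gamma$ for all $\gamma\in\Gamma$: this falls out of the construction since the canonical map restricted to the $\sigma(\Gamma)$-orbit structure recovers the identity coupling.

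The main obstacle I expect is Step~(3)--(4): namely, arguing that the commuting $H$-action genuinely descends to a \emph{homomorphism} $H \to \Out(F_N)$ rather than merely a cocycle, and that it is a group homomorphism (not just a map), and then that the resulting abstract homomorphism is measurable so that automatic continuity applies. The essential uniqueness of the canonical map $\Sigma \to \Out(F_N)$ is what makes $h \mapsto \Phi_0(h)$ well-defined and multiplicative, but one must be careful about null sets: one needs a version of the rigidity conclusion that is genuinely pointwise-canonical (equivariant under all automorphisms of the coupling), which is why the statement is phrased in terms of the abstract commensurator. Once measurability of $\Phi_0$ is in hand, the passage to continuity and then to compactness of the kernel is routine, using that $\sigma(\Gamma)$ has finite covolume and $\Phi_0$ restricts to the identity on it. A secondary point requiring care is reducing from the finite-index subgroup $\Gamma$ to $\Out(F_N)$ itself and back, but since finite-index subgroups of $\Out(F_N)$ have the same abstract commensurator, this is harmless.
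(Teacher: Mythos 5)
Your overall route is the one the paper takes: Theorem~\ref{theo:lattice-embeddings} is deduced from rigidity with respect to action-type cocycles (Theorem~\ref{theo:two-cocycles}) plus the ICC property, by viewing $(H,\lambda_H)$ with the action $(\gamma,\gamma')\cdot h=\sigma(\gamma)h\sigma(\gamma')^{-1}$ as a self coupling of $\Gamma$, applying Proposition~\ref{prop:oe-me} to get an almost $(\Gamma\times\Gamma)$-equivariant Borel map $\Phi:H\to\Out(F_N)$, upgrading $\Phi$ to a continuous homomorphism, and reading off compactness of the kernel and $\Phi_0\circ\sigma=\mathrm{id}$ (this is Theorem~\ref{theo:lattice}, following Furman and Kida). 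Your steps (1)--(2) and (5), and (modulo sloppiness) your compactness argument, are consistent with that.

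However, your step (3) has a genuine gap. The premise ``$\Sigma=H$ carries a commuting action of $H$ by left translations'' is false: left translation by $h_0$ commutes with the right-multiplication factor $h\mapsto h\sigma(\gamma')^{-1}$, but not with the left-multiplication factor $h\mapsto\sigma(\gamma)h$ (and symmetrically for right translations). So there is no $H$-action commuting with the full $\Gamma\times\Gamma$-action to transport through the (essentially unique) taut map, and the map $h\mapsto\Phi_0(h)$ cannot be defined, let alone shown multiplicative, by the uniqueness argument as you describe it. This is exactly the point where the paper does something different: it proves directly that $\Phi$ is almost everywhere multiplicative, by considering $F(h_1,h_2)=\Phi(h_1^{-1})^{-1}\Phi(h_1^{-1}h_2)\Phi(h_2)^{-1}$, checking that $F$ is invariant under the right diagonal $\Gamma^1$-action and conjugation-equivariant under the left one, so that it descends to $X\times X$ with $X=H/\sigma(\Gamma^1)$ of finite invariant measure; the pushforward of $\mu\otimes\mu$ is then a conjugation-invariant probability measure on the ICC group $\Out(F_N)$, hence the Dirac mass at the identity. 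Only after this does one invoke the measurable-homomorphism theorems (\cite[Theorems~B.2 and~B.3]{Zim}) to replace $\Phi$ by a continuous homomorphism $\Phi_0$, and compactness of $\ker\Phi_0$ comes from the fact that $\Phi^{-1}(\{1\})$ is a Borel fundamental domain of finite positive Haar measure for the left $\Gamma^1$-action. So the missing ingredient in your proposal is precisely this ICC-based multiplicativity argument; without it (or some correct substitute), your $\Phi_0$ is not constructed.
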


This implies in particular that $H$ has infinitely many connected components and $\sigma(\Gamma)$ is cocompact in $H$. By viewing $\Out(F_N)$ as a lattice in the automorphism group of its Cayley graph, we reach the following corollary, which was suggested to us by Jingyin Huang.

\begin{cor}[see proof page~\pageref{proof:lattice}]\label{corintro:cayley}
 Let $N\ge 3$. For any finite generating set $S$ of $\Out(F_N)$, any automorphism of the Cayley graph $\Cay(\Out(F_N),S)$ is at bounded
 distance from an automorphism induced by the left multiplication by an element of $\Out(F_N)$.

 If $\Gamma$ is a torsion-free finite-index subgroup of $\Out(F_N)$, and $S$ is a finite generating set of $\Gamma$, 
 then the group of automorphisms of the Cayley graph $\Cay(\Gamma,S)$ is isomorphic to a subgroup of $\Out(F_N)$ containing $\Gamma$.
In particular, it is countable.
\end{cor}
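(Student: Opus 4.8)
The plan is to realise $\Out(F_N)$ --- and, for the second assertion, the subgroup $\Gamma$ --- as a uniform lattice in the automorphism group of its Cayley graph, and then invoke Theorem~\ref{theo:lattice-embeddings}. Fix a finite symmetric generating set $S$ and let $G$ stand for $\Out(F_N)$ (first assertion) or for $\Gamma$ (second assertion), and let $X=\Cay(G,S)$, a connected locally finite graph whose vertex set we identify with $G$. Equip $H:=\Aut(X)$ with the topology of pointwise convergence on vertices; then $H$ is totally disconnected, locally compact and second countable, the stabiliser $\Stab_H(v_0)$ of a base vertex $v_0$ being compact open. Left translations define an injective homomorphism $\lambda\colon G\to H$ whose image is discrete, since $\lambda(g)\in\Stab_H(v_0)$ forces $g=1$, and cocompact, since $\lambda(G)$ is transitive on vertices and hence $H=\lambda(G)\,\Stab_H(v_0)$. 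Thus $\lambda(G)$ is a uniform lattice in $H$, and Theorem~\ref{theo:lattice-embeddings} (applied to $G$, which in both cases is a finite-index subgroup of $\Out(F_N)$) yields a continuous homomorphism $\Phi_0\colon H\to\Out(F_N)$ with compact kernel $K:=\ker\Phi_0$ such that $\Phi_0\circ\lambda=\mathrm{id}_G$.

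The key elementary point is that $K$ is \emph{normal} in $H$, while $H$ already acts transitively on the vertex set of $X$ through $\lambda(G)$. Since $K$ is compact and the vertex set is discrete, the orbit $Kv_0$ is finite; and since $K$ is normal and $H$ is vertex-transitive, for a vertex $w=h v_0$ we have $d(w,kw)=d(v_0,(h^{-1}kh)v_0)$ with $h^{-1}kh\in K$, whence
$$ D:=\sup_{k\in K,\ w\in V(X)}d(w,kw)=\diam\bigl(Kv_0\cup\{v_0\}\bigr)<\infty . $$
So $K$ acts on $X$ with uniformly bounded displacement $D$; note that this uses normality of $K$, not just compactness (a general compact subgroup of $\Aut(X)$ need not move points a bounded amount).

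The first assertion follows at once: given $\varphi\in\Aut(\Cay(\Out(F_N),S))=H$, set $g:=\Phi_0(\varphi)\in\Out(F_N)$; then $\lambda(g)^{-1}\varphi\in K$, so for every vertex $w$ one has $d(\varphi(w),\lambda(g)(w))=d\bigl((\lambda(g)^{-1}\varphi)(w),w\bigr)\le D$, i.e.\ $\varphi$ is at distance at most $D$ from the automorphism induced by left multiplication by $g$ (with $D$ even independent of $\varphi$). For the second assertion, $\Gamma$ is torsion-free and it is enough to prove $K=\{1\}$: then $\Phi_0$ is an injective homomorphism of $\Aut(\Cay(\Gamma,S))$ into $\Out(F_N)$ whose image contains $\Phi_0(\lambda(\Gamma))=\Gamma$, and is countable as a subgroup of $\Out(F_N)$. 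To see $K=\{1\}$, observe that $\{\gamma\in\Gamma:\lambda(\gamma)v_0\in Kv_0\}$ is the setwise stabiliser in $\lambda(\Gamma)$ of the finite set $Kv_0$ (using normality of $K$), and is finite because $\lambda$ acts freely on vertices; being a finite subgroup of $\lambda(\Gamma)\cong\Gamma$, it is trivial, which forces $Kv_0=\{v_0\}$. Hence $K$ fixes $v_0$, and by vertex-transitivity together with normality it fixes every vertex; as $\Aut(X)$ acts faithfully on vertices, $K=\{1\}$.

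The argument is short once Theorem~\ref{theo:lattice-embeddings} is available; the two points that require a little care are the verification that $\lambda(G)$ is genuinely a uniform lattice in the permutation topology (so that the theorem applies), and the upgrade from ``$K$ is compact'' to ``$K$ has bounded displacement'', which rests on the normality of $K$ and on vertex-transitivity. The torsion-freeness hypothesis enters only at the very last step, to kill $K$; this is exactly why the first assertion is merely a bounded-distance statement, since $\Out(F_N)$ itself has torsion and $K$ may then be nontrivial.
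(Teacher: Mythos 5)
Your proposal is correct and follows essentially the same route as the paper's proof of Corollary~\ref{cor:cayley}: realize the group as a discrete cocompact (hence uniform) lattice in $\Aut(\Cay(\cdot,S))$ with the permutation topology, apply the lattice-embedding theorem to obtain the retraction $\Phi_0$ with compact kernel $K$, and exploit normality of $K$ together with vertex-transitivity to get bounded displacement of $K$ (first assertion) and, identifying the $K$-orbit of the base vertex with a finite subgroup of the torsion-free $\Gamma$, to get $K=\{1\}$ (second assertion). Your phrasing of the last step via the setwise stabilizer of $Kv_0$ is just a mild repackaging of the paper's observation that $K(1_{\Gamma})$ is a finite subgroup of $\Gamma$.
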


In this statement, $\Cay(G,S)$ is the simple graph with vertices $G$, and such that there is an edge between two distinct elements $g$ and $h$ if and only if $gh\m \in S\cup S\m$. 
As was recently proved by Leemann and de la Salle, every finitely generated group has a Cayley graph (for some finite generating set) whose automorphism group is countable \cite[Corollary~1.3]{LdlS}. But knowing that this holds for every Cayley graph  is a much more restrictive condition, which fails for free groups, for instance.
On the other hand, if a finitely generated infinite group $G$ has a non-trivial finite order element,
then it has a Cayley graph whose automorphism group is uncountable \cite[Lemma~6.1]{dlST}.

\subsection{New constructions of canonical splittings for subgroups of $\Out(F_N)$}

A substantial part of the present paper (Part~\ref{part2}) is devoted to constructions of canonical splittings associated to a subgroup $H$ of $\Out(F_N)$ which we believe to be of independent interest. 
The canonicity of a splitting $U_H$  associated to $H$ means that it is not only $H$-invariant, 
but even invariant under the normalizer $N_{\Out(F_N)}(H)$ (for the action of $\Out(F_N)$ on the set of all splittings) -- and in fact the map $H\mapsto U_H$ is $\Out(F_N)$-equivariant. 

This is reminiscent of Ivanov's \emph{canonical reduction system} of a reducible subgroup of the mapping class group, which is a canonical decomposition of the surface along a multicurve. As a consequence of these constructions, we obtain the following result which has the same flavor.

\begin{theo}[see Theorem \ref{theo:reducibility}]
  Let $N\ge 2$, and let $H\subset\Out(F_N)$ be an infinite subgroup.
  
    If there exists a conjugacy class of proper free factors of $F_N$ whose $H$-orbit is finite, 
    then there also exists a conjugacy class of proper free factors of $F_N$ whose orbit is finite under
    the normalizer $N_{\Out(F_N)}(H)$.
\end{theo}

We will not use this result directly, we will instead rely on a similar statement for a particular class  of splittings (see Corollary \ref{corintro:crs}).
One has to be careful with the class of splittings considered. Indeed, 
there exists a subgroup $H\subset \Out(F_N)$ having a non-trivial periodic free splitting, but whose normalizer
has none (Example~\ref{ex:arcs} gives an instance).
We actually carry out three separate constructions of canonical splittings.
The first two encode the sets of $H$-invariant free splittings and $H$-invariant $\Zmax$-splittings, respectively, relying on JSJ techniques. The third has a different flavor and deals with $H$-invariant free factor systems.

Recall that a \emph{splitting} of $F_N$ is an action of $F_N$ on a simplicial tree with no proper nonempty invariant subtree.
It is a \emph{free splitting} if edge stabilizers are trivial. Thus, one-edge free splittings (i.e.\ those for which the quotient graph under the $F_N$-action has a single edge)
are dual to free product decompositions $F_N=A*B$ or HNN extensions over the trivial group $F_N=A*$.
A \emph{$\Zmax$-splitting} is a splitting of $F_N$ whose edge stabilizers are isomorphic to $\bbZ$ and maximal
for the inclusion.

We will actually work in the finite index subgroup $\IA\subset\Out(F_N)$ consisting of 
outer automorphisms acting trivially on homology mod 3.
The point of this subgroup is that it avoids many finite order phenomena:
this group is torsion-free, any free splitting which is periodic by some element or subgroup of $\IA$
is in fact invariant; and so is any periodic conjugacy class of free factor, or any periodic conjugacy class of element (\cite{HM2}, see Section \ref{sec_IA}).

\subsubsection{Canonical splittings from collections of free splittings} 

The following theorem is at the heart of Section~\ref{sec:collection-free} of the present paper (see Theorem~\ref{theo:tree-of-cyl} for a more complete statement). It associates to any subgroup $H\subseteq\IA$ a canonical splitting from which one can read all $H$-invariant free splittings. 

\begin{theo}[see Theorem~\ref{theo:tree-of-cyl}]\label{theo:intro-tree-of-cyl}
Let $N\ge 2$. To any subgroup $H\subset \IA$, one can canonically assign a splitting $\Uun_H$ of $F_N$ which is non-trivial as soon as $H$ is infinite and preserves some  non-trivial free splitting of $F_N$.  
The assignment $H\mapsto U^1_H$ is $\Out(F_N)$-equivariant.

The splitting $\Uun_H$ comes with a bipartition $V(U^1_H)=V^0\dunion V^1$ of its vertex set, and the set of $H$-invariant one-edge free splittings coincides with the set of all  one-edge free splittings that can be obtained by blowing up a vertex of $V^1$.
\end{theo}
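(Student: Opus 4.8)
The plan is to build $\Uun_H$ as the \emph{tree of cylinders} of a JSJ-type decomposition of $F_N$ over the trivial subgroup, relative to $H$, and then to verify that it has the announced universal property among $H$-invariant one-edge free splittings. The starting object is the space of all non-trivial $H$-invariant free splittings of $F_N$. Since $H$ preserves each such splitting, $H$ acts (through $\Out(F_N)$) on this deformation space; the canonical and $\Out(F_N)$-equivariant way to extract a single tree is to pass to the JSJ deformation space over the class of finitely generated subgroups that are trivial — equivalently, to use the Guirardel--Levitt construction of the tree of cylinders associated to the commensurability-type relation on edge groups, which for the trivial-edge-group class degenerates in a way that still produces a canonical deformation-space invariant. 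Concretely, first I would record that if $H$ is infinite and fixes some non-trivial free splitting $S$, then by the work invoked in Section~\ref{sec_IA} (torsion-freeness of $\IA$ and rigidity of periodic free splittings) every $H$-periodic free splitting is $H$-invariant, so the set of $H$-invariant free splittings is exactly the set of $H$-periodic ones, and it is non-empty. Second, I would take the (relative) free-splitting JSJ tree of $F_N$ with respect to the family of $H$-invariant free splittings — this exists by Guirardel--Levitt JSJ theory — and let $\Uun_H$ be its tree of cylinders, which is canonical, compatible with every splitting in the deformation space, and visibly $\Out(F_N)$-equivariant because the whole input data $H \mapsto \{H\text{-invariant free splittings}\}$ is $\Out(F_N)$-equivariant.

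The bipartition $V(\Uun_H) = V^0 \dunion V^1$ is then the natural one coming from the tree of cylinders: $V^1$ is the set of vertices carrying the "mobile" or QH-like pieces (the vertices that one is allowed to refine), while $V^0$ collects the rigid vertices and the cylinder vertices. The substance of the theorem is the last sentence: I would prove the two inclusions separately. For the easy direction, any one-edge free splitting obtained by blowing up a vertex $v \in V^1$ refines $\Uun_H$; since $\Uun_H$ lies in (or dominates the elements of) the $H$-invariant deformation space and the blow-up data at $v$ can be taken $H$-invariant by the JSJ flexibility of that vertex, the resulting one-edge splitting is $H$-invariant. For the converse — every $H$-invariant one-edge free splitting arises this way — I would use that such a splitting $S$ is an element of the deformation space over which $\Uun_H$ is the JSJ; by the defining universal property of the JSJ tree, $S$ is obtained from $\Uun_H$ by collapsing, and since collapsing $\Uun_H$ down to a single edge factors through folding the refinement at a single flexible vertex, the edge of $S$ must come from blowing up some $v$, which by the rigidity/flexibility dichotomy of JSJ vertices must lie in $V^1$ (a rigid or cylinder vertex admits no non-trivial invariant refinement over the trivial group).

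The main obstacle I anticipate is \emph{non-triviality and canonicity simultaneously}: making sure that the tree of cylinders construction genuinely outputs a non-trivial tree precisely when $H$ is infinite and preserves a non-trivial free splitting, rather than collapsing to a point. This requires a finiteness/accessibility input — some form of relative accessibility of $F_N$ for the family of $H$-invariant free splittings, guaranteeing the JSJ deformation space is non-degenerate — together with the observation that an infinite $H$ cannot have its invariant-free-splitting deformation space be a single point unless that point is already non-trivial. A secondary delicate point is verifying full $\Out(F_N)$-equivariance through the tree-of-cylinders operation: one must check that the cylinder equivalence relation used is intrinsic (depends only on the $F_N$-tree, not on auxiliary choices), so that an outer automorphism carrying the data for $H$ to the data for $gHg^{-1}$ carries $\Uun_H$ to $\Uun_{gHg^{-1}}$; this is where I would lean most heavily on the canonicity statements in Theorem~\ref{theo:tree-of-cyl} and the Guirardel--Levitt machinery, rather than reprove it.
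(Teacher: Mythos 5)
There is a genuine gap, and it is exactly at the point you flag as "the main obstacle": your construction runs the tree-of-cylinders machine on the deformation space of $H$-invariant free splittings viewed as $F_N$-trees, with a "commensurability-type relation" on the (trivial) edge groups. For trees with trivial edge stabilizers no admissible equivalence relation produces anything non-degenerate -- this is precisely why deformation spaces of free splittings have no canonical representative in general, and the paper points this out as the surprising feature of the statement. The idea you are missing is the passage to $\tilde H$, the full preimage of $H$ in $\Aut(F_N)$: an $H$-invariant free splitting is viewed as an $\tilde H$-tree, its edges then have non-trivial $\tilde H$-stabilizers (lifts of $H$), and the cylinders are defined by \emph{equality} of $\tilde H$-edge-stabilizers. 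The admissibility of this relation rests on a specific lemma (containment of $\tilde H$-edge-stabilizers forces equality), and the whole structure of $\Uun_H$ -- the product decomposition $\tilde H_v = G_v\times\tilde H_Y$ with $G_v=\Fix_{F_N}(\tilde H_Y)$ at $V^1$-vertices, the non-triviality statement ($\Uun_H$ a point forces $\FS^H=\es$ or $H=\{1\}$), and the reading-off of all invariant free splittings -- comes from this. Your plan cannot be repaired while staying at the level of $F_N$-trees: the paper's examples show that $\Uun_H$ genuinely depends on $H$ and not only on the collection of $H$-invariant free splittings, so no construction taking only the deformation space of $F_N$-trees as input can produce it. Before the cylinders step, the existence of a maximum $H$-invariant free splitting (rather than an appeal to a "relative JSJ") also requires an argument: one shows that $\tilde H$-stabilizers of edges of one invariant splitting are elliptic in any other, which is what allows the refinements to stay free splittings.

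Your converse direction is also incorrect as stated. An $H$-invariant one-edge free splitting $S$ is in general \emph{not} a collapse of $\Uun_H$: the edge stabilizers of $\Uun_H$ are typically non-trivial (even non-abelian), so no collapse of it is a free splitting. The correct statement is that $S$ is obtained by blowing up a $V^1$-vertex of $\Uun_H$ (possibly using a non-minimal splitting of $G_v$ relative to incident edge groups) and then collapsing the edges coming from $\Uun_H$; the proof uses the compatibility of the tree of cylinders with every tree dominated by the deformation space, together with the product structure of $V^1$-vertex stabilizers to see that any such blow-up is $H$-invariant. The bipartition is likewise not a rigid/flexible JSJ dichotomy: $V^1$ consists of the cylinder vertices (maximal families of edges with equal $\tilde H$-stabilizer) and $V^0$ of the points lying in at least two cylinders, so the "flexibility" you invoke is not the mechanism at work here.
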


Our proof of Theorem~\ref{theo:intro-tree-of-cyl} uses techniques coming from JSJ theory to construct $\Uun_H$ from the collection of all $H$-invariant free splittings. 
It might be surprising that the result of the construction is a canonical splitting 
as opposed to a deformation space, as is usually the case when working with free splittings.
What rigidifies the situation here is the existence of the infinite group $H$ that preserves all these free splittings,
and  $\Uun_H$ is constructed using the action of $H$.
More precisely, denoting by $\tilde{H}$ the preimage of $H$ in $\Aut(F_N)$, we view $H$-invariant free splittings of $F_N$ as splittings of $\tilde{H}$, and construct $\Uun_H$ as a tree of cylinders for a deformation space of $\tilde{H}$-trees made of maximal $H$-invariant free splittings. 

The above theorem yields in particular a description of the stabilizer of a collection of free splittings of $F_N$ (see Proposition~\ref{prop:stab-u1}), and leads to the following chain condition for stabilizers of collections of free splittings.

\begin{theo}[see Proposition~\ref{prop:chain-FS2}]
  There is a bound, only depending on the rank $N$ of the free group, on the length of a chain $\calc_1\subseteq\dots \subseteq \calc_k$ of collections of free splittings of $F_N$ such that for all $i\in\{1,\dots,k-1\}$, we have $$\bigcap_{S\in \calc_{i+1}}\Stab(S)\subsetneq\bigcap_{S\in \calc_{i}}\Stab(S),$$ where $\Stab(S)$ denotes the $\Out(F_N)$-stabilizer of $S$.
\end{theo}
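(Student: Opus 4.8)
The plan is to reduce the chain condition to the canonical-splitting machinery of Theorem~\ref{theo:intro-tree-of-cyl} and then to a bound on chains of subtrees/deformation spaces controlled by the rank. First I would observe that for a collection $\calc$ of free splittings, the subgroup $H_\calc := \bigcap_{S\in\calc}\Stab(S)$ is a subgroup of $\Out(F_N)$; passing to the finite-index subgroup $\IA$ changes the chain length only by a bounded amount (intersecting each $H_{\calc_i}$ with $\IA$ and using that $\IA$ has bounded index), so it suffices to bound chains where each $H_{\calc_i}$ is replaced by $H_{\calc_i}\cap\IA\subset\IA$. The key point is that a \emph{strict} drop $H_{\calc_{i+1}}\subsetneq H_{\calc_i}$ along the chain forces a genuine change in the associated canonical object.

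The central step is to attach to each $\calc_i$ the canonical splitting $\Uun_{H_{\calc_i}}$ furnished by Theorem~\ref{theo:intro-tree-of-cyl} (together with its bipartition $V^0\dunion V^1$). By the equivariance and canonicity of $H\mapsto\Uun_H$, and since the map $H\mapsto\Uun_H$ records exactly the $H$-invariant one-edge free splittings via blow-ups of $V^1$-vertices, the assignment $\calc_i\mapsto \Uun_{H_{\calc_i}}$ is monotone in the appropriate sense: enlarging $\calc$ (equivalently, shrinking $H_\calc$) can only refine the canonical splitting or enlarge the set of invariant free splittings it encodes. I would make this precise by showing that if $H_{\calc_{i+1}}\subsetneq H_{\calc_i}$ then either the deformation space (equivalently, the number of orbits of edges, or the set of invariant one-edge free splittings) of $\Uun_{H_{\calc_{i+1}}}$ is strictly richer than that of $\Uun_{H_{\calc_i}}$, or else these canonical splittings agree but then one invokes a further invariant — here I expect to use the stabilizer description of Proposition~\ref{prop:stab-u1}, which identifies $\Stab(\Uun_H)$ (and relative twist subgroups) with group-theoretic data that cannot shrink strictly infinitely often in a free group of fixed rank.

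The final step is the combinatorial bound. Free splittings of $F_N$ have at most $3N-3$ edge-orbits (a Grushko-type bound on the complexity of $F_N$-trees with trivial edge stabilizers), so any strictly increasing chain of deformation spaces of free splittings has length bounded by a function of $N$; similarly the lattice of free factor systems of $F_N$ has bounded height. Feeding these bounds through the monotone assignment $\calc_i\mapsto\Uun_{H_{\calc_i}}$ yields the desired uniform bound on $k$. I expect the main obstacle to be the second alternative in the central step: ruling out long chains along which the canonical splitting $\Uun_H$ is \emph{constant} while $H_\calc$ keeps strictly decreasing. Handling this requires extracting a secondary monotone invariant from $\Uun_H$ — most naturally the system of vertex groups together with the induced relative free factor systems and twist subgroups at the $V^0$-vertices — and verifying that a strict decrease of $H_\calc$ forces a strict change there; this is where one must use that $\IA$ is torsion-free and that periodic invariant splittings of subgroups of $\IA$ are genuinely invariant, so that no finite-order ambiguity reintroduces slack into the chain.
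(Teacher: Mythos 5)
Your skeleton is aligned with the paper's: reduce to $\IA$ (bounded index), attach the canonical splitting $\Uun_{H}$ to $H=\Gamma_{\calc}$, note that $\FS^{H}$ — hence $H$ itself, since $H=\Gamma_{\FS^{H}}$ for an elementwise stabilizer — can be recovered from $\Uun_{H}$, and then bound how many values $\Uun_{H_{\calc_i}}$ can take along the chain. But the step you defer to a ``secondary monotone invariant'' is exactly where the content lies, and your proposed mechanism does not work. First, the alternative you single out as the main obstacle ($\Uun_{H}$ constant while $H_{\calc}$ strictly decreases) cannot occur at all: by Theorem~\ref{theo:tree-of-cyl}(3) the collection $\FS^{H}$ is read off from $\Uun_{H}$ together with its bipartition, so equal canonical splittings force equal elementwise stabilizers; no appeal to Proposition~\ref{prop:stab-u1} or to twist subgroups is needed there, and in any case ``a subgroup of $\Out(F_N)$ cannot shrink strictly infinitely often'' is not a valid principle — there is no chain condition on subgroups of $\Out(F_N)$. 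Second, your monotonicity claim (``enlarging $\calc$ can only refine the canonical splitting'') is false: as the examples in Section~\ref{sec:free-cylinder} show, $\Uun_{H}$ is not determined by $\FS^{H}$, and shrinking $H$ makes cylinders grow, possibly moving $\Uun_{H}$ to a different deformation space rather than refining it.

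The genuinely hard case, which your combinatorial bounds do not touch, is the following: after boundedly many steps the deformation space of maximal invariant free splittings stabilizes (this part of your plan, via bounded domination chains and Proposition~\ref{prop:deformation}, is fine), but within one fixed maximal splitting $S$ the trees of cylinders $\Uun_{H_i}$ keep changing because the cylinders of $S$ — the sets of edges with equal $\tilde H_i$-stabilizer — change with $i$. Neither the bound $3N-3$ on edge orbits nor the bounded height of the lattice of free factor systems controls this, since the $V^1$-vertex groups of $\Uun_{H}$ are not free factors in general. What makes the argument work in the paper is the identification of the (global) cylinder stabilizer with an auto-fixed subgroup, $G_{(Y)}=\Fix_{F_N}(\tilde H_Y)$ (Lemma~\ref{lemma:stab-cylinder}), combined with the Martino--Ventura bounded chain condition on fixed subgroups of families of automorphisms \cite{MV}: along the chain the groups $\Fix_{F_N}(\tilde H^i_e)$ form a monotone chain of fixed subgroups, hence take boundedly many values, hence so do the cylinders of $S$ and the trees $\Uun_{H_i}$ (Proposition~\ref{prop:chain-FS}). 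Without this input your proposal has no way to bound the number of values of $\Uun_{H_{\calc_i}}$, so there is a genuine gap.
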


\subsubsection{Canonical splittings from collections of $\Zmax$-splittings}

 In the case where $H$ does not preserve any non-trivial free splitting, the splitting $\Uun_H$ constructed above is just the trivial splitting. But in this case, one may use the collection of invariant $\Zmax$-splittings, if any, to construct a  more interesting canonical splitting.

\begin{theo}[see Theorem~\ref{theo:JSJ_zmax}]\label{theo:intro-canonical-cyclic}
Let $N\ge 2$. To any subgroup $H\subset \IA$ that preserves a non-trivial $\Zmax$-splitting but does not preserve any non-trivial free splitting,
one can canonically assign a non-trivial $\Zmax$-splitting $\UZ_H$.
The assignment $H\mapsto\UZ_H$ is $\Out(F_N)$-equivariant.
\end{theo}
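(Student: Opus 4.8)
The plan is to mimic the construction of $\Uun_H$ from Theorem~\ref{theo:intro-tree-of-cyl}, replacing free splittings by $\Zmax$-splittings and the trivial group by maximal cyclic subgroups of $F_N$. Write $\tilde H$ for the preimage of $H$ under $\Aut(F_N)\onto\Out(F_N)$, so that $1\to F_N\to\tilde H\to H\to 1$. Since $H\subset\IA$, any $H$-periodic object is already $H$-invariant, so ``$H$-invariant $\Zmax$-splitting'' is the right notion. First I would show that any non-trivial $H$-invariant $\Zmax$-splitting $S$ of $F_N$ lifts: because $H$ preserves $S$ and inner automorphisms act through $F_N$, the $F_N$-action on $S$ extends to a $\tilde H$-action, and since $F_N\normal\tilde H$ the $F_N$-minimal subtree is $\tilde H$-invariant, hence all of $S$. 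The stabilizer in $\tilde H$ of an edge $e$ of $S$ fits in $1\to\langle c\rangle\to\Stab_{\tilde H}(e)\to\Stab_H(\bar e)\to 1$ with $\langle c\rangle=\Stab_{F_N}(e)$ maximal cyclic in $F_N$; let $\Z$ be the family of subgroups $A\le\tilde H$ with $A\cap F_N$ maximal cyclic in $F_N$. Conversely a $\tilde H$-tree over $\Z$ in which $F_N$ acts minimally restricts to an $H$-invariant cyclic splitting of $F_N$, which after maximalizing its edge groups becomes an $H$-invariant $\Zmax$-splitting.

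Next I would assemble the lifts of all $H$-invariant $\Zmax$-splittings into a single canonical deformation space $\cald_H$ of $\tilde H$-trees over $\Z$. The natural candidate is the JSJ deformation space of $\tilde H$ over $\Z$ (relative to $F_N$) in the sense of Guirardel--Levitt: it is universally elliptic, canonical, and $\Aut(\tilde H)$-equivariant. The hypothesis that $H$ preserves no non-trivial free splitting of $F_N$ is used here to guarantee that $\tilde H$ has no non-trivial free splitting in which $F_N$ acts minimally (such a splitting would, by normality and minimality of $F_N$, restrict to an $H$-invariant free splitting of $F_N$), which is what lets the accessibility/finiteness arguments run relative to $F_N$ even though $H$ itself need not be finitely generated. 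One then checks that every lift of an $H$-invariant $\Zmax$-splitting is dominated by, and compatible with, this JSJ, so that $\cald_H$ is well defined and depends only on $H$.

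I would then apply the tree of cylinders construction to $\cald_H$, with respect to the commensurability-type relation that declares $A\sim A'$ when $A\cap F_N$ and $A'\cap F_N$ are commensurable in $F_N$; one verifies this relation is admissible in the sense of Guirardel--Levitt, using that maximal cyclic subgroups of $F_N$ are self-commensurating. The resulting tree $T_c$ depends only on the deformation space $\cald_H$, hence only on $H$, and its edge stabilizers are the $\tilde H$-stabilizers of the corresponding axes, whose intersection with $F_N$ is exactly a maximal cyclic subgroup. Restricting the $\tilde H$-action on $T_c$ to $F_N$ and discarding inessential edges yields a splitting $\UZ_H$ of $F_N$ all of whose edge stabilizers are maximal cyclic, i.e.\ a $\Zmax$-splitting. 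Non-triviality follows because the standing hypothesis provides at least one $H$-invariant $\Zmax$-splitting, whose lift makes $\cald_H$ non-trivial, while the absence of $H$-invariant free splittings prevents $T_c$ and its $F_N$-restriction from collapsing to a point. Equivariance is formal: conjugation by a lift $\tilde\Phi\in\Aut(F_N)$ of $\Phi\in\Out(F_N)$ carries $\tilde H$, the family $\Z$, and the set of lifts of $H$-invariant $\Zmax$-splittings to the corresponding data for $\Phi H\Phi^{-1}$, so $\cald_{\Phi H\Phi^{-1}}=\tilde\Phi\cdot\cald_H$ and $\UZ_{\Phi H\Phi^{-1}}=\Phi\cdot\UZ_H$.

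The main obstacle is the second step: proving that a single canonical deformation space of $\tilde H$-trees genuinely captures the whole collection of $H$-invariant $\Zmax$-splittings, which requires a relative accessibility statement for $\tilde H$ over $\Z$ and a careful comparison of deformation spaces (the $\tilde H$-edge groups are not themselves cyclic but only cyclic-by-(subgroup of $H$), so the usual small-group accessibility machinery must be adapted, working relative to $F_N$). Secondary difficulties are verifying admissibility of the commensurability relation on these non-cyclic edge groups, and checking the ``maximalization'' step that passes between cyclic and $\Zmax$-splittings of $F_N$ so that the $F_N$-restriction of the tree of cylinders really has maximal cyclic edge stabilizers.
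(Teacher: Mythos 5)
Your general frame (view $H$-invariant $\Zmax$-splittings as $\tilde H$-trees, build a canonical JSJ-type object, take its tree of cylinders, restrict to $F_N$) is in the right spirit, but the proposal has a genuine gap exactly at the point you dismiss in one sentence: non-triviality. The existence of one non-trivial $H$-invariant $\Zmax$-splitting does \emph{not} make the JSJ deformation space non-trivial, because the JSJ consists of maximal \emph{universally elliptic} trees, and it can perfectly well happen that no non-trivial invariant splitting is universally elliptic (this is the classical surface phenomenon: a surface group has many $\Zmax$-splittings, all pairwise ``crossing'', and its cyclic JSJ decomposition is a point). Nothing in ``$H$ preserves no non-trivial free splitting'' formally prevents $T_c$ from being a point; what rules it out in the paper is the hard analysis of flexible vertices (Theorem~\ref{thm_JSJ} and Proposition~\ref{prop:QH}): if the JSJ were trivial, $F_N$ itself would be a flexible vertex, the Fujiwara--Papasoglu core/regular-neighborhood machinery shows the pair is QH with sockets, and then one shows the group acting --- here $H\subseteq\IA$ --- must act trivially on the underlying surface group, contradicting the fact that $H$ is infinite. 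This QH obstruction and the finiteness/triviality of the induced action are the heart of the proof and are entirely absent from your argument; without them the construction could output the trivial splitting.

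A secondary problem is structural: you propose to run JSJ theory for the group $\tilde H$ over the family $\mathcal{Z}$ of cyclic-by-(subgroup of $H$) edge groups, and you yourself flag that accessibility is unclear there ($\tilde H$ need not be finitely presented, the edge groups are not small, and the relative machinery would need adapting). The paper avoids this entirely: accessibility comes for free from the bound on the number of edge orbits of a $\Zmax$-splitting of $F_N$ (Lemma~\ref{lem_bound_zmax}), so one works with $F_N$-trees throughout, and $\tilde H$ enters only to make refinements equivariant (Lemma~\ref{lemma:edge-stabilizers-cyclic} and Corollary~\ref{coro:blowup_zmax}); the canonical tree is then characterized by being $\ZmaxS^H$-universally elliptic, maximal for domination among such, and equal to its own tree of cylinders, which gives uniqueness and $\Out(F_N)$-equivariance directly. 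So even setting aside non-triviality, your second step replaces an easy accessibility statement by a genuinely open-ended one; I would restructure along the paper's lines and then supply the flexible-vertex/QH analysis, which is where the real work lies.
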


The splitting $\UZ_H$ is obtained as a JSJ splitting of all $H$-invariant $\Zmax$-splittings of $F_N$: it is maximal for domination among all universally elliptic $H$-invariant $\Zmax$-splittings, and equal to its own tree of cylinders.
Although surface groups are examples of hyperbolic groups having non-trivial $\Zmax$-splittings but trivial JSJ decomposition,
this possibility of a trivial JSJ decomposition is ruled out here because the infinite group $H$ has to act trivially
on surfaces of the JSJ decomposition (see Theorem~\ref{thm_JSJ}).

A collection of $\Zmax$-splittings is \emph{$\FS$-averse} if its elementwise stabilizer in $\IA$ does not fix any non-trivial free splitting of $F_N$. As above, we obtain a chain condition as a consequence of Theorem~\ref{theo:intro-canonical-cyclic}.

\begin{theo}[see Proposition~\ref{prop:chain-zmax2}]
There is a bound, only depending on the rank $N$ of the free group, on the length of a chain $\calc_0\subseteq\dots \subseteq \calc_k$ of $\FS$-averse collections of $\Zmax$-splittings of $F_N$ such that for all $i\in\{0,\dots,k-1\}$, one has $$\bigcap_{S\in \calc_{i+1}}\Stab(S)\subsetneq\bigcap_{S\in \calc_{i}}\Stab(S).$$
\end{theo}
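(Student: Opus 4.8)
The plan is to bound the length of the chain by passing to the torsion‑free subgroup $\IA$, attaching to each step of the chain the canonical $\Zmax$‑splitting furnished by Theorem~\ref{theo:intro-canonical-cyclic}, and running an induction on the rank $N$. Set $H_i:=\bigcap_{S\in\calc_i}\Stab(S)$, so that we must bound a strictly decreasing chain $H_0\supsetneq\dots\supsetneq H_k$ of subgroups of $\Out(F_N)$. First I would replace $H_i$ by $G_i:=H_i\cap\IA$; this is a decreasing chain of subgroups of $\IA$, the $\FS$‑aversity of $\calc_i$ says exactly that $G_i$ fixes no non‑trivial free splitting, and $G_i$ fixes the (non‑trivial) $\Zmax$‑splittings in $\calc_i$. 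If $G_i=G_{i+1}$, then the images of $H_i$ and $H_{i+1}$ in the finite group $\Out(F_N)/\IA$ are distinct: indeed $H_i\cap\IA=H_{i+1}\cap\IA$ together with $H_i\IA=H_{i+1}\IA$ forces $H_i=H_{i+1}$. Since $G_i$ is decreasing, the set of indices with a given value of $G_i$ is an interval, on which $H_i$ strictly decreases with strictly decreasing image in the finite group $\Out(F_N)/\IA$; hence each such interval has length at most a constant depending only on $N$. Discarding the at most one index with $\calc_i=\es$, we are reduced to bounding the length of a strictly decreasing chain $G_0\supsetneq\dots\supsetneq G_r$ of subgroups of $\IA$, each of which fixes a non‑trivial $\Zmax$‑splitting but no non‑trivial free splitting.

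Next, apply Theorem~\ref{theo:intro-canonical-cyclic} to each $G_j$: this gives a canonical non‑trivial $\Zmax$‑splitting $U_j:=\UZ_{G_j}$, and by $\Out(F_N)$‑equivariance of the assignment together with normality of $\IA$, each $U_j$ is $G_j$‑invariant, hence a fortiori $G_{j+1}$‑invariant. The key comparison step is then to show that $U_{j+1}$ refines $U_j$ — or at least that $U_j$ and $U_{j+1}$ are compatible and that their JSJ deformation spaces are nested — by playing the JSJ maximality of $U_{j+1}$ (maximal for domination among universally elliptic $G_{j+1}$‑invariant $\Zmax$‑splittings) against the fact that $U_j$ is itself a $G_{j+1}$‑invariant $\Zmax$‑splitting and against the tree‑of‑cylinders normalization that both trees satisfy. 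Granting this, since every reduced $\Zmax$‑splitting of $F_N$ has a number of orbits of edges bounded in terms of $N$ (accessibility for cyclic splittings of a free group), a proper refinement strictly increases this count; so the chain $U_0,\dots,U_r$ can change only a bounded number of times, and up to a further bounded error we may assume $U_j=U$ is a fixed tree along the entire chain.

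It remains to treat the case $U_j=U$ constant, where all $G_j$ lie in $\Stab(U)\cap\IA$. Here I would use the structure of the stabilizer of a graph‑of‑groups decomposition: there is a homomorphism from $\Stab(U)\cap\IA$ to the product, over the vertices $v$ of $U/F_N$, of the group of outer automorphisms of the vertex group $G_v$ preserving the peripheral structure coming from incident edges, with kernel the finitely generated abelian group $T_U$ of Dehn twists along the edges of $U$. As $U$ is non‑trivial, each $G_v$ has strictly smaller complexity than $F_N$, so a relative version of the statement (for free groups equipped with a free factor system) applied inductively bounds the number of strict decreases of the images of the $G_j$ in each factor. For the twist part $G_j\cap T_U$, one uses that $G_j$ is the elementwise $\IA$‑stabilizer of a collection of $\Zmax$‑splittings: for a $\Zmax$‑splitting $S$, the subgroup $T_U\cap\Stab(S)$ is a coordinate sublattice of $T_U$ depending only combinatorially on $(U,S)$ — this is where it matters that the edge groups of $U$ are \emph{maximal} cyclic, so a twist by a power of an edge generator preserves $S$ if and only if the twist by the generator itself does. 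Intersections of coordinate sublattices are coordinate sublattices, and there are at most $2^{|E(U/F_N)|}$ of them with $|E(U/F_N)|$ bounded in terms of $N$; hence $G_j\cap T_U$ can strictly decrease only boundedly often. Combining the bounds for the vertex factors, for the twist factor, and for the number of distinct intermediate images bounds $r$, and tracing back through the first paragraph bounds $k$.

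I expect the main obstacle to be the comparison step of the second paragraph. JSJ decompositions are not monotone under passage to subgroups in general, and the groups $G_j$ are not normal in one another, so there is no formal reason for the canonical splittings to be nested; one must genuinely exploit that $U_j$ is itself an invariant $\Zmax$‑splitting together with the rigidity forced by $\FS$‑aversity and the tree‑of‑cylinders normalization, and in particular establish a universal‑ellipticity statement for $U_j$ relative to the family of $G_{j+1}$‑invariant $\Zmax$‑splittings (or an adequate substitute). The secondary difficulty is organizing the "$U$ constant" case: the inductive statement must be phrased relatively so that it applies to the vertex groups of $U$, and the Dehn‑twist directions — which by themselves satisfy no chain condition — must be pinned down combinatorially using maximality of the edge groups.
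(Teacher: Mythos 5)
Your overall starting point matches the paper's: reduce to elementwise stabilizers in $\IA$ (your finite-quotient argument for passing between $\Out(F_N)$- and $\IA$-stabilizers is fine and is essentially the paper's remark after Proposition~\ref{prop:chain-zmax2}), and then compare the canonical splittings $\UZ_{G_j}$ along the chain. But the step you yourself flag as ``the main obstacle'' is precisely the heart of the paper's proof, and you do not supply it: Proposition~\ref{prop:carac_zmax} shows that $\UZ_H$ is compatible with \emph{every} $H$-invariant $\Zmax$-splitting, and moreover that the whole set $\ZmaxS^H$ is read off from $\UZ_H$ together with its set of flexible vertices (any invariant splitting is a blow-up at flexible and cyclic vertices, and conversely any such blow-up is invariant under anything preserving $\UZ_H$ and acting trivially on flexible vertex groups). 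Proving this requires the structural Theorem~\ref{thm_JSJ} (flexible vertices are QH with sockets and $H$ acts trivially on them), not just the JSJ maximality and tree-of-cylinders normalization you quote from the introduction. With this in hand the paper's argument (Proposition~\ref{prop:BCC_zmax}) is short: all the $\UZ_{G_j}$ are pairwise compatible, hence admit a common refinement, so by the bound on edges of $\Zmax$-splittings they take boundedly many values; and if two indices give the same tree \emph{with the same flexible vertex set}, then $\ZmaxS^{G_i}=\ZmaxS^{G_j}$, whence $G_i=\Gamma_{\ZmaxS^{G_i}}=\Gamma_{\ZmaxS^{G_j}}=G_j$, a contradiction. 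No monotonicity or ``constant-$U$ interval'' analysis is needed — just a pigeonhole on the pair (tree, flexible set).

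Your proposed endgame for the constant-$U$ case is both unnecessary (it is subsumed by the flexible-vertex statement above) and genuinely incomplete. First, your intermediate reduction — bounding arbitrary strictly decreasing chains of subgroups of $\IA$ that fix some non-trivial $\Zmax$-splitting and no free splitting — is false as stated: such chains can be infinite (take smaller and smaller subgroups of a fixed stabilizer); one must retain throughout that each $G_j$ is the \emph{full} elementwise stabilizer of a collection, i.e.\ work with the sets $\ZmaxS^{G_j}$ as the paper does. Second, the induction on rank over vertex groups requires a relative version of the whole statement (for free groups with a peripheral structure) that is neither formulated nor proved, and the images of the $G_j$ in $\Out(G_v,\Inc_v)$ need not be elementwise stabilizers of collections of relative $\Zmax$-splittings, nor FS-averse relative to $\Inc_v$, so the inductive hypothesis would not apply to them. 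Third, your claim that $T_U\cap\Stab(S)$ is a ``coordinate sublattice'' is only argued for single edge twists (via maximality of edge groups); you give no reason why a product of twists about several edges preserving $S$ forces each individual twist to preserve $S$, and nothing like this is established in the paper. So the proposal identifies the right canonical object but leaves the decisive compatibility/characterization step unproven and replaces the paper's simple conclusion with an argument that has unjustified components.
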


\subsubsection{The dynamical decomposition}\label{sec_intro_dd}

Our third construction of a canonical splitting, carried out in Section~\ref{sec:collection-factors}, is of a different, more dynamical, flavor. It encodes the interaction between the maximal invariant free factor systems of a subgroup $H\subseteq\IA$ that does not fix any  non-trivial free splitting.
It is remarkable that from free factor systems, one obtains a canonical splitting, which is a much
more rigid object than a free factor system.

 To every subgroup $H$ of the mapping class group of a closed, connected, orientable, hyperbolic surface $\Sigma$, Ivanov associated a canonical subsurface decomposition which singles out \emph{active} parts of $\Sigma$ (where $H$ acts with at least one pseudo-Anosov homeomorphisms) and \emph{inactive} parts (where, up to a finite-index subgroup, $H$ acts trivially). By analogy, our \emph{dynamical decomposition} for subgroups $H\subseteq\IA$ extracts some \emph{active parts} of the free group -- although we cannot expect $H$ to act trivially on the complement.
 
A set $\calf$ of 
conjugacy classes of non-trivial proper subgroups of $F_N$ is a \emph{free factor system} if there exist representatives $P_1,\dots,P_k$ of the conjugacy classes such that $\calf=\{[P_1],\dots,[P_k]\}$
and $F_N$ decomposes as a free product $F_N=P_1*\dots *P_k *F_r$ for some free subgroup $F_r$ of $F_N$ (with $0\leq r\leq N$). 
We order free factor systems by saying that $\calf\preceq\calf'$ if every group in $\calf$ is conjugate into a group of $\calf'$.
Since chains of free factor systems have bounded length,
any subgroup $H\subseteq\IA$ has at least one maximal $H$-invariant free factor system $\calf$ (we allow $\calf=\es$).

We now assume that $H$ does not preserve any non-trivial free splitting.
By \cite{HM2,GH}, for any maximal $H$-invariant free factor system $\calf$,
the group $H$ contains a fully irreducible outer automorphism relative to $\calf$. 
However, there may exist 
$H$-invariant conjugacy classes of free factors which are not $\calf$-peripheral\footnote{For experts, the maximality of $\calf$ only says that there is no proper free factor \emph{relative to $\calf$} whose conjugacy class is $H$-invariant.}
(i.e.\ not conjugate into any of the subgroups $P_1,\dots, P_k$); these potential extra invariant free factors are the basis of our construction.

\begin{figure}[ht!]
  \centering
  \includegraphics[width= .9\textwidth]{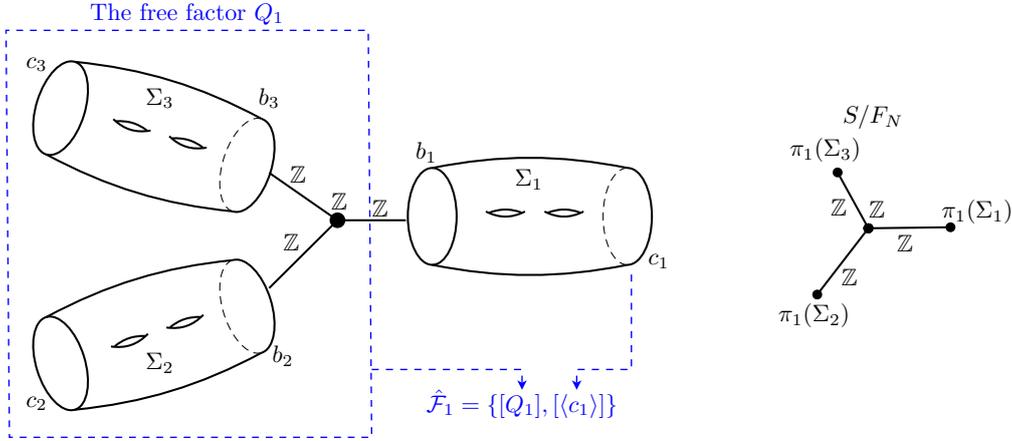}
  \caption{The dynamical decomposition of the group $H\subset\Out(F_N)$ induced by homeomorphisms of the three surfaces
     in  Example \ref{example_intro}.
The conjugacy class of the free factor $Q_1$ is $H$-invariant, and 
    $\HF_1=\{[Q_1],[\grp{c_1}]\}$ is a maximal $H$-invariant \afs.}\label{fig_dyn_intro}
\end{figure}

\begin{ex}[see Example \ref{ex:3-surfaces}]\label{example_intro}
The following example might be helpful (see Figure~\ref{fig_dyn_intro}).
We identify $F_N$ with the fundamental group of the space obtained by gluing 3 surfaces on a circle by homeomorphism along one of their boundary components, as shown in Figure~\ref{fig_dyn_intro}. 
We take for $H$ the group of outer automorphisms induced by homeomorphisms of the surfaces that are the identity on the boundary.
Let $Q_i$ be the fundamental group of the complement of $\Sigma_i$. Then $\calf_i=\{[Q_i]\}$ 
is a maximal $H$-invariant free factor system, and $Q_2$ is a free factor 
whose conjugacy class is $H$-invariant and which is not $\calf_1$-peripheral.
\end{ex}

For technical reasons, we need to work with $H$-invariant \emph{almost free factor systems}, usually denoted by $\HF$, which include extra situations involving unused boundary components of surfaces, see Definition~\ref{de:afs}. 
For example, in Example \ref{example_intro}, $\hat\calf_i=\{[Q_i],[ c_i]\}$ is an \afs{} (in fact a maximal $H$-invariant \afs).
In the case where $H$ has no invariant conjugacy class, every $H$-invariant almost free factor system is a free factor system, so one may ignore these subtleties at first.

If there is a unique maximal $H$-invariant almost free factor system, we say that $H$ is \emph{pure}. In this case, the dynamical decomposition of $H$ will be trivial. 
In Example~\ref{example_intro}, $H$ is not pure, but its restriction to each surface group is pure.

When there are several  maximal $H$-invariant \afs{}s, 
the dynamical decomposition produces a canonical splitting
having some vertex groups which are free factors in restriction to which $H$ is pure.
These vertices are called \emph{active}.
In the case of Example \ref{example_intro}, the dynamical decomposition is dual to the decomposition shown in
Figure \ref{fig_dyn_intro}, and the active vertices correspond to the three surfaces.

\begin{theo}[Dynamical decomposition, see Theorem~\ref{thm_dynamic_dec}]\label{theo:intro-dynamical-decomposition}
Let $N\ge 2$, and let $H\subseteq\IA$ be a subgroup which does not preserve any non-trivial free splitting. Then there exists a
canonical $H$-invariant splitting $\Ud_H$ coming with a bipartition of its vertex set $\Ud_H=V_p\dunion V_a$ such that 
\begin{enumerate}
\item for every $v\in V_p$, the stabilizer $G_v$ is peripheral in every maximal $H$-invariant \afs{} $\HF$, i.e.\ $G_v$ is conjugate into one of the factors in $\HF$;
\item for every $v\in V_a$, the stabilizer $G_v$ is a free factor of $F_N$ whose conjugacy class is $H$-invariant, and the restriction of $H$ to $G_v$ is pure.
\end{enumerate}
\end{theo}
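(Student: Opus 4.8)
The plan is to construct $\Ud_H$ as a canonical splitting encoding the set of all maximal $H$-invariant \afs{}s, via JSJ/tree-of-cylinders machinery applied relative to these factor systems. First, I would recall from the discussion above that chains of free factor systems have bounded length, so the set $\calm$ of maximal $H$-invariant \afs{}s is well-defined (and non-empty); if $|\calm|=1$ then $H$ is pure, the dynamical decomposition is trivial (a single active vertex carrying all of $F_N$), and there is nothing to prove. So assume $|\calm|\ge 2$. The key observation driving the construction is that if $\HF,\HF'\in\calm$ are distinct, then, since neither refines the other and $H$ preserves no non-trivial free splitting, the two factor systems must ``overlap'' in a controlled way, and one extracts from this overlap $H$-invariant conjugacy classes of free factors that are not peripheral in at least one $\HF\in\calm$. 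These non-peripheral invariant free factors (as in $Q_2$ of Example~\ref{example_intro}) are the raw material: I would collect the full poset of $H$-invariant conjugacy classes of free factors obtained this way and take minimal/maximal elements appropriately to get a finite canonical family of subgroups.

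Next I would build the splitting itself. Following the philosophy of Theorems~\ref{theo:intro-tree-of-cyl} and~\ref{theo:intro-canonical-cyclic} (and working inside $\tilde H\subset\Aut(F_N)$ so that $H$-invariant objects become genuine splittings of $\tilde H$), I would take the canonical family of $H$-invariant free factors constructed above and form a graph-of-groups decomposition of $F_N$ in which these free factors appear as vertex groups. One natural route: realize each maximal $H$-invariant \afs{} by a relative free splitting, take a common refinement in the sense of JSJ theory, and pass to the tree of cylinders (for the commensurability or ``co-elliptic'' relation among the relevant subgroups) to obtain a canonical, $\Out(F_N)$-equivariant object $\Ud_H$. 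The bipartition $V_p\dunion V_a$ would then be read off structurally: active vertices $V_a$ are those whose stabilizer is one of the canonical $H$-invariant free factors (the ones on which $H$ restricts to a pure subgroup, so that its own dynamical decomposition is trivial), and peripheral vertices $V_p$ are the complementary pieces, whose stabilizers are forced to be peripheral in every $\HF\in\calm$.

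The verification then splits into three tasks. (i) \emph{Canonicity and equivariance}: since every ingredient — the poset of $H$-invariant free factors, the relative free splittings, the JSJ refinement, the tree of cylinders — is defined purely from $H$ and natural under $\Out(F_N)$, the assignment $H\mapsto\Ud_H$ is automatically equivariant; this should be routine once the construction is pinned down. (ii) \emph{Property (1)}: for $v\in V_p$, I must show $G_v$ is conjugate into every maximal $H$-invariant \afs{}. This follows from the maximality of $\HF$ together with the fact that $G_v$ is complementary to the active free factors: anything not captured by an active piece cannot itself be a non-peripheral $H$-invariant free factor (else it would have been collected), and an ellipticity/relative-maximality argument upgrades ``not a non-peripheral invariant free factor'' to ``peripheral in each $\HF$''. (iii) \emph{Property (2)}: for $v\in V_a$, that $G_v$ is a free factor with $H$-invariant conjugacy class is by construction; purity of $H|_{G_v}$ means $G_v$ has a unique maximal $(H|_{G_v})$-invariant \afs{}, which I would prove by contradiction — a second such factor system inside $G_v$ would propagate, via the graph-of-groups structure, to a finer $H$-invariant factor system of $F_N$ strictly between the ones already recorded, contradicting how the canonical family was chosen.

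The main obstacle I expect is establishing Property (2), i.e.\ purity of the restriction to active vertices, together with the dual statement that the active free factors are genuinely ``independent'' (so that the decomposition does not further decompose). This is exactly the point where the analogy with Ivanov's canonical reduction system is most delicate: one must show the decomposition is both fine enough (every non-peripheral invariant free factor has been used) and coarse enough (active pieces are indecomposable), and reconciling these requires a careful finiteness/maximality argument about the lattice of $H$-invariant free factors — complicated further by the need to work with \emph{almost} free factor systems to handle unused boundary components as in Definition~\ref{de:afs}. Controlling how an invariant \afs{} of a vertex group interacts with the ambient splitting, and ruling out ``new'' invariant free factors crossing several vertices, is the technical heart of the argument.
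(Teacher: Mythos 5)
Your proposal contains a genuine gap at its core: the route you propose for actually building the splitting does not get off the ground. You suggest to ``realize each maximal $H$-invariant \afs{} by a relative free splitting, take a common refinement in the sense of JSJ theory, and pass to the tree of cylinders'' inside $\tilde H$. But the $\tilde H$-tree formalism of Theorems~\ref{theo:intro-tree-of-cyl} and~\ref{theo:intro-canonical-cyclic} applies only to splittings that are themselves $H$-invariant, and by hypothesis $H$ preserves no non-trivial free splitting; a Grushko splitting relative to a maximal $H$-invariant \afs{} is not $H$-invariant (only its deformation space is), and an \afs{} of QH type is not realized by any free splitting at all, since $F_N$ is one-ended relative to it. Moreover, distinct maximal $H$-invariant \afs{}s are \emph{filling} (Lemma~\ref{lemma:filling}): there is no \afs{} refining both, so the ``common refinement'' you want does not exist even at the level of factor systems. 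This is precisely the difficulty the paper flags when it says one obtains a canonical splitting from free factor systems, which are much softer objects than splittings.

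The missing idea, which is the heart of the paper's proof of Theorem~\ref{thm_dynamic_dec}, is dynamical: for each maximal $H$-invariant \afs{} $\HF$, one picks $\alpha\in H$ fully irreducible relative to an extracted free factor system and atoroidal relative to $\HF$ (Lemma~\ref{lemma:af}, via \cite{HM,GH}), and an $\alpha$-invariant arational $\bbR$-tree $T$ with dilatation $\neq 1$ (Proposition~\ref{prop:invariant-arational-tree}). For any $H$-invariant free factor $A$ not $\HF$-peripheral, the translates of the minimal subtree $T_A$ form a transverse covering of $T$ (Reynolds' argument plus mixing); intersecting two such coverings produces, via Lemma~\ref{lemma:intersection-factors}, a smaller non-peripheral invariant factor, which is what proves the \emph{uniqueness} of the minimal factor $\AHF$ — a statement your ``take minimal elements of the poset'' step silently assumes. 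The splitting $\THF$ is then the skeleton of this transverse covering, and only afterwards does a tree-of-cylinders construction (over the class of universally peripheral subgroups, with the admissibility relation of Lemma~\ref{lem_equiv}) glue the various $\THF$ into $\Ud_H$; purity of $H$ restricted to an active vertex group follows from the minimality of $\AHF$, not from a propagation argument. Without the arational-tree/transverse-covering mechanism you have no way to prove uniqueness of $\AHF$, no way to produce $\THF$, and hence no splitting to feed into your tree of cylinders, so the proposal as written cannot be completed.
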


Vertices in $V_a$ are called \emph{active}. More details about this decomposition are given in Section \ref{sec:collection-factors}.
 
Our proof involves considering, for each maximal $H$-invariant almost free factor system $\HF$, an element $\alpha\in H$ which is irreducible relative to $\HF$, and an $\alpha$-invariant $\bbR$-tree $T$.
If $A$ is any free factor whose conjugacy class is $H$-invariant, and which is not $\HF$-peripheral, 
then one gets a splitting of $F_N$ from a transverse covering of $T$ by the translates of the $A$-minimal subtree.
One then proves that there is a unique minimal such $H$-invariant conjugacy class of free factor $A_\HF$ which is not $\HF$-peripheral
and one can check that the associated splitting $T_\HF$ 
depends only on $\HF$ and $H$, not on the choice of $\alpha$ and $T$.
Then as $\HF$ varies among maximal $H$-invariant \afs{}s, the splittings $T_\HF$ are compatible
and yield the desired canonical dynamical decomposition.

In the end, the groups $A_{\HF}$ appear as the stabilizers of the  active vertices of $\Ud_H$. 
As a consequence, we obtain a bound on the number of maximal $H$-invariant free factor systems.

\begin{cor}[see Corollary \ref{coro:finitude-facteurs}]
There exists $K>0$, only depending on the rank of the free group, such that for every subgroup $H\subseteq\Out(F_N)$, assuming that no free splitting is invariant under a finite-index subgroup of $H$, then there are at most $K$ maximal free factor systems of $F_N$ that are invariant under a finite-index subgroup of $H$.
\end{cor}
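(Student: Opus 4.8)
The plan is to pass to the finite-index subgroup $H_0:=H\cap\IA$ and then read off the maximal invariant free factor systems from the (boundedly many) active vertices of the dynamical decomposition $\Ud_{H_0}$.

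First I would carry out the reduction to $\IA$. Since $\IA$ is normal of finite index in $\Out(F_N)$, $H_0:=H\cap\IA$ has finite index in $H$, and $H_0$ preserves no non-trivial free splitting of $F_N$ --- otherwise that splitting would be invariant under a finite-index subgroup of $H$, contrary to hypothesis. Next, a free factor system $\calf=\{[P_1],\dots,[P_k]\}$ invariant under some finite-index subgroup $H'\subseteq H$ is in fact $H_0$-invariant: the subgroup $H_0\cap H'$ has finite index in $H_0$ and acts on the finite set $\{[P_1],\dots,[P_k]\}$ through a finite group, so a further finite-index subgroup of $H_0$ fixes each $[P_i]$; thus each $[P_i]$ is periodic under $H_0$, hence --- being a conjugacy class of free factor with $H_0\subseteq\IA$ --- it is $H_0$-invariant (Section~\ref{sec_IA}, relying on \cite{HM2}). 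Conversely every $H_0$-invariant free factor system is invariant under the finite-index subgroup $H_0$ of $H$. Hence the two posets (free factor systems invariant under a finite-index subgroup of $H$; free factor systems invariant under $H_0$) coincide, and so do their sets of maximal elements. It therefore suffices to bound, uniformly over $H_0\subseteq\IA$ preserving no non-trivial free splitting, the number of maximal $H_0$-invariant free factor systems by a constant depending only on $N$.

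Then I would invoke the dynamical decomposition. Passing from free factor systems to almost free factor systems: every maximal $H_0$-invariant free factor system is the free-factor part of a maximal $H_0$-invariant \afs{} (Definition~\ref{de:afs}), and this correspondence between maximal objects is finite-to-one with a bound depending only on $N$ (the only freedom being the cyclic ``boundary'' components that may be adjoined). So it is enough to bound the number of maximal $H_0$-invariant \afs{}s. Apply Theorem~\ref{thm_dynamic_dec} to $H_0$: it produces the canonical splitting $\Ud_{H_0}$ with a bipartition $V_p\dunion V_a$ of its vertex set, and --- by the description of the construction --- the $F_N$-conjugacy classes of stabilizers of active vertices contain the subgroups $\AHF$, as $\HF$ ranges over the maximal $H_0$-invariant \afs{}s, with \emph{distinct} maximal \afs{}s giving \emph{distinct} active vertices (each $\AHF$ being characterized as the unique minimal non-$\HF$-peripheral $H_0$-invariant conjugacy class of free factor, with $H_0$ acting purely on $\AHF$). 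Thus the number of maximal $H_0$-invariant \afs{}s is at most the number of $F_N$-orbits of active vertices of $\Ud_{H_0}$. Finally, $\Ud_{H_0}$ is a minimal simplicial $F_N$-tree whose edge stabilizers are trivial or cyclic (they come from transverse coverings by minimal subtrees), and graph-of-groups decompositions of $F_N$ of this kind have uniformly bounded complexity: the number of vertex orbits is $\le K_0(N)$ for some $K_0(N)$, e.g.\ by accessibility, or because the active vertex groups are free factors and one applies a Grushko-type bound once degenerate low-valence vertices are ruled out. Combining these bounds produces the required constant $K$.

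Granting the dynamical decomposition (Theorem~\ref{thm_dynamic_dec}), the real content is the second step: checking that the active vertices of $\Ud_{H_0}$ \emph{faithfully} enumerate the maximal invariant (almost) free factor systems, i.e.\ the injectivity of $\HF\mapsto\AHF$, and controlling the passage between maximal free factor systems and maximal almost free factor systems. The complexity bound in the last step is routine but does require knowing the edge groups of $\Ud_{H_0}$.
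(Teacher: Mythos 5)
Your skeleton is the paper's: reduce to $H_0=H\cap\IA$ via Handel--Mosher (Theorem~\ref{theo:ia}, Lemma~\ref{lemma:ia-free-splitting}), apply the dynamical decomposition $\Ud_{H_0}$, use the bijection between maximal $H_0$-invariant \afs{}s and $F_N$-orbits of active vertices (Assertion~(4) of Theorem~\ref{thm_dynamic_dec}), and then pass from \afs{}s to extracted free factor systems with a multiplicative factor bounded in terms of $N$ (Lemma~\ref{lemma:af}). The reduction to $\IA$ and the \afs{}-to-free-factor-system step are fine.

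The gap is in your final complexity bound. Your parenthetical claim that the edge stabilizers of $\Ud_{H_0}$ are ``trivial or cyclic'' is wrong: they are non-trivial (a trivial edge stabilizer would yield an $H_0$-invariant free splitting), and the paper shows they are universally peripheral \emph{free factors} (Corollary~\ref{cor:UP}, Proposition~\ref{prop_additional}), which may well be non-abelian --- this is precisely why bi-nonsporadic splittings occur as dynamical decompositions (Remark~\ref{rk_binonsporadic}). Consequently linear accessibility over cyclic edge groups (as in Lemma~\ref{lem_bound_zmax}) does not apply, there is no general accessibility bound for graph-of-groups decompositions of $F_N$ over arbitrary finitely generated edge groups to call on, and a bare ``Grushko-type'' count does not apply to a splitting with non-trivial edge groups. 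What is actually needed (and what the paper does in the proof of Corollary~\ref{coro:finitude-facteurs}) is a bound on the number $K$ of active orbits only, obtained from the structural input you did not use: by Assertion~(2) of Theorem~\ref{thm_dynamic_dec}, at each active vertex $v$ the incident edge groups form a \emph{non-sporadic} free factor system of $G_v$, so (once $K\ge 2$) one can blow up each active vertex into a free splitting of $G_v$ relative to the incident edge groups with at least two orbits of edges and all vertex groups non-trivial; collapsing the edges coming from $\Ud_{H_0}$ then yields a free splitting of $F_N$ with at least $2K$ orbits of edges and all vertex groups non-trivial, whence $2K\le N-1$ by the elementary rank bound for free splittings. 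This blow-up-and-collapse argument, resting on the non-sporadicity of the incident edge systems at active vertices, is the missing idea; without it (or a genuine substitute), your appeal to accessibility does not close the proof.
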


The assumption that $H$ does not preserve any free splitting is crucial: one easily builds examples (including examples coming from punctured surfaces) of subgroups $H\subseteq\Out(F_N)$ with infinitely many $H$-invariant free splittings, giving infinitely many invariant conjugacy classes of maximal free factor systems (which are \emph{sporadic}, i.e.\ of the form $\calf=\{[P_1]\}$ with $F_N=P_1\ast\mathbb{Z}$ or $\calf=\{[P_1],[P_2]\}$ with $F_N=P_1\ast P_2$).

\subsubsection{Nice splittings}

Theorems \ref{theo:intro-tree-of-cyl} and \ref{theo:intro-canonical-cyclic} construct
from an $H$-invariant splitting, a canonical one, which is in particular invariant under the normalizer of $H$.
The output is no longer a cyclic splitting in general, and neither is the dynamical decomposition.
To iterate this kind of argument in a unified fashion, we use the following definition.

\begin{de}
A splitting $S$ of $F_N$ is \emph{nice} if either 
\begin{enumerate}
\item $S$ is a non-trivial free splitting, or 
\item $S$ is a non-trivial $\Zmax$-splitting, or
\item $S$ is \emph{bi-nonsporadic}, i.e.\ all edge stabilizers of $S$ are finitely generated and non-abelian, and $S$ contains two vertices in distinct $F_N$-orbits such that the Grushko decomposition of their stabilizer with respect to the collection of all incident edge stabilizers is non-sporadic.
\end{enumerate}
\end{de}

With this definition at hand one can deduce the following result from the three constructions above (strictly speaking, the canonical splittings described above are not always nice, but they are easily turned into nice splittings in a canonical way, see Corollary~\ref{cor:biflexible2nice}).

\begin{cor}[see Corollary \ref{cor_nice}]\label{corintro:crs}
 Let $N\ge 2$. If $H\subseteq\Out(F_N)$ is an infinite subgroup preserving a nice splitting, then its normalizer preserves a nice splitting.
\end{cor}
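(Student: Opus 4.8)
The plan is to deduce Corollary~\ref{corintro:crs} from the three canonical-splitting constructions of Theorems~\ref{theo:intro-tree-of-cyl}, \ref{theo:intro-canonical-cyclic} and \ref{theo:intro-dynamical-decomposition}, by organizing them into a single case analysis on the type of nice splitting preserved by $H$. First I would reduce to the case $H\subseteq\IA$: replace $H$ by $H_0:=H\cap\IA$, which has finite index in $H$; since $\IA$ is normal and torsion-free, the normalizer $N(H)$ normalizes $H_0$ as well, and the finite-index passage does not affect which nice splittings are preserved up to the periodicity/invariance dictionary recalled in Section~\ref{sec_IA}. Note also that $H_0$ is still infinite. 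So from now on $H\subseteq\IA$ is infinite and preserves a nice splitting $S$.

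Next I would split into two main cases according to whether $H$ preserves a non-trivial free splitting. \emph{Case 1: $H$ preserves a non-trivial free splitting.} Then Theorem~\ref{theo:intro-tree-of-cyl} applies and yields the canonical splitting $\Uun_H$, which is non-trivial precisely because $H$ is infinite and preserves a non-trivial free splitting. By the $\Out(F_N)$-equivariance of the assignment $H\mapsto\Uun_H$, for every $\gamma\in N(H)$ we have $\gamma\cdot\Uun_H=\Uun_{\gamma H\gamma\m}=\Uun_H$, so $N(H)$ preserves $\Uun_H$. \emph{Case 2: $H$ preserves no non-trivial free splitting.} Then the nice splitting $S$ it preserves is either a non-trivial $\Zmax$-splitting or bi-nonsporadic. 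If $H$ preserves a non-trivial $\Zmax$-splitting, Theorem~\ref{theo:intro-canonical-cyclic} produces the canonical $\Zmax$-splitting $\UZ_H$, again $\Out(F_N)$-equivariantly, so $N(H)$ preserves $\UZ_H$. Otherwise $H$ has no invariant free splitting and no invariant $\Zmax$-splitting; I would then invoke the dynamical decomposition $\Ud_H$ of Theorem~\ref{theo:intro-dynamical-decomposition}, which is canonical and $H$-invariant, hence $N(H)$-invariant by equivariance. The remaining point is that $\Ud_H$ must be non-trivial in this subcase: if it were trivial, then $H$ would be pure and its unique maximal $H$-invariant almost free factor system would have to account for the vertex group of the bi-nonsporadic splitting $S$, whose Grushko decomposition relative to incident edge groups is non-sporadic — and one checks (this is where the structure of nice splittings enters) that $S$ itself then provides, via its edge groups, either an invariant free factor system distinct from the maximal one or an invariant $\Zmax$- or free splitting, contradicting the hypotheses of this subcase.

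In each case we have exhibited a canonical non-trivial splitting of $F_N$ preserved by $N(H)$. The last step is to upgrade these canonical splittings to \emph{nice} splittings: $\Uun_H$ and $\UZ_H$ are by construction a free splitting and a $\Zmax$-splitting respectively (modulo collapsing orbits of edges, which one does canonically), while $\Ud_H$ need not be free, $\Zmax$, or bi-nonsporadic as it stands, but it is turned into a nice splitting canonically by the procedure of Corollary~\ref{cor:biflexible2nice} (looking at edge stabilizers and the Grushko decompositions of vertex groups relative to incident edge groups, and performing a canonical blow-up/collapse). Since this upgrading is canonical, it is again $N(H)$-equivariant, so $N(H)$ preserves the resulting nice splitting, which completes the proof.

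I expect the main obstacle to be the non-triviality claims — specifically, verifying in Case~2 that when $H$ preserves a bi-nonsporadic splitting but no free and no $\Zmax$-splitting, the dynamical decomposition $\Ud_H$ is genuinely non-trivial. This is exactly the subtle point flagged in the introduction (the $N=2$-style and punctured-surface counterexamples, and Example~\ref{ex:arcs}), and it requires knowing that bi-nonsporadicity of $S$ forces $H$ to be non-pure: the two vertices of $S$ with non-sporadic relative Grushko decomposition should give rise to two distinct maximal $H$-invariant almost free factor systems, so that the dynamical decomposition has at least one active vertex. Pinning down this implication — and checking it interacts correctly with the canonical ``nice-ification'' of Corollary~\ref{cor:biflexible2nice} so that the final output is non-trivial — is the delicate part; the equivariance arguments themselves are formal once the relevant assignments $H\mapsto\Uun_H$, $H\mapsto\UZ_H$, $H\mapsto\Ud_H$ are known to be $\Out(F_N)$-equivariant.
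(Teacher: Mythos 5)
Your route is essentially the paper's: the paper proves Corollary \ref{cor_nice} in one line by applying Theorem \ref{thm_nice}(3) to the collection $\calc$ of \emph{all} nice splittings preserved by $H$ (which is setwise $N(H)$-invariant and has infinite elementwise stabilizer containing $H\cap\IA$), and the proof of Theorem \ref{thm_nice} is exactly the case analysis you run — free splitting case via $\Uun$, then $\Zmax$ case via $\UZ$, then bi-nonsporadic case reduced to a filling collection of free factor systems and the dynamical decomposition $\Ud$. Your variant of applying the constructions directly to $H\cap\IA$ and invoking equivariance of $H\mapsto\Uun_H$, $\UZ_H$, $\Ud_H$ is a legitimate repackaging and the reduction to $\IA$ and the equivariance arguments are fine.

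There are, however, two places where your write-up has real gaps. First, in Case 1 the claim that $\Uun_H$ ``is by construction a free splitting (modulo collapsing orbits of edges)'' is false: edge stabilizers of the tree of cylinders are of the form $G_{(Y)}\cap G_x$ and may all be non-abelian, in which case no collapse of $\Uun_H$ is a free (or $\Zmax$) splitting. This is precisely why the paper proves Corollary \ref{cor:uh1-biflexible} (if $\Uun_H$ has no edge with trivial or maximal-cyclic stabilizer, then it is biflexible — an argument using Proposition \ref{prop_suite_exacte}, triviality of the group of twists and the infinitude of $H$) and then Corollary \ref{cor:biflexible2nice} to turn biflexible splittings canonically into nice ones; your Case 1 needs both and as written skips them. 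Second, the non-triviality claim you flag in subcase 2b is exactly Lemma \ref{lemma:binonsporadic2F}: blowing up the two non-sporadic vertices of the bi-nonsporadic splitting into relative Grushko decompositions and collapsing yields a canonical \emph{filling} finite collection of non-empty free factor systems, and fillingness is what contradicts purity (one does not need to exhibit two distinct maximal invariant almost free factor systems directly), so that $\Ud_H$ is non-trivial with at least two orbits of active vertices (Remark \ref{rk_binonsporadic}). Note also that the final nice-ification of $\Ud_H$ is not via Corollary \ref{cor:biflexible2nice}: one observes directly that $\Ud_H$ either has an edge with $\Zmax$ stabilizer (collapse to it) or is itself bi-nonsporadic, since its edge stabilizers are non-trivial universally peripheral free factors and the active vertices have non-sporadic relative Grushko decompositions. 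These are the two pieces you would still have to supply to make the argument complete.
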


As a matter of analogy, when $\Sigma$ is a closed, connected, oriented, hyperbolic surface, and $H$ is an infinite subgroup of $\Mod(\Sigma)$ that preserves the isotopy class of a multicurve, then $H$ has a canonical reduction system, i.e.\ a multicurve whose isotopy class is also invariant by its normalizer. Thus, Corollary~\ref{corintro:crs} and the three canonical constructions on which it is based,
can be viewed as providing a (nice!) analogue of canonical reduction systems for free groups.

\subsection{On the proof of the measure equivalence rigidity theorem}

We now present the techniques we use in the proof of our main theorem. 

\subsubsection{Classical measure group theoretic reductions}

We first use a standard argument in measured group theory, developed in the successive works of Furman \cite{Fur1,Fur2}, Monod and Shalom \cite{MS} and Kida \cite{Kid2}, to reduce the proof of our main  theorem to a statement about measured groupoids associated to measure-preserving $\Out(F_N)$-actions on probability spaces.

It is actually enough (see e.g.\ \cite[Theorem~6.1]{Kid2}) to prove a rigidity statement for self measure equivalence couplings of a finite index subgroup of $\Out(F_N)$. As above, we will work in the finite index subgroup $\IA$ made of outer automorphisms acting trivially on homology mod $3$. 
A \emph{self coupling} of a countable group $\Gamma$ is a measure-preserving action $\Gamma\times\Gamma\actson \Sigma$
by Borel automorphisms on a standard measure space $\Sigma$ such that the action of each factor
is free and has a finite measure fundamental domain.
The restriction to self couplings should not surprise the reader familiar with quasi-isometry rigidity results:
self couplings may be thought as
analogues of self quasi-isometries of $\Gamma$, and quasi-isometric rigidity of groups
is often deduced from rigidity properties of these self quasi-isometries.

There is a \emph{standard} self coupling  $\IA\times\IA\actson \Sigma_0=\Out(F_N)$, where the action is by left and right multiplication. Our goal is to show that every
self coupling $\Sigma$ of $\IA$ factors through this standard coupling, i.e.\ that there always exists a Borel map $\Sigma\to\Out(F_N)$ which is almost everywhere equivariant with respect to the action of $\IA\times\IA$. This is called \emph{coupling rigidity} in \cite{Kid4} and \emph{tautness} in \cite{BFS}.

As in \cite{Fur2}, this is then rephrased in the language of stable orbit equivalence. The space $\Sigma$ is equipped with an action of $\Gamma_1\times\Gamma_2=\IA\times\IA$. Letting $X_1$ be a finite measure fundamental domain for the $\Gamma_1$-action on $\Sigma$, there is a natural action of $\Gamma_2$ on $X_1$ 
(identified with $\Sigma/\Gamma_1$).
Likewise, letting $X_2$ be a finite measure fundamental domain for the $\Gamma_2$-action on $\Sigma$, there is a natural action of $\Gamma_1$ on $X_2$. We choose the fundamental domains $X_1$ and $X_2$ such that, letting $Y=X_1\cap X_2$, one has $(\Gamma_1\times \Gamma_2)Y=\Sigma$ up to a null set. The orbit partitions 
coming from the $\Gamma_1$-action on $X_2$ and from the $\Gamma_2$-action on $X_1$ restrict to the same equivalence relation on $Y$. 

Informally, one can think of $Y$ as being equipped with arrows: there is an arrow from $x$ to $y$ whenever $x$ and $y$ are in the same $\Gamma_1$-orbit, or equivalently in the same $\Gamma_2$-orbit. The arrows are naturally equipped with two $\IA$-valued labels, indicating the element of $\Gamma_i$ that sends $x$ to $y$. 

This is made formal in the language of measured groupoids (see Section~\ref{sec:background} for precise definitions): we have a measured groupoid $\calg$ (the set of all arrows) over the base space $Y$, with two cocycles to $\IA$ (the two labelings). In our terminology, these cocycles coming from actions of $\IA$ are important instances of \emph{action-type} cocycles (see Definition~\ref{de:action-type}). By careful bookkeeping of the above construction, the proof of measure equivalence rigidity of $\Out(F_N)$ reduces to the proof of the following statement, as explained in Section~\ref{sec:me}.

\begin{theo}[See Theorem \ref{theo:two-cocycles}]\label{theo:intro-two-cocycles}
Let $N\ge 3$. Let $\calg$ be a measured groupoid over a standard measure space $Y$. Then any two action-type cocycles $\rho,\rho':\calg\to\IA$ are $\Out(F_N)$-cohomologous, i.e.\ there exist a Borel map $\phi:Y\to\Out(F_N)$ and a conull Borel subset $Y^*\subseteq Y$ such that for every arrow $g\in\calg$ joining two points $x,y\in Y^*$, one has $\rho'(g)=\phi(y)\rho(g)\phi(x)^{-1}$.
\end{theo}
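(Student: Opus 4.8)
The plan is to establish Theorem~\ref{theo:intro-two-cocycles} by exploiting the canonical splitting constructions of Part~\ref{part2} in a measured-groupoid setting, mimicking Kida's strategy for mapping class groups but replacing the curve complex machinery by the hierarchy of nice splittings, free factor systems, and relatively arational trees. The overall architecture is a descent along a canonical complexity: starting from two action-type cocycles $\rho,\rho':\calg\to\IA$, I will first produce, over a conull subset and after passing to a partition of $Y$ into $\calg$-invariant pieces, a measurable $\calg$-equivariant assignment of a canonical \emph{nice splitting} (or a canonical free factor system, or a relatively arational tree) which is read simultaneously by both $\rho$ and $\rho'$; restricting to stabilizers then reduces $N$ and lets one induct. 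The base case is where the groupoid, through $\rho$, acts ``fully irreducibly'' so that no invariant nice splitting exists — there the amenability input (the improved Bestvina--Guirardel--Horbez theorem on relative arationality quoted in the abstract) forces both cocycles to land, up to cohomology, in the stabilizer of a single relatively arational tree, which by the structure of such stabilizers pins down $\phi$.

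First I would set up the measurable analogue of ``$H$-invariant nice splitting'': given an action-type cocycle $\rho:\calg\to\IA$, one considers the $\calg$-action (via $\rho$) on the countable discrete Borel space of all nice splittings of $F_N$, and asks for a $\calg$-invariant measurable map $Y\to\{\text{nice splittings}\}$. Using Corollary~\ref{corintro:crs} together with the $\Out(F_N)$-equivariance of the three canonical constructions ($\Uun_H$, $\UZ_H$, $\Ud_H$), I would argue that if some invariant nice splitting exists over a positive-measure $\calg$-invariant set, then a \emph{canonical} one does, and crucially that the canonicity makes it simultaneously invariant for $\rho$ and for $\rho'$ once we know the two cocycles ``see the same stabilizer structure''. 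Concretely: the chain conditions (Propositions~\ref{prop:chain-FS2}, \ref{prop:chain-zmax2}) give a well-defined maximal canonical splitting, and maximality plus $\Out(F_N)$-equivariance is what transports it across the two labelings.

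The heart of the argument — and the main obstacle — is the \emph{rigidity of the base case}, where neither $\rho$ nor $\rho'$ admits any invariant nice splitting. Here I would invoke the Borel amenability of the action of $\Out(F_N,\calf)$ on the space of relatively arational trees with amenable stabilizer: by a cocycle-amenability argument in the style of Adams and of Kida, the groupoid $\calg$ (rigidified by $\rho$) must fix, measurably, a point or a finite set in a suitable boundary — here the space $\PMF$-analogue, i.e.\ projectivized relatively arational trees — and the non-existence of invariant splittings upgrades this to an honest invariant relatively arational tree, read the same way by $\rho'$. One then needs to know that the $\Out(F_N)$-stabilizer of a relatively arational tree is small enough (virtually cyclic-by-something, acting properly) that two action-type cocycles into it which induce the same orbit equivalence relation are $\Out(F_N)$-cohomologous; this is the analogue of the ``pseudo-Anosov'' endgame in Kida's proof and is where the specific structure of $\Out(F_N)$ (as opposed to mapping class groups) makes the bookkeeping genuinely harder — the stabilizer of an arational tree is not as clean as a pseudo-Anosov centralizer, and relative arationality forces one to track the peripheral structure $\calf$ throughout.

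Finally, I would assemble the induction: on each $\calg$-invariant piece of $Y$ where a canonical nice splitting or free factor system exists, restrict $\rho,\rho'$ to the groupoid of arrows stabilizing that splitting and to the vertex-group cocycles (valued in $\Out$ of lower-rank free groups, or in $\IA$ of the vertex groups), apply the inductive hypothesis to each vertex and edge, and glue the resulting conjugating maps using the equivariance of the canonical construction to get a single $\phi:Y\to\Out(F_N)$ on that piece; on the complementary piece use the base case. The gluing step requires checking that the local conjugators agree on overlaps — this reduces to uniqueness of the conjugator modulo the center, which holds because $\Out(F_N)$ and its relevant subgroups have trivial or controlled centralizers of the canonical splitting, so the only ambiguity is by an element centralizing everything, which can be absorbed. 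Patching the countably many invariant pieces together over the partition of $Y$ yields the global statement on a conull $Y^*$.
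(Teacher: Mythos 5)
Your proposal has a genuine gap at its very center, and the surrounding architecture does not match the problem. The crux of the theorem is to relate the two \emph{a priori unrelated} labelings $\rho$ and $\rho'$: you assert that the canonicity and $\Out(F_N)$-equivariance of the constructions $H\mapsto \Uun_H,\UZ_H,\Ud_H$ ``transport'' a $\rho$-invariant canonical splitting across to $\rho'$ ``once we know the two cocycles see the same stabilizer structure'' --- but that is precisely what has to be proved, and equivariance only operates within a single labeling. What is needed (and what the paper supplies, in its Sections~\ref{sec:big}--\ref{sec:compatibility}) is a purely groupoid-theoretic, cocycle-free characterization of the $(\calg,\rho)$-stabilizers of one-edge non-separating free splittings (Properties $(P_1^{\max})$, $(P_2)$, $(P_3)$ and Theorem~\ref{theo:non-characterization}), together with a groupoidal characterization of compatibility (Theorem~\ref{theo:compatibility-ns}); only such an intrinsic recognition lets one read the same subgroupoid as a $\rho'$-stabilizer. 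Your plan never identifies or supplies this mechanism. Moreover, the endgame is different and, in your version, broken: the paper assembles from these characterizations, for a.e.\ $y$, an injective adjacency- and non-adjacency-preserving map of $\NSG$, uses Proposition~\ref{prop:ns-fs} and Pandit's theorem $\Aut(\NSG)\simeq\Out(F_N)$ to get $\phi(y)\in\Out(F_N)$, and then verifies the cohomology relation. You propose instead to conclude via rigidity of the stabilizer of a relatively arational tree; but such stabilizers (in the amenable-stabilizer regime relevant here) are virtually cyclic, hence nowhere near rigid for action-type cocycles, and for an uncountable-valued object like an $\bbR$-tree there is no mechanism to have it ``read the same way by $\rho'$'' (the paper is careful to pass back to countable data --- splittings, free factor systems --- precisely for this reason).

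There are also structural mismatches in your descent scheme. Since $\rho$ is action-type, the full groupoid $\calg$ never preserves a nice splitting or an arational tree over a positive-measure set (by Remark~\ref{rk:action-type}, any invariant object would be fixed by powers of every element of $\IA$), so the dichotomy ``invariant splitting versus fully irreducible base case'' does not apply to $\calg$ itself; in the paper, the Adams-type amenability arguments and the dynamical decomposition are applied to \emph{subgroupoids} (amenable ones normalized by others, pseudo-products, stabilizer subgroupoids), not to $\calg$. Finally, the proposed induction on rank and gluing step do not go through as stated: restriction cocycles to vertex groups need not be action-type (they have large kernels, the groups of twists), vertex groups can have rank $2$ where no rigidity is available, and matching local conjugators across a splitting is obstructed not by a center but by the groups of twists, which for free splittings are products of non-abelian free groups and cannot be ``absorbed''.
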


\begin{rk}\label{rk_commensurator}
Restricting to the case where $Y$ is reduced to a point, one recovers commensurator rigidity as follows:
an isomorphism $j$ between two finite index subgroups $\Gamma_1,\Gamma_2$ of $\IA$ yields two cocycles
on the groupoid $\Gamma_1$ given by $j$ and the inclusion, and Theorem \ref{theo:intro-two-cocycles}
shows that $j$ is the restriction of an inner automorphism of $\Out(F_N)$.
\end{rk}

\subsubsection{The overall strategy}\label{sec:intro-commensurateur}
Let us first describe a general strategy to prove
 the commensurator rigidity of a group $\Gamma$. This is the approach used by Ivanov
 for the commensurator rigidity of the mapping
class group of a surface \cite{Iva}.
One uses a graph $X$ with an action of $\Gamma$ and 
whose group of automorphisms is precisely $\Gamma$.
One characterises vertex stabilizers of $X$ 
in terms of algebraic conditions that are robust under taking finite index subgroups,
and similarly for pairs of adjacent vertices.
Then any isomorphism $\phi$ between finite index subgroups of $\Gamma$
will induce an automorphism of $X$, and one concludes that $\phi$
is the restriction of an inner automorphism of $\Gamma$.

Our strategy of proof of Theorem~\ref{theo:intro-two-cocycles}, following Kida's, is a generalization of the above, where subgroups of $\Gamma$ are replaced by measured subgroupoids coming with cocycles towards $\Gamma$. 
In this paragraph, we explain a simplified version of the strategy we use; we will make this more accurate below.

We use a graph $X$ with an action of $\Out(F_N)$ 
whose group of automorphisms is precisely $\Out(F_N)$.
Given a vertex $v\in X$, a natural candidate to play the role of the stabilizer of $v$ is 
the subgroupoid $\calh_v:=\rho\m(\stab(v))$.
The goal is to give a characterization of such subgroupoids independently of $\rho$,
only in terms of groupoid-theoretic conditions.
This takes the following form:
there is  a countable Borel partition $Y^*=\dunion_{i\in \bbN} Y_i$ of a conull subset $Y^*\subset Y$,
and for each $i\in \bbN$ a unique vertex $v'_i\in X$ such that
$\calg_{|Y_i}=\rho'^{-1}(\Stab(v'_i))_{|Y_i}$
(where $\calg_{|Y_i}$ is the set of arrows with both endpoints in $Y_i$).
This defines for almost every $y\in Y$ 
a map $\phi(y):X\ra X$: if $y\in Y_i$, $\phi(y)$ maps the vertex $v$ to the vertex $v'_i$ above.
Using a groupoid-theoretic characterization of stabilizers of
pairs of adjacent vertices, one proves that for almost every $y\in Y$, 
$\phi(y)$ is an automorphism of $X$, i.e.~an element of $\Out(F_N)$:
this is the desired map $\phi:Y\ra \Out(F_N)$ that shows that $\rho$ and $\rho'$ are cohomologous.

The main groupoid-theoretic notions that we use in our characterizations
of subgroupoids
are amenability in Zimmer's sense, and the fact that a subgroupoid normalizes 
or contains another one.
In particular, one cannot use notions such as the rank of an abelian group 
(because of the Ornstein--Weiss theorem \cite{OW}).

On the other hand, the algebraic statements used in \cite{HW} to prove the commensurator rigidity of $\Out(F_N)$, phrased in terms of normal direct products of free groups, happen to be adaptable to the groupoid-theoretic framework, as we will explain in more detail in Section~\ref{sec:intro-adams}.

\subsubsection{Two rigid graphs} 
In fact, we need to use two rigid $\Out(F_N)$-graphs, namely
the \emph{free splitting graph} $\FSG$ and its sibling, the  \emph{non-separating free splitting graph} $\NSG$. 
The vertices of $\NSG$ are the decompositions of $F_N$ as an HNN extension $F_N=A\ast$, up to equivariant isomorphism of their Bass--Serre trees (in $\FSG$, one also allows for decompositions as a free product $F_N=A\ast B$). 
Two such decompositions $S,S'$ are joined by an edge when there is a two-edge graph of groups decomposition of $F_N$ collapsing to $S$ and $S'$. The group $\Out(F_N)$ naturally acts on $\NSG$ and $\FSG$. Building on work of Bridson and Vogtmann \cite{BV2}, Pandit proved \cite{Pan} that for $N\geq 3$, $\Out(F_N)$ is exactly the group of simplicial automorphisms of $\NSG$  (the same is true for $\FSG$, see \cite{AS}, but we will not use it).  
We will use additionally that every injective map $\NSG\ra \FSG$ preserving adjacency and non-adjacency
takes values in  the subgraph $\NSG$ (Proposition~\ref{prop:ns-fs}).

As explained above, to prove Theorem~\ref{theo:intro-two-cocycles}, 
we should ideally characterize in a purely groupoid-theoretic way 
(i.e.\ with no reference to the cocycles $\rho$ and $\rho'$) those subgroupoids of $\calg$ that preserve a vertex of $\NSG$ for either $\rho$ or $\rho'$ (and characterize adjacency in a similar manner).
In reality, we rather prove a partial characterization showing, roughly speaking, that the $\rho$-stabilizer of a vertex in $\NSG$
preserves a vertex in $\FSG$. More precisely, 
given a vertex $v\in\NSG$, one associates to the subgroupoid $\rho\m(\Stab(v))$
a countable Borel partition 
$Y^*=\Dunion_i Y_i$  of a conull subset $Y^*\subseteq Y$
such that $\rho'(\calg_{|Y_i})$ fixes a unique vertex $v_i\in \FSG$.
This defines for almost every $y\in Y$ an injective map $\phi(y)$ from the vertex set of $\NSG$ to $\FSG$,
and we prove that $\phi$ preserves compatibility of splittings.
Using the aforementioned facts, we show that in fact, for almost every $y\in Y$, the map $\phi(y)$ is an automorphism of $\NSG$, i.e.\ an element of $\Out(F_N)$.

\subsubsection{Towards a characterization of stabilizers of free splittings}\label{sec:intro-adams} 

The above discussion shows that the main theorem reduces to a groupoidal (partial) characterization 
of the stabilizers of non-separating free splittings of $F_N$ (Theorem~\ref{theo:non-characterization}), an analogue of the algebraic
version proved in \cite{HW}.

In this introduction, we will focus on one crucial statement towards this goal, first presented in a group-theoretic fashion, before explaining the additional arguments required to improve it to a groupoid-theoretic version.

\paragraph*{Finding invariant nice splittings (group version).} 
Recall that a one-edge non-separating free splitting is the same as a decomposition as an HNN extension of the form $F_N=A\ast$.
An important observation in their characterization, already exploited in \cite{HW}, is that for $N\geq 3$, the stabilizer of such a splitting 
contains a normal subgroup (the \emph{group of twists}) isomorphic to a direct product of two non-abelian free groups 
(made of all outer automorphisms having a representative in $\Aut(F_N)$ which acts as the identity on $A$, and multiply a stable letter $t$ by elements of $A$, on the left and on the right).
This somehow explains the choice of the graph $\mathbb{FS}$ as for instance, the stabilizer of a free factor (of corank at least 2) does not have this property.
The next proposition provides a partial converse.

\begin{prop}\label{prop:emblematic}
Let $N\ge 3$, and let $H\subseteq\IA$ be a subgroup. Assume that $H$ contains a subgroup which is isomorphic to a direct product $K_1\times K_2$ of two non-amenable subgroups $K_1$ and $K_2$, both normal in $H$.

Then $H$ fixes a nice splitting of $F_N$.
\end{prop}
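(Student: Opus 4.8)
The plan is to use the product structure $K_1 \times K_2 \subseteq H$ to force the existence of an invariant free factor system of a special "sporadic" type, and then to upgrade this to an invariant nice splitting. The starting point is the observation that $K_1$ and $K_2$ are non-amenable, hence each contains a fully irreducible automorphism relative to any invariant free factor system they happen to fix, or — if no such relative irreducible exists — each preserves a non-trivial free factor system. The crucial tension is that $K_1$ and $K_2$ commute. I would first treat the \emph{pure} case: suppose that no proper free splitting of $F_N$ is invariant under a finite-index subgroup of $K_1 \times K_2$. Then by the dynamical decomposition (Theorem~\ref{theo:intro-dynamical-decomposition}) applied to $H' := K_1 \times K_2$ (inside $\IA$), and more relevantly by the finiteness of maximal invariant free factor systems, one can pass to finite-index subgroups so that each $K_i$ has a well-defined invariant structure. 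The key algebraic input is the following dichotomy: since $K_2$ normalizes $K_1$, the group $K_2$ permutes the (finite set of) maximal $K_1$-invariant almost free factor systems, hence a finite-index subgroup of $K_2$ fixes each of them; symmetrically for $K_1$. So after replacing the $K_i$ by finite-index subgroups, every maximal $K_1$-invariant \afs{} is $K_2$-invariant and conversely.

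Next I would exploit the irreducibility: pick a maximal $K_1 \times K_2$-invariant \afs{} $\HF$ and an element $\alpha \in K_1$ which is fully irreducible relative to $\HF$ (such $\alpha$ exists by \cite{HM,GH}, as cited in the excerpt, provided $K_1$ does not preserve a finer invariant free factor system relative to $\HF$ — if it does, one refines $\HF$ and recurses, using the bounded-length chain condition). Since $K_2$ commutes with $\alpha$ and preserves $\HF$, the group $K_2$ preserves the (unique up to scaling) attracting and repelling $\bbR$-trees of $\alpha$ relative to $\HF$, hence lies in the stabilizer of a point of the relevant boundary; by the structure of stabilizers of relatively arational trees (this is where the amenability statement from the abstract, and the relative version of the index theory, enter), $K_2$ is then constrained: being non-amenable, it cannot fix such a tree unless the tree is \emph{not} arational, i.e.\ it is supported on a proper invariant free factor system strictly finer than $\HF$ — contradicting maximality — or the relevant free factor system $\HF$ is itself sporadic. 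Pushing this, one concludes that $K_1 \times K_2$ cannot be pure: there must be two distinct maximal invariant \afs{}s, and the dynamical decomposition $\Ud_{H}$ is therefore non-trivial, with active vertices. The active vertex stabilizers are free factors whose conjugacy classes are invariant, and a one-edge collapse of $\Ud_H$ (after the canonical "nice-ification" of Corollary~\ref{cor:biflexible2nice}) gives a nice splitting invariant under $H' = K_1\times K_2$; but since this splitting is constructed canonically from $H'$, which is normal in $H$, it is also $H$-invariant — wait, one must be careful here: $K_1 \times K_2$ need not be normal in $H$, only $K_1$ and $K_2$ individually are. So instead I would feed $K_1$ (which \emph{is} normal in $H$) into the machinery: either $K_1$ preserves a nice splitting, in which case Corollary~\ref{corintro:crs} gives that its normalizer, hence $H$, preserves a nice splitting and we are done; or $K_1$ preserves no nice splitting — in particular no non-trivial free splitting and no proper \afs{} giving a nice decomposition — and then I derive a contradiction from the presence of the commuting non-amenable $K_2$ via the tree arguments above.

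The remaining (and genuinely separate) case is when some finite-index subgroup of $K_1$ (or of $K_1\times K_2$) \emph{does} preserve a non-trivial free splitting. Here Theorem~\ref{theo:intro-tree-of-cyl} directly applies: $K_1$ being infinite and preserving a free splitting, it has a canonical non-trivial splitting $\Uun_{K_1}$, which is nice (a free splitting) or can be canonically turned into one, and which is $\Out(F_N)$-equivariantly assigned, hence invariant under the normalizer of $K_1$, which contains $H$. This finishes that case immediately. Analogously, if $K_1$ preserves a non-trivial $\Zmax$-splitting but no free splitting, Theorem~\ref{theo:intro-canonical-cyclic} produces the canonical $\UZ_{K_1}$, again normalizer-invariant and turnable into a nice splitting. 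So the whole proof is an organized case analysis driven by which of the three canonical constructions applies to $K_1$, with the dynamical decomposition handling the residual "bi-nonsporadic / no invariant free or cyclic splitting" case.

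\medskip
\noindent\textbf{Main obstacle.} The hard part is the pure case: showing that two commuting non-amenable subgroups cannot \emph{both} be pure relative to a common maximal \afs{} without the combined group preserving a finer invariant free factor system. This requires precise control of the stabilizer in $\Out(F_N,\HF)$ of the attracting tree of a relatively fully irreducible automorphism — essentially that such a stabilizer is virtually cyclic-by-(something amenable), or at least contains no non-amenable subgroup commuting with the irreducible element except one visibly supported on a proper invariant free factor system. This is exactly the relative arationality/amenability circle of ideas flagged in the abstract, and it is where the bulk of the technical work (the index theory for relatively arational trees, together with the amenability of the action on trees with amenable stabilizer) must be invoked. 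Getting the bookkeeping of finite-index passages right — so that the final nice splitting is invariant under all of $H$ and not merely under some finite-index subgroup — is the secondary subtlety, resolved by always feeding the \emph{normal} subgroup $K_1$ (not the non-normal product) into the canonical constructions and appealing to their $\Out(F_N)$-equivariance.
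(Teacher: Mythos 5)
Your reduction steps are fine and match the paper's: if $K_1$ (or even a single infinite cyclic subgroup $\grp{a_1}\subset K_1$, whose normalizer contains $K_2$) preserves a nice splitting, then Corollary~\ref{corintro:crs} passes this to $K_2$ and then, by normality of $K_2$ in $H$, to $H$; and working inside $\IA$ removes the finite-index bookkeeping. The gap is in your treatment of the pure case. There you argue that a non-amenable $K_2$ commuting with an automorphism $\alpha\in K_1$ which is fully irreducible relative to the maximal invariant \afs{} cannot fix the attracting/repelling trees of $\alpha$ "unless the tree is not arational or the factor system is sporadic", and you conclude that $K_1\times K_2$ cannot be pure, so that the dynamical decomposition takes over. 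That dichotomy is false in the relative setting: stabilizers of relatively arational trees can be non-amenable, and purity can hold. Concretely (this is the example in Section~\ref{sec:adams}): let $\Sigma$ be a one-boundary hyperbolic surface, $c$ a separating curve cutting it into $A\supset\partial\Sigma$ and $B$, take $K_1$ (resp.\ $K_2$) the subgroups of $\IA$ induced by homeomorphisms supported on $A$ (resp.\ $B$) fixing $c$ and $\partial\Sigma$. Both are non-amenable, they commute, $K_1\times K_2$ is pure (unique maximal invariant \afs{} $\{[\pi_1(B)],[\grp{\partial\Sigma}]\}$, no invariant free splitting), and the arational $(F_N,\{[\pi_1(B)]\})$-trees fixed by a pseudo-Anosov of $A$ have stabilizer containing all of $K_2$. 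So the contradiction you are after does not exist, and the "two distinct maximal invariant \afs{}s, hence non-trivial dynamical decomposition" conclusion fails; yet the proposition is true there, via the $\Zmax$-splitting dual to $c$.

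The paper's proof handles exactly this branch constructively rather than by contradiction: with $\grp{a_1}\subset K_1$ pure, $a_1$ acts loxodromically on the relative free factor graph, $K_2$ (centralizing $a_1$) preserves the pair of boundary points, i.e.\ the pair of equivalence classes of arational trees $T_\pm$; since that stabilizer contains the non-amenable $K_2$, one extracts from $T_\pm$ a canonical nice splitting via the fixed-subgroup/transverse-covering construction of Lemma~\ref{lemma:maximal-fix} and Propositions~\ref{prop:S_T} and~\ref{prop:PATna} (the skeleton of the covering of $T_\pm$ by translates of the minimal subtree of the subgroup on which the stabilizer acts trivially, following \cite{GL-on}). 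This splitting is $K_2$-invariant by canonicity, hence $H$-invariant by Corollary~\ref{corintro:crs}. The amenability result for arational trees with amenable stabilizer, which you invoke here, is not what powers this group-level statement; it is needed only in the groupoid version (Theorem~\ref{theointro:adams}), where the stabilizer may be amenable and one must then conclude amenability of the normalizing groupoid. To repair your proposal, replace the attempted contradiction in the pure case by this extraction of a nice splitting from the non-amenable stabilizer of $\{T_-,T_+\}$.
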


The proof goes as follows. Since by Corollary \ref{corintro:crs},
the property of preserving a nice splitting remains true by going to the normalizer, we are done if $K_1$ or $K_2$ preserves a nice
splitting, hence also if some infinite cyclic  subgroup $\grp{a_1}\subset K_1$ preserves a nice splitting.
We can therefore assume that $\grp{a_1}$ is pure (as otherwise it preserves its dynamical decomposition), and to simplify the discussion, let us assume that
its unique maximal invariant almost free factor system is a free factor system.
The relative free factor graph $\FFG$ is a Gromov-hyperbolic graph on which $a_1$ acts loxodromically \cite{BF-FF,HM},
and its invariant endpoints in the Gromov boundary correspond to
equivalence classes of $\bbR$-trees
$T_-,T_+$.
It follows that $K_2$ preserves the  pair of equivalence classes of $T_{\pm}$. Using arguments from \cite{GL-on}, the fact that $T_{\pm}$ has non-amenable stabilizer
is witnessed by
a canonical nice splitting (see Corollary \ref{cor:PATna}) that will thus be  $K_2$-invariant.

\paragraph*{Finding invariant nice splittings (groupoid version).}

We need a groupoid-theoretic version of Proposition~\ref{prop:emblematic}, given in Lemma~\ref{lemma:1-implies-splitting}. Its statement is similar, using a notion of amenable subgroupoid that originates in the work of Zimmer \cite{Zim2}, a notion of normal subgroupoid coined by Kida in \cite{Kid1} as a generalization of earlier work of Feldman, Sutherland and Zimmer \cite{FSZ}, and a notion of pseudo-product of subgroupoids coined to replace the direct product in the statement (see Definition~\ref{de:pseudo-product}).

Our groupoid-theoretic proof stays very close to its group-theoretic version as long as it only involves objects taking values in a countable set, like nice splittings or free factor systems. One reason is that a map from the base space $Y$ to a countable set $\cald$ becomes constant after restriction to each part of a countable partition of $Y$, and all our groupoid-theoretic statements will allow to perform such partitions on the base space. In fact, the chain conditions given in the previous section also play a role here, to ensure essentially that here one never needs to perform infinitely many countable partitions at once. 

For instance, Corollary~\ref{corintro:crs} translates directly to the groupoid setting, as follows (the assumption that $\rho_{|\calh}$ is nowhere trivial is the natural replacement of the assumption that $H$ is infinite in Corollary~\ref{corintro:crs}, see Definition~\ref{dfn:nowhere_trivial}). 

\begin{prop}[see Proposition \ref{prop:nice-preserved}]\label{prop_normalisateur_groupoide}
Let $\calh'$ be a measured groupoid coming with a cocycle $\rho$ towards $\IA$. Let $\calh$ be a normal subgroupoid of $\calh'$, and assume that 
\begin{itemize}
\item $\rho_{|\calh}$ is nowhere trivial; 
\item $\rho(\calh)$ is contained in the stabilizer of a nice splitting.
\end{itemize}
Then, up to passing to a conull subset and taking a countable partition of the base space, $\rho(\calh')$ is also contained in the stabilizer of a nice splitting.  
\end{prop}

Our groupoidal analogue of Proposition \ref{prop:emblematic} follows from Proposition \ref{prop_normalisateur_groupoide} and the following crucial statement. 
All relevant definitions are given in Section~\ref{sec:background}.


\begin{theo}[see Theorem \ref{theo:normalisateur_moyennable}]\label{theointro:adams}
  Let $N\ge 3$. Let $\calg$ be a measured groupoid over a standard measure
  space $Y$, and let $\rho:\calg\to\IA$ be an action-type cocycle. Let $\cala_1$ be an amenable subgroupoid of $\calg$
  of infinite type,
  and let $\calk_2$ be a subgroupoid of $\calh$ that normalizes $\cala_1$.

  Then  either $\calk_2$ is amenable, or there exists a  Borel subset $U\subset Y$ of positive measure such that $(\calk_{2|U},\rho)$ preserves a nice splitting.
\end{theo}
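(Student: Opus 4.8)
The plan is to mimic the group-theoretic proof of Proposition~\ref{prop:emblematic} but upgrade each ingredient to its measured-groupoid analogue, using the principle that any map from the base space $Y$ to a \emph{countable} set becomes constant after restricting to the parts of a countable Borel partition. So the proof will produce, for the subgroupoid $\calk_2$ normalizing the amenable subgroupoid $\cala_1$ of infinite type, a countable partition of $Y$ after which either an amenability conclusion survives on every piece, or else a nice splitting is preserved on a piece of positive measure (and by reassembling countably many pieces on which the same countable datum is preserved, one promotes this to the claimed set $U$). Throughout, Theorem~\ref{theointro:adams} (Adams' argument) will be applied to the amenable subgroupoid $\cala_1$ itself, or to cyclic subgroupoids extracted from it, to produce the invariant $\bbR$-tree data.

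\textbf{Step 1: Reduce to the pure case via the dynamical decomposition.} As in the group version, if $\cala_1$ (or any subgroupoid extracted from it) preserves a nice splitting on a positive measure set we are done by Corollary~\ref{corintro:crs}/Corollary~\ref{cor_nice} in its groupoid form, since preserving a nice splitting passes to the normalizer, hence to $\calk_2$. So I would first try to extract from $\cala_1$ a subgroupoid that ``looks cyclic'', i.e.\ on which the $\rho$-image is generated (on each part of a countable partition) by a single fully irreducible-relative automorphism; this is where amenability and infinite type of $\cala_1$ are used, together with the structure theory of amenable subgroups of $\IA$. If on some part the relevant stabilizer is not pure, its dynamical decomposition $\Ud$ (Theorem~\ref{theo:intro-dynamical-decomposition}) is a nontrivial invariant splitting; turning it into a nice splitting (Corollary~\ref{cor:biflexible2nice}) and invoking the chain conditions (Propositions~\ref{prop:chain-FS2}, \ref{prop:chain-zmax2}, and the finiteness of maximal invariant free factor systems) to guarantee the partitioning process terminates, we again finish. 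Hence we may assume we are in the pure case on each remaining part, with (after a further countable partition) a unique maximal invariant almost free factor system $\HF$.

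\textbf{Step 2: Produce the invariant pair of $\bbR$-trees and transfer it to $\calk_2$.} On each remaining part $U$, the relative free factor graph $\FFG$ (relative to $\HF$) is Gromov-hyperbolic, and the cyclic subgroupoid extracted from $\cala_1$ acts loxodromically; its two boundary fixed points correspond to a pair $\{[T_-],[T_+]\}$ of equivalence classes of arational $\bbR$-trees. Because $\calk_2$ normalizes $\cala_1$, it preserves this pair — this is the groupoid incarnation of ``the normalizer of a loxodromic element preserves its axis endpoints.'' Here is the one genuinely non-discrete object in the argument: $\{[T_-],[T_+]\}$ is a measurable field of pairs of points in a non-discrete space, so one cannot simply partition $Y$ to make it constant. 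Instead one argues as in the group version using the machinery of \cite{GL-on}: since $[T_\pm]$ has non-amenable stabilizer (here is where ``$\calk_2$ not amenable'' enters — if the stabilizer of the tree data were amenable everywhere, one shows $\calk_2$ would be amenable, contradicting the hypothesis), one can \emph{canonically} extract from $T_\pm$ a nice splitting, and canonicity means the extraction commutes with the groupoid action, producing a $\calk_{2|U}$-invariant nice splitting.

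\textbf{Main obstacle.} The delicate point — and the reason this needs Theorem~\ref{theointro:adams} rather than a direct translation — is precisely handling the $\bbR$-tree data, which lives in a non-discrete space where the ``partition into countably many pieces'' trick fails. Everything must be done through canonical, equivariant constructions: one never gets to \emph{choose} a tree measurably and then argue pointwise; one must instead show that the Adams-type argument yields a genuinely canonical nice splitting out of $T_\pm$, so that invariance under $\cala_1$ and the normalization relation automatically force invariance under $\calk_2$. Establishing that the extraction of a nice splitting from a relatively arational tree with non-amenable stabilizer is canonical (hence descends to the groupoid level), and carefully tracking the amenability dichotomy — that failure to find a nice splitting forces the tree stabilizer, and then $\calk_2$ itself, to be amenable — is the technical heart of the matter. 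The chain conditions and the finiteness of maximal invariant free factor systems are needed throughout to ensure the inductive/partitioning scheme terminates after boundedly many steps.
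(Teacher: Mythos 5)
Your proposal has two genuine gaps, both at the heart of the theorem. The first is the plan (Steps 1--2) to extract from $\cala_1$ a subgroupoid whose $\rho$-image is, piecewise, generated by a single fully irreducible automorphism and then to use its loxodromic action on $\FFG$. Amenability of a measured groupoid puts no constraint on the group $\rho(\cala_{1|U})$: an amenable groupoid can carry a cocycle whose image is a non-amenable, highly non-cyclic subgroup of $\IA$ (this is exactly the Zimmer phenomenon), so there is no ``structure theory of amenable subgroups of $\IA$'' to invoke, and no countable partition makes the image cyclic. The paper never reduces to a single automorphism. Instead it uses amenability of $\cala_1$ in Zimmer's sense to produce an $(\cala_1,\rho)$-equivariant Borel map $Y\to\Prob(\Pbaro)$ relative to an extracted free factor system $\calf$ of the maximal invariant almost free factor system (Lemma~\ref{lemma:mu}), then rules out non-arational support via Reynolds' witness map (Theorem~\ref{theo:reducing}, Lemma~\ref{lemma:no-prob-on-FF}, using everywhere-maximality of $\calf$), and cuts the arational support down to at most two classes via the barycenter map of \cite{GHL} (Theorem~\ref{barycenter}, Lemma~\ref{lemma:cardinal-2}). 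Similarly, your transfer of the pair $\{T_-,T_+\}$ to $\calk_2$ by ``the normalizer preserves the endpoints of the axis'' does not make sense at the groupoid level (there is no single element, hence no axis); the paper's mechanism is the Adams maximality trick: there is an essentially unique \emph{maximal} $(\cala_1,\rho)$-equivariant map $Y\to\calp_{\le 2}(\PATsim)$ (Lemma~\ref{lem:canonical_pair}), and its uniqueness forces it to be $(\calk_2,\rho)$-equivariant (Corollary~\ref{cor:canonical_pair2}). Your remark about ``canonicity'' addresses the extraction of a nice splitting from the trees, not the canonicity of the tree pair itself, which is what the normalization step actually needs.

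The second gap is the amenable branch of the dichotomy. You assert that if the stabilizers of the tree data are amenable almost everywhere then $\calk_2$ is amenable, but you give no mechanism, and this is precisely the technical heart of the theorem: one needs the new result that the $\Out(F_N,\calf)$-action on the set of (pairs of) arational $(F_N,\calf)$-trees \emph{with amenable stabilizer} is Borel amenable (Theorem~\ref{theo:action-amenable}, Corollary~\ref{cor:amenable-on-pairs}); the relative version is essential because in the relative setting arational trees can have non-amenable stabilizer, so the absolute amenability result of \cite{BGH} does not apply directly. One then concludes via Kida's criterion (Proposition~\ref{prop:mackey}), which also requires the trivial-kernel/action-type hypothesis on $\rho$ restricted to $\calk_2$ --- a hypothesis your proposal never uses. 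On the non-amenable branch your sketch is close to the paper (a canonical nice splitting extracted from the invariant trees by \cite{GL-on}-type arguments, then a countable partition since nice splittings form a countable set), but since the tree pair is only available through the measure-valued equivariant map described above, the argument does not get off the ground without the two missing ingredients.
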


Here $\cala_1$ and $\calk_2$ are the groupoid analogues of the groups $\grp{a_1}$, $K_2$ from the 
sketch of proof of Proposition \ref{prop:emblematic}.

Our proof of Theorem~\ref{theointro:adams} is the place where the argument significantly differs in the group and groupoid settings, as it involves dealing with objects taking values in non-discrete spaces, like spaces of actions on $\bbR$-trees. In particular, an additional ingredient is the amenability of the action of $\Out(F_N,\calf)$ on a space of arational trees, given by the following statement, a variation on earlier work by Bestvina and the authors \cite{BGH}.
See Definition~\ref{de:borel-amenable} for the definition of Borel amenability of a group action. 

\begin{theo}[see Proposition \ref{prop:action-amenable-lift}]\label{thm_amen_intro}
 Let $N\ge 2$, and let $\calf$ be a non-sporadic free factor system of $F_N$. Let $\PATa$ be the space of all projective arational $(F_N,\calf)$-trees whose stabilizer in $\Out(F_N,\calf)$ is amenable. 

The action of $\Out(F_N,\calf)$ on $\PATa$ is Borel amenable.

Similarly, the action of $\Out(F_N,\calf)$ on the set of pairs of projective arational $(F_N,\calf)$-trees $\{T_+,T_-\}$
whose stabilizer is amenable, is a Borel amenable action.
\end{theo}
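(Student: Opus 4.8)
The plan is to deduce both assertions from the known amenability of the $\Out(F_N,\calf)$-action on the space $\AT$ of \emph{all} arational $(F_N,\calf)$-trees (the \cite{BGH} result), by a standard mechanism for restricting Borel amenability of an action to a Borel-invariant subset and then transporting it across the quotient map to the space of pairs. First I would recall that Borel amenability of an action $G\actson X$ on a standard Borel space passes to any $G$-invariant Borel subset $X'\subseteq X$: one simply restricts the sequence of Borel fields of probability means on $X$ witnessing amenability (the almost-invariance of the means being a pointwise condition, it survives restriction), so $\Out(F_N,\calf)\actson\ATa$ is Borel amenable as soon as $\ATa$ is a Borel $\Out(F_N,\calf)$-invariant subset of $\AT$. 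Invariance is clear since having amenable stabilizer in $\Out(F_N,\calf)$ is a conjugation-invariant property; Borelness of $\ATa$ inside $\AT$ requires that $\{T : \Stab_{\Out(F_N,\calf)}(T)\text{ is amenable}\}$ is Borel, which I would obtain by combining the Borel structure on $\AT$ (equivalently, on the relevant part of $\partial_\infty\FFG$, via \cite{BR,Ham2,GH}) with the fact that amenability of the stabilizer is a Borel condition on $T$ — here I would invoke the standard descriptive-set-theoretic fact that the set of points with amenable stabilizer in a Borel action of a countable group is Borel (one can phrase amenability of the countable stabilizer via the absence of a free non-abelian subgroup, or more robustly via a Borel selection of Følner-type witnesses; in any case this is routine and I would cite the appropriate reference).

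Next, for the second assertion I would handle the space $\AT^{(2)}$ of unordered pairs $\{T_+,T_-\}$ of arational trees. The natural map here is $p:\AT\times\AT\setminus\Delta\to\AT^{(2)}$ (or, better, one works with the diagonal-type action on pairs directly). Borel amenability of a product action $G\actson X\times X$ follows from Borel amenability of $G\actson X$ (amenable actions are closed under taking products with Borel $G$-spaces, or one just notes a Borel field of means on $X\times X$ can be built by pushing forward a field of means from the first coordinate composed with a field on the fibers), and then Borel amenability descends along the finite-to-one (in fact two-to-one) quotient map $\AT\times\AT\setminus\Delta\to\AT^{(2)}$ because the $\bbZ/2$-action interchanging the two coordinates is free off the diagonal and a quotient of an amenable action by a finite group action is again amenable. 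Restricting to the invariant Borel subset of pairs with amenable stabilizer then gives the claim exactly as in the single-tree case; here one should note that the stabilizer of a pair $\{T_+,T_-\}$ contains the intersection of the two individual stabilizers with index at most $2$, so amenability of the pair-stabilizer is again a Borel, conjugation-invariant condition.

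The main obstacle — and the only place where real content beyond soft descriptive set theory is needed — is verifying that $\ATa$ (and its pair-analogue) is a \emph{Borel} subset of $\AT$, i.e.\ that "the $\Out(F_N,\calf)$-stabilizer of $T$ is amenable" cuts out a Borel set. The cleanest route is to use that by \cite{Rey,BR,Ham2,GH} the space $\AT$ of arational trees, up to the equivalence $\sim$, is $\Out(F_N,\calf)$-equivariantly identified with $\partial_\infty\FFG$, a standard Borel $\Out(F_N,\calf)$-space; on a standard Borel space with a Borel action of a countable group, the set of points whose stabilizer is amenable is Borel by a theorem going back to the analysis of Borel equivalence relations (one may present the stabilizer map $T\mapsto\Stab(T)$ as a Borel map into the standard Borel space of subgroups of $\Out(F_N,\calf)$ and then note that $\{$amenable subgroups$\}$ is Borel, since for a countable group amenability is equivalent to containing no free subgroup of rank $2$, an $F_\sigma$-type condition). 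Once this Borelness is in hand, everything else is an application of permanence properties of Borel amenability (restriction to invariant Borel subsets, products, and quotients by free finite group actions), and the two statements of the corollary follow immediately.
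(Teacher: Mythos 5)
There is a genuine gap, and it sits at the foundation of your argument. You take as your starting point that the $\Out(F_N,\calf)$-action on the space $\AT$ of \emph{all} arational $(F_N,\calf)$-trees is Borel amenable ``(the \cite{BGH} result)'', and then restrict to the invariant Borel subset $\ATa$. But the theorem of \cite{BGH} only gives Borel amenability of the action of the subgroup $\Out(F_N,\calf^{(t)})$ (or of $\Out(F_N)$ in the absolute case $\calf=\es$); for the full group $\Out(F_N,\calf)$ the action on $\AT$ is \emph{not} Borel amenable in general, precisely because there exist arational $(F_N,\calf)$-trees with non-amenable stabilizer in $\Out(F_N,\calf)$ (e.g.\ a tree dual to an arational lamination on a subsurface $A$ of a surface $\Sigma=A\cup B$, with $\calf=\{[\pi_1(B)]\}$: the stabilizer contains all mapping classes supported on $B$). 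Since Borel amenability of an action forces every point stabilizer to be amenable, there is no amenability statement on the big space to restrict, and the soft permanence properties you invoke (restriction to invariant Borel subsets, products, quotients by the coordinate flip) cannot produce the corollary. The whole content of the statement is exactly this promotion from the subgroup $\Out(F_N,\calf^{(t)})$ to $\Out(F_N,\calf)$ on the locus of amenable stabilizers, and it is proved in the paper by re-running the folding-path construction of \cite{BGH}: for an optimal morphism $f$ from a point of Outer space onto $T\in\ATa$, one considers the finite windows $[t,t+m]_f$ of simplices through which $f$ factors, uses the chain condition on elementwise stabilizers of collections of free splittings (Proposition~\ref{prop:chain-FS2}) to show that their elementwise stabilizers stabilize, as $m\to\infty$, to a group fixing a subray of a folding path and hence fixing $[T]$ --- so this limit group is amenable because $T\in\ATa$ --- and then averages Dirac masses over such windows to get asymptotically equivariant measures with values in the countable set $\Da$ of finite simplex collections with amenable elementwise stabilizer; amenability of those stabilizers is what makes the final step (\cite[Proposition~2.11]{BGH}) go through. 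None of this is ``soft'' and none of it appears in your proposal.

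A second, independent problem concerns the pair statement. The stabilizer of an unordered pair $\{T_+,T_-\}$ contains the intersection of the two individual stabilizers with index at most $2$, so the pair can have amenable stabilizer while \emph{neither} $T_+$ nor $T_-$ lies in $\ATa$. Hence the set of pairs in question is not contained in (the image of) $\ATa\times\ATa$, and the pair case does not follow from the single-tree case by taking products and quotienting by the coordinate swap, even granting the single-tree case. The paper instead reruns the same construction simultaneously for a finite collection $\FFF=\{f_1,\dots,f_p\}$ of optimal morphisms onto the trees of the pair, with windows $[t,t+M]_{\FFF}=[t,t+M]_{f_1}\cup\dots\cup[t,t+M]_{f_p}$, again invoking Proposition~\ref{prop:chain-FS2} so that the limiting elementwise stabilizer fixes all the folding paths and is therefore contained in the (amenable) stabilizer of the pair (see Proposition~\ref{prop:action-amenable-lift} and Theorem~\ref{theo:action-amenable}, together with the multisection argument and the comparison of the three notions of stabilizer in Lemma~\ref{lemma:stab-amenable}). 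Your remaining observations (Borelness of $\ATa$, invariance under conjugation) are fine but peripheral; as an aside, your justification that amenability of a countable subgroup is equivalent to containing no $F_2$ is false in general, though it does hold for subgroups of $\Out(F_N)$ by the Tits alternative.
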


The strategy of  our proof of Theorem~\ref{theointro:adams} (carried out in Section~\ref{sec:adams})
relies on work of Adams \cite{Ada} that was already exploited by Kida in the context of mapping class groups.
As above, we can assume that $\cala_1$ is pure, and let us assume 
that its unique maximal invariant almost free factor system is a free factor system $\calf$.
Amenability of $\cala_1$ provides an equivariant map $\mu$ that assigns to every point $y\in Y$ a probability measure $\mu_y$ on the compactification of Outer space (relative to $\calf$). We then take advantage of Reynolds' partition of this compactification into \emph{arational} and \emph{non-arational} trees \cite{Rey}, and reduce to the case where $\mu_y$ is supported almost everywhere on arational trees (otherwise we find invariant relative free factors, contradicting the maximality of $\calf$). Arational trees are in fact precisely those that describe points of $\partial_\infty\FFG$ by \cite{BR,Ham2,GH}. We then use a \emph{barycenter map} constructed by Lécureux and the authors in \cite{GHL}, which associates a finite set of conjugacy classes of free factors to every triple of pairwise inequivalent arational trees. This is used to ensure that $\mu_y$ is almost everywhere supported on at most two equivalence classes of arational trees $T_{\pm}(y)$. Among the possible maps $\mu$, there is an essentially unique one
 $\mu_{\max}$ with maximal support,
so  $\mu_{\max}$ is also equivariant under the groupoid $\calk_2$. If in restriction to a subset $U\subset Y$ of positive measure,
the pair $\{T_-(y),T_+(y)\}$ has non-amenable stabilizer, 
this is witnessed by a canonical nice splitting which is invariant under $\cala_1$ and $\calk_2$ as in the group-theoretic case. Otherwise, the stabilizer of $\{T_-(y),T_+(y)\}$ is amenable for almost every $y\in Y$, and we reach a contradiction to the non-amenability of $\calk_2$ using Theorem \ref{thm_amen_intro}.


\paragraph*{Additional arguments.} Once the groupoid-theoretic version of Proposition~\ref{prop:emblematic} has been established, more work is still required to distinguish free splittings among nice splittings. 
Briefly,  $\Zmax$-splittings are distinguished by the fact that their stabilizer contains a normal non-trivial abelian subgroup (its \emph{group of twists}). The groupoid-theoretic statement used to distinguish bi-nonsporadic splittings is quite technical; roughly, the two vertices from the definition of such a splitting enable us to find two commuting normal subgroups in its stabilizer that both contain direct products of free groups, and this does not happen in the stabilizer of a non-separating free splitting.

More work is also needed to establish a characterization of adjacency in $\NSG$. Again, the details are quite involved (see Section~\ref{sec:compatibility}); as in \cite{HW}, one underlying idea is that two one-edge non-separating free splittings of $F_N$ are adjacent in $\NSG$ if and only if their common stabilizer does not fix any third one-edge free splitting.

\subsection{Structure of the paper}

The paper has three parts. 

The first part gives preliminary background, both on $\Out(F_N)$ (Section~\ref{sec:background-out}) and on measured groupoids (Section~\ref{sec:background}) and their use in the study of measure equivalence (Section~\ref{sec:me}). In particular, Section~\ref{sec:me} explains how to reduce the main theorem of the present paper and its consequences to the rigidity statement for groupoids and their action-type cocycles (Theorem~\ref{theo:intro-two-cocycles} in this introduction).

The second part does not involve any groupoids, it is devoted to our constructions of canonical splittings and our descriptions of stabilizers of collections of splittings. Section~\ref{sec:additional-background} collects further background regarding invariant splittings and trees of cylinders. The construction of a canonical splitting encoding all invariant free splittings is carried out in Section~\ref{sec:collection-free}, the variant for $\Zmax$-splittings is the contents of Section~\ref{sec:collection-zmax}, and the dynamical decomposition is constructed in Section~\ref{sec:collection-factors}.  Section~\ref{sec_nice} contains the consequence to the fact that having an invariant nice splitting passes to the normalizer.  
Finally, in Section~\ref{sec:witness}, we build a witness map that associates a canonical splitting to any arational tree with non-amenable stabilizer. 

The third part is the heart of our proof of measure equivalence rigidity of $\Out(F_N)$. In Section~\ref{sec_canonical}, we translate
to the groupoid setting the
fact that having invariant nice splittings passes to the normalizer.
We also set up the notions of pure, nice-averse and stably nice subgroupoids that provide the groupoid-theoretic framework in which the later sections are phrased. As observed in this introduction, direct products of free groups play an important role in the present paper, and Section~\ref{sec:direct-product} is concerned with measured groupoids with cocycles towards such direct products. 
Section~\ref{sec:amenable_actions} contains the proof of Theorem~\ref{thm_amen_intro} showing the amenability of the action of $\Out(F_N,\calf)$ on the space of arational trees with amenable stabilizer. 
In Section~\ref{sec:adams}, we run our version of Adams' argument and prove Theorem~\ref{theointro:adams}. 
Section~\ref{sec:big} culminates in Theorem~\ref{theo:non-characterization} which gives the necessary conditions for stabilizing a non-separating free splitting, and sufficient conditions for stabilizing a free splitting. 
Section~\ref{sec:compatibility} deals with the characterization of adjacency in $\NSG$. In Section~\ref{sec:maps}, we prove that every injective map from $\NSG$ to $\FSG$ which preserves adjacency and non-adjacency must take its values in $\NSG$
(as explained above, this compensates the slight asymmetry in the statement of Theorem~\ref{theo:non-characterization}). The proofs of our main theorem and its consequences are completed in Section~\ref{sec:conclusion}.

\paragraph*{Acknowledgments.}   
We would like to warmly thank Damien Gaboriau for suggesting this problem, organizing a session at the IHP in 2011 with this problem in view within the framework of the trimester
\emph{Von Neumann algebras and ergodic theory of group actions}, and for helpful discussions and encouragements.
We thank Ric Wade and Martin Bridson for pointing out a missing assumption in an earlier version of Corollary \ref{corintro:cayley}. Finally, we warmly thank the referees 
for their exceptionally thorough reading and numerous suggestions that helped us improve this paper.

Both authors acknowledge support from the french ANR grant ANR-22-CE40-0004 GOFR.
The first author was partially supported by  European Research Council (ERC GOAT 101053021),
and benefited from the support of the French government ”Investissements d’Avenir” program integrated to France 2030 (ANR-11-LABX-0020-01).
The second named author acknowledges support from the Agence Nationale de la Recherche under Grant ANR-16-CE40-0006, and from the European Union (ERC) under Grant Artin-Out-ME-OA, 101040507. Views and opinions expressed are however those of the authors only and do not necessarily reflect those of the European Union or the European Research Council. Neither the European Union nor the granting authority can be held responsible for them. 

\newpage
{\small
\setcounter{tocdepth}{1}
\tableofcontents
}
\newpage

\part{Preliminaries}

\section{Preliminaries on $\Out(F_N)$}\label{sec:background-out}

\subsection{Splittings and deformation spaces}\label{sec_splittings_def}

\paragraph*{General definitions.}

Let $G$ be a group, and let $T$ be either a simplicial $G$-tree, or an $\mathbb{R}$-tree (i.e.\ a $0$-hyperbolic geodesic metric space) equipped with an isometric $G$-action. An element $g\in G$ acts \emph{elliptically} on $T$ if $g$ fixes a point in $T$, and \emph{hyperbolically} otherwise. The $G$-action on $T$ is \emph{minimal} if $T$ does not contain any proper nonempty $G$-invariant subtree. Assuming that some element of $G$ acts hyperbolically on $T$, the $G$-action on $T$ always contains a unique nonempty minimal $G$-invariant subtree \cite[Proposition~3.1]{CM}. 

A \emph{splitting} of $G$ is a minimal, simplicial $G$-action on a simplicial tree $T$ with no edge inversion. 

A subgroup $A\subseteq G$ is \emph{elliptic} in $S$ if it fixes a point in $S$. 
Given a collection $\calf$ of conjugacy classes of subgroups of $G$, we say that a splitting $S$ is \emph{relative to $\calf$} (or is a $(G,\calf)$-splitting) if every subgroup in $\calf$ is elliptic in $S$.

When $G$ is finitely generated, any splitting of $G$ has finitely many orbits of vertices and edges. This also holds for a $(G,\calf)$-splitting
if $G$ is finitely generated relative to a finite collection of subgroups $\calf$, i.e.\ $G$ is
generated by $\calf$ together with a finite set of elements of $G$.
By Bass-Serre theory, a splitting of $G$ corresponds to writing $G$ as the fundamental group of a graph of groups \cite{Ser},
and we use the word \emph{decomposition} as a synonym of splitting.
Splittings are always considered up to equivariant graph isomorphism.

There is a natural action of $\Aut(G)$ by precomposition on the set of ($G$-equivariant isomorphism classes of) splittings of $G$. Given a splitting $S$, precomposing the action $G\actson S$ by an inner automorphism $\ad_g$ yields a new action $G\actson S$, and the isomorphism of $S$ defined by $x\mapsto gx$ is equivariant with respect to these two actions. 
Therefore, the action of $\Aut(G)$ factors through an action of $\Out(G)$ on the set of splittings.

Given a splitting $S$ of $G$ and a vertex $v\in S$, we denote by $G_v$ the $G$-stabilizer of $v$. Likewise, given an edge $e\subseteq S$, we denote by $G_e$ the $G$-stabilizer of $e$.
We will also always denote by $\Inc_v$ the set of all $G_v$-conjugacy classes of stabilizers of edges incident on $v$.
When $G$ is finitely generated, 
the group $G_v$ is finitely generated relative to $\Inc_v$ (see \cite[Lemma~1.11]{Gui_actions}).

Let $S$ and $S'$ be two splittings of $G$. The splitting $S$ \emph{dominates} $S'$ if there exists a $G$-equivariant map $S\to S'$. This is equivalent to saying that every subgroup of $G$ which fixes a point in $S$, also fixes a point in $S'$. Two splittings are \emph{in the same deformation space} if they dominate each other (see \cite{For_deformation,GL_deformation}).

The splitting $S$ \emph{refines} $S'$ if $S$ collapses onto $S'$, i.e.\ $S'$ is obtained from $S$ by collapsing every edge in a $G$-invariant collection of edges to a point (in other words, there exists a $G$-equivariant alignment-preserving map $S\to S'$). 
Two splittings $S$ and $S'$ are \emph{compatible} if they admit a common refinement. 
Assume further that $G$ is finitely generated and that the splittings $S,S'$ are irreducible (this means
that $G$ contains a non-abelian free subgroup acting freely).
If $S$ and $S'$ are compatible, then they have a smallest common refinement $S\vee S'$, onto which every other common refinement collapses \cite[Proposition A.26]{GL-jsj}. 
This smallest common refinement has the following properties: every subgroup of $G$ which is elliptic in both $S$ and $S'$ is elliptic in $S\vee S'$ \cite[Proposition~A.26(3)]{GL-jsj};
and every edge stabilizer in $S\vee S'$ is an edge stabilizer in either $S$ or $S'$ \cite[Lemma~A.24]{GL-jsj}. 
More generally, if $S_1,\dots,S_n$ are pairwise compatible irreducible splittings of a finitely generated group, then they have a smallest common refinement $S_1\vee\dots\vee S_n$, and it satisfies the above properties.

A \emph{one-edge} splitting of $G$ is a splitting of $G$ that contains a single orbit of edges.

\paragraph*{Free splittings and $\Zmax$-splittings.} 
A \emph{free splitting} of $G$ is a splitting of $G$ whose edge stabilizers are all trivial. 

Given a collection $\calf$ of conjugacy classes of subgroups of $G$, a \emph{Grushko splitting} of $G$ relative to $\calf$ (or a \emph{Grushko $(G,\calf)$-splitting}) 
is a free splitting $S$ of $G$ relative to $\calf$ which is maximal for domination
\cite{Grushko}, \cite[\S 3.2]{GL-jsj}.
If $G$ is finitely generated, or even finitely generated relative to a finite collection of subgroups $\calf$,
then a Grushko splitting of $G$ relative to $\calf$ exists by Grushko theorem.
In particular, if $S$ is a splitting of a finitely generated group $G$,
for any vertex $v\in S$, since $G_v$ is finitely generated relative to $\Inc_v$,
there exists a $(G_v,\Inc_v)$-Grushko splitting.
Any two such splittings are in the same deformation space. 
A \emph{Grushko factor} of a group $G$ relative to a collection of subgroups $\calf$
is a vertex stabilizer in any Grushko $(G,\calf)$-splitting.

A \emph{$\Zmax$-splitting} of $F_N$ is a splitting of $F_N$ in which all edge stabilizers are infinite cyclic and root-closed. The following lemma follows for instance from \cite{BF91}. 

\begin{lemma}\label{lem_bound_zmax}
There is a bound, only depending on $N$, on the number of orbits of edges of a $\Zmax$-splitting of $F_N$ with no vertex of valence $2$.
\end{lemma}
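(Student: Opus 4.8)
The plan is to bound the complexity of a $\Zmax$-splitting $S$ of $F_N$ with no valence-$2$ vertex via an Euler-characteristic / rank argument, exploiting the fact that edge groups are infinite cyclic and maximal (root-closed). Let $G = F_N/\Gamma$ be the finite quotient graph of groups, with vertex set $V$, edge set $E$, vertex groups $\{G_v\}$ and edge groups $\{G_e \cong \bbZ\}$. First I would recall the standard accessibility-type consequence: since $F_N$ is free and all edge groups are infinite cyclic, a result in the spirit of Bestvina--Feighn \cite{BF91} (and Swarup/Dunwoody-type accessibility for splittings over slender groups) gives an a priori bound, depending only on $N$, on the number of orbits of edges of \emph{any reduced} $\Zmax$-splitting of $F_N$; here ``reduced'' means no edge with an endpoint $v$ of valence $1$ whose vertex group $G_v$ equals the incident edge group. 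So the real content is to pass from ``no valence-$2$ vertex'' to a situation controlled by that bound, i.e.\ to rule out the remaining ways a $\Zmax$-splitting can fail to be reduced, or to bound directly how non-reduced it can be.

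The key steps, in order, are: (1) By collapsing a maximal collection of edges that witness non-reducedness, produce from $S$ a reduced $\Zmax$-splitting $S'$ of $F_N$; apply \cite{BF91} to bound the number of edge-orbits of $S'$ by a constant $C_1 = C_1(N)$. (2) Analyze the collapsed edges. An edge $e$ from $v$ to $w$ with $v$ of valence $1$ and $G_v = G_e \cong \bbZ$ cannot occur at all under our hypothesis: a valence-$1$ vertex with $G_v \cong \bbZ$ and $G_e = G_v$ is removable, but if instead $G_v$ properly contains $G_e$ then $v$ has valence $1$ and $G_v$ nontrivial so $S$ is not reduced in a way that still keeps $v$ — but here one checks that a valence-$1$ vertex with $G_v \supsetneq G_e$ contributes genuinely (it encodes a free product piece), and one counts these. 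The point is that with no valence-$2$ vertices, each vertex either has valence $\ge 3$, or has valence $1$ with $G_v \ne G_e$. (3) Bound the number of valence-$\ge 3$ vertices and the number of valence-$1$ vertices separately: if $V_1$ is the number of valence-$1$ vertices and $V_{\ge 3}$ the number of vertices of valence $\ge 3$, then $2|E| = \sum_v \mathrm{val}(v) \ge V_1 + 3 V_{\ge 3}$, and since $G$ is connected with $b_1(G) \le N$ we have $|E| - |V| \le N - 1$, i.e.\ $|E| \le |V| + N - 1 = V_1 + V_{\ge 3} + N - 1$; combining, $V_1 + 3V_{\ge 3} \le 2(V_1 + V_{\ge 3} + N-1)$, so $V_{\ge 3} \le V_1 + 2N - 2$, and then $|E| \le 2V_1 + 2N - 3 + \dots$. (4) Finally bound $V_1$: each valence-$1$ vertex $v$ has $G_v \supsetneq G_e \cong \bbZ$ root-closed, hence $G_v$ is a free group of rank $\ge 2$ containing $\bbZ$ as a malnormal-ish subgroup and contributing to a free splitting of $F_N$; a rank count (the vertex groups, after collapsing edge groups, assemble into $F_N$ via \cite{BF91}-style bounds on the Grushko/cyclic deformation space) caps $V_1$ by a function of $N$.

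The main obstacle I anticipate is step (4) combined with correctly invoking \cite{BF91}: making precise which accessibility statement for splittings of $F_N$ over maximal cyclic subgroups is being used, and verifying that the ``no valence-$2$ vertex'' normalization is exactly what is needed to convert that accessibility bound (which a priori controls reduced splittings, or splittings up to collapsing) into a bound on the actual number of edge-orbits — one must rule out long chains of valence-$1$ vertices and ``bushes'' that a reduced-splitting bound would not see. I would handle this by a direct combinatorial argument: show that between the non-reduced moves allowed (folding valence-$1$ vertices with $G_v = G_e$) and forbidden configurations (valence $2$), the only remaining flexibility is absorbed into the vertex groups, whose total rank is bounded by $N$, so both $|V|$ and $|E|$ are bounded in terms of $N$. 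An alternative, possibly cleaner route is to quote directly a result of Bestvina--Feighn (or its restatement in the $\Out(F_N)$ literature) that the dimension of the cyclic JSJ / the number of edges in a maximal cyclic splitting of $F_N$ is bounded by $3N - 4$ or a similar explicit constant, and then just note that our valence hypothesis makes $S$ one of the splittings to which such a bound applies.
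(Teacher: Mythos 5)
Your overall skeleton (handshake inequality, $b_1(S/F_N)\le N$ so $|E|\le |V|+N-1$, and a bound on the number of valence-one vertices) can indeed be completed into an elementary proof, but as written the decisive step (4) does not close, and the justification you sketch for it is wrong. The total rank of the vertex groups is \emph{not} bounded by $N$: since edge groups are infinite cyclic ($\chi(\bbZ)=0$), the Euler characteristic formula for the graph of groups gives $1-N=\sum_v(1-\mathrm{rk}\,G_v)$, i.e.\ $\sum_v \mathrm{rk}\,G_v=|V|+N-1$, which grows with the number of rank-one vertices. What this identity does give — and what you actually need — is $\sum_v(\mathrm{rk}\,G_v-1)=N-1$, hence at most $N-1$ vertices carry a noncyclic group. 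Now a valence-one vertex has $G_v\supsetneq G_e$ (otherwise minimality fails), and $G_e$ is maximal cyclic (root-closed), so $G_v$ is noncyclic; therefore $V_1\le N-1$, and your inequalities then give $|E|\le 2V_1+3N-3\le 5N-5$. None of this appears in your plan; it is replaced by an appeal to ``BF91-style bounds on the Grushko/cyclic deformation space'' and the incorrect rank bound. A second, smaller issue: your handshake $2|E|=\sum_v\mathrm{val}(v)$ concerns degrees in the quotient graph, whereas ``no vertex of valence $2$'' is a condition on the tree; the two coincide exactly at vertices with cyclic stabilizer (there every incident edge group equals $G_v$, so all indices are $1$), and the discrepancy at noncyclic vertices is harmless only because, again by the Euler-characteristic count, there are at most $N-1$ of them.

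Step (1) also does not do what you hope. A $\Zmax$-splitting with no valence-two vertices is typically \emph{not} reduced in the Bestvina--Feighn sense: any non-loop edge incident to a vertex with cyclic stabilizer is collapsible, since maximality forces $G_v=G_e$ there. Collapsing to a reduced splitting $S'$ and invoking \cite{BF91} bounds $|E(S')|$, but the number of collapsed edges is exactly the quantity you are trying to bound, so this detour is circular unless supplemented by the same counting as above — which you half-acknowledge in your ``obstacle'' paragraph without resolving it. For comparison, the paper offers no argument at all: it simply says the lemma ``follows for instance from \cite{BF91}'', so your concluding ``alternative cleaner route'' is essentially the paper's route, and it carries the same (small but real) burden of bridging from ``reduced'' to ``no valence-two vertex''. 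The cleanest fix is to drop step (1) entirely and run the direct count: at most $N-1$ noncyclic vertices by the Euler characteristic, every cyclic vertex has degree at least $3$ (minimality rules out degree $1$, the hypothesis rules out $2$), and $|E|\le|V|+N-1$; together these bound $|E|$ linearly in $N$.
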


We mention that given a splitting $S$ of $F_N$ with non-trivial cyclic edge stabilizers, one can construct a canonical $\Zmax$-splitting $S_{\Zmax}$ as follows.

\begin{de}[see {\cite[Lemma~9.27]{GL-jsj}}]\label{dfn_zmaxise} 
Let $S$ be a splitting of $F_N$ whose edge stabilizers are all isomorphic to $\mathbb{Z}$. 
Given an edge $e$ of $S$, define $\hat G_e$ as the maximal cyclic subgroup containing $G_e$.
Let $\sim$ be the equivalence relation on $S$ generated by
$x\sim x'$ if there exists an edge $e$ containing $x$ such that $x'\in \hat G_e.x$.

One defines $S_{\Zmax}$ as the minimal subtree of the quotient $S/\sim$.
\end{de}

By \cite[Lemma~9.27]{GL-jsj}, $S_{\Zmax}$ is a $\Zmax$ splitting of $F_N$, and 
every  $\Zmax$-splitting of $F_N$ dominated by $S$ is dominated by $S_{\Zmax}$ (although $G$ is assumed to be a one-ended hyperbolic group in \cite[\S 9.5]{GL-jsj}, this is not used in the proof of \cite[Lemma 9.27]{GL-jsj}).
It will be useful to describe $S$ assuming that $S_\Zmax$ is trivial.

\begin{lemma}[See Figure \ref{fig_chaussettes}]\label{lem:zmaxise_trivial}
  Let $S$ be a splitting of $F_N$ with infinite cyclic edge stabilizers.
Assume that $S_\Zmax$ is trivial (i.e.\ reduced to a point).

Then $S/F_N$ is a tree of groups, with a unique vertex $v$ whose stabilizer is not abelian.

In particular, assuming moreover that every edge of $S$ is adjacent to at least one vertex with non-abelian stabilizer,
it follows that $S/F_N$ is a star, all vertices $u_1,\dots,u_n$ distinct from $v$ are joined by a unique edge $e_i$ to $v$,
$G_{u_i}$ is cyclic and $[G_{u_i}:G_{e_i}]\geq 2$.
\end{lemma}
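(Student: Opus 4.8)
The plan is to analyze the structure of $S/F_N$ using the hypothesis that $S_\Zmax$ is trivial, and to show this forces all but one vertex group to be abelian.

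First I would unpack Definition~\ref{dfn_zmaxise}: $S_\Zmax$ is obtained from $S$ by identifying edges $e \sim e'$ whenever $e = ge'$ for some $g$ with $g^k$ fixing $e$, then passing to the minimal subtree. Saying $S_\Zmax$ is trivial means that after this identification (and taking the minimal subtree) we get a point. The key consequence I would extract is that for any edge $e$ of $S$ with stabilizer $G_e = \langle c\rangle$ (infinite cyclic), the root-closure $\langle \sqrt{c}\rangle$ acts on $S$ with a fixed point or, more precisely, the collapse identifying all edges in the same $\Zmax$-orbit as $e$ already produces the trivial tree — so the quotient graph $S/F_N$ is a tree (no loops), and collapsing appropriately sends everything to a point. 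I would use this to argue that $S/F_N$ must be a tree of groups whose underlying graph is a tree: if there were a loop, or two ``independent'' directions of edges, the $\Zmax$-ification could not collapse to a point.

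Next, the heart of the argument: I would show there is a unique vertex $v$ of $S/F_N$ with non-abelian stabilizer. Suppose $u \neq v$ are two vertices of $S/F_N$; since the underlying graph is a tree, removing the (open) edges on the geodesic between them separates $S$, giving a decomposition $F_N = A *_C B$ (or with more factors) where $C$ is one of the infinite cyclic edge groups along the path. The point is that $S_\Zmax$ being trivial forces, for each edge $e$ along such a path, that the ambient group $F_N$ is generated by $G_e$ together with "one side", i.e.\ each side of each edge contributes something cyclic — more carefully, I would argue by induction on the number of vertices: if $S/F_N$ has two vertices $v, v'$ with non-abelian stabilizer, pick an edge $e$ on the path between them, with $G_e = \langle c\rangle$; both $G_v$ and $G_{v'}$ are non-abelian and lie on opposite sides of $e$, so neither side of the one-edge collapse $S \to S_e$ (collapsing everything except the orbit of $e$) is cyclic. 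But then in $S_e$, which is a one-edge splitting $F_N = A *_{\langle c\rangle} B$ or an HNN extension with both "partial" vertex groups non-cyclic, the $\Zmax$-ification of $S_e$ is non-trivial (one cannot collapse a genuinely bi-non-cyclic amalgam over $\mathbb Z$ to a point by root-closure identifications), and since $S_e$ is dominated by $S$, by the cited \cite[Lemma~9.27]{GL-jsj} $(S_e)_\Zmax$ is dominated by $S_\Zmax$, contradicting triviality of $S_\Zmax$. Hence at most one vertex group is non-abelian; existence of at least one follows because $F_N$ itself is non-abelian and is built as the fundamental group of the tree of groups $S/F_N$ with infinite cyclic edge groups, so not all vertex groups can be abelian (an iterated amalgam of abelian groups over cyclic subgroups with trivial $\Zmax$-ification would be abelian or virtually so — in any case not a free group of rank $N\geq 2$ unless rank $1$, excluded).

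Finally, under the extra hypothesis that every edge of $S$ is adjacent to a vertex with non-abelian stabilizer, since there is only one such vertex $v$, every edge is adjacent to $v$; as $S/F_N$ is a tree this means $S/F_N$ is a star centered at $v$, with leaves $u_1,\dots,u_n$, each joined to $v$ by a single edge $e_i$. Each $G_{u_i}$ is abelian; being a vertex group of a splitting of $F_N$ it is a subgroup of a free group, hence free, hence cyclic (it contains $G_{e_i} \cong \mathbb Z$, so it is infinite cyclic). The inequality $[G_{u_i} : G_{e_i}] \geq 2$ is because $S$ is minimal and $u_i$ has valence one in the quotient: a valence-one vertex in a minimal tree of groups cannot have its vertex group equal to the incident edge group (otherwise one could collapse that edge), and since both are infinite cyclic the index is at least $2$. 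The main obstacle I expect is making the inductive "cannot collapse a bi-non-cyclic amalgam over $\mathbb Z$ to a point" step fully rigorous — i.e.\ precisely characterizing when $S_\Zmax$ is trivial in terms of the local structure at each edge — but the domination statement from \cite[Lemma~9.27]{GL-jsj} combined with Lemma~\ref{lem_bound_zmax} should handle the bookkeeping, since it reduces the question to one-edge quotients where the $\Zmax$-ification is transparent.
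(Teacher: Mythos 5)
Your route is essentially correct but genuinely different from the paper's, and it is more roundabout. The paper's proof is three lines: writing $\tilde S_{\Zmax}$ for the quotient of $S$ by the relation of Definition~\ref{dfn_zmaxise} (before passing to the minimal subtree), it observes that the relation only identifies edges lying in the same $F_N$-orbit, so $\tilde S_{\Zmax}/F_N$ and $S/F_N$ are isomorphic graphs; triviality of $S_{\Zmax}$ says precisely that $F_N$ fixes a point of $\tilde S_{\Zmax}$, which forces the quotient graph to be a tree (hence $S/F_N$ is a tree), and since any vertex of $\tilde S_{\Zmax}$ other than the global fixed point has stabilizer fixing an edge on the path to that point — an edge stabilizer being cyclic — every vertex of $S$ with non-abelian stabilizer must map to the fixed point, so there is at most one orbit of them. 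Your proposal instead reduces, via one-edge collapses $S_e$ and the domination statement of \cite[Lemma~9.27]{GL-jsj}, to the claim that a one-edge amalgam $F_N=A*_{\langle c\rangle}B$ with both sides non-cyclic (and, for treeness, an HNN collapse over $\mathbb{Z}$) has non-trivial $\Zmax$-ification. The reduction itself is sound: $(S_e)_{\Zmax}$ is a $\Zmax$-splitting dominated by $S$, hence dominated by the trivial $S_{\Zmax}$, hence trivial. But the one-edge claim, which you call ``transparent'' and also flag as the main obstacle, is exactly where the content sits, and the cleanest way to prove it (and likewise your assertion that a loop in $S/F_N$ would obstruct triviality) is the paper's own observation that the folding does not change the quotient graph, combined with ellipticity of $F_N$ and cyclicity of the edge stabilizers of $\tilde S_{\Zmax}$; a direct subgroup analysis of the folded one-edge splitting is possible but requires genuine care about where the root closure of $\langle c\rangle$ sits, so the reduction does not really save work. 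In short: what your approach buys is a reduction to one-edge splittings via a quotable domination lemma; what the paper's buys is that a single global observation yields treeness and uniqueness simultaneously, with no induction and no case analysis.

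Two smaller points. In the existence step, ``an iterated amalgam of abelian groups over cyclic subgroups would be abelian or virtually so'' is false — torus-knot groups $\langle a,b\mid a^p=b^q\rangle$ arise this way and are neither; the fact you actually need, and which is true, is that a graph of infinite cyclic groups over infinite cyclic groups is never free of rank at least $2$ (for instance because its rational Euler characteristic is $0$ while $\chi(F_N)=1-N<0$). The ``in particular'' part of your argument (every edge meets the unique non-abelian vertex, so the quotient tree is a star; leaf groups are free abelian containing $\mathbb{Z}$, hence infinite cyclic; and $[G_{u_i}:G_{e_i}]\ge 2$ since equality would produce a valence-one orbit of vertices contradicting minimality) is fine and matches what the paper leaves implicit.
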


\begin{figure}[ht!]
  \centering 
  \includegraphics{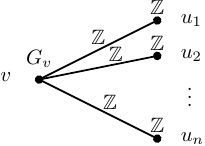}
  \caption{Splittings such that $S_{\Zmax}$ is trivial.}
  \label{fig_chaussettes}
\end{figure}

\begin{proof} 
  Denote by $\tilde S_{\Zmax}$ the quotient of $S$ by the equivalence relation of Definition~\ref{dfn_zmaxise},
  and note that $S/F_N$ and $\tilde S_{\Zmax}/F_N$ are isomorphic as graphs.
  Our assumption says that $F_N$ fixes a point in $\tilde S_\Zmax$. In particular $\tilde S_\Zmax/F_N$ is a tree, hence so is $S/F_N$.

  If $v,v'\in S$ are two vertices with non-abelian stabilizer and in distinct orbits, then $v$ and $v'$ have distinct images in $\tilde S_{\Zmax}/F_N$,
and since edge stabilizers of $S_{\Zmax}$ are cyclic, this prevents $F_N$ from having a global fixed point in $\tilde S_{\Zmax}$.
\end{proof}

\subsection{Free factor systems}\label{sec:factor-systems}

A \emph{free factor} of $F_N$ is a subgroup of $F_N$ that arises as a point stabilizer in a (maybe trivial) free splitting of $F_N$ (in particular, $\{1\}$ and $F_N$ are free factors). Given a collection $\calf$ of conjugacy classes of subgroups of $F_N$, a \emph{free factor} of $F_N$ relative to $\calf$ is a subgroup of $F_N$ that arises as a point stabilizer in some free splitting of $F_N$ relative to $\calf$. 
A free factor $A$ of $F_N$ relative to $\calf$ is \emph{proper} if $A\neq F_N$, $A\neq \{1\}$ and $A$ is not conjugate into a group appearing in $\calf$.

A \emph{free factor system} of $F_N$ is a collection of conjugacy classes of
 non-trivial free factors that arise as the non-trivial point stabilizers in some non-trivial (i.e.\ not reduced to a point) free splitting of $F_N$, see \cite[\S~2.6]{BFH}. 
For instance $\es$ is a free factor system, but $\{[F_N]\}$ is not (on this latter point our convention differs from \cite{BFH}). 

A free factor system $\calf$ of $F_N$ is \emph{sporadic} if either $F_N$ splits as an HNN extension $F_N=A\ast$ and $\calf=\{[A]\}$, or $F_N$ splits as a free product $F_N=A\ast B$ and $\calf=\{[A],[B]\}$. Otherwise $\calf$ is \emph{non-sporadic}. 
The main difference between sporadic and non-sporadic free factor systems is that
a sporadic free factor system $\calf$ satisfies the following property:
there exists a unique free splitting $S$ of $F_N$ such that $S/F_N$ has only one edge, and $\calf$ is the collection of all conjugacy classes of vertex stabilizers of $S$. In particular when $\calf$ is sporadic, this splitting $S$ is invariant under all outer automorphisms of $F_N$ preserving $\calf$. 

In train-track theory (developed in \cite{BH,BFH,FeH}), sporadic free factor systems correspond to \emph{one-edge extensions} \cite{HM2}, and non-sporadic free factor systems to \emph{multi-edge extensions}; they are related to polynomially-growing and exponentially-growing strata in relative train tracks.

If $\calf=\{[P_1],\dots,[P_k]\}$ is a collection of conjugacy classes of subgroups of $F_N$, we say that a subgroup $A\subset F_N$ is \emph{$\calf$-peripheral} if $A$ is trivial or conjugate into some $P_i$.
If $\calf,\calf'$ are two collections of conjugacy class of subgroups of $F_N$, we write
$\calf\sqsubseteq\calf'$ if every group in $\calf$ is $\calf'$-peripheral.
When $\calf,\calf'$ are free factor systems, this happens if and only if any Grushko $(F_N,\calf)$-splitting dominates any Grushko $(F_N,\calf')$-splitting.
This defines a partial ordering on free factor systems of $F_N$.

If $\calf$ if a free factor system of $F_N$, and $A$ is a proper free factor of $F_N$ relative to $\calf$,
then there is a smallest free factor system $\calf'$ with $\{[A]\},\calf\sqsubseteq\calf'$,
it consists of $[A]$ together with the conjugacy classes of groups in $\calf$ that are not conjugate into $A$.
We denote it by $\calf\vee \{[A]\}$.

If $\calf=\{[P_1],\dots,[P_n]\}$ is a collection of conjugacy classes of subgroups of $F_N$, and $A$ is a subgroup of $F_N$,
we let $\calf_{|A}$ be the set of all $A$-conjugacy classes of non-trivial subgroups of $A$ of the form $P_i^g\cap A$ for $g\in F_N$.
When $\calf$ is a free factor system,
$\calf_{|A}$ also coincides with the set of all conjugacy classes (in $A$) of non-trivial vertex stabilizers in the $A$-minimal subtree of any Grushko $(F_N,\calf)$-splitting. It is a free factor system of $\cala$ unless $A$ is $\calf$-peripheral.

\subsection{Relative outer automorphism groups}

Given a collection $\calf$ of conjugacy classes of subgroups of $F_N$, we denote by $\Out(F_N,\calf)$ the subgroup of $\Out(F_N)$ made of all outer automorphisms $\alpha$ such that for every conjugacy class $[A]\in\calf$, one has $\alpha([A])=[A]$. We denote by $\Out(F_N,\calf^{(t)})$ the subgroup made of all outer automorphisms $\alpha$ such that for every representative $\tilde\alpha$ of $\alpha$ in $\Aut(F_N)$, and every subgroup $A\subseteq F_N$ whose conjugacy class belongs to $\calf$, there exists $g_{\tilde\alpha,A}\in F_N$ such that $\tilde\alpha_{|A}$ is given by the conjugation by $g_{\tilde\alpha,A}$.

An outer automorphism $\alpha\in\Out(F_N,\calf)$ is \emph{atoroidal} relative to $\calf$ if any element whose conjugacy class is periodic under $\alpha$ is $\calf$-peripheral. We say that $\alpha$ is \emph{fully irreducible} relative to $\calf$ if
no power $\alpha^k$ with $k\neq 0$ preserves the conjugacy class of a proper free factor of $F_N$ relative to $\calf$.

\subsection{$\bbR$-trees, transverse families and transverse coverings}
A subtree of an $\bbR$-tree $T$ is \emph{non-degenerate} if it is non-empty and not reduced to one point. 
Non-degenerate segments are called \emph{arcs}.
Given a point $x\in T$, the \emph{directions} at $x$ are the connected
components of $T\setminus\{x\}$.
A \emph{branch point} $x\in T$ is a point such that there are at least 3 directions at $x$.

Let $T$ be an $\bbR$-tree equipped with a minimal isometric $F_N$-action. 
A \emph{transverse family} in $T$ is an $F_N$-invariant collection $\caly$ of nondegenerate subtrees of $T$  (i.e.\ nonempty and not reduced to a single point) such that any two distinct trees in $\caly$ intersect in at most a point. A \emph{transverse covering} of $T$ is a transverse family $\caly$ made of closed subtrees such that every segment in $T$ is covered by finitely many subtrees in $\caly$.

The \emph{skeleton} of a transverse covering $\caly$ is the bipartite simplicial tree $S$ having one vertex $v_Y$ for every subtree in $\caly$, one vertex $v_x$ for every point $x\in T$ that belongs to at least two trees in $\caly$, with an edge joining $v_x$ to $v_Y$ whenever $x\in Y$, see \cite[Definition~4.8]{Gui04}.
The skeleton $S$ is compatible with $T$ in the following sense, which generalizes the notion of compatibility of splittings (see \cite[\S A.3]{GL-jsj}): there exists an $\bbR$-tree $
\hat T$ with two equivariant alignment preserving projections $\hat T\ra T$ and $\hat T\ra S$.
Indeed, by \cite[Lemma~4.7]{Gui04}, $T$ splits as a graph of actions with underlying graph of groups $S/G$ (in the sense of \cite[Definition~4.3]{Gui04}), and this clearly implies compatibility of $S$ and $T$.

An $F_N$-tree $T$ is \emph{mixing} (in the sense of Morgan \cite{Mor}) if given any two segments $I,J\subseteq T$, there exists a finite collection $\{g_1,\dots,g_k\}$ of elements of $F_N$ such that $J\subseteq g_1I\cup\dots\cup g_kI$. In a mixing tree, every transverse family made of closed subtrees is a transverse covering.

\subsection{Outer space and its closure}

Let $\calf$ be a free factor system of $F_N$. A \emph{Grushko $(F_N,\calf)$-tree} is a minimal, simplicial metric $(F_N,\calf)$-tree whose underlying simplicial tree is a Grushko splitting of $F_N$ relative to $\calf$. The \emph{unprojectivized relative Outer space} $\calo(F_N,\calf)$ is the space of all $F_N$-equivariant isometry classes of Grushko $(F_N,\calf)$-trees (see \cite{GL-os}). When $\calf=\emptyset$, this is nothing but the unprojectivized version of Culler and Vogtmann's Outer space \cite{CV}.
When $\calf$ is clear from the context, we will simply write $\calo$ instead of $\calo(F_N,\calf)$. We denote by $\mathbb{P}\calo$ the projectivized outer space, where trees are considered up to homothety instead of isometry. The group $\Out(F_N,\calf)$ naturally acts on $\calo$ and $\mathbb{P}\calo$ by precomposition of the action.

The projectivized Outer space $\mathbb{P}\calo$ can be compactified by looking at its closure $\mathbb{P}\overline{\calo}$ in the space of all projective classes of $(F_N,\calf)$-trees  (i.e.\ $\mathbb{R}$-trees equipped with a minimal isometric action of $F_N$ for which every subgroup in $\calf$ is elliptic), equipped with the equivariant Gromov--Hausdorff topology introduced by Paulin in \cite{Pau} (or equivalently,  see \cite{Pau3}, with the length function topology studied by Culler and Morgan \cite{CM}). The closure is identified with the space of all projective classes of \emph{very small} $(F_N,\calf)$-trees, i.e.\ $(F_N,\calf)$-trees in which all arc stabilizers are either trivial or nonperipheral, cyclic and root-closed, and all stabilizers of tripods are trivial (\cite{CL,BF} for the case where $\calf=\emptyset$, and \cite{Hor1} in the relative setting). We also denote by $\overline{\calo}$ the unprojectivized version of $\mathbb{P}\overline{\calo}$, where trees are considered up to isometry instead of homothety. The $\Out(F_N,\calf)$-actions on $\calo$ and $\mathbb{P}\calo$ extend to actions on $\overline{\calo}$ and $\mathbb{P}\overline{\calo}$. We let $\partial\calo=\overline{\calo}\setminus\calo$.


By \cite{GL,Hor1}, if $T\in \ol\calo$, there are finitely many orbits of branch points in $T$,
and finitely many orbits of directions at branch points.
If every orbit is dense, then
the stabilizer of any arc is trivial (see e.g.\ \cite[Proposition~5.17]{Hor1}).


\subsection{Arational trees} 

Arational trees were first introduced by Reynolds \cite{Rey} as a free group analogue of arational measured foliations on surfaces; 
see \cite{Hor} for the version relative to a free factor system needed here.
A tree $T\in\partial\calo(F_N,\calf)$ is \emph{arational} if 
for every proper $(F_N,\calf)$-free factor $A$, $A$ is not elliptic in $T$ and the $A$-action on its minimal subtree $T_A$ is a Grushko $(A,\calf_{|A})$-tree. 

In any arational $(F_N,\calf)$-tree, $F_N$ orbits are dense (see \cite[Lemma~4.9]{Hor}). Therefore, in an arational $(F_N,\calf)$-tree, the stabilizer of any non-degenerate segment is trivial (see e.g.\ \cite[Proposition~5.17]{Hor1}).
In fact, 
arational $(F_N,\calf)$-trees are always mixing \cite[Lemma~4.9]{Hor}.

In the following lemma, we record a basic fact on arational trees mentioned in the paragraph following  \cite[Definition 12.2]{GH1}, relying on \cite[Lemma~4.6]{Hor}. 

\begin{lemma}\label{lemma:arational-stabilizers}
Let $T\in \partial\calo(F_N,\calf)$ be an arational tree. Then either 
\begin{enumerate}
\item  $T$ is relatively free, i.e.\ every point stabilizer is $\calf$-peripheral, 
\item or up to conjugation, there is a unique point stabilizer which is not $\calf$-peripheral, and this point stabilizer is cyclic.
\end{enumerate}
\end{lemma}

We mention that the second option in the lemma occurs when $T$ comes from an arational measured lamination on a surface in the precise sense of \cite[Definition~4.3]{Hor}.

For every $T\in\partial\calo(F_N,\calf)$ and every $\alpha\in\Out(F_N,\calf)$, it follows from the definition of the action of $\Out(F_N,\calf)$ on $\mathbb{P}\baro(F_N,\calf)$ that $\alpha$ fixes the homothety class of $T$ if and only if for every $\tilde\alpha\in\Aut(F_N)$ representing $\alpha$, there exists a homothety $H_{\tilde\alpha}$ of $T$ (i.e.\ a map that multiplies all distances by the same factor $\lambda$) which is $\tilde\alpha$-equivariant in the sense that $$H_{\tilde\alpha}(gx)=\tilde\alpha(g)H_{\tilde\alpha}(x)$$ for every $g\in F_N$ and every $x\in T$. Once the representative $\tilde\alpha$ is chosen, the homothety $H_{\tilde\alpha}$ is unique. The factor $\lambda$ does not depend on the chosen representative and is called the \emph{dilatation} of $\alpha$ with respect to $T$.

\begin{prop}\label{prop:invariant-arational-tree}
Let $\calf$ be a non-sporadic  free factor system of $F_N$, and let $\alpha\in\Out(F_N,\calf)$ be an outer automorphism which is fully irreducible relative to $\calf$.

Then there exists an arational $(F_N,\calf)$-tree $T$ whose homothety class is $\alpha$-invariant, and such that the dilatation of $\alpha$ with respect to $T$ is not equal to $1$. 
\end{prop}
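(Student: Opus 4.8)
The strategy is the classical Handel--Mosher style construction of an invariant tree for a fully irreducible automorphism, carried over to the relative setting. Since $\alpha$ is fully irreducible relative to the non-sporadic free factor system $\calf$, it acts on the relative free factor graph (or on $\mathbb{P}\baro(F_N,\calf)$) with north-south dynamics, and the idea is to produce the attracting fixed point as a limit of iterates. Concretely, I would start from any Grushko $(F_N,\calf)$-tree $T_0\in\calo(F_N,\calf)$, choose a representative $\tilde\alpha\in\Aut(F_N)$, and consider the sequence of projective classes $[T_0\cdot\alpha^n]$ in the compact space $\mathbb{P}\baro(F_N,\calf)$. Passing to a subsequence, these converge to some $[T]\in\mathbb{P}\baro(F_N,\calf)$. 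One then argues that $[T]$ is $\alpha$-invariant: this is a standard compactness-plus-dynamics argument, using that $\alpha$ acts with bounded geometry on Outer space in the sense that translation lengths grow at a controlled rate, so the limit is independent of the subsequence (up to passing to $T$ versus $T\cdot\alpha$) and fixed by $\alpha$.

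\textbf{Key steps.} First, set up the limit tree $T$ as above and record that its homothety class is $\alpha$-invariant; by the discussion preceding the proposition, this yields a representative $\tilde\alpha$ with an associated $\tilde\alpha$-equivariant homothety $H_{\tilde\alpha}$ of dilatation $\lambda$. Second, show $\lambda\neq 1$: if $\lambda=1$ then $H_{\tilde\alpha}$ is an isometry, and one extracts from this that the translation length function of $F_N$ on $T$ is $\alpha$-invariant; combined with the fact that $T$ was obtained as a limit of rescaled iterates of a Grushko tree under a fully irreducible $\alpha$, this forces $T$ to be (projectively) periodic in $\calo$ itself or to have a degenerate expansion factor, contradicting either the non-sporadicity of $\calf$ or the full irreducibility of $\alpha$ (a fully irreducible relative automorphism has infinite order in $\Out(F_N,\calf)$ and cannot fix a point of $\calo(F_N,\calf)$, nor can it fix a simplicial tree with nontrivial stabilizers compatible with $\calf$). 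Third, show $T$ is arational: suppose some proper $(F_N,\calf)$-free factor $A$ is elliptic in $T$, or that $T_A$ fails to be a Grushko $(A,\calf_{|A})$-tree; in either case the set of such "bad" free factors is $\alpha$-invariant up to the dynamics, and by a standard argument (the iterates of a fixed proper free factor under a fully irreducible map exhaust the complexity, so cannot all remain elliptic in the fixed tree) one produces an $\alpha^k$-invariant conjugacy class of proper free factor relative to $\calf$, contradicting full irreducibility. This last step is where one uses that $\calf$ is non-sporadic: in the sporadic case $\mathbb{P}\baro$ may be too small for the argument, whereas here arationality is exactly the complement of the "reducible" behaviour.

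\textbf{Main obstacle.} The delicate point is the combination of the second and third steps: ruling out $\lambda=1$ and establishing arationality both require knowing that the limit tree genuinely "sees" the expansion of $\alpha$ and does not collapse to something fixed by a power of $\alpha$ for cheaper reasons. Concretely, the hard part is to control what happens when a proper free factor $A$ (or its translates) could be elliptic in $T$ while $T_A$ is still nondegenerate but not Grushko --- one must rule out that $T$ is a nontrivial refinement combining an arational piece with simplicial pieces carrying the $\calf$-peripheral structure. The cleanest way around this is to invoke the relative train track / relative Handel--Mosher machinery (as in the relative versions due to Bestvina--Feighn--Handel and the relative Outer space literature cited in the paper): a fully irreducible relative $\alpha$ admits an expanding relative train track representative on a Grushko $(F_N,\calf)$-graph, and its stable lamination / limit tree is automatically arational with $\lambda$ equal to the Perron--Frobenius eigenvalue of the transition matrix, which is strictly greater than $1$ precisely because $\alpha$ does not preserve any proper relative free factor. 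Thus I would phrase the proof as: take the relative train track representative $f$ of $\alpha$, let $T$ be the $\bbR$-tree dual to the attracting lamination (equivalently the limit $\lim [T_0\cdot\alpha^n]$), quote that $T$ is arational (Reynolds, in the relative form used elsewhere in the paper) and that the expansion factor is the PF eigenvalue $\lambda>1$, and conclude.
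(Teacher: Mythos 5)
Your proposal is correct and, in its final ``cleanest way'' form, is essentially the paper's own argument: the paper invokes Francaviglia--Martino (\cite[Theorem~8.23]{FM}) to get an invariant train track axis in $\calo(F_N,\calf)$, takes a limiting tree $T$ of that axis, deduces arationality exactly by the finitely-many-canonical-reducing-factors argument (Reynolds/\cite[Section~4.4]{Hor}) so that a power of $\alpha$ would otherwise fix a proper relative free factor, and cites \cite[Remark~6.4]{GH} for the dilatation being $\neq 1$. Your first sketch (subsequential limits plus ``bounded geometry'') is too vague to stand on its own, but since you replace it by the train-track route, the overall proof matches the paper's.
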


\begin{proof}
It follows from work of Francaviglia and Martino \cite[Theorem~8.23]{FM} that $\alpha$ has an invariant axis in $\calo(F_N,\calf)$. Let $T$ be a limiting tree of such an axis. Then $T$ is arational, as otherwise $\alpha$ would have a non-trivial power that preserves one of the finitely many canonical reducing factors of $T$ (see \cite{Rey} or \cite[Section~4.4]{Hor}). That the dilatation of $\alpha$ acting on $T$ is not equal to $1$ follows from \cite[Remark~6.4]{GH}.  
\end{proof}

\subsection{QH vertex groups}

The notions introduced under this heading will only be used in Sections~\ref{sec:collection-zmax} and~\ref{sec:collection-factors}. 

QH vertex groups are a key notion in the theory of JSJ decompositions of groups, they are used in the description of certain canonical splittings \cite{Sel}. 
Informally, this is a surface attached to the rest of the group by its boundary.
 QH stands for \emph{quadratically hanging}, and refers to the fact that the attachment can be described algebraically by quadratic words.

 Let $\Sigma$ be a compact connected surface with boundary with negative Euler characteristic. A \emph{boundary subgroup} is a subgroup of $\pi_1(\Sigma)$ which is conjugate to a subgroup of the fundamental
group of a boundary component of $\Sigma$. Maximal boundary subgroups are the conjugates
of the fundamental groups of the boundary components of $\Sigma$. 
\begin{de}\label{dfn_cleanQH}
  Let $G$ be a group, and let $S$ be a splitting of $G$. A vertex $v$ of $S$ is a \emph{clean QH vertex} if it comes with an identification $G_v\simeq\pi_1(\Sigma)$ where $\Sigma$ is a (maybe
  non-orientable)  compact connected hyperbolic surface with geodesic boundary, and each incident
  edge group is a maximal boundary subgroup of $\pi_1(\Sigma)$, and
  this induces an injection from the set of incident edges to the set
  of boundary components of $\Sigma$.  

  Boundary components not in the image of this injection are called \emph{unused}.
\end{de}

This definition will be sufficient in most situations, but some arguments
require a more general definition.

\begin{de}[{\cite[Definition~5.13]{GL-jsj}}]\label{dfn_QH}
Let $A$ be a group and $\calq$ a collection of conjugacy classes of subgroups of $A$. The pair $(A,\calq)$ is \emph{QH with trivial fiber} if there exists a connected compact hyperbolic $2$-orbifold $\Sigma$ with boundary, 
and an identification $A\simeq \pi_1(\Sigma)$ such that, under this identification, each group in $\calq$ is either finite or a boundary subgroup of $\Sigma$.

A pair $(A,\calq)$ is \emph{QH with fiber $F$} if $F\normal A$, and $(A/F,\bar\calq)$ is QH with trivial fiber, where $\bar\calq$
is the image of $\calq$ in $A/F$.

Given a collection $\calp$ of conjugacy classes of subgroups of $G$, a vertex $v$ of a splitting of $(G,\calp)$ is a \emph{QH vertex relative to $\calp$} if 
$(G_v,\Inc_v\cup \calp_{|G_v})$ is QH (maybe with fiber). 
\end{de}

\subsection{Twists of splittings}\label{sec_twists}
Given a group $G$ and an element $g\in G$, we denote by $\ad_g$ the inner automorphism $x\mapsto gxg\m$.

Let  $S$ be a splitting of a group $G$, and $e=uv$ an edge of $S$.
Following Levitt \cite{Lev}, given an element $z\in G_u$ that centralizes $G_e$,
one defines as follows an outer automorphism $\tau\in\Out(G)$ called the \emph{twist} by $z$ around $e$ near $u$.

First assume that the image of $e$ in $S/G$ is separating, and consider the corresponding amalgam $G=A*_{G_e} B$, where $A$ contains $G_u$ and $B$ contains $G_v$.
Then $\tau$ is the outer automorphism of $G$ defined as the identity on $A$ and as $\ad_z$ in restriction to $B$.
Similarly, if $e$ is non-separating, and $t$ maps the origin of $e$ to its terminus after collapsing all edges  that are not in the orbit of $e$,
then the corresponding HNN extension gives a presentation of the form
$G=\grp{A,t| t\m g t= \phi(g)}$. Then  $\tau$ is the outer automorphism of $G$ defined as
the identity on $A$ and sending $t$ to $zt$. Notice that the outer automorphism $\tau$ defined in this way does not depend on the choice of the stable letter $t$, as any other choice is of the form $t'=ta$ for some $a\in G_u$.

In both cases (separating and non-separating), changing $e,u$ by a translate, and conjugating $z$ accordingly does not change $\tau$ as an outer automorphism,
so it may be described from the data in the graph of groups.
Moreover if $\tau$ is a twist around $e$ near $u$ and $\tau'$ is a twist around $e'$ near $u'$ and if the pair $(e,u)$ is not in the orbit of $(e',u')$,
then $\tau$ and $\tau'$  commute.
The \emph{group of twists} of $S$ is the subgroup $\Tw$ of $\Out(G)$ generated by all twists.

An important property is that $\tau$ preserves $S$: for any representative $\tilde \tau \in\Aut(G)$ of $\tau$,
there is a $\tilde \tau$-equivariant isomorphism $f:S\ra S$ (i.e.\ such that $f(gx)=\tilde{\tau}(g)f(x)$ for all $x\in S$ and all $g\in F_N$).
\\

We now give three examples that will play an important role.

If $S$ is a splitting of $F_N$ such that all edge stabilizers are non-abelian, then the group of twists is trivial because edge stabilizers
have trivial centralizer.

If $S$ is a $\Zmax$-splitting of $F_N$, and $e=uv$ is an edge in $S$, then $G_e$ is its own centralizer.
It follows that  twists around $e$ near $u$ agree with twists around $e$ near $v$: we just say that $\tau$ is a twist around $e$. Moreover, in this case, the group of twists is abelian.

If $S$ is a splitting of $F_N$ with trivial edge stabilizers, the group of twists can be much larger.
For instance, if $S$ is dual to the HNN extension $A*$ (with $A$ a corank one free factor),
then assuming that $A$ is non-abelian, the group of twists of $S$ is isomorphic to $A\times A$,
generated by twists around $e$  near both sides of $e$.
Explicitly, if $t$ is a stable letter of the HNN extension, then
$\Tw$ is the set of outer automorphisms that  have a representative in $\Aut(F_N)$ acting as the identity on $A$ and sending the stable letter $t$ to
$atb\m$ for some $(a,b)\in A\times A$.
If $A$ is cyclic, then there is an extra relation among the twists
and $\Tw\simeq A$ (in fact, if $A$ is an arbitrary group, the group of twists of the HNN extension $A*$ is isomorphic to $(A\times A)/Z(A)$
where $Z(A)$ is the center of $A$, diagonally embedded in $A\times A$).

If $S$ is dual to a splitting $F_N=A*B$ with $A,B$ non-abelian, then $\Tw\simeq A\times B$.
If $A$ is abelian, then twists near $A$ are trivial as outer automorphisms.
\\

We note that a twist around an edge of a splitting with trivial stabilizer can be seen
as a twist of a cyclic splitting.
For example, if $S$ is dual to a free product decomposition $F_N=A*B$,
and  $z\in A$, then the twist $\tau$ by $z$ around $e$ near $u$ can be seen as a twist of the cyclic splitting $S'$
dual to the decomposition $F_N=A*_{\grp{z}} \grp{B,z}$.
More generally, if $e=uv$ is an edge of $S$ with trivial stabilizer,
and $z\in G_u$, then the twist $\tau$ by $z$ around $e$ near $u$ can be seen as a twist of the cyclic splitting $S'$
obtained by folding $e$ with  its translate $ze$.
More precisely, if $e'=u'v'$ is the image of $e$ in $S'$,
then $\tau$ is also the twist by $z$ around $e'$ near $u'$.

\subsection{The exact sequence for automorphisms of splittings}\label{sec_exact}

Given a splitting  $S$ of $F_N$, we denote by $\Stab(S)$ the stabilizer of $S$ in $\Out(F_N)$.
Recall that $\alpha\in\Stab(S)$ if and only if for  some (equivalently every) $\tilde\alpha\in\Aut(F_N)$ representing $\alpha$
there exists a (unique if $S$ is not a line) $\tilde \alpha$-equivariant isometry $f_{\tilde\alpha}:S\ra S$.
The subgroup $\Stab^0(S)$ consisting of outer automorphisms $\alpha$ such that $f_{\tilde\alpha}$ acts trivially on $S/F_N$ (this does not depend on the choice of $\tilde\alpha$)
has finite index.
For each $\alpha\in \Stab^0(S)$, the restriction of $\alpha$ to $G_v$ gives a well defined outer automorphism of $G_v$.
One thus gets a restriction map $\rho:\Stab^0(S)\ra \prod_{v\in S/F_N} \Out(G_v)$.

\begin{prop}[Levitt \cite{Lev}]\label{prop_suite_exacte}
  Let $S$ be a splitting of $F_N$ with finitely generated edge stabilizers.
  Then one has an exact sequence
  $$1\ra \Tw \ra \Stab^0(S) \xrightarrow{\rho} \prod_{v\in S/F_N} \Out(G_v,\Inc_v)$$
  and the image of $\rho$ contains $\prod_{v\in S/F_N} \Out(G_v,\Inc_v^{(t)})$.
\end{prop}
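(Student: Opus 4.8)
The statement is due to Levitt \cite{Lev}, so the plan is to reconstruct his argument in the form needed here. First I would set up the basic objects: given $\alpha\in\Stab^0(S)$ and a representative $\tilde\alpha\in\Aut(F_N)$, there is a unique $\tilde\alpha$-equivariant isometry $f_{\tilde\alpha}:S\to S$ (uniqueness because $S$ is not a line, having finitely generated free vertex groups of a free group of rank $\ge 2$), and it acts trivially on $S/F_N$ by definition of $\Stab^0$. Changing the representative $\tilde\alpha$ to $\ad_g\circ\tilde\alpha$ replaces $f_{\tilde\alpha}$ by $g\cdot f_{\tilde\alpha}$, so for each vertex orbit one gets a well-defined outer restriction, and since $f_{\tilde\alpha}$ fixes each vertex orbit it sends $G_v$ to $G_v$ up to conjugacy, mapping incident edge groups to incident edge groups; hence $\rho(\alpha)\in\prod_v\Out(G_v,\Inc_v)$ is well-defined and clearly a homomorphism.

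The core is exactness at $\Stab^0(S)$, i.e.\ $\ker\rho=\Tw$. The inclusion $\Tw\subseteq\ker\rho$ is immediate: a twist acts as the identity (up to inner) on each vertex group by construction (Section~\ref{sec_twists}), and it preserves $S$. For the reverse inclusion, suppose $\rho(\alpha)=1$. Pick a representative $\tilde\alpha$ and the isometry $f=f_{\tilde\alpha}$, which fixes $S/F_N$. Working in the quotient graph of groups, I would first arrange (after modifying $\tilde\alpha$ by an inner automorphism and choosing a lift of a spanning tree) that $f$ actually fixes pointwise a lift of the spanning tree of $S/F_N$; this uses that $\alpha$ restricts trivially to each $G_v$, so we can successively conjugate to make $\tilde\alpha$ the identity on each vertex group along the spanning tree and simultaneously straighten $f$. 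Then $\tilde\alpha$ is the identity on all the (chosen conjugates of) vertex groups and can only move the stable letters $t_e$ associated to the edges outside the spanning tree; equivariance of $f$ forces $\tilde\alpha(t_e)=a_e t_e b_e^{-1}$ with $a_e\in G_{o(e)}$, $b_e\in G_{t(e)}$ centralizing the appropriate edge group. This exhibits $\alpha$ as a product of twists, so $\alpha\in\Tw$.

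Finally, for the statement about the image, I need that every element of $\prod_v\Out(G_v,\Inc_v^{(t)})$ lifts to $\Stab^0(S)$. Given a tuple $(\beta_v)_v$ with each $\beta_v$ having a representative $\tilde\beta_v\in\Aut(G_v)$ that acts on each incident edge group as conjugation by some element $g_{\tilde\beta_v,e}\in G_v$, I would build an automorphism of $F_N$ directly from the graph-of-groups presentation: define it to be $\tilde\beta_v$ on each vertex group and on each edge stable letter $t_e$ multiply by suitable correcting elements (the $g_{\tilde\beta_v,e}$) so that the defining relations $t_e^{-1}\iota_e(x)t_e=\iota_{\bar e}(x)$ are preserved. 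The condition that the $\beta_v$ act as inner automorphisms on the edge groups (the ``$(t)$'' condition) is exactly what makes this correction possible — it lets the edge relations be matched up — and this produces an element of $\Stab^0(S)$ restricting to $(\beta_v)_v$. The main obstacle is the straightening step in the proof of $\ker\rho\subseteq\Tw$: one must carefully track how conjugating the representative $\tilde\alpha$ interacts with the equivariant isometry $f$ across the whole spanning tree simultaneously, and check that after straightening, the residual action on stable letters really does land in the centralizers of edge groups so that the resulting automorphisms are genuine twists as defined in Section~\ref{sec_twists}. Since $S$ has finitely generated edge stabilizers, all the relevant centralizers and the bookkeeping stay finitary, which is what makes the argument go through.
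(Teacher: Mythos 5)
There is a genuine gap, and it sits exactly at the step you flag as "the main obstacle" but then wave through. When you straighten $\tilde\alpha$ along a spanning tree and examine its effect on a stable letter, preservation of the edge relation $t_e^{-1}\iota_e(x)t_e=\iota_{\bar e}(x)$ does \emph{not} force the multipliers $a_e,b_e$ to centralize the edge group: it only forces $a_e$ to normalize $\iota_e(G_e)$, $b_e$ to normalize $\iota_{\bar e}(G_e)$, and the two induced conjugations of $G_e$ to cancel. In Levitt's terminology what you obtain this way are \emph{bitwists}, not twists, and indeed Levitt's Proposition~2.2 (which the paper cites) says precisely that $\ker\rho$ is generated by bitwists; in general a bitwist need not be a twist. (There is also a smaller imprecision earlier in your straightening: conjugating $\tilde\alpha$ so that it is literally the identity on one vertex group disturbs the normalization at the other vertex groups, which is exactly why the normalizing — rather than centralizing — multipliers appear.) Your closing remark that finite generation of the edge stabilizers just keeps the "bookkeeping finitary" misidentifies the role of that hypothesis: it is not a convenience, it is the whole point of the extra argument.

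The paper's proof is, accordingly, quite different in structure: it does not reprove Levitt's results but cites them — the image statement is \cite[Proposition~2.1]{Lev} and the generation of $\ker\rho$ by bitwists is \cite[Proposition~2.2]{Lev} — and the only new content is the verification that in this setting bitwists are twists. That verification is where finite generation enters: a non-trivial finitely generated subgroup of a free group has finite index in its normalizer, so $N(G_e)$ contains the elliptic subgroup $G_e$ with finite index and hence fixes a point of $S$; since the two bitwist multipliers $z\in G_u$, $z'\in G_v$ lie in $N(G_e)$ and fix $u$, $v$ respectively, they both fix that point, so one of them fixes the edge $e$, and the bitwist collapses to the twist by $zz'^{-1}$ at $e$ near one endpoint (with $zz'^{-1}$ centralizing $G_e$ by the bitwist condition). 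Your proposal needs this argument (or an equivalent one) spelled out; without it, the inclusion $\ker\rho\subseteq\Tw$ is not established, since your construction only lands in the group generated by bitwists.
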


\begin{proof}
 The assertion on the image of $\rho$ is \cite[Proposition 2.1]{Lev}.
 By \cite[Proposition 2.2]{Lev}, $\ker\rho$ is generated by \emph{bitwists} (see Section 2.4 in \cite{Lev}).
  Thus, we just have to check that bitwists are twists.
  
  Consider an edge $e=uv$ of $S$, and assume that $e$ is separating in $S/F_N$ and induces a splitting of the form $F_N=A*_{G_e} B$.
  Given $z\in G_u$ and $z'\in G_v$ normalizing $G_e$ and such that $zz'^{-1}$ centralizes $G_e$,
  one defines a bitwist by $\ad_z$ in restriction to $A$ and  $\ad_{z'}$ in restriction to $B$.
  Because $G_e$ is finitely generated, it has finite index in its normalizer $N(G_e)$, so $N(G_e)$ fixes a point in $S$.
  It follows that $z$ and $z'$ fix this point so one of them fixes $e$. This implies that $zz'^{-1}$ fixes one endpoint
  of $e$, and the bitwist is actually a twist by $zz'^{-1}$ at $e$ near one of its endpoints.
  The case of an HNN extension is similar and left to the reader.
\end{proof}

\begin{rk}\label{rk_normalizer}
  In many situations, one knows that a subgroup $H\subset \Stab^0(S)$ acts trivially on edge groups in the following sense:
  for every $\tilde\alpha\in \Aut(F_N)$ representing an element of $H$, and every edge group $G_e$, there exists $g\in F_N$
  such that $\tilde\alpha_{|G_e}=(\ad_g)_{|G_e}$.
  If each edge stabilizer is its own  normalizer, this implies that $\rho(H)$ is contained in $\prod_{v\in S/F_N} \Out(G_v,\Inc_v^{(t)})$.
\end{rk}

The following lemma is an immediate consequence.
\begin{lemma}\label{lem_produit}
  Let $S$ be a splitting of $F_N$ whose edge stabilizers are non-abelian and finitely generated.
  
  Then $\Stab^0(S)$ contains a subgroup that maps isomorphically to $\prod_{v\in S/F_N} \Out(G_v,\Inc_v^{(t)})$  under the restriction map.
\end{lemma}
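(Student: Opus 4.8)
The plan is to deduce this from the exact sequence of Proposition~\ref{prop_suite_exacte} together with Remark~\ref{rk_normalizer}. The first observation is that when all edge stabilizers of $S$ are non-abelian, the group of twists $\Tw$ is trivial: indeed, a twist around an edge $e$ near $u$ is defined from an element $z\in G_u$ centralizing $G_e$, but a non-abelian free group has trivial centralizer inside $F_N$ (it is not contained in any larger abelian-by-anything... more directly, the centralizer of a non-abelian subgroup of a free group is trivial), so no non-trivial twists exist. Hence the exact sequence of Proposition~\ref{prop_suite_exacte} becomes an \emph{injection}
$$\rho:\Stab^0(S)\hookrightarrow \prod_{v\in S/F_N}\Out(G_v,\Inc_v),$$
whose image contains the subgroup $\prod_{v\in S/F_N}\Out(G_v,\Inc_v^{(t)})$.

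The second step is to produce an explicit subgroup of $\Stab^0(S)$ mapping isomorphically onto $\prod_{v\in S/F_N}\Out(G_v,\Inc_v^{(t)})$. Since $\rho$ is injective, it suffices to set $G := \rho^{-1}\big(\prod_{v}\Out(G_v,\Inc_v^{(t)})\big)$; because $\prod_v\Out(G_v,\Inc_v^{(t)})$ is contained in the image of $\rho$ by Proposition~\ref{prop_suite_exacte}, the restriction of $\rho$ to $G$ is surjective onto this product, and it is injective because $\rho$ itself is injective on $\Stab^0(S)$. Thus $\rho|_G$ is the desired isomorphism. One should double-check that $\Inc_v^{(t)}$ here is to be interpreted via the "trivial fiber / twisted" convention of Section~\ref{sec:factor-systems}, i.e.\ $\Out(G_v,\Inc_v^{(t)})$ denotes the automorphisms of $G_v$ fixing each incident edge group up to conjugacy in the strong (every-representative) sense; this is exactly the subgroup appearing in Proposition~\ref{prop_suite_exacte}, so no extra work is needed.

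There is essentially only one point that requires genuine care, and I expect it to be the main (minor) obstacle: verifying that the triviality of $\Tw$ really does follow from edge stabilizers being non-abelian and finitely generated, in the exact form needed. The cleanest argument is: a twist $\tau$ around $e=uv$ near $u$ is built from $z\in G_u$ with $z$ centralizing $G_e$; but $G_e$ is a non-abelian subgroup of the free group $F_N$, hence is not cyclic, and the centralizer in $F_N$ of a non-cyclic subgroup of $F_N$ is trivial; therefore $z=1$ and $\tau$ is trivial as an outer automorphism. (The finite generation hypothesis is what lets us invoke Proposition~\ref{prop_suite_exacte} in the first place, so it is used, but the vanishing of $\Tw$ only uses non-abelianness.) Once this is in place, the lemma is immediate: take $G=\rho^{-1}\big(\prod_v\Out(G_v,\Inc_v^{(t)})\big)$ and note that $\rho|_G$ is an isomorphism onto $\prod_{v\in S/F_N}\Out(G_v,\Inc_v^{(t)})$.
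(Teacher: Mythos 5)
Your proof is correct and follows essentially the same route as the paper: non-abelian edge stabilizers have trivial centralizer in $F_N$, so $\Tw=\{1\}$, the map $\rho$ of Proposition~\ref{prop_suite_exacte} is injective, and since its image contains $\prod_{v\in S/F_N}\Out(G_v,\Inc_v^{(t)})$ the preimage of that product maps isomorphically onto it. The only cosmetic remark is that your centralizer argument (non-cyclic, hence non-abelian, subgroups of a free group have trivial centralizer) is exactly the paper's, so no further verification is needed.
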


\begin{proof}
  Since edge stabilizers are non-abelian, they have trivial centralizer in $F_N$ so $\Tw$ is trivial and
$\rho$ is one-to-one.
Since  $\prod_{v\in S/F_N} \Out(G_v,\Inc_v^{(t)})$ is contained in the image of $\rho$, the lemma follows.
\end{proof}

\subsection{Some properties of $\IA$}\label{sec_IA}

The group $\IA$ is the finite-index subgroup of $\Out(F_N)$ defined as the kernel of the natural homomorphism $\Out(F_N)\to \mathrm{GL}_N(\mathbb{Z}/3\mathbb{Z})$. In the present paper, it will often be convenient to work in $\IA$ to avoid finite-order phenomena. 
For instance, $\IA$ is torsion-free, see e.g.\ \cite[\S II.1, p.114]{HM2}.
We now collect the various properties of $\IA$ we will use.

\begin{theo}[{Handel--Mosher \cite[Theorem~II.3.1]{HM2}}]\label{theo:ia}
  Let $H\subseteq\IA$ be a subgroup, and assume that there exists a free factor $A$ of $F_N$ whose conjugacy class is
   invariant under a finite index subgroup of $H$.

Then the conjugacy class of $A$ is $H$-invariant.
\end{theo}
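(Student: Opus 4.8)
The statement we are asked to recover is: if $H\subseteq\IA$ has a finite-index subgroup $H_0$ preserving the conjugacy class $[A]$ of a free factor $A$, then in fact all of $H$ preserves $[A]$. Since the result is attributed to Handel--Mosher \cite{HM2}, I would not expect the paper to give an independent proof; rather, I would explain the mechanism behind it, which rests on the torsion-freeness of $\IA$ and the uniqueness of ``square roots'' of periodic behaviour inside $\IA$.

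The plan is as follows. First, since $[H:H_0]<\infty$, for any $h\in H$ some power $h^n$ (with $n\geq 1$) lies in $H_0$, so $h^n$ preserves $[A]$; thus the conjugacy class $[A]$ is \emph{periodic} under every element of $H$. It therefore suffices to prove the purely one-element statement: if $\phi\in\IA$ and $\phi^n([A])=[A]$ for some $n\geq 1$ and some free factor $A$, then $\phi([A])=[A]$. Indeed, granting this, each $h\in H$ fixes $[A]$ individually, hence so does $H$. So I would reduce to the case $H=\langle\phi\rangle$ cyclic.

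The core of the one-element case is a train-track / relative-train-track argument: one represents $\phi$ by a relative train track map and analyses the periodic orbits of conjugacy classes of free factors. The key point is that the group $\mathrm{GL}_N(\mathbb{Z}/3\mathbb{Z})$ (more precisely, the action on a suitable finite quotient, e.g.\ $H_1(F_N;\mathbb{Z}/3\mathbb{Z})$ together with the finitely many combinatorial types of Grushko splittings that can arise as supports of an invariant free factor system containing $[A]$) acts so that the permutation of the finitely many candidate free factors in the $\phi$-orbit of $[A]$ is realised by an element that must be trivial once we are inside $\IA$: an element of $\IA$ acts trivially on $H_1(F_N;\mathbb{Z}/3\mathbb{Z})$, and a free factor is, up to finitely many choices controlled by this homology datum, determined by its image in homology mod $3$ together with its rank; so the cyclic group $\langle\phi\rangle$ cannot permute $[A]$ nontrivially within its orbit. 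Equivalently: the stabiliser in $\Out(F_N)$ of the \emph{finite set} $\{[A],\phi[A],\dots,\phi^{n-1}[A]\}$ surjects onto a subgroup of the symmetric group on that set, with kernel containing $\IA\cap(\text{that stabiliser})$ acting as the identity permutation, because the permutation is detected by the mod-$3$ homology action. Hence $\phi$, lying in $\IA$, fixes each element of the orbit, in particular $[A]$.

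The step I expect to be the main obstacle is making precise the claim that the permutation action of the cyclic group on the $\phi$-orbit of $[A]$ is \emph{faithfully detected} by data on which $\IA$ acts trivially. This is exactly the content that requires Handel--Mosher's machinery: one needs that a free factor invariant under a finite-index subgroup is invariant under the whole group, and the natural proof of this passes through their structure theory for subgroups of $\IA$ (weak attracting laminations, the fact that $\IA$ is torsion-free so has no nontrivial finite-order permutation phenomena, and control of periodic free factor systems). In the write-up I would therefore simply cite \cite[Theorem~II.3.1]{HM2} for the precise statement, remarking that the torsion-freeness of $\IA$ and the fact that it acts trivially on $H_1(F_N;\mathbb{Z}/3\mathbb{Z})$ are what rule out a nontrivial periodic orbit, and that these same features are also responsible for the companion facts quoted in Section~\ref{sec_IA} (periodic free splittings are invariant, periodic conjugacy classes of elements are invariant).
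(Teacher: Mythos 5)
The paper gives no proof of this statement at all: it is quoted verbatim as Handel--Mosher's Theorem~II.3.1, and the subgroup formulation is exactly what is cited. Your ultimate plan --- observe that $[A]$ is periodic under every element of $H$ because each $h\in H$ has a power in the finite-index subgroup, and then invoke \cite[Theorem~II.3.1]{HM2} for the statement that an $\IA$-periodic conjugacy class of free factor is actually invariant --- therefore coincides with what the paper does, and is fine as far as it goes.

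One caution about the ``mechanism'' you sketch, though: the claim that a free factor is determined, up to finitely many choices, by its rank together with its image in $H_1(F_N;\mathbb{Z}/3\mathbb{Z})$ is false, so the permutation of the orbit $\{[A],\phi[A],\dots,\phi^{n-1}[A]\}$ is \emph{not} detected by the mod-$3$ homology action in any such elementary way. For instance, in $F_3=\langle a,b,c\rangle$ the subgroups $\langle a c^{3k},b\rangle$ for $k\in\mathbb{Z}$ are corank-one free factors (each extends to a basis), all with the same image in mod-$3$ homology, and they represent infinitely many distinct conjugacy classes. So there is no finiteness statement of the kind your heuristic relies on, and one cannot hope to rule out a nontrivial periodic orbit by homological bookkeeping alone; the genuine content is Handel--Mosher's relative train track and lamination analysis, which is precisely why both you and the paper end up citing it rather than reproving it. Since you flag this yourself and fall back on the citation, the write-up is acceptable, but the heuristic paragraph should be deleted or reworded so as not to suggest the theorem admits such a shortcut.
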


\begin{theo}[{Handel--Mosher \cite[Theorem~II.4.1]{HM2}}]\label{theo:ia-element}
  Let $H\subseteq\IA$ be a subgroup, and assume that there exists an element $g$ of $F_N$ whose conjugacy class is
  invariant under a finite index subgroup of $H$.

Then the conjugacy class of $g$ is $H$-invariant.
\end{theo}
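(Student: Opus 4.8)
\textbf{Proof plan for Theorem~\ref{theo:ia-element} (Handel--Mosher).}

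The statement to prove is: if $H\subseteq\IA$ has a finite-index subgroup $H_0$ fixing the conjugacy class of an element $g\in F_N$, then $H$ itself fixes $[g]$. The plan is to reduce this to the corresponding statement about free factors, Theorem~\ref{theo:ia} (already available in the excerpt), together with the observation that $\IA$ is torsion-free. First I would record that $\IA$ is torsion-free: this is classical (a faithful action on $\mathrm{GL}_N(\bbZ/3\bbZ)$-level structure kills torsion, cf.\ the standard argument that the kernel of $\Out(F_N)\to\mathrm{GL}_N(\bbZ/m\bbZ)$ is torsion-free for $m\ge 3$). This is what allows ``periodic implies invariant'' statements to hold on the nose rather than merely up to a power.

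Next I would separate two cases according to whether $\grp{g}$ embeds in a proper free factor or $g$ generates $F_N$ (only possible if $N=1$, which is degenerate, so effectively $\grp{g}$ always sits in a proper free factor unless $g=1$, which is trivial). The key point: let $A$ be the smallest free factor of $F_N$ containing a conjugate of $g$; equivalently, pass to the cyclic subgroup $\grp{g}$ and consider that $[\grp{g}]$ is a root-closed cyclic subgroup after replacing $g$ by a generator of its root closure (root-taking is canonical, so $H_0$ also fixes $[\grp{g_0}]$ where $g_0$ is a root of $g$; and since $\IA$ is torsion-free one checks there is no finite-order ambiguity in the root). The conjugacy class of this canonical cyclic subgroup is $H_0$-invariant, hence by canonicity of ``smallest free factor containing it'' (this assignment is $\Out(F_N)$-equivariant), the conjugacy class $[A]$ is $H_0$-invariant. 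By Theorem~\ref{theo:ia}, $[A]$ is then $H$-invariant.

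Now work inside $A$: the element $g$ (or $g_0$) generates $A$ if $A$ has rank one, and then $[g]$ is determined in $A$ up to inversion by $[A]$; since $\IA$ is torsion-free and acts trivially mod $3$, the only way an automorphism of $A\cong\bbZ$ compatible with the $\IA$-condition can act is trivially, so $[g]$ is $H$-invariant. If $A$ has rank $\ge 2$, then by minimality of $A$ the element $g$ is not contained in any proper free factor of $A$, i.e.\ $g$ is ``not a proper power and fills $A$'' — but more to the point one uses that $H$ acts on $A$ (through the restriction map $\Stab(A)\to\Out(A)$, well-defined up to the inner ambiguity, and compatible with the $\IA$ condition by the mod-$3$ argument of Section~\ref{sec_IA}), and $H_0$ fixes $[g]$ in $A$; so we have reduced to the same statement with $g$ now a ``filling'' element of $A$. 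At that point I would invoke the structure of the attracting/repelling laminations or train-track representatives: a finite-index subgroup fixing the conjugacy class of a filling element forces a periodic attracting lamination, hence (torsion-freeness) an invariant one, and then standard lamination/index arguments pin down $[g]$ on the nose for the full group $H$. Alternatively, and more in the spirit of the excerpt, one can avoid laminations entirely by induction on rank: Theorem~\ref{theo:ia} hands us the invariant free factor, and the restriction of $H$ to that factor again lies in the corresponding $\IA$, so induction applies once $A\subsetneq F_N$; the only genuinely new content is the base case where $g$ generates (up to root and conjugacy) all of $F_N$ relative to having no smaller factor, which is exactly where Handel--Mosher's train-track machinery is needed.

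\textbf{Main obstacle.} The crux is the ``irreducible'' base case: showing that if $g$ is not carried by any proper free factor and a finite-index $H_0\le H\le\IA$ fixes $[g]$, then $H$ fixes $[g]$. Everything else is bookkeeping around canonicity of constructions (root closure, smallest free factor, restriction maps) plus the torsion-freeness of $\IA$. For that base case I would follow Handel--Mosher: take a relative train track representative $\phi$ of an element of $H$, analyze the finite set of periodic conjugacy classes (which is $\phi$-equivariantly canonical), observe $[g]$ lies in it, and then use that $H$ permutes this finite set while $H_0$ fixes $[g]$ — so $H/H_0$ acts on a finite set fixed pointwise by the identity coset, forcing the $H$-orbit of $[g]$ to be a single point once one rules out that a nontrivial element of $H$ could permute it, which again comes down to $\IA$ being torsion-free (a permutation of a finite set by a torsion-free group with a finite-index kernel-of-the-action is trivial). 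I expect the write-up to be short if one is allowed to cite \cite{HM2} for the train-track input, and the only real care needed is making the reduction to a proper free factor genuinely canonical so that Theorem~\ref{theo:ia} can be applied.
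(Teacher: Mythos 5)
The paper offers no proof of this statement at all: it is imported verbatim from Handel--Mosher \cite[Theorem~II.4.1]{HM2}, exactly like Theorem~\ref{theo:ia}, so there is no internal argument to compare yours with. Judged on its own terms, your proposal has a genuine gap at precisely the point you call "the only real care needed". The step that upgrades "periodic under each element of $H$" (equivalently, the finite $H$-orbit of $[g]$ coming from the finite-index stabilizer $H_0$) to "$H$-invariant" is attributed to torsion-freeness of $\IA$, via the claim that a permutation action of a torsion-free group on a finite set with finite-index kernel must be trivial. That claim is false: \emph{every} action of \emph{any} group on a finite set has kernel of finite index, and torsion-free groups surject onto nontrivial finite groups all the time ($\bbZ$ acting on a two-point set through $\bbZ/2$). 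So torsion-freeness of $\IA$ gives you nothing here. The actual content of Handel--Mosher's theorem is exactly that an element of $\IA$ cannot nontrivially permute a periodic conjugacy class, and this is established by their relative train track / attracting lamination analysis, not by formal group theory; your base case either silently cites that result (which is circular, since it is the statement being proved) or is unsupported.

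The surrounding reduction is also of limited help. Passing to the minimal free factor $A$ carrying $g$ and invoking Theorem~\ref{theo:ia} is legitimate (the assignment $g\mapsto[A]$ is equivariant, and the restriction of $H$ to $A$ does land in $\mathrm{IA}(A,\bbZ/3\bbZ)$ because $H_1(A;\bbZ/3\bbZ)$ is a compatible direct summand of $H_1(F_N;\bbZ/3\bbZ)$), and the rank-one case does work, since inverting a basis element of a rank-one free factor is visible in homology mod $3$. But the filling base case is not a small residue: it is essentially the full strength of the theorem, and since Theorem~\ref{theo:ia} is itself a Handel--Mosher import, your argument in the end rests entirely on the source you would need to cite anyway. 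The honest route, and the one the paper takes, is simply to quote \cite[Theorem~II.4.1]{HM2}; if you want a proof sketch, the periodic-implies-fixed statement for a single $\phi\in\IA$ must be taken from their train track machinery, after which the subgroup statement follows by the finite-orbit observation you correctly made.
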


The following lemma was established for example in \cite{HW} as an easy consequence of the above hard theorem of Handel and Mosher.

\begin{lemma}[{see e.g.\ \cite[Lemma~2.6]{HW}}]\label{lemma:ia-free-splitting}
Let $H\subseteq\IA$ be a subgroup, and let $S$ be a free splitting of $F_N$ which is invariant under a finite index subgroup of $H$. Then $S$ is $H$-invariant, and $H$ acts as the identity on $S/F_N$.
\end{lemma}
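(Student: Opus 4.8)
\textbf{Proof plan for Lemma~\ref{lemma:ia-free-splitting}.}
The plan is to reduce the statement to the two Handel--Mosher theorems quoted above, namely Theorem~\ref{theo:ia} (invariance of conjugacy classes of free factors) and Theorem~\ref{theo:ia-element} (invariance of conjugacy classes of elements). Let $H_0\subseteq H$ be a finite index subgroup preserving $S$. First I would record that the vertex stabilizers of $S$ are free factors of $F_N$ (they are point stabilizers of a free splitting, hence free factors by definition), and that the set of $F_N$-conjugacy classes of vertex stabilizers is a finite set permuted by $H_0$; after passing to a further finite index subgroup $H_1\subseteq H_0$ we may assume $H_1$ fixes each such conjugacy class. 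By Theorem~\ref{theo:ia}, applied to each vertex group $A$, the conjugacy class $[A]$ is then $H$-invariant, not merely $H_1$-invariant.

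The next step is to promote invariance of the conjugacy classes of vertex groups to invariance of the splitting $S$ itself and triviality of the $H$-action on the quotient graph $S/F_N$. Here I would use the fact that the deformation space of a free splitting is essentially determined by its elliptic subgroups, and in the free (Grushko) case a free splitting of $F_N$ is in fact rigid in a suitable sense once one knows the conjugacy classes of the vertex groups together with some combinatorial bookkeeping of the edge structure. Concretely, knowing the (finite) set of conjugacy classes of vertex stabilizers determines the free factor system $\calf_S$ they define, and since $S$ is a Grushko splitting relative to $\calf_S$, all such splittings lie in one deformation space; the point is that for free splittings this deformation space already consists of a single $F_N$-tree up to the combinatorics, but to be safe one argues directly: any $\alpha\in H$ sends $S$ to a free splitting $\alpha S$ with the same vertex stabilizers up to conjugacy, and one checks this forces $\alpha S = S$ as an $F_N$-tree. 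For the action on $S/F_N$: if $\alpha\in H$ induced a nontrivial permutation of the finitely many orbits of edges/vertices, then passing to a finite index subgroup $H_2\subseteq H$ acting trivially on $S/F_N$, the quotient $H/H_2$ would be a nontrivial finite group acting on the finite graph $S/F_N$; but one then produces an element whose conjugacy class (or a vertex group conjugacy class) is $H_2$-invariant but genuinely permuted by $H$, contradicting Theorem~\ref{theo:ia} or Theorem~\ref{theo:ia-element} exactly as above. So $H$ itself acts trivially on $S/F_N$, and then for each vertex $v$ of $S/F_N$ the stabilizer $G_v$ has $H$-invariant conjugacy class, which is what is needed to conclude that $S$ (and not just its deformation space) is genuinely $H$-invariant.

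I expect the main obstacle to be the bookkeeping in the second step: carefully arguing that $H$-invariance of the conjugacy classes of all vertex groups, plus triviality on the quotient graph, upgrades to honest $F_N$-equivariant invariance of the simplicial tree $S$, rather than mere invariance of its deformation space. The cleanest route is probably to observe that a free splitting is determined by its set of elliptic subgroups among a Grushko deformation space, so one is reduced to checking that within the Grushko deformation space relative to $\calf_S$ there is no room for $H$ to move $S$ — and here one can invoke that $S$ is already a Grushko splitting for the free factor system $\calf_S$ it defines, so any $\alpha\in H$ fixes its deformation space, and fixing moreover all the relevant conjugacy data pins down the tree. Alternatively, and perhaps more robustly, one can cite that this exact statement is the content of \cite[Lemma~2.6]{HW}, whose proof is precisely the reduction to Theorems~\ref{theo:ia} and~\ref{theo:ia-element} sketched above; the lemma here is stated as a known consequence, so the proof can legitimately be short and delegate the hard input to Handel--Mosher.
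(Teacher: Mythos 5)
The paper itself gives no argument for this lemma: it simply records it as a known statement, citing \cite[Lemma~2.6]{HW} and noting that it is a consequence of the Handel--Mosher theorems. So your fallback option of delegating to \cite{HW} is exactly what the paper does, and is unobjectionable. The problem is with the independent argument you sketch, whose central step is not correct. You claim that once the conjugacy classes of the vertex stabilizers of $S$ are $H$-invariant, the tree $S$ itself is pinned down --- either because ``for free splittings this deformation space already consists of a single $F_N$-tree'', or because ``a free splitting is determined by its set of elliptic subgroups among a Grushko deformation space''. Both formulations are false. A deformation space is by definition the set of trees with the same elliptic subgroups, so a tree is never determined by its elliptic data within its deformation space except in sporadic cases; in general the Grushko deformation space relative to $\calf_S$ is huge. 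Concretely, if all vertex stabilizers of $S$ are trivial (e.g.\ $S$ a Cayley tree of $F_N$), then $\calf_S=\emptyset$ and the relevant deformation space is the whole simplicial part of Culler--Vogtmann Outer space \cite{CV}: two Cayley trees with respect to different bases have identical (empty) collections of nontrivial vertex stabilizers but are not equivariantly isomorphic. So ``$\alpha S$ has the same vertex stabilizers up to conjugacy as $S$'' does not force $\alpha S=S$, and this is precisely the point your bookkeeping step needs and does not supply.

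The same issue affects your argument that $H$ acts trivially on $S/F_N$: when the orbits being permuted are orbits of edges, or of vertices with trivial stabilizer, there is no conjugacy class of a subgroup or of an element available to be ``genuinely permuted'', so Theorems~\ref{theo:ia} and~\ref{theo:ia-element} alone cannot conclude. The determination statement that \emph{is} true, and which a Handel--Mosher-based proof can exploit, is for one-edge free splittings: such a splitting is determined by its (sporadic) free factor system, so each one-edge collapse of $S$, having finite $H$-orbit, is $H$-invariant by Theorem~\ref{theo:ia}; one must then reassemble $S$ from its one-edge collapses (as a minimal common refinement) and handle the action on the quotient graph with an additional input, e.g.\ an argument on $H_1(F_N;\mathbb{Z}/3\mathbb{Z})$ of the kind the paper uses for $\Zmax$-splittings in Proposition~\ref{prop:ia-zmax}. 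As written, your sketch skips exactly this part of the work.
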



\begin{lemma}\label{lemma:ia-nice}
  Let $S$ be a splitting of $F_N$ and $v\in S$. 
  Assume that there is an edge incident on $v$ with trivial stabilizer, or that $G_v$ has a non-trivial free splitting relative to all incident edge stabilizers.
  
Let $\alpha\in\Stab_{\IA}(S)$, let $\tilde\alpha\in\Aut(F_N)$ be a representative of $\alpha$, and let $I_{\tilde\alpha}$ be an $\tilde\alpha$-equivariant isomorphism of $S$.

Then $I_{\tilde\alpha}(v)$ belongs to the same $F_N$-orbit as $v$.
\end{lemma}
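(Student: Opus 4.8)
The goal is to show that an $\IA$-automorphism $\alpha$ stabilizing $S$ cannot permute the $F_N$-orbit of a Grushko vertex $v$ (or a vertex incident on a trivially-stabilized edge) in a non-trivial way, in the sense that the equivariant isomorphism $I_{\tilde\alpha}$ must send $v$ into its own $F_N$-orbit. The natural strategy is to extract from $v$ a canonical conjugacy class of a subgroup of $F_N$ that must be $\alpha$-periodic, and then invoke the Handel--Mosher rigidity of $\IA$ (Theorem~\ref{theo:ia} or Lemma~\ref{lemma:ia-free-splitting}) to upgrade periodicity to invariance, which in turn forces $I_{\tilde\alpha}(v)$ to lie in the orbit of $v$.

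First I would deal with the case where some edge $e=uv$ incident on $v$ has trivial stabilizer. Collapsing every edge of $S$ outside the $F_N$-orbit of $e$ produces a one-edge free splitting $S_0$ of $F_N$, in which $v$ has image a vertex $\bar v$ with stabilizer $G_{\bar v}$ a free factor of $F_N$. The splitting $S_0$ is canonically associated to the pair $(S,[e])$, hence is $\alpha$-invariant up to the finite-index ambiguity of the action on $S_0/F_N$; but $S_0$ has at most two orbits of vertices, so after passing to a finite-index subgroup the vertex orbit $[v]$ in $S_0$, and therefore the conjugacy class $[G_{\bar v}]$, is invariant. By Theorem~\ref{theo:ia} the conjugacy class $[G_{\bar v}]$ is in fact $H$-invariant, i.e.\ $\tilde\alpha(G_{\bar v})$ is conjugate to $G_{\bar v}$; equivalently $I_{\tilde\alpha}$ sends $\bar v$ into its own $F_N$-orbit in $S_0$. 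Since the $F_N$-orbit of $v$ in $S$ maps bijectively (among vertices adjacent to the collapsed orbit of $e$) to the orbit of $\bar v$, and $I_{\tilde\alpha}$ is equivariant and compatible with the collapse map, this pins down $I_{\tilde\alpha}(v)$ to the orbit of $v$ in $S$.

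For the Grushko case, the idea is similar but uses a refinement of $S$ rather than a collapse. By hypothesis the Grushko deformation space of $(G_v,\Inc_v)$ is non-trivial, so I would blow up the vertex $v$ (equivariantly, across the whole orbit) using a Grushko $(G_v,\Inc_v)$-tree; this yields a refinement $\hat S$ of $S$ that has an $F_N$-orbit of edges with trivial stabilizer, chosen canonically enough that $\Stab_{\IA}(S)$ still acts on the relevant data. Then the previous case applies to a vertex $\hat v$ of $\hat S$ above $v$: $I_{\tilde\alpha}$ sends $\hat v$ into its $F_N$-orbit, and collapsing $\hat S$ back to $S$ shows $I_{\tilde\alpha}(v)$ lies in the orbit of $v$. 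An alternative, perhaps cleaner, route avoiding explicit blow-ups is to note that a Grushko vertex $v$ carries a canonical free factor system $\calf_v$ of $G_v$ (the vertex stabilizers of the Grushko deformation space relative to $\Inc_v$), and more to the point the Grushko deformation space itself is a canonical object; one picks a free splitting $Z$ of $F_N$ in which $G_v$ is elliptic and which separates the orbit of $v$ from other vertices --- such a $Z$ exists precisely because $v$ is Grushko --- and then runs the argument with $Z$ in place of $S_0$.

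\textbf{Main obstacle.} The delicate point is the bookkeeping around the finite-index ambiguity: $\Stab_{\IA}(S)$ need not act trivially on $S/F_N$ a priori, so one must first restrict to $\Stab^0_{\IA}(S)$ (finite index, see Section~\ref{sec_exact}) or otherwise argue that permuting the orbit of $v$ is impossible. This is exactly where the ``$\IA$'' hypothesis does its work: Theorem~\ref{theo:ia} promotes finite-index-invariance of the conjugacy class of a free factor to genuine $H$-invariance, so the finite-index passage is harmless. The other point requiring care is ensuring the auxiliary splitting (the collapse $S_0$, the blow-up $\hat S$, or the separating free splitting $Z$) is built in an $I_{\tilde\alpha}$-compatible way --- i.e.\ that $I_{\tilde\alpha}$ descends to or lifts along the corresponding equivariant map of trees --- but since $I_{\tilde\alpha}$ is the unique $\tilde\alpha$-equivariant isometry of $S$ and collapses/blow-ups along $F_N$-invariant edge collections are functorial, the induced map on the auxiliary tree exists automatically and is $\tilde\alpha$-equivariant, so no real difficulty arises there.
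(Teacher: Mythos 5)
Your proof does not go through as written, and the trouble starts exactly at the point your last paragraph dismisses. In the case of an incident edge $e$ with trivial stabilizer, the one-edge splitting $S_0$ you build by collapsing everything outside the orbit of $e$ is canonical only given the choice of $[e]$, and $\alpha$ may permute the trivially-stabilized edge orbits of $S$; $F_N$-invariance of the collapsed edge set is not enough for $I_{\tilde\alpha}$ to descend to $S_0$ — you would need the collapsed set to be $I_{\tilde\alpha}$-invariant, which is not automatic. Your finite-index repair only recovers, via Theorem~\ref{theo:ia}, the $\alpha$-invariance of the conjugacy class $[G_{\bar v}]$, and this is too coarse: when $e$ is non-separating, $S_0$ is an HNN extension with a single orbit of vertices, so invariance of $[G_{\bar v}]$ carries no information, and your parenthetical bijectivity claim fails — both endpoints $v$ and $w$ of $e$, which may lie in different $F_N$-orbits of $S$, collapse to the same vertex orbit of $S_0$, so nothing you have proved distinguishes $I_{\tilde\alpha}(v)\in F_N\cdot v$ from $I_{\tilde\alpha}(v)\in F_N\cdot w$. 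The paper avoids all of this by collapsing all edges with non-trivial stabilizer — a canonical collapse, hence automatically $\alpha$-invariant — so that $e$ survives in the resulting free splitting $\bar S$, and then applying Lemma~\ref{lemma:ia-free-splitting} to conclude that $I_{\tilde\alpha}$ acts as the identity on $\bar S/F_N$, whence $I_{\tilde\alpha}(e)=ge$ and $I_{\tilde\alpha}(v)=gv$. You cite that lemma in your plan but never apply it to a splitting that is actually invariant; that is the missing step.

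The Grushko case inherits the same defect: the blow-up $\hat S$ is not canonical (Grushko trees form a deformation space, and attaching points are not unique when incident edge groups are trivial), so $\alpha$ need not stabilize $\hat S$, $I_{\tilde\alpha}$ need not lift, and ``the previous case applies'' has no justification even if that case were repaired. Your alternative route points in the right direction but is imprecise and partly backwards: the paper attaches to each orbit of Grushko vertices the free factor system $\calf_v$ of $F_N$ obtained from any blow-up of $v$ by collapsing all edges with non-trivial stabilizer — equivalently the smallest free factor system relative to the Grushko factors of $(G_v,\Inc_v)$ and the other vertex groups of $S$ — so $G_v$ is precisely \emph{not} peripheral in $\calf_v$, the opposite of your requirement that ``$G_v$ is elliptic in $Z$''. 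The crucial point missing from your sketch is the injectivity of $v\mapsto\calf_v$ on orbits of Grushko vertices: $G_{v'}$ is $\calf_v$-peripheral while $G_v$ is not, hence $\calf_v\neq\calf_{v'}$ for distinct orbits. Granting this, $\alpha$ permutes the finitely many systems $\calf_v$, Theorem~\ref{theo:ia} forces it to fix each one, and the equivariance of $v\mapsto\calf_v$ then pins $I_{\tilde\alpha}(v)$ into the orbit of $v$. Without a canonical invariant of the vertex orbit together with such an injectivity statement, knowing that some conjugacy class or deformation space is preserved does not determine where $I_{\tilde\alpha}$ sends $v$, and your argument does not close.
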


\begin{proof}
  First assume that there is an edge $e$ with trivial stabilizer with origin at $v$.
  Let $\bar S$ be the free splitting obtained from $S$ by collapsing all edges with non-trivial stabilizer.
  Then $\bar S$ is $\alpha$-invariant.  By Lemma~\ref{lemma:ia-free-splitting}, $I_{\tilde\alpha}$ induces the identity on $\bar S/F_N$, so $I_{\tilde\alpha}$ maps $e$ to $ge$ for some $g\in F_N$. It follows that $I_{\tilde\alpha}(v)=gv$, and we are done.

  We denote by $W$ the set of such vertices $v\in S$ such that $G_v$ has a non-trivial free splitting relative to all incident edge stabilizers, and consider $v\in W$.
  Let $\hat{S}$ be a refinement of $S$ obtained in the following way. We first blow up
   $v$ into a Grushko decomposition $S_v$ of $G_v$ relative to the incident edge groups (this exists because $G_v$ is finitely generated relative to the incident edge groups). We then attach back every incident edge $e$ to the unique point of $S_v$ fixed by $G_e$. We denote by $\calf_v$ the free factor system of $F_N$ determined by collapsing all edges of $\hat{S}$ with non-trivial stabilizer to a point. Notice that this free factor system only depends on $S$ and $v$ and not on the choice of the refinement $\hat{S}$: indeed, $\calf_v$ is equal to the smallest free factor system of $F_N$ relative to the Grushko factors of $(G_v,\Inc_v)$ and to all other vertex groups of $S$. 

We now observe that if $v,v'\in W$ 
belong to different $F_N$-orbits, then $\calf_v\neq\calf_{v'}$: this is because $G_{v'}$ is conjugate into a subgroup of $\calf_v$, while $G_v$ is not.  

The element $\alpha$ permutes all free factor systems of the form $\calf_v$ where $v$ varies among the finitely many orbits of vertices in $W$.
As $\alpha\in\IA$, we deduce using Theorem~\ref{theo:ia} that $\alpha$ fixes each free factor system $\calf_v$. The conclusion follows.
\end{proof}

As a consequence of Lemma~\ref{lemma:ia-nice}, we get the following fact.

\begin{cor}\label{cor:ia-nice}
Let $S$ be a splitting of $F_N$ with all edge stabilizers non-trivial, and let $v\in S$ be a 
vertex such that $G_v$ has a non-trivial free splitting relative to all incident edge stabilizers.
Let $N_v$ be the subgroup of $\Stab_{\IA}(S)$ made of all outer automorphisms that act trivially on all vertex stabilizers but $G_v$.

Then $N_v$ is normal in $\Stab_{\IA}(S)$.
\qed
\end{cor}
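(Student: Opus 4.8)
The plan is to deduce Corollary~\ref{cor:ia-nice} directly from Lemma~\ref{lemma:ia-nice} together with the fact that an automorphism in $\Stab^0(S)$ acts on the set of vertices of $S$ by permuting $F_N$-orbits, and that this permutation is trivial in the relevant cases. First I would fix $\beta\in\Stab_{\IA}(S)$ and $\alpha\in N_v$, and aim to show $\beta\alpha\beta^{-1}\in N_v$, i.e.\ that $\beta\alpha\beta^{-1}$ acts trivially on $G_w$ for every vertex $w$ of $S/F_N$ with $w\neq v$. The key point is that conjugation by $\beta$ shuffles the restriction maps $\rho_w:\alpha\mapsto [\alpha|_{G_w}]$ according to how $\beta$ permutes the orbits of vertices: if $\tilde\beta\in\Aut(F_N)$ represents $\beta$ and $I_{\tilde\beta}$ is a $\tilde\beta$-equivariant isomorphism of $S$, then for a vertex $w$ one has, up to conjugacy, $(\beta\alpha\beta^{-1})|_{G_w}$ conjugate to $\alpha|_{G_{w'}}$ where $w'=I_{\tilde\beta}^{-1}(w)$ (interpreted at the level of $F_N$-orbits).

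The main step is therefore to pin down the orbit $w'$. Since $S$ has all edge stabilizers non-trivial and $v$ is a Grushko vertex, Lemma~\ref{lemma:ia-nice} applies to $\beta$ and to the vertex $v$: it gives $I_{\tilde\beta}(v)\in F_N\cdot v$, hence also $I_{\tilde\beta}^{-1}(v)\in F_N\cdot v$. Thus $\beta$ fixes the orbit of $v$, and consequently permutes the orbits of the remaining vertices among themselves. So for $w\neq v$ we have $w'\neq v$ as well, and $\alpha\in N_v$ acts trivially on $G_{w'}$; therefore $(\beta\alpha\beta^{-1})|_{G_w}$ is inner, i.e.\ trivial in $\Out(G_w)$. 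Applying this to every $w\neq v$ shows $\beta\alpha\beta^{-1}\in N_v$, which is exactly normality.

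One has to be a little careful about two technical points. First, $N_v$ as defined lives inside $\Stab_{\IA}(S)$, not merely $\Stab^0_{\IA}(S)$; but by Lemma~\ref{lemma:ia-free-splitting} (applied after collapsing to the free splitting obtained by collapsing all non-trivial-stabilizer edges, which is legitimate because $S$ has a Grushko vertex, hence at least one edge with trivial stabilizer in the relevant blow-up is not available directly — instead one uses that the permutation of vertex orbits induced by any element of $\Stab_{\IA}(S)$ is what matters), the permutation of $F_N$-orbits of vertices induced by an element of $\Stab_{\IA}(S)$ is well defined, and "acting trivially on $G_w$" only depends on the orbit of $w$, so the bookkeeping above is consistent. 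Second, one must check that the identification $(\beta\alpha\beta^{-1})|_{G_w}$ with $\alpha|_{G_{w'}}$ up to conjugacy is correct: this is a standard computation, choosing representatives $\tilde\alpha,\tilde\beta\in\Aut(F_N)$ and the equivariant isomorphisms $I_{\tilde\alpha},I_{\tilde\beta}$, and tracking that $I_{\tilde\beta}$ conjugates $G_{w'}$ to $G_w$ inside $F_N$. I expect the only genuine obstacle — and it is mild — to be this orbit-tracking argument; the heart of the statement is simply that Lemma~\ref{lemma:ia-nice} forces $\beta$ to fix the orbit of the Grushko vertex $v$, after which normality of $N_v$ is essentially automatic.
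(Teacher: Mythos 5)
Your argument is correct and is essentially the paper's intended proof: the corollary is stated as an immediate consequence of Lemma~\ref{lemma:ia-nice}, exactly via the observation that any $\beta\in\Stab_{\IA}(S)$ preserves the $F_N$-orbit of the Grushko vertex $v$, hence conjugation by $\beta$ permutes the remaining vertex orbits among themselves and preserves the condition defining $N_v$. The only blemish is the parenthetical appeal to Lemma~\ref{lemma:ia-free-splitting} (which is not applicable here, since all edge stabilizers of $S$ are non-trivial), but it is also unnecessary: that ``acting trivially on $G_w$'' depends only on the $F_N$-orbit of $w$ follows directly by composing a chosen lift with a suitable inner automorphism of $F_N$.
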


\begin{prop}\label{prop:ia-zmax}
Let $H\subseteq\IA$, and let $S$ be a $\Zmax$-splitting which is $H$-invariant. Then $H$ acts trivially on $S/F_N$.
\end{prop}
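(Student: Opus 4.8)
The statement to prove is Proposition~\ref{prop:ia-zmax}: if $H\subseteq\IA$ and $S$ is an $H$-invariant $\Zmax$-splitting, then $H$ acts trivially on the quotient graph $S/F_N$. The natural strategy is to mimic the proof of Lemma~\ref{lemma:ia-free-splitting} (the free-splitting case), where the key input is Handel--Mosher's Theorem~\ref{theo:ia}. First I would reduce to showing that for every $\alpha\in H$, every representative $\tilde\alpha\in\Aut(F_N)$, and every $\tilde\alpha$-equivariant isometry $I_{\tilde\alpha}$ of $S$, the induced map on $S/F_N$ is the identity. Since $H\subseteq\IA$ and $\IA$ is torsion-free, it suffices to check that each vertex of $S/F_N$ and each edge of $S/F_N$ is fixed; because $S/F_N$ is a finite graph and $\IA$ has no finite-order elements, it is enough to show that the permutation induced on vertices (or on edges) is trivial, which will follow once we know that each individual vertex orbit is preserved.

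The main tool is to associate to each orbit of vertices a canonical invariant of $F_N$ on which $\IA$ acts trivially. For a vertex $v$ of $S$ with non-abelian stabilizer, the conjugacy class $[G_v]$ is a conjugacy class of a subgroup; by Lemma~\ref{lem:zmaxise_trivial} (or rather the structure of $\Zmax$-splittings) and the fact that edge stabilizers are cyclic, one can detect such vertices by the property that $G_v$ is non-abelian and its conjugacy class is $H$-invariant up to the induced permutation. The cleaner route: collapse $S$ to each of its one-edge $\Zmax$-splittings $S_e$; each $S_e$ is again $H$-invariant (as $\alpha$ permutes the edges of $S/F_N$). For the edge stabilizers themselves, each edge $e$ of $S$ has stabilizer $\grp{g_e}$ infinite cyclic and root-closed, and the conjugacy class of $g_e$ (up to the permutation $\alpha$ induces on edges) is preserved by $H$; if the permutation were non-trivial, some power of $\alpha$ would cyclically permute several edge-stabilizer conjugacy classes, so after replacing $\alpha$ by a power, $\alpha$ fixes the conjugacy class of $g_e$ for each edge $e$ — but then, since $\IA$ is torsion-free and the permutation group of the finite edge set is finite, a chain-condition / divisibility argument forces the permutation to be trivial already for $\alpha$. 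The cleanest packaging is: apply Theorem~\ref{theo:ia-element} to conclude that for each edge $e$, once a finite-index subgroup of $H$ fixes $[g_e]$, all of $H$ does; combined with the finiteness of $\mathrm{Edges}(S/F_N)$ this pins down the $H$-action on edges, and similarly on vertices, possibly using Theorem~\ref{theo:ia} applied to the non-abelian vertex groups (which are free factors once we pass to an appropriate collapse, or at least whose conjugacy classes are rigid under $\IA$ by Handel--Mosher).

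More precisely, here is the argument I would write. Let $\alpha\in H$. The map $\alpha$ induces a simplicial automorphism $\sigma$ of the finite graph $S/F_N$; let $k$ be the order of $\sigma$. Then $\alpha^k$ acts trivially on $S/F_N$, so for each edge $e$ of $S$, the element $\alpha^k$ fixes the conjugacy class $[G_e]=[\grp{g_e}]$, hence fixes $[g_e]$ (the generator is determined up to inversion, and after passing to a further power we may assume $\alpha^k$ fixes $[g_e]$, not just $[g_e^{\pm1}]$; actually since $\alpha^k\in\IA$ it already fixes $[g_e]$ because $\IA$ acts trivially on homology mod $3$ and thus cannot invert a primitive-in-homology class — more carefully, invoke Theorem~\ref{theo:ia-element}). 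Thus a finite-index subgroup of $\grp{\alpha}$, namely $\grp{\alpha^k}$, fixes $[g_e]$ for every edge $e$; by Theorem~\ref{theo:ia-element}, $\grp{\alpha}$ itself fixes $[g_e]$ for every edge $e$. Now if $\sigma\neq\id$, then $\alpha$ moves some edge $e$ of $S$ to an edge $e'$ in a different $F_N$-orbit, yet $\alpha$ carries $[g_e]$ to $[g_{e'}]$; since $\alpha$ fixes $[g_e]$, we get $[g_e]=[g_{e'}]$, i.e.\ $g_{e'}$ is conjugate to $g_e^{\pm1}$. This is not yet a contradiction, since distinct edges can have conjugate stabilizers. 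To close the gap, I would instead run the same argument with the \emph{vertex} groups: by Lemma~\ref{lem:zmaxise_trivial}, after collapsing we may assume there is a unique orbit of vertex with non-abelian stabilizer (or handle the general tree-of-groups structure directly), and the non-abelian vertex groups are free factors whose conjugacy classes $\alpha$ permutes; by Theorem~\ref{theo:ia}, $\alpha$ fixes each such conjugacy class. Combining the rigidity on vertex-group conjugacy classes with the bipartite/tree structure of $S/F_N$ (distinct orbits of non-abelian vertices give distinct invariants, and the abelian vertices are determined by the edges incident to them as in Lemma~\ref{lem:zmaxise_trivial}), one concludes $\sigma=\id$.

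\textbf{Main obstacle.} The delicate point is that edge stabilizers of a $\Zmax$-splitting, being merely infinite cyclic, are not themselves free factors, so Theorem~\ref{theo:ia} does not apply to them directly; only Theorem~\ref{theo:ia-element} does, and that controls the conjugacy class of a generator but not the edge as an edge of $S/F_N$. The resolution is to leverage the incidence structure: an $H$-invariant $\Zmax$-splitting has, by Lemma~\ref{lem:zmaxise_trivial}-type considerations, vertex groups that are either non-abelian (hence controlled by Theorem~\ref{theo:ia}, since — after an appropriate collapse — their conjugacy classes are $H$-invariant conjugacy classes of free factors, or at least Handel--Mosher applies) or abelian of the form $\grp{g_e}$-overgroups tied to a single edge; and a simplicial automorphism of the finite graph $S/F_N$ fixing all the vertex-group and edge-group conjugacy-class data must be trivial. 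Making this bookkeeping precise — in particular checking that the non-abelian vertex stabilizers really are free factors of $F_N$ (this uses that in a $\Zmax$-splitting one can refine/collapse to exhibit them as such, or cites the relevant structure) — is where the real content lies; the rest is a finiteness argument exploiting that $\IA$ is torsion-free.
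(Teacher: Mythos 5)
There is a genuine gap, and it sits exactly where you flagged it: your argument hinges on the claim that the non-abelian vertex stabilizers of a $\Zmax$-splitting are free factors of $F_N$ (so that Theorem~\ref{theo:ia} applies to their conjugacy classes), and this is false in general. Example~\ref{ex:arcs} in the paper already gives a counterexample: in the splitting $F_N=A_0\ast_{\langle a\rangle}A_1$ dual to the curve $c$, the vertex group $A_1=\pi_1(\Sigma_1)$ is the fundamental group of a subsurface missing the boundary and is not a free factor; the same phenomenon occurs for QH/socket vertex groups. No "appropriate collapse" repairs this -- collapsing only enlarges vertex groups, it never turns them into free factors. Your appeal to Lemma~\ref{lem:zmaxise_trivial} is also misplaced: that lemma describes splittings whose associated $\Zmax$-splitting $S_\Zmax$ is \emph{trivial}, whereas for a $\Zmax$-splitting $S$ one has $S_\Zmax$ in the same deformation space as $S$, so there is no reduction to a single orbit of non-abelian vertices. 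Finally, even if one knew that $H$ fixes the conjugacy class of every vertex and edge group (Theorem~\ref{theo:ia-element} does give this for the cyclic edge groups, as you say), triviality of the induced automorphism of $S/F_N$ still does not follow: distinct edge orbits can carry conjugate stabilizers, two edges joining the same pair of vertices can be swapped, and when $S/F_N$ is a circle there are no terminal vertices and a rotation is compatible with all this conjugacy-class data. So the "finiteness plus torsion-freeness of $\IA$" bookkeeping at the end does not close.

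For comparison, the paper avoids Handel--Mosher entirely here and exploits the defining property of $\IA$ directly: it looks at the quotient $K$ of $H_1(F_N;\mathbb{Z}/3\mathbb{Z})$ by the images of the edge groups, observes that terminal vertices of $\Gamma=S/F_N$ have non-cyclic stabilizers whose images in $K$ are non-trivial and pairwise distinct, hence are fixed by $H$; this settles the case where $\Gamma$ is a tree. Otherwise it passes to the core $\Gamma_0$ and uses triviality of the $H$-action on $H_1(\Gamma_0;\mathbb{Z}/3\mathbb{Z})$ to force the identity on $\Gamma_0$ unless $\Gamma_0$ is a circle, and in the circle case it produces a fixed point via a Grushko vertex (\cite[Proposition~6.6]{Hor1}) together with Lemma~\ref{lemma:ia-nice}. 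If you want to salvage your route, you would need either a Handel--Mosher-type invariance statement for periodic conjugacy classes of arbitrary vertex groups of invariant splittings (not available from Theorems~\ref{theo:ia} and~\ref{theo:ia-element}), or an independent argument handling multiple edges and the circle case -- at which point the homological argument is both shorter and self-contained.
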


\begin{proof}
Let $\Gamma=S/F_N$.
Let $K$ be the quotient of $H_1(F_N;\bbZ/3\bbZ)$ by the subgroup generated by the image of all edge groups of $\Gamma$.
Let $v$ be a terminal vertex of $\Gamma$, and $e$ the incident edge. 
As $S$ is a minimal $\Zmax$-splitting, the group $G_v$ is not cyclic.
Thus, $G_v$ is a free group of rank at least 2 and $H_1(G_v/\ngrp{G_e};\bbZ/3\bbZ)\neq \{0\}$.

We claim that for $v'\neq v$, the images of $G_v$ and $G_{v'}$ in $K$ are distinct.
Let $\tilde Q$ be the quotient of $F_N$ by the normal subgroups generated by all edge groups and all vertex groups of $\Gamma$ except $v$, 
and let $Q=H_1(\tilde Q;\bbZ/3\bbZ)$ be the corresponding quotient of $K$. 
Since the image of $G_{v'}$ in $Q$ is trivial for all $v'\neq v$, it suffices to show 
that the image of $G_v$ in $Q$ is non-trivial.
Now, $\tilde Q_v$ is the free product of  $G_v/\ngrp{G_e}$ by the fundamental group of the graph underlying $\Gamma$. It follows that the image of $G_v$ in $Q$ is non-trivial which proves our claim.

Since $H$ acts trivially on $K$, it follows that $H$ acts as the identity on the set of terminal vertices of $\Gamma$.
This concludes the proof if $\Gamma$ is a tree.

Otherwise, let $\Gamma_0\subseteq \Gamma$ be the core of $\Gamma$, i.e.\ the union of embedded loops.
If $\Gamma_0$ is not a circle, then $H$ acts as the identity on $\Gamma_0$ because it acts as the identity on
$H_1(\Gamma_0;\bbZ/3\bbZ)$. This implies that $H$ acts as the identity on $\Gamma$.
If $\Gamma_0$ is a circle, then $H$ acts on $\Gamma_0$ by orientation-preserving homeomorphisms because it acts trivially 
on $H_1(\Gamma_0;\bbZ/3\bbZ)$. So it suffices to prove that $H$ fixes a point in $\Gamma_0$. Since $H$ fixes all terminal points, this is clearly the case if $\Gamma\neq \Gamma_0$ so we can assume that $\Gamma$ itself is a circle.
By \cite[Proposition~6.6]{Hor1}, $\Gamma$ has at least one vertex whose vertex group admits a non-trivial free splitting relative to incident edge groups. Lemma \ref{lemma:ia-nice} shows that this vertex is fixed by $H$, concluding the proof.
\end{proof}

\section{Preliminaries on measured groupoids}\label{sec:background}

We do not assume the reader to be familiar with the notion of a measured groupoid and its use in proving measure equivalence rigidity results. In this section and the next one, we therefore thoroughly review standard facts from this theory. Most of the material we need can be found in \cite[Section~2.1]{AD} or \cite{Kid-survey} and the references therein.

\subsection{Measured groupoids: basic definitions}

\subsubsection{Groupoids, subgroupoids, restrictions}

A \emph{standard Borel space} is a  measurable space which is isomorphic to the 
measurable space of a separable complete metric space with its Borel $\sigma$-algebra.

\begin{de}[Discrete Borel groupoids]
A \emph{discrete Borel groupoid} 
over a standard Borel space $Y$
is a standard Borel space $\calg$ whose elements are called \emph{arrows}, equipped with the following data: 
\begin{enumerate}
\item two Borel maps $s,r:\calg\to Y$ called the \emph{source map} and the \emph{range map}, such that for every $y\in Y$, 
the set of arrows $g\in \calg$ such that $s(g)=y$ is at most countable.
\item a Borel map (called the \emph{composition law})
\begin{displaymath}
\begin{array}{cccc}
\calg^{(2)} & \to & \calg\\
(g_1,g_2) &\mapsto & g_1 g_2
\end{array}
\end{displaymath}
defined on the set $\calg^{(2)}:=\{(g_1,g_2)\in\calg\times\calg|s(g_1)=r(g_2)\}$ of \emph{composable arrows}, such that 
for all  $(g_1,g_2),(g_2,g_3)\in\calg^{(2)}$, the following hold:
\begin{itemize}
\item \textbf{Source/Range:} $s(g_1g_2)=s(g_2)$ and $r(g_1g_2)=r(g_1)$,
\item \textbf{Associativity:} $(g_1 g_2) g_3=g_1(g_2 g_3)$;
\end{itemize}
\item a Borel map (called the \emph{unit map}, whose image is called the \emph{space of units})
\begin{displaymath}
\begin{array}{cccc}
Y & \to & \calg\\
y &\mapsto & e_y
\end{array}
\end{displaymath}
such that for all $y\in Y$, the following hold:
\begin{itemize}
\item \textbf{Source/Range:} $s(e_y)=r(e_y)=y$ for all $y\in Y$, 
\item \textbf{Neutral element:} $g e_y=g$, $e_y g'=g'$ for all arrows $g,g'\in\calg$ with $s(g)=y$ and $r(g')=y$;
\end{itemize}
\item a Borel map (called the \emph{inverse map})
\begin{displaymath}
\begin{array}{cccc}
\iota: & \calg & \to & \calg\\
& g &\mapsto & g^{-1}
\end{array}
\end{displaymath}
such that for all $g\in\calg$, the following hold:
\begin{itemize}
\item \textbf{Source/Range:} $s(g^{-1})=r(g)$, $r(g^{-1})=s(g)$, 
\item \textbf{Inverse:} $gg^{-1}=e_{r(g)}$ and $g^{-1}g=e_{s(g)}$.
\end{itemize}
\end{enumerate} 
\end{de}

The word `discrete' in `discrete Borel groupoid' refers to the first condition from the above definition -- countability of the set of arrows having a given source. As all groupoids considered in the present paper are discrete, we will often drop the term `discrete' from the terminology and simply call them Borel groupoids. 

\begin{de}[Subgroupoids]
  Let $\calg$ be a Borel groupoid over a base space $Y$. A \emph{Borel subgroupoid} of $\calg$ is a Borel subset $\calh\subseteq\calg$ such that 
\begin{enumerate}
\item for every $(g_1,g_2)\in\calg^{(2)}\cap (\calh\times\calh)$, one has $g_1g_2\in\calh$,
\item for every $g\in\calh$, one has $g^{-1}\in\calh$, and
\item for every $y\in Y$, one has $e_y\in\calh$.
\end{enumerate}
Every Borel subgroupoid of $\calg$ has a natural structure of a Borel groupoid over the same base space $Y$.
\end{de}

\begin{de}
Given a Borel groupoid $\calg$ and two Borel subgroupoids $\calh,\calh'$ of $\calg$, the subgroupoid $\langle\calh,\calh'\rangle$ generated by $\calh$ and $\calh'$ is the smallest subgroupoid of $\calg$ that contains $\calh$ and $\calh'$. 

More explicitly, this is the set of all arrows of the form $h_1\dots h_k$, with each $h_i$ in either $\calh$ or $\calh'$.
\end{de}

The subgroupoid $\grp{\calh,\calh'}$ is a Borel subgroupoid of $\calg$ as can be seen by writing $\calh$ and $\calh'$
as a countable union of bisections (see below).

\begin{de}[Restrictions]\label{de:restriction}
Given a Borel groupoid over a base space $Y$, and a Borel subset $A\subseteq Y$, the set $$\calg_{|A}:=\{g\in\calg|s(g),r(g)\in A\}$$ has a natural structure of a Borel groupoid over the base space $A$, induced from the groupoid structure of $\calg$. 
The groupoid $\calg_{|A}$ is called the \emph{restriction} of $\calg$ to $A$.
\end{de}

\begin{rk}
  Note that $\calg_{|A}$ is not a subgroupoid of $\calg$ (it is not defined on the same base space).
\end{rk}

Every countable group has a natural structure of a Borel groupoid over a point. A more interesting -- and crucial -- example comes from restrictions of group actions, as explained right below (another interesting class of examples comes from equivalence relations, but we will not need those in the present paper). All countable groups will always be equipped with the discrete topology. 

\begin{ex}[Group actions]\label{ex-groupoid}
Let $Y$ be a standard Borel space, equipped with an action of a countable group $\Gamma$ by Borel automorphisms. 
Then the direct product $\Gamma\times Y$ has a natural structure of a discrete Borel groupoid over the base space $Y$: 
one defines $s(\gamma,y)=y$ and $r(\gamma,y)=\gamma\cdot y$, and the composition law is given by $$(\gamma_1,\gamma_2\cdot y)(\gamma_2,y)=(\gamma_1\gamma_2,y),$$ 
the unit map is given by $e_y=(1_\Gamma,y)$, and the inverse map by $(\gamma,y)^{-1}=(\gamma^{-1},\gamma\cdot y)$. This groupoid is denoted as $\Gamma\ltimes Y$. 

If $A\subseteq Y$ is any Borel subset (without any invariance hypothesis), then the groupoid $(\Gamma\ltimes Y)_{|A}$ is well defined;
its set of arrows is the set of pairs $(a,\gamma)\in A\times \Gamma$ such that $\gamma\cdot a\in A$.
\end{ex}

In the above example, the groupoid $(\Gamma\ltimes Y)_{|A}$  may be written as the countable union of subsets $B_\gamma=\{\gamma\}\times (A\cap \gamma\m A)$ for $\gamma\in\Gamma$. 
There is a partial isomorphism from $s(B_\gamma)=A\cap\gamma\m A$ to $r(B_\gamma)=\gamma A\cap A$ defined by $y\mapsto \gamma\cdot y$. 
More generally, a bisection is a collection of arrows that defines a partial isomorphism in the following sense.

\begin{de}[Bisections]\label{de:bisection}
Let $\calg$ be a Borel groupoid over a base space $Y$. A \emph{bisection} of $\calg$ is a Borel subset $B\subseteq \calg$ such that $s_{|B}$ and $r_{|B}$ are Borel isomorphisms to Borel subsets  $s(B),r(B)\subseteq Y$.  

The \emph{partial isomorphism} associated to a bisection $B$ is the Borel map $f_B:s(B)\ra r(B)$ defined by $f_B=r\circ s_{|B}\m$.
\end{de}

A theorem by Lusin and  Novikov (see \cite[Theorem~18.10]{Kec}) shows that any discrete Borel groupoid $\calg$ is always a countable union of bisections (this is obvious in the case of a restriction of a group action, but for the general case, it is crucial that Borel spaces are standard).

\begin{de}[Saturation]\label{de:saturation}
Let $\calg$ be a Borel groupoid over a base space $Y$, and let $y_0\in Y$. The \emph{$\calg$-orbit} of $y_0$ is the set of all $y\in Y$ such that there exists $g\in \calg$ with
$s(g)=y_0$ and $r(g)=y$. The \emph{saturation} $\calg A$ of a Borel subset $A\subseteq Y$ is the set of points that are in the orbit of some point in $A$, in other words $\calg A=r(s^{-1}(A))$.
\end{de}

If $A\subset Y$ is a Borel subset, then so is $\calg A$: indeed, writing $\calg$ as a countable union of bisections $B_n$, one has $\calg A=\cup_n f_{B_n}(A)$.

\subsubsection{Measured groupoids}

Let $\calg$ be a Borel groupoid over a base space $Y$. A measure $\mu$ on $Y$ is \emph{$\calg$-invariant} if for every bisection $B\subseteq \calg$, one has $\mu(s(B))=\mu(r(B))$. It is \emph{$\calg$-quasi-invariant} if for every bisection $B\subseteq \calg$, one has $\mu(s(B))=0$ if and only if $\mu(r(B))=0$. Note that in the case of an action of a countable group $\Gamma$ by Borel automorphisms on $Y$, the group $\Gamma$ preserves $\mu$ if and only if the groupoid $\Gamma\ltimes Y$ does.

A \emph{standard measure space} $(Y,\mu)$ is a standard Borel space $Y$ equipped with a $\sigma$-finite positive Borel measure $\mu$.

\begin{de}[Measured groupoid]
Let $(Y,\mu)$ be a standard measure space. A \emph{measured groupoid} over $(Y,\mu)$ is a discrete Borel groupoid $\calg$ over $Y$ such that $\mu$ is $\calg$-quasi-invariant.
\end{de}

\begin{rk}\label{rk:proba}
If we change the measure $\mu$ into another measure $\mu'$ in the same measure class, then 
$\calg$ is still a measured groupoid over $(Y,\mu')$, and this does not change $\calg$ up to isomorphism
(see Definition \ref{de:isomorphic} below). 
Thus, we can always assume without loss of generality that $\mu$ is a probability measure.
\end{rk}

When $\calg$ is a measured groupoid over a base space $(Y,\mu)$, and $\calh$ is a Borel subgroupoid of $\calg$, then $\mu$ is $\calh$-quasi-invariant. The Borel subgroupoid $\calh$ together with the measure $\mu$ is called a \emph{measured subgroupoid} of $\calg$.

From a measure $\mu$ on the base space $Y$, one can define a measure $\nu$ on the set of arrows $\calg$ by
$$\nu(A)=\int_Y\sum_{g\in s\m(y)}\un_A(g)d\mu(y)$$  
for any Borel subset $A\subseteq \calg$.
Then $\mu$ is preserved (resp.\ quasi-preserved)  by $\calg$ if and only if $\nu$ is preserved by the inversion map:
$\iota_{\ast}\nu=\nu$ (resp.\ $\nu$ and $\iota_{\ast}\nu$ have the same null sets).

\subsubsection{Cocycles}

\begin{de}[Groupoid homomorphisms and isomorphisms]\label{de:isomorphic}
Let $\calg_1,\calg_2$ be measured groupoids over standard measure spaces $(Y_1,\mu_1)$ and $(Y_2,\mu_2)$, respectively. A \emph{homomorphism of measured groupoids} from $\calg_1$ to $\calg_2$ is a Borel map $\phi:\calg_1\to \calg_2$ such that there exists a conull Borel subset $Y_1^*\subseteq Y_1$ on which the following hold:
\begin{enumerate}
\item for all $(g,h)\in ((\calg_1)_{|Y_1^*})^{(2)}$, one has $\phi(gh)=\phi(g)\phi(h)$, and
\item there exists a Borel map $\phi_Y:Y_1^*\ra Y_2$ such that
\begin{enumerate}
\item for every $y\in Y_1^*$, one has $e_{\phi_Y(y)}=\phi(e_y)$, and 
\item the measures $(\phi_{Y})_{\ast}((\mu_1)_{|Y_1^*})$ and  $\mu_2$ have the same null sets.
\end{enumerate}
\end{enumerate}  
It is an \emph{isomorphism} if in addition, there exist a groupoid homomorphism $\psi:\calg_2\to\calg_1$ and conull Borel subsets $Y_1^*\subseteq Y_1$ and $Y_2^*\subseteq Y_2$ such that $\psi\circ \phi$ and $\phi\circ \psi$ are the identity maps on $(\calg_1)_{|Y_1^*}$ and $(\calg_2)_{|Y_2^*}$, respectively.
\end{de}

\begin{de}[Cocycles]\label{dfn_cocycle} 
  Let $\calg$ be a measured groupoid and $\Gamma$ be a topological group.
  A \emph{cocycle} $\rho:\calg\ra\Gamma$ is a Borel map such that 
$\rho(gh)=\rho(g)\rho(h)$ for all $(g,h)\in\calg^{(2)}$.
\end{de}

\begin{rk}  Usually, in the definition of a cocycle one only asks that 
$\rho(gh)=\rho(g)\rho(h)$ for almost every (and not all) $(g,h)\in\calg^{(2)}$.
Cocycles satisfying Definition \ref{dfn_cocycle} are usually called \emph{strict}.
When $\Gamma$ is countable, for any non-strict cocycle, there exists a conull subset $Y^*\subseteq Y$ 
such that $\rho_{|Y^*}$ is strict. More generally, when $\Gamma$ is second countable, then
any non-strict cocycle coincides almost everywhere with a strict cocycle (\cite[Theorem~B.9]{Zim}).
All the cocycles we consider in this paper take values in second countable groups (and in fact mostly in countable discrete groups).
For these reasons, all the cocycles we consider are strict as in Definition~\ref{dfn_cocycle}.
\end{rk}

\begin{de}\label{dfn:trivial_kernel}
The \emph{kernel} of a cocycle $\rho$ is the set of all arrows $g\in \calg$ such that $\rho(g)=1_\Gamma$. We say that $\rho$ has \emph{trivial} kernel if its kernel is reduced to the set of unit elements of $\calg$.
\end{de}

Notice that the kernel of a cocycle $\calg\to\Gamma$ is always a Borel subgroupoid of $\calg$. More generally, if $\rho:\calg\to\Gamma$ is a cocycle and $H\subseteq \Gamma$ is a subgroup, then $\rho^{-1}(H)$ is a subgroupoid of $\calg$.

\begin{ex}[Natural cocycle associated to a group action]
Let $\calg:=\Gamma\ltimes Y$ be a groupoid associated to a quasi-measure-preserving action of a countable group $\Gamma$ by Borel automorphisms on a standard Borel space $Y$ equipped with a $\sigma$-finite Borel measure $\mu$. 
Let $A\subseteq Y$ be a Borel subset of positive measure.
The \emph{natural cocycle} associated to the restriction to $A$ of the $\Gamma$-action on $Y$ is the cocycle 
\begin{displaymath}
\begin{array}{cccc}
\rho:&(\Gamma\ltimes Y)_{|A} & \to & \Gamma\\
&(\gamma,y) & \mapsto & \gamma
\end{array}
\end{displaymath}  
Notice that $\rho$ is indeed a cocycle and has trivial kernel.
\end{ex}

The following lemma shows that conversely, every measured groupoid which admits a cocycle with trivial kernel towards a countable group $\Gamma$, is a restriction of a groupoid coming from a measure-preserving action of $\Gamma$.

\begin{lemma}\label{lemma:trivial-kernel}
Let $\calg$ be a measured groupoid over a base space $(Y,\mu)$, let $\Gamma$ be a countable group, and let $\rho:\calg\to\Gamma$ be a cocycle with trivial kernel.

Then there exist a standard Borel space $S$ with a $\sigma$-finite Borel measure $\nu$ and a quasi-measure-preserving $\Gamma$-action on $S$ by Borel automorphisms such that $\calg$ is isomorphic to a restriction of $\Gamma\ltimes S$, and $\rho$ is the restriction of the natural cocycle $\Gamma\ltimes S\to\Gamma$.
\end{lemma}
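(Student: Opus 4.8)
The natural candidate for $S$ is a quotient of $Y \times \Gamma$, glued along the groupoid's identifications. Concretely, put on $Y \times \Gamma$ the equivalence relation generated by $(s(g), \gamma \rho(g)) \sim (r(g), \gamma)$ for $g \in \calg$ and $\gamma \in \Gamma$, and let $S$ be the quotient. Since $\rho$ has trivial kernel, the arrows of $\calg$ pair up points of $Y$ lying over the same orbit in a way that is ``rigid'': the relation identifies $(y,\gamma)$ with $(y', \gamma')$ exactly when there is a (necessarily unique, by triviality of the kernel) arrow $g$ with $s(g)=y$, $r(g)=y'$, and $\gamma' \rho(g) = \gamma$. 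One should first check that $Y \hookrightarrow S$, $y \mapsto [y, 1_\Gamma]$, is a Borel embedding onto a Borel subset $Y_S \subseteq S$ of positive (finite, after normalization) measure; this uses uniqueness of the arrow realizing an identification, which is where the trivial-kernel hypothesis does its work.

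\smallskip

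First I would set up $S$ carefully as a standard Borel space. Write $\calg = \bigsqcup_n B_n$ as a countable union of bisections (Lusin--Novikov). Each $B_n$ together with the value $\rho$ takes on it (which is Borel, hence constant after further partitioning, or at least Borel) gives a countable family of partial Borel isomorphisms of $Y \times \Gamma$; the equivalence relation they generate is a countable Borel equivalence relation on $Y \times \Gamma$, and triviality of the kernel guarantees its restriction to each fiber $Y \times \{\gamma\}$ meets $Y \times \{\gamma'\}$ in a graph of a partial isomorphism — so the relation is \emph{smooth} along the $Y$-direction and $S$ is standard Borel with $Y \times \{1_\Gamma\}$ a Borel transversal-like subset. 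Then define the $\Gamma$-action on $S$ by $\delta \cdot [y, \gamma] := [y, \delta\gamma]$; this is well defined because the generating identifications only involve right multiplication by $\rho$-values, which commutes with left multiplication by $\delta$. It is an action by Borel automorphisms.

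\smallskip

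Next I would produce the measure $\nu$ on $S$. Push forward $\mu$ (assumed a probability measure, as we may by Remark~\ref{rk:proba}) under $y \mapsto [y,1_\Gamma]$ to get a finite measure on $Y_S \subseteq S$, and set $\nu = \sum_{\delta \in \Gamma} \delta_* (\text{that measure})$, a $\sigma$-finite Borel measure on $S = \bigcup_\delta \delta \cdot Y_S$. Quasi-invariance of $\nu$ under $\Gamma$ then reduces to checking quasi-invariance of $\mu$ under the partial isomorphisms $f_{B_n}$ of $\calg$ — which is exactly the hypothesis that $\calg$ is a measured groupoid. Finally, restrict $\Gamma \ltimes S$ to the Borel subset $Y_S$: an arrow of $(\Gamma \ltimes S)_{|Y_S}$ is a pair $(\delta, [y,1_\Gamma])$ with $\delta \cdot [y, 1_\Gamma] = [y', 1_\Gamma] \in Y_S$, i.e.\ $[y, \delta] = [y', 1_\Gamma]$, i.e.\ (by the definition of the relation and triviality of the kernel) there is a unique $g \in \calg$ with $s(g) = y$, $r(g) = y'$, $\rho(g) = \delta$. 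This sets up a bijection $g \mapsto (\rho(g), [s(g),1_\Gamma])$ from $\calg$ to $(\Gamma \ltimes S)_{|Y_S}$; I would check it is a Borel isomorphism of groupoids (using the bisection decomposition for Borelness in both directions) and that it intertwines $\rho$ with the natural cocycle.

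\smallskip

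\textbf{Main obstacle.} The genuinely delicate point is verifying that $S$ is a \emph{standard} Borel space and that $Y \times \{1_\Gamma\} \to S$ is a Borel isomorphism onto a Borel subset — i.e.\ that the quotient by the generated equivalence relation behaves well. The rest is bookkeeping, but this step requires combining the Lusin--Novikov decomposition with the trivial-kernel hypothesis to show the generated equivalence relation on $Y \times \Gamma$ is smooth (admits a Borel transversal), so that the Borel quotient exists; one typically invokes a standard fact (e.g.\ \cite[Section~18]{Kec} or the theory of countable Borel equivalence relations) that the quotient of a standard Borel space by a smooth countable Borel equivalence relation is standard Borel, and that a Borel set of representatives exists.
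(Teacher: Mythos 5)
Your construction of $S$, of the $\Gamma$-action by translation in the $\Gamma$-coordinate, and of the identification $g\mapsto(\rho(g),[s(g),1_\Gamma])$ is essentially the paper's own argument (which follows Kida): the paper forms the product groupoid $\calg\times\Gamma$ over $Y\times\Gamma$, whose orbit equivalence relation is exactly your generated relation (up to an opposite left/right convention), observes that by triviality of the kernel each class meets each fiber $Y\times\{\gamma\}$ at most once, and then produces the Borel fundamental domain you want to "invoke as a standard fact" by a concrete greedy induction: enumerate $\Gamma=\{\gamma_i\}$, set $D_0=Y\times\{\gamma_0\}$, and at each later stage take $Y\times\{\gamma_i\}$ minus the saturation of what has already been chosen. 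So the delicate point you isolate is real, but it is settled in two lines rather than by citation, and the quotient $S$ is simply identified with $D$.

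The step that genuinely fails is your definition of the measure. Writing $\lambda$ for the pushforward of $\mu$ to $Y_S$, your $\nu=\sum_{\delta\in\Gamma}\delta_*\lambda$ is automatically $\Gamma$-invariant, and it is not $\sigma$-finite in general, while $\sigma$-finiteness is part of the statement (and is needed for $\Gamma\ltimes S$ to be a measured groupoid over a standard measure space in the paper's sense). Take $\calg=\Gamma\ltimes(Y,\mu)$ for a measure-preserving action of an infinite group $\Gamma$ on a probability space, with $\rho$ the natural cocycle: then $S\cong Y$ with the original action, $Y_S=S$, $\lambda=\mu$, and $\nu(A)=\sum_{\delta}\mu(\delta^{-1}A)=\infty$ for every $A$ of positive $\mu$-measure; hence every $\nu$-finite Borel set is $\mu$-null and $S$ cannot be covered by countably many of them. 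The same happens whenever $\calg$ is of infinite type, i.e.\ precisely in the situations (action-type cocycles) where the lemma is applied. Your measure does lie in the correct measure class, but to get the statement as claimed you should do what the paper does: endow $Y\times\Gamma$ with $\mu\otimes(\mathrm{counting\ measure})$, restrict it to the Borel fundamental domain $D$ (each slice $D\cap(Y\times\{\gamma_i\})$ has finite measure, so this is $\sigma$-finite), and transport it to $S\cong D$; quasi-invariance then follows from quasi-invariance of $\mu$ under the bisections of $\calg$, exactly as in your last paragraph. Alternatively, a weighted sum $\sum_{\delta}c_\delta\,\delta_*\lambda$ with $\sum_\delta c_\delta<\infty$ gives a finite measure in the right class, at the cost of losing the invariance statement recorded in the remark following the lemma.
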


\begin{rk}  
If the measure $\mu$ is $\calg$-invariant, the constructed measure $\nu$ is $\Gamma\ltimes S$-invariant
but infinite in general even if $\mu$ is a probability measure.
\end{rk}

\begin{proof}
This is an adaptation of an argument given by Kida in his proof of \cite[Proposition~4.33]{Kid1}. Let $\eta$ be the counting measure on $\Gamma$, and let $\tilde{\mu}:=\mu\otimes\eta$ be the product measure on $Y\times \Gamma$. Let $\tilde\calg:=\calg\times \Gamma$. 
The space $\tilde\calg$ has a natural structure of a Borel groupoid over $Y\times\Gamma$ with $s(g,\gamma)=(s(g),\gamma)$ and $r(g,\gamma)=(r(g),\rho(g)\gamma)$ for all $(g,\gamma)\in\tilde{\calg}$, and one checks that $\tilde \calg$ quasi-preserves the measure $\tilde\mu$.

Let $\sim$ be the $\tilde\calg$-orbit equivalence relation on $Y\times\Gamma$, i.e.\ two points $(y_1,\gamma_1)$ and $(y_2,\gamma_2)$ are equivalent whenever there exists $h\in\tilde\calg$ such that $s(h)=(y_1,\gamma_1)$ and $r(h)=(y_2,\gamma_2)$. 
We claim that the equivalence relation $\sim$ has a fundamental domain $D$.
Indeed, triviality of the kernel of $\rho$ shows that for every $\gamma\in\Gamma$, the equivalence relation $\sim_{|Y\times \{\gamma\}}$ is trivial. 
Now choose an enumeration $\{\gamma_i\}_{i\in \bbN}$ of $\Gamma$, and define inductively $D_0=Y\times\{\gamma_0\}$, 
and $D_{i+1}=(Y\times\{\gamma_i\})\setminus \tilde\calg D_i$ where $\Tilde\calg D_i$ is the saturation of $D_i$.
Then $D=\cup_i D_i$ is a Borel fundamental domain for $\sim$.

Consider the space $S:=(Y\times\Gamma)/{\sim}$. Then $S$ is naturally isomorphic to $D$ (in particular $S$ is a standard Borel space),
and we define $\nu$ on $S$ as $\Tilde\mu_{|D}$ through this identification. Given $(y,\gamma)\in Y\times\Gamma$, we denote by $[y,\gamma]$ the class of $(y,\gamma)$ in $S$. The group $\Gamma$ acts on $S$ via $\gamma\cdot [y',\gamma']:=[y',\gamma'\gamma^{-1}]$ in a quasi-measure-preserving way. Then $\calg$ is isomorphic to $(\Gamma\ltimes S)_{|Y\times\{e\}}$ via $g\mapsto (\rho(g),[s(g),e])$, see \cite[Lemmas~4.34 and~4.35]{Kid1}.  
\end{proof}

The following definition will be used at various places in the paper.

\begin{de}[Nowhere trivial, stably trivial cocycle]\label{dfn:nowhere_trivial}
Let $\calg$ be a measured groupoid over a base space $Y$ and let $\Gamma$ be a countable group. A cocycle $\rho:\calg\to \Gamma$ is 
\begin{itemize}
\item \emph{nowhere trivial} if there does not exist any Borel subset $U\subseteq Y$ of positive measure such that $\rho(\calg_{|U})=\{1\}$;
\item  \emph{stably trivial} if there exists a Borel partition $Y^*=\Dunion_{i\in I}Y_i$ into at most countably many Borel subsets such that for every $i\in I$, one has $\rho(\calg_{|Y_i})=\{1\}$.
\end{itemize}
\end{de}

We make the following observation.

\begin{lemma}\label{lem_trivial_partition}
Let $\calg$ be a measured groupoid over a base space $Y$, $\Gamma$ be a countable group and $\rho:\calg\to \Gamma$ a cocycle.

Then there exists a Borel partition $Y=Y_1\dunion Y_2$
such that $\rho$ is nowhere trivial on $\calg_{|Y_1}$
and is stably trivial on $\calg_{|Y_2}$.
\end{lemma}

\begin{proof}
Without loss of generality, we assume that the measure on $Y$ is a probability measure (see Remark \ref{rk:proba}).
We choose for $Y_1$ a Borel subset of maximal measure such that $\rho$ is stably trivial on $\calg_{|Y_1}$: this exists because if $(Y_{1,n})_{n\in\mathbb{N}}$ is a measure-maximizing sequence of such sets, then letting $Y_{1,\infty}=\bigcup Y_{1,n}$, we have again that $\rho$ is stably trivial on $\calg_{|Y_{1,\infty}}$. The proof is then completed by letting $Y_2=Y\setminus Y_1$: the maximality of $Y_1$ ensures that $\rho$ is nowhere trivial on $\calg_{|Y_2}$.
\end{proof}

\subsubsection{Groupoids of infinite type and action-type cocycles}

We adopt the following definition of a measured groupoid of infinite type, which is equivalent to Kida's definition  \cite[Definition~3.1]{Kid2} in the case where $\mu$ is a finite $\calg$-invariant measure, see the discussion above \cite[Definition~2.25]{Kid-survey}. It is a notion of recurrence for a measured groupoid.

\begin{de}[Groupoids of infinite type]
A measured groupoid $\calg$ over a base space $(Y,\mu)$ is \emph{of infinite type} if for every Borel subset $U\subseteq Y$ of positive measure, and $\mu$-a.e.\ $y\in U$, the set $$\{g\in\calg_{|U}\, |\,s(g)=y\}= \{g\in \calg\, |\, s(g)=y,\ t(g)\in U\}$$ 
is infinite.
\end{de}

Notice that if $\calg$ is of infinite type, then so is $\calg_{|U}$ for every Borel subset $U\subseteq Y$ of positive measure.
A crucial example comes from Poincaré recurrence:
if an infinite group acts by preserving a probability measure on $Y$, then the associated groupoid is of infinite type
(here the fact that the measure is finite and invariant is essential).
The following definition reformulates this property in terms of the associated natural cocycle.
It will be of central importance in the present work.

\begin{de}[Action-type cocycles]\label{de:action-type}
Let $\calg$ be a measured groupoid, let $\Gamma$ be a countable group. 
A cocycle $\rho:\calg\to\Gamma$ is  \emph{action-type} if it has trivial kernel, 
and for every infinite subgroup $H\subseteq\Gamma$, the subgroupoid $\rho^{-1}(H)$ is of infinite type.
\end{de} 

\begin{lemma}[See for instance {\cite[Proposition~2.26]{Kid-survey}}]\label{lemma:infinite_type}
Let $\Gamma$ be a countable group, let $Y$ be a standard Borel space equipped with a $\Gamma$-action by Borel automorphisms which preserves a Borel probability measure $\mu$. Let $\calg=\Gamma\ltimes (Y,\mu)$, and let $\rho:\calg\to\Gamma$ be the natural cocycle.

Then $\rho$ is action-type.
\end{lemma}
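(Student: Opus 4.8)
The statement asserts two things, which I would verify in turn. First, $\rho$ has trivial kernel: since $\rho(\gamma,y)=\gamma$ and the units of $\Gamma\ltimes Y$ are exactly the elements $e_y=(1_\Gamma,y)$, the preimage $\rho^{-1}(1_\Gamma)$ is precisely the unit space. Second, and this is the substantive part, I must show that for every infinite subgroup $H\subseteq\Gamma$ the subgroupoid $\rho^{-1}(H)$ is of infinite type. Observe that $\rho^{-1}(H)=H\ltimes Y$, where $H$ acts by restriction of the $\Gamma$-action; and for a Borel set $U\subseteq Y$ of positive measure, the fiber $\{g\in(H\ltimes Y)_{|U}:s(g)=y\}$ over $y\in U$ is in bijection with the return set $R(y):=\{\gamma\in H:\gamma\cdot y\in U\}$. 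So the whole statement reduces to the following form of Poincaré recurrence for countable measure-preserving group actions: for every infinite $H\subseteq\Gamma$ and every $U$ of positive measure, one has $|R(y)|=\infty$ for $\mu$-a.e.\ $y\in U$. Here the finiteness of $\mu$ (which, by Remark~\ref{rk:proba}, we may always assume) is essential.

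To prove this recurrence statement I would argue by contradiction. Let $B:=\{y\in U:R(y)\text{ finite}\}$ and suppose $\mu(B)>0$. Note that $1_\Gamma\in R(y)$ for $y\in U$, and that if $y\in B$ and $\delta\in R(y)$ then $\delta\cdot y\in U$ with $R(\delta\cdot y)=R(y)\delta^{-1}$, so $\delta\cdot y\in B$ and $|R(\delta\cdot y)|=|R(y)|$. Hence each level set $B_n:=\{y\in B:|R(y)|=n\}$ (for $n\ge 1$) is Borel and closed under returns to $U$, and $B=\bigsqcup_{n\ge 1}B_n$, so $\mu(B_n)>0$ for some $n$. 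The $H$-orbit equivalence relation restricted to $B_n$ has all classes of size at most $n$ (the class of $y$ is contained in $\{\delta\cdot y:\delta\in R(y)\}$): it is a finite Borel equivalence relation, hence smooth, and admits a Borel transversal $T_n\subseteq B_n$ by a standard selection argument. One then checks that $B_n\subseteq HT_n$ and that for every $z\in HT_n$, writing $t\in T_n$ for the unique representative of the $H$-orbit of $z$ in $B_n$, the set $\{\gamma\in H:\gamma^{-1}\cdot z\in T_n\}$ is a single left coset of $\Stab_H(t)$, which is finite of order at most $|R(t)|=n$ because $t\in B_n\subseteq U$. Thus the function $z\mapsto\#\{\gamma\in H:z\in\gamma T_n\}$ is bounded by $n$ and supported on $HT_n$; integrating it and using the $H$-invariance of $\mu$ gives $\sum_{\gamma\in H}\mu(T_n)=\sum_{\gamma\in H}\mu(\gamma T_n)\le n\,\mu(HT_n)\le n$, which, since $H$ is infinite, forces $\mu(T_n)=0$, whence $\mu(B_n)\le\mu(HT_n)\le\sum_{\gamma\in H}\mu(\gamma T_n)=0$ by countable subadditivity, a contradiction. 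Therefore $\mu(B)=0$, as desired.

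Alternatively one may simply invoke \cite[Proposition~2.26]{Kid-survey}, but I would prefer to spell out the argument above, as it is short and self-contained. I do not expect a genuine obstacle; the only point requiring care is the non-uniformity of the stabilizer orders, which is precisely why I stratify $B$ by the value of $|R(y)|$ before extracting the transversal — with this stratification the counting argument goes through uniformly, and in particular covers the case where $H$ is an infinite torsion group, for which the quick proof via classical Poincaré recurrence applied to a single element of infinite order is unavailable.
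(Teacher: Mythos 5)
Your proof is correct: the reduction of the infinite-type condition to the statement that the return sets $R(y)=\{\gamma\in H:\gamma\cdot y\in U\}$ are a.e.\ infinite, and the stratification by $|R(y)|=n$ followed by the Borel-transversal counting argument $\sum_{\gamma\in H}\mu(\gamma T_n)\le n$, is exactly the standard Poincar\'e-recurrence proof for infinite countable groups, and it correctly covers infinite torsion subgroups $H$. The paper itself gives no proof of this lemma (it only cites Kida's survey, whose Proposition~2.26 is proved by essentially this argument), so your write-up is in substance the same approach; the only nitpick is that the aside invoking Remark~\ref{rk:proba} is unnecessary and slightly misplaced here, since the hypothesis already provides an \emph{invariant} probability measure, whereas that remark concerns changing a quasi-invariant measure within its class.
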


We will systematically use the following obvious facts without mention.

\begin{lemma}\label{lemma:action-type}
 Let $\calg$ be a measured groupoid over a standard measure space $Y$, let $\Gamma$ be a countable group, and let $\rho:\calg\to\Gamma$ be an action-type  cocycle.
\begin{enumerate}\setcounter{enumi}{0}
\item For every Borel subset $U\subseteq Y$ of positive measure, the restricted cocycle $\rho:\calg_{|U}\to\Gamma$ is action-type.
\item For every subgroup $\Gamma'\subseteq \Gamma$, the induced cocycle $\rho\m(\Gamma')\to\Gamma'$ is action-type. \qed 
\end{enumerate}
\end{lemma}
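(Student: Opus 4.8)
The statement to prove is Lemma~\ref{lemma:action-type}, which asserts two routine stability properties of action-type cocycles: restriction to a positive-measure Borel subset of the base, and pullback along a subgroup inclusion. The plan is to unwind the definitions in both cases and check the two defining conditions of an action-type cocycle (trivial kernel, and that preimages of infinite subgroups are of infinite type), using only the elementary observations already recorded in the excerpt.

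\begin{proof}
We verify the two assertions in turn.

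\emph{(1) Restriction to $U$.} Let $U\subseteq Y$ be a Borel subset of positive measure, and write $\rho_U:\calg_{|U}\to\Gamma$ for the restriction of $\rho$ (the measured groupoid structure on $\calg_{|U}$ is the one from Definition~\ref{de:restriction}, with base measure $\mu_{|U}$). First, $\rho_U$ has trivial kernel: an arrow $g\in\calg_{|U}$ with $\rho_U(g)=1_\Gamma$ is in particular an arrow of $\calg$ with $\rho(g)=1_\Gamma$, hence a unit of $\calg$ by triviality of the kernel of $\rho$; since both its endpoints lie in $U$, it is a unit of $\calg_{|U}$. Second, let $H\subseteq\Gamma$ be an infinite subgroup. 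Then $\rho_U^{-1}(H)=\bigl(\rho^{-1}(H)\bigr)_{|U}$ as subgroupoids over $U$. Since $\rho$ is action-type, $\rho^{-1}(H)$ is of infinite type; and as noted in the excerpt, if a measured groupoid is of infinite type then so is its restriction to any positive-measure Borel subset of its base. Applying this with the positive-measure subset $U$ of the base $Y$ of $\rho^{-1}(H)$, we conclude that $\bigl(\rho^{-1}(H)\bigr)_{|U}$ is of infinite type, i.e.\ $\rho_U^{-1}(H)$ is of infinite type. Hence $\rho_U$ is action-type.

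\emph{(2) Induced cocycle on a subgroup.} Let $\Gamma'\subseteq\Gamma$ be a subgroup, and recall from the excerpt that $\rho^{-1}(\Gamma')$ is a Borel subgroupoid of $\calg$ over the same base space $Y$, so it is a measured subgroupoid with base measure $\mu$; let $\rho':\rho^{-1}(\Gamma')\to\Gamma'$ denote the corestriction of $\rho$. The kernel of $\rho'$ is the set of arrows $g\in\rho^{-1}(\Gamma')$ with $\rho(g)=1_\Gamma$, which is contained in the kernel of $\rho$ and therefore reduced to the units; since the unit maps of $\rho^{-1}(\Gamma')$ and of $\calg$ agree, $\rho'$ has trivial kernel. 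Now let $H\subseteq\Gamma'$ be an infinite subgroup. Then $H$ is also an infinite subgroup of $\Gamma$, and $(\rho')^{-1}(H)=\rho^{-1}(H)$ as subgroupoids of $\calg$ over $Y$. Since $\rho$ is action-type, this groupoid is of infinite type. Hence $\rho'$ is action-type.
\end{proof}

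Both parts are immediate once the right identifications of subgroupoids are written down; the only point requiring any care is to make sure the notions of ``trivial kernel'' and ``of infinite type'' are read on the correct base space, which is why in (1) one must invoke the stability of ``infinite type'' under restriction and in (2) one uses that $\rho^{-1}(H)$ does not depend on whether $H$ is viewed inside $\Gamma'$ or inside $\Gamma$. There is no real obstacle here; this is a bookkeeping lemma isolating facts used silently later in the paper.
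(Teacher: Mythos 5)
Your proof is correct, and it is precisely the definitional unwinding the paper has in mind: the paper simply records this lemma as an "obvious fact" with a \qed in the statement and gives no argument, relying implicitly on the two observations you make explicit (that a restriction of an infinite-type groupoid is of infinite type, and that $\rho^{-1}(H)$ is the same whether $H$ is viewed in $\Gamma'$ or in $\Gamma$). Nothing is missing.
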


\begin{rk}\label{rk:action-type}
We will often use the following consequence of the action-type property.
Let $\rho:\calg\ra\Gamma$ be action-type, $H\subseteq\Gamma$ be a subgroup, and $\calh=\rho\m(H)$.
Then for every element $h\in H$ of infinite order
and every Borel subset $U\subseteq Y$ of positive measure, $\rho(\calh_{|U})$ contains a power of $h$.
\end{rk}

\subsubsection{Stable equality of measured groupoids}

In the present paper, we will often consider measured groupoids up to stable  equality, defined as follows.

\begin{de}[Stable containment, stable  equality]
   Let $\calg$ be a measured groupoid over a base space $Y$, and 
let $\calh$ and $\calh'$ be two measured subgroupoids.

The groupoid $\calh$ is \emph{stably equal} to $\calh'$ (resp.\ \emph{stably contained} in $\calh'$)
if there exist a conull subset $Y^*\subseteq Y$ and a countable Borel partition $Y^*=\Dunion_{i\in I}Y_i$ such that for every $i\in I$, one has $\calh_{|Y_i}=\calh'_{|Y_i}$ (resp.\ $\calh_{|Y_i}\subseteq\calh'_{|Y_i}$).
\end{de}

Note that a finite partition is considered as countable.

\begin{rk}\label{rk:restriction-destabilisation}
In the present paper, 
we will often look for properties $(P)$ of measured groupoids that are preserved by restriction and stabilization: 
\begin{enumerate}
\item $(P)$ is preserved by restriction if, for every  measured groupoid $\calg$ over a base space $(Y,\mu)$, if $\calg$ satisfies $(P)$, 
then so does its restriction $\calg_{|A}$ for every Borel subset $A\subseteq Y$ of positive measure.
\item $(P)$ is preserved by stabilization if, 
for every measured groupoid $\calg$ over a base space $(Y,\mu)$, and for every countable Borel partition $Y^*=\Dunion_{i\in I}Y_i$ of a conull Borel subset $Y^*\subset Y$,
if the groupoid $\calg_{|Y_i}$ satisfies $(P)$ for every $i\in I$, then so does $\calg$.
Equivalently, if $\calg,\calg'$ are stably equal and $\calg$ satisfies $(P)$, then so does $\calg'$.
\end{enumerate}
\end{rk}

\subsubsection{Invariant objects, equivariant maps}

The following definition is a generalization to the groupoid setting of the notion of fixed points of a group action. 

\begin{de}[(Stably) equivariant maps] \label{dfn_invariant}
Let $\calg$ be a measured groupoid over a base space $(Y,\mu)$, let $\Gamma$ be a countable group, and let $\rho:\calg\to\Gamma$ be a cocycle. Let $\Delta$ be a measurable space equipped with a  measurable $\Gamma$-action, and let $\phi:Y\to\Delta$ be a  measurable map.

An element $x\in\Delta$ is \emph{$(\calg,\rho)$-invariant}  if 
there exists a conull Borel subset $Y^*\subseteq Y$ such that 
for all $g\in\calg_{|Y^*}$, one has $\rho(g)x=x$.

We say that $\phi$ is \emph{$(\calg,\rho)$-equivariant} if there exists a conull Borel subset $Y^*\subseteq Y$ such that for all $g\in\calg_{|Y^*}$, one has $\phi(r(g))=\rho(g)\phi(s(g))$.   

We say that $\phi$ is \emph{stably $(\calg,\rho)$-equivariant} if there exist a conull Borel subset $Y^*\subseteq Y$ and a countable Borel partition $Y^*=\Dunion_{i\in I} Y_i$ such that for every $i\in I$, the map $\phi_{|Y_i}$ is $(\calg_{|Y_i},\rho)$-equivariant.
\end{de}

\begin{rk}
The set $\rho(\calg_{|Y^*})$ is a subset of $\Gamma$ with no algebraic structure.
Yet, the element $x\in\Delta$ is $(\calg,\rho)$-invariant if and only if  $\rho(\calg_{|Y^*})$
is contained in the stabilizer $\Gamma_x$ of $x$ for some conull Borel subset $Y^*\subseteq Y$. 
\end{rk}

\begin{ex}
Let $\calg:=\Gamma\ltimes Y$ be a groupoid associated to a measure-preserving action of a countable group $\Gamma$ on some standard measure space $Y$, let $\rho:\calg\to\Gamma$ be the natural cocycle.
Let $\Delta$ be a measurable space equipped with a $\Gamma$-action by measurable automorphisms. Then a measurable map $Y\to\Delta$ is
$(\calg,\rho)$-equivariant if and only if it is almost everywhere $\Gamma$-equivariant.
\end{ex}

\begin{rk}\label{rk_eqv_vs_inv}
The case where the space $\Delta$ is a countable discrete set will be of particular importance. 
In this case, any Borel map $\varphi:Y\ra \Delta$ is \emph{stably constant}:
there exists a countable Borel partition $Y=\dunion_{i\in\bbN} Y_i$
such that $\varphi$ is constant in restriction to each $Y_i$.
For this reason, the two following statements are equivalent when $\Delta$ is countable:
\begin{itemize}
    \item 
there exists a stably $(\calg,\rho)$-equivariant map $\varphi:Y\to\Delta$ 
\item
there exists a countable Borel partition $Y^*=\dunion_{i\in\mathbb{N}}Y_i$ of a conull Borel subset $Y^*\subseteq Y$, such that for every $i\in I$, there exists an element $x_i\in\Delta$ which is $(\calg_{|Y_i},\rho)$-invariant (i.e.\ $\rho(\calg_{|Y_i})\subseteq\Stab_\Gamma(x_i)$). 
\end{itemize}


In Part~\ref{part_ME} of the present work, there will be two very distinct situations: we will have to deal both with stably equivariant maps with values in a countable set $\Delta$ (e.g.\ a set of splittings or free factors), and others with values in uncountable Polish spaces (e.g.\ spaces of actions on $\mathbb{R}$-trees). These two situations are dealt with very differently. The above suggests that, when $\Delta$ is countable, a crucial part of the work consists in having a fine understanding of (group-theoretic) stabilizers of collections of splittings or factors. On the other hand, the case where $\Delta$ is uncountable will require arguments with a dynamical flavour, involving amenability of group actions (see Section~\ref{sec_amenability} below).
\end{rk}





\begin{rk}\label{rk:stably-invariant}
  Although stably equivariant maps are not always equivariant, the following is true: 
  if there exists a stably $(\calg,\rho)$-equivariant measurable map $Y\to\Delta$, then there exists another map $Y\to\Delta$ which
  is $(\calg,\rho)$-equivariant.
The key point is that any map whose restriction to a Borel subset $A\subseteq Y$ of positive measure is
$(\calg_{|A},\rho)$-equivariant extends uniquely to an equivariant map on the saturation $\calg A$ (see \cite[Lemma~2.7]{Ada} or \cite[Lemma~4.16]{Kid1}).
\end{rk}

\begin{de}[Stabilizers]
  Let $\calg$ be a measured groupoid over a base space $Y$. Let $\Gamma$ be a countable group, and let $\rho:\calg\to\Gamma$ be a cocycle. Let $\Delta$ be a measurable space equipped with a  measurable $\Gamma$-action, and let $\phi:Y\to \Delta$ be a measurable map.

The \emph{$(\calg,\rho)$-stabilizer} of $\phi$ is the subgroupoid of $\calg$ made of all $g\in \calg$ such that $\phi(r(g))=\rho(g)\phi(s(g))$. 
\end{de}

\subsection{Normal subgroupoids}

A notion of normal subgroupoid of a measured groupoid was coined by Kida in \cite[Section~4.6.1]{Kid1}, building on earlier work by Feldman, Sutherland and Zimmer \cite{FSZ}. One can check that the definition of normality given below is equivalent to Kida's, but we will not use this fact.

Let $\calg$ be a measured groupoid over a base space $Y$, let $\calh$ be a measured subgroupoid of $\calg$, and let $B\subseteq\calg$ be a Borel subset (typically, $B$ may be a bisection, see Definition~\ref{de:bisection}, but it need not be). 
We say that $\calh$ is \emph{$B$-invariant} if there exists a conull Borel subset $Y^*\subseteq Y$ such that for all $g_1,g_2\in B\cap\calg_{|Y^*}$ and all $h\in\calg_{|Y^*}$ such that $g_2,h,g_1\m$ are composable, one has $h\in\calh$ if and only if $g_2hg_1^{-1}\in\calh$. 

\begin{de}[(Stable) normalization]\label{de:normal}
Let $\calg$ be a measured groupoid over a base space $Y$, and let $\calh$ and $\calh'$ be measured subgroupoids of $\calg$.

The groupoid $\calh'$ \emph{normalizes} $\calh$ if there exists a countable collection of Borel subsets $B_n\subseteq\calg$ such that $\calh'=\bigcup_{n\in\mathbb{N}}B_n$, and for every $n\in\mathbb{N}$, the subgroupoid $\calh$ is $B_n$-invariant.

The groupoid $\calh'$ \emph{stably normalizes} $\calh$ if there exists a countable Borel partition $Y^*=\Dunion_{i\in I}Y_i$ of a conull Borel subset $Y^*\subset Y$
such that for every $i\in I$, the restriction $\calh'_{|Y_i}$ normalizes $\calh_{|Y_i}$.
\end{de} 

\begin{rk}\label{rk:bisection}  
Note that $B_n$ is not assumed to be a subgroupoid.
  Since $\calg$ is a countable union of bisections, one may assume without loss of generality that every $B_n$ is a bisection of $\calg$.
\end{rk}

When $\calh\subseteq\calh'$ and $\calh'$ normalizes $\calh$, we say that $\calh$ is \emph{normal} in $\calh'$, denoted by $\calh\unlhd\calh'$. 
If $\calg=\Gamma\ltimes Y$ is the groupoid associated to the action of a countable group $\Gamma$ on a standard measure space $Y$, and if $H\unlhd \Gamma$ is a normal subgroup,
then the subgroupoid $H\ltimes Y$ is normal in $\calg$. Indeed, this is shown by letting $B_\gamma:=\{(\gamma,y)|y\in Y\}$: then the Borel subsets $B_\gamma$ are bisections that cover $\calg$, and normality of $H$ in $\Gamma$ implies that $H\ltimes Y$ is $B_\gamma$-invariant for every $\gamma\in\Gamma$.

If $Y^*\subset Y$ is a conull Borel subset, then $\calh'$ normalizes $\calh$ if and only if $\calh'_{|Y^*}$ normalizes $\calh_{|Y^*}$.
The notion of stable normalization is clearly preserved under restriction and stabilization.
Notice also that the preimage of a normal subgroupoid by a groupoid homomorphism is again a normal groupoid. In particular, we record the following statement for future use.

\begin{lemma}\label{lemma:normal-subgroup-subgroupoid}
  Let $\calg$ be a measured groupoid over a base space $(Y,\mu)$, let $\Gamma$ be a countable group, and let $\rho:\calg\to\Gamma$ be a cocycle. Let $H_1,H_2$ be two subgroups of $\Gamma$ with $H_1\unlhd H_2$.

Then $\rho^{-1}(H_1)\unlhd \rho^{-1}(H_2)$.
\qed
\end{lemma}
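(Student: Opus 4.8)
The plan is simply to unwind the definitions. Write $\calh_i:=\rho^{-1}(H_i)$ for $i\in\{1,2\}$. By the remark preceding the statement, each $\calh_i$ is a Borel subgroupoid of $\calg$, and since $H_1\subseteq H_2$ we have $\calh_1\subseteq\calh_2$. So it only remains to verify that $\calh_2$ normalizes $\calh_1$ in the sense of Definition~\ref{de:normal}.

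First I would produce the required countable family of bisections. By the Lusin--Novikov theorem (recalled after Definition~\ref{de:bisection}), $\calh_2$ is a countable union of bisections of $\calg$; refining this family according to the (at most countably many) values taken by $\rho$ on $\calh_2$, one obtains a countable family $(B_n)_{n\in\mathbb{N}}$ of bisections with $\calh_2=\bigcup_{n\in\mathbb{N}}B_n$ and such that $\rho$ is constant on each $B_n$, say $\rho_{|B_n}\equiv\gamma_n$ for some $\gamma_n\in H_2$.

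It then suffices to check that $\calh_1$ is $B_n$-invariant for every $n$, and here one may take $Y^*=Y$. Let $g_1,g_2\in B_n$ and let $h\in\calg$ be such that $g_2,h,g_1^{-1}$ are composable. The cocycle identity gives $\rho(g_2hg_1^{-1})=\rho(g_2)\rho(h)\rho(g_1)^{-1}=\gamma_n\,\rho(h)\,\gamma_n^{-1}$. Since $\gamma_n\in H_2$ and $H_1\unlhd H_2$, the element $\gamma_n\rho(h)\gamma_n^{-1}$ lies in $H_1$ if and only if $\rho(h)$ does; equivalently, $g_2hg_1^{-1}\in\calh_1$ if and only if $h\in\calh_1$. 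This is exactly $B_n$-invariance of $\calh_1$, so $\calh_1\unlhd\calh_2$ and the lemma follows.

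The argument is entirely routine; the only point that needs a little care is the preliminary refinement ensuring that $\rho$ is constant on each bisection, so that $\rho(g_2)\rho(h)\rho(g_1)^{-1}$ becomes a genuine conjugate of $\rho(h)$ by an element of $H_2$ even though $g_1$ and $g_2$ are allowed to differ — and this is harmless precisely because $\Gamma$ is countable.
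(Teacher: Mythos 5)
Your proof is correct, and it is essentially the argument the paper intends: the lemma is stated without proof, justified by the remark just before it and by the example following Definition~\ref{de:normal}, where normality is checked on pieces of the groupoid on which the $\Gamma$-label is constant, exactly as in your conjugation computation $\rho(g_2hg_1^{-1})=\gamma_n\rho(h)\gamma_n^{-1}$. The only (harmless) extra ingredient is Lusin--Novikov: since Definition~\ref{de:normal} only requires the $B_n$ to be Borel subsets, not bisections, you could simply take $B_\gamma=\rho^{-1}(H_2)\cap\rho^{-1}(\gamma)$ for $\gamma\in H_2$ and run the same computation.
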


\subsection{Amenability}\label{sec_amenability}

\subsubsection{Amenability of a measured groupoid}

We review the definition of the amenability of a measured groupoid, which is a generalization of Zimmer's definition \cite[Definition~1.4]{Zim2} of an amenable group action, see e.g.\ \cite{ADR}. 
Let $\calg$ be a measured groupoid over a base space $Y$. 
Let $B$ be a separable real Banach space. We denote by $\Isom(B)$ the group of all linear isometries of $B$, equipped with the strong operator topology (this is a Polish group see e.g.~\cite[I.9.B.(9)]{Kec}). 
We denote by $B_1^\ast$ the unit ball of the dual Banach space $B^\ast$, equipped with the weak-$\ast$ topology. Given a linear isometry $T$ of $B$, we denote by $T^\ast$ the restriction to $B_1^\ast$ of the adjoint operator of $T$. 
The \emph{adjoint cocycle} $\rho^\ast$ of a cocycle $\rho:\calg\to\Isom(B)$ is defined as $\rho^{\ast}(g):=(\rho(g)^{-1})^\ast$. 
Then $\rho^{\ast}$ takes its values in the homeomorphism group of the compact metrizable space $X_1^\ast$, which is a Polish group (see e.g.\ \cite[I.9.B.(8)]{Kec}). We denote by $\Conv$ the set of all nonempty convex compact subsets of $B_1^\ast$. 
A map $K:Y\to\Conv$ (which we will refer to as a \emph{convex field over $Y$}) is \emph{measurable} if $$\{(y,k)\in Y\times B_1^\ast| k\in K(y)\}$$ is a Borel subset of $Y\times B_1^\ast$. A \emph{section} of a measurable convex field $K$ over $Y$ is a Borel map $s:Y\to B_1^\ast$ such that for a.e.\ $y\in Y$, one has $s(y)\in K(y)$. 

\begin{de}[Amenable groupoids]\label{dfn_amenable_groupoid}
A measured groupoid $\calg$ over a base space $(Y,\mu)$ is \emph{Zimmer amenable} (or simply \emph{amenable}) if for every separable real Banach space $B$ and every  cocycle $\rho:\calg\to\Isom(B)$, every stably $(\calg,\rho^\ast)$-equivariant measurable convex field over $Y$ has a stably $(\calg,\rho^\ast)$-invariant section. 
\end{de}

When $\calg=\Gamma\ltimes Y$ is the groupoid associated to a measure-preserving
ergodic action of a countable group $\Gamma$ on a standard probability space $Y$,
the definition above exactly coincides with Zimmer's Definition 1.4 in \cite{Zim2}.

\begin{rk}
The definition of an amenable groupoid is usually given in terms of invariant convex fields and invariant sections instead of stably equivariant ones. However, the above definition is equivalent to the usual one in view of Remark~\ref{rk:stably-invariant}. 
\end{rk}

Every restriction and every subgroupoid of an amenable measured groupoid is amenable, see e.g.\ \cite[Theorem ~4.16]{Kid-survey} whose proof relies on work of Connes, Feldman and Weiss \cite{CFW}. 
In addition, it is clear from the definition that amenability is preserved by stabilization, i.e.\ if
$Y^*=\Dunion_{i\in I} Y_i$ for some conull Borel subset $Y^*\subset Y$, and if for every $i\in I$, the groupoid $\calg_{|Y_i}$ is amenable, then $\calg$ is amenable.

\begin{de}\label{dfn:everywhere_nonamenable}
  A measured groupoid $\calg$ over a base space $(Y,\mu)$ is \emph{everywhere non-amenable}
  if for every Borel subset $U\subseteq Y$ of positive measure, $\calg_{|U}$ is not amenable.
\end{de}

\subsubsection{Amenable groupoids from Borel amenable actions}


We will need the notion of Borel amenability of a Borel group action $\Gamma \actson \Delta$, where $\Delta$ is a standard Borel space (in Part~\ref{part_ME} of this work, $\Delta$ will be a space of $\mathbb{R}$-trees). This notion is phrased without reference to any quasi-invariant measure on $\Delta$, but it implies the Zimmer amenability of the action with respect to any quasi-invariant measure on $\Delta$, see e.g.\ \cite[Proposition~2.5]{GHL}.

We denote by $\Prob(\Gamma)$ the space of all probability measures on the countable group $\Gamma$, equipped with the topology of pointwise convergence. 

\begin{de}[Borel amenability of a group action \cite{Ren}]\label{de:borel-amenable}
Let $\Gamma$ be a countable group, and let $\Delta$ be a standard Borel space equipped with a $\Gamma$-action by Borel automorphisms.
The $\Gamma$-action on $\Delta$ is \emph{Borel amenable} if there exists a sequence of Borel maps $\nu_n:\Delta\to\Prob(\Gamma)$ such that for every $\delta\in\Delta$, the norm $||\nu_n(\gamma\cdot\delta)-\gamma_*\nu_n(\delta)||_1$ converges to $0$ as $n$ goes to $+\infty$.
\end{de}

\begin{rk} In \cite[Definition 2.1]{Ren}, Renault gives the definition in the more general context of Borel amenable groupoids.
For equivalence relations, this corresponds to $1$-amenability in \cite[Definition~2.12]{JKL}. We will only use these notions in the context of a group action as in Definition \ref{de:borel-amenable}.
\end{rk}

The following proposition gives a sufficient criterion to ensure the Zimmer amenability of a measured groupoid. 

\begin{prop}[{Kida \cite[Proposition~4.33]{Kid1}}]\label{prop:mackey}
Let $\calg$ be a measured groupoid over a base space $(Y,\mu)$, let $\Gamma$ be a countable group, and let $\rho:\calg\to\Gamma$ be a cocycle with trivial kernel. 
Let $\Delta$ be a  standard Borel space equipped with a Borel amenable $\Gamma$-action. Assume that there exists a stably $(\calg,\rho)$-equivariant Borel map $Y\to\Delta$.

Then $\calg$ is amenable.
\end{prop}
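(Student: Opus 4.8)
The plan is to deduce this from Lemma~\ref{lemma:trivial-kernel} together with the standard fact that a measured groupoid coming from a Borel amenable action is amenable in Zimmer's sense. First I would invoke Lemma~\ref{lemma:trivial-kernel}: since $\rho:\calg\to\Gamma$ has trivial kernel, there exist a standard Borel space $S$ with a $\sigma$-finite Borel measure $\nu$, a quasi-measure-preserving $\Gamma$-action on $S$ by Borel automorphisms, and a Borel subset $Y_0\subseteq S$ with $S=\Gamma Y_0$ (up to null sets), such that $\calg$ is isomorphic to $(\Gamma\ltimes S)_{|Y_0}$ in a way that identifies $\rho$ with the natural cocycle $\Gamma\ltimes S\to\Gamma$ restricted to $Y_0$. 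Since amenability is preserved under restriction, it suffices to prove that $\Gamma\ltimes S$ is amenable.

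Next I would transport the stably $(\calg,\rho)$-invariant map to $S$. A stably equivariant map on $Y_0$ extends, on each piece of the countable partition, to a genuinely equivariant map on the corresponding saturation (Remark~\ref{rk:stably-invariant}, using \cite[Lemma~2.7]{Ada}); since $S=\Gamma Y_0$ and the $\calg$-orbits in $Y_0$ are exactly the intersections with $Y_0$ of the $\Gamma$-orbits in $S$, one obtains from this a measurable map $\psi:S\to\Delta$ which is $\Gamma$-equivariant on a conull subset (after patching the countably many pieces — one must check that the extensions agree on overlaps, which follows because two points of $Y_0$ in the same $\Gamma$-orbit are also in the same $\calg$-orbit, so the original map was already consistent there). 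Thus $\Gamma\actson S$ admits an a.e.\ $\Gamma$-equivariant Borel map to the Borel amenable $\Gamma$-space $\Delta$.

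Then I would apply the classical statement that, for a countable group $\Gamma$ acting on a standard measure space $(S,\nu)$ with quasi-invariant $\nu$, the existence of a $\Gamma$-equivariant Borel map $S\to\Delta$ with $\Delta$ a Borel amenable $\Gamma$-space forces the action $\Gamma\actson(S,\nu)$ to be amenable in the sense of Zimmer, hence $\Gamma\ltimes S$ is an amenable measured groupoid. Concretely, pulling back the almost-invariant sequence $\nu_n:\Delta\to\Prob(\Gamma)$ along $\psi$ yields a sequence of Borel maps $S\to\Prob(\Gamma)$ that is asymptotically $\Gamma$-equivariant, which is exactly a Reiter-type witness for Zimmer amenability of the action; one then uses the equivalence between Zimmer amenability of a measure-preserving (or quasi-preserving) action and amenability of the associated groupoid. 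Since $\calg$ is isomorphic to a restriction of this amenable groupoid, $\calg$ is amenable. I expect the only delicate point to be the bookkeeping in the second paragraph — checking that the stably equivariant map on the subset $Y_0$ genuinely assembles into an (a.e.) equivariant map on all of $S$ under the identification of orbits — but this is routine given Remark~\ref{rk:stably-invariant} and the saturation formula $S=\Gamma Y_0$; everything else is a direct appeal to Lemma~\ref{lemma:trivial-kernel} and the standard Zimmer-amenability machinery. (This is essentially Kida's argument for \cite[Proposition~4.33]{Kid1}, which is where the statement is borrowed from.)
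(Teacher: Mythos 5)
Your overall architecture is the one the paper uses: realize $\calg$ as a restriction of $\Gamma\ltimes S$ via Lemma~\ref{lemma:trivial-kernel}, produce a $\Gamma$-equivariant Borel map from the (saturated) ambient space to $\Delta$, and conclude Zimmer amenability of $\Gamma\actson S$ (your direct Reiter-type pullback of the maps $\nu_n:\Delta\to\Prob(\Gamma)$ is a legitimate substitute for the paper's appeal to \cite{GHL} and \cite{AEG}). However, the step you yourself flag as the "only delicate point" is handled incorrectly, and as written it fails. Stable $(\calg,\rho)$-equivariance of the map $\phi$ on $Y_0$ only gives equivariance for arrows of $\calg$ whose source and range lie in the \emph{same} piece $Y_i$ of the countable partition; it says nothing about arrows joining two different pieces. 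So when you extend $\phi_{|Y_i}$ equivariantly to the saturation $\Gamma Y_i$ and $\phi_{|Y_j}$ to $\Gamma Y_j$, the two extensions need not agree on $\Gamma Y_i\cap\Gamma Y_j$: your justification "two points of $Y_0$ in the same $\Gamma$-orbit are also in the same $\calg$-orbit, so the original map was already consistent there" is precisely the assertion that $\phi$ intertwines arrows crossing between pieces, which is exactly what stable equivariance does not guarantee. (Toy example: if $Y_0=\{a,b\}$ with a group element swapping $a$ and $b$, then \emph{any} map is stably equivariant for the partition into singletons, yet the two piecewise extensions to the whole orbit typically contradict each other.)

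The fix is short and is what the paper does in its very first line: since amenability of a measured groupoid is preserved by stabilization, it suffices to prove amenability of each $\calg_{|Y_i}$, and on $Y_i$ the map is genuinely $(\calg_{|Y_i},\rho)$-equivariant, so your extension-to-the-saturation and Zimmer-amenability argument goes through piece by piece. Equivalently, one can first invoke Remark~\ref{rk:stably-invariant} in full: by an exhaustion over the pieces (extend from $Y_1$ to its saturation, then from $Y_2$ minus that saturation, etc.) one builds a possibly \emph{different} map $Y\to\Delta$ which is genuinely $(\calg,\rho)$-equivariant, and only then extends it to $S$. Either repair makes your argument essentially coincide with the paper's proof; but patching the piecewise extensions of the original map, as you propose, is not a valid step.
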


\begin{proof}
The proof is due to Kida \cite[Proposition~4.33]{Kid1}, but as he phrased it in the specific context of mapping class groups, we include an argument for completeness of the exposition.

Since amenability is preserved by stabilization, we may assume without loss of generality that there exists a $(\calg,\rho)$-invariant Borel map $f:Y\to \Delta$. By Lemma~\ref{lemma:trivial-kernel}, there exist a standard measure space $(\hat Y,\hat \mu)$ containing $(Y,\mu)$ and a measure-class preserving $\Gamma$-action on $\hat Y$, such that $\calg$ identifies with the restriction $(\Gamma\ltimes \hat Y)_{|Y}$ in such a way that $\rho$ is the restriction of the natural cocycle $\hat \rho:(\Gamma\ltimes \hat Y)\ra \Gamma$.

We can assume that $\hat Y$ is the saturation of $Y$ for the groupoid $\Gamma\ltimes \hat Y$.
Let $\nu$ be a probability measure on $\hat Y$ in the same measure class as $\hat \mu$.
Using Remark~\ref{rk:stably-invariant}, the map $f:Y\ra \Delta$ can be extended to a $(\Gamma\ltimes \hat Y,\hat \rho)$-equivariant
Borel map $\hat Y\to\Delta$.
In other words, there exists a $\Gamma$-invariant conull subset $\hat Y^*\subseteq \hat Y$
and $\Gamma$-equivariant Borel map $\hat f:\hat Y^*\to\Delta$. 
Since the $\Gamma$-action on $\Delta$ is Borel amenable, 
the $\Gamma$-action on $(\Delta,\hat f_*\nu)$ is Zimmer amenable \cite[Proposition~2.5]{GHL}.
It then follows from \cite[Corollary~C]{AEG} that the $\Gamma$-action on $(\hat Y,\hat\mu)$ is Zimmer amenable. In other words the groupoid $\Gamma\ltimes \hat Y$ is amenable, and so is $\calg$. 
\end{proof}

As a special case of Proposition~\ref{prop:mackey}, we mention the following fact.

\begin{cor}\label{cor:amenable-subgroup-subgroupoid}
Let $\calg$ be a measured groupoid, let $\Gamma$ be an amenable countable group. Assume that there exists a cocycle $\calg\to\Gamma$ with trivial kernel.

Then $\calg$ is amenable.\qed
\end{cor}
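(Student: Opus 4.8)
The plan is to deduce this immediately from Proposition~\ref{prop:mackey} by feeding it a one-point space. First I would observe that the trivial action of $\Gamma$ on the one-point standard Borel space $\Delta=\{\ast\}$ is Borel amenable. Indeed, since $\Gamma$ is a countable amenable group, it admits a Følner sequence, i.e.\ a sequence of probability measures $\beta_n\in\Prob(\Gamma)$ such that $||\gamma_*\beta_n-\beta_n||_1\to 0$ for every $\gamma\in\Gamma$. Taking for $\nu_n:\Delta\to\Prob(\Gamma)$ the constant map with value $\beta_n$, the condition in Definition~\ref{de:borel-amenable} becomes $||\beta_n-\gamma_*\beta_n||_1\to 0$, which holds by construction; hence the $\Gamma$-action on $\Delta$ is Borel amenable.

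Next I would let $\rho:\calg\to\Gamma$ be the given cocycle with trivial kernel, and let $\phi:Y\to\Delta$ be the (unique, constant) Borel map to the one-point space. Since $\Gamma$ acts trivially on $\Delta$, for every arrow $g\in\calg$ one has $\phi(r(g))=\phi(s(g))=\rho(g)\cdot\phi(s(g))$, so $\phi$ is $(\calg,\rho)$-invariant, hence in particular stably $(\calg,\rho)$-invariant. Proposition~\ref{prop:mackey} then applies directly with this choice of $\Delta$, $\rho$ and $\phi$, and yields that $\calg$ is amenable, which is the desired conclusion. There is no genuine obstacle here; the only point that deserves explicit mention is the Borel amenability of the trivial action of an amenable group, which is precisely the Følner characterization of amenability recalled above.
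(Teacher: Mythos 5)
Your proof is correct, and it is exactly the argument the paper intends: the corollary is stated with \qed as "a special case of Proposition~\ref{prop:mackey}", and instantiating that proposition with the one-point space (whose trivial $\Gamma$-action is Borel amenable precisely by the F\o lner/Reiter characterization of amenability of $\Gamma$) together with the constant, hence equivariant, map $Y\to\{\ast\}$ is the intended reading. Nothing is missing.
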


On the other hand, when the cocycle is action-type, we have the following. 

\begin{lemma}[{Kida \cite[Lemma~3.20]{Kid2}}]\label{lemma:not-amen}
Let $\calg$ be a measured groupoid, let $\Gamma$ be a countable group which contains a non-abelian free group. Assume that there exists an action-type cocycle $\calg\to\Gamma$.

Then $\calg$ is everywhere non-amenable (see Definition~\ref{dfn:everywhere_nonamenable}).
\end{lemma}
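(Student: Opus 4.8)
The plan is to argue by contradiction: assume $\calg_{|U}$ is amenable for some Borel $U\subseteq Y$ with $\mu(U)>0$, and derive a contradiction. First I would reduce to the case where the target group is a non‑abelian free group $F=\grp{a,b}$: pick such an $F$ inside $\Gamma$ (every non‑abelian free group contains a copy of $F_2$), and set $\calh:=(\rho\m(F))_{|U}$. This is a subgroupoid of the amenable groupoid $\calg_{|U}$, hence amenable, and the restriction of $\rho$ to $\calh$ is again action‑type by Lemma~\ref{lemma:action-type}. After replacing $\mu$ by an equivalent probability measure (Remark~\ref{rk:proba}), we are left to prove: no amenable measured groupoid over a standard probability space $(Y,\mu)$ carries an action‑type cocycle $\rho$ to $F$. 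So from now on I assume $\calg$ is amenable over $(Y,\mu)$ with $\rho:\calg\to F$ action‑type, and look for a contradiction.

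Next I would produce a boundary map. Let $\partial F$ be the Gromov boundary (space of ends) of $F$, a compact metrizable space on which $F$ acts by homeomorphisms; then $F$ acts by linear isometries on the separable Banach space $C(\partial F)$, hence on its dual, preserving the weak‑$*$ compact convex set $\Prob(\partial F)$. Composing with $\rho$ and applying the definition of amenability of $\calg$ to the constant (hence stably $(\calg,\rho^\ast)$‑invariant) convex field $y\mapsto \Prob(\partial F)$ gives a stably $(\calg,\rho)$‑equivariant Borel map $Y\to\Prob(\partial F)$. Restricting to one part of the associated partition — on which $\calg$ stays amenable and $\rho$ stays action‑type — I obtain a genuinely $(\calg,\rho)$‑equivariant Borel map $\phi:Y\to\Prob(\partial F)$, i.e.\ $\phi(r(g))=\rho(g)_*\phi(s(g))$ for all arrows $g$ in a conull part of $\calg$.

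The heart of the proof, and the step I expect to be the main obstacle, is the \emph{measure‑reduction step}: one must show that $\phi$ can be taken to be supported on at most two ends. This is exactly Adams' argument \cite{Ada}: using the north–south dynamics on $\partial F$ of any infinite‑order element together with the recurrence supplied by the action‑type property applied to cyclic subgroups (a sequence of distinct powers of $c$ appearing as $\rho$‑labels of arrows out of a generic point pushes $\phi(y)$ towards the fixed ends of $c$), one concludes, after passing to a $\calg$‑saturated Borel subset of positive measure, that $\phi(y)$ is supported on at most two points for a.e.\ $y$. Writing $\psi(y):=\supp(\phi(y))$, this yields a $(\calg,\rho)$‑equivariant Borel map $\psi:Y\to Z_{\le 2}$, where $Z_{\le 2}$ is the standard Borel space of non‑empty subsets of $\partial F$ of cardinality at most $2$, with its natural $F$‑action. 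I would invoke \cite{Ada} for this input and flag it as the delicate point.

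Finally I would extract the contradiction from two independent elements. For $c\in\{a,b\}$, let $c^{\pm}$ be its attracting/repelling fixed ends, and let $\mathrm{Fix}_c\subseteq Z_{\le 2}$ be the (three‑element) set of $\grp{c}$‑invariant elements. The Busemann function $h_c$ of the axis of $c$ (Borel on $\partial F\setminus\{c^+,c^-\}$, with $h_c(c\xi)=h_c(\xi)+\ell(c)$ and translation length $\ell(c)\ge1$) induces a Borel function $H_c:D_c\to\bbR$ on $D_c:=Z_{\le 2}\setminus\mathrm{Fix}_c$ with $H_c(c\cdot Q)=H_c(Q)+\ell(c)$, so $D_0:=H_c\m([0,\ell(c)))$ satisfies $D_c=\Dunion_{n\in\bbZ}c^nD_0$. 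Suppose $\mu(\{y:\psi(y)\notin\mathrm{Fix}_c\})>0$ and set $W:=\psi\m(D_c)$; then $(\rho\m(\grp{c}))_{|W}$ is of infinite type (action‑type, and being of infinite type is preserved under restriction to positive‑measure subsets). Choosing a part $W_0:=\psi\m(c^mD_0)\cap W$ of positive measure and replacing $D_0$ by $c^mD_0$, any arrow $g$ of $(\rho\m(\grp{c}))_{|W_0}$ has $\psi(s(g)),\psi(r(g))\in D_0$ with $\psi(r(g))=c^n\psi(s(g))$, hence $n\ell(c)\in(-\ell(c),\ell(c))$, forcing $n=0$, i.e.\ $\rho(g)=1$, i.e.\ $g$ is a unit since $\rho$ has trivial kernel; thus $(\rho\m(\grp{c}))_{|W_0}$ consists only of units, contradicting infinite type. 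Therefore $\psi(y)\subseteq\{c^+,c^-\}$ for a.e.\ $y$, both for $c=a$ and $c=b$. Since $a$ and $b$ are independent, $\{a^+,a^-\}\cap\{b^+,b^-\}=\es$, while $\psi(y)$ is non‑empty — a contradiction. Hence $\calg_{|U}$ is not amenable for any Borel $U\subseteq Y$ of positive measure, i.e.\ $\calg$ is everywhere non‑amenable.
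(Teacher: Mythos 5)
Your argument is correct, but note that the paper contains no proof of this lemma to compare against: it is quoted verbatim from Kida \cite[Lemma~3.20]{Kid2} and used as a black box. Your proposal is therefore a genuine (essentially self-contained) proof, and its one external input — that an amenable groupoid with a nowhere trivial cocycle to a free group admits an equivariant map to probability measures on the boundary supported on at most two points — is exactly the input the paper itself uses elsewhere: it is \cite[Proposition~5.1]{Ada}, whose proof is recalled in the paper's proof of Lemma~\ref{lemma:adams-produit} via the barycenter map on distinct triples of ends (push the triple measure to $\Prob(V(T))$, take the finite set of vertices of maximal mass, restrict to a set where this is constant, and contradict nowhere triviality since $F$ acts freely on $V(T)$). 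Two small remarks. First, your description of the mechanism behind that step ("north--south dynamics plus recurrence pushes $\phi(y)$ to the fixed ends of $c$") is not how the cited argument actually runs — it is the barycenter argument just described, needing only that $\rho$ is nowhere trivial, which indeed follows from action-type (infinite type of $\calg_{|U}$ plus trivial kernel); since you invoke \cite{Ada} rather than prove it, this is cosmetic, not a gap. Second, your endgame is a nice touch and is correct as written: the function $H_c$ (say the maximum of the Busemann coordinates of the points of $Q$ outside $\{c^+,c^-\}$) is Borel, shifts by $\ell(c)$ under $c$, and the fundamental-domain argument forces every arrow of $(\rho\m(\grp{c}))_{|W_0}$ to have trivial $\rho$-image, hence to be a unit, contradicting infinite type; the final disjointness $\{a^{\pm}\}\cap\{b^{\pm}\}=\es$ does require that $a,b$ freely generate the chosen $F_2$ (a shared fixed end would give a common power), which holds by your choice of $F$. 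So the proposal stands; it differs from the paper only in that the paper delegates the whole statement to Kida, whereas you rebuild it from the Zimmer-type boundary map (Proposition~\ref{prop:zimmer}), Adams' support reduction, and a direct recurrence/wandering contradiction, all of which are compatible with the paper's framework.
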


\subsubsection{Amenability and invariant probability measures}

One way in which amenability is often used is through the following proposition which is essentially due to Zimmer \cite[Proposition~4.3.9]{Zim}, see e.g.\ \cite[Proposition~4.14]{Kid-survey} for a proof in the language of measured groupoids. Given a compact, metrizable space $K$, we denote by $\Prob(K)$ the space of all probability measures on $K$: this is a convex closed subset (in the weak-$\ast$ topology) of the unit ball of the dual of $C(K)$, the space of continuous $\mathbb{C}$-valued functions on $K$ equipped with the sup norm.

\begin{prop}(Zimmer \cite[Proposition~4.3.9]{Zim})\label{prop:zimmer}
  Let $\calg$ be a measured groupoid over a base space $Y$. Let $\Gamma$ be a countable group, and let $K$ be a compact
    metrizable space on which $\Gamma$ acts continuously. Let $\rho:\calg\to\Gamma$ be a cocycle. Assume that $\calg$ is amenable.

Then there exists a $(\calg,\rho)$-equivariant Borel map $Y\to\Prob(K)$. 
\end{prop}

\section{Groupoids, measure equivalence, and applications}\label{sec:me}

In this section, we review how proving that a countable discrete group $\Gamma$ is rigid for measure equivalence can be reduced to a statement about measured groupoids with cocycles towards $\Gamma$. The criterion we present follows from the work of Kida \cite{Kid2,Kid3}, based on earlier works of Furman \cite{Fur1,Fur2} and Monod and Shalom \cite{MS}, see also \cite{BFS}.

Given a measured groupoid $\calg$ over a  base space $Y$ and a countable group $\Gamma$, two cocycles $\rho,\rho':\calg\to\Gamma$ are \emph{$\Gamma$-cohomologous} if there exist a conull Borel subset $Y^*\subseteq Y$ and a Borel map $\phi:Y\to\Gamma$ such that for every $g\in\calg_{|Y^*}$, one has $$\rho'(g)=\phi(r(g))\rho(g)\phi(s(g))^{-1}.$$ 

We consider the following rigidity property of a countable group $\Gamma$.
\begin{de} \label{de:rigid} A countable group $\Gamma$ is \emph{rigid with respect to action-type cocycles} if
there exists a finite index subgroup $\Gamma^0\subset \Gamma$ such that 
for every measured groupoid $\calg$,
any two action-type cocycles $\rho,\rho':\calg\to\Gamma^0$ are $\Gamma$-cohomologous.
\end{de}

As noted in Remark \ref{rk_commensurator} in the introduction, if $\Gamma$ is rigid with respect to action-type cocycles, applying this property for groupoids over a point yields commensurator rigidity in the following sense: the natural map from $\Gamma$ to its abstract commensurator is surjective. It is injective if and only if $\Gamma$ is \emph{ICC} (standing for \emph{infinite conjugacy classes}), i.e.\ every nontrivial conjugacy class of $\Gamma$ is infinite.

We will now explain why checking rigidity with respect to action-type cocycles for $\Out(F_N)$ is enough to deduce all the results announced in the introduction.

\subsection{Every self coupling of $\Gamma$ factors through the standard one}

Given a free action of a group $G$ by Borel automorphisms on a standard Borel space $\Sigma$, a \emph{Borel fundamental domain} for the $G$-action of $\Sigma$ is a Borel subset $X\subseteq\Sigma$ such that $\Sigma=\bigcup_{g\in G}gX$ and $gX\cap hX=\emptyset$ for any two distinct $g,h\in G$. Recall from the introduction that two countable groups are \emph{measure equivalent} if they admit a coupling in the following sense.

\begin{de}
Let $\Gamma$ and $\Lambda$ be two countable groups. A \emph{coupling} between $\Gamma$ and $\Lambda$ is a standard measure space $\Sigma$ equipped with an action of $\Gamma\times\Lambda$ by measure-preserving Borel automorphisms, such that both the $\Gamma$-action and the $\Lambda$-action on $\Sigma$ are free and have a Borel fundamental domain of finite measure.
\end{de}

A \emph{self coupling} of a countable group $\Gamma$ is a coupling between $\Gamma$ and $\Gamma$. If $\Gamma^1\subseteq \Gamma$ is a finite index subgroup, then there is a \emph{standard} self coupling of $\Gamma^1$ with respect to $\Gamma$, given by $\Sigma=\Gamma$, with $\Gamma^1\times \Gamma^1$ acting on $\Gamma$ by
$$(g,g')\cdot h= ghg'^{-1}.$$

\begin{prop}[{Furman \cite{Fur2}, Kida \cite{Kid2}}]\label{prop:oe-me}
Let $\Gamma$ be a countable ICC group which is rigid with respect to action-type cocycles.

Then for every finite-index subgroup $\Gamma^1\subseteq\Gamma$, every self coupling $\Sigma$ of $\Gamma^1$ factors through the standard self coupling of $\Gamma^1$ with respect to $\Gamma$, in the following sense: there exists a Borel map $\Phi:\Sigma\to\Gamma$ which is almost everywhere $(\Gamma^1\times\Gamma^1)$-equivariant.
\end{prop}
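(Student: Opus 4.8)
The plan is to convert the self coupling $\Sigma$ of $\Gamma^1$ into a measured groupoid carrying two action-type cocycles to $\Gamma^1$, apply the hypothesis that $\Gamma$ is rigid with respect to action-type cocycles to get the two cocycles $\Gamma$-cohomologous, and then unwind this cohomology into the desired equivariant Borel map $\Phi:\Sigma\to\Gamma$. This is the standard Furman--Kida mechanism, so the work is mostly bookkeeping; I will sketch it in four steps.

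\textbf{Step 1: Set up the groupoid and the two cocycles.} Write $\Gamma_1\times\Gamma_2=\Gamma^1\times\Gamma^1$ acting on $\Sigma$. Pick Borel fundamental domains $X_1$ for the $\Gamma_1$-action and $X_2$ for the $\Gamma_2$-action, each of finite measure; after adjusting by translating copies, one may arrange that $Y:=X_1\cap X_2$ meets $(\Gamma_1\times\Gamma_2)$-a.e.\ orbit, i.e.\ $(\Gamma_1\times\Gamma_2)Y=\Sigma$ up to a null set. Normalize the restriction of the coupling measure to a probability measure on $Y$. The $\Gamma_2$-action on $\Sigma/\Gamma_1\cong X_1$ is measure-preserving (since $\Gamma_2$ commutes with $\Gamma_1$ and the measure is $(\Gamma_1\times\Gamma_2)$-invariant), and symmetrically for $\Gamma_1$ on $X_2$; hence the orbit equivalence relations induced on $Y$ by these two actions coincide (two points of $Y$ are in the same $\Gamma_1$-orbit iff in the same $\Gamma_2$-orbit, since both are equivalent to lying in the same $(\Gamma_1\times\Gamma_2)$-orbit restricted to $Y$). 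Let $\calg:=(\Gamma_2\ltimes X_1)_{|Y}$, which by the coincidence of equivalence relations is the same groupoid as $(\Gamma_1\ltimes X_2)_{|Y}$. The two identifications endow $\calg$ with two cocycles $\rho,\rho':\calg\to\Gamma^1$: the first records, for an arrow from $x$ to $y$, the element of $\Gamma_1$ sending $x$ to $y$ in $X_2$, and the second the element of $\Gamma_2$ sending $x$ to $y$ in $X_1$. Both have trivial kernel (freeness of the two actions), and by Lemma~\ref{lemma:infinite_type} and Lemma~\ref{lemma:action-type} both are action-type: indeed $\rho^{-1}(H)$ for $H\subseteq\Gamma^1$ infinite is a restriction of $H\ltimes(\text{a finite-measure }H\text{-invariant space})$, so is of infinite type by Poincar\'e recurrence.

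\textbf{Step 2: Apply the rigidity hypothesis.} Since $\Gamma$ is rigid with respect to action-type cocycles, the two action-type cocycles $\rho,\rho':\calg\to\Gamma^1\subseteq\Gamma^0$ (after possibly passing to the finite-index subgroup $\Gamma^0$ in the definition, which only further restricts $Y$ by a countable partition, harmless here since we may also intersect $\Gamma^1$ with $\Gamma^0$ — a finite-index subgroup of $\Gamma^1$ — and work there first, or simply apply the definition directly to $\Gamma^1\subseteq\Gamma^0$ up to adjusting $\Gamma^1$) are $\Gamma$-cohomologous: there is a conull $Y^*\subseteq Y$ and a Borel map $\phi:Y\to\Gamma$ with $\rho'(g)=\phi(r(g))\rho(g)\phi(s(g))^{-1}$ for all $g\in\calg_{|Y^*}$.

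\textbf{Step 3: Build $\Phi$ on $Y$ and spread it over $\Sigma$.} Define $\Phi_0:Y\to\Gamma$ by $\Phi_0:=\phi$. The cohomology relation says precisely that $\Phi_0$ is equivariant for the two actions on $Y$: moving $x\in Y$ to $y\in Y$ by $\gamma_1\in\Gamma_1$ (inside $X_2$) is the same arrow as moving it by $\gamma_2\in\Gamma_2$ (inside $X_1$), and then $\phi(y)\gamma_1\phi(x)^{-1}=\gamma_2$, i.e.\ $\phi(\gamma_1 x)=\gamma_2\phi(x)\gamma_1^{-1}$ in the sense appropriate to the $(\Gamma_1\times\Gamma_2)$-structure. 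Now extend: every point of $\Sigma$ (off a null set) is $(\delta_1,\delta_2)\cdot y$ for a unique $(\delta_1,\delta_2)\in\Gamma_1\times\Gamma_2$ and $y\in Y$ — uniqueness because $X_1$ (resp.\ $X_2$) is a strict fundamental domain forces $\delta_2$ once we know which $\Gamma_1$-coset we are in, and vice versa, and $Y$ meets each orbit; one checks the standard cocycle identities to see this is well defined a.e. Set $\Phi((\delta_1,\delta_2)\cdot y):=\delta_2\,\Phi_0(y)\,\delta_1^{-1}\in\Gamma$ (using $\Gamma^1\subseteq\Gamma$). Then for $(\gamma_1,\gamma_2)\in\Gamma^1\times\Gamma^1$ one computes $\Phi((\gamma_1,\gamma_2)(\delta_1,\delta_2)y)=\Phi((\gamma_1\delta_1,\gamma_2\delta_2)y)=\gamma_2\delta_2\Phi_0(y)\delta_1^{-1}\gamma_1^{-1}=\gamma_2\,\Phi((\delta_1,\delta_2)y)\,\gamma_1^{-1}$, which is exactly $(\Gamma^1\times\Gamma^1)$-equivariance for the action $(g,g')\cdot h=ghg'^{-1}$ on $\Sigma_0=\Gamma$. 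The only subtle point is consistency across the boundary identifications between the two fundamental-domain pictures: this is where the cohomology relation from Step~2, written on $\calg_{|Y^*}$, is used to guarantee the two a priori definitions of $\Phi$ (one from the $X_1$-picture, one from the $X_2$-picture) agree a.e.

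\textbf{Main obstacle.} The essential content is not the rigidity input — that is invoked as a black box — but the correct bookkeeping of fundamental domains in Step~3: arranging $X_1,X_2$ so that $Y=X_1\cap X_2$ is a common ``cross-section'', checking that the two orbit equivalence relations really do coincide on $Y$, and verifying that the formula for $\Phi$ is independent of the choice of representative $(\delta_1,\delta_2,y)$ for a point of $\Sigma$. All of this is routine measured-group-theory manipulation (it is the argument of \cite{Fur2}, reproduced e.g.\ in \cite[Theorem~6.1 ff.]{Kid2}), and it is carried out in detail in Section~\ref{sec:me}; I will therefore only record the statement and refer to those sources for the verifications, since none of the steps presents a genuine difficulty once the groupoid $\calg$ and the cocycles $\rho,\rho'$ have been identified as action-type.
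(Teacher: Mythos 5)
Your overall route (build the groupoid over $Y=X_1\cap X_2$, extract two action-type cocycles, invoke rigidity, unwind the cohomology into an equivariant map $\Phi$) is the same as the paper's, but there is a genuine gap in Step~2 that propagates to the end: the rigidity hypothesis only guarantees that action-type cocycles \emph{with values in the specific finite-index subgroup} $\Gamma^0$ of Definition~\ref{de:rigid} are $\Gamma$-cohomologous, and the given $\Gamma^1$ need not be contained in $\Gamma^0$. Your parenthetical fixes do not work: "adjusting $\Gamma^1$" is not allowed since $\Gamma^1$ is prescribed in the statement, and "intersecting $\Gamma^1$ with $\Gamma^0$ and working there first" is legitimate but then only produces a map $\Phi$ which is equivariant under $(\Gamma^1\cap\Gamma^0)\times(\Gamma^1\cap\Gamma^0)$ (or, in the paper, under $\Gamma^3\times\Gamma^3$ for a further finite-index subgroup $\Gamma^3\subseteq\Gamma^0\cap\Gamma^1$ chosen normal in $\Gamma^1$), not under $\Gamma^1\times\Gamma^1$. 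The upgrade from equivariance under this smaller subgroup to full $(\Gamma^1\times\Gamma^1)$-equivariance is precisely the step where the ICC hypothesis is needed — the paper does it by citing Kida's Lemma~5.8 of \cite{Kid2} — and your proposal never uses ICC at all, which is a clear sign the step is missing. Without it the proof only establishes the proposition for the subgroup $\Gamma^3$, not for an arbitrary finite-index $\Gamma^1$.

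A secondary, smaller issue: in Step~3 you assert that a.e.\ point of $\Sigma$ is $(\delta_1,\delta_2)\cdot y$ for a \emph{unique} triple $(\delta_1,\delta_2,y)$ with $y\in Y$. This is false in general — a $(\Gamma_1\times\Gamma_2)$-orbit typically meets $Y$ in many points (those intersections are exactly the arrows of your groupoid $\calg$) — so the formula $\Phi((\delta_1,\delta_2)y)=\delta_2\Phi_0(y)\delta_1^{-1}$ is not automatically well defined; one must verify, as the paper does explicitly, that if $(\gamma,\gamma')y=y'$ with $y,y'\in Y^*$ then the two expressions agree, and this is exactly where the cohomology relation is consumed. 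You do gesture at this in your "subtle point" remark, so it is a presentational flaw rather than a fatal one, but the uniqueness claim as written is incorrect. (Your sign/side convention $\delta_2\Phi_0(y)\delta_1^{-1}$ versus the paper's $\gamma\phi(y)^{-1}\gamma'^{-1}$ is only a relabeling and is harmless.)
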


\begin{rk} For an ICC group, the conclusion of Proposition~\ref{prop:oe-me} coincides with the notion of coupling rigidity in \cite{Kid4} (for the pair $(\Gamma,\pi)$ where $\pi:\Gamma_1\ra \Gamma$ is the inclusion). This also agrees with the notion of tautness in \cite{BFS}.
\end{rk}

\begin{proof}
By definition of rigidity with respect to action-type cocycles (Definition~\ref{de:rigid}), there exists a finite-index subgroup $\Gamma^0$ of $\Gamma$ such that for every measured groupoid $\calg$, any two action-type cocycles $\calg\to\Gamma^0$ are $\Gamma$-cohomologous. 
Let $\Gamma^2:=\Gamma^0\cap\Gamma^1$. Let $\Gamma^3$ be a finite-index subgroup of $\Gamma^2$ which is normal in $\Gamma^1$. 

The given self-coupling $\Sigma$ of $\Gamma^1$ is also a self-coupling of $\Gamma^3$.
The proof relies on a standard construction by Furman \cite{Fur2} establishing a dictionary between measure equivalence and stable orbit equivalence.

Let $X$ be a Borel fundamental domain for the action of  $\Gamma^3\times\{1\}$ on $\Sigma$.
We claim that one can choose a Borel fundamental domain $X'$ for the action of  $\{1\}\times \Gamma^3$ on $\Sigma$
such that $Y:=X\cap X'$ satisfies that $(\Gamma^3\times\Gamma^3) Y=\Sigma$ up to a null set. This is e.g.\ \cite[Lemma~2.27]{Kid-survey}, but let us sketch an argument for convenience.
Let $X''$ be an arbitrary fundamental domain for the action of $\{1\}\times \Gamma^3$.
The basic fact is that if we partition $X''$ into countably many pieces, and move these
pieces by elements of $\{1\}\times \Gamma^3$, we get a new fundamental domain for the action of $\{1\}\times \Gamma^3$.
Choose a numbering $\gamma_i$ of the elements of $\Gamma^3$.
Using the fact that $X''$ is a fundamental domain for the action of $\{1\}\times \Gamma^3$,
write $X=\dunion_{i\in \bbN} (1,\gamma_i).Y_i$ for some measurable subset $Y_i\subset X''$.
For every $i\in\mathbb{N}$, let $Y'_i=Y_i\setminus (Y_0\cup \dots\cup Y_{i-1})$. And let $Y_\infty:=X''\setminus \bigcup_{i\in\mathbb{N}}Y_i$. Then $X':=\dunion_i (1,\gamma_i)Y'_i \cup Y_\infty$ is the desired fundamental domain.

Consider the groupoid $\calg=\left((\Gamma^3\times\Gamma^3)\ltimes \Sigma\right)_{|Y}$, and recall that $\calg$ is the set of pairs $((\gamma,\gamma'),y)$
such that both $y$ and $(\gamma,\gamma')y$ lie in $Y$.
Let $\rho:\calg\ra \Gamma^3$ and $\rho':\calg\to\Gamma^3$ be the cocycles defined by $\rho((\gamma,\gamma'),y)=\gamma$
and $\rho'((\gamma,\gamma'),y)=\gamma'$.
We claim that the cocyles $\rho$ and $\rho'$ are action-type. Indeed, 
$X$ comes equipped with a natural action of $\{1\}\times\Gamma^3$, by letting $(1,\gamma')\cdot x$ be the unique element of $X$ in the same $(\Gamma^3\times\{1\})$-orbit as $(1,\gamma')x\in\Sigma$.
There is a natural isomorphism between  $\calg$ and  $\left((\{1\}\times\Gamma^3)\ltimes X\right)_{|Y}$ given
by $((\gamma,\gamma'),y)\mapsto ((1,\gamma'),y)$, and under this identification, $\rho'$ becomes the natural cocycle of
$\left((\{1\}\times\Gamma^3)\ltimes X\right)_{|Y}$. By  Lemmas~\ref{lemma:infinite_type} and~\ref{lemma:action-type}, this shows that $\rho'$ is action-type. By a symmetric argument, so is $\rho$.

By cocycle rigidity, our choice of $\Gamma^0$ ensures that there exist a conull Borel subset $Y^*\subseteq Y$ and a Borel map 
$\phi:Y^*\to\Gamma$ such that for all $g\in\calg_{|Y^*}$, one has 
\begin{equation}\label{eq:coho}
\rho'(g)=\phi(r(g))\rho(g)\phi(s(g))^{-1}.
\end{equation}
Consider the conull Borel subset $\Sigma^*=(\Gamma^3\times\Gamma^3)Y^*$ in $\Sigma$.
We then claim that the assignment  $$(\gamma,\gamma')y\mapsto \gamma\phi(y)\m{\gamma'}\m$$
for all $(\gamma,\gamma')\in\Gamma^3\times\Gamma^3$ and all $y\in Y^*$ determines a $(\Gamma^3\times\Gamma^3)$-equivariant Borel map $\Phi:\Sigma^*\to\Gamma$.
To check that this is well defined, it suffices to check that if $(\gamma,\gamma').y=y'$ with $y,y'\in Y^*$, then
$\gamma\phi(y)\m{\gamma'}\m=\phi(y')\m$.
The arrow $g=((\gamma,\gamma'),y)\in\calg$ has source $s(g)=y$ and range $r(g)=y'$,
so Equation~\eqref{eq:coho} above says that $\gamma'=\phi(y')\gamma\phi(y)\m$ and concludes that $\Phi$ is well defined.

The map $\Phi$ is clearly $(\Gamma^3\times \Gamma^3)$-equivariant.
That $\Phi$ is actually almost everywhere $(\Gamma^1\times\Gamma^1)$-equivariant then follows from \cite[Lemma~5.8]{Kid2}, using the fact that the group $\Gamma$ is ICC.
\end{proof}

\subsection{Measure equivalence rigidity}

We recall from the introduction that two countable groups $\Gamma_1$ and $\Gamma_2$ are \emph{virtually isomorphic} if there exist finite-index subgroups $\Gamma_i^0\subseteq\Gamma_i$ and finite normal subgroups $F_i\unlhd\Gamma_i^0$ such that $\Gamma_1^0/F_1$ and $\Gamma_2^0/F_2$ are isomorphic. A countable group $\Gamma$ is \emph{ME-superrigid} if for every countable group $\Lambda$, if $\Lambda$ is measure equivalent to $\Gamma$, then $\Gamma$ and $\Lambda$ are virtually isomorphic. 

\begin{theo}[{Kida \cite[Theorem~6.1]{Kid2}}]\label{theo:me}
Let $\Gamma$ be a countable  ICC group which is rigid with respect to action-type cocycles.

Then $\Gamma$ is ME-superrigid.
\end{theo}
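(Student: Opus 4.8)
The plan is to deduce Theorem~\ref{theo:me} from Proposition~\ref{prop:oe-me} together with the standard passage between measure equivalence couplings and (stable) orbit equivalence, following Kida's argument in \cite{Kid2}. So suppose $\Lambda$ is a countable group which is measure equivalent to $\Gamma$, and fix a coupling $\Sigma$ of $\Gamma$ with $\Lambda$. First I would combine $\Sigma$ with itself to produce a self coupling of a finite-index subgroup of $\Gamma$: taking the product $\Sigma \times_\Lambda \check\Sigma$ of $\Sigma$ with the ``flipped'' coupling $\check\Sigma$ (in which the roles of the two factors are exchanged), and dividing by the diagonal $\Lambda$-action, yields a self coupling of $\Gamma$. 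By Proposition~\ref{prop:oe-me}, every self coupling of a finite-index subgroup $\Gamma^1 \subseteq \Gamma$ factors through the standard self coupling of $\Gamma^1$ with respect to $\Gamma$; applying this to the self coupling just built gives a Borel map $\Phi$ to $\Gamma$ which is almost everywhere equivariant.

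The next step is to feed this factorization back into the original coupling $\Sigma$. The existence of the almost everywhere $(\Gamma^1\times\Gamma^1)$-equivariant map from the self coupling to $\Gamma$ is exactly the hypothesis under which the ``discretization'' argument of Furman--Kida produces a virtual isomorphism: one shows that, after passing to finite-index subgroups, the coupling $\Sigma$ becomes (up to a finite kernel) a coupling induced from an isomorphism of groups. Concretely, one uses the equivariant map $\Phi$ to build a Borel map from $\Sigma$ to $\Gamma$ that intertwines the $\Lambda$-action on $\Sigma$ with a homomorphism $\Lambda \to \Gamma$ defined on a finite-index subgroup of $\Lambda$, whose image has finite index in $\Gamma$ and whose kernel is finite; symmetrically one obtains the reverse direction, and these combine to show $\Gamma$ and $\Lambda$ are virtually isomorphic. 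This is the content of \cite[Theorem~6.1]{Kid2}, so the cleanest way to present the proof is: verify that $\Gamma$ (here $\Gamma = \Out(F_N)$ for $N \ge 3$, which is ICC and, by Theorem~\ref{theo:intro-two-cocycles} restated as rigidity with respect to action-type cocycles, satisfies Definition~\ref{de:rigid}) meets the hypotheses, and then invoke Kida's theorem verbatim, with Proposition~\ref{prop:oe-me} supplying the tautness input that in Kida's setting came from mapping class group combinatorics.

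The main obstacle — and the reason the proof is essentially a citation rather than a self-contained argument — is the bookkeeping in Furman's discretization lemma: one must track how the various finite-index subgroups ($\Gamma^0$ from rigidity, $\Gamma^1, \Gamma^2, \Gamma^3$ from Proposition~\ref{prop:oe-me}, and the finite-index subgroup of $\Lambda$ produced along the way) interact, and verify that the finite ambiguities produced really assemble into \emph{finite normal} subgroups $F_i \unlhd \Gamma_i^0$ as required by the definition of virtual isomorphism. Since every one of these steps is carried out in full generality in \cite{Fur2,Kid2} and requires no property of $\Out(F_N)$ beyond being ICC and rigid with respect to action-type cocycles, the proof reduces to checking those two properties and citing \cite[Theorem~6.1]{Kid2}. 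Indeed, as the excerpt states the theorem as ``Kida \cite[Theorem~6.1]{Kid2}'', the expected proof is precisely: the argument is identical to Kida's, the only change being that the role of his rigidity input for mapping class groups is played here by the hypothesis that $\Gamma$ is rigid with respect to action-type cocycles, via Proposition~\ref{prop:oe-me}.
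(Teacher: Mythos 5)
Your proposal is correct and follows the same route as the paper: the paper's proof is exactly the observation that Proposition~\ref{prop:oe-me} supplies the coupling-rigidity (tautness) hypothesis, after which one cites \cite[Theorem~6.1]{Kid2} verbatim. The extra detail you give about forming $\Sigma\times_\Lambda\check\Sigma$ and Furman's discretization is precisely the content of Kida's cited theorem, so nothing further is needed.
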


\begin{proof}
This is a consequence of Proposition~\ref{prop:oe-me} and \cite[Theorem~6.1]{Kid2} (see also \cite[Theorem~2.6]{BFS}).
\end{proof}

\subsection{Orbit equivalence rigidity}\label{sec:oe-rigidity}

Let $\Gamma_1$ and $\Gamma_2$ be two countable groups. Two free measure-preserving actions $\Gamma_1\actson X_1$ and $\Gamma_2\actson X_2$ on standard probability spaces are \emph{stably orbit equivalent} if there exist Borel subsets $Y_i\subseteq X_i$ of positive measure and a measure-scaling isomorphism $f:Y_1\to Y_2$ such that for a.e.\ $y\in Y_1$, one has $$f((\Gamma_1\cdot y)\cap Y_1)=(\Gamma_2\cdot f(y))\cap Y_2.$$ 

A particular case of stably orbit equivalent actions is when the two actions are conjugate, or virtually conjugate, in the following sense.

They are \emph{conjugate} if there exists an isomorphism $\phi:\Gamma_1\ra \Gamma_2$ and 
and a measure preserving isomorphism $f:X_1\ra X_2$ such that $f(\gamma x)=\phi(\gamma)f(x)$ for all $\gamma\in \Gamma_1$ and almost
every $x\in X_1$.

More generally, they are \emph{virtually conjugate} if there exist short exact sequences $1\to F_i\to\Gamma_i\to\overline{\Gamma}_i\to 1$ with $F_i$ finite for every $i\in\{1,2\}$,  finite-index subgroups $\overline{\Gamma}_i^0\subseteq\overline{\Gamma}_i$, and conjugate actions $\overline{\Gamma}_1^0\actson X'_1$ and $\overline{\Gamma}_2^0\actson X'_2$ such that for every $i\in\{1,2\}$, the action $\overline{\Gamma}_i\actson X_i/F_i$ is induced from the action $\overline{\Gamma}_i^0\actson X'_i$ as in \cite[Definition~2.1]{Kid3}. Notice that this implies in particular that the groups $\Gamma_1$ and $\Gamma_2$ are virtually isomorphic.

\begin{rk}\label{rk_aperiodic}
In the particular case where the groups $\Gamma_i$ have no non-trivial finite normal subgroup 
and the actions $\Gamma_i\actson X_i$ are aperiodic,
i.e.\ every finite index subgroup of $\Gamma_i$ acts ergodically on $X_i$,
the actions are conjugate if and only if they are virtually conjugate.
Indeed, an action induced from a proper subgroup of finite index is never aperiodic. 
\end{rk}

Note for instance that a Bernoulli action of any infinite group is aperiodic.

An ergodic measure-preserving free action $\alpha$ of a countable group $\Gamma$ on a standard probability space is \emph{OE-superrigid} if for every ergodic measure-preserving free action $\beta$ of a  countable group $\Lambda$ on a standard probability space, if $\alpha$ and $\beta$ are stably orbit equivalent, then $\alpha$ and $\beta$ are virtually conjugate.

\begin{theo}[{Furman \cite[Theorem~A]{Fur2}, \cite[Theorem~1.1]{Kid3}}]\label{theo:oe}
Let $\Gamma$ be a countable ICC group which is rigid with respect to action-type cocycles. 

Then for every finite-index subgroup $\Gamma^1\subseteq\Gamma$, every ergodic measure-preserving free action of $\Gamma^1$ on a standard probability space is OE-superrigid.
\end{theo}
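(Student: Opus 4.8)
The plan is to deduce this exactly as Theorem~\ref{theo:me} was deduced, namely from the coupling rigidity of Proposition~\ref{prop:oe-me} combined with the orbit equivalence reconstruction machinery of Furman \cite{Fur2} and Kida \cite{Kid3}. First recall that, $\Gamma$ being rigid with respect to action-type cocycles (Definition~\ref{de:rigid}), Proposition~\ref{prop:oe-me} applies: for every finite-index subgroup $\Gamma^1\subseteq\Gamma$, every self coupling of $\Gamma^1$ factors through the standard self coupling of $\Gamma^1$ with respect to $\Gamma$. So I would fix such a $\Gamma^1$, let $\Gamma^1\actson(X_1,\mu_1)$ be an ergodic measure-preserving free action on a standard probability space, and suppose $\Lambda\actson(X_2,\mu_2)$ is an ergodic measure-preserving free action of a countable group which is stably orbit equivalent to it.

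Following Furman \cite{Fur2}, the stable orbit equivalence produces a measure equivalence coupling $\Sigma$ between $\Gamma^1$ and $\Lambda$ which \emph{remembers the two actions}: $\Gamma^1\actson\Sigma/\Lambda$ and $\Lambda\actson\Sigma/\Gamma^1$ are virtually conjugate to $\Gamma^1\actson X_1$ and $\Lambda\actson X_2$ respectively. Forming the composition $\Sigma\times_\Lambda\check\Sigma$ (where $\check\Sigma$ is $\Sigma$ with the two sides interchanged and the fibre product is over the diagonal $\Lambda$-action) yields a self coupling of $\Gamma^1$, to which Proposition~\ref{prop:oe-me} supplies an almost-everywhere $(\Gamma^1\times\Gamma^1)$-equivariant Borel map to $\Gamma$. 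A standard untwisting argument then descends this to the coupling $\Sigma$ itself and produces a finite-index subgroup $\Lambda^0\subseteq\Lambda$ together with a homomorphism $\Lambda^0\to\Gamma$ with finite kernel; since $\Gamma$ is ICC, this identifies $\Lambda$ with $\Gamma$ up to virtual isomorphism, and exhibits $\Sigma$ (up to that virtual isomorphism) as a coupling induced from an isomorphism between finite-index subgroups.

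Once $\Sigma$ is recognized as coming from a virtual isomorphism in this way, the reconstruction part of \cite[Theorem~A]{Fur2} (equivalently, \cite[Theorem~1.1]{Kid3}) forces the two actions that $\Sigma$ remembers to be virtually conjugate, which is exactly the assertion that $\Gamma^1\actson X_1$ is OE-superrigid. In practice every step of this argument --- juggling the subgroups $\Gamma^0\cap\Gamma^1$, passing to a normal finite-index refinement, and bookkeeping the finite normal subgroups on both sides --- is already assembled in \cite[Theorem~1.1]{Kid3}, so the honest proof is the single sentence ``this is a consequence of Proposition~\ref{prop:oe-me} and \cite[Theorem~1.1]{Kid3}'' (see also \cite[Theorem~2.6]{BFS}); the paragraphs above only indicate where the content lies. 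The one genuinely subtle point, which is the core of \cite{Kid3}, is the compatibility of the finite-index and finite-normal-subgroup data on the two sides, needed so that the resulting conjugacy is between actions of finite-index subgroups that are \emph{both} ergodic; this is precisely where ICC-ness and the good behaviour of the commensurator of $\Gamma$ enter, and it is the step I expect to require the most care.
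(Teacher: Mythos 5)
Your proposal is correct and follows essentially the same route as the paper: the paper's proof is likewise a two-line reduction, invoking Furman's measure equivalence/stable orbit equivalence dictionary from \cite{Fur2} and the machinery of \cite[Theorem~1.1]{Kid3} (which rests on the coupling rigidity statement of Proposition~\ref{prop:oe-me}, equivalently Theorem~\ref{theo:me}). Your extra paragraphs simply unpack what those references do, so there is nothing substantively different to compare.
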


\begin{proof}
The proof of \cite[Theorem~1.1]{Kid3} derives Theorem~\ref{theo:oe} from Theorem~\ref{theo:me}. The idea is to use again the dictionary between measure equivalence and stable orbit equivalence developed by Furman in \cite{Fur2}. See also \cite[Lemma~4.18]{Fur-survey}. 
\end{proof}

See \cite[Theorem~1.2]{Kid3} for variants of the theorem.

\subsection{Lattice embeddings}

Let $H$ be a locally compact second countable group with a bi-invariant Haar measure $\lambda_H$. Given a discrete group $\Gamma$, a  homomorphism $\phi:\Gamma\to H$ is a \emph{lattice embbeding} if $\phi$ is one-to-one and $\phi(\Gamma)$ is a lattice in $H$, i.e.\ $\phi(\Gamma)$ is discrete in $H$ and has finite $\lambda_H$-covolume.

\begin{theo}[{Furman \cite{Fur3}, Kida \cite[Theorem~8.1]{Kid2}}]\label{theo:lattice}
Let $\Gamma$ be a countable ICC group which is rigid with respect to action-type cocycles.

Then for every finite-index subgroup $\Gamma^1 \subseteq\Gamma$, every locally compact second countable group $H$, and every lattice embedding $\sigma:\Gamma^1\to H$, there exists a continuous homomorphism $\Phi_0:H\to\Gamma$ with compact kernel such that for all $\gamma\in\Gamma^1$, one has $\Phi_0(\sigma(\gamma))=\gamma$.
\end{theo}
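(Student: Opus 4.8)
The plan is to reduce the statement to Proposition~\ref{prop:oe-me} and then run Furman's argument \cite{Fur3} (see also Kida \cite[Theorem~8.1]{Kid2}) promoting the resulting measurable equivariant map to a continuous homomorphism. First I would turn the lattice embedding into a self coupling: set $\Sigma:=H$ with its bi-invariant Haar measure $\lambda_H$ (recall $H$ is unimodular, as it admits a lattice), and let $\Gamma^1\times\Gamma^1$ act on $\Sigma$ by $(g,g')\cdot h=\sigma(g)\,h\,\sigma(g')^{-1}$. This action is by measure-preserving Borel automorphisms, each factor acts freely (since $\sigma$ is injective and translations are free), and each factor has a finite-measure Borel fundamental domain, namely a Borel transversal for $\sigma(\Gamma^1)\backslash H$ (resp.\ $H/\sigma(\Gamma^1)$), which has finite $\lambda_H$-mass precisely because $\sigma(\Gamma^1)$ is a lattice. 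So $\Sigma$ is a self coupling of $\Gamma^1$. Since $\Gamma$ is ICC and rigid with respect to action-type cocycles, Proposition~\ref{prop:oe-me} yields a Borel map $\Phi\colon H\to\Gamma$ and a conull Borel subset of $H$ on which $\Phi(\sigma(g)\,h\,\sigma(g')^{-1})=g\,\Phi(h)\,g'^{-1}$ for all $g,g'\in\Gamma^1$.

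Next I would upgrade $\Phi$ to a continuous homomorphism. Fix a Borel transversal $X\subseteq H$ for the left cosets $H/\sigma(\Gamma^1)$; left translation makes $X\cong H/\sigma(\Gamma^1)$ into a finite-measure-preserving $H$-space, with lattice cocycle $c\colon H\times X\to\Gamma^1$ determined by $h_0x=(h_0\cdot x)\,\sigma(c(h_0,x))$, where $h_0\cdot x\in X$. Writing $\phi:=\Phi|_X$, the left- and right-$\Gamma^1$-equivariance of $\Phi$ combine to give the identity $c(\sigma(g),x)=\phi(\sigma(g)\cdot x)^{-1}\,g\,\phi(x)$; that is, $\phi$ conjugates the restriction of $c$ to $\sigma(\Gamma^1)$ (viewed in $\Gamma$) onto the inclusion $\Gamma^1\hookrightarrow\Gamma$, which is a genuine homomorphism independent of $x$. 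Furman's argument then shows that the $\phi$-twisted cocycle $c'(h_0,x):=\phi(h_0\cdot x)\,c(h_0,x)\,\phi(x)^{-1}$ is in fact independent of the $X$-variable, hence is a homomorphism $\Phi_0\colon H\to\Gamma$, and $\Phi_0\circ\sigma=\mathrm{id}_{\Gamma^1}$ by construction. Since a measurable homomorphism between locally compact second countable groups is automatically continuous, $\Phi_0$ is continuous.

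It then remains to check that $K:=\ker\Phi_0$ is compact. Because $\Phi_0(\sigma(\Gamma^1))=\Gamma^1$ has finite index in $\Gamma$, the image of $\Phi_0$ is a finite-index, hence open, subgroup of the discrete group $\Gamma$; as $\Phi_0$ is continuous, $K=\Phi_0^{-1}(\{1\})$ is open, hence also closed. Moreover $K\cap\sigma(\Gamma^1)=\ker(\Phi_0\circ\sigma)$ is trivial, so the quotient map $H\to\sigma(\Gamma^1)\backslash H$ is injective on $K$; comparing $K$ with a Borel fundamental domain of finite $\lambda_H$-measure for $\sigma(\Gamma^1)\backslash H$ gives $\lambda_H(K)<\infty$. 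An open (hence closed) subgroup of a locally compact second countable group with finite Haar measure is compact, so $K$ is compact, which finishes the proof.

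I expect the main obstacle to be the middle step: passing from the measurable $(\Gamma^1\times\Gamma^1)$-equivariant map $\Phi$ to a genuine continuous homomorphism, i.e.\ showing that the twisted lattice cocycle $c'$ really does not depend on the $X$-variable. This is the heart of Furman's theorem \cite{Fur3} (and of Kida \cite[Theorem~8.1]{Kid2}): it is where finer features of $H$ enter — its unimodularity, the ergodic behaviour of the $\sigma(\Gamma^1)$-action on $H/\sigma(\Gamma^1)$, and the uniform-versus-non-uniform dichotomy for the lattice — whereas the reduction to Proposition~\ref{prop:oe-me} and the compactness argument for $\ker\Phi_0$ are comparatively routine.
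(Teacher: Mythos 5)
Your reduction to Proposition~\ref{prop:oe-me} (viewing $H$ with its Haar measure as a self coupling of $\Gamma^1$ via $(g,g')\cdot h=\sigma(g)h\sigma(g')^{-1}$) and your final compactness argument for $\ker\Phi_0$ are both sound and close to what the paper does. The problem is the middle step, which you explicitly leave as a black box ("Furman's argument then shows that the twisted cocycle $c'$ is independent of the $X$-variable"). That is precisely the content that a proof of Theorem~\ref{theo:lattice} has to supply: Furman \cite{Fur3} proves this for higher-rank lattices and Kida \cite{Kid2} for mapping class groups, and neither statement can be quoted verbatim for an arbitrary countable ICC group that is rigid with respect to action-type cocycles, which is the setting here. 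Moreover, the ingredients you predict for this step (unimodularity of $H$, ergodicity of the $\sigma(\Gamma^1)$-action on $H/\sigma(\Gamma^1)$, the uniform/non-uniform dichotomy) are not what makes it work in this generality; you never use the ICC hypothesis beyond invoking Proposition~\ref{prop:oe-me}, whereas ICC is exactly the mechanism behind the key step.

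The paper's argument for that step is short and you could have carried it out: define $F(h_1,h_2)=\Phi(h_1^{-1})^{-1}\Phi(h_1^{-1}h_2)\Phi(h_2)^{-1}$ on $H\times H$. The two-sided equivariance of $\Phi$ gives $F(h_1\gamma,h_2\gamma')=F(h_1,h_2)$ for $\gamma,\gamma'\in\Gamma^1$ and $F(\gamma h_1,\gamma h_2)=\gamma F(h_1,h_2)\gamma^{-1}$, so $F$ descends to $\overline{F}\colon X\times X\to\Gamma$ with $X=H/\sigma(\Gamma^1)$ carrying its finite invariant measure $\mu$. Pushing $\mu\otimes\mu$ forward by $\overline{F}$ yields a probability measure on $\Gamma$ invariant under conjugation by $\Gamma^1$; since $\Gamma$ is ICC and $\Gamma^1$ has finite index, every non-trivial $\Gamma^1$-conjugacy class in $\Gamma$ is infinite, so this measure is the Dirac mass at $1_\Gamma$, i.e.\ $\Phi(h_1h_2)=\Phi(h_1)\Phi(h_2)$ almost everywhere. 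One then invokes \cite[Theorems~B.2 and~B.3]{Zim} to replace the a.e.\ multiplicative Borel map $\Phi$ by a continuous homomorphism $\Phi_0$ agreeing with it almost everywhere, after which $\Phi_0\circ\sigma=\mathrm{id}_{\Gamma^1}$ follows from the equivariance of $\Phi$ (this identity also deserves the short a.e.\ computation rather than "by construction"), and the compactness of the kernel goes through as you wrote. So the overall architecture of your proposal is right, but the heart of the proof is missing and your guess at how to fill it points in the wrong direction.
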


\begin{proof}
We give the argument for completeness, following \cite[Theorem~8.1]{Kid2}.
 The group $\Gamma^1\times\Gamma^1$ acts on $H$ via $(\gamma,\gamma')\cdot h:=\sigma(\gamma)h\sigma(\gamma')^{-1}$. As $\sigma$ is a lattice embedding, this turns $H$ (equipped with the bi-invariant Haar measure $\lambda_H$) into a self coupling of $\Gamma^1$. Since $\Gamma$ is ICC and rigid with respect to action-type cocycles, Proposition~\ref{prop:oe-me} yields the existence of an almost $(\Gamma^1\times\Gamma^1)$-equivariant Borel map $\Phi:H\to\Gamma$.

One first proves that for a.e.\ $(h_1,h_2)\in H$, one has $\Phi(h_1h_2)=\Phi(h_1)\Phi(h_2)$. To see this,
consider the map
$$\begin{array}{cccl}
  F:&H\times H&\ra &\Gamma\\
  &(h_1,h_2)&\mapsto &\Phi(h_1^{-1})^{-1}\Phi(h_1^{-1}h_2)\Phi(h_2)^{-1}.
\end{array}$$
An easy computation shows that $F(h_1\gamma,h_2\gamma')=F(h_1,h_2)$ 
and that $F(\gamma h_1,\gamma h_2)=\gamma F(h_1,h_2)\gamma\m$ for almost every $h_1,h_2\in H$ and all $\gamma,\gamma'\in \Gamma^1$.
Denote by $X=H/\Gamma^1$ the right quotient, endowed with the left $H$-invariant probability measure $\mu$.
Then $F$ 
induces a map $\overline{F}:X\times X\to\Gamma$.
The image of $\mu\otimes\mu$ under $\ol F$ is a probability measure on $\Gamma$ which is invariant under conjugation by $\Gamma^1$.
Since $\Gamma$ is ICC and $\Gamma^1$ has finite index in $\Gamma$, this measure must be the Dirac mass at the identity. This precisely means that for a.e.\ $(h_1,h_2)\in H$, one has $\Phi(h_1h_2)=\Phi(h_1)\Phi(h_2)$.

By \cite[Theorems~B.2 and~B.3]{Zim}, $\Phi$ agrees almost everywhere with a continuous
homomorphism $\Phi_0:H\to\Gamma$.
Since $\Phi\m(\{1_\Gamma\})$ is a Borel fundamental domain for the left action of $\Gamma^1$ on $H$, $\Phi_0\m(\{1_\Gamma\})$ is a closed subgroup of finite positive Haar measure, hence compact. Finally, using the equivariance of $\Phi$ and the fact that $\Phi$ and $\Phi_0$ agree almost everywhere, we see that for all $\gamma\in\Gamma^1$ and almost every $h\in H$, one has $$\Phi_0(\sigma(\gamma))\Phi_0(h)=\Phi_0(\sigma(\gamma)h)=\Phi(\sigma(\gamma)h)=\gamma\Phi(h)=\gamma\Phi_0(h),$$ and therefore $\Phi_0(\sigma(\gamma))=\gamma$.
\end{proof}

Given a finitely generated group $\Gamma$, and a finite generating set $S$ of $\Gamma$, define the \emph{Cayley graph} $\Cay(\Gamma,S)$ to be the simple graph with vertices $\Gamma$, and such that there is an edge between two distinct elements $g$ and $h$ if and only if $gh\m \in S\cup S\m$. The following consequence of Theorem~\ref{theo:lattice} was suggested to us by Jingyin Huang.

\begin{cor}\label{cor:cayley}
Let $\Gamma$ be a finitely generated ICC group which is rigid with respect to action-type cocycles.
\begin{enumerate}
\item For every finite generating set $S$ of $\Gamma$, every automorphism of $\mathrm{Cay}(\Gamma,S)$ is at bounded distance from an automorphism induced by the left multiplication by an element of $\Gamma$.
\item For every torsion-free finite-index subgroup $\Gamma^1\subset \Gamma$, and every finite generating set $S$ of $\Gamma^1$, the automorphism group of $\Cay(\Gamma^1,S)$ is isomorphic to a subgroup of $\Gamma$ containing $\Gamma^1$. In particular, it is countable.
\end{enumerate}
\end{cor}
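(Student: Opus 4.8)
The plan is to realize the group as a uniform lattice in the automorphism group of its Cayley graph and then invoke Theorem~\ref{theo:lattice}. Fix a finite generating set, and let $X$ be the corresponding Cayley graph, which is connected and locally finite; write $H:=\Aut(X)$, equipped with the topology of pointwise convergence on the (countable, discrete) vertex set $V$. Then $H$ is a locally compact second countable group: the vertex stabilizers $\Stab_H(v)$ are open, and compact by local finiteness of $X$, and $H$ is second countable since $V$ is countable. Moreover $H$ is unimodular, since it is about to be shown to admit a uniform lattice. Left multiplication defines an injective homomorphism $\sigma$ from the ambient group (which acts freely and vertex-transitively on $X$) into $H$; its image is discrete (it meets the open subgroup $\Stab_H(v)$ trivially) and cocompact (as $H=\sigma(\Gamma)\cdot\Stab_H(v)$ with $\Stab_H(v)$ compact). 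So $\sigma$ is a lattice embedding, and Theorem~\ref{theo:lattice} — applied with $\Gamma^1=\Gamma$ for part~(1), and with the given torsion-free $\Gamma^1$ for part~(2) — produces a continuous homomorphism $\Phi_0\colon H\to\Gamma$ with compact kernel $K$ such that $\Phi_0\circ\sigma=\id$. Note $K\unlhd H$, and for part~(1) every $\phi\in H$ can be written $\phi=\sigma(\Phi_0(\phi))\,k$ with $k:=\sigma(\Phi_0(\phi))^{-1}\phi\in K$.

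The second step I would carry out is a structural analysis of the compact normal subgroup $K$ via its action on $V$. Since $K$ is compact and $V$ discrete, every $K$-orbit in $V$ is finite. Using that $K$ is normal in $H$ and that $\sigma$ of the group acts transitively on $V$, one computes that the $K$-orbit of a vertex $g$ equals $g\cdot O$, where $O\subseteq\Gamma$ (resp.\ $O\subseteq\Gamma^1$) denotes the finite $K$-orbit of the base vertex $1$ viewed as a subset of the group. As the $K$-orbits partition $V$, the left translates of $O$ partition the group; since $1\in O$, this forces $O$ to be a finite subgroup.

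Finally I would conclude. For part~(1): set $D:=\max_{o\in O}|o|_S<\infty$. Every $k\in K$ sends each vertex $v$ into $vO$, hence $d(v,k\cdot v)\le D$ for all $v$; writing $\phi=\sigma(g)k$ with $g:=\Phi_0(\phi)$ and using that left multiplication by $g$ is a graph isometry, we get $d(\phi(v),g\cdot v)=d(k\cdot v,v)\le D$ for all $v\in V$, which is exactly the assertion that $\phi$ is at bounded distance from the automorphism induced by $g$. For part~(2): since $\Gamma^1$ is torsion-free, the finite subgroup $O$ is trivial, so $K$ fixes every vertex and therefore $K=\{1\}$; hence $\Phi_0$ is injective and $\Aut(\Cay(\Gamma^1,S))\cong\Phi_0(H)$, which is a subgroup of $\Gamma$ containing $\Phi_0(\sigma(\Gamma^1))=\Gamma^1$, and is countable as a subgroup of the countable group $\Gamma$.

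The only substantial ingredient is Theorem~\ref{theo:lattice} (itself resting on the measure equivalence rigidity); given it, the corollary is essentially bookkeeping. The points requiring care — and the place where something could go wrong if stated carelessly — are the verification that $\Aut(X)$ is a locally compact second countable (unimodular) group in which the group sits as a uniform lattice, and the extraction of the structure of the compact kernel, in particular the observation that its orbit of the identity vertex is a finite subgroup, which is what makes torsion-freeness kill the kernel in part~(2) and what produces the uniform displacement bound in part~(1).
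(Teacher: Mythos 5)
Your proof is correct and follows essentially the same route as the paper: embed the group as a cocompact lattice in the automorphism group of the Cayley graph, apply Theorem~\ref{theo:lattice} to get $\Phi_0$ with compact kernel $K$, and analyze $K$ through its vertex orbits (the orbit of the identity being a finite subgroup), which gives the bounded-displacement statement in part~(1) and, via torsion-freeness, triviality of $K$ in part~(2). Your explicit remarks on unimodularity of $\Aut(X)$ and on why the orbit of the identity is a subgroup only make explicit points the paper leaves implicit.
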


Notice that the first conclusion does not hold as such if $\Gamma$ is replaced by an arbitrary finite-index subgroup. Indeed, if $\Gamma^1$ is a normal finite index subgroup of $\Gamma$ and $S$ is a finite generating set of $\Gamma^1$ which is
invariant under conjugation by a finite order element $\gamma\in\Gamma\setminus \Gamma^1$,
then the conjugation by $\gamma$ is an automorphism of $\Cay(\Gamma^1,S)$, and using the ICC condition, one can check that it is not
at bounded distance from the left translation by any element of $\Gamma^1$.

Also, the second conclusion does not hold if $\Gamma^1$ has torsion by \cite[Lemma~6.1]{dlST}.

\begin{proof}
Let $\Gamma^1$ be any finite index subgroup of $\Gamma$ (we will take $\Gamma^1=\Gamma$ or assume $\Gamma^1$ torsion-free below).
Denote by $G$ the group of all graph isomorphisms of $\Cay(\Gamma^1,S)$, equipped with the compact-open topology. As $\Cay(\Gamma^1,S)$ is a locally finite graph, this is a locally compact second countable group and point stabilizers are compact subgroups of $G$. 
As $\Gamma^1$ acts transitively on the vertex set of $\Cay(\Gamma^1,S)$, it follows that $\Gamma^1$ is a (cocompact) lattice inside $G$. Theorem~\ref{theo:lattice} thus yields the existence of a group homomorphism $\Phi_0:G\to\Gamma$ with compact kernel $K$, such that for every $\gamma\in\Gamma^1$, denoting by $L_\gamma\in G$ the automorphism of $\Cay(\Gamma^1,S)$ induced by the left multiplication by $\gamma$, one has $\Phi_0(L_\gamma)=\gamma$. 

As $K$ is compact, 
the $K$-orbit of $1_{\Gamma^1}$ is finite, and since $K$ is normalized by $\Gamma^1$, all $K$-orbits have the same diameter
so $K$ is at bounded distance of the identity.

If $\Gamma^1=\Gamma$, every isometry of $\Cay(\Gamma,S)$ can be written as a product of an element of $K$ and of the left multiplication by an element of $\Gamma$, so the first conclusion follows.

We now assume that $\Gamma^1$ is torsion-free and derive the second conclusion. Since $K$ is normal, $\Gamma^1$ permutes the $K$-orbits
in $\Cay(\Gamma^1,S)$: $\gamma.K(\gamma')=K(\gamma\gamma')$. It follows that $K(1_{\Gamma^1})$ is a finite subgroup of $\Gamma^1$,
so $K(1_{\Gamma^1})=\{1_{\Gamma^1}\}$. It follows that $K(\gamma)=\{\gamma\}$ for all $\gamma\in\Cay(\Gamma^1,S)$, hence $K=\{\id\}$
and $\Phi_0$ gives the desired isomorphism.
\end{proof}

\newpage

\part{Constructions of canonical splittings}\label{part2}

The goal of the second part of our work is to provide three constructions of canonical splittings associated to a subgroup of $\Out(F_N)$  (with $N\ge 2$). The first construction relies on the existence of invariant non-trivial free splittings (see Section~\ref{sec:collection-free}).
The second relies on the existence of invariant $\Zmax$-splittings assuming that there is no invariant non-trivial free splitting (see Section~\ref{sec:collection-zmax}).
The third one produces a splitting from the existence of several maximal invariant almost free factor systems, still
assuming that there is no invariant non-trivial free splitting
(see Section~\ref{sec:collection-factors}).
A convenient unifying statement is given in Section~\ref{sec_nice}.

Given an action of a group $\Gamma$ on a set $X$,
and a collection $\calc\subset X$,
we denote respectively by $\Gamma_\calc$ and $\Gamma_{\{\calc\}}$ the elementwise and setwise stabilizer of $\calc$, namely
$$\Gamma_\calc=\{\gamma\in \Gamma\mid\forall x\in \calc,\ \gamma x=x\}$$
and $$\Gamma_{\{\calc\}}=\{\gamma\in \Gamma\mid \gamma\calc=\calc\}.$$
Below, we take $\Gamma$ as a shorthand for $\IA$,
and when $\IA$ acts on a set $X$ (such as a set of splittings or the set of free factor systems) and given a collection $\calc\subset X$, we still denote by $\Gamma_{\calc}$
and $\Gamma_{\{\calc\}}$ the elementwise and pointwise stabilizer of $\calc$
for the action of $\IA$.

\section{$H$-invariant splittings and trees of cylinders}\label{sec:additional-background}

This section records preliminary material that will be used in the next three sections.

\subsection{$H$-invariant splittings}\label{sec:tildeH}
Let $H\subseteq\Out(F_N)$ be a subgroup
and let $\tilde{H}$ be the full preimage of $H$ in $\Aut(F_N)$. We recall from Section~\ref{sec_splittings_def} that $\Out(F_N)$ acts on the set of all splittings of $F_N$ (considered up to equivariant isometry).
In this subsection,
we recall the relation between $H$-invariant splittings of $F_N$ and actions of $\tilde H$ on trees.

Given a splitting $S$ of $F_N$, recall that $S$ being $H$-invariant means that for every
 $\tilde\alpha\in\tilde{H}$,
there exists an isomorphism $I_{\tilde\alpha}:S\ra S$ which is $\tilde\alpha$-equivariant in the following sense: 
for every $g\in F_N$ and every $x\in S$, one has $$I_{\tilde\alpha}(gx)=\tilde\alpha(g)I_{\tilde\alpha}(x).$$
Endowing $S$ with its combinatorial metric, we view automorphisms of $S$ as isometries.
As soon as $S$ is not a line (which will always be the case in our setting), $I_{\tilde\alpha}$ is unique,
and $\tilde\alpha\in \tilde H\mapsto I_{\tilde\alpha} \in\Aut(S)$ defines an action of $\tilde H$ on $S$
for which the action of an inner automorphism $\ad_g$ is the same as the original action of $g$.
We will thus consistently identify $F_N$ with $\Inn(F_N)$, thus viewing $F_N$ as a normal subgroup $F_N\normal \tilde H$.
This way, the restriction to $F_N$ of the action of $\tilde H$ on $S$ is the same as the original $F_N$-action.
Conversely, given an action of $\tilde H$ on a tree $S$, restricting the action to $F_N$ yields an $H$-invariant $F_N$-tree.

We will denote by $\tilde H_e$, $\tilde H_v$ the $\tilde H$-stabilizers of an edge $e$ or a vertex $v$ of $S$,
and by $G_e$, $G_v$ their $F_N$-stabilizers.
Since $G_e=\Tilde H_e\cap F_N$,
$S$ is a free splitting (resp.\ a cyclic splitting) of $F_N$ if and only if for each edge $e\subseteq S$, $\tilde H_e\cap F_N$ is trivial (resp.\ cyclic).

\subsection{Trees of cylinders}\label{sec:cyl}
Trees of cylinders were introduced in \cite{GL-cyl} as a way to produce a canonical splitting of a group $G$ out of a deformation space. 
The construction is based on a notion of cylinders in a tree, associated to an equivalence relation on edge stabilizers.
This notion will be used in the next three sections, with $G$ being either the free group $F_N$ or a subgroup $\tilde H\subset \Aut(F_N)$ as above.

Let $G$ be a group, and $\cale$ be a collection of subgroups of $G$, stable under conjugation (this is the collection of allowed edge groups). Following \cite[Definition~3.1]{GL-cyl}, we say that an equivalence relation $\sim$ on $\cale$ is \emph{admissible}
if for every $A,B\in\cale$,
and every $G$-tree $T$ with edge groups in $\cale$,
the following three conditions hold: 
\begin{enumerate}
\item for every $g\in G$, if $A\sim B$, then $A^g\sim B^g$, and
\item if $A\subseteq B$, then $A\sim B$, and
\item for every pair of points $a,b\in T$ fixed by $A,B$ respectively, if $A\sim B$, then for every edge $e\subseteq [a,b]$, one has $A\sim G_e\sim B$.
\end{enumerate}

More generally, given a collection $\calp$ of subgroups of $G$, one says that an equivalence relation $\sim$ on $\cale$ is \emph{admissible relative to $\calp$} if the third condition is only required to hold for $G$-trees $T$ relative to $\calp$ with edge groups in $\cale$. 

One then defines the \emph{cylinder} of an edge $e$ in $T$ as the union of all edges $e'$ with $G_e\sim G_{e'}$.
Admissibility of $\sim$ implies that this is a connected subset of $T$.

A simple example of admissible relation is the commutation relation (equivalently, commensurability) on the class $\cale$ of non-trivial cyclic subgroups of $F_N$.
This will be used in Section~\ref{sec:collection-zmax}. 

One then defines the \emph{tree of cylinders} $T_c$ of $T$: 
this is the bipartite simplicial tree having one vertex $v_Y$ for each cylinder $Y\subseteq S$,
one vertex $v_x$ for each point $x\in S$ that does not belong to exactly one\footnote{%
except when $S$ is reduced to a point, this means that $x$ belongs to at least two cylinders.} cylinder, and an edge between $v_x$ and $v_Y$ whenever $x\in Y$.
We denote by $V^1(T_c)$ be the set of vertices corresponding to cylinders, and let $V^0(T_c)$ be the set vertices corresponding to points in at least two cylinders of $S$.
In the special case where $T$ is reduced to a point, $T_c=V^0(T_c)$ is a single vertex. 
As a mnemonic, $V^0$ and $V^1$ correspond to subsets of $S$ of dimension 0 and 1 respectively.
The tree $T_c$ is minimal \cite[Lemma~4.9]{Gui04}. 
It may happen that $T_c$ is reduced to a point even if $T$ is non-trivial.
In general, it may also happen that edge stabilizers of $T_c$ are not in $\cale$.

Recall that two $G$-trees \emph{belong to the same deformation space} 
if they have the same elliptic subgroups.
By \cite[Theorem 1]{GL-cyl}, any two trees with edge stabilizers in $\cale$ that lie in the same deformation space
have the same tree of cylinders.

\section{Stabilizers of collections of free splittings and associated canonical splittings}\label{sec:collection-free}

 Throughout the section, we fix an integer $N\ge 2$. Given a subgroup $H\subseteq\IA$, let $\FS^H$ be the collection of all $H$-invariant non-trivial free splittings of $F_N$. The goal of this section is to construct a canonical splitting $\Uun_H$ of $F_N$  (not a free splitting in general) encoding all the $H$-invariant free splittings of $F_N$. The splitting $\Uun_H$ will be non-trivial as soon as $H$ is infinite and $\FS^H\neq\es$.
To make things a bit more concrete, let us start with an example.

\begin{figure}[ht!]
  \centering
  \includegraphics{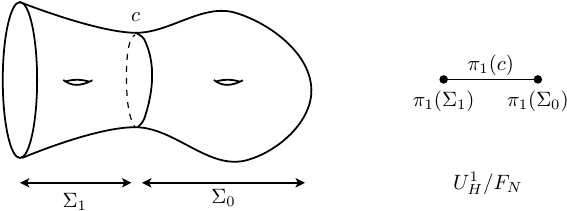}
  \caption{The canonical splitting $\Uun_H$ encoding all $H$-invariant free splittings.}
\end{figure}

\begin{ex}\label{ex:arcs}
Let $\Sigma$ be a connected surface with one boundary component, and let $c\subset \Sigma$ be 
an essential simple closed curve on $\Sigma$ which separates $\Sigma$ into two connected components $\Sigma_0$ and $\Sigma_1$, with $\Sigma_1$ containing the boundary curve. Choose an identification $F_N\simeq \pi_1(\Sigma)$.  
Let $H_1\subset \Out(F_N)$ be the subgroup induced by all
homeomorphisms of $\Sigma$ 
which restrict to
the identity on $\Sigma_1$, and let $H=H_1\cap \IA$.
Then any essential, properly embedded arc in $\Sigma_1$ defines a free splitting of $F_N$ which is $H$-invariant.
These arcs fill the subsurface $\Sigma_1$, and the canonical splitting $\Uun_H$ will be the splitting dual to the amalgamated product
$F_N=A_0*_{\grp{a}} A_1$ 
corresponding to the decomposition $\Sigma=\Sigma_0\cup \Sigma_1$
 (see the argument right after Remark~\ref{rk:cross-connected}).
Then any free splitting of $A_1$ relative to $\grp{a}$ induces a free splitting of $F_N$ which is $H$-invariant.
It will follow from our construction that conversely, any $H$-invariant free splitting is of this form
(but it might not correspond to an arc drawn on $\Sigma_0$). 
\end{ex}

Dually, given any collection $\calc$ of free splittings of $F_N$, recall that
the elementwise stabilizer $\Gamma_\calc$ of 
$\calc$ in $\IA$ is defined as the subgroup of $\IA$ made of all outer automorphisms that fix all the splittings in $\calc$.
One can then describe $\Gamma_\calc$ as a particular subgroup of the stabilizer of  $U^1_{\Gamma_\calc}$.
In the example above, starting with the collection $\calc$ of all free splittings dual to properly embedded arcs in $\Sigma_1$,
the elementwise stabilizer of $\Gamma_\calc$ in $\IA$  coincides with the subgroup of $\IA$ made of all outer automorphisms that preserve the decomposition $F_N=A_0\ast_{\langle a\rangle} A_1$ and act as the identity on $A_1$.
Additionally, one can describe the normalizer of $\Gamma_\calc$ as the stabilizer of  $U^1_{\Gamma_\calc}$.
See Section~\ref{sec:free-stabilizer}.

Our construction also allows us to give a bounded chain condition for stabilizers of collections of free splittings,  see Section~\ref{sec:free-chain}.

Our construction  of the splitting $U^1_H$ goes in two steps: starting from any group $H\subseteq \IA$ and the collection $\FS^H$ of non-trivial $H$-invariant free splittings,
we first construct  in Section~\ref{sec:free-maximum} a maximal $H$-invariant free splitting, which we also view as an $\tilde H$-tree where $\tilde H$ is the preimage
of $H$ in $\Aut(F_N)$.
Such an $\tilde H$-tree is not unique, but the collection
of all of them forms a deformation space  of $\tilde H$-trees.
 In Section~\ref{sec:free-cylinder}, we then constructs a tree of cylinders to get a canonical splitting from this deformation space.

\subsection{The deformation space of maximal $H$-invariant free splittings}\label{sec:free-maximum}

Let $H\subseteq\IA$ be a subgroup and $\tilde{H}$ be the full preimage of $H$ in $\Aut(F_N)$.
Identifying $F_N$ with $\Inn(F_N)$, we view $F_N$ as a normal subgroup of $\tilde H$.
Recall that $H$-invariant splittings of $F_N$ correspond to actions of $\tilde H$ that extend the action of $F_N$
(see Section \ref{sec:tildeH}).

Recall that given two $\tilde H$-trees $S,S'$, one says that  $S$ dominates $S'$ if the $\tilde H$-stabilizer of every vertex in $S$ is elliptic in $S'$. Two $\tilde{H}$-trees \emph{belong to the same deformation space of $\tilde{H}$-trees} if they dominate each other, i.e.\  if they have the same elliptic subgroups (in $\tilde{H}$).

If $S,S'$ are two $H$-invariant free splittings of $F_N$, and if $S$ dominates $S'$ as an $\tilde H$-tree,
then this also holds as $F_N$-tree. We will see in Remark~\ref{rk:domine} that the converse holds.

\begin{prop}\label{prop:deformation}
  Let $H\subseteq\IA$ be a subgroup.

There exists a (maybe trivial) maximum $H$-invariant free splitting, i.e.\
that dominates all $H$-invariant free splittings.

All maximum $H$-invariant free splittings are in the same
  deformation space of $\tilde H$-trees.
\end{prop}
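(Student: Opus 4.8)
The statement has two parts: (1) existence of a maximum $H$-invariant free splitting, i.e.\ one dominating all others, and (2) the fact that all such maxima lie in a common deformation space of $\tilde H$-trees. Part (2) is essentially formal: if $S$ and $S'$ are both maxima, then $S$ dominates $S'$ and $S'$ dominates $S$ \emph{as $F_N$-trees}, and since both are $H$-invariant free splittings, I would upgrade this to domination as $\tilde H$-trees (this is exactly the content of the ``Remark~\ref{rk:domine}'' the authors refer to; I would prove it inline if needed, using that an elliptic subgroup of $\tilde H$ in such a tree is determined by its trace on $F_N$ together with the requirement that the whole stabilizer fix the same point, which follows from uniqueness of the equivariant isometry when the tree is not a line). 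Hence the two maxima have the same elliptic subgroups in $\tilde H$ and lie in the same deformation space. So the crux is part (1).

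For existence, the natural strategy is to produce the maximum as a common refinement / least upper bound of the collection $\FS^H$ of all $H$-invariant free splittings. First I would observe that this collection is nonempty if we allow the trivial splitting, and that any two elements of $\FS^H$ admit a common refinement which is again a free splitting and again $H$-invariant: indeed, the collection of free splittings of $F_N$ refining a pair of given ones is controlled by Grushko-type arguments, and $H$-invariance passes to this canonical common refinement because the refinement is constructed equivariantly (so $\tilde H$ acts on it). Then I would invoke a bounded-complexity / finiteness statement — the analogue of ``chains of free factor systems have bounded length'' together with Grushko accessibility for $F_N$ — to conclude that one cannot keep refining indefinitely, so there is a free splitting $S_{\max}\in\FS^H$ of maximal complexity (say, maximal number of orbits of edges, or equivalently whose associated free factor system is $\preceq$-maximal among those coming from elements of $\FS^H$). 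The key claim is then that $S_{\max}$ dominates every $S\in\FS^H$: if not, the minimal common refinement of $S_{\max}$ and $S$ would be a strictly more complex $H$-invariant free splitting, contradicting maximality. This uses that a free splitting dominates another iff every vertex stabilizer of the first is elliptic in the second, and that strict non-domination forces a strict increase of complexity in a common refinement — standard in the deformation-space theory of free splittings (Guirardel--Levitt).

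The point I expect to require the most care is making the ``common refinement is $H$-invariant'' step genuinely canonical, so that $\tilde H$ acts on it. The minimal common refinement of two compatible trees is canonical, but two $H$-invariant free splittings need not be compatible; what one actually does is pass to a common refinement in the sense of the Grushko deformation space — e.g.\ build a free splitting dominating both by a pullback/fiber-product construction à la Scott--Wall or Guirardel--Levitt and then collapse to reduce to a free splitting — and one must check this construction commutes with the $\tilde H$-action. Concretely I would phrase it as: the set of $F_N$-equivariant isometry classes of free splittings dominating both $S$ and $S'$ is $\tilde H$-invariant (because $\tilde H$ normalizes $F_N$ and preserves $\{S,S'\}$), nonempty, and has a maximum (a free splitting dominating all free splittings that dominate both $S,S'$), and that maximum is therefore fixed by $\tilde H$, hence is an $H$-invariant free splitting. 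Iterating/organizing this via the bounded-complexity argument then yields $S_{\max}$. A secondary subtlety is the degenerate case where $\FS^H$ contains only the trivial splitting, in which case the ``maximum'' is the trivial splitting and there is nothing to prove; I would dispose of this at the outset. Everything else is bookkeeping within the established theory of deformation spaces and Grushko decompositions.
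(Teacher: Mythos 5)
There is a genuine gap, and it sits exactly where you flagged the difficulty: making the common refinement $H$-invariant. Your proposed mechanism — ``the set of free splittings dominating both $S$ and $S'$ is $\tilde H$-invariant, nonempty, and has a maximum, which is therefore fixed by $\tilde H$'' — does not work, because that set has no canonical maximal \emph{element}. Its maximal elements in the domination order are the free splittings with trivial vertex stabilizers, i.e.\ an entire (unrestricted Grushko) deformation space; that deformation space is indeed $\tilde H$-invariant as a set, but it contains no $H$-invariant tree in general. Even the least upper bound of $S$ and $S'$ is canonical only as a deformation space, not as a single splitting, and the whole point of the proposition is precisely that invariance of a deformation space does not produce an invariant tree. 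The same objection applies to your ``key claim'' step (``the minimal common refinement of $S_{\max}$ and $S$ would be a strictly more complex $H$-invariant free splitting''): two $H$-invariant free splittings need not be compatible, so there is no minimal common refinement, and the asymmetric blow-up of $S_{\max}$ using $S$ (à la \cite[Proposition~2.2]{GL-jsj}) involves choices of attaching points for edges with trivial stabilizer, so its $H$-invariance is not automatic. The fiber-product/core fallback fares no better: for two non-compatible free splittings the core is genuinely two-dimensional and does not collapse canonically to a common refinement.

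What is missing is the paper's key input, Lemma~\ref{lemma:edge-stabilizers}: for $S,S'\in\FS^H$, the $\tilde H$-stabilizer of every edge of $S$ is elliptic in $S'$. This is proved by a genuinely geometric argument (an automorphism fixing an edge preserves the two adjacent vertex groups $A,B$, hence preserves the minimal subtrees $S'_A,S'_B\subseteq S'$, and since $A\cap B=\{1\}$ their intersection is compact, so the circumcenter or the bridge gives a common fixed point), and nothing in your proposal plays this role. Once one has it, the blow-up of $S$ at a vertex whose $\tilde H$-stabilizer is not elliptic in $S'$ can be performed equivariantly in the category of $\tilde H$-trees (Lemma~\ref{lemma:maximal-refine}), which is how the paper gets an $H$-invariant proper refinement and hence, by your (correct) bounded-complexity argument, a maximum. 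The same lemma is also needed for your part (2): your inline sketch of Remark~\ref{rk:domine} (``an elliptic subgroup of $\tilde H$ is determined by its trace on $F_N$ plus uniqueness of the equivariant isometry'') breaks down exactly at vertices with trivial $F_N$-stabilizer, where the trace on $F_N$ gives no information; the paper handles those vertices by observing that their $\tilde H$-stabilizer is commensurable with an edge stabilizer and invoking Lemma~\ref{lemma:edge-stabilizers} again. So the skeleton of your argument (maximal complexity element plus refinement-by-contradiction) matches the paper, but without the ellipticity lemma the two crucial steps — $H$-invariance of the refinement and the upgrade from $F_N$-domination to $\tilde H$-domination — are unsupported.
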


\begin{figure}[ht]
  \centering
  \includegraphics{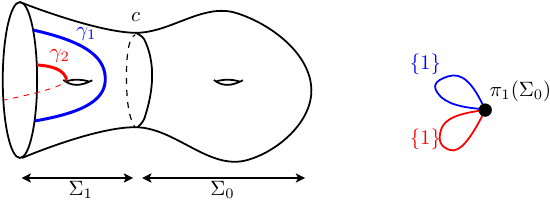}
  \caption{Example \ref{ex:arcs} continued. A maximum $H$-invariant free splitting, dual to the two arcs $\gamma_1,\gamma_2\subset \Sigma$.}
  \label{fig_ex2}
\end{figure}


\noindent\textit{Example \ref{ex:arcs} continued.} The splitting $S$ dual to the two arcs $\gamma_1,\gamma_2\subset\Sigma_1$
depicted on Figure \ref{fig_ex2}
is $H$-invariant. Since $\pi_1(\Sigma_0)$ is elliptic in any $H$-invariant free splitting,
$S$ dominates any $H$-invariant free splitting and therefore satisfies the conclusion of Proposition~\ref{prop:deformation}.

In the sequel, we will denote by $\cald_H$ the set of
maximum $H$-invariant (maybe trivial) free splittings viewed as 
$\tilde H$-trees. Proposition~\ref{prop:deformation} says that they are in the same deformation space of $\tilde H$-trees. 
Equivalently, if $\FS^H\neq\emptyset$, then $\cald_H$ is the set of maximum elements in $\FS_H$ viewed as $\tilde H$-trees; and if $\FS^H=\emptyset$, then $\cald_H$ is reduced to the trivial $\tilde{H}$-tree.

The main point in the proof of Proposition~\ref{prop:deformation} is the following lemma.

\begin{lemma}\label{lemma:edge-stabilizers}
Let  $H\subseteq\IA$ be a subgroup, and let $S,S'\in\FS^H$. Then the $\tilde{H}$-stabilizer of every edge of $S$ is elliptic in $S'$.
\end{lemma}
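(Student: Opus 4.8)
The plan is to show that if $e$ is an edge of $S \in \FS^H$ with $\tilde H$-stabilizer $\tilde H_e$, then $\tilde H_e$ fixes a point in $S' \in \FS^H$. First I would recall the setup from Section~\ref{sec:tildeH}: since $S$ and $S'$ are $H$-invariant free splittings of $F_N$, they are canonically $\tilde H$-trees in which $F_N \normal \tilde H$ acts via its original action, and the $F_N$-stabilizer of $e$ is $G_e = \tilde H_e \cap F_N = \{1\}$ because $S$ is a free splitting. So $\tilde H_e$ injects into $\tilde H / F_N = H$; in particular $\tilde H_e$ is a subgroup of $\tilde H$ that maps isomorphically onto a subgroup $K \subseteq H \subseteq \IA$.

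The key point is that $\tilde H_e$ normalizes $F_N$ and, because $G_e = \tilde H_e \cap F_N$ is trivial, conjugation gives an action of $\tilde H_e$ on $F_N$ that descends to an action on the free splitting $S$; concretely $\tilde H_e$ stabilizes both endpoints $u,v$ of $e$, hence preserves the corresponding free factor decomposition of $F_N$ determined by $e$. More precisely, collapsing all edges of $S$ outside the $F_N$-orbit of $e$ produces a one-edge $H$-invariant free splitting $\bar S$ dual to a decomposition $F_N = A * B$ or $F_N = A*$, in which $\tilde H_e$ fixes (the image of) $e$, so $\tilde H_e$ preserves the conjugacy class of $A$ (and of $B$) and acts on each of them. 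Thus the image $K$ of $\tilde H_e$ in $H$ preserves the conjugacy class of $A$ (a proper free factor of $F_N$ when the splitting is non-trivial). Now I would invoke the finiteness/chain condition to reduce to the case where this free factor structure is actually fixed pointwise in the relevant sense, but the cleaner route is the following: $\tilde H_e$, regarded as a group acting on $F_N$ with the trivial subgroup as kernel-of-action-on-$S$, is an extension-free copy of $K$ sitting inside $\Stab(e) \subseteq \Stab(\bar S)$; applying Lemma~\ref{lemma:ia-free-splitting} to the subgroup $K = \tilde H_e \subseteq \IA$ and the free splitting $S'$, we conclude that $K$ acts on $S'$, i.e.\ $\tilde H_e$ acts on $S'$ extending the $F_N$-action, and it remains only to see this action has a global fixed point.

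For the fixed-point claim I would argue as follows. The group $\tilde H_e$ fixes the edge $e$ in $S$; let $A$ be a vertex group of $\bar S$ as above. Since $\tilde H_e$ preserves $[A]$ and meets $F_N$ trivially, $\tilde H_e$ together with $A$ generates a subgroup $\tilde H_e \cdot A$ of $\tilde H$ that is elliptic in $S$ (it fixes the vertex $u$). Now $A$ is elliptic in $S'$ (as $S' \in \FS^H$ means $S'$ is a free splitting, but more importantly, by Lemma~\ref{lemma:ia-free-splitting}, $H$ — hence $\tilde H_e$ — acts on $S'$ fixing $S'/F_N$ pointwise, so every $H$-invariant conjugacy class of free factor arising as a vertex group of some $H$-invariant free splitting must be elliptic in every element of $\FS^H$; this uses that two free splittings of $F_N$ that are both $H$-invariant share the same orbits of vertices at the level of $S/F_N$ being fixed). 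Then the $\tilde H_e$-action on $S'$ fixing $S'/F_N$ pointwise sends each vertex $v'$ of $S'$ to $g v'$ for some $g \in F_N$ depending Borel-measurably — no, purely algebraically — on the chosen representative; since $\tilde H_e \cap F_N = \{1\}$, a standard cohomological vanishing (the action of $\tilde H_e$ on $S'$ together with triviality of $\tilde H_e \cap F_N$) forces $\tilde H_e$ to fix a vertex of $S'$.

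The main obstacle I expect is precisely the last step: passing from ``$\tilde H_e$ acts on $S'$ inducing the identity on $S'/F_N$'' to ``$\tilde H_e$ fixes a vertex of $S'$''. The clean way to handle it is to use that $\tilde H_e$, having trivial intersection with $F_N$, is isomorphic to its image $K$ in $H$, and to blow up $S'$ at a suitable vertex to a Grushko tree relative to the vertex groups, so that Lemma~\ref{lemma:ia-nice} (applied to $K \subseteq \IA$ and the vertices of this refinement) pins down the $\tilde H_e$-orbit of each vertex to its $F_N$-orbit; combined with the fact that $\tilde H_e$ is a group of automorphisms that already fixes one vertex group conjugacy class $[A]$ coming from $S$, and that the corresponding splitting of $A$ pulled back to $S'$ is refined, one deduces that $\tilde H_e$ fixes an actual vertex of $S'$ whose stabilizer contains (a conjugate of) $A$. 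If needed I would instead appeal directly to Theorem~\ref{theo:ia}: $\tilde H_e \cong K$ preserves the conjugacy class of the free factor $A$, which by Theorem~\ref{theo:ia} is then genuinely $K$-invariant, and $A$ (being a vertex group of the $H$-invariant splitting $\bar S$) is elliptic in $S'$ with a unique fixed point $x'$, so by uniqueness $\tilde H_e$ fixes $x'$. This gives $\tilde H_e$ elliptic in $S'$, completing the proof.
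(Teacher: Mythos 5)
Your first half is essentially the paper's opening move: reduce to a one-edge collapse of $S$ using Lemma~\ref{lemma:ia-free-splitting}, observe that every $\tilde\alpha\in\tilde H_e$ satisfies $\tilde\alpha(A)=A$ and $\tilde\alpha(B)=B$ exactly (not just up to conjugacy), and note that if $A$ (or $B$) is elliptic in $S'$ then its fixed point is unique (free splitting, $A\neq\{1\}$), hence fixed by every $I'_{\tilde\alpha}$. Up to there you are fine. The gap is in the step where you declare that $A$ \emph{is} elliptic in $S'$: your claim that ``every $H$-invariant conjugacy class of free factor arising as a vertex group of some $H$-invariant free splitting must be elliptic in every element of $\FS^H$'' is false. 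Distinct $H$-invariant free splittings need not dominate one another --- that is precisely why Proposition~\ref{prop:deformation} and the deformation space $\cald_H$ have to be constructed at all. Concretely, take $H=\{1\}$ (so $\FS^H$ is all free splittings) and in $F_3=\langle a,b,c\rangle$ the splittings $S$ dual to $\langle a,b\rangle\ast\langle c\rangle$ and $S'$ dual to $\langle a,c\rangle\ast\langle b\rangle$: the element $ab$ is hyperbolic in $S'$, so $A=\langle a,b\rangle$ is not elliptic in $S'$. Example~\ref{ex:arcs} with two essentially intersecting arcs gives an instance with $H$ infinite. So the case ``neither $A$ nor $B$ elliptic in $S'$'' genuinely occurs, and your argument (including the appeal to Theorem~\ref{theo:ia}, which only upgrades periodicity of $[A]$ to invariance and says nothing about ellipticity) does not address it. The ``standard cohomological vanishing'' you invoke earlier is also not a valid substitute: a subgroup of $\tilde H$ meeting $F_N$ trivially and inducing the identity on $S'/F_N$ need not fix a point --- establishing ellipticity is exactly the content of the lemma, so it cannot be assumed.

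What the paper does in the case you are missing is the actual heart of the proof: let $S'_A$ and $S'_B$ be the minimal subtrees of $A$ and $B$ in $S'$. Since $\tilde\alpha(A)=A$ and $\tilde\alpha(B)=B$, each $I'_{\tilde\alpha}$ permutes axes of elements of $A$ (resp.\ $B$), so $S'_A$ and $S'_B$ are invariant under all $I'_{\tilde\alpha}$, $\tilde\alpha\in\tilde H_e$. Because $A\cap B=\{1\}$ and $S'$ is a free splitting, $S'_A\cap S'_B$ is compact (possibly empty); the circumcenter of this intersection, or the bridge between $S'_A$ and $S'_B$ if they are disjoint, is then a \emph{canonical} point, fixed simultaneously by every $I'_{\tilde\alpha}$. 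This canonicity is also what guarantees a common fixed point for the whole group $\tilde H_e$ rather than for each element separately. To repair your proof you would need to add this argument (or an equivalent one) for the non-elliptic case; as written, the proposal only covers the easy case.
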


\begin{proof}
As $H\subseteq\IA$, every automorphism $\tilde\alpha\in\tilde{H}$ acts trivially on the quotient graph $S/F_N$ (Lemma~\ref{lemma:ia-free-splitting}), and therefore induces an isometry of every collapse of $S$. Therefore, without loss of generality, we can assume that $S$ has a single $F_N$-orbit of edges. Let $e\subseteq S$ be an edge, and let $\tilde\alpha\in\Stab_{\tilde{H}}(e)$. We denote by $I_{\tilde\alpha}$ the $\tilde\alpha$-equivariant isometry of $S$, and by $I'_{\tilde\alpha}$ the $\tilde\alpha$-equivariant isometry of $S'$. Our goal is to find a point in $S'$ that is fixed by  $I'_{\tilde\alpha}$, independently of $\tilde\alpha\in\Stab_{\tilde{H}}(e)$.

Let $A$ and $B$ be the $F_N$-stabilizers of the two extremities $v_A,v_B$ of $e$. Then $A$ and $B$ are proper free factors of $F_N$, and they are both non-trivial because $S$ has a single orbit of edges. Since $I_{\tilde\alpha}(e)=e$, we have in particular $I_{\tilde\alpha}(v_A)=v_A$. Therefore, for every $g\in A$, we have $\tilde\alpha(g)v_A=I_{\tilde\alpha}(gv_A)=I_{\tilde\alpha}(v_A)=v_A$, so $\tilde\alpha(g)\in A$. This shows that $\tilde\alpha(A)\subseteq A$, and since $\tilde\alpha$ is an automorphism, we deduce that $\tilde\alpha(A)=A$. Likewise $\tilde\alpha(B)=B$. 

If $A$ (or $B$) fixes a point in $S'$, then this point is unique (because $S'$ is a free splitting of $F_N$), and it is fixed by $I'_{\tilde\alpha}$. We can therefore assume that neither $A$ nor $B$ fixes a point in $S'$.

Let $S'_A$ (resp.\ $S'_B$) be the minimal $A$-invariant (resp.\ $B$-invariant) subtree of $S'$. The isometry $I'_{\tilde\alpha}$ sends the axis of every element $a\in A$ acting hyperbolically on $S'$ to the axis of $\tilde\alpha(a)$. As $S'_A$ is equal to the union of the axes of all elements of $A$ acting hyperbolically on $S'$, we deduce that $S'_A$ is $I'_{\tilde\alpha}$-invariant. Likewise $S'_B$ is $I'_{\tilde\alpha}$-invariant. 

As $A\cap B=\{1\}$, the intersection $S'_A\cap S'_B$ is compact (possibly empty). In addition this intersection is $I'_{\tilde\alpha}$-invariant. If $S'_A\cap S'_B\neq\emptyset$, then $I'_{\tilde\alpha}$ fixes the circumcenter of $S'_A\cap S'_B$. If $S'_A\cap S'_B=\emptyset$, then $I'_{\tilde\alpha}$ fixes the bridge between $S'_A$ and $S'_B$. In both cases, we have found a point fixed by all $I'_{\tilde\alpha}$, as required.     
\end{proof}

\begin{rk}\label{rk:domine} If $S,S'$ are $H$-invariant free splittings such that $S$ dominates $S'$ as an $F_N$-tree,
then $S$ dominates $S'$ as an $\tilde H$-tree. Indeed, if $v\in S$ is a vertex whose $F_N$-stabilizer $G_v$ is trivial,
then $\tilde H_v$ is commensurable to the stabilizer of an incident edge, so Lemma~\ref{lemma:edge-stabilizers} shows that $\tilde H_v$ fixes a point in $S'$.
If $G_v$ is non-trivial, then it fixes a unique point in $S'$ and this point is $\tilde H_v$ invariant since $G_v$ is normal in $\tilde H_v$.
\end{rk}

\begin{lemma}\label{lemma:maximal-refine}
  Let  $H\subseteq\IA$ be a subgroup, and let $S,S'\in\FS^H$. Assume that there does not exist any splitting in $\FS^H$ that properly refines $S$. Then
 $S$ dominates $S'$ as an $\tilde H$-tree.
\end{lemma}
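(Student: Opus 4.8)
The plan is to combine Lemma~\ref{lemma:edge-stabilizers} with a standard refinement argument. First I would use the hypothesis that $S$ admits no proper refinement inside $\FS^H$ to upgrade $S$ to a \emph{maximal} $H$-invariant free splitting in a strong sense: I claim that every $H$-invariant free splitting is in fact dominated by $S$ as an $F_N$-tree. To see this, suppose $S'\in\FS^H$ and consider, for each edge $e$ of $S$ with endpoints $v_A,v_B$ (with $F_N$-stabilizers $A,B$), the action of $A$ (resp.\ $B$) on its minimal subtree $S'_A$ (resp.\ $S'_B$) in $S'$. If some vertex stabilizer $G_v$ of $S$ fails to be elliptic in $S'$, then $G_v$ acts on its minimal subtree $S'_{G_v}\subseteq S'$, which is a free splitting of $G_v$; I would pull this back along the collapse $S'\to S'$ restricted near $v$ to build a common $H$-invariant refinement of $S$ that is strictly finer than $S$, contradicting the hypothesis. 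The delicate point is that this refinement must be done \emph{$\tilde H$-equivariantly}: one needs the induced action of $\tilde H$ on $S'_{G_v}$, which exists because $G_v\normal \tilde H_v$ (as $F_N\normal\tilde H$) and $\tilde H_v$ acts on $S'$ preserving $S'_{G_v}$, exactly as in Lemma~\ref{lemma:edge-stabilizers}.

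Once I know that $S$ dominates $S'$ as an $F_N$-tree, the conclusion that $S$ dominates $S'$ as an $\tilde H$-tree follows directly from Remark~\ref{rk:domine}, which the excerpt already records: if $v\in S$ has trivial $F_N$-stabilizer then $\tilde H_v$ is commensurable with an incident edge stabilizer and hence fixes a point in $S'$ by Lemma~\ref{lemma:edge-stabilizers}; if $G_v\ne\{1\}$ then $G_v$ fixes a unique point of $S'$, which is $\tilde H_v$-invariant by normality of $G_v$ in $\tilde H_v$. Thus every $\tilde H$-vertex-stabilizer of $S$ is elliptic in $S'$, which is precisely the definition of domination of $\tilde H$-trees.

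The main obstacle I anticipate is the ``no proper refinement'' step: turning the absence of a proper refinement into genuine domination requires showing that whenever some vertex group $G_v$ of $S$ is not elliptic in $S'$, one can actually build an $H$-invariant (equivalently, $\tilde H$-equivariant) free splitting strictly refining $S$. This is a blow-up construction — replace $v$ by the minimal $G_v$-tree inside $S'$ and reattach incident edges at the points fixed by their (trivial or not) stabilizers — and one must check that it is a \emph{free} splitting (edge stabilizers stay trivial since $S'$ is free and $S$ is free), that it is non-trivially finer than $S$ (because $G_v$ is not elliptic in $S'$, the blown-up piece has more than one orbit of vertices or edges), and that it carries a compatible $\tilde H$-action. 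The last point is the analogue of the argument in Lemma~\ref{lemma:edge-stabilizers}: $\tilde H_v$ preserves $S'_{G_v}$ because $I'_{\tilde\alpha}$ sends the axes of elements of $G_v$ to axes of their $\tilde\alpha$-images, and $\tilde\alpha(G_v)=G_v$ since $G_v$ is the $F_N$-stabilizer of the (fixed) vertex $v$. I would also need to double-check the corner case where $G_v$ is itself trivial or where $S$ has a single orbit of edges, but these reduce to Lemma~\ref{lemma:edge-stabilizers} as stated.
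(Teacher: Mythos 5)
Your overall strategy is the same as the paper's: contradict the no-proper-refinement hypothesis by blowing up $S$ at a vertex which is not elliptic in $S'$, using Lemma~\ref{lemma:edge-stabilizers}. The paper does this directly at the $\tilde H$-level (take $v$ with $\tilde H_v$ not elliptic in $S'$, blow up along the minimal $\tilde H_v$-invariant subtree of $S'$, invoking \cite[Proposition~2.2]{GL-jsj}, and check the result is a free splitting), whereas you first prove domination as $F_N$-trees (blowing up along $S'_{G_v}$ when $G_v$ is not elliptic) and then upgrade via Remark~\ref{rk:domine}; that detour is legitimate, since the remark only uses Lemma~\ref{lemma:edge-stabilizers} and not the present lemma, so there is no circularity, and your two-step packaging is essentially equivalent to the paper's one-step argument.

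There is, however, one concrete gap in your verification that the blown-up tree carries an $\tilde H$-action: preservation of $S'_{G_v}$ by $\tilde H_v$ is not enough, and ``reattach incident edges at the points fixed by their (trivial or not) stabilizers'' misses the point, because the stabilizers relevant for $\tilde H$-equivariance are the $\tilde H$-stabilizers $\tilde H_e$ of the incident edges, not their trivial $F_N$-stabilizers. You must attach each incident edge $e$ at a point of $S'_{G_v}$ fixed by $\tilde H_e$, chosen $\tilde H_v$-equivariantly on orbit representatives; otherwise the refinement need not be $H$-invariant, and the whole point of Lemma~\ref{lemma:edge-stabilizers} is to make this possible. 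The repair is immediate but must be said: by Lemma~\ref{lemma:edge-stabilizers}, $\tilde H_e$ fixes a point of $S'$, and since $\tilde H_e\subseteq\tilde H_v$ preserves the closed subtree $S'_{G_v}$, the projection of that fixed point onto $S'_{G_v}$ is again $\tilde H_e$-fixed, so it can serve as the attaching point (this is exactly what the paper's appeal to \cite[Proposition~2.2]{GL-jsj} encapsulates, applied there with the $\tilde H_v$-minimal subtree). Your remaining checks --- edge stabilizers of the refinement are trivial because $S$ and $S'$ are free splittings, and the refinement is proper because $G_v$ acts on $S'_{G_v}$ without a fixed point --- are correct as stated.
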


\begin{proof}
Assume not. Then there exists a point $v\in S$ whose $\tilde{H}$-stabilizer $\tilde{H}_v$ is not elliptic in $S'$. Let $U$ be the minimal $\tilde{H}_v$-invariant subtree of $S'$. As $\tilde{H}$-edge stabilizers of $S$ are elliptic in $S'$ (Lemma~\ref{lemma:edge-stabilizers}), we can blow-up $S$ to an $\tilde{H}$-tree $\hat{S}$, using the $\tilde{H}_v$-action on $U$ (see e.g.\ \cite[Proposition~2.2]{GL-jsj}).

We claim that $\hat{S}\in\FS^H$, which will contradict our hypothesis. First, $\hat{S}$ is $H$-invariant (because it is an $\tilde{H}$-tree). Second, $F_N$-stabilizers of edges of $\hat{S}$ are trivial: indeed, every $\tilde{H}$-edge stabilizer of $\hat{S}$ stabilizes an edge in either $S$ or $S'$, and $\tilde{H}$-edge stabilizers of $S$ and $S'$ have trivial intersection with $F_N$ because $S$ and $S'$ are free splittings of $F_N$. This proves our claim, and concludes the proof of the lemma. 
\end{proof}

\begin{proof}[Proof of Proposition \ref{prop:deformation}]
  Consider an $H$-invariant free splitting $S$ of $F_N$ without redundant vertex, whose number of orbits of  edges is maximum
  (the trivial splitting if $\FS^H=\es$).
  By Lemma \ref{lemma:maximal-refine}, the tree $S$ is a maximum $H$-invariant free splitting.

  If $S,S'$ are two maximum $H$-invariant free splittings, then they dominate each other as $\tilde H$-trees by definition, so they are
  in the same deformation space of $\tilde H$-trees.
\end{proof}

\subsection{The tree of cylinders of maximal $H$-invariant free splittings}\label{sec:free-cylinder}

In this section we are going to construct a tree of cylinders of the deformation space $\cald_H$ following \cite{GL-cyl}  (see Section \ref{sec:cyl} for relevant definitions). 

Let $\cale$ be the collection of all $\tilde H$-stabilizers of edges of $H$-invariant free splittings of $F_N$.
To construct a tree of cylinders, we need an admissible equivalence relation on $\cale$.
We will take the trivial equivalence relation (i.e.\ equality),
and the following lemma implies that this is an admissible equivalence relation on $\cale$.

\begin{lemma}\label{lemma:stab-edges-containment}
 Let $H\subseteq\IA$ be a subgroup, and let $S,S'\in\FS^H$. Let $e\subseteq S$ and $e'\subseteq S'$ be edges. If the $\tilde{H}$-stabilizer $\tilde{H}_e$ of $e$ is contained in the $\tilde{H}$-stabilizer $\tilde{H}_{e'}$ of $e'$, then $\tilde{H}_e=\tilde{H}_{e'}$.
\end{lemma}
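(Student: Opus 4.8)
The plan is to mimic the end of the proof of Lemma~\ref{lemma:edge-stabilizers}, namely to produce, from the data of the edge $e\subseteq S$ and the containment $\tilde H_e\subseteq\tilde H_{e'}$, a canonical point of $S'$ fixed by all of $\tilde H_e$, and then check that this point must in fact be (the midpoint of) the edge $e'$. Since any element $\tilde\alpha\in\tilde H_e$ stabilizes $e$, it preserves the $F_N$-stabilizers $A$ and $B$ of the two endpoints of $e$ (the same argument as in Lemma~\ref{lemma:edge-stabilizers}: $\tilde\alpha(g)v_A=I_{\tilde\alpha}(gv_A)=I_{\tilde\alpha}(v_A)=v_A$ for $g\in A$, hence $\tilde\alpha(A)=A$, and similarly $\tilde\alpha(B)=B$). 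As we may assume $S$ has a single orbit of edges (collapsing the others, which is legitimate since elements of $\tilde H$ act trivially on $S/F_N$ by Lemma~\ref{lemma:ia-free-splitting}), both $A$ and $B$ are non-trivial proper free factors.

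First I would treat the generic case where neither $A$ nor $B$ is elliptic in $S'$. Then the minimal subtrees $S'_A$ and $S'_B$ are both invariant under $I'_{\tilde\alpha}$ for every $\tilde\alpha\in\tilde H_e$ (their union-of-axes description is preserved by the equivariant isometry, exactly as in Lemma~\ref{lemma:edge-stabilizers}), and since $A\cap B=\{1\}$ the intersection $S'_A\cap S'_B$ is a compact subtree (possibly empty), with the bridge/circumcenter construction providing a canonical $\tilde H_e$-fixed point $p\in S'$. But $\tilde H_e\subseteq\tilde H_{e'}$ fixes the midpoint $m$ of $e'$; and in a free splitting of $F_N$, the fixed-point set of any subgroup is a subtree, and $\tilde H_e$ fixing both $p$ and $m$ forces it to fix the whole segment $[p,m]$, in particular every edge on it — which by freeness means those edges have trivial $F_N$-stabilizer while being $\tilde H_e$-invariant. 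I need to conclude from this that $p=m$ and that $e'$ is actually the only edge $\tilde H_e$ can stabilize, i.e. $\tilde H_e=\tilde H_{e'}$. The key point is that $\tilde H_e$ cannot fix two distinct edges of $S'$: if it did, collapsing all but those edges of $S'$ would give an $H$-invariant free splitting in which $\tilde H_e$ stabilizes a segment of length $\ge 2$, and then the edge stabilizers along it would form a strictly increasing chain, contradicting... — here I expect to need precisely the statement being proved, so instead the cleaner route is: $\tilde H_e$ stabilizes $e$, hence is contained in the stabilizer of each of the two half-edges, and the containment $\tilde H_e\subseteq \tilde H_{e'}$ together with the fact that $\tilde H_{e'}$ also stabilizes $e$ (run the symmetric argument with the roles reversed) gives $\tilde H_e=\tilde H_{e'}$ by antisymmetry.

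Let me reorganize around that symmetry, which I think is the real engine. Given $\tilde H_e\subseteq\tilde H_{e'}$, the first paragraph's argument applied to $S'$ and the subgroup $\tilde H_{e'}$ (using the endpoints of $e'$) shows that $\tilde H_{e'}$ fixes a canonical point of $S$; I must argue this point is the midpoint of $e$, using that $\tilde H_e\subseteq\tilde H_{e'}$ already fixes the midpoint of $e$ and that $\tilde H_{e'}$ fixes a \emph{unique} point of $S$ unless it fixes a whole arc — and if it fixes an arc in $S$, collapsing $S$ onto the corresponding one-edge free splitting still keeps $e$ as (an edge in) that arc, so $\tilde H_{e'}$ stabilizes some edge $f$ of a one-edge $H$-invariant free splitting with $\tilde H_f\supseteq\tilde H_{e'}\supseteq \tilde H_e = \tilde H_f$ (the last equality because in a one-edge free splitting all edge stabilizers are conjugate and $\tilde H_e$ surjects appropriately) — giving $\tilde H_{e'}=\tilde H_e$ directly. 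The remaining cases are when $A$ or $B$ (or the analogous factors for $e'$) is elliptic in the other tree, which is easier: an elliptic proper free factor fixes a unique point, that point is automatically invariant under the relevant $I'_{\tilde\alpha}$, and one runs the same fixed-point/uniqueness dichotomy. The main obstacle, then, is bookkeeping the uniqueness-of-fixed-point versus fixed-arc dichotomy cleanly in both trees simultaneously so as to land on antisymmetry $\tilde H_e\subseteq\tilde H_{e'}\subseteq\tilde H_e$; the geometric inputs are all already in place from Lemma~\ref{lemma:edge-stabilizers} and Lemma~\ref{lemma:ia-free-splitting}.
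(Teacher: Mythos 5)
Your proposal has a genuine gap: the direction $\tilde H_{e'}\subseteq\tilde H_e$ is never actually established. The fixed-point machinery you import from Lemma~\ref{lemma:edge-stabilizers} only produces \emph{some} point of $S$ fixed by $\tilde H_{e'}$ (equivalently, a nonempty fixed subtree $\Fix_S(\tilde H_{e'})$), but nothing forces this fixed set to meet the edge $e$. The inclusion $\tilde H_e\subseteq\tilde H_{e'}$ gives $\Fix_S(\tilde H_{e'})\subseteq\Fix_S(\tilde H_e)$, and the latter contains $e$ — that is the wrong direction, and your "uniqueness of fixed point versus fixed arc" dichotomy does not repair it: if $\tilde H_{e'}$ fixes an arc of $S$, that arc has no reason to contain $e$, so the subsequent collapse producing an edge $f$ with "$\tilde H_e=\tilde H_f$" is unjustified (that equality is itself an instance of the statement being proved, as you half-acknowledge when you note the circularity). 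So the "antisymmetry" step, which you correctly identify as the real engine, is exactly the missing content.

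The paper's proof uses a different and much shorter mechanism, which your argument never exploits: for $\tilde\alpha\in\tilde H_{e'}$, Lemma~\ref{lemma:ia-free-splitting} (this is where $H\subseteq\IA$ enters) gives that $\tilde\alpha$ acts trivially on $S/F_N$, so $I_{\tilde\alpha}(e)=ge$ for some $g\in F_N$; hence $\ad_g^{-1}\tilde\alpha\in\tilde H_e\subseteq\tilde H_{e'}$. Since both $\ad_g^{-1}\tilde\alpha$ and $\tilde\alpha$ fix $e'$, the element $g$ fixes $e'$, and triviality of edge stabilizers of the free splitting $S'$ forces $g=1$, i.e.\ $\tilde\alpha\in\tilde H_e$. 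Note that the decisive ingredients are precisely the $\IA$-triviality on the quotient graph of $S$ combined with freeness of $S'$ applied to a single group element; minimal subtrees, bridges and circumcenters of $S'_A\cap S'_B$ cannot substitute for this, because they never tie the fixed point they produce to the specific edge $e'$ (or, symmetrically, to $e$). If you want to salvage your outline, replace the whole fixed-point discussion by this one-element argument.
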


Lemma~\ref{lemma:stab-edges-containment} was proved in \cite[Lemma~6.9]{BGH}, we recall the proof for completeness.

\begin{proof}
Let $\tilde\alpha \in \tilde{H}_{e'}$; we aim to prove that $\tilde\alpha\in\tilde{H}_e$. As $H\subseteq \IA$ and $S$ is a free splitting of $F_N$, Lemma~\ref{lemma:ia-free-splitting} ensures that every automorphism in $\tilde{H}$ acts trivially on the quotient graph $S/F_N$, so there exists $g \in F_N$ such that $\tilde\alpha e=ge$. Then $\ad_g^{-1}\tilde\alpha\in\tilde{H}_e$, and hence $\ad_g^{-1}\tilde\alpha\in\tilde{H}_{e'}$ because $\tilde H_e\subseteq \tilde H_{e'}$. As $\ad_g^{-1}\tilde\alpha$ and $\tilde\alpha$ both fix $e'$, we deduce that $g$ fixes $e'$, and therefore $g=1$. Hence $\tilde\alpha\in\tilde{H}_e$, which concludes our proof. 
\end{proof}

For future reference, we record the following consequence of Lemma~\ref{lemma:stab-edges-containment}.


\begin{cor}\label{cor_Tc}
Equality is an admissible relation
 on $\cale$. 
 
 In particular, for every $\tilde H$-tree $S\in\cald_H$, the tree of cylinders of $S$ is well defined, and it does not depend on the choice of $S$.
\end{cor}

\begin{proof}
The 3 conditions for an admissible relation recalled in Section \ref{sec:cyl}
immediately follow from Lemma \ref{lemma:stab-edges-containment}.
This implies that the tree of cylinders of $S$ is well defined.
Since any two elements of $\cald_H$ are in the same deformation space (Proposition~\ref{prop:deformation}),
they have the same tree of cylinders by \cite[Theorem 1]{GL-cyl}.
\end{proof}

Recall that the cylinder of an edge $e\subseteq S$ is the union of all edges whose $\tilde H$-stabilizers are equal to $\tilde H_e$
(admissibility implies that this is a connected subset).

\begin{lemma}\label{lemma:trivial-on-cylinders}
Let $H\subseteq\IA$ be a subgroup, let $S\in\cald_H$, and let $Y\subseteq S$ be a cylinder. The subgroup $\tilde H_Y$ of elements of $\tilde H$ fixing $Y$ pointwise
is a lift of $H$ in $\Aut(F_N)$: the restriction of $\Aut(F_N)\onto \Out(F_N)$ to $\tilde H_Y$ is an isomorphism
$\tilde H_Y \xrightarrow{\sim } H$.

In particular, identifying $F_N$ with $\Inn(F_N)$, we have $\tilde H=\tilde H_Y\ltimes F_N\simeq H\ltimes F_N$.
\end{lemma}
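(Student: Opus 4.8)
The plan is to show that $\tilde H_Y$ maps bijectively onto $H$ under the projection $\pi:\Aut(F_N)\onto\Out(F_N)$. Surjectivity and injectivity must both be argued, and the natural way to package this is to exhibit, for each $\alpha\in H$, a \emph{unique} lift $\tilde\alpha\in\tilde H$ that fixes $Y$ pointwise.

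\textbf{Surjectivity.} First I would fix $\alpha\in H$ and an arbitrary representative $\tilde\alpha_0\in\tilde H$, with its associated $\tilde\alpha_0$-equivariant isometry $I_{\tilde\alpha_0}$ of $S$. Pick an edge $e\subseteq Y$. Since $S\in\cald_H$ is a maximum $H$-invariant free splitting, $\alpha\in\IA$ acts trivially on $S/F_N$ by Lemma~\ref{lemma:ia-free-splitting} (more precisely by Lemma~\ref{lemma:ia-nice}, since $S$ is a free splitting every edge has trivial stabilizer), so $I_{\tilde\alpha_0}(e)=ge$ for some $g\in F_N$. Replacing $\tilde\alpha_0$ by $\tilde\alpha_1:=\ad_g^{-1}\tilde\alpha_0$, we get a representative of $\alpha$ whose equivariant isometry fixes $e$, hence fixes the $\tilde H$-stabilizer $\tilde H_e$ setwise by conjugation; in fact $\tilde\alpha_1$ normalizes $\tilde H_e$. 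Now I claim $I_{\tilde\alpha_1}$ fixes the whole cylinder $Y$ pointwise. Indeed $Y$ is the union of edges with $\tilde H$-stabilizer exactly $\tilde H_e$, and $I_{\tilde\alpha_1}$ permutes this set of edges (since $\tilde\alpha_1$ normalizes $\tilde H_e$ and conjugation by $\tilde\alpha_1$ preserves the relation ``$\tilde H$-stabilizer $=\tilde H_e$''). So $I_{\tilde\alpha_1}(Y)=Y$, and $I_{\tilde\alpha_1}$ is an isometry of the tree $Y$ fixing the edge $e$; to see it is the identity on $Y$ I would argue that any edge $e'\subseteq Y$ adjacent to an already-fixed vertex has $I_{\tilde\alpha_1}(e')$ also adjacent to that vertex with the same $F_N$-stabilizer (namely trivial) — here the real input is that distinct edges of a free splitting incident to a common vertex lie in distinct $F_N$-orbits is \emph{not} true, so instead I use: $e'$ and $I_{\tilde\alpha_1}(e')$ both lie in $Y$, share a vertex, and since $S/F_N$ has no edge inversions and $\alpha$ acts trivially on $S/F_N$, they are in the same $F_N$-orbit, say $I_{\tilde\alpha_1}(e')=he'$ with $h$ fixing the common vertex; but then $h$ has trivial action forces... — this is the delicate bookkeeping step. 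Cleanly: since $I_{\tilde\alpha_1}$ fixes $e$ and maps $Y$ to $Y$ respecting the bipartite/combinatorial structure while inducing the identity on $Y/F_N=$ (its image in $S/F_N$), a connectedness/induction argument along $Y$ shows $I_{\tilde\alpha_1}$ acts trivially on $Y$. Hence $\tilde\alpha_1\in\tilde H_Y$ and $\pi(\tilde\alpha_1)=\alpha$.

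\textbf{Injectivity.} Suppose $\tilde\alpha,\tilde\alpha'\in\tilde H_Y$ with $\pi(\tilde\alpha)=\pi(\tilde\alpha')$. Then $\tilde\alpha'\tilde\alpha^{-1}=\ad_g$ for some $g\in F_N$, and $\ad_g$ fixes $Y$ pointwise. Fixing a point of $Y$ pointwise means $g$ fixes every point of $Y$, so $g$ lies in the $F_N$-stabilizer of any edge of $Y$, which is trivial since $S$ is a free splitting. Hence $g=1$ and $\tilde\alpha=\tilde\alpha'$. This shows $\pi$ restricted to $\tilde H_Y$ is injective, and since $\tilde H_Y$ is a subgroup of $\tilde H$ it is a subgroup of $\Aut(F_N)$; combined with surjectivity onto $H$ we conclude $\pi|_{\tilde H_Y}:\tilde H_Y\xrightarrow{\ \sim\ } H$ is an isomorphism of groups.

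\textbf{Main obstacle.} The routine parts are injectivity (immediate from triviality of edge stabilizers) and the choice of a representative fixing one edge. The genuinely careful step is showing that the equivariant isometry $I_{\tilde\alpha_1}$, once it fixes a single edge of the cylinder $Y$ and maps $Y$ to itself, actually fixes all of $Y$ pointwise — i.e.\ propagating the fixed-point property across $Y$. I expect to handle this by induction on distance from $e$ within $Y$: at each step one has a vertex $w$ fixed by $I_{\tilde\alpha_1}$ and an incident edge $e'\subseteq Y$; $I_{\tilde\alpha_1}(e')$ is incident to $w$, lies in $Y$, and projects to the same edge of $S/F_N$ as $e'$ because $\alpha\in\IA$ acts trivially on $S/F_N$; so $I_{\tilde\alpha_1}(e')=he'$ for some $h\in G_w$, and $G_w$ being... — if $G_w$ is nontrivial this needs the extra observation that the relevant subgroup also normalizes $\tilde H_{e'}$, forcing $h=1$; if $G_w$ is trivial then $h=1$ immediately. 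Making this dichotomy airtight, using Lemma~\ref{lemma:stab-edges-containment} to pin down edge stabilizers inside the cylinder, is the crux of the argument.
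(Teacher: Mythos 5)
Your injectivity argument and your reduction of surjectivity to producing a representative $\tilde\alpha_1$ of $\alpha$ whose equivariant isometry fixes a single edge $e\subseteq Y$ (using that $\alpha\in\IA$ acts trivially on $S/F_N$) are correct and coincide with the paper's proof. The problem is precisely the step you flag as the crux: upgrading ``$I_{\tilde\alpha_1}$ fixes $e$ and preserves $Y$'' to ``$I_{\tilde\alpha_1}$ fixes $Y$ pointwise''. As written this step is not established: from normalization of $\tilde H_e$ you only get that $I_{\tilde\alpha_1}$ preserves $Y$ setwise, and your inductive propagation along $Y$ breaks exactly where you say it does --- at a vertex $w$ of $Y$ with $G_w\neq\{1\}$ you get $I_{\tilde\alpha_1}(e')=he'$ with $h\in G_w$ and have no argument forcing $h=1$ (triviality of \emph{edge} stabilizers does not help here, and vertices of a free splitting can have large stabilizers). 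So the proposal has a genuine gap, which you acknowledge rather than close.

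The missing observation is that no propagation is needed at all, because of how the cylinders are defined in this setting: the admissible equivalence relation is \emph{equality} of $\tilde H$-edge stabilizers (this is what Lemma~\ref{lemma:stab-edges-containment} is for), so $Y$ is by definition the union of the edges $e'$ with $\tilde H_{e'}=\tilde H_e$. Hence, once $\tilde\alpha_1$ fixes $e$, i.e.\ $\tilde\alpha_1\in\tilde H_e$, it lies in $\tilde H_{e'}$ for \emph{every} edge $e'\subseteq Y$, so it fixes every edge of $Y$ and therefore fixes $Y$ pointwise; in other words $\tilde H_Y=\tilde H_e$, which is exactly Remark~\ref{rk_fix} in the paper. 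With that one-line replacement your argument reduces to the paper's two-line proof (injectivity from triviality of $F_N$-edge stabilizers, surjectivity from the existence of a representative fixing $e$), and the whole normalization-plus-induction machinery --- including the appeal to Lemma~\ref{lemma:ia-nice} and the $G_w$ dichotomy --- can be discarded.
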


\begin{rk}\label{rk_fix}
  Note that $\tilde H_Y$ coincides with $\tilde H_e$
for any edge $e\subseteq Y$. Indeed, if $\tilde\alpha\in \tilde H_e$, then for any edge $e'\subset Y$, $\tilde H_{e'}=\tilde H_e\ni\tilde\alpha$. 
\end{rk}

\begin{proof}
Since $F_N$-stabilizers of edges of $S$ are trivial, $\tilde H_Y\cap F_N=\{1\}$ which proves injectivity.

To prove surjectivity, consider $\alpha\in H$.
Let $e\subseteq Y$ be an edge. Since $H\subseteq\IA$, it acts trivially on the quotient graph $S/F_N$, so $\alpha$ has a representative $\tilde{\alpha}\in\tilde{H}$ that fixes the edge $e$.
As noted above, we conclude that $\tilde\alpha\in\tilde H_e=\tilde H_Y$.
\end{proof}

Given an automorphism $\tilde\alpha$ of $F_N$, we denote by $\Fix_{F_N}(\tilde\alpha)$ the subgroup of $F_N$ made of all elements $g\in F_N$ such that $\tilde\alpha(g)=g$. More generally, if $\calx$ is any collection of automorphisms of $F_N$, we let $$\Fix_{F_N}(\calx):=\bigcap_{\tilde\alpha\in\calx}\Fix_{F_N}(\tilde\alpha).$$

\begin{lemma}\label{lemma:stab-cylinder}
Let $H\subseteq\IA$ be a subgroup, let $S\in\cald_H$, and let $Y\subseteq S$ be a cylinder. 
Denote by $\tilde H_{\{Y\}}$ and $G_{\{Y\}}$ the setwise stabilizer of $Y$ in $\tilde H$ or in $F_N$ respectively,
and by $\tilde H_Y$ the pointwise stabilizer of $Y$ in $\tilde H$.

Then $G_{\{Y\}}=\Fix_{F_N}(\tilde{H}_Y)$ and $\tilde H_{\{Y\}}=\tilde H_Y \times G_{\{Y\}}$.
\end{lemma}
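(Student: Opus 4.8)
The plan is to derive both identities from Lemma~\ref{lemma:trivial-on-cylinders} and Remark~\ref{rk_fix}, using throughout the fact that $S\in\cald_H$ is a free splitting of $F_N$, so that all $F_N$-stabilizers of edges of $S$ are trivial. We keep the identification of $F_N$ with $\Inn(F_N)\normal\tilde H$, so that for $g\in F_N$ the automorphism $\ad_g\in\tilde H$ acts on $S$ as translation by $g$; in particular, for any edge $e$ of $S$ and any $g\in F_N$ one has $\tilde H_{ge}=\ad_g\,\tilde H_e\,\ad_g\m$ (conjugation inside $\tilde H$), which I would record first as an immediate consequence of $\tilde\beta\in\tilde H_e\iff\ad_g\tilde\beta\ad_g\m\in\tilde H_{ge}$.

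First I would prove that $G_{(Y)}=\Fix_{F_N}(\tilde H_Y)$. For $G_{(Y)}\subseteq\Fix_{F_N}(\tilde H_Y)$, take $g\in G_{(Y)}$ and an edge $e\subseteq Y$; then $ge\subseteq Y$ as well, so by Remark~\ref{rk_fix} every $\tilde\alpha\in\tilde H_Y$ fixes both $e$ and $ge$ pointwise, and comparing $I_{\tilde\alpha}(ge)=\tilde\alpha(g)I_{\tilde\alpha}(e)=\tilde\alpha(g)e$ with $I_{\tilde\alpha}(ge)=ge$ shows that $g\m\tilde\alpha(g)$ fixes $e$, hence is trivial since edge stabilizers in $F_N$ are trivial; thus $\tilde\alpha(g)=g$. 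For the reverse inclusion, take $g\in\Fix_{F_N}(\tilde H_Y)$; a direct computation with the formula for an inner automorphism (using $\tilde\alpha(g)=g$ for all $\tilde\alpha\in\tilde H_Y$) shows that $\ad_g$ commutes with every element of $\tilde H_Y$ in $\Aut(F_N)$, so $\ad_g\,\tilde H_Y\,\ad_g\m=\tilde H_Y$; combined with the conjugation formula above this gives $\tilde H_{ge}=\tilde H_Y=\tilde H_e$ for every edge $e\subseteq Y$, so $ge$ lies in the cylinder $Y$. Hence $gY\subseteq Y$, and since $g\m$ also lies in $\Fix_{F_N}(\tilde H_Y)$ we get $g\m Y\subseteq Y$, so $gY=Y$, i.e.\ $g\in G_{(Y)}$.

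Then I would establish $\tilde H_{(Y)}=\tilde H_Y\times G_{(Y)}$ as an internal direct product. Both $\tilde H_Y$ and $G_{(Y)}$ are subgroups of $\tilde H_{(Y)}$ (the first fixes $Y$ pointwise, a fortiori globally; the second lies in $F_N\subseteq\tilde H$ and preserves $Y$ by definition). They intersect trivially, since $\tilde H_Y\cap F_N=\tilde H_e\cap F_N=\{1\}$ by Remark~\ref{rk_fix} and triviality of $F_N$-edge stabilizers. They commute: for $\tilde\alpha\in\tilde H_Y$ and $g\in G_{(Y)}$ we have $\tilde\alpha(g)=g$ by the first identity, so $\tilde\alpha\,\ad_g=\ad_{\tilde\alpha(g)}\,\tilde\alpha=\ad_g\,\tilde\alpha$. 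Finally they generate $\tilde H_{(Y)}$: given $\tilde\beta\in\tilde H_{(Y)}$ with image $\alpha\in H$, Lemma~\ref{lemma:trivial-on-cylinders} provides the unique $\tilde\alpha\in\tilde H_Y$ lifting $\alpha$; then $\tilde\beta\tilde\alpha\m=\ad_g$ for some $g\in F_N$, and since $I_{\tilde\beta}$ and $I_{\tilde\alpha\m}$ both preserve $Y$ so does $I_{\ad_g}$, i.e.\ $gY=Y$ and hence $g\in G_{(Y)}$; thus $\tilde\beta=\ad_g\,\tilde\alpha\in G_{(Y)}\cdot\tilde H_Y$. Trivial intersection together with commutation and generation yields the internal direct product.

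The argument is essentially formal; the only point requiring care is the systematic translation between conjugation inside $\tilde H$ and the action on $S$ under the identification $F_N=\Inn(F_N)$, together with the repeated use of the triviality of $F_N$-edge stabilizers of $S$. The main (mild) obstacle is the inclusion $\Fix_{F_N}(\tilde H_Y)\subseteq G_{(Y)}$, where one must check that $\ad_g$ centralizes $\tilde H_Y$ and then push this through the conjugation formula $\tilde H_{ge}=\ad_g\,\tilde H_e\,\ad_g\m$ to land back inside the same cylinder.
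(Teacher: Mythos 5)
Your proof is correct and follows essentially the same route as the paper: both directions of $G_{(Y)}=\Fix_{F_N}(\tilde H_Y)$ rest on the computation $I_{\tilde\alpha}(ge)=\tilde\alpha(g)I_{\tilde\alpha}(e)$ together with triviality of $F_N$-edge stabilizers and Remark~\ref{rk_fix}, and the product decomposition is obtained exactly as in the paper from the lift provided by Lemma~\ref{lemma:trivial-on-cylinders}. The only (harmless) deviations are that you upgrade $\tilde H_e$ versus $\tilde H_{ge}$ to an equality by a direct centralization/conjugation computation where the paper invokes Lemma~\ref{lemma:stab-edges-containment}, and that you verify commutation of the two factors explicitly where the paper notes both are normal.
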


\begin{proof}
Fix an edge $e\subset Y$.
To prove that $G_{\{Y\}}\subset\Fix_{F_N}(\tilde{H}_Y)$,
consider $g\in G_{\{Y\}}$ and $\tilde\alpha\in\tilde{H}_Y$. 
As $e$ and $ge$ both belong to $Y$, we have $ge=I_{\tilde\alpha}(ge)=\tilde\alpha(g)I_{\tilde\alpha}(e)=\tilde\alpha(g)e$. As the $F_N$-stabilizer of $e$ is trivial, this implies that $\tilde\alpha(g)=g$. This shows that $G_{\{Y\}}\subseteq\Fix_{F_N}(\tilde{H}_Y)$.

Conversely, let $g\in F_N$ be such that for every  $\tilde\alpha\in\tilde{H}_Y$, one has $\tilde\alpha(g)=g$. 
To prove that $g\in G_{\{Y\}}$ it suffices to check that $ge\subseteq Y$.
For every $\tilde\alpha\in\tilde{H}_e$, one has $I_{\tilde\alpha}(ge)=\tilde\alpha(g)I_{\tilde\alpha}(e)=ge$, showing that $\tilde\alpha\in\tilde{H}_{ge}$. Therefore $\tilde{H}_{e}\subseteq\tilde{H}_{ge}$. By Lemma~\ref{lemma:stab-edges-containment}, this implies that $\tilde{H}_{e}=\tilde{H}_{ge}$. Thus $e$ and $ge$  belong to the same cylinder $Y$, 
and we conclude that $G_{\{Y\}}=\Fix_{F_N}(\tilde{H}_Y)$.

Now  the groups $\tilde H_Y$ and $G_{\{Y\}}=\tilde H_{\{Y\}}\cap F_N$ are normal in $\tilde H_{\{Y\}}$,
and they intersect trivially because $F_N$-stabilizers of edges of $S$ are trivial.
To conclude, it suffices to check that $\tilde H_{\{Y\}}=\tilde H_Y . G_{\{Y\}}$.
Given $\tilde\alpha\in \tilde H_{\{Y\}}$, there exists $g\in F_N$ such that $I_{\tilde\alpha}(e)=ge$ (as follows from Lemma \ref{lemma:ia-free-splitting} since $H\subseteq\IA$). 
Therefore $g\in G_{\{Y\}}$.
Since  $\ad_g\m\tilde\alpha$ fixes $e$, we have $\ad_g\m\tilde\alpha \in \tilde H_Y$ by Remark \ref{rk_fix}, which concludes the proof.
\end{proof}

Corollary \ref{cor_Tc} allows us to make the following definition (see Section \ref{sec:cyl}).
\begin{de}\label{dfn_UH1}
Given a subgroup $H\subseteq\IA$, we denote by $\Uun_H$ the tree of cylinders of any $\tilde H$-tree $S\in \cald_H$. 
\end{de}

Observe that the assignment $H\mapsto U_H^1$ is $\Out(F_N)$-equivariant: indeed, the set of $H$-invariant free splittings depends equivariantly on $H$, and therefore so do the deformation space $\cald_H$ and its tree of cylinders.

We denote by $V^1$ the set of vertices $v_Y$ of $\Uun_H$ corresponding to cylinders $Y$ of $S$,
and by $V^0$ the set of vertices $v_x$ of $\Uun_H$ corresponding to points $x\in S$ not lying in a unique cylinder. The tree $\Uun_H$ is minimal as an $\tilde{H}$-tree \cite[Lemma~4.9]{Gui04} (it may be reduced to a point). Since $F_N$ is normal in $\tilde H$, the tree $\Uun_H$ is also minimal as an $F_N$-tree.
In general, $F_N$-stabilizers of edges of $\Uun_H$ can be non-abelian. We will see below that it
is not trivial when $H$ is infinite and trees in $\cald_H$ are non-trivial (i.e.\ $\FS^H\neq \es$),
see Theorem~\ref{theo:tree-of-cyl}.

\begin{rk}\label{rk:cross-connected}
  Another more geometric interpretation of the tree of cylinders in terms of cross-connected components of $H$-invariant almost invariant sets is given in \cite{GL5}.
\end{rk}

\noindent\textit{Example \ref{ex:arcs} continued.} Recall that $H\subset \IA$ is the group of  outer automorphisms
induced by homeomorphisms of the surface $\Sigma$ that are the identity on $\Sigma_1$.
Let $S$ be the splitting dual to the arcs $\gamma_1,\gamma_2$ as in Figure \ref{fig_ex2}.
Let $\tilde \Sigma$ be the universal cover of $\Sigma$, and $\tilde \Sigma_1\subset\tilde \Sigma$
a connected component of the preimage of $\Sigma_1$.
Let $\tilde E$ be the collection of arcs of $\tilde \Sigma$ that are lifts of $\gamma_1$ or $\gamma_2$.
Thus, $\tilde E$ is in bijection with the set of edges of $S$.
Denote by $\tilde E_{\tilde\Sigma_1}$ the subcollection of $\tilde E$ consisting of all arcs contained in $\tilde \Sigma_1$,
and by $Y\subset S$ the corresponding collection of edges of $S$.
Any outer automorphism $h\in H$ has a lift acting as the identity on $\tilde \Sigma_1$, thus fixing
all the arcs in $\tilde E_{\tilde\Sigma_1}$.
The collection of these lifts defines a lift $\tilde H\subset \Aut(F_N)$ of $H$ which fixes
all the edges in $Y$, so $\tilde H=\tilde H_e$ for all $e\in Y$.
In particular, all edges in $Y$ are in the same cylinder.
Using the fact that $H$ contains a Dehn twist around $c$,
one can check that edges outside $Y$ are not fixed by all elements of $\tilde H$, so $Y$ is precisely
a cylinder of $S$, and the tree of cylinders $U^1_H$ of $S$ is the cyclic splitting dual to the curve $c\subset \Sigma$.

\begin{ex}
  The following three examples show that as an $F_N$-tree, $\Uun_H$ is not determined by the sole collection of $H$-invariant free splittings, but it really depends on $H$.

  Let $S$ be the free splitting  obtained as the Bass--Serre tree of the decomposition $F_N=A*B$, with $A,B$ non-abelian.
    Denote by $e\subseteq S$ the edge joining the vertices fixed by $A$ and $B$.
  Let $H\subset \IA$ be the stabilizer of $S$. The group $\Aut(A)\times\Aut(B)$ naturally embeds in $\Aut(F_N)$,
  and $\tilde H_e\simeq H$ has finite index in $\Aut(A)\times \Aut(B)$.
  Then $S$ is the only $H$-invariant  non-trivial free splitting, so $S\in \cald_{H}$,
  and we claim that every cylinder is reduced to a single edge.
  Indeed, for any $a\in A\setminus\{1\}$, the edges $e$ and $ae$ 
  cannot be in the same cylinder because $a\notin \Fix_{F_N}(\tilde H_e)$. Using the same argument in $B$, we see that
  the cylinder of $e$ contains no edge adjacent to $e$, which proves our claim.
  It follows that $\Uun_H$ is isomorphic to the barycentric subdivision of $S$, with $V^1$ corresponding to the midpoints of edges of $S$.

  From now on we further assume that $\mathrm{rk}(A)\ge 3$. Let $a\in A$ be an element which is part of a free basis of $A$, and let $H'\subset H$ be the stabilizer  (inside $H$) of the cyclic splitting $S'$ dual to the cyclic decomposition
  $F_N=A*_{\grp{a}} (\grp{a}*B)$.
  Now $\tilde H'_e\simeq H'$ has finite index in $\Aut(A,a)\times \Aut(B)$.
  Again, $S$ is the only $H'$-invariant free splitting. 
  But now, the cylinder of $e$ is the orbit of $e$ under $\grp{a}$ because $\Fix_{F_N}(\tilde H'_e)=\grp{a}$.
  As an $F_N$-tree, the tree $\Uun_{H'}$ is then dual to the two-edge splitting
  $$\xymatrix{ A \ar@{-}[r]_{\grp{a}} &\grp{a} \ar@{-}[r]_{\{1\}} & B}$$
  and as an $\tilde H'$-tree, it is isomorphic to
  $$\xymatrix@=2cm{(\tilde H'_e\semidirect A)\ar@{-}[r]_{\tilde H'_e\times \grp{a}}
    &\tilde H'_e\times \grp{a} \ar@{-}[r]_{\tilde H'_e} & (\tilde H'_e\semidirect B)}.$$
  The product structure on the central vertex illustrates Lemma \ref{lemma:stab-cylinder} and the second assertion of Theorem \ref{theo:tree-of-cyl} below.

  Finally, assume furthermore that $\mathrm{rk}(B)\geq 3$, and let $b\in B$ be an element in a free basis, and let $H''\subset H$ be the stabilizer of the cyclic splitting $T$ given by
  $$\xymatrix{ A \ar@{-}[r]_{\grp{a}} &\grp{a,b} \ar@{-}[r]_{\langle b\rangle} & B}.$$
  Now $\tilde H''_e\simeq H''$ has finite index in $\Aut(A,a)\times \Aut(B,b)$.
  Again, $S$ is the unique $H''$-invariant non-trivial free splitting.
  The cylinder containing $e$ is now the orbit of $e$ under $\grp{a,b}$ (it is unbounded) and $\Uun_{H''}=T$ as an $F_N$-tree
  (it is no more in the same deformation space as $S$). 
  As an $\tilde H''$-tree,
it is isomorphic to
  $$\xymatrix@=2cm{(\tilde H'_e\semidirect A)\ar@{-}[r]_{\tilde H'_e\times \grp{a}}
    &\tilde H'_e\times \grp{a,b} \ar@{-}[r]_{\tilde H'_e\times \grp{b}} & (\tilde H'_e\semidirect B}).$$
\end{ex}

\begin{theo}\label{theo:tree-of-cyl}\renewcommand{\theenumi}{(\arabic{enumi})}\renewcommand{\labelenumi}{\theenumi}
  Consider a subgroup $H\subseteq \IA$. Recall that the vertex set of the tree $\Uun_H$ is bipartitioned into $V^0\dunion V^1$. Then the following properties hold.
  \begin{enumerate}
  \item the $F_N$-stabilizers of vertices in $V^0$ are the maximal subgroups of $F_N$ that are elliptic in all $H$-invariant free splittings; in particular, they are free factors of $F_N$.
  \item\label{it_V1} if $v\in V^1$, and if $Y\subseteq S$ is the corresponding cylinder, then the $\tilde{H}$-stabilizer of $v$ splits as 
$\tilde H_v=G_v\times \tilde H_Y$
where $$G_{v}=G_{\{Y\}}=\Fix_{F_N}(\tilde H_Y)$$ 
and $\tilde H_Y$ maps isomorphically onto $H$
under $\Aut(F_N)\onto\Out(F_N)$;
\item \label{it_carac}
Consider the free splittings of $F_N$ obtained
from $\Uun_H$ by blowing up some vertices in $V^1$ (using for each $v\in V^1$ a maybe non-minimal free splitting of $G_v$ relative to $\Inc_v$) and collapsing all edges coming from $\Uun_H$.
Then any such free splitting is $H$-invariant, and conversely
every $H$-invariant free splitting may be obtained in this way.
\item \label{it_factor_system}
 for every vertex $v\in V^1$ such that every edge incident on $v$ has non-trivial $F_N$-stabilizer,
 the collection of  all conjugacy classes of $F_N$-stabilizers of incident edges forms a proper free factor system of $G_v$.
  \end{enumerate}
  Moreover, $\Uun_H$ is non-trivial if $H$ is infinite and $\FS^H\neq \es$. 
\end{theo}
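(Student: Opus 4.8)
\textbf{Proof plan for Theorem~\ref{theo:tree-of-cyl}.}
The plan is to construct $\Uun_H$ as the tree of cylinders of a tree $S\in\cald_H$ (Proposition~\ref{prop:deformation}) for the equality relation on the collection $\cale$ of $\tilde H$-stabilizers of edges of $H$-invariant free splittings; admissibility of this relation follows from Lemma~\ref{lemma:stab-edges-containment}. By \cite[Theorem~1]{GL-cyl}, $\Uun_H$ depends only on the deformation space $\cald_H$, hence only on $H$, which gives the canonicity and $\Out(F_N)$-equivariance; minimality over $F_N$ follows from \cite[Lemma~4.9]{Gui04} together with the normality of $F_N$ in $\tilde H$ (as in the discussion preceding the theorem). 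Throughout, one freely identifies, for $v\in V^1$ corresponding to a cylinder $Y$, the data $(\tilde H_Y, G_{(Y)}, \tilde H_{(Y)})$ with $(\tilde H_v \cap \ker(\tilde H\to H), G_v, \tilde H_v)$.

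For item~\ref{it_V1}, I would simply invoke Lemmas~\ref{lemma:trivial-on-cylinders} and~\ref{lemma:stab-cylinder}: the first gives that $\tilde H_Y$ maps isomorphically onto $H$, the second gives $G_v=G_{(Y)}=\Fix_{F_N}(\tilde H_Y)$ and the direct-product decomposition $\tilde H_v=\tilde H_{(Y)}=\tilde H_Y\times G_{(Y)}$. For item~(1), recall that the $V^0$-vertices of a tree of cylinders are exactly the points of $S$ lying in at least two cylinders; since edge stabilizers of $S$ are trivial over $F_N$ and $S\in\cald_H$ dominates every $H$-invariant free splitting, the $F_N$-stabilizer $G_{v_x}$ of such a vertex is the $F_N$-stabilizer of a vertex of $S$, hence a free factor, and it is elliptic in every $H$-invariant free splitting; conversely a subgroup of $F_N$ maximal among those elliptic in all of $\FS^H$ must be elliptic in $S$ and be a vertex stabilizer of $S$ lying in at least two cylinders (otherwise one could enlarge it using an incident edge group, which by Lemma~\ref{lemma:edge-stabilizers} is elliptic everywhere) — I expect this to require a short but careful argument tracking how collapses/blow-ups interact with ellipticity.

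For item~\ref{it_carac}, the forward direction is immediate: any blow-up of a $V^1$-vertex $v$ along a free splitting of $G_v$ relative to $\Inc_v$, followed by collapsing the edges coming from $\Uun_H$, is an $\tilde H$-tree (because $\tilde H_v$ acts on the chosen free splitting of $G_v$ through its quotient $\simeq$, using the product decomposition of \ref{it_V1} and the fact that $\Inc_v$ is $\tilde H_v$-invariant) with trivial $F_N$-edge stabilizers, hence lies in $\FS^H$. For the converse, given $T\in\FS^H$, I would use that $S\in\cald_H$ dominates $T$ as an $\tilde H$-tree (Proposition~\ref{prop:deformation}/Remark~\ref{rk:domine}), so $T$ is obtained from $S$ by collapses and blow-ups; one then checks that such a tree factors through $\Uun_H$ in the prescribed way, using that $V^0$-stabilizers are already elliptic in $T$ and that the refinement along each cylinder is recorded by a free splitting of the corresponding $G_v$ relative to $\Inc_v$ — this is the step I expect to be the main obstacle, as it requires controlling the geometry of the deformation space relative to the tree of cylinders. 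Item~\ref{it_factor_system} then follows: when all incident edges at $v\in V^1$ have non-trivial $F_N$-stabilizer, collapsing everything but $v$ in a suitable blow-up realizes $\Inc_v$ as the non-trivial point stabilizers of a free splitting of $G_v$, so $\Inc_v$ is a free factor system of $G_v$, and it is proper because $G_v$ itself is not among these stabilizers (the incident edges genuinely separate $v$ from the rest of $\Uun_H$). Finally, non-triviality of $\Uun_H$ when $H$ is infinite and $\FS^H\neq\es$: if $\Uun_H$ were trivial, then $F_N$ would fix a point, so some cylinder $Y$ of $S$ would be all of $S$; but then $\tilde H=\tilde H_Y$ by Remark~\ref{rk_fix}, so by Lemma~\ref{lemma:trivial-on-cylinders} the projection $\tilde H\to H$ restricted to $\tilde H_Y=\tilde H$ is an isomorphism with $F_N\cap\tilde H_Y=\{1\}$, forcing $F_N=\{1\}$, a contradiction with $N\ge 2$; alternatively and more robustly, $\FS^H\neq\es$ gives a non-trivial $S$, and if $S$ had a single cylinder then $\Fix_{F_N}(\tilde H)=F_N$, contradicting that $H$ is infinite (an infinite subgroup of $\IA$ acts non-trivially, so cannot fix all of $F_N$ up to inner automorphisms in the strong sense required here).
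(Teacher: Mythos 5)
Your overall skeleton is the paper's: you take $\Uun_H$ to be the tree of cylinders of some $S\in\cald_H$ for equality of $\tilde H$-edge stabilizers, you get item~(2) from Lemmas~\ref{lemma:trivial-on-cylinders} and~\ref{lemma:stab-cylinder}, and item~(1) along the same lines (each $V^0$-stabilizer is a vertex stabilizer of $S$, elliptic in all of $\FS^H$ by definition of $\cald_H$, and maximal because $S$ itself is an $H$-invariant free splitting). But the step you flag as the main obstacle — the converse of item~(3) — is where the real content lies, and your proposed route does not work as stated: domination of $T$ by $S$ (an equivariant map $S\to T$) does \emph{not} mean $T$ is obtained from $S$ by collapses and blow-ups, and a priori it is not even clear that $T$ is compatible with $\Uun_H$; proving that is exactly the point. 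The paper settles it by invoking \cite[Proposition~8.1]{GL-cyl}: given the $\tilde H$-equivariant map $S\to T$, the tree of cylinders $\Uun_H$ is compatible with $T$, and the proof of that proposition produces a common refinement $\hat U$ obtained by blowing up the $V^1$-vertices of $\Uun_H$ (using the corresponding cylinders seen in $T$) so that collapsing the edges coming from $\Uun_H$ recovers $T$ — which is precisely the description item~(3) requires. Without this compatibility statement (or a substitute), your argument stops at the gap you identify.

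Two further points. For item~(4), your parenthetical justification of properness ("the incident edges genuinely separate $v$ from the rest") is not an argument: properness amounts to showing that $G_v=G_{(Y)}$ fixes no point of the cylinder $Y$. The paper proves this by contradiction using minimality of $\Uun_H$: if $G_{(Y)}$ fixed $x_0\in Y$, then since all incident edge groups are non-trivial and a non-trivial subgroup fixes a unique point of the free splitting $S$, every edge of $\Uun_H$ incident on $v$ would correspond to the single pair $(x_0,Y)$, giving $v$ valence one, contradicting minimality. Finally, in your non-triviality argument the claim $\tilde H=\tilde H_Y$ when $Y=S$ is false (the inner automorphisms act non-trivially on $S$, so $F_N\not\subseteq\tilde H_Y$), and consequently the deduction $F_N=\{1\}$ collapses; your fallback is essentially the paper's argument, but it must read $\Fix_{F_N}(\tilde H_Y)=F_N$ (from Lemma~\ref{lemma:stab-cylinder} together with $G_{(Y)}=F_N$), whence $\tilde H_Y=\{\id\}$ and $H\simeq\tilde H_Y$ is trivial, contradicting the hypothesis that $H$ is infinite.
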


\begin{rk}
  In Assertion \ref{it_carac}, one allows to blow up $v\in V^1$ using a non-minimal splitting of $G_v$ to get all $H$-invariant free splittings; on the other hand, this is only necessary when
 some edge incident on $v$ has trivial stabilizer.
\end{rk}

\begin{proof}
  Let $S$ be any $\tilde H$-tree in $\cald_H$. Then $\Uun_H$ is the tree of cylinders of $S$.
  For each vertex $v_x\in V^0$ corresponding to a vertex  $x\in S$, one has $\tilde H_{v_x}=\tilde H_x$ and $G_{v_x}=G_x$.
  Since $x$ lies in two distinct cylinders, $G_{x}\neq \{1\}$. By definition of  $\cald_H$, the subgroup $G_x$ is elliptic in all $H$-invariant splittings,
  and is maximal for this property because $F_N\actson S$ is an $H$-invariant splitting of $F_N$.
  This proves the first assertion.

  The second assertion follows from Lemmas \ref{lemma:trivial-on-cylinders} and \ref{lemma:stab-cylinder} noting that
  if $v\in V^1$ corresponds to a cylinder $Y\subset S$, then $\tilde H_v=\tilde H_{\{Y\}}$ is the setwise stabilizer of $Y$.

  To prove the third assertion, given a vertex $v\in V^1$, consider a free splitting $Z$ of $G_v$ relative to $\Inc_v$ (we do not assume that it is invariant under any group of automorphisms,
  and we allow it to  be non-minimal).
  Writing $\tilde H_v=G_v\times \tilde H_Y$, the action of $G_v$ on $Z$ naturally extends to an action of $\tilde H_v$ where $\tilde H_Y$ acts trivially.
  For each edge $e$ incident on $v$, there exists a point $p_e\in Z$ fixed by $\tilde H_e=\tilde H_Y\times G_e$ (not unique if $G_e$ is trivial).  Doing this in each orbit of vertices in $V^1$,
  this allows to blow up $\Uun_H$ into an $\tilde H$-tree
  $\hat{U}^1_H$, and collapsing all edges of $\hat{U}^1_H$ coming from $\Uun_H$ yields an $\tilde H$-tree $T$ (which is minimal if $Z$ is the convex hull
  of the orbits of the attaching points $p_e$). Edges of $T$ have trivial
  $F_N$-stabilizer, so $F_N\actson T$ is an $H$-invariant free splitting. 

Conversely, consider any $H$-invariant free splitting $T$ of $F_N$, and view it as an $\tilde H$-tree.
  By maximality, there is an $\tilde H$-equivariant map $S\ra T$. By \cite[Proposition~8.1]{GL-cyl}, $\Uun_H$ is compatible with $T$,
  and the proof actually shows that one can construct a common refinement $\hat U$ by blowing up vertices in $V^1$ (using the corresponding cylinder in $T$),
  and that one obtains $T$ from $\hat U$ by collapsing
  all edges coming from $\Uun_H$.
This concludes the proof of Assertion \ref{it_carac}.

 We now prove Assertion~\ref{it_factor_system}: assuming that all groups in $\Inc_v$ are non-trivial, we will prove that $\Inc_v$ is a free factor system of $G_v$.
   We use the fact that $G_v=G_{\{Y\}}$ is the setwise stabilizer of $Y$, and incident
  edge stabilizers are the intersections $G_{\{Y\}}\cap G_x$ where $x\in Y$ is a point lying in at least two cylinders.  
  Since $G_{\{Y\}}\actson Y$ has trivial edge stabilizers, it follows that either $\Inc_v=\{[G_{\{Y\}}]\}$ or $\Inc_v$ is a free factor system of $G_v=G_{\{Y\}}$.
  To prove Assertion~\ref{it_factor_system}, it thus suffices to prove that $G_{\{Y\}}$ does not fix a point in $Y$.
  So assume that $G_{\{Y\}}$ fixes a point $x_0\in Y$. Recall that each edge $\eps$ of $\Uun_H$ incident on $v$ is corresponds to a pair $(x,Y)$ with $x\in Y$
  lying in at least two cylinders. 
  Since by assumption, all groups in $\Inc_v$ are non-trivial, $G_\eps$ fixes no other point apart from $x_0$ so $x=x_0$. 
  It follows that there is a single edge in $\Uun_H$ incident on $v$, contradicting the minimality of $\Uun_H$.

  Finally, assume that $\Uun_H$ is reduced to a point $v$. If $v\in V^0$, then $G_v=F_N$ is elliptic in every $H$-invariant free splitting,
  so $\FS^H=\es$.
  If $v\in V^1$, then by Assertion (2), $\tilde H_v=G_v\times \tilde H_Y=F_N\times \tilde H_Y$ and $F_N=\Fix_{F_N}(\tilde H_Y)$  i.e.\ $\tilde H_Y=\{\id\}$.
  Since $\tilde H_Y\simeq H$, $H$ is trivial.
\end{proof}

\subsection{The canonical splitting $\Uun_H$ is biflexible}\label{sec:UH1_biflexible}

\begin{de}\label{de:biflexible}
Let $A$ be a group, and $\calp$ a finite collection of conjugacy classes of subgroups of $A$.
  We say that $(A,\calp)$ is \emph{flexible} if $\Out(A,\calp^{(t)})$ is infinite.

A splitting $S$ of $F_N$ is \emph{biflexible} if all its edge stabilizers are finitely generated non-abelian, and $S$ has
  two vertices $v_1,v_2$ in distinct $F_N$-orbits such that for every $i\in\{1,2\}$, the pair $(G_{v_i},\Inc_{v_i})$ is flexible.
\end{de}

\begin{cor}\label{cor:uh1-biflexible}
 Let $H\subseteq\IA$ be a subgroup. If $H$ is infinite and $\FS^H\neq\es$  
 then, as an $F_N$-tree, either $\Uun_H$ has an edge with trivial or maximal-cyclic stabilizer,
or $\Uun_H$ is biflexible.
\end{cor}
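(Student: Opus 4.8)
\textbf{Proof strategy for Corollary~\ref{cor:uh1-biflexible}.}

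The plan is to work with $\Uun_H$ as an $F_N$-tree and analyze the possible types of its edge stabilizers and vertex groups, using the structural information from Theorem~\ref{theo:tree-of-cyl}. Suppose that $\Uun_H$ has no edge with trivial stabilizer and no edge with maximal-cyclic stabilizer; we must then produce two vertices in distinct $F_N$-orbits whose vertex pairs $(G_v,\Inc_v)$ are flexible. First I would record that every edge of $\Uun_H$ is incident to at least one vertex in $V^1$ (since $\Uun_H$ is bipartite with parts $V^0 \dunion V^1$), and that under our assumption every edge stabilizer $G_e$ is non-trivial and non-abelian; in particular, by Assertion~\ref{it_factor_system} of Theorem~\ref{theo:tree-of-cyl}, for every $v\in V^1$ the collection $\Inc_v$ is a proper free factor system of $G_v$. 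We also need that edge stabilizers are finitely generated: this follows because $G_e \subseteq G_v$ for $v \in V^0$ with $G_v$ a free factor of $F_N$ (hence finitely generated), and a finitely generated subgroup of a free group that is an edge group here is itself a vertex stabilizer of a splitting of a free factor, so finitely generated. Actually more directly, each $G_e$ is contained in a conjugate of some free factor of $F_N$, hence finitely generated.

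The key point is to exhibit flexibility of $(G_v,\Inc_v)$ for the $V^1$-vertices. Fix $v\in V^1$ with associated cylinder $Y \subseteq S$, so by Assertion~\ref{it_V1}, $\tilde H_v = G_v \times \tilde H_Y$ with $\tilde H_Y$ mapping isomorphically onto $H$ and $G_v = \Fix_{F_N}(\tilde H_Y)$. Since $H$ is infinite and $\tilde H_Y \cong H$, the group $\tilde H_Y$ provides infinitely many automorphisms of $F_N$ each fixing $G_v$ pointwise and preserving the $\tilde H_v$-action on the star of $v$; restricting these to $G_v$ we get automorphisms of $G_v$ that are the identity, so that is not directly what we want. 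Instead, the relevant observation — and this is the heart of the argument — is that the $H$-action realizes $\tilde H_Y$ as acting on the incident edges: each incident edge group $\tilde H_e = \tilde H_Y \times G_e$ (Remark~\ref{rk_fix} and Lemma~\ref{lemma:stab-cylinder}), so every element of $\tilde H_Y$, acting on $F_N$, preserves each incident edge group $G_e$ up to nothing — it fixes $G_e$ pointwise as well. So the flexibility must come from elsewhere: it comes from the fact that $H$ (equivalently $\tilde H_Y$) acts nontrivially on $F_N$, hence acts nontrivially on the ambient group, and via the exact sequence of Section~\ref{sec_exact} (Proposition~\ref{prop_suite_exacte}, Lemma~\ref{lem_produit}) applied to the splitting $\Uun_H$, infiniteness of $H \subseteq \Stab^0(\Uun_H)$ forces either a nontrivial group of twists (impossible since edge stabilizers are non-abelian, hence have trivial centralizer, so $\Tw$ is trivial) or an infinite image of the restriction map $\rho$ in $\prod_{v} \Out(G_v,\Inc_v)$. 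Since $\Tw$ is trivial, $H$ embeds into $\prod_{v \in \Uun_H/F_N}\Out(G_v,\Inc_v)$, and by Remark~\ref{rk_normalizer} the image lands in $\prod_v \Out(G_v,\Inc_v^{(t)})$.

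So I would argue: since $H$ is infinite and embeds in $\prod_{v}\Out(G_v,\Inc_v^{(t)})$, at least one factor $\Out(G_{v_0},\Inc_{v_0}^{(t)})$ is infinite, i.e.\ $(G_{v_0},\Inc_{v_0})$ is flexible. The vertex $v_0$ must lie in $V^1$: indeed for $v\in V^0$, $G_v$ is a free factor of $F_N$ elliptic in all $H$-invariant free splittings, and one checks (using that $\Inc_v$ records the incident edge groups, which are themselves elliptic everywhere) that $\Out(G_v,\Inc_v^{(t)})$ restricted through $H$ is finite — here is where I would need to think carefully, perhaps instead deducing directly that the $V^1$-vertex adjacent to such an edge is where the action is concentrated. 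To get a \emph{second} flexible vertex in a distinct $F_N$-orbit, I would use that $\Uun_H$ cannot be a star centered at a single $V^1$-orbit in a way that collapses the count: if there were only one orbit of flexible vertex, consider the collapse of $\Uun_H$ obtained by collapsing everything else; the infiniteness of $H$ would then already be ``used up'' on that single vertex group, but one then exploits minimality of $\Uun_H$ and the bipartite structure — $\Uun_H$ minimal with at least one edge has either $\geq 2$ orbits of $V^1$-vertices, or exactly one $V^1$-orbit and then by Assertion~(1) the $V^0$-vertices carry the ambient structure, and a direct computation with the exact sequence shows a second flexible vertex must appear or else $\Uun_H$ has an edge of trivial or maximal-cyclic stabilizer, contradicting our standing assumption. \textbf{The main obstacle} I anticipate is precisely this last counting step: ruling out the degenerate case where only one $F_N$-orbit of vertices is flexible, which requires carefully combining the exact sequence of Proposition~\ref{prop_suite_exacte}, the bipartite minimality of the tree of cylinders, and the structure of $\Inc_v$ for $V^1$-vertices given by Assertion~\ref{it_factor_system}; everything else is bookkeeping about edge-stabilizer types.
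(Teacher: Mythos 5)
There is a genuine gap, and it lies exactly where you flag your ``main obstacle'': you have inverted which class of vertices gets its flexibility from $H$ and which gets it for free, and as a result you are left trying to conjure a second flexible vertex by a counting argument that the correct proof never needs. The paper's argument is: (a) every $v\in V^1$ is \emph{automatically} flexible, with no use of $H$ at all — by Theorem~\ref{theo:tree-of-cyl}\ref{it_factor_system} the collection $\Inc_v$ is a proper free factor system of $G_v$ whose factors are non-abelian (our standing assumption on edge stabilizers), so the group of twists of a Grushko decomposition of $G_v$ relative to $\Inc_v$ is already an infinite subgroup of $\Out(G_v,\Inc_v^{(t)})$; (b) the infinite group $H$ is then used to produce a flexible vertex in $V^0$, not $V^1$: since the group of twists of $\Uun_H$ is trivial, $H^0$ embeds via the restriction map of Proposition~\ref{prop_suite_exacte}, and — this is the step you get backwards — because $H$ acts \emph{trivially} on every $V^1$-vertex group by Theorem~\ref{theo:tree-of-cyl}\ref{it_V1} (and hence trivially on edge groups, so Remark~\ref{rk_normalizer} applies, each edge group being its own normalizer), the image of $H^0$ lands in $\prod_{v\in \bar V^0}\Out(G_v,\Inc_v^{(t)})$. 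Infiniteness of $H$ then forces some $V^0$-vertex to be flexible. Since the tree is bipartite, the flexible $V^0$-vertex and any $V^1$-vertex are in distinct $F_N$-orbits, and biflexibility follows immediately; there is no degenerate ``only one flexible orbit'' case to rule out.

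In your write-up you correctly observe that $\tilde H_Y$ fixes $G_v$ pointwise for $v\in V^1$, so the $H$-action cannot certify flexibility there — but instead of concluding that the $H$-image of the restriction map must therefore be supported on the $V^0$-factors, you assert that the flexible vertex $v_0$ detected by $H$ ``must lie in $V^1$'' and leave the $V^0$ side as something to ``think carefully'' about; this is precisely the wrong way around, and it is what creates the unresolved second-vertex problem. You also never use the observation you yourself record (that $\Inc_v$ is a proper free factor system of $G_v$ with non-abelian factors) to deduce that $\Out(G_v,\Inc_v^{(t)})$ is infinite for $v\in V^1$, which is the one-line fact that makes the whole argument close. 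A further small omission: reducing to ``all edge stabilizers non-trivial and non-abelian'' needs the remark that edge stabilizers of $\Uun_H$ are root-closed (because vertex stabilizers are, by Assertions~(1) and~(2) of Theorem~\ref{theo:tree-of-cyl}), so a cyclic edge stabilizer would indeed be maximal-cyclic and hence excluded by hypothesis.
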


\begin{proof}
  We know that $\Uun_H$ is a non-trivial splitting. We view it as an $F_N$-tree.
  Note that edge stabilizers of $\Uun_H$ are root-closed because vertex stabilizers are by Assertions~(1) and~(2) of Theorem~\ref{theo:tree-of-cyl}.  
  Additionally, $F_N$-stabilizers of vertices in $V^0$ are finitely generated because they are free factors (Assertion~(1) of Theorem~\ref{theo:tree-of-cyl}) and so are $F_N$-stabilizers of vertices in $V^1$ because they arise as the fixed subgroups of a set of automorphisms of $F_N$ (Assertion~(2) of Theorem~\ref{theo:tree-of-cyl}), and these are finitely generated by \cite[Corollary 4.5.8]{DV}. It follows that edge stabilizers are finitely generated by the Howson property.
  
  Denote by $\bar V^0,\bar V^1$ the images of $V^0,V^1$ in $\Uun_H/F_N$.
  Assume that all $F_N$-stabilizers of edges of $\Uun_H$ are non-cyclic  (in particular non-trivial).
  Consider a vertex $v\in \bar V^1$.  By Theorem~\ref{theo:tree-of-cyl}\ref{it_factor_system}, the collection $\Inc_v$ forms a 
proper free factor system of $G_v$, so
  $\Out(G_v,\Inc_v^{(t)})$ is infinite;
  indeed the group of twists of any Grushko decomposition
  of $G_v$ relative to $\Inc_v$ is infinite (and non-abelian) because $\Inc_v$ contains a non-abelian free group
  (see Section \ref{sec_twists} or \cite[Proposition 3.1]{Lev}).
  Thus, we are done if $\#\bar V^1\geq 2$.

  To conclude, we claim that there is at least one vertex $v\in \bar V^0$ such that $\Out(G_v,\Inc_v^{(t)})$ is infinite.
Let $H^0\subset H$ be the finite index subgroup of $H$ acting trivially on the quotient graph $\Uun_H/F_N$.
Since the group of twists of $\Uun_H$ is trivial,
the natural homomorphism
$$\rho:H^0\to\prod_{v\in \bar V}\Out(G_v,\Inc_v)$$
is injective (Proposition~\ref{prop_suite_exacte}).
By Theorem~\ref{theo:tree-of-cyl}\ref{it_V1}, $H$ acts trivially on each vertex group $G_v$ for $v\in\bar V^1$,
so $\rho$ takes its values in $\prod_{v\in \bar V^0}\Out(G_v,\Inc_v)$.
This also implies that $H$ acts trivially on each edge group,
and that $\rho$ takes values in $\prod_{v\in \bar V^0}\Out(G_v,\Inc_v^{(t)})$ (see Remark \ref{rk_normalizer}; each edge stabilizer is its own normalizer
as an intersection of a fixed subgroup and a free factor).  Since $H$ is infinite, there is at least one vertex $v\in \bar V^0$ such that $\Out(G_v,\Inc_v^{(t)})$ is infinite.
\end{proof}

\subsection{A bounded chain condition}\label{sec:free-chain}

\begin{prop}\label{prop:chain-FS}
 There is a bound, depending only on the rank $N$ of the free group,
on the length of a chain of subgroups  $H^1\subset\dots \subset H^r$ of $\IA$
  such that $$\FS^{H^1}\supsetneqq \dots \supsetneqq \FS^{H^r}.$$
\end{prop}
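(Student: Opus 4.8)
The idea is to exploit the canonical splitting $\Uun_H$ together with the chain conditions that are already available for free factor systems and for the number of orbits of edges of splittings with controlled edge stabilizers. Recall from Theorem~\ref{theo:tree-of-cyl} that $\Uun_H$ is built as the tree of cylinders of a maximum $H$-invariant free splitting, and that by Assertion~\ref{it_carac} of that theorem the collection $\FS^H$ is completely determined by $\Uun_H$ together with its bipartition $V^0\dunion V^1$: one recovers $\FS^H$ by blowing up vertices of $V^1$ and collapsing old edges. Moreover $\Uun_{H}$ depends $\Out(F_N)$-equivariantly, and in particular monotonically, on $H$: if $H\subseteq H'$ then every $H'$-invariant free splitting is $H$-invariant, so $\FS^{H'}\subseteq\FS^H$, and the maximum $H'$-invariant free splitting is dominated (as $F_N$-tree) by the maximum $H$-invariant one. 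Thus a strictly decreasing chain $\FS^{H^1}\supsetneqq\dots\supsetneqq\FS^{H^r}$ forces the associated canonical splittings $\Uun_{H^i}$ (as $F_N$-trees, together with their vertex bipartitions) to be pairwise distinct, since equal canonical splittings with equal bipartitions would yield equal collections $\FS^{H^i}$.

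The next step is to bound how long such a chain of canonical splittings can be. First, one bounds the number of orbits of edges of $\Uun_{H^i}$: edge stabilizers of $\Uun_H$ as an $F_N$-tree are root-closed (vertex stabilizers are free factors or intersections of free factors with fixed subgroups), and either some edge has trivial stabilizer, or some edge has maximal cyclic stabilizer, or $\Uun_H$ is biflexible (Corollary~\ref{cor:uh1-biflexible}). In each of these three regimes there is a bound depending only on $N$ on the number of orbits of edges — for free splittings this is the rank bound ($\le 3N-3$ edges after removing valence-$2$ vertices), for $\Zmax$-splittings this is Lemma~\ref{lem_bound_zmax}, and for splittings with finitely generated non-abelian edge stabilizers it follows from accessibility (Bestvina--Feighn, as quoted for Lemma~\ref{lem_bound_zmax}). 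More precisely, for each $i$ one collapses $\Uun_{H^i}$ onto an appropriate sub-splitting of bounded complexity and argues on that. Second, and more to the point, one shows that along the chain the canonical splittings are ordered by collapse up to a bounded number of ``jumps'': concretely, the free factor system $\calf_i$ determined by the $V^0$-vertices of $\Uun_{H^i}$ (the maximal subgroups elliptic in all $H^i$-invariant free splittings, by Theorem~\ref{theo:tree-of-cyl}(1)) satisfies $\calf_i\preceq\calf_{i+1}$, because an $H^{i+1}$-invariant free splitting is $H^i$-invariant and hence every $H^i$-universally-elliptic subgroup is $H^{i+1}$-universally-elliptic. Since chains of free factor systems have length bounded in terms of $N$, the sequence $\calf_1\preceq\dots\preceq\calf_r$ can only strictly increase boundedly often; between two consecutive strict increases the factor system, hence the set of $V^0$ vertex groups, is constant, and one must then control how many distinct $\Uun_{H^i}$ can share the same $V^0$-part.

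For that last control: when $\calf_i=\calf_{i+1}$ the trees $\Uun_{H^i}$ and $\Uun_{H^{i+1}}$ have the same $V^0$-vertex stabilizers, hence the same $F_N$-stabilizers of the ``big'' vertices, and differ only in how the $V^1$-part and the edges are arranged. One observes that $\FS^{H^i}\supseteq\FS^{H^{i+1}}$ together with equality of the $V^0$-part forces $\Uun_{H^{i+1}}$ to be obtained from $\Uun_{H^i}$ by a blow-up of $V^1$-vertices followed by a collapse (using Assertion~\ref{it_carac}), so that $\Uun_{H^{i}}$ is dominated by $\Uun_{H^{i+1}}$ in a controlled way; since edge stabilizers are root-closed and the trees have boundedly many orbits of edges, only boundedly many distinct such splittings can occur in a monotone sequence. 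Assembling the three bounds (number of strict increases of $\calf_i$; number of distinct canonical splittings with a fixed $V^0$-part; all in terms of $N$ only) yields the desired uniform bound on $r$. The main obstacle, I expect, is precisely this last combinatorial bookkeeping — turning ``the canonical splittings along the chain are monotone and of bounded complexity'' into an honest finite bound — together with checking that distinct collections $\FS^{H^i}$ really do produce distinct pairs $(\Uun_{H^i},\,V^0\dunion V^1)$, i.e.\ that the correspondence of Theorem~\ref{theo:tree-of-cyl}\ref{it_carac} is injective in the relevant sense.
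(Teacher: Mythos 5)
There is a genuine gap, and it is exactly at the step you yourself flag as "combinatorial bookkeeping": the claim that, once the free factor system $\calf_i$ read off the $V^0$-vertices stabilizes, "only boundedly many distinct such splittings can occur in a monotone sequence." Nothing in your argument proves this. Having boundedly many orbits of edges and root-closed edge stabilizers does not bound the number of possible splittings: already among one-edge splittings of $F_N$ there are infinitely many, and along your chain the $V^1$-vertex groups are of the form $\Fix_{F_N}(\tilde H_Y)$ (Theorem~\ref{theo:tree-of-cyl}\ref{it_V1}), i.e.\ fixed subgroups of sets of automorphisms, which a priori range over an infinite family even when the $V^0$-part is frozen. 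Moreover the "monotonicity" you invoke is not established: Assertion~\ref{it_carac} of Theorem~\ref{theo:tree-of-cyl} tells you how to recover the \emph{free} splittings in $\FS^{H^i}$ from $\Uun_{H^i}$, but it does not say that $\Uun_{H^{i+1}}$ (which is not a free splitting) is a blow-up-then-collapse of $\Uun_{H^i}$, nor that the $V^1$-data is nested along the chain. A secondary problem is the appeal to Bestvina--Feighn accessibility to bound the number of edge orbits in the biflexible case: that accessibility result requires small edge groups, and the edge stabilizers of $\Uun_H$ are typically non-abelian; in the paper the edge bound is not obtained this way at all.

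The paper's proof supplies precisely the two ingredients your argument lacks. First, it uses the bound on domination chains of deformation spaces of free splittings to arrange that all the maximal $H^i$-invariant free splittings lie in a single deformation space, and then takes one tree $S$ (a maximal invariant splitting for every $H^i$ simultaneously) and realizes each $\Uun_{H^i}$ as the tree of cylinders of this same $F_N$-tree $S$, viewed as an $\tilde H^i$-tree. Second, with a fixed edge $e\subseteq S$, the cylinders $Y_i$ of $e$ are nested ($Y_1\supseteq\dots\supseteq Y_r$, because $\tilde H^i_{e'}=\tilde H^j_{e'}\cap\tilde H^i$ for $i\le j$), their global stabilizers are $G_{(Y_i)}=\Fix_{F_N}(\tilde H^i_e)$ by Lemma~\ref{lemma:stab-cylinder}, and the Martino--Ventura bounded chain condition for auto-fixed subgroups then shows that $G_{(Y_i)}$, hence $Y_i$, hence $\Uun_{H^i}$, takes boundedly many values; recoverability of $\FS^{H^i}$ from $\Uun_{H^i}$ concludes. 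Your plan never reduces to a common tree, so the fixed subgroups are not nested and no chain condition applies; without that input (or some substitute for it), the bound you need in the constant-$\calf_i$ stretches does not follow.
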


\begin{proof}
  Choose $S_i\in\cald_{H^i}$ a maximal $H^i$-invariant free splitting of $F_N$.
  Since for $i\leq j$,  $S_j$ is $H^{i}$-invariant, $S_i$ dominates $S_j$.
  Since there is a bound on domination chains of deformation spaces of free splittings,
  we may assume that the trees $S_1,\dots,S_r$ all belong to the same deformation space of $F_N$-trees.

  Denote $S=S_r$.
  For all $i\leq r$, $S$ is a maximal $H^i$-invariant free splitting.
  Viewing $S$ as an $\tilde H^i$-tree,  let $\Uun_{H^i}$ be the tree of cylinders of $S$. 
  We fix an edge $e\subseteq S$,  and for every $i\le r$, we let $Y_i\subseteq S$ be the cylinder of $e$ when $S$ is viewed as an $\tilde H^i$-tree. 
  In other words $Y_i$ is the union of all edges $e'\subset S$ such that $\tilde H^i_{e'}=\tilde H^i_{e}$.
  Since for $i\leq j$, $\tilde H^i_{e'}=\tilde H^j_{e'}\cap \tilde H^i$, one has $Y_1\supseteq Y_2\supseteq \dots \supseteq Y_r$.
  
  We claim that $Y_i$ takes boundedly many values.
  Indeed, by Lemma~\ref{lemma:stab-cylinder}, we have $G_{\{Y_i\}}=\Fix_{F_N}(\tilde H^i_e)$, so by the bounded chain condition of auto-fixed groups \cite{MV}, the group
  $G_{\{Y_i\}}$ takes boundedly
  many values. Since $Y_i\cap (F_N\cdot e)=G_{\{Y_i\}}\cdot e$, and  since the number of edges of $S/F_N$ is finite (in fact bounded, with a bound only depending on $N$), it follows that $Y_i$ takes boundedly many values (with a bound only depending on $N$). 

  Using again the bound on the number of  $F_N$-orbits of edges of $S$, 
  we conclude that  $\Uun_{H^i}$ takes boundedly many values, as an $F_N$-tree.
Since $\FS^{H^i}$ can be recovered from $\Uun_{H^i}$ by Theorem~\ref{theo:tree-of-cyl}\ref{it_carac}, $\FS^{H^i}$ takes boundedly many values.
\end{proof}

\subsection{The stabilizer of a collection of free splittings}\label{sec:free-stabilizer}

We conclude this section by applying  the previous results to study the elementwise stabilizer $\Gamma_\calc$ in $\IA$ of any collection $\calc$ of non-trivial free splittings.
We denote by $\hat \calc=\FS^{\Gamma_\calc}$ the collection of all $\Gamma_\calc$-invariant non-trivial free splittings. Notice that $\calc\subseteq \hat\calc$,
and $\Gamma_\calc=\Gamma_{\hat\calc}$.

Proposition \ref{prop:chain-FS} can be reformulated as follows.

\begin{prop}\label{prop:chain-FS2}
  There is a bounded chain condition on the set of elementwise stabilizers (in either $\IA$ or $\Out(F_N)$) of collections of free splittings.
\end{prop}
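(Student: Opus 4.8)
The plan is to deduce Proposition~\ref{prop:chain-FS2} directly from Proposition~\ref{prop:chain-FS}, translating a chain of elementwise stabilizers into a chain of collections of invariant free splittings. First I would fix a chain $\Gamma_{\calc_1}\subsetneq\Gamma_{\calc_2}\subsetneq\dots\subsetneq\Gamma_{\calc_r}$ of subgroups of $\IA$, each of which arises as the elementwise stabilizer in $\IA$ of some collection of free splittings. Relabelling $H^i:=\Gamma_{\calc_{r+1-i}}$ reverses the inclusions so that $H^1\subsetneq H^2\subsetneq\dots\subsetneq H^r$. The key observation is that for each $i$, the collection $\FS^{H^i}$ of all $H^i$-invariant free splittings contains $\calc_{r+1-i}$ and satisfies $\Gamma_{\FS^{H^i}}=H^i$ (an elementwise stabilizer of a collection of free splittings is its own "re-saturation"; this is the remark made just before Proposition~\ref{prop:chain-FS2}, namely $\Gamma_\calc=\Gamma_{\hat\calc}$ with $\hat\calc=\FS^{\Gamma_\calc}$). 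Hence the assignment $H\mapsto\FS^H$ is injective on the set of elementwise stabilizers: if $\FS^{H^i}=\FS^{H^j}$ then $H^i=\Gamma_{\FS^{H^i}}=\Gamma_{\FS^{H^j}}=H^j$.

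Next I would check that the chain $H^1\subsetneq\dots\subsetneq H^r$ forces a strictly decreasing chain of collections after passing to a suitable subchain. Since $H^i\subseteq H^j$ for $i\le j$, any $H^j$-invariant free splitting is $H^i$-invariant, so $\FS^{H^1}\supseteq\FS^{H^2}\supseteq\dots\supseteq\FS^{H^r}$. If two consecutive terms coincide, $\FS^{H^i}=\FS^{H^{i+1}}$, then by the injectivity just established $H^i=H^{i+1}$, contradicting strictness of the original chain. Therefore all inclusions are strict: $\FS^{H^1}\supsetneqq\FS^{H^2}\supsetneqq\dots\supsetneqq\FS^{H^r}$. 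Proposition~\ref{prop:chain-FS} now bounds $r$ by a constant depending only on $N$, which is exactly the bounded chain condition for elementwise stabilizers in $\IA$.

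Finally, to obtain the statement for $\Out(F_N)$ I would use that $\IA$ has finite index in $\Out(F_N)$. Given a chain of elementwise stabilizers $\Lambda_\calc$ in $\Out(F_N)$, intersect each with $\IA$: the map $\Lambda\mapsto\Lambda\cap\IA$ sends elementwise stabilizers in $\Out(F_N)$ to elementwise stabilizers in $\IA$ (the elementwise stabilizer in $\IA$ of $\calc$ is $\Lambda_\calc\cap\IA$), and since $[\Lambda_\calc:\Lambda_\calc\cap\IA]$ is bounded by $[\Out(F_N):\IA]$, a strictly increasing chain of length $r$ in $\Out(F_N)$ yields a chain in $\IA$ in which each inclusion $\Lambda_{\calc_i}\cap\IA\subseteq\Lambda_{\calc_{i+1}}\cap\IA$ is strict for all but boundedly many indices (any block of $1+[\Out(F_N):\IA]$ consecutive strict inclusions in $\Out(F_N)$ contains at least one strict inclusion after intersecting with $\IA$). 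Combining this with the bound just proved for $\IA$ gives a bound depending only on $N$ for chains in $\Out(F_N)$.

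The only genuinely delicate point is the claim $\Gamma_\calc=\Gamma_{\hat\calc}$, i.e.\ that an elementwise stabilizer equals the elementwise stabilizer of the full collection of free splittings it preserves; but this is immediate from the definitions, since $\calc\subseteq\hat\calc=\FS^{\Gamma_\calc}$ gives $\Gamma_{\hat\calc}\subseteq\Gamma_\calc$, while every element of $\Gamma_\calc$ fixes every splitting in $\FS^{\Gamma_\calc}$ by definition, giving the reverse inclusion. Thus there is no real obstacle: the proposition is a formal reindexing of Proposition~\ref{prop:chain-FS} together with the finite-index bookkeeping between $\IA$ and $\Out(F_N)$.
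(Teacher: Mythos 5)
Your argument is correct and is essentially the paper's own proof: you convert a strict chain of elementwise stabilizers into a strictly decreasing chain of collections $\FS^{H}$ (strictness coming from the observation $\Gamma_\calc=\Gamma_{\hat\calc}$), apply Proposition~\ref{prop:chain-FS}, and then pass from $\IA$ to $\Out(F_N)$ by noting that a chain of subgroups of $\Out(F_N)$ with a common intersection with $\IA$ has length bounded in terms of $[\Out(F_N):\IA]$. The only blemish is notational: the relabelling $H^i:=\Gamma_{\calc_{r+1-i}}$ actually reverses the chain (it becomes decreasing in $i$), but this is harmless since the original chain already has the orientation needed for Proposition~\ref{prop:chain-FS} and the rest of your argument uses the increasing convention correctly.
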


\begin{proof}
We first work in $\IA$. If $\Gamma_{\calc_1}\subsetneqq\dots \subsetneqq \Gamma_{\calc_r}$,
  then $\FS^{\Gamma_{\calc_1}}\supsetneqq\dots \supsetneqq \FS^{\Gamma_{\calc_r}}$,
  so Proposition \ref{prop:chain-FS} concludes.
  
 The chain condition for elementwise stabilizers in $\Out(F_N)$ follows from the chain condition for elementwise stabilizers in $\IA$, since the length of a chain of subgroups of $\Out(F_N)$ having the same given intersection with $\IA$ is bounded by the index $[\Out(F_N):\IA]$.   
\end{proof}

Now let $\Gamma_{\!\{\hat\calc\}}$ be the setwise stabilizer of $\hat\calc$ in $\IA$. 
Our next result allows to recover $\Gamma_\calc$ and $\Gamma_{\!\{\hat\calc\}}$ from $\Uun_{\Gamma_\calc}$. Before we state it, we start with a definition.

\begin{de}\label{dfn_V0-rigid}
Let $H\subseteq\IA$ be a subgroup. The \emph{$V^1$-rigid stabilizer of $\Uun_H$}
is the group of all outer automorphisms $\alpha\in\IA$ such that for every $v\in V^1(\Uun_H)$, there exists a lift $\tilde\alpha$ of $\alpha$ to $\Aut(F_N)$ 
and an $\tilde\alpha$-equivariant isometry $I_{\tilde\alpha}:\Uun_H\ra \Uun_H$ acting as the identity on the star of $v$. 
\end{de}

\begin{prop}\label{prop:stab-u1}
Let $\calc$ be any collection of non-trivial free splittings, and view $\Uun_{\Gamma_\calc}$ as an $F_N$-tree. 

The normalizer in $\IA$ of $\Gamma_\calc$ is $\Gamma_{\!\{\hat\calc\}}$. Moreover,
  \begin{itemize}
  \item $\Gamma_{\!\{\hat\calc\}}$ is the stabilizer of $\Uun_{\Gamma_\calc}$ in $\IA$;
  \item $\Gamma_\calc$ is the $V^1$-rigid stabilizer of $\Uun_{\Gamma_\calc}$.
  \end{itemize}
\end{prop}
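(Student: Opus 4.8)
The plan is to prove the three claims in turn, moving from the most concrete to the most subtle, and relying heavily on Theorem~\ref{theo:tree-of-cyl}, Proposition~\ref{prop:deformation}, Lemma~\ref{lemma:stab-cylinder}, and the exact sequence of Proposition~\ref{prop_suite_exacte}. Write $U:=\Uun_{\Gamma_\calc}$ and $\Gamma:=\Gamma_\calc$ throughout.

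\medskip
\noindent\textbf{Step 1: the normalizer of $\Gamma_\calc$ is $\Gamma_{\!(\hat\calc)}$.} This is essentially a formal point, already sketched in the paragraph preceding the statement. First I would record that $\hat\calc=\FS^{\Gamma}$ and $\Gamma=\Gamma_{\hat\calc}$, both of which follow from the definitions: a splitting is $\Gamma$-invariant iff it lies in $\FS^\Gamma$, and fixing every element of $\hat\calc\supseteq\calc$ is no stronger than fixing every element of $\calc$ because $\FS^\Gamma$ is by definition the set of $\Gamma$-invariant free splittings. Now if $g\in\IA$ normalizes $\Gamma$, then for any $S\in\hat\calc=\FS^\Gamma$ the splitting $gS$ is $g\Gamma g^{-1}=\Gamma$-invariant, so $gS\in\FS^\Gamma=\hat\calc$; hence $g\in\Gamma_{\!(\hat\calc)}$. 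Conversely, if $g\in\Gamma_{\!(\hat\calc)}$ then $g$ permutes $\hat\calc$, and for $\delta\in\Gamma$ and $S\in\hat\calc$ we have $g\delta g^{-1}S=g\delta(g^{-1}S)=g(g^{-1}S)=S$ since $g^{-1}S\in\hat\calc$ is $\Gamma$-invariant; so $g\Gamma g^{-1}\subseteq\Gamma$, and applying this to $g^{-1}$ as well gives $g\Gamma g^{-1}=\Gamma$. Thus the normalizer of $\Gamma$ in $\IA$ is exactly $\Gamma_{\!(\hat\calc)}$.

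\medskip
\noindent\textbf{Step 2: $\Gamma_{\!(\hat\calc)}=\Stab_{\IA}(U)$.} The inclusion $\subseteq$ is immediate from the $\Out(F_N)$-equivariance of $H\mapsto\Uun_H$ (Theorem~\ref{theo:intro-tree-of-cyl}/Theorem~\ref{theo:tree-of-cyl}): if $g$ globally preserves $\hat\calc=\FS^\Gamma$, i.e.\ $g\Gamma g^{-1}=\Gamma$, then $gU=g\,\Uun_{\Gamma}=\Uun_{g\Gamma g^{-1}}=\Uun_\Gamma=U$. For $\supseteq$, suppose $g\in\IA$ fixes $U$. By Theorem~\ref{theo:tree-of-cyl}\ref{it_carac}, every splitting in $\FS^\Gamma$ is obtained from $U$ by blowing up vertices in $V^1$ and collapsing the old edges, and conversely every such blow-up is $\Gamma$-invariant; since $g$ fixes $U$ (hence permutes the vertices of $V^1$, preserving the bipartition and the incidence data), $g$ permutes the set of splittings obtained this way, i.e.\ $g$ globally preserves $\FS^\Gamma=\hat\calc$. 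Hence $g\in\Gamma_{\!(\hat\calc)}$. Here one should be a little careful to check that a fixed-point of $U$ really does send a blow-up to another blow-up — this uses that fixing $U$ means there is, for every lift $\tilde g$, a $\tilde g$-equivariant isometry of $U$, and the vertex-group and edge-group structure is carried along; the point is that $g$ acts on the vertex groups $G_v$ ($v\in V^1$) and sends the incident edge-group system $\Inc_v$ to $\Inc_{g\cdot v}$, so a free splitting of $G_v$ relative to $\Inc_v$ is carried to one of $G_{g\cdot v}$ relative to $\Inc_{g\cdot v}$.

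\medskip
\noindent\textbf{Step 3: $\Gamma_\calc$ is the $V^1$-rigid stabilizer of $U$.} For the inclusion $\subseteq$: take $\alpha\in\Gamma=\Gamma_\calc\subseteq\Gamma_{\!(\hat\calc)}=\Stab_{\IA}(U)$, and fix $v\in V^1$ corresponding to a cylinder $Y\subseteq S$ for a chosen $S\in\cald_\Gamma$. By Theorem~\ref{theo:tree-of-cyl}\ref{it_V1}, $\tilde H_Y$ maps isomorphically onto $\Gamma$ under $\Aut(F_N)\onto\Out(F_N)$; let $\tilde\alpha\in\tilde H_Y$ be the lift of $\alpha$. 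Since $\tilde\alpha$ fixes $Y$ pointwise (it lies in the pointwise stabilizer of $Y$ in $\tilde H$), the induced equivariant isometry $I_{\tilde\alpha}$ of $U$ — which on the level of the tree of cylinders is determined by the action on cylinders and on branch points — fixes $v_Y$ and every edge incident on it. Indeed an edge of $U$ incident on $v\in V^1$ corresponds to a pair $(x,Y)$ with $x\in Y$ a branch point, and $\tilde\alpha$ fixing $Y$ pointwise fixes $x$ and $Y$, hence fixes that edge of $U$; so $I_{\tilde\alpha}$ is the identity on the star of $v$. As $v\in V^1$ was arbitrary, $\alpha$ lies in the $V^1$-rigid stabilizer. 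For the converse $\supseteq$: let $\alpha\in\IA$ be in the $V^1$-rigid stabilizer. In particular $\alpha\in\Stab_{\IA}(U)=\Gamma_{\!(\hat\calc)}$, so $\alpha$ preserves $\hat\calc$; we must show $\alpha$ \emph{fixes} every $S\in\hat\calc$. Write $S$ as a blow-up of $U$ at vertices of $V^1$ as in Theorem~\ref{theo:tree-of-cyl}\ref{it_carac}; this realizes $\Stab(S)^0$ inside $\Stab(U)^0$, and the restriction to a vertex $v\in V^1$ lands in $\Out(G_v,\Inc_v)$ via the exact sequence of Proposition~\ref{prop_suite_exacte} applied to $U$. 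The hypothesis that for each $v\in V^1$ there is a lift $\tilde\alpha$ with $I_{\tilde\alpha}$ the identity on the star of $v$ says exactly that the restriction of $\alpha$ to $G_v$ (as an element of $\Out(G_v,\Inc_v)$) is trivial. Combined with the fact that $\alpha$ acts trivially on $G_v$ for $v\in V^0$ as well — because those $G_v$ are free factors elliptic in every splitting of $\hat\calc$, and any $\Gamma$-invariant free splitting of $F_N$ is built by only blowing up $V^1$-vertices, so the $V^0$-vertex groups are common refinements-invariant; more precisely one uses that $\alpha$ fixes $U$ and hence fixes $G_v$ for $v\in V^0$ pointwise up to inner automorphism because $G_v=\Fix$ of the cylinder stabilizers is rigid — and with triviality of the group of twists of any free splitting refining $U$ being irrelevant here since the relevant twists of $S$ over the \emph{blown-up} edges are killed by the $V^1$-rigidity, we conclude that $\alpha$ acts as the identity on $S/F_N$ and on all vertex groups of $S$, hence fixes $S$. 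As $S\in\hat\calc$ was arbitrary, $\alpha\in\Gamma_{\hat\calc}=\Gamma_\calc$.

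\medskip
\noindent The main obstacle is the $\supseteq$ direction of Step~3: one has to transform the ``rigid on the star of each $V^1$-vertex'' condition, which is about the tree of cylinders $U$, into a statement that $\alpha$ literally fixes every blow-up $S$. The clean way is to run the exact sequence of Proposition~\ref{prop_suite_exacte} \emph{for the refinement $\hat U$ of $U$ compatible with $S$} constructed in Theorem~\ref{theo:tree-of-cyl}\ref{it_carac}: the vertex groups of $\hat U$ are the $G_v$ ($v\in V^0$), the vertex groups of the blow-ups $Z_v$ ($v\in V^1$) and nothing else, the group of twists of $\hat U$ is generated by twists over the new (free-splitting) edges inside the $Z_v$ and over the old $U$-edges; one checks that $\alpha$'s restriction to every vertex group of $\hat U$ is trivial (for $V^1$-blow-ups by the $V^1$-rigidity hypothesis, for $V^0$-vertices because $\alpha$ fixes $U$ and $G_v$ is its own normalizer-type rigid subgroup), and that the relevant twists are trivial in $\Out$ because $\alpha$ acts identically on the star of each $V^1$-vertex (killing twists over $U$-edges) and $S$ is a \emph{collapse} of $\hat U$ along all $U$-edges so the surviving edges have trivial stabilizer and contribute only twists already accounted for inside each $Z_v$. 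Care is needed to see that no extra twisting over the collapsed edges survives — this is where the ``identity on the star of $v$'' (as opposed to merely ``fixes $v$'') is used in full strength. I would phrase this last point carefully, invoking Remark~\ref{rk_normalizer} and Lemma~\ref{lem_produit}-style bookkeeping, to conclude that $\alpha$ fixes $S$.
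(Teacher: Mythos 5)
Steps 1 and 2, as well as the inclusion $\Gamma_\calc\subseteq$ ($V^1$-rigid stabilizer), follow the paper's route, but already in Step 2 your parenthetical claim that any $g\in\IA$ fixing $\Uun_{\Gamma_\calc}$ ``permutes the vertices of $V^1$, preserving the bipartition'' is not automatic and is precisely what the paper has to prove: it invokes Lemma~\ref{lemma:ia-nice} (applicable because, by Theorem~\ref{theo:tree-of-cyl}\ref{it_factor_system}, every $v\in V^1$ is a Grushko vertex or has an incident edge with trivial stabilizer), which uses the Handel--Mosher theorem for $\IA$ to show the $F_N$-orbit of each $V^1$-vertex is preserved. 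Without some such argument you cannot yet assert that $g$ carries blow-ups at $V^1$-vertices to blow-ups at $V^1$-vertices; this gap is fixable but real. Similarly, your assertion that ``identity on the star of $v$'' \emph{says exactly} that the restriction of $\alpha$ to $G_v$ is trivial is not definitional; it requires the observation that two distinct edges incident on $v$ have stabilizers contained in stabilizers of distinct vertices of a free splitting, hence intersect trivially (this is exactly the uniqueness argument in the paper's proof).

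The genuine failure is in the converse direction of Step 3. First, your claim that $\alpha$ acts trivially on $G_v$ for $v\in V^0$ is false: in Example~\ref{ex:arcs} the $V^0$-vertex group is $\pi_1(\Sigma_1)$ and $\Gamma_\calc$ (which \emph{is} the $V^1$-rigid stabilizer, by the very proposition) acts on it through the whole mapping class group; nothing about $V^0$-vertex groups can or need be used. Second, and more seriously, the mechanism you propose --- running the exact sequence of Proposition~\ref{prop_suite_exacte} for the refinement $\hat U$ (or for $S$) and checking restrictions and twists --- is circular: that exact sequence analyzes automorphisms already known to lie in $\Stab^0(\hat U)$, whereas the whole point is to prove that $\alpha$ preserves $\hat U$, equivalently $S$, in the first place. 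What is needed (and what the paper does) is a direct construction: from the rigidity hypothesis one extracts, for each $v\in V^1$, a \emph{unique} element $g(\tilde\alpha,v)\in F_N$ such that the fixed equivariant isometry $I_{\tilde\alpha}$ agrees with the action of $g(\tilde\alpha,v)$ on the star of $v$ (uniqueness via the trivial-intersection fact above), one derives the cocycle identity $\tilde\alpha(h)g(\tilde\alpha,v)=g(\tilde\alpha,hv)h$, and one then defines by hand a $\tilde\alpha$-equivariant isometry of the common refinement $\hat U$ given by Theorem~\ref{theo:tree-of-cyl}\ref{it_carac}, acting by $g(\tilde\alpha,v)$ on $p^{-1}(v)$ and lifting $I_{\tilde\alpha}$ elsewhere; collapsing the $U$-edges then shows that every $S\in\hat\calc$ is $\alpha$-invariant, whence $\alpha\in\Gamma_\calc$. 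Your sketch contains neither this construction nor any substitute for it, so the key direction of the second bullet is not established.
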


\begin{proof}
Any subgroup of $\IA$ normalizing $\Gamma_{\calc}$ preserves $\hat\calc$.
Since $\Gamma_{\!\{\hat\calc\}}$ normalizes $\Gamma_{\hat\calc}=\Gamma_\calc$, 
this shows that
 $\Gamma_{\!\{\hat\calc\}}$ is precisely the normalizer of $\Gamma_\calc$ in $\IA$.

  
  The set $\cald_{\Gamma_\calc}$ consists of all (maybe trivial) maximum $\Gamma_\calc$-invariant free splittings,
    viewed as $\tilde \Gamma_\calc$-trees.
   Since $\Gamma_\calc$ is normalized by  $\Gamma_{\!\{\hat\calc\}}$, $\cald_{\Gamma_\calc}$ is invariant under $\Gamma_{\!\{\hat\calc\}}$. Since all trees in $\cald_{\Gamma_\calc}$ have the same tree of cylinders (by Corollary \ref{cor_Tc}), it follows that
  $\Uun_{\Gamma_\calc}$ is $\Gamma_{\!\{\hat\calc\}}$-invariant as a $\tilde \Gamma_\calc$-tree, hence also as an $F_N$-tree.

 Conversely, let $H\subset \IA$ be the stabilizer of $\Uun_{\Gamma_\calc}$.
  By Lemma \ref{lemma:ia-nice}, $H$ preserves  the $F_N$-orbit of every vertex in $V^1$.
  Since the splittings in $\hat\calc$ are precisely those that can be read from $V^1$ as in Theorem \ref{theo:tree-of-cyl}\ref{it_carac},
  $H$ preserves $\hat\calc$, so $H\subseteq \Gamma_{\!\{\hat\calc\}}$ which proves the first assertion.

  To prove the second assertion, choose a  $\tilde \Gamma_\calc$-tree $S\in \cald_{\Gamma_\calc}$,
   then  $\Uun_{\Gamma_\calc}$ is the tree of cylinders of $S$. Fix a vertex $v\in V^1(\Uun_{\Gamma_\calc})$,
  and let $Y\subset S$ be the cylinder corresponding to $v$.
  By Lemma~\ref{lemma:trivial-on-cylinders}, the group $\Gamma_\calc$ has a lift $\tilde{\Gamma}_\calc^Y$ to $\Aut(F_N)$ which acts as the identity on $Y$.
  Now, every edge $\eps$ of $\Uun_{\Gamma_\calc}$ incident on $v$    
  corresponds to a pair $(Y,x)$ with $x\in Y$.
  As $\tilde{\Gamma}_\calc^Y$ acts trivially on $Y$, it fixes $x$, and therefore $\tilde{\Gamma}_\calc^Y$ fixes the edge $\eps$ in  $\Uun_{\Gamma_\calc}$. It follows that $\tilde{\Gamma}_\calc^Y$ acts trivially on the star of $v$ in  $\Uun_{\Gamma_\calc}$, so $\Gamma_\calc$ is contained in the $V^1$-rigid stabilizer of $\Uun_\calc$.
  
  Conversely, let $\alpha\in\IA$ be contained in the $V^1$-rigid stabilizer of $\Uun_{\Gamma_\calc}$. Let $\tilde\alpha$ be a lift of $\alpha$ to $\Aut(F_N)$. We denote by $I_{\tilde\alpha}$ the $\tilde\alpha$-equivariant isometry of $\Uun_{\Gamma_\calc}$.
  Let $S\in {\calc}$. Theorem \ref{theo:tree-of-cyl}\ref{it_carac} can be reformulated as follows:
  there is an $F_N$-tree $\hat U$ which is a common refinement of
  $S$ and $\Uun_{\Gamma_\calc}$, with a collapse map $p:\hat U\ra \Uun_{\Gamma_\calc}$ such that $p\m(v)$ is reduced to a point for all $v\in V^0(\Uun_{\Gamma_\calc})$, and
  such that $S$ is obtained from $\hat U$ by collapsing every edge $e\subseteq \hat U$ which is not collapsed by $p$.
 
  For every vertex $v\in V^1$, we claim that there exists a unique element $g(\tilde\alpha,v)\in F_N$ such that $I_{\tilde\alpha}$ coincides with the action of $g(\tilde\alpha,v)$ on the star of $v$.
  Indeed, existence follows 
from the assumption on $\alpha$.
Uniqueness comes from the fact that for any two distinct vertices $w,w'\in V^0$, the stabilizers $G_w$ and $G_{w'}$ are $F_N$-stabilizers of distinct vertices of the same free splitting of $F_N$ (namely $S$), so $G_w\cap G_{w'}=\{1\}$.
In particular, if $e$ and $e'$ are two distinct edges in $\Uun_\calc$ incident on $v$, then $G_e\cap G_{e'}=\{1\}$.
This is enough to deduce the uniqueness of $g(\tilde\alpha,v)$.

For every point $w$ in the star of $v$ and every $h\in F_N$, one has $g(\tilde\alpha,hv)hw=I_{\tilde\alpha}(hw)=\tilde\alpha(h)I_{\tilde\alpha}(w)=\tilde\alpha(h)g(\tilde\alpha,v)w$.
By uniqueness, this shows that $$\tilde\alpha(h)g(\tilde\alpha,v)=g(\tilde\alpha,hv)h.$$
Let $\hat{I}_{\tilde\alpha}:\hat{U}\to \hat{U}$ be the map defined in the following way:
\begin{itemize}
\item for every vertex $v\in V^1$ and every point $x\in p^{-1}(v) $, we let $\hat{I}_{\tilde\alpha}(x):=g(\tilde\alpha,v)x$, and 
\item for every point $y\in\hat{U}$ whose $p$-image does not belong to $V^1$, we let $\hat{I}_{\tilde\alpha}(y)$ be the unique preimage of $I_{\tilde\alpha}(y)$ in $\hat{U}$. 
\end{itemize}
Then $\hat{I}_{\tilde\alpha}$ is an isometry, and the above relation shows that $\hat{I}_{\tilde\alpha}$ is $\tilde\alpha$-equivariant. This shows that $\hat{U}$, whence $S$, is $\alpha$-invariant. Since this holds for all $S\in\calc$, we conclude that $\alpha\in \Gamma_{\calc}$
which finishes the proof.
\end{proof}

\subsection{When all splittings in $\calc$ are fixed by the stabilizer of $\Uun_{\Gamma_\calc}$}
The following technical results will be used to characterize compatibility of free splittings in terms of their stabilizers in
Section~\ref{sec:compatibility}.

\begin{lemma}\label{lem_V1-trivial}
  Let $\calc$ be a collection of 
  non-trivial free splittings with infinite elementwise stabilizer $\Gamma_\calc$.
  Denote by $V^0\dunion V^1$ the bipartition of the vertex set of $\Uun_{\Gamma_\calc}$.
  Assume that $\Gamma_\calc$ almost coincides with the stabilizer of $\Uun_{\Gamma_\calc}$ in the following sense:
  for every $\alpha\in\IA$ preserving ${\Uun_{\Gamma_\calc}}$, there exists $k\geq 1$ such that $\alpha^k\in\Gamma_\calc$.

  Then for every $v\in V^1$,
  \begin{enumerate}
  \item if $G_v$ is non-abelian, then the Grushko decomposition of $G_v$ relative to $\Inc_v$ is sporadic with free rank 0 (\ie dual to an amalgam $G_v=A*B$); 
  \item if $G_v$ is non-abelian, then there is no edge incident on $v$ with trivial stabilizer; 
  \item if $G_v$ is infinite cyclic, then there is exactly one  $G_v$-orbit of edges with trivial stabilizer incident on $v$.
  \end{enumerate}
\end{lemma}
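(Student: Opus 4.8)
The plan is to analyze each vertex $v \in V^1$ of $\Uun_{\Gamma_\calc}$ using the product structure $\tilde H_v = G_v \times \tilde H_Y$ from Theorem~\ref{theo:tree-of-cyl}\ref{it_V1}, together with Assertion~\ref{it_carac} which describes all $\Gamma_\calc$-invariant free splittings as blow-ups of vertices in $V^1$. The key mechanism is as follows: if some vertex $v \in V^1$ admits a blow-up producing "too much flexibility'' in the splitting of $(G_v, \Inc_v)$, then the corresponding $\Gamma_\calc$-invariant free splittings would have a large stabilizer strictly larger than $\Gamma_\calc$ (modulo finite index), contradicting the hypothesis. The hypothesis that $\Gamma_\calc$ almost coincides with $\Stab_{\IA}(\Uun_{\Gamma_\calc})$ is exactly what forces each $(G_v, \Inc_v)$ to be "rigid enough''.

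First I would set up notation: fix $S \in \cald_{\Gamma_\calc}$, so $\Uun_{\Gamma_\calc}$ is the tree of cylinders of $S$, and for $v \in V^1$ write $Y$ for the corresponding cylinder, $G_v = G_{(Y)} = \Fix_{F_N}(\tilde H_Y)$ where $\tilde H_Y$ is a lift of $\Gamma_\calc$ to $\Aut(F_N)$. For Assertion~(1) and~(2), suppose $G_v$ is non-abelian. Consider any free splitting $Z$ of $G_v$ relative to $\Inc_v$; by Assertion~\ref{it_carac} it gives rise to a $\Gamma_\calc$-invariant free splitting $T$ of $F_N$. The group of twists of the blow-up (using the edges of $Z$ incident to $v$) injects into $\Stab_{\IA}(T)$, and more to the point, the twists around an edge of $T$ with trivial stabilizer (near a non-abelian side) generate a copy of a non-abelian free group, which by the exact sequence of Proposition~\ref{prop_suite_exacte} and Levitt's description (Section~\ref{sec_twists}) maps \emph{injectively} into $\Out(F_N)$ and preserves $T$, hence preserves $\Uun_{\Gamma_\calc}$ (since $\Uun_{\Gamma_\calc}$ is canonically built from the $\Gamma_\calc$-invariant free splittings, or more directly: these twists fix all of $\hat\calc$ since they fix $Z$ and act trivially off $G_v$... wait, one needs to check they lie in $\Gamma_\calc$ or contradict the hypothesis). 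The cleaner route: such twists are infinite-order elements of $\Stab_{\IA}(\Uun_{\Gamma_\calc})$, so by hypothesis a power lies in $\Gamma_\calc$; but a nontrivial twist around an edge of trivial stabilizer does not fix that edge's splitting $T$ — contradiction with $T \in \hat\calc = \FS^{\Gamma_\calc}$. This rules out edges of trivial stabilizer incident on a non-abelian $v$ (Assertion~2), and similarly, if the Grushko decomposition of $(G_v, \Inc_v)$ had positive free rank or more than two non-peripheral factors, one produces a free splitting refinement carrying a surviving infinite-order twist not fixing it — so it must be the sporadic amalgam case $G_v = A * B$ with free rank $0$ (Assertion~1). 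For Assertion~(3), when $G_v$ is infinite cyclic, $\Inc_v$ must be contained in $\{[G_v]\}$ or consist of trivial/cyclic data; since $G_{(Y)}$ acts on $Y$ with trivial edge stabilizers and $v$ has valence $\geq 2$ in $\Uun_{\Gamma_\calc}$ by minimality, there is at least one $G_v$-orbit of incident edges with trivial stabilizer, and a twist-counting argument (the group of twists of an HNN extension $\bbZ *$ is cyclic, so no extra flexibility) shows there is exactly one.

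The main obstacle I expect is making precise the implication "extra flexibility in the blow-up of $v \Rightarrow$ an infinite-order element of $\Stab_{\IA}(\Uun_{\Gamma_\calc}) \setminus \Gamma_\calc$''. One must be careful that the produced automorphism genuinely (i) preserves $\Uun_{\Gamma_\calc}$ — here one uses that twists preserve the blown-up tree $\hat U$, hence its collapse $\Uun_{\Gamma_\calc}$, plus Lemma~\ref{lemma:ia-nice} to see the action on vertex orbits is controlled — and (ii) does \emph{not} lie in $\Gamma_\calc$, because $\Gamma_\calc = \Gamma_{\hat\calc}$ fixes \emph{every} free splitting obtainable as in Assertion~\ref{it_carac}, in particular the specific $T$ built from $Z$, whereas a nontrivial twist around an edge of $T$ of trivial stabilizer moves $T$. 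Establishing that this twist has infinite order (automatic: twists around trivial-stabilizer edges near a non-abelian vertex generate $A \times A$ type groups, all infinite) and then invoking the hypothesis to get a contradiction is the crux. Once that template is in place, Assertions (1)–(3) follow by applying it to the three possible "bad'' configurations of $(G_v, \Inc_v)$: positive free rank, $\geq 3$ factors, a trivial-stabilizer edge at a non-abelian vertex, or $\geq 2$ orbits of trivial-stabilizer edges at a cyclic vertex.
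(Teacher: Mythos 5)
Your overall template --- produce an infinite-order automorphism preserving $\Uun_{\Gamma_\calc}$ none of whose powers can lie in $\Gamma_\calc$, then contradict the hypothesis --- is indeed the strategy the paper follows, but the step you yourself single out as the crux rests on a false premise. You assert (twice) that ``a nontrivial twist around an edge of $T$ of trivial stabilizer moves $T$.'' It does not: a twist around an edge of a splitting always preserves that splitting (this is stated explicitly in Section~\ref{sec_twists}, and the group of twists is precisely the kernel of the restriction map in Proposition~\ref{prop_suite_exacte}, hence a subgroup of $\Stab(T)$). Consequently your intended contradiction --- ``a power of the twist lies in $\Gamma_\calc=\Gamma_{\hat\calc}$, yet the twist moves $T\in\hat\calc$'' --- collapses: you never exhibit any splitting of $\hat\calc$ that your automorphism actually fails to fix, and that is exactly the non-trivial content of the lemma.

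What a correct argument needs, and what the paper does, is different in each case. For Assertions (2) and (3) the contradiction is not with a moved splitting at all: one invokes Proposition~\ref{prop:stab-u1}, which identifies $\Gamma_\calc$ with the $V^1$-rigid stabilizer of $\Uun_{\Gamma_\calc}$ (Definition~\ref{dfn_V0-rigid}), and checks that the twist by $a\in G_v$ around a trivial-stabilizer edge $e$ acts on the star of $v$ by $ge\mapsto gae$, which is the action of no single element of $F_N$ when $G_v$ is non-abelian (for (3), one takes a twist fixing $e$ and sending $e'$ to $ae'$); hence no power of the twist is $V^1$-rigid, contradicting the hypothesis. For Assertion (1) a twist of the blown-up splitting alone cannot suffice (it fixes that splitting); in the non-sporadic case the paper instead extends to $F_N$ a fully irreducible automorphism of $G_v$ relative to its Grushko factors acting trivially on incident edge groups, no power of which preserves any non-trivial free splitting of $G_v$ relative to those factors, while every blow-up of $v$ is $\Gamma_\calc$-invariant by Theorem~\ref{theo:tree-of-cyl}\ref{it_carac}; in the sporadic HNN case it uses $t\mapsto ta$, which preserves the HNN splitting but moves the auxiliary splitting $G_v=A*\grp{t}$ --- i.e.\ one must exhibit a \emph{second} invariant splitting that gets moved, never the one the automorphism is a twist of. Your ``twist-counting'' sketch for (3) and the ``valence/minimality'' justification for the existence of a trivial-stabilizer edge are also not arguments as stated (existence comes from the fact that otherwise $v$ admits no blow-up into a free splitting), but the essential gap is the false claim about twists.
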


\begin{proof}
To prove the first assertion, assume that $G_v$ is non-abelian, and that the Grushko decomposition of $G_v$ relative to $\Inc_v$
  is non-sporadic, and argue towards a contradiction. Denote by $\calf_v$ the peripheral factors of this decomposition.
  Then consider an outer automorphism  $\alpha_v\in \Out(G_v,\calf_v^{(t)})$ 
  which is fully irreducible relative to $\calf_v$.
  In particular, no power of $\alpha_v$ preserves a non-trivial  free splitting of $G_v$ relative to $\calf_v$.
  Since $\alpha_v$ acts trivially on incident edge groups,
  it extends to an outer automorphism $\alpha\in\Out(F_N)$ preserving  $\Uun_{\Gamma_\calc}$.
  By  Theorem \ref{theo:tree-of-cyl}\ref{it_carac} every free splitting of $F_N$ obtained by blowing up $v$ is $\Gamma_\calc$-invariant.
  This prevents that $\alpha^k\in\Gamma_\calc$ for some $k\geq 1$. 

  Assume now that $G_v$ is non-abelian and that  $G_v$ has a Grushko decomposition  relative to $\Inc_v$ of the form of an
  HNN extension $G_v= A*$ (in particular, all groups in $\Inc_v$ are conjugate into $A$ within $G_v$).
  We claim that exists a free splitting $S_v$ of $G_v$ relative to $A$ and an outer automorphism $\alpha_v\in \Out(G_v,A^{(t)})$
   such that no power of $\alpha_v$ preserves $S_v$. The claim then leads to a contradiction as in the first case, thus concluding the proof of the first assertion.
  To prove the claim, let $t$ be a stable letter of the HNN extension, so that $G_v=A*\grp{t}$, and let $S_v$ be the free splitting dual
  to this amalgamated product. Choose $a\in A\setminus\{1\}$ and consider $\alpha_v$ acting as the identity on $A$ and sending $t$ to $ta$. Then no power of $\alpha_v$ preserves $S_v$, which proves the claim.

  To prove the second assertion, assume that $G_v$ is non-abelian, but there is an edge $e$ with trivial stabilizer incident on $v$.

  Let $a\in G_v\setminus\{1\}$, and let $\alpha\in \Out(F_N)$ be the twist by $a$ around $e$ near $v$: this has a representative $\tilde\alpha\in\Aut(F_N)$
  such that  $\tilde\alpha_{|G_v}=\id$ and $I_{\tilde \alpha}(e)=ae$.

Then for all $g\in G_v$, one has $I_{\tilde\alpha}(ge)=\tilde \alpha(g)I_{\tilde\alpha}(e)=gae$ and since $G_v$ is not abelian,
$\alpha$ is not in the $V^1$-rigid stabilizer of $U^1_{\Gamma_\calc}$, and neither are its non-trivial powers, a contradiction
which concludes the proof of the second assertion.

To prove the third assertion, notice first that there has to be at least one $G_v$-orbit of edges with trivial stabilizer incident on $v$, as otherwise there would be no possible blowup of $v$ into a free splitting. The uniqueness of this orbit of edges is proved as follows: if $e,e'$ are two edges with trivial stabilizer incident on $v$ which are not in the same orbit under $G_v$, one can consider a twist $\tilde \alpha$ around $e$ near $v$
that acts as the identity on  $G_v$, and such that $I_{\tilde\alpha}$ fixes $e$ and maps $e'$ to $ae'$ for some non-trivial
$a\in G_v$. Then $\alpha$ and its non-trivial powers are not in the $V^1$-rigid stabilizer of $U^1_{\Gamma_\calc}$,
which proves the  third assertion.
\end{proof}

\begin{cor}\label{cor_V1_compatible}
Under the assumptions of Lemma~\ref{lem_V1-trivial}, then any one-edge free splitting obtained (as in Theorem \ref{theo:tree-of-cyl}\ref{it_carac}) from  $\Uun_{\Gamma_\calc}$ 
 by blowing up a vertex $v\in V^1$ with non-trivial stabilizer,  is compatible with all splittings in $\calc$.
\end{cor}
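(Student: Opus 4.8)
The plan is to deduce this corollary from the structure theorem for $\Uun_{\Gamma_\calc}$ (Theorem~\ref{theo:tree-of-cyl}, especially Assertion~\ref{it_carac}) together with the constraints on $V^1$ vertices established in Lemma~\ref{lem_V1-trivial}. Fix $v\in V^1$ with non-trivial stabilizer $G_v$, and let $T$ be a one-edge free splitting obtained by blowing up $v$ via a one-edge free splitting of $G_v$ relative to $\Inc_v$ and collapsing all edges coming from $\Uun_{\Gamma_\calc}$. By Theorem~\ref{theo:tree-of-cyl}\ref{it_carac}, $T$ is $\Gamma_\calc$-invariant, i.e.\ $T\in\hat\calc$. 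We want to show $T$ is compatible with every $S\in\calc$.

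First I would record the shape of $v$: by Lemma~\ref{lem_V1-trivial}, if $G_v$ is non-abelian then its Grushko decomposition relative to $\Inc_v$ is an amalgam $G_v=A*B$ with \emph{no} incident edge of trivial stabilizer, so the blow-up is just this amalgam and $T$ is dual to $G_v=A*B$ with the incident edge groups of $\Inc_v$ distributed among $A$ and $B$ according to which side they fix; if $G_v$ is infinite cyclic then there is exactly one $G_v$-orbit of trivial-stabilizer edges incident on $v$, so again the blow-up splitting of $G_v$ is essentially forced (a single edge). In either case $T$ is obtained from $\Uun_{\Gamma_\calc}$ by subdividing/blowing up at the single vertex $v$ and then collapsing, so one has a natural common refinement $\hat U$ of $T$ and $\Uun_{\Gamma_\calc}$: namely the blow-up $\hat{U}^1$ of $\Uun_{\Gamma_\calc}$ at $v$, with collapse maps $\hat U\to T$ (collapse all edges coming from $\Uun_{\Gamma_\calc}$) and $\hat U\to \Uun_{\Gamma_\calc}$ (collapse the one new edge).

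Now take any $S\in\calc$. Then $\Gamma_\calc$ fixes both $S$ and $\Uun_{\Gamma_\calc}$, and by Theorem~\ref{theo:tree-of-cyl}\ref{it_carac} (applied in the reformulated form used in the proof of Proposition~\ref{prop:stab-u1}) there is a common refinement $\hat{U}_S$ of $S$ and $\Uun_{\Gamma_\calc}$, obtained by blowing up vertices of $V^1$ using the corresponding cylinders in $S$, with $p\m(w)$ a single point for every $w\in V^0$, and with $S$ recovered from $\hat U_S$ by collapsing all edges collapsed by $p:\hat U_S\to\Uun_{\Gamma_\calc}$. The key point is that the blow-up of $\Uun_{\Gamma_\calc}$ at $v$ realizing $T$ is \emph{dominated} by (refines to) the blow-up at $v$ realizing the $S$-side refinement: since $\Inc_v$ is a free factor system of $G_v$ (Theorem~\ref{theo:tree-of-cyl}\ref{it_factor_system}), and since $T$ is blown up from a free splitting of $G_v$ relative to $\Inc_v$ while the $S$-refinement blows up $v$ using the $G_v$-minimal subtree of $S$ (which is a free splitting of $G_v$ relative to $\Inc_v$ because $S$ is a free splitting of $F_N$ in which all groups of $\Inc_v$ are elliptic), both local blow-ups live in the space of free splittings of $(G_v,\Inc_v)$, and one can take a common refinement of these two local splittings of $G_v$. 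Blowing up $\Uun_{\Gamma_\calc}$ at $v$ using this common local refinement (and leaving all other $V^1$ vertices blown up as for $S$) produces a single $F_N$-tree $\hat W$ that collapses onto both $\hat U$ (hence onto $T$) and onto $\hat U_S$ (hence onto $S$). Therefore $\hat W$ is a common refinement of $T$ and $S$, proving compatibility.

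The main obstacle I expect is the bookkeeping in the last step: one must check carefully that the local blow-up data at $v$ coming from $T$ and the local blow-up data at $v$ coming from $S$ can be \emph{simultaneously} refined while keeping the attaching points of all incident edges consistent, i.e.\ that for each incident edge $e$ with stabilizer $G_e\in\Inc_v$ the fixed point in the local common refinement is well-defined and compatible with both collapses; this is exactly where one uses that incident edge groups are (conjugates of) groups in the free factor system $\Inc_v$, so they are elliptic with a unique fixed point in any free splitting of $(G_v,\Inc_v)$, and where Lemma~\ref{lem_V1-trivial} is needed to rule out the problematic cases (non-abelian $G_v$ with trivial incident edges, or non-sporadic Grushko decomposition) in which no such clean local picture exists. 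Once that local compatibility at $v$ is in hand, gluing it back into $\Uun_{\Gamma_\calc}$ and invoking \cite[Proposition~8.1]{GL-cyl} (as in the proof of Theorem~\ref{theo:tree-of-cyl}) to handle all other $V^1$ vertices simultaneously finishes the argument.
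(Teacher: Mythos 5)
Your overall framing (reduce to comparing blow-ups of $\Uun_{\Gamma_\calc}$ and build a common refinement) is reasonable, but the crucial step is not justified and, as stated, is false. You claim that since both the local splitting defining $T$ and the local splitting coming from the $S$-side refinement are free splittings of $(G_v,\Inc_v)$, "one can take a common refinement of these two local splittings of $G_v$." Two free splittings of a group relative to a free factor system are in general \emph{not} compatible: already in $F_2$, the splittings $\langle a\rangle\ast\langle b\rangle$ and $\langle ab\rangle\ast\langle b\rangle$ admit no common refinement. So the existence of the local common refinement at $v$ is exactly what needs to be proved, and nothing in your argument supplies it; invoking Lemma~\ref{lem_V1-trivial} merely to "rule out problematic cases" does not close this gap, because the lemma is not being used to constrain which local splittings can actually occur.

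This is precisely where the paper's proof differs. It first reduces (via Theorem~\ref{theo:tree-of-cyl}\ref{it_carac}) to comparing $T$ with a $\Gamma_\calc$-invariant one-edge free splitting $S'$, which is itself a blow-up at some $v'\in V^1$; if $v'\neq v$ the two blow-ups can be performed simultaneously, and if $v'=v$ the content of Lemma~\ref{lem_V1-trivial} is \emph{uniqueness} of the blow-up: when $G_v$ is non-abelian, Assertions~1 and~2 force the local splitting to be the amalgam $G_v=A\ast B$, so $T=S'$; when $G_v$ is cyclic with some nontrivially-stabilized incident edge, Assertion~3 again forces a unique blow-up, so $T=S'$; and in the one remaining case ($\bar v$ terminal cyclic with a single trivially-stabilized incident edge) there are exactly two possible blow-ups, which are checked to be compatible by an explicit two-edge common refinement (edge plus loop). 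Your proposal never establishes this uniqueness/dichotomy, and without it the "common refinement of the local splittings" step fails; to repair the proof you would have to run essentially the case analysis of Lemma~\ref{lem_V1-trivial} as in the paper.
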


\begin{proof} 
  Let $S$ be a one-edge free splitting as in the statement.
  It suffices to prove that $S$ is compatible with any  $\Gamma_{\calc}$-invariant one-edge free splitting $S'$.
  By  Theorem \ref{theo:tree-of-cyl}\ref{it_carac}, $S'$ can be obtained by from $\Uun_{\Gamma_\calc}$
  by blowing up a vertex $v'\in V^1$.
  If $v',v$ are not in the same orbit, then $S$ and $S'$ are clearly compatible.
  If $v',v$ are in the same orbit, and $G_v$ is non-abelian, then by Assertions 1 and 2 of Lemma~\ref{lem_V1-trivial},
  there is a unique possible free splitting (obtained by blowing up $v$ using the decomposition $G_v=A*B$), so $S=S'$.
  If $G_v$ is infinite cyclic, then up to the action of $G_v$, there is exactly one edge $\eps$ with trivial stabilizer incident on $v$ (Assertion~3 of Lemma~\ref{lem_V1-trivial}). If there is an edge with non-trivial stabilizer incident on $v$,
  then there is a unique possible free splitting, namely the one dual to $\eps$, so again $S=S'$.

  In the remaining case, $v$ corresponds to a terminal vertex $\bar v$ of $\Uun_{\Gamma_\calc}/F_N$ with infinite cyclic vertex group, and the incident edge $\bar\eps$ carries the trivial group.
  Then there are exactly two possible blow ups of $\Uun_{\Gamma_\calc}$ at $v$, namely one dual to $\bar\eps$,
  and one obtained by replacing $\bar v$ by a circle. These two splittings are compatible, so in any case, $S$ and $S'$ are compatible.
\end{proof}

\section{Stabilizers of collections of $\Zmax$-splittings and associated canonical splittings}\label{sec:collection-zmax}

Throughout this section, we fix an integer $N\ge 2$. Let $H\subseteq\IA$ be a subgroup. When $H$ does not fix any non-trivial free splitting, the canonical splitting $\Uun$ constructed in the previous section is trivial. In this section, we assume that $\FS^H=\es$, and we use JSJ theory to construct a canonical $\Zmax$-splitting of $F_N$ encoding the collection of all $H$-invariant $\Zmax$-splittings, see Theorem~\ref{theo:JSJ_zmax} below.

\subsection{Basic facts on cyclic splittings and trees of cylinders}

We recall that a $\calz$-splitting of $F_N$ is a splitting of $F_N$ whose edge stabilizers are either trivial or isomorphic to $\mathbb{Z}$.
A \emph{$\zmax$-splitting} is a splitting whose edge stabilizers are maximally cyclic (in particular non-trivial).

\begin{lemma}\label{lemma:acylindrical}
Let $S$ be a $\calz$-splitting of $F_N$, and let $A\subseteq F_N$ be a subgroup isomorphic to $\mathbb{Z}$ which is elliptic in $S$. 

Then the subtree $Y_A\subseteq S$ made of all points fixed by $A$ is a finite subtree.
\end{lemma}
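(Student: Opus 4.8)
The plan is to prove that $Y_A$ has only finitely many edges, which forces it to be a finite subtree. Let $a$ be a generator of $A$, so that $Y_A=\Fix(a)$. If $Y_A$ contains no edge it is reduced to a point and we are done, so assume it contains an edge; then $a\in G_e$ for that edge, and since $S$ is a $\calz$-splitting, $G_e$ is a non-trivial cyclic subgroup of $F_N$ containing $a$. Let $c$ be the generator of the (unique) maximal cyclic subgroup $C=\langle c\rangle$ of $F_N$ containing $a$, and write $a=c^k$ with $k\geq 1$; recall that $C$ is abelian and that $C=Z_{F_N}(a)$. Since any non-trivial cyclic subgroup of $F_N$ containing $a$ is contained in $C$, we get $G_e\subseteq C$ for every edge $e\subseteq Y_A$.

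First I would record two elementary facts. Since $C$ centralizes $a$, it preserves $Y_A=\Fix(c^k)$; and for any edge $e\subseteq Y_A$, the whole $C$-orbit $Ce$ is contained in $Y_A$ and is finite of cardinality at most $k$, because $c^k$ fixes $e$. Second, I claim that two edges $e,e'\subseteq Y_A$ lying in the same $F_N$-orbit lie in the same $C$-orbit. Indeed, write $e'=ge$ with $g\in F_N$. Then $a$ fixes $e'$ (as $e'\subseteq Y_A$) and $gag^{-1}$ fixes $e'=ge$ (as $a$ fixes $e$), so $a$ and $gag^{-1}$ both lie in $G_{e'}\subseteq C=\langle c\rangle$; in particular $gc^kg^{-1}$ is a non-trivial power of $c$. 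Hence $C$ and $gCg^{-1}$ are maximal cyclic subgroups of $F_N$ with non-trivial intersection, so they are equal, i.e.\ $g\in N_{F_N}(C)$; since $N_{F_N}(\langle c\rangle)=\langle c\rangle$ in a free group, this gives $g\in C$, proving the claim.

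Then I would conclude as follows. The group $F_N$ is finitely generated and $S$ is minimal, so the quotient graph $S/F_N$ is finite. The restriction to the edges of $Y_A$ of the quotient map $S\to S/F_N$ therefore has finite image, and by the claim each of its fibres is contained in a single $C$-orbit of edges of $Y_A$, hence has cardinality at most $k$. Consequently $Y_A$ has only finitely many edges, and therefore $Y_A$ is a finite subtree.

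The only delicate ingredients are the classical facts about free groups used in the claim: that the centralizer of a non-trivial element is the maximal cyclic subgroup containing it, that two distinct maximal cyclic subgroups of $F_N$ intersect trivially, and that no non-trivial element of $F_N$ is conjugate to its inverse (equivalently $N_{F_N}(\langle c\rangle)=\langle c\rangle$). Granting these, the argument is pure bookkeeping; in particular it needs neither an acylindricity statement nor Lemma~\ref{lem_bound_zmax}, and it makes essential use of the $\calz$-splitting hypothesis only through the fact that edge stabilizers are cyclic.
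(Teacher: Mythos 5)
Your proof is correct and follows essentially the same route as the paper's: the paper also fixes an edge $e\subseteq Y_A$, notes that any $g$ with $ge\subseteq Y_A$ must lie in the maximal cyclic subgroup $\hat A\supseteq A$ (your $C$), so that each $F_N$-orbit contributes at most $[\hat A:A]$ edges to $Y_A$, and then concludes from the finiteness of $S/F_N$. Your Fact 1 and the explicit appeal to self-normalization of maximal cyclic subgroups are just a more detailed write-up of the same malnormality argument, so there is nothing to add.
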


\begin{proof}
Consider an edge $e\subset Y_A$. Let $\hat A$ be the maximal cyclic group containing $A$.
We claim that $Y_A$ contains only finitely many edges in the $F_N$-orbit of $e$.
Indeed, if $e'=ge\subseteq Y_A$, then $G_{e'}=gG_e g\m$ and $G_e$ and $G_{e'}$ both contain $A$.
It follows that $g\in \hat A$, so there are at most $\#\hat A/A$ possibilities for $e'$.
This concludes the proof since there are only finitely many orbits of edges in $S$.
\end{proof}

In the case where $S$ is a $\Zmax$-splitting of $F_N$, we have the following more precise version.

\begin{lemma}\label{lemma:acylindrical-zmax}
Let $S$ be a $\Zmax$-splitting of $F_N$. Then two distinct edges of $S$ with the same stabilizer belong to different $F_N$-orbits.
\end{lemma}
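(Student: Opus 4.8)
The plan is to argue by contradiction. Suppose $e$ and $e'=ge$ are two distinct edges of $S$ in the same $F_N$-orbit with $G_e=G_{e'}$. Since $G_{e'}=gG_eg^{-1}$, the element $g$ normalizes the infinite cyclic group $G_e$. In a free group, the normalizer of a maximal cyclic subgroup $C=\langle c\rangle$ is $C$ itself: indeed, if $g$ normalizes $C$ then $gcg^{-1}\in\{c,c^{-1}\}$, and since $\langle c,g\rangle$ is free (being a subgroup of $F_N$) of rank at most $2$, the relation $gcg^{-1}=c^{\pm 1}$ forces $g$ to lie in the maximal cyclic subgroup containing $c$, which is $C$ by maximality. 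Hence $g\in G_e$, so $g$ fixes $e$, contradicting $e'\neq e$.

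Concretely, I would structure the proof as follows. First I would record the elementary group-theoretic fact: in $F_N$, a maximal cyclic subgroup is self-normalizing. This can be cited (it is standard, and also follows from the root-closedness hypothesis built into the definition of a $\Zmax$-splitting) or proved in one line using that two-generator subgroups of free groups are free together with the classification of automorphisms of $\mathbb{Z}$. Second, I would apply this to $g$: from $G_e=G_{e'}=gG_eg^{-1}$ we get $g\in N_{F_N}(G_e)=G_e$, whence $g$ stabilizes $e$, so $e'=ge=e$, contradicting the assumption that $e$ and $e'$ are distinct. This gives the claim.

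I do not expect any serious obstacle here; the only point requiring a moment's care is the self-normalizing property, and that is exactly where the maximality (root-closedness) of the edge stabilizers of a $\Zmax$-splitting is used — it is what upgrades the weaker statement of Lemma~\ref{lemma:acylindrical} (finitely many edges in an orbit can share a stabilizer, with the bound $\#\hat A/A$) to the sharp statement that there is at most one. In the general $\calz$-splitting setting one really can have $\hat A/A$ nontrivial and several edges in an orbit with the same stabilizer, so the proof genuinely needs the $\Zmax$ hypothesis and cannot be deduced formally from the previous lemma alone.

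Written out:

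\begin{proof}
Suppose for contradiction that $e$ and $e'$ are distinct edges of $S$ lying in the same $F_N$-orbit, say $e'=ge$ for some $g\in F_N$, and that $G_e=G_{e'}$. Since $G_{e'}=gG_eg^{-1}$, the element $g$ normalizes $G_e$. As $S$ is a $\Zmax$-splitting, $G_e$ is infinite cyclic and root-closed, say $G_e=\langle c\rangle$. Because $\langle c,g\rangle$ is a free group of rank at most $2$ in which $gcg^{-1}\in\{c,c^{-1}\}$, the element $g$ lies in the maximal cyclic subgroup of $F_N$ containing $c$; by root-closedness this subgroup is $G_e$ itself, so $g\in G_e$. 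Hence $g$ fixes $e$, and therefore $e'=ge=e$, contradicting $e\neq e'$.
\end{proof}
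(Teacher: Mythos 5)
Your proof is correct and follows essentially the same route as the paper's: from $G_{ge}=gG_eg^{-1}=G_e$ one deduces $g\in G_e$ (using that a maximal cyclic subgroup of a free group is self-normalizing, which is exactly where root-closedness enters), hence $ge=e$. The paper's proof is just a terser version of this same argument, leaving the self-normalization step implicit.
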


\begin{proof}
Let $g\in F_N$, and let $e$ be an edge with stabilizer $G_e$. Then the stabilizer of $ge$ is equal to $gG_eg^{-1}$. Hence either $g$ belongs to the $\Zmax$ subgroup $G_e$ and $e=ge$, or else $e$ and $ge$ have distinct stabilizers. 
\end{proof}

For a $\Zmax$-splitting $S$ of $F_N$, 
equality among edge stabilizers is an admissible equivalence relation
(which coincides with commensurability and commutation).
The cylinder of an edge $e$ is thus the union of edges of $S$ having the same stabilizer.
See Section \ref{sec:cyl} for the definition of admissible equivalence relations and of the tree of cylinders.

\begin{lemma}\label{lemma:cyl_Zmax}
  Let $S$ be a $\Zmax$-splitting of $F_N$, and let $S_c$ be its tree of cylinders.

Then $S_c$ is a $\Zmax$-splitting in the same deformation space as $S$,
and every edge stabilizer of $S_c$ is an edge stabilizer of $S$.
\end{lemma}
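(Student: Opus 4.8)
The statement follows by combining the general theory of trees of cylinders from \cite{GL-cyl} with the admissibility observations just recorded, so the plan is mostly to check that the hypotheses of the relevant results from \cite{GL-cyl} hold in the present setting. First I would recall that for a $\Zmax$-splitting $S$ of $F_N$, equality among edge stabilizers has already been shown (Lemmas~\ref{lemma:acylindrical-zmax}, and the remark following it) to be an admissible equivalence relation on the class $\cale$ of maximal cyclic subgroups, coinciding with commensurability and commutation. Thus $S_c$ is well-defined, and by \cite[Theorem~1]{GL-cyl} it belongs to the same deformation space as $S$, and moreover $S_c$ is invariant under $\Out(F_N)$-type symmetries of the deformation space (we will only need the deformation-space statement here). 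This already gives the ``same deformation space'' half of the claim.

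The substantive point is that $S_c$ is again a $\Zmax$-splitting, i.e.\ that its edge stabilizers are maximal cyclic, and that in fact each edge stabilizer of $S_c$ is literally an edge stabilizer of $S$. Here I would invoke the description of edge stabilizers of the tree of cylinders from \cite[Lemma~4.2 and~Proposition~4.3]{GL-cyl} (or the explicit combinatorial description of $S_c$ recalled in Section~\ref{sec:cyl}): an edge of $S_c$ joins a vertex $v_x$ (a branch point $x\in S$ lying in at least two cylinders) to a vertex $v_Y$ (a cylinder $Y$), and its stabilizer is $G_x\cap G_{(Y)}$, where $G_{(Y)}$ is the global stabilizer of the cylinder $Y$. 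Since $\sim$ is equality of edge stabilizers, a cylinder $Y$ is a union of edges all sharing the same stabilizer, call it $C\in\cale$; one checks that $C$ fixes every edge and hence every point of $Y$, so $C\subseteq G_{(Y)}$. Conversely any element $g$ globally preserving $Y$ conjugates $C$ to the stabilizer of $gY=Y$'s edges, which is again $C$, so $g$ normalizes $C$; since $C$ is maximal cyclic it is its own normalizer in $F_N$ (malnormality of maximal cyclic subgroups, or directly: $gCg^{-1}=C$ with $C$ infinite cyclic forces $g\in C$), hence $G_{(Y)}=C$. Therefore the stabilizer of the edge $(v_x,v_Y)$ equals $G_x\cap C = C$ (because $x\in Y$ means $x$ is fixed by $C$, so $C\subseteq G_x$), which is exactly the edge stabilizer $C$ of any edge of $S$ in the cylinder $Y$. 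This shows simultaneously that every edge stabilizer of $S_c$ is an edge stabilizer of $S$, and that it is maximal cyclic, so $S_c$ is a $\Zmax$-splitting.

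Finally I would note minimality: $S_c$ is minimal as an $F_N$-tree by \cite[Lemma~4.9]{Gui04} (which was already cited for this purpose in Section~\ref{sec:cyl}), so it is indeed a splitting in our sense, and since $S_c$ lies in the deformation space of $S$ it has the same elliptic subgroups; as its edge groups lie in $\cale$ it is a genuine $\Zmax$-splitting. The main obstacle, such as it is, is the identification $G_{(Y)}=C$ of the global stabilizer of a cylinder with the common edge stabilizer --- everything else is a direct appeal to \cite{GL-cyl}. This identification is where the hypothesis that the splitting is $\Zmax$ (rather than merely $\calz$, where cylinders can be larger and stabilizers only commensurable) is essential; it is exactly the place where Lemma~\ref{lemma:acylindrical-zmax} gets used, guaranteeing that distinct edges of a cylinder lie in distinct $F_N$-orbits and that the cylinder is genuinely ``thin''.
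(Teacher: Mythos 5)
Your identification of cylinder stabilizers, $G_{(Y)}=C$, is correct and is the heart of the matter; your route (every $g\in G_{(Y)}$ sends an edge of $Y$ to an edge of $Y$, hence conjugates $C$ to itself, and a maximal cyclic subgroup of a free group is its own normalizer/malnormal) is a perfectly good variant of the paper's argument, which instead uses Lemma~\ref{lemma:acylindrical} to see that each cylinder is a finite tree, so that $G_e$ has finite index in $G_{(Y)}$ and maximality of $G_e$ forces $G_{(Y)}=G_e$. Your computation of the edge stabilizer of an edge $(v_x,v_Y)$ of $S_c$ as $G_x\cap G_{(Y)}=C$ then gives both remaining assertions exactly as in the paper. (Incidentally, your closing remark that Lemma~\ref{lemma:acylindrical-zmax} is ``exactly where'' the $\Zmax$ hypothesis enters is loose: what your argument actually uses is malnormality/self-normalization of maximal cyclic subgroups, which is the fact underlying that lemma rather than its statement.)

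The genuine problem is the justification of the ``same deformation space'' half. You attribute it to \cite[Theorem~1]{GL-cyl}, but that theorem says something different (and it is the only way it is used in Section~\ref{sec:cyl}): two trees with edge groups in $\cale$ lying in the same deformation space have the \emph{same} tree of cylinders. It does \emph{not} assert that $T_c$ lies in the deformation space of $T$, and in general this is false: the vertex stabilizers $G_{(Y)}$ of $T_c$ need not be elliptic in $T$ (e.g.\ for the commensurability relation in $BS(1,2)$ the unique cylinder has stabilizer the whole group and $T_c$ is a point). The correct statement is that $T$ always dominates $T_c$, while $T_c$ dominates $T$ precisely when the cylinder stabilizers $G_{(Y)}$ are elliptic in $T$; this is how the paper argues, deducing ellipticity from $G_{(Y)}=G_e$. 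Your own identification $G_{(Y)}=C$ supplies exactly this missing ingredient, but as written you declare the deformation-space claim settled \emph{before} proving $G_{(Y)}=C$ and never connect the two, so the first half of your proof rests on a misquoted result. Reordering the argument --- prove $G_{(Y)}=C$ first, then conclude that all vertex stabilizers of $S_c$ (the $G_x$ and the $G_{(Y)}=G_e$) are elliptic in $S$, and invoke the easy general fact that $S$ dominates $S_c$ --- repairs it completely.
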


\begin{proof}
Lemma \ref{lemma:acylindrical} shows that each cylinder $Y$ is finite, so its setwise stablizer $G_{\{Y\}}$
contains $G_e$ with finite index. Since $G_e$ is maximal cyclic, it follows that $G_{\{Y\}}=G_e$.
Since $G_{\{Y\}}$ is elliptic in $S$, this implies that $S$ and $S_c$ are in the same deformation space.

Additionally, if $\eps$ is an edge of $S_c$, it corresponds to a pair $(x,Y)$ with $x\in Y$,
and $G_\eps$ contains the stabilizer of
any edge $e$ of $Y$ incident on $x$, so $G_e\subset G_\eps\subset G_{\{Y\}}=G_e$. This shows that $S_c$ is a $\Zmax$-splitting
and concludes the proof of the lemma.
\end{proof}

One can check that a $\Zmax$-splitting $S$ is its own tree of cylinders if its vertex set is bipartite into $V_{cyc}\dunion V_{nab}$
where  $G_v$ is not abelian for $v\in V_{nab}$, and for all $v\in V_{cyc}$, $G_v$ is maximal cyclic and
the set of fixed points of $G_v$ is precisely the star of $v$ in $S$.

\subsection{Statement of the result}\label{sec:statement}

Let $H\subset \IA$ be a subgroup, and assume that there is no non-trivial  $H$-invariant free splitting of $F_N$.
Let $\ZmaxS^H$ be the collection of all $H$-invariant  non-trivial $\Zmax$-splittings of $F_N$.
Out of this collection of splittings we are going to construct a JSJ decomposition that will be a canonical splitting
associated to $H$.

If $S,S'$ are two splittings of a group $A$, we say that $S$ is \emph{elliptic} with respect to $S'$ if all its edge stabilizers
are elliptic in $S'$.
If $\calt$ is a collection  of splittings of a group $A$, 
we say that a subgroup $B\subseteq A$ is \emph{$\calt$-universally elliptic} if  $B$ is elliptic in all splittings in $\calt$.
We say that a splitting of $A$ is \emph{$\calt$-universally elliptic} if all its edge stabilizers are $\calt$-universally elliptic.

\begin{theo}[Canonical JSJ decomposition for $H$-invariant $\Zmax$-splittings]\label{theo:JSJ_zmax}
Let $H\subset \IA$ be a subgroup, and assume that there is no non-trivial $H$-invariant free splitting of $F_N$.

There exists a unique $H$-invariant $\Zmax$-splitting $\UZ_H$ with the following properties:
\begin{enumerate}\renewcommand{\theenumi}{(\arabic{enumi})}\renewcommand{\labelenumi}{\theenumi}
\item $\UZ_H$ is $\ZmaxS^H$-universally elliptic;
\item $\UZ_H$ dominates every $H$-invariant $\Zmax$-splitting which is $\ZmaxS^H$-universally elliptic; 
\item $\UZ_H$ is its own tree of cylinders.
\end{enumerate}
Moreover, $\UZ_H$ is non-trivial if $\ZmaxS^H\neq\es$.
\end{theo}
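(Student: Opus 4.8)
The plan is to build $\UZ_H$ as the JSJ decomposition of $F_N$ over the class of $\zmax$-subgroups, relative to the collection $\ZmaxS^H$, and then to bring in the infinite group $H$ to make the construction equivariant and to rule out triviality. I would proceed in the following order.

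\textbf{Step 1: Existence of the JSJ.} Apply the general JSJ machinery of Guirardel--Levitt \cite{GL-jsj} to the group $F_N$ with edge group class $\cale$ = maximal cyclic subgroups, relative to the family $\ZmaxS^H$ of splittings. Since the class of $\Zmax$-splittings admits bounded chains (Lemma~\ref{lem_bound_zmax}), one knows there is a JSJ deformation space: a deformation space $\cald_{\jsj}$ of $\ZmaxS^H$-universally elliptic $\Zmax$-splittings, such that any tree in $\cald_{\jsj}$ dominates every $\ZmaxS^H$-universally elliptic $\Zmax$-splitting. Care is needed here because elements of $\ZmaxS^H$ are $\Zmax$-splittings but a universally elliptic \emph{refinement} of several of them need not a priori be $\Zmax$; one uses the $\Zmax$-ification operation of Definition~\ref{dfn_zmaxise} to stay inside the class of $\Zmax$-splittings (every $\Zmax$-splitting dominated by $S$ is dominated by $S_{\Zmax}$ by \cite[Lemma~9.27]{GL-jsj}), and the boundedness of chains gives the existence of a maximal such tree.

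\textbf{Step 2: Canonical choice via trees of cylinders.} The JSJ deformation space is not a single tree, so I would pass to its tree of cylinders for the admissible equivalence relation given by equality of edge stabilizers (equivalently commensurability/commutation on maximal cyclic groups), as in Section~\ref{sec:cyl}. By Lemma~\ref{lemma:cyl_Zmax}, the tree of cylinders of a $\Zmax$-splitting is again a $\Zmax$-splitting in the same deformation space, so the tree of cylinders $\UZ_H$ of any element of $\cald_{\jsj}$ is a $\Zmax$-splitting, it lies in the JSJ deformation space (hence satisfies properties (1) and (2)), and it equals its own tree of cylinders by the general idempotence property of the tree-of-cylinders operation \cite{GL-cyl}, giving property (3). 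By \cite[Theorem~1]{GL-cyl}, trees in the same deformation space with edge stabilizers in $\cale$ have the same tree of cylinders, so $\UZ_H$ does not depend on the chosen representative. Uniqueness of a tree with properties (1)--(3): any such $S$ is $\ZmaxS^H$-universally elliptic and dominates all universally elliptic $\Zmax$-splittings including $\UZ_H$, and conversely $\UZ_H$ dominates $S$, so $S$ and $\UZ_H$ are in the same deformation space; since both equal their own tree of cylinders, $S=\UZ_H$.

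\textbf{Step 3: $H$-invariance.} Here is where the hypothesis $H\subseteq\IA$ and the absence of an invariant free splitting enter. The JSJ deformation space $\cald_{\jsj}$ is canonically associated to the pair $(F_N,\ZmaxS^H)$, and $H$ preserves $\ZmaxS^H$ (by definition of $\ZmaxS^H$), hence $H$ permutes $\cald_{\jsj}$; since the tree of cylinders is a canonical point of this deformation space, $\UZ_H$ is $H$-invariant. One should phrase this via the preimage $\tilde H\subseteq\Aut(F_N)$ as in Section~\ref{sec:tildeH}: every $\tilde\alpha\in\tilde H$ induces an isometry of every tree in $\cald_{\jsj}$, compatibly with collapses, hence an isometry of $\UZ_H$, so $\UZ_H$ is $H$-invariant. (By Proposition~\ref{prop:ia-zmax}, $H$ even acts trivially on $\UZ_H/F_N$.)

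\textbf{Step 4: Non-triviality.} Suppose $\ZmaxS^H\neq\es$, so there is a non-trivial $H$-invariant $\Zmax$-splitting $S_0$. If $\UZ_H$ were trivial, then by property (2) every $\ZmaxS^H$-universally elliptic $\Zmax$-splitting is trivial; in particular no one-edge collapse of any element of $\ZmaxS^H$ is universally elliptic. One then argues that $F_N$ must nonetheless admit \emph{some} non-trivial universally elliptic $\Zmax$-splitting, reaching a contradiction. The mechanism: take $S_0\in\ZmaxS^H$ with edge stabilizer $\langle c\rangle$; the group $\langle c\rangle$ is $H$-invariant up to conjugacy (it is the edge stabilizer of an $H$-invariant splitting, and $H\subseteq\IA$ so by Theorem~\ref{theo:ia-element} its conjugacy class is $H$-invariant). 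The key point, exploiting that $H$ is infinite, is that $H$ cannot act nontrivially on a surface-type QH vertex of a would-be JSJ, so the usual obstruction to nontriviality of the cyclic JSJ (flexible surface vertices) is absent — the infinite group $H$ acting must fix the surface-vertex structure, forcing $\langle c\rangle$ or a related edge group to be genuinely universally elliptic. More concretely, I would show: since $H$ does not fix any non-trivial free splitting, $\UZ_H$ trivial would force (via the structure of $S_0$ and Lemma~\ref{lem:zmaxise_trivial}) that $H$ is contained in the stabilizer of a splitting all of whose vertex groups carry QH or cyclic structure with $H$ acting via the twist group only, contradicting that $H\subseteq\IA$ is torsion-free and infinite and the bounded-chain constraints.

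\textbf{Main obstacle.} The delicate part is Step~4 (non-triviality), which is the analogue of the phenomenon flagged in the introduction: surface groups have trivial cyclic JSJ, and one must use that the infinite group $H$ acting trivially on surfaces of a putative JSJ decomposition rules out this degeneracy. Making this precise requires understanding flexible (QH) vertices of the relative cyclic JSJ and checking that an infinite $H\subseteq\IA$ fixing $\UZ_H$ must act trivially on each such vertex group modulo boundary, which then forces a nontrivial universally elliptic edge group to exist whenever $\ZmaxS^H\neq\es$. A secondary technical point is Step~1: ensuring the JSJ over maximal-cyclic subgroups relative to $\ZmaxS^H$ stays within the $\Zmax$ class, which is handled by systematically applying the $\Zmax$-ification of Definition~\ref{dfn_zmaxise} and invoking \cite[Lemma~9.27]{GL-jsj}.
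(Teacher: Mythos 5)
Your Steps 2 and 3 (uniqueness via ``same deformation space $+$ own tree of cylinders'', and $H$-invariance via canonicity) are fine and close to the paper, but there are two genuine gaps. The first is in Step 1: the object you posit --- a deformation space of $\ZmaxS^H$-universally elliptic $\Zmax$-splittings whose members dominate \emph{every} $\ZmaxS^H$-universally elliptic $\Zmax$-splitting --- is not what the Guirardel--Levitt machinery produces, and its existence is unjustified. In \cite{GL-jsj}, ``relative'' means relative to a family of subgroups, and universal ellipticity is measured against the same class of trees over which domination is required; that symmetry is what makes the construction work, because the key step (refining one universally elliptic tree by another) needs the two trees to be elliptic with respect to \emph{each other}. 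In your setup, universal ellipticity is only tested against the (possibly very small) family $\ZmaxS^H$ of $H$-invariant splittings, while the trees to be dominated range over all $\Zmax$-splittings with that weak property; two such trees need not be mutually elliptic, the refinement step breaks down, and no accessibility bound rescues the existence of a maximal element. Note also that statement (2) of the theorem only concerns $H$-\emph{invariant} universally elliptic splittings, and the correct construction stays inside that class from the start: one views $H$-invariant splittings as $\tilde H$-trees (Section~\ref{sec:tildeH}) and proves that if $S,S'\in\ZmaxS^H$ are elliptic with respect to each other as $F_N$-trees, then they are elliptic as $\tilde H$-trees (Lemmas~\ref{lemma:acylindrical}, \ref{lemma:acylindrical2}, \ref{lemma:edge-stabilizers-cyclic}), so that a common refinement can be built $\tilde H$-equivariantly and stays $H$-invariant and $\Zmax$ (Corollary~\ref{coro:blowup_zmax}); accessibility (Lemma~\ref{lem_bound_zmax}) then yields a maximal $H$-invariant universally elliptic splitting, whose tree of cylinders is $\UZ_H$. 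This ellipticity-upgrade mechanism, which is the real content of the existence part, is absent from your proposal, and without it neither existence nor property (2) is established.

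The second gap is Step 4: what you write is a statement of intent, not a proof. The paper's non-triviality is Theorem~\ref{thm_JSJ}, whose engine is Proposition~\ref{prop:QH}: one first shows (Lemma~\ref{lemma:restriction}) that a flexible vertex group $G_v$ of $\UZ_H$ inherits the hypotheses (no invariant free splitting relative to incident edge groups, a nonempty family of invariant $\Zmax$-splittings none of which is universally elliptic), then runs a full JSJ/regular-neighborhood analysis for the group $\tilde K\subseteq\Aut(G_v)$ --- choosing the slender class $\Asl$, verifying the stability condition and minuscule hypotheses, producing a pair of fully hyperbolic trees, taking the Fujiwara--Papasoglu core, and using its tesselated-surface structure together with a faithfulness claim to show that $(G_v,\Inc_v)$ is QH with sockets \emph{and} that the restricted group $K=H_{|G_v}$ is finite --- and finally upgrades finiteness to triviality using $H\subseteq\IA$ (Handel--Mosher, Theorem~\ref{theo:ia-element}, plus \cite{KSS}). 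Non-triviality then follows because a trivial $\UZ_H$ would make $F_N$ itself a flexible vertex, forcing $H=H_{|F_N}$ to be trivial. Your appeal to ``torsion-freeness, bounded chains and the twist group'' does not substitute for this analysis; in particular the assertion that some edge group of $S_0$ must be ``genuinely universally elliptic'' is exactly what has to be proved, and it requires the QH-with-sockets classification of flexible vertices and the finiteness of the restricted action, neither of which your sketch supplies.
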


\begin{rk}
  The assumption that $H$ preserves no  non-trivial free splitting implies that $H$ is non-trivial, i.e.\ infinite.
\end{rk}

To prove the existence of this JSJ decomposition, one has to work consistently with $H$-invariant splittings,
which we interpret as $\tilde H$-splittings, where $\tilde H$ is the full preimage of $H$ in $\Aut(F_N)$
(see Section \ref{sec:tildeH}).
So in fact, we will construct $\UZ_H$ as a JSJ decomposition of $\tilde H$ over a suitable collection of edge groups.
Proving the existence and uniqueness of $U^z_H$ is actually easy, and done in Section~\ref{sec:existence-uniqueness} below; the hard part of the work is to prove the non-triviality of $\UZ_H$ in Section~\ref{sec:non-trivial}.

\subsection{Uniqueness and existence}\label{sec:existence-uniqueness}

Let us start with the proof of the uniqueness of $\UZ_H$ which is almost immediate.

\begin{proof}[Uniqueness of $\UZ_H$.]
  Let $U_1,U_2$ be two $H$-invariant $\Zmax$-splittings as in Theorem~\ref{theo:JSJ_zmax}.
  We view $U_1$ and $U_2$ as $F_N$-trees.
   Combining Assertions~(1) and~(2) shows that $U_1$ and $U_2$ are in the same deformation space. 
  Since they are their own trees of cylinders,
  this implies that $U_1=U_2$ (\cite[Theorem 1]{GL-cyl}).
\end{proof}

\begin{lemma}\label{lemma:acylindrical2}
  Let $S$ be an $H$-invariant $\Z$-splitting of $F_N$.
  Let $\tilde B$ be a subgroup of $\tilde H$ such that $\tilde B\cap F_N$ is non-trivial and fixes a point in $S$.

  Then $\tilde B$ fixes a point in $S$.  
\end{lemma}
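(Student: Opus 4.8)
The statement is essentially an "acylindricity transfers to the overgroup" lemma: if the $F_N$-part of $\tilde B$ is already elliptic in $S$, then the whole group $\tilde B$ (which sits inside $\tilde H$, hence acts on $S$ via the $H$-invariance of $S$) must be elliptic too. The plan is to exploit the normality $F_N \normal \tilde H$ together with the fact that elliptic subgroups of a $\calz$-splitting that contain a fixed infinite cyclic subgroup have only a \emph{finite} (hence $\tilde B$-invariant) fixed-point set.

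First I would set $A := \tilde B \cap F_N$, which by hypothesis is a non-trivial subgroup of $F_N$ fixing a point of $S$; choose an element $a \in A$ of infinite order (possible since non-trivial subgroups of $F_N$ are infinite). Let $\hat A$ be the maximal cyclic subgroup of $F_N$ containing $a$. Consider the set $Y_{\grp a} \subseteq S$ of points fixed by $a$ (equivalently by $\hat A$, up to enlarging; what matters is that $\grp a$ is elliptic). By Lemma~\ref{lemma:acylindrical} applied to the cyclic elliptic subgroup $\grp a$, the fixed subtree $Y_{\grp a}$ is a \emph{finite} subtree of $S$ — this is the key finiteness input. Now $A \subseteq \tilde B$ is normalized by $\tilde B$ (since $A = \tilde B \cap F_N$ and $F_N \normal \tilde H$, so $F_N \normal \tilde B$ too), and more specifically $a$ is conjugated by any $\tilde\beta \in \tilde B$ to $\tilde\beta(a) \in F_N$.

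The main step is then: for $\tilde\beta \in \tilde B$, the isometry $I_{\tilde\beta}$ of $S$ (the $\tilde\beta$-equivariant isometry witnessing $H$-invariance) sends $Y_{\grp a}$ to $Y_{\grp{\tilde\beta(a)}}$. To get $\tilde\beta$ elliptic I want $I_{\tilde\beta}$ to preserve some non-empty finite subtree. The cleanest route is to pass to the subgroup $\tilde B$ acting on the (finite!) tree $T' := $ the convex hull in $S$ of $\bigcup_{\tilde\beta \in \tilde B} Y_{\grp{\tilde\beta(a)}}$; I would argue this union is finite because all the subgroups $\tilde\beta(a)\bbZ$ are conjugates of $a\bbZ$ \emph{within $F_N$} (as $\tilde\beta(a)$ generates a cyclic subgroup of $F_N$ of the same "type"), and the argument in Lemma~\ref{lemma:acylindrical} — counting orbits of edges and using that $G_{e'} = g G_e g^{-1}$ forces $g \in \hat A$ — bounds the total number of edges in all these fixed subtrees together by a constant times $\#\hat A/\grp a$ times the number of $F_N$-orbits of edges. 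Hence $T'$ is a finite $\tilde B$-invariant subtree of $S$, and since $\tilde B$ acts on $S$ without inversions (it is a splitting), $\tilde B$ fixes a point of $T'$, namely the circumcenter of $T'$ (or a vertex of $T'$ if one prefers to avoid subdivision). Therefore $\tilde B$ is elliptic in $S$, as claimed.

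The step I expect to be the main obstacle is verifying that $\bigcup_{\tilde\beta \in \tilde B} Y_{\grp{\tilde\beta(a)}}$ — or an appropriate $\tilde B$-invariant enlargement of $Y_{\grp a}$ — is actually finite, rather than just each individual piece: one must be careful that $\tilde\beta(a)$ still lies in $F_N$ (true, since $F_N \normal \tilde H$) and that its fixed set is finite by the same Lemma~\ref{lemma:acylindrical}, but then control that these infinitely many finite subtrees (indexed by the possibly infinite group $\tilde B$) do not accumulate to an infinite subtree. An alternative, possibly smoother, formulation: since $a$ fixes a point and $a \normal \tilde B$ up to the $F_N$-action, the subgroup $\tilde B$ permutes the (finitely many) edges of $S$ whose stabilizer contains a conjugate of $a$ — and one shows directly that $\tilde B$ preserves the finite subtree $Y_{\grp a}$ itself once we note that for $\tilde\beta \in \tilde B$, $\tilde\beta(a) = a^g$ for some $g \in F_N$ with $g \in \tilde B$ (again by $F_N \normal \tilde B$ and the structure of $\tilde B$), whence $I_{\tilde\beta}$ composed with the translation by $g^{-1}$ fixes $Y_{\grp a}$ setwise; this realizes $\tilde B$ as acting on the finite tree $Y_{\grp a}$ and gives a global fixed point immediately. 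I would try the second formulation first and fall back to the convex-hull argument if the conjugating elements $g$ cannot be taken inside $\tilde B$.
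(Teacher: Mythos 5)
There is a genuine gap at exactly the step you flag as the main obstacle: producing the finite $\tilde B$-invariant subtree. Your justification that $\bigcup_{\tilde\beta\in\tilde B} Y_{\grp{\tilde\beta(a)}}$ is finite rests on the claim that the subgroups $\grp{\tilde\beta(a)}$ are $F_N$-conjugates of $\grp{a}$; this is false in general, since $\tilde\beta$ is an automorphism, so $\tilde\beta(a)$ lies in the $\Aut$-orbit of $a$, not in its conjugacy class (already $\tilde\beta(a)=a^{-1}$ is possible, and if $A:=\tilde B\cap F_N$ is non-cyclic, $\tilde\beta$ can induce an arbitrary automorphism of $A$). Moreover, even correct conjugacy information would not give finiteness: the union of the fixed sets of all $F_N$-conjugates of $a$ is the $F_N$-orbit of $Y_{\grp a}$, which is infinite. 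In fact your union really can be infinite: since $A\subseteq\tilde B$, every $g\in A$ gives $\ad_g\in\tilde B$ with $\ad_g(a)=gag^{-1}$, so your union contains the whole $A$-orbit of $Y_{\grp a}$; if $Y_{\grp a}$ contains an edge $e$ with $G_e=\grp a$ and $\grp a$ has infinite index in $A$ (e.g.\ $A$ a rank-two vertex stabilizer and $a$ a generator of an incident edge group, with $\tilde B=A$ acting by inner automorphisms), this orbit is infinite, so the convex hull $T'$ is an infinite tree and no circumcenter argument applies. The fallback formulation fails for the same reasons: the set of edges whose stabilizer contains a conjugate of $a$ is $F_N$-invariant (hence not finite), and the assertion that $\tilde\beta(a)=a^g$ with $g\in \tilde B\cap F_N$ is false in general.

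The missing idea, which is how the paper argues, is to use the fixed-point set of the whole group $A$ rather than of a single element, i.e.\ an intersection instead of a union. Since $F_N\normal\tilde H$, the subgroup $A=\tilde B\cap F_N$ is normal in $\tilde B$, so $\tilde\beta(A)=A$ for every $\tilde\beta\in\tilde B$ (no conjugation inside $F_N$ is needed), and therefore the subtree $Y_A$ of points fixed by $A$ satisfies $I_{\tilde\beta}(Y_A)=Y_{\tilde\beta(A)}=Y_A$, i.e.\ it is $\tilde B$-invariant; it is non-empty by hypothesis. If $A$ is not cyclic, $Y_A$ is a single point because edge stabilizers are cyclic; if $A$ is (infinite) cyclic, $Y_A$ has finite diameter by Lemma~\ref{lemma:acylindrical}. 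In either case $\tilde B$ fixes the circumcenter of $Y_A$. Your overall shape of argument (finite invariant subtree, then circumcenter) matches the paper's, but the invariant bounded subtree must be obtained this way; as written, your construction does not produce one.
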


\begin{proof}
Since $A:=\tilde B\cap F_N$ is normal in $\tilde B$, 
the subtree  $Y_A\subset S$ consisting of all points fixed by $A$ is $\tilde B$-invariant.
If $A$ is not cyclic, then $Y_A$ is reduced to a point because edge stabilizers are cyclic. Otherwise,
 $Y_A$ has finite diameter by Lemma~\ref{lemma:acylindrical}, and we deduce that $\tilde{B}$ fixes the circumcenter of $Y_A$.
\end{proof}

\begin{lemma}\label{lemma:edge-stabilizers-cyclic}
Let $S,S'\in\ZmaxS^H$. The tree $S$ is elliptic with respect to $S'$ as an $F_N$-tree  if and only if $S$ is elliptic with respect to $S'$ as an $\tilde{H}$-tree. 
\end{lemma}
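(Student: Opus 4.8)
The forward implication is the easy one: if $S$ is elliptic with respect to $S'$ as an $\tilde H$-tree, then in particular every $\tilde H$-stabilizer of an edge of $S$ fixes a point in $S'$, hence so does its intersection with $F_N$, which is exactly the $F_N$-stabilizer of that edge; thus $S$ is elliptic with respect to $S'$ as an $F_N$-tree. So the content is the converse. The plan is as follows. Assume $S$ is elliptic with respect to $S'$ as an $F_N$-tree, and let $e$ be an edge of $S$ with $\tilde H$-stabilizer $\tilde H_e$ and $F_N$-stabilizer $G_e = \tilde H_e \cap F_N$. Since $S$ is a non-trivial $H$-invariant splitting and $H$ preserves no non-trivial free splitting (this is the standing hypothesis of Section~\ref{sec:collection-zmax}, which is in force here), every edge of $S$ has non-trivial stabilizer; in particular $G_e$ is non-trivial (indeed infinite cyclic, as $S\in\ZmaxS^H$). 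By the $F_N$-ellipticity hypothesis, $G_e$ fixes a point in $S'$. Now apply Lemma~\ref{lemma:acylindrical2} with $\tilde B = \tilde H_e$: we have $\tilde B\cap F_N = G_e$ non-trivial and fixing a point in $S'$, so $\tilde H_e$ fixes a point in $S'$. Since this holds for every edge $e$ of $S$, the tree $S$ is elliptic with respect to $S'$ as an $\tilde H$-tree.

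One point to be slightly careful about: $\ZmaxS^H$ is by definition the set of \emph{non-trivial} $H$-invariant $\Zmax$-splittings, so both $S$ and $S'$ are genuinely non-trivial $\Zmax$-splittings, and in particular all edge stabilizers of $S$ are maximal cyclic and non-trivial. This is what lets us invoke Lemma~\ref{lemma:acylindrical2}, whose hypothesis requires $\tilde B\cap F_N$ non-trivial. It is worth spelling out why $G_e$ is non-trivial even though, a priori, the definition of a $\Zmax$-splitting only forbids trivial edge groups by fiat: here it is immediate since $S\in\ZmaxS^H$ means precisely that $S$ is a $\Zmax$-splitting, so $G_e\cong\bbZ$. (The hypothesis that $H$ preserves no non-trivial free splitting is not actually needed for this particular lemma, but it is the ambient assumption, so no harm.)

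There is essentially no obstacle here; the lemma is a routine consequence of Lemma~\ref{lemma:acylindrical2}, whose own proof uses Lemma~\ref{lemma:acylindrical} (finiteness of the fixed-point tree of a cyclic elliptic subgroup) plus the circumcenter trick. The only thing to check is that Lemma~\ref{lemma:acylindrical2} is applied to the right group, namely the full $\tilde H$-stabilizer of an edge rather than of a vertex, and that the ``non-trivial intersection with $F_N$'' hypothesis is met — which it is, edge by edge, because $S$ is a $\Zmax$-splitting. I would write this up in a few lines, treating the $F_N$-to-$\tilde H$ direction via Lemma~\ref{lemma:acylindrical2} and the $\tilde H$-to-$F_N$ direction as the trivial observation above.

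\begin{proof}
If $S$ is elliptic with respect to $S'$ as an $\tilde H$-tree, then for every edge $e$ of $S$ the group $\tilde H_e$ fixes a point in $S'$, hence so does $G_e=\tilde H_e\cap F_N$; thus $S$ is elliptic with respect to $S'$ as an $F_N$-tree.

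Conversely, assume that $S$ is elliptic with respect to $S'$ as an $F_N$-tree. Let $e$ be an edge of $S$. Since $S\in\ZmaxS^H$, the group $G_e$ is infinite cyclic, in particular non-trivial, and by assumption it fixes a point in $S'$. Applying Lemma~\ref{lemma:acylindrical2} to the subgroup $\tilde B:=\tilde H_e$ of $\tilde H$ (for which $\tilde B\cap F_N=G_e$ is non-trivial and fixes a point in $S'$), we deduce that $\tilde H_e$ fixes a point in $S'$. As this holds for every edge $e$ of $S$, the tree $S$ is elliptic with respect to $S'$ as an $\tilde H$-tree.
\end{proof}
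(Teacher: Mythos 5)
Your proof is correct and is essentially identical to the paper's own argument: the forward direction is the trivial observation that $G_e\subseteq\tilde H_e$, and the converse applies Lemma~\ref{lemma:acylindrical2} to $\tilde B=\tilde H_e$, using that $G_e=\tilde H_e\cap F_N$ is non-trivial (since $S$ is a $\Zmax$-splitting) and elliptic in $S'$. Nothing further is needed.
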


\begin{proof}
Given an edge $e\subseteq S$, we denote by $G_e$ the $F_N$-stabilizer of $e$, and by $\tilde{H}_e$ its $\tilde{H}$-stabilizer. 
By definition of ellipticity (see Section \ref{sec:statement}), 
we have to prove that $\tilde H_e$ is elliptic in $S'$ if and only if $G_e$ is.

Since $G_e\subseteq \tilde H_e$, one implication is obvious.
Conversely, assume that 
$G_e$ is elliptic in $S'$.
Since $G_e=\tilde H_e\cap F_N$ is non-trivial and elliptic in $S'$,
Lemma~\ref{lemma:acylindrical2} shows that $\tilde H_e$ fixes a point in $S'$.
\end{proof}

\begin{cor}\label{coro:blowup_zmax}
Let $S,S'\in\ZmaxS^H$ viewed as $\tilde H$-trees. Assume that $S$ is elliptic with respect to $S'$.

Then there exists an $\Tilde H$-tree $\hat S$ which is a blowup of $S$ and dominates $S'$,
and such that  each edge stabilizer of $\hat S$ fixes an edge in $S$ or in $S'$.

Moreover, as an $F_N$-tree, $\hat S$ is a ($H$-invariant) $\Zmax$-splitting of $F_N$.
\end{cor}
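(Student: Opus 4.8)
\textbf{Proof strategy for Corollary~\ref{coro:blowup_zmax}.} The plan is to build $\hat S$ as an explicit blowup of $S$: since $S$ is elliptic with respect to $S'$, by Lemma~\ref{lemma:edge-stabilizers-cyclic} $S$ is also elliptic with respect to $S'$ viewed as an $\tilde H$-tree, so every $\tilde H$-edge stabilizer $\tilde H_e$ of $S$ fixes a point of $S'$. This is exactly the hypothesis needed to apply the standard blowup construction of Guirardel--Levitt (\cite[Proposition~2.2]{GL-jsj}) to the $\tilde H$-tree $S$: for each orbit of vertices $v$ of $S$ we replace $v$ by the minimal $\tilde H_v$-invariant subtree of $S'$ (or a point, when $\tilde H_v$ is already elliptic in $S'$), re-attaching each incident edge $e$ at a point of $S'$ fixed by $\tilde H_e$ (such a point exists by the ellipticity just recalled, and one checks it can be chosen equivariantly). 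The resulting $\tilde H$-tree $\hat S$ collapses onto $S$ and maps onto $S'$, hence dominates $S'$; moreover every edge of $\hat S$ is either a (copy of an) edge of $S$ or an edge coming from one of the inserted subtrees of $S'$, so its stabilizer fixes an edge of $S$ or of $S'$ respectively.

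First I would make the equivariant choice of attaching points precise: for each $\tilde H$-orbit of edges $e$ of $S$ incident on a vertex $v$, the stabilizer $\tilde H_e$ fixes a nonempty subtree $Y_e$ of the minimal $\tilde H_v$-subtree $S'_v$ of $S'$; pick one point $p_e\in Y_e$ in each orbit and propagate by $\tilde H$. When $\tilde H_v$ itself is elliptic in $S'$ there is nothing to blow up at $v$ and we just collapse $S'_v$ to a point. This gives a well-defined $\tilde H$-tree $\hat S$ together with a collapse $\hat S\to S$ and an equivariant map $\hat S\to S'$; minimality can be arranged by passing to the convex hull of the orbits of the attaching points inside each $S'_v$, as in the proof of Theorem~\ref{theo:tree-of-cyl}.

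It remains to verify the last sentence: as an $F_N$-tree, $\hat S$ is an $H$-invariant $\Zmax$-splitting. That it is $H$-invariant is immediate since $\hat S$ is by construction an $\tilde H$-tree. For the edge stabilizers: every edge of $\hat S$ comes either from an edge of $S$ or from an edge of some inserted copy of $S'$. In the first case its $F_N$-stabilizer is the $F_N$-stabilizer of an edge of $S$, which is maximal cyclic since $S\in\ZmaxS^H$. In the second case, the edge lies in the minimal subtree $S'_v$ of $S'$, so its $F_N$-stabilizer is an $F_N$-edge stabilizer of $S'$, again maximal cyclic. Hence all edge stabilizers of $\hat S$ are maximal cyclic, i.e.\ $\hat S$ is a $\Zmax$-splitting of $F_N$.

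The only genuinely delicate point is making the attaching-point choice equivariant and checking that distinct edges of $S$ incident on the same vertex do not force incompatible choices — but this is handled exactly as in \cite[Proposition~2.2]{GL-jsj} (and echoes the argument already used in the proof of Theorem~\ref{theo:tree-of-cyl}\ref{it_carac}): the set of valid attaching points for $e$ is a nonempty $\tilde H_e$-invariant subtree, and one picks orbit representatives. Everything else is bookkeeping about which edges of $\hat S$ descend from $S$ versus from $S'$.
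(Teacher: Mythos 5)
Your overall route is the same as the paper's: use Lemma~\ref{lemma:edge-stabilizers-cyclic} to upgrade ellipticity to the $\tilde H$-level and then invoke the refinement construction of \cite[Proposition~2.2]{GL-jsj}; up to and including the claim that every edge stabilizer of $\hat S$ fixes an edge of $S$ or of $S'$, your argument is fine (and matches the paper, which then simply refers to \cite[\S~9.5]{GL-jsj} for the last clause). The gap is in your direct verification of that last clause: you assert that an edge $e'$ of an inserted copy of $S'_v$ has $F_N$-stabilizer equal to an $F_N$-edge stabilizer of $S'$. It does not. Distinct copies of $S'_v$ are separated in $\hat S$ by the edges coming from $S$, so the stabilizer of such an edge is the intersection $G_v\cap G_{e'}$, where $G_v$ is the $F_N$-stabilizer of the blown-up vertex of $S$ and $G_{e'}$ the $F_N$-stabilizer of $e'$ in $S'$. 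This is only a subgroup of the maximal cyclic group $G_{e'}$, so ``maximal cyclic'' does not follow as you state it; in particular nothing in your argument excludes that this intersection is trivial, in which case $\hat S$ would fail to be a $\Zmax$-splitting altogether.

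Two further inputs are needed to close this. First, vertex stabilizers of the $\Zmax$-splitting $S$ are root-closed (if $g^n$ fixes $v$ with $gv\neq v$, then $g^n$ fixes the edge of $[v,gv]$ adjacent to $v$, whose stabilizer is root-closed and contained in $G_v$); hence $G_v\cap G_{e'}$ is either trivial or equal to $G_{e'}$, which settles both the infinite-cyclic and the root-closedness requirements in the nontrivial case. Second, the trivial case must be excluded, and this is where the standing hypothesis of this section --- that $H$ preserves no non-trivial free splitting of $F_N$ --- enters (you never use it): if some edge of the $\tilde H$-tree $\hat S$ had trivial $F_N$-stabilizer, collapsing all edges with non-trivial stabilizer would produce a non-trivial $H$-invariant free splitting, a contradiction. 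With these two points added (or with the appeal to \cite[\S~9.5]{GL-jsj} that the paper uses in their place), your proof is complete; as written, the ``Moreover'' part is not established.
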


\begin{proof}
Since $S$ is elliptic with respect to $S'$ as an $\tilde H$-tree by Lemma~\ref{lemma:edge-stabilizers-cyclic},
one can apply \cite[Proposition~2.2]{GL-jsj} to construct an $\tilde H$-tree $\hat S$
that refines $S$ and dominates $S'$ and whose edge stabilizers fix an edge in $S$ or $S'$ (as $\tilde H$-trees, therefore also as $F_N$-trees).
Thus, $\hat S$ can be viewed as an $H$-invariant $F_N$-tree, and $\hat S$ is still a $\Zmax$-splitting by \cite[\S~9.5]{GL-jsj}.  
\end{proof}

We can now prove the existence of $\UZ_H$.

\begin{proof}[Existence of $\UZ_H$.]
Note that the trivial splitting is $\ZmaxS^H$-universally elliptic.
By Lem\-ma~\ref{lem_bound_zmax}, there is a bound on the number of orbits of edges of a $\Zmax$-splitting of $F_N$ without vertex of valence 2.
Therefore, there exists an $H$-invariant $\ZmaxS^H$-universally elliptic $\Zmax$-splitting $U$ such that no proper refinement of $U$ satisfies this property. We claim that $U$ satisfies Assertion (2). Indeed, consider $U'\in \ZmaxS^H$ any $\ZmaxS^H$-universally elliptic splitting. 
Apply Corollary \ref{coro:blowup_zmax} to construct
an $\tilde H$-tree $\hat U$ that refines $U$ and dominates $U'$.
Since edge stabilizers of $\hat U$ fix an edge in $U$ or $U'$,
$\hat U$ is $\ZmaxS^H$-universally elliptic. By maximality of $U$, we thus have $\hat{U}=U$, showing that $U$ dominates $U'$. 
This proves that $U$ satisfies Assertions (1) and (2).

Now let $\UZ_H$ be the tree of cylinders of $U$, viewed as a $\Zmax$-splitting of $F_N$.
By Lemma~\ref{lemma:cyl_Zmax}, $\UZ_H$
 is a $\Zmax$-splitting in the same deformation space as $U$, and edge stabilizers of $\UZ_H$ are edge stabilizers of $U$, so $\UZ_H$ is  $\ZmaxS^H$-universally elliptic. It follows that $\UZ_H$ satisfies (1), (2) and (3).
\end{proof}

\subsection{Description and non-triviality of $\UZ_H$}\label{sec:non-trivial}

The goal of the present section is to prove the non-triviality of $\UZ_H$ when $H$ has no invariant  non-trivial free splittings and $\ZmaxS^H\neq\emptyset$ (Theorem~\ref{thm_JSJ} below). This relies on arguments coming from JSJ theory, the key tool being to understand \emph{flexible} vertices of $\UZ_H$, defined as follows, and show that they come from QH situations (all relevant definitions are given right below).

\begin{de}\label{de:flexible}
A vertex $v$ of  $\UZ_H$ is \emph{flexible} if its $F_N$-stabilizer $G_v$ is not  $\ZmaxS^H$-universally elliptic. 
\end{de}

\begin{figure}[ht]
  \centering
  \includegraphics{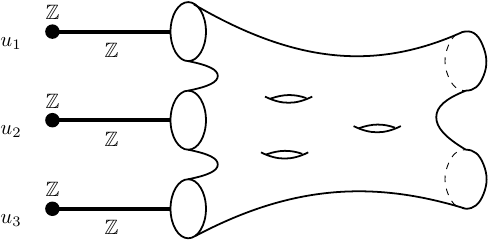}
  \caption{A socket graph of groups with 3 proper sockets and 2 improper ones.}
\end{figure}

A \emph{socket graph of groups} $\Lambda$ is a minimal tree of groups whose underlying graph is a star over a central vertex $v$,
where $v$ is a clean QH vertex (see Definition \ref{dfn_cleanQH}) and the other vertex groups are cyclic (see \cite{Sel}).
Equivalently, denoting by $u_1,\dots,u_r$ the vertices of $\Lambda\setminus\{v\}$, and by $e_i$ the edge joining $u_i$ to $v$, 
\begin{itemize}
\item the group $G_v=\pi_1(\Sigma)$ is identified to the fundamental group of a connected compact hyperbolic surface  $\Sigma$ with $n\geq r$ boundary components, each edge group $G_{e_i}$ being conjugate in $\pi_1(\Sigma)$  to $B_i$, where $B_i$ is the fundamental group of the $i$-th boundary component of $\Sigma$;
\item each vertex group $G_{u_i}$ is cyclic and $[G_{u_i}:G_{e_i}]\geq 2$.
\end{itemize}
We note that if $\pi_1(\Lambda)$ is a free group, then $n>r$.

For $i\leq n$, let $C_i$ be the maximal cyclic subgroup of $\pi_1(\Lambda)$ containing $B_i$.
Note that $C_i=G_{u_i}$ for $i\leq r$ and $C_i=B_i$ for $i>r$.
The conjugacy classes of the groups $C_i$ are called the \emph{sockets}.
A socket is \emph{improper} if $C_i=B_i$. 

\begin{rk}\label{rk:zmaxise_trivial}
  If $\Lambda$ is a socket graph of groups, then the associated $\Zmax$-splitting $\Lambda_\Zmax$ defined in Definition \ref{dfn_zmaxise}
  is trivial.
The following lemma gives a converse.  
\end{rk}

\begin{lemma}\label{lem_QH_Zmax} Let $S$ be a splitting of $F_N$ with infinite cyclic edge stabilizers. Assume that $S$ has at least one QH vertex with trivial fiber, and that every edge is adjacent to at least one vertex with non-abelian stabilizer.

If the associated $\Zmax$-splitting $S_\Zmax$ (Definition \ref{dfn_zmaxise}) is trivial,
then $S/F_N$ is a socket graph of groups. 
\end{lemma}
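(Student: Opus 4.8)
\textbf{Proof plan for Lemma~\ref{lem_QH_Zmax}.}

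The plan is to combine Lemma~\ref{lem:zmaxise_trivial} with the structural information coming from the QH vertex. First I would apply Lemma~\ref{lem:zmaxise_trivial}: since $S$ has infinite cyclic edge stabilizers, $S_\Zmax$ is trivial, and every edge is adjacent to a vertex with non-abelian stabilizer, the conclusion of that lemma tells us that $S/F_N$ is a star over a unique vertex $v$ whose stabilizer is non-abelian, all other vertices $u_1,\dots,u_n$ are joined to $v$ by a single edge $e_i$, each $G_{u_i}$ is cyclic, and $[G_{u_i}:G_{e_i}]\geq 2$. So the underlying graph already has the shape required by the definition of a socket graph of groups, and what remains is to identify $(G_v,\Inc_v)$ with a clean QH vertex, i.e.\ with the fundamental group of a compact hyperbolic surface whose boundary subgroups are (up to the injection from incident edges) exactly the incident edge groups.

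Next I would exploit the hypothesis that $S$ has at least one QH vertex with trivial fiber. Since the unique non-abelian vertex of $S/F_N$ is $v$, and a QH vertex with non-trivial fiber would have a non-abelian-by-something structure — in any case a QH vertex with trivial fiber carries a surface group, and an abelian QH vertex group attached by infinitely-many-index... — the point is that the only candidate for the QH vertex is $v$ itself: every abelian vertex of $S$ is one of the $u_i$, whose stabilizer is cyclic, and a cyclic group being QH with trivial fiber would force it to be a boundary subgroup of a surface, which is inconsistent with it being a non-boundary vertex group of positive-index extension type. Hence $v$ is the QH vertex, so $G_v\simeq\pi_1(\Sigma)$ for a compact hyperbolic $2$-orbifold $\Sigma$ with boundary (with trivial fiber), and $(G_v,\Inc_v)$ is QH; I then need to check the extra ``cleanness'' conditions of Definition~\ref{dfn_cleanQH}: that $\Sigma$ is a genuine surface (not an orbifold with cone points), that each incident edge group is a \emph{maximal} boundary subgroup of $\pi_1(\Sigma)$, and that distinct incident edges give distinct boundary components.

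For cleanness: a cone point of order $k$ in $\Sigma$ would produce a finite cyclic subgroup of $G_v$, but $G_v\subseteq F_N$ is torsion-free, so $\Sigma$ has no cone points and is an honest surface. For the boundary/maximality conditions I would argue as in the JSJ literature (e.g.\ \cite{GL-jsj}): the incident edge groups $G_{e_i}$ are infinite cyclic subgroups of $\pi_1(\Sigma)$ which, by the QH property, are each contained in a boundary subgroup; since $S$ is minimal (hence has no valence-one vertex with the wrong incidence) and each $G_{e_i}$ has index $\geq 2$ in the adjacent $G_{u_i}$, a short argument shows each $G_{e_i}$ is in fact a \emph{maximal} boundary subgroup (otherwise one could push the extra index into the surface and contradict triviality of $S_\Zmax$, or minimality), and that the map $\{e_1,\dots,e_n\}\to\{\text{boundary components of }\Sigma\}$ is injective (two incident edges landing in conjugate boundary subgroups would, after Zmaxising, be identified, making $S_\Zmax$ non-trivial or violating acylindricity, cf.\ Lemma~\ref{lemma:acylindrical-zmax}). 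This is precisely the data of a clean QH vertex, and together with the star shape and the cyclic vertices $G_{u_i}$ with $[G_{u_i}:G_{e_i}]\geq 2$, it exhibits $S/F_N$ as a socket graph of groups.

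The main obstacle I expect is the last step: carefully verifying that the incident edge groups are \emph{maximal} boundary subgroups and that distinct incident edges hit distinct boundary components, using only triviality of $S_\Zmax$, minimality of $S$, and the QH hypothesis — rather than quoting a black-box JSJ statement. One has to rule out a boundary component of $\Sigma$ carrying an edge group of index $\geq 2$ inside the boundary subgroup, and rule out two edges of $S/F_N$ attached along the same boundary curve; both are exactly the kind of redundancy that a non-trivial $S_\Zmax$ would detect, so the argument is to run the equivalence relation of Definition~\ref{dfn_zmaxise} and observe that any such failure would make $S_\Zmax$ non-trivial, contradicting the hypothesis. Once that bookkeeping is done, the identification of $S/F_N$ with a socket graph of groups is immediate from the definitions.
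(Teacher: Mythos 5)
Your first half matches the paper exactly: apply Lemma~\ref{lem:zmaxise_trivial} to get the star shape with a unique non-abelian central vertex $v$, observe that a QH vertex with trivial fiber has non-elementary (hence non-abelian) stabilizer so the QH vertex must be $v$, and rule out cone points (and reflectors) using torsion-freeness of $F_N$, so the underlying orbifold is a genuine surface. The gap is in the step you yourself flag as the main obstacle, and your proposed mechanism for it is not the right one. You want to verify that each incident edge group is a \emph{maximal} boundary subgroup of $\pi_1(\Sigma)$ and that distinct incident edges hit distinct boundary components, and you propose to do this by showing that any failure would make $S_{\Zmax}$ non-trivial (or would violate minimality, or ``acylindricity, cf.\ Lemma~\ref{lemma:acylindrical-zmax}''). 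None of these would deliver the contradiction. The Zmax-isation of Definition~\ref{dfn_zmaxise} only folds together \emph{more} edges: in a hypothetical configuration where an edge group sits with index $\ge 2$ in the boundary subgroup, or where two socket-type edges attach along the same boundary component, the extra roots only enlarge the equivalence classes, and the quotient would still collapse to a point exactly as it does for an honest socket graph of groups (Remark~\ref{rk:zmaxise_trivial}); so triviality of $S_{\Zmax}$ cannot detect the failure. Lemma~\ref{lemma:acylindrical-zmax} is also inapplicable, since it concerns $\Zmax$-splittings, i.e.\ splittings with \emph{maximal} cyclic edge stabilizers, whereas in the present splitting $S$ the incident edge groups are precisely \emph{not} maximal (they have index $\ge 2$ in the adjacent cyclic vertex groups $G_{u_i}$).

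The correct argument — and the one the paper uses, in one line — is purely about the ambient free group: roots are unique in $F_N$. If an incident edge group $G_{e_i}$ were a proper subgroup of the maximal boundary subgroup $\langle b\rangle$, or if two incident edges were attached along the same boundary component, then the boundary generator $b$ (or a common power of it) would acquire two distinct roots, one visible on the surface side and one inside the cyclic vertex group $G_{u_i}$ with $[G_{u_i}:G_{e_i}]\ge 2$ supplied by Lemma~\ref{lem:zmaxise_trivial}; this is impossible in a free group. In other words, the offending configurations simply cannot be realized inside $F_N$ at all, so there is nothing for $S_{\Zmax}$ to detect: the hypothesis of triviality of $S_{\Zmax}$ has already done all its work in producing the star shape via Lemma~\ref{lem:zmaxise_trivial}, and the cleanness of the QH vertex comes from the free group, not from the splitting.
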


\begin{proof}
By Lemma~\ref{lem:zmaxise_trivial}, $S/F_N$ is a tree of groups which is a star with all vertex groups cyclic except the central one. The central vertex $v$ is QH with trivial fiber by assumption.
There remains to prove that $v$ is clean QH. First, the underlying orbifold $\Sigma$ is a surface since $F_N$ is torsion-free and the fiber is trivial. Second, no two edges of $S/F_N$ are incident on the same boundary component of $\Sigma$, as otherwise we get an element of $F_N$ that has two distinct roots in $F_N$, which is not possible. For the same reason, each incident edge group is a maximal boundary subgroup of $\pi_1(\Sigma)$.
\end{proof}

\begin{de}\label{dfn_sockets}
Let $A$ be a finitely generated group, and let $\calp$ be a finite collection of subgroups of $A$. The pair $(A,\calp)$ is \emph{weakly QH with sockets} if $A$ splits as a socket graph of groups $A\simeq \pi_1(\Lambda)$ 
and $\calp$ is a subset of the conjugacy classes of the socket groups. 

It is \emph{QH with sockets} if there exists a socket graph of groups as above where additionally $\calp$ contains all conjugacy classes of improper sockets. Such a decomposition of $A$ is called a \emph{socket decomposition} of $(A,\calp)$.

If $\Lambda$ is a socket decomposition of $(A,\calp)$, a \emph{Möbius socket} of $\Lambda$ is a socket $C_i$ which is not in $\calp$ and such that $[C_i:B_i]=2$.
\end{de}

\begin{rk}\label{rk:weak-qh} 
If $(A,\calp)$ is weakly 
QH with sockets but not QH with sockets, then there is a free splitting
of $A$ relative to $\calp$ dual to a properly embedded arc in the underlying surface, whose endpoints are on a boundary component corresponding to an improper socket not appearing in $\calp$.
Thus, if $(A,\calp)$ is weakly QH with sockets and if there is no free splitting
of $A$ relative to $\calp$, then $(A,\calp)$ is in fact QH with sockets.
\end{rk}

\begin{figure}[ht]
  \centering
  \includegraphics[scale=0.9]{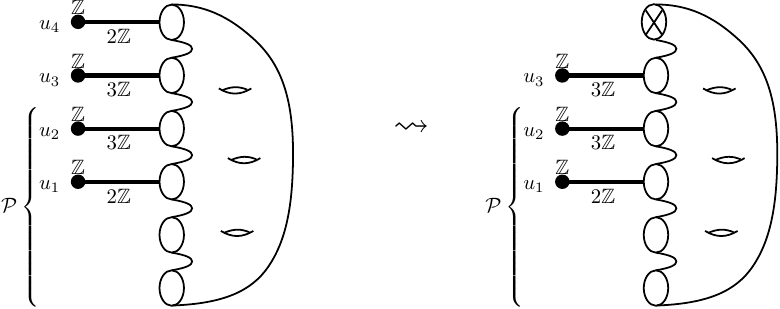}
  \caption{$u_4$ is the only Möbius socket, and one can replace it by a Möbius band. 
  The decomposition on the right is the cyclic JSJ decomposition relative to $\calp$.}
  \label{fig:socket}
\end{figure}

\begin{rk} 
If $(A,\calp)$ is QH with sockets, then one can choose a socket decomposition of $(A,\calp)$ without any Möbius socket (see Figure~\ref{fig:socket}).
Indeed, start with any socket decomposition $\Lambda$ of $(A,\calp)$ 
with underlying surface $\Sigma$.
If there are Möbius sockets (which are by definition not in $\calp$), one can remove the corresponding edges of $\Lambda$
and  glue back a Möbius bands on the corresponding boundary components of $\Sigma$.

If $\Lambda$ is a socket decomposition of $(A,\calp)$, then $\Lambda$ is its cyclic JSJ decomposition by \cite[Proposition~4.24]{DG_isomorphism}. In particular, it is unique.
\end{rk}

In the following statement, we view $\UZ_H$ as an $F_N$-tree.
Given a vertex $v\in \UZ_H$, we denote by $\Inc_v$ the set of all $F_N$-stabilizers of edges that are incident on $v$.

We denote by $H_{|G_v}\subset \Out(G_v)$ the subgroup obtained by restriction of outer automorphisms in $H$.
More precisely, since $\UZ_H$ is $H$-invariant and $H$ acts trivially on the quotient graph $\UZ_H/F_N$ (Proposition~\ref{prop:ia-zmax}),
the conjugacy class of $G_v$ is $H$-invariant. Thus for every $\tilde\alpha\in H$, there is a representative
$\tilde \alpha\in \Aut(F_N)$ such that $\tilde\alpha(G_v)=G_v$, and the image of $\tilde \alpha_{|G_v}$ in
$\Out(G_v)$ does not depend on the choice of $\tilde\alpha$ because $G_v$ is its own normalizer, we denote it by $\alpha_{|G_v}$.
The map $\alpha\mapsto \alpha_{|G_v}$ defines a morphism $H\to \Out(G_v)$ whose image we denote by $H_{|G_v}$.
An element $\alpha$ lies in its kernel if it has a lift $\tilde \alpha\in \Aut(F_N)$ such that $\tilde\alpha_{|G_v}=\id$; we then
say that $\alpha$ \emph{acts trivially} on $G_v$.

\begin{theo}\label{thm_JSJ}
Assume that $H\subset\IA$ (is infinite and) preserves no  non-trivial free splitting, and that $\ZmaxS^H\neq \es$. Then $\UZ_H$ is non-trivial. 

Moreover, if $v$ is a flexible vertex of $\UZ_H$, then 
$(G_v,\Inc_v)$ is QH with sockets, and $H_{|G_v}=\{1\}$.
\end{theo}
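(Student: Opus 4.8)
The plan is to deduce Theorem~\ref{thm_JSJ} from JSJ theory applied to $\tilde H$, together with the specific features of the situation: $H$ is infinite, $H$ preserves no non-trivial free splitting, and $\mathcal Z_{\mathrm{max}}\mathrm{S}^H\neq\emptyset$. First I would recall that $\UZ_H$ is, by the construction in Section~\ref{sec:existence-uniqueness}, the tree of cylinders of a maximal $\mathcal Z_{\mathrm{max}}\mathrm{S}^H$-universally elliptic $H$-invariant $\Zmax$-splitting $U$, which we view as an $\tilde H$-tree with cyclic edge groups (intersecting $F_N$ in maximal cyclic subgroups). The point is that $U$ is essentially the (cyclic) JSJ tree of $\tilde H$ relative to the family of subgroups generated by the stabilizers occurring in $\mathcal Z_{\mathrm{max}}\mathrm{S}^H$-splittings: its flexible vertices are QH by the general JSJ machinery (e.g.\ \cite{GL-jsj}), and we must turn this abstract QH vertex of an $\tilde H$-tree into the concrete statement about the $F_N$-tree $\UZ_H$.

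Second, I would handle the flexible vertex description. Let $v$ be a flexible vertex of $\UZ_H$, with $\tilde H_v$ its $\tilde H$-stabilizer and $G_v=\tilde H_v\cap F_N$ its $F_N$-stabilizer. By the JSJ theory of $\tilde H$, the pair $(\tilde H_v,\widetilde{\Inc}_v)$ is QH with (possibly) a fiber; one then argues that the fiber is trivial because $F_N$ is torsion-free and because any QH vertex carrying an infinite normal subgroup not coming from $F_N$ would force extra invariant splittings. Concretely: the restriction of the $\tilde H_v$-action to $G_v$ is a $\Z$-splitting of $G_v$; I would show that $G_v$ itself is a QH vertex group relative to $\Inc_v$, i.e.\ $(G_v,\Inc_v)$ is weakly QH with sockets, using Lemma~\ref{lem:zmaxise_trivial} and Lemma~\ref{lem_QH_Zmax} applied to the restriction to $G_v$ of (a suitable blow-up of) $U$, noting that $(G_v)_\Zmax$ restricted this way must be trivial (otherwise we could enlarge the universally elliptic splitting). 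Then, since $H$ preserves no free splitting of $F_N$, Remark~\ref{rk:weak-qh} upgrades ``weakly QH with sockets'' to ``QH with sockets'': a free splitting of $G_v$ relative to $\Inc_v$ dual to an arc in the underlying surface would combine with $\UZ_H$ (Corollary~\ref{coro:blowup_zmax}-style blow-up/collapse) to give an $H$-invariant non-trivial free splitting of $F_N$, a contradiction. For $H_{|G_v}=\{1\}$: since $(G_v,\Inc_v)$ is QH with sockets, any $\alpha\in H$ restricts to an automorphism of the surface group $\pi_1(\Sigma)$ fixing the peripheral structure; the action of $H$ on homology mod $3$ being trivial (as $H\subseteq\IA$) forces, via the standard fact that $\mathrm{Mod}(\Sigma)$ acting trivially on $H_1(\Sigma;\Z/3)$ is trivial, that $\alpha_{|G_v}$ is inner, i.e.\ $\alpha$ acts trivially on $G_v$. (Here I would use the relevant mod-$3$ torsion-freeness statements of Section~\ref{sec_IA} and the fact, guaranteed by Proposition~\ref{prop:ia-zmax}, that $H$ fixes $v$ and its incident edges.)

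Third, non-triviality of $\UZ_H$. Suppose for contradiction that $\UZ_H$ is trivial, i.e.\ reduced to a point $v$ with $G_v=F_N$. Then $F_N$ is $\mathcal Z_{\mathrm{max}}\mathrm{S}^H$-universally elliptic as a whole, which is absurd unless $\mathcal Z_{\mathrm{max}}\mathrm{S}^H=\emptyset$: indeed any non-trivial $S\in\mathcal Z_{\mathrm{max}}\mathrm{S}^H$ has an edge, and $F_N$ cannot be elliptic in a non-trivial splitting of $F_N$. So $\UZ_H$ is non-trivial. (Alternatively, one can observe directly that the maximal universally elliptic splitting $U$ is non-trivial since it dominates some non-trivial $\Zmax$-splitting, and its tree of cylinders is non-trivial because by Lemma~\ref{lemma:cyl_Zmax} it lies in the same deformation space; I would phrase whichever is cleanest.)

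The main obstacle I expect is the second step: making rigorous that the abstract JSJ-theoretic QH vertex of the $\tilde H$-tree descends to a QH-with-sockets structure on the $F_N$-vertex group $G_v$ relative to $\Inc_v$, and in particular controlling the fiber and the ``improper socket'' subtlety. Specifically, ruling out free splittings of $G_v$ hanging off an improper or Möbius boundary component — which is exactly what the hypothesis ``no $H$-invariant free splitting'' buys us — requires carefully checking that such a free splitting of $G_v$ can be blown up inside $\UZ_H$ and then collapsed to a genuine non-trivial $H$-invariant free splitting of $F_N$ (using $\Out(F_N)$-equivariance of the constructions and the compatibility/blow-up lemmas). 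The mod-$3$ argument for $H_{|G_v}=\{1\}$ is comparatively routine given Section~\ref{sec_IA}, but one must be slightly careful about non-orientable surfaces and about boundary-peripheral subgroups when invoking triviality of the mod-$3$ mapping class group action on homology.
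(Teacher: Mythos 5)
There are genuine gaps, and the most serious one is exactly at the point you treat as easy. Your non-triviality argument ("if $\UZ_H$ is a point then $F_N$ is $\ZmaxS^H$-universally elliptic, absurd") confuses universal ellipticity of a \emph{splitting} (a condition on its edge stabilizers) with ellipticity of the whole group: the trivial splitting is vacuously $\ZmaxS^H$-universally elliptic, so triviality of $\UZ_H$ only says that no non-trivial $H$-invariant $\Zmax$-splitting is universally elliptic — which can perfectly well happen a priori (this is what occurs for surface groups, whose cyclic JSJ is trivial despite $\Zmax$-splittings existing). Your "alternative" is also wrong: the maximal universally elliptic splitting $U$ dominates only the universally elliptic splittings in $\ZmaxS^H$, not all of them. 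In the paper, non-triviality is \emph{deduced from} the flexible-vertex statement (Remark~\ref{rk:2imp1}): if $\UZ_H$ were a point, the whole group would be a flexible vertex, so $H=H_{|F_N}$ would be trivial, contradicting that $H$ is infinite. So the hard analytic content cannot be bypassed.

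For the flexible-vertex statement itself, two steps of your plan do not go through as written. First, you cannot simply quote "the general JSJ machinery" for the $\tilde H$-tree: the relevant group ($\tilde H_v$, resp.\ the preimage $\tilde K$ of $H_{|G_v}$ in $\Aut(G_v)$) is not known to be finitely presented, which is why the paper reproves the flexible-vertex description by hand, running the Fujiwara--Papasoglu core/regular-neighborhood construction for a pair of trees (Proposition~\ref{prop:QH}), with the extra payoff that $K=H_{|G_v}$ is \emph{finite} (via stabilizers of squares of the core acting trivially). Second, your derivation of $H_{|G_v}=\{1\}$ rests on a false "standard fact": the kernel of $\Mod(\Sigma)\to\Aut(H_1(\Sigma;\bbZ/3))$ is torsion-free (Serre) but certainly not trivial — it has finite index — so triviality of the mod-$3$ action gives nothing unless you have \emph{already proved that $H_{|G_v}$ is finite}, which your proposal never establishes (and this finiteness is precisely the extra output of the core argument that prevents citing off-the-shelf JSJ results). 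The paper then passes from finite to trivial not via surface homology but via Handel--Mosher's $\IA$ theorem on periodic conjugacy classes combined with the fact that a generic $u\in G_v$ has $\Aut(G_v)$-stabilizer generated by $\ad_u$. Your remaining points — the reduction of the hypotheses to the vertex group (no invariant free splitting rel.\ $\Inc_v$, existence of invariant $\Zmax$-splittings, no universally elliptic one), the fiber-triviality, and the upgrade from weakly QH to QH with sockets using the absence of invariant free splittings — are in line with the paper's Lemma~\ref{lemma:restriction} and Remark~\ref{rk:weak-qh}, but without the core argument and the finiteness of $H_{|G_v}$ the proof does not close.
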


\begin{rk}\label{rk:2imp1}
  The second assertion implies the first. Indeed,
if $\UZ_H$ is reduced to a single point $v$, then $v$ is flexible because there exists a non-trivial $H$-invariant $\Zmax$-splitting
(we assume $\ZmaxS^H\neq \es$).
Since $G_v=F_N$, $H_{|G_v}=H$ so  $H$ is trivial by the second assertion, a contradiction.
\end{rk}

\begin{lemma}\label{lemma:restriction}
  Let $v$ be a flexible vertex of $\UZ_H$, and let $\calt$ be the set of all non-trivial
   $\Zmax$-splittings of $G_v$ relative to $\Inc_v$ which are $H_{|G_v}$-invariant. Then
  \begin{itemize}
  \item $G_v$ has no non-trivial $H_{|G_v}$-invariant free splitting relative to $\Inc_v$;
  \item $\calt\neq\es$ \ie $G_v$ has some non-trivial $H_{|G_v}$-invariant $\Zmax$-splitting relative to $\Inc_v$;
  \item no splitting in $\calt$ is $\calt$-universally elliptic.
  \end{itemize}
\end{lemma}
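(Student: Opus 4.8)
\textbf{Proof plan for Lemma~\ref{lemma:restriction}.}

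The overall strategy is to transfer each of the three failures from the level of $F_N$ to the level of the vertex group $G_v$, using the standard blow-up/collapse machinery for $H$-invariant splittings together with the JSJ-type properties (1)--(3) of $\UZ_H$ from Theorem~\ref{theo:JSJ_zmax}.

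First I would prove the first bullet. Suppose $S_v$ is a non-trivial $H_{|G_v}$-invariant free splitting of $G_v$ relative to $\Inc_v$. Since $v$ is a flexible vertex, $G_v$ is not $\ZmaxS^H$-universally elliptic, so in particular there is at least one edge incident on $v$ in $\UZ_H$ whose stabilizer is non-trivial (cyclic, maximal). Viewing everything over $\tilde H$ as in Section~\ref{sec:tildeH}, I would blow up the vertex $v$ of $\UZ_H$ using the $\tilde H$-action extending $G_v\actson S_v$ (the incident edge groups are elliptic in $S_v$ because they lie in $\Inc_v$, and Lemma~\ref{lemma:acylindrical2}/the argument of Corollary~\ref{coro:blowup_zmax} lets one pass from $F_N$-ellipticity to $\tilde H$-ellipticity); this produces an $H$-invariant splitting $\hat S$ of $F_N$ refining $\UZ_H$. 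Collapsing every edge of $\hat S$ coming from $\UZ_H$ leaves an $H$-invariant splitting of $F_N$ whose edges have trivial stabilizer (they were trivial-stabilizer edges of $S_v$), hence a non-trivial $H$-invariant free splitting of $F_N$ — contradicting the hypothesis that $H$ preserves no non-trivial free splitting. The only subtlety here is checking that the collapsed tree is really non-trivial, which follows because $S_v$ was non-trivial as a splitting of $G_v$ relative to $\Inc_v$, so at least one $G_v$-orbit of edges survives.

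Next, the second bullet. Since $v$ is flexible, $G_v$ is not $\ZmaxS^H$-universally elliptic, so there is some $S'\in\ZmaxS^H$ in which $G_v$ is not elliptic. Let $T'$ be the minimal $G_v$-invariant subtree of $S'$; then $T'$ is a non-trivial $\Zmax$-splitting of $G_v$ (its edge stabilizers are intersections of $G_v$ with edge stabilizers of $S'$, which are trivial or maximal cyclic in $F_N$, hence trivial or maximal cyclic in $G_v$, and actually non-trivial on the core by minimality), and each group of $\Inc_v$ is elliptic in $S'$ hence in $T'$, so $T'$ is relative to $\Inc_v$. Moreover $T'$ is $H_{|G_v}$-invariant: for $\tilde\alpha\in\tilde H$ the equivariant isometry $I_{\tilde\alpha}$ of $S'$ preserves $G_v$'s minimal subtree because $\tilde\alpha$ preserves $G_v$ up to conjugacy and $\UZ_H$ is $H$-invariant with trivial action on $\UZ_H/F_N$ (Proposition~\ref{prop:ia-zmax}), so $I_{\tilde\alpha}$ maps $T'$ to $g\cdot T'$ for some $g\in F_N$, which after composing with $\ad_g^{-1}$ gives the required equivariant isometry. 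If $T'$ is genuinely a $\Zmax$-splitting, we are done; if some edge stabilizer collapsed to trivial, $T'$ is at least a $\calz$-splitting relative to $\Inc_v$ that is not a free splitting (some edge has non-trivial stabilizer, because $G_v$ is not elliptic in $S'$ and $S'$ is a $\Zmax$-splitting, so $G_v$ crosses an edge with non-trivial stabilizer), and collapsing its trivial-stabilizer edges yields a non-trivial $\Zmax$-splitting — unless that collapse is total, which would contradict the first bullet. Hence $\calt\neq\es$.

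Finally, the third bullet. Suppose some $S_v^0\in\calt$ were $\calt$-universally elliptic. I would again blow up $v$ in $\UZ_H$ using $S_v^0$ to get an $H$-invariant refinement $\hat S$ of $\UZ_H$; collapsing the $\UZ_H$-edges gives an $H$-invariant splitting $S^0$ of $F_N$ whose edge stabilizers are those of $\UZ_H$ together with the (maximal cyclic) edge stabilizers of $S_v^0$, hence $S^0\in\ZmaxS^H$. The point is that $S^0$ is then $\ZmaxS^H$-universally elliptic and strictly dominates $\UZ_H$ (it strictly refines it, since $S_v^0$ is non-trivial relative to $\Inc_v$): this will contradict the maximality property~(2) of $\UZ_H$ in Theorem~\ref{theo:JSJ_zmax}, provided one first checks that $S^0$ is $\ZmaxS^H$-universally elliptic. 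This last check is the main obstacle: one must show each edge stabilizer of $S^0$ is elliptic in every $S'\in\ZmaxS^H$. For the edges coming from $\UZ_H$ this holds by property~(1). For the edges coming from $S_v^0$: given $S'\in\ZmaxS^H$, by the first bullet the restriction $T'$ (minimal $G_v$-subtree of a suitable refinement dominating both $\UZ_H$ and $S'$, as built via Corollary~\ref{coro:blowup_zmax}) is a non-trivial $H_{|G_v}$-invariant $\Zmax$- or $\calz$-splitting of $G_v$ relative to $\Inc_v$; passing to its $\Zmax$-ification as in Definition~\ref{dfn_zmaxise} (or collapsing trivial edges) places it in $\calt$, so $S_v^0$ being $\calt$-universally elliptic forces its edge stabilizers to be elliptic in it, hence in $S'$. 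Assembling these, $S^0$ is $\ZmaxS^H$-universally elliptic, contradicting~(2), and the lemma is proved.
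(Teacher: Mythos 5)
Your overall route is essentially the paper's: you prove a blow-up claim (any $H_{|G_v}$-invariant splitting of $G_v$ relative to $\Inc_v$ can be used to blow up $\UZ_H$ into an $H$-invariant splitting of $F_N$, via the $\tilde H$-formalism and Lemma~\ref{lemma:acylindrical2}), deduce the first bullet from it, obtain the second bullet by restricting to the minimal $G_v$-subtree of an invariant $\Zmax$-splitting in which $G_v$ is not elliptic, and get the third bullet by playing a blow-up along $S_v^0$ against property~(2) of Theorem~\ref{theo:JSJ_zmax}; your third bullet is the contrapositive of the paper's direct argument. However, two points in that last step need repair.

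First, your description of $S^0$ is internally inconsistent: if you literally collapse the $\UZ_H$-edges of $\hat S$, the resulting tree neither refines $\UZ_H$ nor has the edge stabilizers you list, and the comparison with $\UZ_H$ breaks down; what you must use is the refinement $\hat S$ itself, and the reason property~(2) is then violated is not ``strict refinement'' per se but the fact that $G_v$ is elliptic in $\UZ_H$ while it is not elliptic in $\hat S$ (because $S_v^0$ is a non-trivial minimal $G_v$-tree), so $\UZ_H$ cannot dominate $\hat S$. Second, and more substantially, the inference ``the edge stabilizers of $S_v^0$ are elliptic in the $\Zmax$-isation (or trivial-edge collapse) of $T'$, hence in $S'$'' goes the wrong way: collapsing edges only enlarges the collection of elliptic subgroups, so ellipticity in the collapsed tree does not pull back to $T'$, let alone to $S'$. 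The repair uses your own first bullet: work directly with the minimal $G_v$-subtree of $S'$ (no common refinement is needed; and if $G_v$ is elliptic in $S'$ the required ellipticity is immediate since the edge groups of $S_v^0$ lie in $G_v$). That subtree cannot contain any edge with trivial $G_v$-stabilizer, since otherwise collapsing the non-trivially stabilized edges would produce a non-trivial $H_{|G_v}$-invariant free splitting of $G_v$ relative to $\Inc_v$; as $G_v$ is root-closed, its edge stabilizers are then $\Zmax$ in $G_v$, so the subtree already lies in $\calt$, and being a subtree of $S'$, ellipticity in it gives ellipticity in $S'$ directly — this is exactly how the paper argues. (A minor further remark: the parenthetical in your second bullet claiming $T'$ is not a free splitting ``because $G_v$ crosses an edge with non-trivial stabilizer'' is incorrect as stated, since an edge stabilizer of $S'$ may meet $G_v$ trivially; but it is not load-bearing, as the same bullet-one argument covers that case.)
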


\begin{proof}
  We claim that any $H_{|G_v}$-invariant cyclic splitting $S$ of $G_v$ relative to $\Inc_v$ can be used to blow up
  $\UZ_H$ into an $H$-invariant cyclic splitting $\hat U$ of $F_N$. Indeed, view $\UZ_H$ as an $\tilde H$-tree,
  and let $\tilde H_v$ be the stabilizer of $v$ in $\tilde H$.
  Since $S$ is $H_{|G_v}$-invariant,
  it can be viewed as a $\tilde K$-tree where $\tilde K\subset \Aut(G_v)$ is the preimage of $H_{|G_v}$
  in $\Aut(G_v)$. We note that $\tilde K$ coincides with the image of 
  the natural restriction map $r:\tilde H_v \ra \Aut(G_v)$. 
  The map $r:\tilde H_v\ra \tilde K$ allows to view the $\tilde K$-tree $S$ as an $\tilde H_v$-tree.
  There remains to check that the $\tilde H$-stabilizer $\tilde H_e$ of each edge $e$ incident on $v$ fixes a point in $S$.
  Since $S$ is relative to $\Inc_v$, the $F_N$-stabilizer $G_e$ of $e$ fixes a point in $S$.
  By Lemma \ref{lemma:acylindrical2} (applied to $\tilde K\actson S$ and $\tilde B=r(\tilde H_e)$), 
$\tilde H_e$ is elliptic in $S$ which concludes the proof of the claim.

  The claim implies the first assertion: any non-trivial $H_{|G_v}$-invariant free splitting of $G_v$ relative to $\Inc_v$
  would induce an $H$-invariant splitting $\hat U$ of $F_N$ having an edge with trivial $F_N$-stabilizer, contradicting
  the assumption that $H$ preserves no non-trivial free splitting.
  
  Since $G_v$ is flexible, there exists an $H$-invariant $\Zmax$-splitting $S$ of $F_N$
in which $G_v$ is not elliptic.
Let $S_v\subset S$ be the minimal $G_v$-invariant subtree, with its action by $G_v$.
Then $S_v$ is an $H_{|G_v}$-invariant $\Zmax$-splitting  of $G_v$, and since $\UZ_H$ is universally elliptic, the groups in $\Inc_v$
fix a point in $S$ hence in $S_v$. This proves the second assertion.

 Finally, let $S\in \calt$; we aim to prove that $S$ is not $\calt$-universally elliptic. Let $\hat U$ be an $H$-invariant blowup of $\UZ_H$ given by the claim above, using $S$ to blow up $\UZ_H$ at  $v$. 
Note that $G_v$ is root closed in $H$ because $\UZ_H$ is a $\Zmax$-splitting.
Edge stabilizers of $\hat U$ are $\Zmax$ because $S$ is a $\Zmax$-splitting and $G_v$ is root closed.
Since  $\UZ_H$ does not dominate $\hat U$, $\hat U$ is not  $\ZmaxS^H$-universally elliptic
so one can find some edge $e\subset S$ whose stabilizer $G_e\subset G_v$ is not elliptic 
in some $H$-invariant $\Zmax$-splitting $T$ of  $F_N$. Let $T_v\subset T$ be the minimal $G_v$-invariant tree.
The action $G_v\actson T_v$ is an $H_{|G_v}$-invariant $\Zmax$-splitting of $G_v$ relative to $\Inc_v$.
Thus this is a splitting in $\calt$ in which $G_e$ is not elliptic.  In particular $S$ is not $\calt$-universally elliptic.
\end{proof}

Theorem \ref{thm_JSJ} is an easy consequence of the following result.

\begin{prop}\label{prop:QH}
  Let $G$ be a non-cyclic finitely generated free group, $\calp$ be a finite family of conjugacy classes of maximal cyclic subgroups of $G$, and $K\subset \Out(G,\calp)$.
  Let $\calt$ be the set of all non-trivial minimal $\Zmax$-splittings of $G$
  relative to $\calp$ which are $K$-invariant. Assume that 
  \begin{itemize}
  \item $G$ has no non-trivial $K$-invariant free splitting relative to $\calp$;
  \item $\calt\neq\es$;
  \item no splitting in $\calt$ is $\calt$-universally elliptic.
  \end{itemize}
  Then $(G,\calp)$ is weakly QH with sockets and  $K$ is finite.
\end{prop}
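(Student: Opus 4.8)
The strategy is to run the JSJ machinery of Guirardel--Levitt \cite{GL-jsj} for the group $G$ relative to $\calp$, over the class $\cale$ of maximal cyclic (i.e.\ $\Zmax$) subgroups of $G$, and to identify the flexible vertices. Since $G$ is a free group, the JSJ deformation space of $(G,\calp)$ over $\cale$ exists, is canonical, and its flexible vertices are QH by the general theory (\cite[Theorem~9.5 and Section~9.6]{GL-jsj}); in the free group setting the QH fiber is automatically trivial because $G$ is torsion-free, so flexible vertices are genuine surface-type vertices. The hypotheses are tailored to feed into this picture: the assumption that $\calt\neq\es$ guarantees that $(G,\calp)$ admits a non-trivial $\Zmax$-splitting relative to $\calp$, and the assumption that no splitting in $\calt$ is $\calt$-universally elliptic will force the canonical JSJ to be non-trivial with a flexible (hence QH) vertex, exactly as in the proof of Theorem~\ref{thm_JSJ}. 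The assumption that there is no non-trivial $K$-invariant free splitting of $(G,\calp)$ is what upgrades ``weakly QH with sockets'' to genuinely QH with sockets, via Remark~\ref{rk:weak-qh}, and also what rules out extra free-splitting degrees of freedom.

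\textbf{Key steps, in order.} First I would pass to $\tilde K\subseteq \Aut(G)$, the preimage of $K$, and reinterpret $K$-invariant $\Zmax$-splittings of $G$ as $\tilde K$-trees with $G\normal \tilde K$, mirroring Section~\ref{sec:tildeH}; this is the setting in which a canonical JSJ over $\cale$ can be constructed. Second, I would build the JSJ tree $U$ of $(\tilde K,\calp)$ over $\cale$ following the existence argument of Theorem~\ref{theo:JSJ_zmax} (take a $K$-invariant universally elliptic $\Zmax$-splitting maximal for refinement among such, using Lemma~\ref{lem_bound_zmax} for the bound; then pass to its tree of cylinders using Lemma~\ref{lemma:cyl_Zmax}), and note $U$ is non-trivial: if $U$ were trivial then the single vertex $v$ has $G_v=G$, which would be flexible since $\calt\neq\es$ and not $\calt$-universally elliptic, but then some splitting in $\calt$ would be $\calt$-universally elliptic (namely, $G$ being universally elliptic is vacuous) — wait, the correct contradiction is that $G$ elliptic in all of $\calt$ is automatic, so one instead uses that $U$ being trivial and all of $\calt$ non-universally-elliptic means the JSJ must refine, contradicting maximality; I would phrase this exactly as the non-triviality argument preceding Theorem~\ref{thm_JSJ}. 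Third, by the JSJ theory for free groups over $\cale$, every flexible vertex $w$ of $U$ has $(G_w,\Inc_w)$ QH with trivial fiber; I would then invoke Lemma~\ref{lem_QH_Zmax} (noting $U_\Zmax$ is trivial because $U$ is its own tree of cylinders and a $\Zmax$-splitting) to conclude $U/G$ restricted near $w$ is a socket graph of groups, hence $(G_w,\Inc_w)$ is \emph{weakly} QH with sockets in the sense of Definition~\ref{dfn_sockets}; since $G_w$ admits no non-trivial $K_{|G_w}$-invariant free splitting relative to $\Inc_w$ (which follows from the no-free-splitting hypothesis for $(G,\calp)$ by the blow-up argument of Lemma~\ref{lemma:restriction}), Remark~\ref{rk:weak-qh} upgrades this to genuinely QH with sockets. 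Fourth, choosing $v$ to be an actual flexible vertex (which exists by the non-triviality argument, since if $U$ had no flexible vertex then $U$ itself, being universally elliptic for everything, would witness that every splitting in $\calt$ is $\calt$-universally elliptic — contradiction), we get that $(G,\calp)$ is weakly QH with sockets provided we can arrange $v=$ the whole group, which is not automatic; instead I would argue directly: a flexible vertex group being all of $G$ up to the structure, or more precisely that the presence of a flexible QH vertex together with the no-free-splitting and not-universally-elliptic hypotheses forces $U$ to be precisely the socket decomposition of $(G,\calp)$, so $(G,\calp)$ itself is weakly QH with sockets.

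\textbf{Finiteness of $K$.} Once $(G,\calp)$ is weakly QH with sockets with underlying surface $\Sigma$, I would show $K$ is finite. The point is that $K$ acts on the (finite) JSJ graph of groups $U/G$ and, up to finite index, fixes the QH vertex $v$ and acts on $(G_v,\Inc_v)\simeq(\pi_1(\Sigma),\text{boundary})$; a subgroup of $\Out(\pi_1\Sigma)$ preserving the peripheral structure and preserving (the non-universal-ellipticity data giving) every isotopy class of essential simple closed curve dual to a splitting in $\calt$ would have to be finite — this is a mapping-class-group statement (an infinite subgroup of $\Mod(\Sigma)$ either contains a pseudo-Anosov or a Dehn twist, both of which move some curve, contradicting invariance of the full collection $\calt$), combined with the fact that the non-QH vertex groups are cyclic so their outer automorphism contribution is finite and the group of twists of $U$ is controlled. \textbf{The main obstacle} I anticipate is this last finiteness step: carefully extracting from ``no splitting in $\calt$ is $\calt$-universally elliptic'' and ``$\calt$ is $K$-invariant'' that $K$ cannot contain an infinite-order mapping class — one must rule out both a pseudo-Anosov on $\Sigma$ (which would contradict $K$-invariance of some curve in $\calt$, using that arational/pseudo-Anosov directions are not fixed) and an infinite-order reducible element whose supporting curves are not all pinned by $\calt$. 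This requires knowing that $\calt$ is rich enough, which is exactly what the ``not $\calt$-universally elliptic'' hypothesis provides, but assembling the quantifiers cleanly is the delicate part; the QH identification itself is essentially a citation to \cite{GL-jsj} and \cite{DG_isomorphism}.
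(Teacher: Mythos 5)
Your plan has a genuine gap at its central step, and the gap is essentially circularity. You propose to build the JSJ of $(\tilde K,\calp)$ over $\cale$ and then quote the general theory of \cite{GL-jsj} to say that flexible vertices are QH; but in the $K$-invariant setting the ambient group carrying the splittings is $\tilde K$, which is not known to be finitely presented (nor even finitely generated), so the flexible-vertex description of \cite{GL-jsj} cannot simply be cited -- this is exactly the point of Proposition~\ref{prop:QH}, which is the technical input later used to describe flexible vertices in Theorem~\ref{thm_JSJ}. The paper's proof therefore does not invoke that description: it verifies by hand the stability condition for a class $\cala_{sl}$ of slender subgroups of $\tilde K$, proves faithfulness of the $\tilde K$-action on the trees in play, checks that all trees are minuscule, uses \cite[Proposition~6.28]{GL-jsj} to produce, from a tree $U\in\calt$ with maximal number of edge orbits, a second tree $V$ fully hyperbolic with respect to $U$, and then analyzes the Fujiwara--Papasoglu core and its regular neighborhood directly, concluding via Lemma~\ref{lem_QH_Zmax} that $R/G$ is a socket graph of groups. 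Your proposal also has a structural misstep: under the hypothesis that no splitting in $\calt$ is $\calt$-universally elliptic, the maximal universally elliptic $K$-invariant $\Zmax$-splitting (the JSJ constructed as in Theorem~\ref{theo:JSJ_zmax}) is \emph{trivial}, so there is no non-trivial JSJ tree with a flexible vertex to exhibit; your hesitation in that step (``wait, the correct contradiction is\dots'') reflects this, and the issue you flag of ``arranging $v$ to be the whole group'' cannot be resolved along the lines you sketch. A minor further point: the upgrade from weakly QH with sockets to QH with sockets via Remark~\ref{rk:weak-qh} is not available here, since that remark requires the absence of \emph{all} free splittings relative to $\calp$, not merely of $K$-invariant ones; the proposition deliberately claims only the weak form (the upgrade happens later, in Theorem~\ref{thm_JSJ}, after $H_{|G_v}$ is shown to be trivial).

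The finiteness of $K$ is also handled differently, and your route is incomplete even granting the surface structure. The paper does not argue inside $\Mod(\Sigma)$; it bounds the index of $G$ in $\tilde K$ (or of the kernel of the restriction to the QH vertex, using the exact sequence of Proposition~\ref{prop_suite_exacte} and triviality of the group of twists of a socket graph) by counting $G$-orbits of squares in the core: the $\tilde K$-stabilizer of a square acts as the identity on the tessellated surface component, hence, by the faithfulness claim, trivially on $G$ (resp.\ on $G_v$). Your mapping-class-group dichotomy (``infinite implies a pseudo-Anosov or a Dehn twist'') is not the actual classification -- infinite-order partial pseudo-Anosov and more general reducible elements must be excluded, as you acknowledge -- and it moreover presupposes that $K$ already acts on a surface preserving a curve system coming from $\calt$, which is precisely what your flawed first part was supposed to deliver. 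So the proposal does not constitute a proof; the missing ingredient is the hands-on core/regular-neighborhood argument replacing the unavailable citation.
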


\begin{proof}[Proof of Theorem \ref{thm_JSJ} from Proposition \ref{prop:QH}.]
  As noted in Remark \ref{rk:2imp1}, it suffices to prove the second assertion.
  So consider a flexible vertex $v$ of $\UZ_H$.
  Notice that $G_v$ is a finitely generated free group (being a point stabilizer in a $\Zmax$-splitting of $F_N$),
  and is not cyclic (because being flexible, it has a non-trivial  splitting relative to incident edge groups).

  We apply Proposition~\ref{prop:QH} with  $G=G_v$, $\calp=\Inc_v$ and $K=H_{|G_v}$.
 Lemma \ref{lemma:restriction} shows that the hypotheses of Proposition~\ref{prop:QH} are satisfied,
 so  $(G_v,\Inc_v)$ is weakly QH with sockets and $H_{|G_v}$ is finite.

 We now deduce that $H_{|G_v}$ is trivial using that $H\subset \IA$.
 Fix any $\alpha\in H_{|G_v}$, and consider
 $\tilde \alpha\in\tilde H$ preserving $G_v$ and inducing $\alpha$ in $\Out(G_v)$.
 By \cite[Theorem~B(3)]{KSS}, generic elements $u\in G_v$ satisfy that their stabilizer in $\Aut(G_v)$ is generated by $\ad_u$
 (the inner automorphism associated to $u$).
 Choose such an element $u\in G_v$ not conjugate into an incident edge group.
 Since $H_{|G_v}$ is finite, the $H$-orbit of the $F_N$-conjugacy class of $u$ is finite.
 Since $H\subseteq \IA$, it follows from \cite[Theorem~4.1]{HM2} that the $F_N$-conjugacy class of $u$ is fixed by $H$.
 Then $\tilde\alpha(u)\in G_v$ is conjugate to $u$ in $F_N$,
 and since $u$ fixes no edge, $\tilde \alpha(u)$ is conjugate to $u$ in $G_v$.
 Up to composing $\tilde\alpha$ by the conjugation by an element in $G_v$,
 we may assume that $\tilde \alpha(u)=u$. By choice of $u$, $\tilde \alpha_{|G_v}$ is inner, i.e.\ $\alpha$ is trivial.
 We thus conclude that $H_{|G_v}=\{1\}$.
 
 Finally, Lemma~\ref{lemma:restriction} ensures that $G_v$ has no non-trivial free splitting relative to $\Inc_v$, so $(G_v,\Inc_v)$ is QH with sockets (and not only weakly QH with sockets; see Remark~\ref{rk:weak-qh}).   
\end{proof}

\begin{proof}[Proof of Proposition \ref{prop:QH}]
 Let $\tilde K$ be the full preimage of $K$ under the quotient map $\Aut(G)\onto\Out(G)$.
We will see $K$-invariant splittings of $G$ as splittings of $\tilde K$.

We are going to follow  \cite[\S 6]{GL-jsj} giving the description 
of flexible vertices of the JSJ decomposition of
the group $\tilde K$ relative to $\calp$ over a well chosen class of allowed edge groups.
We cannot readily apply the results because we want the extra information that $K$ is finite,
and we do not know a priori that $\tilde K$ is finitely presented.

\newcommand{\Amax}{\cala_{max}}
\newcommand{\AZ}{\cala_{\bbZ}}
\newcommand{\Acyc}{\cala_{cyc}}
\newcommand{\Asl}{\cala_{sl}}

If $\cala$ is a collection of subgroups of $\tilde{K}$, an $(\cala,\calp)$-tree $S$ is an action of $\tilde K$ on 
a tree with edge stabilizers in $\cala$ and in which groups in $\calp$ are elliptic. 
We will work with the following collections of subgroups.
We identify $G$ with $\Inn(G)$ so that $G\normal \tilde K$.
Let $\Amax$ (respectively $\AZ$, $\Acyc$) be the collection of subgroups $E\subset \tilde K$ such that
$E\cap G$ is maximal cyclic (respectively isomorphic to $\bbZ$, respectively cyclic (maybe trivial)).
For instance, non-trivial $(\Amax,\calp)$-trees are exactly the same thing as non-trivial $K$-invariant $\Zmax$-splittings of $G$ relative to $\calp$ 
(i.e.\ trees in $\calt$), viewed as $\tilde{K}$-trees.

If $E\subset \tilde K$ and $S$ is a splitting of $\tilde K$, 
we say that $E$ is \emph{slender in $S$} if it preserves a line or a point of $S$. 
We will say that $E$ is \emph{hyperbolic} in $S$ if it is not elliptic.

Let $\Asl$ the collection of subgroups $E\subset \tilde K$ such that $E\in\Acyc$ and $E$ is slender in any
$(\Acyc,\calp)$-tree.
Unlike the other two classes, $\Asl$ and $\Acyc$ are stable under taking subgroups.

We have the following inclusions
$$\Amax\subset \AZ\subset\Asl \subset\Acyc.$$
We explain the second inclusion, the two others being obvious.
Let $E\in \AZ$, and let $C=E\cap G\simeq \bbZ$, a normal subgroup of $E$.
If $T$ is an $(\Acyc,\calp)$-tree in which $C$ is not elliptic, then $C$ has an axis $A_C$ in $T$, and $A_C$ is $E$-invariant. If $C$ is elliptic in $T$, then its set of fixed points is bounded (Lemma~\ref{lemma:acylindrical}) so
$E$ is also elliptic in $T$. This shows that $E$ is slender in $(\Acyc,\calp)$-trees, so $\AZ\subset \Asl$.

Since there is no non-trivial $K$-invariant free splitting of $G$ relative to $\calp$,
any $(\Acyc,\calp)$-tree is in fact an $(\AZ,\calp)$-tree. Thus,
$(\AZ,\calp)$-trees, $(\Asl,\calp)$-trees, and $(\Acyc,\calp)$-trees are the same objects:
we denote by $\calt'\supset \calt$ this set of actions of $\tilde K$ on trees.

\begin{claim}\label{claim_slender}
If $E\subset \tilde K$ is slender in trees in $\calt'$,
and hyperbolic in some tree in $\calt'$,
then $E\in\Asl$.
\end{claim}

\begin{proof}
We have to check that $E\cap G$ is cyclic (maybe trivial).
Indeed, consider $T\in\calt'$ such that $E$ is hyperbolic in $T$, 
and let $l\subset T$ be the $E$-invariant line.
Since the $G$-stabilizer of any edge of $T$ is cyclic, 
the setwise $G$-stabilizer of $l$ is a small subgroup of the free group $G$, so it is cyclic.
Thus $E\cap G$ is cyclic and $E\in \Asl$.
\end{proof}

\begin{claim}\label{cl_faithful}
 The $\tilde{K}$-action on every non-trivial tree $S\in \calt'$ is faithful.

 More generally, let $S\in \calt'$, $A\subset G$
 not elliptic in $S$,
and $\tilde N\subset\tilde K$ acting as the identity on an $A$-invariant subtree $S'\subset S$.
Then for all $\tilde\alpha \in \tilde N$ and all $a\in A$, one has $\tilde\alpha(a)=a$.
\end{claim}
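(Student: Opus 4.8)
\textbf{Proof plan for Claim \ref{cl_faithful}.}

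The plan is to deduce the first statement from the more general one: if the $\tilde K$-action on a non-trivial $S\in\calt'$ is not faithful, then its kernel $\tilde N$ acts as the identity on all of $S$; picking any $A\subset G$ not elliptic in $S$ (such an $A$ exists since $S$ is non-trivial, e.g.\ $A=G$) and $S'=S$, the general statement gives $\tilde\alpha(a)=a$ for all $\tilde\alpha\in\tilde N$, $a\in A$, so $\tilde N$ restricts to the identity on $A$; but taking $A=G$ we get $\tilde N\subset\Inn(G)$ acting trivially on $G$, hence $\tilde N=\{1\}$. So I would only prove the general statement.

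For the general statement, the key point is to exploit that $\tilde N$ normalizes $A$ (since $\tilde N$ fixes $S'$ pointwise and $A$ preserves $S'$, for $\tilde\alpha\in\tilde N$, $a\in A$ the automorphism $\tilde\alpha a\tilde\alpha^{-1}$ fixes $S'$ pointwise and lies in $G$, hence equals its action on $S'$; and one checks it lies in the pointwise stabilizer in $G$ of $S'$, which since $A$ is not elliptic and edge stabilizers are cyclic forces $\tilde\alpha(a)\in A$ — more precisely $\tilde\alpha a\tilde\alpha^{-1}$ has the same axis as $a$ in $S$). So first I would establish $\tilde\alpha(A)=A$ and that $\tilde\alpha$ acts on $A$ preserving the translation lengths in $S$, hence preserving the axis $\mathrm{Axis}_S(a)$ of each loxodromic $a\in A$. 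Then, because $S'\subset S$ is $A$-invariant and $A$ is not elliptic, $S'$ contains the axis of some loxodromic element of $A$; since $\tilde\alpha$ fixes $S'$ pointwise, $\tilde\alpha$ fixes $\mathrm{Axis}_S(a)$ pointwise for every loxodromic $a\in A$ whose axis meets $S'$ in a nondegenerate segment, and by minimality/mixing-type arguments inside $A\curvearrowright S'$ one propagates this to all loxodromic elements of $A$. Fixing the axis of $a$ pointwise means $\tilde\alpha(a)$ and $a$ have the same axis and the same translation length and direction, so $\tilde\alpha(a)a^{-1}$ is elliptic fixing that axis, hence (edge stabilizers being cyclic and the axis being a line) $\tilde\alpha(a)a^{-1}$ lies in a common cyclic edge group along the axis; combining for two independent loxodromics in $A$ (using that $A$ is non-cyclic, or handling the cyclic case separately) forces $\tilde\alpha(a)=a$.

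The case where $A$ is infinite cyclic, say $A=\langle a\rangle$, needs separate care since then $S'$ might be a line: there $\tilde\alpha$ fixes $S'$ pointwise and $S'$ contains $\mathrm{Axis}_S(a)$, so $\tilde\alpha(a)$ has the same axis, translation length and direction as $a$, hence $\tilde\alpha(a)a^{-1}$ fixes the axis pointwise; but an element of $G$ fixing a line in a tree with cyclic edge stabilizers, and commuting appropriately, must be trivial in a free group (a nontrivial element of $F_N$ with a fixed line would generate with $a$ a non-cyclic abelian subgroup, impossible). I expect the main obstacle to be the bookkeeping in propagating "$\tilde\alpha$ fixes $\mathrm{Axis}_S(a)$ pointwise for one $a$" to "for all loxodromic $a\in A$" using only that $S'$ is $A$-invariant and $A\curvearrowright S'$ is non-trivial — this is where one must be careful about whether $A\curvearrowright S'$ is minimal or merely contains a minimal subtree, and invoke that $F_N$-subtrees with cyclic edge stabilizers behave well (Lemma~\ref{lemma:acylindrical} and the structure of $\calt'$-trees). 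Everything else is routine tree combinatorics given the earlier setup.
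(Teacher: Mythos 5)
The reduction of the first assertion to the second (taking $A=G$, $S'=S$, $\tilde N$ the kernel) is fine and is exactly what the paper does. But your treatment of the general assertion takes a long detour and, as written, contains real problems. The intermediate claim that ``$\tilde\alpha a\tilde\alpha^{-1}$ fixes $S'$ pointwise'' is not correct: what the hypotheses give is that $\tilde\alpha(a)$ and $a$ induce the \emph{same} isometry of $S'$. Indeed, for $x\in S'$ one has $a\cdot x\in S'$ (by $A$-invariance), hence $a\cdot x=I_{\tilde\alpha}(a\cdot x)=\tilde\alpha(a)\cdot I_{\tilde\alpha}(x)=\tilde\alpha(a)\cdot x$, so it is the element $a^{-1}\tilde\alpha(a)\in G$ that fixes $S'$ pointwise. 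Once you have this you are done immediately, for \emph{every} $a\in A$ at once: since $A$ is not elliptic, the nonempty $A$-invariant subtree $S'$ is unbounded (otherwise $A$ would fix its circumcenter), and by Lemma~\ref{lemma:acylindrical} a non-trivial element of $G$ cannot fix an unbounded subtree of a tree with cyclic edge stabilizers; hence $\tilde\alpha(a)=a$. This is the paper's short proof; none of the machinery about $\tilde\alpha(A)=A$, translation lengths, axes, propagation, or pairs of independent loxodromic elements is needed.

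If you insist on the axis route, the gaps you would still need to close are concrete. First, your argument only treats loxodromic elements of $A$, while the claim is for all $a\in A$; elliptic elements are never addressed (they could be handled, e.g., by writing them as products of loxodromic elements, but this is an extra step you omit). Second, the ``propagation'' step you single out as the main obstacle is in fact vacuous: any nonempty $A$-invariant subtree contains the minimal $A$-subtree, hence the axis of \emph{every} loxodromic element of $A$, so no minimality or mixing argument is required. Third, the endgame is shaky: in the cyclic case, the assertion that a non-trivial element fixing the axis of $a$ pointwise would commute with $a$ is not justified (all you get is that it lies in the cyclic stabilizers of the edges along the axis), and the ``two independent loxodromics'' step is left vague; in both cases the clean way to conclude is again that a non-trivial element has bounded fixed-point set (Lemma~\ref{lemma:acylindrical}), which is precisely the mechanism that, applied directly to $a^{-1}\tilde\alpha(a)$ on $S'$, collapses your whole plan to the three-line argument above.
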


\begin{proof}
One deduces the first assertion from the second remembering that $\tilde K\subset \Aut(G)$ 
by taking $A=G$, and $\tilde N$ the kernel of the action $\tilde K\actson S$.

In general, 
for $\tilde\alpha\in\tilde K$,
 we denote by $I_{\tilde\alpha}$ the isometry of $S$ determined by the action of $\tilde\alpha$.
 Fix $\tilde\alpha\in\tilde{N}$ and $a\in A$. Then for all $x\in S'$, we have $a.x=I_{\tilde\alpha}(a.x)=\tilde\alpha(a).I_{\tilde\alpha}(x)=\tilde\alpha(a).x$, so $a\m\tilde\alpha(a)$ is an element of $G$ acting as the identity on $S'$. 
Since $S'$ is unbounded, Lemma \ref{lemma:acylindrical} implies that $\tilde\alpha(a)=a$.
\end{proof}

We claim that $\Asl$ satisfies the stability condition $(SC)$ of \cite[Definition~6.1]{GL-jsj} with $\calf=\Asl$ as a class of fibers. Recall that this means that for every short exact sequence of groups $$1\to F\to B\to Q\to 1$$ with $B\subseteq\tilde{K}$, the following two conditions hold:
\begin{enumerate}
\item if $B\in\Asl$ and $Q$ is isomorphic to either $\mathbb{Z}$ or the infinite dihedral group $D_\infty$, then $F\in\Asl$, and 
\item if $F\in\Asl$ and $Q$ is isomorphic to a quotient of $\bbZ$ or $D_\infty$, then $B\in\Asl$.
\end{enumerate}
Indeed, the first condition is obvious since $\Asl$ is stable under taking subgroups. 
To check the second condition, assume that $F\in\Asl$, and that $B$ is an extension of $F$ as above.
Then $B\cap G$ is an extension of the cyclic group $F\cap G$ by a cyclic or dihedral group, so $B\cap G$ is cyclic, i.e.\ $B\in \Acyc$.
To check the slenderness condition, let $S\in\calt'$. 
Since $F\in \Asl$, either $F$ preserves a unique line $l$ in $S$, or $F$ fixes a point.
In the first case, $l$ is $B$-invariant because $F$ is normal in $B$, so $B$ is slender in $S$.
In the second case, let $Y\subset S$ be the subtree consisting of all points fixed by $F$.
Then $Y$ is $B$-invariant and the action of $B$ factors through an action of $Q$.
Since $Q$ is cyclic or dihedral, it preserves a line or fixes a point in $Y$, so $B$ is slender in $S$. This concludes the proof
that $\Asl$ satisfies the stability condition $(SC)$.\\

We recall some more definitions from \cite[Definition~6.9]{GL-jsj}, which generalize a usual assumption asking that
$G$ does not split over a subgroup commensurable with a subgroup of infinite index in an allowed edge group.
Given two subgroups $A,B\in\Asl$, we write $A\ll B$ if $A\subseteq B$ and there exists a tree $S\in\calt'$ such that $A$ is elliptic in $S$ while $B$ is not.
A subgroup $B\in\Asl$ is \emph{minuscule} if whenever $A\ll B$ and $A$ fixes an edge $e$ in some tree in $\calt'$,
then $A$ has infinite index in the $\tilde{K}$-stabilizer $\tilde{K}_e$ of $e$. A tree $S\in\calt'$ is \emph{minuscule} if all edge stabilizers of $S$ are minuscule.

\begin{claim}\label{claim:minuscule}
Every tree $S\in \calt'$ is minuscule.
\end{claim}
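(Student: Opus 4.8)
\textbf{Proof plan for Claim \ref{claim:minuscule}.}

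The plan is to argue by contradiction: suppose some tree $S\in\calt'$ has an edge $e$ whose stabilizer $\tilde K_e$ is not minuscule. This means there exists $A\in\Asl$ with $A\ll \tilde K_e$ (so $A$ fixes $e$, $A\subseteq \tilde K_e$, and there is a tree $S_0\in\calt'$ where $A$ is elliptic but $\tilde K_e$ is not) and moreover $A$ has finite index in $\tilde K_e$. The goal is to extract a non-trivial $K$-invariant free splitting of $G$ relative to $\calp$, contradicting the first hypothesis of Proposition \ref{prop:QH}.

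First I would analyze the intersection with $G$. Since $A$ has finite index in $\tilde K_e$ and both lie in $\Acyc$, the groups $A\cap G$ and $\tilde K_e\cap G = G_e$ are commensurable cyclic subgroups of $G$; in a $\Zmax$-setting $G_e$ is root-closed (all our trees in $\calt'$ are $(\AZ,\calp)$-trees, and relative to the absence of free splittings these are in fact $\Zmax$-type, cf.\ the discussion around Lemma \ref{lemma:cyl_Zmax}), so in fact $A\cap G = G_e$ after possibly passing to the appropriate index. Now in $S_0$, the group $A$ is elliptic while $\tilde K_e$ is not; restricting to $G$, this forces $G_e = A\cap G$ to be elliptic in $S_0$ but $\tilde K_e$, hence some element of $\tilde K$ normalizing $G_e$, to move the fixed-point set of $G_e$ around. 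The key point is that the fixed-point set of the cyclic group $G_e$ in $S_0$ is bounded by Lemma \ref{lemma:acylindrical}, so by the argument of Lemma \ref{lemma:acylindrical2} (applied to $G_e\normal \tilde K_e$) the group $\tilde K_e$ would itself have to be elliptic in $S_0$ — contradicting $A\ll \tilde K_e$.

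Thus the only way the minuscule condition can fail is if $A\cap G$ is trivial, i.e.\ $A$ is a finite-index subgroup of $\tilde K_e$ with $A\cap G = \{1\}$, forcing $G_e = \tilde K_e\cap G$ to be finite, hence trivial since $G$ is torsion-free; but then $e$ is an edge with trivial $G$-stabilizer in the splitting $S\in\calt'$, i.e.\ $S$ induces a non-trivial $K$-invariant free splitting of $G$ relative to $\calp$ after collapsing the edges with non-trivial $G$-stabilizer (using that groups in $\calp$ remain elliptic). This contradicts the first hypothesis of Proposition \ref{prop:QH}. Either way we reach a contradiction, so every $S\in\calt'$ is minuscule. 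The main obstacle I anticipate is bookkeeping the precise relationship between $A\ll B$ in the sense of \cite[Definition~6.9]{GL-jsj} and ellipticity of $A\cap G$ versus $B\cap G$ in trees of $\calt'$; one has to be careful that the tree $S_0$ witnessing $A\ll B$ is genuinely in $\calt'$ (a $K$-invariant $\Zmax$-splitting of $G$ relative to $\calp$, viewed as a $\tilde K$-tree) and that passing between the $\tilde K$-action and the $G$-action does not lose the ellipticity information — this is exactly the content of Lemmas \ref{lemma:acylindrical} and \ref{lemma:acylindrical2}, which is why I would route the whole argument through them.
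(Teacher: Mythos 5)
Your core mechanism is the paper's: transfer ellipticity from $A$ to the cyclic group $G_e=\tilde K_e\cap G$ and then use the bounded-fixed-set argument (Lemmas \ref{lemma:acylindrical} and \ref{lemma:acylindrical2}, with $G_e$ normal in $\tilde K_e$) to force $\tilde K_e$ to be elliptic in the witness tree, contradicting $A\ll\tilde K_e$. But there is a genuine gap at the very first step: you have misread the definition of \emph{minuscule}. The negation of minuscularity for $B=\tilde K_e$ provides $A\ll\tilde K_e$ together with an edge $e'$ of some \emph{possibly different} tree $S'\in\calt'$ fixed by $A$ such that $A$ has finite index in $\tilde K_{e'}$; it does not say that $A$ has finite index in $\tilde K_e$ itself. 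Your version is in fact vacuous for an elementary reason --- a finite-index subgroup of $\tilde K_e$ is elliptic in a tree if and only if $\tilde K_e$ is, so $A\ll\tilde K_e$ with $[\tilde K_e:A]<\infty$ is already impossible --- and it is not the statement that needs proving. Under the correct reading, the finite-index witness is used for exactly one thing (as in the paper's proof): since $S'$ has $G$-edge stabilizers isomorphic to $\bbZ$, the group $A\cap G$ is infinite cyclic. One then notes that $A\subseteq\tilde K_e$ gives $A\cap G\subseteq G_e$, so $A\cap G$ has finite index in the infinite cyclic group $G_e$ (the paper phrases this via the maximal cyclic subgroup $C$ of $G$ containing both, which $\tilde K_e$ normalizes); after that, your ellipticity transfer and Lemma \ref{lemma:acylindrical2} close the argument exactly as in the paper. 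So the fix is small, but as written your proof does not engage with the actual content of the claim, and your second case ($A\cap G=\{1\}$, forcing $G_e$ trivial and producing an invariant free splitting) is an artifact of the misreading: under the correct definition it cannot occur, since $A$ has finite index in $\tilde K_{e'}$ and $G_{e'}\simeq\bbZ$.

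A secondary inaccuracy: trees in $\calt'$ are not $\Zmax$-splittings of $G$ in general and their edge stabilizers need not be root-closed; only $\calt\subseteq\calt'$ consists of $\Zmax$-splittings, while trees in $\calt'$ merely have $G$-edge stabilizers isomorphic to $\bbZ$ (the absence of invariant free splittings upgrades ``cyclic, maybe trivial'' to ``infinite cyclic'', not to ``maximal cyclic''), and Lemma \ref{lemma:cyl_Zmax} concerns trees of cylinders of $\Zmax$-splittings of $F_N$, not this situation. This slip is harmless to the argument, though, because nothing requires $A\cap G=G_e$; commensurability suffices, which is how the paper proceeds.
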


\begin{proof} 
  Let $e\subseteq S$ be an edge. We assume that there exists $A\ll\tilde{K}_e$, with $A$ of finite index in the stabilizer $\tilde K_{e'}$ of an edge $e'$ of another tree $S'\in\calt'$, and aim for a contradiction.
As noted above, $S'$ is an $\AZ$-tree so $A\cap G\simeq \bbZ$. Let $C$ be the maximal cyclic subgroup of $G$ that contains $A\cap G$.
The group $G_e=\tilde{K}_e\cap G$ is cyclic, of finite index in $C$, and therefore $\tilde{K}_e$ normalizes $C$.
As $A\ll \tilde{K}_e$, there exists a tree $S''\in\calt'$ such that $A$ is elliptic in $S''$ while $\tilde{K}_e$ is not.
Then $A\cap G$, whence $C$, is elliptic in $S''$. By Lemma~\ref{lemma:acylindrical2}, $\tilde{K}_e$ fixes a point in $S''$, a contradiction.
\end{proof}

We are now going to construct the underlying surface following the approach by Fujiwara and Papsoglu \cite{FP} as
implemented in \cite[\S6]{GL-jsj}. We will use the general definition of $QH$ vertices 
of \cite[Definition~5.13]{GL-jsj} recalled in Definition \ref{dfn_QH}:
if $S$ is a $\tilde K$-tree, $v\in S$ is a QH vertex with fiber $F$ relative to $\calp$
if $F\normal G_v$ and there is an identification $G_v/F=\pi_1(\Sigma)$ with the fundamental group 
of a hyperbolic $2$-orbifold with boundary so that incident edge stabilizers and groups in $\calp$
intersect $G_v$ into subgroups that map in finite or boundary subgroups of $\pi_1(\Sigma)$.
The results of \cite{GL-jsj} we use are stated for slender edge groups (i.e.~groups all of whose subgroups are finitely generated),
but as explained in \cite[Section~6.8]{GL-jsj}, they apply with edge groups in $\Asl$.

 Let now $U$ be a tree in $\calt$ whose number of orbits of edges is maximal.
 We claim that there exists a tree $V\in \calt'$ such that $U$ and $V$ are fully hyperbolic with respect to each other
(i.e.\ every edge stabilizer in one of these trees is hyperbolic in the other tree).

We would like to apply \cite[Proposition~6.28]{GL-jsj} but it does not formally apply: 
we know that trees in $\calt'$ are minuscule, but
we do not know that $\tilde K$ is totally flexible (in the sense of \cite[Definition~6.7]{GL-jsj}) 
with respect to trees in $\calt'$.
However, its proof applies because the assumption of total flexibility is only used
to ensure that each edge stabilizer of $U$ is hyperbolic in some tree in $\calt'$;
this is true because edge groups of $U$ are not $\calt$-universally elliptic by Assumption (3) of 
\ref{prop:QH}. This proves our claim.

We mention that $U$ and $V$ are also fully hyperbolic with respect to each other as $G$-trees
(this follows from Lemma \ref{lemma:acylindrical2} as in the proof of Lemma~\ref{lemma:edge-stabilizers-cyclic}).

\newcommand{\Vcut}{V_{\mathrm{cut}}}\newcommand{\VS}{V_{\cals}}\newcommand{\Cut}{\mathrm{Cut}}

We will now consider the asymmetric core $\calc_{\tilde{K}}(U,V)$ of $U$ and $V$, as defined by Fujiwara and Papasoglu in \cite{FP} (see \cite[Definition~6.16]{GL-jsj}).  
By definition, $\calc_{\tilde{K}}(U,V)$ is the subcomplex of $U\times V$
 whose fiber over each edge or vertex of $U$ is the minimal subtree of $V$ with respect
to the stabilizer of that edge or vertex.
The fiber over each edge $e$ is a line (by slenderness of $\tilde K_e$),
on which $\tilde{K}_e$ acts cocompactly;
in particular, there are only finitely many orbits of squares in $\calc_{\tilde{K}}(U,V)$.
Reversing the roles of $U$ and $V$, we get another subset $\check\calc_{\tilde K}(U,V)\subset U\times V$.
Notice that the core $\calc_{\tilde{K}}(U,V)$ of $U$ and $V$ viewed as $\tilde{K}$-trees is the same as the core $\calc_{G}(U,V)$ as $G$-trees; we simply denote it by $\calc(U,V)$ (likewise we will write $\check\calc(U,V)$). 
Indeed, given an edge $e\subseteq U$ with $G$-stabilizer $G_e$, the axis of $G_e$ in $V$ is $\tilde{K}_e$-invariant, so $\tilde{K}_e$ and $G_e$ have the same axis, and similarly, the minimal subtrees of $G_v$
and $\tilde K_v$ in $V$ agree for every vertex $v\in U$.

As $U$ and $V$ are minuscule and fully hyperbolic with respect to each other, the core $\calc(U,V)$ is symmetric, i.e.\ $\calc(U,V)=\check\calc(U,V)$ \cite[Proposition~6.19]{GL-jsj}, 
and $\calc(U,V)$ actually coincides with the core defined by the first author in \cite{Gui_core}, see \cite[Proposition~12.1]{Gui_core}. 
We denote by $V(\calc)$ the set of vertices of $\calc(U,V)$ and by $\Cut(\calc)\subset V(\calc)$ the set of cut vertices.
By \cite[Lemma~6.18]{GL-jsj}, each connected component $Z$ of $\calc(U,V)\setminus V(\calc)$ is a connected surface, 
tesselated by squares with missing vertices.

Let $R$ be the regular neighborhood of $U$ and $V$ as defined in \cite[Definition~6.23]{GL-jsj} following \cite{FP}. This is the bipartite $\tilde K$-tree defined as follows. 
The vertex set of $R$ is $\Vcut\dunion \VS$, having one vertex $v_x\in \Vcut$ for every cut vertex $x\in \Cut(\calc)$, 
one vertex  $v_Z\in\VS$ for every connected component $Z$ of 
$\calc(U,V)\setminus \Cut(\calc)$, with an edge joining $v_x$ to $v_Z$ whenever $x\in\overline{Z}$. 
We apply Proposition~6.25 of \cite{GL-jsj} with $\cala=\Asl$ which satisfies condition (SC) as proved above. 
First, we get that $R$ is an $(\Asl,\calp)$-tree.
If we view $U,V$ as $G$-trees, then their regular neighborhood as $G$-trees
is nothing but the tree $R$ where one restricts the action to $G\subseteq\tilde K$. 

We apply \cite[Proposition~6.25~(RN1)]{GL-jsj} to the vertex $v_Z$ of $R$ both viewed as a $\tilde K$-tree
and as a $G$-tree (the class of cyclic groups of $G$ satisfies condition (SC) 
with trivial fiber).
We claim that $G_{v_Z}$ is not slender and that $\tilde K_{v_Z}$ is not slender in trees in $\calt'$.
Indeed, if $G_{v_Z}$ is slender, then it is cyclic (as a subgroup of the free group $G$).
In particular $G_{v_Z}$ does not virtually surject onto $\bbZ^2$, so by \cite[Proposition~6.25~(RN1)]{GL-jsj}, 
$G_{v_Z}$ is QH, a contradiction.
If $\tilde K_{v_Z}$ is slender in trees in $\calt'$,
then since it is hyperbolic in $U$, Claim \ref{claim_slender} 
ensures that $G_{v_Z}$ is cyclic, a contradiction.

By \cite[Proposition~6.25~(RN1)]{GL-jsj}, we conclude that
$v_Z$ is a QH vertex relative to $\calp$ of $R$, both as a $\tilde K$-tree and as a $G$-tree.
Its fiber $F$, as a $\tilde{K}$-tree, belongs to $\Asl$. Its fiber as a $G$-tree is $F'=F\cap G$, so it is cyclic.
 In fact, the fiber $F'$ is trivial because 
 it is normal in $G_{v_Z}$. 
 Additionally, since $G$ is torsion-free and the fiber is trivial,
the underlying orbifold is a surface (we do not claim that this is the case when viewing $R$ as a $\tilde K$-tree). 

We first assume that $R$ is trivial (as a $\tilde K$-tree or equivalently as a $G$-tree), 
i.e.\ that $\Cut(\calc)=\es$ and
$Z=\calc(U,V)\setminus V(\calc)$ is connected, and $R$ is reduced to a single vertex $v_Z$. 
Then by \cite[Proposition~6.25~(RN1)]{GL-jsj}, $(\tilde K,\calp)$ is QH with fiber $F\in\Asl$, and $(G,\calp)$ is QH with trivial fiber.
Since $\calp$ consists of maximal cyclic subgroups,
$(G,\calp)$ is QH with sockets (all sockets being improper).
We claim that $K$ is finite, i.e.\ that  $G$ has finite index in $\tilde{K}$.
Indeed, the index of $G$ in $\tilde{K}$ is bounded by the number of $G$-orbits of squares in $\calc(U,V)$: if there exists a square $C\in\calc(U,V)$ such that $kC=gC$ with $k\in \Tilde K$ and $g\in G$, then $k\m g$ fixes $C$, and since 
$Z=\calc(U,V)\setminus V(\calc)$ is a connected tesselated surface, this implies that $k\m g$ acts as the identity on the core.
Since the natural projection from $\calc(U,V)$ to $U$ is onto,  $k\m g$ acts as the identity on $U$
so  $k\m g=\id$ by Claim~\ref{cl_faithful}.

Now we do not assume that $R$ is trivial any more.
We are going to prove that the $\Zmax$-splitting $R_{\Zmax}$ of $G$ associated to $R$
is trivial (see Definition \ref{dfn_zmaxise} for a definition of $R_{\Zmax}$). 
We thus view $R$ as a $K$-invariant cyclic splitting of $G$ relative to $\calp$. Then $R_\Zmax$ is a $K$-invariant $\Zmax$-splitting of $G$, i.e.\ $R_\Zmax\in\calt$. By \cite[Proposition~6.25~(RN3)]{GL-jsj}, the tree $U$ is dual to a family of curves in QH-vertices of $R$. Hence $R_{\Zmax}$ is compatible with $U$.
Let $S=U\vee R_{\Zmax}$ be the smallest common refinement of $U$ and $R_{\Zmax}$. Then $S$ belongs to $\calt$. Since $U$ has the maximal possible number of orbits of edges among trees in $\calt$, it follows that $U=S$.
Assuming by contradiction that $R_{\Zmax}$ is not reduced to a point, let $e$ be an edge of $U=S$ not collapsed in $R_{\Zmax}$.
Since $U$ is fully hyperbolic with respect to $V$, the stabilizer of $e$ is hyperbolic in $V$.
On the other hand, edge stabilizers of $R$ (hence of $R_{\Zmax}$) are elliptic in $U$ and $V$
by \cite[Proposition~6.25 (RN2)]{GL-jsj}, a contradiction.

This shows that $R_{\Zmax}$ is trivial. Viewing it as a $G$-tree, 
this implies by Lemma \ref{lem_QH_Zmax} 
that $R/G$ is a socket graph of groups.

We now prove that $K$ is finite.
Let $v\in R$ be a QH vertex and $Z$ the corresponding component of $\calc(U,V)\setminus \Cut(\calc)$. 
By Proposition \ref{prop_suite_exacte},
up to replacing $K$ with a finite index subgroup, one has an extension
$$1\ra K\cap\mathrm{Tw} \ra K \xrightarrow{\rho} K_{|G_v} \ra 1$$
where  $\mathrm{Tw}$ is 
the group of twists,
and $K_{|G_v}$ is the image of $K$ in $\prod_{u\in R/F_N}\Out(G_u,\Inc_u)=\Out(G_v,\Inc_v)$ under the restriction map $\rho$. Since  $R/G$ is a socket graph of groups, its group of twists is trivial so it suffices to prove that $K_{|G_v}$ is finite.
The argument is similar to the one above.
Since $Z\setminus V(\calc)$ is a connected tesselated surface, the $\tilde K$-stabilizer $\tilde N$ of a square $C$ in $Z$ acts as the identity on $Z$.
By Claim \ref{cl_faithful}, for every element $\tilde \alpha\in \tilde N$, one has $\tilde \alpha_{|G_v}=\id_{G_v}$.
Since $\tilde K_v$ permutes the $G_v$-orbits of squares, for $\tilde\alpha$ in finite index subgroup $\tilde K_0\subset \tilde K_v$,
there exists $g\in G$ such that $\tilde\alpha C=gC$, so 
$\ad_g\m\tilde\alpha \in \tilde N$.
This shows that all elements in a finite index subgroup of $K$ act trivially on $G_v$ 
and concludes the proof of Proposition \ref{prop:QH}.
\end{proof}

\subsection{Characterization of invariant $\Zmax$-splittings}

Recall  that an outer automorphism $\alpha\in \Out(F_N)$ 
is said to act trivially on a subgroup $A\subseteq F_N$
if it has a representative $\tilde \alpha$ in $\Aut(F_N)$
such that $\tilde \alpha_{|A}=\id_A$.

\begin{prop}\label{prop:carac_zmax}
 Let $H\subseteq\IA$ be a subgroup with no invariant  non-trivial free splitting. Then $\UZ_H$ is compatible with every $H$-invariant $\Zmax$-splitting  of $F_N$.
  More precisely, 
  any $H$-invariant $\Zmax$-splitting of $F_N$ may be obtained from $\UZ_H$
  by blowing up each flexible 
  vertex $v$ into a (minimal, maybe trivial)  $\Zmax$-splitting $Y_v$ relative to $\Inc_v$,
  and by blowing up each vertex $v$  with cyclic stabilizer into a finite tree
  on which $G_v$ acts as the identity, and then by collapsing all the edges coming from $\UZ_H$.

  Conversely, let $S$ be any $\Zmax$-splitting of $F_N$ obtained by blowing up $\UZ_H$ in this way.
Then $S$ is $H$-invariant.
  More generally, $S$ is invariant under any outer automorphism $\alpha\in \IA$ that preserves $\UZ_H$
  and that acts trivially on each flexible vertex group of $\UZ_H$.
\end{prop}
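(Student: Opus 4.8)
The plan is to deduce the statement from the description of flexible vertices of $\UZ_H$ given in Theorem~\ref{thm_JSJ}, combined with the universal ellipticity property (Theorem~\ref{theo:JSJ_zmax}(1)) and the compatibility results for trees of cylinders from \cite{GL-jsj,GL-cyl}. First I would fix an arbitrary $H$-invariant $\Zmax$-splitting $S$ of $F_N$ and show that it is dominated by a blow-up of $\UZ_H$. Since $\UZ_H$ is $\ZmaxS^H$-universally elliptic, every edge stabilizer $G_e$ of $\UZ_H$ is elliptic in $S$. Viewing both $\UZ_H$ and $S$ as $\tilde H$-trees and applying Lemma~\ref{lemma:edge-stabilizers-cyclic}, the $\tilde H$-stabilizers of edges of $\UZ_H$ are elliptic in $S$ as well, so by \cite[Proposition~2.2]{GL-jsj} one can build an $\tilde H$-tree $\hat U$ refining $\UZ_H$ and dominating $S$, with edge stabilizers fixing an edge in $\UZ_H$ or in $S$; as in Corollary~\ref{coro:blowup_zmax} this $\hat U$ is still a $\Zmax$-splitting of $F_N$. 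The refinement $\hat U$ is obtained by blowing up each vertex $v$ of $\UZ_H$ using the minimal $G_v$-invariant subtree $S_v\subseteq S$; by Proposition~\ref{prop:ia-zmax}, $H$ acts trivially on $\UZ_H/F_N$, so this blow-up is $H$-equivariant and $S_v$ carries an $H_{|G_v}$-action.

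Next I would analyze the vertices $v$ of $\UZ_H$ according to whether they are flexible. If $v$ is \emph{not} flexible, then $G_v$ is $\ZmaxS^H$-universally elliptic, hence elliptic in $S$, so $S_v$ is a point and no blow-up occurs at $v$. If $v$ has cyclic (possibly finite-index-in-$\Zmax$) stabilizer, then $G_v$ is cyclic, hence certainly $\ZmaxS^H$-universally elliptic unless $v$ is flexible; and by Lemma~\ref{lemma:acylindrical} the fixed-point set of $G_v$ in $S$ is a finite subtree, so the blow-up is a finite tree on which $G_v$ acts trivially — this is exactly the second type of blow-up in the statement. If $v$ is a flexible vertex, Theorem~\ref{thm_JSJ} tells us $(G_v,\Inc_v)$ is QH with sockets and $H_{|G_v}=\{1\}$; then $S_v$ is a $\Zmax$-splitting of $G_v$ relative to $\Inc_v$ which is automatically $H_{|G_v}$-invariant (the action being trivial), giving the first type of blow-up. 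Collapsing in $\hat U$ all edges coming from $\UZ_H$ recovers $S$ (since $\UZ_H$ dominates $\hat U$ after these collapses are inverted — more precisely $S$ is obtained from $\hat U$ by collapsing the lifts of edges of $\UZ_H$, by construction of the blow-up). Compatibility of $\UZ_H$ with $S$ follows since $\hat U$ is a common refinement.

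For the converse, let $S$ be obtained from $\UZ_H$ by blow-ups of the stated form and let $\alpha\in\IA$ preserve $\UZ_H$ and act trivially on every flexible vertex group $G_v$ of $\UZ_H$. I would argue exactly as in the proof of Lemma~\ref{lemma:restriction}: since $\alpha$ preserves $\UZ_H$ and acts trivially on the quotient graph (Proposition~\ref{prop:ia-zmax}), for every vertex $v$ of $\UZ_H$ it has a representative $\tilde\alpha$ preserving $G_v$, and the induced element $\alpha_{|G_v}\in\Out(G_v)$ is trivial whenever $v$ is flexible, and — for cyclic vertices — the blow-up $Y_v$ has $G_v$ acting trivially, so any outer automorphism of $G_v$ (in particular $\alpha_{|G_v}$, which is itself trivial mod inner) preserves the $G_v$-tree $Y_v$ after adjusting by an inner automorphism; for non-flexible vertices no blow-up occurs. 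In each case $\tilde\alpha$ extends to an isometry of the blow-up of the star of $v$, using Lemma~\ref{lemma:acylindrical2} to see that the incident edge stabilizers remain elliptic. Gluing these local isometries equivariantly gives a $\tilde\alpha$-equivariant isometry of $S$, so $S$ is $\alpha$-invariant; applying this with $\alpha$ ranging over $H$ (which acts trivially on flexible vertex groups by Theorem~\ref{thm_JSJ}) shows $S$ is $H$-invariant. The main obstacle I anticipate is bookkeeping the equivariance of the blow-up at cyclic vertices carefully: one must check that the choice of the finite tree $Y_v$ (on which $G_v$ acts trivially) can indeed be taken $\alpha$-equivariantly and that the attaching points of incident edges are matched up correctly — this is where Lemma~\ref{lemma:acylindrical2} and the uniqueness of circumcenters/bridges (as in Lemma~\ref{lemma:edge-stabilizers}) do the real work.
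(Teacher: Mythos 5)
Your converse direction is essentially the paper's argument (build a lift $J_{\tilde\alpha}$ of $I_{\tilde\alpha}$ vertex by vertex, using the tree-of-cylinders property that distinct edges at a flexible vertex have distinct stabilizers, and the fact that at a cyclic vertex two choices of conjugating element differ by an element of $G_u$, which acts trivially on $Y_u$), and the bookkeeping you flag is exactly what the paper carries out. The genuine problem is in the forward direction: you conflate \emph{domination} with \emph{compatibility}. The blow-up $\hat U$ produced via Lemma~\ref{lemma:edge-stabilizers-cyclic} and \cite[Proposition~2.2]{GL-jsj} (as in Corollary~\ref{coro:blowup_zmax}) refines $\UZ_H$ and \emph{dominates} $S$, but it is not thereby a common refinement of $\UZ_H$ and $S$: the induced map from $\hat U$ (or from its collapse $\overline U$) to $S$ is surjective but may fold, so your claims ``collapsing all edges coming from $\UZ_H$ recovers $S$, by construction of the blow-up'' and ``compatibility follows since $\hat U$ is a common refinement'' are precisely what must be proved, and they are the heart of the proposition.

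The paper closes this gap by defining, for each vertex $v$ of $\UZ_H$, a subtree $Y_v\subseteq S$ (the $G_v$-minimal subtree if $G_v$ is non-cyclic, the full fixed-point set of $G_v$ if $G_v$ is cyclic — note that your ``minimal $G_v$-invariant subtree'' would be a single point at cyclic vertices, which does not capture the possibly distinct attaching points of the incident edges) and then proving two intersection statements: (i) $Y_u\cap Y_v\neq\es$ when $u,v$ are adjacent, and (ii) $Y_{v_1}\cap Y_{v_2}$ contains at most one point when $v_1\neq v_2$. Claim (ii) is what makes the map $\overline U\to S$ injective, hence an isomorphism, and it is not formal: it uses Lemma~\ref{lem_sockets} (flexible vertices are QH with sockets, so $Y_v$ is dual to non-boundary-parallel curves, its edge stabilizers are non-trivial and fix no edge of $\UZ_H$) together with the fact that $\UZ_H$ is its own tree of cylinders to rule out a common edge stabilizer of $Y_{v_1}$ and $Y_{v_2}$ being elliptic along the whole segment $[v_1,v_2]$. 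Without this analysis you have only produced a refinement of $\UZ_H$ dominating $S$, which neither yields compatibility nor the precise description of $S$ as a blow-up of $\UZ_H$ stated in the proposition.
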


\begin{rk}
  The fact that any $\alpha\in H$ acts trivially on stabilizers of flexible vertices of $\UZ_H$ is a consequence of Theorem \ref{thm_JSJ}.  See Proposition~\ref{prop:stab-zmax} below for the precise relationship between the group of all such outer automorphisms and the stabilizer of $\UZ_H$.
\end{rk}

 In order to prove Proposition~\ref{prop:carac_zmax}, we will use the following fact.

\begin{lemma}\label{lem_sockets}
Let $(A,\calp)$ be QH with sockets. Let $\Lambda$ be a socket decomposition of $(A,\calp)$ with no M\"obius socket.
Let $Y_A$ be a $\Zmax$-splitting of $(A,\calp)$.

Then $Y_A$ is dual to a family of pairwise disjoint non-boundary parallel $2$-sided simple closed curves on the underlying surface  of $\Lambda$, and every group in $\calp$ fixes a unique point in $Y_A$.
\end{lemma}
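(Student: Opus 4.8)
The statement is a standard fact about $\Zmax$-splittings of a QH-with-sockets pair, so the plan is to reduce it to the classification of cyclic splittings of a surface group relative to its boundary, i.e.\ to the fact that such splittings are dual to simple closed multicurves. First I would fix a socket decomposition $\Lambda$ of $(A,\calp)$ with underlying surface $\Sigma$ such that every M\"obius socket appears in $\calp$; such a decomposition exists by the remark following Definition~\ref{dfn_sockets}. Write $A=\pi_1(\Lambda)$, with central QH vertex group $\pi_1(\Sigma)$ and cyclic vertex groups $G_{u_i}$ attached along boundary subgroups $B_i$, and let $C_i\supseteq B_i$ be the maximal cyclic (socket) subgroups. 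The point of choosing $\Lambda$ so that all M\"obius sockets lie in $\calp$ is that then every socket subgroup $C_i$ not in $\calp$ satisfies $[C_i:B_i]\geq 3$ or $C_i=B_i$, but in all cases $C_i$ is contained in the $\Zmax$-closure we need to control, and the peripheral structure $\calp$ on $A$ pulls back to the boundary structure of $\Sigma$ together with some finite collection of conjugacy classes of elements.

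Next I would use that a $\Zmax$-splitting $Y_A$ of $(A,\calp)$ is in particular a cyclic splitting of $A$ relative to $\calp$, hence relative to all the socket subgroups and to all incident edge groups of $\Lambda$; so $Y_A$ is compatible with (or can be refined together with) $\Lambda$, and by the theory of cyclic JSJ decompositions of surface-with-boundary groups (Fujiwara--Papasoglu, as recalled in \cite[\S6]{GL-jsj}), any cyclic splitting of $\pi_1(\Sigma)$ relative to the boundary is dominated by the one dual to a maximal simple closed multicurve on $\Sigma$. Concretely, restricting $Y_A$ to the central vertex group $\pi_1(\Sigma)$ gives a cyclic splitting of $\pi_1(\Sigma)$ relative to the boundary subgroups; such a splitting is dual to a family of essential simple closed curves on $\Sigma$, none of which is boundary parallel (a boundary-parallel curve would give a boundary subgroup that is not elliptic, but all boundary subgroups of $\Sigma$ are among the $B_i$, which are elliptic in $Y_A$ since they lie in socket subgroups which either lie in $\calp$ or are otherwise elliptic). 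The edge stabilizers of $Y_A$ coming from these curves are the fundamental groups of the curves, and since $Y_A$ is a $\Zmax$-splitting these are root-closed, which is automatic for curves not boundary-parallel. One then checks that the cyclic vertices $G_{u_i}$ of $\Lambda$, being cyclic and elliptic, contribute nothing new: $Y_A$ is obtained purely from the multicurve on $\Sigma$.

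Finally, for the ellipticity statement: each group $P\in\calp$ is conjugate to a socket subgroup $C_i$, which contains $B_i=\pi_1(\partial_i\Sigma)$. Since the multicurve defining $Y_A$ consists of non-boundary-parallel simple closed curves, $B_i$ fixes a point of $Y_A$, and in fact a \emph{unique} one, because a boundary curve is disjoint from the multicurve and lies in a single complementary piece — equivalently, $B_i$ acts elliptically with a single fixed point since its axis in any cyclic splitting of the surface would have to cross the multicurve, contradicting disjointness. Then $C_i$, being the maximal cyclic overgroup of $B_i$ and hence acting on $Y_A$ with $B_i$ of finite index, also fixes that same unique point (a finite-diameter fixed-point set of a $\Zmax$-group is a point, by Lemma~\ref{lemma:acylindrical} and maximality). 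Conjugating back gives that every $P\in\calp$ fixes a unique point of $Y_A$, as claimed.

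\textbf{Main obstacle.} The delicate point is the case analysis around improper and M\"obius sockets: one must make sure that the curves arising are genuinely non-boundary-parallel and that the peripheral subgroups in $\calp$ do not secretly introduce a splitting of $\Sigma$ along a boundary-parallel curve or force a socket vertex to be hyperbolic. This is exactly why the hypothesis that every M\"obius socket lies in $\calp$ is needed — without it, a M\"obius band's core curve could be peeled off as an arc (cf.\ Remark~\ref{rk:weak-qh}), producing a splitting not dual to a simple closed curve on $\Sigma$. Handling this carefully, and checking that the restriction of $Y_A$ to $\pi_1(\Sigma)$ really is a splitting relative to the full boundary, is where the real work lies; the rest is bookkeeping with the JSJ machinery already set up in the paper.
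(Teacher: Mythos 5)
Your outline follows the same broad path as the paper (restrict $Y_A$ to the QH vertex group $\pi_1(\Sigma)$, recognize a non-boundary-parallel multicurve, then argue that the socket vertices add nothing), but two of the steps you treat as routine are exactly where the work lies, and as written they are gaps. First, you justify ellipticity of the boundary subgroups $B_i$ in $Y_A$ by saying they ``lie in socket subgroups which either lie in $\calp$ or are otherwise elliptic.'' For unattached boundary components the socket is improper or M\"obius and hence lies in $\calp$, fine; but for a boundary component attached to a cyclic vertex $G_{u_i}$ with $[G_{u_i}:B_i]\ge 3$, the socket need not belong to $\calp$, and ``otherwise elliptic'' is precisely what has to be proved. (Note also that ellipticity of the socket is equivalent to ellipticity of $B_i$, since hyperbolicity passes to finite-index subgroups, so there is no shortcut here.) The paper's proof gets this from the fact that $\Lambda$ is the cyclic JSJ decomposition of $(A,\calp)$ (\cite[Proposition~4.24]{DG_isomorphism}), so its edge groups are universally elliptic relative to $\calp$ — and this is where the hypothesis that every M\"obius socket appears in $\calp$ is actually used. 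Your use of the M\"obius hypothesis (ruling out arcs as in Remark~\ref{rk:weak-qh}) does not deliver this universal ellipticity, and without it the splitting of $\pi_1(\Sigma)$ induced by $Y_A$ is not known to be relative to the boundary, hence not known to be dual to a multicurve; your appeal to ``compatibility'' of $Y_A$ with $\Lambda$ and to Fujiwara–Papasoglu is likewise only available after this point is settled.

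Second, the sentence ``one then checks that the cyclic vertices contribute nothing new: $Y_A$ is obtained purely from the multicurve on $\Sigma$'' is the entire second half of the paper's argument, not bookkeeping. The paper blows up the Bass–Serre tree of $\Lambda$ by replacing the QH vertex by the minimal subtree $Y_v\subset Y_A$, maps this tree equivariantly to $Y_A$, uses the $\Zmax$ hypothesis (uniqueness of the maximal cyclic overgroup of $B_i$) to redefine the map so that it collapses all edges coming from $\Lambda$, and then proves the induced map is an isomorphism onto $Y_A$ by showing it cannot fold edges lying in distinct translates of $Y_v$ (their stabilizers do not commute, since a common fixed edge in the Bass–Serre tree would force $B_i$ to fix an edge of $Y_v$, which non-boundary-parallelism forbids). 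Your final paragraph on the uniqueness of the fixed point of each group in $\calp$ presupposes exactly this identification of $Y_A$ with the tree dual to the multicurve, so as it stands the argument is circular at that point. To repair the proposal you would need to either reproduce this blow-up/collapse argument or find a substitute for it, and you would need to invoke the JSJ property of $\Lambda$ (or prove universal ellipticity of its edge groups directly) rather than assert ellipticity of the attached sockets.
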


\begin{proof}
  Let $v$ be the QH vertex of $\Lambda$ and
  denote by $\tilde\Lambda$ the Bass--Serre tree of $\Lambda$.

Let $Y_v\subset Y_A$ be the minimal $G_v$-invariant subtree of $Y_A$.
The groups in $\calp$ and the edge groups of $\Lambda$ are elliptic in $Y_A$ 
because $\Lambda$ is the cyclic JSJ decomposition of $(A,\calp)$ by \cite[Proposition 4.24]{DG_isomorphism}. Let $\Sigma$ be the underlying surface of the socket decomposition $\Lambda$. Since all improper sockets appear in $\calp$ by Definition~\ref{dfn_sockets}, 
the fundamental group of each boundary component of $\Sigma$ is elliptic in $Y_A$,
so $Y_v$ is dual to a collection of disjoint simple closed curves on $\Sigma$ not parallel to the boundary.
 Observe in particular that for each edge $e$ of  $\tilde \Lambda$ incident on $v$,
 $G_e$ and any finite index subgroup of $G_e$ fixes a unique point $p_e$ in $Y_v$.
 We also observe for future use that if $gv\neq v$ and  $e_1,e_2$ are two edges in $Y_v$ and $gY_v$
 respectively, then $G_{e_1}$ and $G_{e_2}$ do not commute: if they did, then
 $G_{e_1}\cap G_{e_2}$ would fix the segment $[v,gv]$ in $\tilde \Lambda$, hence some
 edge incident on $v$, contradicting the previous observation.

 Let $S$ be the blow up of  $\tilde \Lambda$ obtained by replacing $v$ by $Y_v$
 and gluing back each incident edge $e$ to the point $p_e\in Y_v$.

 The inclusion $Y_v\hookrightarrow Y_A$ extends to an equivariant map
 $f:S\ra Y_A$ which can be chosen linear on edges.
 Let $e=uv$ be an edge of  $\tilde \Lambda$ incident on $v$, and let $\tilde e=up_e$
 be the corresponding edge in $S$. Note that $G_u$ is a conjugate of some group in $\calp$. 
 Since edge stabilizers of $Y_A$ are $\Zmax$, $f$ maps the translates of $\tilde e$ by $G_u$
 to the same segment $[f(u),p_e]\subset Y_A$,
 and one may redefine $f$ to map $\tilde e$ to the point $p_e$.
 Thus $f$ factors through the tree $S'$ obtained from $S$ by collapsing all edges coming from  $\tilde\Lambda$. Since $f$ is isometric in restriction to each $Y_v$, and $f$ cannot fold
 two edges in distinct translates of $Y_v$ by the observation above, $S'\simeq Y_A$.
 Since no edge in $Y_v$ has a stabilizer conjugate to  some group in $\calp$, this concludes the proof.
 \end{proof}

\begin{proof}[Proof of Proposition \ref{prop:carac_zmax}]
  Let $S$ be any $H$-invariant $\Zmax$-splitting.
    
  To each vertex $v\in \UZ_H$ we associate in an equivariant way a subtree $Y_v\subset S$ as follows.
If $G_v$ is not cyclic, we define $Y_v\subset S$ as the minimal $G_v$-invariant subtree of $S$ (if $G_v$ fixes a point, this point is unique because $G_v$ is not cyclic).
 Since flexible vertices  of $\UZ_H$ are QH with sockets  (Theorem~\ref{thm_JSJ}), Lemma \ref{lem_sockets} applies to $G_v\actson Y_v$ for $v$ flexible  (with $\calp=\Inc_v$),
and in particular, the stabilizer $G_v\cap G_e$ of every edge  $e\subseteq Y_v$ is non-trivial and fixes no edge in $\UZ_H$.

If $G_v$ is cyclic, then we define $Y_v\subset S$ as the set of points fixed by $G_v$;
this is non-empty because edges of $\UZ_H$ incident on $v$ have stabilizer $G_v$ and are $\ZmaxS^H$-universally elliptic.

 We note that in all cases, if $e$ is an edge in $Y_v$, then the stabilizer of $e$ for the action $G_v\actson Y_v$ is equal to its stabilizer $G_e$ for the $G$-action on $S$, in other words $G_e\cap G_v=G_e$: this is because $G_v$ is root-closed and $G_v\cap G_e$ is infinite cyclic.

We claim that if $u,v\in \UZ_H$ are joined by an edge, then $Y_u\cap Y_v\neq \es$.
Since $\UZ_H$ is a tree of cylinders, we may assume that $G_u$
is cyclic and $G_v$ is not, and since edge stabilizers of $\UZ_H$ are maximal cyclic,
$G_u\subset G_v$. 
Since $Y_v$ is a $G_u$-invariant tree and $G_u$ is elliptic in $S$, $G_u$ fixes a point in $Y_v$ which proves the claim.

We next claim that if $v_1\neq v_2$ are distinct vertices of $\UZ_H$, then $Y_{v_1}\cap Y_{v_2}$ contains at most one point. Indeed, assuming by contradiction that there is an edge $e$ in $Y_{v_1}\cap Y_{v_2}$,
then 
$G_e\subset G_{v_1}\cap G_{v_2}$ so 
$G_e$ fixes all the edges in the segment joining $v_1$ to $v_2$ in $\UZ_H$.
As noticed above, this prevents $v_1$ and $v_2$ from being flexible vertices.
Since $Y_{v_1},Y_{v_2}$ are not reduced to a point, this implies that $v_1$ and $v_2$
are vertices with cyclic stabilizer. Since $G_{v_1}\cap G_{v_2}\neq \{1\}$,
and since  $\UZ_H$ is a tree of cylinders, this implies $v_1= v_2$, a contradiction.

Now we construct a blowup $\hat U$ of $\UZ_H$ as follows:
we start from the disjoint union of the trees $Y_v$ (where $v$ ranges over the set of vertices of $\UZ_H$) 
and the disjoint union of the edges of $\UZ_H$, and for each edge $e=uv$ of $\UZ_H$, we let $p$ be the unique point in $Y_u\cap Y_v$
and we glue the endpoints of $e$ to the copy of $p$ in $Y_u$ and $Y_v$ respectively.
We claim that collapsing all edges of $\hat U$ coming from $\UZ_H$ gives an $F_N$-tree $\overline U$ isomorphic to $S$.
Indeed, the inclusions $Y_v\subset S$ extend to an equivariant map $\hat U\ra S$ which maps each edge coming from $\UZ_H$ to a point, hence factors through $\overline U$.
Since $Y_u\cap Y_v$ contains at most one point for $u\neq v$, the induced map $\overline U\ra S$ is injective,
which proves the first assertion of Proposition \ref{prop:carac_zmax}.

We now prove the converse.
Let $\hat U$ be obtained by blowing up $\UZ_H$ by replacing in a $F_N$-equivariant way
each flexible vertex $v\in \UZ_H$ by a minimal action $G_v\actson Y_v$ relative to $\Inc_v$,
each vertex $u$ with cyclic stabilizer by a finite tree $Y_u$ on which $G_u$ acts as the identity,
and choosing for each  edge $\eps=uv$ attaching points $p_\eps\in Y_u$, $q_\eps\in Y_v$ fixed by $G_\eps$.
It suffices to check that $\hat U$ is $H$-invariant, and more generally, invariant under any  outer automorphism $\alpha\in \IA$
preserving $\UZ_H$ and acting trivially on flexible vertex groups of $\UZ_H$.

Let $\tilde\alpha$ be a preimage of $\alpha$ in $\Aut(G)$.
We have an $\tilde\alpha$-equivariant isometry $I_{\tilde\alpha}:\UZ_H\ra \UZ_H$, and we
now define a lift $J_{\tilde\alpha}:\hat U\ra \hat U$. 
For each edge $\eps\subset \hat U$ coming from $\UZ_H$, we define $J_{\tilde\alpha}(\eps)=I_{\tilde\alpha}(\eps)$.
We now define $J_{\tilde\alpha}$ on each tree $Y_v$.
If $v$ is rigid with non-cyclic stabilizer, $Y_v$ is a point and we define $J_{\tilde\alpha}(Y_v)=Y_{J_{\tilde\alpha}(v)}$.
If $v$ is flexible, we know that there exists $g_v\in F_N$ such that $\ad_{g_v\m}\circ \tilde\alpha$ restricts to the identity on $G_v$.
Since $\UZ_H$ is a tree of cylinders, 
there is no pair of edges incident on $v$ with the same stabilizer.
It follows that $g_v\m I_{\tilde\alpha}$ is the identity on the star of $v$.
We then define $J_{\tilde\alpha}(x)=g_v.x$ for all $x\in Y_v$, and this is compatible with the attaching points.
Now consider $u\in\UZ_H$ with cyclic stabilizer.
Recall that as $\alpha\in\IA$, Proposition~\ref{prop:ia-zmax} shows that $\tilde\alpha$ acts as the identity on $\UZ_H/F_N$. Therefore, there exists a (non-unique) $g_u\in F_N$ such that $\tilde\alpha$ coincides with $\ad_{g_u}$ in restriction to $G_u$,
and we define $J_{\tilde\alpha}(x)=g_u.x$ for all $x\in Y_u$. If $g'_u$ is another choice for $g_u$, then $g_u\m g'_u$ centralizes $G_u$ so lies in $G_u$,
so  $g_u.x=g'_u.x$ for all $x\in Y_u$ since $G_u$ acts as the identity on $Y_u$.
To check that this is compatible with the attaching data, consider $\eps=uv$ an edge of $\UZ_H$ with origin $u$. Then 
$I_{\tilde\alpha}(\eps)=g_\eps \eps$ for some $g_\eps\in F_N$ because $\tilde\alpha$ acts trivially on $\UZ_H/F_N$, so $g_\eps u=g_u u$ and
$g_\eps\m g_u\in G_u$ fixes the attaching point $p_\eps\in Y_u$.

This shows that $J_{\tilde\alpha}$ induces an $\tilde\alpha$-equivariant isometry $\hat U\ra \hat U$ and concludes the proof. 
\end{proof}

\subsection{Bounded chain condition}

\begin{prop}\label{prop:BCC_zmax}
There exists a bound, depending only on the rank of the free group $F_N$, on the length of
any chain 
  $$H_1\subseteq H_2\subseteq \dots\subseteq H_n$$
  of subgroups of $\IA$ such that $H_1$ (hence every $H_i$) fixes no  non-trivial free splitting of $F_N$, and such that
  $$\ZmaxS^{H_1}\supsetneqq \ZmaxS^{H_2}\supsetneqq \dots \supsetneqq \ZmaxS^{H_n}.$$
\end{prop}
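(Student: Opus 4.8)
The strategy mirrors that of Proposition~\ref{prop:chain-FS}: from the chain of subgroups I produce, for each $i$, a canonical $\Zmax$-splitting $\UZ_{H_i}$, show that this invariant takes only boundedly many values, and finally recover $\ZmaxS^{H_i}$ from $\UZ_{H_i}$ so that the strict inclusions of the $\ZmaxS^{H_i}$ force the chain to have bounded length. Since $H_1$ fixes no non-trivial free splitting, neither does any $H_i$ (as $H_i\supseteq H_1$ and an $H_i$-invariant free splitting would be $H_1$-invariant), so Theorem~\ref{theo:JSJ_zmax} applies to each $H_i$ and furnishes the canonical splitting $\UZ_{H_i}$, which is non-trivial precisely because $\ZmaxS^{H_i}\supseteq\ZmaxS^{H_n}\neq\es$ (the latter being non-empty since the chain is strictly decreasing and ends at a non-empty set — actually one should be slightly careful and note $\ZmaxS^{H_n}$ could a priori be empty; but then $\UZ_{H_n}$ is trivial and the argument still bounds the length from the steps before, so this causes no trouble).

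\textbf{Step 1: comparing the $\UZ_{H_i}$.} For $i\le j$ one has $H_i\subseteq H_j$, so every $H_j$-invariant $\Zmax$-splitting is $H_i$-invariant, i.e.\ $\ZmaxS^{H_j}\subseteq \ZmaxS^{H_i}$; in particular $\UZ_{H_j}$, being $\ZmaxS^{H_j}$-universally elliptic and $H_j$-invariant, is $\ZmaxS^{H_i}$-averse to no edge of $\ZmaxS^{H_j}$ — more to the point, I want to compare deformation spaces. The clean way is: $\UZ_{H_j}\in\ZmaxS^{H_j}\subseteq\ZmaxS^{H_i}$, and $\UZ_{H_j}$ is $\ZmaxS^{H_i}$-universally elliptic since $\ZmaxS^{H_i}\supseteq\ZmaxS^{H_j}$ means being $\ZmaxS^{H_j}$-universally elliptic is \emph{weaker}, not stronger — so this direction does not immediately work. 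Instead I will argue at the level of the underlying universally elliptic JSJ trees $U_i$ (the maximal $H_i$-invariant $\ZmaxS^{H_i}$-universally elliptic $\Zmax$-splittings, of which $\UZ_{H_i}$ is the tree of cylinders): domination among deformation spaces of $\Zmax$-splittings satisfies a bounded chain condition (Lemma~\ref{lem_bound_zmax} bounds the number of orbits of edges), so after passing to a sub-chain of bounded co-length I may assume all the $U_i$ lie in a single deformation space, hence (being their own trees of cylinders after $\Zmax$-isation, by Lemma~\ref{lemma:cyl_Zmax} and \cite[Theorem~1]{GL-cyl}) all the $\UZ_{H_i}$ coincide as $F_N$-trees. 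But that is too strong and would contradict the strict decrease; the correct statement is the \emph{opposite} monotonicity, which I now isolate.

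\textbf{Step 2: the right monotonicity and the bound.} Fix $i\le j$. Then $\UZ_{H_j}$ is an $H_j$-invariant, hence $H_i$-invariant, $\Zmax$-splitting, and I claim $\UZ_{H_i}$ dominates $\UZ_{H_j}$: indeed $\UZ_{H_i}$ dominates every $H_i$-invariant $\ZmaxS^{H_i}$-universally elliptic $\Zmax$-splitting by Theorem~\ref{theo:JSJ_zmax}(2), and $\UZ_{H_j}$ \emph{is} $\ZmaxS^{H_i}$-universally elliptic — here is where one must be careful: $\UZ_{H_j}$ is $\ZmaxS^{H_j}$-universally elliptic, and an edge stabilizer need not be $\ZmaxS^{H_i}$-universally elliptic since $\ZmaxS^{H_i}$ is larger. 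So the honest route is via Proposition~\ref{prop:carac_zmax}: every $H_i$-invariant $\Zmax$-splitting, in particular each element of $\ZmaxS^{H_i}$, is obtained from $\UZ_{H_i}$ by blowing up flexible and cyclic-stabilizer vertices, so $\ZmaxS^{H_i}$ is determined by the $F_N$-tree $\UZ_{H_i}$ together with the flexible vertices' structure. Now I bound the number of possible $\UZ_{H_i}$: the number of orbits of edges of $\UZ_{H_i}$ is bounded by Lemma~\ref{lem_bound_zmax} (it has no valence-$2$ vertices, being a non-trivial tree of cylinders); its edge stabilizers are $\Zmax$ subgroups of $F_N$, of which there are countably many but — via the comparison with the common deformation space obtained after the bounded-co-length reduction of Step~1, the edge stabilizers of $\UZ_{H_i}$ are among the finitely many edge stabilizers of a fixed $U_n$ by Theorem~\ref{theo:JSJ_zmax}(3) and Lemma~\ref{lemma:cyl_Zmax}; combining this with the bound on the number of orbits of edges and vertices gives finitely many possibilities for $\UZ_{H_i}$ as an $F_N$-tree, with a bound depending only on $N$.

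\textbf{Step 3: conclusion.} Since $\ZmaxS^{H_i}$ can be recovered from $\UZ_{H_i}$ — precisely, by Proposition~\ref{prop:carac_zmax} an $\Zmax$-splitting lies in $\ZmaxS^{H_i}$ iff it arises from $\UZ_{H_i}$ by the prescribed blow-up operation, which depends only on the $F_N$-tree $\UZ_{H_i}$ (the allowed blow-ups at a flexible vertex $v$ are the $\Zmax$-splittings of $(G_v,\Inc_v)$, and $(G_v,\Inc_v)$ is QH with sockets by Theorem~\ref{thm_JSJ}, so this data is intrinsic to $\UZ_{H_i}$) — two indices $i<j$ with $\UZ_{H_i}=\UZ_{H_j}$ would force $\ZmaxS^{H_i}=\ZmaxS^{H_j}$, contradicting the strict inclusion. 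Hence the map $i\mapsto\UZ_{H_i}$ is injective on the chain, and since it takes boundedly many values (with a bound depending only on $N$, after the bounded-co-length reduction of Step~1, whose cost is also bounded only in terms of $N$), the chain has length bounded in terms of $N$ alone.

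\textbf{Main obstacle.} The delicate point is Step~1--2: making rigorous that the $\UZ_{H_i}$, as $F_N$-trees, range over a set that is finite with a bound depending only on $N$. The subtlety is that universal ellipticity is taken relative to the \emph{varying} families $\ZmaxS^{H_i}$, so the naive monotonicity of JSJ decompositions does not directly apply; one genuinely needs the reduction to a common deformation space of universally elliptic trees (using the bounded chain condition for domination of $\Zmax$-deformation spaces and the auto-fixed-subgroup bound of \cite{MV} in the same spirit as the proof of Proposition~\ref{prop:chain-FS}), together with Lemma~\ref{lemma:cyl_Zmax} to control edge stabilizers of the trees of cylinders, and Proposition~\ref{prop:carac_zmax} to know that $\ZmaxS^{H_i}$ is recoverable from $\UZ_{H_i}$. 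I expect that once these pieces are assembled the bookkeeping is routine, entirely parallel to Section~\ref{sec:free-chain}.
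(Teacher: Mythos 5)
Your Step~3 (recovering $\ZmaxS^{H_i}$ from $\UZ_{H_i}$ together with its flexible vertices via Proposition~\ref{prop:carac_zmax}, so that the strict decrease of the families forces the recorded data to change) is the same endgame as the paper's, but the step that is supposed to bound the number of possible values of $\UZ_{H_i}$ has a genuine gap. Your Steps~1--2 try to transplant the argument of Proposition~\ref{prop:chain-FS}: truncate the chain at bounded cost so that the universally elliptic JSJ trees all lie in one deformation space, and then extract finiteness of the possible edge stabilizers of the $\UZ_{H_i}$ from that. But the truncation in the free-splitting case rests on a domination monotonicity ($S_i$ dominates $S_j$ for $i\le j$ because $S_i$ is a \emph{maximum} $H_i$-invariant free splitting), and no analogue holds here: Theorem~\ref{theo:JSJ_zmax}(2) only gives that $\UZ_{H_i}$ dominates the $H_i$-invariant splittings which are $\ZmaxS^{H_i}$-universally elliptic, while $\UZ_{H_j}$ (for $j>i$) is only known to be $\ZmaxS^{H_j}$-universally elliptic, which is the \emph{weaker} condition since $\ZmaxS^{H_j}\subseteq\ZmaxS^{H_i}$. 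You notice exactly this obstruction in Step~2, but you never repair it: your claim that ``the edge stabilizers of $\UZ_{H_i}$ are among the finitely many edge stabilizers of a fixed $U_n$'' still appeals to ``the common deformation space obtained after the bounded-co-length reduction of Step~1'', which was never established (there is no chain of dominations among the JSJ trees to which a bounded chain condition could be applied). As it stands, nothing in your text bounds the set of trees $\{\UZ_{H_1},\dots,\UZ_{H_n}\}$, so the injectivity argument of Step~3 has nothing to count against.

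The missing idea is to use \emph{compatibility} rather than domination, and it is already contained in Proposition~\ref{prop:carac_zmax}: since $H_i\subseteq H_j$, the splitting $\UZ_{H_j}$ is $H_i$-invariant, and Proposition~\ref{prop:carac_zmax} (applied to $H_i$) says that $\UZ_{H_i}$ is compatible with \emph{every} $H_i$-invariant $\Zmax$-splitting, hence with $\UZ_{H_j}$. Pairwise compatibility gives a common refinement $\hat U$ of all the $\UZ_{H_i}$ by \cite[Proposition~A.17]{GL-jsj}, so each $\UZ_{H_i}$ is a collapse of $\hat U$, and the bound (depending only on $N$, cf.\ Lemma~\ref{lem_bound_zmax}) on the number of orbits of edges of such splittings bounds the number of distinct trees among the $\UZ_{H_i}$ --- with no truncation of the chain needed. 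One further small correction to your Step~3: the set of flexible vertices of $\UZ_{H_i}$ is defined relative to $\ZmaxS^{H_i}$ (Definition~\ref{de:flexible}) and is \emph{not} intrinsic to the $F_N$-tree (a vertex group can be QH with sockets and still be $\ZmaxS^{H_j}$-universally elliptic for a larger $H_j$), so it must be carried along as extra, boundedly-many-valued, data: if $\UZ_{H_i}=\UZ_{H_j}$ \emph{and} their flexible vertex sets coincide, then $\ZmaxS^{H_i}=\ZmaxS^{H_j}$ by Proposition~\ref{prop:carac_zmax}, and since both the tree and the flexible-vertex subset range over boundedly many values, the chain length is bounded.
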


\begin{proof}
  Denote $U_i=\UZ_{H_i}$. For $j\geq i$, $U_j$ is an $H_i$-invariant $\Zmax$-splitting,
so $U_i$ is compatible with $U_j$ by Proposition \ref{prop:carac_zmax}.
Let $\hat U=U_1\vee\dots\vee U_n$ be the smallest common refinement of the splittings $U_1,\dots,U_n$ given by \cite[Proposition~A.26]{GL-jsj}, it is again a $\Zmax$ splitting. 
Each $U_i$ can be obtained from $\hat U$ by collapsing some set of edges.
Since there is a bound on the number of edges of a $\Zmax$-splitting of $F_N$ (see for instance \cite{BF91}), this gives a bound on the number of edges of $\hat U$, and therefore a bound on the possible number of distinct trees $U_i$ (which are collapses of $\hat U$).
If $U_i=U_j$ and if the set of flexible vertices of $U_i$ agrees with the set of flexible vertices of $U_j$,
then $\ZmaxS^{H_i}=\ZmaxS^{H_j}$ by Proposition \ref{prop:carac_zmax}.
The proposition follows.
\end{proof}

\subsection{The stabilizer of a collection of $\Zmax$-splittings}

We conclude this section by applying the previous results to study the elementwise
stabilizer $\Gamma_\calc$ in $\IA$ of a collection $\calc$ of $\Zmax$-splittings, assuming that
this stabilizer does not preserve a non-trivial free splitting.
We denote by $\hat\calc=\ZmaxS^{\Gamma_\calc}$ the collection of all $\Gamma_\calc$-invariant
$\Zmax$-splittings of $F_N$. Notice that $\calc\subseteq\hat\calc$, and $\Gamma_\calc=\Gamma_{\hat\calc}$.

Proposition \ref{prop:BCC_zmax} may be reformulated as follows:
\begin{prop}\label{prop:chain-zmax2}
There exists a bound, depending only on the rank of the free group $F_N$, on the length of
any chain $$\calc_1\subseteq \dots\subseteq \calc_n$$ 
of collections of $\Zmax$-splittings of $F_N$ such that 
no $\Gamma_{\calc_i}$ fixes a non-trivial free splitting of $F_N$, and
$$\Gamma_{\calc_1}\supsetneq \dots \supsetneq \Gamma_{\calc_n}.$$
\end{prop}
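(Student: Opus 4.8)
\textbf{Proof plan for Proposition~\ref{prop:chain-zmax2}.}
The plan is to deduce this statement directly from Proposition~\ref{prop:BCC_zmax} (equivalently Proposition~\ref{prop:chain-zmax2}'s internal companion \ref{prop:BCC_zmax}), exactly as Proposition~\ref{prop:chain-FS2} was deduced from Proposition~\ref{prop:chain-FS}. The key observation is a formal translation between chains of collections of $\Zmax$-splittings ordered by \emph{elementwise stabilizer} and chains of subgroups ordered by their invariant $\Zmax$-splittings.

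First I would set up notation: given a chain $\calc_1\subseteq\dots\subseteq\calc_n$ of collections of $\Zmax$-splittings with $\Gamma_{\calc_1}\supsetneq\dots\supsetneq\Gamma_{\calc_n}$ and the hypothesis that no $\Gamma_{\calc_i}$ fixes a non-trivial free splitting, put $H_i:=\Gamma_{\calc_i}$. Since $\calc_i\subseteq\calc_j$ for $i\le j$, an automorphism fixing every splitting in $\calc_j$ fixes every splitting in $\calc_i$, so $H_1\supseteq\dots\supseteq H_n$; combined with the strict containments we get $H_1\supsetneq\dots\supsetneq H_n$. Reversing the indexing (set $H'_k:=H_{n+1-k}$) gives an increasing chain $H'_1\subseteq\dots\subseteq H'_n$ of subgroups of $\IA$, none of which fixes a non-trivial free splitting. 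Now I claim $\ZmaxS^{H'_1}\supseteq\dots\supseteq\ZmaxS^{H'_n}$ with all containments strict. The (non-strict) containments are immediate: a larger group has fewer invariant splittings. For strictness, suppose $\ZmaxS^{H'_k}=\ZmaxS^{H'_{k+1}}$ for some $k$. Then $\hat\calc$ for the two groups coincide, and since the elementwise stabilizer of a collection equals the elementwise stabilizer of its saturation ($\Gamma_\calc=\Gamma_{\hat\calc}$, noted just before Proposition~\ref{prop:chain-zmax2}), we would get $\Gamma_{\ZmaxS^{H'_k}}=\Gamma_{\ZmaxS^{H'_{k+1}}}$. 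But $H'_k\subseteq\Gamma_{\ZmaxS^{H'_k}}$ always, and in fact $H'_k=\Gamma_{\ZmaxS^{H'_k}}$: indeed $\Gamma_{\ZmaxS^{H'_k}}$ is by definition the set of automorphisms in $\IA$ fixing every $H'_k$-invariant $\Zmax$-splitting, which contains $H'_k$; conversely I need $\Gamma_{\ZmaxS^{H'_k}}\subseteq H'_k$, which holds because $H'_k=\Gamma_{\calc_{n+1-k}}$ and $\calc_{n+1-k}\subseteq\ZmaxS^{H'_k}=\hat\calc_{n+1-k}$, so any automorphism fixing all of $\ZmaxS^{H'_k}$ fixes all of $\calc_{n+1-k}$, hence lies in $\Gamma_{\calc_{n+1-k}}=H'_k$. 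Thus $\ZmaxS^{H'_k}=\ZmaxS^{H'_{k+1}}$ forces $H'_k=H'_{k+1}$, contradicting strictness of the chain $H'_1\subsetneq\dots\subsetneq H'_n$.

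Having produced a strictly increasing chain of subgroups $H'_1\subsetneq\dots\subsetneq H'_n$ of $\IA$, none fixing a non-trivial free splitting, with strictly decreasing $\ZmaxS^{H'_i}$, I can now invoke Proposition~\ref{prop:BCC_zmax}: it gives a bound, depending only on $N$, on the length $n$ of such a chain. This bound is exactly the bound asserted in Proposition~\ref{prop:chain-zmax2}. I should also record, as in the proof of Proposition~\ref{prop:chain-FS2}, that passing from chains of elementwise stabilizers inside $\IA$ to chains inside $\Out(F_N)$ costs only a factor of $[\Out(F_N):\IA]$ — but here the statement is phrased purely in terms of $\Stab(S)$ in $\Out(F_N)$, so the final bookkeeping step is to note that $\bigcap_{S\in\calc_i}\Stab(S)$ and its intersection with $\IA$ determine each other up to finite index, and the hypothesis ``$\Gamma_{\calc_i}$ fixes no non-trivial free splitting'' is a condition on that intersection with $\IA$; a chain of subgroups of $\Out(F_N)$ with a fixed intersection with $\IA$ has length at most $[\Out(F_N):\IA]$.

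The one subtlety worth care is that Proposition~\ref{prop:chain-zmax2} as stated uses $\Stab(S)$ in $\Out(F_N)$ whereas Proposition~\ref{prop:BCC_zmax} is phrased with subgroups of $\IA$; the ``no non-trivial invariant free splitting'' hypothesis must be interpreted for the $\IA$-part, and I expect this reconciliation (together with checking the equality $H'_k=\Gamma_{\ZmaxS^{H'_k}}$ used above) to be the only place demanding genuine attention — everything else is the formal dualization already carried out for free splittings in Section~\ref{sec:free-chain}. There is no real mathematical obstacle: the substance lives entirely in Proposition~\ref{prop:BCC_zmax}, which in turn rests on the compatibility and common-refinement results of Proposition~\ref{prop:carac_zmax} and the bound on the number of edges of a $\Zmax$-splitting.
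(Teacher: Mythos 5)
Your proof is correct and follows essentially the paper's own argument: setting $H_i=\Gamma_{\calc_i}$, using $\Gamma_{\calc}=\Gamma_{\hat\calc}$ (i.e.\ $\Gamma_{\calc_i}=\Gamma_{\ZmaxS^{\Gamma_{\calc_i}}}$) so that equal saturations force equal elementwise stabilizers, and then invoking Proposition~\ref{prop:BCC_zmax}. One small correction to your last paragraph: Proposition~\ref{prop:chain-zmax2} is already stated with $\Gamma_{\calc_i}$, the elementwise stabilizer in $\IA$, so the $\Out(F_N)$-versus-$\IA$ bookkeeping you describe is not needed here (the paper records the $\Out(F_N)$ variant only in a subsequent remark, handled exactly as you indicate).
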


\begin{proof}
  We apply Proposition~\ref{prop:BCC_zmax} with $H_i=\Gamma_{\calc_i}$. 
Denote by $\hat \calc_i=\ZmaxS^{H_i}$.
If $n$ is too large, then 
there exists $i_1<i_2$ such that $\hat\calc_{i_1}=\hat \calc_{i_2}$.
Since $\Gamma_{\calc_i}=\Gamma_{\hat\calc_i}$ for all $i\leq n$,
we get $\Gamma_{\calc_{i_1}}=\Gamma_{\calc_{i_2}}$, a contradiction.
\end{proof}

\begin{rk}
 As in Proposition~\ref{prop:chain-FS2}, the same conclusion holds if one replaces $\Gamma_{\calc_i}$ (i.e.\ the elementwise stabilizer of $\calc_i$ in $\IA$) by the elementwise stabilizer of $\calc_i$ in $\Out(F_N)$.
\end{rk}

As in Section~\ref{sec:collection-free}, we denote by $\Gamma_{\!\{\hat\calc\}}$ the setwise stabilizer of $\hat\calc$ in $\IA$,
and we note that $\Gamma_{\!\{\hat\calc\}}$ is the normalizer of $\Gamma_\calc$ in $\IA$.

The following result allows to recover $\Gamma_\calc$ and $\Gamma_{\!\{\hat\calc\}}$ from $\UZ_{\Gamma_\calc}$.

\begin{prop}\label{prop:stab-zmax}
Let $\calc$ be any  collection of $\Zmax$-splittings, and assume that $\Gamma_\calc$ fixes no  non-trivial free splitting.
Then
  \begin{itemize}
  \item $\Gamma_{\!\{\hat\calc\}}$ is the stabilizer of $\UZ_{\Gamma_\calc}$ in $\IA$;
  \item $\Gamma_\calc$ is the set of outer automorphisms $\alpha\in \IA$ that stabilize $\UZ_{\Gamma_\calc}$
and act trivially on each flexible vertex group.
  \end{itemize}
\end{prop}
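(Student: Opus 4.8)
The two assertions parallel those of Proposition~\ref{prop:stab-u1}, so the strategy is to mimic its proof, replacing the tree of cylinders $\Uun_{\Gamma_\calc}$ by the canonical $\Zmax$-splitting $\UZ_{\Gamma_\calc}$ and invoking the characterization of $\Gamma_\calc$-invariant $\Zmax$-splittings proved in Proposition~\ref{prop:carac_zmax}. First I would record that $\Gamma_{\!(\hat\calc)}$ is the normalizer of $\Gamma_\calc$ in $\IA$: any $\alpha$ normalizing $\Gamma_\calc$ permutes the $\Gamma_\calc$-invariant $\Zmax$-splittings, hence preserves $\hat\calc=\ZmaxS^{\Gamma_\calc}$; conversely $\Gamma_{\!(\hat\calc)}$ normalizes $\Gamma_{\hat\calc}=\Gamma_\calc$. (This point need not be stated explicitly but is used implicitly below.)

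For the first bullet, I would argue as in Proposition~\ref{prop:stab-u1}. Since $\Gamma_\calc$ is normalized by $\Gamma_{\!(\hat\calc)}$, and $\UZ_{\Gamma_\calc}$ is the unique $H$-invariant $\Zmax$-splitting satisfying the three properties of Theorem~\ref{theo:JSJ_zmax} with $H=\Gamma_\calc$, the uniqueness forces $\UZ_{\Gamma_\calc}$ to be $\Gamma_{\!(\hat\calc)}$-invariant: an element of $\Gamma_{\!(\hat\calc)}$ carries $\UZ_{\Gamma_\calc}$ to another splitting enjoying the same three properties, which must therefore equal $\UZ_{\Gamma_\calc}$. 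Hence $\Gamma_{\!(\hat\calc)}$ is contained in the $\IA$-stabilizer of $\UZ_{\Gamma_\calc}$. For the reverse inclusion, let $\alpha\in\IA$ stabilize $\UZ_{\Gamma_\calc}$. By Proposition~\ref{prop:ia-zmax}, $\alpha$ acts trivially on $\UZ_{\Gamma_\calc}/F_N$. By Theorem~\ref{thm_JSJ}, for each flexible vertex $v$ the pair $(G_v,\Inc_v)$ is QH with sockets and (by the same theorem) automorphisms in $\Gamma_\calc$ act trivially on $G_v$; but more to the point, I would like to conclude that $\alpha$ preserves every splitting in $\hat\calc$. Unfortunately an arbitrary $\alpha$ stabilizing $\UZ_{\Gamma_\calc}$ need not act trivially on flexible vertex groups, so one cannot directly apply the converse half of Proposition~\ref{prop:carac_zmax}. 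The resolution is that, since flexible vertex groups are QH with sockets and $\alpha\in\IA$, any $\Zmax$-splitting $Y_v$ of $(G_v,\Inc_v)$ is dual to a family of simple closed curves on the underlying surface (Lemma~\ref{lem_sockets}); the action of $\alpha_{|G_v}\in\Out(G_v,\Inc_v)$ on the set of isotopy classes of multicurves is by a mapping class, and $\alpha$ preserving all elements of $\hat\calc$ is equivalent to this mapping class being trivial on the relevant multicurves. Combining this with the fact that $\hat\calc$ is closed under the operation ``blow up each flexible vertex by any admissible $Y_v$'' (Proposition~\ref{prop:carac_zmax}), I would deduce that $\alpha$ globally preserves $\hat\calc$, i.e. $\alpha\in\Gamma_{\!(\hat\calc)}$.

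For the second bullet, the inclusion ``$\subseteq$'' is immediate: $\Gamma_\calc\subseteq\Gamma_{\!(\hat\calc)}$ stabilizes $\UZ_{\Gamma_\calc}$ by the first bullet, and by Theorem~\ref{thm_JSJ} every $\alpha\in\Gamma_\calc\subseteq H=\Gamma_\calc$ acts trivially on each flexible vertex group of $\UZ_{\Gamma_\calc}$. For ``$\supseteq$'', let $\alpha\in\IA$ stabilize $\UZ_{\Gamma_\calc}$ and act trivially on every flexible vertex group. Given any $S\in\hat\calc$, Proposition~\ref{prop:carac_zmax} expresses $S$ as a blow-up of $\UZ_{\Gamma_\calc}$ obtained by inserting a $\Zmax$-splitting $Y_v$ relative to $\Inc_v$ at each flexible vertex, a finite $G_v$-fixed tree at each cyclic vertex, and collapsing the edges coming from $\UZ_{\Gamma_\calc}$; the ``converse'' half of that same proposition then says precisely that such an $S$ is invariant under any $\alpha\in\IA$ preserving $\UZ_{\Gamma_\calc}$ and acting trivially on flexible vertex groups. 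Hence $\alpha$ fixes every $S\in\hat\calc$, so $\alpha\in\Gamma_{\hat\calc}=\Gamma_\calc$.

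\textbf{Main obstacle.} The delicate point is the reverse inclusion in the first bullet: showing that the bare condition ``$\alpha\in\IA$ stabilizes $\UZ_{\Gamma_\calc}$'' already forces $\alpha$ to globally preserve $\hat\calc$, even though $\alpha$ may act nontrivially (by a mapping class) on the flexible vertex groups. One must check that the mapping-class action on flexible QH-with-sockets vertices permutes admissible multicurves within $\hat\calc$ — i.e. that blowing up $\UZ_{\Gamma_\calc}$ along $\alpha\cdot Y_v$ instead of $Y_v$ still yields a splitting in $\hat\calc$. This follows from the characterization in Proposition~\ref{prop:carac_zmax} together with the fact (Lemma~\ref{lem_sockets}) that $\Zmax$-splittings of a QH-with-sockets pair correspond to multicurves and are permuted by $\Out(G_v,\Inc_v)$; but assembling these pieces cleanly, and checking compatibility of the blow-up data across orbits and at the cyclic vertices, is where the real work lies.
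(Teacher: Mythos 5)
Your proof is correct and follows essentially the same route as the paper: uniqueness in Theorem~\ref{theo:JSJ_zmax} gives that $\Gamma_{\!(\hat\calc)}$ preserves $\UZ_{\Gamma_\calc}$, Proposition~\ref{prop:ia-zmax} together with Proposition~\ref{prop:carac_zmax} gives the reverse inclusion, and Theorem~\ref{thm_JSJ} plus the converse half of Proposition~\ref{prop:carac_zmax} handle the second bullet. One remark: your ``main obstacle'' dissolves, since for the first bullet only \emph{setwise} preservation of $\hat\calc$ is needed, and because Proposition~\ref{prop:carac_zmax} (both directions, applied to $\Gamma_\calc$) identifies $\hat\calc$ with the set of all admissible blow-ups of $\UZ_{\Gamma_\calc}$ at flexible and cyclic vertices, any $\alpha\in\IA$ preserving $\UZ_{\Gamma_\calc}$ (hence acting trivially on its quotient graph by Proposition~\ref{prop:ia-zmax}) permutes these blow-ups — no multicurve or mapping-class analysis on the QH vertices is required.
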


\begin{rk}
  Note that $\alpha$ acts trivially on a flexible vertex stabilizer
  $G_v$ of $\UZ_{\Gamma_\calc}$ if and only if $\alpha$ has a lift
  acting as the identity on the star of $v$ (compare Definition
  \ref{dfn_V0-rigid}).  This is because two distinct edges incident on
  $v$ have distinct stabilizer.
\end{rk}

\begin{proof}
  Since $\Gamma_{\!\{\hat\calc\}}$ preserves the collection of all $\Gamma_\calc$-invariant $\Zmax$-splittings,
the uniqueness statement in Theorem \ref{theo:JSJ_zmax} ensures that
$\Gamma_{\!\{\hat\calc\}}$ preserves $\UZ_{\Gamma_\calc}$.
Conversely, if $\alpha\in\IA$ preserves $\UZ_{\Gamma_\calc}$, then $\alpha$ induces the identity on the quotient graph 
by Proposition~\ref{prop:ia-zmax}. In particular, it preserves the collection of flexible vertices,
so by Proposition~\ref{prop:carac_zmax}, it preserves the collection of all $\Gamma_\calc$-invariant $\Zmax$-splittings,
i.e.\ $\alpha\in\Gamma_{\!\{\hat\calc\}}$. This proves the first assertion.

Consider the group $H'\subset \IA$ consisting of outer automorphisms 
that preserve $\UZ_{\Gamma_\calc}$ and act trivially on each flexible vertex group. 
Theorem \ref{thm_JSJ} shows that $\Gamma_\calc\subseteq H'$.
Conversely, Proposition~\ref{prop:carac_zmax} says that every $\Gamma_\calc$-invariant splitting can be read from $\UZ_{\Gamma_\calc}$
and that every such splitting is $H'$-invariant. This implies that $H'\subseteq \Gamma_\calc$.
\end{proof}

\section{Pure subgroups of $\Out(F_N)$ and dynamical decomposition}\label{sec:collection-factors}

In this section, we consider subgroups $H\subseteq\IA$ that do not preserve any non-trivial free splittings, and for which there are several maximal $H$-invariant free factor systems. 
Our goal is to construct a non-trivial 
canonical $H$-invariant splitting encoding the interaction
between these free factor systems.
The construction does not work in the exceptional case where the free factor systems 
all come from a common QH-splitting. For this reason, the right setting for this section is to work with almost free factor systems, introduced in Definition~\ref{de:afs}.


 Throughout this section, we fix an integer $N\ge 2$.

\subsection{Almost free factor systems}
The goal of this section is to introduce almost free factor systems and basic related notions.
These objects also occur in the framework of weak attraction theory as
non-attracting subgroup systems, which are described in \cite[Theorem~F]{HM2}.
Many results concerning these objects appear in various forms in \cite{HM2}
but we give an independent treatment.

Recall from Definition \ref{dfn_cleanQH} that a vertex $v$ in a graph of groups is a clean QH vertex
if $G_v=\pi_1(\Sigma)$ and incident edge groups correspond bijectively to the fundamental groups of a subset of the boundary components of $\Sigma$. 

In order to introduce the notion of almost free factor systems, we start 
with the following definition which already appears in various forms in 
\cite{Sel}, \cite[Section~I.2.2]{HM2}, or \cite[Section 4.1]{Hor}.

\begin{de}[QH splittings and related notions]\label{dfn_QH_splitting}
A \emph{QH splitting} of $F_N$ is a cyclic splitting $S$ of $F_N$ 
such that the vertices of the quotient graph of groups $S/F_N$ are $v,u_1,\dots, u_n$ (maybe $n=0$) 
where $v$ is a clean QH vertex identified with the fundamental group of a surface $\Sigma$ with
at least one unused boundary component, and every edge joins $v$ to some $u_i$.

Its \emph{factor system} is $\hat\calf=\{[G_{u_1}],\dots,[G_{u_n}],[\grp{b_1}],\dots,[\grp{b_r}]\}$ where $[\grp{b_1}],\dots,[\grp{b_r}]$ are the conjugacy classes of the fundamental groups of unused boundary components.

The conjugates of the groups $\grp{b_1},\dots,\grp{b_r}$ are called the \emph{unused boundary subgroups}.

An \emph{extracted free factor system} is a collection of the form $\HF\setminus \{[\grp{b_i}]\}$
for some $i\leq r$.
\end{de}

\begin{figure}[ht]
  \centering
  \includegraphics{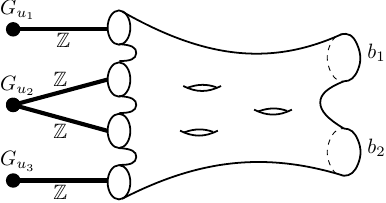}
  \caption{A QH splitting with $2$ unused boundary components. Its factor system is $\HF=\{[G_{u_1}],[G_{u_2}],[G_{u_3}],[\grp{b_1}],
    [\grp{b_2}]\}$. The two extracted free factor systems are
    $\{[G_{u_1}],[G_{u_2}],[G_{u_3}],[\grp{b_1}]\}$ and $\{[G_{u_1}],[G_{u_2}],[G_{u_3}],  [\grp{b_2}]\}$.
  }
\end{figure}

The quotient graph $S/F_N$ is not assumed to be a tree, there may be several edges joining $u_i$ to $v$.
Note that  $F_N$ is one-ended relative to $\hat \calf$ because the fundamental group of a surface
  is one-ended relative to its boundary components.

  \begin{rk}\label{rk:afs-fs}
Each  extracted free factor system is indeed a free factor system of $F_N$.   
This is shown by considering a maximal system of disjoint pairwise non-isotopic properly embedded arcs
with both endpoints on the boundary component of $\Sigma$ whose fundamental group is $\langle b_i\rangle$: such an arc system yields a free splitting of $F_N$ in which the elliptic subgroups are precisely given by  the collection $\HF\setminus\{[\langle b_i\rangle]\}$. 

In particular, if the number $r$ of unused boundary components is at least $2$, all the factors in $\HF$ are free factors of $F_N$
(although the factor system $\hat{\calf}$, as a collection, is never a free factor system of $F_N$, see Lemma~\ref{lem_HF_JSJ} and Remark~\ref{rk:hf} below).
When $r=1$, it may happen that the unused boundary subgroup is not a free factor of $F_N$.
\end{rk}

\begin{lemma}\label{lem_HF_JSJ}
 Let $S$ be a QH splitting of $F_N$, and $\HF$ be its factor system. Then $F_N$ is one-ended
  relative to $\HF$, and $S$ is a JSJ decomposition of $F_N$ relative to $\HF$.
\end{lemma}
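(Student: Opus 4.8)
The plan is to verify the two assertions of Lemma~\ref{lem_HF_JSJ} separately, both being essentially surface-topological facts transported through the QH structure of $S$.

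\textbf{One-endedness.} First I would observe that $S$ is a splitting of $F_N$ in which every edge group is infinite cyclic, hence non-trivial, so $S$ carries no edge with trivial stabilizer. Since $\HF$ consists precisely of the conjugacy classes of the non-QH vertex groups $[G_{u_i}]$ together with the conjugacy classes $[\langle b_j\rangle]$ of the unused boundary subgroups, every group in $\HF$ is elliptic in $S$, so $S$ is a splitting of $(F_N,\HF)$. To conclude one-endedness of $F_N$ relative to $\HF$, it suffices to show that $F_N$ admits no non-trivial free splitting relative to $\HF$; equivalently, that the pair $(F_N,\HF)$ does not split over the trivial group. If it did, then by standard accessibility/Grushko arguments relative to $\HF$, combining such a free splitting with $S$ via \cite[Proposition~2.2]{GL-jsj} (edge groups of $S$ being elliptic in the free splitting, as they are conjugate into boundary subgroups of $\pi_1(\Sigma)$, which are elliptic because all of $\HF$ is) would produce a refinement of $S$ in which the QH vertex $v$ is blown up by a free splitting of $(G_v,\Inc_v\cup\{[\langle b_j\rangle]\})=(\pi_1(\Sigma),\{\text{all boundary subgroups}\})$. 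But $\pi_1(\Sigma)$ is one-ended relative to its boundary subgroups, since $\Sigma$ is a compact surface with boundary of negative Euler characteristic (the fundamental group of such a surface does not split freely rel boundary). Hence no such free splitting exists, and $F_N$ is one-ended relative to $\HF$. I should also double-check that the other vertex groups $G_{u_i}$ contribute no free splitting: they are allowed to be arbitrary, but any free product decomposition of some $G_{u_i}$ relative to its incident edge groups would again refine $S$ and, after collapsing, give a free splitting of $(F_N,\HF)$; the point is that we only need $(F_N,\HF)$ one-ended, and a refinement argument reduces everything to the QH vertex — more precisely, one-endedness rel $\HF$ is equivalent to: in \emph{some} (hence every) Grushko decomposition of $(F_N,\HF)$, there is a single vertex carrying all of $F_N$. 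Since $S$ already witnesses that $\HF$ generates (up to conjugacy and free factors) and the surface part is one-ended rel boundary, no further splitting is possible.

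\textbf{$S$ is a JSJ decomposition rel $\HF$.} Here the plan is to invoke the fact that $S$ is already built as a QH splitting and apply the characterization of cyclic JSJ decompositions in the presence of a QH vertex. Concretely: every edge group of $S$ is infinite cyclic and, being a boundary subgroup of $\pi_1(\Sigma)$, is universally elliptic — any cyclic splitting of $(F_N,\HF)$ restricts on $G_v=\pi_1(\Sigma)$ to a cyclic splitting rel all boundary subgroups, which (by change-of-curves on surfaces, e.g.\ the argument of Fujiwara--Papasoglu \cite{FP} as used in \cite[\S 6]{GL-jsj}, or directly \cite[Proposition~4.24]{DG_isomorphism}) makes every boundary subgroup elliptic. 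So $S$ is $\ZmaxS$-universally-elliptic-type, i.e.\ universally elliptic among cyclic splittings rel $\HF$. Moreover $S$ dominates every universally elliptic cyclic splitting of $(F_N,\HF)$: given such a splitting $T$, its edge groups are elliptic in $S$ (they are cyclic and universally elliptic, hence fixing points in the $\ZmaxS$-canonical picture), and one can blow up $S$ by the restrictions $G_{u_i}\actson T_{u_i}$ and $G_v\actson T_v$; the surface part $G_v\actson T_v$ is dual to a curve system on $\Sigma$ rel boundary, so $T$ is obtained from $S$ by refining at $v$ along disjoint simple closed curves and collapsing — in particular $S$ dominates $T$. Together, universal ellipticity plus maximality for domination among universally elliptic cyclic splittings is exactly the definition of a JSJ decomposition of $(F_N,\HF)$ over the class of cyclic subgroups. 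This is the content of \cite[Proposition~4.24]{DG_isomorphism} (already cited in the excerpt for essentially this purpose) once one knows $\pi_1(\Sigma)$ is one-ended rel boundary, which the first part supplies.

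\textbf{Main obstacle.} I expect the delicate point to be the domination/maximality half of the JSJ claim: showing that an \emph{arbitrary} universally elliptic cyclic splitting $T$ of $(F_N,\HF)$ is dominated by $S$, including handling the non-QH vertices $G_{u_i}$ about which we assume nothing. The clean way is to restrict $T$ to each vertex group of $S$, note that edge groups of $S$ (cyclic, universally elliptic) are elliptic in $T$, apply \cite[Proposition~2.2]{GL-jsj} to build a common refinement $\hat S$ of $S$ and $T$ in which $G_{u_i}$ and $G_v$ are blown up, and then observe that collapsing $\hat S$ onto $S$ shows $S$ is a collapse-target, while the surface curve-system argument controls what happens at $v$. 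The book-keeping that this refinement has cyclic edge groups and that the $G_{u_i}$-pieces cause no obstruction (they are simply elliptic subgroups that stay elliptic) is routine but needs care; alternatively one simply quotes \cite[Proposition~4.24]{DG_isomorphism} directly, since that proposition is precisely the statement that a graph-of-groups decomposition with a single QH vertex carrying all the flexibility and the remaining vertices rigid, all edge groups being maximal boundary subgroups, is \emph{the} cyclic JSJ decomposition rel the peripheral structure — which is exactly $S$ and $\HF$ here.
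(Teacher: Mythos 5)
Your first half is essentially the paper's: one-endedness reduces to the fact that $\pi_1(\Sigma)$ is one-ended relative to its boundary subgroups, the other vertex groups being in $\HF$ and hence elliptic in any splitting relative to $\HF$ (incidentally, your worry about the $G_{u_i}$ splitting freely relative to incident edge groups is a red herring: such a splitting would not be relative to $\HF$, since $[G_{u_i}]\in\HF$). Also note that the clean reason the edge groups of $S$ are universally elliptic is simply that each one is contained in some $G_{u_i}$ with $[G_{u_i}]\in\HF$; the ``change-of-curves'' justification you offer for this point is circular, since boundary subgroups of a surface-with-boundary group are certainly not elliptic in arbitrary cyclic splittings of that group — their ellipticity is exactly what membership of the $G_{u_i}$ and $\grp{b_j}$ in $\HF$ supplies.

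The genuine gap is in the maximality half. To conclude that $S$ is a JSJ decomposition you must show that $S$ dominates every universally elliptic cyclic splitting $T$ of $(F_N,\HF)$, i.e.\ that $G_v=\pi_1(\Sigma)$ is elliptic in every such $T$. Your argument restricts $T$ to $G_v$, notes that $G_v\actson T_v$ is dual to a curve system on $\Sigma$, refines $S$ at $v$ accordingly and collapses, and then asserts ``in particular $S$ dominates $T$''. That inference is false: refining-and-collapsing only shows that $S$ and $T$ are compatible. Concretely, let $T$ be the splitting of $F_N$ dual to a single non-peripheral simple closed curve $c\subseteq\Sigma$: its edge group $\grp{c}\subseteq G_v$ is elliptic in $S$, the groups of $\HF$ are elliptic in $T$, and $T$ is obtained from $S$ by refining at $v$ along $c$ and collapsing the old edges — yet $S$ does not dominate $T$, since $G_v$ is hyperbolic in $T$. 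Every property of $T$ that your argument actually uses holds for this $T$, so the argument would prove a false statement; what rules this $T$ out is precisely its failure of universal ellipticity, which you never invoke at the QH vertex. The missing step — the heart of the paper's proof — is the surface fact that any cyclic splitting of $\pi_1(\Sigma)$ relative to all its boundary subgroups is dual to a multicurve and that every non-peripheral curve is crossed by another one, so universal ellipticity of $T$ forces the multicurve to be peripheral and hence $G_v$ to be elliptic in $T$ (cf.\ \cite[Corollary~5.30]{GL-jsj}). Finally, the fallback of simply quoting \cite[Proposition~4.24]{DG_isomorphism} does not apply as stated: that proposition, as used elsewhere in the paper, concerns socket decompositions, where the non-QH vertex groups are cyclic, whereas here the $G_{u_i}$ are arbitrary groups of $\HF$.
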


\begin{proof}
    One-endedness follows from the fact that $\pi_1(\Sigma)$ is one-ended relative to its boundary subgroups.
    The splitting $S$ is universally
    elliptic because each edge group is conjugate into a group in $\HF$, hence
    elliptic in any cyclic splitting of $F_N$ relative to $\HF$.
    Maximality follows from the fact that any cyclic splitting of $\pi_1(\Sigma)$ relative to its boundary components
    is dual to a multicurve on $\Sigma$, and each non-peripheral curve on $\Sigma$ is intersected non-trivially by another one.
    This shows that $\pi_1(\Sigma)$ is elliptic in any JSJ decomposition (see also \cite[Corollary~5.30]{GL-jsj}) so $S$ itself is a JSJ decomposition.    
\end{proof}

\begin{rk}\label{rk:hf}
  One-endedness implies that $\HF$ cannot a free factor system.
  The lemma also implies that $\HF$ determines $S$ uniquely up to deformation,
  but one can easily deduce that it actually determines $S$ up to equivariant isomorphism:
  the tree of cylinders $S_c$ of $S$ is determined by $\HF$ up to equivariant isomorphism and
  $S$ is obtained from $S_c$ by collapsing the edges of $S$ not adjacent to the QH vertex.
\end{rk}

The following corollary implies that boundary subgroups and extracted free factor systems are permuted by
the stabilizer of $\HF$.

\begin{cor}\label{cor:hf}
  The notions of boundary subgroups, unused boundary subgroups, and extracted free factor systems
  depend only of $\HF$.
\end{cor}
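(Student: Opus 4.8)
The plan is to deduce Corollary~\ref{cor:hf} directly from Lemma~\ref{lem_HF_JSJ} and Remark~\ref{rk:hf}, which say that the factor system $\HF$ determines the QH splitting $S$ up to equivariant isomorphism. First I would observe that once $S$ is recovered from $\HF$, every ingredient appearing in Definition~\ref{dfn_QH_splitting} can be read off $S$ intrinsically. More precisely, given $\HF$, reconstruct the tree of cylinders $S_c$ (canonical by the theory of trees of cylinders, since $S_c$ depends only on the deformation space of $S$ relative to $\HF$, which is the JSJ deformation space by Lemma~\ref{lem_HF_JSJ}), and then obtain $S$ by collapsing the edges of $S_c$ not adjacent to the QH vertex, as in Remark~\ref{rk:hf}. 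Thus $S$, hence its quotient graph of groups $S/F_N$, is a canonical invariant of $\HF$.

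Next I would identify, inside this canonically reconstructed $S/F_N$, the data we wish to show is $\HF$-dependent. The QH vertex $v$ is characterized as the unique vertex whose vertex group is the fundamental group of a surface with at least one unused boundary component (equivalently, the unique flexible/QH vertex of the JSJ decomposition); this is intrinsic to $S$. The boundary subgroups of $\pi_1(\Sigma)=G_v$ are then canonically the conjugates of the maximal boundary subgroups of the surface $\Sigma$ underlying $G_v$, which is part of the QH structure carried by $v$ in the (canonical) JSJ splitting. Among these, the \emph{used} boundary subgroups are exactly the ones in the image of the incident-edge map (part of the clean QH structure of $S$), so the \emph{unused} boundary subgroups are the complementary ones — again determined by $S$, hence by $\HF$. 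Finally, an extracted free factor system is $\HF\setminus\{[\grp{b_i}]\}$ for $[\grp{b_i}]$ one of the (finitely many) conjugacy classes of unused boundary subgroups; since $\HF$ is given and the set of unused boundary subgroups is now known to depend only on $\HF$, so is the finite collection of extracted free factor systems.

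The one point requiring a little care — and the likely main obstacle — is making precise that the QH (surface) structure on the vertex $v$, and in particular the collection of its maximal boundary subgroups, is genuinely canonical and not just canonical up to the ambiguity of choosing an identification $G_v\simeq\pi_1(\Sigma)$. This is handled by noting that the maximal boundary subgroups of a surface group are intrinsically characterized (for instance as the maximal subgroups that are two-ended and elliptic in every splitting of $\pi_1(\Sigma)$ relative to its boundary, or via the peripheral structure coming from the JSJ), so any two surface identifications realizing the same QH structure give the same set of conjugacy classes of boundary subgroups. Combined with the canonicity of $S$ from $\HF$, this yields that boundary subgroups, unused boundary subgroups, and extracted free factor systems all depend only on $\HF$, proving the corollary.

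\begin{proof}
By Lemma~\ref{lem_HF_JSJ}, $F_N$ is one-ended relative to $\HF$ and $S$ is a JSJ decomposition of $F_N$ relative to $\HF$; by Remark~\ref{rk:hf}, the tree of cylinders $S_c$ is determined by $\HF$ up to equivariant isomorphism, and $S$ is recovered from $S_c$ by collapsing all edges not adjacent to the QH vertex. Hence $S$ (with its structure of graph of groups and its QH vertex $v$) is determined by $\HF$ up to equivariant isomorphism. The QH vertex $v$ is the unique vertex of $S/F_N$ carrying a clean QH structure with a surface having an unused boundary component, so $v$ and the set of conjugacy classes of maximal boundary subgroups of $G_v=\pi_1(\Sigma)$ are determined by $S$, hence by $\HF$; these conjugacy classes are, by definition, the boundary subgroups. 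The used boundary subgroups are those in the image of the injection from incident edges of $v$ to boundary components of $\Sigma$, which is part of the data of $S$; therefore the unused boundary subgroups are determined by $\HF$ as well. Finally, each extracted free factor system is of the form $\HF\setminus\{[\grp{b_i}]\}$ where $[\grp{b_i}]$ ranges over the finitely many conjugacy classes of unused boundary subgroups; since both $\HF$ and this finite set of conjugacy classes depend only on $\HF$, so does the collection of extracted free factor systems.
\end{proof}
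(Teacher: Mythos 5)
There is a genuine gap, and it sits exactly at the point you flag as "requiring a little care". Your reduction via Remark~\ref{rk:hf} (recover $S$ canonically from $\HF$ through the tree of cylinders) is fine, but the next step — that the surface structure on $G_v$, hence the set of boundary subgroups, can then be read off $S$ "intrinsically" — is not true, and the justification you offer does not repair it. A nonabelian free group carries many inequivalent structures of a surface group with boundary, so "maximal boundary subgroups of $\pi_1(\Sigma)$" is not an invariant of the abstract group $G_v$; and even the pair (splitting $S$, vertex $v$) does not determine the QH structure. Concretely, in $F_3=\grp{a,b,c}$ take $S$ dual to the amalgam $\grp{a,b}\ast_{\grp{a}}\grp{a,c}$: the vertex group $\grp{a,b}$ can be identified with the fundamental group of a pair of pants with boundary curves $a,b,(ab)^{-1}$ (incident edge on $a$, unused boundaries $b$ and $ab$), or with boundary curves $a, ab, (a^2b)^{-1}$ (unused boundaries $ab$ and $a^2b$); both make $S$ a QH splitting, with different unused boundary subgroups and different factor systems. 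So the boundary data is genuinely extra information beyond $S$, and your proposed characterization of boundary subgroups as "maximal two-ended subgroups elliptic in every splitting of $\pi_1(\Sigma)$ relative to its boundary" is circular, since it presupposes the boundary structure, while "via the peripheral structure coming from the JSJ" just restates what has to be proved.

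The paper closes this by characterizing the relevant subgroups in terms of $\HF$ itself rather than in terms of $S$ alone: all JSJ decompositions of $F_N$ relative to $\HF$ lie in one deformation space, so $\pi_1(\Sigma)$ is the unique non-$\HF$-peripheral vertex group of any of them; the boundary subgroups are then exactly the maximal cyclic subgroups of $\pi_1(\Sigma)$ that are $\HF$-peripheral; the unused ones are those that fix no edge in $S$, equivalently in any reduced JSJ decomposition relative to $\HF$; and the extracted free factor systems are obtained by deleting an unused boundary subgroup from $\HF$. If you want to salvage your route, replace the "intrinsic to the surface group" claim by an argument of this kind: the used boundary subgroups are the incident edge groups of $v$ in the canonical $S$, and the unused ones must be identified using $\HF$ (e.g.\ as the $\HF$-peripheral maximal cyclic subgroups of $G_v$ not fixing an edge), not from the bare tree.
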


\begin{proof}
  All JSJ decompositions are in the same deformation space so have the same non-cyclic vertex groups \cite[Corollary~4.4]{GL_deformation}.
  The group $\pi_1(\Sigma)$ is thus characterized as the unique vertex group in any JSJ decomposition that is not $\HF$-peripheral.
  Boundary subgroups of $\pi_1(\Sigma)$ are characterized as the maximal cyclic subgroups of $\pi_1(\Sigma)$ which
  are $\HF$-peripheral.
  Unused boundary subgroups are characterized as those that do not fix an edge  in $S$, hence in any reduced JSJ decomposition of $F_N$ relative to $\HF$
  \cite[Proposition~4.6]{GL_deformation}.
  Extracted free factor systems are obtained from $\HF$ by removing any unused boundary subgroup.
\end{proof}

\begin{de}[Almost free factor systems]\label{de:afs}
  An \emph{almost free factor system} $\HF$ of $F_N$ is
  a collection of conjugacy classes of subgroups of $F_N$ which is either a free factor system or the factor system
   of a QH splitting of $F_N$.

  A free factor system $\calf$ is \emph{extracted from $\HF$} if $\HF$ is a free factor system and $\calf=\HF$ or
  if $\HF$ the factor system of a QH splitting $S$ of $F_N$ and $\calf$ is an extracted free factor system.
\end{de}

\begin{rk}\label{rk_afs_malnormal}
  Any \afs\ $\HF$ is a \emph{malnormal family}: for each $[P],[Q]\in\HF$  and every $g\in F_N$,
  if $P^g\cap Q\neq\{1\}$ then $Q=P^g$ and $g\in P$.
\end{rk}

\begin{lemma}\label{lemma:af}
  Let $H\subseteq\IA$ be a subgroup
  that does not preserve any non-trivial free splitting.  

  \begin{enumerate}\renewcommand{\theenumi}{(\arabic{enumi})}

  \item\label{it:af1} For any maximal $H$-invariant free factor system $\calf$, there exists a unique maximal $H$-invariant \afs\ $\HF$
  such that $\calf\sqsubseteq \HF$. Moreover $\calf$ is extracted from $\HF$.

\item\label{it:af2} If $\HF$ is any maximal $H$-invariant \afs, then any free
    factor system $\calf$ extracted from $\HF$ is a maximal
    $H$-invariant free factor system.
  
\item\label{it:af3} If $\calf$ and $\HF$ are as in \ref{it:af1} or \ref{it:af2},
then every cyclic group whose conjugacy class is $H$-invariant is $\HF$-peripheral, and $H$ contains an element which
  is fully irreducible relative to $\calf$, and atoroidal relative to $\HF$.  
\end{enumerate}
\end{lemma}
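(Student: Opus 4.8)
\textbf{Proof strategy for Lemma~\ref{lemma:af}.}
The plan is to deduce everything from the classification of maximal invariant free factor systems and from basic facts about QH splittings established above. First I would recall that chains of free factor systems have bounded length (Section~\ref{sec:factor-systems}), so maximal $H$-invariant free factor systems exist; this is the starting point for \ref{it:af1}. Given such a $\calf$, I want to produce the associated \afs\ $\HF$. The key input is the relative theory of Handel--Mosher and of the second author with Guirardel: since $H$ does not preserve any non-trivial free splitting and $\calf$ is maximal, the group $\Out(F_N,\calf)$ restricted to $H$ contains an automorphism which is fully irreducible relative to $\calf$ (this is cited in the introduction as \cite{HM,GH}), and one moreover checks it is atoroidal relative to $\calf$ because an element with a periodic non-$\calf$-peripheral conjugacy class would, via Theorem~\ref{theo:ia-element}, give an $H$-invariant conjugacy class, hence (after the construction below) a proper invariant relative free factor contradicting maximality. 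The only obstruction to $\calf$ itself being the desired $\HF$ is the presence of $H$-invariant \emph{cyclic} conjugacy classes that are not $\calf$-peripheral; these are exactly the ones recorded in the ``unused boundary components'' of a QH splitting. So the construction of $\HF$ amounts to assembling these invariant cyclic subgroups together with $\calf$ into the factor system of a QH splitting, and one must verify that the resulting collection is $H$-invariant and maximal among \afs{}s above $\calf$.

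Concretely, for \ref{it:af1} I would argue as follows. Let $\calz$ be the (finite, by Theorem~\ref{theo:ia-element} together with a bounded-cardinality argument using that an \afs\ is a malnormal family, Remark~\ref{rk_afs_malnormal}) set of conjugacy classes of maximal cyclic subgroups of $F_N$ that are $H$-invariant but not $\calf$-peripheral. Build a graph of groups with a central clean QH vertex $v$ carrying a surface $\Sigma$ whose used boundary components correspond to the factors of $\calf$ and whose unused boundary components correspond to $\calz$: the existence of such a QH splitting compatible with $\calf$ follows from the fact that $\calf$ is a free factor system and an element of $H$ is fully irreducible relative to $\calf$, so the ``arational'' geometry (Proposition~\ref{prop:invariant-arational-tree}, Reynolds' reducing factors \cite{Rey}) produces the surface piece; its factor system is then $\HF$. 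That $\HF$ is $H$-invariant follows because $\calf$ and $\calz$ are, and because by Lemma~\ref{lem_HF_JSJ} and Corollary~\ref{cor:hf} a QH splitting is determined up to isomorphism by its factor system, so $H$ permutes the data and acts trivially (being in $\IA$, using Lemma~\ref{lemma:ia-free-splitting} and Theorem~\ref{theo:ia}). Uniqueness and maximality of $\HF$: any $H$-invariant \afs\ $\HF'$ with $\calf\sqsubseteq\HF'$ either is a free factor system, forcing $\HF'=\calf$ by maximality of $\calf$ and hence $\HF'\sqsubseteq\HF$, or is itself the factor system of a QH splitting, in which case its used boundary factors form a free factor system extracted from $\HF'$ dominating $\calf$, hence equal to $\calf$ by maximality, and its unused boundary subgroups are $H$-invariant cyclic non-$\calf$-peripheral, hence contained in $\calz$; comparing the QH splittings via their factor systems gives $\HF'\sqsubseteq\HF$. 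That $\calf$ is extracted from $\HF$ is built into the construction.

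For \ref{it:af2}, start with a maximal $H$-invariant \afs\ $\HF$. If $\HF$ is a free factor system there is nothing to prove. Otherwise $\HF$ is the factor system of a QH splitting $S$, and by Remark~\ref{rk:afs-fs} each extracted free factor system $\calf=\HF\setminus\{[\langle b_i\rangle]\}$ is genuinely a free factor system, which is $H$-invariant since $\HF$ and each boundary subgroup are (Corollary~\ref{cor:hf}). If $\calf$ were not maximal, there would be an $H$-invariant free factor system $\calf'\sqsupsetneq\calf$; applying \ref{it:af1} to a maximal $H$-invariant free factor system above $\calf'$ produces an $H$-invariant \afs\ strictly above $\HF$, contradicting maximality of $\HF$ (one has to check the domination is strict, which follows because $\calf'$ is not $\HF$-peripheral as $[\langle b_i\rangle]$ is not conjugate into $\calf$). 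For \ref{it:af3}, the statement about cyclic subgroups is immediate: an $H$-invariant cyclic conjugacy class not $\HF$-peripheral would lie in $\calz$ and hence be an unused boundary subgroup of $\HF$, hence $\HF$-peripheral, a contradiction (in case \ref{it:af2}, use that $\HF$ is the factor system of a QH splitting and every $H$-invariant cyclic class is among its boundary subgroups). The existence of an element of $H$ fully irreducible relative to $\calf$ is \cite{HM,GH} applied to the maximal $\calf$; atoroidality relative to $\HF$ then follows because any element with a periodic non-$\HF$-peripheral conjugacy class would, by Theorem~\ref{theo:ia-element}, yield an $H$-invariant such class, contradicting the first part of \ref{it:af3}. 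The main obstacle is the construction in \ref{it:af1} of the QH splitting realizing $\HF$ from $\calf$ and $\calz$ simultaneously with the verification that its factor system is the \emph{maximum} $H$-invariant \afs\ above $\calf$; this requires the relative arational/JSJ machinery of Sections~\ref{sec:collection-zmax} and the cited work, and getting the compatibility of the surface piece with all the prescribed boundary data right is the delicate point.
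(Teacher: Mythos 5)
The central gap is in your construction of $\HF$ in assertion~(1). You define $\calz$ as the set of \emph{all} $H$-invariant maximal cyclic non-$\calf$-peripheral conjugacy classes and then posit a clean QH splitting whose unused boundary components realize all of $\calz$; but the existence of a surface piece with this prescribed boundary data is exactly what has to be proved, and the machinery you cite does not deliver it (you acknowledge this is "the delicate point" but never resolve it). What the paper proves first, and what your argument is missing, is that $\calz$ has at most one element: taking $\alpha\in H$ fully irreducible relative to $\calf$ and an $\alpha$-invariant arational tree $T$ with dilatation $\lambda\neq 1$ (Proposition~\ref{prop:invariant-arational-tree}), any $\alpha$-invariant conjugacy class has its translation length multiplied by $\lambda$, hence is elliptic in $T$; so $T$ is an arational surface tree, its unique non-$\calf$-peripheral point stabilizer $\grp{b}$ is the only possible member of $\calz$, and $\calf\cup\{[\grp{b}]\}$ is then automatically an \afs. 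Your finiteness claim for $\calz$ (via Theorem~\ref{theo:ia-element} plus malnormality) is not a proof, and without the uniqueness the surface cannot be assembled. A second gap is in your uniqueness/maximality step: "comparing the QH splittings via their factor systems" silently excludes the configuration where $\grp{b}$ occurs as an \emph{edge group} (a used boundary subgroup) of the QH splitting defining a competing maximal \afs\ $\HF'$; in that case $\HF'\neq\calf'\cup\{[\grp{b}]\}$ and one needs a genuine argument — the paper collapses that splitting to a one-edge cyclic splitting $U$ and shows the surface group of $\HF$ is elliptic in $U$, forcing the equivariant map from the QH splitting of $\HF$ to $U$ to be constant, a contradiction.

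The atoroidality step is also incorrect as written. If a conjugacy class is periodic under the single element $\alpha$, Theorem~\ref{theo:ia-element} only upgrades this to $\langle\alpha\rangle$-invariance, not $H$-invariance, since $\langle\alpha^k\rangle$ has infinite index in $H$ in general; so "yield an $H$-invariant such class, contradicting the first part of (3)" does not work. Moreover your earlier claim that $\alpha$ is atoroidal \emph{relative to $\calf$} is false whenever $\grp{b}$ exists (the class $[b]$ itself is $\alpha$-invariant and non-$\calf$-peripheral); the lemma only asserts atoroidality relative to $\HF$, and the correct argument is again ellipticity in $T$: any $\alpha$-periodic class is elliptic because the dilatation is $\neq 1$, hence $\calf$-peripheral or conjugate into $\grp{b}$, i.e.\ $\HF$-peripheral. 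By contrast, your treatment of assertion~(2) — deriving a contradiction with the maximality of $\HF$ from assertions (1) and (3) — matches the paper's argument in substance and would go through once (1) and (3) are correctly established.
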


\begin{rk}
  Notice that as $H$ does not preserve any non-trivial free splitting, every $H$-invariant free factor system is non-sporadic. 
\end{rk}

\begin{proof}[Proof of Lemma \ref{lemma:af}]
  We first prove \ref{it:af1} and  \ref{it:af3} assuming that every $H$-periodic
  (or equivalently, $H$-invariant) conjugacy class is $\calf$-peripheral.
We claim that every $H$-invariant \afs\ $\HF$ such that $\calf\sqsubseteq\HF$ is a free factor system.
This claim will imply that $\calf=\HF$, so \ref{it:af1} holds
and \ref{it:af3} follows from \cite[Theorem~2]{GH}.

To prove the claim, assume towards a contradiction that $\hat\calf$ is not a free factor system.
Consider a QH splitting defining $\HF$, and let $\Sigma$ be the underlying surface.
By Corollary \ref{cor:hf}, 
every boundary subgroup of $\pi_1(\Sigma)$
is $H$-periodic, hence $\calf$-peripheral by assumption.
Since $\pi_1(\Sigma)$ is freely indecomposable relative to its boundary components,
$\pi_1(\Sigma)$ is $\calf$-peripheral hence $\HF$-peripheral, a contradiction.

We now consider the case where there exists a 
 maximal cyclic subgroup $\grp{b}$ which is not $\calf$-peripheral and whose conjugacy class is $H$-periodic. 
 We claim that such a conjugacy class is unique. Indeed, by \cite[Theorem~1]{GH}, there exists an element $\alpha\in H$ which is fully irreducible relative to $\calf$.
Proposition~\ref{prop:invariant-arational-tree} ensures that there exists an
$\alpha$-invariant tree $T$ which is arational relative to $\calf$ and in which $b$ is elliptic.
It follows that $T$ is an arational surface tree \cite{Rey,Hor}, and, up to conjugacy, $\grp{b}$ is the unique point stabilizer of $T$ which is not $\calf$-peripheral. In particular, $\calf\cup\{[\grp{b}]\}$ is an almost free factor system, and 
we have proved that $\alpha$ is fully irreducible relative to $\calf$ and atoroidal relative to $\calf\cup\{[\grp{b}]\}$.

We now check that $\HF=\calf\cup\{[\grp{b}]\}$ is the unique maximal $H$-invariant almost free factor system 
relative to which $\calf$ is peripheral.
This will conclude the proof of Assertions~\ref{it:af1} and~\ref{it:af3}.
So consider $\HF'$ a maximal $H$-invariant \afs\ with  $\calf\sqsubseteq\hat\calf'$ and let us prove that $\HF=\HF'$.
Consider a QH splitting defining $\HF'$, and let $\Sigma'$ be the underlying surface.
By Corollary~\ref{cor:hf}, 
the boundary subgroups of $\pi_1(\Sigma')$ are $H$-periodic.
As above, at least one of these boundary subgroups 
is not $\calf$-peripheral,  so it is conjugate to a subgroup of $\grp{b}$,
so $\calf\cup\{[\grp{b}]\}$ is $\HF'$-peripheral.

If $\grp{b}$ is an unused boundary subgroup of $\HF'$, then write $\HF'=\calf'\cup\{[\grp{b}]\}$
where $\calf'$ is the corresponding extracted free factor system.
Since $\calf\sqsubseteq\HF'$  and $b$ is not $\calf$-peripheral, it follows that $\calf\sqsubseteq \calf'$.
By maximality of $\calf$, it follows that $\calf=\calf'$, so $\HF=\HF'$ and we are done.

If $\grp{b}$ is not an unused boundary subgroup of $\HF'$, we argue towards a contradiction.
The QH splitting $S'$ associated to $\HF'$
has an edge $e$ whose stabilizer is commensurable with $\grp{b}$. Let $U$ be the splitting of $F_N$ obtained from $S'$
by collapsing every edge not in the orbit of $e$.
Denote by $\Sigma$ the surface underlying the QH splitting $S$ associated to $\HF$, so that $\pi_1(\Sigma)$
is identified with a vertex stabilizer  of $S$.
Since $\HF\sqsubseteq\HF'$, the boundary subgroups of $\pi_1(\Sigma)$ are elliptic in $S'$ hence in $U$.
If $\pi_1(\Sigma)$ does not fix a point in $U$, then the action of $\pi_1(\Sigma)$ on
its minimal subtree in $U$ is dual to a collection of simple closed curves in $\Sigma$
whose stabilizer is conjugate to a power of $b$. But since $\grp{b}$ is a boundary subgroup of $\pi_1(\Sigma)$,
there is no such curve so $\pi_1(\Sigma)$ fixes a point in $U$.
This implies that there is an equivariant map $f:S\ra U$, but this map has to collapse every edge in $S$
because no edge stabilizer of $S$ is commensurable with $\grp{b}$.
It follows that $f$ is constant, a contradiction.

This concludes the proof of Assertions \ref{it:af1} and \ref{it:af3} in all cases.

 Assertion \ref{it:af2} is trivial if $\HF$ is a free factor system, so write
  $\HF=\calf\cup\{[\grp{b}]\}$ for some unused boundary subgroup $\grp{b}$ of $\HF$.
 The free factor system $\calf$ is $H$-invariant because $H\subseteq\IA$  (Theorem \ref{theo:ia}), and it suffices to show that $\calf$ is maximal.
 Assume that $\calf\sqsubseteq\calf'$ for some maximal $H$-invariant free factor system $\calf'$.
  By Assertion~\ref{it:af1}, there exists a unique maximal $H$-invariant \afs\ $\HF'$ such that $\calf'\sqsubseteq\HF'$.
Note that $\grp{b}$ cannot be $\calf'$-peripheral because $F_N$ is freely indecomposable relative to $\hat\calf$; but as $\grp{b}$ is $H$-invariant, Assertion~\ref{it:af3} implies that $\langle b\rangle$ is $\HF'$-peripheral. Therefore $\HF'=\calf'\cup\{[\grp{b}]\}$.
Hence $\hat\calf\sqsubseteq\HF'$,
and by maximality of $\hat\calf$ it follows that $\hat\calf=\HF'$, 
so $\calf=\calf'$.
\end{proof}

\begin{lemma}\label{lem_deborde}
Let $H\subseteq \IA$ be a subgroup that does not preserve any non-trivial free splitting,  and $\HF$ be a maximal $H$-invariant \afs. 

Then  there is a maximal $H$-invariant \afs\ distinct from $\HF$ if and only if
there exists a proper free factor $A$ which is not $\HF$-peripheral and
whose conjugacy class is $H$-invariant.
\end{lemma}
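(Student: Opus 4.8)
The plan is to prove both implications separately, the harder one being the ``only if'' direction.

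For the ``if'' direction, suppose $A$ is a proper free factor of $F_N$ that is not $\HF$-peripheral and whose conjugacy class is $H$-invariant. Consider the smallest free factor system $\calf'$ with $\{[A]\}\sqsubseteq\calf'$ and $\calf_0\sqsubseteq\calf'$, where $\calf_0$ is a free factor system extracted from $\HF$; this exists by the discussion of $\calf\vee\{[A]\}$ in Section~\ref{sec:factor-systems}, and it is $H$-invariant since $H\subseteq\IA$ preserves $[A]$ and $\calf_0$ (using Theorem~\ref{theo:ia}). Since $A$ is not $\HF$-peripheral, $\calf'$ is strictly larger than $\calf_0$ in the ordering, so $\calf'\neq\calf_0$. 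Enlarge $\calf'$ to a maximal $H$-invariant free factor system $\calf''$ (possible by the bounded chain condition on free factor systems), which still properly dominates $\calf_0$. By Lemma~\ref{lemma:af}\ref{it:af1}, there is a unique maximal $H$-invariant \afs\ $\HF'$ with $\calf''\sqsubseteq\HF'$. It remains to check $\HF'\neq\HF$: if they were equal, then $\calf''$ would be a free factor system extracted from $\HF$, hence would be $\sqsubseteq$-incomparable-or-dominated-by... more precisely, by Lemma~\ref{lemma:af}\ref{it:af2} both $\calf_0$ and $\calf''$ would be maximal $H$-invariant free factor systems extracted from the same $\HF$, and one checks from Remark~\ref{rk:afs-fs} and Corollary~\ref{cor:hf} that any two extracted free factor systems of a given QH splitting are $\sqsubseteq$-incomparable (they differ only by which single unused boundary subgroup is kept), contradicting $\calf_0\sqsubsetneq\calf''$. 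Hence $\HF'\neq\HF$ is a second maximal $H$-invariant \afs.

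For the ``only if'' direction, suppose $\HF'$ is a maximal $H$-invariant \afs\ with $\HF'\neq\HF$. Pick a free factor system $\calf'$ extracted from $\HF'$; by Lemma~\ref{lemma:af}\ref{it:af2} it is a maximal $H$-invariant free factor system. By Lemma~\ref{lemma:af}\ref{it:af1}, $\HF$ is the unique maximal $H$-invariant \afs\ with $\calf\sqsubseteq\HF$ for any free factor system $\calf$ extracted from $\HF$, and similarly $\HF'$ is the unique one above $\calf'$. Now take $\calf$ a free factor system extracted from $\HF$. If every group of $\calf'$ were $\HF$-peripheral, i.e.\ $\calf'\sqsubseteq\HF$, then since $\calf'$ is a maximal free factor system and $F_N$ is one-ended relative to $\HF$ (Lemma~\ref{lem_HF_JSJ}), one argues that $\calf'$ must in fact be extracted from $\HF$ — indeed $\calf'\sqsubseteq\HF$ forces each group of $\calf'$ into a conjugate of a factor of $\HF$, and a maximal free factor system below $\HF$ must, by one-endedness, consist of all the non-cyclic factors of $\HF$ together with all but one of the unused boundary subgroups (this is the content of Remark~\ref{rk:afs-fs}). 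But then by Remark~\ref{rk:hf} and Corollary~\ref{cor:hf}, $\HF'$ is determined by $\calf'$ as the unique maximal $H$-invariant \afs\ above it, giving $\HF'=\HF$, a contradiction. Therefore some group $P$ in $\calf'$ is not $\HF$-peripheral. Since $\calf'$ is a free factor system (and $\HF'\neq\HF$ is either a free factor system or the factor system of a QH splitting), each group of $\calf'$ is either a free factor of $F_N$ or — in the QH case — possibly a single unused boundary subgroup; here $P$, being an element of the extracted free factor system $\calf'$, is in fact a free factor of $F_N$ by Remark~\ref{rk:afs-fs}. Its conjugacy class is $H$-invariant because $\calf'$ is $H$-invariant and $H\subseteq\IA$ preserves each individual conjugacy class in an invariant free factor system (Theorem~\ref{theo:ia}). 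So $A:=P$ works, provided $A\neq F_N$, which holds since $\calf'$ is a non-sporadic, in particular proper, free factor system.

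The main obstacle I anticipate is the bookkeeping in the step asserting that ``a maximal $H$-invariant free factor system $\sqsubseteq$-below $\HF$ must be extracted from $\HF$'': this requires knowing precisely what the maximal free factor systems dominated by an \afs\ look like, which combines one-endedness relative to $\HF$ (Lemma~\ref{lem_HF_JSJ}), the structure of the QH splitting, and the description of extracted free factor systems via arc systems in Remark~\ref{rk:afs-fs}. A clean way to handle it is: if $\calf'\sqsubseteq\HF$ then any Grushko tree relative to $\calf'$ dominates a tree relative to $\HF$, and one analyzes how $\pi_1(\Sigma)$ (the unique non-peripheral JSJ vertex group, by Corollary~\ref{cor:hf}) acts — since $\calf'$ contains all the non-cyclic factors of $\HF$ by maximality and one-endedness forbids splitting $\pi_1(\Sigma)$ freely rel.\ its boundary, $\calf'$ can only refine $\HF$ by promoting unused boundary subgroups to factors, i.e.\ $\calf'$ is extracted. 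I would isolate this as a small lemma if the argument runs long, but it should follow formally from the results already established in this section.
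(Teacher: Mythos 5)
There is a genuine gap in your ``if'' direction: the object you construct first, the smallest free factor system $\calf'$ lying above both $\{[A]\}$ and $\calf_0$, need not exist. The construction $\calf\vee\{[A]\}$ recalled in Section~\ref{sec:factor-systems} is only available when $A$ is a proper free factor of $F_N$ \emph{relative to} $\calf$, and your $A$ is precisely not of that type: being non-$\HF$-peripheral, it typically meets the factors of $\calf_0$ in non-trivial subgroups, and then no free factor system at all contains both $[A]$ and $\calf_0$. This already fails in the paper's motivating Example~\ref{ex:3-surfaces}: take $\HF=\HF_1=\{[Q_1],[\grp{c_1}]\}$, $\calf_0=\{[Q_1]\}$ and $A=\pi_1(\Sigma_1)$; suitable conjugates of $A$ and $Q_1$ intersect in $\grp{c}$, so in any free splitting in which both are elliptic they fix the same point (edge stabilizers being trivial), hence $F_N=\grp{A,Q_1}$ is elliptic and the splitting is trivial. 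So there is no free factor system above $\{[A]\}\cup\calf_0$, and your $\calf'$, together with everything built from it, does not exist. The repair is to drop $\calf_0$ altogether: $\{[A]\}$ is itself an $H$-invariant free factor system, hence (by the bounded chain condition) contained in some maximal $H$-invariant \afs{} $\HF'$, and $\HF'\neq\HF$ simply because $A$ is $\HF'$-peripheral but not $\HF$-peripheral. This one-line argument is the paper's proof of the ``if'' direction.

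Your ``only if'' direction follows the paper's skeleton (extract $\calf'$ from $\HF'$, invoke Lemma~\ref{lemma:af}\ref{it:af2}, and reduce to the case $\calf'\sqsubseteq\HF$), but the contradiction in that case is not justified as written. The assertion that a maximal $H$-invariant free factor system dominated by $\HF$ must be extracted from $\HF$ is not ``the content of Remark~\ref{rk:afs-fs}'' (that remark only says that extracted systems are free factor systems), your one-endedness sketch is not a proof, and it tacitly assumes that $\HF$ is the factor system of a QH splitting rather than possibly a free factor system. In fact no such structural claim is needed: once $\calf'\sqsubseteq\HF$, both $\HF$ and $\HF'$ are maximal $H$-invariant \afs{}s lying above the maximal $H$-invariant free factor system $\calf'$, so the uniqueness clause of Lemma~\ref{lemma:af}\ref{it:af1} --- which you yourself quote --- gives $\HF=\HF'$ at once, the desired contradiction. (The paper handles this case slightly differently: by Lemma~\ref{lemma:af}\ref{it:af3}, the unused boundary class $[\grp{b}]$ of $\HF'$ is also $\HF$-peripheral, whence $\HF'\sqsubseteq\HF$ and maximality of $\HF'$ forces $\HF'=\HF$.) So the skeleton is right, but both halves of your write-up rest on steps that are either false in general (the join) or unsubstantiated (the extraction claim), even though each admits a short correct replacement already available in Lemma~\ref{lemma:af}.
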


\begin{proof}
The `if' direction follows from the fact that every proper free factor whose conjugacy class is $H$-invariant is contained in a maximal $H$-invariant almost free factor system. For the `only if' direction, let $\HF'$ be a maximal $H$-invariant \afs{} distinct from $\HF$.
If $\HF'$ is a free factor system, then it contains a conjugacy class of free factor $A$ which is not $\HF$-peripheral, and the conjugacy class of $A$ is $H$-invariant (as $H\subseteq\IA$). So we assume that $\HF'$ is not a free factor system,
 and write $\HF'=\calf'\cup \{[\grp{b}]\}$
where $\calf'$ is a free factor system extracted from $\HF'$.
By Lemma~\ref{lemma:af}\ref{it:af2}, $\calf'$ is a maximal $H$-invariant free factor system.
If some free factor in $\calf'$ is not $\HF$-peripheral, then we are done.
Otherwise, we have $\calf'\sqsubseteq\HF$. But by 
Lemma~\ref{lemma:af}\ref{it:af3}, the conjugacy class of 
$\grp{b}$ is also $\HF$-peripheral, so $\HF'\sqsubseteq\HF$, and by maximality $\HF'=\HF$, a contradiction.
\end{proof}

\paragraph{Restricting an \afs\ to a subgroup.}

 Recall from Section~\ref{sec:factor-systems} that if $A$ is a subgroup of $F_N$, and $\calq$ a collection of conjugacy classes of subgroups of $F_N$,
we define $\calq_{|A}$ as the set of all $A$-conjugacy classes of non-trivial groups of the form  $Q\cap A$, where $Q\subseteq F_N$ is a subgroup whose conjugacy class belongs to $\calq$.

If $\calf$ is a free factor system of $F_N$, and $A$ is any subgroup, not $\calf$-peripheral, then
 the Kurosh lemma says that
$\calf_{|A}$ is a free factor system of $A$.  Our next lemma describes what happens for the restriction of an \afs.

Recall that a 
\emph{Grushko factor} of a group $G$ relative to a collection of subgroups $\calp$
is a vertex stabilizer in any Grushko $(G,\calp)$-splitting: with our usual conventions, if $G=G_1*\dots *G_p*F_r$ is a Grushko decomposition of $(G,\calp)$,
then its Grushko factors are the groups conjugate to $G_i$ for some $i\leq p$.
Note that $G_i$ is one-ended relative to $\calp_{|G_i}$.

\begin{lemma}\label{lem_restriction}
  Let $\HF$ be an \afs\ of $F_N$, and $A\subseteq F_N$ be a finitely generated malnormal subgroup
  which is not $\HF$-peripheral. Then one of the following holds: 
\begin{enumerate}
    \item[(1)] $\HF_{|A}$ is an \afs\ of $A$, 
\item[(2)] or the Grushko decomposition of $A$ relative to $\HF_{|A}$
  is non-trivial and there exists a Grushko factor $A_0$ of  $A$ relative to $\HF_{|A}$
  which is not $\HF_{|A}$-peripheral.
\end{enumerate}
\end{lemma}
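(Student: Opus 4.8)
The plan is to analyze the possible structure of $\HF_{|A}$ and show that failure to be an \afs\ forces exactly the claimed picture. First I would recall the two cases in Definition~\ref{de:afs}: $\HF$ is either a genuine free factor system, or the factor system of a QH splitting $S$ of $F_N$, consisting of the non-cyclic vertex groups $G_{u_i}$ together with finitely many unused boundary subgroups $[\grp{b_1}],\dots,[\grp{b_r}]$. If $\HF$ is a free factor system, then by the Kurosh subgroup theorem (recalled before the statement) $\HF_{|A}$ is itself a free factor system of $A$ whenever $A$ is not $\HF$-peripheral, so the hypothesis of the lemma is vacuous and there is nothing to prove. Hence we may assume $\HF$ is the factor system of a QH splitting $S$, with underlying surface $\Sigma$, $r\ge 1$ unused boundary components, and distinguished QH vertex $v$ with $G_v=\pi_1(\Sigma)$.

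The key computation is to understand $\HF_{|A}$ via the action of $A$ on $S$. Since $A$ is finitely generated and malnormal and not $\HF$-peripheral, it is not elliptic in $S$ (if it fixed a vertex $w$, malnormality plus the fact that $A$ is not conjugate into any $G_{u_i}$ would force $w$ to be the QH vertex, and then $A\subseteq\pi_1(\Sigma)$, but $A$ is not conjugate into any boundary subgroup, so by malnormality $A$ would meet the boundary subgroups trivially, making $A\cap\HF_{|A}$ empty — this case I would handle by observing that then $\HF_{|A}=\es$, which \emph{is} an \afs, contradicting the hypothesis; so $A$ is genuinely hyperbolic in $S$). Let $T_A\subseteq S$ be the minimal $A$-invariant subtree. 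The edge stabilizers of $A\actson T_A$ are the intersections $A\cap G_e^g$ for edges $e$ of $S$, which are cyclic (possibly trivial). The restriction $\HF_{|A}$ is then read off from the vertex stabilizers: the non-trivial groups $A\cap G_v^g$ (which, by malnormality of $A$ and of the QH vertex group inside its own incident edge/boundary structure, are either finite-index-free-factor-like pieces of $\pi_1(\Sigma)$ or pieces of the $G_{u_i}$) together with the non-trivial cyclic intersections coming from the unused boundary subgroups $A\cap\grp{b_j}^g$. The Kurosh/visual decomposition of $A\actson T_A$ expresses $A$ as a graph of groups whose vertex groups are either cyclic (slices of edge groups or boundary subgroups) or intersections with the QH vertex group; collapsing this appropriately gives a Grushko decomposition of $A$ relative to $\HF_{|A}$.

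Now the dichotomy. If every Grushko factor of $A$ relative to $\HF_{|A}$ is $\HF_{|A}$-peripheral, this means each such factor is conjugate into one of the intersection subgroups listed in $\HF_{|A}$. In that case $A$ itself is built as a (relative) free product of $\HF_{|A}$-peripheral pieces, and I claim this forces $\HF_{|A}$ to be an \afs\ of $A$. Indeed, the one subgroup of $A$ that could obstruct $\HF_{|A}$ from being an honest free factor system is exactly a piece $A\cap\pi_1(\Sigma)^g$ that is one-ended relative to its induced boundary subgroups but is itself not a free factor of $A$ — and when that happens, $A\cap\pi_1(\Sigma)^g$ is the fundamental group of a subsurface $\Sigma'\subseteq\Sigma$ (an \emph{incompressible} subsurface, since $A$ is malnormal, so the arcs and curves of $\Sigma$ cut along $\partial\Sigma'$ respect the $A$-structure) which carries a QH vertex with at least one unused boundary component (coming from an unused $b_j$ of $S$ that meets $A$). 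Then $\HF_{|A}$ is precisely the factor system of the QH splitting of $A$ obtained by taking the minimal $A$-subtree of $S$ and cleaning it up, i.e.\ $\HF_{|A}$ is an \afs\ of $A$. Contrapositively, if $\HF_{|A}$ is \emph{not} an \afs\ of $A$, then the Grushko decomposition of $A$ relative to $\HF_{|A}$ must be non-trivial (a trivial one would mean $A$ is itself $\HF_{|A}$-peripheral, i.e.\ $\HF_{|A}=\{[A]\}$ or $\es$, both being \afs s, contradiction — or $A$ one-ended relative to $\HF_{|A}$, which by the subsurface analysis above would again make $\HF_{|A}$ an \afs), and there must be a Grushko factor $A_0$ that is not $\HF_{|A}$-peripheral, since otherwise, as just argued, $\HF_{|A}$ would be an \afs. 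The main obstacle I anticipate is the careful bookkeeping in the QH case: correctly identifying, using malnormality of $A$, that the intersection of $A$ with $\pi_1(\Sigma)$ (up to conjugacy) decomposes into pieces each of which is either a free factor of $A$ or the fundamental group of an incompressible subsurface, and verifying that unused boundary components of $\Sigma$ restrict to unused boundary components for these subsurface pieces — this is where one must invoke the structure of surfaces with boundary and the compatibility of the arc systems witnessing the extracted free factor systems (Remark~\ref{rk:afs-fs}) with the subgroup $A$.
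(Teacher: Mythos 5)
Your overall contour (reduce to the case where $\HF$ is the factor system of a QH splitting $S$, study the action of $A$ on $S$ or on its minimal subtree, and show that failure of the conclusion forces $\HF_{|A}$ to inherit an \afs\ structure) is indeed the paper's strategy, but two steps at the core of your argument are broken or missing. First, your dismissal of the elliptic case rests on the claim that ``by malnormality $A$ would meet the boundary subgroups trivially, making $\HF_{|A}=\es$.'' Malnormality of $A$ constrains intersections $A\cap A^g$, not intersections of $A$ with other subgroups, and the claim is false: the case where $A$ is conjugate to the full QH vertex group $\pi_1(\Sigma)$ is a genuine case (there malnormality forces a finite-index $A\subseteq G_v$ to equal $G_v$, and $\HF_{|A}$ is the non-empty \afs\ of boundary subgroups, so it is excluded by the hypothesis --- not because it is empty), while the infinite-index elliptic case is ruled out in the paper by Perin's lemma (an infinite-index finitely generated non-peripheral subgroup of $\pi_1(\Sigma)$ splits freely relative to its induced boundary structure), not by malnormality. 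Relatedly, your remark that a trivial relative Grushko decomposition ``would mean $A$ is itself $\HF_{|A}$-peripheral, i.e.\ $\HF_{|A}=\{[A]\}$ or $\es$'' is also wrong: triviality means $A$ is freely indecomposable relative to $\HF_{|A}$ (e.g.\ $A=\pi_1(\Sigma)$ relative to its boundary), and this is precisely the hard case of the lemma, not a degenerate one.

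Second, in that hard case your proposal replaces the actual argument by the unproved assertion that the obstructing piece $A\cap\pi_1(\Sigma)^g$ is an incompressible subsurface group carrying an unused boundary component; this is exactly the content you would need to establish, and it is not how the proof goes. The paper's mechanism is: assuming $A$ freely indecomposable relative to $\HF_{|A}$, every edge of the minimal subtree $S_A\subseteq S$ has non-trivial (maximal cyclic) $A$-stabilizer, so for each surface-orbit vertex $w\in S_A$ the group $A\cap G_w$ is non-cyclic; if it had infinite index in $G_w$, Perin's lemma would give a free splitting of $A\cap G_w$ relative to incident edge groups, hence of $A$ relative to $\HF_{|A}$, a contradiction; so $A\cap G_w$ has finite index, malnormality upgrades this to $A\cap G_w=G_w$ and shows all such vertices lie in one $A$-orbit, and then one verifies that $S_A$ is a clean QH splitting of $A$ with an unused boundary component and that its factor system is exactly $\HF_{|A}$ (the delicate bookkeeping of unused boundary subgroups whose fixed vertices may or may not lie in $S_A$, where intersections with $A$ can be trivial or become used/unused boundary subgroups of $S_A$). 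None of this appears in your proposal, and your ``all Grushko factors peripheral'' branch, which in the paper is a one-line observation that $\HF_{|A}$ is then a free factor system, is conflated with this one-ended analysis. As written, the proof has a genuine gap at its central step.
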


\begin{proof}
If $\HF$ is a free factor system, then since $A$ is not $\HF$-peripheral,
$\HF_{|A}$ is a free factor system of $A$ by Kurosh theorem.
We may thus assume that $\HF$ is the factor system of a QH splitting $S$ of $F_N$ with QH vertex $v\in S$, with underlying surface $\Sigma$.

Assume first that the Grushko decomposition of $A$ relative to $\HF_{|A}$ is non-trivial.
If (2) does not hold, then all its factors are $\HF_{|A}$-peripheral,
so $\HF_{|A}$ is a free factor system of $A$, i.e.\ (1) holds.

We thus assume that $A$ is freely indecomposable relative to $\HF_{|A}$ and prove that $\HF_{|A}$ is an \afs. 

We first treat the case where $A$ is contained in the surface group $G_v=\pi_1(\Sigma)$.
If $A$ has finite index in $G_v$, then being malnormal, this implies that $A=G_v$ and $\HF_{|A}$ consists
in the conjugacy classes of fundamental groups of boundary components of $\Sigma$, so $\HF_{|A}$ is an almost
free factor system of $A$.  We now assume that $A$ has infinite index in $G_v$.
Since $A$ is not  $\HF$-peripheral, it is not conjugate into a boundary subgroup of $G_v$,
so 
 $A$ has a non-trivial free splitting relative to $\HF_{|A}$
(see for instance \cite[Lemma~3.11]{Perin} for this well known fact).

We may now assume that $A$ does not fix any point in $S$. Indeed, the case where $A$ fixes a point in the orbit of $v$ was
treated above, and if $A$ fixes another vertex, then $A$ is $\HF$-peripheral.

  Let $V_p\dunion V_s$ be the bipartition of vertices of $S$ where $V_s=F_N.\{v\}$.
  Let $S_A\subset S$ be the minimal $A$-invariant subtree. Since $A$ is freely indecomposable relative to  $\HF_{|A}$,
  for every edge $e$ of $S_A$,  the intersection $G_e\cap A$ is non-trivial
  (in fact maximally cyclic  in $F_N$ by malnormality of $A$).
  In particular, for every vertex $w\in V_s\cap S_A$, the intersection  $G_w\cap A$ is not cyclic since by minimality,
  there are at least two edges in $S_A$ incident on $w$.
  Therefore, if $G_w\cap A$ has infinite index in $G_w$, the argument above
  yields a non-trivial 
  free splitting of $G_w\cap A$ relative to the incident edge groups, hence a
  non-trivial free splitting of $A$ relative to $\HF_{|A}$, a contradiction.

  We can therefore assume that for every vertex $w\in V_s\cap S_A$, the intersection $G_w\cap A$ has finite index in $G_w$. As $A$ is malnormal, this implies that $A\cap G_w=G_w$.
  It also follows that all vertices in $V_s\cap S_A$ are in the same $A$-orbit:
    indeed, if $w,w'\in V_s\cap S_A$, then $w=gw'$ for some $g\in F_N$, so $G_w\subseteq A\cap A^{g\m}$
    so $g\in A$ by malnormality.

    We now check that $S_A$ is
    a QH splitting of $A$.
    Up to conjugating $A$, we may assume that $v\in S_A$, and
     we claim that
     $v$ is a clean QH vertex in $S_A$. Indeed, $G_v=\pi_1(\Sigma)$, and the stabilizers of incident
      edges in $S_A$ are subgroups of the boundary components of $\Sigma$. Moreover, since $A\cap G_v=G_v$, the $A$-stabilizer 
    of any edge in $S_A$ incident on $v$ agrees with its $F_N$-stabilizer. It follows that $v$ is clean QH as a vertex of $S_A$.
    Since $\Sigma$ has an unused boundary component in $S$, the same is true in $S_A$, so $S_A$ is a QH splitting of $A$.
    
    There remains to check that $\HF_{|A}$ coincides with the set of all conjugacy classes of $A$-stabilizers of vertices in $V_p\cap S_A$
    together with the conjugacy classes of fundamental groups of unused  (in $S_A$) boundary components of $\Sigma$.
    Each group $Q$ in $\HF$ is either of the form $Q=G_p$ for some vertex $p\in V_p$,
    or $Q$ is conjugate to the fundamental group $G_b$ of a boundary component $b$ of $\Sigma$ unused in $S$.
    If $Q=G_p$ with $p\in S_A$, then $Q\cap A$ is the stabilizer of the vertex $p$ for the action  $A\actson S$.
    If $p\notin S_A$, then either $G_p\cap A$ is trivial, or $p$ is adjacent to a vertex  $w\in V_s\cap S_A$ 
    (this is because any segment in $S$ with non-trivial stabilizer has length at most $2$ and contains at most one vertex in $V_p$).
    In this case, up to conjugation we may assume that $v=w$; then $G_p\cap A=G_p\cap G_v$ is conjugate to the fundamental 
    group of a boundary component of $\Sigma$ that is unused in $S_A$.
    If $Q$ is conjugate to the stabilizer $G_b$ of an unused  (in $S$) boundary component of $\Sigma$, then $Q$ fixes a unique vertex
    $w\in S$ (and $w\in V_s$). If $w\in S_A$, then $Q\cap A=Q$ because $G_w\subset A$, so $Q$ is conjugate in $A$
    to the stabilizer of an unused boundary component of $\Sigma$.
    Finally, if $w\notin S_A$, then $Q\cap A=\{1\}$. Otherwise,  $Q\cap A$
    has finite index in the cyclic group $Q$.
    But $Q\cap A$ fixes the arc joining $w$ to $S_A$, hence so does $Q$ since edge stabilizers are maximal cyclic,
    contradicting that $Q$ fixes no edge in $S$.
    This concludes the proof.
\end{proof}

\subsection{The dynamical decomposition: statement and examples}

Given a subgroup $H\subseteq\IA$,
the goal of this section is to construct a \emph{dynamical decomposition} of the free group $F_N$ for $H$. As was informally explained in Section~\ref{sec_intro_dd} of the introduction, this is an analogue of Ivanov's natural $H$-invariant decomposition of a surface into \emph{active} and \emph{inactive} subsurfaces, when $H$ is a subgroup of the mapping class group.

\begin{de}\label{dfn_pure}
We say that a subgroup $H\subseteq\IA$ is \emph{pure} if
  there is a unique maximal $H$-invariant \afs, but no $H$-invariant  non-trivial free splitting.  
\end{de}

We insist that in this definition, the unique maximal $H$-invariant \afs\ is allowed to be empty, 
which happens if $H$ contains an atoroidal fully irreducible outer automorphism.

\begin{rk}
  One can naturally extend this definition to any subgroup of $\Out(F_N)$ as follows:
  define $H\subset\Out(F_N)$ as \emph{pure} if $H\cap \IA$ is pure.
  Equivalently, $H$ is pure if  there is
 a unique maximal $H$-periodic \afs, and no $H$-periodic non-trivial free splitting.
 The equivalence 
 follows from the fact that any $H$-periodic \afs\ (or free splitting) is invariant under  $H\cap\IA$.
\end{rk}

\begin{de}\label{dfn_UP}
Let $H\subseteq\IA$ be a subgroup.  A subgroup $P\subseteq F_N$ is \emph{universally peripheral}  with respect to $H$ if it is $\HF$-peripheral for any maximal $H$-invariant \afs\ $\HF$.

  We denote by $\calp_H$ the set of all subgroups of $F_N$ that are universally peripheral with respect to $H$, and by $\calp_H^{\max}$ the subset of maximal
  universally peripheral subgroups.
\end{de}

We note that $\calp_H$ is stable under taking subgroups and contains the trivial group.
Since any intersection of free factors is a free factor,
every group $P\in\calp_H^{\max}$ is either a free factor or a maximal
cyclic subgroup of $F_N$.
If $H$ is not fully irreducible, then $\calp_H$ contains a non-trivial
proper free factor of $F_N$ (one can additionally show, using the chain condition on free factors, that $\calp_H^{\max}$ is a finite collection of conjugacy classes of free factors and maximal cyclic subgroups).
The case where $H$ 
is fully irreducible and atoroidal (i.e.\ does not preserve the conjugacy class of any proper free factor or of any cyclic group)
is a bit peculiar:
there is no 
non-empty $H$-invariant \afs\ (i.e.\ $\hat \calf=\es$ is the only one) so $\calp_H=\calp_H^{\max}$ 
 is reduced to the trivial subgroup.
But anyway the dynamical decomposition is clearly trivial in this case, so we may as well exclude this case from now on.

\begin{theo}[Dynamical decomposition]\label{thm_dynamic_dec}
Let $H\subseteq\IA$ be a subgroup that does not preserve any non-trivial free splitting of $F_N$.

Then there exists a unique $H$-invariant  splitting $U^d_H$ of $F_N$ relative to $\calp_H$  whose vertex set $V$ has a bipartition $V=V_p\dunion V_a$
(for peripheral vs.\ active vertices) with the following properties:
\renewcommand{\theenumi}{(\arabic{enumi})} 
\renewcommand{\labelenumi}{\theenumi}
\begin{enumerate}
\item \label{it_relP} for every $v\in V_p$, one has $G_v\in\calp_H^{\max}$, and $G_v\cap G_{v'}=\{1\}$ for distinct vertices $v,v'\in V_p$;
\item \label{it_Vapur} for every $v\in V_a$, the group $G_v$ is a free factor of $F_N$  whose conjugacy class is $H$-invariant,
  the restriction of $H$ to $G_v$ is pure,
  and  $(\calp_H^{\max})_{|G_v}$ is the unique maximal $H$-invariant \afs\ in $G_v$;
  moreover, the collection of incident edge groups is a non-sporadic free factor system of $G_v$;
\item \label{it_Vamin}  for every free factor $A$ whose conjugacy class is $H$-invariant and which is not universally peripheral, there exists $v\in V_a$ such that $G_v\subseteq A$;
\item \label{it_HF} for every vertex $v\in V_a/F_N$, there exists a unique
  maximal $H$-invariant \afs\ $\HF_v$ such that $G_v$ is not $\HF_v$-peripheral.
  Moreover the map $v\mapsto \HF_v$ is a bijection from  $V_a/F_N$ to the set of 
 maximal $H$-invariant \afs{}s.
\end{enumerate}
\end{theo}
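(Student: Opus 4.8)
The plan is to build $\Ud_H$ by gluing together, as $\HF$ ranges over the finitely many maximal $H$-invariant almost free factor systems, the ``one-active-vertex'' splittings $T_\HF$ sketched in Section~\ref{sec_intro_dd}, and then verify the four properties. The finiteness of the index set is Corollary~\ref{coro:finitude-facteurs}, and the bounded chain condition on free factor systems guarantees that the construction terminates. First I would treat the degenerate case: if $H$ is pure, i.e.\ there is a unique maximal $H$-invariant \afs, then by Lemma~\ref{lem_deborde} there is no proper free factor whose conjugacy class is $H$-invariant and which is not $\HF$-peripheral, so $\calp_H^{\max}$ consists of the factors of (an extracted free factor system of) $\HF$ together with, possibly, one maximal cyclic group, and one takes $\Ud_H$ to be the trivial splitting with its single vertex declared active; all four properties hold vacuously or by Lemma~\ref{lemma:af}. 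So assume $H$ is not pure.

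For each maximal $H$-invariant \afs\ $\HF$, I would produce the splitting $T_\HF$ as follows. By Lemma~\ref{lemma:af}\ref{it:af3}, $H$ contains an element $\alpha$ fully irreducible relative to an extracted free factor system $\calf$ and atoroidal relative to $\HF$; by Proposition~\ref{prop:invariant-arational-tree} there is an $\alpha$-invariant arational $(F_N,\calf)$-tree $T$ with dilatation $\neq 1$, which is mixing. Given a proper free factor $A$ with $H$-invariant conjugacy class, not $\HF$-peripheral, the $F_N$-translates of the minimal $A$-subtree $T_A$ form a transverse covering of $T$ (using mixing and the fact that $A$ is its own normalizer, so distinct translates meet in at most a point by malnormality arguments as in Remark~\ref{rk_afs_malnormal}), and its skeleton is an $H$-invariant splitting of $F_N$ with one orbit of active vertices carrying $A$. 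The key internal step is: \emph{there is a unique minimal such $H$-invariant conjugacy class of free factor $A_\HF$}, and the associated splitting $T_\HF$ depends only on $\HF$ and $H$, not on $\alpha$ or $T$. Uniqueness of $A_\HF$ I would get by intersecting: if $A,A'$ both work, $A\cap A'^g$ is again a free factor whose class is controlled by $H$ (using Lemma~\ref{lemma:ia-nice} / Theorem~\ref{theo:ia} to promote periodicity to invariance), it is still not $\HF$-peripheral by Lemma~\ref{lem_restriction} applied carefully, and minimality forces $A=A'^g$; independence of $T$ follows because any two $\alpha$-invariant arational trees have the same canonical reducing factor data and the skeleton is canonically attached. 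Then $\calp_H^{\max}$ restricted to the active vertex group $G_v=A_\HF$ is precisely the unique maximal $H|_{G_v}$-invariant \afs: this is where Lemma~\ref{lem_restriction} and the maximality of $\HF$ combine to show $H|_{G_v}$ is pure.

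Next I would show the splittings $\{T_\HF\}_\HF$ are pairwise \emph{compatible}, so that by the standard fact (\cite[Proposition~A.17]{GL-jsj}) they admit a common refinement $\hat U$, from which $\Ud_H$ is obtained by collapsing all edges internal to each $T_\HF$ except the essential ones, keeping the bipartition $V_p\dunion V_a$ where active vertices are exactly the images of the $A_\HF$'s. Compatibility is the crux: given $\HF\neq\HF'$ with active factors $A_\HF,A_{\HF'}$, one must see the transverse coverings can be taken simultaneously, i.e.\ $A_{\HF'}$ is elliptic in $T_\HF$ or vice versa, or they cross-embed --- this I expect to be \textbf{the main obstacle}. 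I would handle it by a minimality/universal-ellipticity argument: each $A_\HF$ is, by construction, the smallest $H$-invariant non-$\HF$-peripheral factor, and one shows the edge groups of $T_\HF$ (which are $\HF$-peripheral free factors) are elliptic in every $H$-invariant splitting coming from another $\HF'$, using that $\HF$-peripheral groups are $\HF'$-peripheral or carried by $A_{\HF'}$ via Lemma~\ref{lemma:af}. Once compatibility is in hand, properties \ref{it_relP}--\ref{it_HF} follow: \ref{it_relP} from the definition of $\calp_H^{\max}$ and malnormality (Remark~\ref{rk_afs_malnormal}); \ref{it_Vapur} from the paragraph above plus the fact that incident edge groups at an active vertex are the $A_\HF$-peripheral factors, a non-sporadic free factor system since $T$ was arational relative to a non-sporadic $\calf$; \ref{it_Vamin} from the minimality of $A_\HF$ together with Lemma~\ref{lem_deborde}; and \ref{it_HF} from the bijection $v\mapsto\HF_v$ built into the construction, whose injectivity uses uniqueness of $A_\HF$ and whose surjectivity uses that every maximal $H$-invariant \afs\ contributes an active vertex. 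Uniqueness of $\Ud_H$ itself follows because any splitting satisfying \ref{it_relP}--\ref{it_HF} has its active vertices forced to be the $A_\HF$'s by \ref{it_Vamin} and \ref{it_HF}, hence lies in the same deformation space and, being reduced with the prescribed peripheral structure, equals $\Ud_H$.
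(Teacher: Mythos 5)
Your first half is essentially the paper's: the construction of the minimal non-peripheral invariant factor $A_{\HF}$ and of the one-active-vertex splitting $T_{\HF}$ via an $\alpha$-invariant arational tree, a transverse covering by translates of the $A$-minimal subtree, the intersection argument for uniqueness of $A_{\HF}$, and the purity of $H$ restricted to $A_{\HF}$ all correspond to Lemma~\ref{lemma:minimal-debording-factor} and Lemma~\ref{lemma:A-periph}. The problem is the step you yourself flag as the crux. Mutual ellipticity of edge groups (which is what Corollary~\ref{cor:UP} gives: edge stabilizers of $T_{\HF}$ are universally peripheral, hence elliptic in every $T_{\HF'}$) does \emph{not} yield compatibility of $T_{\HF}$ and $T_{\HF'}$: it only lets you build a refinement of one tree that \emph{dominates} the other, not a common refinement in the collapse sense. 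The paper never proves pairwise compatibility of the $T_{\HF}$'s directly. Instead it iteratively blows up $U_{i-1}$ into a tree $U_i$ dominating $T_i$, passes to the tree of cylinders $U_i^c$ for the admissible equivalence relation on non-trivial universally peripheral subgroups (Lemmas~\ref{lem_equiv} and~\ref{lemma:deformation}), and only then proves, by a careful analysis of the maps $f_{ij}:U_i^c\to T_j$ (steps (i)--(iii), using that distinct $V^0$-stabilizers of $T_j$ intersect trivially and that $\sim$-equivalent edge groups force $V_p$-vertices to coincide), that each $T_j$ is a \emph{collapse} of $U_i^c$. Your sketch omits the mechanism that upgrades domination to collapse, and I do not see how your ellipticity argument supplies it.

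A second, related gap: even granting a common refinement of the $T_{\HF}$'s, the resulting peripheral vertex groups would be $V^0$-vertex groups of the $T_{\HF}$'s, i.e.\ groups in $\HF$, and these are in general \emph{not} universally peripheral — in Example~\ref{ex:3-surfaces} the group $Q_1$ is a $V^0$-vertex group of $T_{\HF_1}$ but is not $\HF_2$-peripheral. So Property~\ref{it_relP} ($G_v\in\calp_H^{\max}$ with pairwise trivial intersections) does not come for free from your construction; in the paper it is precisely the tree-of-cylinders step that produces it. The same omission undermines your uniqueness argument: two splittings satisfying \ref{it_relP}--\ref{it_HF} are indeed in the same deformation space, but ``reduced with the prescribed peripheral structure'' does not force equality; the paper concludes by showing that Property~\ref{it_relP} forces each such splitting to be its own tree of cylinders (each cylinder is the star of a $V_p$-vertex) and then invoking \cite[Corollary~4.10]{GL-cyl}. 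In short, your route (pairwise compatibility plus common refinement) could in principle be made to work, but as written the compatibility step would fail and the tree-of-cylinders ingredient needed for \ref{it_relP} and for uniqueness is missing.
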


As usual, the uniqueness of $\Ud_H$ is up to $F_N$-equivariant isomorphism.

The above theorem is already new in the case where $H=\grp{\Phi}$ is a cyclic subgroup of $\IA$, in which case it yields a new conjugacy invariant for $\Phi$. In Section~\ref{sec_rel_other_works} below, we will explain the connection between active vertices and attracting laminations of $\Phi$ and the relation to other concepts from the literature. 

\begin{rk}
  It is \emph{a priori} not obvious that there are only finitely many
  maximal $H$-invariant almost free factor systems (under the assumption that $H$ preserves no non-trivial free splitting),
  but this is a consequence of Assertion~\ref{it_HF}. In fact, Theorem~\ref{thm_dynamic_dec} can be used to derive a bound on their numbers, as will be done in Corollary~\ref{coro:finitude-facteurs} below. We insist that the assumption that $H$ preserves no free splitting is crucial here,
  see Example \ref{ex:arcs} for a situation where there are infinitely many invariant splittings  dual to
  arcs on a surface, hence infinitely many sporadic $H$-invariant free factor systems.
\end{rk}

\begin{rk}\label{rk_edge_fg}
  Since $H$ does not preserve any non-trivial free splitting, edge stabilizers of $\Ud_H$ are non-trivial.
  They are finitely generated because vertex stabilizers are either cyclic or free factors. 
  By Assertion~\ref{it_relP}, they are universally peripheral with respect to $H$.
  We will see that edge stabilizers are actually free factors of $F_N$ (see Proposition \ref{prop_additional}).
\end{rk}

\begin{rk}\label{rk_binonsporadic}
  If $H$ is pure, $\Ud_H$ is the trivial splitting, reduced to a vertex in $V_a$.
  If $H$ is not pure, then $\Ud_H$ is non-trivial, and there are at least two orbits of active vertices.
  One can refine each of these vertices into a non-sporadic free splitting.
\end{rk}

\begin{figure}
\centering
\includegraphics{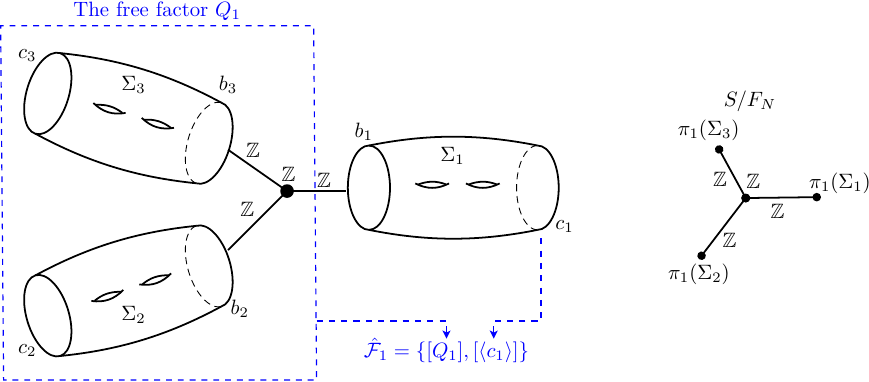}
\caption{The splitting in Example~\ref{ex:3-surfaces}.}
\label{fig:3-surfaces}
\end{figure}

\begin{ex}\label{ex:3-surfaces}
We give an example (see Figure~\ref{fig:3-surfaces}) illustrating the dynamical decomposition of Theorem \ref{thm_dynamic_dec}.

Let $\Sigma_1,\Sigma_2,\Sigma_3$ be three connected orientable surfaces of positive genus, each of which has exactly two boundary components. Let $F_N$ be the free group obtained by amalgamating the fundamental groups of these surfaces over  the fundamental group of one of their boundary components, and let $S$ be the splitting of $F_N$ represented in Figure~\ref{fig:3-surfaces}. Let $H$ be the subgroup of $\IA$ induced by the homeomorphisms of the three surfaces whose restriction to the boundary is the identity. 
 
Let us describe the $H$-invariant conjugacy classes of proper free factors.  Note that $\pi_1(\Sigma_i)$ is a free factor of $F_N$
as can be seen by
drawing arcs on $\Sigma_{i-1}$ and $\Sigma_{i+1}$ with endpoints in their unused boundary components. Similarly, so is 
$\langle c_i\rangle $, the fundamental group of the unused  boundary curve in $\Sigma_i$, and $\langle c \rangle$ the central vertex group.
Finally, the subgroup 
$Q_i=\pi_1(\Sigma_{i-1}\cup \Sigma_{i+1})$ is also a free factor of $F_N$. The conjugacy classes of all these free factors are clearly $H$-invariant.
Conversely, one may use the argument in Lemma \ref{lemma:minimal-debording-factor} below
and the fact that $H$ contains elements whose
restriction to $\pi_1(\Sigma_i)$ is pseudo-Anosov
to prove that these are these are the only invariant free factors.
It follows in particular that $H$ does not preserve any  non-trivial free splitting.

The almost free factor system  $\hat\calf_i=\{[Q_i],[\langle c_i\rangle]\}$
is $H$-invariant and maximal.
On the other hand, $\pi_1(\Sigma_i)$ is a free factor
which is not $\hat\calf_i$-peripheral  and whose conjugacy class is $H$-invariant, and it is minimal for this property.
Minimal non-peripheral free factors will play an important role in the proof
and appear as stabilizers of active vertices by Assertion \ref{it_Vamin}.

One can check that the dynamical decomposition $\Ud_H$ of Theorem \ref{thm_dynamic_dec} is the splitting dual to the decomposition shown in Figure~\ref{fig:3-surfaces}
where $V_p/F_N$ is the central vertex, and $V_a/F_N$
corresponds to the three surfaces.

In this example, active vertices correspond to surfaces, but this is not a general fact. For instance, one may replace any of the surfaces of the example by a free group
$F=\grp{x_1,\dots,x_p}$, and attach it to the central vertex along $x_1$, 
and in the above construction, replace the homeomorphisms of the surface by the automorphisms of $F$ fixing $x_1$.
\end{ex}

\subsection{A minimal non-peripheral invariant free factor}

The main step in the proof of the existence part of Theorem \ref{thm_dynamic_dec} is the following result.
We have been informed by a referee that this lemma can also be proved using results from \cite{HM2} in the framework of weak attraction theory.

\begin{lemma}\label{lemma:minimal-debording-factor}
  Let $H\subseteq\IA$ be a subgroup that does not preserve any non-trivial free splitting. Let $\hat\calf$ be a maximal $H$-invariant almost free factor system.
 
Among all conjugacy classes of non-trivial free factors of $F_N$ (including $F_N$ itself) which are $H$-invariant and not $\hat\calf$-peripheral, 
there is a unique minimal one, that we denote by $\AHF$.

In addition, $\AHF$ is non-abelian and
there is a  minimal 
$H$-invariant splitting $\THF$ 
of $F_N$ relative to $\hat\calf$ 
 coming with a bipartition of its vertex set as  $V^0\dunion V^1$ that satisfies the following properties.
\renewcommand{\theenumi}{(\arabic{enumi})} 
\renewcommand{\labelenumi}{\theenumi}
\begin{enumerate} 
\item \label{it0_relP} For every $v\in V^0$, one has $[G_v]\in\HF$  and $G_v$ is not an unused boundary subgroup of $\HF$.  
 Moreover $G_v\cap G_{v'}=\{1\}$ for distinct vertices $v,v'\in V^0$;
\item \label{it0_Va} $V^1$ consists of a single orbit of vertices, the stabilizer of every vertex in $V^1$ is conjugate to $\AHF$;
\item \label{it0_periph}  $\HF_{|\AHF}$ is a non-sporadic almost free factor system of $\AHF$,  and the collection of incident edge groups is a non-sporadic free factor system of $\AHF$.
\end{enumerate}
\end{lemma}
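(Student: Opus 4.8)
The plan is to produce $\AHF$ and the splitting $\THF$ from a fully irreducible, atoroidal-enough element of $H$ together with an invariant $\bbR$-tree, and then to use a transverse covering argument to extract the canonical splitting. First, by Lemma~\ref{lemma:af}\ref{it:af3}, $H$ contains an element $\alpha$ which is fully irreducible relative to $\calf$ (where $\calf$ is any free factor system extracted from $\HF$) and atoroidal relative to $\HF$. By Proposition~\ref{prop:invariant-arational-tree} (applied to $(F_N,\calf)$, which is non-sporadic since $H$ preserves no free splitting), there is an $\alpha$-invariant $(F_N,\calf)$-tree $T$ which is arational relative to $\calf$ and has dilatation $\lambda\neq 1$; since $\alpha$ is atoroidal relative to $\HF$, $T$ is in fact an arational surface tree whose unique non-$\calf$-peripheral point stabilizer is (up to conjugacy) the unused boundary subgroup $\grp{b}$ of $\HF$, in the case $\HF$ is not a free factor system, and $T$ has trivial arc stabilizers and no non-$\calf$-peripheral point stabilizer in the case $\HF$ is a free factor system. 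In either case $T$ is a mixing tree (arational trees are mixing, cf.\ \cite[Lemma~4.9]{Hor}).

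Next, for any conjugacy class of non-trivial free factor $A$ which is $H$-invariant and not $\HF$-peripheral, I want to produce a splitting of $F_N$ from $T$ by means of the $A$-minimal subtree. The point is that $A$ is not elliptic in $T$ (it is not $\calf$-peripheral, hence by arationality acts on $T_A$ as a Grushko $(A,\calf_{|A})$-tree, which is non-trivial since $A$ is a proper free factor not $\calf$-peripheral, or $A=F_N$), so $T_A\subsetneq T$ is a proper subtree, and the $F_N$-translates of $T_A$ form a transverse family: two translates $gT_A$ and $g'T_A$ intersect in at most a point, because an arc in $gT_A\cap g'T_A$ would have stabilizer containing $A^{g}\cap A^{g'}$ conjugated appropriately, which is a non-trivial intersection of free factors, hence again a free factor, but arc stabilizers of $T$ are $\calf$-peripheral cyclic or trivial — one has to check malnormality-type statements here carefully. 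Since $T$ is mixing, this transverse family is a transverse covering, and its skeleton $\THF^A$ is a simplicial $F_N$-tree; because the covering is canonically associated to $T$ and $A$, and $T$'s equivalence class is $H$-invariant (it is the unique pair of endpoints fixed by $\alpha$ in $\partial_\infty\FFG$, and $\alpha$'s centralizer-like behaviour forces $H$ to preserve $\{T\}$ up to scaling — this is where one invokes that $T_-=T_+$ for a surface tree, so there is a single invariant equivalence class), the tree $\THF^A$ is $H$-invariant. One then checks $\THF^A$ is relative to $\HF$ (each group of $\HF$ is elliptic in $T$, hence fixes a point in some translate of $T_A$ or is a point of the skeleton).

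Then comes the extraction of the minimal $A$. I would argue that among all such $A$, the associated vertex stabilizers of $\THF^A$ on the "cylinder" side give smaller and smaller $H$-invariant non-$\HF$-peripheral free factors; since chains of free factors have bounded length, there is a minimal one $\AHF$, and I must show it is \emph{unique}. Uniqueness is the main obstacle: if $A_1,A_2$ were two distinct minimal non-$\HF$-peripheral $H$-invariant free factors, I would intersect the two transverse coverings (or equivalently take the $A_1$-minimal subtree inside the $A_2$-minimal subtree) to produce an even smaller invariant free factor, contradicting minimality unless $A_1$ and $A_2$ are conjugate and their minimal subtrees coincide — this requires a careful analysis of how $T_{A_1}$ and $T_{A_2}$ sit inside $T$, using that $T$ is mixing and that $A_1\cap A_2^g$ is again a free factor whose $T$-minimal subtree is contained in $T_{A_1}\cap g T_{A_2}$. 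Once $\AHF$ is pinned down, set $\THF=\THF^{\AHF}$; the bipartition $V^1=$ cylinder vertices (translates of $T_{\AHF}$), $V^0=$ branch vertices. Property~\ref{it0_Va} (single orbit, stabilizer $\AHF$) is immediate from the skeleton construction since all translates of $T_{\AHF}$ are in one orbit. Property~\ref{it0_relP}: a branch vertex $v_x$ has stabilizer $(F_N)_x$, the $F_N$-stabilizer of a point $x\in T$ lying in at least two translates of $T_{\AHF}$; such $x$ is a branch point of $T$, hence its stabilizer is a point stabilizer of $T$, which is $\calf$-peripheral (arationality) and in fact conjugate into $\HF$ — and it is not an unused boundary subgroup since unused boundary subgroups are elliptic in a translate of $T_{\AHF}$ or equal to $\grp{b}$ whose fixed point is not a branch point between two translates, needing a short argument; distinct branch vertices have trivially intersecting stabilizers by malnormality of $\HF$ (Remark~\ref{rk_afs_malnormal}). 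Property~\ref{it0_periph}: $\HF_{|\AHF}$ is obtained by restricting $\HF$, and by Lemma~\ref{lem_restriction} applied with $A=\AHF$ (malnormal, not $\HF$-peripheral) it is either an \afs{} of $\AHF$ or has a non-peripheral Grushko factor; the latter would give a \emph{smaller} $H$-invariant non-$\HF$-peripheral free factor (the Grushko factor, whose conjugacy class is $H$-invariant by Theorem~\ref{theo:ia}), contradicting minimality of $\AHF$ — so $\HF_{|\AHF}$ is an \afs, and it is non-sporadic because $\AHF$ acts on $T_{\AHF}$ as a Grushko $(\AHF,\calf_{|\AHF})$-tree which, combined with arationality, rules out the sporadic cases; the incident edge groups form a free factor system of $\AHF$ by the skeleton structure (they are of the form $\AHF\cap (F_N)_x$, and $\AHF\actson T_{\AHF}$ has trivial arc stabilizers along the relevant edges). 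Non-abelianity of $\AHF$ follows since an abelian free factor is cyclic, hence $\calf$-peripheral or elliptic in $T$, contradicting that it is not $\HF$-peripheral and not elliptic. The hardest part, as flagged, is the uniqueness of $\AHF$ and the simultaneous verification that $\THF$ does not depend on the choice of $\alpha$ and $T$; I expect to handle the latter by showing any two choices give trees in the same ($H$-invariant) equivalence class, and the transverse covering by minimal subtrees of $\AHF$ is preserved by the $\Out$-action identifying these classes.
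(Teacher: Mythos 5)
Your overall route is the paper's: extract $\alpha\in H$ fully irreducible rel.\ $\calf$ and atoroidal rel.\ $\HF$, take an $\alpha$-invariant arational tree $T$ with dilatation $\neq 1$, cover $T$ by translates of the $A$-minimal subtree, and take the skeleton of this transverse covering to get $\THF$, with Lemma~\ref{lem_restriction} handling assertion~(3). However, there is a genuine gap at the crux of the statement, namely $H$-invariance and canonicity of $\THF$. You justify $H$-invariance by asserting that ``$T$'s equivalence class is $H$-invariant'' because ``$\alpha$'s centralizer-like behaviour forces $H$ to preserve $\{T\}$ up to scaling''. This is false: $H$ is an arbitrary subgroup containing $\alpha$, it need not normalize $\grp{\alpha}$, and in general it does not preserve the homothety or equivalence class of $T$ (nor the pair of fixed points of $\alpha$ in $\partial_\infty\FFG$). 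Your fallback for independence of choices -- that ``any two choices give trees in the same equivalence class'' -- is also false, since different fully irreducible elements of $H$ have genuinely inequivalent attracting trees. The paper resolves this differently: after constructing $\THF$ from one choice of $(\alpha,T)$, it gives an intrinsic combinatorial description of it -- the bipartite graph whose vertices are the subgroups with conjugacy class in $\HF$ together with the conjugates of $\AHF$, with adjacency given by non-trivial intersection, from which $\THF$ is recovered by deleting terminal vertices. Since this description depends only on $\HF$ and the ($H$-invariant) conjugacy class of $\AHF$, it yields both independence of the choices and $H$-invariance. Without some argument of this type your construction only produces an $\alpha$-invariant splitting, not an $H$-invariant one.

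Two further points. First, you flag uniqueness of $\AHF$ as the main obstacle but leave it incomplete; the missing argument in the paper is: take $A$ of \emph{minimal rank} among $H$-invariant non-$\HF$-peripheral free factors and $A'$ any other; superpose the two transverse coverings $\{gT_A\}$ and $\{gT_{A'}\}$; the stabilizer of a nondegenerate intersection $T_A\cap gT_{A'}$ equals $A\cap (A')^g$ and acts with dense orbits on it (this uses that each $T_A$ has dense orbits, which is where $\lambda\neq 1$ enters), hence is not elliptic in $T$, i.e.\ not $\HF$-peripheral; by Lemma~\ref{lemma:intersection-factors} its class is $H$-invariant, and minimality of rank forces $A\subseteq (A')^g$. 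Second, your argument that the translates of $T_A$ form a transverse family is not correct as written: an arc in $gT_A\cap g'T_A$ does not acquire a stabilizer containing $A^g\cap A^{g'}$; the correct tool is Reynolds' argument (as in \cite[Corollary~11.9]{GH1}), which applies precisely because $A$ acts on $T_A$ with dense orbits.
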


\begin{rk}
In this statement, we do not claim that $\AHF$ is a \emph{proper} free factor; in fact $\AHF=F_N$ if and only if
$H$ is pure (see Lemma~\ref{lem_deborde}). More generally, the defining property of $\AHF$ implies that the restriction of $H$ to $\AHF$ is pure, and its unique
maximal $H$-invariant maximal \afs\ is $\HF_{|\AHF}$.
\end{rk}

\begin{ex}
  Continuing with the notations from Example~\ref{ex:3-surfaces}, let $\hat\calf:=\hat\calf_1=\{[Q_1],[\langle c_1\rangle]\}$. Up to conjugation, among all $H$-invariant  free factors which are not $\hat\calf$-peripheral, the unique
  minimal one 
  is $A_{\HF_1}=\pi_1(\Sigma_1)$. The splitting $T_{\hat\calf_1}$ is the Bass--Serre tree of the decomposition $F_N=\pi_1(\Sigma_1)\ast_{\mathbb{Z}} Q_1$; the vertex with stabilizer $\pi_1(\Sigma_1)$ belongs to $V^1$, and the vertex with stabilizer $Q_1$ belongs to $V^0$.
Moreover,  $\HF_{|\pi_1(\Sigma_1)}=\{[\grp{b_1}],[\grp{c_1}]\}$. 
\end{ex}

In our proof of Lemma~\ref{lemma:minimal-debording-factor}, we will make use of the following fact.

\begin{lemma}\label{lemma:intersection-factors}
Let $H\subseteq\IA$. Let $A$ and $A'$ be two proper free factors of $F_N$ whose conjugacy classes are $H$-invariant.

Then for every $g\in F_N$,  $A\cap (A')^g$ is a free factor of $F_N$ whose conjugacy class is $H$-invariant. 
\end{lemma}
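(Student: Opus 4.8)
The statement is about intersections of conjugates of two $H$-invariant free factors, so the natural approach is to produce, from the two invariant free factor systems, a common refinement on which $H$ acts, and read off the conjugacy class of $A\cap (A')^g$ as a vertex (or edge) stabilizer of an $H$-invariant splitting. First I would recall that $A$ and $A'$ being free factors whose conjugacy classes are $H$-invariant means (using $H\subseteq\IA$ and Theorem~\ref{theo:ia}) that $\{[A]\}$ and $\{[A']\}$, or rather the free factor systems they generate together with the trivial one, are $H$-invariant free factor systems. The key point is that the conjugacy class of $A\cap (A')^g$ can be recovered canonically from these two invariant objects.

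The cleanest route is via Grushko trees. Pick a Grushko $(F_N,\{[A]\})$-tree $S$ and a Grushko $(F_N,\{[A']\})$-tree $S'$; both are $H$-invariant free splittings of $F_N$ in the sense of Section~\ref{sec:tildeH}. By Lemma~\ref{lemma:edge-stabilizers} (applied with $H$, noting edge stabilizers of free splittings are trivial, so this step is automatic) one can form a common refinement $\hat{S}$ of $S$ and $S'$ as an $\tilde H$-tree --- concretely, $\hat S$ is the minimal subtree in the product $S\times S'$, which is again a free splitting of $F_N$ and is $H$-invariant since it is canonically built from two $H$-invariant trees. Now $A\cap (A')^g$ is the $F_N$-stabilizer of the (possibly empty) intersection of the fixed subtree of $A$ in $S$ with the fixed subtree of $(A')^g$ in $S'$, equivalently a vertex stabilizer of $\hat S$ when this intersection is a point, or trivial otherwise. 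Since $\hat S$ is $H$-invariant and $H\subseteq\IA$, Lemma~\ref{lemma:ia-nice} (or directly Theorem~\ref{theo:ia}, since vertex stabilizers of a free splitting are free factors and their conjugacy classes are permuted by $H$, hence fixed) shows that the conjugacy class of each vertex stabilizer of $\hat S$ is $H$-invariant. In particular the conjugacy class of $A\cap (A')^g$ is $H$-invariant.

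There is a subtlety when $A$ or $A'$ equals $F_N$ (allowed as ``improper'' free factors in some conventions, though here ``proper'' is assumed, so $A,A'\neq F_N$) and when $A\cap (A')^g$ is trivial: in the trivial case there is nothing to prove since the statement concerns the conjugacy class, and the empty/trivial subgroup is trivially $H$-invariant; one should just phrase the argument so that it covers this degenerate case without special pleading. The only genuinely non-routine point is the identification of $A\cap(A')^g$ with a vertex stabilizer of the common refinement: this is standard for free splittings (the intersection of two free factors, up to conjugacy, is again read off a common Grushko refinement), but I would cite or briefly justify it via the fact that in a free splitting the fixed-point set of a subgroup is either empty or a single vertex, and the fixed set of $A$ in $\hat S$ is contained in the preimages of the fixed points of $A$ in $S$ and of $A$ in $S'$. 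I expect the main obstacle to be purely expository --- stating precisely which common-refinement construction is used and checking it is canonical enough to be $H$-invariant --- rather than any real mathematical difficulty, since all the heavy lifting (Theorem~\ref{theo:ia}, Lemma~\ref{lemma:ia-nice}) is already available.
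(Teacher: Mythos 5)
There is a genuine gap at the very first step. You pick a Grushko $(F_N,\{[A]\})$-tree $S$ and a Grushko $(F_N,\{[A']\})$-tree $S'$ and assert that "both are $H$-invariant free splittings of $F_N$". This does not follow: $H$-invariance of the conjugacy class $[A]$ only makes the Grushko \emph{deformation space} relative to $\{[A]\}$ invariant, not any particular tree in it, and in general no tree in that deformation space is $H$-invariant. Concretely, take $F_4=A\ast A'$ with $A,A'$ of rank $2$ and $H$ the intersection with $\IA$ of the common stabilizer of $[A]$ and $[A']$. The image of $H$ in $\Out(A')$ contains fully irreducible elements, so no free splitting of $F_N$ in which $A$ is elliptic but $A'$ is not (in particular, no Grushko tree relative to $\{[A]\}$) can be $H$-invariant. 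Since your whole argument rests on $S,S'\in\FS^H$ — the appeal to Lemma~\ref{lemma:edge-stabilizers} and the construction of an $H$-invariant common refinement $\hat S$ both presuppose it — the proof collapses at this point. (Your closing step, that conjugacy classes of vertex stabilizers of an $H$-invariant free splitting are $H$-invariant, is fine, but you never get such a splitting.)

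The fix — and this is the route the paper takes — is to decouple the two roles you are asking the trees to play. A Grushko splitting $S_{A'}$ relative to $A'$ is used only as a \emph{counting device}, with no invariance required: the point stabilizers of the $A$-minimal subtree of $S_{A'}$ are exactly the subgroups $A\cap(A')^g$, and since that subtree has finitely many $A$-orbits of vertices, there are only finitely many conjugacy classes $[A\cap(A')^g]$. The $H$-invariance then comes for free from the hypotheses, not from any invariant tree: since $[A]$ and $[A']$ are $H$-invariant, any $\alpha\in H$ sends $A\cap(A')^g$ to a conjugate of some $A\cap(A')^{g'}$, so $H$ permutes this finite set of conjugacy classes; each class is therefore periodic under a finite-index subgroup of $H$, and since each $A\cap(A')^g$ is a free factor of $F_N$ and $H\subseteq\IA$, Theorem~\ref{theo:ia} upgrades periodicity to genuine $H$-invariance (the trivial intersection being a degenerate non-issue, as you note). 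So the heavy lifting is indeed Handel--Mosher, but it must be applied to the finite orbit of conjugacy classes directly, not via an $H$-invariant refinement that need not exist.
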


\begin{proof}
By the Kurosh subgroup theorem, 
there are only finitely many $A$-conjugacy classes (hence $F_N$-conjugacy classes) of 
subgroups of the form $A\cap (A')^g$ with $g\in F_N$, and these are free factors of $A$ 
(hence of $F_N$).
Since $[A]$ and $[A']$ are $H$-invariant, the group $H$ permutes the conjugacy classes $[A\cap (A')^g]$. As $H\subseteq\IA$, these are actually $H$-invariant.  
\end{proof}

\begin{proof}[Proof of Lemma~\ref{lemma:minimal-debording-factor}]
  Let $A,A'$ be two free factors of $F_N$ which are not $\HF$-peripheral, whose conjugacy classes are $H$-invariant 
 with $A$ of minimal rank. 
We aim to prove that $A$ is conjugate into $A'$.

Let $\calf$ be a free factor system extracted from  $\hat\calf$. By Lemma~\ref{lemma:af},
$\calf$ is a maximal $H$-invariant free factor system
(non-sporadic because $H$ does not fix any free splitting of $F_N$)
and $H$ contains an outer automorphism $\alpha$ which is fully irreducible relative to $\calf$ and atoroidal relative to $\hat{\calf}$.

Let $T$ be an $\alpha$-invariant arational tree (rel.\ $\calf$) on which $\alpha$ acts by homothety of dilatation factor $\lambda\neq 1$: this exists by Proposition~\ref{prop:invariant-arational-tree}. Notice that a subgroup is elliptic in $T$ if and only if it is $\hat{\calf}$-peripheral. 

Since $A$ is not elliptic in $T$, we can consider the minimal $A$-invariant subtree $T_A$ of $T$. Since $[A]$ is $\alpha$-invariant, we deduce that $T_A$ is $\tilde\alpha$-invariant for some preimage $\tilde\alpha\in\Aut(F_N)$ of $\alpha$. 
Since $\tilde \alpha$ acts on $T_A$ by homothety of dilatation $\lambda\neq 1$, $T_A$ has no simplicial edges,
so $A$ acts on $T_A$ with dense orbits.

Using an argument of Reynolds (as formulated in \cite[Corollary~11.9]{GH1}), we deduce that the collection $\{gT_A\}_{g\in F_N}$ is a transverse family of $T$, 
and more precisely, for every $g\in F_N\setminus A$, the intersection $gT_A\cap T_A$ contains at most one point (in particular $\Stab_{F_N}(T_A)=A$). As $T$ is arational, it is mixing \cite[Lemma~4.9]{Hor}, so $\{g T_A\}_{g\in F_N}$ is a transverse covering of $T$,  i.e.\  the subtree $T_A$ is closed and every segment of $T$ is covered by finitely many subtrees from this family. 
  
Likewise, the above argument also yields another transverse covering $\{gT_{A'}\}_{g\in F_N}$ of $T$. We can then form the refinement of the two transverse coverings, which consists of all nondegenerate subtrees of $T$ of the form $gT_A\cap g'T_{A'}$ with $g,g'\in F_N$. 
Since this forms a transverse covering of $T$, the stabilizer of each of these trees acts with dense orbits on it (see e.g.\ \cite[Lemma~5.2]{HW}) and in particular is not $\HF$-peripheral and not cyclic.

We observe that if $T_A\cap g T_{A'}$ is nondegenerate, then its stabilizer is equal to $A\cap (A')^g$. Indeed, it is clear that $A\cap (A')^g$ preserves both $T_A$ and $g T_{A'}$, and therefore preserves their intersection. Conversely, if $h\in F_N$ preserves $T_A\cap g T_{A'}$, then in particular $h T_A\cap T_A$ is nondegenerate, so $h\in A$, and similarly, $h\in  (A')^g$. 

Therefore, there exists $g\in F_N$ such that $A\cap (A')^g$ is not elliptic in $T$, hence not $\hat\calf$-peripheral.
By Lemma~\ref{lemma:intersection-factors}, its conjugacy class is $H$-invariant. By minimality of the rank of $A$, we have $A\subseteq (A')^g$, as desired. This concludes our proof of the existence and the uniqueness of $\AHF$, and of the fact that $\AHF$ is non-abelian.

We now construct the tree $\THF$. Our construction a priori depends on the choice of $\alpha\in H$  and of the $\mathbb{R}$-tree $T$, but 
we will check at the end of the proof that  in fact $\THF$ does not depend on these choices.
We keep the notations from above. Let $\THF$ be the skeleton (in the sense of \cite[Definition~4.8]{Gui04}) of the transverse covering $\{gT_A\}_{g\in F_N}$, i.e.\ $\THF$ is the bipartite simplicial tree having one vertex $v_Y$ for every subtree $Y$ of the form $gT_A$, one vertex $v_x$ for every point $x\in T$ that belongs to at least two subtrees from the family, with an edge between $v_x$ and $v_Y$ whenever $x\in Y$.  Minimality of $T_\HF$ follows from \cite[Lemma~4.9]{Gui04}. We denote by  $V^1$ the set of all vertices of the form $v_Y$, and by  $V^0$ the set of all vertices of the form $v_x$. Notice that $V^1$ consists of a single orbit of vertices, and the stabilizer of every vertex in $V^1$ is conjugate to $\AHF$, proving Assertion \ref{it0_Va}.
Edge stabilizers of $\THF$ are non-trivial because otherwise, this would produce an $(F_N,\calf)$-free factor acting non-discretely on $T$, contradicting arationality. 
In particular, the stabilizer of any vertex $v=v_x\in V^0$ is non-trivial, and since non-trivial point stabilizers of $T$ are precisely groups in $\HF$,  we deduce that $[G_v]\in\HF$. Since arc stabilizers of $T$ are trivial, it follows that the stabilizers of distinct vertices in
$V^0$ have trivial intersection.

To prove Assertion \ref{it0_relP}, there remains to prove that 
$G_v$ is not  an unused boundary subgroup $\langle b\rangle$ of $\HF$.
So assume otherwise that $G_v=\grp{b}$. Since there are at least two edges incident on $v$ in $\THF$, and since edge stabilizers are non-trivial,
there exists $k\geq 1$ such that $b^k$ is contained in two distinct conjugates of $A$, say  $b^k\in A^g\cap A^{g'}$.
By malnormality, $A^g=A^{g'}$, a contradiction.
This proves Assertion \ref{it0_relP}.

We now prove Assertion \ref{it0_periph}.
We first apply Lemma \ref{lem_restriction} to prove that $\HF_{|A}$ is an almost free factor system. If not, then there is
Grushko factor $A_0\subsetneq A$ of $A$ relative to $\HF_{|A}$ which is not $\HF_{|A}$-peripheral.
Since there are only finitely many conjugacy classes of such factors, $A_0$ is $H$-periodic hence $H$-invariant because $H\subseteq\IA$. This contradicts the choice of $A$. 
This proves that $\HF_{|A}$ is an \afs\ of $A$. If it was sporadic, then the action of $A$ on $T_A$ would be simplicial, a contradiction.

In order to prove the last part of Assertion~\ref{it0_periph},
 denote by $\calf_0\subseteq\HF$ the collection of all  conjugacy classes of stabilizers of $V^0$-vertices, and note that $(\calf_0)_{|A}$ coincides with the set of  $A$-conjugacy classes of incident edge groups.
 By Assertion \ref{it0_relP}, $\calf_0$ contains no unused boundary subgroup of $\HF$,
 so $\calf_0$ is a free factor system of $F_N$,  hence $(\calf_0)_{|A}$ is a free factor system of $A$. 
If $(\calf_0)_{|A}$ was sporadic, then as above, the action of $A$ on $T_A$ would be simplicial, a contradiction.

We  finally give an alternative description of the tree $T_{\HF}$ which shows that it does not depend on the choice of $\alpha$ and of the $\bbR$-tree $T$, and which also implies that it is $H$-invariant.
We note that since $T_{\hat\calf}$ is bipartite with non-trivial edge stabilizers, and since $G_v\cap G_{v'}=\{1\}$ for any two distinct vertices $v,v'\in V^0$, it follows that two vertices $v\in V^0$ and $v'\in V^1$ are adjacent if and only if $G_v\cap G_{v'}\neq\{1\}$.
So let $S$ be the  bipartite graph on the vertex set $V^0(S)\dunion V^1(S)$, where $V^0(S)$ is defined as the set
of subgroups of $F_N$ whose conjugacy class is in $\HF$, and $V^1(S)$  is the set of minimal non-$\HF$-peripheral free factors whose conjugacy class is $H$-invariant, and where two vertices  (one in $V^0(S)$ and one in $V^1(S)$) are adjacent if and only if the corresponding subgroups have non-trivial intersection. The action of $F_N$ by conjugation on its subgroups induces an action on $S$. 
The map  $f:v\in V(T_\HF)\mapsto G_v\in V(S)$ is injective, and preserves adjacency and non-adjacency.
We claim every vertex in $S\setminus f(T_\HF)$ is a terminal vertex of $S$ (in particular, $S$ is a tree).
Indeed, let $P$ be the subgroup  associated to a vertex  $v\in S\setminus f(T_\HF)$.
Then $[P]\in\HF$ (in other words $v\in V^0(S)$) and the  point $x\in T$ fixed by $P$ is not in $V^0(T_\HF)$.
This means that $x$ lies in a single translate $gT_A$ of $T_A$, so $P\subset A^g$.
Since arc stabilizers of $T$ are trivial, all other conjugates of $A$ intersect $P$ trivially.
This means that $v$ is adjacent to a unique vertex in $V^1(S)$, and the bipartite structure of $S$ implies that $v$ is a terminal vertex.
This shows that $T_\HF$ can be described as the complement of terminal vertices in $S$ and concludes the proof.
\end{proof}

The factors $\AHF$ have the following property.
 
  \begin{lemma}\label{lemma:A-periph}
    Let $H\subseteq\IA$ be a subgroup that does not preserve any non-trivial free splitting, and let $\HF$ and $\HF'$ be two distinct maximal $H$-invariant \afs{}s.
    
    Then $\AHF$ is $\hat\calf'$-peripheral, and it is not contained in an unused boundary subgroup of $\hat\calf'$.
  \end{lemma}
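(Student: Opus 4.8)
The plan is to deduce Lemma~\ref{lemma:A-periph} from the defining minimality property of $\AHF$ together with Lemma~\ref{lemma:minimal-debording-factor} applied to $\hat\calf'$. Recall that $\AHF$ is, by definition, the unique minimal conjugacy class of non-trivial free factor of $F_N$ which is $H$-invariant and not $\hat\calf$-peripheral. Write $A=\AHF$ and $A'=A_{\hat\calf'}$, and suppose towards a contradiction that $A$ is not $\hat\calf'$-peripheral (or, for the second assertion, that $A$ is contained in an unused boundary subgroup of $\hat\calf'$).

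First I would treat the case where $A$ is not $\hat\calf'$-peripheral. Since $A$ is a non-trivial free factor of $F_N$ whose conjugacy class is $H$-invariant and which is not $\hat\calf'$-peripheral, the minimality clause of Lemma~\ref{lemma:minimal-debording-factor} (applied with $\hat\calf'$ in place of $\hat\calf$) forces $A$ to contain a conjugate of $A'$; say $(A')^g\subseteq A$ for some $g\in F_N$. Symmetrically, $A'$ is a non-trivial $H$-invariant free factor which is not $\hat\calf$-peripheral: indeed, if $A'$ were $\hat\calf$-peripheral then, since $(A')^g\subseteq A$ and $A$ is a free factor, $(A')^g$ would be $\hat\calf$-peripheral and hence conjugate into some factor of $\hat\calf$, which combined with $(A')^g\subseteq A\subsetneq F_N$ and malnormality of $\hat\calf$ (Remark~\ref{rk_afs_malnormal}) is not yet a contradiction — so here the cleaner route is: by minimality of $A$ for $\hat\calf$, if $A'$ is not $\hat\calf$-peripheral then $A$ is conjugate into $A'$. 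Combining $A$ conjugate into $A'$ with $A'$ conjugate into $A$ gives $\rank(A)=\rank(A')$ and then $A$ and $A'$ are conjugate. But then $\hat\calf$ and $\hat\calf'$ are both ``the'' maximal $H$-invariant \afs\ relative to which $A$ is not peripheral; by Lemma~\ref{lemma:minimal-debording-factor} (more precisely the remark following it, which identifies $\HF_{|A}$ as the unique maximal $H$-invariant \afs\ of $A$ and recovers $\hat\calf$ from it — or by Theorem~\ref{thm_dynamic_dec}\ref{it_HF}, which says the map $v\mapsto\hat\calf_v$ is a bijection), this forces $\hat\calf=\hat\calf'$, contradicting our hypothesis. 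It remains to rule out the possibility that $A'$ \emph{is} $\hat\calf$-peripheral while $(A')^g\subseteq A$: if $A'$ is $\hat\calf$-peripheral, then $A'$ is conjugate into a factor of $\hat\calf$, but the active-vertex stabilizer $A'=A_{\hat\calf'}$ is not $\hat\calf'$-peripheral and its restricted system $\hat\calf_{|A'}$ is non-sporadic (Lemma~\ref{lemma:minimal-debording-factor}\ref{it0_periph}); a factor of $\hat\calf$ restricted to $A'$ would then have to be either all of $A'$ or a proper factor, and in either case one contradicts non-sporadicity / the fact that $A'$ is not conjugate into a single factor — I would make this precise using Remark~\ref{rk_afs_malnormal} and the structure of $\hat\calf_{|A'}$.

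For the second assertion, suppose $A$ is contained in an unused boundary subgroup $\langle b\rangle$ of $\hat\calf'$. Then $A$ is infinite cyclic. But Lemma~\ref{lemma:minimal-debording-factor} asserts that $\AHF$ is non-abelian, an immediate contradiction. (If one wants the statement to also cover the case $A\subseteq\langle b\rangle^g$ for some conjugate, the same argument applies since conjugates of cyclic groups are cyclic.)

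\textbf{Main obstacle.} The delicate point, and the one I expect to require the most care, is the bookkeeping in the first case: ensuring that ``$A$ conjugate into $A'$'' and ``$A'$ conjugate into $A$'' genuinely force $\hat\calf=\hat\calf'$, rather than merely $A$ and $A'$ being conjugate. This is where one must invoke the uniqueness of the maximal $H$-invariant \afs\ relative to which a given $H$-invariant free factor is non-peripheral — essentially the content of Theorem~\ref{thm_dynamic_dec}\ref{it_HF} or the remark after Lemma~\ref{lemma:minimal-debording-factor} (purity of $H_{|A}$ and $\hat\calf_{|A}$ being its unique maximal invariant \afs). One must check there is no circularity, i.e.\ that this uniqueness statement does not itself rely on Lemma~\ref{lemma:A-periph}; inspecting the proof of Lemma~\ref{lemma:minimal-debording-factor}, it does not, so the argument is legitimate. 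The secondary subtlety is the case analysis needed to exclude $A'$ being $\hat\calf$-peripheral, which I would handle via malnormality (Remark~\ref{rk_afs_malnormal}) and the non-sporadicity of $\hat\calf_{|A'}$ from Lemma~\ref{lemma:minimal-debording-factor}\ref{it0_periph}.
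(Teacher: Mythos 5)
Your reduction to the case ``$\AHF$ not $\hat\calf'$-peripheral'' and your handling of the second assertion (non-abelianness of $\AHF$ rules out cyclic unused boundary subgroups) are fine, but the core of your first case has two genuine gaps. First, the decisive step — ``$\AHF$ conjugate to $A_{\hat\calf'}$ forces $\hat\calf=\hat\calf'$'' — is exactly the uniqueness asserted in Theorem~\ref{thm_dynamic_dec}\ref{it_HF}, and the proof of that theorem uses Lemma~\ref{lemma:A-periph} (both directly, to see that $A_{\hat\calf_j}$ is elliptic in all the trees $T_{\hat\calf_\ell}$, and via Corollary~\ref{cor:UP}); so invoking it here is circular, contrary to your non-circularity check, which only inspected Lemma~\ref{lemma:minimal-debording-factor}. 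The fallback you propose, the remark after Lemma~\ref{lemma:minimal-debording-factor}, only says that $\hat\calf_{|\AHF}$ is the unique maximal $H_{|\AHF}$-invariant \afs\ \emph{of the subgroup} $\AHF$; from $A:=\AHF$ conjugate to $A_{\hat\calf'}$ you would at best get $\hat\calf_{|A}=\hat\calf'_{|A}$, which does not yield $\hat\calf=\hat\calf'$ as systems in $F_N$ (the factors meeting $A$ trivially are invisible to the restriction). Second, your subcase in which $A_{\hat\calf'}$ is $\hat\calf$-peripheral while a conjugate of it sits inside $A$ is not actually contradictory: ``not $\hat\calf'$-peripheral'' says nothing about $\hat\calf$, and malnormality of $\hat\calf$ together with non-sporadicity of $\hat\calf'_{|A_{\hat\calf'}}$ (note your sketch misquotes this as $\hat\calf_{|A'}$) is perfectly compatible with $A_{\hat\calf'}$ lying in a single factor of $\hat\calf$ — indeed, once the lemma is proved, that is exactly what happens. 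So this branch is left open.

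The paper's argument avoids both problems by never introducing $A_{\hat\calf'}$ at all: since $\hat\calf'$ and $\hat\calf$ are distinct maximal $H$-invariant \afs{}s, $\hat\calf'\not\sqsubseteq\hat\calf$, so some factor $F'$ of $\hat\calf'$ is not $\hat\calf$-peripheral; by Lemma~\ref{lemma:af}\ref{it:af3} every cyclic subgroup with $H$-invariant conjugacy class is $\hat\calf$-peripheral, so $F'$ is non-cyclic, hence not an unused boundary subgroup and therefore an honest free factor of $F_N$ with $H$-invariant conjugacy class; the minimality clause of Lemma~\ref{lemma:minimal-debording-factor} then places a conjugate of $\AHF$ inside $F'$, so $\AHF$ is $\hat\calf'$-peripheral, and non-abelianness finishes the unused-boundary statement. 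If you want to keep your symmetric framework, you would need to supply an independent proof that a single conjugacy class cannot be the minimal non-peripheral factor for two distinct maximal invariant \afs{}s, which in effect amounts to reproving the lemma; the direct route via a non-$\hat\calf$-peripheral factor of $\hat\calf'$ is the correct fix.
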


  \begin{proof}
    Since $\HF'\not\sqsubseteq \HF$, consider a factor $F'$ in $\HF'$ which is not $\HF$-peripheral. By 
    Lemma~\ref{lemma:af}\ref{it:af3},
    $F'$ is not cyclic, hence is not an unused boundary subgroup of $\HF'$. In particular, $F'$ is a free factor of $F_N$,
and its  conjugacy class is $H$-invariant. Therefore, $F'$ contains a conjugate of $\AHF$.
This concludes that $\AHF$ is $\calf'$-peripheral.
Since $\AHF$ is not abelian by Theorem \ref{lemma:minimal-debording-factor}, it cannot be contained in an unused boundary subgroup of  $\HF'$.
\end{proof}

We record the following consequences of Lemma~\ref{lemma:A-periph}.

  \begin{cor}\label{cor:UP}
 Let $H\subseteq\IA$ be a subgroup  that does not preserve any non-trivial free splitting, and let $\hat\calf$ be a maximal $H$-invariant almost free factor system. Then all edge stabilizers of $\THF$ are universally peripheral with respect to $H$.
  \end{cor}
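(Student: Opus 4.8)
The plan is to show that every edge stabilizer $G_e$ of $T_\HF$ is $\hat\calf'$-peripheral for \emph{every} maximal $H$-invariant almost free factor system $\hat\calf'$, and then invoke Definition~\ref{dfn_UP}. First I would dispose of the case $\hat\calf'=\hat\calf$: by Lemma~\ref{lemma:minimal-debording-factor}, each edge of $T_\HF$ joins a vertex $v\in V^0$ to a vertex $w\in V^1$, so $G_e\subseteq G_v$ where $[G_v]\in\HF$ by Assertion~\ref{it0_relP}; hence $G_e$ is $\HF$-peripheral. It therefore remains to treat the case where $\hat\calf'\ne\hat\calf$ is another maximal $H$-invariant \afs.

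For the main case, fix an edge $e=vw$ of $T_\HF$ with $v\in V^0$, $w\in V^1$. By Lemma~\ref{lemma:minimal-debording-factor}\ref{it0_Va}, $G_w$ is conjugate to $\AHF$, and $G_e\subseteq G_w$. By Lemma~\ref{lemma:A-periph}, since $\hat\calf'\ne\hat\calf$, the group $\AHF$ (hence also the conjugate $G_w$, since $\hat\calf'$-peripherality is a conjugacy-invariant notion) is $\hat\calf'$-peripheral. Since being $\hat\calf'$-peripheral is inherited by subgroups (a subgroup of a group conjugate into some factor of $\hat\calf'$ is itself conjugate into that factor), it follows that $G_e$ is $\hat\calf'$-peripheral. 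Combining the two cases, $G_e$ is $\hat\calf''$-peripheral for every maximal $H$-invariant \afs\ $\hat\calf''$, i.e.\ $G_e\in\calp_H$, which is precisely the statement.

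I do not expect any genuine obstacle here: the corollary is essentially a bookkeeping consequence of the two facts about $\AHF$ already established, namely that edge stabilizers of $T_\HF$ lie inside vertex stabilizers of the two explicit types described in Lemma~\ref{lemma:minimal-debording-factor}, and that $\AHF$ is peripheral with respect to every \emph{other} maximal invariant \afs\ (Lemma~\ref{lemma:A-periph}). The only point requiring a word of care is the elementary observation that $\hat\calf'$-peripherality passes to subgroups and is invariant under conjugation, which is immediate from the definition of a peripheral subgroup (Section~\ref{sec:factor-systems}). Thus the proof is a two-line deduction once the correct earlier results are cited.

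\begin{proof}
Let $e$ be an edge of $\THF$, with endpoints $v\in V^0$ and $w\in V^1$ (Lemma~\ref{lemma:minimal-debording-factor}). Let $\hat\calf'$ be any maximal $H$-invariant almost free factor system; we must show $G_e$ is $\hat\calf'$-peripheral.

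If $\hat\calf'=\HF$, then by Lemma~\ref{lemma:minimal-debording-factor}\ref{it0_relP} we have $[G_v]\in\HF$, and $G_e\subseteq G_v$, so $G_e$ is $\HF$-peripheral.

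If $\hat\calf'\neq\HF$, then by Lemma~\ref{lemma:minimal-debording-factor}\ref{it0_Va} the group $G_w$ is conjugate to $\AHF$, and by Lemma~\ref{lemma:A-periph} the factor $\AHF$ is $\hat\calf'$-peripheral; hence so is its conjugate $G_w$. Since $G_e\subseteq G_w$ and the class of $\hat\calf'$-peripheral subgroups is closed under passing to subgroups, $G_e$ is $\hat\calf'$-peripheral.

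In all cases $G_e$ is $\hat\calf'$-peripheral for every maximal $H$-invariant \afs\ $\hat\calf'$, so $G_e\in\calp_H$ by Definition~\ref{dfn_UP}.
\end{proof}
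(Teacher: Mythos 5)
Your proof is correct and follows the same route as the paper's: the paper's two-sentence argument is precisely that edge stabilizers are $\HF$-peripheral via Lemma~\ref{lemma:minimal-debording-factor}\ref{it0_relP} (containment in a $V^0$-vertex stabilizer) and $\HF'$-peripheral for $\HF'\neq\HF$ via Lemma~\ref{lemma:A-periph} (containment in a conjugate of $\AHF$). You have merely written out the bipartite structure of $\THF$ and the elementary closure properties of peripherality that the paper leaves implicit.
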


\begin{proof}
Lemma~\ref{lemma:A-periph} implies that edge stabilizers of $\THF$ are $\HF'$-peripheral for $\HF'\neq \HF$ because $A_\HF$ is. They are also $\HF$-peripheral by Lemma \ref{lemma:minimal-debording-factor}\ref{it0_relP}.
\end{proof}

\begin{lemma}\label{lemma:filling}
    Let $H\subseteq\IA$ be a subgroup  that does not preserve any non-trivial free splitting, and let $\hat\calf_1,\HF_2$ be two distinct maximal $H$-invariant almost free factor systems. 
    
    Then $\HF_1\cup\HF_2$ is filling in the following sense: there is no \afs\ $\HF$ such that
    $\HF_1\sqsubseteq \HF$ and $\HF_2\sqsubseteq \HF$.
\end{lemma}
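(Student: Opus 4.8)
The plan is to argue by contradiction, exploiting the uniqueness statement for the minimal non-peripheral invariant free factor from Lemma~\ref{lemma:minimal-debording-factor}. So suppose that there exists an \afs\ $\HF$ with $\HF_1\sqsubseteq\HF$ and $\HF_2\sqsubseteq\HF$. The group $H$ preserves $\HF_1$ and $\HF_2$, but it need not preserve $\HF$; however $\HF$ need not be $H$-invariant for the argument to work. I would first recall that $H$ does not preserve any non-trivial free splitting, so by Lemma~\ref{lemma:af} every $H$-invariant free factor system is non-sporadic and $F_N$ is one-ended relative to any maximal $H$-invariant \afs. The key objects are $A_{\HF_1}$ and $A_{\HF_2}$, the minimal non-$\HF_i$-peripheral free factors whose conjugacy class is $H$-invariant, which are non-abelian by Lemma~\ref{lemma:minimal-debording-factor}.

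The heart of the argument: I claim $A_{\HF_1}$ must be $\HF$-peripheral, and likewise $A_{\HF_2}$. Indeed, since $\HF_1\neq\HF_2$, Lemma~\ref{lemma:A-periph} tells us $A_{\HF_1}$ is $\HF_2$-peripheral and not contained in an unused boundary subgroup of $\HF_2$; so up to conjugacy $A_{\HF_1}$ is contained in a factor $Q_2$ of $\HF_2$ which is a free factor of $F_N$. Since $\HF_2\sqsubseteq\HF$, the factor $Q_2$ is $\HF$-peripheral, hence so is $A_{\HF_1}$. Symmetrically $A_{\HF_2}$ is $\HF$-peripheral (using $A_{\HF_2}$ is $\HF_1$-peripheral, not in an unused boundary subgroup of $\HF_1$, hence contained in a factor of $\HF_1$, which is $\HF$-peripheral). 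Now I want to derive a contradiction from the fact that both $A_{\HF_1}$ and $A_{\HF_2}$ are $\HF$-peripheral. The idea is that at least one of $A_{\HF_1},A_{\HF_2}$ is \emph{not} $\HF_j$-peripheral for the other index $j$ (by definition, $A_{\HF_1}$ is not $\HF_1$-peripheral), but it \emph{is} $\HF$-peripheral while $\HF_1\sqsubseteq\HF$ — so $A_{\HF_1}$ being conjugate into a factor of $\HF$, and $\HF_1\sqsubseteq\HF$ says each factor of $\HF_1$ is conjugate into a factor of $\HF$; this does not immediately force $A_{\HF_1}$ to be $\HF_1$-peripheral, so I need a finer tool.

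The finer tool is the transverse-covering/tree-of-$A$-minimal-subtrees machinery used in the proof of Lemma~\ref{lemma:minimal-debording-factor}, combined with Lemma~\ref{lem_restriction}. Here is how I would run it. Since $\HF_1\sqsubseteq\HF$ and $A_{\HF_1}$ is $\HF$-peripheral, there is a factor $P$ of $\HF$ with $A_{\HF_1}$ conjugate into $P$; since $\HF_1\sqsubseteq\HF$, every factor of $\HF_1$ is conjugate into some factor of $\HF$ as well. Consider $\alpha\in H$ fully irreducible relative to an extracted free factor system $\calf_1$ of $\HF_1$ and atoroidal relative to $\HF_1$ (Lemma~\ref{lemma:af}\ref{it:af3}), and an $\alpha$-invariant arational $(F_N,\calf_1)$-tree $T$ with dilatation $\lambda\neq 1$. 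In $T$, the $\HF_1$-peripheral subgroups are exactly the elliptic ones; in particular $A_{\HF_1}$ is \emph{not} elliptic in $T$, so it has a non-degenerate minimal subtree $T_{A_{\HF_1}}$ on which it acts with dense orbits. On the other hand, I now want to show that every factor $P$ of $\HF$ is elliptic in $T$: this follows because $P$ is a finite union (by malnormality of $\HF$) of pieces each conjugate into a factor of $\HF_1$ — wait, that is false in general since $\HF_1\sqsubseteq\HF$ goes the wrong way. So instead I use $\HF_2\sqsubseteq\HF$: each factor of $\HF_2$ is conjugate into a factor of $\HF$. Combined with Lemma~\ref{lemma:filling}'s desired conclusion being false... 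Let me restructure: the cleanest route is to show directly that $A_{\HF_1}$ and $A_{\HF_2}$ "fill" in the sense that no common \afs\ can contain both, which is essentially what Lemma~\ref{lemma:A-periph} plus minimality gives. The main obstacle — and the step I expect to be delicate — is precisely turning "both $A_{\HF_i}$ are $\HF$-peripheral" into a contradiction: I anticipate needing to observe that $A_{\HF_1}$ is conjugate into a factor $P$ of $\HF$, that $P$ (being $\HF$-peripheral) is $\HF_2$-peripheral via a restriction argument using $\HF_2\sqsubseteq\HF$ is again backwards, so the correct move is: $P$ is a free factor of $F_N$ (since $\HF$ has at least $A_{\HF_1}$ and $A_{\HF_2}$ as non-cyclic non-boundary factors, or at worst use Remark~\ref{rk:afs-fs}), and $P$ together with $\HF_1$ generates an \afs\ $\HF_1\vee\{[P]\}$ relative to which $A_{\HF_2}$... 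I would ultimately pin this down by applying Lemma~\ref{lemma:minimal-debording-factor} to the subgroup $A_{\HF_1}$: since the restriction of $H$ to $A_{\HF_1}$ is pure with unique maximal \afs\ $(\HF_1)_{|A_{\HF_1}}$ (Remark after Lemma~\ref{lemma:minimal-debording-factor}), and $A_{\HF_2}\cap A_{\HF_1}^g$ is $H$-invariant (Lemma~\ref{lemma:intersection-factors}) and non-$\HF_1$-peripheral for suitable $g$ by the transverse covering argument, we would get $A_{\HF_1}\subseteq A_{\HF_2}^{g^{-1}}$, and symmetrically $A_{\HF_2}$ conjugate into $A_{\HF_1}$, forcing $A_{\HF_1}$ and $A_{\HF_2}$ to be conjugate; but then $A_{\HF_1}=A_{\HF_2}^g$ is simultaneously non-$\HF_1$-peripheral and (by Lemma~\ref{lemma:A-periph}) $\HF_1$-peripheral, the desired contradiction. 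I would then remark that the $\HF$-peripherality of both factors, which makes the transverse covering of $T$ refine compatibly, is what licenses forming the common refinement needed to extract the $H$-invariant non-peripheral intersection, so the existence of $\HF$ is exactly what drives the contradiction.
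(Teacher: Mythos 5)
The decisive step of your argument fails. You want to find $g$ so that $A_{\HF_2}\cap A_{\HF_1}^g$ is $H$-invariant and \emph{not} $\HF_1$-peripheral, in order to invoke the minimality of $A_{\HF_1}$ and conclude that $A_{\HF_1}$ and $A_{\HF_2}$ are conjugate. But by Lemma~\ref{lemma:A-periph} (which you yourself quote), $A_{\HF_2}$ is $\HF_1$-peripheral because $\HF_1\neq\HF_2$; hence every subgroup of every conjugate of $A_{\HF_2}$, in particular $A_{\HF_2}\cap A_{\HF_1}^g$ for any $g$, is $\HF_1$-peripheral. So no ``suitable $g$'' exists, the transverse-covering machinery of Lemma~\ref{lemma:minimal-debording-factor} does not apply (being $\HF_1$-peripheral, $A_{\HF_2}$ is elliptic in the arational tree $T$ rel.\ $\calf_1$, so it has no minimal subtree with dense orbits to intersect with), and the contradiction you aim for cannot be reached this way. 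A second symptom of the same problem: your final contradiction never genuinely uses the hypothesized common \afs\ $\HF$ — only the opening observation that $A_{\HF_1},A_{\HF_2}$ are $\HF$-peripheral, which then plays no role, plus a vague closing remark. If the rest of the argument worked without $\HF$, it would show that two distinct maximal $H$-invariant \afs{}s can never coexist, which is false (see Example~\ref{ex:3-surfaces}, where there are three).

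For comparison, the paper's proof turns the $\HF$-peripherality of $A_{\HF_1}$ into a contradiction via the canonical splitting $T_{\HF_1}$: all vertex stabilizers of $T_{\HF_1}$ are $\HF$-peripheral (the $V^1$-stabilizers are conjugates of $A_{\HF_1}$, which is $\HF_2$-peripheral hence $\HF$-peripheral; the $V^0$-stabilizers lie in $\HF_1\sqsubseteq\HF$), so there is an equivariant map from $T_{\HF_1}$ to a tree $S'$ adapted to $\HF$ in which each $\HF$-peripheral subgroup fixes a unique vertex with stabilizer in $\HF$. Malnormality of $\HF$ together with the non-triviality of edge stabilizers of $T_{\HF_1}$ (here the hypothesis that $H$ preserves no non-trivial free splitting enters) forces this map to be constant, making $F_N$ itself $\HF$-peripheral — the contradiction. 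The case where $\HF$ is a genuine free factor system is treated separately by a relative Grushko argument, a case your proposal also leaves unaddressed. Some global step of this kind, pinning down how the existence of $\HF$ constrains the whole splitting $T_{\HF_1}$ rather than just $A_{\HF_1}$, is what your proposal is missing.
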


\begin{rk}
  The point in the lemma is that we  do not assume that $\HF$ is $H$-invariant.
\end{rk}

\begin{proof}
Assume on the contrary that there exists an \afs\ $\HF$ such that     $\HF_1\sqsubseteq \HF$ and $\HF_2\sqsubseteq \HF$.
If $\HF$ is a free factor system, 
then the Grushko decomposition of $F_N$ relative to $\HF_1\cup\HF_2$ is non-trivial, and provides 
a free factor system $\calg$ such that $\HF_1\sqsubsetneq\calg$ and $\HF_2\sqsubsetneq\calg$, and which is $H$-invariant.
This contradicts maximality of $\HF_1$ and $\HF_2$.

So assume that $\HF=\{[A_1],\dots,[A_n],[\grp{b_1}],\dots,[\grp{b_r}]\}$
is the factor system of a QH splitting $S$
with 
$r\geq 1$ unused boundary components.
Let $\Gamma'$ be the non-minimal graph of groups obtained from $S/F_N$ by adding $r$ terminal edges with stabilizers
$\grp{b_1},\dots,\grp{b_r}$,
and let $S'$ be the Bass-Serre tree of this splitting.
The splitting $S'$ has the property that for every $\HF$-peripheral subgroup $A\subset F_N$,
there is a unique vertex $v_A\in S'$ such that $A$ fixes $v_A$ and the stabilizer of $v_A$ is in $\HF$.

We claim that all vertex stabilizers of $T_{\HF_1}$ are $\HF$-peripheral.
Indeed, $A_{\HF_1}$ is $\HF$-peripheral because it is $\HF_2$-peripheral by Lemma~\ref{lemma:A-periph},
and the stabilizers of $V^0$-vertices of $T_{\HF_1}$ are $\HF_1$-peripheral
 (Lemma~\ref{lemma:minimal-debording-factor}\ref{it0_relP}).
It follows that there is an equivariant map $f:T_{\HF_1}\ra S'$
that sends each vertex $u\in T_{\HF_1}$ to the vertex $v_{G_u}$ defined above. 
Edge stabilizers of $T_{\HF_1}$ are non-trivial because $H$ preserves no non-trivial free splitting.
Since $\HF$ is a malnormal family, this forces $f$ to be constant, say $f(T_{\HF_1})=\{w\}$ with $[G_w]\in\HF$.
Thus $F_N$ is $\HF$-peripheral, a contradiction.
\end{proof}

\subsection{End of the proof via a tree of cylinders construction}

If $\HF_1,\dots,\HF_n$ are maximal $H$-invariant free factor systems,
Corollary \ref{cor:UP} implies that for all $i,j\leq n$,  the trees $T_{\HF_i}$, $T_{\HF_j}$ are elliptic with respect
to each other, i.e.\ each edge stabilizer fixes a point in the other tree. This allows to construct 
a refinement of $T_{\HF_1}$ dominating $T_{\HF_2}$, and repeating this procedure, a splitting that dominates all
the trees $T_{\HF_i}$.
To make things more canonical
it will be convenient to use a suitable notion of tree of cylinders (see Section~\ref{sec:cyl}).
We take as a class of allowed edge groups the class $\cale$ of non-trivial universally peripheral subgroups
(Definition~\ref{dfn_UP}), i.e.\ $\cale=\calp_H\setminus\{\{1\}\}$. 
We consider the following equivalence relation on $\cale$.

\begin{lemma}\label{lem_equiv}
  Let $H\subseteq\IA$ be a subgroup. If $P,P'\in\calp_H$
  and $P\cap P'\neq\{1\}$, then $\grp{P,P'}\in\calp_H$. 

  The relation $P\sim P'$ defined by $\grp{P,P'}\in\calp_H$ is an equivalence relation on
  the set $\cale$ of non-trivial universally peripheral subgroups.
\end{lemma}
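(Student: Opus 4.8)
The statement has two parts. First, I would prove the ``multiplication'' property: if $P,P'\in\calp_H$ and $P\cap P'\neq\{1\}$, then $\langle P,P'\rangle\in\calp_H$. By Definition~\ref{dfn_UP}, it suffices to show that $\langle P,P'\rangle$ is $\HF$-peripheral for every maximal $H$-invariant \afs\ $\HF$. Fix such an $\HF$. Since $P$ and $P'$ are both $\HF$-peripheral and $\HF$ is a malnormal family (Remark~\ref{rk_afs_malnormal}), the fact that $P\cap P'\neq\{1\}$ forces $P$ and $P'$ to be conjugate into the \emph{same} factor $Q$ of $\HF$; more precisely, malnormality gives that if $P\subseteq Q^{g}$ and $P'\subseteq (Q')^{g'}$ with $P\cap P'\neq\{1\}$, then $Q^g=(Q')^{g'}$, so both $P$ and $P'$ lie in a common conjugate $Q^g$ of a factor of $\HF$. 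Hence $\langle P,P'\rangle\subseteq Q^g$, which is $\HF$-peripheral. As $\HF$ was arbitrary, $\langle P,P'\rangle\in\calp_H$.

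Second, I would check that the relation $P\sim P'\iff \langle P,P'\rangle\in\calp_H$ is an equivalence relation on $\cale=\calp_H\setminus\{\{1\}\}$. Reflexivity is immediate since $\langle P,P\rangle=P\in\calp_H$ for $P\in\cale$. Symmetry is obvious from the symmetric definition. For transitivity, suppose $P\sim P'$ and $P'\sim P''$ with $P,P',P''\in\cale$; I must show $\langle P,P''\rangle\in\calp_H$. By the first part applied within each maximal $H$-invariant \afs\ $\HF$: since $\langle P,P'\rangle$ is $\HF$-peripheral and non-trivial, it lies in a conjugate $Q_1$ of a factor of $\HF$, so $P$ and $P'$ both lie in $Q_1$; similarly $P'$ and $P''$ both lie in a conjugate $Q_2$ of a factor of $\HF$. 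Now $P'\subseteq Q_1\cap Q_2$ and $P'\neq\{1\}$, so by malnormality of $\HF$ we get $Q_1=Q_2$. Therefore $P$ and $P''$ both lie in $Q_1$, so $\langle P,P''\rangle\subseteq Q_1$ is $\HF$-peripheral. Since this holds for every $\HF$, $\langle P,P''\rangle\in\calp_H$, i.e.\ $P\sim P''$.

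The only genuine input here is the malnormality of almost free factor systems (Remark~\ref{rk_afs_malnormal}), together with the observation that a non-trivial subgroup which is $\HF$-peripheral is conjugate into a \emph{unique} factor of $\HF$ (again by malnormality); both are already recorded in the excerpt. I expect no real obstacle: the argument is a clean bookkeeping exercise once one notes that $\HF$-peripherality of a non-trivial subgroup pins it down to a single conjugate of a single factor, so that $P\cap P'\neq\{1\}$ propagates ``being in the same factor'' transitively. One mild point of care is that the factors $Q^g$ depend a priori on $\HF$, but this causes no difficulty because the peripherality condition is quantified over all $\HF$ separately and the malnormality argument is applied inside each fixed $\HF$.
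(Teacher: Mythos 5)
Your proof is correct and follows essentially the same route as the paper: both assertions come down to the malnormality of the \afs\ $\HF$ (Remark~\ref{rk_afs_malnormal}), which forces subgroups with non-trivial intersection into a common conjugate of a single factor. The only cosmetic difference is in transitivity, where the paper applies the first assertion directly to the universally peripheral groups $\grp{P,P'}$ and $\grp{P',P''}$ (which intersect in $P'\neq\{1\}$) and then uses that $\calp_H$ is closed under subgroups, whereas you rerun the malnormality argument inside each $\HF$ — the same idea, just unwound.
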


\begin{proof}
  To prove the first assertion,
  let $\HF$ be a maximal $H$-invariant \afs. Let $F,F'$ be the factors in $\HF$ containing  $P$ and $P'$ respectively.
  Since $F\cap F'$ contains $P\cap P'$, malnormality of $\HF$ implies that $F=F'$. Thus $\grp{P,P'}$ is $\HF$-peripheral.
Since this holds for all $\HF$, the first assertion follows.

  We deduce the transitivity of the relation $\sim$. If $P\sim P'$ and $P'\sim P''$, then $\grp{P,P'}$ and $\grp{P',P''}$ are universally peripheral
  and have non-trivial intersection, so $\grp{P,P',P''}\in\calp_H$ by the first assertion.
\end{proof}

The lemma implies that for any $P\in\cale$, 
there exists a unique $\hat P\in\calp_H^{\max}$
such that $P\subset \hat P$, and $P\sim P'$ if and only if $\hat P=\hat P'$.

Notice that if $P,P'\in\cale$ with $P\sim P'$, then $\langle P,P'\rangle$ is elliptic in any $F_N$-tree relative to $\calp_H$. It thus follows from \cite[Lemma 3.2]{GL-cyl} that this equivalence relation is admissible relative to $\calp_H$, and can
therefore be used to construct the tree of cylinders of any tree $U$ relative to $\calp_H$ with edge groups in $\cale$
(see Section \ref{sec:cyl}):
cylinders are subtrees of $U$ defined by the partition of edges induced by the equivalence relation $G_e\sim G_{e'}$.
One then defines the tree of cylinders $U^c$ of $U$
as the tree having one vertex $v_Y$ for each cylinder $Y\subset U$, one vertex $v_x$ for each vertex $x\in U$
belonging to at least two cylinders, with an edge between $v_x$ and $v_Y$ if an only if $x\in Y$.
We denote by $V_p=\{v_Y\}$ and $V_a=\{v_x\}$ the two sets of vertices of $U^c$.

  \begin{lemma}\label{lemma:deformation}
    Let $H\subseteq\IA$ be a subgroup, let $U$ be a splitting of $F_N$ relative to $\calp_H$ 
    with edge stabilizers in $\cale$,
    and let $U^c$ be its tree of cylinders, coming with the bipartition $V=V_a\sqcup V_p$ of its vertex set.

    For every $v\in V_p$, one has $G_v\in \calp_H^{\max}$, and $G_v\cap G_{v'}=\{1\}$ if $v\neq v'\in V_p$.
    On the other hand, if $v\in V_a$, then $G_v$ is not universally peripheral.
    
    Moreover, $U$ and $U^c$ are in the same deformation space  (i.e.\ every vertex stabilizer of $U^c$ fixes a vertex in $U$).
  \end{lemma}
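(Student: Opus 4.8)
The statement is essentially a translation into the present setting of the general properties of trees of cylinders established in \cite{GL-cyl}, combined with the definition of the equivalence relation $\sim$ on $\cale=\calp_H\setminus\{\{1\}\}$. First I would recall the structure of $U^c$: vertices in $V_p$ correspond to cylinders $Y\subseteq U$, vertices in $V_a$ correspond to points $x\in U$ lying in at least two cylinders, and an edge of $U^c$ between $v_x$ and $v_Y$ records $x\in Y$. The three assertions to prove are then: (i) for $v=v_Y\in V_p$, the stabilizer $G_{v_Y}$ equals $\hat{P}$ for the common value $\hat{P}\in\calp_H^{\max}$ of $\hat{G}_e$ over edges $e$ of $Y$, and in particular $G_{v_Y}\in\calp_H^{\max}$; (ii) distinct vertices of $V_p$ have trivially intersecting stabilizers; (iii) for $v=v_x\in V_a$, the stabilizer $G_{v_x}=G_x$ is not universally peripheral; and (iv) $U$ and $U^c$ lie in the same deformation space.

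For (i): the global stabilizer of a cylinder $Y$ permutes the edges of $Y$. Since every edge $e\subseteq Y$ has $G_e\subseteq\hat{G}_e=\hat{P}$ with $\hat{P}\in\calp_H^{\max}$ the unique maximal universally peripheral subgroup containing $G_e$ (Lemma~\ref{lem_equiv} and the paragraph following it), and since $\hat{P}$ is itself elliptic in $U$ — indeed $U$ is relative to $\calp_H$ — one checks that $\hat{P}$ fixes a point of $Y$ and that $G_{(Y)}=\hat{P}$. Concretely: $\hat{P}$ normalizes each $G_e$ with $e\subseteq Y$ (because $\hat{P}$ is the unique maximal universally peripheral group containing $G_e$, hence $\hat{P}$-invariant under conjugation), so $\hat{P}$ preserves the set of edges $e$ with $G_e\subseteq\hat{P}$, which contains $Y$; combined with ellipticity of $\hat{P}$ in $U$ and connectedness of $Y$, this forces $\hat{P}$ to fix a point of $Y$ and to stabilize $Y$ globally. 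Conversely any element stabilizing $Y$ normalizes some $G_e$ with $e\subseteq Y$, hence (by maximal-cyclic-or-free-factor rigidity recorded after Definition~\ref{dfn_UP} and malnormality of the relevant factor systems, Remark~\ref{rk_afs_malnormal}) lies in $\hat{P}$. This gives $G_{v_Y}=G_{(Y)}=\hat{P}\in\calp_H^{\max}$.

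For (ii): two distinct cylinders $Y\neq Y'$ have $\hat{G}_e\neq\hat{G}_{e'}$ for $e\subseteq Y$, $e'\subseteq Y'$, i.e.\ the two maximal universally peripheral groups $\hat{P}=G_{v_Y}$ and $\hat{P}'=G_{v_{Y'}}$ are distinct; by Lemma~\ref{lem_equiv}, if $\hat{P}\cap\hat{P}'\neq\{1\}$ then $\langle\hat{P},\hat{P}'\rangle\in\calp_H$, contradicting maximality of $\hat{P}$ (and $\hat{P}'$) unless $\hat{P}=\hat{P}'$. Hence $G_{v_Y}\cap G_{v_{Y'}}=\{1\}$. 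For (iii): if $v_x\in V_a$ had $G_x\in\calp_H$, then $G_x$ would be universally peripheral and, being contained in its hull $\hat{G}_x\in\calp_H^{\max}$, it could meet at most one cylinder nontrivially — but by construction $x$ lies in at least two distinct cylinders $Y,Y'$, and $G_x\supseteq G_e$ for edges $e$ of $Y$ incident on $x$, with $\hat{G}_e$, $\hat{G}_{e'}$ distinct for $e\subseteq Y$, $e'\subseteq Y'$; since $G_x\neq\{1\}$ (edges of $U$ incident on $x$ have nontrivial stabilizer as $U$ has edge groups in $\cale$ and $x$ lies on at least two cylinders) this would place $G_x$ inside two distinct maximal universally peripheral subgroups, impossible by the uniqueness of $\hat{P}$. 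Finally (iv) is exactly \cite[Corollary~4.10]{GL-cyl} (or the combination of the admissibility of $\sim$ relative to $\calp_H$, established via \cite[Lemma~3.2]{GL-cyl}, with \cite[Lemma~4.11]{GL-cyl}): the tree of cylinders of $U$ lies in the same deformation space as $U$.

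The main obstacle I anticipate is (i), specifically the identification $G_{(Y)}=\hat{P}$: one direction ($\hat{P}\subseteq G_{(Y)}$) requires knowing $\hat{P}$ is elliptic in $U$ and fixes a point \emph{of the cylinder $Y$}, which uses that $\hat{P}$ is connected-convex-invariant; the other direction ($G_{(Y)}\subseteq\hat{P}$) requires the rigidity input that any element normalizing an edge group $G_e$ (a maximal cyclic subgroup or a free factor) and fixing the same set of edges must already lie in the maximal universally peripheral group $\hat{G}_e$ — this is where malnormality of the almost free factor systems (Remark~\ref{rk_afs_malnormal}) and root-closedness enter. Everything else is bookkeeping on top of \cite{GL-cyl}.
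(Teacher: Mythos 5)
Your overall route is the paper's: identify the $V_p$-vertex stabilizers with the groups of $\calp_H^{\max}$ via normalizer considerations, use Lemma~\ref{lem_equiv} to get the trivial intersections and the statement about $V_a$, and deduce the deformation-space claim from ellipticity. Items (ii) and (iii) are essentially fine. But the key step of your item (i) is flawed as written. The forward claim that $\hat P$ normalizes each $G_e$ with $e\subseteq Y$ is false (a subgroup of $\hat P$ need not be normal in it); what is true, and all you need, is that for $g\in\hat P$ the edge $ge$ has stabilizer $gG_eg^{-1}\subseteq\hat P$, so $\hat P$ preserves the set of edges whose stabilizer is contained in $\hat P$ --- and this set is exactly $Y$, not merely a superset, since $G_{e'}\subseteq\hat P$ forces $\hat G_{e'}=\hat P$ by uniqueness of the hull. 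More seriously, your converse claim that any element stabilizing $Y$ globally ``normalizes some $G_e$ with $e\subseteq Y$'' would fail: such an element permutes the edges of $Y$ and has no reason to preserve any individual edge stabilizer. The correct argument --- which is the paper's --- is that $g\in G_{(Y)}$ sends $Y=Y_{\hat P}$ to $Y_{g\hat Pg^{-1}}$, and since $gY=Y$ and a cylinder determines its hull ($\hat P=\hat G_e$ for every $e\subseteq Y$), one gets $g\hat Pg^{-1}=\hat P$; thus $G_{(Y)}=N(\hat P)$, which equals $\hat P$ because groups in $\calp_H^{\max}$ are free factors or maximal cyclic subgroups (alternatively, $N(\hat P)$ is itself universally peripheral by malnormality of almost free factor systems, Remark~\ref{rk_afs_malnormal}, hence equals $\hat P$ by maximality). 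So your proof that $G_v\in\calp_H^{\max}$ for $v\in V_p$ has a genuine gap, though the repair is short and lands you on the paper's argument.

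For the last assertion, you cannot simply cite \cite{GL-cyl}: in general the tree of cylinders of an admissible relation is only dominated by $U$, and it may even be trivial when $U$ is not (this is pointed out in Section~\ref{sec:cyl}), so ``same deformation space'' is not a formal consequence of admissibility. What is needed, and what the paper uses, is that every vertex stabilizer of $U^c$ is elliptic in $U$: the $V_a$-stabilizers are point stabilizers of $U$, and the $V_p$-stabilizers are the groups of $\calp_H^{\max}$, which are elliptic precisely because $U$ is a splitting relative to $\calp_H$. You do record this ellipticity inside your argument for (i), but it must be invoked here; the citation alone does not cover it.
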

   
  \begin{proof}
    Consider the cylinder $Y_P$ corresponding to all edge groups contained in a given subgroup $P\in \calp_H^{\max}$.
    Then the stabilizer of $Y_P$ is the normalizer of $P$, which coincides with $P$.
    In particular, 
    $G_{v_{Y_P}}=P\in \calp_H^{\max}$. If $Y,Y'$ are two distinct cylinders corresponding to $P\neq P'\in \calp_H^{\max}$,
    then $G_{v_Y}\cap G_{v_{Y'}}=P\cap P'=\{1\}$.

    If the stabilizer $G_x$ of a vertex $x\in U$ is universally peripheral, then all incident edge groups are in the same equivalence class so $x$ lies in a unique cylinder and cannot correspond to a vertex in $V_a$.

    Since every subgroup $P\in\calp_H^{\max}$ is elliptic in $U$,
    all vertex stabilizers of $U^c$ fix a point in $U$, and it follows that $U$ and $U^c$ are in the same deformation space.    
  \end{proof}

\begin{proof}[Proof of Theorem \ref{thm_dynamic_dec}]  
  We first prove the uniqueness (see Proposition~\ref{prop_intrinseque} for an alternative proof).
  Let $U,U'$ be as in the theorem. We first prove that $U$ and $U'$ are in the same deformation space,
  i.e.\ that they have the same elliptic subgroups.
  For any $u\in V_p(U)$, $G_u$ is universally peripheral hence elliptic in $U'$. 
  For $v\in V_a(U)$, $G_v$ is not universally peripheral by Assertion \ref{it_HF}.
  Since $G_v$ is a free factor by \ref{it_Vapur},
  and since its conjugacy class is $H$-invariant, \ref{it_Vamin} implies that $G_v$ contains the stabilizer $G_{v'}$
  of a vertex $v'\in V_a(U')$. Symmetrically $G_{v'}$ contains $G_{v''}$ for some $v''\in V_a(U)$,
  which imposes $G_v=G_{v''}$ hence $G_v=G_{v'}$. This concludes that $U$ and $U'$ are in the same deformation space.
  
  To conclude that $U=U'$, we claim that $U$ and $U'$ are their own trees of cylinders.
 If $e_1,e_2$ are two edges of $U$ with $G_{e_1}\sim G_{e_2}$, 
  let $p_i$ denote the endpoint of $e_i$ in $V_p$. Then $G_{p_1}\sim G_{e_1}\sim G_{e_2}\sim G_{p_2}$ so $\grp{G_{p_1},G_{p_2}}$
  is universally peripheral. Since $G_{p_i}\in\calp_H^{\max}$, we get $G_{p_1}=\grp{G_{p_1},G_{p_2}}=G_{p_2}$ hence $p_1=p_2$
  by Assertion \ref{it_relP}.
  This shows that each cylinder is the star of a $V_p$-vertex
  so that $U$ is its own tree of cylinders, and so is $U'$ by the same argument.
  By \cite[Corollary 4.10]{GL-cyl}, it follows that $U$ and $U'$ are isomorphic.

  We now construct $\Ud_H$.  Choose a numbering $\HF_1,\HF_2,\dots$ of the set of maximal $H$-invariant \afs{}s (we do not know yet that it is finite),
  and let $T_i=T_{\HF_i}$ and $A_i=A_{\HF_i}$. By  Corollary~\ref{cor:UP}, edge stabilizers of $T_i$ are universally peripheral.
  We start with $U_1=T_1$ and construct inductively an $H$-invariant tree $U_i$
  with edge groups in $\calp_H$ and such that a subgroup $K\subseteq F_N$ is elliptic in $U_i$ if and only if it is elliptic in
  each of the trees $T_1,\dots,T_i$.
  Since $H$ preserves no free splitting, edge stabilizers are necessarily non-trivial.
  Since edge stabilizers of $U_{i-1}$ are elliptic in $T_{i}$ one can construct a blowup $U_i$ of $U_{i-1}$ that dominates $T_i$: replace
  each vertex $v$ such that $G_v$ is not elliptic in $T_i$ by the minimal $G_v$-invariant subtree of $T_i$, and attach every edge $e$ of $U_{i-1}$ incident on $v$
  to a point $p_e$ fixed by $G_e$. This can be done so that $U_{i}$ is $H$-invariant by choosing $p_e$ in a canonical way:
  if $G_e$ fixes a
  point in
  $V^0(T_i)$, then it is unique by Lemma \ref{lemma:minimal-debording-factor}\ref{it0_relP}, 
  and one can choose $p_e$ to be this vertex; otherwise, $G_e$ fixes no edge hence fixes a unique vertex in  $V^1(T_i)$.
  This concludes the construction of $U_i$.

Given any $i$ and $j$, the group $A_j$ fixes a unique vertex $v_{A_j}\in U_i$: the existence of $v_{A_j}$ follows from the fact that $A_j$ is elliptic in all trees $T_\ell$ (Lemma~\ref{lemma:A-periph}), and its uniqueness follows from the fact that $A_j$ is not universally peripheral, so cannot fix an edge.

  Let $U_i^c$ be the tree of cylinders of $U_i$.
  We  are going to prove that $T_j$ is a collapse of $U_i^c$ for all $j\leq i$. 
  We construct for all $j\leq i$ a map $f_{ij}:U^c_i\ra T_j$ as follows.
  For every vertex $v\in U^c_i$,  the group $G_v$ is elliptic in $U_i$ by Lemma~\ref{lemma:deformation}, hence also in $T_j$.
  If $G_v$ fixes a vertex $v'\in V^0(T_j)$, then this vertex is unique because the stabilizers of two distinct vertices in
   $V^0(T_j)$ have trivial intersection.  If $G_v$ fixes no vertex in $V^0(T_j)$, then it fixes a necessarily unique vertex $v'\in V^1(T_j)$.
  In both cases, we let $f_{ij}(v)=v'$.  
  We extend $f_{ij}$ linearly on edges.
  Notice that $f_{ij}$ is $F_N$-equivariant, so its image is an $F_N$-invariant subtree of $T_j$; the minimality of $T_j$ thus implies that $f_{ij}$ is surjective.

  (i) We first claim that if $v_1,v_2\in V_a(U^c_i)$ are such that $f_{ij}(v_1)=f_{ij}(v_2)=v'$ with $v'\in V^1(T_j)$, then $v_1=v_2$.
  Indeed, $G_{v'}$ is a conjugate of $A_j$ and therefore fixes a vertex $w\in U^c_i$.
  Now $G_{v_1}\subset G_{v'}\subset G_w$, and since $G_{v_1}$ fixes no edge of $U_i^c$ because $G_{v_1}\notin\calp_H$  (Lemma~\ref{lemma:deformation}),
  it follows that $v_1=w$. Symmetrically, one gets $v_2=w=v_1$, which proves the claim.

  (ii) We deduce that $f_{ij}(V_p(U^c_i))\subset V^0(T_j)$. Indeed, arguing by contradiction,
  consider $u\in V_p(U_i^c)$  such that $f_{ij}(u)\in V^1(T_j)$.
  By definition of $f_{ij}$, this means that $G_u$ fixes no vertex in $V^0(T_j)$. Let $e$ be an edge incident on $u$.
  If $G_e$ fixes a vertex  $w'\in V^0(T_j)$, then  $G_u\cap G_{w'}\neq \{1\}$, and since  $[G_{w'}]\in\HF_j$ and $G_u$ is universally peripheral  (Lemma~\ref{lemma:deformation}), if follows that $G_u\subset G_{w'}$, a contradiction. This shows that no edge incident on $u$ fixes a vertex in $V^0(T_j)$,
  so $f_{ij}$ collapses the star of $u$ to $f_{ij}(u)$, contradicting (i) and concluding the proof of (ii).
  
  (iii) We now claim that the image of an edge $e\subset U_i^c$ under $f_{ij}$ is either an edge or a vertex.
  Write $e=uv$ with $u\in V_p(U_i^c)$ and $v\in V_a(U_i^c)$. By (ii), $f_{ij}(u)\in V^0(T_j)$. Since stabilizers of
  distinct vertices in $V^0(T_j)$ intersect trivially and $G_e\neq\{1\}$,  it follows that $f_{ij}(e)$ is contained in the star of $f_{ij}(u)$ which proves (iii).

  To prove that $f_{ij}$ is a collapse map, consider $e_1=u_1v_1$, $e_2=u_2v_2$  two distinct edges of $U_i^c$ that are mapped to the same edge $e'=u'v'$ with $u'=f_{ij}(u_1)=f_{ij}(u_2)\in V^0(T_j)$ and $v'=f_{ij}(v_1)=f_{ij}(v_2)\in V^1(T_j)$.
  By (ii), $v_1,v_2\in V_a(U_i^c)$ hence $u_1,u_2\in V_p(U_i^c)$. By (i), it follows that $v_1=v_2$.
    Since $\grp{G_{e_1},G_{e_2}}$ fixes an edge in $T_j$, it is universally peripheral (Corollary~\ref{cor:UP}).
  It follows that $G_{u_1}\sim G_{e_1}\sim \grp{G_{e_1},G_{e_2}} \sim G_{e_2}\sim G_{u_2}$,
  and $U_i^c$ being a tree of cylinders, this implies $u_1=u_2$.
  This shows that no pair of noncollapsed edges gets identified by $f_{ij}$. This concludes that $f_{ij}$ is a collapse map.

  Additionally, for any vertex $v\in f_{ij}\m(V^1(T_j))$,  the map $f_{ij}$ induces a bijection between
  the star of $v$ and the star of $f_{ij}(v)$. Indeed,  as $f_{ij}$ is surjective, it suffices to show that it does not collapse any edge
  $uv$ incident on $v$, but if it did, then $f_{ij}(u)\in V^1(T_j)$ would contradict (ii).

By Lemma \ref{lemma:minimal-debording-factor}\ref{it0_periph}, this shows that the
  incident edge groups at  $v_{A_j}$
 form a non-sporadic free factor system of $A_j$.
This allows to refine $U_i^c$ simultaneously at each vertex $v_{A_j}$ into a free splitting of $A_j$; 
collapsing all edges with non-trivial stabilizer in this refinement yields a free splitting of $F_N$ with at least $i$ edges. This provides a bound on $i$, 
  and shows that there are only finitely many maximal $H$-invariant \afs{}s, so the process stops.
  
  We define $\Ud_H=U_i^c$ as the last constructed tree, and we denote by $f_j=f_{ij}:\Ud_H\ra T_j$.
  The tree $\Ud_H$ is relative to $\calp_H$ because every
  $T_j$ is.
  Assertion~\ref{it_relP} follows from the properties of the trees of cylinders (Lemma \ref{lemma:deformation}).

  To prove \ref{it_HF}, we check that each vertex  $v\in V_a(\Ud_H)$ is in the orbit of some $v_{A_j}$.
  Indeed, $G_v\notin \calp_H$ so $G_v$ is not $\HF_j$-peripheral for some $j$. 
  It follows that up to changing $v$ to a translate, the image of $v$ in $T_j$ under $f_j$ is the vertex fixed by $A_j$,
  so $G_v\subset A_j$.  
  Since $A_j$ fixes a vertex in  $U_H^d$ and $G_v$ fixes no edge, it follows that $G_v=A_j$. This proves \ref{it_HF}.

   We now prove Assertion \ref{it_Vamin}. 
  If $A$ is a free factor  whose conjugacy class is $H$-invariant and which is not universally peripheral, then $A$ is not $\HF_j$-peripheral for some $j$,
  so $A$ contains $A_j$ by definition of $A_j=A_{\HF_j}$. Since $A_j=G_{v_{A_j}}$, this proves Assertion~\ref{it_Vamin}. 
  
  We finally prove Assertion~\ref{it_Vapur}. 
  We first note that  $(\calp_{H}^{\max})_{|A_j}=(\HF_{j})_{|A_j}$ because $A_j$ is $\HF_{j'}$-peripheral for all $j'\neq j$
  by Lemma~\ref{lemma:A-periph}.
  By Lemma~\ref{lemma:minimal-debording-factor}\ref{it0_periph}, this is a non-sporadic \afs, and we denote it by $\hat\calg$.
  To prove purity, consider a proper free factor $A\subset A_j$  whose conjugacy class is $H$-invariant.
  The minimality of $A_{\HF_j}$  implies that $A$ is $\HF_j$-peripheral hence $\hat\calg$-peripheral.
  This implies that  $\hat\calg$ is the unique maximal $H$-invariant \afs\ of $A_j$.
 As already noted above, the incident edge groups form a non-sporadic free factor system of $A_j$ which concludes the proof
  of Assertion \ref{it_Vapur} and of the theorem.
\end{proof}

\subsection{Finiteness of maximal invariant almost free factor systems}

The previous proof shows that there are only finitely many maximal $H$-invariant \afs{}s. For future use, we record this fact in the next corollary, with an explicit bound.

\begin{cor}\label{coro:finitude-facteurs}
  Let $N\geq 2$. For every subgroup $H\subseteq \IA$
  that does not preserve any free splitting of $F_N$, there are at most  $\max\left\{1,\frac{N-1}{2}\right\}$ maximal $H$-invariant almost free factor systems of $F_N$. 
    
  Consequently, under the same hypothesis on $H$, there are at most $\frac{(N+1)(N-1)}{2}$ maximal $H$-invariant free factor systems of $F_N$. 
  \end{cor}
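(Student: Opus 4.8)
\textbf{Proof plan for Corollary~\ref{coro:finitude-facteurs}.}

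The plan is to deduce the first bound directly from the dynamical decomposition of Theorem~\ref{thm_dynamic_dec}. Given $H\subseteq\IA$ not preserving any non-trivial free splitting, consider the canonical splitting $\Ud_H$ with its bipartition $V=V_p\dunion V_a$. By Assertion~\ref{it_HF} of Theorem~\ref{thm_dynamic_dec}, the map $v\mapsto\HF_v$ is a bijection from $V_a/F_N$ onto the set of maximal $H$-invariant almost free factor systems, so it suffices to bound $\#(V_a/F_N)$. If $H$ is pure, then $\Ud_H$ is trivial and there is exactly one maximal $H$-invariant \afs, so the bound $\max\{1,\frac{N-1}{2}\}$ holds trivially; assume now $H$ is not pure, so $\#(V_a/F_N)\geq 2$. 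By Assertion~\ref{it_Vapur}, for every $v\in V_a$ the collection $\Inc_v$ of incident edge groups is a \emph{non-sporadic} free factor system of the free factor $G_v$. The key step is then to blow up each active vertex $v$ into a non-sporadic free splitting of $(G_v,\Inc_v)$: since $\Inc_v$ is non-sporadic, the Grushko deformation space of $G_v$ relative to $\Inc_v$ is non-trivial and non-sporadic, so we may pick such a one-edge free splitting at each $v\in V_a$. Performing these blow-ups simultaneously (and leaving the peripheral vertices untouched) produces a splitting $\hat U$ of $F_N$ in which each orbit of active vertices of $\Ud_H$ contributes at least one new orbit of edges, with all vertex stabilizers of $\hat U$ proper subgroups of $F_N$.

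The combinatorial input I would then invoke is the standard bound on the complexity of a splitting of $F_N$ with proper vertex stabilizers: collapsing $\hat U$ to a free splitting and recalling that a free splitting of $F_N$ has at most $3N-3$ edges (or, more directly, that the underlying graph of groups has first Betti number plus edge count controlled by $N$) gives a linear-in-$N$ bound on the number of orbits of edges, hence on $\#(V_a/F_N)$. A careful count using that each active vertex yields a non-sporadic (hence genuinely branching) blow-up gives $\#(V_a/F_N)\leq\frac{N-1}{2}$; combined with the pure case this yields the first assertion, that there are at most $\max\{1,\frac{N-1}{2}\}$ maximal $H$-invariant almost free factor systems.

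For the second assertion I would pass from almost free factor systems to free factor systems using Lemma~\ref{lemma:af}: by \ref{it:af1}, every maximal $H$-invariant free factor system $\calf$ is extracted from a (unique) maximal $H$-invariant \afs\ $\HF$, and by \ref{it:af2} every free factor system extracted from a maximal $H$-invariant \afs\ is itself maximal $H$-invariant. Thus the set of maximal $H$-invariant free factor systems maps onto the set of maximal $H$-invariant \afs{}s, with fibers given by the extracted free factor systems of a fixed $\HF$. When $\HF$ is itself a free factor system the fiber has one element; when $\HF$ is the factor system of a QH splitting with $r$ unused boundary components, the extracted free factor systems are obtained by deleting one of the $r$ unused boundary subgroups (Definition~\ref{de:afs}, Corollary~\ref{cor:hf}), so the fiber has exactly $r$ elements. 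It therefore remains to bound $r$ in terms of $N$: the underlying surface $\Sigma$ of the QH splitting satisfies $\pi_1(\Sigma)$ is a free factor of $F_N$ of rank at most $N$, and a compact surface whose fundamental group is free of rank $\leq N$ has at most $N+1$ boundary components, so $r\leq N+1$. Multiplying the two bounds, $\frac{N-1}{2}\cdot(N+1)=\frac{(N+1)(N-1)}{2}$, gives the stated estimate.

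\textbf{Main obstacle.} The delicate point is the precise constant $\frac{N-1}{2}$ in the first bound: getting the sharp coefficient requires carefully accounting for the rank consumed by each non-sporadic active vertex and by the peripheral part, rather than the cruder $3N-3$-type estimate; I would organize this by a Euler-characteristic or rank-additivity argument on the graph of groups underlying $\hat U$, using that a non-sporadic free factor system of a free group of rank $k$ forces $k\geq 3$ together with the constraint that the peripheral vertices carry at least the edge-group data. The extraction-count and the boundary-component bound for the second assertion are routine once the first bound is in hand.
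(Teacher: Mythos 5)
Your overall route is the paper's: both assertions are deduced from the dynamical decomposition of Theorem~\ref{thm_dynamic_dec}, using the bijection of Assertion~\ref{it_HF} to reduce the first bound to bounding $K=\#(V_a/F_N)$, and then Lemma~\ref{lemma:af} together with a bound (at most $N+1$) on the number of possible extractions from a single maximal $H$-invariant \afs{} for the second. The extraction count is in line with the paper (which phrases the $N+1$ bound as: an \afs{} of $F_N$ contains at most $N+1$ conjugacy classes of cyclic groups). One small point: for $N=2$ your product $\frac{N-1}{2}\cdot(N+1)$ uses $\frac{N-1}{2}<1$ rather than $\max\{1,\frac{N-1}{2}\}=1$, so the multiplication does not literally give the stated bound there; like the paper, dispose of $N=2$ separately by noting that every non-empty free factor system of $F_2$ is sporadic, so $\emptyset$ is the unique maximal $H$-invariant one.

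The genuine gap is precisely the step you flag as the main obstacle: the constant $\frac{N-1}{2}$. As written, your blow-up only produces ``at least one new orbit of edges'' per active vertex, which yields $K\le N-1$, and your proposed repair (each active vertex group has rank $\ge 3$, plus rank additivity) does not go through as stated: the edges of $\Ud_H$ carry non-trivial stabilizers, so the ranks of the vertex groups do not simply add up to $N$, and ``non-sporadic forces rank $\ge 3$'' is only true because $\Inc_v$ is non-empty (it fails for the empty system on $F_2$); even granting it, a bound of the form $3K\le N$ is not what the blow-up geometry gives you. The paper closes the count differently and more simply: since $\Inc_v$ is a non-empty non-sporadic free factor system of $G_v$ (Theorem~\ref{thm_dynamic_dec}\ref{it_Vapur}, using that edge stabilizers are non-trivial and, when $K\ge 2$, every active vertex has an incident edge), each active vertex can be blown up into a free splitting of $G_v$ relative to $\Inc_v$ with \emph{at least two} orbits of edges and \emph{all vertex groups non-trivial}. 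Doing this at every active vertex and collapsing all edges coming from $\Ud_H$ yields a free splitting of $F_N$ with at least $2K$ orbits of edges and all vertex groups non-trivial; for such a splitting the rank count $N=\sum_v\mathrm{rk}(G_v)+b_1(\Gamma)\ge \#E(\Gamma)+1$ on the quotient graph $\Gamma$ gives $\#E(\Gamma)\le N-1$, whence $2K\le N-1$. Once you replace your one-edge-per-vertex blow-up by this two-edge version and run the non-trivial-vertex-group edge count, your argument coincides with the paper's.
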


\begin{rk}\label{rk:finitude}
  As already mentioned, Example \ref{ex:arcs} shows that
  Corollary~\ref{coro:finitude-facteurs} does not hold true in general if we remove the assumption that $H$ does not preserve any free splitting of $F_N$.  
\end{rk}

\begin{proof}
  Let $\Ud_H$ be the splitting of $F_N$ associated to $H$ given by Theorem~\ref{thm_dynamic_dec}, coming with the bipartition $V_p\dunion V_a$ of its vertex set. Assertion~\ref{it_HF} from this theorem ensures that the number of maximal $H$-invariant almost free factor systems of $F_N$ is equal to the cardinality $K$ of $V_a/F_N$. We may assume that $K\geq 2$,  and aim to prove that $K\le\frac{N-1}{2}$. 

 Recall that stabilizers of edges of $\Ud_H$ are non-trivial because there is no $H$-invariant free splitting.
  By the last assertion of Theorem~\ref{thm_dynamic_dec}\ref{it_Vapur}, for each vertex $v\in V_a/F_N$
  there is a free splitting of $G_v$ relative to the incident edge groups
  with all vertex groups non-trivial and at least two  orbits of edges (indeed, the relative Grushko decomposition is non-sporadic and has at least one non-trivial vertex group because there is at least one edge incident on $v$, as $K\geq 2$).
  Blowing up every vertex in $V_a/F_N$ using this free splitting
  and  collapsing all edges coming from $\Ud_H/F_N$ gives a
  free splitting of $F_N$ with at least $2K$ edges and all vertex groups non-trivial. It follows that $2K\leq N-1$.

  For $N=2$, any non-empty free factor system is sporadic, so the empty set is the unique
  maximal $H$-invariant free factor system. Otherwise,
  since any maximal $H$-invariant free factor system can be extracted from a maximal $H$-invariant \afs\ by Lemma~\ref{lemma:af},
  and since an almost free factor system of $F_N$ contains of at most $(N+1)$
  conjugacy classes of cyclic groups, there are at most $(N+1)\frac{N-1}{2}$ maximal $H$-invariant
  free factor systems. 
\end{proof}

\subsection{Reducibility is inherited by the normalizer}

The following result will not be used in the rest of the paper, but we believe that it is of independent interest.
\begin{theo}\label{theo:reducibility}
    Let $H\subset\Out(F_N)$ be an infinite subgroup. If $H$ has a periodic conjugacy class of proper free factor, so does its normalizer in $\Out(F_N)$.
\end{theo}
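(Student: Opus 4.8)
The plan is to reduce to the case of a subgroup of $\IA$ and then play off the three canonical constructions of Part~\ref{part2} against each other, via the dynamical decomposition. First I would replace $H$ by $H_0=H\cap\IA$, which has finite index; by Theorem~\ref{theo:ia}, a conjugacy class of proper free factor is periodic under $H$ if and only if it is invariant under $H_0$, so $H_0$ has an invariant proper free factor, and likewise it suffices to produce a free factor invariant under the normalizer of $H_0$ in $\IA$ (a finite-index subgroup of the full normalizer intersected with $\IA$, and one concludes for $\Out(F_N)$ using Theorem~\ref{theo:ia} again on the normalizer). Write $\caln$ for the normalizer of $H_0$ in $\IA$.

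Next I would split into two cases according to whether $H_0$ preserves a non-trivial free splitting. If $H_0$ \emph{does} preserve a non-trivial free splitting, I would apply the construction of Section~\ref{sec:collection-free}: $\Uun_{H_0}$ is a non-trivial canonical splitting (Theorem~\ref{theo:tree-of-cyl}, using that $H_0$ is infinite and $\FS^{H_0}\neq\es$), and by $\Out(F_N)$-equivariance of $H\mapsto\Uun_H$ it is invariant under $\caln$. By Theorem~\ref{theo:tree-of-cyl}(1), vertex stabilizers of vertices in $V^0$ are free factors of $F_N$; since $\Uun_{H_0}$ is non-trivial these are proper whenever there is more than one orbit of vertices, and in the remaining degenerate cases (a single $V^1$-vertex, or a single $V^0$-vertex with an incident edge) one reads off a proper free factor directly from the bipartite structure, invariant under $\caln$ because $\caln$ preserves $\Uun_{H_0}$ and (by Lemma~\ref{lemma:ia-nice}) the orbit of each vertex. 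One has to be slightly careful: $\Uun_{H_0}$ may be reduced to a point exactly when $\FS^{H_0}=\es$, which is excluded in this case, so non-triviality is genuine.

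In the case where $H_0$ preserves \emph{no} non-trivial free splitting, I would invoke the dynamical decomposition $\Ud_{H_0}$ of Theorem~\ref{thm_dynamic_dec}. By hypothesis $H_0$ has an invariant proper free factor, so it is not pure, hence $\Ud_{H_0}$ is non-trivial with at least two orbits of active vertices (Remark~\ref{rk_binonsporadic}), and active vertex stabilizers are proper free factors of $F_N$ with $H_0$-invariant conjugacy class (Theorem~\ref{thm_dynamic_dec}\ref{it_Vapur}). It remains to show $\Ud_{H_0}$ is $\caln$-invariant; this follows from the uniqueness clause in Theorem~\ref{thm_dynamic_dec} together with the observation that $\caln$ permutes the maximal $H_0$-invariant almost free factor systems and preserves the family $\calp_{H_0}$ of universally peripheral subgroups — hence sends $\Ud_{H_0}$ to a splitting satisfying the same characterizing properties, so $\Ud_{H_0}$ is $\caln$-invariant. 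Then Lemma~\ref{lemma:ia-nice} shows $\caln$ preserves the $F_N$-orbit of each active vertex, so an active vertex stabilizer is $\caln$-invariant as desired.

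\textbf{Main obstacle.} The routine part is the reduction modulo $\IA$ and the case of free splittings; the delicate point is the second case, specifically verifying that the normalizer $\caln$ genuinely preserves $\Ud_{H_0}$ rather than merely permuting a finite collection of candidate splittings. This requires checking that all the data entering the characterization of $\Ud_{H_0}$ — the set of maximal invariant almost free factor systems, the minimal non-peripheral invariant free factors $A_{\HF}$, the universally peripheral subgroups $\calp_{H_0}$ and the equivalence relation defining the tree-of-cylinders construction — are natural under conjugation by elements of $\caln$. Each individual verification is short (conjugation commutes with "being a free factor", "being $H_0$-invariant", intersections, etc.), but assembling them carefully, and in particular handling the edge-case analysis when $\Ud_{H_0}$ or $\Uun_{H_0}$ is close to trivial so that the proper free factor must be extracted from edge or vertex group data rather than an obvious vertex stabilizer, is where the real work lies.
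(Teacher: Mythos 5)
There is a genuine gap in your second case. You claim that because $H_0$ has an invariant proper free factor, $H_0$ is not pure, and you then extract the desired free factor from the active vertices of a non-trivial dynamical decomposition $\Ud_{H_0}$. But purity (Definition~\ref{dfn_pure}) only requires a \emph{unique} maximal $H_0$-invariant \afs{}, which is allowed to be non-empty: a pure subgroup can perfectly well have an invariant proper free factor, as long as that factor is peripheral in the unique maximal invariant \afs{} $\HF$. Indeed, Lemma~\ref{lem_deborde} shows that non-purity is equivalent to the existence of an invariant proper free factor which is \emph{not} $\HF$-peripheral — a strictly stronger condition than the hypothesis of the theorem (think of the cyclic group generated by an automorphism that is fully irreducible relative to a non-sporadic $\calf=\{[A]\}$: it preserves $[A]$ but is pure). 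In the pure case $\Ud_{H_0}$ is the trivial splitting with a single active vertex whose stabilizer is $F_N$ (Remark~\ref{rk_binonsporadic}), so your extraction of a proper free factor from active vertex stabilizers breaks down, and the argument as written does not cover precisely this situation. You also locate the main difficulty in the $\caln$-invariance of $\Ud_{H_0}$, which is in fact immediate from the canonicity/equivariance of the construction; the real issue is the purity dichotomy you skipped.

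The paper's own proof avoids this entirely and is much lighter. In the case with no invariant free splitting it does not use $\Ud_{H_0}$ at all: by Corollary~\ref{coro:finitude-facteurs} there are only finitely many maximal $H_0$-invariant \emph{free factor systems}; since $\{[A]\}$ is a non-empty invariant free factor system (for your invariant proper free factor $A$), the empty system is not maximal, so all the maximal ones are non-empty, and the normalizer permutes this finite collection — every factor occurring in them is then normalizer-periodic, pure or not. (In the case with an invariant free splitting, the paper also bypasses the tree of cylinders: it takes the maximum deformation space of invariant free splittings from Proposition~\ref{prop:deformation} and uses its non-trivial vertex stabilizers, non-empty because $H_0$ is infinite; your $\Uun_{H_0}$-based variant of that half is correct, just heavier than necessary.) If you want to salvage your route in the second case, you must add a separate argument when $H_0$ is pure, e.g.\ observing that the unique maximal invariant \afs{} is non-empty, normalizer-invariant, and yields a periodic proper free factor via its extracted free factor systems (with some care when $\HF$ consists of a single unused boundary subgroup); but the finiteness corollary makes all of this unnecessary.
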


\begin{proof}
    Up to replacing $H$ by $H\cap \IA$, we may assume that $H\subset \IA$.

    Assume first that there is no $H$-invariant non-trivial free splitting.
    Then by Corollary~\ref{coro:finitude-facteurs}, there are only finitely many maximal
    $H$-invariant free factor systems, and all of them are non-empty by assumption.
    The normalizer of $H$ permutes them and the theorem follows in this case.
    
    If there exists a non-trivial $H$-invariant free splitting, then by Proposition~\ref{prop:deformation}
    there exists a $H$-invariant free splitting $S$ which is maximum for domination among
    all $H$-invariant free splittings, and all such splittings are non-trivial and belong to the same deformation space.
    Let $\calf$ be the collection of conjugacy classes of non-trivial vertex stabilizers of $S$. 
    Then $\calf$ is a free factor system, and $\calf$ is non-empty because otherwise, the stabilizer of $S$ would be finite, contradicting that $H$ is infinite.
    Since $\calf$ is clearly invariant under the normalizer of $H$, this concludes the proof.
\end{proof}

\subsection{Side remarks about the dynamical decomposition}

We conclude this section by establishing a few additional properties of the splitting $\Ud_H$ and by relating it to other works involving the graph of free splittings and laminations.

This subsection will not be used in the sequel of the paper.

\subsubsection{Additional properties of the dynamical decomposition}

\begin{prop}\label{prop_additional} 
 Keeping the notations from Theorem~\ref{thm_dynamic_dec}, the following hold: 
  \renewcommand{\theenumi}{(\alph{enumi})} 
\renewcommand{\labelenumi}{\theenumi} 
  \begin{enumerate}
  \item \label{it_FF} All edge and vertex stabilizers of $\Ud_H$ are free factors of $F_N$.
  \item \label{it_P_Grushko} For every $v\in V_p$, the Grushko decomposition of $G_v$ relative to its incident edge groups is trivial.
  \end{enumerate}
\end{prop}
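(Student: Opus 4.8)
The plan is to deduce both assertions from the structure already obtained in Theorem~\ref{thm_dynamic_dec} and the preparatory lemmas, essentially by tracing through where the various subgroups sit. For \ref{it_FF}, I would first handle edge stabilizers and then vertex stabilizers. Edge stabilizers of $\Ud_H$ are, by Assertion~\ref{it_relP} and Corollary~\ref{cor:UP} (applied through the tree-of-cylinders construction), universally peripheral with respect to $H$; so it suffices to show that every group in $\calp_H^{\max}$ is a free factor of $F_N$. This is exactly the dichotomy already noted after Definition~\ref{dfn_UP}: a maximal universally peripheral subgroup is either a free factor or a maximal cyclic subgroup of $F_N$, the latter arising only as an unused boundary subgroup of some maximal $H$-invariant \afs. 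The point is then to rule out the cyclic case for subgroups that actually occur as edge or vertex stabilizers of $\Ud_H$: by Lemma~\ref{lemma:minimal-debording-factor}\ref{it0_relP}, no vertex stabilizer of any $T_{\HF_j}$ (hence, after the tree-of-cylinders construction, no edge stabilizer of $\Ud_H$ and no $V_p$-vertex stabilizer) is an unused boundary subgroup of the corresponding \afs; combined with Lemma~\ref{lemma:A-periph} (which says $A_{\HF_j}$ is never contained in an unused boundary subgroup of another $\HF_{j'}$), one concludes that the groups in question are $\HF_j$-peripheral and \emph{not} cyclic-of-unused-boundary-type for every $j$, hence free factors. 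For $V_a$-vertices this is immediate from Theorem~\ref{thm_dynamic_dec}\ref{it_Vapur}, which already asserts $G_v$ is a free factor.

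For \ref{it_P_Grushko}, let $v\in V_p$ and recall $G_v\in\calp_H^{\max}$, and that its incident edge groups are precisely the intersections $G_v\cap G_{e}$ where $e$ runs over edges of $\Ud_H$ at $v$; equivalently, in the tree-of-cylinders picture, the incident edge groups at the cylinder-vertex $v_{Y_P}$ (with $P=G_v$) are the groups $P\cap G_x$ for $x$ a vertex of $\Ud_H$ (before passing to the tree of cylinders) lying in at least two cylinders. What needs to be shown is that $P$ is one-ended relative to this collection of subgroups. The idea is: if the relative Grushko decomposition of $P$ were non-trivial, one of its Grushko factors $P_0\subsetneq P$ would be a proper free factor of $F_N$ (being a free factor of the free factor $P$), relative to the incident edge groups; its conjugacy class is then $H$-periodic because $H$ permutes the finitely many such factors, hence $H$-invariant since $H\subseteq\IA$ (Theorem~\ref{theo:ia}). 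But $P_0$ is still universally peripheral with respect to $H$: indeed it is contained in $P\in\calp_H^{\max}$, and $\calp_H$ is stable under taking subgroups (noted right after Definition~\ref{dfn_UP}). Now either $P_0\in\calp_H^{\max}$, contradicting maximality of $P$ (as $P_0\subsetneq P$), or $P_0$ is a maximal cyclic subgroup; in the latter case one argues as in \ref{it_FF} that such a cyclic group would have to be an unused boundary subgroup of some maximal \afs, but an unused boundary subgroup never fixes an edge of the corresponding $T_{\HF}$ (Lemma~\ref{lemma:minimal-debording-factor}\ref{it0_relP}), which is incompatible with $P_0$ being conjugate into an incident edge group of the $V_p$-vertex via the blow-up/tree-of-cylinders construction. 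Alternatively, and perhaps more cleanly, one observes directly that the incident edge groups at $v\in V_p$ are pairwise non-conjugate-into-one-another and their union ``fills'' $P$ in the appropriate sense, which forces one-endedness.

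I expect the main obstacle to be the bookkeeping in \ref{it_P_Grushko}: one must carefully identify what the incident edge groups at a $V_p$-vertex actually are after the inductive blow-ups $U_1\subset U_2\subset\cdots$ and the final passage to the tree of cylinders $\Ud_H=U_i^c$, and then verify that a hypothetical free split of $P_v$ relative to these would propagate to an $H$-invariant free splitting of $F_N$ or to a universally peripheral overgroup of some $P_0$, contradicting either the hypothesis that $H$ preserves no free splitting or the maximality of $P_v$ in $\calp_H^{\max}$. The cleanest route is probably to use Lemma~\ref{lemma:deborde}: a non-trivial relative Grushko decomposition of $P_v$ would yield a proper free factor of $F_N$, $H$-invariant and not universally peripheral (since a Grushko factor is not peripheral relative to its own incident edges), and then invoke Theorem~\ref{thm_dynamic_dec}\ref{it_Vamin} to place an active vertex stabilizer inside it — but active vertex stabilizers are never universally peripheral while $P_0\subseteq P_v\in\calp_H^{\max}$ is, a contradiction. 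This makes \ref{it_P_Grushko} a short argument once the identification of incident edge groups is pinned down, so I would spend most of the writing effort on that identification.
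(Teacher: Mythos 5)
Your treatment of \ref{it_FF} follows essentially the paper's route: the real content is the claim that a $V_p$-vertex stabilizer $G_v$ is not \emph{contained in} an unused boundary subgroup of any maximal $H$-invariant \afs, which is obtained by collapsing $\Ud_H$ onto $T_{\HF}$ and invoking Lemma~\ref{lemma:minimal-debording-factor}\ref{it0_relP} (plus malnormality of $\HF$); once this is known, $G_v$ lies in a free factor of $F_N$ for each maximal \afs, the intersection of these is a universally peripheral free factor containing $G_v$, and maximality of $G_v$ in $\calp_H^{\max}$ forces equality (edge stabilizers then follow since they are intersections of adjacent vertex stabilizers). Two small inaccuracies in your sketch: Lemma~\ref{lemma:A-periph} concerns the active groups $A_{\HF}$ and is not the relevant input here, and a $V_p$-vertex stabilizer of $\Ud_H$ is in general only contained in, not equal to, a $V^0$-vertex stabilizer of $T_{\HF_j}$; both are repairable.

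For \ref{it_P_Grushko}, however, none of your proposed contradictions works, and the actual mechanism is missing. Your first argument yields no contradiction: the Grushko factor $P_0\subsetneq G_v$ is automatically universally peripheral (as you note, $\calp_H$ is closed under subgroups), so the alternative ``$P_0\in\calp_H^{\max}$ or $P_0$ maximal cyclic'' is spurious, and $P_0$ merely failing to be maximal in $\calp_H$ contradicts nothing -- $H$-invariant proper free factors that are universally peripheral genuinely exist (e.g.\ $\grp{c_1}$ in Example~\ref{ex:3-surfaces}). Your ``cleanest route'' is internally inconsistent: you claim $P_0$ is not universally peripheral ``since a Grushko factor is not peripheral relative to its own incident edges'', but peripherality relative to $\Inc_v$ has nothing to do with being peripheral in every maximal $H$-invariant \afs\ of $F_N$; since $P_0\subseteq G_v\in\calp_H$, it \emph{is} universally peripheral, so Theorem~\ref{thm_dynamic_dec}\ref{it_Vamin} cannot be applied to it (and the lemma you cite is Lemma~\ref{lem_deborde}, which is not what carries the argument). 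The ``filling'' alternative simply restates the conclusion. The paper's proof is different and global: a non-trivial Grushko decomposition of $(G_v,\Inc_v)$ is used to blow up $\Ud_H$ at $v$ and produces an $H$-invariant \emph{free factor system} $\calg$ of $F_N$ in which every vertex stabilizer of $\Ud_H$ outside the orbit of $v$ is peripheral; enlarging to a maximal $H$-invariant free factor system $\calf\sqsupseteq\calg$ and using the claim from (a) (that $G_v$ is not inside an unused boundary subgroup of the associated maximal \afs) shows $G_v$ is $\calf$-peripheral as well; hence all vertex groups of $\Ud_H$ are elliptic in the non-trivial free splitting $R_\calf$, and since all edge stabilizers of $\Ud_H$ are non-trivial, the equivariant map $\Ud_H\to R_\calf$ collapses every edge and is constant, contradicting non-triviality of $R_\calf$. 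This domination argument is the missing idea: maximality of $G_v$ in $\calp_H^{\max}$ alone does not rule out a non-trivial relative Grushko decomposition.
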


\begin{proof}
We first claim that that for every $v\in V_p$ and every maximal $H$-invariant \afs\ $\HF$,
$G_v$ is not contained in an  unused boundary subgroup $\grp{b}$ of $\HF$.
Otherwise, let $v_{\hat{\calf}}$ be a vertex in $\Ud_H$ whose orbit is sent to $\hat{\calf}$ under the bijection given in Theorem~\ref{thm_dynamic_dec}\ref{it_HF}.
Since collapsing all edges not adjacent to a vertex in the orbit of $v_\HF$ yields the tree $\THF$,
we deduce that $\grp{b}$ fixes a vertex in $V_0(\THF)$, contradicting
Lemma~\ref{lemma:minimal-debording-factor}\ref{it0_relP}.

To prove Assertion~\ref{it_FF}, it suffices to check that vertex stabilizers are free factors.
We already know this for vertices in $V_a$, so
consider $v\in V_p$. Since $G_v\in\calp_H^{\max}$, for each maximal $H$-invariant \afs\ $\HF$, there
is a factor of $\HF$ containing $G_v$, and the claim above says that this factor is  not an unused boundary subgroup.
The intersection of these free factors as $\HF$ varies is a free factor $G'_v$ containing $G_v$
and which is universally peripheral. Since $G_v\in\calp_H^{\max}$, we have $G_v=G'_v$, which proves  Assertion~\ref{it_FF}.

To prove Assertion~\ref{it_P_Grushko}, consider $v\in V_p$. If the Grushko decomposition of $G_v$ relative to its incident edge groups is non-trivial, then one can use it to blow up $\Ud_H$ at the vertex $v$,
and thus get an $H$-invariant free factor system $\calg$ of $F_N$.
Consider a  maximal $H$-invariant  free factor system $\calf$ such that $\calg\sqsubseteq\calf$.
By construction, for every vertex of $\Ud_H$ not in the orbit of $v$, its stabilizer is $\calg$-peripheral hence $\calf$-peripheral.
Let $\HF$ be the unique maximal $H$-invariant \afs\ containing $\calf$.
Being universally peripheral, $G_v$ is $\HF$-peripheral 
and the claim above shows that $G_v$ is in fact $\calf$-peripheral.
It follows that there is a domination map $\Ud_H\ra R_\calf$ where $R_\calf$ is a (non-trivial) free splitting whose elliptic subgroups are the elements of $\calf$. As all edge stabilizers of $\Ud_H$ are non-trivial, this map has to be constant, contradicting that $R_\calf$ is non-trivial. This contradiction concludes the proof.
\end{proof}

We  finally give a more intrinsic description of $\Ud_H$ by defining a new graph $S$ as follows.
Let $V'_a$ be the collection of minimal free factors of $F_N$ whose conjugacy class is $H$-invariant and which are not universally peripheral, i.e.\ the collection of subgroups that are conjugate to some $A_\HF$ where $\HF$ is a maximal $H$-invariant \afs.
Let $V'_p$ be the set of subgroups $P\in \calp_H^{\max}$ such that there exist at least two distinct subgroups
$A,A'\in V'_a$ that intersect $P$ non-trivially.
We define $S$ as the bipartite graph with vertex set $V'_a\cup V'_p$ and with an edge between
$A\in V'_a$ and $P\in V'_p$ if $A\cap P\neq \{1\}$.
There is a natural action of $F_N$ on $S$ by conjugation.

\begin{prop}\label{prop_intrinseque}
  The graph  $S$ is equivariantly isomorphic to $\Ud_H$.
\end{prop}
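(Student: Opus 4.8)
\textbf{Proof plan for Proposition~\ref{prop_intrinseque}.}
The strategy is to compare the abstract bipartite graph $S$ with the dynamical decomposition $\Ud_H$ given by Theorem~\ref{thm_dynamic_dec}, exploiting the fact that both $V_a/F_N$ and $V'_a/F_N$ are parametrized by the (finite) set of maximal $H$-invariant almost free factor systems, and that vertex groups in $V_p$ are exactly elements of $\calp_H^{\max}$. First I would set up an $F_N$-equivariant map $\Phi$ from the vertex set of $\Ud_H$ to the vertex set of $S$ as follows: for $v\in V_a$, Theorem~\ref{thm_dynamic_dec}\ref{it_Vamin} together with \ref{it_HF} tells us that $G_v$ is a minimal non-universally-peripheral $H$-invariant free factor (conjugate to $A_{\HF_v}$), so $G_v$ determines a vertex of $V'_a$; set $\Phi(v):=G_v\in V'_a$. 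For $v\in V_p$, we have $G_v\in\calp_H^{\max}$ by \ref{it_relP}; I must check that $G_v$ actually lies in $V'_p$, i.e.\ that it meets at least two distinct elements of $V'_a$ non-trivially. This follows because $v$ has at least two incident edges in $\Ud_H$ (minimality of the splitting, using $H$ not pure; the pure case is handled separately since then both $S$ and $\Ud_H$ are a single point), and each incident edge $e=uv$ has $G_e\neq\{1\}$ with $u\in V_a$, so $G_u\in V'_a$ and $G_u\cap G_v\supseteq G_e\neq\{1\}$; distinctness of the two relevant $G_u$'s comes from \ref{it_relP} (distinct $V_p$-vertices have trivial intersection of stabilizers is not quite it — rather I use that $G_u\cap G_{u'}=\{1\}$ for $u\neq u'\in V_a$ follows from $G_u,G_{u'}$ being stabilizers of distinct vertices together with the tree structure, or more directly from malnormality-type statements in Lemma~\ref{lemma:minimal-debording-factor}). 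So $\Phi$ sends $V_p$ into $V'_p$ and $V_a$ into $V'_a$.

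Next I would show $\Phi$ is injective on vertices and bijective at the level of orbits. On $V_a$: injectivity up to $F_N$-orbit follows from Theorem~\ref{thm_dynamic_dec}\ref{it_HF}, which says $v\mapsto\HF_v$ is a bijection $V_a/F_N\to\{\text{maximal }H\text{-invariant \afs{}s}\}$, and Lemma~\ref{lemma:minimal-debording-factor} says $A_\HF$ is determined by $\HF$; conversely every element of $V'_a$ is conjugate to some $A_\HF$, so $\Phi$ is onto $V'_a/F_N$. Pinning down the precise stabilizers: for $v\in V_a$, $\Stab_{F_N}(G_v\in S)$ under the conjugation action is the normalizer $N_{F_N}(G_v)$, which equals $G_v$ since $G_v$ is a non-trivial free factor (malnormal-ish: a proper free factor is its own normalizer in $F_N$). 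This matches the $\Ud_H$-stabilizer $G_v$. For $v\in V_p$, similarly $\Stab$ in $S$ is $N_{F_N}(G_v)=G_v$ since $G_v\in\calp_H^{\max}$ is a free factor (Proposition~\ref{prop_additional}\ref{it_FF}) or maximal cyclic, hence self-normalizing; again matching $\Ud_H$. Injectivity of $\Phi$ on all of $V(\Ud_H)$ then follows from equivariance plus injectivity on orbits plus the stabilizer match.

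Finally I would check that $\Phi$ preserves and reflects adjacency, which upgrades the vertex bijection to a graph isomorphism (both being trees, or at least connected graphs on which $F_N$ acts, this suffices — but I should note $S$ is a tree: this is essentially the content of the alternative description of $T_\HF$ at the end of the proof of Lemma~\ref{lemma:minimal-debording-factor}, glued across the various $\HF$, or can be seen a posteriori once the isomorphism with the tree $\Ud_H$ is established). For an edge $uv$ of $\Ud_H$ with $u\in V_a$, $v\in V_p$: $G_u\cap G_v\supseteq G_{uv}\neq\{1\}$ so $\Phi(u)\Phi(v)$ is an edge of $S$. Conversely, if $A\in V'_a$ and $P\in V'_p$ satisfy $A\cap P\neq\{1\}$, I must produce an edge of $\Ud_H$ between the corresponding vertices; writing $A=G_u$, $P=G_{v}$ for the appropriate vertices of $\Ud_H$ (using the orbit bijections just established, after conjugating), the non-trivial intersection $G_u\cap G_v$ is universally peripheral (it is contained in $P\in\calp_H^{\max}$), hence elliptic in $\Ud_H$; I then argue it fixes an edge between $u$ and $v$ — here the key input is again the cylinder structure, i.e.\ that in $\Ud_H$ any non-trivial subgroup of both $G_u$ ($u\in V_a$) and $G_v$ ($v\in V_p$) must fix an edge joining them, which follows from $\Ud_H$ being its own tree of cylinders (established in the uniqueness part of the proof of Theorem~\ref{thm_dynamic_dec}) together with Theorem~\ref{thm_dynamic_dec}\ref{it_relP}. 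The main obstacle I anticipate is precisely this last "adjacency reflection" step: translating the purely algebraic condition $A\cap P\neq\{1\}$ back into the existence of an edge in $\Ud_H$ requires care about conjugacy representatives and a genuine use of the tree-of-cylinders rigidity, rather than a formal manipulation; everything else is bookkeeping with the already-proved properties \ref{it_relP}--\ref{it_HF} and Proposition~\ref{prop_additional}.
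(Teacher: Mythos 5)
Your map $v\mapsto G_v$, the injectivity arguments (malnormality of the free factors $G_v$ for $v\in V_a$, item~(1) of Theorem~\ref{thm_dynamic_dec} for $V_p$, item~(4) across $V_a$-orbits), and the treatment of adjacency and non-adjacency via non-trivial edge stabilizers and item~(1) are essentially the paper's proof. But there is a genuine gap: you never prove surjectivity onto $V'_p$, i.e.\ that every $P\in V'_p$ is actually the stabilizer of a $V_p$-vertex of $\Ud_H$, and your adjacency-reflection step silently assumes it ("writing $A=G_u$, $P=G_v$ for the appropriate vertices of $\Ud_H$"). A priori $S$ could contain "extra" $V'_p$-vertices -- maximal universally peripheral subgroups meeting two conjugates of the $A_{\HF}$'s non-trivially without being realized as cylinder vertices -- in which case your argument only yields an isomorphism onto a full subgraph of $S$; your appeal to "$S$ is a tree, seen a posteriori once the isomorphism is established" is circular at exactly this point. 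The paper closes this with a short but non-formal argument: since $\Ud_H$ is relative to $\calp_H$, any $P\in V'_p$ is elliptic in $\Ud_H$; if it fixes a $V_p$-vertex $v$ then maximality in $\calp_H^{\max}$ forces $P=G_v$; and if $P$ fixes only a $V_a$-vertex $v$, then any non-trivial intersection $P\cap G_{v'}$ with $v'\in V_a\setminus\{v\}$ would fix the path $[v,v']$, hence a $V_p$-vertex $w$, and (via Lemma~\ref{lem_equiv} and maximality) $P=G_w$, a contradiction -- so $P$ meets only one element of $V'_a$ and cannot lie in $V'_p$. Some version of this must be supplied.

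A secondary slip: your justification of distinctness of the two $V'_a$-elements meeting $G_v$ (for $v\in V_p$) rests on the claim that $G_u\cap G_{u'}=\{1\}$ for distinct $u,u'\in V_a$, which is false in general -- in Example~\ref{ex:3-surfaces}, $\pi_1(\Sigma_1)\cap\pi_1(\Sigma_2)$ contains $\langle c\rangle$. What you actually need is only $G_u\neq G_{u'}$, which does hold (different orbits are separated by item~(4), and within an orbit $G_{gu}=G_u$ forces $g\in G_u$ by self-normalization of the free factor $G_u$), so this is repairable, but as written the intermediate statement is wrong.
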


\begin{proof}
  For each $v\in V_a$,  we have $G_v\in V'_a$,
  and for each $v\in V_p$, we have $G_v\in V'_p$ because edge stabilizers of $\Ud_H$ are non-trivial.
  The assignment $v\mapsto G_v$ then defines a map $f$ from the vertices of $\Ud_H$ to the vertices of $S$.

  Since edge stabilizers of $\Ud_H$ are non-trivial, $f$ preserves adjacency.
  The map $f$ is injective on $V_p$ by Theorem \ref{thm_dynamic_dec}\ref{it_relP}.
  We claim that it is also injective on $V_a$.  Indeed, if $v,v'\in V_a$ are not in the same orbit, then $G_{v'}$ is $\HF_v$-peripheral but $G_{v}$ is not
  by Theorem~\ref{thm_dynamic_dec}\ref{it_HF}, so $G_v\neq G_{v'}$. If $v'=gv$ and $G_{v'}=G_v$, then $g\in G_v$ by malnormality of the free factor $G_v$, so $v'=v$. This  proves our claim that $f$ is injective on  $V_a$.
  Since groups in $V'_p$ are universally peripheral and groups in $V'_a$ are not, $V'_p\cap V'_a=\es$ so $f$ is injective.
  
  To check that $f$ preserves non-adjacency, consider $u\in V_p$ and $v\in V_a$ which are not adjacent in $\Ud_H$.
  Then they are at distance at least $3$ and the segment $[u,v]$ contains a vertex $u'\in V_p$ distinct from $u$.
  Since $G_u\cap G_{u'}=\{1\}$ by Theorem \ref{thm_dynamic_dec}\ref{it_relP}, it follows that $G_u\cap G_v=\{1\}$ so
  $f(u)$ is not adjacent to $f(v)$.

  We finally check that $f$ is onto. It is clear that $f(V_a)=V'_a$.
  So consider $P\in V'_p$. Since $\Ud_H$ is relative to $\calp_H$,  the group $P$ fixes a vertex $v\in \Ud_H$.
  If one can choose $v\in V_p$, then $P=G_v$ because $P\in\calp_H^{\max}$ so $P=f(v)$.
  If $P$ fixes no vertex in $V_p$, then $v\in V_a$ and for any $v'\in V_a\setminus \{v\}$, one has $P\cap G_{v'}=\{1\}$.
  This shows that $G_v$ is the unique group in $V'_a$ that $P$ intersects non-trivially, so $P$ does not fulfill the requirements
  to lie in $ V'_p$.
  This shows that $f$ is surjective and defines an isomorphism $\Ud_H\ra S$.
\end{proof}

\subsubsection{Relation with other works and concepts}\label{sec_rel_other_works}
In this subsection, we relate the dynamical decomposition of a subgroup $H\subset \IA$ with 
its dynamics on the free splitting graph $\mathbb{FS}$ (see Section \ref{sec:maps} for its definition). Recall that $\mathbb{FS}$ is a Gromov-hyperbolic space by \cite{HM3}.
Using \cite{HM2,HM4}, we also relate it to the set of attracting laminations 
in the case of a cyclic group $H$.\\


The construction of the dynamical decomposition of a subgroup $H\subset \IA$
assumes that there is no $H$-invariant non-trivial free splitting,
i.e.~that $H$ has no fixed point (equivalently no finite orbit) in $\mathbb{FS}$.

Theorem \ref{thm_dynamic_dec} implies that if a subgroup $H\subset \IA$ is not pure,
then $H$ is elliptic in $\mathbb{FS}$. 
Indeed, this is clear if there is an $H$-invariant non-trivial free splitting, so assume otherwise.
Since $H$ is impure, 
there exist two distinct active vertices $a_1,a_2$ in the dynamical decomposition
$\Ud/F_N$. 
Consider two free splittings $S_1,S_2$ obtained by refining $\Ud/F_N$ at $a_1$ and $a_2$ respectively.
Then for any $\alpha\in H$, the free splitting $\alpha.S_1$ is also obtained by refining 
$\Ud/F_N$ at $a_1$ (because $\Ud$ is $H$-invariant and $H$ fixes $a_1$ in $\Ud/F_N$ by Theorem \ref{thm_dynamic_dec}\ref{it_HF}) and is therefore compatible with $S_2$.
This shows that the $H$-orbit of $S_1$ is at bounded distance from $S_2$ in $\mathbb{FS}$, and that $H$ is elliptic.

Using the characterization of loxodromic elements of $\mathbb{FS}$ given in \cite{HM4},
one can deduce conversely that if $H=\grp{\alpha}\subset \IA$ is cyclic and pure, then 
$\alpha$ acts loxodromically on $\mathbb{FS}$.
Indeed, recall that to any maximal $\alpha$-invariant free factor system $\calf$ which is non-sporadic,
one can associate a pair of attracting and repulsive laminations $\Lambda_\calf^{\pm}$
(\cite{BFH}). 
Using \cite{HM4}, it suffices to prove that 
$\Lambda_\calf^+$ is not carried by a proper free factor to deduce 
that $\alpha$ acts loxodromically on $\mathbb{FS}$.
Now if $\Lambda_\calf^+$ was carried by a proper free factor, 
the minimal free factor carrying $\Lambda_\calf^+$
would be $\alpha$-invariant up to conjugacy,
hence $\hat\calf$-peripheral by purity, contradicting that 
$\Lambda_\calf^+$ is not carried by a free factor in $\calf$, nor by a cyclic group.

In the same way that several maximal $H$-invariant free factor systems $\calf$ may correspond
to the same $\hat\calf$,
several distinct $\calf$ may correspond to the same pair of 
laminations $\Lambda_\calf^\pm$. 
The canonical object associated to $\Lambda_\calf^+$ is 
the non-attracting subgroup system of $\Lambda_\calf^+$ defined in \cite[\S III.1]{HM2}
from a completely split train track representative of $\alpha$ in the sense of \cite{FeH};
this non-attracting subgroup system has a nice description given in \cite{HM2} in the remark before Remark III.1.3, page 197.
One can show that $\hat\calf$ coincides with this non-attracting subgroup system.

When $H=\grp{\alpha}$ is cyclic and does not preserve a non-trivial free splitting,
there is a natural bijection between active vertices in $\Ud/F_N$
and the set of pairs of laminations $\Lambda_\calf^{\pm}$ associated
to maximal $\alpha$-invariant free factor systems $\calf$.
This bijection can be defined using the bijections
$\Lambda_\calf^{\pm}\mapsto \hat \calf$ and $v\in V_a/F_N\mapsto {\hat \calf_v}$.

Notice that for every active vertex $v\in V_a/F_N$, $\alpha_{|G_v}$ is pure, so there is a top attracting lamination of $\alpha_{|G_v}$ which is not carried by any proper free factor of $G_v$. Viewing $\Lambda_v$ as a lamination of $F_N$, this is the unique attracting lamination of $\alpha$ which is not carried by $\hat\calf_v$. The map $v\mapsto\Lambda_v$ is the above bijection.

\section{Canonical nice splittings} \label{sec_nice}

In Sections \ref{sec:collection-free}, \ref{sec:collection-zmax}, and \ref{sec:collection-factors}
we constructed canonical splittings using invariant free splittings, invariant $\Zmax$-splittings,
and maximal invariant free factor systems.
The goal of this section is to unify them using the following notion of nice splittings.  We let $N\ge 2$.

\begin{de}[Nice and bi-nonsporadic splittings]\label{dfn_nice}
  A non-trivial splitting $S$ of $F_N$ is \emph{nice} if either
  \begin{itemize}
  \item $S$ is a free splitting
  \item $S$ is a $\Zmax$-splitting
  \item $S$ is \emph{bi-nonsporadic} in the following sense: its edge stabilizers are finitely generated non-abelian, and 
    there are at least two vertices $v_1,v_2\in S$ in different $F_N$-orbits such that for every $i\in\{1,2\}$, 
    the Grushko decomposition of $G_{v_i}$ relative to the incident edge groups is non-sporadic.
  \end{itemize}
\end{de}

 We say that a collection $\calc$ of 
almost free factor systems is \emph{filling}
there is no almost free factor system $\HF$ such that for every $\HF'\in \calc$,
one has $\HF'\sqsubseteq\HF$.

The main result we will prove is the following one.

\begin{theo}\label{thm_nice}
  There exists an $\Out(F_N)$-equivariant map assigning a (canonical) nice splitting $U_\calc$ to $\calc$
  when $\calc$ is either
  \begin{enumerate}
  \item a non-empty (possibly infinite) 
  collection of nice splittings whose elementwise stabilizer is infinite;
  \item a (possibly infinite) filling collection of \afs{}s whose elementwise stabilizer is infinite;
  \end{enumerate}
\end{theo}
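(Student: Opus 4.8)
The strategy is to handle the three kinds of input collections $\calc$ separately and to funnel each one into a canonical nice splitting by invoking the three constructions already built. Throughout we work inside the finite-index subgroup $\IA$: if $H\subseteq\Out(F_N)$ is the elementwise stabilizer of $\calc$, then $H\cap\IA$ is infinite whenever $H$ is, and all splittings and factor systems in play are automatically $H\cap\IA$-invariant, so we may as well assume $H=\Gamma_\calc\subseteq\IA$. The key point common to all cases is that the canonical splittings $\Uun_H$, $\UZ_H$, $\Ud_H$ are each $\Out(F_N)$-equivariantly assigned to $H$, hence to $\calc$ via $H=\Gamma_\calc$; what remains is to argue non-triviality and to repair the ``niceness'' defect (the outputs of Sections~\ref{sec:collection-free}--\ref{sec:collection-factors} are not literally nice splittings in the sense of Definition~\ref{dfn_nice}), which is exactly what Corollary~\ref{cor:biflexible2nice} (referenced in the introduction as ``easily turned into nice splittings in a canonical way'') is designed to do.

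\textbf{Case (1): free or $\Zmax$-splittings.} Let $\calc$ be a non-empty collection of non-trivial free or $\Zmax$-splittings with $H=\Gamma_\calc$ infinite. First suppose $\calc$ contains at least one free splitting $S$; then $\FS^H\ni S$ is non-empty, so by Theorem~\ref{theo:tree-of-cyl} the splitting $\Uun_H$ is non-trivial. By Corollary~\ref{cor:uh1-biflexible}, as an $F_N$-tree $\Uun_H$ either has an edge with trivial or maximal-cyclic stabilizer, or is biflexible; in the first subcase one collapses to extract a one-edge free splitting or a $\Zmax$-splitting in a canonical way, in the biflexible subcase $\Uun_H$ is a bi-nonsporadic splitting up to the canonical surgery of Corollary~\ref{cor:biflexible2nice}. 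If instead every splitting in $\calc$ is a $\Zmax$-splitting, there are two sub-possibilities: either $H$ preserves some non-trivial free splitting, in which case we apply the previous paragraph's construction to $\FS^H$ (note $\FS^H\neq\es$); or $H$ preserves no non-trivial free splitting, in which case $\ZmaxS^H\supseteq\calc\neq\es$, so by Theorem~\ref{theo:JSJ_zmax} the splitting $\UZ_H$ is a non-trivial $\Zmax$-splitting, which is already nice. In every subcase the output is an $\Out(F_N)$-equivariant function of $H$, hence of $\calc$, because each of $\FS^H$, $\ZmaxS^H$ and the constructions applied to them are equivariant, and the dichotomy ``$H$ preserves a free splitting or not'' is itself equivariant.

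\textbf{Case (2): filling collections of \afs{}s.} Let $\calc$ be a filling collection of almost free factor systems with $H=\Gamma_\calc$ infinite. If $H$ preserves a non-trivial free splitting, apply the Case~(1) construction to $\FS^H$ (non-empty). Otherwise, $H$ preserves no non-trivial free splitting and Theorem~\ref{thm_dynamic_dec} produces the dynamical decomposition $\Ud_H$; I claim it is non-trivial. Indeed if $\Ud_H$ were trivial then $H$ would be pure (Remark~\ref{rk_binonsporadic}), i.e.\ there would be a \emph{unique} maximal $H$-invariant \afs\ $\HF_0$; but every element of $\calc$ is $H$-invariant, hence $\sqsubseteq$ some maximal $H$-invariant \afs, hence $\sqsubseteq\HF_0$, contradicting that $\calc$ is filling. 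So $\Ud_H$ is non-trivial, with at least two orbits of active vertices (Remark~\ref{rk_binonsporadic}), and by Theorem~\ref{thm_dynamic_dec}\ref{it_Vapur} the incident edge groups at each active vertex form a non-sporadic free factor system while edge stabilizers are free factors (Proposition~\ref{prop_additional}\ref{it_FF}), hence finitely generated; they are also non-abelian since a non-sporadic free factor system cannot be carried by a cyclic group. Thus after the canonical repair of Corollary~\ref{cor:biflexible2nice}, $\Ud_H$ yields a bi-nonsporadic, hence nice, splitting, and the assignment is equivariant since $\Ud_H$ is.

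\textbf{Case (3): collections of nice splittings.} Let $\calc$ be a non-empty collection of nice splittings with $H=\Gamma_\calc$ infinite. Each nice $S\in\calc$ is a free splitting, a $\Zmax$-splitting, or bi-nonsporadic. If some $S\in\calc$ is a free splitting, or if $H$ preserves some non-trivial free splitting, apply Case~(1) to $\FS^H$; if $H$ preserves no non-trivial free splitting but some $S\in\calc$ is a $\Zmax$-splitting, apply Case~(1) to $\ZmaxS^H$ to get $\UZ_H$. The remaining possibility is that $H$ preserves no non-trivial free splitting and every $S\in\calc$ is bi-nonsporadic. In that case each $S\in\calc$ provides, via the Grushko decompositions relative to incident edge groups at its two distinguished vertices, a pair of non-sporadic $H$-invariant free factor systems, and more: collapsing appropriately one reads off \afs{}s. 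The point I would argue is that the collection of \afs{}s so obtained from all $S\in\calc$ is filling --- roughly, if it failed to be filling, all these free factor systems would be simultaneously $\sqsubseteq$ a single \afs\ $\HF$, and a bi-nonsporadic splitting with both distinguished vertex groups carried peripherally by $\HF$ forces a contradiction with the filling/one-endedness structure of Lemma~\ref{lemma:filling}-type arguments --- so we then invoke Case~(2). This is where the details are heaviest.

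\textbf{Main obstacle.} The routine parts (non-triviality in each case, equivariance, and the reduction to $\IA$) are essentially bookkeeping. The genuine difficulty is Case~(3) when all splittings in $\calc$ are bi-nonsporadic and $H$ fixes no free splitting: one must canonically extract from each bi-nonsporadic splitting a collection of \afs{}s, verify that the resulting total collection is filling (so that Case~(2) applies), and check that the composite assignment $\calc\mapsto U_\calc$ is still well-defined and $\Out(F_N)$-equivariant --- in particular independent of any auxiliary choices made during the extraction. I expect this to rely on combining the JSJ-theoretic rigidity of $\Ud_H$ (Theorem~\ref{thm_dynamic_dec}, especially the bijection in~\ref{it_HF} and Proposition~\ref{prop_intrinseque}) with the bound on the number of maximal invariant \afs{}s (Corollary~\ref{coro:finitude-facteurs}) to pin down the extracted data canonically, and on Corollary~\ref{cor:biflexible2nice} to perform the final surgery uniformly across all three cases.
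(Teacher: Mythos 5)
Your overall strategy coincides with the paper's: reduce to the elementwise stabilizer $\Gamma_\calc\subseteq\IA$, treat case (1) via $\Uun_{\Gamma_\calc}$ (Corollaries~\ref{cor:uh1-biflexible} and~\ref{cor:biflexible2nice}) or $\UZ_{\Gamma_\calc}$ (Theorem~\ref{theo:JSJ_zmax}), treat case (2) via the dynamical decomposition $\Ud_{\Gamma_\calc}$, and reduce case (3) to case (2) by extracting free factor systems from bi-nonsporadic splittings; your non-triviality and equivariance arguments in cases (1)--(2) are essentially those of the paper. However, two steps need repair. In case (2) you assert that the edge stabilizers of $\Ud_H$ are non-abelian ``since a non-sporadic free factor system cannot be carried by a cyclic group''. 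This inference is false: non-sporadicity of the \emph{collection} $\Inc_v$ of incident edge groups at an active vertex says nothing about the individual edge groups, which can be cyclic. In Example~\ref{ex:3-surfaces} the dynamical decomposition has cyclic (boundary-curve) edge stabilizers and a cyclic peripheral vertex group, so there $\Ud_H$ is neither bi-nonsporadic nor biflexible, and Corollary~\ref{cor:biflexible2nice} -- whose biflexibility hypothesis also requires non-abelian edge stabilizers -- is not the right repairing device anyway. The correct dichotomy, which is what the paper uses, is: edge stabilizers of $\Ud_H$ are non-trivial free factors (Proposition~\ref{prop_additional}), so any cyclic one is $\Zmax$; hence either some edge has $\Zmax$ stabilizer, and collapsing all edges whose stabilizer is not $\Zmax$ yields a nice $\Zmax$-splitting (the same canonical collapse you already used in case (1)), or all edge stabilizers are non-abelian and $\Ud_H$ is itself bi-nonsporadic (Remark~\ref{rk_binonsporadic}), with no surgery needed.

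More seriously, case (3) in the all--bi-nonsporadic situation -- which you yourself single out as the main obstacle -- is only sketched, and the tools you propose (rigidity of $\Ud_H$, Proposition~\ref{prop_intrinseque}, Corollary~\ref{coro:finitude-facteurs}) are not what is needed. The missing ingredient is Lemma~\ref{lemma:binonsporadic2F}: to each bi-nonsporadic splitting $S$ one canonically associates a finite set $\calc_S$ of non-empty free factor systems which is \emph{already filling}, and whose members do not depend on the Grushko blow-ups used in the construction. Consequently $\calc'=\bigcup_{S\in\calc}\calc_S$ contains a filling subcollection, hence is filling; it is canonically and $\Out(F_N)$-equivariantly attached to $\calc$ with no auxiliary choices, and its elementwise stabilizer contains $\Gamma_\calc$, so one simply sets $U_\calc:=U_{\calc'}$ by case (2). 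Your concerns about verifying fillingness of the union and about well-definedness dissolve once that lemma is invoked; without it (or an equivalent argument carried out in full), case (3) remains unproved as written.
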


The following corollary will not be directly used in this paper,
but it is good to have it in mind in preparation for the next section,
where we will establish groupoid-theoretic analogues that will be crucial in the proof of our main measure equivalence rigidity theorem. 

\begin{cor}\label{cor_nice}
  If $H\subseteq\Out(F_N)$ is an infinite group preserving a nice splitting, then its normalizer preserves a nice splitting.
\end{cor}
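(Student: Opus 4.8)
Let $H\subseteq\Out(F_N)$ be infinite and let $S$ be a nice splitting with $H\subseteq\Stab(S)$. Let $\widetilde H$ be the normalizer of $H$ in $\Out(F_N)$; the goal is to produce a nice splitting invariant under $\widetilde H$. The first move is the standard reduction to $\IA$: pass to $H_0:=H\cap\IA$ (still infinite, still preserving $S$ by the various lemmas of Section~\ref{sec_IA}) and note that $\widetilde H$ normalizes $H_0$ as well, so it suffices to produce an $\widetilde H$-invariant nice splitting from the fact that $H_0\subseteq\IA$ is infinite and preserves a nice splitting. From this point on I work entirely with $H_0$ and set $\calc:=\{S\}$, whose elementwise $\IA$-stabilizer $\Gamma_\calc$ contains $H_0$ and is therefore infinite.

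Next I apply Theorem~\ref{thm_nice}(3) to the singleton collection $\calc=\{S\}$: it is a non-empty collection of nice splittings with infinite elementwise stabilizer, so the theorem produces a canonical nice splitting $U_\calc$, and the equivariance clause says that for every $g\in\Out(F_N)$ one has $U_{g\calc}=g\cdot U_\calc$. Now observe that any element $\phi$ of $\widetilde H$ (the normalizer of $H$) permutes the set of $H$-invariant nice splittings; in particular, if I replace the singleton $\calc$ by the (possibly infinite) collection $\calc'$ of \emph{all} $\Gamma_\calc$-invariant nice splittings, then $\calc'$ is invariant under the normalizer $\Gamma_{(\calc')}$ of $\Gamma_\calc$ in $\IA$, and $\Gamma_{\calc'}=\Gamma_\calc$ is still infinite. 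Feeding $\calc'$ into Theorem~\ref{thm_nice}(3) again and using equivariance, the canonical nice splitting $U_{\calc'}$ is invariant under $\Gamma_{(\calc')}$, i.e.\ under the normalizer of $H_0$ in $\IA$. Since the full normalizer $\widetilde H$ of $H$ in $\Out(F_N)$ contains the normalizer of $H_0$ in $\IA$ with finite index — one checks that $\widetilde H$ normalizes $H_0=H\cap\IA$ and permutes the finitely many $\Gamma_\calc$-invariant almost free factor systems and the relevant finite data — a further finite-index averaging (or rather: $\widetilde H$ already preserves $\calc'$ because it permutes $H$-invariant, hence $H_0$-invariant, nice splittings, and $\Gamma_\calc\supseteq H_0$ is normalized by $\widetilde H$) shows $U_{\calc'}$ is $\widetilde H$-invariant.

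The one point that needs care, and which I regard as the main (mild) obstacle, is the bookkeeping around passing between $H$, $H_0=H\cap\IA$, and their normalizers, together with the fact that the constructions of Part~\ref{part2} are phrased for subgroups of $\IA$ while Corollary~\ref{cor_nice} is stated for subgroups of $\Out(F_N)$. Concretely I must check: (i) $H_0$ is infinite (clear, since $\IA$ has finite index); (ii) $H_0$ preserves $S$ (immediate from $H_0\subseteq\Stab(S)$); (iii) the elementwise $\IA$-stabilizer of the collection $\calc'$ of all $\Gamma_\calc$-invariant nice splittings equals $\Gamma_\calc$, so that Theorem~\ref{thm_nice}(3) applies to $\calc'$; (iv) the $\Out(F_N)$-normalizer $\widetilde H$ of $H$ preserves the collection $\calc'$ — this holds because any $\phi\in\widetilde H$ conjugates $H$-invariant splittings to $H$-invariant splittings, and hence (restricting to $\IA$-data) $\Gamma_\calc$-invariant nice splittings to $\Gamma_\calc$-invariant nice splittings, using that $\phi$ normalizes $\Gamma_\calc$ since $\Gamma_\calc$ is characterized in terms of $H$ and $\IA$ both of which $\phi$ preserves. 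Granting these, equivariance of $\calc'\mapsto U_{\calc'}$ forces $\phi\cdot U_{\calc'}=U_{\phi\calc'}=U_{\calc'}$ for all $\phi\in\widetilde H$, so $U_{\calc'}$ is the desired $\widetilde H$-invariant nice splitting. This completes the proof.
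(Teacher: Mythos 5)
Your overall strategy---feed a canonical, normalizer-invariant collection of nice splittings into the $\Out(F_N)$-equivariant assignment of Theorem~\ref{thm_nice}(3)---is the same as the paper's, but the collection you actually feed in is the wrong one, and the step where you claim it is normalizer-invariant has a genuine gap. You set $\calc=\{S\}$, so that $\Gamma_\calc=\Stab_{\IA}(S)$, and then let $\calc'$ be the set of all $\Gamma_\calc$-invariant nice splittings. To get invariance of $\calc'$ under the normalizer $N(H)$ you assert that $N(H)$ normalizes $\Gamma_\calc$ ``since $\Gamma_\calc$ is characterized in terms of $H$ and $\IA$''. That is not true: $\Gamma_\calc=\Stab_{\IA}(S)$ is defined in terms of the particular splitting $S$, not in terms of $H$. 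An element $\phi\in N(H)$ only sends $S$ to some other $H$-invariant nice splitting $\phi S$, and it conjugates $\Stab_{\IA}(S)$ to $\Stab_{\IA}(\phi S)$, which is in general a different subgroup (in the spirit of Example~\ref{ex:arcs}, an element normalizing $H$ may move one invariant arc-splitting to another one with a different stabilizer). Consequently neither the identification of $\Gamma_{(\calc')}$ with the normalizer of $H_0$ in $\IA$, nor the claim that $N(H)$ preserves $\calc'$, is established: the set of $H_0$-invariant nice splittings is visibly $N(H)$-invariant, but your $\calc'$ is the (generally smaller) set of $\Stab_{\IA}(S)$-invariant ones, which is tied to the non-canonical choice of $S$. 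The auxiliary remark that $N(H)$ ``contains the normalizer of $H_0$ in $\IA$ with finite index'', and the finite-index averaging, do not repair this and are not needed.

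The fix is to drop $\Gamma_\calc$ altogether and use exactly the observation you already make: let $\calc$ be the collection of \emph{all} nice splittings preserved by $H$ (equivalently by $H_0=H\cap\IA$). It is non-empty since $S\in\calc$, its elementwise stabilizer in $\IA$ contains $H_0$ and is therefore infinite, and it is manifestly $N(H)$-invariant, because $\phi\in N(H)$ maps $H$-invariant splittings to $\phi H\phi^{-1}=H$-invariant splittings. Theorem~\ref{thm_nice}(3) then produces a nice splitting $U_\calc$ with $U_{\phi\calc}=\phi\cdot U_\calc$, so $U_\calc$ is $N(H)$-invariant. This two-line argument is the paper's proof; your detour through $\calc=\{S\}$ and $\Gamma_\calc$ is what introduces the gap.
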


\begin{proof}
  Let $\calc$ be the collection of all nice splittings preserved by $H$, and let $U_\calc$
  be the associated nice splitting. Since $\calc$ is invariant under $N(H)$, so is $U_\calc$.
\end{proof}

We prove several intermediate results before proving the theorem.

In Section \ref{sec:UH1_biflexible}, we constructed from invariant free splittings
a canonical splitting which has an edge with trivial or $\Zmax$
stabilizer or is biflexible.
Recall from Definition~\ref{de:biflexible} that
that $(A,\calp)$ is \emph{flexible} if $\Out(A,\calp^{(t)})$ is infinite. A splitting $S$ of $F_N$ is \emph{biflexible} if all its edge stabilizers are
finitely generated non-abelian and there exist two vertices $v_1,v_2$ in different $F_N$-orbits such that for every $i\in\{1,2\}$, $(G_{v_i},\Inc_{v_i})$ is flexible.

\begin{lemma}\label{lem:flex} Let $A$ be a finitely generated free group and $\calp$ a non-empty collection of conjugacy classes of non-abelian subgroups.
Assume that $(A,\calp)$ is flexible. Then one of the following holds:
\begin{enumerate}
\item the Grushko decomposition of $A$ relative to $\calp$ is non-sporadic;
\item there is a canonical non-trivial free splitting of $A$ relative to $\calp$ ---in particular it is invariant under $\Out(A,\calp)$;
\item $A$ is one-ended relative to $\calp$ and the canonical $\Zmax$ JSJ decomposition of $A$ relative to $\calp$ is non-trivial.
\end{enumerate}
\end{lemma}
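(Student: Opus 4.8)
\textbf{Proof strategy for Lemma~\ref{lem:flex}.}
The plan is to perform a case analysis based on the relative Grushko decomposition of $A$ with respect to $\calp$. First I would look at the Grushko deformation space of $(A,\calp)$. If this deformation space is non-sporadic, we are in case~(1) and there is nothing more to prove. So from now on assume the Grushko decomposition of $A$ relative to $\calp$ is either trivial (i.e.\ $A$ is one-ended relative to $\calp$) or sporadic.

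Next I would deal with the sporadic case. If the relative Grushko decomposition is a non-trivial sporadic free splitting, I claim there is in fact a \emph{canonical} non-trivial free splitting relative to $\calp$. The point is that the relative Grushko deformation space of $(A,\calp)$ contains a canonical splitting constructed from its tree of cylinders: since the deformation space is sporadic, it either consists of HNN-type splittings $A=B\ast$ over the trivial group with $\calp$ concentrated in $B$, or of amalgam-type splittings $A=B_1\ast B_2$, and in either situation one extracts a canonical reduced representative --- for instance, the number of orbits of non-trivial vertex groups, and the set of conjugacy classes of non-trivial vertex groups, is an invariant of the deformation space, and since $\calp$ is non-empty and consists of non-abelian groups the resulting free factor system of $A$ (the non-trivial vertex stabilizers) is canonical; it determines a canonical free splitting of $A$ relative to $\calp$ (e.g.\ its tree of cylinders for the trivial relation, or more simply the Grushko splitting associated to that free factor system), which is therefore invariant under $\Out(A,\calp)$. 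This yields case~(2). (When $A=B_1\ast B_2$ with both $B_i$ non-trivial, this canonical splitting is just the one-edge splitting; when $A=B\ast$, it is the one-edge HNN splitting; when $A=B\ast\bbZ$ with $B$ the only non-trivial vertex group carrying $\calp$, we take the free splitting with vertex groups $B$ and $\bbZ$.)

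Finally I would treat the one-ended case: $A$ is freely indecomposable relative to $\calp$. Here I would invoke the canonical cyclic JSJ decomposition of $A$ relative to $\calp$ over the class of maximal cyclic subgroups, and pass to its $\Zmax$-version $S_{\Zmax}$ (Definition~\ref{dfn_zmaxise}), which is canonical, hence $\Out(A,\calp)$-invariant. The remaining content is to show that this JSJ decomposition is \emph{non-trivial}. This is where the flexibility hypothesis enters. If the $\Zmax$ JSJ decomposition were trivial, then by Proposition~\ref{prop_suite_exacte} applied to any $\Zmax$-splitting of $A$ relative to $\calp$ (equivalently, by the structure theory of JSJ decompositions: a rigid vertex group that equals $A$ and carries all of $\calp$) the group $\Out(A,\calp^{(t)})$ would embed, up to finite index and modulo the group of twists, into a product of relative outer automorphism groups of rigid or QH vertex groups; triviality of the JSJ forces $A$ itself to be rigid relative to $\calp$ (no QH vertices, no twists available since there are no edges), and a rigid freely indecomposable group relative to $\calp$ has finite $\Out(A,\calp^{(t)})$ --- this is the standard rigidity statement for JSJ (the automorphisms of $A$ preserving $\calp$ up to conjugacy and acting trivially on the peripheral structure are, modulo inner automorphisms, controlled by the JSJ, and when it is trivial there is nothing left). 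This contradicts flexibility. Hence the $\Zmax$ JSJ decomposition is non-trivial, and we are in case~(3).

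\textbf{Main obstacle.} The delicate point is the one-ended case: making precise and rigorous the claim that \emph{triviality of the $\Zmax$ JSJ decomposition relative to $\calp$ implies $\Out(A,\calp^{(t)})$ is finite}. This requires citing the appropriate rigidity statement from JSJ theory (e.g.\ from \cite{GL-jsj}, using that $A$ is finitely generated, hyperbolic, and freely indecomposable relative to the finite malnormal-ish family $\calp$), and handling the bookkeeping of the exact sequence in Proposition~\ref{prop_suite_exacte} when the graph of groups has a single vertex and no edges (so the group of twists is trivial and the restriction map is the identity). A secondary, more routine, obstacle is checking in the sporadic case that the extracted free factor system --- and hence the free splitting --- really is canonical: this follows from the fact that all sporadic splittings in the relative Grushko deformation space share the same collection of conjugacy classes of non-trivial vertex groups, which is a standard fact about deformation spaces (\cite{GL_deformation}).
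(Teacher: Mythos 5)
The one-ended case is where your argument breaks down. The step ``triviality of the $\Zmax$ JSJ decomposition relative to $\calp$ forces $A$ to be rigid relative to $\calp$, hence $\Out(A,\calp^{(t)})$ is finite'' is not the standard JSJ rigidity statement and is false: a one-ended pair can admit many non-trivial $\Zmax$-splittings, none of them universally elliptic, so that the JSJ is trivial while the relative outer automorphism group is infinite. This is exactly the surface phenomenon the paper itself points out (see the remark following Theorem~\ref{theo:intro-canonical-cyclic} and Proposition~\ref{prop:QH}): if $A=\pi_1(\Sigma)$ for a compact hyperbolic surface with boundary and $\calp$ is the collection of boundary subgroups, then $(A,\calp)$ is flexible, one-ended relative to $\calp$, and its $\Zmax$ JSJ relative to $\calp$ is trivial. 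In particular conclusion~(3) genuinely fails once cyclic peripheral groups are allowed, so no argument that never uses the non-abelianness of $\calp$ in this case (yours does not) can succeed. The paper's proof runs differently: flexibility gives, by Paulin's argument and Rips theory, \emph{some} non-trivial $\Zmax$-splitting of $A$ relative to $\calp$; if the JSJ were trivial, the unique vertex would be flexible, so by the description of flexible vertices (Sela, or Proposition~\ref{prop:QH} of the paper) the pair $(A,\calp)$ would be QH with sockets, forcing every group in $\calp$ to be cyclic --- and it is this that contradicts the hypothesis that the groups in $\calp$ are non-abelian. So the contradiction is with non-abelianness of $\calp$, not with flexibility.

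A secondary remark on your sporadic case: the main line (a sporadic relative Grushko deformation space has a unique reduced splitting, which is therefore canonical and $\Out(A,\calp)$-invariant) matches the paper. But your parenthetical sub-case ``$A=B\ast\bbZ$ with vertex groups $B$ and $\bbZ$'' does not arise as a Grushko decomposition relative to $\calp$ when $\calp$ consists of non-abelian groups (a cyclic non-trivial vertex group would carry no peripheral subgroup, so such a splitting is not maximal for domination), and the two-vertex splitting you propose there would not be canonical in any case, since the cyclic complement of $B$ is not well defined up to conjugacy; the canonical splitting in that situation is the one-edge HNN splitting $A=B\ast$, as in the paper's proof.
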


\begin{proof}
  Assume that (1) does not hold. If the Grushko decomposition of $A$ relative to $\calp$ is non-trivial (hence sporadic),
  then it contains a unique reduced free splitting (this is the splitting with a single orbit of edges),
  so Assertion (2) holds.
  Thus, we may assume that $A$ is one-ended relative to $\calp$. This allows to consider the canonical $\Zmax$ JSJ decomposition of $(A,\calp)$, and we will prove that it is non-trivial. Since $\Out(A,\calp^{(t)})$ is infinite,
  there exists a non-trivial $\Zmax$-splitting of $A$ relative to $\calp$ by Paulin's argument  and Rips theory (\cite{Pau2,BF95},   see for instance \cite[Theorem~7.14]{GL_automorphisms}
   for this precise statement).
   If the canonical $\Zmax$ JSJ decomposition of $A$ relative to $\calp$ is trivial, then the pair $(A,\calp)$ has to be QH with sockets
   (see \cite{Sel} or Proposition \ref{prop:QH})
  and in particular, every group in $\calp$ is cyclic. This is a contradiction.
\end{proof}

\begin{cor}\label{cor:biflexible2nice}
To any biflexible splitting of $F_N$, one can associate canonically (in an $\Out(F_N)$-equivariant way) a
nice splitting.
\end{cor}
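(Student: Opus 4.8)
Let $S$ be a biflexible splitting of $F_N$, with distinguished vertices $v_1,v_2$ in distinct $F_N$-orbits such that each $(G_{v_i},\Inc_{v_i})$ is flexible. Since edge stabilizers of $S$ are finitely generated and non-abelian, so are the incident edge groups, hence each $\Inc_{v_i}$ is a non-empty collection of conjugacy classes of non-abelian (finitely generated) subgroups of $G_{v_i}$. The plan is to apply Lemma~\ref{lem:flex} to each pair $(G_{v_i},\Inc_{v_i})$ and to blow up $S$ at $v_1,v_2$ (or only at the relevant $v_i$'s) using the resulting canonical structure, producing a nice splitting. The output is $\Out(F_N)$-equivariant because everything in the construction is canonical and $\Out(F_N)$ acts on the set of biflexible splittings.

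First I would record that the hypothesis of Lemma~\ref{lem:flex} is met: $G_{v_i}$ is a free group (it is a vertex stabilizer in a splitting of $F_N$, hence a free factor of a vertex group of a Grushko refinement — more simply, a vertex stabilizer of any splitting of a free group is free), $\Inc_{v_i}$ is non-empty since $S$ is non-trivial, and each group in $\Inc_{v_i}$ is non-abelian by the definition of bi-nonsporadicity/biflexibility. Lemma~\ref{lem:flex} then gives, for each $i\in\{1,2\}$, one of three canonical outcomes: (1) the Grushko decomposition of $G_{v_i}$ relative to $\Inc_{v_i}$ is non-sporadic; (2) there is a canonical non-trivial free splitting $Z_i$ of $G_{v_i}$ relative to $\Inc_{v_i}$; or (3) $G_{v_i}$ is one-ended relative to $\Inc_{v_i}$ and the canonical $\Zmax$ JSJ decomposition $J_i$ of $G_{v_i}$ relative to $\Inc_{v_i}$ is non-trivial. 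In all three cases the structure attached to $v_i$ is canonical, hence invariant under $\Out(G_{v_i},\Inc_{v_i})$, and in particular under $\Stab(S)$-automorphisms that fix $v_i$ (after passing to the appropriate finite-index subgroup, but canonicity handles this).

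The construction then proceeds by cases according to which outcome holds at $v_1$ and $v_2$. If outcome (1) holds at both $v_1$ and $v_2$, then $S$ is already bi-nonsporadic (its edge stabilizers are finitely generated non-abelian by hypothesis, and the two vertices $v_1,v_2$ witness non-sporadicity of the relative Grushko decomposition), so $S$ itself is nice and we set $U_S=S$. If outcome (2) holds at some $v_i$, I would blow up $S$ at $v_i$ using the canonical free splitting $Z_i$ (attaching each incident edge $e$ to the point of $Z_i$ fixed by $G_e$, which exists since $Z_i$ is relative to $\Inc_v$) and then collapse all edges of $S$; the resulting splitting of $F_N$ has an edge with trivial stabilizer, so it is non-trivial and has a further collapse to a non-trivial free splitting, which is canonical and which we take as $U_S$. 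If outcome (3) holds at some $v_i$ (and outcome (2) does not hold at either vertex), I would blow up $S$ at $v_i$ using the canonical $\Zmax$ JSJ $J_i$, giving a splitting with a new edge carrying a $\Zmax$ edge group; one then passes to the associated $\Zmax$-splitting (via Definition~\ref{dfn_zmaxise}, $\Zmax$-ising the cyclic edges, while noting that the old edges of $S$ carry non-abelian groups and are untouched) — but this requires care since $S$'s edge groups are non-abelian not cyclic, so the cleanest route is: collapse $S$ down after blow-up so that only the JSJ edges survive, yielding a canonical non-trivial $\Zmax$-splitting $U_S$. Actually the simplest uniform prescription is: prefer outcome (2) over (3) over (1), blow up at the first vertex (in the canonical ordering, say by orbit) where the preferred outcome occurs, collapse everything else, and take a canonical further collapse to a free splitting (in case (2)) or the $\Zmax$-splitting (in case (3)), or return $S$ itself (in case (1)).

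\textbf{Main obstacle.} The delicate point is ensuring the output is genuinely a \emph{nice} splitting in the precise sense of Definition~\ref{dfn_nice}, i.e.\ checking in each case that the blown-up-then-collapsed object is non-trivial and falls in one of the three classes (free, $\Zmax$, bi-nonsporadic). In case (3) the issue is that blowing up $S$ at a one-ended-relative vertex by a $\Zmax$-splitting relative to $\Inc_v$ produces a mixed splitting with both non-abelian edge groups (from $S$) and cyclic ones (from $J_i$); one must argue that collapsing the $S$-edges leaves a non-trivial splitting, which holds because $J_i$ is non-trivial and $G_{v_i}$ is one-ended relative to $\Inc_v$ (so the $J_i$-edges cannot all be collapsed by collapsing $S$-edges only). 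A second subtlety is well-definedness: if outcome (2) or (3) happens at more than one vertex or at more than one choice, canonicity requires a deterministic selection rule (orbit ordering plus the priority (2)$>$(3)$>$(1)), which I would state explicitly so that the assignment $\calc\mapsto U_\calc$ commutes with the $\Out(F_N)$-action. Once the case analysis is organized with this priority rule, each verification is short and uses only the definitions plus Lemma~\ref{lem:flex}.
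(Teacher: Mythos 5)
Your overall strategy is the same as the paper's: apply Lemma~\ref{lem:flex} at flexible vertices and, according to the outcome, either observe that $S$ is bi-nonsporadic, or blow up by the canonical free splitting (sporadic non-trivial Grushko) or by the canonical $\Zmax$ JSJ (trivial Grushko) and collapse the old edges. However, there is a genuine gap exactly where the statement has its content, namely canonicity/equivariance. First, you anchor the construction to a chosen pair $(v_1,v_2)$, but a biflexible splitting does not come equipped with such a pair -- only its existence is asserted -- so your output can depend on that choice (e.g.\ if the chosen pair contains a sporadic vertex while two other flexible vertices are non-sporadic, you blow up where the paper would simply return $S$). Second, and more seriously, your tie-breaking rule ``blow up at the first vertex in the canonical ordering, say by orbit'' is not well defined in an $\Out(F_N)$-equivariant way: there is no ordering of the vertex orbits of $S/F_N$ preserved by automorphisms, and an element of $\Out(F_N)$ stabilizing $S$ may permute several orbits of flexible vertices all exhibiting the preferred outcome; equivariance would then force the selected orbit to be invariant, which it need not be. So as written the assignment $S\mapsto U_S$ is not canonical.

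The paper avoids any selection: it takes the \emph{full} (canonical) set of flexible vertices $v_1,\dots,v_k$, and in each case blows up simultaneously at \emph{every} flexible vertex of the relevant type -- all those whose relative Grushko decomposition is non-trivial but sporadic (yielding, after collapsing the $S$-edges, a non-trivial free splitting), or, in the remaining case, all those whose relative Grushko decomposition is trivial (yielding a non-trivial $\Zmax$-splitting); if at least two flexible vertices are non-sporadic one keeps $S$ itself. Since the set of flexible vertices, the case distinction, and the blow-ups (with uniquely determined attaching points, because incident edge groups are non-trivial, respectively non-abelian) are all intrinsic, no choice is ever made and equivariance is automatic. Your construction becomes correct if you replace the chosen pair by the set of all flexible vertices and replace the selection rule by this simultaneous blow-up; the minor verifications you list (non-triviality after collapse, uniqueness of attaching points) then go through as you indicate.
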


\begin{proof}
  Let $S$ be a biflexible splitting, and let $v_1,\dots, v_k$ be the collection of flexible vertices, with $k\geq 2$.
  We construct the canonical nice splitting $S'$ as follows. 
  
  If there are at least two vertices $v_i$ such that the Grushko decomposition of $(G_{v_i},\Inc_{v_i})$ is non-sporadic, then $S$ is bi-nonsporadic and we take $S'=S$.
  Thus, we may assume that for some $i\leq k$, the Grushko decomposition of $(G_{v_i},\Inc_{v_i})$ is trivial or sporadic.

  If the Grushko decomposition of $(G_{v_i},\Inc_{v_i})$ is non-trivial but sporadic for some $i$,
  then consider $\hat S$ obtained from $S$ by blowing up each such vertex $v_i$ into the corresponding canonical free splitting of $G_{v_i}$ relative to $\Inc_{v_i}$
  (there is no choice in the attaching points of the edges of $S$ because the groups in $\Inc_{v_i}$ are non-trivial).
  Collapsing all edges of $\hat S$ coming from $S$ yields a non-trivial free splitting $S'$ that is invariant under the stabilizer of $S$.

  If the Grushko decomposition of $(G_{v_i},\Inc_{v_i})$ is trivial for some $i\leq k$, then
   the canonical $\Zmax$ JSJ decomposition of $G_{v_i}$ relative to $\Inc_{v_i}$ is non-trivial by Lemma \ref{lem:flex}.
So consider $\hat S$ obtained from $S$ by blowing up each such vertex $v_i$ using this JSJ decomposition
  (there is no choice in the attaching points of the edges of $S$ because the groups in $\Inc_{v_i}$ are non-abelian).
  Collapsing all edges of $\hat S$ coming from $S$ yields a non-trivial $\Zmax$-splitting $S'$ that is invariant under the stabilizer of $S$.
\end{proof}

\begin{lemma}\label{lemma:binonsporadic2F}
To any bi-nonsporadic splitting of $F_N$, one can canonically associate a filling finite set of non-empty free factor systems.

In particular, a subgroup $H\subseteq\IA$ preserving a bi-nonsporadic splitting of $F_N$ cannot be pure.
\end{lemma}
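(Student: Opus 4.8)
The idea is to extract the free factor systems directly from the two non-sporadic vertices of a bi-nonsporadic splitting $S$, using the Grushko decompositions of their stabilizers. Recall that $S$ has finitely generated non-abelian edge stabilizers, and two vertices $v_1,v_2$ in distinct $F_N$-orbits such that, for $i\in\{1,2\}$, the Grushko decomposition of $G_{v_i}$ relative to $\Inc_{v_i}$ is non-sporadic. For each $i$, blow up $S$ at $v_i$ using this Grushko decomposition (there is no ambiguity in the attaching points of incident edges since the groups in $\Inc_{v_i}$ are non-abelian, hence non-trivial), then collapse all edges coming from $S$; this yields a non-trivial free splitting $R_i$ of $F_N$, and we let $\calf_i$ be its associated free factor system, i.e.\ the collection of conjugacy classes of non-trivial vertex stabilizers of $R_i$. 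The collection $\{\calf_1,\calf_2\}$ is canonical because $S$ is canonical and the set $\{v_1,v_2\}$ of non-sporadic vertices is intrinsically determined by $S$; moreover the whole construction is $\Out(F_N)$-equivariant. Each $\calf_i$ is non-empty: since the Grushko decomposition of $G_{v_i}$ relative to $\Inc_{v_i}$ is non-sporadic, it has at least one non-trivial vertex group (in fact the groups in $\Inc_{v_i}$ are already non-trivial), so $R_i$ has a non-trivial vertex stabilizer.

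\textbf{Filling.} The key point is that $\{\calf_1,\calf_2\}$ is filling, i.e.\ there is no \afs\ $\HF$ with $\calf_1\sqsubseteq\HF$ and $\calf_2\sqsubseteq\HF$. Suppose such an $\HF$ existed. Since $\calf_i\sqsubseteq\HF$ means that every Grushko $(F_N,\HF)$-tree is dominated by every Grushko $(F_N,\calf_i)$-tree, and since $R_i$ (a Grushko $(F_N,\calf_i)$-tree up to deformation) is obtained from $S$ by blowing up $v_i$ and collapsing the original edges, the stabilizers $G_e$ of edges $e$ of $S$ are $\HF$-peripheral for $i=1$ (they are elliptic in $R_1$, being vertex stabilizers of the Grushko decomposition at $v_1$ — more precisely each $G_e$ with $e$ incident to $v_1$ is conjugate into a factor of $\Inc_{v_1}$, hence into a factor of $\calf_1$, hence $\HF$-peripheral; and every edge of $S$ is incident to $v_1$ in its orbit is not automatic, so one has to be careful here). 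Rather than chase ellipticity of all edges, the cleaner argument is: the group $G_{v_1}$ is \emph{not} $\calf_1$-peripheral is false — $G_{v_1}$ \emph{is} a union of $\calf_1$-peripheral pieces; instead use that $G_{v_1}$ is not elliptic in $R_2$ (since $R_2$ comes from blowing up $v_2\neq v_1$, the vertex $v_1$ survives with the same stabilizer and, as $S$ is non-trivial, $G_{v_1}$ is a proper free factor that is \emph{not} $\calf_2$-peripheral whenever $v_1$ carries a non-sporadic decomposition). This is where the main obstacle lies.

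\textbf{Main obstacle and how to handle it.} The delicate step is showing that $G_{v_1}$ cannot be made peripheral in a common almost free factor system with $\calf_2$ — equivalently, that the non-sporadicity of the Grushko decomposition of $G_{v_1}$ relative to $\Inc_{v_1}$ obstructs $G_{v_1}$ being an $\HF$-peripheral subgroup whenever $G_{v_2}$ and its incident structure is also $\HF$-peripheral. The plan is to argue by contradiction as in Lemma~\ref{lemma:filling}: if $\HF$ is a common upper bound, build the graph of groups obtained from $S$ by (if needed) adding terminal edges for unused boundary subgroups of $\HF$, with Bass--Serre tree $S'$, so that every $\HF$-peripheral subgroup fixes a unique vertex of $S'$ with stabilizer in $\HF$. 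Then consider the minimal $G_{v_1}$-subtree in $S'$: since $\calf_1\sqsubseteq\HF$, the factors of the Grushko decomposition of $G_{v_1}$ rel $\Inc_{v_1}$ are $\HF$-peripheral, so $G_{v_1}$ acts on $S'$ with these as point stabilizers, which (because the decomposition is non-sporadic, hence $G_{v_1}$ has a non-trivial free splitting rel $\Inc_{v_1}$ with $\geq 2$ orbits of edges) forces $G_{v_1}$ to be non-elliptic in $S'$ unless $\HF$ already refines $\calf_1$ strictly — but then the analogous argument at $v_2$, combined with the fact that the edge groups of $S$ are non-abelian and hence fix unique vertices of $S'$ in $V^0$, forces $S$ itself to admit an $F_N$-equivariant map to $S'$ collapsing all edges (since no edge stabilizer of $S$ can be conjugate into an $\HF$-factor containing $G_{v_1}$ while also being non-abelian and compatible with the Grushko structure), making $S'$ trivial, i.e.\ $F_N$ is $\HF$-peripheral, a contradiction. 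The final sentence of the statement (no pure $H\subseteq\IA$ preserves a bi-nonsporadic splitting) is then immediate: a pure subgroup has a \emph{unique} maximal $H$-invariant \afs, which would be a common upper bound for the $H$-invariant family $\{\calf_1,\calf_2\}$ (using Lemma~\ref{theo:ia} to see the $\calf_i$ are $H$-invariant), contradicting filling.
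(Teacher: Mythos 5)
Your construction of the systems $\calf_i$ (blow up each non-sporadic vertex into a relative Grushko decomposition, collapse the old edges, take the non-trivial vertex stabilizers) is exactly the paper's; the only cosmetic issue there is that bi-nonsporadicity guarantees \emph{at least} two orbits of non-sporadic vertices, so for canonicity you should take all of them rather than a chosen pair. The genuine gap is in your filling argument, and it starts with a claim that is false and exactly backwards: you assert that $G_{v_1}$ is not elliptic in $R_2$ and hence not $\calf_2$-peripheral. Since $v_1$ is not in the $F_N$-orbit of $v_2$, the vertex $v_1$ is untouched by the blow-up at $v_2$, so $G_{v_1}$ fixes the image of $v_1$ in $R_2$ and is conjugate into a non-trivial vertex stabilizer of $R_2$; it \emph{is} $\calf_2$-peripheral (and it need not be a free factor of $F_N$ either). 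The rest of your sketch inherits this confusion: the fact that every vertex stabilizer of $S$ becomes $\HF$-peripheral for a common upper bound $\HF$ is the key fact to \emph{use}, not something to contradict, and your claim that non-sporadicity of the Grushko decomposition of $(G_{v_1},\Inc_{v_1})$ ``forces $G_{v_1}$ to be non-elliptic in $S'$'' simply does not follow — a group with a non-trivial relative free splitting can perfectly well be contained in a single $\HF$-factor. The chain ending in ``no edge stabilizer of $S$ can be conjugate into an $\HF$-factor containing $G_{v_1}$'' is unjustified and not what is needed.

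The correct argument is shorter and is precisely where the existence of two non-sporadic vertices in distinct orbits is used: for every vertex $v$ of $S$ not in the orbit of $v_i$, the group $G_v$ is elliptic in $R_i$, hence $\calf_i$-peripheral; applying this with $i=2$ to $v_1$, with $i=1$ to $v_2$, and with either index to the remaining vertices, \emph{every} vertex stabilizer of $S$ is $\calf_1$- or $\calf_2$-peripheral, hence $\HF$-peripheral for any common upper bound $\HF$. Now encode $\HF$ by a tree with small edge stabilizers: if $\HF$ is a free factor system, take a non-trivial free splitting $R$ whose non-trivial elliptic subgroups are exactly the groups of $\HF$; if $\HF$ is the factor system of a QH splitting, use instead the non-minimal tree $S'$ with terminal edges added for the unused boundary subgroups, as in the proof of Lemma~\ref{lemma:filling} (or note that all groups in $\calf_1\cup\calf_2$ are non-abelian, so one may replace $\HF$ by an extracted free factor system). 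Ellipticity of all vertex groups gives an $F_N$-equivariant map $f\colon S\to R$ (resp.\ $S\to S'$); since edge stabilizers of $S$ are non-abelian while those of the target are trivial (resp.\ cyclic), $f$ collapses every edge, hence is constant, so $F_N$ would fix a point of $R$ (resp.\ be $\HF$-peripheral), a contradiction. Your deduction of the ``in particular'' statement about pure subgroups from the filling property is fine as written.
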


\begin{proof}
  Let $S$ be a bi-nonsporadic splitting, and $v_1,\dots,v_n$ with $n\geq 2$ be the collection of vertices of $S$ such that
  the Grushko decomposition of   $(G_{v_i},\Inc_{v_i})$ is non-sporadic.
  For each $i\leq n$, let $\hat S_i$ be obtained from $S$ by blowing up $v_i$ into a Grushko decomposition of $G_{v_i}$ relative to $\Inc_{v_i}$
  and let $S'_i$ be the free splitting obtained by collapsing all edges coming from $S$. Finally, let $\calf_i$ be the free factor system consisting
  of non-trivial vertex stabilizers of $S'_i$.
  Note that $\calf_i$ does not depend on the choice of the Grushko decomposition used to blow up $S$.
  
  We claim that $\{\calf_1,\dots,\calf_n\}$ is filling. Otherwise, consider a free factor system
  $\calf$ such that $\calf_i\sqsubseteq\calf$ for all $i\leq n$.
  Note that for every vertex $v\in S$ not in the orbit of $v_i$, the group $G_{v}$ is $\calf_i$-peripheral.
  Since $n\geq 2$, this implies that $G_{v}$ is $\calf$-peripheral for every vertex $v$ of $S$.
  Let $R$ be a non-trivial free splitting whose conjugacy classes of non-trivial elliptic subgroups are precisely the groups in $\calf$.
  Since vertex stabilizers of $S$ are elliptic in $R$, there is an equivariant map $f:S\ra R$.
  Since edge stabilizers of $R$ are trivial, $f$ maps every edge of $S$ to a point, so $f$ is constant.
  Since $R$ is non-trivial, this is a contradiction and concludes the proof.
\end{proof}

We now prove the main result of this section.

\begin{proof}[Proof of Theorem \ref{thm_nice}]
  Note that it suffices to assign to $\calc$ a splitting which is either bi-nonsporadic or which has
  an edge with trivial or $\Zmax$ stabilizer. Indeed, in the latter case, collapsing all edges with non-trivial stabilizer
  or all edges whose stabilizer is not $\Zmax$ accordingly yields a nice splitting.

  Assume first that $\Gamma_\calc$ preserves a non-trivial free splitting.
  Then Corollary \ref{cor:uh1-biflexible} assigns to $\Gamma_\calc$ a canonical splitting $U^1_{\Gamma_\calc}$
  which has an edge with trivial or $\Zmax$-stabilizer, or which is biflexible.
  As noted above, we may assume that $U^1_{\Gamma_\calc}$ is biflexible, and we conclude using Corollary \ref{cor:biflexible2nice}.
  
From now on, we will assume that $\Gamma_\calc$ preserves no non-trivial free splitting.
In particular, $\calc$ contains no free splitting.
  
If $\calc$ contains some non-trivial $\Zmax$ splitting, then 
Theorem \ref{theo:JSJ_zmax}, applied to $H=\Gamma_\calc$, provides a canonical non-trivial $\Zmax$ splitting $U_\calc:=\UZ_{\Gamma_\calc}$. 
  This proves the theorem in the case where $\calc$ contains is a collection of non-trivial $\Zmax$ or free splittings.


Before treating the case where $\calc$ consists of bi-nonsporadic splittings, 
we consider the case where $\calc$ is a filling collection of \afs{}s whose elementwise stabilizer $\Gamma_\calc$
  is infinite. 
  Let $\calc'$ be the collection of all maximal $\Gamma_\calc$-invariant proper \afs{}s. Note that for any $\HF$ in $\calc$,
  there is $\HF'\in\calc'$ with $\HF\sqsubseteq\HF'$. Since $\calc$ is filling, this implies that $\#\calc'\geq 2$, i.e.\
  $\Gamma_\calc$ is not pure (Definition \ref{dfn_pure}).
 Since $\Gamma_\calc$ does not preserve any non-trivial free splitting, we can apply Theorem~\ref{thm_dynamic_dec} to $H=\Gamma_\calc$ and get a (canonical) splitting $\Ud_{\Gamma_\calc}$. 
  By Remark \ref{rk_binonsporadic}, it is non-trivial and either it has an edge with $\Zmax$ stabilizer or
  else it is bi-nonsporadic (see also Remark~\ref{rk_edge_fg} for the fact that its edge stabilizers are finitely generated).
 
  Finally, we consider the case where $\calc$ is a collection of bi-nonsporadic splittings.
  Using Lemma \ref{lemma:binonsporadic2F}, consider for each $S\in \calc$ the filling finite collection
  $\calc_S$ of free factor systems associated to $S$.
  Each free factor system in $\calc_S$ is $\Gamma_\calc$-invariant because $\calc_S$ is finite and $\Gamma_\calc\subset \IA$. 
  Let $\calc'=\cup_{S\in \calc}\calc_S$.
  Then $\calc'$ is a filling collection of free factor systems canonically associated to $S$
  and whose elementwise stabilizer contains $\Gamma_\calc$. One can then define $U_\calc:=U_{\calc'}$.
\end{proof}


\section{Stabilizers of arational trees and associated canonical splittings}\label{sec:witness}

In this section, we associate to any projective arational tree with non-amenable stabilizer, 
a compatible splitting that witnesses that its stabilizer is infinite. 
More generally, to any finite set of projective arational trees whose common stabilizer is non-amenable, we associate a splitting that is compatible with all these trees (see Proposition \ref{prop:PATna}).
We proceed in two steps: in Subsection \ref{sec_witness_isometric} we deal 
with trees whose isometric stabilizer is infinite
and we deduce Proposition \ref{prop:PATna} in Subsection \ref{sec:witness2}.

In all this section, we fix a non-sporadic free factor system $\calf$
and work relative to $\calf$. In particular $\AT$ is the set of arational trees
relative to $\calf$, endowed with its action of $\Out(F_k;\calf)$.

\subsection{A witness map for trees with infinite isometric stabilizer}\label{sec_witness_isometric}


Given a tree $T\in \AT$, there are three naturally associated stabilizers:
the stabilizers of $T$, $[T]$ and $[[T]]$ for the action of
$\Out(F_k;\calf)$ on $\AT$, $\PAT$ and $\PAT/{\sim}$ respectively.
For disambiguation purposes, we sometimes say that the stabilizer of $T$ is its
\emph{isometric stabilizer}.
 Note that if $T,T'\in \AT$ represent the same projective tree $[T]=[T']\in\PAT$, then
 $T$ and $T'$ have the same isometric stabilizer, and the same compatible splittings.

In the following proposition, we denote by $\calp_{<\infty}(\PAT)$ the set of all non-empty finite sets of trees in $\PAT$. The definition of a biflexible splitting was given in Definition~\ref{de:biflexible}.

\begin{prop}\label{prop:S_T}
Let $\calf$ be a non-sporadic free factor system of $F_k$.

  There exists an $\Out(F_k,\calf)$-equivariant Borel map  
assigning to any finite set of trees $[\TT]=\{[T_1],\dots,[T_p]\}\in\calp_{<\infty}(\PAT)$
a splitting $S^0_{[\TT]}$ compatible with $T_1,\dots, T_p$, such that:
\begin{itemize}
\item if the common isometric stabilizer of $T_1,\dots,T_p$
is infinite, then $S^0_{[\TT]}$ is a  non-trivial splitting which is either biflexible or has an edge with $\Zmax$ stabilizer;
\item if the common isometric stabilizer of $T_1,\dots,T_p$
is finite, then $S^0_{[\TT]}$ is trivial.
\end{itemize}
\end{prop}

The proof follows the ideas introduced in \cite{GL-on}, but we give a self-contained proof.

Recall that we say that a subgroup $H$ of $\ia$ acts trivially on a subgroup $A$
if every $\alpha\in H$ has a lift in $\Aut(F_k)$ acting as the identity on $A$.

\begin{lemma}\label{lemma:maximal-fix}
Let $H\subseteq\ia$ be a subgroup, and assume that $H$ is contained in the isometric stabilizer of some tree in $\AT$. Then up to conjugacy, there exists a unique maximal subgroup $A_{H}\subseteq F_k$ which is  non-abelian and not $\calf$-peripheral
and such that $H$ acts trivially on  $A_{H}$.

Moreover, consider any $T\in\AT$ such that $H$ is contained in the isometric stabilizer of $T$, and let $Y\subset T$ be the minimal $A_H$-invariant subtree. Then the translates of $Y$ form a transverse covering of $T$, and $A_H$ is the setwise stabilizer of $Y$. 
\end{lemma}

\begin{proof}
Consider $T\in\AT$ such that $H$ is contained in the isometric stabilizer of $T$.
We are going construct a group $A_H$ using $T$.
Given $\alpha\in H$ and a representative $\tilde\alpha\in \Aut(F_k)$, we denote by $I_{\tilde \alpha}$
the $\tilde\alpha$-equivariant isometry of $T$.
Let $\eta$ be a germ of direction at a branch point of $T$.
Let $H^0$ be the subgroup of $H$ consisting of elements preserving the $F_k$-orbit of $\eta$.
As there are finitely many $F_k$-orbits of directions at branch points in $T$ (\cite{GL}), the subgroup $H^0$ has finite index in $H$.
We will see below that in fact $H^0=H$.
For each $\alpha\in H^0$, there exists a unique representative $\tilde \alpha\in \Aut(F_k)$
such that $I_{\tilde \alpha}$ fixes $\eta$.
The set of all such representatives $\tilde \alpha$ defines a lift $\tilde H^0$
of $H^0$ in $\Aut(F_k)$. 

Given $\tilde\alpha_1,\dots,\tilde\alpha_n\in \tilde H^0$,
let $Y_{\tilde\alpha_1,\dots,\tilde\alpha_n}$ be the set of points in $T$ fixed by
$I_{\tilde\alpha_i}$ for all $i\leq n$, and let 
$A_{\tilde \alpha_1,\dots,\tilde \alpha_n}=\Fix_{F_k}(\grp{\tilde \alpha_1,\dots,\tilde\alpha_n})$ be 
 the set of elements of $F_k$ fixed by $\tilde \alpha_i$ for all $i\leq n$. 
The subtree $Y_{\tilde\alpha_1,\dots,\tilde\alpha_n}$ contains at least a segment representing the direction $\eta$.

\begin{claim*}
  If $g\in F_k$ is such that $g.Y_{\tilde\alpha_1,\dots,\tilde\alpha_n}\cap Y_{\tilde\alpha_1,\dots,\tilde\alpha_n}$ contains a non-degenerate segment,
  then $g\in A_{\tilde \alpha_1,\dots,\tilde \alpha_n}$.
\end{claim*}

\begin{proof}[Proof of the claim]
The subtree $g.Y_{\tilde\alpha_1,\dots,\tilde\alpha_n}\cap Y_{\tilde\alpha_1,\dots,\tilde\alpha_n}$
is fixed by $I_{\tilde \alpha_i}$ and by  $gI_{\tilde\alpha_i} g\m$ hence by
$gI_{\tilde\alpha_i} g\m I_{\tilde \alpha_i\m}=g\tilde\alpha_i(g\m)$. 
Since $F_k$-stabilizers of arcs of $T$ are trivial, it follows that  $\tilde\alpha_i(g)=g$.
We conclude that $g\in  A_{\tilde \alpha_1,\dots,\tilde \alpha_n}$.
\end{proof}

Let now $J\subset Y_{\tilde\alpha_1,\dots,\tilde\alpha_n}$ be a non-degenerate segment. 
Since $T$ is mixing (\cite[Proposition~8.3]{Rey}, \cite[Lemma 4.9]{Hor}), 
$J$ contains infinitely many germs of segments representing a direction in the $F_k$-orbit of $\eta$.
The claim then implies that $A_{\tilde \alpha_1,\dots,\tilde \alpha_n}$ contains elements of arbitrary small positive translation length,
and is therefore is non-abelian.

By the bounded chain condition on fixed subgroups in $F_k$ \cite{MV}, there exists 
$\tilde\alpha_1,\dots,\tilde\alpha_n\in \tilde H^0$ such that $A_{\tilde\alpha_1,\dots,\tilde\alpha_n}=A_{\tilde\alpha_1,\dots,\tilde\alpha_n,\tilde\alpha}$
for any $\tilde\alpha\in\tilde H^0$.
We define $A_H=A_{\tilde\alpha_1,\dots,\tilde\alpha_n}$
and we note that $H^0$ acts trivially on $A_H$ (but we don't yet know that $A_H$ does not depend on $T$).

We now use the fact $H$ is contained in $\ia$ to deduce that $H$ itself acts  trivially on $A_H$.
Since $H^0$ preserves the conjugacy class of every element $g\in A_H$,
its orbit under every $\alpha\in H$ is finite. Since $\alpha\in\ia$, this implies that $\alpha$ preserves $[g]$ (Theorem~\ref{theo:ia-element} by Handel and Mosher). Since this holds for every $g\in A_H$, \cite[Corollary~1.6]{GL4} implies that $\alpha$ acts trivially on $A_H$, hence so does $H$.

In order to prove the first assertion of the lemma, there remains to prove that 
if $H$ acts trivially on some subgroup  $A'\subseteq F_k$ which is not abelian and not $\calf$-peripheral, then $A_H$ contains $A'$ up to conjugacy.
Since the centralizer of $A'$ in $F_k$ is trivial, 
every $\alpha\in H$ has a unique lift $\tilde \alpha\in \Aut(F_k)$
acting as the identity on $A'$.
This defines a lift
$\tilde H' $ of $H$ in $\Aut(F_k)$ with $A'\subseteq \Fix_{F_k} \tilde H'$.
Since $A'$ is non-abelian and not $\calf$-peripheral, $A'$ is not elliptic in $T$
and its minimal subtree $Y'\subset T$ is not a line.
Since $\tilde H'$ fixes $A'$, for every $\tilde\alpha\in \tilde H'$, 
$I_{\tilde\alpha}$ preserves the axis of every element of $A'$ and therefore acts as the identity on  $Y'$.
Since $T$ is mixing, up to conjugating $A'$ and $\tilde H'$ by some element of $F_k$,
we may assume that $Y'$ contains 
a representative of $\eta$, so $\tilde H'\subset \tilde H^0$.
Since $\tilde H'$ is a lift of $H$, it follows that $\tilde H'= \tilde H^0$ 
and $A'\subset A_H$ (and it also follows that $H^0=H$).
This proves the maximality and the uniqueness of $A_H$.

The fact that translates of the minimal $A_H$-invariant tree $Y$ are transverse and that $A_H$ is the setwise stabilizer of $Y$ follows from the claim above.
Since the action of $F_k$ on $T$ is mixing, 
this implies that the translates of $Y$ form a transverse covering.
\end{proof}

We denote by $S_{T,H}$ the skeleton of the transverse covering of $T$ by the translates of $Y$.
Its vertex set is $V_0\dunion V_1$ where $V_1$ is the set of translates of $Y$,
$V_0$ is the set of points  $x\in T$ lying in at least two distinct translates of $Y$, and there is an edge between $x$ and $gY$
if and only if $x\in gY$. 
Since $T$ and the conjugacy class of $A_H$ are $H$-invariant, $S_{T,H}$ is $H$-invariant.

Stabilizers of vertices in $V_0$ are point stabilizers in $T$ (as $T$ is arational, these are either groups in $\calf$ or maximal cyclic), and stabilizers of vertices in $V_1$ are conjugates of $A_H$ (which are fixed subgroups of $F_k$). In particular, vertex stabilizers of $S_{T,H}$ are finitely generated, hence so are edge stabilizers by Howson's property.
It follows that each edge stabilizer $G_e$ is its own normalizer: being a finitely generated subgroup of $F_k$,
$G_e$ has finite index in its normalizer, and since vertex stabilizers are stable under taking roots,
so is $G_e$.

\begin{rk}\label{rk_non-discrete}
The moreover part of Lemma~\ref{lemma:maximal-fix} implies that $A_H$ acts non-discretely on $T$.
Indeed, $T$ splits as a graph of actions over $S_{T,H}$ with vertex actions being
the action of a conjugate of $A_H$ on the corresponding translate of $Y$ 
(for vertices in $V_1$), and trivial actions on points (for vertices in $V_0$); see \cite[Lemma~4.7]{Gui04}. If the action of $A_H$ on $T$ was discrete, then the action of $F_k$ on $T$ would also be discrete, contradicting the arationality of $T$.
\end{rk}

\begin{lemma}\label{lemma:edges-sth}
Edge stabilizers of $S_{T,H}$ are non-trivial and $\calf$-peripheral.
\end{lemma}

\begin{proof}
Assume first that $e$ is an edge of $S_{T,H}$ with trivial stabilizer. 
By collapsing all edges of $S_{T,H}$ outside of the orbit of $e$, we obtain a free splitting 
of $(F_k,\calf)$ in which $A_H$ is elliptic, 
which yields a free factor of $(F_k,\calf)$ containing $A_H$ and
acting non-discretely on $T$. This is a 
contradiction to Remark~\ref{rk_non-discrete}. 

Note that edge stabilizers of $S_{T,H}$ are elliptic in $T$. Thus, if some edge $e$ of $S_{T,H}$ has non-peripheral stabilizer, then Lemma~\ref{lemma:arational-stabilizers} implies that the stabilizer of $e$ is cyclic. 
Thus, collapsing all edges of $S_{T,H}$ outside of the orbit of $e$, 
we see that the group $A_H$ is contained in a proper $\calz$-factor in the terminology of \cite[Section~11.4]{GH1}, and \cite[Proposition~11.5]{GH1} contradicts that $A_H$ acts non-discretely on $T$.
%
\end{proof}

\begin{lemma}\label{lemma:alt-desc}
  The tree $S_{T,H}$ does not depend on $T$: if $T'\in \AT$ is $H$-invariant,
  then $S_{T,H}=S_{T',H}$.
\end{lemma}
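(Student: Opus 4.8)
The statement says the skeleton $S_{T,H}$ is independent of the choice of the $H$-invariant arational tree $T$. The plan is to exploit the uniqueness part of Lemma~\ref{lemma:maximal-fix}: the subgroup $A_H$ (up to conjugacy) is characterized purely algebraically as the maximal non-abelian, non-$\calf$-peripheral subgroup of $F_k$ on which $H$ acts trivially, with no reference to any particular tree. So if $T'$ is another $H$-invariant tree in $\AT$, applying Lemma~\ref{lemma:maximal-fix} to $T'$ gives the \emph{same} collection of conjugacy classes of subgroups $\{A_H^g \mid g\in F_k\}$ as the vertex stabilizers of $V_1$ in $S_{T',H}$; only the geometric realization (the minimal $A_H$-invariant subtree $Y'\subset T'$ versus $Y\subset T$) differs a priori. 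Thus both $S_{T,H}$ and $S_{T',H}$ are bipartite trees whose $V_1$-vertex stabilizers are the conjugates of $A_H$.

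The key step is then to show that the tree $S_{T,H}$ is in fact determined by the conjugacy class of $A_H$ alone. First I would identify the $V_0$-vertices: a vertex $v_x\in V_0$ of $S_{T,H}$ corresponds to a point $x\in T$ lying in at least two translates $g_1 Y, g_2 Y$, and its stabilizer $G_x$ is a point stabilizer of $T$, which (since $T$ is arational relative to $\calf$) is either conjugate into a factor of $\calf$ or maximal cyclic. The incidence structure is: $v_x$ is joined to $v_{gY}$ iff $x\in gY$ iff $G_x\cap A_H^g\neq\{1\}$. The crucial point (which I would extract from the proof of Lemma~\ref{lemma:maximal-fix}, in particular the Claim there about translates of $Y$ intersecting in non-degenerate segments) is that two distinct translates $g_1Y$ and $g_2Y$ meet in at most a point, and that point is the unique fixed point of $g_1A_Hg_1^{-1}\cap g_2 A_H g_2^{-1}$; moreover a point $x$ lies in $gY$ iff $G_x$ intersects $A_H^g$ non-trivially, in which case $G_x\subseteq A_H^g$ by malnormality-type considerations inside $F_k$ (using that $A_H$ is a fixed subgroup, hence root-closed and its own normalizer, and that arc stabilizers of $T$ are trivial). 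This lets me reconstruct $S_{T,H}$, up to equivariant isomorphism, as the bipartite graph whose $V_1$ is the set of conjugates of $A_H$, whose $V_0$ is the set of subgroups $G$ of $F_k$ that are either in $\calf$ or maximal cyclic and that are contained in at least two distinct conjugates of $A_H$, with adjacency given by inclusion. This description makes no reference to $T$, so $S_{T,H}=S_{T',H}$.

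The main obstacle I expect is verifying carefully that the intersection pattern of translates of $Y$ (and the nature of $V_0$-stabilizers) is the same in $T$ and $T'$ — i.e.\ that a conjugate $A_H^g$ and $A_H^{g'}$ with $g'g^{-1}\notin A_H$ have a common fixed point in $T'$ precisely when they do in $T$, and that this fixed point's stabilizer is the same subgroup of $F_k$ in both. This should follow because $A_H^g\cap A_H^{g'}$ is a finitely generated subgroup of $F_k$ which, being contained in two conjugates of a fixed subgroup, is itself elliptic in any arational tree (it cannot be all of $A_H^g$ by comparing ranks/malnormality, so it must be small enough to be $\calf$-peripheral or cyclic, hence elliptic in $T$ and $T'$ alike by arationality); then one checks the set of such common fixed points determines the same vertex set $V_0$. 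I would also need to confirm that the bijection $S_{T,H}\to S_{T',H}$ matching conjugates of $A_H$ to conjugates of $A_H$ and matching $V_0$-stabilizers to $V_0$-stabilizers is $F_k$-equivariant and preserves adjacency and non-adjacency; equivariance is automatic since both graphs are reconstructed from $F_k$-conjugation data, and the adjacency/non-adjacency check reduces to the inclusion criterion $G_x\subseteq A_H^g$ established above. Once these geometric facts are in hand the identification $S_{T,H}=S_{T',H}$ is immediate.
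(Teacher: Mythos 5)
Your overall strategy — reconstruct $S_{T,H}$ as an abstract bipartite graph built from the conjugates of $A_H$ (which by Lemma~\ref{lemma:maximal-fix} are characterized algebraically, independently of $T$) together with point-stabilizer data — is exactly the route the paper takes. However, your proposed reconstruction is not the correct one, and the step you use to justify it contains a genuine error. You claim that a point stabilizer $G_x$ with $G_x\cap A_H^g\neq\{1\}$ must satisfy $G_x\subseteq A_H^g$ ``by malnormality-type considerations''; but fixed subgroups are not malnormal, and this inclusion is false in general. Concretely, in the situation the paper itself describes (a surface $\Sigma$ with a separating curve $c$, $A\supseteq\partial\Sigma$ and $B$ the two sides, $\calf=\{[\pi_1(B)]\}$, $T$ dual to an arational lamination on $A$, $H$ induced by homeomorphisms of $B$ fixing $c$), one has $A_H$ conjugate to $\pi_1(A)$, while the $V_0$-stabilizer $G_x$ is a conjugate of the peripheral factor $\pi_1(B)$, which meets the adjacent conjugate of $A_H$ only in the cyclic group $\langle c\rangle$ and is certainly not contained in it. For the same reason your adjacency relation ``inclusion'' does not reproduce the incidence of $S_{T,H}$: the correct $T$-free model (the one in the paper's proof) takes $V_1'$ to be the conjugates of $A_H$, $V_0'$ to be the subgroups whose conjugacy class lies in $\calf$ and which intersect at least two distinct conjugates of $A_H$ non-trivially, with adjacency given by non-trivial intersection; non-adjacency is then recovered from malnormality of the family $\calf$ (not of $A_H$), by noting that two $V_0$-vertices at distance $\ge 2$ have stabilizers meeting trivially.

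A second gap is your treatment of the cyclic point stabilizers. You include ``maximal cyclic'' subgroups in your abstract $V_0$, but this sidesteps rather than resolves the real issue: one must show that the stabilizer of a $V_0$-vertex of $S_{T,H}$ is always a conjugate of a factor in $\calf$, i.e.\ rule out the possibility that it is a non-peripheral maximal cyclic point stabilizer (which can occur for arational surface trees). The paper's proof does this by observing that such a vertex would force a cyclic non-peripheral edge stabilizer in $S_{T,H}$, contradicting \cite[Lemma~4.8]{Hor} since $T$ is compatible with $S_{T,H}$. Without this step, and with your modified vertex set and adjacency rule, there is no identification between your abstract graph and $S_{T,H}$ (your graph could acquire spurious cyclic vertices, or depend on whether $T$ is of surface type), so the conclusion $S_{T,H}=S_{T',H}$ does not follow. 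The parts of your ``main obstacle'' paragraph that are on the right track (translates of $Y$ meet in at most a point, so $A_H^g\cap A_H^{g'}$ contains no hyperbolic element and hence lies in a point stabilizer) are indeed the correct mechanism for surjectivity of the comparison map, but they do not repair the two problems above.
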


\begin{proof}
We give an alternative description of $S_{T,H}$, independent of $T$. 
Let $V_1'$ be the set of conjugates of $A_H$, and $V_0'$  be the set of free factors whose conjugacy class belongs to $\calf$ and that intersect 
at least two distinct conjugates of $A_H$ non-trivially.
We put an edge between $A_H^g\in V'_1$ and $P\in V'_0$ if and only if $A_H^g\cap P\neq \{1\}$.
Clearly, the graph $S'$ defined in this way does not depend on $T$.
We claim that $S'$ is equivariantly isomorphic to $S_{T,H}$. 

 We are going to show that there is a well defined map $f:S_{T,H}\ra S'$ sending a vertex $v$ to its stabilizer $G_v$.
The map is well defined on $V_1(S_{T,H})$ by sending $v$ to $G_v\in V'_1$.


The fact that edge stabilizers of $S_{T,H}$ are non-trivial (Lemma~\ref{lemma:edges-sth}) implies that if $v_0\in V_0(S_{T,H})$ then $G_{v_0}$ intersects at least two distinct conjugates of $A_H$
non-trivially, namely the stabilizers of the neighbors of $v_0$.
It also follows that $G_{v_0}$ is non-trivial, and we claim that the conjugacy class of  $G_{v_0}$ lies in $\calf$. If the claim does not hold, then since $G_{v_0}$ is a point stabilizer in $T$, 
Lemma~\ref{lemma:arational-stabilizers} ensures that $G_{v_0}$ is cyclic and non-$\calf$-peripheral.
In particular some edge stabilizer of $S_{T,H}$ is non-$\calf$-peripheral, 
contradicting Lemma \ref{lemma:edges-sth}.
This proves that $G_{v_0}\in V'_0$ and that $f$ is well defined and sends edge to edge.

Since $f$ is clearly injective and surjective on the set of vertices, there remains to show that $f$
preserves non-adjacency. It suffices to prove that for $u\in V_0$ , $v\in V_1$ at distance at least 3,
then  $G_u\cap G_v=\{1\}$. Consider $u'\in [u,v]\cap V_0$ at distance 1 from $v$. Since $\calf$ is a malnormal family,
$G_u\cap G_{u'}=\{1\}$ so $G_u\cap G_v=\{1\}$. This concludes the proof.
\end{proof}

\begin{lemma}\label{lemma:biflexible-trivial}
  For $\TT=\{T_1,\dots,T_p\}\in\calp_{<\infty}(\AT)$, denote by $\Gamma_\TT$
  the intersection of the isometric stabilizers
  of $T_1,\dots, T_p$ in $\ia$. Let $S^0_\TT=S_{T_i,\Gamma_\TT}$ for any $i\leq p$.
The following are equivalent:
  \begin{enumerate}
  \item $\Gamma_\TT$ is infinite; 
  \item $S^0_{\TT}$ is a non-trivial splitting of $F_k$;
  \item $S^0_{\TT}$ is a non-trivial splitting of $F_k$ which is biflexible or has an edge with $\Zmax$ stabilizer.
  \end{enumerate}
\end{lemma}

\begin{proof}
  Clearly, $3\imp 2$.
If $\Gamma_\TT$ is finite, then it is trivial because $\ia$ is torsion-free, so its fixed subgroup $A_{\Gamma_\TT}$ is $F_k$.
It follows that $S^0_\TT$ is a point.
This proves $2\imp 1$.

Now assume that $\Gamma_\TT$ is infinite. Then $A_{\Gamma_\TT}$ is a proper subgroup of $F_k$.
This implies that $F_k$ does not fix a vertex in $S^0_\TT$, so $S^0_\TT$ is non-trivial.
We know that edge stabilizers of $S^0_\TT$ are non-trivial (Lemma~\ref{lemma:edges-sth}), and they are root-closed because vertex stabilizers are.

 So we assume that all edge stabilizers are non-abelian, and prove that $S^0_\TT$ is biflexible.
 We first note that the group of twists of $S^0_\TT$ is trivial.
 The vertex set  $\bar V$ of $S^0_\TT/F_k$ is $\{v_1\}\cup \bar V_0$ where 
 $G_{v_1}$ is conjugate to $A_{\Gamma_\TT}$, and $\bar V_0$ is the image of $V_0$ in $S^0_\TT/F_k$.
 Since $S^0_\TT$ is ${\Gamma_\TT}$-invariant, let $\Gamma_\TT^0$ the finite index subgroup of $\Gamma_\TT$
 acting as the identity on the quotient graph $S^0_\TT/F_k$.
 By Proposition~\ref{prop_suite_exacte},
 the natural map $$\rho:\Gamma_\TT^0\ra \prod_{v\in \bar V} \Out(G_v,\Inc_v)$$
 is injective. 
 Since $\Gamma_\TT$ acts trivially on $A_{\Gamma_\TT}$, the image
 $\rho(\Gamma_\TT^0)$ is contained in $\prod_{v\in \bar V_0} \Out(G_v,\Inc_v)$.
 But since each edge stabilizer of $S^0_\TT$ is contained in a conjugate of $A_{\Gamma_\TT}$,
 $\Gamma_\TT$ also acts trivially on all edge groups,
 so $\rho(\Gamma_\TT^0)$ is in fact contained in $\prod_{v\in \bar V_0} \Out(G_v,\Inc_v^{(t)})$ (see Remark~\ref{rk_normalizer}).
 It follows that there exists at least
one vertex $v_0\in \bar V_0$ such that $\Out(G_{v_0};\Inc_{v_0}^{(t)})$ is infinite.

Consider the free factor system 
$\calf_{|G_{v_1}}$ of $G_{v_1}$ (which is conjugate to $A_{\Gamma_\TT}$). The tree $Y$ from Lemma~\ref{lemma:maximal-fix} is a $(G_{v_1},\calf_{|G_{v_1}})$-tree 
with dense orbits and trivial arc stabilizers. 
In particular, $G_{v_1}$ is not cyclic and $\calf_{|G_{v_1}}$ is a proper free factor system.
Since $\Inc_{v_1}$ is $\calf_{|G_{v_1}}$-peripheral (Lemma~\ref{lemma:edges-sth}), it follows that $G_{v_1}$ has a non-trivial Grushko decomposition relative to incident edge groups.
 Therefore $S^0_\TT$ is biflexible.
\end{proof}

View $\calp_{<\infty}(\AT)$ as the disjoint union over $n$ of $\AT^n/\mathfrak{S}_n$,
and endow it with the corresponding Borel structure.

\begin{lemma} \label{lem:S_T_Borel}
The map $\TT\in\calp_{<\infty}(\AT) \mapsto S^0_{\TT}$ is a Borel map.
\end{lemma}

\begin{proof}
 Using the description of $S^0_\TT$ given in the proof of Lemma \ref{lemma:alt-desc},
    it suffices to check that the map which sends $\TT$ to the conjugacy class of $A_{\Gamma_\TT}$ is Borel. To this end, it is enough to check that for every finitely generated non-$\calf$-peripheral non-abelian subgroup $A\subseteq F_k$, the set of all $\TT\in\calp_{<\infty}(\AT)$ such that $A$ is conjugate to a subgroup of $A_{\Gamma_\TT}$ is Borel. 
  
By Lemma \ref{lemma:maximal-fix}, the group $A$ is conjugate into $A_{\Gamma_\TT}$ if and only if $\Gamma_\TT\subseteq\Gamma_A$, where $\Gamma_A$ is the subgroup of all elements of $\ia$ acting trivially on $A$. This in turn is equivalent to requiring that for every $\alpha\notin\Gamma_A$, there exists $T\in\TT$ such that $\alpha\cdot T\neq T$, a Borel condition.
  \end{proof}  

\begin{proof}[Proof of Proposition \ref{prop:S_T}]
  The map $\TT\mapsto S^0_\TT$ is an equivariant Borel map on $\calp_{<\infty}(\AT)$.
  Since any two homothetic trees have the same isometric stabilizer $\Gamma_\TT$ and since
  $S^0_\TT=S_{T,\Gamma_\TT}$ for any tree $T\in\AT$ whose isometry class is fixed by $\Gamma_\TT$, this map passes to the quotient into a map on $\calp_{<\infty}(\PAT)$. 
  Also, as $S^0_\TT$ is the skeleton of a transverse covering of any tree in $\TT$, it is compatible with every tree in $\TT$ (see e.g.\ \cite[Lemma~4.7]{Gui04}).
  By Lemma~\ref{lemma:biflexible-trivial}, $S^0_\TT$ is biflexible or has a $\Zmax$ edge stabilizer
   if the common isometric
  stabilizer of the trees in $\TT$ is infinite.
\end{proof}

\subsection{A witness map for trees with non-amenable stabilizer}\label{sec:witness2}

In this section, we deduce from Proposition~\ref{prop:S_T} a witness map
on the set of (finite collections of) projective arational trees
with non-amenable stabilizer.

Given a finite collection of arational trees $\TT=\{T_1,\dots,T_n\}\subset\AT$,
we denote by $[\TT]=\{[T_1],\dots,[T_n]\}$ its image in $\PAT$.

\begin{lemma}\label{lemma:stab-amenable}
  For $\TT\in\calp_{<\infty}(\AT)$, the following are equivalent:
  \begin{enumerate}
  \item $\Stab(\TT)$ is amenable;
  \item $\Stab([\TT])$ is amenable;
  \end{enumerate}
\end{lemma}

\begin{proof}
As $\Stab(\TT)\subseteq\Stab([\TT])$, 
it suffices to prove $1\Rightarrow 2$.
Write $\TT=\{T_1,\dots,T_p\}$. 
Assume that $\Stab(\TT)$ is amenable,
and let $\Stab^0([\TT])$ be the finite-index subgroup made of all elements that 
fixes each $[T_i]$ (as opposed to permuting them).

The group $\Stab^0([\TT])$ acts by homothety on each of the trees $T_{i}$, 
and the scaling factors define a homomorphism from $\Stab^0([\TT])$ to the abelian group $(\bbR_+^*)^p$. The kernel $K$ of this homomorphism acts by isometry on each tree $T_{i}$ 
so $K\subseteq\Stab(\TT)$ is amenable. We deduce that $\Stab^0([\TT])$ and $\Stab([\TT])$ are amenable. This concludes our proof.
\end{proof}

We denote by 
$\PnaPAT$ 
the set of non-empty finite subsets $[\TT]$ of $\PAT$
whose stabilizer $\Stab([\TT])$ is non-amenable.

\begin{prop}\label{prop:PATna}
Let $\calf$ be a non-sporadic free factor system of $F_k$.

  There exists an $\Out(F_k,\calf)$-equivariant Borel map assigning to any finite set $[\TT]\in\PnaPAT$ 
a splitting $S^1_{[\TT]}$ 
which is either biflexible or a $\Zmax$-splitting, and 
which is compatible with every tree in $\TT$.
\end{prop}





\begin{proof} 
 Given $[\TT]=\{[T_1],\dots,[T_p]\}\in \PnaPAT$,
 Lemma \ref{lemma:stab-amenable} implies that the common isometric stabilizer of $T_1,\dots,T_p$
 is infinite. 
 Applying Proposition \ref{prop:S_T}, we get a non-trivial splitting $S_{[\TT]}$
 which is either biflexible or has an edge with $\Zmax$ stabilizer.
 We define $S^1_{[\TT]}=S^0_{[\TT]}$ if it is biflexible. If $S^0_{[\TT]}$ has an edge $e$ with $\Zmax$ stabilizer, we define $S^1_{[\TT]}$ by collapsing every edge not in the orbit of $e$ to a point.
\end{proof}

Combining Proposition~\ref{prop:PATna} with Corollary~\ref{cor:biflexible2nice}, we reach the following statement.

\begin{cor}\label{cor:PATna}
Let $\calf$ be a non-sporadic free factor system of $F_k$. There exists an $\Out(F_k,\calf)$-equivariant Borel map assigning to any finite set $[\TT]\in\PnaPAT$ 
a nice splitting $S_{[\TT]}$. \qed
\end{cor}

\newpage

\part{Measure equivalence rigidity of $\Out(F_N)$}\label{part_ME}

\section{Canonical invariant splittings for groupoids}\label{sec_canonical}

In this section, we fix a finitely generated free group $F_k$ with $k\ge 2$. In the sequel of the paper, we will usually apply the results of this section to the case where $F_k=F_N$ with $N\geq 3$, but we will also need to consider the case where $F_k$ is a corank one free factor of $F_N$ in Section~\ref{sec:proof-2}.

Let $\calg$ be a measured groupoid over a base space $Y$, equipped with a cocycle $\rho:\calg\to\ia$. The goal of the present section is to give a criterion ensuring (roughly) that if a subgroupoid $\calh$ has invariant splittings (with respect to the cocycle $\rho$), then so does any subgroupoid that normalizes $\calh$ (see Proposition~\ref{prop:nice-preserved} below for a precise statement). This is done by using all the results obtained in Part~\ref{part2} and transfering them to the groupoid setting.
The chain conditions proved in the previous sections are crucial for this to work.

Let $\Delta$ be a set equipped with an action of $\Out(F_k)$, for instance $\Delta$ could be a set of splittings or (almost) free factor systems.
Recall from Definition \ref{dfn_invariant} that an element $S\in \Delta$ is  \emph{$(\calg,\rho)$-invariant}  if
there  is a conull Borel subset $Y^*\subseteq Y$ such that
$\rho(\calg_{|Y^*})\subseteq\Gamma_S$, where $\Gamma_S$ denotes the stabilizer of $S$ in $\ia$. Recall that nice splittings were defined in Definition \ref{dfn_nice}.

\begin{de}[Stably nice, nice-averse and FS-averse cocycle]\label{dfn_averse_cocycle}
Let $\calg$ be a measured groupoid over a base space $Y$, and let $\rho:\calg\to\ia$ be a cocycle. We say that $(\calg,\rho)$ is
\begin{itemize}
\item \emph{stably nice} if there exists a partition  
  $Y=\Dunion_{i\in I}Y_i$ 
  into at most countably many Borel subsets such that for every $i\in I$,
  there is a $(\calg_{|Y_i},\rho)$-invariant nice splitting of $F_k$;
\item \emph{nice-averse} if for every Borel subset $U\subseteq Y$ of positive measure, there is no $(\calg_{|U},\rho)$-invariant nice splitting of $F_k$.
\item \emph{$\FS$-averse} if for every Borel subset $U\subseteq Y$ of positive measure, there is no $(\calg_{|U},\rho)$-invariant  non-trivial free splitting of $F_k$.
\end{itemize}
\end{de}

Note that a nice-averse cocycle is in particular $\FS$-averse.

We extend the definition of a pure subgroup of $\ia$ (Definition~\ref{dfn_pure}) to groupoids with a cocycle towards $\ia$.

\begin{de}[Pure cocycle]\label{dfn_pure_cocycle}
Let $\calg$ be a measured groupoid over a base space $Y$, equipped with a cocycle $\rho:\calg\to\ia$.

  We say that $(\calg,\rho)$ is \emph{pure} if $(\calg,\rho)$ is $\FS$-averse and there exists a
  $(\calg,\rho)$-invariant almost free factor system $\hat{\calf}$  such that for every Borel subset $U\subseteq Y$ of positive measure, and every $(\calg_{|U},\rho)$-invariant almost free factor system $\hat{\calf}'$, one has $\hat\calf'\sqsubseteq\hat\calf$.

  We say that $(\calg,\rho)$ is \emph{stably pure} if there is a partition
  $Y=\Dunion_{i\in I} Y_i$ 
  into at most countably many Borel subsets such that
for every $i\in I$,  $(\calg_{|Y_i},\rho)$ is pure.
\end{de}

\begin{rk}\label{rk_pure_nonsporadic}
If $(\calg,\rho)$ is pure, then the corresponding \afs\ $\HF$ is necessarily unique.
 It is also non-sporadic because the stabilizer of a sporadic free factor system preserves a free splitting (see Section \ref{sec:factor-systems}). We also insist that, as usual, the \afs\ $\HF=\es$ is allowed in this definition.
\end{rk}

\begin{rk}
  Note that if $Y^*\subset Y$ is a conull Borel subset such that $(\calg_{|Y^*},\rho)$ is stably nice or stably pure
  then so is $(\calg,\rho)$.
\end{rk}

The  goal of this section is to prove the following two propositions,
which are groupoid analogues of Theorem \ref{thm_nice} and Corollary \ref{cor_nice}. 

\begin{prop}\label{prop:stably-nice-vs-pure-nice-averse}
Let $\calg$ be a measured groupoid over a base space $Y$, and let $\rho:\calg\to\ia$ be a cocycle. 

There exists a Borel partition $Y=Y_1\dunion Y_2$ such that $(\calg_{|Y_1},\rho)$ is stably nice, and
$(\calg_{|Y_2},\rho)$ is stably pure and nice-averse.  
\end{prop}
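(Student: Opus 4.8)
The plan is to carry out an exhaustion/maximality argument for the ``stably nice'' part, and then show that the complement is automatically stably pure and nice-averse by reducing to the previously established chain conditions. The only subtlety is that niceness and purity are both only \emph{stably} definable (they allow countable partitions of the base), so one has to be careful to combine countably many partitions into one and to show the process of ``enlarging the stably nice part'' terminates, or at least stabilizes in a suitable sense.

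\textbf{Step 1: the stably nice part.} Consider the collection $\mathcal{S}$ of Borel subsets $Z\subseteq Y$ such that $(\calg_{|Z},\rho)$ is stably nice. This collection is closed under at most countable disjoint unions: if $Z=\Dunion_n Z_n$ with each $(\calg_{|Z_n},\rho)$ stably nice, then refining each of the countably many witnessing partitions of $Z_n$ and taking the common countable partition of $Z$ shows $(\calg_{|Z},\rho)$ is stably nice. Hence there is a Borel subset $Y_1\in\mathcal{S}$ which is maximal in measure among elements of $\mathcal{S}$ (take an increasing sequence exhausting the supremum of the measures and use closure under countable unions; here we may assume $\mu$ is a probability measure by Remark~\ref{rk:proba}). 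Set $Y_2=Y\setminus Y_1$. By maximality, $Y_2$ has the property that no Borel subset $U\subseteq Y_2$ of positive measure admits a $(\calg_{|U},\rho)$-invariant nice splitting (otherwise $Y_1\dunion U$ would be a strictly larger element of $\mathcal{S}$, using that a single invariant nice splitting on $\calg_{|U}$ makes $(\calg_{|U},\rho)$ trivially stably nice). This is precisely the statement that $(\calg_{|Y_2},\rho)$ is nice-averse. In particular $(\calg_{|Y_2},\rho)$ is $\FS$-averse.

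\textbf{Step 2: $Y_2$ is stably pure.} It remains to show $(\calg_{|Y_2},\rho)$ is stably pure. Since $(\calg_{|Y_2},\rho)$ is already $\FS$-averse, what has to be produced is a countable Borel partition $Y_2=\Dunion_i W_i$ such that on each $W_i$ there is a $(\calg_{|W_i},\rho)$-invariant almost free factor system $\HF_i$ dominating every $(\calg_{|U},\rho)$-invariant \afs{} for $U\subseteq W_i$ of positive measure. The key inputs are: (a) any subgroupoid that is $\FS$-averse preserves at least one maximal (possibly empty) invariant \afs{} after a countable partition --- this follows because invariant \afs{}s form a countable set (\afs{}s of $F_k$ are countably many), so an invariant \afs{} is constant on each part of a countable partition, and on each part one picks a maximal one among the at most countably many that occur; (b) the bounded chain condition. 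Concretely: partition $Y_2$ so that on each part some invariant \afs{} $\HF$ occurs, and argue that if on a part $W$ there existed, on arbitrarily small positive-measure subsets, invariant \afs{}s not dominated by a fixed one, one could build by transfinite-free (finite, thanks to the bound) iteration a strictly increasing chain of elementwise-stabilizer subgroupoids, i.e.\ a strictly decreasing chain of cocycle-images, contradicting Corollary~\ref{coro:finitude-facteurs} (finiteness of maximal invariant \afs{}s, which here applies because the groupoid is $\FS$-averse and Corollary~\ref{coro:finitude-facteurs}'s hypothesis ``no invariant free splitting'' is guaranteed). After finitely many such refinement steps --- each time passing to a countable partition --- the process terminates and one is left with a countable partition on which a maximal invariant \afs{} dominates all others, which is exactly purity on each part. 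Combining the (countably many) partitions into one countable partition of $Y_2$ gives stable purity.

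\textbf{Main obstacle.} The delicate point is Step~2: making the ``iterate until you reach a maximal invariant \afs{}'' argument precise in the measured-groupoid setting, i.e.\ showing the refinement process halts after boundedly many stages uniformly over the base space, rather than requiring an uncountable partition. This is exactly where the bounded chain conditions of Part~\ref{part2} (Corollary~\ref{coro:finitude-facteurs}, and behind it Propositions~\ref{prop:chain-FS2} and~\ref{prop:chain-zmax2}) do the work: the bound depends only on the rank $N$, hence is uniform, so after at most $K=K(N)$ refinements one obtains on each piece a maximal invariant \afs{} with the domination property. One must also check the measurability/Borel selection issues when choosing, on each part, ``the'' maximal occurring \afs{} --- but since the relevant sets (\afs{}s, and the event that a given \afs{} is $(\calg_{|U},\rho)$-invariant) are countable/Borel, this is routine. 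With these pieces in place, $Y=Y_1\dunion Y_2$ is the desired partition.
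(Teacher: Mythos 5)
Your Step~1 is exactly the paper's argument (Lemma~\ref{lemma:stably-nice-vs-nice-averse}): maximize the measure of a stably nice piece using closure under countable unions, and observe that the complement is nice-averse. That part is fine.

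Step~2 has a genuine gap. The chain conditions and Corollary~\ref{coro:finitude-facteurs} only give you, on each piece of a countable partition, a \emph{finite, nonempty} collection of invariant almost free factor systems that are everywhere maximal (this is the paper's Lemma~\ref{lem_maxFF}, producing a ``stable reduction collection''). They do \emph{not} give you a single maximal invariant \afs{} dominating every invariant \afs{} on every positive-measure subset, which is what the definition of purity demands: ``everywhere maximal'' is not ``dominant,'' and there may a priori be several pairwise incomparable everywhere-maximal invariant \afs{}s on the same piece. Indeed, your argument as written would show that every $\FS$-averse pair $(\calg_{|Y_2},\rho)$ is stably pure, and that is false: take $\calg$ to be a groupoid over a point given by a subgroup $H\subseteq\IA$ as in Example~\ref{ex:3-surfaces}, which preserves no non-trivial free splitting but has three maximal invariant \afs{}s; this is $\FS$-averse but not (stably) pure. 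The point is that such an $H$ preserves a nice splitting (its dynamical decomposition), so it is not nice-averse --- and nice-averseness, which you never use in Step~2 beyond deducing $\FS$-averseness, is precisely the missing input. The paper closes the gap as follows: if on some $Y_i$ the stable reduction collection $\calc_i$ had at least two elements, then by Lemma~\ref{lemma:filling} $\calc_i$ is filling, and Theorem~\ref{thm_nice}(2) produces a canonical nice splitting $U_{\calc_i}$ invariant under the (infinite) elementwise stabilizer $\Gamma_{\calc_i}$, hence $(\calg_{|Y_i},\rho)$-invariant, contradicting nice-averseness. With that argument inserted, the collection on each piece is a singleton and purity follows; without it, your Step~2 does not reach the conclusion.
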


Recall that a cocycle  $\rho$ is \emph{nowhere trivial} if there is no  Borel subset of positive measure of the base space
of $\calg$ in restriction to which $\rho$ is trivial (Definition \ref{dfn:nowhere_trivial}). 

\begin{prop}\label{prop:nice-preserved}
  Let $\calg$ be a measured groupoid over a base space $Y$, with a cocycle $\rho:\calg\to\ia$.
  Let $\calh,\calh'$ be measured subgroupoids of $\calg$. 
  Assume that  $(\calh,\rho)$ is stably nice and that   $\rho_{|\calh}$ is nowhere trivial.

If $\calh'$ stably normalizes $\calh$, then  $(\calh',\rho)$ is stably nice.

More precisely, there exists a partition $Y=\Dunion_{i\in I}Y_i$
into at most countably many Borel subsets such that for every $i\in I$, there exists a nice splitting $S_i$ which is invariant under both $(\calh_{|Y_i},\rho)$ and $(\calh'_{|Y_i},\rho)$.
\end{prop}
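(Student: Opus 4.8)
The strategy is to reduce Proposition~\ref{prop:nice-preserved} to Theorem~\ref{thm_nice} by transferring the canonical constructions of Part~\ref{part2} to the groupoid setting, one layer at a time, using the three chain conditions (Propositions~\ref{prop:chain-FS2}, \ref{prop:chain-zmax2}, and Corollary~\ref{coro:finitude-facteurs}) to control the countable partitions. First I would record the basic combinatorial mechanism: whenever we have a $(\calg_{|U},\rho)$-invariant object $D(y)$ taking values in a \emph{countable} set (a splitting, a free factor system, an \afs), the map $y\mapsto D(y)$ becomes constant after passing to a countable Borel partition of $U$; so on each piece we may speak of ``\emph{the} $\rho$-invariant object'' and the cocycle $\rho$ maps the restricted groupoid into its $\ia$-stabilizer. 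Conversely, an $(\calh_{|Y_i},\rho)$-invariant splitting $S_i$ gives, after applying $\Gamma_{S_i}$-equivariance of the canonical constructions, something that is \emph{automatically} invariant under any larger groupoid that normalizes $\calh_{|Y_i}$ — this is exactly the role played by the $\Out(F_N)$-equivariance in Theorem~\ref{thm_nice}.

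The heart of the argument is the following groupoid analogue of Theorem~\ref{thm_nice}: if $(\calh,\rho)$ is stably nice and $\rho_{|\calh}$ is nowhere trivial, then after a countable Borel partition $Y=\Dunion_i Y_i$, on each $Y_i$ there is a nice splitting $U_i$ which is $(\calh_{|Y_i},\rho)$-invariant \emph{and} is canonical, in the sense that it only depends on the collection $\calc_i$ of all $(\calh_{|Y_i},\rho)$-invariant nice splittings (or, at the intermediate stages, on the collection of invariant free/$\Zmax$-splittings, or the collection of maximal invariant \afs{}s). To build this I would follow the three cases of the proof of Theorem~\ref{thm_nice}: (1)~partition $Y$ so that on each piece the invariant nice splitting is either a free splitting, a $\Zmax$-splitting, or a bi-nonsporadic one (a countable partition, since ``nice splitting'' ranges over a countable set); (2)~on the free/$\Zmax$ pieces, use the chain conditions to argue that the collection $\FS^{\Gamma}$ (resp.\ $\ZmaxS^{\Gamma}$) of invariant splittings of the \emph{elementwise $\ia$-stabilizer} $\Gamma=\rho(\calh_{|Y_i})$-generated subgroup stabilizes along a further countable partition — concretely, one shows that a strictly decreasing chain of such collections along a nested sequence of Borel subsets would contradict boundedness, so after countably many refinements the collection is locally constant — and then apply Corollary~\ref{cor:uh1-biflexible}, Corollary~\ref{cor:biflexible2nice}, Theorem~\ref{theo:JSJ_zmax} to extract a canonical nice splitting; (3)~on the bi-nonsporadic pieces, apply Lemma~\ref{lemma:binonsporadic2F} to pass to a filling collection of free factor systems, then invoke the dynamical decomposition (Theorem~\ref{thm_dynamic_dec}) and Remark~\ref{rk_binonsporadic}, again using Corollary~\ref{coro:finitude-facteurs} to see that the relevant collections of maximal invariant \afs{}s stabilize after a countable partition. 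The nowhere-triviality of $\rho_{|\calh}$ is used to guarantee that the ``elementwise stabilizer'' is infinite on each piece of positive measure, which is the hypothesis feeding Theorem~\ref{thm_nice}; without it the canonical splitting could be trivial.

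Granting this groupoid version of Theorem~\ref{thm_nice}, Proposition~\ref{prop:nice-preserved} follows quickly. Suppose $\calh'$ stably normalizes $\calh$, so there is a countable Borel partition $Y^*=\Dunion_n Z_n$ with $\calh'_{|Z_n}$ normalizing $\calh_{|Z_n}$. Restricting the canonical partition from the previous paragraph and intersecting, we get a common countable Borel partition $Y^* = \Dunion_i Y_i$ on each piece of which there is a nice splitting $U_i$ which is $(\calh_{|Y_i},\rho)$-invariant and depends only on the collection $\calc_i$ of $(\calh_{|Y_i},\rho)$-invariant nice splittings. Since $\calh'_{|Y_i}$ normalizes $\calh_{|Y_i}$, conjugating by an arrow $g\in\calh'_{|Y_i}$ sends a $(\calh_{|Y_i},\rho)$-invariant nice splitting to another one — more precisely, $\rho(g)\cdot\calc_i = \calc_i$ up to a conull set, because the set of $\rho$-invariant nice splittings is determined intrinsically by the pair $(\calh_{|Y_i},\rho)$ and is carried to itself under the normalizing action — hence $\rho(g)$ preserves $U_{\calc_i}=U_i$ by $\Out(F_N)$-equivariance of the assignment $\calc\mapsto U_\calc$ in Theorem~\ref{thm_nice}. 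Thus $U_i$ is $(\calh'_{|Y_i},\rho)$-invariant as well, which is exactly the ``more precise'' conclusion, and in particular $(\calh',\rho)$ is stably nice.

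\textbf{Main obstacle.} The delicate point is the second paragraph: making rigorous the claim that the canonical invariant splitting on a piece $Y_i$ can be chosen to depend \emph{only} on the (countable) collection of invariant nice splittings, after finitely-or-countably many Borel refinements, rather than requiring infinitely many nested partitions at once. This is precisely where one must feed the bounded chain conditions into a Borel-measurable induction: one shows that any attempt to strictly decrease the relevant collection of splittings/factor-systems along a descending chain of positive-measure Borel subsets must terminate after a bounded number of steps (bound depending only on $N$), so that a standard exhaustion argument produces, off a null set, a countable partition on each piece of which the collection — and hence the canonical splitting built from it via the $\Out(F_N)$-equivariant machinery of Theorem~\ref{thm_nice} — is constant. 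Handling the three types of nice splittings simultaneously (free, $\Zmax$, bi-nonsporadic), and the passage between them in case~(3) via Lemma~\ref{lemma:binonsporadic2F}, is the part that requires the most care, but each individual transfer is routine given the chain conditions already established.
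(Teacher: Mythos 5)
Your proposal follows essentially the same route as the paper: after countable Borel partitions governed by the chain conditions, you produce saturated (restriction-stable) collections of invariant free splittings, $\Zmax$-splittings, or maximal invariant \afs{}s, show by conjugating along bisections that these collections are preserved by the normalizing groupoid, and then feed them into the $\Out(F_N)$-equivariant constructions packaged in Theorem~\ref{thm_nice} — which is exactly the content of the paper's Lemmas~\ref{lemma:fs-part}, \ref{lemma:fs-canonical}, \ref{lemma:invariant-zmax-groupoid}, \ref{lemma:case-bi-nonsporadic} and Corollary~\ref{cor:case-non-fs-averse}. The only imprecision is directional: under restriction to smaller positive-measure subsets the collections of invariant objects grow while their elementwise $\ia$-stabilizers shrink, so the bounded chains actually invoked (Propositions~\ref{prop:chain-FS2} and~\ref{prop:chain-zmax2}) are chains of stabilizers rather than of decreasing collections, but this does not affect the exhaustion mechanism you describe.
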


\subsection{Stably nice vs pure partition}

After a partition of the base space, $(\calg,\rho)$ is either stably nice or nice-averse, as shown in the following lemma.

\begin{lemma}\label{lemma:stably-nice-vs-nice-averse}
Let $\calg$ be a measured groupoid over a base space $Y$, and let $\rho:\calg\to\ia$ be a cocycle. 

There exists a Borel partition $Y=Y_1\dunion Y_2$ such that $(\calg_{|Y_1},\rho)$ is stably nice, and $(\calg_{|Y_2},\rho)$ is nice-averse.
\end{lemma}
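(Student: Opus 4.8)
The statement is a routine "exhaustion" argument, of the kind that appears throughout the theory of measured groupoids: one wants to carve off a maximal piece of the base space on which an invariant object exists. The plan is as follows. First I would consider the collection $\mathcal{U}$ of all Borel subsets $U \subseteq Y$ of positive measure such that $(\calg_{|U},\rho)$ admits an invariant nice splitting, i.e.\ there is a nice splitting $S$ of $F_k$ with $\rho(\calg_{|U}) \subseteq \Gamma_S$ up to a null set. Set $m$ to be the supremum of $\mu(U)$ over $U \in \mathcal{U}$ (with the convention $m = 0$ if $\mathcal{U} = \es$). If $m = 0$, then by definition every Borel subset of positive measure fails to carry an invariant nice splitting, so $(\calg,\rho)$ is already nice-averse, and we take $Y_1 = \es$, $Y_2 = Y$.

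So assume $m > 0$. Choose a sequence $U_n \in \mathcal{U}$ with $\mu(U_n) \to m$, with associated nice splittings $S_n$. Let $Y_1 := \bigcup_n U_n$, up to a null set, and $Y_2 := Y \setminus Y_1$. The key claim is that $(\calg_{|Y_1},\rho)$ is stably nice. To see this, first decompose $Y_1$ into the countably many Borel pieces $Z_n := U_n \setminus (U_0 \cup \dots \cup U_{n-1})$; then $Z_n \subseteq U_n$, so $\rho(\calg_{|Z_n}) \subseteq \rho(\calg_{|U_n}) \subseteq \Gamma_{S_n}$ up to a null set, hence $S_n$ is a $(\calg_{|Z_n},\rho)$-invariant nice splitting. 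This is exactly the definition of $(\calg_{|Y_1},\rho)$ being stably nice.

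It remains to check that $(\calg_{|Y_2},\rho)$ is nice-averse, and this is the only point where one must be slightly careful. Suppose not: then there is a Borel subset $V \subseteq Y_2$ of positive measure and a nice splitting $S$ with $\rho(\calg_{|V}) \subseteq \Gamma_S$ up to a null set, i.e.\ $V \in \mathcal{U}$. Since $V$ is disjoint from each $U_n$ (up to null sets), the set $U_n \dunion V$ has the property that $\rho(\calg_{|U_n})\subseteq \Gamma_{S_n}$ and $\rho(\calg_{|V})\subseteq \Gamma_S$, but a priori $S_n \neq S$, so $U_n \dunion V$ need not itself lie in $\mathcal{U}$ — this is the subtlety. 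However, this does not matter: for $n$ large enough that $\mu(U_n) > m - \mu(V)$, we simply observe that $V$ itself is an element of $\mathcal{U}$ with positive measure disjoint from $Y_1$; but then $\mu(Y_1 \cup V) > m$. To turn this into a genuine contradiction, I would instead redo the supremum argument more carefully: replace $\mathcal{U}$ by the set of pairs $(U,S)$ and, after fixing a countable enumeration of nice splittings (the set of nice splittings of $F_k$ is countable, as each is determined by a graph of groups with finitely generated vertex and edge groups up to the countably many isomorphism types), for each nice splitting $S$ let $m_S = \sup\{\mu(U) : \rho(\calg_{|U}) \subseteq \Gamma_S \text{ a.e.}\}$ and pick $U_{S,j}$ realizing it in the limit; set $Y_1 = \bigcup_{S,j} U_{S,j}$. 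Then $Y_1$ is a countable union of sets each carrying a fixed invariant nice splitting, so $(\calg_{|Y_1},\rho)$ is stably nice by the same decomposition as above, and if $V \subseteq Y_2$ had positive measure with $\rho(\calg_{|V}) \subseteq \Gamma_S$ for some nice $S$, then $\mu(Y_1 \cap (\bigcup_j U_{S,j})) \ge m_S$ forces $\mu(V \setminus \bigcup_j U_{S,j}) = 0$ by maximality of $m_S$, hence $\mu(V) = 0$, a contradiction. The main (mild) obstacle is thus purely bookkeeping: one must organize the exhaustion splitting-by-splitting rather than globally, because distinct pieces of positive measure may require distinct invariant nice splittings and there is no reason a single splitting works on their union.
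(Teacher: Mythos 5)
There is a genuine gap, and it sits exactly where you located the subtlety but did not resolve it. Your repaired exhaustion fixes the splitting $S$ and uses the class $\mathcal{C}_S=\{U\subseteq Y:\rho(\calg_{|U})\subseteq\Gamma_S \text{ a.e.}\}$, but this class is \emph{not} closed under (even disjoint) unions: $\calg_{|U\cup V}$ contains arrows with source in $U$ and range in $V$, and nothing forces their $\rho$-images to lie in $\Gamma_S$ just because $\rho(\calg_{|U})$ and $\rho(\calg_{|V})$ do. Consequently the inference ``$\mu\bigl(\bigcup_j U_{S,j}\bigr)\ge m_S$ forces $\mu\bigl(V\setminus\bigcup_j U_{S,j}\bigr)=0$ by maximality of $m_S$'' is unjustified: it would require $U_{S,j}\cup V$ (or $\bigl(\bigcup_j U_{S,j}\bigr)\cup V$) to again lie in $\mathcal{C}_S$. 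A schematic counterexample: $Y$ split into two halves $Y'$ and $Y''$, each separately in $\mathcal{C}_S$ with $m_S=\mu(Y')=\mu(Y'')$, but with arrows from $Y'$ to $Y''$ whose $\rho$-image does not stabilize $S$; the maximizing sequence $U_{S,j}=Y'$ for all $j$ is legitimate, yet your $Y_2$ then contains $Y''$, which carries the invariant splitting $S$, so $(\calg_{|Y_2},\rho)$ is not nice-averse. The same defect already undermines your first-pass argument; restricting to one splitting at a time does not remove it, because the obstruction is not ``different splittings on different pieces'' but ``arrows crossing between pieces''.

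The paper avoids this by running the exhaustion over the right class: choose $Y_1$ of maximal measure such that $(\calg_{|Y_1},\rho)$ is \emph{stably} nice. Stable niceness only constrains $\rho(\calg_{|Y_i})$ for the pieces $Y_i$ of a countable partition, never the crossing arrows, so this class \emph{is} closed under countable unions (disjointify the sets of a measure-maximizing sequence and refine the partitions; restriction preserves stable niceness), and hence the supremum is attained by $Y_{1,\infty}=\bigcup_n Y_{1,n}$. Maximality then gives nice-averseness of $Y_2=Y\setminus Y_1$ immediately: if $V\subseteq Y_2$ had positive measure with a $(\calg_{|V},\rho)$-invariant nice splitting, then $Y_1\dunion V$ would again be stably nice (just append $V$ to the partition) and of strictly larger measure. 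Your argument becomes correct if you replace your per-splitting classes $\mathcal{C}_S$ by the class of stably nice subsets; as a side remark, the countability of the set of nice splittings, which you invoke, is then not needed at all.
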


\begin{proof}
Without loss of generality, we assume that the measure on $Y$ is a probability measure (see Remark \ref{rk:proba}).
We choose for $Y_1$ a Borel subset of maximal measure such that $(\calg_{|Y_1},\rho)$ is stably nice: this exists because if $(Y_{1,n})_{n\in\mathbb{N}}$ is a measure-maximizing sequence of such sets, then letting $Y_{1,\infty}=\bigcup Y_{1,n}$, we  have again that $(\calg_{|Y_{1,\infty}},\rho)$ is stably nice. The proof is then completed by letting $Y_2=Y\setminus Y_1$: the maximality of $Y_1$ ensures that $(\calg_{|Y_2},\rho)$ is nice-averse.
\end{proof}

To prove Proposition \ref{prop:stably-nice-vs-pure-nice-averse}, we will show that a nice-averse cocycle is stably pure (Lemma~\ref{lemma:nice-averse_is_stably_pure} below). A key step will be Lemma~\ref{lem_maxFF}, based on the chain condition, showing the existence of a collection of maximal invariant  almost free factor systems, up to partitioning the base space. We first introduce a few definitions.

\begin{de}[Everywhere maximal, dominant, stable reduction collection]
  Let $\calh$ be a measured groupoid over a base space $Y$, let $\rho:\calh\to\ia$ be a cocycle.
  
An $(\calh,\rho)$-invariant \afs\ $\HF$ is \emph{$(\calh,\rho)$-everywhere maximal} if for every Borel subset $U\subseteq Y$ of positive measure, there is no \afs\ $\HF'\sqsupsetneq \HF$ which is $(\calh_{|U},\rho)$-invariant .

A collection $\calc$ of $(\calh,\rho)$-invariant \afs{}s is \emph{$(\calh,\rho)$-dominant}
if for every Borel subset $U\subseteq Y$ of positive measure and every $(\calh_{|U},\rho)$-invariant \afs\ $\HF$, there exists $\HF'\in\calc$ such that $\HF\sqsubseteq\HF'$.

A collection $\calc$ of $(\calh,\rho)$-invariant \afs{}s is a \emph{stable reduction collection}
for $(\calh,\rho)$ if it is $(\calh,\rho)$-dominant and if every $\HF\in\calc$ is $(\calh,\rho)$-everywhere maximal.
\end{de}

\begin{rk} 
If $(\calh,\rho)$ has a stable reduction collection $\calc$, then it is unique.  Indeed, assume that $\calc'$ is another stable reduction collection for $(\calh,\rho)$, and let $\HF\in\calc$. As $\HF$ is $(\calh,\rho)$-invariant and $\calc'$ is $(\calh,\rho)$-dominant, we deduce that there exists $\HF'\in\calc'$ such that $\HF\sqsubseteq\HF'$. As $\HF$ is  $(\calh,\rho)$-everywhere maximal, it follows that $\HF=\HF'$. Hence $\calc\subseteq\calc'$, and by symmetry they are actually equal. 

Moreover, for every Borel subset $U\subseteq Y$ of positive measure, $\calc$ is also a stable reduction collection for $(\calh_{|U},\rho)$.

Note that $(\calh,\rho)$ is pure if and only if it is $\FS$-averse and it has a stable reduction collection $\calc$ consisting of a single almost free factor system (which may, as usual, be the empty \afs ).
\end{rk}

\begin{lemma}\label{lemma:maxFF_unique}
Let $\calg$ be a measured groupoid over a base space $Y$ with a cocycle $\rho:\calg\ra\ia$.
Let $\calh,\calh'$ be two measured subgroupoids.
Assume that $\calc$ is a stable reduction collection for $(\calh,\rho)$ and that $\calh'$ normalizes $\calh$.

Then $\calc$ is $(\calh',\rho)$-invariant.

If $\calc$ is finite, then every element of $\calc$ is  $(\calh',\rho)$-invariant.
\end{lemma}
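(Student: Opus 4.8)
\textbf{Proof plan for Lemma~\ref{lemma:maxFF_unique}.}

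The plan is to exploit the fact that a stable reduction collection is \emph{unique}, which was observed in the remark preceding the lemma. First I would fix a countable family of Borel subsets $B_n\subseteq\calg$ witnessing that $\calh'$ normalizes $\calh$, so that $\calh'=\bigcup_n B_n$ and each $B_n$ is a bisection with $\calh$ being $B_n$-invariant; let $f_n:s(B_n)\to r(B_n)$ be the associated partial isomorphism. The key point is that conjugation by (the arrows in) $B_n$ carries the groupoid $\calh_{|s(B_n)}$ isomorphically onto $\calh_{|r(B_n)}$, hence transports the cocycle $\rho$ up to an inner twist. More precisely, if $\HF$ is $(\calh,\rho)$-invariant, I would check that its image under the partial isomorphism $f_n$ is again $(\calh_{|r(B_n)},\rho)$-invariant: for an arrow $g\in B_n$ with $s(g)=x$, $r(g)=y$, and $h\in\calh$ an arrow with both endpoints over $f_n$-related points, $B_n$-invariance of $\calh$ gives that $ghg^{-1}\in\calh$, and the cocycle relation gives $\rho(ghg^{-1})=\rho(g)\rho(h)\rho(g)^{-1}$; since $\rho(h)$ preserves $\HF$ (up to a null set) and $\HF$ is a set of conjugacy classes, $\rho(g)\rho(h)\rho(g)^{-1}$ preserves $\HF$ as well. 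Running this over a countable generating family and intersecting the conull sets, one gets that the whole collection $\{f_n\}$ of partial isomorphisms preserves $(\calh,\rho)$-invariance of almost free factor systems, and the analogous statement for everywhere-maximality and for dominance follows because these are preserved by restriction and the $f_n$ are measure-class preserving partial isomorphisms of the base.

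From this I would conclude that $\calc$, viewed as a stable reduction collection, is mapped by each $f_n$ to a stable reduction collection for $(\calh_{|r(B_n)},\rho)$; but by uniqueness of the stable reduction collection (and the fact that $\calc$ restricted to any positive-measure Borel subset is again the stable reduction collection there), this image must agree with $\calc$ restricted to $r(B_n)$. Unwinding what it means for a countable collection of conjugacy classes of subgroups of $F_k$ to be ``fixed'' by all the $f_n$ simultaneously: since $\IA$ acts on the countable set $\cald$ of almost free factor systems, the statement ``$\calc$ is $f_n$-invariant for all $n$'' is exactly the assertion that $\calc$, as a (possibly non-constant over $Y$) assignment, is $(\calh',\rho)$-invariant in the sense of Definition~\ref{dfn_invariant}. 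Here one uses that $\calc$ itself is a fixed, base-independent collection, so the relevant map $Y\to\calp(\cald)$ is constant and the equivariance condition $\rho(g)\cdot\calc=\calc$ holds for a.e.\ arrow $g$ of $\calh'$.

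For the final sentence, suppose $\calc=\{\HF_1,\dots,\HF_m\}$ is finite. Then $\calh'$ (via $\rho$) acts on this finite set, so each $\HF_j$ has finite orbit; since the action factors through $\IA$, which is torsion-free and in which finite-orbit phenomena for free factor systems are genuine invariance by Theorem~\ref{theo:ia}, one deduces that each $\HF_j$ is individually $(\calh',\rho)$-invariant. Concretely, I would partition $Y$ according to the (locally constant, hence a.e.\ constant on pieces of a countable partition) permutation of $\calc$ induced by $\rho$ on $\calh'$-arrows, observe that on each piece the permutation must be trivial because $\IA$ has no nontrivial finite-order action arising this way on almost free factor systems extracted from invariant data (again Theorem~\ref{theo:ia}, applied to the free factors and cyclic groups making up each $\HF_j$), and conclude that each $\HF_j$ is $(\calh'_{|Y_i},\rho)$-invariant on every piece, hence $(\calh',\rho)$-invariant after reassembling via Remark~\ref{rk:stably-invariant}.

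\textbf{Main obstacle.} The technical heart is the bookkeeping in the first paragraph: making precise, at the level of bisections and partial isomorphisms, that ``$\calh$ is $B_n$-invariant'' plus ``$\rho$ is a cocycle'' forces the $f_n$ to preserve the property of being an $(\calh,\rho)$-invariant everywhere-maximal almost free factor system, and doing so while only ever intersecting countably many conull sets. Once that equivariance-transport statement is in hand, the appeal to uniqueness of the stable reduction collection and to Theorem~\ref{theo:ia} for the finite case is routine.
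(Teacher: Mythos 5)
Your proposal follows essentially the same route as the paper's proof: write $\calh'$ as a countable union of bisections witnessing normalization, transport the stable reduction collection along them, invoke its uniqueness to get setwise invariance, and use Handel--Mosher rigidity inside $\ia$ for the finite case. One step, however, is misstated in a way worth correcting. You claim that if $\rho(h)$ preserves $\HF$ then $\rho(g)\rho(h)\rho(g)^{-1}$ ``preserves $\HF$ as well''; this is false in general: the conjugate preserves the translate $\rho(g)\cdot\HF$, not $\HF$ itself. The fact that $\HF$ is a set of $F_k$-conjugacy classes only makes it insensitive to inner automorphisms, while $\rho(g)\in\ia$ genuinely moves almost free factor systems around --- and this is precisely why the lemma only asserts setwise invariance of $\calc$ in general. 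The correct transport statement (obtained most cleanly by subdividing each bisection $B_n$ so that $\rho$ is constant on it, with value $\gamma_n$, as the paper does) is that $\gamma_n\cdot\calc$ is a stable reduction collection for $\calh$ restricted to the range of $B_n$, and uniqueness then forces $\gamma_n\cdot\calc=\calc$. Your second paragraph in fact runs on this corrected version (you speak of the ``image'' of $\calc$ agreeing with $\calc$ by uniqueness), so the argument is repaired simply by replacing ``preserves $\HF$'' with ``preserves $\rho(g)\cdot\HF$''. For the finite case, your idea matches the paper's, but you should cite Theorem~\ref{theo:ia-element} alongside Theorem~\ref{theo:ia}: an almost free factor system may contain conjugacy classes of cyclic (unused boundary) subgroups, whose periodicity is promoted to invariance by the statement about elements rather than the one about free factors; with both, the global stabilizer of the finite set $\calc$ in $\ia$ coincides with its elementwise stabilizer, which is exactly how the paper concludes.
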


We mention that finiteness of $\calc$ is in fact automatic when $(\calh,\rho)$ is $\FS$-averse, as will be established in Lemma~\ref{lem_maxFF} below.

\begin{proof} 
  By definition  of normalization of subgroupoids (see Definition~\ref{de:normal} and Remark~\ref{rk:bisection}), one can write $\calh'$
  as a countable union of bisections $B_n\subset \calh'$ such that
for all $h\in\calg$ and all $\phi,\psi\in B_n$ such that $\phi h \psi\m$ is defined, 
one has $h\in\calh$ if and only if $\phi h \psi\m\in \calh$.
Up to subdividing the bisections, we may assume additionally that the cocycle $\rho$ maps $B_n$
to a single value $\gamma_n\in\ia$.
Let $f_{B_n}:U_n\ra V_n$ be the partial isomorphism associated to $B_n$, with $U_n,V_n\subset Y$.

Assume that $U_n$ and $V_n$ have positive measure.
If $\HF$ is any $(\calh_{|U_n},\rho)$-invariant \afs, then $\gamma_n\HF$ is  $(\calh_{|V_n},\rho)$-invariant.
It follows that if $\HF$ is  $(\calh_{|U_n},\rho)$-everywhere maximal, $\gamma_n\HF$ is  $(\calh_{|V_n},\rho)$-everywhere maximal.
Similarly, since $\calc$ is a stable reduction collection for $(\calh_{|U_n},\rho)$,
$\gamma_n\calc$ is a stable reduction collection for $(\calh_{|V_n},\rho)$.
By uniqueness of the stable reduction collection, $\gamma_n\calc=\calc$ if $U_n$ has positive measure.

Discarding the bisections with zero measure, 
since $\calh'$ is the union of the bisections $B_n$, it follows that $\calc$ is $(\calh',\rho)$-invariant.
 Under the additional assumption that $\calc$ is finite, the setwise stabilizer of $\calc$ in $\ia$ coincides with its elementwise stabilizer
by Theorems~\ref{theo:ia} and~\ref{theo:ia-element}. We thus conclude that every $\HF\in\calc$ is $(\calh',\rho)$-invariant.
\end{proof}

Under the assumption that $(\calh,\rho)$ is $\FS$-averse, the following lemma shows the existence of stable reduction collections up to partitioning the base space,
and shows that such collections are finite.

\begin{lemma}\label{lem_maxFF}
 Let $\calh$ be a measured groupoid over a base space $Y$, equipped with a cocycle $\rho:\calh\to\ia$. Assume that $(\calh,\rho)$ is $\FS$-averse.

 Then there exists a countable Borel partition $Y=\Dunion_{i\in I} Y_i$
 such that for every $i\in I$, there exists a stable reduction collection $\calc_i$ for $(\calh_{|Y_i},\rho)$, and $\calc_i$ is finite and nonempty.
\end{lemma}

\begin{rk}
The non-emptiness of $\calc_i$ means that it contains at least one free factor system, 
this free factor system is allowed to be the empty free factor system. 
In other words $\calc_i=\{\emptyset\}$ is allowed.
\end{rk}
 
\begin{proof}
  We first claim that for every subset $Y'\subseteq Y$ of positive measure,
  there are at most $\max\{1,\frac{k-1}{2}\}$
   \afs{}s which are both $(\calh_{|Y'},\rho)$-invariant and $(\calh_{|Y'},\rho)$-everywhere maximal.
  Indeed, assume that $\HF_1$,\dots,$\HF_r$ are $(\calh_{|Y'},\rho)$-invariant and $(\calh_{|Y'},\rho)$-everywhere maximal.
  Let $H=\Gamma_{\{\HF_1,\dots,\HF_r\}}$ be the elementwise stabilizer of this collection. 
  Up to replacing $Y'$ by a  Borel subset of full measure, we have $\rho(\calh_{|Y'})\subseteq H$.
  Since $(\calh,\rho)$ is FS-averse, 
  the group $H$ preserves no non-trivial free splitting.
  Similarly, since $\HF_i$ is $(\calh_{|Y'},\rho)$-everywhere maximal, this implies that $\HF_i$ is a maximal $H$-invariant \afs.
  Corollary~\ref{coro:finitude-facteurs} then gives the bound  $r\leq \max\{1,\frac{k-1}{2}\}$ and proves the claim.

  We will use several times the following fact: if $\HF$ is an \afs\ which is  $(\calh_{|Y'},\rho)$-invariant,
  then one can find a countable Borel partition $Y'=\Dunion_i Y'_i$
  and an \afs\  $\HF_i\sqsupseteq \HF$ which is $(\calh_{|Y_i},\rho)$-invariant  and $(\calh_{|Y_i},\rho)$-everywhere maximal. 
  Indeed, using a maximizing sequence as in Lemma \ref{lemma:stably-nice-vs-nice-averse}, 
  consider a Borel subset of maximal measure $Z\subset Y'$ such that 
   $Z$ has a countable partition $Z=\Dunion_j Z_j$
  where there exists $\HF_j\sqsupsetneq \HF$ that is invariant on $Z_j$.
  Define $Y''=Y'\setminus Z$, and note that $\HF$ is  everywhere maximal on $Y''$.
  
  Since there is a bound
  on the size of a chain of
  almost free factor systems of $F_k$, repeating this construction finitely many times yields the desired partition.

  We first use this fact (with $\HF$ the empty free factor system) to find a countable Borel partition
  $Y=\Dunion_i Z^1_i$ such that  for every $i$, there exists an $(\calh_{|Z^1_i},\rho)$-invariant
  almost free factor system $\hat\calf_i$ which is  $(\calh_{|Z_i^1},\rho)$-everywhere maximal.
  
Using this fact again, one may then find a countable Borel  partition $Z^1_i=Y^1_i\dunion (\Dunion_j Z^{\geq 2}_{i,j})$,
  such that
  \begin{itemize}
  \item $(\calh_{|Y^1_i},\rho)$ is pure: every \afs\ $\HF'$
    which is $(\calh_{|U},\rho)$-invariant for some  Borel subset of positive measure
    $U\subseteq Y^1_i$ satisfies $\HF'\sqsubseteq\HF_i$;
  \item for each $j$, there are at least two  everywhere maximal invariant \afs{}s on $Z^{\geq 2}_{i,j}$.
\end{itemize}

Repeating this process $r$ times, one gets a partition of $Y$ into pieces where the lemma holds (with a
stable reduction collection of cardinality at most $r-1$),
and pieces on which there are at 
least $r$ distinct invariant \afs{}s  which are  everywhere maximal.
Taking $r>\max\{1,\frac{k-1}{2}\}$ concludes the proof.
\end{proof}

\begin{lemma}\label{lemma:nice-averse_is_stably_pure}
  Let $\calh$ be a measured groupoid over a base space $Y$, equipped with a cocycle $\rho:\calh\to\ia$.

  If $(\calh,\rho)$ is nice-averse, then it is stably pure.
\end{lemma}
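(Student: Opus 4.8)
\textbf{Proof plan for Lemma~\ref{lemma:nice-averse_is_stably_pure}.}
Assume $(\calh,\rho)$ is nice-averse; in particular it is $\FS$-averse. The plan is to apply Lemma~\ref{lem_maxFF} to get a countable Borel partition $Y=\Dunion_{i\in I}Y_i$ together with, for each $i$, a finite nonempty stable reduction collection $\calc_i$ for $(\calh_{|Y_i},\rho)$. Since being pure means being $\FS$-averse and having a stable reduction collection consisting of a single almost free factor system, it suffices to show that for each $i$ we may further partition $Y_i$ so that on each piece $\calc_i$ becomes a singleton. So fix $i$ and suppose, towards a contradiction, that $\calc_i$ has at least two distinct elements $\HF_1\neq\HF_2$ on some Borel subset $U\subseteq Y_i$ of positive measure (if $\#\calc_i=1$ on a piece there is nothing to do, and by the everywhere-maximality built into a stable reduction collection, once two distinct elements of $\calc_i$ coexist they do so on all of $Y_i$ up to null sets; otherwise first split $Y_i$ according to which elements of $\calc_i$ survive, which is legitimate since $\calc_i$ is finite).

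The key point is then to derive a contradiction with nice-averseness. Restricting to a conull subset, we have $\rho(\calh_{|U})\subseteq H$ where $H=\Gamma_{\calc_i}$ is the elementwise stabilizer of $\calc_i$ in $\ia$. Since $(\calh_{|U},\rho)$ is $\FS$-averse, $H$ preserves no non-trivial free splitting of $F_k$. Moreover each $\HF_j\in\calc_i$ is $(\calh_{|U},\rho)$-everywhere maximal, hence is a maximal $H$-invariant almost free factor system; and there are at least two of them. Therefore $H$ is not pure in the sense of Definition~\ref{dfn_pure}. I would now invoke Theorem~\ref{thm_dynamic_dec} applied to $H$ (more precisely to $H\cap\IA=H$, already inside $\IA$): since $H$ is not pure and preserves no non-trivial free splitting, its dynamical decomposition $\Ud_H$ is a non-trivial $H$-invariant splitting, and by Remark~\ref{rk_binonsporadic} together with Theorem~\ref{thm_dynamic_dec}\ref{it_Vapur} it has at least two orbits of active vertices, each of whose incident edge groups form a non-sporadic free factor system; blowing up the active vertices into non-sporadic free splittings (or using Theorem~\ref{thm_nice} directly on the filling collection $\calc_i$) produces a nice splitting $U_{\calc_i}$ canonically associated to $H$, hence $H$-invariant. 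Concretely, Theorem~\ref{thm_nice}(2) applies because $\calc_i$ is a filling collection of almost free factor systems (filling because the $\HF_j$ are everywhere maximal and distinct, so no common $\HF$ dominates both — this is exactly the content of Lemma~\ref{lemma:filling} at the level of $H$) whose elementwise stabilizer $H$ is infinite (it is infinite since $\rho_{|\calh_{|U}}$ takes values in $H$ and $\calh_{|U}$ is of infinite type — or one argues that a finite $H$ would force $\calh_{|U}$ to be trivial on a positive measure subset, contradicting nice-averseness trivially). Thus there is a nice splitting of $F_k$ invariant under $\rho(\calh_{|U})$, i.e.\ $(\calh_{|U},\rho)$ has an invariant nice splitting, contradicting nice-averseness.

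This contradiction shows that on each $Y_i$ (after the preliminary finite splitting according to which elements of $\calc_i$ survive) the stable reduction collection is a singleton, so $(\calh_{|Y_i},\rho)$ is pure. Hence $(\calh,\rho)$ is stably pure, as claimed.

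\textbf{Main obstacle.} The delicate point is not the logical skeleton but checking the hypotheses of Theorem~\ref{thm_nice}(2) at the groupoid level: namely that $\calc_i$ is genuinely \emph{filling} and that its elementwise stabilizer $H=\Gamma_{\calc_i}$ is genuinely infinite. Fillingness should follow because the $\HF_j$ are everywhere maximal and pairwise incomparable, but one must be careful that "everywhere maximal relative to the groupoid" really forces the purely group-theoretic notion of filling for $H$ — this is where one uses that the global and elementwise stabilizers of a finite collection of almost free factor systems coincide on $\IA$ (Theorems~\ref{theo:ia} and~\ref{theo:ia-element}, already exploited in Lemma~\ref{lemma:maxFF_unique}). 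Infiniteness of $H$ is the other thing to nail down: if $H$ were finite then on a conull subset of $U$ the cocycle $\rho$ would land in a finite group, but an action-type-style recurrence argument is not available here since $\rho$ is an arbitrary cocycle; instead one should split $U$ further along the countably many values of a finite-valued cocycle to reduce to $\rho(\calh_{|U})=\{1\}$ on a positive measure piece, on which \emph{any} non-trivial nice splitting is invariant, again contradicting nice-averseness. Handling this last reduction cleanly — and making sure the various partitions of the base space remain countable — is the part that requires the most care.
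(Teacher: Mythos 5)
Your proposal is correct and follows essentially the same route as the paper: partition the base via Lemma~\ref{lem_maxFF}; on a piece $Y_i$ with $\#\calc_i\ge 2$ observe that $\rho(\calh_{|Y_i^*})\subseteq\Gamma_{\calc_i}$ for a conull $Y_i^*\subseteq Y_i$, that $\Gamma_{\calc_i}$ is infinite and preserves no non-trivial free splitting, and that everywhere-maximality makes each element of $\calc_i$ a maximal $\Gamma_{\calc_i}$-invariant almost free factor system, so Lemma~\ref{lemma:filling} gives fillingness and Theorem~\ref{thm_nice}(2) produces a $\Gamma_{\calc_i}$-invariant nice splitting contradicting nice-averseness (your detour through $\Ud_H$ merely unpacks that theorem, and the appeal to Theorems~\ref{theo:ia} and~\ref{theo:ia-element} is not needed for this step). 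The one point you flag as delicate, infiniteness of $\Gamma_{\calc_i}$, is resolved more simply than by your ``partition along the values of a finite-valued cocycle'' maneuver (which is not a legitimate operation, since cocycles are defined on arrows rather than on points of $Y$): $\ia$ is torsion-free, so a finite $\Gamma_{\calc_i}$ would be trivial, forcing $\rho(\calh_{|Y_i^*})=\{1\}$ on the positive-measure piece $Y_i$ and making every non-trivial free splitting invariant there, which already contradicts $\FS$-averseness.
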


\begin{proof}
  Being nice-averse, $(\calh,\rho)$ is $\FS$-averse
  so one may apply Lemma \ref{lem_maxFF} to get a partition $Y=\Dunion_{i\in I} Y_i$ with
  associated stable reduction collections
  $\calc_i$.

  It suffices to prove that  for every $i\in I$, the pair $(\calh_{|Y_i},\rho)$ is pure, i.e.\
  that $\calc_i$ consists
  of a single \afs.
  Assume on the contrary that $\#\calc_i\geq 2$, and let $\Gamma_{\calc_i}$ be the elementwise stabilizer of $\calc_i$.
There exists a conull Borel subset $Y_i^*\subseteq Y_i$ such that $\rho(\calh_{|Y_i^*})\subseteq \Gamma_{\calc_i}$. As $(\calh,\rho)$ is $\FS$-averse, it follows that $\Gamma_{\calc_i}$ is infinite and preserves no free splitting.
    Since $\calc_i$ is a stable reduction collection,
  each \afs\ $\HF\in \calc_i$ is maximal among all $\Gamma_{\calc_i}$-invariant \afs{}s.
  Lemma~\ref{lemma:filling} implies that $\calc_i$ is filling,
  i.e.\ there does not exist any almost free factor system $\HF$ such that for every $\HF'\in\calc_i$, one has $\HF'\sqsubseteq\HF$.
  Theorem \ref{thm_nice}(2) then provides a nice splitting $U_{\calc_i}$
  which is invariant under $\Gamma_{\calc_i}$, hence under $(\calh_{|Y_i^*},\rho)$.
  This contradicts that $(\calh,\rho)$ is nice-averse.
\end{proof}

\begin{proof}[Proof of Proposition \ref{prop:stably-nice-vs-pure-nice-averse}]
  Let $Y=Y_1\dunion Y_2$  be a Borel partition with $(\calh_{|Y_1},\rho)$ stably nice and
  $(\calh_{|Y_2},\rho)$ nice-averse, as given by Lemma \ref{lemma:stably-nice-vs-nice-averse}.
  Then by Lemma \ref{lemma:nice-averse_is_stably_pure}, $(\calh_{|Y_2},\rho)$ is stably pure, which concludes the proof.
\end{proof}

\subsection{Invariant free splittings}

The proof of Proposition \ref{prop:nice-preserved} occupies the next subsections,
where we distinguish cases according to the nature of the nice splittings preserved.
Our first lemma uses the chain condition on stabilizers of collections of free splittings.

\begin{lemma}\label{lemma:fs-part}
  Let $\calh$ be a measured groupoid over a base space $Y$, equipped with a  
  cocycle $\rho:\calh\to\ia$.

  Then there exist a countable  Borel partition $Y=\Dunion_{i\in I} Y_i$
  and for every $i\in I$, a (maybe empty) collection $\calc_i$
  of non-trivial free splittings, such that 
\begin{enumerate}
\item every splitting in $\calc_i$ is $(\calh_{|Y_i},\rho)$-invariant;
\item for every Borel subset $U\subseteq Y_i$ of positive measure, every non-trivial free splitting which is
 $(\calh_{|U},\rho)$-invariant belongs to $\calc_i$.
\end{enumerate}
\end{lemma}

Notice that for each $i\in I$ such that $Y_i$ has positive measure,
 the collection $\calc_i$ satisfying the two properties is unique;
furthermore, $\calc_i$ satisfies these two properties
with respect to $\calh_{|Y'}$ for every Borel subset $Y'\subseteq Y_i$ of positive measure.
Also note that $(\calh_{|Y_i},\rho)$ is $\FS$-averse if and only if $\calc_i=\es$.

\begin{proof}
  Let $\calc^0$ be the (maybe empty) set of all  $(\calh,\rho)$-invariant  non-trivial free splittings. 
 
  We claim that we can find a Borel partition $Y=Y_0\dunion \Dunion_{i\in I}Y'_i$ with the following properties:
  \begin{enumerate}
      \item 
      for every $i\in I$, there exists a non-trivial free splitting $S_i\notin\calc^0$ which is $(\calh_{|Y_i},\rho)$-invariant; 
      \item for every Borel subset $U\subseteq Y_0$ of positive measure, every  $(\calh_{|U},\rho)$-invariant free splitting belongs to $\calc^0$.
  \end{enumerate}
  Indeed, take for $Z_0$ a Borel subset of maximal measure having a partition $Z_0=\dunion_{i\in I}Y'_i$ satisfying 1 
  (the existence of $Z_0$ is proved using a measure-maximizing sequence, 
  as in the proof of Lemma~\ref{lemma:stably-nice-vs-nice-averse}). Then $Y_0:=Y\setminus Z_0$
  satisfies 2.
  

Now, for every $i\in I$, let $\calc^1_i$ be the set of all $(\calh_{|Y'_i},\rho)$-invariant  non-trivial free splittings. Notice that $\calc^0\subsetneq\calc^1_i$. In addition, their elementwise stabilizers in $\ia$ satisfy
$\Gamma_{\calc^1_i}\subsetneq\Gamma_{\calc^0}$: indeed, this inclusion is strict because
$\rho(\calh_{|Y^*})\subseteq \Gamma_{\calc^0}$ for some conull Borel subset $Y^*\subseteq Y$
whereas 
$\rho(\calh_{|Y^*})\not\subseteq \Gamma_{\calc^1_i}$ since $S_i$ is not  $(\calh,\rho)$-invariant.

One can now repeat this construction: for every $i\in I$, we can find a countable Borel partition $Y'_i=Y_{i,0}\dunion\Dunion_{j\in I_i} Y'_{i,j}$ such that 
\begin{enumerate}
\item for every $j\in I_i$, there exists a non-trivial free splitting $S_{i,j}\notin\calc^1_i$ which is  $(\calh_{|Y'_{i,j}},\rho)$-invariant;
\item for every Borel subset $U\subseteq Y_{i,0}$ of positive measure, every  $(\calh_{|U},\rho)$-invariant non-trivial free splitting belongs to $\calc^1_i$.
\end{enumerate}
For every $i\in I$ and every $j\in I_i$, let $\calc^2_{i,j}$ be the set of all $(\calh_{|Y'_{i,j}},\rho)$-invariant non-trivial free splittings. As above, we have $\Gamma_{\calc^2_{i,j}}\subsetneq\Gamma_{\calc^1_i}$.

As there is a bound on the size of a chain of  elementwise stabilizers of collections of free splittings (Proposition~\ref{prop:chain-FS2}), the iteration of this process stops after finitely many steps.
The output is a countable Borel partition of $Y$ as in the lemma.
\end{proof}

In the following lemma, we let $\calg,\calh,\calh'$ be as in the statement of Proposition~\ref{prop:nice-preserved}, and we apply Lemma~\ref{lemma:fs-part} to $\calh$ to get subsets $Y_i$ and collections $\calc_i$ of non-trivial free splittings.

\begin{lemma}\label{lemma:fs-canonical}
    Let $Y_i$, $\calc_i$ be as in Lemma \ref{lemma:fs-part}. 
    Let $\calh'$ be a measured subgroupoid of $\calg$ which normalizes $\calh$.
    
    Then for every $i\in I$, the collection $\calc_i$ is $(\calh'_{|Y_i},\rho)$-invariant.
\end{lemma}

Denoting by $\Gamma_{\{\calc_i\}}$ the setwise stabilizer in $\ia$ of the collection $\calc_i$, this means that there exists a conull Borel subset $Y_i^*\subseteq Y_i$ such that $\rho(\calh'_{|Y_i^*})\subseteq\Gamma_{\{\calc_i\}}$.

\begin{proof}
By definition of normalization of subgroupoids, one can write $\calh'_{|Y_i}$ as a countable union of bisections $B_n\subset \calh'_{|Y_i}$ such that
for all $h\in\calg_{|Y_i}$ and all $\phi,\psi\in B_n$ such that $\phi h \psi\m$ is defined, 
one has $h\in\calh$ if and only if
$\phi h \psi\m\in \calh$.
Up to subdividing the bisections, we may assume additionally that the cocycle $\rho$ maps $B_n$ to a single value $\gamma_n\in \ia$.
Let $f_{B_n}:U_n\ra V_n$ be the partial isomorphism associated to the bisection $B_n$, with $U_n,V_n\subseteq Y_i$.

Any free splitting $S\in\calc_i$ is $(\calh_{|U_n},\rho)$-invariant, so $\gamma_n S$ is $(\calh_{|V_n},\rho)$-invariant.
By maximality of $\calc_i$ (Lemma~\ref{lemma:fs-part}(2)), 
it follows that $\gamma _n S\in\calc_i$ if $V_n$ has positive measure.
Taking for $Y_i^*$ a subset of full measure avoiding all $U_n$ and $V_n$ of zero measure,
we deduce that $\rho(\calh'_{|Y_i^*})\subseteq\Gamma_{\{\calc_i\}}$, which concludes our proof.
\end{proof}

Since by Theorem \ref{thm_nice}, one can assign to $\calc_i$ a canonical nice splitting as soon as $\calc_i\neq \es$ and $\Gamma_{\calc_i}$ is infinite,
we get the following corollary for nowhere trivial cocycles.

\begin{cor}\label{cor:case-non-fs-averse}
  Let $\calg$ be a measured groupoid over a base space $Y$ with a  cocycle $\rho:\calg\to\ia$.
  Let $\calh$ be a measured subgroupoid of $\calg$ such that
  $\rho_{|\calh}$ is nowhere trivial.

  Then there exists a countable Borel partition $Y=Z\sqcup (\Dunion_{i\in I} Y_i)$ such that
\begin{enumerate}
\item $(\calh_{|Z},\rho)$ is $\FS$-averse,
\item for every $i\in I$, there exists a nice splitting $S_i$
  which is  $(\calh_{|Y_i},\rho)$-invariant, and in fact $(\calh'_{|Y_i},\rho)$-invariant for every measured subgroupoid $\calh'$ of  $\calg$ such that $\calh'_{|Y_i}$ normalizes $\calh_{|Y_i}$. 
\end{enumerate}
\end{cor}

\begin{proof} 
  Consider 
  a countable Borel partition $Y=\Dunion_{j\in J} Y_j$ with a collection $\calc_j$ of free splittings for each $j\in J$,
  as in Lemma \ref{lemma:fs-part}.
  Denoting by $Z$ the union of the sets $Y_j$ such that $\calc_j=\es$, we see that $(\calh_{|Z},\rho)$ is $\FS$-averse.

  Now consider $j\in J$ such that $\calc_j\neq \es$.
By Lemma~\ref{lemma:fs-canonical}, $\calc_j$ is invariant under $(\calh_{|Y_j},\rho)$ and $(\calh'_{|Y_j},\rho)$.
Since  $\rho_{|\calh}$ is nowhere trivial, the elementwise stabilizer  $\Gamma_{\calc_j}$ of $\calc_j$ is non-trivial, hence infinite.
The nice splitting $S_j=U_{\calc_j}$ provided by  Theorem~\ref{thm_nice}(1) is then
invariant under $(\calh_{|Y_j},\rho)$ and $(\calh'_{|Y_j},\rho)$.
\end{proof}

\subsection{Invariant $\Zmax$-splittings}

\begin{lemma}\label{lemma:invariant-zmax-groupoid}
  Let $\calg$ be a measured groupoid over a  base space $Y$, equipped with a cocycle $\rho:\calg\to\ia$, and let $\calh$ be a measured subgroupoid of $\calg$. Assume that $(\calh,\rho)$ is $\FS$-averse.

  Then there exists a countable Borel partition $Y=Z\sqcup(\Dunion_{i\in I} Y_i)$ such that 
\begin{enumerate}
\item for every Borel subset $U\subseteq Z$ of positive measure, there is no $(\calh_{|U},\rho)$-invariant non-trivial $\Zmax$-splitting of $F_k$,
\item for every $i\in I$, there is a non-trivial $\Zmax$-splitting $S_i$ which is $(\calh_{|Y_i},\rho)$-invariant, and in fact $(\calh'_{|Y_i},\rho)$-invariant for every measured subgroupoid $\calh'$ of $\calg$  such that $\calh'_{|Y_i}$ normalizes $\calh_{|Y_i}$. 
\end{enumerate}
\end{lemma}

\begin{proof}
Using the chain condition on stabilizers of $\Zmax$-splittings (Proposition~\ref{prop:BCC_zmax}), one constructs as in the proof of Lemma~\ref{lemma:fs-part} a countable Borel partition $Y=\Dunion_{j\in J} Y_j$, and for every $j\in J$, a (maybe empty) set $\calc_j$ of non-trivial $\Zmax$-splittings, such that 
\begin{enumerate}
\item every splitting in $\calc_j$ is $(\calh_{|Y_j},\rho)$-invariant;
\item for every Borel subset $U\subseteq Y_j$ of positive measure, every non-trivial $\Zmax$-splitting which is $(\calh_{|U},\rho)$-invariant belongs to $\calc_j$.
\end{enumerate}
We define $Z$ as the union of all $Y_j$ such that $\calc_j$ is empty. It clearly satisfies the first assertion of the lemma.

Now fix $j$ such that $\calc_j\neq\es$.
As $(\calh,\rho)$ is $\FS$-averse, 
the elementwise stabilizer $\Gamma_{\calc_j}$ of $\calc_j$ does not fix any non-trivial free splitting of $F_k$.
As in Lemma \ref{lemma:fs-canonical}, for every Borel subset $U\subseteq Y_j$ of positive measure $(\calh',\rho)$ stabilizes $\calc_j$ setwise.
The proposition thus follows by using Theorem~\ref{theo:JSJ_zmax}, which canonically associates a non-trivial $\Zmax$-splitting to the group $\Gamma_{\calc_j}$.
\end{proof}

\subsection{Invariant bi-nonsporadic splittings and conclusion}

\begin{lemma}\label{lemma:case-bi-nonsporadic}
  Let $\calg$ be a measured groupoid over a base space $Y$, with a cocycle $\rho:\calg\to\ia$.
  Let  $\calh$ be a  measured  subgroupoid of $\calg$.
  If $(\calh,\rho)$ is $\FS$-averse and stabilizes a bi-nonsporadic splitting $T$,
  then for every measured subgroupoid $\calh'$ of $\calg$ that normalizes $\calh$, $(\calh',\rho)$ is stably nice.

  More precisely, there is a countable Borel partition $Y=\Dunion_{i\in I} Y_i$
  and for each $i\in I$ a nice splitting $S_i$ of $F_k$ such that $S_i$
  is invariant under both $(\calh_{|Y_i},\rho)$ and  $(\calh'_{|Y_i},\rho)$.
\end{lemma}

\begin{proof}
  By Lemma \ref{lemma:binonsporadic2F}, one can canonically associate to $T$ a filling finite collection $\calc$ of \afs{}s.
  In particular, $\calc$ is $(\calh,\rho)$-invariant, 
  hence every almost free factor system in $\calc$ is $(\calh,\rho)$-invariant by Theorems \ref{theo:ia} and \ref{theo:ia-element}.
  Apply Lemma \ref{lem_maxFF} to get a partition $Y=\Dunion_{i\in I}Y_i$  such that for every $i\in I$,
  there is a  nonempty stable reduction collection $\calc_i$ of \afs{}s.
By Lemma \ref{lemma:maxFF_unique}, the set $\calc_i$ is also $(\calh'_{|Y_i},\rho)$-invariant.
  
Fix $i\in I$. The collection $\calc_i$ has the property that for every $\HF\in\calc$, there exists $\HF'\in \calc_i$ such that $\HF\sqsubseteq\HF'$. Since $\calc$ is filling, so is $\calc_i$.
Since $(\calh,\rho)$ is $\FS$-averse,  $\rho_{|\calh}$ is nowhere trivial, so the setwise stabilizer of $\calc_i$ in $\ia$
is non-trivial hence infinite. It follows that the elementwise stabilizer $\Gamma_{\calc_i}$ of $\calc_i$ is infinite.
Let $S_i=U_{\calc_i}$ be the nice splitting associated to $\calc_i$ by Theorem \ref{thm_nice}(2).
Then $S_i$ is invariant under both $(\calh_{|Y_i},\rho)$ and  $(\calh'_{|Y_i},\rho)$.
\end{proof}

\begin{proof}[Proof of Proposition \ref{prop:nice-preserved}]
By Corollary \ref{cor:case-non-fs-averse}, up to partitioning the base space $Y$, we may assume that $(\calh,\rho)$ is $\FS$-averse.
Since $(\calh,\rho)$ is stably nice, we may assume up to further partitioning that $(\calh,\rho)$ preserves a $\Zmax$
or a binonsporadic splitting $S$.
The proposition then follows from Lemmas \ref{lemma:invariant-zmax-groupoid} and \ref{lemma:case-bi-nonsporadic} accordingly.
\end{proof}

\section{Groupoids with cocycles to direct products of free groups}\label{sec:direct-product}

In later sections, given a measured groupoid $\calg$ equipped with a cocycle $\rho:\calg\to\IA$, we will often want to consider subgroupoids $\calh$ of $\calg$ such that $(\calh,\rho)$ has an invariant one-edge non-separating free splitting $S$, and an interesting subgroupoid of $\calh$ to consider is the preimage under $\rho$ of the group of twists of the splitting $S$, which is a finite-index subgroup of a direct product of free groups (see Section \ref{sec_twists}). This motivates the present section, which establishes a few facts about groupoids equipped with cocycles taking values in a direct product of free groups. We start by stating a first lemma which is essentially due to Adams \cite{Ada}, and actually follows from the next lemma which gives a more general statement.

\begin{lemma}\label{lemma:adams-libre}
  Let $\calg$ be a measured groupoid over a base space $Y$, let $F$ be a finitely generated free group, and let $\rho:\calg\to F$ be a cocycle. Let $\cala,\calh$ be two measured subgroupoids of $\calg$. Assume that
 $\rho_{|\cala}$ is nowhere trivial and that $\rho_{|\calh}$ 
 has trivial kernel (Definitions \ref{dfn:nowhere_trivial} and \ref{dfn:trivial_kernel}).

If $\cala$ is amenable and normalized by $\calh$, then $\calh$ is amenable.  
\end{lemma}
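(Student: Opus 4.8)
The statement is a special case of Adams' result on amenable subrelations normalized by larger groupoids, specialized to cocycles into a free group. The key geometric input is the action of $F$ on its Cayley tree $T$ (or, more precisely, on the Gromov boundary $\partial T$ together with the space of ends), which is a hyperbolic action with the crucial property that the stabilizer of any pair of distinct boundary points is amenable (in fact cyclic). The plan is to exploit amenability of $\cala$ to produce a $(\cala,\rho)$-equivariant map from the base space into $\Prob(\partial T)$, then use the nowhere-triviality of $\rho_{|\cala}$ to upgrade this into a map valued in the set of at-most-two-point subsets of $\partial T$, and finally use normality of $\cala$ in $\calh$ to transport this structure to $\calh$, where triviality of the kernel of $\rho_{|\calh}$ forces amenability.

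\textbf{Step 1: Boundary map from amenability of $\cala$.} Since $\cala$ is amenable, by Proposition~\ref{prop:zimmer} (Zimmer) applied to the compact metrizable space $K=\partial T\cup\{*\}$ (the end compactification, on which $F$ acts continuously by left translation), there is a $(\cala,\rho)$-equivariant Borel map $\nu:Y\to\Prob(K)$. The point of nowhere-triviality of $\rho_{|\cala}$ is that, after restricting to a conull set, for a.e.\ $y$ the measure $\nu_y$ cannot be invariant under a nontrivial subgroup of $F$ that keeps appearing: more precisely, using Remark~\ref{rk:action-type}-type arguments together with the fact that an infinite-order element of $F$ acts on $\partial T$ with exactly two fixed points (north--south dynamics), one shows that the support of $\nu_y$ is concentrated on the fixed-point set of the relevant elements. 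The standard argument (Adams; see also Kida's adaptation) shows that after a countable Borel partition of $Y$ one may assume $\nu_y$ is purely atomic and supported on at most two points of $\partial T$, i.e.\ one obtains a stably $(\cala,\rho)$-equivariant map $\phi:Y\to E$, where $E$ is the (countable, discrete — since $F$ is countable) set of subsets of $\partial T$ of cardinality $1$ or $2$ consisting of endpoints of axes. By Remark~\ref{rk:stably-invariant} we may replace "stably equivariant" by "equivariant" after passing to the saturation, so we get an honest $(\cala,\rho)$-equivariant map $\phi:Y\to E$.

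\textbf{Step 2: Transport along the normalizer.} Because $\cala$ is normalized by $\calh$, the map $\phi$ is in fact stably $(\calh,\rho)$-equivariant: write $\calh$ as a countable union of bisections $B_n$ each with $\rho(B_n)=\{\gamma_n\}$ a single value, and observe that conjugating the subgroupoid $\cala$ by $B_n$ preserves $\cala$, hence preserves the (essentially unique, on each piece of a countable partition) equivariant map $\phi$ up to the translation by $\gamma_n$; uniqueness of the equivariant boundary map on recurrent pieces — which again uses nowhere-triviality of $\rho_{|\cala}$, so that the boundary measure is rigid — forces $\phi(r(g))=\rho(g)\phi(s(g))$ for $g\in B_n$ after a countable partition. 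Thus $\phi$ is stably $(\calh,\rho)$-equivariant, and after another countable partition and using Remark~\ref{rk:stably-invariant} we get a genuine $(\calh,\rho)$-equivariant map $Y\to E$.

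\textbf{Step 3: Amenability of $\calh$.} The $F$-action on $E$ has amenable stabilizers: the stabilizer of a one- or two-point subset of $\partial T$ is virtually cyclic, hence amenable, and the $F$-set $E$ is amenable in the Borel sense (it is covered by countably many orbits, each of the form $F/(\text{amenable subgroup})$, and such actions are Borel amenable). So the $F$-action on $E$ is Borel amenable in the sense of Definition~\ref{de:borel-amenable}. Since $\rho_{|\calh}$ has trivial kernel and there is a $(\calh,\rho)$-equivariant Borel map $Y\to E$ into a Borel amenable $F$-space, Proposition~\ref{prop:mackey} (Kida) applies and yields that $\calh$ is amenable. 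This completes the proof.

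\textbf{Main obstacle.} The delicate point is Step~1 combined with the uniqueness used in Step~2: one must genuinely use that $\rho_{|\cala}$ is \emph{nowhere} trivial (not merely that $\cala$ has infinite type) to rule out $\nu_y$ being a Dirac mass at the single end $*$ or spreading over too large a subset of $\partial T$, and to get the rigidity that makes the boundary map essentially unique on recurrent pieces — this rigidity is exactly what lets the normalizer $\calh$ inherit the equivariant map. Everything else (Zimmer's proposition, Mackey-type induction of amenability, Borel amenability of actions on $E$) is standard and already packaged in the cited statements; Lemma~\ref{lemma:adams-libre} will then be recovered as the special case where $\calh$ itself already has trivial kernel, and the more general lemma in the excerpt is proved by the same argument. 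Since the excerpt only asks for the statement of Lemma~\ref{lemma:adams-libre} (the following, more general lemma being forward-referenced), the proof above — or rather its specialization — is what I would write immediately after it.
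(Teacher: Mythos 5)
Your overall strategy is the paper's (Adams' argument: Zimmer gives an $(\cala,\rho)$-equivariant map to probability measures on the boundary of the Cayley tree, a barycenter argument plus nowhere-triviality of $\rho_{|\cala}$ cuts the support down to at most two boundary points, normalization transports the resulting map to $\calh$, and triviality of $\ker\rho_{|\calh}$ plus Proposition~\ref{prop:mackey} gives amenability; the paper actually proves the two-factor version and deduces this lemma by doubling the cocycle, but proving the one-factor case directly is fine). However, Step~1 contains a genuine gap that propagates into Step~3. The barycenter argument only shows that for a.e.\ $y$ the support of $\nu_y$ has at most two points of $\partial_\infty T$; it does \emph{not} show that these points are endpoints of axes of elements of $F$. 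Nowhere-triviality of $\rho_{|\cala}$ guarantees that on every positive-measure set some arrow of $\cala$ has non-trivial $\rho$-image, but such an arrow need not be a loop, so the equivariance relation $\phi(r(g))=\rho(g)\phi(s(g))$ does not force $\phi(y)$ to be fixed by any non-trivial group element (think of the groupoid of a free boundary action of $F$ with $\phi(y)=\{y\}$: the values are generically irrational). Hence your set $E$ cannot be taken to be the countable set of pairs of axis endpoints; the map genuinely lands in the uncountable space $\calp_{\le 2}(\partial_\infty T)$. Consequently the justification in Step~3 (``$E$ is countable, a union of orbits $F/(\text{amenable})$, hence Borel amenable'') does not apply, and amenability of point stabilizers alone is not enough to conclude Borel amenability of an action on an uncountable standard Borel space. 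The correct replacement is exactly what the paper uses: the $F$-action on $\partial_\infty T$, hence on $\calp_{\le 2}(\partial_\infty T)$, is Borel amenable (a standard fact, cited there from Anantharaman-Delaroche), and then Proposition~\ref{prop:mackey} applies because $\rho_{|\calh}$ has trivial kernel.

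Two smaller points. In Step~1 the compact space should be $\partial_\infty T$ itself (already compact) or $T\cup\partial_\infty T$; with your $K=\partial T\cup\{*\}$, the added point $*$ is a global fixed point, so $\nu_y=\delta_*$ is equivariant for everything and nowhere-triviality cannot rule it out, contrary to what you assert in your ``main obstacle'' paragraph. In Step~2, what is transported along the bisections of $\calh$ is not ``the'' equivariant map (equivariant maps to $\calp_{\le 2}(\partial_\infty T)$ are not unique: a single fixed endpoint versus the pair, for instance) but the essentially unique \emph{everywhere maximal} equivariant map in the sense of Adams' Lemmas~3.2--3.3, whose existence follows from the fact that the union of any two equivariant maps still takes values in $\calp_{\le 2}$; with that correction the transport argument is the paper's.
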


Lemma~\ref{lemma:adams-libre} follows from the next statement, after replacing $\rho:\calg\to F$ by $\tilde{\rho}:\calg\to F\times F$ defined by $\tilde{\rho}(g)=(\rho(g),\rho(g))$, and setting $\cala_l=\cala_r=\cala$.

\begin{lemma}\label{lemma:adams-produit}
  Let $\calg$ be a measured groupoid, let $F_l,F_r$ be two finitely generated free groups, and let $\rho:\calg\to F_l\times F_r$ be a
  cocycle. Let $\rho_l:\calg\to F_l$ and $\rho_r:\calg\to F_r$ be the two cocycles obtained by postcomposing $\rho$ with the two projections. Let $\cala_l,\cala_r,\calh$ be measured subgroupoids of $\calg$. Assume that  $(\rho_l)_{|\cala_l}$ and $(\rho_r)_{|\cala_r}$ are nowhere trivial and that $\rho_{|\calh}$ has trivial kernel. 

  If $\cala_l$ and $\cala_r$ are both amenable and normalized by $\calh$, 
  then $\calh$ is amenable.  
\end{lemma}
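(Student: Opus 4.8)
The strategy follows Adams' argument as implemented by Kida, using the action of $F_l\times F_r$ on the product of the boundaries of the two trees. First I would reduce to the situation where $\rho_{|\calh}$ has trivial kernel, which is assumed, and hence by Lemma~\ref{lemma:trivial-kernel} we may assume $\calh$ is (isomorphic to a restriction of) a groupoid coming from a quasi-measure-preserving action of $F_l\times F_r$; it is enough to show this groupoid is amenable, which after restricting does not change the problem. Let $T_l$ and $T_r$ be the Bass--Serre trees of a free splitting of $F_l$ and of $F_r$, respectively (e.g.\ the Cayley graphs), with their Gromov boundaries $\partial T_l$, $\partial T_r$: these are compact metrizable spaces on which $F_l$ and $F_r$ act, and the actions are Borel amenable (boundary actions of free groups are amenable, see e.g.\ the standard references cited in the paper for Borel amenability). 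Consequently the diagonal-type action of $F_l\times F_r$ on $K:=\partial T_l\times\partial T_r$ is Borel amenable.

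The core step is to produce a stably $(\calh,\rho)$-invariant Borel map $Y\to K$, or more precisely a stably invariant map to $\partial T_l\times\partial T_r$, out of which amenability of $\calh$ follows from Proposition~\ref{prop:mackey}. To build the first coordinate: since $\cala_l$ is amenable, by Proposition~\ref{prop:zimmer} there is a $(\cala_l,\rho_l)$-equivariant Borel map $\phi_l:Y\to\Prob(\partial T_l)$. Now one analyzes $\phi_l$ using the structure of $\Prob(\partial T_l)$ together with the nowhere triviality of $(\rho_l)_{|\cala_l}$: because $(\rho_l)_{|\cala_l}$ is nowhere trivial and $\cala_l$ is normalized by $\calh$, an element of $\calh$ conjugates $\cala_l$ to itself, so the $\calh$-translate of $\phi_l$ is again $\cala_l$-equivariant; by a ``nowhere trivial implies the measure cannot be diffuse in a $\calh$-invariant way'' argument (exactly as in \cite[Lemma~3.20]{Kid2} or Adams' original argument \cite{Ada}), one extracts from $\phi_l$ a canonical finite subset, hence after a countable partition of $Y$ a single point, giving a stably $(\calh,\rho_l)$-equivariant Borel map $\psi_l:Y\to\partial T_l$. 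Symmetrically, using $\cala_r$ amenable and $(\rho_r)_{|\cala_r}$ nowhere trivial, one gets a stably $(\calh,\rho_r)$-equivariant Borel map $\psi_r:Y\to\partial T_r$. Combining, $\psi:=(\psi_l,\psi_r):Y\to K$ is stably $(\calh,\rho)$-equivariant.

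With $\psi$ in hand, Proposition~\ref{prop:mackey} (using that $\rho_{|\calh}$ has trivial kernel and that the $F_l\times F_r$-action on $K$ is Borel amenable) immediately gives that $\calh$ is amenable, concluding the proof. The main obstacle is the extraction step: showing that from the $\cala_l$-equivariant map $Y\to\Prob(\partial T_l)$ one obtains, after a countable Borel partition of $Y$, a stably $(\calh,\rho_l)$-equivariant map to $\partial T_l$ itself. This requires the nowhere triviality of $(\rho_l)_{|\cala_l}$ to force the measures $\phi_l(y)$ to be supported on a bounded (in fact, at most two-point, by the ping-pong dynamics of the free group on its boundary) canonical set, and the normalization hypothesis to transport this canonical set $\calh$-equivariantly; this is where one genuinely reuses the hyperbolic dynamics of the $F_l$-action on $T_l$ together with the recurrence guaranteed by nowhere triviality, essentially reproducing Adams' lemma in the groupoid language as done in \cite[Section~3]{Kid2}. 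Once that lemma is established separately (or cited), the rest is a straightforward assembly of Propositions~\ref{prop:mackey} and~\ref{prop:zimmer}.
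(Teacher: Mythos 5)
Your route is the paper's route in outline: Zimmer's Proposition~\ref{prop:zimmer} applied to the amenable subgroupoid $\cala_l$ gives an $(\cala_l,\rho_l)$-equivariant map $Y\to\Prob(\partial_\infty T_l)$, Adams' barycenter argument together with the nowhere triviality of $(\rho_l)_{|\cala_l}$ forces the support to have at most two points, a canonical choice of such a map is then transported to the normalizing groupoid $\calh$, the two coordinates are combined, and Proposition~\ref{prop:mackey} concludes using the trivial kernel of $\rho_{|\calh}$. The genuine gap is your extraction step: you assert that, after a countable Borel partition of $Y$, one obtains a stably $(\calh,\rho_l)$-equivariant map $\psi_l:Y\to\partial_\infty T_l$, i.e.\ a single boundary point. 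What the argument actually produces is a map to $\calp_{\le 2}(\partial_\infty T_l)$, and there is no equivariant way, even stably, to select one point of each two-point set: a countable partition of the base does not remove the obstruction, which is of skew-product type (the two-to-one extension groupoid whose fibre over $y$ is the two-point set can be ergodic over $\calh$, in which case no measurable selection of one sheet is invariant on any positive-measure subset). The step is also unnecessary: keep the pair-valued maps, combine them into an $(\calh,\rho)$-equivariant map $Y\to\calp_{\le 2}(\partial_\infty T_l)\times\calp_{\le 2}(\partial_\infty T_r)$, and note that the $F_l\times F_r$-action on this space is still Borel amenable, so Proposition~\ref{prop:mackey} applies verbatim. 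This is exactly what the paper does.

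Two smaller points. First, transporting equivariance from $\cala_l$ to $\calh$ needs more than ``a canonical finite subset'': one must take the essentially unique everywhere-maximum $(\cala_l,\rho_l)$-equivariant map $Y\to\calp_{\le 2}(\partial_\infty T_l)$ (Adams' uniqueness/maximality argument from \cite{Ada}); it is this maximality that guarantees each bisection of $\calh$ normalizing $\cala_l$ carries the map to itself, hence its $(\calh,\rho_l)$-equivariance. Second, your opening reduction via Lemma~\ref{lemma:trivial-kernel} is superfluous and slightly off: $\cala_l$ and $\cala_r$ are subgroupoids of $\calg$, not of $\calh$, so modelling $\calh$ as a restriction of an action groupoid does not carry the $\cala$'s along; fortunately you never use this reduction, and the groupoid-level argument you then run is the right one once the single-point claim is replaced by the pair-valued version.
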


\begin{proof}
The proof is based on an argument due to Adams \cite{Ada}. Let $T_l$ be a Cayley tree for $F_l$ with respect to some free basis. Then $F_l$ acts by isometries on $T_l$, and this action extends continuously to an $F_l$-action by homeomorphisms on $\partial_\infty T_l$. As $\cala_l$ is amenable and $\partial_\infty T_l$ is a compact metrizable space, Proposition~\ref{prop:zimmer} ensures that there exists an $(\cala_l,\rho_l)$-equivariant Borel map $\mu:Y\to\Prob(\partial_\infty T_l)$. 

By \cite[Proposition~5.1]{Ada}, for a.e.\ $y\in Y$, the probability measure $\mu(y)$ is supported on at most two points. Let us briefly recall the argument for this fact, in order to prepare for the next section where we will need a refined version.
Assume towards a contradiction that there exists a Borel subset $U\subseteq Y$ of positive measure such that for every $y\in U$, the probability measure $\mu(y)$ is supported on at least three points. Then $\mu(y)\otimes\mu(y)\otimes\mu(y)$ gives positive measure to the subspace $(\partial_\infty T_l)^{(3)}$ made of pairwise distinct triples. 
By restriction and renormalization of the probability measures, we thus get an $((\cala_l)_{|U},\rho_l)$-equivariant Borel map $U\to\Prob((\partial_\infty T_l)^{(3)})$. 
Notice that there exists an $F_l$-equivariant Borel \emph{barycenter} map $(\partial_\infty T_l)^{(3)}\to V(T_l)$, where $V(T_l)$ denotes the (countable) vertex set of $T_l$. Pushing forward the probability measures obtained above by this barycenter map, this yields an $((\cala_l)_{|U},\rho_l)$-equivariant Borel map  $\phi:U\to \Prob(V(T_l))$. 
Denote by $\calp_{<\infty}(V(T_l))$ the (countable) collection of all nonempty finite subsets of $V(T_l)$. As $V(T_l)$ is countable, there is also an $F_l$-equivariant Borel map $\Prob(V(T_l))\to \calp_{<\infty}(V(T_l))$, sending a probability measure $\nu$ to the finite set of all elements of $V(T_l)$ with maximal $\nu$-measure. 
Combining the above two maps, we derive an $((\cala_l)_{|U},\rho_l)$-equivariant Borel map $U\to\calp_{<\infty}(V(T_l))$. 
By restricting to a Borel subset $U'\subseteq U$ of positive measure where this map is constant, we find a positive measure Borel subset of $Y$ such that $\rho_l((\cala_l)_{|U'})$ is contained in the $F_l$-stabilizer of a  finite set of vertices of $T_l$, whence trivial. 
This yields a contradiction to the fact that  $(\rho_l)_{|\cala_l}$ is nowhere trivial.

Therefore, by post-composing $\mu$ with the map sending every probability measure to its support, we get an $(\cala_l,\rho_l)$-equivariant Borel map $Y\to\calp_{\le 2}(\partial_\infty T_l)$, where $\calp_{\le 2}(\partial_\infty T_l)$ is the set of all nonempty subsets of $\partial_\infty T_l$ of cardinality at most $2$. In fact, the barycenter argument also shows that if $\phi_1,\phi_2$ are two such maps, then their union $\phi_1\cup\phi_2$ must again take its values in $\calp_{\le 2}(\partial_\infty T_l)$. Therefore, by \cite[Lemmas~3.2 and~3.3]{Ada}, there exists a unique
$(\cala_l,\rho_l)$-equivariant Borel map $\phi_{\max}:Y\to\calp_{\le 2}(\partial_\infty T_l)$ which is everywhere maximum in the following sense: for every  Borel subset $U\subseteq Y$ of positive measure and every  $((\cala_{l})_{|U},\rho_l)$-equivariant Borel map $\phi:U\to\calp_{\le 2}(\partial_\infty T_l)$, one has $\phi(y)\subseteq\phi_{\max}(y)$ for almost every $y\in U$.

Using the fact that $\calh$ normalizes $\cala_l$, we will now derive that $\phi_{\max}$ is also $(\calh,\rho_l)$-equivariant.
Indeed, one can write $\calh$ as a countable union of bisections $B_n\subset \calh$ such that
for all  $a\in\cala_l$ and all $\gamma,\delta\in B_n$ such that $\gamma a \delta\m$ is defined, 
one has  $a\in\cala_l$ if and only if  $\gamma a \delta\m\in \cala_l$.
 Up to subdividing these bisections, we may assume that the cocycle $\rho_l$ takes a single value $\gamma_n$ on $B_n$.
Let $f_{B_n}:U_n\ra V_n$ be the partial isomorphism associated to the bisection $B_n$, with $U_n,V_n\subseteq Y$. Whenever $U_n$ has positive measure, the map 
\begin{align*}
\phi':U_n&\ra \calp_{\le 2}(\partial_\infty T_l) \\
y&\mapsto \gamma_n\m.\phi_{\max}( f_{B_n}(y))  
\end{align*}
is an everywhere maximum  $((\cala_l)_{|U_n},\rho_l)$-equivariant Borel map 
so there is a subset of full measure $U'_n\subseteq U_n$ on which $\phi'$ and $\phi_{\max}$  coincide -- this is also obviously true if $U_n$ has measure zero.  Since $\calh$ is the union of the bisections $B_n$, this shows that $\phi_{\max}$ is $(\calh,\rho_l)$-equivariant.

Similarly, by working with $\cala_r$ instead of $\cala_l$, we build an $(\calh,\rho_r)$-equivariant Borel map $Y\to\calp_{\le 2}(\partial_\infty T_r)$. Combining these two maps yields an $(\calh,\rho)$-equivariant Borel map $Y\to\calp_{\le 2}(\partial_\infty T_l)\times\calp_{\le 2}(\partial_\infty T_r)$. The action of $F_l\times F_r$ on $\calp_{\le 2}(\partial_\infty T_l)\times\calp_{\le 2}(\partial_\infty T_r)$ is Borel amenable (see e.g.\ \cite[Example~1.4(3)]{AD} for the Borel amenability of the action of a finitely generated free group on the boundary of its Cayley tree, from which our claim easily follows). 
As $\rho_{|\calh}$ has trivial kernel, Proposition~\ref{prop:mackey} therefore ensures that $\calh$ is amenable. 
\end{proof}

As a consequence of Lemma~\ref{lemma:adams-libre}, we deduce the following statement.

\begin{lemma}\label{lemma:normalisateur_twist}
Let $\calg$ be a measured groupoid over a base space $Y$, let $F_l$ and $F_r$ be finitely generated non-abelian free groups, and let $\rho:\calg\to F_l\times F_r$ be an action-type cocycle. 

Then $\calg$ does not contain any normal amenable subgroupoid  of infinite type.
\end{lemma}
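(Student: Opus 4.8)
The statement asserts that if $\rho:\calg\to F_l\times F_r$ is action-type, then $\calg$ has no normal amenable subgroupoid of infinite type. Suppose for contradiction that $\cala\unlhd\calg$ is amenable of infinite type. The idea is to exhibit a subgroupoid $\calh\subseteq\calg$ on which $\rho$ has trivial kernel, which normalizes $\cala$, and which is non-amenable, and then invoke Lemma~\ref{lemma:adams-libre} (or rather Lemma~\ref{lemma:adams-produit} with $\cala_l=\cala_r=\cala$) to deduce that $\calh$ is amenable, a contradiction. The natural candidate for $\calh$ is $\calg$ itself, provided $\rho$ has trivial kernel; but $\rho$ need not have trivial kernel a priori, so the first step is to reduce to that case.

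\textbf{Step 1: reduce to trivial kernel.} Let $\calk=\ker\rho$ be the kernel subgroupoid of $\rho$. Since $\rho_{|\calk}$ is trivial and hence amenable (Corollary~\ref{cor:amenable-subgroup-subgroupoid}), and since $\rho$ is action-type, $\calk$ is of infinite type only if the trivial subgroup of $F_l\times F_r$ is infinite, which it is not; in fact action-type requires the kernel to be reduced to the units, so $\rho$ has trivial kernel automatically. Indeed, unwinding Definition~\ref{de:action-type}: an action-type cocycle has trivial kernel by definition. So $\rho_{|\calg}$ has trivial kernel, and a fortiori so does $\rho_{|\calh}$ for any subgroupoid $\calh$. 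This removes the first obstacle essentially for free.

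\textbf{Step 2: verify the hypotheses of Lemma~\ref{lemma:adams-produit}.} Take $\cala_l=\cala_r=\cala$ and $\calh=\calg$. We need: (i) $(\rho_l)_{|\cala}$ and $(\rho_r)_{|\cala}$ are nowhere trivial; (ii) $\rho_{|\calg}$ has trivial kernel; (iii) $\cala$ amenable and normalized by $\calg$. Item (ii) is Step 1, item (iii) is our assumption ($\cala\unlhd\calg$ in particular means $\calg$ normalizes $\cala$). For (i): suppose $(\rho_l)_{|\cala}$ were trivial in restriction to some Borel $U\subseteq Y$ of positive measure, i.e.\ $\rho_l(\cala_{|U})=\{1\}$. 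Then $\cala_{|U}\subseteq\rho_l^{-1}(\{1\}\times F_r)$. Now here is where normality and infinite type are used: since $\cala$ is normal in $\calg$ and of infinite type, and $\rho$ is action-type, one can propagate the vanishing of $\rho_l$ on $\cala_{|U}$ along the groupoid. Precisely, for any $g\in\calg$ with $s(g),r(g)\in\calg U$ lying over appropriate saturations, conjugating $\cala_{|U}$ by a bisection through $g$ keeps it inside $\cala$, and the cocycle relation forces $\rho_l$ to remain constrained; combined with the fact that an infinite-type subgroupoid ``sees'' arbitrarily deep elements of any infinite subgroup of $F_l\times F_r$ via Remark~\ref{rk:action-type}, one derives that $\cala$ cannot avoid the $F_l$-factor on a positive measure set while remaining normal and of infinite type unless $F_l$ is finite — contradicting that $F_l$ is non-abelian free. (A cleaner route: if $\rho_l(\cala_{|U})=\{1\}$, then $\rho_l^{-1}(\{1\})$ is an infinite-type subgroupoid on $U$; but $\{1\}\subseteq F_l$ is finite, contradicting action-type applied to the cocycle $\rho_l$ restricted to $U$ — indeed $\rho_l:\calg_{|U}\to F_l$ is action-type by Lemma~\ref{lemma:action-type}, and action-type cocycles have the property that $\rho_l^{-1}(H)$ has infinite type only when $H$ is infinite; taking $H=\{1\}$, infinite type of $\cala_{|U}\subseteq\rho_l^{-1}(\{1\})$ forces $\cala_{|U}$ to fail infinite type unless... ) — the point is that $\cala$ of infinite type and normal, together with $\rho$ action-type, forces both projections to be nowhere trivial on $\cala$. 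I expect this verification to be the main obstacle and will need to be done carefully using the action-type machinery (Remark~\ref{rk:action-type}) and the interplay between normality of $\cala$ in $\calg$ and the saturation lemmas.

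\textbf{Step 3: conclude.} Once the hypotheses are checked, Lemma~\ref{lemma:adams-produit} gives that $\calh=\calg$ is amenable. But by Lemma~\ref{lemma:not-amen}, since $F_l\times F_r$ contains a non-abelian free group (namely $F_l$) and $\rho:\calg\to F_l\times F_r$ is action-type, $\calg$ is everywhere non-amenable; in particular $\calg$ is not amenable. This contradiction completes the proof. The only genuinely delicate point is Step 2 — showing that a normal amenable subgroupoid of infinite type must have both coordinate projections of its $\rho$-image nowhere trivial — and I would isolate it as a sub-lemma: \emph{if $\cala\unlhd\calg$ is of infinite type and $\rho:\calg\to\Gamma$ is action-type, then for any nontrivial direct factor decomposition $\Gamma\cong\Gamma_1\times\Gamma_2$ with $\Gamma_1$ infinite, the composite $\calg\to\Gamma\to\Gamma_1$ restricted to $\cala$ is nowhere trivial}, whose proof uses that the normal closure considerations force $\cala$ to intersect every infinite subgroup's preimage in an infinite-type subgroupoid.
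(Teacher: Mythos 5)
There is a genuine gap in your Step 2, and it is exactly the point the lemma turns on. You cannot verify hypothesis (i) of Lemma~\ref{lemma:adams-produit} — that both $(\rho_l)_{|\cala}$ and $(\rho_r)_{|\cala}$ are nowhere trivial — for an arbitrary normal amenable subgroupoid $\cala$ of infinite type, and your two attempted arguments for it do not work. The ``cleaner route'' misreads the definitions: $\rho_l:\calg\to F_l$ is \emph{not} action-type (its kernel contains $\rho^{-1}(\{1\}\times F_r)$, which is of infinite type since $\rho$ is action-type and $F_r$ is infinite), and the action-type property never says that a subgroupoid of infinite type cannot sit inside the preimage of a finite subgroup when the cocycle has non-trivial kernel. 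The isolated sub-lemma you propose at the end is false as stated: take $\cala=\rho^{-1}(\{1\}\times F_r)$, which is normal (Lemma~\ref{lemma:normal-subgroup-subgroupoid}) and of infinite type, yet has everywhere trivial projection to $F_l$. Adding amenability does not rescue the verification at that stage of the proof, because a priori an amenable normal subgroupoid of infinite type could perfectly well have $\rho_l$ trivial on a positive-measure piece of the base; ruling this configuration out is not easier than the lemma itself.

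The way the paper handles this is to accept that one projection may die on part of the base and to localize: partition $Y=Y_l\dunion Y_r$ so that $(\rho_l)_{|\cala}$ is nowhere trivial on $Y_l$ and stably trivial on $Y_r$; on $Y_r$, triviality of $\rho_l$ on pieces of $\cala$ together with the trivial kernel of $\rho$ and the infinite type of $\cala$ forces $(\rho_r)_{|\cala}$ to be nowhere trivial there. Then, say on $Y_l$, one does \emph{not} take $\calh=\calg$ (your choice also fails for a second reason: $\rho_l$ does not have trivial kernel on $\calg$, so even the one-variable Lemma~\ref{lemma:adams-libre} would not apply to it); instead one takes $\calh_l=\rho^{-1}(F_l\times\{1\})$ restricted to $Y_l$, on which $\rho_l$ \emph{does} have trivial kernel, which normalizes $\cala_{|Y_l}$ because $\cala$ is normal in $\calg$, and which is everywhere non-amenable by Lemma~\ref{lemma:not-amen}. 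Lemma~\ref{lemma:adams-libre} applied to the cocycle $\rho_l$ then gives amenability of $(\calh_l)_{|Y_l}$, the desired contradiction (and symmetrically on $Y_r$). So your overall strategy (Adams-type lemma against the everywhere non-amenability coming from Lemma~\ref{lemma:not-amen}) is in the right spirit, but the partition of the base and the choice of the one-sided normalizing subgroupoid $\rho^{-1}(F_l\times\{1\})$ are the essential missing ingredients.
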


\begin{proof}
  Assume towards a contradiction that $\calg$ contains a normal amenable subgroupoid $\cala$  of infinite type.
  Let $\rho_l,\rho_r$ be the two cocycles obtained by postcomposing $\rho$ with the projection to $F_l$ and $F_r$, respectively.

  We claim that we can then find a Borel partition $Y=Y_l\dunion Y_r$
  such that
  $(\rho_l)_{|\cala}$ is nowhere trivial on $Y_l$
   and  $(\rho_r)_{|\cala}$ is nowhere trivial on $Y_r$.
  Indeed, let $Y_r$ be such that
\begin{enumerate}  
\item $(\rho_l)_{|\cala}$ is stably trivial on $Y_r$, i.e.\ there exists a partition $Y_r=\Dunion_{i\in I} Y_{r,i}$ into at most countably many Borel subsets such that for every $i\in I$, the cocycle $(\rho_l)_{|\cala}$ is trivial on $Y_{r,i}$, and 
\item $(\rho_l)_{|\cala}$ is nowhere trivial on $Y_l=Y\setminus Y_r$.
\end{enumerate}
We check that $(\rho_r)_{|\cala}$ is nowhere trivial on $Y_r$.
Given $U\subset Y_r$ of positive measure,  consider $i\in I$ such that $U':=U\cap Y_{r,i}$ has positive measure. In particular $\rho_l(\cala_{|U'})=\{1\}$.
Since $\rho$ has trivial kernel, $\rho_r$ has trivial kernel on $\cala_{|U'}$.
Since $\cala$ is of infinite type, this implies that $\rho_r(\cala_{|U})\neq\{1\}$ and proves our claim.

  Without loss of generality, assume that $Y_l$ has positive measure.
  Let $\calh_l=\rho^{-1}(F_l\times\{1\})$.
  Since 
  $F_l$ is a non-abelian free group,
  $\calh_l$ is everywhere non-amenable by Lemma~\ref{lemma:not-amen}.
  As $\rho$ has trivial kernel,   $\rho_l$ has trivial kernel on $(\calh_l)_{|Y_l}$.
   Since $\cala_{|Y_l}$ is normalized by $(\calh_l)_{|Y_l}$, Lemma~\ref{lemma:adams-libre} implies that $(\calh_l)_{|Y_l}$ is amenable, a contradiction.   
\end{proof}

\section{Amenable actions on arational trees}\label{sec:amenable_actions}

In a joint work with Bestvina \cite{BGH}, we proved 
that the $\Out(F_k)$-action on $\PAT$ is Borel amenable \cite[Theorem~6.4]{BGH} --  see Definition~\ref{de:borel-amenable} for the notion of Borel amenability of a group action.

More generally, for every free factor system $\calf$,
it is proved in \cite[Theorem~6.4]{BGH} that the action of $\Out(F_k,\calf^{(t)})$ on $\PAT$ is Borel amenable
(recall that $\Out(F_k,\calf^{(t)})$ denotes the subgroup of $\Out(F_k,\calf)$ made of all outer automorphisms acting trivially on each free factor in $\calf$).
 
But  when $\calf$ contains a non-cyclic free factor (and is non-sporadic),
the $\Out(F_k,\calf)$-action on $\PAT(F_k,\calf)$ is not Borel amenable
because there exist arational $(F_k,\calf)$-trees with non-amenable stabilizer. 
A typical example is the following: let $\Sigma$ be a hyperbolic surface with one boundary component, let $c$ be an essential simple closed curve on $\Sigma$ that separates $\Sigma$ into two connected components $\Sigma_0$ and $\Sigma_1$ (where $\Sigma_0$ contains the boundary, say), and let $T$ be a tree dual to an arational measured lamination on $\Sigma_0$. Then $T$ is an arational
$(F_k,\pi_1(\Sigma_1))$-tree, but its stabilizer
contains all outer automorphisms induced by homeomorphisms of $\Sigma_1$ acting as the identity on $c$.

The goal of this section is to show that, once one gets rid of points with
non-amenable stabilizers, the action becomes Borel amenable. More precisely,
consider the partition $$\calp_{<\infty}(\PAT)=\PaPAT\dunion\PnaPAT$$
where
$\PaPAT$ (resp.\ $\PnaPAT$) is the set of non-empty finite collections
 projective arational trees, whose setwise (or equivalently elementwise) stabilizer is amenable (resp.\ non-amenable). 
 
First note that this is a Borel partition. 
Indeed, given a finitely generated subgroup $H\subseteq\Out(F_k)$, the set of fixed points of $H$ in $\calp_{<\infty}(\PAT)$ is measurable.
For any $C\in \calp_{<\infty}(\PAT)$, $C\notin \PaPAT$ if and only if there is a
finitely generated non-amenable subgroup $H$ fixing $C$ (because an increasing union of amenable groups is amenable).
Since there are countably many finitely generated subgroups in $\Out(F_k)$,
it follows that $\PaPAT$ is Borel.
 
 Similary, we also consider the Borel partition
  $$\calp_{<\infty}(\PATsim)=\PaPATsim\dunion\PnaPATsim$$
  where a collection lies in $\PaPATsim$ if and only if its stabilizer is amenable.
 
Building on \cite{BGH}, the goal of this section is the following theorem.

\begin{theo}\label{theo:action-amenable-2}
  The actions of $\Out(F_k,\calf)$ on $\PaPAT$ and $\PaPATsim$ are Borel amenable.  
\end{theo}

The theorem will be proved in 
Propositions \ref{prop:action-amenable-lift}
and \ref{prop:action-amenable} in the next two subsections.


\subsection{Amenability of the action on $\PaPAT$}\label{sec:papatsanssim}

We start by proving the amenability of the action on $\PaPAT$.

\begin{prop}\label{prop:action-amenable-lift}
  The action of $\Out(F_k,\calf)$ on $\PaPAT$ is Borel amenable.  
\end{prop}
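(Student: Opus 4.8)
The strategy is to reduce Borel amenability of the $\Out(F_k,\calf)$-action on $\PaPAT$ to the already-known Borel amenability of the action of the smaller group $\Out(F_k,\calf^{(t)})$ on $\PAT$ from \cite[Theorem~6.4]{BGH}. The point is that if $\TT=\{[T_1],\dots,[T_p]\}\in\PaPAT$ then, by Lemma~\ref{lemma:stab-amenable}, $\Stab(\TT)$ is amenable; since $\Out(F_k,\calf^{(t)})$ is normal in $\Out(F_k,\calf)$ with quotient a subgroup of $\prod_{[A]\in\calf}\Out(A)$, and $\Out(F_k,\calf)/\Out(F_k,\calf^{(t)})$ has no reason to be amenable, amenability of the full stabilizer means the intersection $\Stab(\TT)\cap\Out(F_k,\calf^{(t)})$ is "small" in a way that is detectable equivariantly. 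Concretely, the plan is: first, produce an $\Out(F_k,\calf)$-equivariant Borel map from $\PaPAT$ to the space of finite subsets of the coset space $\Out(F_k,\calf)/\Out(F_k,\calf^{(t)})$, or more precisely exploit that the Borel amenability we want is stable under extensions of the acting group by amenable quotients and under passing to the $\Out(F_k,\calf^{(t)})$-orbit structure.

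\textbf{Key steps.} First I would recall that Borel amenability of a group action is inherited by subgroups and is stable under the following operation: if $1\to G_0\to G\to Q\to 1$ with $G_0$ acting Borel-amenably on $\Delta$ and $Q$ amenable (or more relevantly, if one can build the maps $\nu_n$ for $G_0$ and average them over a suitable transversal), then $G$ acts Borel-amenably on $\Delta$ as well; here, though, $Q$ is not amenable, so instead I would argue directly. The second step is to use the multisection $\sigma:\PATsim\to\calp_{<\infty}(\PAT)$ of Lemma~\ref{lem:multisection}: given $\TT\in\PaPAT$, form the finite set $\hat\TT$ obtained by saturating each $[T_i]$ to its $\sim$-class of ergometric representatives via $\sigma\circ\pi$; this is $\Out(F_k,\calf)$-equivariant, lands in $\calp_{<\infty}(\PAT)$, and still has amenable stabilizer by Lemma~\ref{lemma:stab-amenable} (the stabilizer of $\hat\TT$ contains a finite-index subgroup of $\Stab(\TT)$, up to the permutation finite-index issue). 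The third step: on the space of finite subsets of $\PAT$ that are unions of full $\sim$-classes of ergometric trees, the $\Out(F_k,\calf^{(t)})$-action is Borel amenable because it is a Borel subset of $\calp_{<\infty}(\PAT)$ and Borel amenability of $\Out(F_k,\calf^{(t)})\curvearrowright\PAT$ passes to finite-subset spaces (the diagonal action on $\PAT^p$ is Borel amenable, hence so is the action on the Borel subset of $p$-element subsets, for each $p$, and one patches over $p$). Fourth, I would promote this from $\Out(F_k,\calf^{(t)})$ to $\Out(F_k,\calf)$ using that, because the stabilizer $\Stab(\hat\TT)$ is amenable, the $\Out(F_k,\calf)$-orbit map factors appropriately: more precisely, one uses the criterion that $G\curvearrowright\Delta$ is Borel amenable if and only if for every $\delta$ the stabilizer $\Stab(\delta)$ is amenable \emph{and} $G\curvearrowright\Delta$ is "Borel amenable relative to" a normal subgroup acting Borel amenably — the precise tool here is that Borel amenability of the $G_0$-action plus amenability of all $G$-stabilizers on each $G$-orbit forces Borel amenability of the $G$-action (since each $G$-orbit is, as a $G$-space, of the form $G/\Stab(\delta)$ with $\Stab(\delta)$ amenable, and one assembles the $\nu_n$ using both the $G_0$-maps and the amenable-stabilizer data via a Reiter-type averaging).

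\textbf{Main obstacle.} The delicate point is the last step: going from $\Out(F_k,\calf^{(t)})$-Borel-amenability to $\Out(F_k,\calf)$-Borel-amenability, since the quotient is not amenable. The resolution is exactly where the amenability of the stabilizer (Lemma~\ref{lemma:stab-amenable}) gets used: for a point $\delta\in\PaPAT$, the $\Out(F_k,\calf)$-orbit of $\delta$ is $\Out(F_k,\calf)/\Stab(\delta)$, and since $\Stab(\delta)$ is amenable this orbit, as an $\Out(F_k,\calf)$-space, is Borel amenable all by itself; the content is to make the sequence of maps $\nu_n:\PaPAT\to\Prob(\Out(F_k,\calf))$ Borel \emph{uniformly} across orbits, which is where the $\Out(F_k,\calf^{(t)})$-action and the multisection $\sigma$ feed in — they give a Borel way to "coordinatize" the orbit directions transverse to $\Out(F_k,\calf^{(t)})$. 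So the real work is a careful bookkeeping argument combining: (i) the Borel family of finite transversals coming from $\sigma$, (ii) the Borel-amenable $\nu_n^0:\PaPAT\to\Prob(\Out(F_k,\calf^{(t)}))$ from \cite{BGH}, and (iii) for each $\delta$ a Reiter sequence on the amenable group $\Stab(\delta)\backslash$ (or rather an amenable transversal witnessing $\Stab(\delta)$'s amenability chosen Borel-measurably in $\delta$), convolving these to get the desired $\nu_n$. I would also need to be careful that the various finite-index subgroup passages (from the permutation action on $\TT$ to $\Stab^0$, from the homothety-scaling kernel in Lemma~\ref{lemma:stab-amenable}) do not destroy equivariance of the Borel maps — this is routine but must be checked. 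Finally, Theorem~\ref{theo:action-amenable} itself then follows immediately by transporting along the equivariant Borel map $\PaPATsim\to\PaPAT$, $[[\TT]]\mapsto\sigma([[\TT]])\cup\cdots$, using that Borel amenability pulls back along equivariant Borel maps.
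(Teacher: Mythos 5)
Your reduction breaks down at the step you yourself identify as the "main obstacle": the promotion from $\Out(F_k,\calf^{(t)})$ to $\Out(F_k,\calf)$. The principle you invoke — that Borel amenability of the action of a normal subgroup, together with amenability of all point stabilizers for the big group, forces Borel amenability of the big group's action — is false in general, already in the degenerate case $G_0=\{1\}$: a free Borel action of a non-amenable group preserving a probability measure (e.g.\ a Bernoulli shift) has trivial, hence amenable, stabilizers and a Borel amenable (trivial) normal subgroup action, yet is not Borel amenable. The underlying issue is that Borel amenability is not an orbit-by-orbit condition: each orbit $\Out(F_k,\calf)/\Stab(\delta)$ with $\Stab(\delta)$ amenable does carry asymptotically invariant probability measures, but the whole content of the statement is to choose them in a Borel, asymptotically equivariant way across the uncountably many orbits of $\PaPAT$, and amenability of stabilizers gives no mechanism for this. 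Nor does the multisection $\sigma$ of Lemma~\ref{lem:multisection} "coordinatize the directions transverse to $\Out(F_k,\calf^{(t)})$": it only records the finitely many ergometric representatives inside a single $\sim$-class, and in the paper it is used in the opposite passage, namely to deduce Theorem~\ref{theo:action-amenable} from Proposition~\ref{prop:action-amenable-lift}, not in the proof of the proposition itself. What your plan would really need is a Borel amenability statement for the induced action of $\Out(F_k,\calf)/\Out(F_k,\calf^{(t)})$ on the space of $\Out(F_k,\calf^{(t)})$-orbits (a quotient-groupoid statement in the spirit of Zimmer's extension results), and nothing in your outline, nor in Lemma~\ref{lemma:stab-amenable}, produces that. (Your step 3, pulling Borel amenability of $\Out(F_k,\calf^{(t)})\curvearrowright\PAT$ through to finite-subset spaces, is fine, but it is not where the difficulty lies.)

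The paper's proof takes a genuinely different route: it does not reduce to the $\Out(F_k,\calf^{(t)})$-statement but re-runs the folding-path construction of \cite{BGH} for the full group. Given an optimal morphism $f$ from a point of relative Outer space to a tree in $\ATa$ (or a finite collection $\FFF$ of such morphisms), one considers the finite sets of simplices $[t,t+m]_f$ crossed by the folding path; the chain condition on elementwise stabilizers of collections of free splittings (Proposition~\ref{prop:chain-FS2}) shows these stabilizers stabilize as $m\to\infty$ to a group fixing $[T]$, which is amenable precisely because $[T]$ has amenable stabilizer. Choosing window lengths $M_n(f)$ measurably in $f$ so that all these elementwise stabilizers are amenable, and averaging Dirac masses over $t\in[n,2n]$, yields asymptotically equivariant Borel maps to $\Prob(\Da)$, where $\Da$ is the countable set of finite simplex collections with amenable stabilizer; one concludes with the criterion \cite[Proposition~2.11]{BGH} (asymptotically equivariant maps to probability measures on a countable set with amenable stabilizers imply Borel amenability). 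This is where the amenable-stabilizer hypothesis actually enters, and it is a construction your proposal does not replace.
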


We now recall some vocabulary, and some tools from \cite{BGH}.

A map $f:S\to T$ between two $F_k$-trees is a \emph{morphism} if every segment $I\subseteq S$ can be subdivided into finitely many subsegments $I_1,\dots,I_n$, so that the restriction of $f$ to each subsegment $I_j$ is an isometry. 
It is \emph{optimal} if every point $x\in S$ is contained in an open interval $I_x$ such that $f_{|I_x}$ is injective. We mention that for every $T\in\baro$, there exists a tree $S\in\calo$ equipped with an optimal morphism towards $T$, see e.g.\ \cite[Remark~3.19]{BGH}.
We will use the Gromov topology on morphisms as defined in \cite[Section~3.2]{GL-os}
and the induced Borel structure.

A \emph{turn} in a tree $T$ is a pair of directions at a branch point of $T$. 
Following Definitions~3.1 and~3.2 in \cite{BGH}, 
a \emph{turning class} in a tree $T\in\baro$ is an $F_k$-invariant set of turns in $T$. 
The turning class of an optimal morphism $f:S\to T$, denoted $\mathrm{Turn}(f)$, is the set of turns $(d,d')$ in $T$ such that there exists a segment $I\subseteq S$ such that $f_{|I}$ is injective, and $f(I)$ contains two germs representing the directions $d$ and $d'$.

 Given an optimal morphism  $f:S\ra T$ where $S\in \calo$ and $T\in \AT$,
and given $s\leq t\in \bbR$ we define $[s,t]_f$
as the collection of all simplices $\tau$ of $\mathbb{P}\calo$ such that there exists a tree $S'\in\calo$ through which $f$ factors and representing a point in $\tau$, 
and such that $e^{-t}\leq \mathrm{vol}(S'/F_k)\leq e^{-s}$  (here $\mathrm{vol}(S'/F_k)$ denotes the volume of the quotient graph $S'/F_k$, i.e.\ the sum of its edge lengths).
By \cite[Proposition~3.6 and Remark~3.9]{BGH}, the set $[s,t]_f$ is finite.

The following lemma says that there is some confluence between $[s,t]_f$
and $[s,t]_{f'}$ when $f,f'$ have the same turning class.

\begin{lemma}[Lemma 3.3 and Remark 3.5 in \cite{BGH}]\label{lem_confluence}
Let $S,S'\in\calo$ and $f:S\ra T$, $f':S'\ra T$ be two optimal morphisms
towards the same arational tree $T\in \AT$.
Assume that $f$ and $f'$ have the same turning class.

Then there exists $s_0\geq 0$ such that for all $t\geq s\geq s_0$, one has
$[s,t]_f=[s,t]_{f'}$.\qed
\end{lemma}

We denote by $\calm(\calo,\AT)$ the Borel set of optimal morphisms
$f:S\ra T$ between a tree $S\in \calo$ and a tree $T\in \AT$.

\begin{lemma}\label{lemma:measurability}
Given a simplex $\tau$ in $\calo$,
let $X_\tau\subset \bbR\times \bbR \times \calm(\calo,\AT)$ 
be the set of triples $(s,t,f)$ 
such that $\tau\in [s,t]_f$.

Then $X_\tau$ is Borel.
\end{lemma}

\begin{proof}
This is a consequence of the measurability of the set $Z_\tau$ introduced at the beginning of the proof of \cite[Lemma~3.13]{BGH}.
\end{proof}

We now note the following consequence of the chain condition on stabilizers of collections of free splittings.

\begin{lemma}\label{lem_stabilisation_stab}
    For every optimal morphism  $f:S\ra T$ where $S\in \calo$ and $T\in \AT$,
    and every $s\geq 0$, there exists $t_1\geq s$ such that for all $t\geq t_1$,
    the elementwise stabilizer of $[s,t]_f$ in $\ia\cap \Out(F_k,\calf)$ 
    fixes $T$.
\end{lemma}

\begin{proof}
Fix $s\geq 0$.
By Proposition~\ref{prop:chain-FS2}, 
as $t$ goes to infinity,
the elementwise stabilizer $H_{s,t}$ of $[s,t]_f$ in $\ia\cap \Out(F_k,\calf)$ stabilizes to a group $H_{s,\infty}$.
Let $t_1\geq s$ be such that $H_{s,t}=H_{s,\infty}$ for all $t\geq t_1$.
Since $H_{s,t}\subset \ia$, $H_{s,t}$ acts as the identity
on each simplex of $[s,t]_f$, so $H_{s,\infty}$ acts as the identity on each simplex occuring in $\bigcup_{t\geq s} [s,t]_f$.
 Now let $\rho:[0,\infty)\ra \calo$ be a folding path from $S$ to $T$ 
 such that $f$ factors through $\rho(t)$ for all $t\geq 0$ (for the existence,
 see for instance \cite[Section~3]{GL-os}, and \cite[Lemma~2.3]{BGH} for the fact that it stays in the interior of Outer space).
 Since $H_{s,\infty}$ fixes a subray of the image of $\rho$ in $\bbP\calo$, 
 it fixes $T$. 
Since $H_{s,\infty}=H_{s,t}$  for all $t\geq t_1$, this concludes the proof.
  \end{proof}


  Let $\cald$ be the countable set of all finite collections of simplices of $\mathbb{P}\calo$. 
  For instance, given $f:S\ra T$ an optimal morphism as above,
  $[s,t]_f$ is an element of $\cald$.
  We now define a family of probability measures on $\cald$ associated to $f$
  as follows: for all $n,m\geq 1$ we define
$$\mu_{n,m}(f)=\frac{1}{n}\int_n^{2n}\delta_{[t,t+m]_f}dt \ \in \Prob(\cald),$$
where $\delta_{[t,t+m]_f}$ denotes the Dirac mass on $[t,t+m]_f\in\cald$.

To state the following lemma, note that if $[T]=[T']$, then
the turning classes of $T$ and $T'$ are naturally identified.

\begin{lemma}\label{lem_asymptotic_invariance}
Let $T,T'\in \AT$ be such that $[T]=[T']$ in $\PAT$.
Let $S,S'\in\calo$, and let $f:S\ra T$, $f':S'\ra T'$ be two optimal morphisms having the same turning class.

Then for any sequence of integers $M_n$,
$$||\mu_{n,M_n}(f)-\mu_{n,M_n}(f')||_1\to 0\text{ as $n\ra \infty$}$$
\end{lemma}

\begin{proof}
The proof is a tiny variation on Corollary 3.10, Lemma 3.11 and Corollary 3.12 in \cite{BGH}. We repeat the proof for convenience.
Assume first that $T=T'$ (not up to scaling).
Then by Lemma \ref{lem_confluence}, there exists $s_0$ such that $[s,t]_f=[s,t]_{f'}$ for all
$t\geq s\geq s_0$.
It follows that for all $n\geq s_0$, $\mu_{n,M_n}(f)=\mu_{n,M_n}(f')$ and we are done.

We now consider the case where there exists $\lambda>0$ such that 
$T'=\lambda T$, $S'=\lambda S$ (i.e. $T'$ and $S'$ are obtained from $T$ and $S$ by rescaling the metric by the same factor $\lambda$) and $f'$ is $f$ viewed as a map from $S'$ to $T'$.
In this situation, we have by definition that 
for all $s\leq t$, $[s,t]_{f'}=[s+\log\lambda,t+\log\lambda]_{f}$.
Thus, changing variables, we get
\begin{eqnarray*}
\mu_{n,M_n}(f')-\mu_{n,M_n}(f)&=&
\frac{1}{n}\int_{n+\log\lambda}^{2n+\log\lambda}\delta_{[t,t+M_n]_f}dt-
\frac{1}{n}\int_n^{2n}\delta_{[t,t+M_n]_f}dt \\
&=&\frac{1}{n}\int_{2n}^{2n+\log\lambda}\delta_{[t,t+M_n]_f}dt-\frac{1}{n}\int_n^{n+\log\lambda}\delta_{[t,t+M_n]_f}dt
\end{eqnarray*}
so $||\mu_{n,M_n}(f')-\mu_{n,M_n}(f)||_1\leq \frac{2|\log\lambda|}{n}$.

Finally, putting the two cases together proves the lemma in general.
\end{proof}

We have defined a family of probability measures $\mu_{n,m}(f)$ depending on the optimal
morphism $f$. We now want to define probability measures that depend only on $[T]$ (and not of $f$). 
To achieve this, in Lemma \ref{lem_selection_morphisms}, 
we are going to associate to $T$ a finite collection
$\Theta(T)$ of morphisms with target $T$ in a measurable way. This collection
will not be equivariant but the turning classes they represent will.
A difficulty is that there may be infinitely many turning classes of morphisms
with target $T$, and Lemma~\ref{lem_selection_morphisms} will select finitely many of them in an equivariant way.

Before stating the lemma, let us recall the action of $\Out(F_k)$ on the set of morphisms.
Given a morphism $f:S\ra T$ and $\phi\in \Aut(F_k)$, 
recall that $\phi.S$ and $\phi.T$ are just the trees $S$ and $T$
with the actions precomposed by $\phi\m$.
The equivariant map $f:S\ra T$ is also equivariant
viewed as a map $\phi.S \ra \phi.T$,
and this induces an action of $\Out(F_k)$ on the set of morphisms.

There is also an action of $\Out(F_k)$ on the set of turning classes on trees $T\in \baro$:
if $\calb$ is an $F_k$-invariant set of turns of $T$, then 
we define $\phi.\calb$ as the same set of turns on $T$ which is now
endowed with the action precomposed by $\phi$.
This descends to an action of $\Out(F_k)$ on the set of turning classes on trees in $\baro$.
The map $\Turn$, sending an optimal morphism to its turning class, is $\Out(F_k)$-equivariant for this action.

\begin{lemma}\label{lem_selection_morphisms}
There is a Borel map 
$$\Theta:\left\{
\begin{array}{rcl}
\AT &\to& \calp_{<\infty}(\calm(\calo,\AT)) \\
T&\mapsto& \Theta(T)=\{f_1^T:S_1\ra T,\dots,f_{k_T}^T:S_k\ra T\}
\end{array}\right.
$$
assigning to $T\in \AT$ a non-empty finite collection of optimal morphisms $f_i^T:S_i\ra T$ 
with $S_i\in \calo$, such that 
\begin{enumerate}
\item for $i\neq j$, $\Turn(f_i^T)\neq \Turn(f_j^T)$,
\item for every $\Phi\in\Out(F_k)$, one has  $\Turn(\Theta(\Phi.T))=\Phi.\Turn(\Theta(T))$,
\item for all $\lambda>0$, $\Turn(\Theta(\lambda T))=\Turn(\Theta(T))$.
\end{enumerate}
\end{lemma}


\begin{rk}\label{rk_nb}
Note that by Assertion 1, $\#\Theta(T)=k_T=\#\Turn(\Theta(T))$. By Assertions 2 and 3
it follows that
$\#\Theta(\lambda T)=\#\Theta(T)$ and
$\#\Theta(\Phi. T)=\#\Theta(T)$.
\end{rk}


\begin{proof}
We rely on \cite[Proposition~4.1]{BGH} saying that $\AT$
has namable turning classes in the sense of \cite[Definition~3.22]{BGH}. 
This implies that to every turning class $\calb$ on an arational tree $T\in \AT$,
we can associate a name $\Name(\calb)\in \bbN\cup\{\perp\}$ such that 
for every $T\in\AT$, there are only finitely many turning classes on $T$ whose name is a given integer $n\in\mathbb{N}$ (but this need not hold for $\perp$). 
In addition, the map $f\mapsto\Name(\Turn(f))$ sending an optimal morphism to the name of its turning class is measurable, $\Out(F_k,\calf)$-invariant, 
 and invariant under homothety.
Moreover, for every $T\in \AT$, there exists a very optimal morphism $f:S\ra T$ such that
$\Name(\Turn(f))\neq \perp$, where a morphism $f:S\ra T$ is said to be very optimal
if it is optimal and maps each branch point of $S$ to a branch point of $T$.

We will now define $\Theta(T)$ as a choice
of very optimal morphisms whose turning classes have the smallest possible name in $\bbN$, as in the proof of \cite[Proposition~3.24]{BGH}.
More precisely, by \cite[Remark~3.18 and Lemma~3.20]{BGH}, there is a measurable enumeration of 
all very optimal morphisms $F_{j,T}:S_{j,T}\ra T$ with range $T$ in the following sense:
for every $j\in \bbN$, $S_{j,T}$ and $F_{j,T}$ depend measurably on $T$,
and every very optimal morphism $S\ra T$ (with $S\in \calo$) is equal to some $F_{j,T}$. 
We let $n_0(T)$ be the smallest integer $n\in\mathbb{N}$ such that 
$n=\Name(F_{j,T})$ for some $j\in \bbN$.
Let $\calb_1,\dots,\calb_{k_T}$ be the turning classes of very optimal morphisms with range $T$ whose name is $n_0(T)$. 
For every $i\in\{1,\dots,k_T\}$, we let $f_i=F_{j,T}$
where $j$ is the smallest integer such that $\Turn(F_{j,T})=\calb_i$. We let $\Theta(T)=\{f_1,\dots,f_{k_T}\}$.

This map $\Theta$ is Borel and clearly satisfies Assertion 1.
Assertions 2 and 3 follow from the fact that 
$\Name$ is invariant under the action of $\Out(F_k,\calf)$ and under homothety.
\end{proof}

  We now define a family of probability measures on $\cald$ associated to $[T]\in\PAT$
  as follows.
  Let $\sigma:\PAT\ra \AT$ be a Borel section (maybe not equivariant).
  For all $n,m\geq 1$ and all $T\in\AT$ we define
$$\tilde\mu_{n,m}(T)=\frac{1}{\#\Theta(T)}\sum_{f\in \Theta(T)} \mu_{n,m}(f)$$
and
$$\mu_{n,m}([T])=\tilde\mu_{n,m}(\sigma([T])).$$

We now prove that this family of measures depends asymptotically equivariantly on $[T]$.
Given a probability measure $\mu$ on $\cald$, and $\Phi\in \Out(F_k,\calf)$
we denote by $\Phi_*\mu$ the push-forward of $\mu$ under the action of $\Phi$ on $\cald$.

\begin{rk}\label{rk_mu_equivariant}
Note that for all $\Phi\in \Out(F_k,\calf)$, the pushforward probability measure $\Phi_\ast\mu_{n,m}$ satisfies  
$\Phi_*\mu_{n,m}(f)=\mu_{n,m}(\Phi.f)$,
where $\Phi.f:\Phi.S\ra \Phi.T$ is the image of the morphism $f$ under $\Phi$.
\end{rk}

\begin{lemma}\label{lemma:asymptotic-equivariance}
For any $[T]\in \PAT$, any $\Phi\in \Out(F_k,\calf)$, 
and any sequence of integers $M_n$,
$$||\mu_{n,M_n}(\Phi.[T])-\Phi_*\mu_{n,M_n}([T])||_1\to 0\text{ as $n\ra \infty$}.$$
\end{lemma}

\begin{proof}
Fix $[T]\in \PAT$, $\Phi\in \Out(F_k,\calf)$ and a sequence $M_n$ of integers.
Up to changing $T$ in its projective class, assume 
 that $\sigma([T])=T$. 
Let $T_1=\Phi.T$,
and observe that $\sigma(\Phi.[T])=\lambda T_1$ for some $\lambda >0$.

Let $k=\#\Theta(T)$, and write $\Theta(T)=\{f_i:S_i\ra T \mid i\leq k\}$,
and $\Phi.\Theta(T)=\{f_{i,1}:S_{i,1}\ra T_1 \mid i\leq k\}$
where $S_{i,1}=\Phi. S_i$, and $f_{i,1}=\Phi.f_i$.
Then using Remark~\ref{rk_mu_equivariant}, we get
\begin{eqnarray*}
\Phi_*\mu_{n,M_n}([T])&=&\frac{1}{k}\sum_{i\leq k}\Phi_*\mu_{n,M_n}(f_i)\\
&=&\frac{1}{k}\sum_{i\leq k}\mu_{n,M_n}(f_{i,1})
\end{eqnarray*}

On the other hand, 
by Remark \ref{rk_nb}, we have $\#\Theta(\lambda T_1)=\#\Theta(T)=k$ 
so we can write 
$\Theta(\lambda T_1)=\{f'_{i,1}:S'_{i,1}\ra \lambda T_1\mid i\leq k\}$.
Then 
$$\mu_{n,M_n}(\Phi.[T])=\tilde \mu_{n,M_n}(\lambda T_1)=\frac{1}{k}\sum_{i\leq k}\mu_{n,M_n}(f'_{i,1}).$$ 
By Lemma~\ref{lem_selection_morphisms}, we have $\Turn(\{f_{i,1}\mid i\leq k\})=\Turn(\{f'_{i,1}\mid i\leq k\})$. The conclusion therefore follows from Lemma~\ref{lem_asymptotic_invariance}.
\end{proof}

\begin{proof}[Proof of Proposition \ref{prop:action-amenable-lift}]

    Fix $N\in \bbN$ and let $(\PAT^N)_{\amen}$ be the subset of $\PAT^N$ consisting of $N$-tuples
    whose stabilizer (under the diagonal action of $\Out(F_k,\calf)$)
    is amenable.
    We first prove that the action of $\Out(F_k,\calf)$
    on $(\PAT^N)_{\amen}$ is Borel amenable.
 
 We consider $\cald^N$ endowed with the diagonal action of $\Out(F_k,\calf)$
 and let $\Da\subset\cald^N$ be the subset consisting of tuples whose stabilizer is amenable.
 We will associate to every $\TT\in(\PAT^N)_\amen$ a sequence of probability measures $\mu_n(\TT)$ on $\Da$, as follows.
 
 Given $n,m\in\bbN$ and $\TT=([T_1],\dots,[T_N])$, we let $\mu_{n,m}(\TT)=\mu_{n,m}([T_1])\otimes\dots\otimes\mu_{n,m}([T_N])$. Noting that \[||\mu_1\otimes\dots\otimes\mu_N-\mu'_1\otimes\dots\otimes\mu'_N||_1\le \sum_{i=1}^N||\mu_i-\mu'_i||_1,\] 
 it follows from Lemma~\ref{lemma:asymptotic-equivariance} that for every sequence of integers $M_n$, the measures $\mu_{n,M_n}(\TT)$ are asymptotically equivariant.
 
 We claim that for every $n\in\bbN$ and every $\TT\in (\PAT^N)_\amen$, there exists $M'_n=M'_n(\TT)$ so that
 the support of $\mu_{n,M'_n}(\TT)$ is contained in $\Da$. Indeed, by Lemma~\ref{lem_stabilisation_stab}, for every $i\in\{1,\dots,N\}$, there exists $m_i=m_i(\TT)$ such that for every $t\in [n,2n]$ and every $m\ge m_i$, the measure $\mu_{t,m}([T_i])$ is supported on subsets in $\cald$ whose $\Out(F_k,\calf)$-stabilizer is contained in the stabilizer of $T_i$ hence of $[T_i]$. 
 For $M'_n=\max\{m_1(\TT),\dots,m_N(\TT)\}$, the measure $\mu_{n,M'_n}(\TT)$ is supported on $\Da$.
 
 Now, for every $n,m$, the measure $\mu_{n,m}(\TT)$ depends measurably on $\TT$, and therefore its support (a finite subset of the countable set $\cald^N$) 
 depends measurably on $\TT$. It follows that the smallest integer $M_n(\TT)$ such that $\mu_{n,m}(\TT)$ is supported in $\Da$ depends measurably on $\TT$. The measures $\mu_{n,M_n(\TT)}(\TT)$ depend measurably on $\TT$.
 
 At this point, we have constructed a sequence of measurable and asymptotically equivariant maps $\mu_n:(\PAT^N)_\amen\to\Prob(\Da)$. Since the stabilizer of every element of the countable set $\Da$ is amenable, it follows from \cite[Proposition~2.12]{BGH} that the $\Out(F_k,\calf)$-action on $(\PAT^N)_\amen$ is Borel amenable.
This provides measurable and asymptotically equivariant maps $\nu_n:(\PAT^N)_\amen\to\Out(F_k,\calf)$. 

The set $(\PAT^N)_\amen$ consists of ordered $N$-tuples.
Let $\PaPATN\subset\PaPAT$ consist of (unordered) subsets of cardinality $N$.
The action of $\Out(F_k,\calf)$ on $\PaPATN$ is Borel amenable: indeed,
defining 
\[\nu'_n([T_1],\dots,[T_N])=\frac{1}{N!}\sum_{\sigma\in\mathfrak{S}_N}\nu_n([T_{\sigma(1)}],\dots,[T_{\sigma(N)}]),\] 
the maps $\nu'_n$ are still measurable and asymptotically equivariant, and they descend to maps defined on $\PaPATN$. 
Since $\PaPAT=\dunion_{N\geq 1} \PaPATN$,
the Borel amenability of the action on $\PaPAT$ follows. 
 \end{proof}



\subsection{Amenability of the action on $\PaPATsim$}\label{sec:papatsim}

We now upgrade the amenability of the action established in the previous section from $\PaPAT$ to $\PaPATsim$.


\begin{prop}\label{prop:action-amenable}
  The action of $\Out(F_k,\calf)$ on $\PaPATsim$ is Borel amenable.
\end{prop}

\begin{cor}\label{cor:amenable-on-pairs}
   The action of $\Out(F_k,\calf)$ on the subset of $\PATsim$ made of points with amenable stabilizer is Borel amenable. This also holds for the action on the set of pairs in $\calp_{\leq 2}(\PATsim)$ whose stabilizer is amenable.
   \qed
\end{cor}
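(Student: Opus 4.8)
\textbf{Proof plan for Theorem~\ref{theo:action-amenable}.} The plan is to reduce the Borel amenability of the $\Out(F_k,\calf)$-action on $\PaPATsim$ to the known Borel amenability of the $\Out(F_k,\calf^{(t)})$-action on $\PAT$ (\cite[Theorem~6.4]{BGH}), exploiting the fact that $\Out(F_k,\calf^{(t)})$ has finite index in $\Out(F_k,\calf)$... wait, this is false in general; rather $\Out(F_k,\calf^{(t)})$ is a \emph{normal} subgroup of $\Out(F_k,\calf)$ with quotient a subgroup of $\prod_i \Out(P_i)$ for $P_i$ ranging over representatives of $\calf$, which need not be finite. So the correct strategy is to work with the exact sequence
\[
1\to \Out(F_k,\calf^{(t)})\to \Out(F_k,\calf)\xrightarrow{r} Q\to 1,
\]
where $Q\subseteq \prod_i \Out(P_i)$, and show that stabilizers of amenable collections project to amenable subgroups of $Q$, so that amenability can be ``assembled'' from amenability of the kernel action and amenability of the quotient acting on its coset space. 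Concretely, I would first recall that a group $G$ with a normal subgroup $K$ acts Borel amenably on a $G$-space $\Delta$ provided: (i) the $K$-action on $\Delta$ is Borel amenable, and (ii) for the induced $G/K$-action on the (Borel) space of $K$-ergodic components or, more simply, on $\Delta$ itself in a way compatible with a $G/K$-equivariant Borel map, one has the appropriate amenability. The cleanest formulation available to us is the permanence property that Borel amenable actions are closed under taking the action on a space admitting a $G$-equivariant Borel map to a Borel amenable $G$-space combined with Borel amenability of the fibers; I would invoke the Borel analogue of \cite[Example 1.4, Propositions]{AD} / \cite{Ren} for this.

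The first key step is the structural one: for a collection $\calc\in\PaPATsim$, its stabilizer $\Stab(\calc)$ in $\Out(F_k,\calf)$ is amenable, hence its image $r(\Stab(\calc))$ in $Q$ is amenable. More importantly, I would show that $r$ maps $\PaPATsim$ to a Borel amenable $Q$-space. The natural candidate: each non-amenable arational tree $T$ comes (as recalled in Section~\ref{sec:amen-5}, and made precise by the witness map of Section~\ref{sec:witness}) from an arational lamination supported on a proper sub-factor, which forces non-amenability of the stabilizer; conversely, for $\calc\in\PaPATsim$, amenability of $\Stab(\calc)$ constrains how $\calc$ can interact with the factors $P_i$ in $\calf$. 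I would use this to build a $Q$-equivariant Borel map from $\PaPATsim$ to a homogeneous space $Q/Q'$ where $Q'$ is amenable (essentially, to the set of those tuples of subgroups of the $P_i$ that are ``activated'' by $\calc$), and combine it with the Borel amenability of the $Q'$-action on $Q/Q'$. Simultaneously, restricting to the kernel, the $\Out(F_k,\calf^{(t)})$-action on $\PaPATsim\subseteq\calp_{<\infty}(\PATsim)$ is Borel amenable: this follows from \cite[Theorem~6.4]{BGH} (Borel amenability of the $\Out(F_k,\calf^{(t)})$-action on $\PAT$, hence on $\PATsim$) together with the standard fact that Borel amenability passes to the action on the space $\calp_{<\infty}$ of finite subsets (one fibers over the orbit-counting data and uses that stabilizers of finite subsets are commensurated, or directly that $\Prob(\calp_{<\infty}(X))\to\Prob(X)$-type averaging works). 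Patching these two via the extension $1\to\Out(F_k,\calf^{(t)})\to\Out(F_k,\calf)\to Q\to 1$ — i.e.\ averaging the $\nu_n$ maps produced for the kernel against the $\nu_n$ maps produced for the quotient acting on $Q/Q'$ — produces the required sequence $\nu_n\colon \PaPATsim\to\Prob(\Out(F_k,\calf))$ witnessing Borel amenability.

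The main obstacle will be making the ``quotient'' step honest, since $Q$ is not assumed finite and the naive induction of amenable actions along an extension requires a genuinely $Q$-equivariant Borel target that is Borel amenable for $Q$. The safest route is to identify, for $\calc\in\PaPATsim$, the minimal $\calf$-invariant sub-data it determines: namely, each arational tree in $\calc$ is arational relative to $\calf$ and, by Reynolds' structure theory (used in Section~\ref{sec:amen-2} and in \cite{Hor,Rey}), its restriction to each factor $P_i\in\calf$ is a Grushko tree; amenability of $\Stab(\calc)$ then forces the image of $\Stab(\calc)$ in each $\Out(P_i)$ to stabilize an arational-type datum inside $P_i$, so that after passing to a suitable Borel transversal one reduces the rank and argues by induction on $N$ (or on the complexity of $\calf$). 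In the base case $\calf=\es$, the theorem is exactly \cite[Theorem~6.4]{BGH}. I expect the inductive bookkeeping — checking that the Borel maps constructed at each factor assemble $Q$-equivariantly, and that amenability of the relevant point stabilizers in the $P_i$ is genuinely available from the induction hypothesis applied to $\PaPATsim(P_i,\calf_{|P_i})$ — to be the delicate part; the Borel-measurability of all choices (transversals, barycenters, restriction-to-factor maps) is routine given the equivariant Borel maps already constructed in Sections~\ref{sec:amen-1}--\ref{sec:amen-4} and in \cite{GHL,BGH}.

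\textbf{Deduction of Corollary~\ref{cor:amenable-on-pairs}.} The first assertion is the special case of Theorem~\ref{theo:action-amenable} restricted to singletons $\{[T]\}$ with $\Stab([T])$ amenable, since this subset is $\Out(F_k,\calf)$-invariant and Borel in $\PATsim$, and Borel amenability passes to invariant Borel subspaces. The second assertion is the restriction of Theorem~\ref{theo:action-amenable} to the invariant Borel subset of $\calp_{\le 2}(\PATsim)\subseteq\calp_{<\infty}(\PATsim)$ consisting of pairs (or singletons) with amenable common stabilizer, which is precisely $\PaPATsim\cap\calp_{\le 2}(\PATsim)$; again Borel amenability of the ambient action restricts to this invariant Borel subset.
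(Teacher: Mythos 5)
Your deduction of the corollary from Theorem~\ref{theo:action-amenable} is exactly what the paper intends by its ``\qed'': the two sets in question are $\Out(F_k,\calf)$-invariant Borel subsets of $\PaPATsim$, and Borel amenability restricts to invariant Borel subsets (one just restricts the maps $\nu_n$). So that step is fine and identical to the paper.

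The issue is your proof plan for Theorem~\ref{theo:action-amenable} itself, which is where all the content lies: it is not the paper's route, and it has a genuine gap. The paper never passes through the extension $1\to\Out(F_k,\calf^{(t)})\to\Out(F_k,\calf)\to Q\to 1$; it reruns the construction of \cite[Theorem~6.4]{BGH} directly for $\Out(F_k,\calf)$ acting on (collections of) arational trees with \emph{amenable} stabilizer. For an optimal morphism $f$ onto such a tree it considers the finite sets $[t,t+m]_f$ of simplices of $\mathbb{P}\calo$ through which $f$ factors; the decisive new ingredient is the chain condition on elementwise stabilizers of collections of free splittings (Proposition~\ref{prop:chain-FS2}), which forces the stabilizer of $[t,t+m]_f$ to stabilize as $m\to\infty$ to a group fixing a subray of a folding path, hence fixing $[T]$; amenability of $\Stab([T])$ (the hypothesis defining $\ATa$) is what makes these stabilizers eventually amenable, so the averaged Dirac measures take values in a countable set with amenable point stabilizers and \cite[Proposition~2.11]{BGH} applies. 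Your plan has nothing playing this role. Concretely: (i) you never exhibit a Borel amenable $Q$-space receiving a $Q$-equivariant Borel map from $\PaPATsim$; knowing that each point's stabilizer has amenable image in $Q$ is far weaker than Borel amenability of an action, and producing the asymptotically equivariant measures is precisely the difficulty. (ii) The proposed inductive datum ``inside $P_i$'' does not exist as described: by the definition of arationality, the restriction of an arational $(F_k,\calf)$-tree to a factor $P_i$ is a Grushko $(P_i,\calf_{|P_i})$-tree, not an arational tree of $P_i$, and the $\Out(P_i)$-stabilizer of such a simplicial tree contains its group of twists, in general a product of non-abelian free groups; the surface example at the start of Section~\ref{sec:amen-5} is exactly this phenomenon, so the hoped-for amenable $Q'$ is typically not amenable. (iii) There is no general permanence of Borel amenability along group extensions that would license the ``patching'' of kernel and quotient data without such a target space. (Your minor permanence claims are fine: averaging handles finite subsets, and the passage from $\PAT$ to $\PATsim$ uses the finite multisection of Lemma~\ref{lem:multisection}, as in the paper.) In short: read as resting on Theorem~\ref{theo:action-amenable} as stated, your corollary argument coincides with the paper's; read as resting on your sketch of that theorem, it is incomplete at its central step.
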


Given a tree $T\in\AT$, we denote by $[T]$ and $[[T]]$ its images in $\PAT$ and $\PATsim$, respectively.
Similarly, given $\TT=\{T_1,\dots,T_p\}\in\calp_{<\infty}(\AT)$ we denote by $[\TT]$ and $[[\TT]]$ its images
in $\calp_{<\infty}(\PAT)$ and in $\calp_{<\infty}(\PATsim)$, respectively.

\newcommand{\Ext}{\mathrm{Ext}}
We recall that the preimage of $[[T]]$ in $\PAT$ (under the quotient map $\PAT\to\PATsim$) is a finite-dimensional simplex, see \cite[Proposition~13.5]{GH1} relying on \cite[Section~5]{Gui00}. 
We denote by $\mathrm{Ext}([[T]])$ the finite set of its extremal points. The trees in $\Ext([[T]])$ are sometimes called \emph{ergometric} in the literature.

We view $\Ext$ as a map $\PATsim\ra\calp_{<\infty}(PAT)$,
and think of it as a multisection of the quotient map $\pi:\PAT\ra\PATsim$.

\begin{lemma}\label{lem:multisection}
 The map $\Ext:\PATsim\ra\calp_{<\infty}(\PAT)$ is an 
 $\Out(F_k,\calf)$-equivariant Borel map.
\end{lemma}

\begin{proof}
Equivariance is clear.
By \cite[Lemmas~7.4 and~7.5]{GHL},
there exists a sequence of (not equivariant)
Borel maps $f_n:\PAT\to\PAT$ such that for all $[T]\in\PAT$, one has $\mathrm{Ext}([[T]])=\{f_n([T])\mid n\in\mathbb{N}\}$.
Thus, we obtain an equivariant Borel map $\tilde\sigma:\PAT\ra \calp_{<\infty}(\PAT)$ by letting $\tilde\sigma([T]):=\{f_n([T])\mid n\in\bbN\}$.
It descends to an equivariant Borel map $\sigma$ on $\PATsim$
which coincides with $\Ext$.
\end{proof}

We immediately get the following corollary, where the second assertion follows from the
fact that the stabilizer of a projective tree $[T]$ is contained in the stabilizer of its equivalence class $[[T]]$.

\begin{cor}\label{cor:multisection}
 There exists an $\Out(F_k,\calf)$-equivariant Borel map 
 $\sigma:\calp_{<\infty}(\PATsim) \ra \calp_{<\infty}(\PAT)$;
    it maps    $\PaPATsim$ in $\PaPAT$.\qed
\end{cor}



\begin{proof}[Proof of Proposition \ref{prop:action-amenable}]
By Proposition~\ref{prop:action-amenable-lift},
there exists a sequence of Borel maps $$\nu_n:\PaPAT\to\Prob(\Out(F_k,\calf))$$ which is asymptotically $\Out(F_k,\calf)$-equivariant.
Let  $\sigma:\PaPATsim \ra \PaPAT$ be
the section defined in Corollary \ref{cor:multisection}.
Taking $\nu'_n=\nu_n\circ \sigma$,
one gets the desired asymptotically equivariant sequence of probability measures.
\end{proof}



\section{Nice-averse amenable subgroupoids: Adams' argument}\label{sec:adams}

 In this section, we fix an integer $k\ge 2$. The main result of the present section is the following.

\begin{theo}\label{theo:normalisateur_moyennable}
Let $\calg$ be a measured groupoid over a  base space $Y$, and $\cala,\calh$ two measured subgroupoids of $\calg$.
Let $\rho:\calg\ra \ia$ be a cocycle such that  $\rho_{|\calh}$ has trivial kernel
and $\rho_{|\cala}$ is nowhere trivial. 
Assume that $\cala$ is amenable, and that $\cala$ is normalized by $\calh$.

Then either $\calh$ is amenable, or there exist a Borel subset $U\subseteq Y$ of positive measure and a nice splitting which is $(\calh_{|U},\rho)$-invariant and $(\cala_{|U},\rho)$-invariant.
\end{theo}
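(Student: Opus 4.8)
The strategy is to run Adams' argument in the relative setting, using the barycenter map of \cite{GHL} in place of the tree-barycenter map used in Section~\ref{sec:direct-product}. First I would dispose of the trivial cases. If $(\calh,\rho)$ is stably nice, we are done after a countable partition of $Y$: some piece $U$ has positive measure and carries a $(\calh_{|U},\rho)$-invariant nice splitting, and since $\cala_{|U}$ is normalized by $\calh_{|U}$ with $\rho_{|\cala}$ nowhere trivial, Lemma~\ref{lemma:maxFF_unique} (applied to the canonical collections of splittings/factor systems attached to a nice splitting) shows this nice splitting is also $(\cala_{|U},\rho)$-invariant. So by Proposition~\ref{prop:stably-nice-vs-pure-nice-averse} we may assume, after a partition, that $(\calh,\rho)$ is stably pure and nice-averse; restricting to one piece, assume $(\calh,\rho)$ is pure and nice-averse. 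In particular $(\calh,\rho)$ is $\FS$-averse, hence $(\cala,\rho)$ is too. Let $\calf$ be the unique maximal $(\calh,\rho)$-invariant almost free factor system; since $\cala\subseteq\langle\calh,\cdots\rangle$-normalized piece and $\rho_{|\cala}$ nowhere trivial, $\calf$ is also the everywhere-maximal $(\cala,\rho)$-invariant \afs{} (using that any larger $(\cala_{|U},\rho)$-invariant \afs{} would, by maximality transported via the normalizing bisections as in Lemma~\ref{lemma:maxFF_unique}, contradict purity). Extract a non-sporadic free factor system $\calf_0$ from $\calf$.

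Next, amenability of $\cala$ and Proposition~\ref{prop:zimmer} applied to the compact metrizable space $\mathbb{P}\overline{\calo}(F_k,\calf_0)$ (relative Outer space compactification) produces an $(\cala,\rho)$-equivariant Borel map $\mu:Y\to\Prob(\mathbb{P}\overline{\calo}(F_k,\calf_0))$. I would now invoke Reynolds' partition of $\partial\calo$ into arational and non-arational trees \cite{Rey}: each non-arational tree has a canonical finite family of reducing free factors. If $\mu(y)$ gives positive measure to non-arational trees on a positive-measure set, pushing forward through the (Borel, $\Out(F_k,\calf_0)$-equivariant) reducing-factor map and then through the "maximal-measure finite subset" map as in the proof of Lemma~\ref{lemma:adams-produit} yields, after restriction to a positive-measure set where it is constant, an $(\cala_{|U},\rho)$-invariant finite set of free factors, hence (by $\IA$-rigidity, Theorem~\ref{theo:ia}) an $(\cala_{|U},\rho)$-invariant proper free factor relative to $\calf_0$ — contradicting the everywhere-maximality of $\calf$. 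So $\mu(y)$ is supported on arational trees for a.e.\ $y$. Then, exactly as in Adams' argument but using the barycenter map of \cite{GHL} which assigns a finite set of conjugacy classes of free factors to every triple of pairwise inequivalent arational trees: if $\mu(y)$ has support meeting $\ge 3$ distinct equivalence classes of arational trees on a positive-measure set, the same push-forward-and-freeze procedure produces an $(\cala_{|U},\rho)$-invariant finite set of free factors, again contradicting maximality of $\calf$. Hence after post-composing with "support modulo equivalence", $\mu$ gives an $(\cala,\rho)$-equivariant Borel map $Y\to\calp_{\le 2}(\AT/{\sim})$, where $\AT$ is the space of arational $(F_k,\calf_0)$-trees. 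By the usual Adams union/uniqueness argument (\cite[Lemmas~3.2,~3.3]{Ada}, the barycenter map ensuring the union of two such maps still lands in $\calp_{\le 2}$) there is an everywhere-maximum such map $\phi_{\max}$, and because $\calh$ normalizes $\cala$, the bisection argument of Lemma~\ref{lemma:adams-produit} upgrades $\phi_{\max}$ to an $(\calh,\rho)$-equivariant map as well.

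Now the dichotomy. Work over a positive-measure set $U$ on which $\phi_{\max}$ has constant cardinality; write $\{T_+(y),T_-(y)\}=\phi_{\max}(y)$ (or a single class). Case 1: on some positive-measure $U'\subseteq U$ the stabilizer in $\Out(F_k,\calf_0)$ of the pair $\{T_+(y),T_-(y)\}$ (equivalently of the corresponding pair of points of $\partial_\infty\FFG$) is non-amenable. Arational trees with non-amenable stabilizer are simplicial-free but have a rich transverse structure; following the argument sketched in the introduction and the methods of \cite{GL-on}, from $T_\pm(y)$ one extracts a canonical splitting of $F_k$ which is invariant under the stabilizer of $\{T_+,T_-\}$, hence $(\calh_{|U'},\rho)$- and $(\cala_{|U'},\rho)$-invariant; turning it into a nice splitting via Theorem~\ref{thm_nice}/Corollary~\ref{cor:biflexible2nice} contradicts nice-aversion of $(\calh,\rho)$ — unless $\calh$ is amenable, which is the conclusion we want (in fact in this branch we simply obtain the invariant nice splitting, so the theorem holds). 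Case 2: for a.e.\ $y\in U$ the stabilizer of $\{T_+(y),T_-(y)\}$ is amenable. Then $\phi_{\max}$ is an $(\calh,\rho)$-equivariant Borel map into the standard Borel space of such pairs, on which $\Out(F_k,\calf_0)$ acts Borel-amenably by Corollary~\ref{cor:amenable-on-pairs}. Since $\rho_{|\calh}$ has trivial kernel, Proposition~\ref{prop:mackey} gives that $\calh_{|U}$ is amenable; and amenability being preserved under stabilization and by passing from $\calh_{|U}$ back up via the normalizing structure — more carefully, repeating the argument over a countable partition of $Y$ — yields that $\calh$ is amenable.

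\textbf{Main obstacle.} The delicate points are (i) ensuring that the "maximal invariant \afs" for $\cala$ really coincides with that for $\calh$, so that the Reynolds/barycenter contradictions genuinely bite — this requires carefully transporting everywhere-maximality through the countably many normalizing bisections, exactly the mechanism of Lemma~\ref{lemma:maxFF_unique}, and keeping track of the countable partitions of $Y$ without accumulating infinitely many at once (here the chain conditions of Part~\ref{part2} are what make this terminate); and (ii) Case~1, extracting a canonical nice splitting from a pair of arational trees with non-amenable stabilizer — this is where the substantive input from \cite{GL-on} and the biflexibility-to-nice machinery of Section~\ref{sec_nice} is needed, and where one must be careful that the extracted splitting is canonical enough to be simultaneously $\cala$- and $\calh$-invariant. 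Everything else is the standard Adams template (Zimmer amenability $\Rightarrow$ invariant probability on a compactification, barycenter map bounding the support, uniqueness of the maximal map, bisection-transfer to the normalizer, Borel amenability of the limit action $\Rightarrow$ amenability of the groupoid).
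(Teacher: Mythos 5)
Your overall engine is the paper's: Zimmer amenability of $\cala$ gives a measure on $\Pbaro$, Reynolds' witness map and the barycenter map of \cite{GHL} force the support into $\calp_{\le 2}(\PATsim)$, Adams' maximality plus the bisection argument makes the resulting map $\calh$-equivariant, and the final dichotomy (pairs with non-amenable stabilizer yield a canonical nice splitting; pairs with amenable stabilizer feed Corollary~\ref{cor:amenable-on-pairs} and Proposition~\ref{prop:mackey}) is exactly how the paper concludes. However, there is a genuine gap in your initial reduction: you apply the stably nice vs.\ stably pure dichotomy (Proposition~\ref{prop:stably-nice-vs-pure-nice-averse}) to $\calh$, whereas it must be applied to $\cala$, the amenable \emph{normalized} subgroupoid. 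All the transfer machinery you invoke -- Lemma~\ref{lemma:maxFF_unique} and Proposition~\ref{prop:nice-preserved} -- transports invariance of \emph{canonical} objects attached to the normalized groupoid \emph{up} to its normalizer, never down. Concretely, in your first branch ``$(\calh,\rho)$ stably nice'' you claim the $(\calh_{|U},\rho)$-invariant nice splitting is also $(\cala_{|U},\rho)$-invariant ``by Lemma~\ref{lemma:maxFF_unique}'': this is the wrong direction ($\cala$ is not contained in $\calh$, and nothing forces a splitting preserved by the normalizer to be preserved by the normalized subgroupoid), so that branch is not handled as written. The correct resolution, which is the paper's, is to run the dichotomy on $\cala$: if $(\cala,\rho)$ is stably nice, Proposition~\ref{prop:nice-preserved} (using that $\rho_{|\cala}$ is nowhere trivial) directly produces, on a positive-measure piece, a nice splitting invariant under both $\cala$ and $\calh$; otherwise $(\cala,\rho)$ is stably pure and one works with the maximal $(\cala,\rho)$-invariant \afs.

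The same directional issue infects your identification of the maximal \afs: you take $\HF$ maximal for $(\calh,\rho)$ and assert it is everywhere maximal for $(\cala,\rho)$ ``by maximality transported via the normalizing bisections as in Lemma~\ref{lemma:maxFF_unique}''; but an arbitrary $(\cala_{|U},\rho)$-invariant \afs{} is not a canonical object, so its invariance does not pass to $\calh$ by a bisection argument. This matters because your Reynolds and barycenter contradictions produce $\cala$-invariant (not $\calh$-invariant) finite sets of free factors, so they only bite against $\cala$-maximality. This particular point is repairable (enlarge the offending \afs{} to the finite canonical stable reduction collection of $(\cala,\rho)$ via Lemma~\ref{lem_maxFF}, which \emph{is} $(\calh,\rho)$-invariant by Lemma~\ref{lemma:maxFF_unique}, and then invoke purity of $\calh$), but it requires exactly this extra step, and in any case the cleaner and correct setup -- which the paper adopts and which also fixes the stably nice branch -- is to assume $(\cala,\rho)$ pure from the start, so that the contradiction in Lemma~\ref{lemma:no-prob-on-FF} is against the everywhere-maximality of the extracted free factor system with respect to $\cala$ (Lemma~\ref{lem_calf_max}), the objects in the Adams argument being only $\cala$-equivariant until the canonicity step.
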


\begin{cor}\label{cor:normalisateur_moyennable}
Let $\calg$ be a measured groupoid over a  base space $Y$, and $\cala,\calh$ two measured subgroupoids of $\calg$.
Let $\rho:\calg\ra \ia$ be a  cocycle with trivial kernel.
Assume that $\cala$ is of infinite type and amenable, and that $\cala$ is stably normalized by $\calh$.

If $(\calh,\rho)$, $(\cala,\rho)$ or $(\grp{\cala,\calh},\rho)$ is nice-averse, 
then $\calh$ is amenable.
\end{cor}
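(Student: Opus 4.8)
\textbf{Proof proposal for Corollary~\ref{cor:normalisateur_moyennable}.}

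The plan is to reduce to Theorem~\ref{theo:normalisateur_moyennable} after a standard destabilization. Since $\cala$ is stably normalized by $\calh$, by definition there is a countable Borel partition $Y^* = \Dunion_{n\in\bbN} Y_n$ of a conull subset such that $\calh_{|Y_n}$ normalizes $\cala_{|Y_n}$ for every $n$. Because amenability is preserved by stabilization (see the remarks following Definition~\ref{dfn:everywhere_nonamenable}), it suffices to prove that $\calh_{|Y_n}$ is amenable for each $n$ with $\mu(Y_n) > 0$. Note that $\cala_{|Y_n}$ is again of infinite type (since $\cala$ is), it is again amenable (being a restriction of an amenable groupoid), and $\rho_{|\cala_{|Y_n}}$ still has trivial kernel as $\rho$ does. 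Similarly $\rho_{|\calh_{|Y_n}}$ has trivial kernel. Whichever of $(\calh,\rho)$, $(\cala,\rho)$ or $(\grp{\cala,\calh},\rho)$ is nice-averse, its restriction to $Y_n$ is still nice-averse, since nice-aversion only quantifies over positive-measure Borel subsets of the base space. Thus, after replacing $Y$ by $Y_n$, we may assume from the start that $\calh$ normalizes $\cala$, and it remains to show $\calh$ is amenable.

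The next step is to upgrade ``$\rho_{|\cala}$ has trivial kernel'' to ``$\rho_{|\cala}$ is nowhere trivial'', in order to apply Theorem~\ref{theo:normalisateur_moyennable}. Here is where the infinite-type hypothesis on $\cala$ enters: if there were a positive-measure Borel subset $U \subseteq Y$ with $\rho(\cala_{|U}) = \{1\}$, then since $\rho_{|\cala_{|U}}$ has trivial kernel, the subgroupoid $\cala_{|U}$ would consist only of unit elements over $U$, which contradicts $\cala$ being of infinite type (every point of $U$ has infinitely many arrows of $\cala_{|U}$ emanating from it). Hence $\rho_{|\cala}$ is nowhere trivial. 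Now all the hypotheses of Theorem~\ref{theo:normalisateur_moyennable} are satisfied. That theorem gives the dichotomy: either $\calh$ is amenable — in which case we are done — or there exist a positive-measure Borel subset $U \subseteq Y$ and a nice splitting $S$ of $F_k$ which is simultaneously $(\calh_{|U},\rho)$-invariant and $(\cala_{|U},\rho)$-invariant.

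It remains to rule out the second alternative using the nice-averse hypothesis. If $(\calh,\rho)$ is nice-averse, the existence of an $(\calh_{|U},\rho)$-invariant nice splitting over the positive-measure set $U$ is an immediate contradiction. If $(\cala,\rho)$ is nice-averse, likewise the $(\cala_{|U},\rho)$-invariant nice splitting $S$ contradicts nice-aversion. If $(\grp{\cala,\calh},\rho)$ is nice-averse, observe that $S$ being invariant under both $(\calh_{|U},\rho)$ and $(\cala_{|U},\rho)$ means there is a conull subset $U^* \subseteq U$ with $\rho(\calh_{|U^*}) \subseteq \Gamma_S$ and $\rho(\cala_{|U^*}) \subseteq \Gamma_S$ (where $\Gamma_S$ denotes the stabilizer of $S$ in $\ia$); since $\grp{\cala,\calh}_{|U^*}$ is generated as a groupoid by $\calh_{|U^*}$ and $\cala_{|U^*}$, and $\Gamma_S$ is a group, every arrow of $\grp{\cala,\calh}_{|U^*}$ — being a product of arrows from $\calh_{|U^*}$ and $\cala_{|U^*}$ — has $\rho$-image in $\Gamma_S$, so $S$ is $(\grp{\cala,\calh}_{|U},\rho)$-invariant, again contradicting nice-aversion over the positive-measure set $U$. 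In all three cases the second alternative is impossible, so $\calh$ is amenable. The only slightly delicate point is the bookkeeping in this last step, checking that joint invariance under $\calh_{|U}$ and $\cala_{|U}$ propagates to the generated subgroupoid; but this is routine given that the generated subgroupoid is, by definition, the set of finite products of arrows from the two pieces, and that the stabilizer of a splitting is a subgroup of $\ia$ closed under the group operations.
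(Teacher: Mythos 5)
Your destabilization to the case where $\calh$ normalizes $\cala$, the upgrade from ``$\rho_{|\cala}$ has trivial kernel'' to ``$\rho_{|\cala}$ is nowhere trivial'' via the infinite-type hypothesis, and the treatment of the first two cases are all correct and in line with the paper. The problem is the third case. Your key step asserts that every arrow of $\grp{\cala,\calh}_{|U^*}$ is a product of arrows of $\cala_{|U^*}$ and $\calh_{|U^*}$. This is false in general: an arrow of the generated subgroupoid whose source and range lie in $U^*$ decomposes as $h_k\cdots h_1$ with each $h_i\in\cala\cup\calh$, but the intermediate points $s(h_i)=r(h_{i+1})$ may leave $U^*$ (or even $U$), and nothing constrains the $\rho$-images of those factors to lie in the stabilizer $\Gamma_S$. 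In other words, $\grp{\cala_{|U^*},\calh_{|U^*}}$ is in general a proper subgroupoid of $\grp{\cala,\calh}_{|U^*}$ --- precisely the pitfall the paper flags in Remark~\ref{rk:schottky-stabilities}, where it is observed that $\grp{\cala^1_{|U},\dots,\cala^k_{|U}}$ need not coincide with $\grp{\cala^1,\dots,\cala^k}_{|U}$. Consequently your argument only yields invariance of $S$ under the smaller groupoid $\grp{\cala_{|U^*},\calh_{|U^*}}$, and this does not contradict nice-averseness of $(\grp{\cala,\calh},\rho)$: nice-averseness is inherited when passing from a subgroupoid to a larger one, not the other way around. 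So the third alternative of the hypothesis is not ruled out by your reasoning, and the ``routine bookkeeping'' you invoke is exactly the step that fails.

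The paper sidesteps this propagation problem by changing which groupoid is fed into Theorem~\ref{theo:normalisateur_moyennable}: after the same destabilization it sets $\calh'=\grp{\cala,\calh}$, notes that $\calh'$ still normalizes $\cala$ (compositions of normalizing bisections normalize), that $\rho_{|\calh'}$ has trivial kernel, and that $(\calh',\rho)$ is nice-averse in all three cases --- in the first two because nice-averseness of a subgroupoid ($\calh$ or $\cala$) passes to any larger subgroupoid containing it, and in the third by hypothesis. Applying the theorem to the pair $(\cala,\calh')$ then gives either amenability of $\calh'$, hence of $\calh\subseteq\calh'$, or a nice splitting invariant under a restriction of $\calh'$ itself, which directly contradicts nice-averseness of $(\calh',\rho)$; no promotion of joint $(\cala,\calh)$-invariance to the generated subgroupoid is ever needed. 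If you want to keep your structure, the repair is exactly this: in the third case, run Theorem~\ref{theo:normalisateur_moyennable} with $\grp{\cala,\calh}$ in place of $\calh$ from the start, rather than trying to upgrade the invariance afterwards.
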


\begin{proof}
  Since $\calh$ stably normalizes $\cala$, there is a partition $Y^*=\Dunion_{i\in I}Y_i$
   of a conull Borel subset $Y^*\subset Y$ into at most countably many Borel subsets
   such that $\calh_{|Y_i}$ normalizes $\cala_{|Y_i}$  for every $i\in I$. Since amenability of every restriction
   $\calh_{|Y_i}$ implies amenability of $\calh$,
  we may restrict to $Y_i$ and assume that $\calh$ normalizes $\cala$.

  Let $\calh'=\langle \cala,\calh\rangle$, and note that $\calh'$ normalizes $\cala$.
  In all cases, $(\calh',\rho)$ is nice-averse because 
if $(\calh,\rho)$ or  $(\cala,\rho)$ is nice-averse, then so is $(\calh',\rho)$.
As $\rho$ has trivial kernel, $\rho_{|\calh'}$ has trivial kernel, and as $\cala$ is of infinite type, we also deduce that $\rho_{|\cala}$ is nowhere trivial. 
We can therefore apply Theorem~\ref{theo:normalisateur_moyennable} to the subgroupoids $\cala$ and $\calh'$
and as $(\calh',\rho)$ is nice-averse, its conclusion ensures that $\calh'$ is amenable, hence so is $\calh$.  
\end{proof}

The general strategy of our proof follows an argument due to Adams \cite{Ada}, and implemented in the mapping class group setting by Kida \cite[Chapter~4]{Kid1};  the basic idea of this argument was in fact reviewed in our proof of Lemma~\ref{lemma:adams-produit}. But we will also need to establish some new results about the amenability of certain actions of $\Out(F_k,\calf)$, and there are some real differences that arise between mapping class groups and $\Out(F_k)$. 
For instance, the analogue of Kida's statement saying that the normalizer of an irreducible amenable groupoid is amenable does not always hold here, 
an extra possibility being that the normalizer preserves a nice splitting.

Our proof of Theorem~\ref{theo:normalisateur_moyennable}  will be completed in Section \ref{sec:amen-6}.
It goes as follows. 
After a countable partition of the base space $Y$,
we may assume that $(\cala,\rho)$ is pure or that  $(\cala,\rho)$ preserves a nice splitting (Proposition~\ref{prop:stably-nice-vs-pure-nice-averse}), and in the latter case,
Proposition~\ref{prop:nice-preserved} says that up to further partitioning,
there is a canonical one which is both $(\cala,\rho)$-invariant and $(\calh,\rho)$-invariant, so we are done.
We thus assume that $(\cala,\rho)$ is pure and consider the unique maximal $(\cala,\rho)$-invariant almost free factor system $\HF$.  It is also $(\calh,\rho)$-invariant,
and we work relative to $\HF$, and in fact relative to an extracted free factor system $\calf\subset \HF$ (see Definition~\ref{de:afs}).

At first our proof follows Kida's implementation of Adams' argument closely. 
We work in the relative Outer space $\calo=\calo(G,\calf)$ -- this will be important in order to exploit the maximality of $\calf$.
We start by using the amenability of $\cala$ to construct an $(\cala,\rho)$-equivariant  Borel map $\mu$ from $Y$ to the  space of all probability measures on the compactification $\Pbaro$ of the relative  Outer space (Section~\ref{sec:amen-1}). 
Using a witness map assigning to any non-arational tree a finite collection of  conjugacy classes of free factors, maximality of $\calf$ implies that for almost every $y\in Y$, the probability measure $\mu(y)$ gives full measure to the set of arational trees (Section~\ref{sec:amen-2}). 
Using a barycenter map on triples of inequivalent arational trees, one shows that the support of $\mu(y)$ has cardinality at most two almost everywhere (Section~\ref{sec:amen-3}). 
This argument also implies that there is an essentially  unique maximal $(\cala,\rho)$-equivariant  Borel map from $Y$ to the set of pairs of points in $\AT/{\sim}$, which is therefore $(\calh,\rho)$-equivariant (Section~\ref{sec:amen-4}).

If this map takes values into the set of pairs with amenable stabilizer,
we conclude that $\calh$ is amenable using Borel amenability of the action given by Theorem~\ref{theo:action-amenable-2}. Otherwise, we exploit the witness map constructed in Section~\ref{sec:witness} to get an invariant nice splitting.
\\
\\
\textbf{Standing assumptions:} From now on and until the end of Section~\ref{sec:amen-6}, thanks to Propositions~\ref{prop:stably-nice-vs-pure-nice-averse} and~\ref{prop:nice-preserved}, we will assume that $(\cala,\rho)$ is pure in addition to all assumptions coming from the statement of Theorem~\ref{theo:normalisateur_moyennable} including amenability of $\cala$.
We denote by $\HF$ the maximal $(\cala,\rho)$-invariant almost free factor system (coming from Definition~\ref{dfn_pure_cocycle} of purity). We choose a free factor system $\calf\subseteq\HF$ extracted from $\HF$.
Then $\calf$ is $(\cala,\rho)$-invariant (as $\rho$ takes its values in $\ia$, see Theorem~\ref{theo:ia}).

In all this section, we will work relative to $\calf$, and leave $\calf$ implicit in the notations:
we denote by $\Pbaro=\Pbaro(F_k,\calf)$ the compactification of the projectivized Outer space of $(F_k,\calf)$, 
by $\PAT\subset \Pbaro$ the set of projective arational trees,
and by $\FF$ the set of conjugacy classes of proper $(F_k,\calf)$-free factors.

We record the following fact for future use.

\begin{lemma}\label{lem_calf_max}
  The free factor system $\calf$ is  nonsporadic, and everywhere maximal in the following sense:
  if $\calf'\sqsupseteq\calf$ is a free factor system which is
  $(\cala_{|U},\rho)$-invariant for some
   Borel subset $U\subset Y$ of positive measure, then $\calf'=\calf$.
\end{lemma}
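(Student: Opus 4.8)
The statement is a direct consequence of the standing assumption that $(\cala,\rho)$ is pure together with the purity hypothesis recorded in Definition~\ref{dfn_pure_cocycle}. Recall that purity of $(\cala,\rho)$ asserts in particular that $(\cala,\rho)$ is $\FS$-averse, so that no non-trivial free splitting of $F_k$ is $(\cala_{|U},\rho)$-invariant for any Borel subset $U\subseteq Y$ of positive measure. In particular $\calf$ cannot be sporadic: if $\calf$ were sporadic, then any Grushko $(F_k,\calf)$-tree would be a non-trivial free splitting of $F_k$ relative to $\calf$, hence a non-trivial $(\cala,\rho)$-invariant free splitting of $F_k$ (since $\calf\subseteq\HF$ is $(\cala,\rho)$-invariant and $\rho$ takes values in $\ia$, one invokes Lemma~\ref{lemma:ia-free-splitting} and the fact that such a Grushko splitting is determined by $\calf$), contradicting $\FS$-aversion. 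So $\calf$ is non-sporadic.

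For everywhere maximality, I would argue by contradiction using the defining property of purity. Suppose $\calf'\sqsupseteq\calf$ is a free factor system which is $(\cala_{|U},\rho)$-invariant for some Borel subset $U\subseteq Y$ of positive measure, and $\calf'\neq\calf$. Since every free factor system is an almost free factor system (Definition~\ref{de:afs}), $\calf'$ is an $(\cala_{|U},\rho)$-invariant almost free factor system. By purity of $(\cala,\rho)$ (and the fact that purity is inherited by restriction, or rather by the very phrasing of Definition~\ref{dfn_pure_cocycle} which quantifies over all positive-measure Borel subsets $U$), we get $\calf'\sqsubseteq\HF$. Now I need to upgrade ``$\calf'\sqsubseteq\HF$ and $\calf\sqsubseteq\calf'$'' to ``$\calf'=\calf$''. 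Here is where the choice of $\calf$ as a free factor system \emph{extracted} from $\HF$ enters: by Lemma~\ref{lemma:af}\ref{it:af2}, since $\HF$ is a maximal $(\cala,\rho)$-invariant (equivalently, letting $H$ be the elementwise $\ia$-stabilizer of $\HF$ restricted to a conull set, maximal $H$-invariant) almost free factor system and $\calf$ is extracted from it, $\calf$ is a \emph{maximal} $(\cala,\rho)$-invariant free factor system — more precisely a maximal $H$-invariant free factor system, where $H=\rho(\cala_{|U})$ for a suitable conull $U$. Since $\calf'$ is $H$-invariant (up to a conull set) with $\calf\sqsubseteq\calf'$, maximality of $\calf$ forces $\calf=\calf'$, the desired contradiction.

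The main (and essentially only) subtlety is bookkeeping the ``up to a conull set / for all positive-measure $U$'' quantifiers correctly, so that the group-theoretic maximality statements of Section~\ref{sec:factor-systems} and Lemma~\ref{lemma:af} apply verbatim: one passes from $(\cala,\rho)$-invariance of $\calf$ and $\calf'$ to genuine $H$-invariance for a fixed finitely generated... no, for a fixed subgroup $H\subseteq\ia$ by taking $H$ to be the image under $\rho$ of $\cala$ restricted to a conull Borel subset $Y^*$ on which both invariances hold simultaneously (intersecting the two conull sets); then $\HF$ is a maximal $H$-invariant almost free factor system by definition of purity, $\calf$ is maximal $H$-invariant free factor system by Lemma~\ref{lemma:af}\ref{it:af2}, and $\calf\sqsubseteq\calf'$ with $\calf'$ $H$-invariant yields $\calf=\calf'$. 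The non-sporadicity and everywhere-maximality together are exactly the two assertions of Lemma~\ref{lem_calf_max}, completing the proof. No $\mathbb{R}$-tree machinery or amenability input is needed here; this lemma is purely a formal consequence of purity and the extraction setup.
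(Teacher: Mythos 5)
Your argument is correct and follows essentially the same route as the paper: non-sporadicity is exactly the appeal to $\FS$-averseness, and for maximality the paper likewise uses the everywhere-maximality of $\HF$ built into the definition of purity to get $\calf\sqsubseteq\calf'\sqsubseteq\HF$, then concludes immediately (``since $\calf'$ is a free factor system, $\calf=\calf'$'') where you route through Lemma~\ref{lemma:af}(2). If you keep that detour, two small bookkeeping fixes: the group $H$ should be $\rho(\cala_{|U^*})$ for a suitable conull Borel subset $U^*$ of $U$ (which has positive measure but is not conull in $Y$, since $\calf'$ is only invariant over $U$), and the hypothesis of Lemma~\ref{lemma:af} that $H$ preserves no non-trivial free splitting needs to be noted explicitly --- it holds by $\FS$-averseness, the same fact you already used for non-sporadicity.
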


\begin{proof}
 Non-sporadicity of $\calf$ follows from the fact that $(\cala,\rho)$ is $\FS$-averse (this is an assumption in Definition~\ref{dfn_pure_cocycle} of purity). 

 For the maximality statement, by purity of $(\cala,\rho)$, 
 we have $\calf\sqsubseteq\calf'\sqsubseteq\HF$. 
Since $\calf'$ is a free factor system, $\calf=\calf'$.
\end{proof}

\begin{rk}\label{rk:standing}
Amenability of $\cala$ will only be used in Section~\ref{sec:amen-1} to build an equivariant map $Y\to\Prob(\Pbaro)$. In later sections, we will only use the existence of this map, and not amenability itself.
\end{rk}

\subsection{Equivariant probability measures on the compactified Outer space}\label{sec:amen-1}

\begin{lemma}\label{lemma:mu} 
There exists an $(\cala,\rho)$-equivariant Borel map $\mu:Y\ra \Prob(\Pbaro)$.
\end{lemma}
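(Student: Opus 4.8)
The plan is to invoke the amenability of $\cala$ together with the compactness of $\Pbaro$. First I would recall that $\Pbaro=\Pbaro(F_k,\calf)$, the closure of the projectivized relative Outer space in the space of projective classes of very small $(F_k,\calf)$-trees, is a compact metrizable space, on which $\Out(F_k,\calf)$ acts by homeomorphisms (this follows from the compactness statements cited in the ``Outer space and its closure'' subsection of the preliminaries, namely \cite{CL,BF,Hor1}, together with the continuity of the $\Out(F_k,\calf)$-action on the equivariant Gromov--Hausdorff topology). Since $\calf$ is $(\cala,\rho)$-invariant (as $\rho$ takes values in $\ia\subseteq\Out(F_N)$, and by our standing assumptions), the cocycle $\rho$ restricted to $\cala$ can be viewed as a cocycle $\cala\to\Out(F_k,\calf)$, so that the $\Out(F_k,\calf)$-action on $\Pbaro$ together with this cocycle makes sense.

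The key step is then to apply Proposition~\ref{prop:zimmer} (Zimmer's amenability criterion in the language of measured groupoids): since $\cala$ is amenable, $\Pbaro$ is a compact metrizable space on which $\Out(F_k,\calf)$ acts continuously, and $\rho_{|\cala}:\cala\to\Out(F_k,\calf)$ is a cocycle, there exists a $(\cala,\rho)$-equivariant Borel map $Y\to\Prob(\Pbaro)$. This is exactly the map $\mu$ we want.

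There is essentially no obstacle here — the lemma is a direct application of the Zimmer machinery recalled in Section~\ref{sec:background}. The only point requiring a word of care is that the cocycle $\rho$ has values in $\ia\subseteq\Out(F_N)$ rather than in $\Out(F_k,\calf)$ directly; but the $(\cala,\rho)$-invariance of $\calf$ (Lemma~\ref{lem_calf_max}, or rather the standing assumption that $\calf$ is $(\cala,\rho)$-invariant) means that, after restricting to a conull Borel subset of $Y$, the image $\rho(\cala)$ stabilizes $\calf$, so that $\rho_{|\cala}$ factors through the stabilizer of $\calf$ in $\ia$, which maps to $\Out(F_k,\calf)$ by restriction. (When $F_k$ is a free factor of $F_N$ rather than $F_N$ itself, one uses the natural restriction homomorphism; this is the only place where the case distinction in the opening paragraph of Section~\ref{sec_canonical} plays a role, and it is harmless.) Applying Zimmer's proposition to this induced cocycle completes the proof.
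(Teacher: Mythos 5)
Your proof is correct and is essentially the paper's own argument: the paper also obtains $\mu$ by applying Proposition~\ref{prop:zimmer} to the amenable groupoid $\cala$ and the compact metrizable space $\Pbaro$, with the $(\cala,\rho)$-invariance of $\calf$ making the cocycle land in $\Out(F_k,\calf)$ on a conull set. Your extra parenthetical about a restriction homomorphism when $F_k$ is a proper free factor is unnecessary here (in this section $\rho$ already takes values in $\ia=\mathrm{IA}_k(\mathbb{Z}/3\mathbb{Z})$), but it is harmless.
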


\begin{proof}
  This follows from the fact that $\cala$ is amenable and $\Pbaro$ is compact and metrizable,
  by applying Proposition~\ref{prop:zimmer}.
\end{proof}

In the sequel, we will often denote by $\mu_y$ the probability measure $\mu(y)$.

\subsection{Arational trees and Reynolds' witness map}\label{sec:amen-2}

Recall that an $\bbR$-tree $T$ in the boundary of Outer space is \emph{not} arational
if some proper $(F_k,\calf)$-free factor acts non-discretely or fixes a point.
Theorem~\ref{theo:reducing} below, established in this form in \cite[Lemma~5.5]{Hor} by generalizing an argument of Reynolds \cite{Rey} to a relative setting, asserts the existence of a \emph{witness map}, which equivariantly assigns to any non-arational tree $T$ a finite collection of conjugacy classes of free factors, witnessing the fact that $T$ is not arational.

We denote by $\AT=\AT(F_k,\calf)$ the space of all arational $(F_k,\calf)$-trees, and by $\mathbb{P}\AT$ the space of all projective classes of arational $(F_k,\calf)$-trees.  
We will use that $\mathbb{P}\AT$ is a standard Borel space: indeed it is a Borel subset of the standard Borel space $\bbP\baro$ (see e.g.\ \cite[Lemma~5.5]{Hor}), from which it follows that it is standard Borel \cite[Corollary~13.4]{Kec}.
We recall that $\FF$ denotes the countable set of all conjugacy classes of proper $(F_k,\calf)$-free factors (i.e.\ free factors relative to $\calf$
that are non-trivial, not $\calf$-peripheral, and not equal to $F_k$ itself, see Section \ref{sec:factor-systems}).
Given a set $X$, we denote by $\calp_{<\infty}(X)$ the set of all non-empty finite subsets of $X$.

\begin{theo}[Witness map {\cite[Lemma~5.5]{Hor}, \cite{Rey}}]\label{theo:reducing}
There exists an $\Out(F_k,\calf)$-equivariant Borel map $$\Pbaro\setminus\PAT\ra \calp_{<\infty}(\FF).$$ 
\end{theo}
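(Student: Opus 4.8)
The plan is to recall Reynolds' structure theorem for trees in $\Pbaro\setminus\PAT$, make the choice of witnessing free factors canonical, and then verify measurability. First I would use the characterization of non-arational trees: a very small $(F_k,\calf)$-tree $T$ fails to be arational precisely when there exists a proper $(F_k,\calf)$-free factor $A$ that is either not elliptic in $T$ and whose action on its minimal subtree $T_A$ is not a Grushko $(A,\calf_{|A})$-tree, or that is elliptic in $T$. Reynolds' argument (as extended to the relative setting in \cite[Lemma~5.5]{Hor}) shows moreover that among all such ``reducing'' free factors one can single out a \emph{finite, canonical} collection: concretely, one runs the process of collapsing and passing to transverse coverings to produce the set of $A$ with $T_A$ of minimal complexity among reducing factors, or one takes the point stabilizers of maximal rank together with the minimal non-discrete reducing factors. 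The key point I would emphasize is that this collection depends only on $T$ (not on auxiliary choices), hence is $\Out(F_k,\calf)$-equivariant: if $\alpha\in\Out(F_k,\calf)$ and $\tilde\alpha$ is a lift with $H_{\tilde\alpha}$ the associated homeomorphism $T\to \alpha T$, then $H_{\tilde\alpha}$ carries the reducing data of $T$ to that of $\alpha T$.

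Next I would address finiteness. There are only finitely many $F_k$-orbits of point stabilizers in any very small $(F_k,\calf)$-tree (they form a finite set up to conjugacy because such trees have finitely many orbits of branch points with non-trivial stabilizer, and the elliptic free factors appearing are controlled by the bounded complexity of $(F_k,\calf)$); similarly there are only finitely many conjugacy classes of minimal non-discrete reducing free factors, by the Kurosh-type argument bounding the number of orbits of subtrees in a transverse covering (compare the use of \cite[Corollary~11.9]{GH1} and \cite[Lemma~4.9]{Hor} elsewhere in the paper). Since each such free factor is a proper $(F_k,\calf)$-free factor, the resulting set is a well-defined element of $\calp_{<\infty}(\FF)$. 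This gives a map $\Phi:\Pbaro\setminus\PAT\to\calp_{<\infty}(\FF)$ which by construction is $\Out(F_k,\calf)$-equivariant and nonempty-valued.

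Finally, for Borel measurability I would argue as follows. The set $\Pbaro\setminus\PAT$ is a Borel subset of $\Pbaro$ (arationality is a Borel condition, being a countable conjunction over the countable set $\FF$ of the Borel conditions ``$A$ is elliptic in $T$'' and ``$T_A$ is simplicial with the expected edge and vertex stabilizers''; each of these is Borel in $T$ for fixed $A$, using the length-function description of the topology). For each fixed finite subset $\cals\in\calp_{<\infty}(\FF)$, the preimage $\Phi^{-1}(\cals)$ is carved out by countably many such Borel conditions — namely, ``each $A\in\cals$ is a reducing factor of $T$ of the distinguished type'' and ``no proper $(F_k,\calf)$-free factor outside the orbits of $\cals$ is a reducing factor of the distinguished type'' — so $\Phi$ is Borel. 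Since $\calp_{<\infty}(\FF)$ is a countable discrete space, a Borel map into it is just a countable Borel partition of the domain together with the assignment of a value to each piece, which is exactly what we have produced. The main obstacle here is purely bookkeeping: checking carefully that the ``distinguished'' finite reducing collection really is canonical (independent of the order in which one collapses edges or splits off transverse subtrees) and that each intermediate condition is genuinely Borel in the equivariant Gromov–Hausdorff topology; all of this is contained in \cite[Lemma~5.5]{Hor} and \cite{Rey}, and I would simply cite it, stating the equivariance, finiteness and Borel-measurability as the three things to extract from those references.
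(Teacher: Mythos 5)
The paper does not prove this statement at all: it is quoted verbatim from \cite[Lemma~5.5]{Hor} (building on \cite{Rey}), so the ``paper's proof'' is just the citation. Your sketch of Reynolds' reducing-factor construction (a canonical nonempty finite set of reducing proper $(F_k,\calf)$-free factors, equivariant because canonical, finite by bounded complexity, Borel because the target $\calp_{<\infty}(\FF)$ is countable and the defining conditions are Borel in the length-function/Gromov--Hausdorff topology) is exactly the content of those references, and since you ultimately defer to them for the delicate points, your approach is essentially the same as the paper's.
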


 Let now $\mu:Y\to\Pbaro$ be a map as in Lemma~\ref{lemma:mu}.

\begin{lemma}\label{lemma:proba_PAT}
For a.e.\ $y\in Y$, the probability measure $\mu_y$ gives full measure to $\PAT$.
\end{lemma}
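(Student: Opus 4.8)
The strategy is to argue by contradiction: suppose there is a Borel subset $U\subseteq Y$ of positive measure such that for every $y\in U$, the probability measure $\mu_y$ gives positive measure to the complement $\Pbaro\setminus\PAT$ of the set of projective arational trees. The idea is then to extract from this an $(\cala_{|U'},\rho)$-invariant free factor system strictly larger than $\calf$ for some positive-measure $U'\subseteq U$, contradicting the everywhere-maximality of $\calf$ established in Lemma~\ref{lem_calf_max}.

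First I would push forward $\mu_y$ under the witness map of Theorem~\ref{theo:reducing} restricted to $\Pbaro\setminus\PAT$. More precisely, for $y\in U$ let $\nu_y$ be the restriction of $\mu_y$ to $\Pbaro\setminus\PAT$, renormalized to a probability measure (this is a Borel operation since $\Pbaro\setminus\PAT$ is Borel and $\mu_y(\Pbaro\setminus\PAT)>0$ on $U$); this gives an $(\cala_{|U},\rho)$-equivariant Borel map $U\to\Prob(\Pbaro\setminus\PAT)$. Composing with the witness map $\Pbaro\setminus\PAT\to\calp_{<\infty}(\FF)$, which is $\Out(F_k,\calf)$-equivariant, I obtain an $(\cala_{|U},\rho)$-equivariant Borel map $U\to\Prob(\calp_{<\infty}(\FF))$. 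Since $\calp_{<\infty}(\FF)$ is a countable set, the map sending a probability measure on a countable set to the (finite, nonempty) set of atoms of maximal mass is equivariant and Borel, so precomposing yields an $(\cala_{|U},\rho)$-equivariant Borel map $\Psi: U\to\calp_{<\infty}(\calp_{<\infty}(\FF))$, with values in a countable set.

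Next I would partition $U$ according to the value of $\Psi$: since the target is countable, there is a positive-measure Borel subset $U'\subseteq U$ on which $\Psi$ is constant, equal to some finite set $\mathcal{S}\subseteq\calp_{<\infty}(\FF)$. Equivariance of $\Psi$ and constancy on $U'$ then force $\rho(\cala_{|U'^*})$ (for a conull $U'^*\subseteq U'$) to lie in the $\Out(F_k,\calf)$-stabilizer of $\mathcal{S}$; since $\mathcal{S}$ is a finite set of finite subsets of the countable set $\FF$, this stabilizer preserves setwise the finite union $\bigcup_{F\in\mathcal{S}} F\subseteq\FF$, which is a finite collection of conjugacy classes of proper $(F_k,\calf)$-free factors. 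Because $\rho$ takes values in $\ia$, Theorem~\ref{theo:ia} upgrades this to: each free factor in this finite collection has $(\cala_{|U'},\rho)$-invariant conjugacy class. Taking the smallest free factor system $\calf'$ containing $\calf$ and relative to which all these free factors are peripheral (using the $\vee$-operation of Section~\ref{sec:factor-systems} iteratively), I get an $(\cala_{|U'},\rho)$-invariant free factor system with $\calf\sqsubsetneq\calf'$ — strictly larger because at least one witnessed free factor is, by construction of the witness map, not $\calf$-peripheral. This contradicts Lemma~\ref{lem_calf_max}.

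The main obstacle I anticipate is making precise the last sentence: verifying that the witness map of Theorem~\ref{theo:reducing}, as formulated, actually outputs free factors that are \emph{proper} relative to $\calf$ and genuinely not $\calf$-peripheral — i.e. that the resulting $\calf'$ is strictly bigger than $\calf$ and not all of $F_k$. This is where one must invoke the precise content of \cite[Lemma~5.5]{Hor} / \cite{Rey}: the point stabilizers or the free factors acting non-discretely that witness non-arationality are, by definition of arationality, proper $(F_k,\calf)$-free factors, so they do belong to $\FF$ and adding them to $\calf$ genuinely enlarges the free factor system. A secondary technical point is the measurability of the renormalization $\mu_y\mapsto\nu_y$ on $U$, which is routine since $y\mapsto\mu_y(\Pbaro\setminus\PAT)$ is Borel and bounded away from $0$ on $U$, but should be noted. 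Everything else is a standard application of the ``push forward along an equivariant map into a countable set, then partition the base'' technique already used repeatedly in this section (e.g. in the proof of Lemma~\ref{lemma:adams-produit}).
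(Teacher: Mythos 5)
Your argument is essentially the paper's: restrict and renormalize $\mu_y$ on $\Pbaro\setminus\PAT$, push forward by the witness map of Theorem~\ref{theo:reducing} to get an $(\cala_{|U},\rho)$-equivariant map into $\Prob(\calp_{<\infty}(\FF))$, pass to the (finite, nonempty) set of atoms of maximal mass to land in a countable target, partition the base so the map is constant, and contradict the everywhere-maximality of $\calf$ (Lemma~\ref{lem_calf_max}); the paper packages the second half of this as a separate statement (Lemma~\ref{lemma:no-prob-on-FF}) but the content is the same, including the use of Theorem~\ref{theo:ia} to upgrade setwise invariance of the finite collection to invariance of each conjugacy class.

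One step as you wrote it does not quite work and should be replaced by what the paper does: you propose to form ``the smallest free factor system containing $\calf$ relative to which \emph{all} the witnessed free factors are peripheral, by iterating the $\vee$-operation.'' Such a system need not exist: the operation $\calf\vee\{[A]\}$ is only defined for a single proper free factor $A$ relative to $\calf$, and a second witnessed factor need not be a free factor relative to $\calf\vee\{[A]\}$; concretely, in $F_2=\langle a,b\rangle$ with $\calf=\emptyset$ the three proper free factors $\langle a\rangle,\langle b\rangle,\langle ab\rangle$ cannot all be peripheral in one free factor system. The fix is immediate and already contained in your argument: since each conjugacy class $[A]$ in the constant finite collection is individually $(\cala_{|U'},\rho)$-invariant (by Theorem~\ref{theo:ia}, as $\rho$ takes values in $\ia$), pick a single such $[A]$ and observe that $\calf\vee\{[A]\}$ is an $(\cala_{|U'},\rho)$-invariant free factor system strictly larger than $\calf$, contradicting Lemma~\ref{lem_calf_max}. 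Your remaining concerns (properness of the witnessed factors relative to $\calf$, and measurability of the renormalization) are handled exactly as you indicate.
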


\begin{proof}
  Otherwise, let $U\subseteq Y$ be a Borel subset of positive measure such that for every $y\in U$, one has $\mu_y(\Pbaro\setminus\PAT)>0$.
  By renormalizing, we deduce an  $(\cala_{|U},\rho)$-equivariant Borel map $U\to \Prob(\Pbaro\setminus\PAT)$.
  Via the witness map given by Theorem~\ref{theo:reducing}, we get for each $y\in U$ a probability measure on $\calp_{<\infty}(\FF)$. This yields a contradiction to the following lemma.
\end{proof}

\begin{lemma}\label{lemma:no-prob-on-FF} 
For every Borel subset $U\subseteq Y$ of positive measure, there is no $(\cala_{|U},\rho)$-equivariant Borel map that assigns to every $y\in U$ a probability measure on  $\calp_{<\infty}(\FF)$.
\end{lemma}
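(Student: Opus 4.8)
The plan is to reduce the statement to the fact, already established in Lemma~\ref{lem_calf_max}, that $\calf$ is everywhere maximal, via a standard ``concentration on a countable set'' argument for amenable-type data taking values in a discrete space. The countability of $\FF$ (hence of $\calp_{<\infty}(\FF)$) is the key structural input: a Borel map into a countable set becomes constant after restriction to each piece of a countable Borel partition, so any equivariant family of probability measures on $\calp_{<\infty}(\FF)$ can be replaced by an equivariant choice of a single element of $\calp_{<\infty}(\FF)$, which then produces an invariant free factor system properly containing $\calf$.

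First I would suppose, for contradiction, that there is a Borel subset $U\subseteq Y$ of positive measure and an $(\cala_{|U},\rho)$-equivariant Borel map $y\mapsto \nu_y\in\Prob(\calp_{<\infty}(\FF))$. Since $\calp_{<\infty}(\FF)$ is countable and $\Out(F_k,\calf)$ acts on it, there is an $\Out(F_k,\calf)$-equivariant Borel map $\Prob(\calp_{<\infty}(\FF))\to\calp_{<\infty}(\calp_{<\infty}(\FF))$ sending a probability measure $\nu$ to the (finite, nonempty) set of elements of maximal $\nu$-mass. Composing, and then taking the union of all the finite sets of free factors appearing in that finite set of elements of $\calp_{<\infty}(\FF)$, yields an $(\cala_{|U},\rho)$-equivariant Borel map $\Phi: U\to\calp_{<\infty}(\FF)$. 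Now I would restrict to a Borel subset $U'\subseteq U$ of positive measure on which $\Phi$ is constant, equal to some finite set $\{[A_1],\dots,[A_m]\}\subseteq\FF$; by equivariance there is then a conull Borel subset $U''\subseteq U'$ such that $\rho(\cala_{|U''})$ is contained in the (finite-index, by Theorems~\ref{theo:ia} and the fact that $\rho$ takes values in $\ia$) stabilizer of this finite set of conjugacy classes of proper $(F_k,\calf)$-free factors — in fact it fixes each $[A_i]$ since $\rho(\cala_{|U''})\subseteq\ia$ (Theorem~\ref{theo:ia}).

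Next, I would pass from these invariant conjugacy classes of proper $(F_k,\calf)$-free factors to an invariant free factor system strictly above $\calf$. Let $\calf':=\calf\vee\{[A_1]\}\vee\cdots\vee\{[A_m]\}$ be the smallest free factor system of $F_k$ above $\calf$ containing the $[A_i]$ (using the operation $\calf\vee\{[A]\}$ recalled in Section~\ref{sec:factor-systems}; one applies it iteratively, noting at each stage that the next $A_{i+1}$, being a free factor of $F_k$ relative to $\calf$, is in particular a free factor relative to the intermediate system, or $\calf'$-peripheral, in which case it may be discarded). Since each $[A_i]$ is not $\calf$-peripheral, $\calf'\sqsupsetneq\calf$ strictly. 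This free factor system $\calf'$ is $(\cala_{|U''},\rho)$-invariant. But this contradicts the maximality statement of Lemma~\ref{lem_calf_max}: a free factor system strictly above $\calf$ which is $(\cala_{|U''},\rho)$-invariant for a positive-measure subset $U''$ cannot exist.

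The only point requiring a little care — and the step I would flag as the main (though minor) obstacle — is the passage from ``$\rho(\cala_{|U''})$ fixes each $[A_i]$'' to ``$\calf'$ is $(\cala_{|U''},\rho)$-invariant'': one must check that the $\vee$-construction is compatible with the action, i.e.\ that an element of $\Out(F_k,\calf)$ fixing $\calf$ and each $[A_i]$ fixes $\calf'$. This is immediate from the canonical description of $\calf\vee\{[A]\}$ recalled in Section~\ref{sec:factor-systems} (it consists of $[A]$ together with the classes in $\calf$ not conjugate into $A$), which makes it manifestly functorial under automorphisms preserving the data. Everything else is the routine ``discrete concentration + restrict to where the Borel map is constant'' mechanism, which is used repeatedly in this part of the paper.
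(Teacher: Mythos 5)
Your argument follows the paper's proof essentially step for step: reduce from probability measures to a $\calp_{<\infty}(\FF)$-valued equivariant map via the maximal-mass atoms, restrict to a positive-measure subset where it is constant, use that $\rho$ takes values in $\ia$ (Theorem~\ref{theo:ia}) to make each factor in the resulting finite collection invariant, and contradict the everywhere-maximality of $\calf$ (Lemma~\ref{lem_calf_max}) via the $\vee$-operation of Section~\ref{sec:factor-systems}.

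The one step that does not survive scrutiny is the iterated join $\calf':=\calf\vee\{[A_1]\}\vee\cdots\vee\{[A_m]\}$, and specifically the claim you use to justify it, namely that $A_{i+1}$, being a free factor relative to $\calf$, is automatically a free factor relative to the intermediate system or peripheral in it. This is false in general: two proper relative free factors need not be simultaneously elliptic in any free splitting relative to $\calf$ (they may fill), in which case no free factor system contains both and the smallest one above $\calf$ containing all the $[A_i]$ simply does not exist. For instance, in $F_2$ the factors $\langle a\rangle$ and $\langle ab^2\rangle$ admit no common free factor system, and $\langle ab^2\rangle$ is neither a free factor relative to $\{[\langle a\rangle]\}$ nor peripheral in it; and inside the proof by contradiction you cannot appeal to the standing assumptions to rule such configurations out. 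Fortunately the detour is unnecessary: once each $[A_i]$ is $(\cala_{|U''},\rho)$-invariant, joining a \emph{single} factor, $\calf\vee\{[A_1]\}$, already produces an invariant free factor system strictly containing $\calf$ (strictness because $A_1$ is not $\calf$-peripheral, invariance by the canonical description of the join, as you note), which is exactly the contradiction the paper derives. So the proof is correct after deleting the multi-factor join and keeping only one $[A_i]$.
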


\begin{proof}
  There is an $\Out(F_k,\calf)$-equivariant Borel map $\Prob(\calp_{<\infty}(\FF))\to\calp_{<\infty}(\FF)$, assigning to every probability measure $\nu$ the union of all finite subsets of $\FF$ with maximal $\nu$-measure. It is therefore enough to show that there is no  $(\cala_{|U},\rho)$-invariant Borel map $U\to\calp_{<\infty}(\FF)$. Assume towards a contradiction that such a map $\psi$ exists.
  Since $\psi$ takes values in the countable set $\calp_{<\infty}(\FF)$, let  $V\subseteq U$ be a Borel subset of positive measure such that $\psi_{|V}$ is constant,
  with value a nonempty finite collection $\calc\subset\FF$. Since $\rho$ takes its values in $\ia$, it follows that every free factor in $\calc$ is $(\cala_{|V},\rho)$-invariant. Choosing $[A]\in\calc$, it follows that the free factor system $\calf\vee\{[A]\}$ (see Section~\ref{sec:factor-systems}) is $(\cala_{|V},\rho)$-invariant. This contradicts that $\calf$ is everywhere maximal (Lemma~\ref{lem_calf_max}).
\end{proof}

\subsection{A barycenter argument}\label{sec:amen-3}

Two arational $(F_k,\calf)$-trees $T$ and $T'$ are said to be \emph{equivalent} (which we denote as $T\sim T'$) if there exist $F_k$-equivariant alignment-preserving maps from $T$ to $T'$ and from $T'$ to $T$ (recall that a map $T\to T'$ is \emph{alignment-preserving} if it maps every segment in $T$ to a segment in $T'$).

Given a space $X$, we denote by $X^{(3)}$ the subspace of $X^3$ made of all pairwise distinct triples.

The following theorem was proved in a joint work with Lécureux \cite{GHL}. As a matter of fact $\PATsim$
is also the Gromov boundary of the free factor graph of $(F_k,\calf)$ \cite{BR,Ham2,GH}, which is hyperbolic by \cite{BF-FF,HM} (in particular $\PATsim$ is Polish by \cite[Proposition~3.4.18]{DSU}, hence a standard Borel space). The following statement can therefore be viewed as giving a barycenter map on the free factor graph. 

\begin{theo}[{\cite[Theorem~8.1]{GHL}}]\label{barycenter}
There exists an $\Out(F_k,\calf)$-equivariant Borel map  $$(\PATsim)^{(3)}\to\calp_{<\infty}(\FF).$$ 
\end{theo}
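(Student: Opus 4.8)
The statement to prove (Theorem~\ref{barycenter} in its role as the final displayed theorem) asserts the existence of an $\Out(F_k,\calf)$-equivariant Borel barycenter map $(\PATsim)^{(3)}\to\calp_{<\infty}(\FF)$. Since the excerpt cites this as \cite[Theorem~8.1]{GHL}, the plan is not to reprove it from scratch but to describe how such a map is built, following the architecture of the companion paper with L\'ecureux. The key conceptual input is that $\PATsim$ is $\Out(F_k,\calf)$-equivariantly identified with the Gromov boundary $\partial_\infty\FFG$ of the relative free factor graph $\FFG$, which is a hyperbolic graph on which $\Out(F_k,\calf)$ acts by isometries (this identification is \cite{BR,Ham2,GH}). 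So the problem reduces to producing, on a hyperbolic graph $X=\FFG$ with a countable vertex set acted on by a group $G=\Out(F_k,\calf)$, a Borel $G$-equivariant map $(\partial_\infty X)^{(3)}\to\calp_{<\infty}(V(X))$ sending a triple of pairwise distinct boundary points to a finite set of vertices lying ``near their center''.

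\textbf{Main steps.} First I would recall the coarse geometry of a Gromov hyperbolic space: for a triple $(\xi_1,\xi_2,\xi_3)$ of pairwise distinct points of $\partial_\infty X$, choose bi-infinite geodesics (or quasigeodesics) $\gamma_{ij}$ joining $\xi_i$ to $\xi_j$; by thinness of ideal triangles there is a point within bounded distance (depending only on the hyperbolicity constant $\delta$) of all three sides. The set of vertices $v\in V(X)$ that are within a fixed constant $D=D(\delta)$ of each of the three sides $\gamma_{12},\gamma_{13},\gamma_{23}$ is non-empty, and one must check it is \emph{finite}: this is where hyperbolicity is used again, since in a $\delta$-hyperbolic space the ``center region'' of an ideal triangle has bounded diameter once the three endpoints are genuinely distinct. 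Here a subtlety specific to $\FFG$ arises: the relative free factor graph is locally infinite, so ``finitely many vertices within distance $D$ of a bounded region'' is not automatic and needs the relevant local finiteness substitute --- one uses instead that the \emph{geodesics} $\gamma_{ij}$ themselves pass through a bounded-diameter region, and intersects the three geodesic segments to get a bounded set; finiteness of a bounded set of vertices on a geodesic is clear. The resulting assignment $(\xi_1,\xi_2,\xi_3)\mapsto\{\text{these vertices}\}$ is $G$-equivariant by construction (the construction only used the $G$-invariant metric and the $G$-action on $\partial_\infty X$), and non-empty.

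\textbf{Borel measurability and the output space.} The next step is Borel measurability. The map $(\xi_1,\xi_2,\xi_3)\mapsto$ (a geodesic from $\xi_i$ to $\xi_j$) is not canonical, so one works with the \emph{union over all quasigeodesics} or a suitable uniformizing selection; the standard device is to show that the correspondence ``$v$ lies in the $D$-center of the triple'' is a Borel subset of $(\partial_\infty X)^{(3)}\times V(X)$, using that the Gromov product $(\xi_i\mid\xi_j)_v$ is a Borel function of $(\xi_i,\xi_j,v)$ (it is a pointwise limit of continuous functions of finite approximants), and then the center set is cut out by finitely many inequalities among Gromov products. Since $V(X)$ is countable, a Borel subset of $(\partial_\infty X)^{(3)}\times V(X)$ with finite nonempty fibers gives, via Lusin--Novikov, a Borel map into $\calp_{<\infty}(V(X))$. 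Finally one transports this back along the $\Out(F_k,\calf)$-equivariant Borel isomorphism $\PATsim\cong\partial_\infty\FFG$ and along the inclusion $V(\FFG)\hookrightarrow\FF$, yielding the desired $\Out(F_k,\calf)$-equivariant Borel map $(\PATsim)^{(3)}\to\calp_{<\infty}(\FF)$.

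\textbf{Anticipated main obstacle.} The technical heart --- and the step I expect to be most delicate --- is controlling the \emph{finiteness and boundedness of the center set} in the relative free factor graph, precisely because $\FFG$ fails to be locally finite and its Gromov boundary $\PATsim$ is a quotient of $\PAT$ by the (sometimes nontrivial) equivalence $\sim$, so ``pairwise distinct in $\PATsim$'' is weaker than ``pairwise distinct as trees.'' One must ensure that for $\sim$-inequivalent arational trees the three ideal sides genuinely have a bounded-diameter overlap region, which ultimately rests on the hyperbolicity of $\FFG$ together with the fact (from \cite{BR,Ham2,GH}) that the boundary map is a homeomorphism onto $\partial_\infty\FFG$; granting those cited results, the remaining argument is the standard thin-triangles barycenter construction plus a routine Borel-measurability verification.
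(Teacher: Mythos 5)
Your proposal does not follow the paper's route: the paper does not attempt a proof at all beyond quoting \cite[Theorem~8.1]{GHL} in its stronger form (a map assigning to every triple in $(\PATsim)^{(3)}$ a finite set of free splittings of $(F_k,\calf)$) and then reducing to free factors by the canonical observation that to a free splitting one can associate the finite set of conjugacy classes of proper free factors elliptic in it or in one of its collapses. What you propose instead is a from-scratch coarse-geometric barycenter construction on $\FFG$ via its identification with $\partial_\infty\FFG\cong\PATsim$.

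That construction has a genuine gap exactly at the step you flag as delicate, and your proposed fix does not close it. In a $\delta$-hyperbolic graph the three sides of an ideal triangle do pass through a region of bounded diameter, but since $\FFG$ is locally infinite, a bounded-diameter region can contain infinitely many vertices, so "the set of vertices within $D$ of all three sides" (or any set cut out by inequalities among Gromov products) need not lie in $\calp_{<\infty}(\FF)$. Your remedy — intersecting with the geodesics themselves — faces a dichotomy: if you fix one geodesic (or quasigeodesic) per pair of boundary points, the output is finite but the choice is non-canonical, and equivariance and Borel measurability are precisely what you lose (a Lusin--Novikov selection will not be $\Out(F_k,\calf)$-equivariant); if instead you take the union over all geodesics or uniform quasigeodesics joining the two boundary points, finiteness fails again, since in a locally infinite hyperbolic graph infinitely many such paths can pass through the bounded center region, hitting infinitely many distinct vertices (and genuine bi-infinite geodesics between boundary points need not even exist in this non-proper setting). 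This is exactly why the barycenter statement is a substantial theorem in \cite{GHL}: its proof does not proceed by abstract hyperbolicity but uses the fine structure of arational $(F_k,\calf)$-trees (folding paths in Outer space, transverse coverings, etc.) to extract a finite, canonical set of free splittings from a triple of pairwise inequivalent arational trees. So as written, your argument would not yield the theorem; the honest options are either to cite \cite[Theorem~8.1]{GHL} as the paper does (together with the easy splitting-to-factor reduction), or to reproduce the tree-theoretic construction from that paper, which is of a different nature from the thin-triangle argument you sketch.
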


In fact \cite[Theorem~8.1]{GHL} gives a stronger statement assigning to every triple in $(\PATsim)^{(3)}$ a finite set of free splittings of $(F_k,\calf)$. Theorem~\ref{barycenter} follows as there is a canonical way to associate a finite set of conjugacy classes of proper free factors of $(F_k,\calf)$ to every free splitting $S$ of $(F_k,\calf)$, by considering all nonperipheral free factors that are equal to a vertex stabilizer in either $S$ or some collapse of $S$ (there exists one because $\calf$ is non-sporadic).

By Lemma \ref{lemma:proba_PAT}, we have for a.e.\ $y\in Y$ a probability  measure $\mu_y$ on $\PAT$. 
The following lemma shows that for a.e.\ $y\in Y$, its image under the quotient map $\PAT\onto \PATsim$  is supported on a set of cardinality at most $2$.  

\begin{lemma}\label{lemma:cardinal-2} 
For every subset of positive measure $U\subseteq Y$, every $(\cala_{|U},\rho)$-equivariant Borel map $\nu:U\to\Prob(\PATsim)$, and a.e.\ $y\in U$, the support of $\nu_y$ has cardinality at most $2$.
\end{lemma}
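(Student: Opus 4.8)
This is the standard Adams-type barycenter argument, now carried out one dimension up since we want to bound the support rather than conclude amenability directly. The plan is to argue by contradiction: suppose there is a Borel subset $U\subseteq Y$ of positive measure, an $(\cala_{|U},\rho)$-equivariant Borel map $\nu:U\to\Prob(\PATsim)$, and a further Borel subset $V\subseteq U$ of positive measure such that for every $y\in V$ the support $\supp(\nu_y)$ has cardinality at least $3$. First I would observe that for such $y$, the product measure $\nu_y\otimes\nu_y\otimes\nu_y$ gives positive mass to the subspace $(\PATsim)^{(3)}$ of pairwise distinct triples: indeed if $x_1,x_2,x_3$ are three distinct points in $\supp(\nu_y)$ then every neighbourhood of each $x_i$ has positive $\nu_y$-measure, and one can choose the neighbourhoods pairwise disjoint. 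Restricting and renormalizing $\nu_y^{\otimes 3}$ to $(\PATsim)^{(3)}$ then yields an $(\cala_{|V},\rho)$-equivariant Borel map $V\to\Prob\big((\PATsim)^{(3)}\big)$ (measurability of $y\mapsto\nu_y^{\otimes3}$ and of the renormalization is routine, since these are continuous operations on probability measures in the weak-$*$ topology).

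Next I would push this measure forward through the barycenter map of Theorem~\ref{barycenter}, namely the $\Out(F_k,\calf)$-equivariant Borel map $(\PATsim)^{(3)}\to\calp_{<\infty}(\FF)$, to obtain an $(\cala_{|V},\rho)$-equivariant Borel map $V\to\Prob(\calp_{<\infty}(\FF))$. This contradicts Lemma~\ref{lemma:no-prob-on-FF}, which asserts that no Borel subset of positive measure admits an $(\cala_{|\cdot},\rho)$-equivariant Borel map to $\Prob(\calp_{<\infty}(\FF))$. (That lemma was proved using the fact that $\Out(F_k,\calf)$-equivariantly one can extract from a probability measure on the countable set $\calp_{<\infty}(\FF)$ a canonical finite subset — the set of atoms of maximal mass — and then invoking everywhere-maximality of $\calf$, i.e. Lemma~\ref{lem_calf_max}, together with $\rho$ taking values in $\ia$.) This contradiction shows that the set of $y\in U$ with $|\supp(\nu_y)|\ge 3$ is null, which is exactly the statement of Lemma~\ref{lemma:cardinal-2}.

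The main thing to be careful about is the Borel measurability of the various constructions — in particular that $y\mapsto\nu_y^{\otimes 3}$ is Borel as a map into $\Prob((\PATsim)^3)$, that the subset $(\PATsim)^{(3)}$ is Borel (it is the complement of the three ``diagonal'' Borel sets, using that $\PATsim$ is standard Borel so that equality is Borel), and that the renormalization $y\mapsto \nu_y^{\otimes3}(\cdot\cap(\PATsim)^{(3)})/\nu_y^{\otimes3}((\PATsim)^{(3)})$ is Borel on the (Borel) set where the denominator is positive. All of this is standard descriptive set theory and is handled exactly as in \cite[Chapter~4]{Kid1} and \cite{Ada}; I would simply cite those references rather than reproving it. I do not expect any genuine obstacle here: the only real input is Theorem~\ref{barycenter} together with Lemma~\ref{lemma:no-prob-on-FF}, both already available.
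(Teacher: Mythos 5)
Your proposal is correct and follows essentially the same route as the paper's proof: contradiction via the triple product measure giving positive mass to pairwise distinct triples, pushing forward through the barycenter map of Theorem~\ref{barycenter}, and contradicting Lemma~\ref{lemma:no-prob-on-FF}. The extra measurability remarks you include are fine but the paper treats these as routine, exactly as you do.
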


\begin{proof}
  Assume by contradiction that the support of  $\nu_y$ has cardinality at least $3$ on a Borel subset $U'\subseteq U$ of positive measure. Then for all $y\in U'$, the probability measure $\nu_y\otimes\nu_y\otimes\nu_y$ on
 $(\PAT/{\sim})^3$ gives positive measure to the subset $(\PAT/{\sim})^{(3)}$  of pairwise distinct triples. 
Therefore, we get an $(\cala_{|U'},\rho)$-equivariant Borel map $U'\to\Prob((\PAT/{\sim})^{(3)})$. Using Theorem~\ref{barycenter}, we deduce an  $(\cala_{|U'},\rho)$-equivariant Borel map  $U'\to\Prob(\calp_{<\infty}(\FF))$. This contradicts Lemma~\ref{lemma:no-prob-on-FF}.
\end{proof}

Given a set $X$, we denote by $\calp_{\le 2}(X)$ the collection of all nonempty subsets of $X$ of cardinality at most $2$.

\begin{cor}\label{cor:psi-0}
 There exists an $(\cala,\rho)$-equivariant Borel map
$\psi_0:Y\ra \calp_{\le 2}(\PATsim)$.
\end{cor}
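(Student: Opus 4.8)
\textbf{Proof plan for Corollary~\ref{cor:psi-0}.}

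The plan is to combine Lemma~\ref{lemma:proba_PAT}, Lemma~\ref{lemma:cardinal-2} and the standard support map. First I would invoke Lemma~\ref{lemma:mu} to fix an $(\cala,\rho)$-equivariant Borel map $\mu:Y\to\Prob(\Pbaro)$. By Lemma~\ref{lemma:proba_PAT}, for a.e.\ $y\in Y$ the measure $\mu_y$ gives full measure to $\PAT$, so after discarding a null set we may and do regard $\mu$ as an $(\cala,\rho)$-equivariant Borel map $Y\to\Prob(\PAT)$. Composing with the (Borel, $\Out(F_k,\calf)$-equivariant) pushforward induced by the quotient map $q:\PAT\onto\PATsim$ produces an $(\cala,\rho)$-equivariant Borel map $\nu:=q_*\circ\mu:Y\to\Prob(\PATsim)$.

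Next I would apply Lemma~\ref{lemma:cardinal-2} with $U=Y$: it asserts that for a.e.\ $y\in Y$ the support $\supp(\nu_y)$ has cardinality at most $2$. The support map $\Prob(\PATsim)\to\calp_{\le 2}(\PATsim)$, restricted to the (Borel, conull) set of measures with support of size $\le 2$, is an $\Out(F_k,\calf)$-equivariant Borel map; measurability of $y\mapsto\supp(\nu_y)$ is routine since $\PATsim$ is a standard Borel space and $\nu$ is Borel. Setting $\psi_0(y):=\supp(\nu_y)$ on the conull set where this has cardinality at most $2$ (and extending arbitrarily, say by a fixed point, on the remaining null set) yields a Borel map $\psi_0:Y\to\calp_{\le 2}(\PATsim)$.

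Finally I would check equivariance: since $\nu$ is $(\cala,\rho)$-equivariant and the support map is $\Out(F_k,\calf)$-equivariant, for every arrow $g\in\cala$ joining $x$ to $y$ (in a conull subset) one has $\nu_y=\rho(g)_*\nu_x$, hence $\supp(\nu_y)=\rho(g)\cdot\supp(\nu_x)$, so $\psi_0(r(g))=\rho(g)\psi_0(s(g))$. There is no real obstacle here; the content has already been extracted in the preceding lemmas, and this corollary is simply the packaging of $\mu$, the full-measure statement, the cardinality bound, and the support map into a single equivariant map. The only point requiring a line of care is the measurability of the support map, which follows from the fact that $\PATsim$ is standard Borel and that $\{(y,T): T\in\supp(\nu_y)\}$ is Borel.
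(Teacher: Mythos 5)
Your proposal is correct and follows essentially the same route as the paper's own proof: take $\mu$ from Lemma~\ref{lemma:mu}, use Lemma~\ref{lemma:proba_PAT} to view it as a map to $\Prob(\PAT)$, push forward to $\Prob(\PATsim)$, and apply Lemma~\ref{lemma:cardinal-2} together with the support map. The only difference is that you spell out the measurability of the support map, which the paper leaves implicit.
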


\begin{proof}
  Lemmas~\ref{lemma:mu} and~\ref{lemma:proba_PAT} show that there exists an $(\cala,\rho)$-equivariant Borel map $\mu:Y\to\Prob(\PAT)$, and $\mu$ then induces an $(\cala,\rho)$-equivariant Borel map $\nu:Y\to\Prob(\PATsim)$. Lemma~\ref{lemma:cardinal-2} ensures that for a.e.\ $y\in Y$, the support of  $\nu_y$ has cardinality at most $2$. The map sending $y$ to the support of $\nu_y$ is then an $(\cala,\rho)$-equivariant Borel map $Y\to\calp_{\le 2}(\PATsim)$ (defined almost everywhere).
\end{proof}

\subsection{Canonicity}\label{sec:amen-4} 

\begin{lemma}\label{lem:canonical_pair} 
  There exists an essentially unique $(\cala,\rho)$-equivariant Borel map
$\phi:Y\ra  \calp_{\le2}(\PATsim)$
which is everywhere maximal in the following sense:
for every Borel subset $U\subseteq Y$ of positive measure, every $(\cala_{|U},\rho)$-equivariant Borel map
$\psi:U\ra \calp_{\le2}(\PATsim)$,  and almost every $y\in U$, $\psi(y)$ is contained in $\phi(y)$.
\end{lemma}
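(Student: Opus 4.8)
The statement is a standard "maximal equivariant map" argument, of the type already used by Adams \cite{Ada} and reviewed in the proof of Lemma~\ref{lemma:adams-produit}, together with the fact (Lemma~\ref{lemma:cardinal-2}) that the union of two equivariant maps with values in $\calp_{\leq 2}(\PATsim)$ still takes values in $\calp_{\leq 2}(\PATsim)$. First, by Corollary~\ref{cor:psi-0} there exists at least one $(\cala,\rho)$-equivariant Borel map $\psi_0:Y\ra\calp_{\leq 2}(\PATsim)$, so the set of such maps is non-empty. I would then run the maximality construction exactly as in the proof of Lemma~\ref{lemma:adams-produit}: following \cite[Lemmas~3.2 and~3.3]{Ada}, one builds an everywhere maximal element $\phi$ of this set. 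The key closure property needed to make the Adams machinery apply is: if $\psi_1,\psi_2:U\ra\calp_{\leq 2}(\PATsim)$ are $(\cala_{|U},\rho)$-equivariant on a Borel subset $U\subseteq Y$ of positive measure, then $y\mapsto\psi_1(y)\cup\psi_2(y)$ is again $(\cala_{|U},\rho)$-equivariant with values in $\calp_{\leq 2}(\PATsim)$ (not merely in $\calp_{<\infty}(\PATsim)$). Equivariance of the union is immediate, and the fact that the union still has cardinality at most $2$ is exactly Lemma~\ref{lemma:cardinal-2} applied to the measure $\frac12(\delta_{\psi_1(y)\text{-points}}+\delta_{\psi_2(y)\text{-points}})$ — more precisely, to the $(\cala_{|U},\rho)$-equivariant Borel map $U\ra\Prob(\PATsim)$ sending $y$ to the uniform probability measure on $\psi_1(y)\cup\psi_2(y)$, whose support is $\psi_1(y)\cup\psi_2(y)$.

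Concretely, I would proceed as follows. Normalize the measure on $Y$ to be a probability measure (Remark~\ref{rk:proba}). For a Borel subset $W\subseteq Y$ of positive measure, say a map $\psi:W\ra\calp_{\leq 2}(\PATsim)$ is \emph{admissible on $W$} if it is $(\cala_{|W},\rho)$-equivariant. Given admissible maps $\psi$ on $W$ and $\psi'$ on $W'$, the restriction of $\psi'\cup\psi$ to $W\cap W'$ (when this has positive measure) is admissible there by the closure property above. Consider the supremum, over all admissible maps $\psi$ defined on conull subsets of $Y$, of the quantity measuring "how big $\psi$ is on average"; since the relevant values lie in the two-element-subsets of a \emph{countable} set, one can use the argument of \cite[Lemma~3.3]{Ada} (or directly an exhaustion argument: take a sequence $\psi_n$ defined on conull sets whose "sizes" approach the supremum, and let $\phi(y):=\bigcup_n\psi_n(y)$ on the conull set where all $\psi_n$ are defined; by the closure property applied inductively this union still has cardinality at most $2$, and it is $(\cala,\rho)$-equivariant). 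Then for any $(\cala_{|U},\rho)$-equivariant $\psi$ on a positive-measure $U$, the map $y\mapsto\phi(y)\cup\psi(y)$ is admissible on $U$ and, by the definition of $\phi$ via the union over the $\psi_n$, its restriction to a conull set is dominated by $\phi$; combined with the fact that $\phi(y)\cup\psi(y)$ has size at most $2$, this forces $\psi(y)\subseteq\phi(y)$ for a.e.\ $y\in U$. This is the everywhere-maximality statement.

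Uniqueness is then formal: if $\phi$ and $\phi'$ both satisfy the everywhere-maximality property, applying the property of $\phi$ to the $(\cala,\rho)$-equivariant map $\phi'$ (taking $U=Y$) gives $\phi'(y)\subseteq\phi(y)$ for a.e.\ $y$, and symmetrically $\phi(y)\subseteq\phi'(y)$, so $\phi=\phi'$ almost everywhere. I do not expect a real obstacle here: the only substantive input is the cardinality bound of Lemma~\ref{lemma:cardinal-2}, which is already established, and the rest is a verbatim transcription of the Adams–Kida maximality argument (cf.\ the corresponding passage in the proof of Lemma~\ref{lemma:adams-produit} for $\calp_{\leq 2}(\partial_\infty T_l)$). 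The one point requiring a word of care is to ensure that the union of \emph{two} (rather than of a fixed finite number of) admissible maps stays in $\calp_{\leq 2}$, which is why the closure property must be phrased and proved for pairs before iterating; this is handled by Lemma~\ref{lemma:cardinal-2} as indicated.
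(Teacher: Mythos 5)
Your overall architecture matches the paper's: existence of one equivariant map via Corollary~\ref{cor:psi-0}, the closure property that the union of two equivariant maps into $\calp_{\le 2}(\PATsim)$ stays in $\calp_{\le 2}(\PATsim)$ (Lemma~\ref{lemma:cardinal-2}), an Adams-style exhaustion to produce a maximal map, and formal uniqueness. But there is a genuine gap at the decisive step, namely maximality of $\phi$ against equivariant maps $\psi$ defined only on a positive-measure subset $U\subseteq Y$ which is not conull. Your exhaustion ranges only over maps defined on conull subsets, and the sentence ``by the definition of $\phi$ via the union over the $\psi_n$, its restriction to a conull set is dominated by $\phi$'' does not go through: $\phi\cup\psi$ is defined on $U$ only, so it is not one of the maps entering the supremum, and the claim that it is dominated by $\phi$ is exactly the conclusion you are trying to prove. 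The obvious repair, extending $\phi\cup\psi$ to $Y$ by declaring it equal to $\phi$ off $U$, produces a map that is in general \emph{not} $(\cala,\rho)$-equivariant, because arrows of $\cala$ may have source in $U$ and range outside $U$; so you cannot feed it back into the maximization to obtain a contradiction.

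The missing ingredient is the equivariant extension to the $\cala$-saturation $\cala U$ of $U$: for $y\in\cala U$ choose $g\in\cala$ with $s(g)\in U$ and $r(g)=y$, and set $\overline{(\phi\cup\psi)}(y)=\rho(g)\bigl(\phi(s(g))\cup\psi(s(g))\bigr)$; equivariance of $\phi\cup\psi$ on $U$ makes this well defined almost everywhere, the saturation is $\cala$-invariant, and patching with $\phi$ (or $\psi_0$) on $Y\setminus\cala U$ yields a genuinely equivariant map on a conull set which contains $\phi$ everywhere and strictly exceeds it on a set of positive measure, contradicting maximality. This is precisely the point the paper's proof flags (``one has to be careful because of measurability issues'') and handles via saturations, and it is available off the shelf as Remark~\ref{rk:stably-invariant} (Adams' Lemma~2.7, Kida's Lemma~4.16). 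With that step inserted, your argument is correct and is essentially the paper's proof.
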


\begin{proof} The argument is exactly Adams' in \cite[Lemmas~3.2 and~3.3]{Ada}.
The uniqueness is clear.

To prove the existence, we observe that if $\psi,\psi':U\ra \calp_{\le2}(\PATsim)$ are two
$(\cala_{|U},\rho)$-equivariant Borel maps, then for almost every $y\in U$, $\psi(y)\cup \psi'(y)$
has cardinality at most 2 by Lemma~\ref{lemma:cardinal-2}.
So one would like to construct $\phi$ as the union of all maps $\psi$, but one has to 
be careful because of measurability issues.

For $U\subset Y$ we denote by $\bar U$ the $\cala$-saturation of $U$ i.e.\ the set of 
points $y\in Y$ such that there exists an element of $\cala$ with source $y$ and range in $U$.
We observe that if $\psi:U\ra \calp_{\le2}(\PATsim)$ is $(\cala_{|U},\rho)$-equivariant,
it extends naturally to an $(\cala_{|\bar{U}},\rho)$-equivariant map $\bar\psi:\bar U\ra\calp_{\le2}(\PATsim)$
by letting  $\bar\psi(r(g))=\rho(g)\psi(s(g))$ for all $g\in\calg_{|\bar U}$ with $s(g)\in U$ 
(this does not depend on choices).
It can then be extended to $Y$ by $\bar \psi(y)=\psi_0(y)$ for all $y\notin \bar U$ (where $\psi_0$ is  a map provided by Corollary~\ref{cor:psi-0}).

Let $U\subseteq Y$ be a Borel subset of maximal measure such that there exists a stably $(\cala_{|U},\rho)$-equivariant
Borel map $\phi'$ from $U$ to the set $\calp_{=2}(\PATsim)$ of all subsets of $\PATsim$ of cardinal exactly 2. Then there is actually an $(\cala_{|U},\rho)$-equivariant Borel map $\phi:U\to\calp_{=2}(\PATsim)$ (this is proved by starting from a Borel subset $V\subseteq U$ on which $\phi'$ is actually equivariant, extending it to an equivariant map on $\bar V$ as above, and repeating the process on the complement of $\bar V$, at most countably many times).  
The set $U$ is then almost equal to its saturation $\bar U$,
and extending $\phi$ on $Y\setminus \bar U$ using $\psi_0$,
one gets a map $\phi:Y\ra  \calp_{\le2}(\PATsim)$
which is $(\cala,\rho)$-equivariant and maximal as required.
\end{proof}

The advantage of the maximal map constructed in Lemma~\ref{lem:canonical_pair} is that it is also invariant under the
given subgroupoid $\calh$ which normalizes $\cala$.

\begin{cor}\label{cor:canonical_pair2}
There exists a Borel map $\phi:Y\ra \calp_{\le2}(\PATsim)$ which is both $(\cala,\rho)$-equivariant and $(\calh,\rho)$-equivariant.
\end{cor}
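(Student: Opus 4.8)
\textbf{Proof proposal for Corollary~\ref{cor:canonical_pair2}.}

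The plan is to take the everywhere-maximal map $\phi$ provided by Lemma~\ref{lem:canonical_pair} and to promote its $(\cala,\rho)$-equivariance to $(\calh,\rho)$-equivariance by exploiting the fact that $\calh$ normalizes $\cala$. This is exactly the step that appeared inside the proof of Lemma~\ref{lemma:adams-produit}, and it will be carried out in the same way here. First I would record that $\phi$ is essentially unique, in the strong sense that any $(\cala_{|U},\rho)$-equivariant Borel map $U\to\calp_{\le2}(\PATsim)$ that is itself everywhere maximal for $(\cala_{|U},\rho)$ must agree almost everywhere with $\phi_{|U}$ (this is immediate from the definition of everywhere-maximality applied in both directions). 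I would also note that everywhere-maximality of $\phi$ is inherited by restriction: for every Borel subset $U\subseteq Y$ of positive measure, the map $\phi_{|U}$ is the everywhere-maximal $(\cala_{|U},\rho)$-equivariant Borel map into $\calp_{\le2}(\PATsim)$.

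Next I would use the definition of normalization to write $\calh$ as a countable union of bisections $B_n\subseteq\calh$ such that for every $n$, the subgroupoid $\cala$ is $B_n$-invariant; after subdividing, I may assume that $\rho$ is constant on each $B_n$, with value $\gamma_n\in\ia$. Let $f_{B_n}:U_n\to V_n$ be the associated partial isomorphism, with $U_n,V_n\subseteq Y$ Borel. For those $n$ with $U_n$ (equivalently $V_n$) of positive measure, the $B_n$-invariance of $\cala$ means precisely that the Borel map
\begin{displaymath}
y\in U_n\ \longmapsto\ \gamma_n^{-1}\cdot\phi(f_{B_n}(y))\ \in\ \calp_{\le2}(\PATsim)
\end{displaymath}
is again $(\cala_{|U_n},\rho)$-equivariant, and moreover it is everywhere maximal for $(\cala_{|U_n},\rho)$ because $\phi$ is everywhere maximal for $(\cala_{|V_n},\rho)$ and conjugation by $\gamma_n$ together with the isomorphism $f_{B_n}$ transports everywhere-maximal maps to everywhere-maximal maps (here the fact that $\calp_{\le2}(\PATsim)$ carries an $\Out(F_N)$-action and that maximality is an inclusion-theoretic condition is what is used). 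By the uniqueness noted above, this map coincides with $\phi_{|U_n}$ off a null set, i.e.\ $\phi(f_{B_n}(y))=\gamma_n\cdot\phi(y)$ for almost every $y\in U_n$; this equality holds trivially when $U_n$ is null. Taking a conull Borel subset $Y^*\subseteq Y$ avoiding all the null sets involved and all the $U_n,V_n$ of measure zero, I get that for every arrow $g$ of $\calh$ with both endpoints in $Y^*$, writing $g\in B_n$ for some $n$, one has $\phi(r(g))=\rho(g)\,\phi(s(g))$. Since $\calh$ is the union of the $B_n$, this is exactly the statement that $\phi$ is $(\calh,\rho)$-equivariant, and it was already $(\cala,\rho)$-equivariant, which proves the corollary.

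The only point requiring any care — and hence the "main obstacle", though it is really a bookkeeping matter rather than a genuine difficulty — is the measurability: one must check that the countably many null sets arising from the countably many bisections $B_n$ can be removed simultaneously while keeping a conull base, and that the map $y\mapsto\gamma_n^{-1}\cdot\phi(f_{B_n}(y))$ is genuinely Borel (this follows since $f_{B_n}$ is a Borel isomorphism onto its image, $\phi$ is Borel, and the $\Out(F_N)$-action on the standard Borel space $\calp_{\le2}(\PATsim)$ is by Borel automorphisms). There is no new conceptual ingredient beyond Lemma~\ref{lem:canonical_pair}; the corollary is simply the observation that an everywhere-maximal equivariant object for a normal subgroupoid is automatically equivariant for the normalizing subgroupoid.
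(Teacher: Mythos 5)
Your proposal is correct and follows essentially the same route as the paper: the paper's proof also takes the everywhere-maximal map of Lemma~\ref{lem:canonical_pair}, writes $\calh$ as a countable union of bisections $B_n$ on which $\rho$ is constant, transports $\phi$ along the associated partial isomorphisms, and invokes maximality/uniqueness to conclude that the transported map agrees with $\phi$ almost everywhere, exactly as in the proof of Lemma~\ref{lemma:adams-produit}. The only cosmetic difference is that you spell out the restriction-stability of everywhere-maximality and the measurability bookkeeping, which the paper leaves implicit.
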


\begin{proof}
  Let $\phi:Y\ra \calp_{\leq 2}(\PATsim)$ be the essentially unique maximal $(\cala,\rho)$-equivariant
  map given by the previous lemma.
  Since $\calh$ normalizes $\cala$, one can write $\calh$ as a countable union of bisections $B_n\subset \calh$ such that
for all $a\in\cala$ and all $\gamma,\delta\in B_n$ such that $\gamma a \delta\m$ is defined, 
one has $a\in\cala$ if and only if
$\gamma a \delta\m\in \cala$. Up to subdividing $B_n$, we may assume without loss of generality that the cocycle $\rho$ takes a single value on $B_n$, say $\rho(B_n)=\{\beta_n\}$.
Let $f_{B_n}:U_n\ra V_n$ be the partial isomorphism associated to the bisection $B_n$, with $U_n,V_n\subset Y$.

\begin{figure}[htb]
    \centering
    \includegraphics{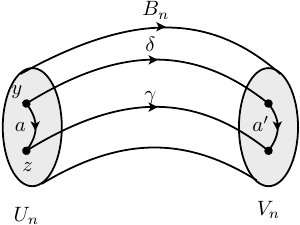}
    \caption{Equivariance of $\phi'$}
    \label{fig_eqv}
\end{figure}

Assume that $U_n$ has positive measure. We claim that the Borel map
\begin{align*}
  \phi':U_n&\ra \calp_{\le2}(\PATsim) \\
  y&\mapsto\beta_n\m .\phi( f_{B_n}(y))
\end{align*}
is  $(\cala_{|U_n},\rho)$-equivariant.
Indeed, up to replacing $U_n$ and $V_n$ by conull Borel subsets, we will assume that the equivariance relation for $\phi$ holds everywhere on $\cala_{|V_n}$. For all $a\in \cala_{|U_n}$, denoting by $y,z\in U_n$ the source and target of $h$
we need to check that $\phi'(z)=\rho(a)\phi'(y)$.
Let $\delta,\gamma\in B_n$ be the elements with source $y,z$;
the element $a'=\gamma a\delta\m$ lies in $\cala$ and its source and target are $f_{B_n}(y)$ and $f_{B_n}(z)$ respectively.
By equivariance $\phi(f_{B_n}(z))=\rho(a')\phi(f_{B_n}(y))$, so $$\phi'(z)=\beta_n\m\phi(f_{B_n}(z))=\beta_n\m\rho(a')\phi(f_{B_n}(y))=\beta_n\m\rho(a')\beta_n\phi'(y)=\rho(a)\phi'(y),$$ as desired.

A similar argument shows that $\phi'$ is maximal among $(\cala_{|U_n},\rho)$-equivariant 
maps $U_n\to\calp_{\le 2}(\PATsim)$,
so there is a subset of full measure $U'_n\subseteq U_n$ such that $\phi'$ and $\phi$ coincide on $U'_n$.
This still holds if $U_n$ has measure zero taking $U'_n=\es$.
Taking $Y'=\cup_n U'_n$, this shows that $\phi_{|Y'}$ is $(\calh_{|Y'},\rho)$-equivariant.
\end{proof}

At this point, we would like to make an observation, which will be useful later in Section~\ref{sec:extension} in order to establish a slight refinement  of the main theorem of the present section. We insist (as already mentioned in Remark~\ref{rk:standing}) that in Sections~\ref{sec:amen-2},~\ref{sec:amen-3},~\ref{sec:amen-4}, 
we never really used the amenability of $\cala$. Instead, what we used is the existence of an $(\cala,\rho)$-equivariant Borel map $\mu:Y\to\Prob(\Pbaro)$ established in Section~\ref{sec:amen-1}. We record this in the following proposition for future use.

\begin{prop}\label{prop:prepare-for-extension}
  Let $\calg$ be a measured groupoid over a base space $Y$, let $\rho:\calg\to\ia$ be a cocycle, and let $\cala,\calh$ be measured subgroupoids of $\calg$. Assume that $(\cala,\rho)$ is pure, and let $\HF$ be the maximal $(\cala,\rho)$-invariant \afs, and $\calf\subset\HF$ be an extracted free factor system.
  
  Assume that $\calh$ normalizes $\cala$ and that there exists an $(\cala,\rho)$-equivariant Borel map $Y\to \calp_{\le 2}(\PATsim)$. 

Then there exists a Borel map $Y\to \calp_{\le 2}(\PATsim)$ which is both $(\cala,\rho)$-equivariant and $(\calh,\rho)$-equivariant.
\qed
\end{prop}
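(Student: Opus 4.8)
The plan is to retrace the arguments of Sections~\ref{sec:amen-2}--\ref{sec:amen-4}, observing that amenability of $\cala$ entered those sections only through the construction (via Lemma~\ref{lemma:mu} and Lemma~\ref{lemma:proba_PAT}, culminating in Corollary~\ref{cor:psi-0}) of an $(\cala,\rho)$-equivariant Borel map $Y\to\calp_{\le 2}(\PATsim)$. Since such a map is now available by hypothesis, the rest of the argument carries over with no change.

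First I would record that $\calf$ is everywhere maximal in the sense of Lemma~\ref{lem_calf_max}: as $(\cala,\rho)$ is pure with maximal $(\cala,\rho)$-invariant \afs\ $\HF$ and $\calf\subseteq\HF$ is extracted, the proof of Lemma~\ref{lem_calf_max} applies verbatim, since it uses only purity of $(\cala,\rho)$ and the definition of an extracted free factor system. Consequently Lemma~\ref{lemma:no-prob-on-FF} holds in the present setting, its proof invoking only everywhere maximality of $\calf$ and the fact that $\rho$ takes values in $\ia$; and then, combining this with the barycenter map of Theorem~\ref{barycenter}, the proof of Lemma~\ref{lemma:cardinal-2} shows verbatim that for every positive-measure $U\subseteq Y$ and every $(\cala_{|U},\rho)$-equivariant Borel map $\nu\colon U\to\Prob(\PATsim)$, the support of $\nu_y$ has cardinality at most $2$ for a.e.\ $y\in U$.

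Next I would run the proof of Lemma~\ref{lem:canonical_pair}, using the hypothesized map $Y\to\calp_{\le 2}(\PATsim)$ in the role of the map $\psi_0$ of Corollary~\ref{cor:psi-0}. The two ingredients are: (i) for $(\cala_{|U},\rho)$-equivariant Borel maps $\psi,\psi'\colon U\to\calp_{\le 2}(\PATsim)$, one has $|\psi(y)\cup\psi'(y)|\le 2$ for a.e.\ $y\in U$, obtained by averaging the uniform probability measures on $\psi(y)$ and $\psi'(y)$ and applying the now-established Lemma~\ref{lemma:cardinal-2}; and (ii) Adams' saturation construction, which assembles from these an essentially unique everywhere-maximal $(\cala,\rho)$-equivariant Borel map $\phi\colon Y\to\calp_{\le 2}(\PATsim)$, using the given map only to define $\phi$ outside the relevant $\cala$-saturations. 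Both ingredients being in place, $\phi$ exists.

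Finally I would apply the proof of Corollary~\ref{cor:canonical_pair2} verbatim: write $\calh$ as a countable union of bisections $B_n$ on which $\rho$ is constant and which conjugate $\cala$ into itself; then on the source of each $B_n$ the map $y\mapsto \rho(B_n)^{-1}\phi(f_{B_n}(y))$ is again an everywhere-maximal $(\cala,\rho)$-equivariant Borel map, hence agrees with $\phi$ almost everywhere, and letting $n$ range over all indices shows that $\phi$ is $(\calh,\rho)$-equivariant on a conull set. The only point that requires checking --- the ``main obstacle'', though it is a mild one --- is that Lemma~\ref{lem_calf_max}, hence Lemmas~\ref{lemma:no-prob-on-FF} and~\ref{lemma:cardinal-2}, remain valid once the standing amenability hypothesis is dropped; this is immediate because those proofs use only purity of $(\cala,\rho)$.
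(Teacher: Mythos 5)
Your proposal is correct and is essentially the paper's own argument: the paper proves Proposition~\ref{prop:prepare-for-extension} simply by observing (as flagged in Remark~\ref{rk:standing}) that amenability of $\cala$ was used only to produce the equivariant map of Section~\ref{sec:amen-1}, so that Lemmas~\ref{lem_calf_max},~\ref{lemma:no-prob-on-FF},~\ref{lemma:cardinal-2},~\ref{lem:canonical_pair} and Corollary~\ref{cor:canonical_pair2} apply verbatim with the hypothesized map playing the role of $\psi_0$. Your tracing of which hypotheses each of those lemmas actually uses matches the paper's intent exactly.
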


As in our standing assumptions, the space $\PAT$ consists of projective arational trees relative to the free factor
system $\calf$ defined in the statement. Since $\calh$ normalizes $\cala$, the unique 
maximal $(\cala,\rho)$-invariant \afs\ $\hat\calf$ is $(\calh,\rho)$-invariant,
and as we are working in $\ia$,
 so is $\calf$.
However, $\calh$ could preserve larger \afs{}s.

\subsection{Using a witness map for trees with non-amenable stabilizer}\label{sec:witness3}


We denote by 
$\PATnasimd$ the set of pairs of (maybe not distinct)  equivalence classes of trees $[[T]],[[T']]\in \PATsim$,
such that the common stabilizer of $[[T]]$ and $[[T']]$ is not amenable.

\begin{prop}\label{prop:PATna2}
Let $\calf$ be a non-sporadic free factor system of $F_k$.

  There exists an $\Out(F_k,\calf)$-equivariant Borel map assigning to any 
$\{[[T]],[[T']]\}\in \PATnasimd$
a nice splitting $S_{\{[[T]],[[T']]\}}$.
\end{prop}

\begin{proof} 
  Given $\{[[T]],[[T']]\}\in \PATnasimd$, consider $[\TT]=\Ext([[T]])\cup\Ext([[T']])\in \calp_{<\infty}(\PAT)$, where $\Ext:\PATsim\ra \calp_{<\infty}(\PAT)$ is the Borel map from Lemma~\ref{lem:multisection}.
  The splitting $S_{[\TT]}$ from Corollary \ref{cor:PATna} is nice,
  and we just define $S_{\{[[T]],[[T']]\}}:=S_{[\TT]}$. 
\end{proof}

\subsection{Conclusion}\label{sec:amen-6}

\begin{proof}[Proof of Theorem~\ref{theo:normalisateur_moyennable}]
  As per our standing assumptions, we can assume without loss of generality that $(\cala,\rho)$ is pure.
  Let $\HF$ be the unique maximal $(\cala,\rho)$-invariant \afs\ and $\calf$ be an extracted free factor system.
  Since $\calh$ normalizes $\cala$, we have that $(\calh,\rho)$ preserves $\HF$. Since $\rho$ takes values in $\ia$,
  it follows that $(\calh,\rho)$ also preserves $\calf$ (see Theorem~\ref{theo:ia}). We now work relative to $\calf$.

  By Corollary \ref{cor:canonical_pair2},  
  there is a Borel map
  $\phi:Y\ra \calp_{\le 2}(\PATsim)\subset \calp_{<\infty}(\PATsim)$
which is both  $(\cala_{|U},\rho)$ and $(\calh_{|U},\rho)$-equivariant.
 Recall the partition
$\calp_{<\infty}(\PATsim)=\PaPATsim\dunion \PnaPATsim$.

Assume that for all $y$ in some subset $U\subset Y$ of positive measure,
$\phi(y)$ has non-amenable stabilizer, i.e.\ $\phi(y)\in\PnaPATsim$.
Then composing with the map of Proposition \ref{prop:PATna2}, one gets 
a Borel map $y\mapsto S_{\phi(y)}$  assigning a nice splitting to every $y\in U$, and which is both $(\cala_{|U},\rho)$ and $(\calh_{|U},\rho)$-equivariant.
Since there are only countably many nice splittings, there is a Borel subset  $V\subseteq U$ of positive measure
in restriction to which $\phi$ is constant and defines
 an $(\calh_{|V},\rho)$-invariant nice splitting.

Thus, we may assume that $\phi(y)\in\PaPATsim$ for almost every $y\in Y$. Since the $\Out(F_k,\calf)$-action on $\PaPATsim$ is Borel amenable (Theorem~\ref{theo:action-amenable-2})
and since  $\rho_{|\calh}$ has trivial kernel, Proposition~\ref{prop:mackey} implies that $\calh$ is amenable.
\end{proof}

\subsection{An extension}\label{sec:extension}

We now establish a refined version of Theorem~\ref{theo:normalisateur_moyennable}, that we will use in the sequel of the paper (precisely, in the proof of Lemma~\ref{lemma:check-p3}).

\begin{theo}\label{theo:a-etages}
  Let $\calg$ be a measured groupoid over a base space $Y$, and let $\caln_1,\dots,\caln_p$ be measured subgroupoids of $\calg$, such that for every $i\in\{1,\dots,p-1\}$, the subgroupoid $\caln_i$ is normalized by $\caln_{i+1}$. Let $\rho:\calg\ra \ia$ be a cocycle
 such that $\rho_{|\caln_i}$ is nowhere trivial for all $i\in\{1,\dots,p\}$.  Assume that $\caln_1$ is amenable and that $(\caln_1,\rho)$ is pure, and let $\hat\calf$ be the unique maximal $(\caln_1,\rho)$-invariant almost free factor system.

Then one of the following three assertions holds:
\begin{enumerate}
\item there exists a Borel subset $U\subseteq Y$ of positive measure such that $((\caln_p)_{|U},\rho)$ has a nice invariant splitting;
\item there exists a Borel subset $U\subseteq Y$ of positive measure such that $((\caln_p)_{|U},\rho)$ fixes an almost free factor system
  $\hat{\calf}'\sqsupsetneq\HF$;
\item for every measured subgroupoid $\calh'\subseteq\caln_p$, if $\rho_{|\calh'}$ has trivial kernel, then $\calh'$ is amenable.
\end{enumerate}
\end{theo}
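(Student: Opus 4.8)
The plan is to iterate Theorem~\ref{theo:normalisateur_moyennable} up the chain $\caln_1 \trianglelefteq \caln_2 \trianglelefteq \cdots$, carefully carrying along the extra structural information afforded by the purity of $(\caln_1,\rho)$. The key point is that in the inductive step we do not merely want to conclude amenability; we want to preserve enough of the ``pure'' data so that the next step can be run. So the invariant I would propagate is: \emph{either one of the first two alternatives has already occurred on a positive measure subset, or else (after a countable partition of the base space and passing to conull subsets) $(\caln_i,\rho)$ is still pure with the \emph{same} maximal invariant almost free factor system $\HF$ and $\caln_i$ is amenable.} The base case $i=1$ is the hypothesis.

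For the inductive step, suppose $(\caln_i,\rho)$ is pure with maximal invariant \afs{} $\HF$ and $\caln_i$ is amenable, and $\caln_{i+1}$ normalizes $\caln_i$. First I would apply Theorem~\ref{theo:normalisateur_moyennable} with $\cala = \caln_i$ and $\calh = \caln_{i+1}$ (using that $\rho_{|\caln_i}$ is nowhere trivial and $\rho_{|\caln_{i+1}}$ has trivial kernel — the latter following since $\rho$ takes values in the countable group $\ia$ and $\rho_{|\caln_{i+1}}$ nowhere trivial does \emph{not} suffice, so here one must be slightly careful: actually we only know $\rho_{|\caln_{i+1}}$ is nowhere trivial, so in place of $\calh=\caln_{i+1}$ one should instead quote the argument used inside the proof of Theorem~\ref{theo:normalisateur_moyennable}, which only uses the existence of the equivariant map $\mu\colon Y\to\Prob(\Pbaro)$; I return to this below). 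If the ``decomposability'' outcome of Theorem~\ref{theo:normalisateur_moyennable} occurs — a nice $(\caln_{i+1}|_U,\rho)$-invariant splitting on a positive measure $U$ — then alternative (1) holds for $\caln_p$ once we note that invariance under $\caln_{i+1}$ restricted to $U$ will, after restricting $\caln_p$ to a further positive-measure subset, still give \emph{some} $\caln_p$-invariant nice splitting; more carefully, I would instead record alternative (1) at level $i+1$ and simply observe that a nice splitting invariant under $(\caln_{i+1}|_U,\rho)$ gives, for the \emph{final} groupoid, nothing automatically — so the correct bookkeeping is to prove the theorem by downward induction, or rather to prove the stronger statement ``for each $i$, one of (1)$_i$, (2)$_i$, or [$(\caln_i,\rho)$ pure+amenable with \afs{} $\HF$] holds,'' and then note (1)$_i$ and (2)$_i$ for $i\le p$ are weaker than (1)/(2) only when $i=p$. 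Thus once (1) or (2) occurs at some stage $i<p$, I need to propagate it to stage $p$: but a splitting invariant under a subgroupoid of $\calg$ need not be invariant under a larger one. This is the genuine subtlety.

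To handle it, I would \emph{not} stop when alternatives (1) or (2) occur at an intermediate stage; instead, following the structure of the proof of Theorem~\ref{theo:normalisateur_moyennable}, I would run the whole Adams-type argument once, simultaneously for the tower. Concretely: by Proposition~\ref{prop:stably-nice-vs-pure-nice-averse} and Proposition~\ref{prop:nice-preserved} applied to $\caln_1$ (whose stably nice part, via its normalizers $\caln_2,\dots,\caln_p$ in turn, yields a nice splitting invariant under all the $(\caln_j|_U,\rho)$, giving alternative (1)), we may assume $(\caln_1,\rho)$ pure and work relative to an extracted free factor system $\calf\subseteq\HF$. Amenability of $\caln_1$ gives the map $\mu\colon Y\to\Prob(\Pbaro)$ (Lemma~\ref{lemma:mu}). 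Now I would apply Proposition~\ref{prop:prepare-for-extension} successively: it takes an $(\caln_i,\rho)$-equivariant map $Y\to\calp_{\le2}(\PATsim)$ and, using that $\caln_{i+1}$ normalizes $\caln_i$, produces one that is also $(\caln_{i+1},\rho)$-equivariant — provided $(\caln_i,\rho)$ is pure with the same $\HF$. But whether $(\caln_i,\rho)$ remains pure is exactly alternative (2)'s negation: if at some stage $(\caln_i|_U,\rho)$ acquires a strictly larger invariant \afs{} on a positive measure set, that is alternative (2) (since $\caln_i\subseteq\caln_p$ after restriction... again the same propagation issue). The honest resolution: run Proposition~\ref{prop:prepare-for-extension} only using that $\caln_{i+1}$ normalizes $\caln_i$ and that \emph{$(\caln_1,\rho)$} is pure with \afs{} $\HF$ — its proof (see the statement) needs $(\cala,\rho)$ pure, so one must verify that $(\caln_i,\rho)$ inherits purity from $(\caln_1,\rho)$, and that is precisely where alternative (2) is the escape hatch: if $(\caln_i,\rho)$ is not pure on any positive measure subset with \afs{} $\HF$, Lemma~\ref{lem_maxFF} gives a finite stable reduction collection, and either it is the singleton $\{\HF\}$ (purity — continue) or it is filling with $\ge 2$ elements, in which case Theorem~\ref{thm_nice}(2) gives a nice $(\caln_i|_U,\rho)$-invariant splitting; or some invariant \afs{} strictly dominates $\HF$, which is alternative (2).

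So the final plan is: (a) reduce to $(\caln_1,\rho)$ pure with \afs{} $\HF$ and work relative to $\calf\subseteq\HF$; (b) obtain $\mu\colon Y\to\Prob(\Pbaro)$ from amenability of $\caln_1$, then via Lemmas~\ref{lemma:proba_PAT}–\ref{cor:psi-0} an $(\caln_1,\rho)$-equivariant $\psi_0\colon Y\to\calp_{\le2}(\PATsim)$, where Lemma~\ref{lemma:no-prob-on-FF} uses only that $\calf$ is everywhere maximal for $(\caln_1,\rho)$; (c) inductively, at stage $i$, either $(\caln_i,\rho)$ fails to be pure with \afs{} $\HF$ on some positive-measure $U$ — then by Lemma~\ref{lem_maxFF}/Lemma~\ref{lemma:filling}/Theorem~\ref{thm_nice}(2) we land in alternative (1) or (2) (for $\caln_i$, hence after restriction for $\caln_p$, since restricting $\caln_p$ to $U$ and invariance passes down — here use that $\caln_i\subseteq\caln_p$? \emph{No}: use instead that the nice splitting / larger \afs{} is invariant under $(\caln_i|_U,\rho)$, and since $\caln_{i+1},\dots,\caln_p$ successively normalize, apply Proposition~\ref{prop:nice-preserved} (resp.\ Lemma~\ref{lemma:maxFF_unique}) to push invariance up to $(\caln_p|_{U'},\rho)$ on a further positive-measure $U'$) — and we stop; or else $(\caln_i,\rho)$ is pure with \afs{} $\HF$, and Proposition~\ref{prop:prepare-for-extension} upgrades the equivariant map to be $(\caln_{i+1},\rho)$-equivariant; (d) if we reach stage $p$ with an $(\caln_p,\rho)$-equivariant $\phi\colon Y\to\calp_{\le2}(\PATsim)$, split $\calp_{<\infty}(\PATsim)=\PaPATsim\sqcup\PnaPATsim$: on the $\PnaPATsim$-part Proposition~\ref{prop:PATna} yields a nice $(\caln_p|_U,\rho)$-invariant splitting (alternative (1)), and on the $\PaPATsim$-part Theorem~\ref{theo:action-amenable} plus Proposition~\ref{prop:mackey} give that any $\calh'\subseteq\caln_p$ with $\rho_{|\calh'}$ of trivial kernel is amenable (alternative (3)). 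The main obstacle is exactly the bookkeeping in step (c): ensuring that the ``bad'' outcomes at an intermediate level $i<p$ genuinely propagate to level $p$, which requires invoking Proposition~\ref{prop:nice-preserved} and Lemma~\ref{lemma:maxFF_unique} to transport invariant nice splittings and invariant \afs{}s up the normalizing tower while only shrinking the base to a positive-measure subset — and checking nowhere-triviality of $\rho$ on the relevant restricted subgroupoids at each step, which is where the hypothesis that $\rho_{|\caln_i}$ is nowhere trivial for \emph{all} $i$ gets used.
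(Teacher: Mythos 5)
Your final plan (a)–(d) is essentially the paper's proof: reduce via Proposition~\ref{prop:nice-preserved} (using nowhere-triviality of all the $\rho_{|\caln_i}$) and the stably-nice/pure dichotomy of Proposition~\ref{prop:stably-nice-vs-pure-nice-averse}, use normalization to compare maximal invariant almost free factor systems up the tower (so a strict increase gives alternative~(2) directly at level $p$), iterate Proposition~\ref{prop:prepare-for-extension} starting from Corollary~\ref{cor:canonical_pair2} to obtain an $(\caln_p,\rho)$-equivariant map $Y\to\calp_{\le 2}(\PATsim)$, and conclude with the $\PaPATsim\dunion\PnaPATsim$ dichotomy via Proposition~\ref{prop:PATna} and Theorem~\ref{theo:action-amenable} with Proposition~\ref{prop:mackey}. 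The only (cosmetic) difference is that the paper reaches alternative~(2) by observing $\hat\calf_{i,j}\sqsubseteq\hat\calf_{i+1,j}$ along the chain rather than by your explicit upward transport via Lemma~\ref{lemma:maxFF_unique}, and your correct self-diagnosis that Theorem~\ref{theo:normalisateur_moyennable} cannot be applied verbatim (trivial kernel vs.\ nowhere trivial) matches the paper's choice to run the argument through Proposition~\ref{prop:prepare-for-extension} instead.
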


\begin{proof}
  In view of Proposition~\ref{prop:nice-preserved}, and using the fact that the cocycles $\rho_{|\caln_i}$ are all nowhere trivial, the first conclusion holds as soon as there exist some $i\in\{1,\dots,p\}$ and some Borel subset $U\subseteq Y$ of positive measure for which there exists an $((\caln_i)_{|U},\rho)$-invariant nice splitting. 
  
  We now assume otherwise, i.e.\ for every $i\in\{1,\dots,p\}$, $(\caln_i,\rho)$ is nice-averse. Then Proposition~\ref{prop:stably-nice-vs-pure-nice-averse} implies that each $(\caln_i,\rho)$ is also stably pure. Thus, we can find a partition $Y=\Dunion_{j\in J}Y_j$ into at most countably many Borel subsets such that for every $i\in\{1,\dots,p\}$ and  every $j\in J$, the pair $((\caln_i)_{|Y_j},\rho)$ is pure, with maximum invariant almost free factor system $\hat{\calf}_{i,j}$. As $\caln_i$ is normalized by $\caln_{i+1}$, for every $i\in\{1,\dots,p-1\}$ and every $j\in J$, the almost free factor system $\hat{\calf}_{i,j}$ is also $((\caln_{i+1})_{|Y_j},\rho)$-invariant, so $\hat{\calf}_{i,j}\sqsubseteq\hat{\calf}_{i+1,j}$. We can therefore assume that for every $j\in J$, one has $\hat{\calf}_{p,j}=\hat{\calf}$, as otherwise the second conclusion of the theorem holds and we are done. 

    Let $\calf\subset\HF$ be an  extracted free factor system. 
    Recall that purity implies that $\calf$ is non-sporadic (Remark \ref{rk_pure_nonsporadic}).
    Moreover, as in Lemma \ref{lem_calf_max},
    for every $j\in J$ and every $i\in\{1,\dots,p\}$,  $\calf$ is an everywhere maximal $((\caln_i)_{|Y_j},\rho)$-invariant free factor system.
    In the sequel, we always work relative to $\calf$ (in particular, Outer spaces, proper free factors and arational trees are understood relative to $\calf$).

As $\caln_1$ is amenable, Corollary~\ref{cor:canonical_pair2} ensures that there exists a Borel map $\phi:Y\to\calp_{\le 2}(\PATsim)$ which is both $(\caln_1,\rho)$-equivariant and $(\caln_2,\rho)$-equivariant. Applying Proposition~\ref{prop:prepare-for-extension}
iteratively $p-2$ times ensures that there exists a Borel map $\phi:Y\to \calp_{\le 2}(\PATsim)$ which is $(\caln_p,\rho)$-equivariant.

We will finally use the partition $\calp_{<\infty}(\PATsim)=\PaPATsim\dunion \PnaPATsim$. Assume first that there is a Borel subset $U\subseteq Y$ of positive measure such that $\phi(y)\in\PnaPATsim$ for all $y\in U$. Using Proposition~\ref{prop:PATna2},
we deduce an $((\caln_p)_{|U},\rho)$-equivariant Borel map from $U$ to the countable set of all nice splittings. We can then restrict to a Borel subset $V\subseteq U$ where this map is constant to get an $((\caln_p)_{|V},\rho)$-invariant nice splitting, so the first conclusion of the theorem holds. We are thus left with the case where $\phi(y)\in\PaPATsim$ for a.e.\ $y\in Y$. In this case, as the  $\Out(F_k,\calf)$-action on $\PaPATsim$ is amenable (Theorem~\ref{theo:action-amenable-2}), Proposition~\ref{prop:mackey} ensures that a measured subgroupoid $\calh'\subseteq\caln_p$ is amenable whenever $\rho_{|\calh'}$ has trivial kernel, so the third conclusion of the theorem holds.  
\end{proof}

\section{Detecting when a subgroupoid stabilizes a free splitting}\label{sec:big}

The present section is of central importance in the proof of our main theorem; its goal is the following.
Let $\calg$ be a measured groupoid coming with an action-type cocycle $\rho:\calg\to\IA$,  with $N\ge 3$. Given a measured subgroupoid $\calh$ of $\calg$, we want to relate the fact that $(\calh,\rho)$ preserves a non-separating free splitting of $F_N$ (up to a partition of the base space of $\calh$ into at most countably many pieces), to some purely groupoid-theoretic properties of the pair $(\calg,\calh)$, without reference to the cocycle $\rho$.
This will be crucial in later sections to get a map at the level of (a variation of) the free splitting graph intertwining a pair of action-type cocycles $\calg\to\IA$ as above.


To begin with, let us start by describing the stabilizer of a
non-separating free splitting $S$, i.e.\ of an HNN extension $F_N=A*$ over the trivial group.
By Proposition~\ref{prop_suite_exacte}, an index 2 subgroup $H^0$ of the stabilizer $H$ of $S$ fits in a short exact sequence
$$ 1 \ra A\times A \ra H^0 \ra \Out(A) \ra 1$$
where the map to $\Out(A)$ is the natural restriction map
and the kernel is the group of twists; if $t$ is a stable letter of the HNN extension,
the twists are the outer automorphisms of the form $\tau_{u,v}$ (with $u,v\in A$) acting as the identity on $A$ 
and sending $t$ to $utv$.
When $N\geq 3$, 
$H^0$ normalizes two non-amenable subgroups that form a direct product -- this will be translated to the groupoid-theoretic context as Property~\PI.
To introduce this property, we first need an analogue of direct products for groupoids, which is the role of our first subsection.


In all this section, we fix $N\geq 3$.
\subsection{Schottky tuples and pseudo-products}

\begin{de}[Schottky tuples of subgroupoids]
Let $\calg$ be a measured groupoid over a base space $Y$. A \emph{Schottky tuple of subgroupoids} of $\calg$ is a tuple $(\cala^1,\dots,\cala^k)$ of amenable subgroupoids of $\calg$ of infinite type such that for every Borel subset $U\subseteq Y$ of positive measure, the subgroupoid $\langle \cala^1_{|U},\dots,\cala^k_{|U}\rangle$ is non-amenable.
\end{de}

\begin{rk}\label{rk:schottky-stabilities}
  This definition implies that $k\geq 2$ and that $\grp{\cala^1,\dots,\cala^k}$ is everywhere non-amenable. But this is not equivalent
  because in general,
  $\grp{\cala^1_{|U},\dots,\cala^k_{|U}}$ does not coincide with the restriction of $\grp{\cala^1,\dots,\cala^k}$ to $U$.
  The point of this notion is that it is stable under restriction and stabilization. More precisely, 
\begin{itemize}
\item If $V\subseteq Y$ is a Borel subset of positive measure, and if $(\cala^1,\dots,\cala^k)$ is a Schottky tuple of subgroupoids of $\calg$, then $(\cala^1_{|V},\dots,\cala^k_{|V})$ is a Schottky tuple of subgroupoids of $\calg_{|V}$.
\item If there exists a partition $Y=\Dunion_{i\in I}Y_i$ into at most countably many Borel subsets, and if $\cala^1,\dots,\cala^k$ are subgroupoids of $\calg$ such that for every $i\in I$, the tuple $(\cala^1_{|Y_i},\dots,\cala^k_{|Y_i})$ is a Schottky tuple of subgroupoids of $\calg_{|Y_i}$, then $(\cala^1,\dots,\cala^k)$ is a Schottky tuple of subgroupoids of $\calg$.
\end{itemize}
\end{rk}

An important source of examples, which justifies the name \emph{Schottky}, is given by the following lemma.

\begin{lemma}[{Kida \cite[Lemma~3.20]{Kid2}}]\label{lemma:schottky}
  Let $\calg$ be a measured groupoid, let $\Gamma$ be a countable group, and let $\rho:\calg\to \Gamma$ be an
  action-type cocycle. Let $A_1$ and $A_2$ be two infinite cyclic subgroups of $\Gamma$ such that $\langle A_1,A_2\rangle$ is a non-abelian free group. 

  Then the groupoids $\cala_1=\rho^{-1}(A_1)$ and $\cala_2=\rho^{-1}(A_2)$ form
  a Schottky pair of subgroupoids of $\calg$.
\end{lemma}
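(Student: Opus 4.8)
The statement is that if $\rho:\calg\to\Gamma$ is an action-type cocycle and $A_1,A_2$ are infinite cyclic subgroups with $\langle A_1,A_2\rangle$ a non-abelian free group, then $\cala_1=\rho^{-1}(A_1)$ and $\cala_2=\rho^{-1}(A_2)$ form a Schottky pair. The plan is to verify the three requirements in the definition of a Schottky tuple: that each $\cala_i$ is amenable, that each $\cala_i$ is of infinite type, and that for every Borel subset $U\subseteq Y$ of positive measure the subgroupoid $\langle (\cala_1)_{|U},(\cala_2)_{|U}\rangle$ is non-amenable.

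First, each $\cala_i$ is of infinite type: since $A_i\cong\bbZ$ is an infinite subgroup of $\Gamma$ and $\rho$ is action-type, $\rho^{-1}(A_i)$ is of infinite type by Definition~\ref{de:action-type}. Second, each $\cala_i$ is amenable: the restriction $\rho_{|\cala_i}:\cala_i\to A_i$ has trivial kernel (because $\rho$ does, being action-type), and $A_i\cong\bbZ$ is an amenable countable group, so Corollary~\ref{cor:amenable-subgroup-subgroupoid} applies and $\cala_i$ is amenable. Third, fix a Borel subset $U\subseteq Y$ of positive measure and set $\calk=\langle(\cala_1)_{|U},(\cala_2)_{|U}\rangle\subseteq\calg_{|U}$. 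By Lemma~\ref{lemma:action-type}(1), the restricted cocycle $\rho:\calg_{|U}\to\Gamma$ is again action-type; in particular $\rho_{|\calk}$ has trivial kernel. Moreover $\rho(\calk)\subseteq\langle A_1,A_2\rangle$, which is a non-abelian free group, and more precisely I claim $\rho(\calk)$ contains a non-abelian free group: using the action-type property (Remark~\ref{rk:action-type}) applied to a generator $a_i$ of $A_i$, for any Borel subset of positive measure of $U$ the set $\rho((\cala_i)_{|\cdot})$ contains a nonzero power of $a_i$; two such powers $a_1^{m}$ and $a_2^{n}$ with $m,n\neq 0$ generate a non-abelian free subgroup of $\langle A_1,A_2\rangle$. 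Hence $\rho_{|\calk}:\calk\to\langle A_1,A_2\rangle$ is an action-type cocycle into a group containing a non-abelian free group, so Lemma~\ref{lemma:not-amen} shows that $\calk$ is everywhere non-amenable, in particular non-amenable. This establishes all three conditions, so $(\cala_1,\cala_2)$ is a Schottky pair.

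The one point requiring a little care is the third condition: one must check that $\rho$ restricted to the \emph{generated} subgroupoid $\calk=\langle(\cala_1)_{|U},(\cala_2)_{|U}\rangle$ is action-type, not merely that $\rho$ is action-type on $\calg_{|U}$. The cleanest route is to observe that $\rho_{|\calg_{|U}}$ is action-type and that any subgroupoid $\calk$ of a groupoid carrying an action-type cocycle $\rho$ with $\rho(\calk)$ containing a non-abelian free group has $\rho_{|\calk}$ already sufficient to apply Lemma~\ref{lemma:not-amen}: indeed Lemma~\ref{lemma:not-amen} only needs an action-type cocycle $\calk\to\rho(\calk)$, and $\rho_{|\calk}$ has trivial kernel and, for every infinite subgroup $H\subseteq\rho(\calk)$, the preimage $\rho_{|\calk}^{-1}(H)\supseteq\rho_{|\calg_{|U}}^{-1}(H)\cap\calk$ — here I would instead simply note that $\rho^{-1}(H)\cap\calg_{|U}$ is of infinite type and contained in $\calg_{|U}$, and use that a groupoid containing an action-type cocycle towards a group with a free subgroup is everywhere non-amenable, which is exactly what is needed. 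This is the main (mild) obstacle; everything else is a direct invocation of the cited lemmas.

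Alternatively, and perhaps more transparently, one can cite the original source: this is precisely Lemma~3.20 of Kida~\cite{Kid2}, transported to the present groupoid framework, and the proof above is the natural adaptation. I would present the short self-contained argument as above.
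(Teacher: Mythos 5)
Your verification of the first two requirements in the definition of a Schottky pair is fine: $\cala_i=\rho^{-1}(A_i)$ is of infinite type directly from the definition of an action-type cocycle, and it is amenable by Corollary~\ref{cor:amenable-subgroup-subgroupoid} since $\rho_{|\cala_i}$ has trivial kernel and $A_i\simeq\bbZ$. The gap is in the third requirement, and it is exactly the point where the real work lies. To apply Lemma~\ref{lemma:not-amen} to $\calk=\langle(\cala_1)_{|U},(\cala_2)_{|U}\rangle$ you would need an \emph{action-type} cocycle on $\calk$ itself, i.e.\ that $\rho_{|\calk}^{-1}(H)=\rho^{-1}(H)\cap\calk$ is of infinite type for \emph{every} infinite subgroup $H\subseteq\langle A_1,A_2\rangle$; this is not known (an arrow of $\calg_{|U}$ with $\rho$-value in, say, $\langle w(a_1,a_2)\rangle$ need not factor as a product of arrows of $(\cala_1)_{|U}$ and $(\cala_2)_{|U}$). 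Trivial kernel together with a large image is genuinely insufficient: the groupoid of the boundary action $F_2\actson\partial F_2$ (with a quasi-invariant measure) is amenable although its natural cocycle has trivial kernel and image all of $F_2$. Your attempted patch only shows that $\calg_{|U}$, or $\rho^{-1}(\langle A_1,A_2\rangle)_{|U}$, is everywhere non-amenable; but non-amenability does not pass to subgroupoids (amenability does), so this says nothing about $\calk$, which is the groupoid whose non-amenability is required.

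What is missing is the actual content of Kida's lemma. The standard argument runs: suppose $\calk_{|V}$ were amenable for some positive-measure $V\subseteq U$; by Proposition~\ref{prop:zimmer} there is a $(\calk_{|V},\rho)$-equivariant Borel map $V\to\Prob(\partial T)$, where $T$ is a Cayley tree of $\langle A_1,A_2\rangle$. The recurrence coming from the action-type property of $\rho$ on $\calg$ (Remark~\ref{rk:action-type}: $\rho((\cala_i)_{|W})$ contains arbitrary large powers of $a_i$ for every positive-measure $W$) combined with the north--south dynamics of $a_i$ on $\partial T$ forces the measures to be concentrated, almost everywhere, on the pair of endpoints of the axis of $a_1$ and simultaneously on that of $a_2$; these pairs are disjoint since $\langle a_1,a_2\rangle$ is free non-abelian, a contradiction. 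Note that the paper itself does not reprove the statement but cites Kida's Lemma~3.20, so your fallback of quoting the original source matches the paper; the self-contained argument as you wrote it, however, does not close.
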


We now introduce pseudo-products of subgroupoids. These give a counterpart for groupoids to the notion of a direct product $G_1\times\dots\times G_k$ of groups that all contain non-abelian free subgroups. Our definition in the groupoid setting translates the fact that each of the factors $G_i$ of the direct product contains two infinite cyclic subgroups $\grp{a_i},\grp{b_i}$ such that
$\grp{a_i,b_i}\simeq F_2$ with $\grp{a_i}$ and $\grp{b_i}$ both normalized by all other factors $G_j$.

\begin{de}
  Given a measured subgroupoid $\calh$ of $\calg$ and a Schottky tuple $(\cala^1,\dots,\cala^k)$ of subgroupoids of $\calg$,
  we say that $(\cala^1,\dots,\cala^k)$ is \emph{stably normalized} by $\calh$ if each $\cala^i$ is.
\end{de}

\begin{de}[Pseudo-products]\label{de:pseudo-product}
    Let $\calg$ be a measured groupoid,
    let $k\in\mathbb{N}$, and let $\calg_1,\dots,\calg_k$ be measured subgroupoids of $\calg$. 

    The subgroupoids $\calg_1,\dots,\calg_k$ \emph{form a pseudo-product} if each $\calg_i$ contains a Schottky tuple of subgroupoids which is stably normalized by all $\calg_j$ with $j\neq i$.
\end{de}

\begin{rk} 
  In view of Remark~\ref{rk:schottky-stabilities}, this notion is stable under restriction and stabilization. More precisely,
  denoting by $Y$ the base space of $\calg$,
\begin{itemize}
\item If $U\subseteq Y$ is a Borel subset of positive measure, and if the subgroupoids $\calg_i$ form a pseudo-product, then so do the subgroupoids $(\calg_i)_{|U}$ of $\calg_{|U}$. 
\item If the  subgroupoids $\calg_i$ stably form a pseudo-product, then they form a pseudo-product. 
\end{itemize}
\end{rk}

A typical example, which justifies the terminology and will be of particular importance in the sequel, is the following. 

\begin{lemma}\label{lemma:pseudo-product}
  Let $\calg$ be a measured groupoid, let $\Gamma$ be a countable group, and let $\rho:\calg\to \Gamma$ be an action-type cocycle. Assume that $\Gamma$ contains subgroups $G_1,\dots,G_k$ that generate their direct product in $\Gamma$, and that all contain a non-abelian free subgroup.

Then the groupoids $\calg_i=\rho^{-1}(G_i)$ for $i\leq k$ form a pseudo-product.
\end{lemma}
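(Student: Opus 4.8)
The plan is to verify that the groupoids $\calg_1 = \rho^{-1}(G_1)$ and $\calg_2 = \rho^{-1}(G_2)$ satisfy Definition~\ref{de:pseudo-product} directly, by exhibiting inside each $\calg_i$ a Schottky pair of subgroupoids that is stably normalized by $\calg_{3-i}$. First I would pick, for each $i \in \{1,2\}$, a non-abelian free subgroup $F \subseteq G_i$, and inside it two elements $a_i, b_i$ generating a free group of rank $2$; set $A_i = \langle a_i \rangle$, $B_i = \langle b_i \rangle$. Then define $\cala^{a}_i = \rho^{-1}(A_i)$ and $\cala^{b}_i = \rho^{-1}(B_i)$, which are subgroupoids of $\calg_i$. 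By Lemma~\ref{lemma:schottky}, since $\rho$ is action-type and $\langle A_i, B_i\rangle \cong F_2$, the pair $(\cala^a_i, \cala^b_i)$ is a Schottky pair of subgroupoids of $\calg$; in particular each member is amenable of infinite type (amenability since it admits a cocycle with trivial kernel into the amenable group $\bbZ$ via Corollary~\ref{cor:amenable-subgroup-subgroupoid}, and infinite type since $\rho$ is action-type and $A_i$, $B_i$ are infinite).

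The main point to check is that $(\cala^a_i, \cala^b_i)$ is stably normalized by $\calg_{3-i}$. Here I would use the hypothesis that $G_1$ and $G_2$ generate their direct product, so $G_{3-i}$ centralizes $G_i$, hence normalizes $A_i$ and $B_i$. Now I invoke the general fact recorded just before Lemma~\ref{lemma:normal-subgroup-subgroupoid} and in the paragraph following Definition~\ref{de:normal}: if $H_1 \unlhd H_2$ in $\Gamma$ and $\rho:\calg \to \Gamma$ is a cocycle, then $\rho^{-1}(H_1) \unlhd \rho^{-1}(H_2)$ — and more generally, if $H$ normalizes $K$ in $\Gamma$ (not necessarily with $K \subseteq H$), then $\rho^{-1}(H)$ normalizes $\rho^{-1}(K)$, witnessed by the bisections $B_\gamma = \rho^{-1}(H) \cap \{g : \rho(g) = \gamma\}$ for $\gamma \in H$, each of which is $B_\gamma$-invariance-compatible because $\gamma K \gamma^{-1} = K$ in $\Gamma$. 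Since $G_{3-i}$ normalizes $A_i$ (resp. $B_i$), it follows that $\calg_{3-i} = \rho^{-1}(G_{3-i})$ normalizes $\cala^a_i = \rho^{-1}(A_i)$ (resp. $\cala^b_i$) in the strong, non-stable sense — which a fortiori gives stable normalization.

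Combining these observations: for each $i$, the Schottky pair $(\cala^a_i, \cala^b_i) \subseteq \calg_i$ is stably normalized by $\calg_{3-i}$, which is exactly what Definition~\ref{de:pseudo-product} demands for $k = 2$. Hence $\calg_1$ and $\calg_2$ form a pseudo-product. I do not anticipate a serious obstacle here; the only mild care needed is to make sure the normalization statement for preimages of subgroups under a cocycle is applied in the form "normalizes" rather than merely "contains as a normal subgroupoid", but this is immediate from unwinding the definition via the bisections $B_\gamma$ as in the discussion after Definition~\ref{de:normal}. One could also simply cite Lemma~\ref{lemma:normal-subgroup-subgroupoid} after replacing $G_{3-i}$ by $\langle G_{3-i}, A_i\rangle = G_{3-i} \times A_i$ if one prefers to stay within the literal statement proved in the text, but the direct bisection argument is cleaner.
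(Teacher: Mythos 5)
Your proof is correct and follows essentially the same route as the paper: choose two infinite cyclic subgroups of each $G_i$ generating a rank-two free group, apply Lemma~\ref{lemma:schottky} to get the Schottky pair, and use the direct-product hypothesis to see that the preimage of $G_{3-i}$ normalizes the preimages of these cyclic subgroups. Your bisection argument simply makes explicit the normalization step that the paper states without detail, so no further comment is needed.
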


\begin{proof}
For every $i\in\{1,\dots,k\}$, let $A_i^1$ and $A_i^2$ be two infinite cyclic subgroups of $G_i$ that generate a non-abelian free subgroup. By Lemma~\ref{lemma:schottky}, for every $i\in\{1,\dots,k\}$, the pair $(\rho^{-1}(A_i^1),\rho^{-1}(A_i^2))$ is a Schottky pair of subgroupoids of $\rho^{-1}(G_i)$. In addition, since $G_1,\dots,G_k$ form a direct product, the subgroupoids $\rho^{-1}(A_i^1)$ and $\rho^{-1}(A_i^2)$ are normalized by $\rho^{-1}(G_j)$ for $j\neq i$.
\end{proof}

\begin{rk}\label{rk:nowhere-nonamenable}
  If $\calg_1,\dots,\calg_k$ form a pseudo-product, then each $\calg_i$ is everywhere non-amenable since it contains a Schottky tuple of subgroupoids.
\end{rk}

\subsection{Main statement}

Let $\calg$ be a measured groupoid over a base space $Y$, and let $\calh$ be a measured subgroupoid of $\calg$. We consider the following properties of the pair $(\calg,\calh)$. 

\begin{enumerate}
\item[$(P_1)$] \hypertarget{Pi}{}
  The groupoid $\calh$ is everywhere non-amenable and stably normalizes two subgroupoids of $\calg$
  that form a pseudo-product.
\item[$(P_1^{\max})$] \hypertarget{Pimax}{}
  The pair $(\calg,\calh)$ satisfies $(P_1)$, and 
  for every Borel subset $U\subseteq Y$ of positive measure, if $\calh'$ is
  a subgroupoid of $\calg_{|U}$ such that $(\calg_{|U},\calh')$ satisfies $(P_1)$, with $\calh_{|U}$ stably contained in $\calh'$, then $\calh_{|U}$ is stably equal to $\calh'$.
\item[$(P_2)$] \hypertarget{Pii}{}
  For every Borel subset $U\subseteq Y$ of positive measure, the groupoid $\calh_{|U}$ does not
  normalize any amenable subgroupoid of $\calg_{|U}$ of infinite type.
\item[$(P_3)$]\hypertarget{Piii}{}
  There do not exist a Borel subset $U\subseteq Y$ of positive measure and two normal subgroupoids $\caln^-,\caln^+$ of $\calh_{|U}$ which contain respectively three subgroupoids
  $\calb_1^-,\calb_2^-,\calb_3^-$ and $\calb_1^+,\calb_2^+,\calb_3^+$ such that for all $\eps\in\{\pm\}$, the groupoids $\calb_1^\eps,\calb_2^\eps,\calb_3^\eps,\caln^{-\eps}$  form a pseudo-product.
\end{enumerate}

Sometimes, when the ambient groupoid $\calg$ is understood from context, we will slightly abuse notation and simply say that $\calh$ satisfies the property instead of $(\calg,\calh)$.

Property \PI\ is our groupoid-theoretic version of the property of normalizing a direct product
of non-amenable groups, which is a key property of the stabilizer of a non-separating free splitting. Property \PImax\ is a maximality property among subgroupoids satisfying \PI. 
As a first step, we will prove that Property~\PI\ forces the subgroupoid to (stably) stabilize a nice splitting (with respect to any action-like cocycle towards $\IA$). The next two properties will be used to distinguish free splittings from $\Zmax$ and bi-nonsporadic splittings. 

Property~\PII\ is the groupoid-theoretic version of the property of not normalizing any infinite amenable subgroup. It will be used to 
exclude stabilizers of $\Zmax$ splittings.

Property~\PIII\ translates the idea that whenever $H$ normalizes two subgroups that form a direct product, then those two subgroups cannot themselves contain direct products (recall that the group of twists $A\times A$ above is a direct product of two free groups).
It will be used to exclude stabilizers of bi-nonsporadic splittings.

\begin{rk}
  Notice that the assumptions $\PI$, $\PImax$, $\PII$, and $\PIII$ 
  are preserved under restriction and stabilization. More precisely, 
\begin{enumerate}
\item If $(\calg,\calh)$ satisfies any of these properties, and $U\subseteq Y$ is a
   Borel subset of positive measure, then $(\calg_{|U},\calh_{|U})$ satisfies the property.
\item If there exists a partition $Y=\Dunion_{i\in I}Y_i$ into at most countably many Borel subsets, such that for every $i\in I$, the pair $(\calg_{|Y_i},\calh_{|Y_i})$ satisfies one of these properties, then $(\calg,\calh)$ satisfies the corresponding property.
\end{enumerate}
\end{rk}

Our goal in the sequel of the present section is to consider an action-type cocycle $\rho:\calg\ra \IA$
and to relate the above purely groupoid-theoretic properties of the pair $(\calg,\calh)$ to
the fact that $(\calh,\rho)$ stabilizes a free splitting of $F_N$.

A \emph{one-edge non-separating free splitting} of $F_N$ is a free splitting of $F_N$ equal to the Bass--Serre tree of an HNN extension of the form $F_N=P\ast$, with $P$ a subgroup of $F_N$ of rank $N-1$. We denote by $\NS$ the countable collection of all one-edge non-separating free splittings. We say that a free splitting of $F_N$ is a \emph{$(2,2^+)$-free splitting} if it is a free splitting of $F_N$ equal to the Bass--Serre tree of a decomposition of the form $F_N=P*P'$ with
$\rank(P)=2$ and $\rank(P')\geq 2$.
We denote by $\FSii$ the countable set of all $(2, 2^+)$-free splittings of $F_N$.
Note that when $N=3$, $\FSii=\es$. 

The goal of the present section is to prove the following theorem. 

\begin{theo}\label{theo:non-characterization}
  Let $\calg$ be a measured groupoid over a base space $Y$, and let $\rho:\calg\to\IA$ be an  action-type cocycle with $N\geq 3$.
  Let $\calh$ be a measured subgroupoid of $\calg$.  
\begin{enumerate}
\item If $(\calg,\calh)$ satisfies Properties~$\PImax$,~$\PII$ and~$\PIII$, then there exists a conull Borel subset $Y^*\subseteq Y$ and a partition $Y^*=\Dunion_{i\in I} Y_i$ into at most countably many Borel subsets, such that for every $i\in I$, the groupoid $\calh_{|Y_i}$ is equal to the $(\calg_{|Y_i},\rho)$-stabilizer of a splitting $S_i\in\NS\cup\FSii$, i.e.\ $\calh_{|Y_i}=\rho^{-1}(\Stab_{\IA}(S_i))_{|Y_i}$.
\item If there exist a conull Borel subset $Y^*\subseteq Y$ and a partition $Y^*=\Dunion_{i\in I}Y_i$ into at most countably many Borel subsets, such that for every $i\in I$, the groupoid $\calh_{|Y_i}$ is equal to the $(\calg_{|Y_i},\rho)$-stabilizer of some splitting $S_i\in\NS$, then $(\calg,\calh)$ satisfies Properties~$\PImax$,~$\PII$ and~$\PIII$.
\end{enumerate}
\end{theo}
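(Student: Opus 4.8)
\textbf{Proof strategy for Theorem~\ref{theo:non-characterization}.}

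The plan is to prove the two implications separately, with the second (algebraic) direction serving as a guide for what the first (groupoid) direction must extract. I would begin with direction~(2), which is essentially a verification. Given $\calh_{|Y_i}=\rho^{-1}(\Stab_{\IA}(S_i))_{|Y_i}$ with $S_i\in\NS$ corresponding to an HNN extension $F_N=P_i\ast$ with $\rank(P_i)=N-1\geq 2$, the key point is that for $N\ge 3$ the group of twists $\Tw(S_i)$ is isomorphic to $(P_i\times P_i)/Z(P_i)$, hence virtually a direct product of two non-abelian free groups (Section~\ref{sec_twists}), and is normal in $\Stab_{\IA}(S_i)$ (Proposition~\ref{prop_suite_exacte}, since $S$ is $\IA$-invariant with trivial edge group). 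Via Lemma~\ref{lemma:pseudo-product}, the preimages under $\rho$ of the two free factors of $\Tw(S_i)$ form a pseudo-product stably normalized by $\calh_{|Y_i}$, and $\calh_{|Y_i}$ is everywhere non-amenable by Lemma~\ref{lemma:not-amen}; this gives $\PI$. For $\PImax$ one uses that the group-theoretic stabilizer $\Stab_{\IA}(S_i)$ is maximal among subgroups of $\IA$ admitting such a normalized pseudo-product, together with the fact that an action-type cocycle detects inclusions of subgroups up to stabilization; a subgroupoid $\calh'$ properly stably containing $\calh_{|Y_i}$ and still satisfying $\PI$ would, after a countable partition on which $\rho(\calh')$ lands in a fixed subgroup $H'\supsetneq\Stab_{\IA}(S_i)$, force $H'$ itself to normalize a direct product of free groups, contradicting the algebraic classification of such subgroups. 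Property $\PII$ follows from Lemma~\ref{lemma:normalisateur_twist}: if $\calh_{|U}$ normalized an amenable subgroupoid of infinite type inside $\rho^{-1}(\Tw(S_i))$ one would contradict that lemma; in general one reduces to this case by the rigidity of the free splitting graph. Finally $\PIII$ is where the asymmetry in the statement lives: a one-edge non-separating free splitting has \emph{one} twist direction producing a product of two free factors, not the doubled configuration $\caln^-,\caln^+$ with three pseudo-product pieces each; an explicit analysis of normal subgroupoids of $\Stab_{\IA}(S_i)$ and their possible pseudo-products — there is essentially a single maximal normal product, namely $\Tw(S_i)$ — rules out the configuration in $\PIII$.

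For direction~(1), I would run the following program. Starting from $\PImax$ and using Lemma~\ref{lemma:1-implies-splitting} (the groupoid version of Proposition~\ref{prop:emblematic}, which is where Theorem~\ref{theointro:adams} gets invoked), the pseudo-product normalized by $\calh$ forces, after a countable partition of the base space, that $(\calh_{|Y_i},\rho)$ preserves a nice splitting $S_i$. Then one descends through the hierarchy of nice splittings using the canonical constructions of Part~\ref{part2}: Property $\PII$ rules out $S_i$ being a $\Zmax$-splitting (whose stabilizer contains a normal non-trivial abelian group of twists, giving an amenable normalized subgroupoid of infinite type via Remark~\ref{rk:action-type}), and it rules out vertex groups with abelian-ish behavior; Property $\PIII$ rules out $S_i$ being bi-nonsporadic (the two non-sporadic vertices produce two commuting normal subgroupoids each containing a pseudo-product of enough pieces — this is exactly the $\caln^\pm$ configuration). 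Hence $S_i$ may be taken to be a non-trivial free splitting. A further refinement, using the canonical splitting $\Uun_{\Gamma}$ of Section~\ref{sec:collection-free} and the chain condition (Proposition~\ref{prop:chain-FS2}), together with $\PII$ again (to eliminate free splittings whose vertex groups carry extra invariant structure) and $\PImax$ (maximality pins down which free splitting), shows that after yet another countable partition $(\calh_{|Y_i},\rho)$ is precisely the $(\calg_{|Y_i},\rho)$-stabilizer of a one-edge free splitting; counting ranks and using $N\ge 3$ forces it into $\NS\cup\FSii$, the separating case $F_N=P\ast P'$ surviving only when $\rank P=2\le\rank P'$.

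The main obstacle, as expected, is the careful bookkeeping in direction~(1) to go from "$(\calh,\rho)$ preserves some nice splitting up to partition" to "$\calh$ \emph{equals} the $(\calg,\rho)$-stabilizer of a specific one-edge free splitting up to partition". Preserving a splitting only says $\rho(\calh_{|Y_i})\subseteq\Stab_{\IA}(S_i)$; upgrading $\subseteq$ to equality is where $\PImax$ must be combined with the action-type hypothesis — one shows the $(\calg_{|Y_i},\rho)$-stabilizer $\calk_i:=\rho^{-1}(\Stab_{\IA}(S_i))_{|Y_i}$ itself satisfies $\PI$ (by direction~(2)), that $\calh_{|Y_i}$ is stably contained in $\calk_i$, and hence by maximality that they are stably equal. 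The delicate parts are (a) ensuring that the various countable partitions can be performed only finitely many times in sequence, which is exactly what the chain conditions of Sections~\ref{sec:collection-free} and~\ref{sec:collection-zmax} guarantee, and (b) verifying that the groupoid-theoretic elimination of $\Zmax$- and bi-nonsporadic splittings via $\PII$ and $\PIII$ is tight — i.e.\ that these splittings genuinely fail the properties, which requires translating the structure of their stabilizers (groups of twists, vertex group restrictions) into normalized amenable subgroupoids of infinite type and into the doubled pseudo-product configuration, using Remark~\ref{rk:action-type}, Lemma~\ref{lemma:pseudo-product}, Corollary~\ref{cor:ia-nice} and Proposition~\ref{prop_suite_exacte}. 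The remaining asymmetry between $\NS$ and $\FSii$ is handled at the end by a direct rank computation together with Proposition~\ref{prop:ns-fs} and the compatibility analysis of Section~\ref{sec:compatibility}.
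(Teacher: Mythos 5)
Your overall architecture for direction (1) matches the paper's (pseudo-product $\Rightarrow$ stably nice via the Adams-type theorem; $(P_2)$ kills $\Zmax$-splittings through their abelian group of twists; $(P_3)$ kills bi-nonsporadic splittings; $(P_1^{\max})$ upgrades containment in a stabilizer to equality), but there are genuine gaps. The most serious ones are in direction (2), where you verify the properties for the stabilizer $\calh_0=\rho^{-1}(\Stab_{\IA}(S_0))$ of a splitting in $\NS$. Your argument for $(P_1^{\max})$ treats a competing subgroupoid $\calh'$ satisfying $(P_1)$ as if, after a countable partition, it were of the form $\rho^{-1}(H')$ for a subgroup $H'\supsetneq\Stab_{\IA}(S_0)$, and then appeals to an ``algebraic classification of subgroups normalizing a direct product of free groups''; neither step is available — subgroupoids are not preimages of subgroups, and no such classification exists or is used in the paper. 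What is actually needed (and missing from your sketch) is the uniqueness statement that $S_0$ is the \emph{only} nice splitting which is $(\calh_{0|U},\rho)$-invariant for any positive-measure $U$ (the paper's Lemma~\ref{lemma:unique-nice-invariant}, resting on Lemmas~\ref{lem_conj_invariant}, \ref{lemma:unique-invariant-factor}, \ref{lemma:unique-invariant-nice-splitting}, \ref{lem:invariant_A0}); combined with the fact that $(P_1)$ for $\calh'$ forces a stably invariant nice splitting, this is what pins $\calh'$ inside $\calh_0$. The same uniqueness statement, together with the restriction cocycle $\rho_{A_0}:\calh_0\to\Out(A_0)$ and the Adams-type theorem, is what reduces $(P_2)$ to the twist group before invoking Lemma~\ref{lemma:normalisateur_twist}; your ``rigidity of the free splitting graph'' plays no role here. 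For $(P_3)$, ``there is essentially a single maximal normal product, namely $\Tw(S_0)$'' is not a proof: the normal subgroupoids $\caln^\pm$ and Schottky subgroupoids in the forbidden configuration need not come from subgroups, and the paper has to run a layered Adams argument (Theorem~\ref{theo:a-etages}) plus Lemma~\ref{lemma:empty_afs} (with a separate treatment of $N=3$, where $A_0\simeq F_2$ has an invariant peripheral class) to force the Schottky pieces into $\Tw\simeq A_0\times A_0$, and then a case analysis with the two projections and Lemmas~\ref{lemma:adams-libre}--\ref{lemma:adams-produit} to reach a contradiction. None of this machinery appears in your plan for direction (2).

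In direction (1), the final step is also wrong as stated: ``counting ranks and using $N\ge 3$'' does not force the invariant one-edge free splitting into $\NS\cup\FSii$, since separating splittings $F_N=F_k\ast F_{N-k}$ with $k,N-k\ge 3$ exist for all $N\ge 6$ and no rank count excludes them. The paper excludes them by first using $(P_1^{\max})$ together with the fact that the $(\calg,\rho)$-stabilizer of any acyclic one-edge free splitting satisfies $(P_1)$ (Lemma~\ref{lemma:nonsep-satisfies-1}) to get $\calh_{|Y_i}=\rho^{-1}(\Stab_{\IA}(S_i))_{|Y_i}$, and then applying $(P_3)$ once more, using that $\Aut(F_k)$ contains $F_2\times F_2\times F_2$ for $k\ge 3$ (Lemma~\ref{lemma:f2-f2-f2-aut}); the same $(P_1^{\max})$-upgrade is also needed before $(P_3)$ can be applied in the bi-nonsporadic exclusion, a point your sketch leaves implicit. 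Finally, the asymmetry between $\NS$ and $\FSii$ in the statement is not ``handled'' by Proposition~\ref{prop:ns-fs} or Section~\ref{sec:compatibility} (those enter only later, in the proof of the main rigidity theorem); it simply reflects that one does not know whether stabilizers of $\FSii$-splittings satisfy $(P_3)$, which is why direction (2) is stated only for $\NS$.
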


\begin{proof}
The first part is proved in Section~\ref{sec:proof-1} below, and the second is proved in Section~\ref{sec:proof-2}  (Lemmas~\ref{lemma:check-p1max},~\ref{lemma:check-p2} and~\ref{lemma:check-p3}).
\end{proof}

\begin{rk}
We do not know however whether stabilizers of splittings $\FSii$ satisfy Property~$\PIII$ or not, whence the slight asymmetry in the statement of Theorem~\ref{theo:non-characterization}.
\end{rk}

\subsection{A sufficient condition to stabilize a free splitting}\label{sec:proof-1}

Our goal in this section is to prove the first part of Theorem~\ref{theo:non-characterization}.

\subsubsection{Step 1: Getting an invariant nice splitting from Property~$(P_1)$}

\begin{lemma}\label{lemma:1-implies-splitting}
Let $\calg$ be a measured groupoid over a base space $Y$, and let $\rho:\calg\to\IA$ be a cocycle with trivial kernel. Let $\calh$ be a measured subgroupoid of $\calg$. Assume that $(\calg,\calh)$ satisfies Property~$\PI$.

Then $(\calh,\rho)$ is stably nice.
\end{lemma}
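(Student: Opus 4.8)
\textbf{Proof plan for Lemma~\ref{lemma:1-implies-splitting}.}
The idea is to unpack Property~$\PI$ and feed it into Theorem~\ref{theo:normalisateur_moyennable} (Adams' argument). By hypothesis $\calh$ is everywhere non-amenable and stably normalizes two subgroupoids $\calg_1,\calg_2$ of $\calg$ that form a pseudo-product. After passing to a countable partition of the base space (allowed since ``stably nice'' is preserved by stabilization, and stable normalization, everywhere non-amenability and the pseudo-product property are all preserved under restriction), I may assume that $\calh$ genuinely normalizes both $\calg_1$ and $\calg_2$, and that each $\calg_i$ contains a Schottky tuple $(\cala_i^1,\dots,\cala_i^{k_i})$ normalized by $\calg_j$ for $j\neq i$; moreover each $\cala_i^\ell$ is amenable of infinite type. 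Since $\calh$ normalizes $\calg_i$ and $\calg_i$ normalizes $\cala_j^\ell$ (for $j\neq i$), I can arrange, after a further countable partition, that $\calh$ normalizes each $\cala_i^\ell$: indeed $\langle \calh,\calg_i\rangle$ stably normalizes $\cala_j^\ell$, and a countable partition turns stable normalization into genuine normalization on each piece.

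Now fix one of the amenable infinite-type subgroupoids, say $\cala:=\cala_1^1$, which is normalized by $\calh$. Since $\rho$ has trivial kernel, $\rho_{|\calh}$ has trivial kernel. I would like to apply Theorem~\ref{theo:normalisateur_moyennable} with this $\cala$ and $\calh$, but that theorem also requires $\rho_{|\cala}$ to be nowhere trivial. This need not hold for $\cala$ itself, but I can reduce to it: by definition there is a (possibly empty) Borel subset $Z\subseteq Y$ on which $\rho_{|\cala}$ is stably trivial, and on $Y\setminus Z$ it is nowhere trivial. On $Y\setminus Z$, Theorem~\ref{theo:normalisateur_moyennable} applies to the pair $(\cala,\calh)$ (noting $\cala$ is amenable and normalized by $\calh$), and since $\calh$ is everywhere non-amenable its restriction is never amenable, so the theorem forces the existence — after a countable partition of $Y\setminus Z$ — of an invariant nice splitting for $(\calh_{|U},\rho)$ on each piece; that is exactly ``stably nice'' over $Y\setminus Z$. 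The remaining work is to handle $Z$, i.e.\ the locus where $\rho_{|\cala_1^1}$ is stably trivial.

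To handle $Z$: the pseudo-product structure gives $k_1\geq 1$ subgroupoids $\cala_1^1,\dots,\cala_1^{k_1}$ inside $\calg_1$ whose joint restriction to any positive-measure set is non-amenable, and similarly inside $\calg_2$; and $\calh$ normalizes each of them. On $Z$, $\rho_{|\cala_1^1}$ is stably trivial, so after a countable partition I may assume $\rho(\cala_1^1{}_{|Z})=\{1\}$. I then run the same dichotomy with $\cala_1^2$ in place of $\cala_1^1$: either $\rho$ is nowhere trivial on some restriction and Adams' argument gives stable niceness, or it is stably trivial there too, and I iterate. The point is that the $\cala_1^\ell$ cannot all have $\rho$ stably trivial on a common positive-measure piece $W$: if they did, then $\rho(\langle \cala_1^1,\dots,\cala_1^{k_1}\rangle_{|W'})=\{1\}$ on each part $W'$ of a countable partition, so $\langle\cala_1^1{}_{|W'},\dots\rangle$ would be amenable by Corollary~\ref{cor:amenable-subgroup-subgroupoid}, contradicting the Schottky property. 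Hence after finitely many (in fact $\le k_1$) passes and a countable partition, on every piece some $\cala_1^\ell$ has $\rho$ nowhere trivial, Theorem~\ref{theo:normalisateur_moyennable} applies with that $\cala_1^\ell$ and $\calh$, and ``everywhere non-amenable'' rules out the amenable alternative, yielding an invariant nice splitting on each piece. Assembling all pieces shows $\calh$ is stably nice.

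\textbf{Main obstacle.} The delicate point is the bookkeeping of countable partitions: each invocation of ``stable normalization'', ``stable triviality'', and the passage to genuine normalization or genuine triviality requires a countable partition of the base, and I must check these can be performed consistently (only finitely many in sequence, since $k_1$ is finite) so that the final conclusion is a single countable partition on each piece of which a nice splitting is invariant. The conceptual content — that everywhere non-amenability of $\calh$ together with a normalized amenable infinite-type subgroupoid on which $\rho$ is nontrivial forces an invariant nice splitting — is entirely contained in Theorem~\ref{theo:normalisateur_moyennable}; the work here is purely in arranging the hypotheses of that theorem on a suitable countable partition.
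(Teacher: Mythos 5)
Your reduction step is where the argument breaks. From Property~$\PI$ you only know that $\calh$ stably normalizes the two pseudo-product factors $\caln^+,\caln^-$, and that the Schottky tuple inside $\caln^+$ is stably normalized by $\caln^-$ (and vice versa); nothing forces $\calh$ to normalize the Schottky-tuple members themselves. Your claim that ``since $\calh$ normalizes $\calg_i$ and $\calg_i$ normalizes $\cala_j^\ell$, the groupoid $\langle\calh,\calg_i\rangle$ stably normalizes $\cala_j^\ell$'' is an appeal to transitivity of normalization, which fails already for groups: a subgroup normalized by $G$ need not be normalized by the normalizer of $G$. In the model situation behind this lemma ($\calh=\rho\m(\Stab_{\IA}(S_0))$ for a non-separating free splitting $S_0$, with $\caln^\pm$ the preimages of the two factors of the group of twists $A_0\times A_0$), the Schottky subgroupoids are preimages of cyclic subgroups $\langle a\rangle\subseteq A_0$, and these are certainly not normalized by the stabilizer of $S_0$. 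So Theorem~\ref{theo:normalisateur_moyennable} cannot be invoked with $\cala=\cala_1^1$ and $\calk_2=\calh$, and everything after that point depends on this invalid step. (A smaller issue: Theorem~\ref{theo:normalisateur_moyennable} only yields a nice splitting on \emph{some} positive-measure subset, so even granting the normalization you would still need an exhaustion/maximality argument to conclude ``stably nice''; and your detour through the stably-trivial locus is vacuous, since an infinite-type subgroupoid automatically has nowhere-trivial cocycle once $\rho$ has trivial kernel.)

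The paper's proof avoids the problem by applying the amenability machinery one level down, where the normalization hypothesis genuinely holds. Take an amenable infinite-type member $\cala^+$ of the Schottky tuple inside $\caln^+$; by definition of pseudo-product it is stably normalized by $\caln^-$, which in turn is stably normalized by $\calh$. Partition $Y=Y_1\dunion Y_2$ with $(\calh_{|Y_1},\rho)$ stably nice and $(\calh_{|Y_2},\rho)$ nice-averse (Lemma~\ref{lemma:stably-nice-vs-nice-averse}). On $Y_2$, since stable niceness passes to stable normalizers (Proposition~\ref{prop:nice-preserved}), both $(\cala^+_{|Y_2},\rho)$ and $(\caln^-_{|Y_2},\rho)$ must be nice-averse; then Corollary~\ref{cor:normalisateur_moyennable}, applied to the pair $(\cala^+_{|Y_2},\caln^-_{|Y_2})$, forces $\caln^-_{|Y_2}$ to be amenable, contradicting Remark~\ref{rk:nowhere-nonamenable} unless $Y_2$ is null. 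Note that the contradiction uses the everywhere non-amenability of the pseudo-product factor $\caln^-$, not of $\calh$; the role of $\calh$ is only to carry the nice-averse/stably-nice dichotomy, which is transferred down to $\cala^+$ and $\caln^-$ via Proposition~\ref{prop:nice-preserved}. If you restructure your argument along these lines — amenability result applied to $(\cala^+,\caln^-)$, then climb the normalization chain with Proposition~\ref{prop:nice-preserved} — the proof goes through.
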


\begin{proof}
  By assumption $\calh$
  stably normalizes two subgroupoids $\caln^-,\caln^+$ of $\calg$ which form a pseudo-product.
  In particular, $\calh$ stably normalizes the subgroupoid $\caln^-$, which is
  everywhere non-amenable (Remark \ref{rk:nowhere-nonamenable}), 
  and $\caln^-$ stably normalizes an amenable subgroupoid $\cala^+$ of $\caln^+$ of infinite type.
  Applying Lemma~\ref{lemma:stably-nice-vs-nice-averse} to $\calh$, consider a  Borel partition $Y=Y_1\dunion Y_2$ with $(\calh_{|Y_1},\rho)$ stably nice
  and $(\calh_{|Y_2},\rho)$ nice-averse. We assume that $Y_2$ has positive measure and argue towards a contradiction.
  Because the property of  stably preserving a nice splitting goes to the normalizer (Proposition \ref{prop:nice-preserved}),
  $(\cala^+_{|Y_2},\rho)$ and $(\caln^-_{|Y_2},\rho)$ are nice-averse.
  As $\cala^+_{|Y_2}$ is of infinite type and amenable, and $\rho$ has trivial kernel,
  Corollary \ref{cor:normalisateur_moyennable}
   shows that $\caln^-_{|Y_2}$ is amenable,
  a contradiction.
\end{proof}

\subsubsection{Step 2: Excluding $\Zmax$-splittings thanks to Property~$(P_2)$}

\begin{lemma}\label{lemma:exclude-cyclic}
Let $\calg$ be a measured groupoid over a base space $Y$, and let $\rho:\calg\to\IA$ be an action-type cocycle. Let $\calh$ be a measured subgroupoid of $\calg$.

If there exists an $(\calh,\rho)$-invariant $\Zmax$-splitting, then $\calh$ normalizes an amenable subgroupoid of $\calg$ of infinite type.
\end{lemma}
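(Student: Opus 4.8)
\textbf{Proof plan for Lemma~\ref{lemma:exclude-cyclic}.}
The plan is to produce the required amenable subgroupoid of infinite type as the $\rho$-preimage of a suitable subgroup of the group of twists of the invariant $\Zmax$-splitting. Let $S$ be an $(\calh,\rho)$-invariant $\Zmax$-splitting of $F_N$. By definition there is a conull Borel subset $Y^*\subseteq Y$ such that $\rho(\calg_{|Y^*})$, and in particular $\rho(\calh_{|Y^*})$, is contained in $\Stab_{\IA}(S)$; after replacing $Y$ by $Y^*$ we may assume $\rho(\calh)\subseteq\Stab_{\IA}(S)$.

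First I would recall from Section~\ref{sec_twists} the structure of the group of twists of a $\Zmax$-splitting: since every edge stabilizer $G_e$ of $S$ is a maximal cyclic subgroup, $G_e$ is its own centralizer, so twisting near the two endpoints of $e$ agree and the group of twists $\Tw(S)$ is abelian; moreover $\Tw(S)$ is non-trivial (it contains a twist around any edge $e$ by a generator of $G_e$) and $\Tw(S)$ is normalized by $\Stab^0(S)$, hence by a finite-index subgroup of $\Stab_{\IA}(S)$ (indeed $\Tw(S)\unlhd\Stab(S)$ by Proposition~\ref{prop_suite_exacte} applied with the exact sequence for automorphisms of splittings, and here $\Stab_{\IA}(S)\subseteq\Stab^0(S)$ by Proposition~\ref{prop:ia-zmax} since $\IA$ acts trivially on $S/F_N$). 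Fix a non-trivial element $\tau\in\Tw(S)$ of infinite order — e.g.\ a power of a twist by a generator of an edge group — and let $A=\langle\tau\rangle\cong\mathbb{Z}$; then $A$ is normalized by $\Stab_{\IA}(S)$, hence by $\rho(\calh)$.

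Now set $\cala:=\rho^{-1}(A)$, a Borel subgroupoid of $\calg$. Since $A$ is infinite cyclic, $A$ is amenable, and $\rho$ restricted to $\cala$ takes values in the amenable group $A$ and has trivial kernel (as $\rho$ has trivial kernel, being action-type, cf.\ Definition~\ref{de:action-type}); by Corollary~\ref{cor:amenable-subgroup-subgroupoid}, $\cala$ is amenable. Because $\rho$ is action-type and $A$ is infinite, $\cala=\rho^{-1}(A)$ is of infinite type (Definition~\ref{de:action-type}). It remains to check that $\calh$ normalizes $\cala$: since $\rho(\calh)$ normalizes $A$ in $\IA$, this follows by writing $\calh$ as a countable union of bisections $B_n$ on each of which $\rho$ is constant with value $\gamma_n\in\Stab_{\IA}(S)$, and observing that for $g\in\cala$ and $\phi,\psi\in B_n$ with $\phi g\psi^{-1}$ composable one has $\rho(\phi g\psi^{-1})=\gamma_n\rho(g)\gamma_n^{-1}\in A$ iff $\rho(g)\in A$, i.e.\ $\cala$ is $B_n$-invariant in the sense of Definition~\ref{de:normal}; this gives $\calh$ normalizes $\cala$ (cf.\ the analogous computation for $H\ltimes Y$ in the paragraph following Definition~\ref{de:normal}). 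Thus $\cala$ is an amenable subgroupoid of $\calg$ of infinite type normalized by $\calh$, as required. The only genuinely delicate point is ensuring that the chosen twist $\tau$ has infinite order and that $\langle\tau\rangle$ is genuinely normalized (not merely virtually) by all of $\rho(\calh)$; this is handled by noting that $\Stab_{\IA}(S)$ acts on the abelian group $\Tw(S)$ preserving the subgroup generated by any single orbit of edge-twists, and by choosing $\tau$ inside such an invariant cyclic subgroup (e.g.\ the image in $\Tw(S)$ of a fixed $F_N$-orbit of edges, which is permuted trivially by $\IA$ by Lemma~\ref{lemma:ia-nice} / Proposition~\ref{prop:ia-zmax}), so that $\langle\tau\rangle$ is literally $\Stab_{\IA}(S)$-invariant.
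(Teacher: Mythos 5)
Your proof is correct and follows essentially the same route as the paper's: the paper simply takes $\cala=\rho^{-1}(A)$ where $A\subseteq\Stab_{\IA}(S)$ is the full (abelian, hence amenable) group of twists of $S$, which is infinite and normal in $\Stab_{\IA}(S)$, so Lemma~\ref{lemma:normal-subgroup-subgroupoid} gives the normalization immediately and your extra care in isolating an exactly invariant infinite cyclic subgroup of twists, while valid, is not needed. One small slip: $(\calh,\rho)$-invariance of $S$ only yields $\rho(\calh_{|Y^*})\subseteq\Stab_{\IA}(S)$, not $\rho(\calg_{|Y^*})\subseteq\Stab_{\IA}(S)$, but since you only use the former this is harmless.
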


\begin{proof}
 Let $S$ be a $\Zmax$-splitting which is $(\calh,\rho)$-invariant. Then there exists a conull Borel subset $Y^*\subseteq Y$ such that $\calh_{|Y^*}\subseteq\rho^{-1}(\Stab_{\IA}(S))$. 
 Let $A\subseteq\Stab_{\IA}(S)$ be the group of twists of the splitting $S$: this is an infinite abelian normal subgroup of $\Stab_{\IA}(S)$ (see Section~\ref{sec_twists} and Proposition~\ref{prop_suite_exacte}). 
 Let $\cala=\rho^{-1}(A)$: this is an amenable subgroupoid of $\calg$ (Corollary~\ref{cor:amenable-subgroup-subgroupoid}), of infinite type because $\rho$ is action-type. 
 In addition, as $A$ is normal in $\Stab_{\IA}(S)$, it follows that $\cala_{|Y^*}$ is normal in $\rho^{-1}_{|Y^*}(\Stab_{\IA}(S))$  (Lemma~\ref{lemma:normal-subgroup-subgroupoid}),
and therefore $\cala$ is normalized by $\calh$.
\end{proof}

Combining Lemmas~\ref{lemma:1-implies-splitting} and~\ref{lemma:exclude-cyclic}, we reach the following statement. 

\begin{cor}\label{cor:exclude-cyclic-splitting}
  Let $\calg$ be a measured groupoid over a base space $Y$, and let $\rho:\calg\to\IA$ be an action-type cocycle.
  Let $\calh$ be a measured subgroupoid of $\calg$. Assume that $(\calg,\calh)$ satisfies Properties~$\PI$ and~$\PII$.

Then there exists a partition $Y^*=\Dunion_{i\in I}Y_i$ of a conull Borel subset $Y^*\subseteq Y$ into at most countably many Borel subsets such that for every $i\in I$, there exists a non-trivial free splitting or bi-nonsporadic splitting which is $(\calh,\rho)$-invariant.
\qed
\end{cor}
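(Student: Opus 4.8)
The plan is simply to combine Lemma~\ref{lemma:1-implies-splitting} with Lemma~\ref{lemma:exclude-cyclic}, using Property~$\PII$ to rule out the only problematic case among nice splittings, namely the $\Zmax$-splittings.

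First, since $\rho$ is action-type it has trivial kernel (Definition~\ref{de:action-type}), so Property~$\PI$ lets us apply Lemma~\ref{lemma:1-implies-splitting} to the pair $(\calg,\calh)$: the groupoid $\calh$ is stably nice. By Definition~\ref{dfn_averse_cocycle} this means that there is a partition $Y=\Dunion_{i\in I}Y_i$ into at most countably many Borel subsets such that for every $i\in I$ there is an $(\calh_{|Y_i},\rho)$-invariant nice splitting $S_i$ of $F_N$. Discarding the (countably many) pieces of measure zero, whose union is null, we obtain a conull subset $Y^*$ partitioned by the remaining positive-measure pieces $Y_i$, each carrying such a splitting $S_i$.

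Second, I would show that no $S_i$ can be a $\Zmax$-splitting. By Definition~\ref{dfn_nice} each $S_i$ is a non-trivial free splitting, a $\Zmax$-splitting, or a bi-nonsporadic splitting, so it suffices to exclude the middle case. Suppose $S_i$ is a $\Zmax$-splitting. The cocycle $\rho$ restricts to an action-type cocycle on $\calg_{|Y_i}$ (Lemma~\ref{lemma:action-type}), and $S_i$ is $(\calh_{|Y_i},\rho)$-invariant, so Lemma~\ref{lemma:exclude-cyclic} applied to $\calg_{|Y_i}$ and $\calh_{|Y_i}$ produces an amenable subgroupoid of $\calg_{|Y_i}$ of infinite type normalized by $\calh_{|Y_i}$. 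Since $Y_i$ has positive measure, this contradicts Property~$\PII$. Hence every $S_i$ is a non-trivial free splitting or a bi-nonsporadic splitting, which is exactly the desired conclusion.

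I do not expect any genuine obstacle here: the substantive work is already encapsulated in Lemma~\ref{lemma:1-implies-splitting} (which rests on the Adams-type argument of Section~\ref{sec:adams} together with the fact, established in Section~\ref{sec_canonical}, that having an invariant nice splitting passes to normalizing subgroupoids) and in the elementary Lemma~\ref{lemma:exclude-cyclic} (the group of twists of a $\Zmax$-splitting being an infinite amenable normal subgroup of its $\IA$-stabilizer, pulled back by the action-type cocycle to an amenable subgroupoid of infinite type). The only care needed is routine measure-theoretic bookkeeping: that Properties~$\PI$ and~$\PII$, action-typeness of $\rho$, and $(\calh,\rho)$-invariance all behave well under restriction to positive-measure Borel subsets, and that the conclusion of Lemma~\ref{lemma:exclude-cyclic} is precisely the negation of~$\PII$ on such a subset.
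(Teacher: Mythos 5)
Your proof is correct and is exactly the paper's argument: the corollary is stated there as the immediate combination of Lemma~\ref{lemma:1-implies-splitting} (Property~$\PI$ plus trivial kernel gives stable niceness) with Lemma~\ref{lemma:exclude-cyclic} (an invariant $\Zmax$-splitting would produce a normalized amenable subgroupoid of infinite type, contradicting~$\PII$ on the positive-measure piece). The restriction/bookkeeping points you flag are indeed the only details to check, and you handle them correctly.
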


\subsubsection{Step 3: Excluding bi-nonsporadic splittings via $(P_1^{\max})$ and $(P_3)$}

\begin{lemma}\label{lemma:f2-f2-f2}
Let $S$ be a bi-nonsporadic splitting of $F_N$, and let $H:=\Stab_{\IA}(S)$.

Then $H$ contains a direct product $N^-\times N^+$, with $N^-,N^+$ normal in $H$,
and such that for every $\epsilon \in \{\pm\}$, the group $N^\eps$ contains a subgroup isomorphic to $F_2\times F_2\times F_2$.  
\end{lemma}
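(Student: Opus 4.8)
Let $S$ be a bi-nonsporadic splitting of $F_N$. By Definition~\ref{dfn_nice}, $S$ has finitely generated non-abelian edge stabilizers and two vertices $v_1, v_2$ in distinct $F_N$-orbits such that the Grushko decomposition of $(G_{v_i}, \Inc_{v_i})$ is non-sporadic for each $i\in\{1,2\}$. The plan is to pass to the finite-index subgroup $\Stab^0(S)$ of $\Stab(S)$ acting trivially on the quotient graph $S/F_N$ and use the exact sequence of Proposition~\ref{prop_suite_exacte}. Since all edge stabilizers are non-abelian (hence have trivial centralizer in $F_N$), the group of twists $\Tw$ is trivial, so the restriction map $\rho : \Stab^0(S) \to \prod_{v\in S/F_N} \Out(G_v, \Inc_v)$ is injective, and by Lemma~\ref{lem_produit} its image contains $\prod_{v\in S/F_N} \Out(G_v, \Inc_v^{(t)})$. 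In particular, for $i\in\{1,2\}$, the factor $\Out(G_{v_i}, \Inc_{v_i}^{(t)})$ embeds (via a section) into $\Stab^0(S)$ as a subgroup $N^i$, and the two subgroups $N^1, N^2$ commute and generate their direct product inside $\Stab^0(S)$; moreover, each $N^i$ is normal in $\Stab^0(S)$ because it corresponds to a single $F_N$-orbit of vertices fixed setwise (here I would invoke that $\Stab^0(S)$ preserves the orbit of $v_i$, as the vertices come from distinct orbits, so conjugation permutes the factors trivially). Then I would pass to a further finite-index subgroup $H^0 \subseteq H = \Stab_{\IA}(S)$ contained in $\Stab^0(S)$; since $\IA$ is of finite index, $N^\eps := N^\eps \cap H^0$ (after relabelling $1,2$ as $-,+$) are still normal in $H^0$ and generate their direct product, and $H^0$ has finite index in $H$.

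The second ingredient is that $\Out(G_{v_i}, \Inc_{v_i}^{(t)})$ contains a copy of $F_2\times F_2\times F_2$. Here I would use that the Grushko decomposition of $(G_{v_i}, \Inc_{v_i})$ is non-sporadic: writing $G_{v_i} = Q_1 * \dots * Q_p * F_r$ for this decomposition (with $p + r$ large enough that it is non-sporadic, i.e. not of the form $A*$ or $A*B$), the group of twists of this Grushko splitting of $(G_{v_i}, \Inc_{v_i}^{(t)})$ is a direct product of groups, one factor for each (edge, side) pair, and since non-sporadicity forces at least three such independent twisting directions, and each twisting group is either a non-abelian free factor $Q_j$ or $F_r$ with $r\ge 2$ (or can be arranged to be, by choosing the vertices so the relevant incident groups are non-trivial — recall edge stabilizers are non-abelian, so all incident edge groups at $v_i$ are non-trivial), one extracts a subgroup $F_2\times F_2\times F_2$. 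Concretely I would cite Section~\ref{sec_twists} (or \cite[Proposition~3.1]{Lev}) for the computation of the group of twists of a free splitting: twists near distinct (edge, side) pairs commute, and a twist near a side supported by a non-abelian group ranges over a free group containing $F_2$. I would check that a non-sporadic Grushko decomposition of $(G_{v_i}, \Inc_{v_i})$ always yields at least three independent such commuting free directions — the worst cases being $G_{v_i} = F_3$ (three HNN-type twisting directions around the three edges of a rose, but actually one needs to be careful that free-rank-alone twists may collapse; it is cleaner to take $G_{v_i}$ of the form $Q_1 * Q_2 * Q_3$ or with enough incident edges) — and this case analysis is exactly where I would be most careful. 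Then $N^\eps \supseteq$ this copy of $F_2\times F_2\times F_2$ as required.

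The main obstacle I anticipate is precisely this last combinatorial point: verifying uniformly that a \emph{non-sporadic} Grushko decomposition of $(G_{v_i}, \Inc_{v_i})$ produces three commuting copies of $F_2$ inside $\Out(G_{v_i}, \Inc_{v_i}^{(t)})$. One has to split into cases according to whether the extra rank comes from free factors $Q_j$, from free rank $F_r$, or from multiple incident edges (which give HNN-type twists), and in the pure-free-rank case a single generator of $F_r$ gives only an abelian twist, so one genuinely needs $r$ and $p$ together to be large enough; non-sporadicity guarantees exactly this. I would handle it by distinguishing: (a) if $(G_{v_i}, \Inc_{v_i})$ has at least three non-trivial peripheral/incident factors that are non-abelian, take twists near each; (b) otherwise non-sporadicity forces free rank $r\ge 2$ together with at least one more orbit of edges/factors, and combine HNN twists (whose twisting group is a non-abelian free factor) with the $F_r$-twists; in every case one reads off $F_2\times F_2\times F_2$ inside the group of twists, which sits inside $\Out(G_{v_i}, \Inc_{v_i}^{(t)})$. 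Once this is established, assembling $N^- \times N^+ \subseteq H^0 \subseteq_{\mathrm{f.i.}} H$ with each factor normal and containing $F_2\times F_2\times F_2$ completes the proof. (If the statement is meant to hold for $H = \Stab_{\IA}(S)$ on the nose rather than up to finite index, I would instead argue that $\IA$ already contains all these twists — twists about free splittings act trivially on homology — so $N^\eps$ can be taken directly inside $\Stab_{\IA}(S)$ without passing to a finite-index subgroup, which is in fact the cleaner route given the standing convention that we work in $\IA$.)
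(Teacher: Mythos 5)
Your overall strategy (embed $\Out(G_{v^-},\Inc_{v^-}^{(t)})\times\Out(G_{v^+},\Inc_{v^+}^{(t)})$ into $\Stab(S)$ via the exact sequence with trivial group of twists, then find $F_2\times F_2\times F_2$ among twists of a relative Grushko splitting of each vertex group) is the same as the paper's, but there is a genuine gap on the normality requirement. The lemma asserts that $N^-$ and $N^+$ are normal in $H=\Stab_{\IA}(S)$ itself, whereas your argument only yields normality in the finite-index subgroup $H\cap\Stab^0(S)$: your justification (``$\Stab^0(S)$ preserves the orbit of $v_i$'') is automatic inside $\Stab^0(S)$ but says nothing about elements of $H$ that act non-trivially on $S/F_N$. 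Your parenthetical repair does not fix this: first, the claim that twists about free splittings act trivially on homology is false (an HNN-type twist $t\mapsto zt$ is a transvection on $H_1$), and second, even if the relevant twists did lie in $\IA$, that would only address membership in $\IA$, not the question of whether an arbitrary element of $\Stab_{\IA}(S)$ normalizes the subgroup of automorphisms acting trivially on all vertex groups other than $G_{v^\eps}$. The missing ingredient is precisely Corollary~\ref{cor:ia-nice} (resting on Lemma~\ref{lemma:ia-nice}, i.e.\ on the Handel--Mosher invariance theorems for $\IA$): elements of $\Stab_{\IA}(S)$ preserve the $F_N$-orbit of each Grushko vertex, so the subgroup $N^\eps$ of $H$ acting trivially off $G_{v^\eps}$ is normal in all of $H$, not merely in a finite-index subgroup. (Containment of $F_2\times F_2\times F_2$ in the $\IA$-intersection is not an issue: a finite-index subgroup of $F_2\times F_2\times F_2$ still contains such a product.)

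A secondary point: the combinatorial step you flag as the ``main obstacle'' --- extracting three commuting non-abelian twist directions from non-sporadicity --- is genuinely left open in your write-up, and your worry about ``pure free rank'' directions is a red herring once the setup is chosen correctly. Since $S$ is bi-nonsporadic, the incident edge groups at $v^\eps$ are non-abelian, so one can choose a Grushko splitting $S_{v^\eps}$ of $(G_{v^\eps},\Inc_{v^\eps})$ with all vertex stabilizers non-trivial, and these are then automatically non-abelian (each contains a conjugate of an incident edge group, the free rank being carried by edges attached to such vertices). Non-sporadicity gives at least four half-edges in distinct orbits based at these vertices, and the twists around any three of them already generate a direct product of three non-abelian free groups inside $\Out(G_{v^\eps},\Inc_{v^\eps}^{(t)})$; no case analysis on $p$ versus $r$ is needed, because the twist at a half-edge ranges over the (non-abelian) vertex group at its base, not over the free part.
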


\begin{proof}
As $S$ is bi-nonsporadic, there exist two vertices $v^-,v^+\in S$ in distinct $F_N$-orbits such that for every $\eps\in\{\pm\}$, the Grushko decomposition of $G_{v^\eps}$ relative to the set $\Inc_{v^\eps}$ of incident edge groups is non-sporadic.
  
  We claim that the group $\Out(G_{v^\eps},\Inc_{v^\eps}^{(t)})$ made of all outer automorphisms of $G_{v^\eps}$ acting by conjugation on every incident edge group contains a copy of $F_2\times F_2\times F_2$.  Indeed, let $S_{v^\epsilon}$ be a Grushko splitting of $G_{v^\epsilon}$ relative to $\Inc_{v^\epsilon}$
  with all vertex stabilizers non-trivial (hence non-abelian because incident edge groups are non-abelian).
  By non-sporadicity, the splitting $S_{v^\epsilon}$ contains at least four half-edges in distinct orbits based at vertices with non-trivial stabilizer. 
  The group of twists of $S_{v^\eps}$ is a subgroup of $\Out(G_{v^\eps})$
  containing a copy of $F_2\times F_2\times F_2$, associated to three of these half-edges. And since twists acts trivially on the vertex groups of $S_{v^\eps}$, it is contained in $\Out(G_{v^\eps},\Inc_{v^\eps}^{(t)})$. This proves our claim.
  

   Now by Lemma~\ref{lem_produit}, the product
  $\Out(G_{v^+},\Inc_{v^+}^{(t)})\times \Out(G_{v^-},\Inc_{v^-}^{(t)})$ embeds in $\Stab_{\Out(F_N)}(S)$.
   We then denote by $H$ the stabilizer of $S$ in $\IA$, and by $N^\eps$ the intersection of $H$ with $\Out(G_{v^\eps},\Inc_{v^\eps}^{(t)})$.  Corollary~\ref{cor:ia-nice} ensures that $N^+$ and $N^-$ are normal in $H$, which concludes the proof.
\end{proof}  

\begin{cor}\label{cor:step3}
 Let $\calg$ be a measured groupoid over a base space $Y$, and let $\rho:\calg\to\IA$ be an action-type cocycle. Let $\calh$ be a measured subgroupoid of $\calg$. Assume that $(\calg,\calh)$ satisfies Properties~$\PImax$, $\PII$, and $\PIII$.

Then $(\calh,\rho)$ stably preserves a free splitting.
\end{cor}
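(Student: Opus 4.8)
\textbf{Proof of Corollary \ref{cor:step3}.}
The plan is to combine Corollary~\ref{cor:exclude-cyclic-splitting} with an argument ruling out bi-nonsporadic splittings thanks to Properties~$\PImax$ and~$\PIII$. First, by Corollary~\ref{cor:exclude-cyclic-splitting} (using Properties~$\PI$ and~$\PII$, both of which follow from $\PImax$ and $\PII$), there exists a partition $Y^*=\Dunion_{i\in I}Y_i$ of a conull Borel subset into at most countably many Borel pieces such that for every $i\in I$, there is a splitting $S_i$ which is $(\calh_{|Y_i},\rho)$-invariant and which is either a non-trivial free splitting or a bi-nonsporadic splitting. Since preserving a free splitting stably is precisely what we want to prove, it suffices to show that the set of indices $i$ for which $S_i$ is bi-nonsporadic (and no free splitting is invariant in restriction to any positive measure subset of $Y_i$) has measure zero; equivalently, after discarding a null set and refining the partition, we may assume we are in the situation where for some $i$ with $Y_i$ of positive measure, $(\calh_{|Y_i},\rho)$ preserves a bi-nonsporadic splitting $S_i$ and $(\calh_{|U},\rho)$ preserves no free splitting for any positive measure $U\subseteq Y_i$, and derive a contradiction.

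So assume $(\calh_{|U},\rho)$ preserves a bi-nonsporadic splitting $S$ for $U$ of positive measure. Let $H:=\Stab_{\IA}(S)$. By Lemma~\ref{lemma:f2-f2-f2}, $H$ contains a direct product $N^-\times N^+$ with $N^-,N^+$ normal in $H$, and for each $\epsilon\in\{\pm\}$, the group $N^\epsilon$ contains a subgroup isomorphic to $F_2\times F_2\times F_2$; write $F_2^{(1,\epsilon)}\times F_2^{(2,\epsilon)}\times F_2^{(3,\epsilon)}$ for such a subgroup. Up to restricting to a conull Borel subset of $U$, we have $\rho(\calh_{|U})\subseteq H$. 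Set $\caln^\epsilon:=\rho^{-1}(N^\epsilon)_{|U}$ and $\calb_j^\epsilon:=\rho^{-1}(F_2^{(j,\epsilon)})_{|U}$ for $j\in\{1,2,3\}$. Since $N^-$ and $N^+$ are normal in $H$, Lemma~\ref{lemma:normal-subgroup-subgroupoid} shows that $\caln^-$ and $\caln^+$ are normal in $\calh_{|U}$. Moreover, for each fixed $\epsilon$, the four groups $F_2^{(1,\epsilon)},F_2^{(2,\epsilon)},F_2^{(3,\epsilon)},N^{-\epsilon}$ pairwise commute inside $H$ (the three $F_2$ factors commute with each other as factors of a direct product, and each commutes with $N^{-\epsilon}$ since $N^\epsilon$ and $N^{-\epsilon}$ form a direct product), so any two of them generate their direct product, and each of them contains a non-abelian free group. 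Lemma~\ref{lemma:pseudo-product} (applied repeatedly to pairs, or rather the obvious multi-factor analogue obtained by iterating it) then shows that $\calb_1^\epsilon,\calb_2^\epsilon,\calb_3^\epsilon,\caln^{-\epsilon}$ form a pseudo-product. This is exactly the configuration forbidden by Property~$\PIII$ for the pair $(\calg_{|U},\calh_{|U})$, contradicting the fact that $\PIII$ is preserved under restriction. Hence no positive measure piece can be of the bi-nonsporadic type, and after discarding the corresponding null set we conclude that $(\calh,\rho)$ stably preserves a free splitting.

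The only subtle point — and the place where $\PImax$ (rather than merely $\PI$) is genuinely needed — is making sure that the dichotomy ``free splitting vs.\ bi-nonsporadic'' from Corollary~\ref{cor:exclude-cyclic-splitting} is applied in a way that is compatible with the later steps of Section~\ref{sec:proof-1}; in the present statement, however, $\PImax$ enters only through its consequence $\PI$, and the contradiction above uses $\PII$ and $\PIII$ directly. I expect the routine verification that the ``multi-factor'' version of Lemma~\ref{lemma:pseudo-product} holds (i.e.\ that finitely many pairwise-commuting subgroups each containing a non-abelian free group yield, via preimages under an action-type cocycle, a pseudo-product in the sense of Definition~\ref{de:pseudo-product}) to be the main bookkeeping obstacle, but it follows immediately from the proof of Lemma~\ref{lemma:pseudo-product}: for each factor one picks two infinite cyclic subgroups generating an $F_2$, applies Lemma~\ref{lemma:schottky} to get a Schottky pair in its preimage, and observes that this pair is normalized by the preimages of all the other (commuting) factors. \qed
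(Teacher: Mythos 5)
There is a genuine gap, and it sits exactly where you dismiss $\PImax$ as being needed "only through its consequence $\PI$". In your contradiction step you set $\caln^\epsilon:=\rho^{-1}(N^\epsilon)_{|U}$ and assert that Lemma~\ref{lemma:normal-subgroup-subgroupoid} makes $\caln^-$ and $\caln^+$ normal subgroupoids of $\calh_{|U}$. That lemma only gives $\rho^{-1}(N^\epsilon)\unlhd\rho^{-1}(H)$ where $H=\Stab_{\IA}(S)$; what you actually know about $\calh_{|U}$ is the containment $\calh_{|U}\subseteq\rho^{-1}(H)_{|U}$ (up to a conull set), so $\calh_{|U}$ merely \emph{normalizes} $\caln^{\pm}$, but $\caln^{\pm}$ need not be contained in $\calh_{|U}$ at all. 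Property~$\PIII$, as defined, forbids a configuration of \emph{normal subgroupoids of} $\calh_{|U}$ (hence subgroupoids contained in $\calh_{|U}$, cf.\ the convention after Definition~\ref{de:normal}), so the configuration you exhibit does not contradict $\PIII$. Intersecting with $\calh_{|U}$ does not repair this, since there is no reason the intersections still contain Schottky tuples or form pseudo-products: $\calh_{|U}$ could a priori miss the relevant twist subgroupoids entirely.

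The paper closes this gap precisely with $\PImax$: one first checks that $\hat\calh:=\rho^{-1}(\Stab_{\IA}(S))$ satisfies $\PI$ (the pseudo-product $\caln^-,\caln^+$ coming from Lemma~\ref{lemma:f2-f2-f2} and Lemma~\ref{lemma:pseudo-product} is normalized by $\hat\calh$), and since $\calh_{|U}$ is stably contained in $\hat\calh_{|U}$, maximality forces $\calh_{|U}$ to be stably equal to $\hat\calh_{|U}$. After a further restriction one may therefore assume $\calh=\rho^{-1}(\Stab_{\IA}(S))$, and only then are $\caln^{\pm}$ and the $\calb_j^{\pm}$ genuinely (normal) subgroupoids of $\calh$, so that the two 4-tuples $(\caln^{+},\calb_1^-,\calb_2^-,\calb_3^-)$ and $(\caln^{-},\calb_1^+,\calb_2^+,\calb_3^+)$ contradict $\PIII$. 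The rest of your argument (the reduction via Corollary~\ref{cor:exclude-cyclic-splitting}, and the multi-factor application of Lemma~\ref{lemma:pseudo-product}) matches the paper and is fine; you just need to insert the $\PImax$ step before invoking $\PIII$.
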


\begin{proof}
 We argue by contradiction, so assume otherwise. Then by Corollary \ref{cor:exclude-cyclic-splitting}, up to restricting to a subset of positive measure we may assume
  that $(\calh,\rho)$ preserves a bi-nonsporadic splitting $S$, and let $\Gamma_S\subset\IA$ be its stabilizer.
  Let $N^+,N^-$ be the normal subgroups of $ \Gamma_S$ be given by Lemma \ref{lemma:f2-f2-f2}.

 Let $\hat \calh=\rho\m(\Gamma_S)$. We claim that $(\calg,\hat\calh)$ 
   satisfies $\PI$. Indeed,  the subgroupoids $\caln^+=\rho\m(N^+)$
   and $\caln^-=\rho\m(N^-)$ 
   form a pseudo-product by Lemma \ref{lemma:pseudo-product} and are normalized
   by $\hat \calh$ (Lemma~\ref{lemma:normal-subgroup-subgroupoid}). And $\hat\calh$ is everywhere non-amenable (Lemma~\ref{lemma:not-amen}), which proves the claim.

   Since $(\calg,\calh)$ satisfies Property~$\PImax$ it follows that $\calh$ is stably equal to $\hat\calh$,
   so  up to further restricting to a Borel subset of positive measure, we may assume that $\calh=\rho\m(\Gamma_S)$.
   For each $\eps\in\{\pm\}$, denote by $B_1^\eps\times B_2^\eps\times B_3^\eps\simeq F_2\times F_2\times F_2$
   the subgroup of $N^\eps$ given by Lemma \ref{lemma:f2-f2-f2}. 
   For each $\eps\in\{\pm\}$, and each $j\leq 3$, consider $\calb_j^\eps=\rho\m(B_j^\eps)$.
   Then by Lemma \ref{lemma:pseudo-product}, $(\caln^+,\calb_1^-,\calb_2^-,\calb_3^-)$ form a pseudo-product, and so do
   $(\caln^-,\calb_1^+,\calb_2^+,\calb_3^+)$. This contradicts Property~$\PIII$.
\end{proof}

\subsubsection{Step 4: Excluding some types of free splittings}\label{sec_step4}

We say that a free splitting $S$ of $F_N$ is \emph{acyclic} if $S/F_N$ does not contain any terminal
vertex with vertex group isomorphic to $\mathbb{Z}$. When $S$ is a one-edge splitting, this is equivalent to saying that $S$ is not the Bass--Serre tree of a decomposition of $F_N$ of the form $F_N=\mathbb{Z}\ast F_{N-1}$. Note that the stabilizer in $\Out(F_N)$ of the splitting $F_N=\mathbb{Z}\ast F_{N-1}$ is contained in the stabilizer of the acyclic splitting $F_N=F_{N-1}\ast$. This observation leads to the following result. 

\begin{lemma}\label{lemma:nonspecial-free-splitting}
Let $S$ be a free splitting of $F_N$, and let $H$ be its stabilizer in $\IA$.

Then $H$ stabilizes an acyclic one-edge free splitting of $F_N$.
\end{lemma}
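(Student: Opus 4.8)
The statement to be proven says: if $S$ is a free splitting of $F_N$ with stabilizer $H$ in $\IA$, then $H$ stabilizes an acyclic one-edge free splitting of $F_N$. The plan is to produce from $S$ a \emph{canonical} acyclic one-edge free splitting whose stabilizer contains $H$, by performing a sequence of canonical collapse and replacement moves on $S$ that are invariant under the full automorphism group $\Stab_{\Out(F_N)}(S)$, and in particular under $H$.

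First I would reduce to the one-edge case, or rather bypass it: the point is to build \emph{some} acyclic one-edge free splitting invariant under $H$, not to modify $S$ edge by edge. The key structural fact is the one recorded before the statement: a terminal vertex $\bar v$ of $S/F_N$ with $G_{\bar v}\cong\mathbb{Z}$, attached by an edge carrying the trivial group, is the ``bad'' configuration, and the observation is that $\Stab_{\Out(F_N)}$ of the splitting $F_N = \mathbb{Z}*F_{N-1}$ is contained in the stabilizer of the acyclic splitting $F_N = F_{N-1}*$. So the plan is: if $S$ is already acyclic, collapse $S$ to a one-edge free splitting in a canonical way. Concretely, consider the collection $\calc$ of all $H$-invariant free splittings obtained by collapsing edges of $S$; this is a nonempty $H$-invariant collection (since $S$ itself is $H$-invariant by hypothesis, and $H$ acts trivially on $S/F_N$ by Lemma~\ref{lemma:ia-free-splitting}, so every collapse of $S$ is $H$-invariant). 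Among one-edge collapses of an acyclic $S$, at least one is acyclic: a one-edge collapse of $S$ corresponds to choosing a single orbit of edges of $S/F_N$ to keep; picking an edge of $S/F_N$ not separating off a cyclic terminal vertex yields an acyclic one-edge splitting, and such an edge exists when $S$ is acyclic. Since $H$ acts trivially on $S/F_N$, any such one-edge collapse is $H$-invariant, and we are done in the acyclic case.

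The remaining case is when $S$ is not acyclic, i.e.\ $S/F_N$ has a terminal vertex $\bar v$ with $G_{\bar v}\cong\mathbb{Z}$ and incident edge group trivial. Here I would use the canonical replacement: blow up $\bar v$ by replacing it (equivalently, modify the splitting) so that the cyclic group $G_{\bar v}=\langle g\rangle$ becomes an HNN-stable-letter structure — precisely, pass from the local picture $F_N = A * \langle g\rangle$ (where $A$ is the rest of the group, of rank $N-1$) to $F_N = A*$ with stable letter a generator of $\langle g\rangle$, i.e.\ the acyclic splitting $F_N = A*$. The dual move at the level of trees: collapse all edges of $S$ except those in the orbit giving the decomposition $F_N = A * \langle g\rangle$ to get the one-edge splitting $S'$ dual to $F_N = A*\langle g\rangle$, then replace $S'$ by $S''$ dual to $F_N = A*$. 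By the observation quoted before the lemma, $\Stab_{\Out(F_N)}(S') \subseteq \Stab_{\Out(F_N)}(S'')$; and $\Stab_{\Out(F_N)}(S)\subseteq\Stab_{\Out(F_N)}(S')$ since $S'$ is a collapse of $S$. Hence $H = \Stab_{\IA}(S) \subseteq \Stab_{\IA}(S'')$, and $S''$ is an acyclic one-edge free splitting. One must only check that one can always \emph{choose} such a cyclic terminal vertex (if there are several, picking any one works; the move is canonical once the vertex is chosen, and invariance of the whole construction under $H$ is automatic because $H$ acts trivially on $S/F_N$ and hence fixes each terminal vertex individually — indeed by Lemma~\ref{lemma:ia-free-splitting}, $H$ fixes the orbit of $\bar v$ and acts as the identity on $S/F_N$).

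\textbf{Main obstacle.} The real content is entirely concentrated in the quoted observation that $\Stab_{\Out(F_N)}(F_N = \mathbb{Z}*F_{N-1}) \subseteq \Stab_{\Out(F_N)}(F_N = F_{N-1}*)$, together with checking that the corank-one free factor $A$ appearing as the complement is canonically determined (so that the move does not depend on non-canonical choices) — here one uses that $A$ is recovered as the unique corank-one free factor complementary to the cyclic terminal vertex group in the given splitting. A second, more bookkeeping-level point to be careful about: when $S$ has several orbits of cyclic terminal vertices, or when collapsing to a one-edge splitting forces one through a cyclic terminal vertex, one needs that \emph{some} one-edge collapse avoids all cyclic terminal configurations, or failing that, that the single replacement move above still produces an acyclic output with larger stabilizer — this is where I would be most careful to get the case analysis on the shape of $S/F_N$ right, but none of it is deep: it is just a finite combinatorial check on the quotient graph of groups, and the invariance under $H$ is free of charge throughout since $H\subseteq\IA$ acts trivially on $S/F_N$.
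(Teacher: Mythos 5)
Your proposal is correct and is essentially the paper's argument: the paper simply takes any one-edge collapse of $S$ (automatically $H$-invariant because $H\subseteq\IA$ acts trivially on $S/F_N$) and, if that collapse has the form $F_N=A\ast B$ with $B$ cyclic, replaces it by the acyclic splitting $F_N=A\ast$ via the stabilizer-containment observation you quote, so your case split on whether $S$ is acyclic is unnecessary. One small caveat: your case-1 criterion (``an edge not separating off a cyclic terminal vertex yields an acyclic collapse'') is not literally correct — a separating edge one of whose sides is a loop with trivial vertex groups also collapses to $\mathbb{Z}\ast F_{N-1}$ — but this is harmless, since the replacement move of your case 2 (which is exactly the paper's proof) repairs any non-acyclic one-edge collapse, as you yourself note in your ``main obstacle'' paragraph.
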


\begin{proof}
As $H\subseteq\IA$, it stabilizes every one-edge collapse of $S$ (Lemma~\ref{lemma:ia-free-splitting}). In addition, if $H$ stabilizes a splitting of the form $F_N=A\ast B$ with $B$ cyclic, then it also stabilizes the acyclic splitting $F_N=A\ast$.
\end{proof}

In order to be able to use Property $\PImax$, we now prove
that the stabilizer of a free  splitting satisfies Property $\PI$. This will also be useful later for proving the second part of Theorem~\ref{theo:non-characterization}.

\begin{lemma}\label{lemma:nonsep-satisfies-1}
Let $\calg$ be a measured groupoid over a base space $Y$, and let $\rho:\calg\to\IA$ be an action-type cocycle. Let $S$ be a one-edge acyclic free splitting of $F_N$.

Then the $(\calg,\rho)$-stabilizer of $S$ satisfies Property~$\PI$.
\end{lemma}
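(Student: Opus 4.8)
\textbf{Proof strategy for Lemma~\ref{lemma:nonsep-satisfies-1}.}
The plan is to produce, inside the stabilizer $\Stab_{\IA}(S)$ of the one-edge acyclic free splitting $S$, an explicit direct product of two groups each containing a non-abelian free group, both normal in $\Stab_{\IA}(S)$; then invoke Lemma~\ref{lemma:pseudo-product} to turn this into a pseudo-product of subgroupoids that is (stably) normalized by $\calh:=\rho^{-1}(\Stab_{\IA}(S))$, and finally check that $\calh$ is everywhere non-amenable, which is immediate from the action-type assumption. Concretely, since $S$ is acyclic it is dual either to an HNN extension $F_N=A*$ with $A=F_{N-1}$ non-abelian (as $N\ge 3$), or to an amalgam $F_N=A*B$; in either case the group of twists $\Tw$ of $S$ is, by Section~\ref{sec_twists}, isomorphic to $A\times A$ (HNN case) or $A\times B$ (amalgam case; note acyclicity rules out the case where one factor is cyclic, and if $S$ is not reduced one first collapses to a reduced one-edge free splitting stabilized by $H$ as in Lemma~\ref{lemma:nonspecial-free-splitting}). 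In the amalgam case $A*B$ with both $A,B$ non-abelian this is already the desired product; in the HNN case $A\times A$ with $A=F_{N-1}$ non-abelian, the two factors of $A\times A$ are the twists near the two sides of the edge, and each is a non-abelian free group. So set $N^-,N^+$ to be the intersections with $\IA$ of these two twist factors; they generate their direct product in $\IA$, both contain non-abelian free subgroups, and they are normal in $\Stab_{\IA}(S)$ because $\Tw$ is normal in $\Stab(S)$ and each of the two twist factors is preserved (up to the finite-index subgroup $\Stab^0(S)$, which is harmless after replacing by intersections with $\IA$, or one uses directly that $\IA$ acts trivially on $S/F_N$ by Lemma~\ref{lemma:ia-free-splitting}, hence preserves each side of the edge).

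Next, let $N^\pm$ be as above and set $\caln^\pm:=\rho^{-1}(N^\pm)$. By Lemma~\ref{lemma:pseudo-product}, since $N^-$ and $N^+$ generate their direct product in $\IA$ and each contains a non-abelian free subgroup, the pair $(\caln^-,\caln^+)$ forms a pseudo-product of subgroupoids of $\calg$. Moreover there is a conull Borel subset $Y^*\subseteq Y$ with $\rho(\calg_{|Y^*})\subseteq\Stab_{\IA}(S)$, so $\calh=\rho^{-1}(\Stab_{\IA}(S))$ satisfies $\calh_{|Y^*}=\rho^{-1}(\Stab_{\IA}(S))_{|Y^*}$; since $N^\pm$ is normal in $\Stab_{\IA}(S)$, Lemma~\ref{lemma:normal-subgroup-subgroupoid} gives that $\caln^\pm_{|Y^*}$ is normal in $\calh_{|Y^*}$, in particular $\calh$ normalizes (hence stably normalizes) both $\caln^-$ and $\caln^+$. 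Finally, $\Stab_{\IA}(S)$ contains a non-abelian free subgroup (e.g.\ inside $\Tw\cap\IA$, which is already non-abelian as seen above), so by Lemma~\ref{lemma:not-amen} the groupoid $\calh$ is everywhere non-amenable. This verifies all the clauses of Property~$\PI$.

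The only mildly delicate point is the reduction to the reduced one-edge case and the bookkeeping with the finite-index subgroup $\Stab^0(S)$ versus $\Stab(S)$: one must make sure that after passing to $\IA$ the two twist factors are genuinely normal (not merely normal up to finite index) in the relevant stabilizer. This is handled by Lemma~\ref{lemma:ia-free-splitting}: every element of $\IA\cap\Stab(S)$ acts as the identity on $S/F_N$, so it fixes each of the two directions of the (unique orbit of) edge of $S$, hence conjugates the twist near one side to itself and likewise for the other side; thus $N^-$ and $N^+$ are each normal in $\Stab_{\IA}(S)$. With this observation the argument above goes through verbatim, and Property~$\PI$ holds for the $(\calg,\rho)$-stabilizer of $S$.
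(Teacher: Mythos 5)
Your proof is correct and takes essentially the same route as the paper: in both the amalgam and HNN cases one takes the two factors of the group of twists of $S$, intersects them with $\IA$ to get two normal subgroups of $\Stab_{\IA}(S)$ that generate their direct product and each contain a non-abelian free group, and then pulls them back under $\rho$ to obtain a pseudo-product of subgroupoids normalized by $\calh=\rho^{-1}(\Stab_{\IA}(S))$, exactly via Lemmas~\ref{lemma:pseudo-product} and~\ref{lemma:normal-subgroup-subgroupoid}. The only (harmless) slip is the parenthetical claim that $\rho(\calg_{|Y^*})\subseteq\Stab_{\IA}(S)$ on a conull set, which is false for the ambient groupoid and also unnecessary, since the $(\calg,\rho)$-stabilizer of $S$ equals $\rho^{-1}(\Stab_{\IA}(S))$ by definition and normality of $\caln^{\pm}$ in it follows directly.
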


\begin{proof}
We denote by $H$ the stabilizer of $S$ in $\IA$, and we let $\calh=\rho^{-1}(H)$, so that $\calh$ is the $(\calg,\rho)$-stabilizer of $S$. Our proof will show that $H$ contains a non-abelian free subgroup, in particular $\calh$ is everywhere non-amenable by Lemma~\ref{lemma:not-amen}.

We first assume that $S$ is a separating free splitting, equal to a Bass--Serre tree of a free product decomposition $F_N=A\ast B$, with both $A$ and $B$ noncyclic.
The group of twists of $S$ in $\Out(F_N)$ is isomorphic to the direct product $A\times B$ (see Section \ref{sec_twists}). 
We then let $N^-:=(A\times\{1\})\cap\IA$ and $N^+:=(\{1\}\times B)\cap\IA$. Since $H\subseteq\IA$, both $N^-$ and $N^+$ are normal subgroups of $H$. For every $\epsilon\in\{\pm\}$, we then let $\caln^\eps:=\rho^{-1}(N^\epsilon)$, a  normal subgroupoid of $\calh$ by  Lemma~\ref{lemma:normal-subgroup-subgroupoid}.
In addition, as $N^+$ and $N^{-}$ generate their direct product in $H$ and both contain a non-abelian free subgroup, Lemma~\ref{lemma:pseudo-product} ensures that $\caln^-$ and $\caln^+$ form a pseudo-product.
This proves that $(\calg,\calh)$ satisfies $\PI$ when $S$ is separating.

We now assume that $S$ is a non-separating free splitting, equal to a Bass--Serre tree of a decomposition $F_N=A\ast$ (where $A$ is a corank one free factor of $F_N$).  The group of twists of $S$ in $\Out(F_N)$ is isomorphic to a direct product $A^{-}\times A^+$ of two copies of $A$ (see Section \ref{sec_twists}).
We let $N^-:=(A^{-}\times\{1\})\cap\IA$ and $N^+:=(\{1\}\times A^+)\cap\IA$. Since $H\subseteq\IA$, both $N^-$ and $N^+$ are normal subgroups of $H$.
For every $\epsilon\in\{\pm\}$, we then let $\caln^\eps:=\rho^{-1}(N^\epsilon)$. 
As above,  $\caln^-$ and $\caln^+$ are normalized by $\calh$ and form a pseudo-product.
\end{proof}

Property $\PIII$ will allow us to rule out  one-edge acyclic free splittings not in $\NS\cup\FSii$ 
thanks to the following lemma.
 
\begin{lemma}\label{lemma:f2-f2-f2-aut}
  Let $S$ be a separating free splitting of $F_N$ corresponding to a decomposition of the form $F_N=F_k\ast F_{N-k}$ with $k,N-k\ge 3$, and let $H:=\Stab_{\IA}(S)$.

Then $H$ contains a direct product $N^-\times N^+$, with $N^-,N^+$ normal in $H$,
and such that for every $\epsilon \in \{\pm\}$, the group $N^\eps$ contains a subgroup isomorphic to $F_2\times F_2\times F_2$. 
\end{lemma}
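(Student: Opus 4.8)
The plan is to mimic the proof of Lemma~\ref{lemma:f2-f2-f2}, using the splitting $S$ dual to $F_N = F_k * F_{N-k}$ with both sides of rank at least $3$. First I would recall, as in Section~\ref{sec_twists} and Lemma~\ref{lem_produit}, that since the single edge $e$ of $S$ has trivial (hence finitely generated) stabilizer, the group of twists of $S$ decomposes as a direct product, and $\Stab^0(S)$ (the finite-index subgroup acting trivially on $S/F_N$, which here is all of $\Stab_{\IA}(S)$ since $\IA$ acts trivially on $S/F_N$ by Lemma~\ref{lemma:ia-free-splitting}) contains a subgroup mapping onto $\Out(F_k, \Inc^{(t)}) \times \Out(F_{N-k}, \Inc^{(t)})$, where $\Inc$ denotes the incident edge group (a single trivial group on each side). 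So it suffices to show that $\Out(F_k)$ with $k \ge 3$ — acting trivially on one marked point, which is no constraint since the edge group is trivial — contains a copy of $F_2 \times F_2 \times F_2$.

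The key step is therefore: for $k \ge 3$, the group $\Out(F_k)$ contains $F_2 \times F_2 \times F_2$. I would prove this by exhibiting three half-edge twists in distinct $F_N$-orbits in a suitable free splitting of $F_k$. Concretely, take a Grushko-type free splitting of $F_k$ whose quotient graph of groups has at least four half-edges based at vertices with non-trivial (in fact, non-abelian) vertex group; for $k \ge 3$ one may take, for instance, the decomposition $F_k = A_1 * A_2$ with $A_1$ of rank $\lfloor k/2 \rfloor \ge 1$ — but to get non-abelian vertex groups one should instead use a two-edge splitting. A cleaner route: choose a free splitting of $F_k$ whose underlying graph is a segment of two edges with both endpoint vertices carrying non-abelian free groups and the central vertex carrying a non-abelian free group too (possible since $k \ge 3$: e.g. $F_k = F_{a} * F_b * F_c$ with $a,b \ge 1$, arranged as a tripod or a segment, choosing ranks so that at least three of the four half-edges abut non-abelian vertex groups — actually for $k=3$ one needs to be careful and may instead use $F_3 = F_2 * \bbZ$ and pass to a rose, getting enough half-edges from the rank-$2$ vertex). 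The twists about three such half-edges pairwise commute (they are based at distinct pairs $(e,u)$, so the last sentence of Section~\ref{sec_twists} applies) and each generates a non-abelian free group since the relevant vertex group is non-abelian; thus they generate $F_2 \times F_2 \times F_2$ inside the group of twists of $F_k$, hence inside $\Out(F_k)$. The main obstacle is handling the boundary case $k=3$ (or $N-k=3$): the naive ``four half-edges at non-abelian vertices'' count needs one of the vertex groups to be non-cyclic, and for a rank-$3$ factor one must choose the auxiliary splitting with some care — this is exactly the kind of bookkeeping done in the proof of Lemma~\ref{lemma:f2-f2-f2}, and I expect to follow the same device of looking at half-edges incident to a single non-abelian vertex group.

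Having produced, for each $\epsilon \in \{\pm\}$, a copy of $F_2 \times F_2 \times F_2$ inside $\Out(F_k)$ (for $\epsilon = -$) and $\Out(F_{N-k})$ (for $\epsilon = +$), I pull these back through $\Stab_{\Out(F_N)}(S) \twoheadleftarrow \Out(F_k, \Inc^{(t)}) \times \Out(F_{N-k}, \Inc^{(t)})$ via Lemma~\ref{lem_produit} (or directly via the twist subgroup, which already sits inside $\Stab_{\IA}(S)$ as both factors act trivially on homology mod $3$ after passing to $\IA$ — note the twists we constructed have representatives in $\Aut(F_N)$ fixing a free basis complement, so they lie in $\IA$; alternatively intersect with $\IA$, which does not kill the $F_2 \times F_2 \times F_2$ since $\IA$ has finite index and $F_2 \times F_2 \times F_2$ has no finite-index subgroup that is not again of this form up to commensurability — but cleaner is to observe the specific twists are in $\IA$). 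Set $N^-$ to be the image of $\Out(F_k, \Inc^{(t)}) \cap (\text{twist image})$ and $N^+$ the image of the $\Out(F_{N-k})$-factor, both intersected with $\IA$; these generate their direct product in $H = \Stab_{\IA}(S)$. Finally, $N^+$ and $N^-$ are normal in $H$ by Corollary~\ref{cor:ia-nice} applied to the two vertices of $S$: indeed $N^\epsilon$ consists of automorphisms acting trivially on all vertex stabilizers of $S$ except one, and Corollary~\ref{cor:ia-nice} states precisely that such subgroups are normal in $\Stab_{\IA}(S)$ (here every edge stabilizer is trivial, so the hypothesis ``all edge stabilizers non-trivial'' of Corollary~\ref{cor:ia-nice} is not met, so instead I would invoke the analogue for free splittings, or simply check normality directly: conjugating a twist near $u$ by an element of $\Stab_{\IA}(S)$ gives a twist near the image vertex $u$, which stays on the same side since $\IA$ fixes the orbit of each vertex by Lemma~\ref{lemma:ia-free-splitting}). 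This gives the desired $N^- \times N^+ \le H$ with both factors normal and each containing $F_2 \times F_2 \times F_2$, completing the proof.
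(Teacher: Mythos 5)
Your reduction of the problem to finding $F_2\times F_2\times F_2$ inside each side of the splitting is the right idea, but the key step you isolate — ``for $k\ge 3$, the group $\Out(F_k)$ contains $F_2\times F_2\times F_2$'' — is exactly where the argument breaks, and your proposed construction cannot deliver it when $k=3$ (or $N-k=3$), a case the lemma explicitly allows. Your source of commuting free groups is the group of twists of an auxiliary free splitting of $F_k$, which requires three half-edges in distinct orbits based at non-abelian vertex groups. For a free splitting of $F_3$ the rank budget (sum of vertex group ranks plus the first Betti number of the quotient graph equals $3$) allows at most one non-abelian vertex group, of rank $2$, carrying at most two half-edges: the best one gets is $F_2\times F_2$ (from $F_3=F_2\ast$), and your fallback ``$F_3=F_2\ast\bbZ$ and pass to a rose'' does not create any new non-abelian half-edges. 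So the ``same device as Lemma~\ref{lemma:f2-f2-f2}'' is not available here: that lemma works because the relevant vertex groups have a \emph{non-sporadic} Grushko decomposition relative to non-trivial incident edge groups, which a rank-$3$ factor with a single trivial incident edge group cannot have. (There is also a secondary misstep: Lemma~\ref{lem_produit} is not applicable, since it requires non-abelian edge stabilizers and the edge stabilizer of $S$ is trivial, so the restriction map of Proposition~\ref{prop_suite_exacte} has the group of twists $F_k\times F_{N-k}$ as kernel and you cannot simply ``pull back'' a subgroup of $\Out(F_k)$; you partially acknowledge this, but it matters for how the pieces are assembled.)

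The paper's proof is much shorter and avoids the issue by working with $\Aut$ rather than $\Out$: the stabilizer $\Stab_{\Out(F_N)}(S)$ has a finite-index subgroup isomorphic to $\Aut(F_k)\times\Aut(F_{N-k})$ (the kernel $F_k\times F_{N-k}$ of the restriction map, i.e.\ the group of twists of $S$, combines with the outer actions on the two factors to give the two $\Aut$ groups), one sets $N^{\mp}=H\cap\Aut(F_k)$, $H\cap\Aut(F_{N-k})$ — these are normal in $H$ because each direct factor is normal in the product, and of finite index in $\Aut(F_k)$, $\Aut(F_{N-k})$ — and then one invokes the fact that $\Aut(F_k)$ contains $F_2\times F_2\times F_2$ for every $k\ge 3$ (cited to \cite[Theorem~6.2]{HW}). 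Note that for $k=3$ the third commuting free factor in $\Aut(F_3)$ is supplied by inner automorphisms of $F_3$ (e.g.\ with basis $a,b,c$: left-multipliers of $c$ by $\langle a,b\rangle$, right-multipliers of $c$ by $\langle a,b\rangle$, and conjugations by $\langle a,b\rangle$), which are trivial in $\Out(F_3)$ but non-trivial in $\Aut(F_3)$ — equivalently, inside $\Out(F_N)$ they are realized by the twists of $S$ itself, as in Section~\ref{sec_twists}. Your proposal could be repaired along these lines by adjoining, on a rank-$3$ side, the $S$-twists of that side as the missing third factor; as written, however, the claim that the needed $F_2\times F_2\times F_2$ lives in $\Out(F_3)$ via twists of a free splitting of $F_3$ is false, and this is a genuine gap.
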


\begin{proof}
The subgroup of $\Stab_{\Out(F_N)}(S)$ of index at most $2$ that preserves the conjugacy classes of both $F_k$ and $F_{N-k}$ (as opposed to permuting them) is isomorphic to $\Aut(F_k)\times\Aut(F_{N-k})$ (see for instance \S5 and the short exact sequence (4) in \cite{GL-os}). Since $H\subseteq\IA$, it does not swap these two free factors. 
The intersections $N^{\pm}$ of $H$ with each of the factors $\Aut(F_k)$ and $\Aut(F_{N-k})$ are therefore normal in $H$, and they have finite index in $\Aut(F_k)$ and $\Aut(F_{N-k})$. The conclusion thus follows from the fact that for every $k\ge 3$, the group $\Aut(F_k)$ contains a subgroup isomorphic to $F_2\times F_2\times F_2$, see e.g.\ \cite[Theorem~6.1]{HW}.
\end{proof}

We are now ready to complete the proof of the first part of Theorem~\ref{theo:non-characterization}.

\begin{proof}[Proof of Theorem~\ref{theo:non-characterization}, Part 1]
  Assume that $(\calg,\calh)$ satisfies Properties \PImax, \PII\ and \PIII.
  By Corollary \ref{cor:step3}, $\calh$ stably preserves a free splitting: there is a countable Borel partition  $Y^*=\Dunion_{i\in I} Y_i$ of a conull subset
  $Y^*\subseteq Y$ into subsets of positive measure
  such that  $\rho(\calh_{|Y_i})\subseteq \Gamma_{S_i}$ where $S_i$ is a free splitting and $\Gamma_{S_i}$ is its stabilizer in $\IA$.
  By Lemma~\ref{lemma:nonspecial-free-splitting}, up to increasing $\Gamma_{S_i}$, we may assume that $S_i$ is a one-edge acyclic free splitting.
  Since the subgroupoid  $\calh'=\rho\m(\Gamma_{S_i})_{|Y_i}$ contains $\calh_{|Y_i}$ and satisfies Property~$\PI$ by Lemma~\ref{lemma:nonsep-satisfies-1},  
  Assumption $\PImax$ on $\calh$ ensures that, up to refining our partition,  $\calh_{|Y_i}=\rho\m(\Gamma_{S_i})_{|Y_i}$.

  We assume that $S_i\notin \NS\cup\FSii$ and argue towards a contradiction.
  Then Lemma~\ref{lemma:f2-f2-f2-aut} applies to $S_i$
  and yields two normal subgroups  $N^+,N^-\unlhd \Gamma_{S_i}$  which generate their direct product, and such that
  for every $\epsilon \in \{\pm\}$, the group $N^\epsilon$ contains a direct product  $B_1^\epsilon\times B_2^\epsilon\times B_3^\epsilon$
  of three non-abelian free groups.
  The subgroupoids $\caln^+=\rho\m(N^+)_{|Y_i}$ and $\caln^-=\rho\m(N^-)_{|Y_i}$ are normal subgroupoids of $\calh_{|Y_i}$.
  For all $\eps\in\{\pm\}$ and all $j\in\{1,2,3\}$, we let  $\calb_j^\epsilon=\rho^{-1}(B_j^\epsilon)_{|Y_i}$.
   Then by Lemma \ref{lemma:pseudo-product}, $(\caln^+,\calb_1^-,\calb_2^-,\calb_3^-)$ form a pseudo-product, and so do
   $(\caln^-,\calb_1^+,\calb_2^+,\calb_3^+)$. This contradicts that $\calh$ satisfies Property $\PIII$. 
\end{proof}

\subsection{A necessary condition to stabilize a non-separating free splitting}\label{sec:proof-2}

The goal of the present section is to prove the second part of Theorem~\ref{theo:non-characterization}.

Since Properties~$\PI$, $\PImax$,~$\PII$ and~$\PIII$ are preserved under stabilization, it is enough to check that the $(\calg,\rho)$-stabilizer $\calh$ of any non-separating free splitting satisfies Properties~$\PI$, $\PImax$,~$\PII$ and~$\PIII$. 
We already checked that  $\calh$ satisfies Property~$\PI$ in Lemma \ref{lemma:nonsep-satisfies-1}.

In the rest of this subsection, we fix a measured groupoid $\calg$ over a base space $Y$, and let $\rho:\calg\to\IA$ be an action-type cocycle (with $N\ge 3$).
Let $S_0$ be a one-edge non-separating free splitting of $F_N$, let $H_0\subseteq\IA$ be the stabilizer of $S_0$ in $\IA$, and let $\calh_0:=\rho^{-1}(H_0)$, which is a measured subgroupoid of infinite type because $H_0$ is infinite and $\rho$ is action-type. 
We let $A_0$ be a free factor of corank 1 stabilizing a vertex of $S_0$
so that $S_0$ is dual to the HNN extension $F_N=A_0*$.
We denote by $v_0$ the vertex of $S_0$ fixed by  $A_0$,
choose an edge $e_0$ incident on $v_0$, and consider $t\in F_N$ such that $e_0=[v_0,tv_0]$,
so that $t$ is a stable letter of the HNN extension.
For each $w,w'\in A_0$, consider the automorphism $\tilde \tau_{w,w'}\in \Aut(F_N)$
 that acts as the identity on $A_0$
and sends $t$ to $wtw'$: this is the lift to $\Aut(F_N)$ of an element of the group of twists of $S_0$, in particular it stabilizes $S_0$. The associated $\tilde \tau_{w,w'}$-equivariant isometry $I_{\tilde \tau_{w,w'}}$ of $S_0$
maps $e_0$ to $we_0$ and $t\m e_0$ to $w'^{-1}t\m e_0$.

Finally, we let $\rho_{A_0}:\calh_0\to\Out(A_0)$ be the cocycle obtained by postcomposing $\rho$ with the natural map $\Stab(S_0)\to\Out(A_0)$. 

\begin{rk}\label{rk:rho_A0}
We mention that $\rho_{A_0}$ is not action-type (it has a non-trivial kernel, coming from the $\rho$-preimage of the group of twists of $S_0$). Nevertheless, as a consequence of the fact that $\rho$ is action-type, we see that $(\calh_0,\rho_{A_0})$ is nowhere trivial, e.g.\ by Remark~\ref{rk:action-type}.
\end{rk}

\subsubsection{Proof of Property~$(P_1^{\max})$}

\begin{lemma} \label{lem_conj_invariant}
Consider an element $a\in F_N$ whose conjugacy class is $H_0$-invariant. Then $a$  is conjugate into $A_0$.
\end{lemma}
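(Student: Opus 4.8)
The statement asserts that if the conjugacy class $[a]$ is fixed by the whole stabilizer $H_0 = \Stab_\IA(S_0)$ of the one-edge non-separating free splitting $S_0$ dual to $F_N = A_0 \ast$, then $a$ is conjugate into $A_0$. First I would observe that it suffices to prove that $a$ is elliptic in $S_0$: once $a$ fixes a point of $S_0$, that point lies in the $F_N$-orbit of the unique vertex $v_0$ fixed by $A_0$ (there is only one orbit of vertices in a one-edge non-separating free splitting), so $a$ is conjugate into $\Stab_{F_N}(v_0) = A_0$. So the real content is to show that $a$ cannot act hyperbolically on $S_0$.

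\textbf{Key steps.} Assume for contradiction that $a$ acts hyperbolically on $S_0$, with translation length $\ell \geq 1$. The plan is to produce an element $\tilde\tau = \tilde\tau_{w,w'}$ of (the lift to $\Aut(F_N)$ of) the group of twists of $S_0$ such that $\tilde\tau(a)$ is \emph{not} conjugate to $a$, contradicting the assumed $H_0$-invariance of $[a]$ together with Theorem~\ref{theo:ia-element} (or rather, directly contradicting $H_0$-invariance, since the twists lie in $\Stab(S_0)$ and one can pass to $\IA$ by composing with a fixed power if necessary — but in fact the group of twists already lies in $\IA$, being in the kernel of the action on homology). Concretely, I would pick a reduced form of $a$ as a cyclic word in the HNN structure $F_N = \langle A_0, t \mid -\rangle$: since $a$ is hyperbolic in $S_0$ its cyclically reduced form involves the stable letter $t$ (or $t^{-1}$), say $a = c$ is cyclically reduced and contains at least one occurrence of $t^{\pm 1}$. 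The twist $\tilde\tau_{w,w'}$ replaces each occurrence of $t$ by $wtw'$ and each $t^{-1}$ by $w'^{-1}t^{-1}w^{-1}$. One then chooses $w, w' \in A_0$ generic enough — for instance $w$ of large word length and $w'$ chosen so that the resulting cyclic word $\tilde\tau_{w,w'}(a)$ is still cyclically reduced and strictly longer than any conjugate of $a$ in its cyclically reduced form; the translation length of $\tilde\tau_{w,w'}(a)$ on $S_0$ is still $\ell$, but the $A_0$-syllables become arbitrarily long, so the conjugacy class genuinely changes. The cleanest way to detect this is: the translation length of $a$ in the \emph{word metric} (or the conjugacy length function of $F_N$) strictly increases under a suitably chosen twist, since twisting inflates the $A_0$-parts of the cyclic word while the reduced-ness in the HNN normal form is preserved for generic $w,w'$.

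\textbf{Main obstacle.} The delicate point is checking that one can choose $w, w' \in A_0$ so that $\tilde\tau_{w,w'}(a)$, after cyclic reduction in the HNN structure, is genuinely longer (hence not conjugate to $a$), rather than having cancellation conspire to give back a conjugate of $a$. This requires a small amount of combinatorial group theory: writing $a$ in cyclically reduced HNN normal form $a = b_1 t^{\epsilon_1} b_2 t^{\epsilon_2} \cdots b_k t^{\epsilon_k}$ with $b_i \in A_0$ and no pinch, one applies the twist and re-reduces, and one needs that for $w$ (say) not lying in the finitely many "bad" cosets that could cause cancellation against $b_i$ or $b_{i+1}$, no pinches are created; since $A_0$ is infinite (as $N \geq 3$, $A_0$ has rank $N-1 \geq 2$) such $w$ exists in abundance, and then the $A_0$-syllables strictly grow. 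An alternative, perhaps slicker route avoiding explicit normal forms: use that $[a]$ being $H_0$-invariant means, by Theorem~\ref{theo:ia-element} applied to suitable subgroups, that $a$ is fixed by a huge group of twists, and then invoke a fixed-point / Howson-type argument — but I expect the direct normal-form computation to be the most transparent and is likely what the authors do. I would organize the write-up so that the contradiction is drawn purely from the action of a single well-chosen twist on the conjugacy class of $a$.
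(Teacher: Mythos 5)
Your proof is correct in substance and shares the paper's core idea (a twist of $S_0$ lying, up to a power, in $H_0$ must move the conjugacy class of any element that is hyperbolic in $S_0$), but the way you verify that the class moves is genuinely different. The paper first folds $S_0$ into a $\Zmax$-splitting $S_w$ with edge group $\grp{w}$, checks that $a$ stays hyperbolic in $S_w$ for all but finitely many $w$, and then invokes Cohen--Lustig \cite{CL}: the rescaled trees $\frac{1}{n}\tau^n\cdot R$ converge to (a tree homothetic to) $S_w$, so $\|\tau^n(a)\|_R\to\infty$ and the class of $a$ cannot be fixed by the power of $\tau$ lying in $H_0$. You instead stay inside the free product $F_N=A_0\ast\grp{t}$ and argue combinatorially on cyclically reduced normal forms that a generic twist $\tilde\tau_{w,w'}$ changes the cyclic word. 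Your route is more elementary (no train-track/limit-tree input), at the cost of the genericity bookkeeping; the paper's route avoids all normal-form case analysis by outsourcing the length growth to \cite{CL}, at the cost of the intermediate construction of $S_w$.

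Two small points to fix in your write-up. First, your parenthetical claim that the group of twists of $S_0$ lies in $\IA$ is false: the twist $t\mapsto wt$ acts on $H_1(F_N;\bbZ/3\bbZ)$ by adding the class of $w$ to that of $t$, so it is generally not in $\IA$; your fallback (replace $\tau_{w,w'}$ by $\tau_{w,w'}^3=\tau_{w^3,w'^3}$, which is again a twist) is the correct fix and is also what the paper implicitly uses. Second, your ``main obstacle'' is slightly misidentified: since the edge group of $S_0$ is trivial, applying the twist to a reduced HNN form can never create Britton pinches (alternating-sign syllables are merely conjugated, hence stay non-trivial), so no cancellation can destroy reducedness; the only thing to check is that the new cyclic word is not a cyclic permutation of the old one (beware the degenerate alternating pattern, e.g.\ $a=b_1tb_2t^{-1}$ with $w'=1$, where syllables are only conjugated and you must choose $w$ not commuting with them). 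With these adjustments your argument goes through, using that $A_0$ is non-abelian since $N\ge 3$.
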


\begin{rk}\label{rk_N3}
  This uses the fact that $N\geq 3$. When $N>3$ (so that $A_0$ has rank at least 3), $a$ has in fact to be trivial.
  When $N=3$ (so that $A_0$ has rank 2),  one may have $a=[x,y]^k$ for some free basis $\{x,y\}$ of $A_0$ and some $k\in\mathbb{Z}$.
\end{rk}

\begin{proof}
Fix a free basis $\{s_1,\dots, s_{N-1},t\}$ of $F_N$ such that $\{s_1,\dots,s_{N-1}\}$ is a basis of $A_0$. We denote by $|\cdot|$ the word length in this basis.
Since $a$ is not conjugate into $A_0$, up to conjugacy, 
write $$a=a_1 t^{\eps_1} a_2 t^{\eps_2}\dots a_r t^{\eps_r}$$
as a cyclically reduced word in the graph of groups underlying the HNN extension,
with $r\geq 1$, $a_i\in A_0$ for all $i\leq r$, $\eps_i=\pm 1$ 
and  $a_i\neq 1$ when $\eps_i=-\eps_{i-1}$ (indices being understood modulo $r$).

Given $w\in A_0$, consider the automorphism $\tau_w$ fixing $A_0$ and sending $t$ to $tw$.
Then $$\tau_{w^k}(a)=a'_1(k) t^{\eps_1} a'_2(k) t^{\eps_2}\dots a'_r(k) t^{\eps_r}$$
where $a'_i(k)$ is either $a_i$, $w^ka_i$, $a_iw^{-k}$, $w^ka_i w^{-k}$
according to the signs of $\eps_i$ and $\eps_{i-1}$.
Note that the cases $a'_i(k)=a_i$ and $a'_i(k)=w^k a_i w^{-k}$ occur when 
$\eps_i=-\eps_{i-1}$, so $a_i\neq 1$ in these cases.
Also note that the case where $\eps_{i-1}=-1$ and $\eps_{i}=1$ (corresponding to $a'_i(k)=a_i$) cannot hold for all $i\leq r$.

We choose $w\in A_0\setminus\{ 1\}$ so that it does not commute with any non-trivial $a_i$ 
(which is possible because $A_0$ has rank at least 2).
This ensures that, writing $a'_i(k)$ as a reduced word on $\{s_1,\dots,s_{N-1}\}$, 
one obtains a cyclically reduced expression of $\tau_{w^k}(a)$. 
Moreover, as $k\ra\infty$, $|w^ka_i|$, $|a_iw^{-k}|$, and $|w^ka_i w^{-k}|$ all go to infinity.
It follows that $|\tau_{w^k}(a)|$ goes to infinity.
This prevents $\tau_w$ from preserving the conjugacy class of $a$.
\end{proof}

\begin{proof}
Let $a\in F_N$ be an element which is not conjugate into $A_0$; we are going to prove that the conjugacy class of $a$ is not $H_0$-invariant.

Given a non-trivial element $w\in A_0$ which is not a proper power, consider
the $\Zmax$-splitting $S_w$ of $F_N$ dual to the HNN extension
with vertex group $\grp{A_0,twt\m}$ and with edge group $\grp{w}$
with the two obvious inclusions.
The splitting $S_w$ may be obtained from $S_0$ as follows:  
recall that $v_0\in S_0$ is the vertex fixed by $A_0$, and $e_0=[v_0,tv_0]$; then $S_w$ is obtained by folding $e_0$ with $we_0$.

As $a$ is not conjugate into $A_0$, it is hyperbolic in $S_0$. For all but finitely many choices of $w$, the axis of $a$ in $S_0$ isometrically embeds in $S_w$ under the folding map. It follows that we may choose $w$ so that $a$ is hyperbolic in $S_w$.
Let $\tau$ be the twist associated to the splitting $S_w$: this is the automorphism restricting to the identity on $A_0$ and sending $t$ to $tw$, so in particular $\tau$ has a power in $H_0$.

Let $R$ be a Cayley graph of $F_N$ with respect to some basis, equipped with the simplicial metric where every edge is assigned length $1$.
By \cite{CL}, the rescaled trees $\frac{1}{n}(\tau^n.R)$ converge to a tree homothetic to $S_w$ as $n\ra \infty$.
It follows that the translation length $||\tau^n(a)||_R$ tends to infinity, so the conjugacy class of $a$ is not $H_0$-invariant.
\end{proof}

\begin{lemma}\label{lemma:unique-invariant-factor}
Let $B$ be any proper free factor of $F_N$ whose conjugacy class is $H_0$-invariant.

Then $B$ is conjugate to $A_0$. 
\end{lemma}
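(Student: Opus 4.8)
The plan is to show that any proper $H_0$-invariant free factor $B$ must equal a conjugate of $A_0$, by analysing how $B$ sits with respect to the splitting $S_0$ and using the twist automorphisms $\tilde\tau_{w,w'}$ together with Lemma~\ref{lem_conj_invariant}. First I would fix a representative $B$ and consider the minimal $B$-invariant subtree $T_B$ of $S_0$ (or note that $B$ is elliptic). Since the conjugacy class of $B$ is $H_0$-invariant and $H_0\subseteq\IA$, for every $w,w'\in A_0$ the twist $\tilde\tau_{w,w'}$ has a power in $H_0$, so after conjugating $B$ by a suitable element of $F_N$ we get $\tilde\tau_{w,w'}(B)=B$ for that power; more usefully, applying Lemma~\ref{lemma:ia-nice} to the vertex $v_0$ (which is a Grushko vertex of $S_0$, indeed a free factor vertex with an incident trivial-stabilizer edge) shows that any $\tilde\alpha$-equivariant isometry of $S_0$ with $\tilde\alpha\in\tilde H_0$ preserves the $F_N$-orbit of $v_0$.

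The key step is the elliptic/hyperbolic dichotomy. If $B$ is elliptic in $S_0$, then $B$ fixes a vertex, which (up to conjugacy) we may take to be $v_0$, so $B\subseteq A_0$; and then I would rule out $B\subsetneq A_0$ as follows. If $B$ were a proper free factor of $A_0$ whose conjugacy class in $F_N$ is $H_0$-invariant, then in particular it would be invariant under the subgroup of $H_0$ coming from $\Aut(A_0)\cap\IA$ acting via the exact sequence of Proposition~\ref{prop_suite_exacte}, but that subgroup has finite index in $\Aut(A_0)$ modulo twists, hence acts on conjugacy classes of proper free factors of $A_0$ with no global fixed point when $\mathrm{rk}(A_0)\geq 2$ — concretely, by Theorem~\ref{theo:ia} it suffices to produce one automorphism of $A_0$ in $\IA$ (extended to $F_N$ by fixing $t$, which lands in $H_0$) not fixing $[B]$, and this exists since $A_0$ has a fully irreducible automorphism in its $\IA$ subgroup relative to nothing. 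Combined with the conjugacy-class analysis this forces $B$ to equal $A_0$ up to conjugacy. If $B$ is hyperbolic in $S_0$, i.e.\ not elliptic, then $T_B$ is a nondegenerate subtree with trivial edge stabilizers, and the translates $gT_B$ under $F_N\setminus B$ are transverse, yielding a free splitting of $F_N$; I would then argue that the folding/twist automorphisms $\tilde\tau_{w,w'}$ with generic $w$ push a chosen hyperbolic element of $B$ to have unbounded translation length (exactly as in the proof of Lemma~\ref{lem_conj_invariant}), contradicting $H_0$-invariance of $[B]$ — more precisely, $B$ hyperbolic in $S_0$ means $B$ contains an element hyperbolic in $S_0$, and that element's conjugacy class is then $H_0$-invariant, contradicting Lemma~\ref{lem_conj_invariant} unless the element is conjugate into $A_0$; iterating over a generating set of $B$ shows $B$ itself is conjugate into $A_0$, reducing to the elliptic case.

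I expect the main obstacle to be the rank-$3$ subtlety flagged in Remark~\ref{rk_N3}: when $\mathrm{rk}(A_0)=2$, Lemma~\ref{lem_conj_invariant} does not force invariant elements to be trivial (commutators $[x,y]^k$ survive), so the step ruling out $B\subsetneq A_0$ needs care. Here I would use that $B$ is a \emph{free factor} (not merely a subgroup) of $F_N$ with $[B]$ being $H_0$-invariant: a proper free factor of $A_0\cong F_2$ is cyclic, generated by a primitive element of $A_0$, and such an element is never a power of a commutator, so the invariance of its conjugacy class in $F_N$ under all the relevant twists and $\Aut(A_0)\cap\IA$ automorphisms is immediately contradicted by Lemma~\ref{lem_conj_invariant} (or directly by exhibiting an $\IA$-automorphism of $F_2$ not fixing the given primitive conjugacy class, extended to $F_N$). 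Thus the free-factor hypothesis on $B$, rather than just a subgroup hypothesis, is exactly what makes the argument go through uniformly for all $N\geq 3$, and I would make sure to invoke it at this point.
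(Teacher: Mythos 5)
Your elliptic case is essentially the paper's opening observation (no proper free factor of $A_0$ can be $H_0$-invariant, since $H_0$ surjects onto $\mathrm{IA}(A_0,\mathbb{Z}/3\mathbb{Z})$), and that part is fine. The genuine gap is in your non-elliptic case, at the step ``that element's conjugacy class is then $H_0$-invariant.'' The hypothesis only says the conjugacy class of the \emph{subgroup} $B$ is $H_0$-invariant: for $\alpha\in H_0$ one gets a representative $\tilde\alpha$ with $\tilde\alpha(B)=B^{g}$, but $\tilde\alpha$ may restrict to a nontrivial (outer) automorphism of $B$, so individual elements of $B$ need not have invariant conjugacy classes and Lemma~\ref{lem_conj_invariant} does not apply to them. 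For the same reason your twist/translation-length variant fails: $\tilde\tau^{\,n}_{w,w'}(b)$ is only conjugate to \emph{some} element of $B$, possibly a longer and longer one, so unbounded translation length of $\tilde\tau^{\,n}_{w,w'}(b)$ does not contradict invariance of $[B]$. (A secondary issue: even granting that each generator of $B$ is conjugate into $A_0$, the conjugators may differ, so ``iterating over a generating set'' does not by itself conjugate $B$ into $A_0$; one needs every element elliptic plus finite generation and Serre's lemma.)

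What is missing is precisely the intermediate work the paper does before invoking Lemma~\ref{lem_conj_invariant}. Assuming $B$ is not conjugate into $A_0$, one first shows $B\cap A_0^{g}=\{1\}$ for every $g$: by Lemma~\ref{lemma:intersection-factors} the conjugacy class of $A_0\cap B^{g}$ is $H_0$-invariant, and since it is a free factor contained in the corank-one factor $A_0$ it is either trivial or all of $A_0$, the latter forcing $B^{g}=A_0$. Consequently $B$ acts \emph{freely} on its minimal subtree $S_B\subseteq S_0$, so $S_B$ is a point of the Outer space of $B$. Since every $\alpha\in H_0$ preserves $S_0$, it preserves translation lengths, whence $\|\tilde\alpha(b)\|_{S_B}=\|b\|_{S_B}$ for all $b\in B$; by Culler--Morgan the restriction of $H_0$ fixes the point $S_B$, whose stabilizer in $\mathrm{IA}(B,\mathbb{Z}/3\mathbb{Z})$ is trivial, so the image of $H_0$ in $\Out(B)$ is trivial. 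Only now is the conjugacy class of each element of $B$ genuinely $H_0$-invariant, so Lemma~\ref{lem_conj_invariant} makes every element of $B$ elliptic in $S_0$, and Serre's lemma conjugates $B$ into $A_0$, the desired contradiction. Your outline bypasses this ``$H_0$ acts trivially on $B$'' step, which is the heart of the proof.
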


\begin{proof}
Observe that since $H_0$ surjects onto $\mathrm{IA}(A_0,\mathbb{Z}/3\mathbb{Z})$, there is no $H_0$-invariant proper free factor of $F_N$ strictly contained in $A_0$. We can therefore assume that $B$ is not conjugate into $A_0$, and aim for a contradiction.

 We first claim that $B$ intersects every conjugate of $A_0$ trivially, equivalently that $A_0$ intersects every conjugate of $B$ trivially. Indeed, for every $g\in F_N$,  the subgroup $A_0\cap B^g$ is a free factor contained in $A_0$ whose conjugacy class is $H_0$-invariant (Lemma~\ref{lemma:intersection-factors}). 
 Thus, $A_0\cap B^g$ is either trivial or equal to $A_0$, but the latter case implies $B^g=A_0$ since $A_0$ has corank $1$.

We next claim that $H_0$ acts trivially on $B$. 
Let $S_B\subseteq S_0$ be the minimal $B$-invariant subtree.
As $B$ intersects every conjugate of $A_0$ trivially, it acts freely on $S_B$, so $S_B$ defines a point in the Outer space of $B$, and its stabilizer in $\mathrm{IA}(B,\bbZ/3\bbZ)$
is trivial (because point stabilizers in Outer space are finite, and $\mathrm{IA}(B,\bbZ/3\bbZ)$ is torsion free).
For all $\alpha\in H_0$ and all $b\in B$,
one has $||\alpha(b)||_{S_B}=||\alpha(b)||_{S_0}=||b||_{S_0}=||b||_{S_B}$,
so $S_B$ is invariant by the image of $H_0$ in $\mathrm{IA}(B,\mathbb{Z}/3\mathbb{Z})$
(indeed every tree in the Outer space of $B$ is determined by its length function by \cite[Theorem~3.7]{CM}). 
The image of $H_0$ in $\Out(B)$ is therefore trivial, which proves our claim.

Together with Lemma~\ref{lem_conj_invariant}, the claim implies that every element of $B$ is conjugate into $A_0$. 
It follows that $B$ is conjugate into $A_0$. Indeed,
we know that every non-trivial element of $B$ fixes a point in $S_0$ (necessarily unique). 
If two distinct non-trivial elements of $B$ had distinct fixed points, their product would be hyperbolic in $S_0$, a contradiction. Therefore $B$ fixes a point in $S_0$, so $B$ is conjugate into $A_0$. This contradicts our initial assumption, so the lemma follows.
\end{proof}

 Recall that an outer automorphism $\alpha\in\Out(A_0)$ is \emph{fully irreducible} if no non-trivial power of $\alpha$ fixes the conjugacy class of a proper free factor of $A_0$.

\begin{lemma}\label{lem:invariant_A0}
    For every fully irreducible element $\alpha\in \Out(A_0)$, the following two assertions hold:
\begin{enumerate}
\item no non-trivial power of $\alpha$ preserves the conjugacy class a non-cyclic finitely generated subgroup
of $A_0$ of infinite index;
\item no non-trivial power of $\alpha$ preserves a non-trivial splitting of $A_0$
  with finitely generated edge or vertex stabilizers.
  \end{enumerate}
\end{lemma}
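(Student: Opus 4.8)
The statement is a standard fact about fully irreducible automorphisms of free groups, and the plan is to reduce both assertions to the well-known description of fixed points of a fully irreducible automorphism: passing to a power, $\alpha^k$ fixes no conjugacy class of proper free factor of $A_0$, and the stable and unstable trees $T_\pm$ of $\alpha$ (obtained from train track representatives, or from an invariant axis in the Outer space of $A_0$) are arational, with dense $A_0$-orbits and trivial arc stabilizers. First I would record that a non-cyclic finitely generated subgroup $B\subseteq A_0$ of infinite index is a \emph{proper} subgroup that is not elliptic in every tree in the closure of Outer space; more precisely, since $B$ has infinite index, $B$ does not act cocompactly on $A_0$ and so $B$ is not a free factor of finite index, and being non-cyclic it is not trivial. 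The key point is that if $[B]$ were $\alpha^k$-invariant then the $\alpha^k$-invariance of the conjugacy class $[B]$ together with arationality of $T_+$ forces a contradiction.

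For Assertion~(1): suppose some non-trivial power $\alpha^k$ fixes $[B]$ with $B$ non-cyclic, finitely generated, of infinite index in $A_0$. Consider the stable tree $T_+$ of $\alpha$, which is arational and has dense orbits. Since $B$ is non-cyclic, it is not elliptic in $T_+$ (point stabilizers of $T_+$ are trivial because $\alpha$ is atoroidal on a free group, or more simply because $T_+$ is free as an arational tree over the trivial free factor system here), so $B$ has a minimal invariant subtree $T_+^B$. Because $[B]$ is $\alpha^k$-invariant, $T_+^B$ is $\tilde\alpha^k$-invariant for a suitable representative, hence $\tilde\alpha^k$ acts on $T_+^B$ by a homothety with ratio $\lambda\ne 1$, so $T_+^B$ has dense $B$-orbits and no simplicial part. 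By Reynolds' argument (as in the proof of Lemma~\ref{lemma:minimal-debording-factor}, using \cite[Corollary~11.9]{GH1}), the $A_0$-translates of $T_+^B$ form a transverse covering of the mixing tree $T_+$, and the stabilizer of $T_+^B$ is exactly $B$. But a tree admitting a transverse covering by proper subtrees with the covering index at least two (which happens precisely because $B$ has infinite index, so $T_+^B\ne T_+$) cannot be arational: indeed the skeleton of this transverse covering is a non-trivial simplicial $A_0$-tree with non-trivial vertex stabilizer $B$, contradicting the fact that $T_+$ has trivial arc stabilizers and is not dual to such a decomposition — equivalently, by \cite[Proposition~8.3]{Rey} or the discussion of arationality, the existence of a proper invariant subtree with non-trivial stabilizer forces $B$ (or a group it contains) to be a proper free factor acted on with dense orbits, contradicting arationality. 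Actually the cleanest contradiction: the transverse covering makes $A_0$ split as a graph of groups with one vertex group $B$ and trivial edge groups; since $B$ is finitely generated of infinite index, this is a non-trivial free splitting of $A_0$ in which $B$ is elliptic; but $T_+$ being arational means no proper free factor is elliptic, and $B$ is contained in a vertex group which is a proper free factor of $A_0$, contradiction.

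For Assertion~(2): suppose $\alpha^k$ (for some $k\ne 0$) preserves a non-trivial splitting $S$ of $A_0$ with finitely generated edge and vertex stabilizers. Passing to a further power, we may assume $\alpha^k$ acts on $S$ inducing the identity on the finite quotient graph $S/A_0$ and fixing the conjugacy class of each vertex group $G_v$. If some $G_v$ is a proper free factor of $A_0$, then $[G_v]$ is $\alpha^k$-invariant, contradicting full irreducibility directly. If no vertex group is a proper free factor, then since all vertex groups are finitely generated, each $G_v$ is either cyclic or of finite index (hence equal to $A_0$, impossible as $S$ is non-trivial), so all vertex groups are cyclic; but then $S$ is a non-trivial cyclic-vertex splitting and collapsing produces a free splitting, whose (non-trivial) vertex groups are proper free factors fixed by $\alpha^k$ — again contradicting full irreducibility — unless $S$ is itself already free with all vertex groups trivial, i.e.\ $A_0$ is free of the corresponding rank and $S$ is a free splitting, in which case a one-edge collapse gives a proper free factor vertex group fixed by $\alpha^k$, the final contradiction. (The edge-stabilizer finiteness hypothesis is used to invoke that $\alpha^k$-invariance of $S$ yields a genuinely algebraic splitting and that the number of orbits is finite, so passing to a power is legitimate.)

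\textbf{Main obstacle.} The delicate step is Assertion~(1): turning the $\alpha^k$-invariance of the conjugacy class $[B]$ into an actual invariant subtree and then running Reynolds' transverse covering argument to contradict arationality, while being careful that $B$ need not be a free factor — one must argue that the skeleton of the transverse covering exhibits $B$ (or an overgroup) inside a \emph{proper free factor} vertex group, which is what arationality forbids. The infinite-index hypothesis is exactly what guarantees the transverse covering is non-trivial (the covering index is $\ge 2$); without it $B$ could be all of $A_0$. I expect to spend most of the care making sure the non-cyclic, infinite-index, finitely-generated hypotheses are each used in the right place and that the appeal to arationality (no proper free factor is elliptic, all arc stabilizers trivial) is correctly invoked via \cite{Rey} and the mixing property \cite[Lemma~4.9]{Hor}.
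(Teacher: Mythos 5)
Your proof has genuine gaps in both assertions, and in each case the missing point is exactly the step the paper's argument is built around. For Assertion~(1), the paper does not run a transverse-covering argument at all: it observes that the attracting tree $T$ is indecomposable \cite[Theorem~2.1]{CH_botany}, invokes Reynolds' theorem \cite[Theorem~4.5]{Reynolds_indecomposable} that a finitely generated subgroup of \emph{infinite index} acts discretely on an indecomposable tree, and then the homothety of ratio $\lambda^k\neq 1$ forces $B$ to be elliptic, whence cyclic by arationality \cite[Theorem~1.1]{Rey}. Your route breaks at two places. First, you claim point stabilizers of $T_+$ are trivial ``because $\alpha$ is atoroidal'' or because $T_+$ is free: this is false in general, since a fully irreducible element of $\Out(A_0)$ may be geometric, in which case $T_+$ is an arational tree of surface type with \emph{cyclic} point stabilizers (this is precisely why the paper only concludes ``$B$ is cyclic'' from ellipticity). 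Consequently the skeleton of your transverse covering need not have trivial edge groups, and $B$ need not be a free factor. Second, your claimed contradiction --- that an arational tree ``cannot admit a transverse covering by proper subtrees'' and that the skeleton contradicts arationality --- is simply wrong: the paper itself constructs non-trivial transverse coverings of arational trees by translates of minimal subtrees (Lemmas~\ref{lemma:minimal-debording-factor} and~\ref{lemma:maximal-fix}), and arationality forbids proper free factors from being elliptic \emph{in $T_+$}, not in the skeleton. In the non-geometric case your argument can be repaired (trivial point stabilizers give a free splitting in which $B$ is a vertex group, so $B$ is an $\alpha^k$-invariant proper free factor, contradicting full irreducibility), but the surface case is not handled; and once you know $T_+^B$ has dense $B$-orbits you could in any case conclude immediately from Reynolds' discreteness theorem, which is essentially the paper's proof.

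For Assertion~(2), the inference ``no vertex group is a proper free factor, and all are finitely generated, hence each is cyclic or of finite index'' is not justified as written --- it requires Assertion~(1) (invariant $+$ f.g.\ $\Rightarrow$ cyclic or finite index), which is presumably what you intend. The real gap is the phrase ``collapsing produces a free splitting'': if every edge stabilizer of $S$ is non-trivial (cyclic), no collapse of $S$ yields a free splitting, and nothing in your argument rules this case out. The paper handles it by a commutative transitivity argument: once all vertex groups are cyclic, any two edge stabilizers meeting at a vertex commute, hence all edge stabilizers of the tree commute and lie in a single maximal cyclic subgroup, which is impossible for a non-trivial minimal splitting of a free group; therefore some edge stabilizer is trivial, and collapsing all edges with non-trivial stabilizer gives an $\alpha^k$-invariant non-trivial free splitting, contradicting full irreducibility. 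Without an argument producing a trivially-stabilized edge, your reduction to a free splitting does not go through.
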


\begin{proof}
  For the first assertion, 
  consider the attracting $\bbR$-tree $T$ of $\alpha$
  and denote by  $L_T(g)$ the translation length in $T$ of an element $g\in F_N$.
 Recall that there exists $\lambda >1$ such that $L_T(\alpha(g))=\lambda L_T(g)$.
  By \cite[Theorem~5.4]{BFH0},
any finitely generated subgroup $B\subseteq A_0$ of infinite index acts discretely on $T$. 
In particular, there exists $\eps>0$ such that $L_T(B)\cap (0,\eps)=\es$.
If $B$ is invariant under some power of $\alpha$, this forces $B$ to be elliptic in $T$.
It is well known that this implies that $B$ is cyclic
(this can be proved using train-track theory as in \cite{BH,BFH0}, 
or one can apply \cite[Theorem~1.1]{Rey} since $T$ is arational by \cite[Corollary 1.4]{Rey}).

We now prove that $\alpha$ satisfies the second conclusion of the lemma. Recall that a splitting with finitely generated edge
stabilizers also has finitely generated vertex stabilizers.
So let $S$ be a splitting of $A_0$ with finitely generated vertex stabilizers which is invariant
under a power of $\alpha$. Then the vertex and edge stabilizers have to be cyclic.
We claim that some edge stabilizer has to be trivial. Otherwise,
any two edges incident on the same vertex have commuting stabilizers, so by
commutative transitivity, all edge stabilizers of $S$ commute, a contradiction.
We may thus assume that $S$ is a free splitting. Since there is no free factor
invariant by a power of $\alpha$, this concludes the proof.
\end{proof}

\begin{lemma}\label{lemma:unique-invariant-nice-splitting}
Let $S'$ be a nice splitting such that every element of $H_0$ has a power that preserves $S'$.

Then $S'=S_0$.
\end{lemma}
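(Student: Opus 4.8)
The plan is as follows. Keep the notation from the section: $A_0$ is a corank-one free factor with $S_0$ dual to $F_N=A_0\ast$, with stable letter $t$, so $H_0=\Stab_{\IA}(S_0)$ contains both a finite-index subgroup of the twist group $A_0\times A_0$ (the outer automorphisms fixing $A_0$ and sending $t\mapsto wtw'$) and, for every $\bar\alpha\in\mathrm{IA}(A_0,\bbZ/3\bbZ)\le\Out(A_0)$, the element $\alpha$ extending $\bar\alpha$ by $t\mapsto t$ (which lies in $\IA$ and fixes $S_0$). By hypothesis every element of $H_0$ has a power in $\Stab(S')$. The first step is to show that $A_0$ is elliptic in $S'$: choose $\bar\alpha\in\mathrm{IA}(A_0,\bbZ/3\bbZ)$ fully irreducible (this subgroup has finite index in $\Out(A_0)$, and $A_0$ has rank $N-1\ge 2$), let $\Phi\in\Aut(F_N)$ represent the corresponding $\alpha\in H_0$ with $\Phi(A_0)=A_0$; then $\Phi^k$ preserves $S'$ for some $k\ge 1$, and the associated equivariant isometry preserves the minimal $A_0$-invariant subtree $Y_{A_0}\subseteq S'$. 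Since $S'$ is nice its edge stabilizers are finitely generated (trivial, cyclic, or f.g.\ non-abelian), so by Howson the splitting $Y_{A_0}$ of $A_0$ has finitely generated edge and vertex stabilizers; being $\bar\alpha^k$-invariant, Lemma~\ref{lem:invariant_A0}(2) forces $Y_{A_0}$ to be trivial, i.e.\ $A_0$ is elliptic in $S'$.

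Next I would treat the three types of nice splitting separately. If $S'$ is a non-trivial free splitting: by Lemma~\ref{lemma:ia-free-splitting} applied with $H=\langle\alpha\rangle$ (using that $\langle\alpha^k\rangle$ has finite index in $\langle\alpha\rangle$), each $\alpha\in H_0$ actually preserves $S'$ and acts trivially on $S'/F_N$, so the free factor system $\calf'$ of $S'$ is $H_0$-invariant; Lemma~\ref{lemma:unique-invariant-factor} together with a rank count (two disjoint copies of $[A_0]$ cannot form a free factor system when $N\ge 3$) gives $\calf'=\{[A_0]\}$, and $\calf'$ is non-empty since otherwise $\Stab(S')$ would be finite while $H_0$ is infinite and torsion-free. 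Thus $S'$ is an $H_0$-invariant free splitting with free factor system $\{[A_0]\}$; one then checks directly that a proper refinement of $S_0$ with this free factor system cannot be invariant under the twists $\tilde\tau_{w,1}$ (equivariance of the twist-isometry forces $w=1$), whence $S'=S_0$. If $S'$ is bi-nonsporadic: by Lemma~\ref{lemma:binonsporadic2F} it determines, canonically and hence $H_0$-invariantly, a filling finite set of non-empty free factor systems; as above every free factor occurring in it is $H_0$-invariant, hence conjugate to $A_0$, so this set equals $\{\{[A_0]\}\}$, which is not filling — a contradiction, so this case does not occur. If $S'$ is a $\Zmax$-splitting: by Proposition~\ref{prop:ia-zmax} a power of $\alpha$ acts trivially on $S'/F_N$, so by Theorem~\ref{theo:ia-element} each $\alpha\in H_0$ fixes the conjugacy class of a generator of every edge stabilizer of $S'$; by Lemma~\ref{lem_conj_invariant} this generator is conjugate into $A_0$, so by Remark~\ref{rk_N3} it is trivial when $N\ge 4$ — impossible for a $\Zmax$-splitting — and is conjugate to $\langle[x,y]\rangle$ for a basis $\{x,y\}$ of $A_0$ when $N=3$ (since $\langle[x,y]^k\rangle$ is maximal cyclic only for $k=\pm1$). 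In the remaining case $N=3$ one lists the possible shapes of $S'$ near the $A_0$-fixed point $p$ (with $G_p\supseteq A_0$ and all incident edge stabilizers conjugate to $\langle[x,y]\rangle$) and checks, exactly as in the free case, that the twists $\tilde\tau_{w,1}$ with $w\in A_0$ of trivial $\bbZ/3\bbZ$-homology never preserve any such $S'$ even after passing to powers — a contradiction, so this case is also excluded. Only $S'=S_0$ remains.

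The main obstacle is the $\Zmax$ case when $N=3$: the clean obstruction "an atoroidal automorphism has no periodic conjugacy class of nontrivial element" is unavailable because $\Out(F_2)$ contains no atoroidal fully irreducibles (every fully irreducible element fixes the conjugacy class of the boundary commutator $[x,y]$), so one is pushed into a hands-on analysis of the vertex group $G_p\supseteq A_0$ at the $A_0$-fixed point and of the action of the twist subgroup on the various possible $S'$; carrying out this bookkeeping correctly — enumerating the possible one- and two-edge $\Zmax$-splittings compatible with the constraints and verifying twist-non-invariance of each — is the delicate part of the argument.
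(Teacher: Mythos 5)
Your free-splitting and bi-nonsporadic cases follow essentially the paper's route (Lemma~\ref{lemma:ia-free-splitting}, Lemma~\ref{lemma:unique-invariant-factor}, Lemma~\ref{lemma:binonsporadic2F}), and your step showing that $A_0$ must be elliptic in $S'$ via a fully irreducible element of $\mathrm{IA}(A_0,\bbZ/3\bbZ)$ and Lemma~\ref{lem:invariant_A0} is exactly what the paper does in the $\Zmax$ case. The genuine gap is the one you flag yourself: for $N=3$ the $\Zmax$ case is left as a plan, not a proof. You assert that it suffices to ``list the possible one- and two-edge $\Zmax$-splittings'' with all edge groups conjugate to $\langle [x,y]\rangle$ and $A_0$ elliptic, and to check twist non-invariance of each; but nothing in your argument bounds the number of edge orbits by two, the vertex group $G_p$ at the $A_0$-fixed point may properly contain $A_0$ (so the candidate splittings are not parametrized just by data inside $A_0$), and no completeness or finiteness of the proposed list is established. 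Moreover, since vertex groups of $\Zmax$-splittings are not free factors, you cannot upgrade their power-periodicity under $H_0$ to invariance via Theorems~\ref{theo:ia} or~\ref{theo:ia-element}, so controlling the shapes of $S'$ is genuinely harder than in the free case. (A secondary, smaller issue: your $N\ge 4$ exclusion leans on Remark~\ref{rk_N3}, which is only asserted in the paper; it can be justified by exhibiting an atoroidal fully irreducible in the image of $H_0$ in $\Out(A_0)$, as in Lemma~\ref{lemma:empty_afs}, but you should say so.)

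The paper closes the $\Zmax$ case uniformly for all $N\ge 3$, with no enumeration, as follows. Once $A_0$ is known to fix a (unique) point of $S'$, consider the unique $F_N$-equivariant map $f:S_0\to S'$ which is linear on edges; uniqueness makes it compatible with the isometries coming from the stabilizer $H'$ of $S'$. Since $S'$ has non-trivial edge stabilizers, $f$ is not injective, so it folds initial segments of two edges $e,e'$ incident at $v_0$. If $e'=ae$ with $a\in A_0\setminus\{1\}$, take $w\in A_0$ not commuting with $a$ and the twist of $S_0$ acting as the identity on $A_0$ with $I_{\tilde\alpha}(e)=we$, $I_{\tilde\alpha}(ae)=awe$; a power of this twist lies in $H'$, and applying $f$-equivariance shows that $e$ and $w^{-n}aw^{n}e$ have folded initial segments, so the non-abelian group $\langle a, w^{-n}aw^{n}\rangle$ fixes an arc of $S'$, contradicting that edge stabilizers are cyclic. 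If $e,e'$ lie in distinct $A_0$-orbits, another twist (fixing $e'$ and sending $e$ to $ae$) reduces to the previous case. Replacing your $N=3$ enumeration with this folding-plus-twist argument is what is needed to make your proof complete.
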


\begin{proof}
  Assume first that $S'$ is a free splitting.
  By assumption, $S'$ is periodic under the action of every element of $H_0$.
  As $H_0\subseteq\IA$, Lemma~\ref{lemma:ia-free-splitting} implies that $S'$ is in fact $H_0$-invariant,
  and so is every collapse of $S'$.
  By Lemma~\ref{lemma:unique-invariant-factor}, point stabilizers of $S'$ are either trivial or conjugate to $A_0$,
  and the same is true for collapses of $S'$.
  If the action of $F_k$ on $S'$ was free, then one could construct some collapse of $S'$ with a cyclic vertex stabilizer, a contradiction. It follows that $A_0$ fixes a point in $S'$, so either $S'=S_0$ or $S'$ is dual
  to a graph of groups of the form
  $$\includegraphics[width=2cm]{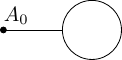}$$
in which case $S'$ has a collapse with a cyclic vertex stabilizer, a contradiction.

We now assume that $S'$ is a $\Zmax$-splitting of $F_N$. Theorem~\ref{theo:ia-element} ensures that the the conjugacy class of any edge group of $S'$ 
is $H_0$-invariant so if $N>3$, we may use Lemma~\ref{lem_conj_invariant} and Remark~\ref{rk_N3}
to exclude the existence of $S'$. The following argument works for all $N\geq 3$.

Let $H'$ be the intersection of $H_0$ with the stabilizer of $S'$ in $\IA$.
Let $H'_{|A_0}\subset\Out(A_0)$ be its image under the restriction map.
The action of $A_0$ on its minimal subtree $S'_{A_0}\subset S'$ is a splitting of $A_0$  with finitely generated edge stabilizers. This splitting is invariant under $H'_{|A_0}$.
As  every element of $H_0$ has a power contained in $H'$, it follows that every element of $\Out(A_0)$ has a power contained in $H'_{|A_0}$,
in particular $H'_{|A_0}$ contains a fully irreducible outer automorphism of $A_0$. By Lemma~\ref{lem:invariant_A0}, it follows that $A_0$ fixes a (unique) point in $S'$.

View $S_0$ and $S'$ as $\tilde H'$-trees where $\tilde H'$ is the preimage of $H'$ 
in $\Aut(F_N)$ (see Section~\ref{sec:tildeH}).
Let $f:S_0\to S'$ be the unique $F_N$-equivariant map that is linear on edges.
Being unique, $f$ is $\tilde H$-equivariant. 
Since $f$ is not an isomorphism, there exists a pair of adjacent edges $e,e'$ in $S_0$ such that $f$ identifies an initial segment of $e$ with an initial segment of $e'$.
Without loss of generality, the common endpoint of $e$ and $e'$
is the vertex $v_0\in S_0$ fixed by $A_0$.

Assume first that $e'=ae$ for some  $a\in A_0\setminus\{1\}$. Let $w\in A_0$  be an element that does not commute with $a$.
We use the fact that 
there exists a  twist $\alpha\in\Out(F_N)$ of $S_0$ which has a lift $\tilde{\alpha}\in\Aut(F_N)$ acting as the identity on $A_0$ and such that
the $\tilde\alpha$-equivariant isometry $I_{\tilde \alpha}:S_0\ra S_0$
(see Section \ref{sec:tildeH})
satisfies $I_{\tilde\alpha}(e)=we$
and $I_{\tilde\alpha}(ae)=awe$. 
Take  $n\ge 1$ such that $\alpha^n\in H'$.
Since $\tilde\alpha$ acts as the identity on $A_0$, $I_{\tilde\alpha^n}(e)=w^ne$ and $I_{\tilde\alpha^n}(e')=aw^ne$.
The $\tilde H$-equivariance of $f$ translates into the fact that $f$ identifies initial segments of $I_{\tilde\alpha^n}(e)$ and $I_{\tilde\alpha^n}(e')$.
It follows that $e$ and $w^{-n} aw^ne$ have initial segments that are identified by $f$. Thus, the non-abelian group $\grp{a,w^{-n}aw^n}$ fixes a segment in $S'$, a contradiction.

Now assume that $e,e'$ are not in the same $A_0$-orbit. Choose $a\in A_0\setminus\{1\}$. 
There exists a twist $\alpha\in \Stab(S_0)$ and a preimage $\tilde \alpha\in\Aut(F_N)$ acting as the identity on $A_0$ and such that
the isometry $I_{\tilde \alpha}$ of $S_0$ fixes $e'$ and sends $e$ to $ae$.
In particular, if  $n\ge 1$ is such that $\alpha^n\in H'$,
we get that $f$ identifies some initial segments of $e$ and  $a^ne$. The previous case then yields a contradiction.

We finally assume that $S'$ is a bi-nonsporadic splitting. 
By Lemma~\ref{lemma:binonsporadic2F} there is a finite collection $\{\calf_1,\dots,\calf_n\}$ of $n\geq 2$ non-empty free factor systems of $F_N$ such that every element of $H_0$ has a power that preserves each $\calf_i$. 
Since $H_0\subset \IA$, each $\calf_i$ is in fact $H_0$-invariant.
This contradicts Lemma~\ref{lemma:unique-invariant-factor}. 
\end{proof}

\begin{lemma}\label{lemma:unique-nice-invariant}
For every Borel subset $U\subseteq Y$ of positive measure,
\begin{enumerate}
\item $S_0$ is the only nice splitting of $F_N$ which is $({\calh_0}_{|U},\rho)$-invariant; 
\item there is no non-trivial splitting of $A_0$  with finitely generated edge stabilizers
which is $({\calh_0}_{|U},\rho_{A_0})$-invariant. 
\end{enumerate}
\end{lemma}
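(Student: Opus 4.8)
The strategy is to reduce both assertions to statements about the action of a \emph{group} (rather than a groupoid) on the set of nice splittings (resp.\ on the set of splittings of $A_0$ with finitely generated edge stabilizers), using the action-type hypothesis on $\rho$, and then to invoke Lemma~\ref{lemma:unique-invariant-nice-splitting} (resp.\ Lemma~\ref{lem:invariant_A0} together with Lemma~\ref{lemma:ia-free-splitting}). The key observation is that there are only countably many nice splittings of $F_N$, so a $({\calh_0}_{|U},\rho)$-invariant one is, after restriction to a positive-measure subset, genuinely $({\calh_0}_{|U},\rho)$-invariant with a fixed value; and the action-type property lets us translate invariance under the whole groupoid $\calh_0$ into periodicity under every element of $H_0$.

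First I would prove the first statement. Suppose $S'$ is a nice splitting which is $({\calh_0}_{|U},\rho)$-invariant for some Borel subset $U\subseteq Y$ of positive measure; so there is a conull $U^*\subseteq U$ with $\rho({\calh_0}_{|U^*})\subseteq\Stab_{\IA}(S')$. Pick any $\alpha\in H_0$ of infinite order (for instance a nontrivial power of a twist of $S_0$; note $H_0$ is torsion-free, being contained in $\IA$). By Remark~\ref{rk:action-type}, for every positive-measure Borel subset $V\subseteq U^*$ the set $\rho(({\calh_0})_{|V})$ contains a nontrivial power of $\alpha$; since $\rho(({\calh_0})_{|V})\subseteq\Stab_{\IA}(S')$ (using $({\calh_0})_{|V}\subseteq ({\calh_0})_{|U^*}$), that power of $\alpha$ preserves $S'$. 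Hence every element of $H_0$ has a nontrivial power preserving $S'$. Lemma~\ref{lemma:unique-invariant-nice-splitting} then forces $S'=S_0$. Conversely $S_0$ itself is $({\calh_0}_{|U},\rho)$-invariant for every $U$ by definition of $\calh_0=\rho^{-1}(H_0)$, so this is indeed the unique such nice splitting.

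For the second statement I would run the same argument one level down. Suppose $Z$ is a non-trivial splitting of $A_0$ with finitely generated edge stabilizers which is $({\calh_0}_{|U},\rho_{A_0})$-invariant; so on a conull subset $U^*\subseteq U$ the cocycle $\rho_{A_0}$ takes values in the stabilizer $H'_{A_0}\subseteq\Out(A_0)$ of $Z$. As above, the action-type property of $\rho$ (hence of $\rho_{A_0}$ in the relevant sense: for any $\alpha\in H_0$ of infinite order, some power of its image in $\Out(A_0)$ lies in $\rho_{A_0}(({\calh_0})_{|V})$ for every positive-measure $V$) shows that every element of the image of $H_0$ in $\Out(A_0)$ has a nontrivial power preserving $Z$. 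Since $H_0$ surjects onto $\mathrm{IA}(A_0,\bbZ/3\bbZ)$, which contains a fully irreducible automorphism of $A_0$, Lemma~\ref{lem:invariant_A0}(2) shows that no such $Z$ can exist (if $Z$ has a trivial edge stabilizer one gets a fully irreducible-invariant free factor; otherwise by commutative transitivity all edge stabilizers of $Z$ commute, contradicting nontriviality of $Z$ as a splitting with at least two orbits of edges, or one applies Lemma~\ref{lem:invariant_A0}(1) to a cyclic vertex stabilizer). This gives the required contradiction.

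\textbf{Main obstacle.} The delicate point is the passage from ``$S'$ is $({\calh_0}_{|U},\rho)$-invariant'' to ``every element of $H_0$ has a power preserving $S'$''. One must be careful that the action-type property is applied correctly: it guarantees that $\rho^{-1}(\langle\alpha\rangle)$ is of infinite type, hence (Remark~\ref{rk:action-type}) that $\rho(({\calh_0})_{|V})$ meets $\langle\alpha\rangle$ in a nontrivial power for every positive-measure $V\subseteq Y$ — but one needs this for $V$ inside the conull set $U^*$ on which $\rho({\calh_0}_{|\cdot})\subseteq\Stab_{\IA}(S')$, which is fine since $({\calh_0})_{|U^*}$ is still action-type by Lemma~\ref{lemma:action-type}. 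A secondary subtlety, already handled by Lemma~\ref{lemma:unique-invariant-nice-splitting}, is that we only get \emph{periodicity} (not invariance) of $S'$ under elements of $H_0$ in general, and invariance only after passing through $\IA$ as in Lemma~\ref{lemma:ia-free-splitting} in the free-splitting case; the proof of Lemma~\ref{lemma:unique-invariant-nice-splitting} already absorbs this, so no extra work is needed here.
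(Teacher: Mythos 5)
Your proof is correct and follows essentially the same route as the paper: use the action-type property (via Remark~\ref{rk:action-type}) to upgrade groupoid-invariance on a positive-measure subset to the statement that every element of $H_0$ has a non-trivial power preserving the splitting, then conclude with Lemma~\ref{lemma:unique-invariant-nice-splitting} for the first assertion and Lemma~\ref{lem:invariant_A0} (applied to an element of $H_0$ restricting to a fully irreducible automorphism of $A_0$) for the second. The parenthetical re-derivation of Lemma~\ref{lem:invariant_A0}(2) at the end is unnecessary — invoking that lemma as a black box suffices — but this does not affect correctness.
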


\begin{proof}
  Let $S$ be an $({\calh_0}_{|U},\rho)$-invariant nice splitting. Then  there exists a conull Borel subset $U^*\subseteq U$ such that $S$ is invariant by $\rho({\calh_0}_{|U^*})$. Since ${\calh_0}=\rho^{-1}(H_0)$ and $\rho$ is action-type, Remark~\ref{rk:action-type} implies that every element $\alpha\in H_0$ has a power that preserves $S$. Lemmas~\ref{lemma:unique-invariant-nice-splitting} and~\ref{lem:invariant_A0} thus conclude the proof
  (to apply Lemma~\ref{lem:invariant_A0}, notice indeed that $H_0$ contains an element that restricts to a fully irreducible outer automorphism of $A_0$).
\end{proof}

\begin{lemma}\label{lemma:check-p1max}
The pair $(\calg,{\calh_0})$ satisfies Property~$\PImax$.
\end{lemma}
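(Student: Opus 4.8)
The plan is to verify the two defining clauses of Property~$\PImax$ for the pair $(\calg,\calh_0)$: first that $(\calg,\calh_0)$ satisfies Property~$\PI$, and second the maximality clause. The first point is already done: Lemma~\ref{lemma:nonsep-satisfies-1} shows that the $(\calg,\rho)$-stabilizer of the one-edge acyclic non-separating free splitting $S_0$ satisfies $\PI$. So the real content is the maximality statement. Concretely, we must show: given a Borel subset $U\subseteq Y$ of positive measure and a subgroupoid $\calh'$ of $\calg_{|U}$ such that $(\calg_{|U},\calh')$ satisfies $\PI$ and ${\calh_0}_{|U}$ is stably contained in $\calh'$, then ${\calh_0}_{|U}$ is stably equal to $\calh'$.

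First I would apply Lemma~\ref{lemma:1-implies-splitting} to the pair $(\calg_{|U},\calh')$: since it satisfies $\PI$ and $\rho$ has trivial kernel (being action-type), $\calh'$ is stably nice. Thus, after passing to a countable Borel partition $U^*=\Dunion_{i\in I}U_i$ of a conull subset, for each $i$ there is a $(\calh'_{|U_i},\rho)$-invariant nice splitting $S_i$. Now I would use the hypothesis that ${\calh_0}_{|U}$ is stably contained in $\calh'$: refining the partition if necessary, we may assume ${\calh_0}_{|U_i}\subseteq\calh'_{|U_i}$ for every $i$, so $S_i$ is also $({\calh_0}_{|U_i},\rho)$-invariant. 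By Lemma~\ref{lemma:unique-nice-invariant}, the only nice splitting of $F_N$ that is $({\calh_0}_{|U_i},\rho)$-invariant (for $U_i$ of positive measure) is $S_0$ itself. Hence $S_i=S_0$ on each piece of positive measure, which means $\calh'_{|U_i}$ is contained in $\rho^{-1}(\Stab_{\IA}(S_0))_{|U_i}=({\calh_0})_{|U_i}$. Combined with the reverse inclusion ${\calh_0}_{|U_i}\subseteq\calh'_{|U_i}$, we get ${\calh_0}_{|U_i}=\calh'_{|U_i}$ for every $i$, i.e.\ ${\calh_0}_{|U}$ is stably equal to $\calh'$.

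The only subtlety I anticipate is bookkeeping with the partitions: we start with the partition coming from "stably nice", then intersect with the partition coming from "stably contained", and we must be careful that the resulting common refinement is still a countable Borel partition of a conull subset of $U$ into pieces on which both the containment ${\calh_0}_{|U_i}\subseteq\calh'_{|U_i}$ holds and $S_i$ is genuinely $(\calh'_{|U_i},\rho)$-invariant; discarding null pieces is harmless. There is also a minor point: Lemma~\ref{lemma:unique-nice-invariant} is stated for restrictions of $\calh_0$ itself, so one should note that $\calh_0$ restricted to a positive-measure subset of $U$ is again of the same form (this is immediate from the definition $\calh_0=\rho^{-1}(H_0)$ and $(\calg_{|W})_{|V}=\calg_{|V}$ for $V\subseteq W$). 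Once these routine identifications are in place, the argument closes. I therefore expect no genuine obstacle here; the work was all done in establishing Lemmas~\ref{lemma:nonsep-satisfies-1} and~\ref{lemma:unique-nice-invariant}, and Lemma~\ref{lemma:check-p1max} is essentially a formal consequence.

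\begin{proof}
Since Property~$\PI$ is preserved under stabilization, and since by Lemma~\ref{lemma:nonsep-satisfies-1} the $(\calg,\rho)$-stabilizer of the one-edge acyclic non-separating free splitting $S_0$ satisfies Property~$\PI$, the pair $(\calg,\calh_0)$ satisfies~$\PI$.

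It remains to establish the maximality clause. Let $U\subseteq Y$ be a Borel subset of positive measure, and let $\calh'$ be a subgroupoid of $\calg_{|U}$ such that $(\calg_{|U},\calh')$ satisfies Property~$\PI$ and ${\calh_0}_{|U}$ is stably contained in $\calh'$. Since $(\calg_{|U},\calh')$ satisfies~$\PI$ and $\rho$ is action-type, hence has trivial kernel, Lemma~\ref{lemma:1-implies-splitting} ensures that $\calh'$ is stably nice. Therefore there exist a conull Borel subset $U^*\subseteq U$ and a partition $U^*=\Dunion_{i\in I}U_i$ into at most countably many Borel subsets, such that for every $i\in I$, there is a $(\calh'_{|U_i},\rho)$-invariant nice splitting $S_i$ of $F_N$. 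Using that ${\calh_0}_{|U}$ is stably contained in $\calh'$, we may refine this partition so that in addition ${\calh_0}_{|U_i}\subseteq\calh'_{|U_i}$ for every $i\in I$; discarding the null pieces does not affect stable equality.

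Fix $i\in I$ with $U_i$ of positive measure. Since ${\calh_0}_{|U_i}\subseteq\calh'_{|U_i}$, the splitting $S_i$ is $({\calh_0}_{|U_i},\rho)$-invariant. By Lemma~\ref{lemma:unique-nice-invariant}, the only nice splitting of $F_N$ that is $({\calh_0}_{|U_i},\rho)$-invariant is $S_0$. Hence $S_i=S_0$, so there exists a conull Borel subset $U_i^*\subseteq U_i$ with $\rho(\calh'_{|U_i^*})\subseteq\Stab_{\IA}(S_0)=H_0$, that is, $\calh'_{|U_i^*}\subseteq\rho^{-1}(H_0)_{|U_i^*}=({\calh_0})_{|U_i^*}$. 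Combined with the inclusion ${\calh_0}_{|U_i^*}\subseteq\calh'_{|U_i^*}$, we obtain ${\calh_0}_{|U_i^*}=\calh'_{|U_i^*}$.

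As this holds for every $i\in I$ with $U_i$ of positive measure, and $\Dunion_{i}U_i^*$ is conull in $U$, the groupoid ${\calh_0}_{|U}$ is stably equal to $\calh'$. This proves that $(\calg,\calh_0)$ satisfies Property~$\PImax$.
\end{proof}
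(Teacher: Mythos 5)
Your proof is correct and follows essentially the same route as the paper: establish $\PI$ via Lemma~\ref{lemma:nonsep-satisfies-1}, use Lemma~\ref{lemma:1-implies-splitting} to get a stably invariant nice splitting for $\calh'$, transfer the invariance to ${\calh_0}$ via the stable containment, and conclude $S_i=S_0$ by Lemma~\ref{lemma:unique-nice-invariant}, forcing stable equality. The extra bookkeeping remarks about refining partitions are exactly the implicit steps in the paper's argument, so there is nothing to add.
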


\begin{proof}
  By Lemma~\ref{lemma:nonsep-satisfies-1}, the pair $(\calg,{\calh_0})$ satisfies Property~$\PI$, so we only need to check maximality. Let $U\subseteq Y$ be a Borel subset of positive measure, and let $\calh'$ be a subgroupoid of $\calg_{|U}$ such that $(\calg_{|U},\calh')$ satisfies Property~$\PI$, and such that ${\calh_0}_{|U}$ is stably contained in $\calh'$. By Lemma~\ref{lemma:1-implies-splitting}, there exists a partition $U=\Dunion_{i\in I} U_i$ 
  into at most countably many Borel subsets of positive measure, such that for every $i\in I$, there exists a nice splitting $S_i$ of $F_N$ which is  $(\calh'_{|U_i},\rho)$-invariant,  
 and up to refining the partition it is also $({\calh_0}_{|U_i},\rho)$-invariant.
  
  Lemma~\ref{lemma:unique-nice-invariant}(1) implies that $S_i=S_0$ for every $i\in I$. Thus,
  there exists a conull  Borel subset
 $U_i^*\subset U_i$ such that $\calh'_{|U_i^*}$ is contained in $\rho\m(H_0)_{|U^*_i}={\calh_0}_{|U^*_i}$, and hence ${\calh_0}_{|U}$ is stably equal to $\calh'$. 
\end{proof}

\subsubsection{Proof of Property~$(P_2)$}

\begin{lemma}\label{lemma:check-p2}
The pair $(\calg,{\calh_0})$ satisfies Property~$\PII$.
\end{lemma}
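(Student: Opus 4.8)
Property $\PII$ asks that for every Borel subset $U\subseteq Y$ of positive measure, the groupoid ${\calh_0}_{|U}$ does not normalize any amenable subgroupoid of $\calg_{|U}$ of infinite type. The plan is to argue by contradiction: suppose there exist a Borel subset $U\subseteq Y$ of positive measure and an amenable subgroupoid $\cala$ of $\calg_{|U}$ of infinite type which is normalized by ${\calh_0}_{|U}$. Since $\rho$ is action-type, $\rho$ has trivial kernel, and ${\calh_0}_{|U}$ is everywhere non-amenable (it contains the preimage under $\rho$ of a non-abelian free subgroup of $H_0$, which is everywhere non-amenable by Lemma~\ref{lemma:not-amen}). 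Moreover $\rho_{|\cala}$ is nowhere trivial because $\cala$ is of infinite type and $\rho$ has trivial kernel. We are thus in position to apply Theorem~\ref{theo:normalisateur_moyennable} with $\calh={\calh_0}_{|U}$ and $\cala$: since ${\calh_0}_{|U}$ is not amenable, there must exist a Borel subset $V\subseteq U$ of positive measure and a nice splitting which is $({\calh_0}_{|V},\rho)$-invariant.

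But Lemma~\ref{lemma:unique-nice-invariant} tells us that the only nice splitting of $F_N$ which is $({\calh_0}_{|V},\rho)$-invariant (for $V$ of positive measure) is $S_0$ itself. So we conclude that $S_0$ is $({\calh_0}_{|V},\rho)$-invariant, which is of course true and gives no contradiction on its own. The point, then, is to push the argument one step further: the amenable normal subgroupoid $\cala$ must be compatible with the structure of $S_0$, and we need to derive a contradiction from the fact that the stabilizer $H_0$ of $S_0$ in $\IA$ does not normalize any infinite amenable subgroup after suitable restriction. Concretely, the plan is the following. Restricting to $V$, the groupoid $\cala$ is an amenable infinite-type subgroupoid of $\calg_{|V}$ normalized by ${\calh_0}_{|V}$. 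Consider the group of twists $T\simeq A_0^-\times A_0^+$ of $S_0$, which is a finitely generated-by-direct-product-of-two-free-groups normal subgroup of $H_0$. Let $\calt=\rho^{-1}(T\cap\IA)$; this is a subgroupoid of ${\calh_0}_{|V}$ with an action-type cocycle to $T\cap\IA\simeq$ (finite index in) $A_0\times A_0$, where both factors are non-abelian free groups since $N\ge 3$. By Lemma~\ref{lemma:normalisateur_twist}, $\calt$ does not contain any normal amenable subgroupoid of infinite type.

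Now one needs to see $\cala$ (after restriction) inside $\calt$, or at least inside something controlled by $\calt$. Here is the mechanism: since $\cala$ is normalized by ${\calh_0}_{|V}$ and is of infinite type and amenable, one first observes that $\langle\cala, {\calh_0}_{|V}\rangle$ is also normalized by ${\calh_0}_{|V}$; but more usefully, we use that amenability combined with the action-type cocycle $\rho_{A_0}:{\calh_0}\to\Out(A_0)$ forces structure. By Lemma~\ref{lemma:unique-nice-invariant}, there is no non-trivial splitting of $A_0$ with finitely generated edge stabilizers which is $({\calh_0}_{|V},\rho_{A_0})$-invariant; in particular $({\calh_0}_{|V},\rho_{A_0})$ is $\FS$-averse and, by Corollary~\ref{cor:normalisateur_moyennable} (applied with the cocycle $\rho_{A_0}$ to a corank one free factor $A_0$, using the results of Section~\ref{sec_canonical} relative to $F_k=A_0$), the fact that $\cala$ is normalized by ${\calh_0}_{|V}$ combined with nice-aversity of $({\calh_0}_{|V},\rho_{A_0})$ would force a contradiction unless $\rho_{A_0}(\cala)$ is stably trivial. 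So $\cala$ is stably contained in $\rho_{A_0}^{-1}(\{1\})$, which is exactly the preimage under $\rho$ of the kernel of $\Stab(S_0)\to\Out(A_0)$ intersected with $H_0$, and this kernel is (virtually) the group of twists $T$. Thus, after a further countable partition of $V$, we may assume $\cala\subseteq\calt$, with $\cala$ of infinite type, amenable, and normalized by ${\calh_0}_{|V}\supseteq\calt$, hence normal in $\calt$. This contradicts Lemma~\ref{lemma:normalisateur_twist}.

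\begin{proof}
  Assume towards a contradiction that there exist a Borel subset $U\subseteq Y$ of positive measure and an amenable subgroupoid $\cala$ of $\calg_{|U}$ of infinite type which is normalized by ${\calh_0}_{|U}$. Recall that ${\calh_0}=\rho^{-1}(H_0)$, and that $H_0\subseteq\IA$ contains a non-abelian free subgroup (for instance a non-abelian free subgroup of the group of twists of $S_0$, which is isomorphic to a corank one free factor of $F_N$, hence non-abelian since $N\ge 3$). By Lemma~\ref{lemma:not-amen}, the groupoid ${\calh_0}_{|U}$ is everywhere non-amenable. Moreover, as $\rho$ is action-type, it has trivial kernel, so $\rho_{|\cala}$ has trivial kernel; since $\cala$ is of infinite type, $\rho_{|\cala}$ is nowhere trivial.

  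We apply Corollary~\ref{cor:normalisateur_moyennable} to the cocycle $\rho_{A_0}:{\calh_0}\to\Out(A_0)$ obtained by postcomposing $\rho$ with the natural map $\Stab(S_0)\to\Out(A_0)$. This is an action-type cocycle (being the composition of an action-type cocycle with the quotient by a free splitting; here $A_0$ has corank one in $F_N$, so the results of Section~\ref{sec_canonical} apply with $F_k=A_0$). By Lemma~\ref{lemma:unique-nice-invariant}, there is no non-trivial splitting of $A_0$ with finitely generated edge stabilizers which is $({\calh_0}_{|V},\rho_{A_0})$-invariant for any Borel subset $V\subseteq U$ of positive measure; in particular $({\calh_0}_{|V},\rho_{A_0})$ is nice-averse. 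Since $\cala$ is of infinite type, amenable, and normalized by ${\calh_0}_{|U}$, Corollary~\ref{cor:normalisateur_moyennable} applied to $\calh={\calh_0}_{|U}$ and $\cala$ (with the cocycle $\rho_{A_0}$) would imply that ${\calh_0}_{|U}$ is amenable unless $\rho_{A_0}(\cala)$ fails to be nowhere trivial. As ${\calh_0}_{|U}$ is everywhere non-amenable, we conclude that there exists a Borel subset $W\subseteq U$ of positive measure such that $\rho_{A_0}(\cala_{|W})=\{1\}$. Up to a further countable partition of $W$ and passing to one piece, we may assume that $\cala_{|W}$ is contained in $\rho_{A_0}^{-1}(\{1\})_{|W}$.

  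Let $K$ be the kernel of the natural map $\Stab_{\IA}(S_0)\to\Out(A_0)$: by Proposition~\ref{prop_suite_exacte} applied to $S_0$, the group $K$ coincides with the group of twists $\Tw(S_0)\cap\IA$ (using that the restriction map to $\Out(A_0,\Inc^{(t)})$ is injective once one quotients by the twists, and that here there is a single vertex orbit, with vertex group $A_0$, and the edge group is trivial so bitwists are twists). As recalled in Section~\ref{sec_twists}, $\Tw(S_0)\simeq A_0^-\times A_0^+$ is a direct product of two copies of the free group $A_0$; since $N\ge 3$, both factors are non-abelian free groups. Thus $K\cap\IA$ is a finite-index subgroup of $A_0^-\times A_0^+$, and in particular it contains a direct product of two non-abelian free groups, normal in $H_0$ by Corollary~\ref{cor:ia-nice}.

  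Let $\calt:=\rho^{-1}(K)_{|W}$. Then $\cala_{|W}\subseteq\calt$, and $\calt\subseteq{\calh_0}_{|W}$. Since $K$ is normal in $H_0$, the subgroupoid $\calt$ is normalized by ${\calh_0}_{|W}$, and it is normal in ${\calh_0}_{|W}$; in particular $\cala_{|W}$ is a normal subgroupoid of $\calt$ (being normalized by ${\calh_0}_{|W}\supseteq\calt$, and contained in $\calt$). The cocycle $\rho$ restricts to an action-type cocycle $\calt\to K$, and since $K$ has finite index in $A_0^-\times A_0^+$, we may further compose with a finite-index inclusion to view $\calt$ as carrying an action-type cocycle towards a direct product $F\times F'$ of two non-abelian free groups. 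By Lemma~\ref{lemma:normalisateur_twist}, the groupoid $\calt$ does not contain any normal amenable subgroupoid of infinite type. But $\cala_{|W}$ is such a subgroupoid: it is amenable, of infinite type (since $\cala$ is of infinite type, so is its restriction to $W$), and normal in $\calt$. This is the desired contradiction.
\end{proof}
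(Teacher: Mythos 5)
There is a genuine gap, and it occurs before your argument even gets started: you apply the cocycle $\rho_{A_0}$ to $\cala$, writing things like ``$\rho_{A_0}(\cala)$ is stably trivial'' and ``$\cala_{|W}\subseteq\rho_{A_0}^{-1}(\{1\})_{|W}$'', but $\rho_{A_0}$ is only defined on ${\calh_0}=\rho^{-1}(H_0)$, whereas $\cala$ is merely assumed to be an amenable subgroupoid of $\calg_{|U}$ normalized by ${\calh_0}_{|U}$ --- nothing forces its arrows to have $\rho$-image in $\Stab_{\IA}(S_0)$. The paper's proof devotes its entire first step to exactly this point: it shows (using Lemma~\ref{lemma:stably-nice-vs-nice-averse}, Corollary~\ref{cor:normalisateur_moyennable} with the cocycle $\rho$ itself, Proposition~\ref{prop:nice-preserved} and Lemma~\ref{lemma:unique-nice-invariant}) that on a positive-measure subset $V$ the splitting $S_0$ is $(\cala_{|V},\rho)$-invariant, so that $\cala_{|V}\subseteq{\calh_0}_{|V}$ up to a conull set and $\rho_{A_0}$ becomes defined on $\cala_{|V}$. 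Your plan paragraph gestures at this (``one needs to see $\cala$ inside something controlled by $\calt$'') but the written proof skips it entirely, so the second half is not well-posed as stated.

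The second problem is the way you run the dichotomy once $\rho_{A_0}$ is in play. You invoke Corollary~\ref{cor:normalisateur_moyennable} with $\calh={\calh_0}_{|U}$ and the cocycle $\rho_{A_0}$, but both that corollary and Theorem~\ref{theo:normalisateur_moyennable} require the cocycle to have trivial kernel on the relevant (normalizing) groupoid, and $\rho_{A_0}$ has an enormous kernel on ${\calh_0}$, namely $\rho^{-1}(\Tw)$; in particular your parenthetical claim that $\rho_{A_0}$ is action-type is false. So the conclusion ``${\calh_0}_{|U}$ is amenable unless $\rho_{A_0}(\cala)$ is stably trivial'' does not follow from the results you cite. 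The paper gets around this by partitioning into three cases for $(\cala,\rho_{A_0})$ (stably trivial / nice-averse / stably nice and nowhere trivial), killing the stably-nice case with Proposition~\ref{prop:nice-preserved} plus Lemma~\ref{lemma:unique-nice-invariant}, and killing the nice-averse case by applying Theorem~\ref{theo:normalisateur_moyennable} not to ${\calh_0}$ but to a subgroupoid $\calh'=\rho^{-1}(F)$ where $F\subseteq H_0$ is a non-abelian free group injecting into $\Out(A_0)$, precisely so that $(\rho_{A_0})_{|\calh'}$ has trivial kernel, then contradicting Lemma~\ref{lemma:not-amen}. Your final step --- once $\rho(\cala_{|W})$ lands in the group of twists, contradict Lemma~\ref{lemma:normalisateur_twist} --- does match the paper and is fine, but the two reductions leading to it need to be repaired along the lines above.
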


\begin{proof}
  Assume towards a contradiction that there exists a Borel subset $U\subseteq Y$ of positive measure such that ${\calh_0}_{|U}$
 normalizes an amenable subgroupoid $\cala$ of $\calg_{|U}$ of infinite type.

We first claim that there exists a Borel subset $V\subseteq U$ of positive measure such that $S_0$ is $(\cala_{|V},\rho)$-invariant. Indeed, by Lemma~\ref{lemma:stably-nice-vs-nice-averse}, there exists a Borel partition $U=U_1\dunion U_2$ such that $(\cala_{|U_1},\rho)$ is nice-averse and $(\cala_{|U_2},\rho)$ is stably nice. As $\cala_{|U_1}$ is amenable and
normalized by ${\calh_0}_{|U_1}$, and as $(\cala_{|U_1},\rho)$ is nice-averse and $\rho$ has trivial kernel, Corollary~\ref{cor:normalisateur_moyennable} implies that ${\calh_0}_{|U_1}$ is amenable. But we already know that $\calh_0$ is everywhere non-amenable (Lemma~\ref{lemma:nonsep-satisfies-1}), so $U_1$ is a null set. Using Proposition~\ref{prop:nice-preserved}, we deduce that there exists a partition $U=\Dunion_{i\in I} U'_{i}$  into at most countably many Borel subsets of positive measure such that for every $i\in I$, there exists a nice splitting $S_i$ which is both $(\cala_{|U'_{i}},\rho)$-invariant and $({\calh_0}_{|U'_{i}},\rho)$-invariant. Lemma~\ref{lemma:unique-nice-invariant}(1) then implies that $S_i=S_0$ for every $i\in I$. This completes the proof of our claim by letting $V$ be any $U'_i$ with positive measure.

Up to replacing $V$ by a conull Borel subset, we can assume that $\cala_{|V}$ is contained in ${\calh_0}_{|V}$. In particular, the cocycle $\rho_{A_0}:{\calh_0}\to\Out(A_0)$ is defined on $\cala_{|V}$. 

We aim to prove that $\rho_{A_0}$ is stably trivial in restriction to $\cala_{|V}$ (in the sense of Definition~\ref{dfn:nowhere_trivial}).
Using Lemma \ref{lem_trivial_partition} and Lemma \ref{lemma:stably-nice-vs-nice-averse}, 
consider a partition $V=V_0\dunion V_1\dunion V_2$ where 
$\rho_{A_0}$ is stably trivial in restriction to $\cala_{|V_0}$,
$(\cala_{|V_1},\rho_{A_0})$ is nice-averse (so in particular nowhere trivial) and
$(\cala_{|V_2},\rho_{A_0})$ is stably nice and nowhere trivial.

If $V_2$ has positive measure, then
by Proposition~\ref{prop:nice-preserved}, $({\calh_0}_{|V_2},\rho_{A_0})$ is stably nice, which contradicts
Lemma \ref{lemma:unique-nice-invariant}(2). So $V_2$ is a null set.

Assume towards a contradiction that $V_1$ has positive measure. Let $F\subseteq H_0$ be a non-abelian free subgroup that injects in $\Out(A_0)$ under the natural restriction map $H_0\to\Out(A_0)$. Let $\calh'=\rho^{-1}(F)$. Then $(\rho_{A_0})_{|\calh'}$ has trivial kernel. Since $\cala_{|V_1}$ is amenable and 
normalized by ${\calh_0}_{|V_1}$, and $(\cala_{|V_1},\rho_{A_0})$ is nowhere trivial, Theorem~\ref{theo:normalisateur_moyennable} implies that either $\calh'_{|V_1}$ is amenable or there exist a Borel subset $W_1\subseteq V_1$ of positive measure and a nice splitting of $A_0$ which is both $(\cala_{|W_1},\rho_{A_0})$-invariant 
and $(\calh'_{|W_1},\rho_{A_0})$-invariant. The former case is excluded by Lemma~\ref{lemma:not-amen} because $F$ is a non-abelian free group, and the latter case is excluded because $(\cala_{|W_1},\rho_{A_0})$ is nice-averse.

We thus conclude that $\rho_{A_0}$ is stably trivial in restriction to $\cala_{|V}$.
So consider a Borel subset $W\subseteq V$ of positive measure such that for all $g\in\cala_{|W}$, one has $\rho_{A_0}(g)=1$. Then $\rho_{|\cala_{|W}}$ takes its values in the group of twists $\mathrm{Tw}\simeq A_0\times A_0$ of the splitting $S_0$.
Let now $\calh''=\rho\m(\mathrm{Tw})_{|W}$.  Then $\cala_{|W}$ is an amenable subgroupoid of infinite type contained in $\calh''$ and
normalized by $\calh''$.
Notice that $\rho$ restricts to an action-type cocycle $\calh''\to\mathrm{Tw}$. We thus get a contradiction to Lemma~\ref{lemma:normalisateur_twist}.
\end{proof}

\subsubsection{Proof of Property~$(P_3)$}

\begin{lemma}\label{lemma:empty_afs}
  If $\rank(A_0)\geq 3$, then
  for every Borel subset $U\subseteq Y$ of positive measure, $\es$ is the only \afs\ which is 
  $({\calh_0}_{|U},\rho_{A_0})$-invariant.

  If $A_0=\grp{a,b}\simeq F_2$, then the \afs\ $\HF_{A_0}=\{[\grp{aba\m b\m}]\}$ is $\Out(A_0)$-invariant and
  for every Borel subset $U\subseteq Y$ of positive measure, $\HF_{A_0}$ is the only non-empty \afs\ which is 
  $({\calh_0}_{|U},\rho_{A_0})$-invariant.
\end{lemma}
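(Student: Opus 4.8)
The plan is to reduce this statement to facts already established in this subsection, namely Lemma~\ref{lemma:unique-invariant-factor} (the only proper $H_0$-invariant free factor is $A_0$ up to conjugacy) and Lemma~\ref{lem_conj_invariant} (an $H_0$-invariant conjugacy class is conjugate into $A_0$, and when $\rank(A_0)\geq 3$ it is trivial, while when $A_0=\langle a,b\rangle$ it is a power of the commutator $[a,b]=aba^{-1}b^{-1}$, by Remark~\ref{rk_N3}). The first step is to pass, as in the proofs of Lemmas~\ref{lemma:unique-nice-invariant} and~\ref{lemma:unique-nice-invariant}, from a $(\calh_{0|U},\rho_{A_0})$-invariant almost free factor system $\hat\calg$ of $A_0$ to a group-theoretic statement about $\Out(A_0)$. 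Precisely: if $\hat\calg$ is $(\calh_{0|U},\rho_{A_0})$-invariant, then there is a conull $U^*\subseteq U$ with $\hat\calg$ invariant under $\rho_{A_0}(\calh_{0|U^*})$, and by the action-type property (Remark~\ref{rk:action-type}), for every $\alpha\in H_0$ some power of $\alpha$ fixes $\hat\calg$. Since $H_0$ surjects onto $\mathrm{IA}(A_0,\mathbb{Z}/3\mathbb{Z})$ under the restriction map $H_0\to\Out(A_0)$, we conclude that $\hat\calg$ is periodic under every element of $\mathrm{IA}(A_0,\mathbb{Z}/3\mathbb{Z})$, hence (working inside $\mathrm{IA}(A_0,\mathbb{Z}/3\mathbb{Z})$, which is torsion-free and where periodic objects are invariant, cf.\ Theorems~\ref{theo:ia},~\ref{theo:ia-element} applied with $F_N$ replaced by $A_0$) actually $\mathrm{IA}(A_0,\mathbb{Z}/3\mathbb{Z})$-invariant, whence $\Out(A_0)$-invariant.

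So the whole statement reduces to: the only $\Out(A_0)$-invariant almost free factor systems of $A_0$ are $\es$ when $\rank(A_0)\geq 3$, and $\es$ together with $\{[\langle[a,b]\rangle]\}$ when $A_0=\langle a,b\rangle\simeq F_2$. For the first case, suppose $\hat\calg\neq\es$ is $\Out(A_0)$-invariant. If $\hat\calg$ is a free factor system, it contains the conjugacy class of a proper free factor $B\subsetneq A_0$ that is $\Out(A_0)$-invariant; since $\Out(A_0)$ contains a fully irreducible automorphism, no proper free factor of $A_0$ can be $\Out(A_0)$-invariant, a contradiction. If instead $\hat\calg$ is the factor system of a QH splitting of $A_0$, then by Corollary~\ref{cor:hf} its (unused) boundary subgroups are $\Out(A_0)$-invariant cyclic conjugacy classes, so each is generated by a $g\in A_0$ whose conjugacy class is $\Out(A_0)$-invariant; but when $\rank(A_0)\geq 3$, an element of $A_0$ whose conjugacy class is $\mathrm{IA}(A_0,\mathbb{Z}/3\mathbb{Z})$-invariant must be trivial (Remark~\ref{rk_N3}), which is impossible for a boundary subgroup. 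Hence $\hat\calg=\es$.

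For the case $A_0=\langle a,b\rangle\simeq F_2$, first note $F_2$ has no proper non-$\es$ free factor system: a free factor of rank $1$ cannot be invariant (again using a fully irreducible element, or simply that $\Out(F_2)\cong\mathrm{GL}_2(\mathbb{Z})$ acts transitively on conjugacy classes of primitive elements), and a free factor system with two rank-$1$ factors would correspond to a splitting $F_2=\mathbb{Z}*\mathbb{Z}$, again not invariant. So any non-$\es$ invariant $\hat\calg$ is the factor system of a QH splitting of $F_2$: the only hyperbolic surface with fundamental group $F_2$ and a boundary component is the once-punctured torus $\Sigma_{1,1}$ (the thrice-punctured sphere is excluded since $F_2$ acting on $\Sigma_{1,1}$ is the only one-ended-relative-to-boundary option, and for QH splittings we need an unused boundary component; the only consistent decomposition is $F_2=\pi_1(\Sigma_{1,1})$ with the single boundary curve $[a,b]$ unused). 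Thus $\hat\calg=\{[\langle[a,b]\rangle]\}$, which is indeed $\Out(F_2)$-invariant since $[a,b]$ generates the commutator subgroup up to conjugacy and hence its conjugacy class (up to inversion, absorbed in $\ia$) is characteristic. This proves the claim. The main obstacle, and the only place requiring care, is the passage from $\Out(A_0)$-invariance down to the explicit classification of invariant QH factor systems of $F_2$ — essentially identifying that the once-punctured torus is the unique relevant surface and its boundary curve is the commutator — but this is a well-known low-rank computation; everything else is a direct transfer of the arguments already given in the proofs of Lemmas~\ref{lem_conj_invariant} and~\ref{lemma:unique-invariant-factor}.
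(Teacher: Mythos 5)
Your reduction of the groupoid statement to a group-theoretic one is fine and matches the paper's mechanism: by Remark~\ref{rk:action-type}, $\rho_{A_0}((\calh_0)_{|U})$ contains a non-trivial power of the restriction of any chosen infinite-order element of $H_0$, and $H_0$ surjects onto $\mathrm{IA}(A_0,\mathbb{Z}/3\mathbb{Z})$. But where the paper simply picks a single automorphism --- an atoroidal fully irreducible element of $\mathrm{IA}(A_0)$ when $\rank(A_0)\ge 3$ (this is exactly where the Stallings/Gersten and Bestvina--Handel input enters), resp.\ a pseudo-Anosov when $\rank(A_0)=2$ --- and observes that no non-empty \afs\ (resp.\ none other than $\{[\grp{aba\m b\m}]\}$) is invariant under any of its powers, you upgrade to invariance under all of $\mathrm{IA}(A_0,\mathbb{Z}/3\mathbb{Z})$ and then try to classify the invariant \afs{}s. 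Two remarks on that route: the jump ``$\mathrm{IA}(A_0,\mathbb{Z}/3\mathbb{Z})$-invariant, whence $\Out(A_0)$-invariant'' is unjustified (invariance under a finite-index subgroup does not pass to the whole group); it is harmless only because you never actually need it, provided you run the classification with $\mathrm{IA}(A_0,\mathbb{Z}/3\mathbb{Z})$ itself, which does contain fully irreducible elements. Also, in the rank $\ge 3$ QH case you outsource the real content to the unproven triviality claim of Remark~\ref{rk_N3}; that claim is essentially the atoroidal-automorphism input that the paper's proof makes explicit, so citing it is formally admissible but hides rather than supplies the argument.

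The genuine gap is in the rank-$2$ case. Your classification step asserts that the once-punctured torus is the only hyperbolic surface with fundamental group $F_2$ and boundary; this is false: the three-holed sphere $\Sigma_{0,3}$, the once-holed Klein bottle and the twice-holed projective plane all have free fundamental group of rank $2$ with boundary, and each yields a bona fide QH factor system of $A_0$ (for instance $\{[\grp{a}],[\grp{b}],[\grp{ab}]\}$ for $\Sigma_{0,3}$, with trivial underlying splitting and all boundary components unused --- note that the factor system $\{[\grp{aba\m b\m}]\}$ you keep is itself of this $n=0$ type). The parenthetical justification you give (``one-ended relative to boundary'', ``only consistent decomposition'') does not distinguish these cases, since every surface group is one-ended relative to its boundary. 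These extra factor systems must be excluded by an invariance argument, not a structural one: their boundary classes are not periodic under a pseudo-Anosov (hyperbolic) element of $\mathrm{IA}(A_0,\mathbb{Z}/3\mathbb{Z})$, whose only periodic maximal cyclic conjugacy class is $[\grp{aba\m b\m}]$. That is exactly the paper's one-line argument in the rank-$2$ case, and it is the missing ingredient at the key step of yours.
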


\begin{proof}
  If $\rank(A_0)\geq 3$, there exists $\alpha\in\mathrm{IA}(A_0)$ which is atoroidal and fully irreducible: indeed, by \cite{Sta} and \cite{GS}, there exists an outer automorphism which is both fully irreducible and non-geometric (i.e.\ not induced by a surface homeomorphism), and by \cite[Theorem~4.1]{BH} this implies that it is atoroidal.
  Then for all $k\geq 1$, $\es$ is the only \afs\ which is $\alpha^k$-invariant.
  By Remark~\ref{rk:action-type}, for every Borel
  subset $U\subset Y$ of positive measure, $\rho({\calh_0}_{|U})$ contains a non-trivial power of $\alpha$, which proves the first assertion.

  If $\rank(A_0)=2$, then $\Out(A_0)$ is isomorphic to the mapping class group of a punctured torus,
  with $\HF_{A_0}$ corresponding to the peripheral element.
   To prove the second assertion,
   it suffices to consider $\alpha\in \Out(F_2)$ induced by a pseudo-Anosov homeomorphism.
   Then $[\grp{aba\m b\m}]$
   is the only $\alpha$-periodic conjugacy class of maximal cyclic subgroup, and one concludes using Remark \ref{rk:action-type} as above.
\end{proof}

\begin{lemma}\label{lemma:subgroupoid-p3}
  There exists a measured subgroupoid $\calh'\subseteq{\calh_0}$ which is everywhere non-amenable and such that $(\rho_{A_0})_{|\calh'}$ has trivial kernel.
\end{lemma}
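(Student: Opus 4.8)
The plan is to build $\calh'$ as the $\rho$-preimage of an appropriate subgroup of $H_0$, chosen so that it maps injectively into $\Out(A_0)$ under the restriction map while still being large enough to be everywhere non-amenable. Recall that $H_0=\Stab_{\IA}(S_0)$ fits into the exact sequence of Proposition~\ref{prop_suite_exacte}: one has $1\to\Tw\to H_0^0\xrightarrow{\rho_{A_0}}\Out(A_0,\Inc_{v_0}^{(t)})$ where $\Tw\simeq A_0\times A_0$ is the group of twists, $H_0^0$ is the finite-index subgroup acting trivially on the quotient graph $S_0/F_N$, and the image contains all of $\Out(A_0,\Inc_{v_0}^{(t)})$. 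Since $v_0$ has a single incident edge orbit with trivial stabilizer (the stable letter edge), $\Inc_{v_0}$ is trivial, so $\rho_{A_0}$ maps $H_0^0$ onto $\Out(A_0)$ with kernel exactly $\Tw$.

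First I would pick inside $\Out(A_0)$ a non-abelian free subgroup $F$: this exists since $\rank(A_0)=N-1\ge 2$, so $\Out(A_0)$ either is (virtually, for $N=3$) a mapping class group of a punctured torus or is $\Out(F_{N-1})$ with $N-1\ge 3$, and in all cases contains a non-abelian free group (e.g.\ generated by two independent fully irreducibles, or by two pseudo-Anosovs). By the surjectivity of $\rho_{A_0}$ on $H_0^0$, there is a subgroup $\tilde F\subseteq H_0^0\subseteq H_0$ mapping isomorphically onto $F$ under $\rho_{A_0}$ --- one can take $\tilde F$ to be any set-theoretic lift closed under the group operations, using that $F$ is free hence the restriction $\rho_{A_0}|_{\tilde F}$ splits. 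Then I set $\calh':=\rho^{-1}(\tilde F)$, which is a measured subgroupoid of $\calh_0=\rho^{-1}(H_0)$.

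Next I would verify the two required properties. For triviality of the kernel of $(\rho_{A_0})_{|\calh'}$: if $g\in\calh'$ satisfies $\rho_{A_0}(g)=1$, then $\rho(g)\in\tilde F$ and $\rho_{A_0}(\rho(g))=1$; since $\rho_{A_0}|_{\tilde F}$ is injective this forces $\rho(g)=1_{\IA}$, and then triviality of the kernel of $\rho$ (which holds because $\rho$ is action-type) gives that $g$ is a unit. For everywhere non-amenability: the group $\tilde F\cong F$ is a non-abelian free group, and $\rho$ restricts to a cocycle $\calh'\to\tilde F$ which is action-type (by Lemma~\ref{lemma:action-type}, the restriction of an action-type cocycle to the preimage of a subgroup is again action-type). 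Lemma~\ref{lemma:not-amen} then applies verbatim: a measured groupoid admitting an action-type cocycle to a group containing a non-abelian free subgroup is everywhere non-amenable, so $\calh'_{|U}$ is non-amenable for every Borel subset $U\subseteq Y$ of positive measure. This completes the construction.

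The only genuine point requiring care --- and the main obstacle --- is the existence of a free lift $\tilde F\subseteq H_0$ of $F\subseteq\Out(A_0)$; this is where the hypothesis $N\ge 3$ (equivalently $\rank(A_0)\ge 2$) enters, ensuring both that $\Out(A_0)$ contains a non-abelian free group and, via Proposition~\ref{prop_suite_exacte} with $\Inc_{v_0}$ trivial, that $\rho_{A_0}$ is onto $\Out(A_0)$ in restriction to the finite-index subgroup $H_0^0$, so that lifting a free group is automatic. (If one prefers to avoid discussing $H_0^0$: a non-abelian free subgroup of $\Out(A_0)$ can be found already inside the image of $H_0$ itself, since this image has finite index; then restrict a free basis to a free basis of a finite-index subgroup if needed.) Everything else is a direct citation of Lemmas~\ref{lemma:action-type} and~\ref{lemma:not-amen} together with the action-type hypothesis on $\rho$.
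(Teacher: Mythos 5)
Your proof is correct and follows essentially the same route as the paper: lift a non-abelian free subgroup of $\Out(A_0)$ to a subgroup $F\subseteq H_0$ mapping isomorphically under the restriction map (the paper simply chooses the free group inside $\mathrm{IA}(A_0)$ so that the obvious lift, extending by $t\mapsto t$, lands in $\IA$), set $\calh'=\rho^{-1}(F)$, and conclude via Lemma~\ref{lemma:not-amen} together with triviality of $\ker\rho$. One minor inaccuracy: since $H_0\subseteq\IA$, the image of $\rho_{A_0}$ on $H_0$ is only the mod-$3$ congruence subgroup of $\Out(A_0)$ rather than all of $\Out(A_0)$ (and the kernel is $\Tw\cap\IA$, not $\Tw$), but this is exactly what your finite-index fallback covers, so the argument stands.
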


\begin{proof}
  Let $F_0=\grp{\ol\alpha,\ol\beta}\subset \mathrm{IA}(A_0)$ be a non-abelian free group,
  and
  let $F=\grp{\alpha,\beta}\subset H_0$ be a subgroup mapping isomorphically to $F_0$ under the restriction map.
  The subgroupoid $\calh'=\rho^{-1}(F)$ is everywhere non-amenable by Lemma~\ref{lemma:not-amen}.
  As $F$ injects into $\Out(A_0)$ and $\rho$ has trivial kernel, it follows that
   $(\rho_{A_0})_{|\calh'}$ has trivial kernel.
\end{proof}

\begin{lemma}\label{lemma:check-p3}
The pair $(\calg,{\calh_0})$ satisfies Property~$\PIII$.
\end{lemma}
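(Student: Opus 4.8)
The plan is to prove Property~$\PIII$ for the pair $(\calg,{\calh_0})$ by contradiction, using the structural results about $H_0$ and $A_0$ established in the preceding lemmas. So suppose there exist a Borel subset $U\subseteq Y$ of positive measure and two normal subgroupoids $\caln^-,\caln^+$ of $({\calh_0})_{|U}$ containing respectively subgroupoids $\calb_1^-,\calb_2^-,\calb_3^-$ and $\calb_1^+,\calb_2^+,\calb_3^+$ such that for every $\eps\in\{\pm\}$ the groupoids $\calb_1^\eps,\calb_2^\eps,\calb_3^\eps,\caln^{-\eps}$ form a pseudo-product. We may restrict to $U$ and assume $U=Y$. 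The key point is that $\PIII$ asks for \emph{three} Schottky subgroupoids inside each $\caln^\eps$, which morally corresponds to each $\caln^\eps$ containing a copy of $F_2\times F_2\times F_2$; but the group $H_0=\Stab_{\IA}(S_0)$ of a non-separating free splitting virtually splits only as $A_0\times A_0$, and $A_0$ (a free group of rank $N-1$) does not contain $F_2\times F_2\times F_2$ in a sufficiently robust way once we have divided out the group of twists.

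First I would apply Corollary~\ref{cor:canonical_pair2}-type machinery: since $\caln^\eps$ is normalized by ${\calh_0}_{|U}$ and contains a pseudo-product, $\caln^\eps$ is in particular everywhere non-amenable and stably normalizes the amenable infinite-type subgroupoids of any of its Schottky subgroupoids. I would then run an analysis of $\rho_{A_0}$ restricted to the $\calb_j^\eps$ and $\caln^\eps$, using Theorem~\ref{theo:a-etages} applied to a chain of subgroupoids built from a Schottky subgroupoid $\cala\subseteq\calb_1^{-\eps}$ (which is amenable of infinite type and pure, by Proposition~\ref{prop:stably-nice-vs-pure-nice-averse}, after a countable partition), then $\caln^{-\eps}$ normalizing it, then $\caln^\eps$ — wait, one must be careful about which normalizes which. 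The clean setup is: inside $\caln^-$ pick a Schottky subgroupoid $\cala$ (from $\calb_1^-$); it is stably normalized by $\caln^+$ (since $\calb_1^-,\calb_2^-,\calb_3^-,\caln^+$ form a pseudo-product means each $\calb_j^-$, in particular $\cala$, is stably normalized by $\caln^+$). By Lemma~\ref{lemma:unique-nice-invariant}, $({\calh_0}_{|U},\rho)$ has no invariant nice splitting other than $S_0$; so after partitioning, either $(\cala,\rho)$ is stably nice — forcing $S_0$ invariant, hence $\rho({\calh_0}_{|U})\subseteq H_0$ and $\rho$ restricted there lands in (a finite-index subgroup of) $A_0\times A_0$ with the twist subgroup as an amenable normal piece — or $(\cala,\rho)$ is stably pure. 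In either case I would pass to the cocycle $\rho_{A_0}:{\calh_0}\to\Out(A_0)$ and use Lemma~\ref{lemma:unique-nice-invariant} (second part) plus Lemma~\ref{lemma:empty_afs}: there is no non-trivial $({\calh_0}_{|U},\rho_{A_0})$-invariant splitting of $A_0$ with finitely generated edge stabilizers, and the only possible invariant \afs{} of $A_0$ is $\es$ (if $\rank A_0\geq 3$) or $\{[\grp{aba^{-1}b^{-1}}]\}$ (if $\rank A_0=2$).

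The heart of the argument is then to derive a contradiction from having, inside $\caln^-$ say, three subgroupoids $\calb_1^-,\calb_2^-,\calb_3^-$ each containing a Schottky subgroupoid, all stably normalized by $\caln^+$, together with the symmetric data. Postcomposing with $\rho_{A_0}$ and running Theorem~\ref{theo:a-etages} along the chain $\cala\triangleleft\caln^-\triangleleft{\calh_0}$ (using that $\caln^+$ normalizes $\cala$, hence so does any bigger groupoid containing $\caln^+$; here one uses $\caln^{-\eps}$ normalizes the Schottky subgroupoids of $\caln^\eps$): the first alternative of Theorem~\ref{theo:a-etages} is excluded by Lemma~\ref{lemma:unique-nice-invariant} (no invariant nice splitting of $A_0$), the second alternative is excluded by Lemma~\ref{lemma:empty_afs} (no bigger invariant \afs{} of $A_0$), so we are in the third alternative: every measured subgroupoid $\calh'\subseteq\caln^+$ on which $\rho_{A_0}$ has trivial kernel is amenable. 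But Lemma~\ref{lemma:subgroupoid-p3} produces a measured subgroupoid $\calh'\subseteq{\calh_0}$ which is everywhere non-amenable and on which $(\rho_{A_0})_{|\calh'}$ has trivial kernel; intersecting (via $\rho^{-1}$) with the image $\rho(\caln^+)$ — or rather, observing that since $\caln^+$ is stably normalized by the Schottky subgroupoids of $\caln^-$ and contains a pseudo-product, $\rho(\caln^+)$ must contain a non-abelian free subgroup of $\Stab_{\IA}(S_0)$ mapping into $\Out(A_0)$ with trivial kernel — yields a non-amenable subgroupoid inside $\caln^+$ on which $\rho_{A_0}$ has trivial kernel, contradicting the third alternative.

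The main obstacle I anticipate is the bookkeeping of which subgroupoid normalizes which, and ensuring that $\caln^+$ really does carry a non-amenable subgroupoid with trivial $\rho_{A_0}$-kernel: the pseudo-product data gives non-amenability of $\caln^+$, but one needs that after projecting to $\Out(A_0)$ the image is still non-amenable, i.e.\ that $\caln^+$ is not entirely contained (up to stabilization) in $\rho^{-1}$ of the twist subgroup $A_0\times\{1\}\cong A_0$ with trivial $\Out(A_0)$-image. This is exactly where the rank-$N\geq 3$ hypothesis and the fact that $\caln^+$ contains a \emph{pseudo-product} (Schottky + stable normalization by three commuting pieces) should force enough room; concretely, if $\rho_{A_0}(\caln^+)$ were stably trivial then $\rho(\caln^+)$ would be stably inside the abelian-ish twist group, contradicting that $\caln^+$ stably normalizes three Schottky subgroupoids of $\caln^-$ which must themselves have non-twist image (by Lemma~\ref{lemma:normalisateur_twist}, the twist groupoid cannot normalize an amenable infinite-type subgroupoid in an action-type way, so the Schottky pieces cannot live in the twist part). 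Once this is nailed down the contradiction is immediate, completing the proof of Lemma~\ref{lemma:check-p3}, and with Lemmas~\ref{lemma:check-p1max},~\ref{lemma:check-p2} and~\ref{lemma:nonsep-satisfies-1} in hand, Part~2 of Theorem~\ref{theo:non-characterization} follows.
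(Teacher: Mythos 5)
Your first stage (the application of Theorem~\ref{theo:a-etages} together with Lemmas~\ref{lemma:subgroupoid-p3}, \ref{lemma:unique-nice-invariant} and \ref{lemma:empty_afs}) is essentially the paper's argument, but two things go wrong. First, a bookkeeping error: the pseudo-product hypothesis says the Schottky tuple inside $\calb_1^-$ is stably normalized by $\calb_2^-,\calb_3^-$ and $\caln^{+}$, \emph{not} by $\caln^-$, and normalization does not pass to larger groupoids, so the claim ``$\caln^+$ normalizes $\cala$, hence so does any bigger groupoid containing $\caln^+$'' is false; the chain $\cala\triangleleft\caln^-\triangleleft\calh_0$ you feed into Theorem~\ref{theo:a-etages} must instead start from a Schottky piece of $\calb_1^{+}$ (which \emph{is} stably normalized by $\caln^-$, itself normal in $\calh_0$). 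That slip is fixable by swapping signs. The serious gap is the hypothesis of Theorem~\ref{theo:a-etages} that the cocycle $\rho_{A_0}$ be nowhere trivial (and pure) on the bottom of the chain: nothing forces this, and the case where the Schottky subgroupoids have $\rho$-image inside the group of twists $\Tw\cong A_l\times A_r$ (so that $\rho_{A_0}$ is trivial on them) is a genuinely possible configuration, not a contradiction — a Schottky pair sits happily inside $\rho^{-1}(A_l\times\{1\})$.

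Your proposed resolution of that case does not work. Lemma~\ref{lemma:normalisateur_twist} requires the amenable subgroupoid to be \emph{normal in the whole groupoid} carrying an action-type cocycle to a product of free groups; here the Schottky pieces are only normalized by $\caln^{-\eps}$ and the other $\calb_j^{\eps}$'s, not by $\rho^{-1}(\Tw)$, so the lemma gives no obstruction to the Schottky pieces, or indeed all of $\caln^{+}$, living in the twist part (also, $\Tw$ is a product of two non-abelian free groups, not ``abelian-ish''). This missing case is the heart of the matter and the only place the hypothesis of \emph{three} subgroupoids $\calb_1^\eps,\calb_2^\eps,\calb_3^\eps$ is used: the paper first shows, by a maximality argument over how many of the six Schottky subgroupoids on one side have trivial $\rho_{A_0}$ (a non-trivial one feeds precisely your Theorem~\ref{theo:a-etages} argument and is ruled out), that one may assume all six map into $\Tw=A_l\times A_r$; it then splits into cases according to whether the projections $\rho_l,\rho_r$ are trivial or nowhere trivial on the first two Schottky pairs, and uses the \emph{third} pair $\cala^+_{3,1},\cala^+_{3,2}$ as the everywhere non-amenable normalizing groupoid to contradict Lemma~\ref{lemma:adams-libre} or Lemma~\ref{lemma:adams-produit}. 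Without this second stage your argument does not exclude the configuration in which $\caln^+$ and its three Schottky pairs all lie over the twist group, so the proof of Property~$(P_3)$ is incomplete.
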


\begin{proof}
Assume by contradiction that $\PIII$ does not hold, and let $U\subseteq Y$, and subgroupoids 
$\caln^+,\caln^-$, $\calb_i^+,\calb_i^-$ ($i\leq 3$) of ${\calh_0}_{|U}$ be such that
$\caln^-,\caln^+$ are normal in ${\calh_0}_{|U}$, and for all $\eps\in\{\pm\}$, the four subgroupoids $\calb_1^\eps,\calb_2^\eps,\calb_3^\eps,\caln^{-\eps}$ form a pseudo-product (Definition~\ref{de:pseudo-product}),
 with all $\calb_i^+$ contained in $\caln^+$ and all $\calb_i^-$ contained in $\caln^-$.

Thus consider for all $\eps\in\{\pm\}$ and all $i\leq 3$
a pair of amenable subgroupoids $\cala_{i,1}^\eps,\cala_{i,2}^\eps\subseteq \calb_i^\eps$
stably normalized by $\calb_{i'}^{\eps}$ for all $i'\neq i$, and by $\caln^{-\eps}$
and such that for every Borel subset $V\subseteq U$ of positive measure, the groupoid $\langle (\cala_{i,1}^{\eps})_{|V},(\cala_{i,2}^{\eps})_{|V}\rangle$ is non-amenable.
Notice in particular that
$\caln^+$ and $\caln^-$ are everywhere non-amenable.
Up to restricting to a smaller subset of positive measure, we may assume that
$\cala_{i,j}^\eps$ is actually normalized (and not only stably normalized)
by $\calb_{i'}^\eps$ and $\caln^{-\eps}$.

Since $\caln^+,\caln^-\subseteq {\calh_0}_{|U}$, 
the cocycle $\rho_{A_0}:{\calh_0}\ra\Out(A_0)$ is well-defined on all the subgroupoids under consideration.

\begin{claim*} 
There exists $\epsilon\in\{\pm\}$ and a Borel subset $V\subseteq U$ of positive measure such that the cocycle $\rho_{A_0}$ is trivial on the six subgroupoids $(\cala_{i,j}^{\eps})_{|V}$ for $i\le 3$, $j\le 2$.
\end{claim*}

\begin{proof}[Proof of the claim]
  We can assume that $(\rho_{A_0})_{|\caln^-}$ is nowhere trivial since otherwise, the claim would hold
as $\rho_{A_0}$ would be trivial on the six subgroupoids $(\cala_{i,j}^{-})_{|V}$ for some $V\subseteq U$ of positive measure.
  Let $k\leq 6$ be maximal such that there exists $V\subseteq U$ of positive measure
  such that $\rho_{A_0}$ is trivial on $k$ of the six subgroupoids $(\cala_{i,j}^{+})_{|V}$.
  We assume that $k<6$ and argue towards a contradiction.
  
  Up to renumbering, 
  we may assume that 
the cocycle $\rho_{A_0}$ is nowhere trivial on 
  $(\cala_{1,1}^+)_{|V}$.  Using Proposition \ref{prop:stably-nice-vs-pure-nice-averse}, we may additionally assume up to replacing $V$ by 
  a Borel subset of positive measure that 
  $((\cala_{1,1}^+)_{|V},\rho_{A_0})$ is either stably nice, or pure and nice-averse.
  If stably nice, then so is $(\caln^-_{|V},\rho_{A_0})$
  by Proposition~\ref{prop:nice-preserved}. Since  $\rho_{A_0}$ is nowhere trivial on $\caln^-_{|V}$,
  one can apply Proposition~\ref{prop:nice-preserved} again and get that $(({\calh_0})_{|V},\rho_{A_0})$ is stably nice.
  This contradicts Lemma~\ref{lemma:unique-nice-invariant}(2), so $((\cala_{1,1}^+)_{|V},\rho_{A_0})$ is pure and nice-averse.
  We denote by $\HF_1$ the unique maximal $((\cala_{1,1}^+)_{|V},\rho_{A_0})$-invariant \afs\ of $A_0$.
  
  We will now set up a contradiction to Theorem~\ref{theo:a-etages}.
  First, by Lemma~\ref{lemma:subgroupoid-p3}, we can find a measured subgroupoid $\calh'$ of $({\calh_0})_{|V}$ which is everywhere non-amenable, such that $(\rho_{A_0})_{|\calh'}$ has trivial kernel.
  As $(\cala_{1,1}^+)_{|V}$ is amenable and 
  normalized by $\caln^-_{|V}$, and $\caln^-_{|V}$ is normalized by $\calh'$, and $((\calh_0)_{|V},\rho_{A_0})$ is nowhere trivial (Remark~\ref{rk:rho_A0}), we can therefore apply
  Theorem~\ref{theo:a-etages} to  $(\caln_1,\caln_2,\caln_3)=((\cala_{1,1}^+)_{|V},\caln^-_{|V},({\calh_0})_{|V})$ with respect to $\rho_{A_0}$.
  Since $(({\calh_0})_{|V},\rho_{A_0})$ is nice-averse
  (Lemma~\ref{lemma:unique-nice-invariant}(2)) and $\calh'_{|V}$ is everywhere non-amenable, the only possible conclusion
  from this theorem is Assertion~2 saying that $({\calh_0})_{|V}$ preserves an \afs\ $\HF$ of $A_0$ with $\HF\sqsupsetneq\HF_1$.
  In particular, $\HF\neq\es$ which contradicts Lemma~\ref{lemma:empty_afs} if
  $\rank(A_0)\geq 3$ (i.e.\ $N\geq 4$). If $\rank(A_0)=2$, then with $\HF_{A_0}$ as in Lemma~\ref{lemma:empty_afs}, one has
  $\HF_{A_0}\sqsubseteq\HF_1\sqsubsetneq \HF$, again a contradiction.
\end{proof}

Thanks to the claim, up to swapping signs, we may find a Borel subset
$V\subseteq U$ of positive measure such that for every $i\in\{1,2,3\}$ and every $j\in\{1,2\}$, one has  $\rho_{A_0}((\cala_{i,j}^+)_{|V})=\{\id\}$.
Since $\rho$ takes values in $\IA$, this means that $\rho((\cala_{i,j}^+)_{|V})\subset \Tw$
where $\Tw$ is the group of twists of the splitting $S_0$ in $\Out(F_N)$.
Recall that $\Tw$ is isomorphic to the direct product
$A_l\times A_r$ of two copies of $A_0$. We denote by $\rho_{l}$ and $\rho_{r}$ the corresponding cocycles (given by postcomposing $\rho$ with the projection to one of the two factors).

We now consider the four groupoids $\cala_{i,j}^+$ for $i,j\leq 2$.
Clearly, one of the following assertions holds:
\begin{enumerate}
\item there exists a Borel subset $W\subseteq V$ of positive measure such that $\rho_l((\cala^+_{i,j})_{|W})=\{\id\}$ for all $i,j\leq 2$;
\item there exists a Borel subset $W\subseteq V$ of positive measure such that $\rho_r((\cala^+_{i,j})_{|W})=\{\id\}$ for all $i,j\leq 2$;
\item or there exist a Borel subset $W\subseteq V$ of positive measure, and $(i_l,j_l),(i_r,j_r)\in \{1,2\}\times\{1,2\}$, 
such that $\rho_l$ is nowhere trivial on $(\cala^+_{i_l,j_l})_{|W}$, and $\rho_{r}$ is nowhere trivial on $(\cala^+_{i_r,j_r})_{|W}$.
\end{enumerate}


First assume that Assertion 1 holds. Since $\rho$ has trivial kernel,
then $\rho_r$ has trivial kernel on the groupoid $\grp{(\cala^+_{1,1})_{|W},(\cala^+_{1,2})_{|W}}$ which is everywhere non-amenable.
This groupoid normalizes the amenable groupoid $(\cala^+_{2,2})_{|W}$ on which $\rho_r$ is nowhere trivial.
This contradicts Lemma~\ref{lemma:adams-libre}.

Similarly, Assertion 2 leads to a contradiction.

If Assertion 3 holds, we apply Lemma~\ref{lemma:adams-produit} to
the two amenable groupoids  $(\cala^+_{i_l,j_l})_{|W}$,  $(\cala^+_{i_r,j_r})_{|W}$
which are normalized by the groupoid $\grp{(\cala^+_{3,1})_{|W},(\cala^+_{3,2})_{|W}}$ on which $\rho$ has trivial kernel.
We get that $\grp{(\cala^+_{3,1})_{|W},(\cala^+_{3,2})_{|W}}$ is amenable, a contradiction.
\end{proof}

\section{Characterizing compatibility}\label{sec:compatibility}

Let $\calg$ be a measured groupoid over a base space $Y$, and let $\rho:\calg\to\IA$ be an action-type cocycle, with $N\ge 3$. Given two splittings $S_1,S_2\in\NS\cup\FSii$, the goal of the present section is to characterize the compatibility of $S_1$ and $S_2$ in terms of a purely groupoid-theoretic statement about their $(\calg,\rho)$-stabilizers. 

We refer to Section~\ref{sec:big} for Properties~$\PImax$,~$\PII$ and~$\PIII$. We introduce the following property of a triple
$(\calg;\calg_1,\calg_2)$, where $\calg$ is a measured groupoid over a base space $Y$, and $\calg_1$ and $\calg_2$ are measured subgroupoids of $\calg$.

\newcommand{\Pcompi}{\hyperlink{Pcompi}{\ensuremath{(P_{\mathrm{comp}}^1)}}}
\newcommand{\Pcompii}{\hyperlink{Pcompii}{\ensuremath{(P_{\mathrm{comp}}^2)}}}
\begin{enumerate}
\item[\Pcompi] \hypertarget{Pcompi}{}
Every measured subgroupoid of $\calg$ that stably normalizes $\calg_1\cap\calg_2$ is stably contained in $\calg_1\cap\calg_2$.
\item[\Pcompii] \hypertarget{Pcompii}{}
  For every Borel subset $Z\subseteq Y$ of positive measure, and every measured subgroupoid $\calh$ of $\calg_{|Z}$ that stably contains $(\calg_1\cap\calg_2)_{|Z}$ and such that $(\calg_{|Z},\calh)$ satisfies Properties~$\PImax$,~$\PII$ and~$\PIII$, there exists a Borel partition $Z=Z_1\dunion Z_2$ such that the groupoids $\calh_{|Z_1}$ and $\calh_{|Z_2}$ are stably contained in $(\calg_1)_{|Z_1}$ and $(\calg_2)_{|Z_2}$ respectively.
\end{enumerate}

\begin{rk}
In the group setting, \Pcompi\ translates into the fact that the stabilizer of
a pair of distinct compatible splittings $S_1,S_2\in \NS\cup\FSii$
is its own normalizer.
Property \Pcompii\ is meant to express the fact that if $H$ is any group containing the stabilizer of $\{S_1,S_2\}$
and which is the stabilizer of some splitting $S'\in \NS\cup\FSii$,
then $H$ stabilizes $S_1$ or $S_2$ (this analogy is not completely faithful because \PImax, \PII, \PIII\ don't quite characterize stabilizers of splittings in $\NS\cup\FSii$).
\end{rk}

Notice that Properties~$\Pcompi$ and $\Pcompii$ are preserved by restriction to a Borel subset of positive measure, and by stabilization.

\begin{theo}\label{theo:compatibility-ns}
  Let $\calg$ be a measured groupoid over a base space $Y$, and let $\rho:\calg\to\IA$ be an action-type cocycle with $N\geq 3$.
  Let $S_1,S_2\in\NS\cup\FSii$ be distinct splittings, and for every $j\in\{1,2\}$, let $\calg_j$ be the $(\calg,\rho)$-stabilizer of $S_j$.
  The following assertions are equivalent.
\begin{itemize}
\item[(i)] The splittings $S_1$ and $S_2$ are compatible.
\item[(ii)] The triple $(\calg;\calg_1,\calg_2)$ satisfies Properties~$\Pcompi$ and $\Pcompii$.
\end{itemize}
\end{theo}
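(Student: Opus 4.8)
The strategy is to prove the two implications separately, in both cases reducing to the group-theoretic picture via the action-type machinery developed in Sections~\ref{sec:background}--\ref{sec:big}. The key translation tool is Theorem~\ref{theo:non-characterization}: a measured subgroupoid $\calh$ of $\calg_{|Z}$ satisfying $\PImax$, $\PII$, $\PIII$ is, after a countable partition of the base, the $(\calg,\rho)$-stabilizer of a splitting in $\NS\cup\FSii$; conversely the $(\calg,\rho)$-stabilizer of a splitting in $\NS$ always has these properties. Throughout I will use the standard fact that a cocycle into a countable set becomes constant after a countable partition of the base, together with Remark~\ref{rk:action-type} (action-type implies that over any positive-measure subset, $\rho^{-1}(H)$ sees a power of every infinite-order $h\in H$), so that group-theoretic relations among stabilizers transfer to groupoid-theoretic relations and vice versa.

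\textbf{Proof of (i)$\Rightarrow$(ii).} Assume $S_1$ and $S_2$ are compatible, and let $\hat S$ be their minimal common refinement. Write $H_j=\Stab_\IA(S_j)$ and $\hat H=\Stab_\IA(\hat S)$; then $\hat H = H_1\cap H_2$ (the stabilizer of a pair of compatible splittings is the stabilizer of their minimal common refinement), so $\calg_1\cap\calg_2=\rho^{-1}(H_1\cap H_2)=\rho^{-1}(\hat H)$ up to a conull set. For $\Pcompi$: I would argue, using the structure of $\hat S$ and Section~\ref{sec:free-stabilizer}/Proposition~\ref{prop:stab-u1}-type reasoning adapted to a fixed splitting, that $\hat H$ is ``self-normalizing in a groupoid sense'' --- more precisely that any bisection-invariance forcing a cocycle-labeled arrow to conjugate $\rho^{-1}(\hat H)$ into itself forces the label into $\hat H$. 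Concretely: if $\calh'$ stably normalizes $\calg_1\cap\calg_2$, partition the base so the normalizing bisections carry constant labels $\gamma_n\in\IA$; $B_n$-invariance of $\rho^{-1}(\hat H)$ together with the action-type property gives $\gamma_n \hat H\gamma_n^{-1}=\hat H$, and since the common refinement of two splittings in $\NS\cup\FSii$ has trivial group of twists pointing ``away from'' the relevant factors and $\hat H$ equals its own normalizer in $\IA$ (an intersection of stabilizers of free splittings, cf.\ the discussion around Remark~\ref{rk_normalizer}; one also invokes Lemma~\ref{lemma:ia-nice} and Theorem~\ref{theo:ia} to pin down that normalizing $\hat S$ preserves its structure), one concludes $\gamma_n\in\hat H$, hence $\calh'$ is stably contained in $\calg_1\cap\calg_2$. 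For $\Pcompii$: given $\calh\supseteq(\calg_1\cap\calg_2)_{|Z}$ with $\PImax,\PII,\PIII$, Theorem~\ref{theo:non-characterization}(1) partitions $Z$ into pieces $Z_i$ on which $\calh_{|Z_i}=\rho^{-1}(\Stab_\IA(T_i))_{|Z_i}$ with $T_i\in\NS\cup\FSii$. On each piece, $\rho^{-1}(\hat H)_{|Z_i}\subseteq \rho^{-1}(\Stab_\IA(T_i))_{|Z_i}$ and action-type (Remark~\ref{rk:action-type}) forces $\hat H\subseteq\Stab_\IA(T_i)$. Now I would use the group-theoretic fact (this is where the characterization of adjacency in $\NSG$ by "common stabilizer fixing no third one-edge free splitting", alluded to in the introduction, and Corollary~\ref{cor_V1_compatible} enter) that if $\hat H=\Stab_\IA(\hat S)$ is contained in the stabilizer of a one-edge free splitting $T_i$, then $T_i$ is obtained from $\hat S$ by collapsing, hence $T_i$ is compatible with each of $S_1,S_2$; in particular $T_i$ is a collapse of $\hat S$, so it is compatible with both $S_j$, and more is true: $T_i$ must collapse to $S_1$ or to $S_2$ --- this is where one partitions $Z_i$ further into $Z_i^1\sqcup Z_i^2$ according to which, giving $\calh_{|Z_i^1}\subseteq (\calg_1)_{|Z_i^1}$, $\calh_{|Z_i^2}\subseteq(\calg_2)_{|Z_i^2}$. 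Assembling over $i$ yields the required partition $Z=Z_1\sqcup Z_2$.

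\textbf{Proof of (ii)$\Rightarrow$(i).} Assume $S_1$ and $S_2$ are \emph{not} compatible; I want to violate $\Pcompi$ or $\Pcompii$. The idea is to produce, group-theoretically, an element $\gamma\in\IA\setminus(H_1\cap H_2)$ that normalizes $H_1\cap H_2$, or a splitting $T$ compatible with neither $S_1$ nor $S_2$ whose stabilizer contains $H_1\cap H_2$ --- then lift to the groupoid. Since $S_1,S_2$ are incompatible one-edge (free or $(2,2^+)$-free) splittings, one analyzes the intersection $H_1\cap H_2$ using the twist subgroups: incompatibility forces the edges to "cross," and $H_1\cap H_2$ either fails to be self-normalizing (there is a twist-type element in $N_\IA(H_1\cap H_2)$ moving $S_1$), giving a failure of $\Pcompi$ after setting $\calh'=\rho^{-1}(\langle H_1\cap H_2,\gamma\rangle)$; or else $H_1\cap H_2$ is contained in $\Stab_\IA(T)$ for some one-edge free splitting $T$ that is a "diagonal" of the two crossing edges (built as in Section~\ref{sec_twists}, folding crossing edges) and $T$ collapses to neither $S_1$ nor $S_2$. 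In the latter case, pick a one-edge acyclic free splitting $T$ with $H_1\cap H_2\subseteq \Stab_\IA(T)$ that is compatible with neither $S_1$ nor $S_2$; then $\calh:=\rho^{-1}(\Stab_\IA(T))$ stably contains $\calg_1\cap\calg_2$ and, by Lemmas~\ref{lemma:check-p1max}, \ref{lemma:check-p2}, \ref{lemma:check-p3} (after possibly enlarging $T$ via Lemma~\ref{lemma:nonspecial-free-splitting} to land in $\NS$), satisfies $\PImax,\PII,\PIII$ --- but no partition of the base makes $\calh$ stably contained in $\calg_1$ or $\calg_2$, since that would force $\Stab_\IA(T)\subseteq H_j$ over a positive-measure set (action-type), hence $T$ a refinement of $S_j$, contradicting incompatibility. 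This violates $\Pcompii$, completing the contrapositive.

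\textbf{Main obstacle.} The genuinely delicate point is the group-theoretic input in both directions: precisely characterizing, for $S_1,S_2\in\NS\cup\FSii$, when $H_1\cap H_2$ is self-normalizing and when it is contained in the stabilizer of a third one-edge free splitting not compatible with both --- i.e.\ reproving at the level of $\IA$ an analogue of the adjacency characterization in the non-separating free splitting graph $\NSG$ (the statement from the introduction that two one-edge non-separating free splittings are adjacent iff their common stabilizer fixes no third one-edge free splitting). This requires a careful case analysis of crossing configurations of HNN-splittings and $(2,2^+)$-splittings, using the twist exact sequence (Proposition~\ref{prop_suite_exacte}) and the rigidity of $\IA$ (Theorems~\ref{theo:ia},~\ref{theo:ia-element}, Lemma~\ref{lemma:ia-nice}); the $(2,2^+)$ case is the fussiest because, as the authors note, we do not even know whether its stabilizer satisfies $\PIII$, so one must route everything through $\NS$ via collapses. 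Once this group-theoretic dictionary is in place, the transfer to groupoids is routine bookkeeping with countable partitions and the action-type property.
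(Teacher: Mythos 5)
Your outline of (i)$\Rightarrow$(ii) is close to the paper's in spirit, but both halves of it lean on a group-theoretic statement you never prove: that if the stabilizer $\hat H$ in $\IA$ of the two-edge common refinement $\hat S$ of $S_1,S_2$ is contained in the stabilizer of a one-edge free splitting $T$ (or, more generally, if every element of $\hat H$ has a power fixing $T$), then $T$ is a collapse of $\hat S$; your self-normalization claim for $\hat H$, used for $(P^1_{\mathrm{comp}})$, is itself a consequence of this statement rather than of "trivial groups of twists" or of Corollary~\ref{cor_V1_compatible}, which concerns the canonical splitting $\Uun_{\Gamma_\calc}$ and does not apply as you invoke it. This is exactly Lemma~\ref{lemma:stabilizer-free-splitting} of the paper, proved there by a genuine construction: one folds each half-edge of $\hat S$ along a suitably filling vertex element to produce a $\Zmax$-splitting, and plays Dehn twists about its one-edge collapses against $T$ via Lemma~\ref{lemma:zmax-compatible}. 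You correctly flag this as the main obstacle, but without it this direction is incomplete (one must also check that $\hat S$ is acyclic with non-trivial vertex stabilizers so that the lemma applies).

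The serious gap is in (ii)$\Rightarrow$(i). You argue by contraposition through an unproven dichotomy: for an incompatible pair, either $N_{\IA}(H_1\cap H_2)\supsetneq H_1\cap H_2$, or $H_1\cap H_2$ preserves a "diagonal" one-edge free splitting distinct from $S_1,S_2$. No construction is offered — folding "crossing edges" of two incompatible free splittings does not in general yield a free splitting — and even granting such a $T$, your violation of $(P^2_{\mathrm{comp}})$ requires $(\calg,\rho^{-1}(\Stab_{\IA}(T)))$ to satisfy $(P_1^{\max})$, $(P_2)$, $(P_3)$, which Theorem~\ref{theo:non-characterization}(2) only provides when $T$ is non-separating; if $T$ is separating with both factors of rank at least $2$, Property $(P_3)$ is unknown for its stabilizer and Lemma~\ref{lemma:nonspecial-free-splitting} does not help. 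The paper instead argues directly from $(P^1_{\mathrm{comp}})$ and $(P^2_{\mathrm{comp}})$: it shows $K=\Gamma_{\{S_1,S_2\}}$ is infinite (else $(P^1_{\mathrm{comp}})$ fails at once), forms the canonical splitting $\Uun_K$ of Section~\ref{sec:collection-free}, applies $(P^1_{\mathrm{comp}})$ to $\rho^{-1}(\Stab_{\IA}(\Uun_K))$ — the normalizer of $K$ by Proposition~\ref{prop:stab-u1} — to verify the hypothesis of Lemma~\ref{lem_V1-trivial}, and then either Corollary~\ref{cor_V1_compatible} gives compatibility outright (when the relevant $V^1$-vertex has non-trivial stabilizer), or it applies $(P^2_{\mathrm{comp}})$ to stabilizers of explicitly non-separating one-edge splittings read off from $\Uun_K$ (collapses onto non-separating trivially-stabilized edges, or the loop obtained by blowing up a terminal cyclic vertex), with Lemma~\ref{lemma:stabilizer-free-splitting} pinning those down as $S_1$ or $S_2$ and yielding the conclusion or a contradiction. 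This tree-of-cylinders mechanism is the idea missing from your sketch; the crossing-configuration case analysis you propose is not carried out in the paper and there is no indication it can be made to work as stated.
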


We will use the following lemma from \cite{HW}.

\begin{lemma}[{\cite[Lemma~2.7]{HW}}]\label{lemma:zmax-compatible}
  Let $U$ be a one-edge $\Zmax$-splitting of $F_N$, let $\tau_U$ be a twist about $U$, and let $S$ be a free splitting of $F_N$.

  If $\tau_U^k(S)=S$ for some $k\neq 0$, then $S$ is compatible with $U$.
\end{lemma}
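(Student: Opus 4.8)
The splitting $U$ is dual to a one-edge graph-of-groups decomposition of $F_N$, which is either an amalgam $F_N=A\ast_C B$ or an HNN extension $F_N=A\ast_C$, where $C=\grp{c}$ is a maximal cyclic subgroup of $F_N$ (maximality coming from $U$ being a $\Zmax$-splitting). Fix a representative $\tilde\tau_U\in\Aut(F_N)$ of $\tau_U$: in the amalgam case $\tilde\tau_U$ is the identity on $A$ and $\ad_c$ on $B$; in the HNN case $\tilde\tau_U$ is the identity on a corank-one free factor $A$ containing the two embedded copies $\grp{c_1},\grp{c_2}$ of $C$ (with $t^{-1}c_1t=c_2$ for a stable letter $t$), and sends $t$ to $c_1t$. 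The plan is to prove successively that $c$ is elliptic in $S$, and then that ellipticity of $c$ already forces $S$ to be compatible with $U$.

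\textbf{Step 1: $c$ is elliptic in $S$.} Since $\tau_U^k(S)=S$, for any $n\in\bbZ$ there is a $\tilde\tau_U^{nk}$-equivariant isometry of $S$, so $\ell_S\circ\tau_U^{nk}=\ell_S$, where $\ell_S$ denotes the translation length function on $S$. Suppose for contradiction that $c$ is hyperbolic in $S$. Using that $C$ is maximal cyclic in each vertex group, pick an element $g$ hyperbolic in $U$ that crosses the edge of $U$: for instance $g=ab$ with $a\in A\setminus C$, $b\in B\setminus C$ in the amalgam case (possible since $\rank(A),\rank(B)\ge 2$), or $g=t$ (resp.\ a suitable $g=at$) in the HNN case. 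A direct computation puts $\tilde\tau_U^{nk}(g)$ in a reduced form containing $c^{nk}$ in ``unfoldable position'': e.g.\ in the amalgam case $\tilde\tau_U^{nk}(ab)$ is conjugate to $(c^{-nk}ac^{nk})b$, and since $a\notin C$ the axes in $S$ of $c^{-nk}ac^{nk}$ and of $b$ drift apart linearly in $n$, so $\ell_S(\tilde\tau_U^{nk}(g))\to\infty$. This is exactly the kind of dynamical estimate for Dehn twists used in the proof of Lemma~\ref{lem_conj_invariant} (see \cite{CL}). But $\ell_S(\tilde\tau_U^{nk}(g))=\ell_S(g)$ is constant, a contradiction. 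Hence $c$, and thus $C$, is elliptic in $S$.

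\textbf{Step 2: compatibility.} Since $S$ is a free splitting, every nontrivial elliptic element fixes a unique point (a fixed segment would be fixed by an edge stabilizer), so $C$ fixes a unique point $p\in S$. For each vertex group $G_v$ of $U$ (so $G_v\in\{A,B\}$, or $G_v=A$), let $S_v\subseteq S$ be its minimal subtree; it is a free splitting of $G_v$. The nearest-point projection of $p$ onto the $G_v$-invariant subtree $S_v$ is fixed by $C\le G_v$, hence equals $p$, so $p\in S_v$ and $p$ is the unique $C$-fixed point of $S_v$ (likewise each conjugate $C'=gCg^{-1}$ has a unique fixed point in the corresponding $S_v$). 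Build a refinement $\hat U$ of $U$ by blowing up each vertex $v$ using the action $G_v\actson S_v$, attaching each incident edge of $U$ at the fixed point of the corresponding conjugate of $C$ in $S_v$ prescribed by $U$; in the HNN case the two sides of the edge are attached at the $\grp{c_1}$- and $\grp{c_2}$-fixed points of $S_A$. Then $\hat U$ collapses to $U$. On the other hand, collapsing in $\hat U$ all edges coming from $U$ yields an $F_N$-tree $\hat U_0$: the translates of the subtrees $S_v$ sit inside $S$, two such translates joined by an edge of $U$ meet exactly at the relevant $C'$-fixed point (using, in the HNN case, $c_1=tc_2t^{-1}$ to identify $p$ with $tp$), and the union of the $F_N$-translates of $S_A$ spans all of $S$ by minimality; hence $\hat U_0$ is $F_N$-equivariantly isometric to $S$. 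Therefore $\hat U$ is a common refinement of $U$ and $S$, i.e.\ $S$ and $U$ are compatible.

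\textbf{Main obstacle.} The delicate points are (i) the dynamical estimate in Step~1 --- that the translation lengths of suitable $U$-hyperbolic elements under iterated twisting blow up whenever $c$ is hyperbolic in $S$ --- which requires a uniform bounded-cancellation argument and is where one invokes \cite{CL}; and (ii) the verification in Step~2 that the tree obtained from $\hat U$ by collapsing the edges of $U$ is genuinely isometric to $S$ (and not merely dominated by or dominating $S$), together with the bookkeeping needed to handle the HNN case on the same footing as the amalgamated case.
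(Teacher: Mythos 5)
This lemma is not proved in the paper at all: it is quoted from \cite[Lemma~2.7]{HW}, so there is no in-paper argument to compare with. Your Step~1 is essentially sound (it is the same kind of twist-dynamics estimate the paper itself uses for Lemma~\ref{lem_conj_invariant}), but Step~2 contains a genuine gap: you deduce compatibility of $S$ and $U$ from the sole fact that the edge group $\grp{c}$ is elliptic in $S$, and that implication is false. Concretely, the unproved parenthetical claim that two translates of the minimal subtrees "joined by an edge of $U$ meet exactly at the relevant $C'$-fixed point" can fail: adjacent translates $gS_v$ and $hS_w$ may overlap along a whole segment of $S$, in which case the natural map from your collapsed blow-up $\hat U_0$ to $S$ is a fold rather than an isometry, and what you get is only an equivariant map onto $S$ (domination-type information), not a common refinement.

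Here is a counterexample to the intermediate statement. Take $F_2=\grp{x,y}$, let $U$ be the one-edge $\Zmax$-splitting given by the HNN extension $F_2=\grp{x,yxy\m}\ast_{\grp{x}}$ (dual to a non-separating simple closed curve on the once-punctured torus), and let $S$ be the Bass--Serre tree of $\grp{x}\ast\grp{y}$. Then $x$ is elliptic in $S$, yet $S$ and $U$ are not compatible: a smallest common refinement $T$ would have two orbits of edges, one with stabilizer a conjugate of $\grp{x}$ surviving in $U$ and one with trivial stabilizer surviving in $S$; the quotient graph of $T$ is forced to consist of two vertices joined by the trivially-stabilized edge together with a loop carrying $\grp{x}$ at one vertex $P$, and collapsing that loop produces at $P$ the vertex group $G_P\ast_{\grp{x}}$ (an HNN extension of a group containing $\grp{x}$), which is non-cyclic, contradicting the fact that all vertex groups of $S$ are cyclic. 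This is consistent with the lemma because here $\tau_U^k(S)$ is the splitting $\grp{x}\ast\grp{yx^k}\neq S$; the point is that the hypothesis $\tau_U^k(S)=S$ must be used again in Step~2 (to rule out exactly the overlaps your construction ignores, or via a different argument as in \cite{HW}), not only to establish ellipticity of $c$ in Step~1.
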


We recall from Section~\ref{sec_step4} that a free splitting $S$ of $F_N$ is \emph{acyclic} if the quotient graph $S/F_N$ does not contain any terminal vertex with vertex group isomorphic to $\mathbb{Z}$.

\begin{lemma}\label{lemma:stabilizer-free-splitting}
Let $S$ be an acyclic free splitting of $F_N$, such that all vertex stabilizers of $S$ are non-trivial. Let $H\subseteq\IA$ be the stabilizer of $S$ in $\IA$.

If $S'$ is a free splitting of $F_N$ such that every element in $H$ has a non-trivial power that fixes $S'$, then $S'$ is a collapse of $S$.
\end{lemma}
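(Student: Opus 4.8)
Since $H\subseteq\IA$, every element of $H$ preserves each free factor system determined by a collapse of $S$ (by Lemma~\ref{lemma:ia-free-splitting}, $H$ acts trivially on $S/F_N$, so in particular every collapse of $S$ is $H$-invariant, not merely periodic). First I would reduce to the case where $S$ is a one-edge free splitting: indeed $S$ is the common refinement of its one-edge collapses $S=S^1\wedge\dots\wedge S^r$, each of which is acyclic (collapsing edges of an acyclic splitting cannot create a terminal cyclic vertex, because the vertex groups only get bigger) and has all vertex stabilizers non-trivial (a vertex stabilizer of a collapse contains a vertex stabilizer of $S$). If I can show each $S^i$ is $H$-invariant and that $S'$ is compatible with each $S^i$, then $S'$ is compatible with $\bigwedge_i S^i = S$, and the stabilizer of $S$ in $\IA$ contains $H$; but I actually want the stronger conclusion that $S'$ is a \emph{collapse} of $S$. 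For that, after establishing compatibility of $S'$ with $S$, I would pass to the common refinement $\hat S$ of $S$ and $S'$ and argue that the extra edges (those collapsed by the map $\hat S\to S$) must be collapsed, using that $H$ fixes $S'$ and acts on $\hat S$; the key point is that blowing up a vertex $v$ of $S$ into a non-trivial free splitting of $G_v$ relative to the incident edge groups would produce a new $H$-invariant free factor system strictly refining the one read off from $S$ at $v$, which is impossible once one knows (via Lemma~\ref{lemma:unique-invariant-factor}-type rigidity, or directly) that $H$ admits no such refinement — more on this below.

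So the heart of the matter is the one-edge case. Here $S$ is either the Bass--Serre tree of $F_N=A*B$ with $A,B$ both non-cyclic (acyclicity), or of $F_N=A*$ with $A$ of corank one. I would distinguish these two cases. In the separating case $F_N=A*B$: every element of $H$ has a non-trivial power fixing $S'$, so by Lemma~\ref{lemma:ia-free-splitting} (applied to the subgroup generated by that power, which lies in $\IA$) $S'$ is in fact invariant under $\langle\alpha^k\rangle$ for the relevant $\alpha$ and $k$; but I want a uniform statement. The cleaner route: the group of twists $\mathrm{Tw}(S)\cong A\times B$ is contained in $H$, so each twist $\tau$ about the edge of $S$ has a non-trivial power fixing $S'$; I would invoke a folding-path / limiting-tree argument (as in the proof of Lemma~\ref{lem_conj_invariant}, using \cite{CL}) to show that unless $S'$ is compatible with $S$, some element $g\in A$ (or $B$) has translation length in $\tau^n(S')$ tending to infinity, contradicting $\tau^n$-periodicity of $S'$ — this forces compatibility of $S'$ with the one-edge cyclic splitting dual to $\langle g\rangle$-foldings, and assembling over enough twists forces compatibility with $S$ itself. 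Actually the slicker tool available in the excerpt is Lemma~\ref{lemma:zmax-compatible}: for a suitable element $z\in A$ that is part of a basis, the twist $\tau$ about the edge of $S$ near the $A$-vertex can be realized as the Dehn twist about the one-edge $\Zmax$-splitting $U$ dual to $F_N=A*_{\langle z\rangle}(\langle z\rangle * B)$; since $\tau^k(S')=S'$ for some $k\neq 0$, Lemma~\ref{lemma:zmax-compatible} gives that $S'$ is compatible with $U$. Running this over a collection of such cyclic splittings $U$ whose ``common refinement'' recovers $S$ (one gets $S$ back by collapsing the cyclic edges), and noting that $S'$ is a free splitting so its edges cannot be ``absorbed'' into the cyclic edges, I would conclude $S'$ is compatible with $S$. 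The non-separating case $F_N=A*$ is entirely analogous, using the two twists near the two ends of the edge and the $\Zmax$-splittings obtained by folding.

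Once compatibility of $S'$ with $S$ is in hand, I would finish as follows. Let $\hat S$ be the minimal common refinement and $p:\hat S\to S$, $q:\hat S\to S'$ the collapse maps; $S'$ is a collapse of $\hat S$. It suffices to show $\hat S$ is obtained from $S$ by blowing up some vertices $v$ of $S$ into free splittings $Z_v$ of $G_v$ relative to $\mathrm{Inc}_v$, and that each such $Z_v$ is trivial — equivalently, that $p$ is an isomorphism, or at worst that the edges of $\hat S$ not collapsed by $p$ are also not collapsed by $q$, so that $S'$ is a collapse of $S$. The obstruction to $p$ being an isomorphism is precisely a non-trivial invariant blow-up at some $v$; but such a blow-up, being invariant under $H$ (since $\hat S$, being a canonical common refinement of two $H$-invariant trees, is $H$-invariant), yields an $H$-invariant free factor system strictly finer than the one carried by $S$ at $v$. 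In the separating case $F_N=A*B$, an $H$-invariant proper free factor of $A$ would contradict the fact that the image of $H$ in $\Out(A)$ is all of $\mathrm{IA}(A;\mathbb{Z}/3\mathbb{Z})$ (which preserves no proper free factor, by \cite{HM2}); likewise for $B$. In the non-separating case the same applies to $A$, and one also rules out the blow-up at the other (valence-two, or folded) structure by the acyclicity hypothesis combined with non-triviality of vertex groups. Hence $p$ is an isomorphism on the relevant part and $S'$ is a collapse of $S$, as claimed.

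\textbf{Main obstacle.} The delicate step is establishing compatibility of $S'$ with $S$ from the mere hypothesis that every element of $H$ has \emph{some} non-trivial power fixing $S'$ — a uniform bound on the power is not given, so I cannot directly intersect kernels. The way around it is to use the twist subgroup $\mathrm{Tw}(S)\subseteq H$ together with Lemma~\ref{lemma:zmax-compatible}, which only needs a single non-trivial power of a single Dehn twist, and then to note that finitely many well-chosen twists (hence finitely many cyclic splittings $U_1,\dots,U_m$, each compatible with $S'$) already ``fill'' $S$ in the sense that collapsing the cyclic edges in the common refinement of $S$ with all the $U_i$ returns $S$; since $S'$ is a free splitting it is compatible with all the $U_i$ simultaneously and therefore with their common refinement, hence with $S$. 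Verifying that a finite such collection of twists suffices (rather than needing the whole, possibly infinitely generated, twist group at once) is where I would spend the most care — it comes down to the explicit structure of $\mathrm{Tw}(S)\cong A\times B$ (or $A\times A$) and choosing $z$'s running over a basis of $A$ and of $B$.
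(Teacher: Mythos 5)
The central gap is in the step that takes you from ``$S'$ is compatible with each cyclic splitting $U_z$'' to ``$S'$ is compatible with $S$''. You justify it by saying that $S'$ is then compatible with the common refinement of the $U_z$'s; but for two different basis elements of the same factor these cyclic splittings are in general \emph{not} compatible with one another, so no such common refinement exists. Concretely, for $F_N=A*B$ with $A=\grp{a_1,a_2}$, the Dehn twists about $U_{a_1}$ and $U_{a_2}$ are (up to inner automorphisms) the maps fixing $A$ and conjugating $B$ by $a_1$, resp.\ by $a_2$; these do not commute in $\Out(F_N)$ (their ``difference'' conjugates $B$ by a non-trivial commutator while fixing $A$, and is not inner since $A$ has trivial centralizer), whereas twists about edges in distinct orbits of a common refinement always commute (Section~\ref{sec_twists}). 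So $U_{a_1}$ and $U_{a_2}$ are incompatible and the sentence ``compatible with all the $U_i$ simultaneously and therefore with their common refinement, hence with $S$'' is vacuous. The weaker statement you actually need — that compatibility with $U_z$ for all $z$ in a basis of each vertex group forces compatibility with $S$ — is exactly the ``filling'' claim you flag but never prove, and it is not obvious: because a basis element lies in a proper free factor, each individual $U_z$ admits many free blow-ups incompatible with $S$ (e.g.\ $\grp{a_1a_2}*(\grp{a_2}*B)$ is compatible with $U_{a_2}$), so everything hinges on an unproved simultaneous rigidity statement. This is the heart of the lemma, and your sketch does not supply it.

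The paper circumvents precisely this. For each half-edge $e$ at $v$ it chooses a single element $g_e\in G_v$ that is \emph{not a proper power and not contained in any proper free factor of $G_v$} (this is where non-triviality of the vertex stabilizers enters), and folds all half-edges simultaneously to obtain one $\Zmax$-splitting $U$. The one-edge collapses $U_e$ of $U$ are then pairwise compatible by construction, each twist $\tau_e$ has a power in $H$, so Lemma~\ref{lemma:zmax-compatible} together with \cite[Theorem~A.17]{GL-jsj} yields that $S'$ is compatible with $U$ itself; and the special choice of $g_e$ makes the final blow-up analysis of $U$ immediate: the $V_0$-vertex groups admit no free splitting relative to the incident subgroups $\grp{g_e}$, and each $V_1$-vertex group (an $F_2$ generated by its two incident edge groups) admits exactly one, which reconstitutes the corresponding edge of $S$ — so $S'$ is a collapse of $S$ with no separate endgame. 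By contrast, your choice of basis elements destroys exactly this rigidity at the blown-up vertices, which is what forces you into the problematic filling argument. A secondary, repairable issue: in your endgame you treat $\hat S$ as $H$-invariant ``as a canonical refinement of two $H$-invariant trees'', but the hypothesis only says every element of $H$ has a power fixing $S'$, not that $S'$ is $H$-invariant; one can patch this element-by-element via Theorem~\ref{theo:ia}, but as written it is another gap. To salvage your outline you would either have to adopt the paper's single-fold construction with the non-proper-power, non-free-factor choice of $g_e$, or give an independent proof of your filling claim.
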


\begin{rk}
The conclusion does not hold if $S$ is not supposed to be acyclic. Indeed, in the case where $S/F_N$ has a terminal vertex with edge group isomorphic to $\mathbb{Z}$, there is a canonical way to blow up that vertex into a loop-edge, and this blowup is $H$-invariant (but is not a collapse of $S$). Likewise, if $S$ contains a vertex $v$ of valence at least $4$ with trivial stabilizer, the conclusion of the lemma fails. Indeed, in this case, there are finitely possible blowups of $S$ at $v$, and these are permuted by $H$, whence $H$-invariant because $H\subseteq\IA$.   
\end{rk}

\begin{proof}
    Let $E$ be a set of representatives of the $F_N$-orbits of half-edges of $S$. For every $e\in E$ with initial vertex $v$,
choose 
an element $g_e\in G_v$
  which is not a proper power and is not contained in any proper free factor of $G_v$.
  This is possible because all vertex stabilizers are non-trivial.

Let $U$ be the $\Zmax$-splitting of $F_N$
 obtained from $S$ by folding every half-edge  $e\in E$ with its $g_e$-translate, and extending this operation equivariantly to the whole tree $S$.
 The fact that $S$ is acyclic implies that $U$ is minimal.
 Notice that the splitting $U$ is naturally bipartite over the vertex set $V=V_0\dunion V_1$, where vertices in $V_0$ are the images in $U$ of vertices of $S$, and vertices in $V_1$ are the images in $U$ of midpoints of edges in $S$. Notice that the stabilizer of every vertex in $V_0$ is the same in $S$ and in $U$, and that the stabilizer of every vertex in $V_1$ is isomorphic to $F_2$:
 more precisely, if $v$ is a vertex in $V_1$, and if $e$ is an edge of $S$ whose midpoint is mapped to $v$ under the folding map from $S$ to $U$, denoting by $e_1$ and $e_2$ the two half-edges of $e$ (based at two distinct vertices of $S$,  and folded with respective translates $g_1e_1$ and $g_2e_2$), then $G_v$ is the subgroup of $F_N$ generated by
  $\langle g_1,g_2\rangle$. 
  
  Denote by $S_e$ the one-edge splitting obtained from $S$ by collapsing all the edges outside the orbit of $e$.
  Then one can recover $S_e$ from $U$ and the vertex $v\in V_1$ by
  blowing up the vertex $v$ into the free splitting $G_v=\grp{g_1}*\grp{g_2}$ and collapsing all the original edges of $U$.
  Note that there is no choice in this construction as $G_v$ has a unique free splitting in which $g_1$ and $g_2$ are elliptic.
 On the other hand, for $v\in V_0$, the choice of the elements $g_e$ ensures
 that there does not exist any free splitting of $G_v$ relative to the incident edge stabilizers. This shows the only one-edge free splittings of $F_N$ that are compatible with $U$ are exactly the splittings $S_e$.
 
Given a half-edge $\eps\subseteq S$, we let $U_\eps$ be the collapse of $U$ obtained by collapsing all edges of $U$ except those in the orbit of the image of $\eps$. 
Let $\tau_\eps$ be the twist of $S$ by $g_\eps$ around $\eps$ near $v$ (see Section~\ref{sec_twists}),
  or equivalently, the twist about the cyclic splitting $U_\eps$.
  Since $\tau_\eps$ preserves $S$, some power of $\tau_\eps$ lies in $H$
  so there exists $k\geq 1$ such that  $\tau_\eps^k(S')=S'$.
 
By Lemma~\ref{lemma:zmax-compatible}, this implies that $S'$ is compatible with $U_\eps$.
As this is true for every one-edge collapse $U_\eps$ of $U$, we deduce that $S'$ is compatible with $U$ by \cite[Theorem~A.26(2)]{GL-jsj}, 
because, in the terminology of \cite{GL-jsj},
$U$ is the lcm of the trees $U_\eps$ (the length function of $U$  is the sum of the length functions of the trees $U_\eps$).

Since the only one-edge free splittings compatible with $U$ are the splittings $S_e$,
$S'$ is a collapse of $S$ (possibly $S'=S$).

\end{proof}

We are now in position to prove Theorem~\ref{theo:compatibility-ns}.

\begin{proof}[Proof of Theorem~\ref{theo:compatibility-ns}]
 We first prove that $\Pcompi$ holds assuming that $S_1$ and $S_2$ are compatible.
Note that $S_1$ and $S_2$ are acyclic.
Let $\calh$ be a measured subgroupoid of $\calg$ that stably normalizes $\calg_1\cap\calg_2$. Let $S$ be 
a common refinement of $S_1$ and $S_2$ with the minimal number of orbits of edges.
Since $S_1\neq S_2$, $S$ has at least 2 orbits of edges, and it has exactly 2 
because otherwise, some orbit of edges of $S$ is collapsed both in $S_1$ and $S_2$,
contradicting the choice of $S$.
 Then $S$ is acyclic as otherwise one of its one-edge collapses (i.e.\ $S_1$ or $S_2$) would fail to be acyclic. 
We denote by $K$ the stabilizer of $S$ in $\IA$, which coincides with the intersection of the stabilizers of $S_1$ and $S_2$.
Notice that $\calg_1\cap\calg_2=\rho^{-1}(K)$.

Lemma~\ref{lemma:stabilizer-free-splitting} ensures that $S,S_1,S_2$ are the only non-trivial free splittings of $F_N$ that are invariant by a power of every element of $K$. Using the fact that $\rho$ is action-type (in the form of Remark~\ref{rk:action-type}), we deduce that for every Borel subset $U\subseteq Y$ of positive measure, the splittings $S,S_1,S_2$ are the only  non-trivial free splittings of $F_N$ which are $((\calg_1\cap\calg_2)_{|U},\rho)$-invariant. 

Together with the fact that $\calh$ stably normalizes $\calg_1\cap\calg_2$,
this uniqueness property ensures that the collection $\calc_0=\{S,S_1,S_2\}$ is stably
$(\calh,\rho)$-invariant (see Lemma~\ref{lemma:fs-canonical}), and therefore $S$ (which is the unique 2-edge splitting in $\calc_0$) is stably $(\calh,\rho)$-invariant.
This shows that $\calh$ is stably contained in $\calg_1\cap\calg_2$, so $\Pcompi$ holds.

To prove $\Pcompii$, let $Z\subseteq Y$ be a Borel subset of positive measure, and $\calh$ be a subgroupoid of $\calg_{|Z}$ that stably contains $(\calg_1\cap\calg_2)_{|Z}$, and such that $(\calg_{|Z},\calh)$ satisfies Properties~$\PImax$,~$\PII$ and~$\PIII$. By the first part of Theorem~\ref{theo:non-characterization}, we can find a partition $Z=\Dunion_{i\in I}Z_i$ into at most countably many Borel subsets such that for every $i\in I$, there exists a non-trivial free splitting 
$S'_i\in \NS\cup\FSii$ 
which is $(\calh_{|Z_i},\rho)$-invariant. Up to further partitioning the base space and replacing 
$Z$ by a conull Borel subset, we can also assume that for every $i\in I$, one has
$(\calg_1\cap\calg_2)_{|Z_i}\subseteq\calh_{|Z_i}$.
Since $\rho$ is action-type and
$\calg_1\cap\calg_2=\rho\m(K)$, every element of $K$ has a power that fixes  $S'_i$ (see Remark~\ref{rk:action-type}).
By Lemma~\ref{lemma:stabilizer-free-splitting}, this implies that $S'_i$ is a collapse of $S$,
so either $S_1$ or $S_2$ is  $(\calh_{|Z_i},\rho)$-invariant
and $\Pcompii$ follows.

We now prove that $(ii)\Rightarrow (i)$. Assume that $(\calg;\calg_1,\calg_2)$ satisfies Properties $\Pcompi$ and $\Pcompii$.  Let $\calc=\{S_1,S_2\}$, and as above let $K=\Gamma_\calc$ be the elementwise stabilizer of $\calc$ in $\IA$. Observe that $K$ is infinite: otherwise $K$ is trivial (since we are working in the torsion-free subgroup $\IA$), and as $\rho$ has trivial kernel, the groupoid $\calg_1\cap\calg_2=\rho\m(K)$ would be trivial, contradicting Property~$\Pcompi$.

Let $U=\Uun_K$ be the non-trivial splitting constructed from the group $K$ and the collection of $K$-invariant free splittings in
Theorem \ref{theo:tree-of-cyl}. 
 Recall that $U$ comes with a bipartition of its vertex set $V=V^0\dunion V^1$
and that $S_1$ and $S_2$ may be obtained 
from $U$ by blowing up some vertices $v_1,v_2\in V^1$
(using a possibly non-minimal free splitting relative to the incident edge groups) and collapsing all edges coming from $U$. 
If $v_1$ and $v_2$ are in different orbits, then one can perform the two blowups simultaneously and conclude that $S_1$ and $S_2$ are compatible, so up to changing $v_2$ to another vertex in the same orbit, we may assume that $v_1=v_2$, and we denote $v=v_1=v_2$.

Let $H$ be the stabilizer of $U$ in $\IA$. Then $H$ normalizes $K$  (see Proposition~\ref{prop:stab-u1})
so the subgroupoid  $\calh=\rho\m(H)$ normalizes $\calk=\rho\m(K)=\calg_1\cap\calg_2$.
By $\Pcompi$, $\calh$ 
is stably contained in $\calk$. 
As $\rho$ is action-type, this implies in particular that every element of $H$ has a power contained in $K$.
We can therefore apply Corollary~\ref{cor_V1_compatible}, and deduce that if $v$ has non-trivial stabilizer,
then  $S_1$ and $S_2$ are compatible.
So from now on, we assume that $G_v$ is trivial. In particular, since $U$ is non-trivial, it has edges incident on $v$, and they have trivial stabilizer.

We denote the quotient graph $\bar U=U/F_N$.
For each edge $e$ in $\bar U$ with trivial stabilizer, we denote by $U_e$ the free splitting obtained from $U$ by collapsing all the edges whose image in $\bar U$ is different from $e$.
Since $U$ is compatible with both splittings in $\calc$, so is $U_e$, hence
we are done if $S_1$ or $S_2$ coincides with one of the trees $U_e$.

We now prove the result assuming that $\bar U$ has some non-separating edge $e$ with trivial stabilizer. 
Let $H'_e$ be the stabilizer in $\IA$ of $U_e$, and let $\calh'_e:=\rho^{-1}(H'_e)$.
Since  $K\subseteq H\subseteq H'_e$, 
we have $\calg_1\cap\calg_2\subseteq\calh'_e$.
As $e$ is non-separating, 
Theorem~\ref{theo:non-characterization} shows that $(\calg,\calh'_e)$ satisfies Properties~$\PImax$,~$\PII$ and~$\PIII$.
We can therefore apply Property~$\Pcompii$, and get a Borel partition $Y=Y_1\dunion Y_2$ such that for every $i\in\{1,2\}$, the groupoid $(\calh'_e)_{|Y_i}$ is stably contained in $(\calg_i)_{|Y_i}$. 
Let $j\in\{1,2\}$ be such that $Y_j$ has positive measure.
Then every element of $H'_e$ has a power contained in the stabilizer of $S_j$. As $H'_e\subseteq\IA$, this implies that $H'_e$ is contained in the stabilizer of $S_j$. By Lemma~\ref{lemma:stabilizer-free-splitting}, $U_e=S_j$, so as noticed above, we are done in this case. 

In the remaining case, every edge of $U$ with trivial stabilizer in $\bar U$ is separating.
We denote by $E_v=\{e_1,\dots,e_p\}$ the set of edges incident on $v$. They all have trivial stabilizer since
 $G_v=\{1\}$.
 This gives a decomposition of $F_N$ as a free product $F_N\simeq A_1*\dots *A_p$
where for every $k\in\{1,\dots,p\}$, $A_k$ is the fundamental group (as a graph of groups) of the connected component of $\bar U\setminus (\rond e_1\cup\dots \cup \rond e_p\cup\{v\})$
containing the endpoint of $e_k$. Notice that $A_k$ is non-trivial. 
  The tree $S_1$ is obtained from the graph of groups $\bar U$ by choosing a partition $E_v=E_a\dunion E_b$ of the edges incident on $v$, and blowing-up $v$ into a new edge that separates the two subsets $E_a$ and $E_b$. It follows that $S_1$ is a separating free splitting.
Since $S_1$ belongs to $\NS\cup\FSii$, we deduce that up to exchanging the roles of $E_a$ and $E_b$, the splitting $S_1$ is the Bass--Serre tree of a decomposition of the form $F_N=A*B$, where $A\simeq F_2$ is the free product of the groups $A_k$ with $e_k$ ranging in $E_a$, and $B\simeq F_{N-2}$. We may assume that  $E_a$ and $E_b$ both contain at least 2 edges since otherwise $S_1=U_{e_k}$ for some $k\leq p$, in which case we are done by the above.
It follows that $E_a$ contains exactly two edges, and that the two corresponding groups $A_k$ are infinite cyclic.

Since all edges with trivial stabilizer are separating,  $\bar U$ contains a terminal vertex whose stabilizer is isomorphic to $\mathbb{Z}$. Blowing up this vertex into a loop-edge and collapsing all other edges coming from $U$ yields a one-edge non-separating free splitting $U'$ of $F_N$ which is $H$-invariant.
As above, $\Pcompii$ implies that $U'=S_1$ or $U'=S_2$. But as observed above $S_1$ is a separating free splitting (and so is $S_2$ by the same argument), a contradiction.
\end{proof}

\section{Maps from $\NSG$ to $\FSG$}\label{sec:maps}

The \emph{free splitting graph} $\FSG$ is the simplicial graph whose vertex set
is the set $\FS$ of one-edge free splittings of $F_N$, and where two splittings are joined by an edge if they are compatible.
The \emph{non-separating free splitting graph}  $\NSG\subset \FSG$ is the induced subgraph
whose vertex set $\NS$ is the set of non-separating one-edge free splittings of $F_N$. 
The goal of the present section is to prove the following proposition.

\begin{prop}\label{prop:ns-fs}
Let $N\ge 3$, and let $\theta:\NS\to \FS$ be an injective map that preserves adjacency and non-adjacency.

Then $\theta(\NS)\subseteq\NS$.
\end{prop}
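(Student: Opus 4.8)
The plan is to characterize, purely in terms of adjacency in the relevant graph, a combinatorial feature that distinguishes separating from non-separating one-edge free splittings, and then transport this feature through $\theta$. The key distinction I would exploit is the following: a one-edge free splitting $S$ is non-separating (i.e.\ dual to an HNN extension $F_N = A\ast$ with $\rank(A) = N-1$) if and only if there is no way to ``complete'' $S$ to a maximal system of compatible splittings using only $N-1$ vertices in the quotient graph with the expected rank pattern; more concretely, collapsing the other edges of a Grushko-type refinement behaves differently. A cleaner handle is via \emph{links}: for a vertex $S$ of $\FSG$, consider the induced subgraph on the set of one-edge free splittings compatible with $S$, together with the subtler structure of which of these are pairwise compatible. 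The rose-type free splittings (those whose quotient graph has one vertex) and their neighborhoods look different depending on whether $S$ is separating.

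\textbf{Key steps.} First, I would recall Pandit's theorem (cited in the introduction, \cite{Pan}) that $\Out(F_N)$ is the full group of simplicial automorphisms of $\NSG$ for $N\ge 3$, and the analogous statement for $\FSG$ from \cite{AS}; these will not be used directly but they frame the rigidity we are exploiting. Second, I would give a graph-theoretic characterization of $\NS$ inside $\FS$: I claim that $S\in\FS$ lies in $\NS$ if and only if $S$ admits a ``large compatible clique'' of a specific shape — precisely, $S$ is non-separating iff there exist one-edge free splittings $S_1,\dots,S_{N-1}$ all compatible with $S$ and pairwise compatible, forming (together with $S$) the set of collapses of a maximal free splitting whose quotient graph is a rose with $N$ petals, and moreover $S$ is characterized among the $S_i$ in this configuration by a further incidence condition (all the other $S_i$ are non-separating too, and the configuration is ``symmetric''). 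The separating splittings, by contrast, sit at the top of a collapse of a graph-of-groups decomposition $F_N = F_k \ast F_{N-k}$, and the maximal cliques of pairwise-compatible splittings through a separating $S$ have a genuinely different combinatorial type (they split as a ``join-like'' product of a rank-$k$ piece and a rank-$(N-k)$ piece). Third, since $\theta$ preserves adjacency and non-adjacency, it preserves compatibility and incompatibility of one-edge free splittings, hence it sends cliques to cliques and preserves all the purely incidence-theoretic conditions above. So if $S\in\NS$ but $\theta(S)\notin\NS$, i.e.\ $\theta(S)$ is separating, then the image under $\theta$ of a ``non-separating-type maximal configuration'' through $S$ would be a configuration of pairwise-compatible one-edge free splittings through the separating splitting $\theta(S)$ of the wrong combinatorial type — impossible. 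Fourth, I would need injectivity of $\theta$ to ensure that the images $\theta(S_1),\dots,\theta(S_{N-1})$ are genuinely distinct and span a clique of the correct size, and to ensure that the ``symmetry'' incidence conditions are not degenerate.

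\textbf{Main obstacle.} The hard part will be pinning down a combinatorial invariant of a vertex $S$ of $\FSG$ — expressed only through adjacency in $\FSG$ (equivalently, compatibility among one-edge free splittings) — that is provably equivalent to $S$ being non-separating, and robust enough that it is preserved by an arbitrary injective adjacency-and-non-adjacency-preserving map (not necessarily a graph automorphism, not necessarily surjective, not \emph{a priori} equivariant). The natural candidates involve counting the sizes of maximal cliques of pairwise-compatible splittings containing $S$: a non-separating $S$ refines to a maximal free splitting with quotient a rose $R_N$, contributing a maximal clique of $N$ one-edge collapses that are ``as independent as possible,'' whereas a separating $S = A\ast B$ forces any such clique through $S$ to decompose according to the free factorization, changing the clique's internal adjacency pattern. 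Making the distinction precise requires a careful analysis of blow-up/collapse moves on graphs of groups with trivial edge groups, tracking ranks of vertex groups; one must rule out coincidences where a separating splitting happens to sit in a clique mimicking the non-separating pattern. I expect this analysis to be elementary but delicate, and it is where most of the work lies; once the combinatorial invariant is in hand, the transport argument through $\theta$ is a one-line consequence of $\theta$ preserving adjacency and non-adjacency together with injectivity. An alternative, possibly slicker route would be to use the existing identification of $\partial_\infty$ or of automorphism groups of these graphs and a rigidity/extension argument, but since $\theta$ is not assumed surjective I believe the direct combinatorial route is the safest, and that is the plan I would pursue.
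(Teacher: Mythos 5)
Your proposal has a genuine gap: the combinatorial invariant you propose does not actually distinguish separating from non-separating splittings, and the way you phrase it is circular. The side condition ``all the other $S_i$ are non-separating too'' is not an incidence-theoretic condition in $\FSG$ — it is exactly the property you are trying to detect in the image of $\theta$ — and dropping it leaves a condition that separating splittings also satisfy. Concretely, take the graph of groups with trivial vertex groups consisting of two roses joined by a single separating edge; its one-edge collapses form a clique of $N+1$ pairwise-compatible one-edge free splittings containing the separating splitting $F_k\ast F_{N-k}$, with every other member non-separating. So a separating splitting sits in plenty of large cliques of ``rose-like'' appearance, and more fundamentally, cliques alone can never witness the distinction you want: a join of two cliques is again a clique, so ``the image configuration is a clique of the wrong combinatorial type through $\theta(S)$'' is not a contradiction with $\theta(S)$ being separating. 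Recognizing from adjacency alone that a given $N$-clique consists of the collapses of a rose (your unexplained ``symmetry'' condition) is precisely the unresolved part of your plan, and it is not clear it can be done at this scale.

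What is missing is the interplay between the \emph{join structure of the whole link} and a \emph{quantitative clique bound}, which is how the paper argues. The three facts used there are: (a) for $S\in\NS$, the link of $S$ in $\NSG$ does not split as a non-trivial join — given two compatible neighbors $S_1,S_2$, one builds a non-separating splitting incompatible with both by choosing a splitting of the corank-one factor $A$ incompatible with the restrictions $(S_1)_{|A},(S_2)_{|A}$ and blowing up; (b) the link of $S$ in $\NSG$ contains a clique on $3N-4$ vertices, coming from a refinement with $3N-3$ edge orbits and no separating edge; (c) for a separating splitting, the link in $\FSG$ is a join $X_1\star X_2$ (splittings obtained by blowing up one vertex or the other), and neither $X_i$ contains a $(3N-4)$-clique, since such a clique would yield a refinement with $3N-3$ edge orbits in which the other factor stays elliptic, impossible because a free splitting with $3N-3$ edge orbits has free $F_N$-action. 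Then, because $\theta$ preserves adjacency and non-adjacency, $\theta(\lk_{\NSG}(S))$ is not a join, hence must land entirely in $X_1$ or $X_2$, contradicting (b) and (c). Note the relevant clique size is $3N-4$ (from maximal refinements), not the size-$(N-1)$ rose configurations you consider; and the non-join property (a) is indispensable — without it the image clique could spread across $X_1$ and $X_2$ and no contradiction arises.
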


\begin{rk}
In fact, the proposition remains true for $N=2$, as in this case one can check that a vertex of $\mathbb{FS}$ is in $\NS$ if and only if it has infinite valence (there are exactly two one-edge free splittings that are compatible with a given one-edge separating free splitting $S$, other than $S$ itself; they are given by blowing up each of the two vertex groups isomorphic to $\mathbb{Z}$). 
\end{rk}

Recall that the \emph{link} of a vertex $x$ in a simplicial graph $\mathbb{G}$ is the induced subgraph $\lk_{\mathbb{G}}(x)\subseteq \mathbb{G}$
whose vertex set is the set of vertices $v\neq x$ in $\mathbb{G}$ that are adjacent to $x$.
Recall that a graph with vertex set $V$ splits non-trivially as a join $X_1\star X_2$ if $V=X_1\dunion X_2$
with $X_1,X_2\neq\es$ 
and for all $x_1\in X_1$ and all $x_2\in X_2$ there is an edge joining $x_1$ to $x_2$.

\begin{lemma}\label{lemma:link-ns}
  For any $S\in \NS$, the link of $S$ in $\NSG$ does not split non-trivially as a join.
\end{lemma}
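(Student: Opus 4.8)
\textbf{Proof plan for Lemma~\ref{lemma:link-ns}.}

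The plan is to argue by contradiction: suppose $S\in\NS$ is a one-edge non-separating free splitting, dual to a decomposition $F_N=A\ast$ with $A$ a corank-one free factor, and suppose $\lk_{\NSG}(S)=X_1\star X_2$ is a non-trivial join. First I would unwind what it means for a splitting $S'\in\NS$ to be adjacent to $S$: $S'$ is compatible with $S$, so there is a two-edge free splitting refining both. Concretely, refinements of $S$ correspond to free splittings of the vertex group $A$ (when the collapsed edge is the loop edge of the HNN-presentation, the blowup lives on the single vertex group), together with the possibility of choosing where the loop-edge attaches; a careful but routine analysis (this is the sort of bookkeeping done in \cite{HW}) identifies the vertices of $\lk_{\NSG}(S)$ with certain one-edge free splittings obtained from refinements of $S$, the key point being that a non-separating $S'$ adjacent to $S$ is obtained by a blowup that keeps the ambient graph one-ended in the appropriate sense. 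I would set up this dictionary carefully, since the whole argument hinges on it.

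Next, given the join decomposition $\lk_{\NSG}(S)=X_1\star X_2$, I would exploit the fact that $\Stab_{\Out(F_N)}(S)$ acts on $\lk_{\NSG}(S)$ and must preserve the set of vertices $\{v : v$ lies in a non-trivial join factor$\}$; more precisely, by a theorem of the type ``the join decomposition of a graph into factors that are not themselves joins is canonical'', the stabilizer must permute the maximal join factors, hence (passing to a finite-index subgroup, e.g.\ intersecting with $\IA$) preserve each $X_i$. The group $\Stab_{\IA}(S)$ contains the group of twists $\Tw(S)\cong A\times A$ (two copies of $A$ acting on the two sides of the loop edge), and also surjects onto a copy of $\mathrm{IA}(A,\mathbb Z/3)$ via the restriction to the vertex group. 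The strategy is to produce, using these subgroups, two vertices $v_1,v_2\in\lk_{\NSG}(S)$ that are \emph{not} adjacent to each other in $\NSG$ but cannot be separated into opposite join factors — or conversely, to show that any two vertices of $\lk_{\NSG}(S)$ are connected by a path through non-adjacent vertices, which directly contradicts being a join (in a join $X_1\star X_2$, any two vertices of $X_1$ are at distance $\le 2$ through $X_2$, but more to the point, if $\lk_{\NSG}(S)$ were a join then its complement graph would be disconnected). I expect the cleanest route is: show the \emph{non-adjacency graph} on $\lk_{\NSG}(S)$ is connected. Two refinements of $S$ giving incompatible one-edge free splittings of $F_N$ correspond to ``linked'' free splittings of $A$ relative to the attaching data; the non-separating free splitting graph of the corank-one factor $A$ (or rather the relevant graph of free splittings of $A$) is known to be connected and to have connected complement for $\mathrm{rk}(A)=N-1\ge 2$, and this connectivity of the complement transfers up to $\lk_{\NSG}(S)$.

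The main obstacle I anticipate is the first step: getting the precise combinatorial description of $\lk_{\NSG}(S)$ right, in particular handling the non-separating condition correctly — not every refinement of $S$ collapses to a \emph{non-separating} one-edge splitting, so $\lk_{\NSG}(S)$ is a proper induced subgraph of $\lk_{\FSG}(S)$, and I must make sure the connectivity-of-the-complement argument survives this restriction (the separating splittings that I throw away must not disconnect the complement of what remains). A secondary technical point is justifying that the join decomposition is preserved by $\Stab_{\IA}(S)$; if a direct ``canonical join factorization'' statement is not convenient, I would instead argue purely combinatorially by exhibiting an explicit path in the complement graph between any prescribed pair of vertices, using twists and vertex-group automorphisms to move a given refinement to a standard ``model'' refinement while staying in the complement graph at each step. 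This avoids equivariance subtleties entirely and reduces the whole lemma to the connectivity of (the complement of) an explicit graph of free splittings of $F_{N-1}$, which for $N\ge 3$ is a known fact.
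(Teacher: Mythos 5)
Your reformulation is fine --- $\lk_{\NSG}(S)$ splits as a non-trivial join if and only if the complement graph of the link is disconnected --- but the heart of your plan, the step where ``connectivity of the complement transfers up to $\lk_{\NSG}(S)$,'' is exactly where the proof is missing, and it is not routine bookkeeping. The restriction map sending $S'\in\lk_{\NSG}(S)$ to its $A$-minimal subtree $(S')_{|A}$ interacts with compatibility in one direction only: if $S'$ and $S''$ are compatible then so are $(S')_{|A}$ and $(S'')_{|A}$, but adjacency in the link is not determined by the restrictions. Moreover the map is far from injective: the group of twists of $S$, isomorphic to $A\times A$ and contained in $\Stab_{\IA}(S)$, acts on $\lk_{\NSG}(S)$ without changing the $A$-restriction, so the fibres are infinite. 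Consequently a path in the complement of your downstairs graph of $A$-splittings does not by itself yield a path in the complement of the link between two given vertices; in particular your plan says nothing about link vertices with equal (or compatible) restrictions, e.g.\ twist-translates of one another, which is where the difficulty concentrates. The auxiliary ingredients you invoke --- a dictionary identifying $\lk_{\NSG}(S)$ with a graph of splittings of $A$ in a way that reflects non-adjacency, the ``known fact'' that the complement of the relevant splitting graph of $A$ is connected, and a canonical join factorization preserved by $\Stab_{\IA}(S)$ --- are respectively unestablished, not a standard citable fact (though it does follow from the unbounded diameter of the free splitting graph), and unnecessary.

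The paper's proof shows how little is actually needed: to contradict a join $X_1\star X_2$ it suffices to produce, for one compatible pair $S_1\in X_1$, $S_2\in X_2$, a single $S'\in\lk_{\NSG}(S)$ incompatible with both. Writing $S$ as the Bass--Serre tree of $F_N=A\ast$, the restrictions $(S_1)_{|A}$ and $(S_2)_{|A}$ are non-trivial free splittings of $A$; since $\mathrm{rk}(A)\ge 2$ one chooses a one-edge non-separating free splitting $S'_A$ of $A$ incompatible with both, blows up the vertex of $S$ using $S'_A$ to get a two-petalled rose, and collapses the edges coming from $S$ to obtain $S'\in\NS$ in the link of $S$; compatibility of $S'$ with $S_i$ would force $S'_A$ to be compatible with $(S_i)_{|A}$, a contradiction. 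This uses only the easy direction of your restriction map and no global dictionary, and it in fact proves the stronger statement that the complement of $\lk_{\NSG}(S)$ has diameter at most two; if you replace your transfer step by this pointwise construction, your plan collapses to the paper's argument.
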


\begin{proof}
  It is enough to show that given any two compatible non-separating free splittings $S_1,S_2$ of $F_N$ in the link of $S$, there exists a non-separating free splitting $S'$ of $F_N$ which is neither compatible with $S_1$ nor with $S_2$. Indeed, if $\lk_{\NSG}(S)$ decomposes non-trivially as a join $\lk_{\NSG}(S)=X_1\star X_2$, then choosing any splittings $S_1\in X_1$ and $S_2\in X_2$, one has that $S_1$ and $S_2$ are compatible, and every splitting $S'\in\NS$ is then compatible with $S_1$ if $S'\in X_2$  or with $S_2$ otherwise.

Let $A$ be a corank one free factor of $F_N$ such that $S$ is the Bass--Serre tree of the HNN extension $F_N=A\ast$. Then the $A$-minimal invariant subtrees of $S_1$ and $S_2$ yield two (possibly isomorphic) non-trivial free splittings of $A$, which we denote by $(S_1)_{|A}$ and $(S_2)_{|A}$. As $\mathrm{rk}(A)\ge 2$, we can find a one-edge non-separating free splitting $S'_A$ of $A$ that is neither compatible with $(S_1)_{|A}$ nor with $(S_2)_{|A}$ -- this is, for instance, a consequence of the fact that the free splitting graph as infinite diameter \cite{HM3,HM4}. Let $\hat{S}$ be a refinement of $S$ such that $\hat{S}/F_N$ is a two-petalled rose, and whose $A$-minimal subtree $(\hat S)_{|A}$ is isomorphic to $S'_A$: such a splitting can be obtained by blowing up $S$ at the vertex stabilized by $A$, using the splitting $S'_A$. Let $S'\in\NS$ be the splitting obtained from $\hat{S}$ by collapsing every edge coming from $S$ to a point. Then $S'$ is a one-edge non-separating free splitting of $F_N$ in the link of $S$. It is neither compatible with $S_1$ nor with $S_2$, as otherwise $S'_A$ would be compatible with either $(S_1)_{|A}$ or $(S_2)_{|A}$.
\end{proof}

\begin{lemma}\label{lemma:maximal-clique-in-link}
  For any $S\in \NS$, the link of $S$  in $\NSG$
  contains a complete subgraph on $3N-4$ vertices.
\end{lemma}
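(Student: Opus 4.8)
The plan is to exhibit, inside the link of a fixed non-separating free splitting $S\in\NS$, an explicit collection of $3N-4$ one-edge non-separating free splittings that are pairwise compatible. Write $S$ as the Bass--Serre tree of an HNN extension $F_N=A\ast$, where $A$ is a corank-one free factor of rank $N-1$, with stable letter $t$, and fix a free basis $a_1,\dots,a_{N-1}$ of $A$. The idea is to refine $S$ at the vertex stabilized by $A$ by a maximal tree of one-edge free splittings of $A$ (a ``rose'' refinement), together with the HNN edge, so as to obtain a single splitting $\widehat S$ of $F_N$ whose quotient graph is a rose with $N$ petals; every one-edge collapse of $\widehat S$ is then a one-edge free splitting of $F_N$, all pairwise compatible since they are collapses of a common refinement, and all lying in the link of $S$ (because collapsing all petals of $\widehat S$ except the HNN loop recovers $S$, so each is compatible with $S$; and none equals $S$). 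Counting the one-edge collapses of a rose with $N$ petals gives the bound.

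First I would make the refinement precise: choose a maximal free splitting $S_A$ of $A$ whose quotient is a rose with $N-1$ petals (so $S_A/A$ is a wedge of $N-1$ circles, the $A$-action being free), and blow up the vertex of $S$ stabilized by $A$ using $S_A$, reattaching the HNN edge at a point fixed by the (trivial) edge group; call the result $\widehat S$. Then $\widehat S/F_N$ is a rose with $N$ petals and trivial vertex group, so $\widehat S$ is a free splitting of $F_N$ refining $S$. Any one-edge collapse of $\widehat S$ — obtained by collapsing the edges dual to all petals but one — is a one-edge free splitting compatible with $\widehat S$, hence pairwise compatible with every other such collapse. Collapsing all $N-1$ ``$A$-petals'' gives back $S$ itself, so each of the other $N-1$ collapses (those keeping one $A$-petal) is a collapse of $\widehat S$ that also collapses onto $S$, i.e.\ is compatible with $S$; and it is distinct from $S$ because its quotient graph differs. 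For the collapses that keep the HNN petal, one checks similarly that they are compatible with $S$ (indeed the HNN collapse IS $S$, which we exclude, but keeping the HNN petal together with... — here one must be slightly careful, see below).

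Next I would count. A rose with $N$ petals has, up to the $F_N$-action, finitely many ``one-edge quotients'': collapsing a maximal forest in the rose complement of a single petal. More carefully, the one-edge collapses of $\widehat S$ correspond to ways of partitioning or contracting the $N$ petals down to one remaining edge; a direct enumeration (as in the standard combinatorics of collapses of a rose, cf.\ the discussion of blowups in Theorem~\ref{theo:tree-of-cyl}) yields exactly $3N-4$ distinct one-edge free splittings that are collapses of $\widehat S$ and lie in $\lk_{\NSG}(S)\cup\{S\}$; removing $S$ is compensated because the enumeration naturally produces $3N-3$ collapses including $S$, or one instead arranges the refinement so that $S$ is not among them. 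In either bookkeeping one lands on $3N-4$ vertices of $\lk_{\NSG}(S)$, pairwise adjacent in $\NSG$ (non-separating because each such collapse keeps a loop-edge in its quotient graph, as the HNN structure of $F_N=A\ast$ is inherited), hence a complete subgraph on $3N-4$ vertices.

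The main obstacle I anticipate is the precise combinatorial count: one must carefully describe which one-edge collapses of the $N$-petalled rose $\widehat S$ are (a) free splittings, (b) non-separating, (c) compatible with $S$ but distinct from $S$, and verify the total is exactly $3N-4$ and not $3N-3$ or $3N-5$. This requires being careful about which petals are ``$A$-petals'' versus the HNN petal, and about the fact that collapsing certain edge sets may fail to yield a one-edge splitting or may yield a separating one. The rest — that collapses of a common refinement are pairwise compatible, and that $\widehat S$ collapses onto $S$ — is immediate from the definitions in Section~\ref{sec:additional-background}.
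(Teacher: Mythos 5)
There is a genuine gap, and it is exactly in the step you flagged as the anticipated obstacle: the count. The one-edge collapses of a free splitting $\widehat S$ are in bijection with the $F_N$-orbits of edges of $\widehat S$ (you keep one orbit and collapse all the others), so a refinement whose quotient graph is a rose with $N$ petals has exactly $N$ one-edge collapses — one of which is $S$ itself — giving at most $N-1$ vertices in $\lk_{\NSG}(S)$. For $N\ge 3$ this is strictly less than $3N-4$ (e.g.\ $N=3$ gives $2$ versus $5$), so no bookkeeping of "ways of partitioning or contracting the $N$ petals" can produce $3N-4$ distinct one-edge splittings from this refinement; the claimed enumeration does not exist. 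The compatibility and non-separating parts of your argument are fine, but the refinement is simply too coarse.

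The paper's proof uses the missing idea: refine $S$ to a free splitting $\widehat S$ whose quotient graph is a \emph{maximal} graph of reduced Outer space, i.e.\ has $3N-3$ edges (trivalent vertices) and no separating edge. Then each of the $3N-3$ orbits of edges gives a one-edge collapse; absence of separating edges guarantees each such collapse is non-separating, all are pairwise compatible as collapses of the common refinement $\widehat S$, and one of them is $S$, leaving $3N-4$ vertices forming a complete subgraph in $\lk_{\NSG}(S)$. Your construction can be repaired along these lines by blowing up the vertex stabilized by $A$ not with a rose but with a splitting of $A$ whose quotient is trivalent with no separating edge and with the HNN edge attached so as to keep the total quotient graph trivalent and without separating edges; but as written, with the rose, the lemma is not proved.
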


\begin{proof}
  The splitting $S$ has a refinement $\hat{S}$ such that $\hat{S}/F_N$ has $3N-3$ edges and no separating edge (recall that $3N-3$ is the maximal number of edges of a graph in Culler--Vogtmann's reduced Outer space \cite{CV}). The one-edge splittings associated to the edges of $\hat S/F_N$
  yield in addition to $S$, $3N-4$ distinct non-separating one-edge free splittings in the link of $S$ that are pairwise compatible.
\end{proof}

\begin{lemma}\label{lemma:free-splitting-link}
  For any $S\in \FS\setminus\NS$,
  the link of $S$ in $\FSG$ decomposes as a non-trivial join $\lk_{\FSG}(S)=X_1\star X_2$ 
  where neither $X_1$ nor $X_2$ contains a complete subgraph on $3N-4$ vertices. 
\end{lemma}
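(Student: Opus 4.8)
The plan is to analyze the structure of a separating one-edge free splitting $S\in\FS\setminus\NS$, which by definition is the Bass--Serre tree of a decomposition $F_N=A\ast B$ with both $A,B$ nontrivial (and hence both proper free factors of ranks $a,b\geq 1$ with $a+b=N$). First I would describe the link of $S$ in $\FSG$. A one-edge free splitting $S'$ is compatible with $S$ if and only if it is obtained as a collapse of a two-edge refinement of $S$; since $S$ has a single edge $e$ with trivial stabilizer and two vertices with stabilizers $A$ and $B$, any such refinement is gotten by blowing up the $A$-vertex using a one-edge free splitting of $A$ (and keeping $B$ fixed), or symmetrically by blowing up the $B$-vertex. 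Collapsing $e$ in the first case produces a one-edge free splitting whose elliptic subgroups refine $A$ and leave $B$ elliptic --- equivalently a one-edge free splitting of $F_N$ "supported in $A$" --- and the analogous statement holds for $B$. Thus I would set
\[
X_1=\{S'\in\lk_{\FSG}(S)\mid B\text{ is elliptic in }S'\},\qquad
X_2=\{S'\in\lk_{\FSG}(S)\mid A\text{ is elliptic in }S'\},
\]
and show $\lk_{\FSG}(S)=X_1\dunion X_2$. For this one checks that every $S'$ compatible with $S$ has a common refinement $\hat S$ with $S$; collapsing the $S$-edge and all-but-one $S'$-edges shows the remaining edge is incident to a vertex whose stabilizer contains $A$ or contains $B$, so one of $A,B$ is elliptic in $S'$; and $A,B$ cannot both be elliptic since $S'$ is a one-edge free splitting (that would make $F_N=A\ast B$ elliptic, contradicting nontriviality). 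Then $X_1$ and $X_2$ are disjoint, nonempty (e.g. blow up the higher-rank vertex, possible since $\max(a,b)\geq 2$ as $N\geq 3$ --- if $N=3$ one of $a,b$ is $2$; if both are $1$ then $N=2$, excluded), and the join property is immediate: a splitting supported in $A$ and one supported in $B$ admit the obvious two-edge common refinement (blow up $A$-vertex and $B$-vertex simultaneously), hence are compatible.

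The heart of the proof is to bound the clique sizes in $X_1$ and $X_2$. Here I would identify $X_i$ with (a subset of) the vertex set of the free splitting graph of $A$ (resp. $B$): via the blow-up/collapse correspondence above, splittings in $X_1$ compatible with each other as splittings of $F_N$ correspond to compatible one-edge free splittings of $A$. Therefore a complete subgraph of $X_1$ on $k$ vertices yields $k$ pairwise compatible one-edge free splittings of $A$, hence a free splitting of $A$ with $k$ edges in its quotient graph. Since $A$ is free of rank $a\leq N-1$, the quotient graph of any free splitting of $A$ has at most $3a-3\leq 3(N-1)-3=3N-6$ edges (the bound for reduced Outer space of $A$; if $a=1$ there are no free splittings of $A$ and the bound is vacuously much smaller). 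In all cases $3N-6<3N-4$, so $X_1$ contains no complete subgraph on $3N-4$ vertices, and symmetrically for $X_2$. This is exactly what is required.

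I do not expect a serious obstacle: the only slightly delicate point is the precise correspondence between one-edge free splittings of $F_N$ compatible with $S$ and one-edge free splittings of the vertex groups $A$, $B$, together with the matching of compatibility relations --- this requires the standard blow-up/collapse dictionary for free splittings and care that collapsing the $S$-edge does not merge the $A$- and $B$-sides in an unexpected way (it does not, since that edge has trivial stabilizer and the two vertex groups $A,B$ generate $F_N$ as a free product). Once this dictionary is in place, the edge-count bound $3N-6<3N-4$ finishes the argument, and Lemmas~\ref{lemma:link-ns}, \ref{lemma:maximal-clique-in-link} (applied to $\theta(S)$ together with its image $\theta$ of a $(3N-4)$-clique in $\lk_{\NSG}(S)$, and the fact that $\theta$ preserves adjacency and non-adjacency so carries this clique into $\lk_{\FSG}(\theta(S))$ without being split by the join) then force $\theta(S)\notin\FS\setminus\NS$, i.e. $\theta(S)\in\NS$, which is Proposition~\ref{prop:ns-fs}.
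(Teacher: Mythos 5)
Your setup (the description of $\lk_{\FSG}(S)$ and the join decomposition into splittings supported in $A$ and in $B$) matches the paper's, but the central counting step has a genuine gap: the claimed identification of $X_1$ with a subset of the vertex set of the free splitting graph of $A$ is not injective, and your clique bound relies on injectivity. Concretely, take $F_3=\langle x,y,z\rangle$, $A=\langle x,y\rangle$, $B=\langle z\rangle$, and $S$ the Bass--Serre tree of $A\ast B$. The splittings $\langle x,z\rangle\ast\langle y\rangle$ and $\langle x\rangle\ast\langle y,z\rangle$ are distinct, both compatible with $S$, both have $B$ elliptic (so both lie in your $X_1$), and they are even compatible with each other; yet their restrictions to $A$ (minimal $A$-invariant subtrees) are the same one-edge splitting $\langle x\rangle\ast\langle y\rangle$ of $A$. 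The point is that an element of $X_1$ is determined by a one-edge splitting of $A$ \emph{together with} the choice of attaching point for the edge towards the $B$-vertex in the blow-up, and restriction to $A$ forgets that choice. Hence a $k$-clique in $X_1$ need not yield $k$ distinct pairwise compatible splittings of $A$, and the bound $k\le 3\,\mathrm{rank}(A)-3$ does not follow from your argument as written.

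The bound you want is true, and the repair is to count edge orbits upstairs rather than downstairs: a $k$-clique in $X_1$, together with $S$ itself, consists of $k+1$ pairwise distinct, pairwise compatible one-edge splittings, so it has a common refinement which is a free splitting of $F_N$ with $k+1$ orbits of edges in which $B$ is elliptic. This is exactly the paper's argument; it then uses only the crude fact that a free splitting of $F_N$ with $3N-3$ orbits of edges corresponds to a free action of $F_N$, hence admits no non-trivial elliptic subgroup, so a $(3N-4)$-clique in $X_i$ is impossible. (Alternatively, one can get your sharper bound $k+1\le 3\,\mathrm{rank}(A)-2$ from the rank formula for graphs of groups with trivial edge groups having a vertex group containing $B$.) Two smaller points: your parenthetical argument for disjointness of $X_1$ and $X_2$ (``both elliptic would make $F_N$ elliptic'') is not a valid deduction as stated, since two elliptic subgroups generating $F_N$ need not share a fixed point --- one needs, e.g., the homology/rank argument; and your nonemptiness argument only produces a vertex on the higher-rank side, so it does not address non-triviality of the join when one factor is cyclic (in that case the other side is the single splitting $F_N=A\ast$, as the paper's remark following the lemma indicates).
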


\begin{rk}
  We allow  $X_1$ or $X_2$ to be reduced to a point (this occurs if one of the factors of the splitting is cyclic).
\end{rk}

\begin{proof}
  Let $A_1,A_2$ be proper free factors of $F_N$ such that $S$ is the Bass--Serre tree of the free product $F_N=A_1\ast A_2$. Every one-edge free splitting of $F_N$ that is compatible with $S$ but distinct from $S$
  is obtained in the following way: choose $i\in\{1,2\}$, construct a refinement $\hat{S}$ of $S$ by blowing up $S$ at the vertex fixed by $A_i$, 
  using a one-edge free splitting of $A_i$ and choosing an attaching point for the edge of $S$, 
  and then collapse every edge of $\hat{S}$ that comes from $S$ to a point. We say that such a splitting is a \emph{compatible splitting of type $i$}. Then the link $\lk_{\FSG}(S)$ decomposes as a non-trivial join $\lk_{\FSG}(S)=X_1\star X_2$, where $X_i$ consists of all compatible splittings of type $i$.

Let $i\in\{1,2\}$. Assume towards a contradiction that $X_i$ contains a complete subgraph on $3N-4$ vertices. 
Then by \cite[Lemma A.17]{GL-jsj} there exists a refinement of $S$ with $3N-3$ orbits of edges and in which $A_j$ is elliptic (with $j\neq i$). 
This is a contradiction because every free splitting of $F_N$ with $3N-3$ orbits of edges corresponds to a free action of $F_N$.
\end{proof}

\begin{proof}[Proof of Proposition~\ref{prop:ns-fs}]
  Assume towards a contradiction that there exists a vertex $S\in\NS$ such that $\theta(S)$ is a separating splitting. By Lemma~\ref{lemma:free-splitting-link}, the link $\lk_{\FSG}(\theta(S))$ decomposes as a non-trivial join $\lk_{\FSG}(\theta(S))=X_1\star X_2$, where no $X_i$ contains a complete subgraph on $3N-4$ vertices. By Lemma~\ref{lemma:link-ns}, the link $\lk_{\NSG}(S)$ does not decompose non-trivially as a join, and as $\theta$ preserves both adjacency and non-adjacency, neither does $\theta(\lk_{\NSG}(S))$. Since $$\theta(\lk_{\NSG}(S))=(\theta(\lk_{\NSG}(S))\cap X_1)\star (\theta(\lk_{\NSG}(S))\cap X_2),$$ we deduce that $\theta(\lk_{\NSG}(S))$ is contained in either $X_1$ or $X_2$. On the other hand, Lemma~\ref{lemma:maximal-clique-in-link} ensures that $\lk_{\NSG}(S)$ contains a complete subgraph on $3N-4$ vertices, a contradiction.
\end{proof}

\section{Measure equivalence rigidity of $\Out(F_N)$}\label{sec:conclusion}

The goal of this section is to complete the proof of the main theorem of the paper.

\begin{theo}\label{thm_superrigid}
For every $N\ge 3$, the group $\Out(F_N)$ is ME-superrigid.
\end{theo}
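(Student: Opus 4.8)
The strategy is to reduce Theorem~\ref{thm_superrigid} to the rigidity criterion of Section~\ref{sec:me}, and then to verify that criterion using the groupoid-theoretic machinery developed throughout the paper. By Theorem~\ref{theo:me}, it suffices to check that $\Out(F_N)$ is ICC and that it is rigid with respect to action-type cocycles in the sense of Definition~\ref{de:rigid}. The ICC property is standard (and in any case, by Remark~\ref{rk_commensurator}, rigidity with respect to action-type cocycles already yields surjectivity of $\Out(F_N)$ onto its abstract commensurator, recovering the Farb--Handel theorem). So the heart of the matter is to produce a finite-index subgroup $\Gamma^0\subseteq\Out(F_N)$ --- we take $\Gamma^0=\IA$ --- such that any two action-type cocycles $\rho,\rho':\calg\to\IA$ on a common measured groupoid $\calg$ over a base space $Y$ are $\Out(F_N)$-cohomologous. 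This is precisely Theorem~\ref{theo:intro-two-cocycles}, and the whole of Part~III is geared towards its proof.

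First I would carry out the reduction: given $\rho$ and $\rho'$, the plan is to build for almost every $y\in Y$ a simplicial automorphism $\Psi(y)$ of $\NSG$, depending measurably on $y$, which (after composing with $\rho'$ on one side) intertwines the two cocycles; since by Pandit's theorem \cite{Pan} $\Out(F_N)$ is the full automorphism group of $\NSG$ for $N\ge 3$, this $\Psi(y)$ is an element of $\Out(F_N)$, and the intertwining relation is exactly the statement that $\rho$ and $\rho'$ are $\Out(F_N)$-cohomologous via $\phi=\Psi$. To construct $\Psi(y)$: for each vertex $v\in\NS$, consider the subgroupoid $\rho^{-1}(\Stab_{\IA}(v))$ of $\calg$. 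By Lemma~\ref{lemma:nonsep-satisfies-1} and Lemmas~\ref{lemma:check-p1max}, \ref{lemma:check-p2}, \ref{lemma:check-p3}, this subgroupoid satisfies Properties~\PImax, \PII, \PIII\ with respect to $\rho$; but these properties are purely groupoid-theoretic, so by the first part of Theorem~\ref{theo:non-characterization} applied with the cocycle $\rho'$, after partitioning $Y=\Dunion_i Y_i$ this subgroupoid restricted to $Y_i$ is the $(\calg_{|Y_i},\rho')$-stabilizer of some splitting $v_i\in\NS\cup\FSii$. Using Proposition~\ref{prop:ns-fs} (and the fact that compatibility is detected groupoid-theoretically, Theorem~\ref{theo:compatibility-ns}) one upgrades this to a well-defined map $\Psi(y)$ from $\NS$ to $\FS$, defined a.e., preserving adjacency and non-adjacency, and with image in $\NS$; symmetry (running the argument with $\rho$ and $\rho'$ exchanged) shows $\Psi(y)$ is a bijection $\NS\to\NS$, hence an element of $\Out(F_N)$. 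One then checks the equivariance/intertwining property of $\Psi$ directly from the construction (the stabilizer of $gv$ under $\rho'$ corresponds, via $\Psi$, to the stabilizer of $v$ under $\rho$, conjugated by the relevant label).

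Once $\Psi:Y\to\Out(F_N)$ is constructed, the cohomology relation $\rho'(g)=\Psi(r(g))\,\rho(g)\,\Psi(s(g))^{-1}$ for $g\in\calg_{|Y^*}$ follows, establishing Theorem~\ref{theo:intro-two-cocycles}, hence rigidity with respect to action-type cocycles, hence --- via Theorem~\ref{theo:me} --- ME-superrigidity of $\Out(F_N)$. Finally, the consequences (Theorems~\ref{theo:intro-oe}, \ref{theo:lattice-embeddings}, Corollary~\ref{corintro:cayley}) are deduced via Theorems~\ref{theo:oe}, \ref{theo:lattice}, and Corollary~\ref{cor:cayley}. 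The main obstacle, and the technically deepest point, is the first part of Theorem~\ref{theo:non-characterization}: translating the group-theoretic characterization of stabilizers of non-separating free splittings into the groupoid setting, which itself rests on the amenability input of Section~\ref{sec:adams} (Theorem~\ref{theo:normalisateur_moyennable} and the Borel amenability of the $\Out(F_N,\calf)$-action on arational trees with amenable stabilizer), on the three canonical-splitting constructions of Part~II and their chain conditions, and on the delicate bookkeeping needed to keep all partitions of the base space countable. A secondary subtlety is the slight asymmetry between $\NS$ and $\FSii$ in Theorem~\ref{theo:non-characterization}, which is precisely why Proposition~\ref{prop:ns-fs} is needed to push the map $\Psi(y)$ back into $\NSG$.
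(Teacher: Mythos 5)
Your proposal is correct and follows essentially the same route as the paper: reduce via Theorem~\ref{theo:me} (plus ICC) to rigidity with respect to action-type cocycles with $\Gamma^0=\IA$, then prove Theorem~\ref{theo:two-cocycles} by building a measurable family of automorphisms of $\NSG$ from the groupoid-theoretic characterization of stabilizers (Theorem~\ref{theo:non-characterization}), the compatibility criterion (Theorem~\ref{theo:compatibility-ns}), Proposition~\ref{prop:ns-fs}, and Pandit's theorem. This matches the paper's own proof, including the identification of the deep input (Section~\ref{sec:adams} and Part~II) and the role of the $\NS$ versus $\FSii$ asymmetry.
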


The following lemma is folklore. It follows for instance from \cite[Lemma 3.2, Corollary 1.4]{FH}.

\begin{lemma}\label{lem_ICC}
For every $N\geq 3$, the group $\Out(F_N)$ is ICC: the conjugacy class of every non-trivial element of $\Out(F_N)$ is infinite.\qed
\end{lemma}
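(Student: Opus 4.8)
The statement to prove is that $\Out(F_N)$ is ICC for $N\ge 3$, i.e.\ that every non-trivial $\alpha\in\Out(F_N)$ has an infinite conjugacy class. The plan is to argue by contradiction: suppose $\alpha\neq 1$ has finite conjugacy class, so its centralizer $C$ in $\Out(F_N)$ has finite index. Passing to the torsion-free finite-index subgroup $\IA=\mathrm{IA}_N(\bbZ/3\bbZ)$, the group $C\cap\IA$ still has finite index and centralizes some non-trivial power $\alpha^m\in\IA$ (note $\alpha^m\neq 1$ since $\IA$ is torsion-free and $\alpha$ has infinite order in $\IA$, or more precisely some power of $\alpha$ lies in $\IA$ and is non-trivial). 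So it suffices to show: no non-trivial element of $\IA$ is centralized by a finite-index subgroup of $\IA$.

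\textbf{Key steps.} First I would reduce to understanding, for $\beta\in\IA\setminus\{1\}$, the structure forced on its centralizer by the various canonical objects attached to $\beta$ (or rather to the cyclic group $\langle\beta\rangle$) constructed in Part~\ref{part2}. The dichotomy is whether $\langle\beta\rangle$ (equivalently a finite-index overgroup $H\subseteq\IA$ centralizing $\beta$, since such an $H$ preserves every $\langle\beta\rangle$-invariant object) preserves a non-trivial free splitting, a non-trivial $\Zmax$-splitting, or neither.

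\emph{Case 1: $H$ preserves a non-trivial free splitting.} Then by Proposition~\ref{prop:deformation} there is a maximum $H$-invariant free splitting and the canonical tree $\Uun_H$ of Theorem~\ref{theo:tree-of-cyl} is non-trivial (since $H$ is infinite). Its vertex stabilizers in $V^0$ are proper free factors, its edge stabilizers are proper, and $H$ acts trivially on the quotient graph (as $H\subseteq\IA$). Using the exact sequence of Proposition~\ref{prop_suite_exacte}, $\beta$ restricts to a possibly-trivial outer automorphism on each vertex group; iterating into the active vertices (where the group of twists is infinite and non-abelian, cf.\ Corollary~\ref{cor:uh1-biflexible}), one finds explicit elements not commuting with $\beta$, contradicting that $H$ centralizes $\beta$. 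The cleanest incarnation: a finite-index subgroup of $\IA$ cannot centralize a non-trivial element of a group of twists $A\times A$ or $A\times B$ with $A,B$ non-abelian, and cannot centralize a non-trivial element acting non-trivially on a vertex group $G_v$ with $(G_v,\Inc_v)$ flexible, because $\Out(G_v,\Inc_v^{(t)})$ then contains a non-abelian free group with trivial center.

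\emph{Case 2: $H$ preserves a non-trivial $\Zmax$-splitting but no non-trivial free splitting.} Use the canonical JSJ $\UZ_H$ of Theorem~\ref{theo:JSJ_zmax}, which is non-trivial, and Theorem~\ref{thm_JSJ}: flexible vertices are QH with sockets and $H_{|G_v}=\{1\}$ there. Proposition~\ref{prop:ia-zmax} gives $H$ acting trivially on the quotient graph. Since $H$ has no invariant free splitting and $\UZ_H$ is non-trivial, there is a flexible QH-with-sockets vertex; the mapping class group of the underlying surface (with boundary) has trivial center on a finite-index subgroup, and the group of twists contributes infinite abelian pieces with non-trivial elements not centralized by Dehn twists — again a non-abelian free (or infinite abelian with infinitely many conjugates) piece inside $H$ with trivial center, contradicting that all of $H$ centralizes $\beta$. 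More directly, restrict $\beta$ to the flexible vertex group $G_v=\pi_1(\Sigma)$: the hypothesis forces $\beta_{|G_v}$ trivial and $\beta$ a product of twists; but twists around distinct edges of a $\Zmax$-splitting do not all commute once there are two flexible/QH vertices, and Remark~\ref{rk_binonsporadic}-type reasoning shows $\UZ_H$ has enough structure to produce a non-commuting element.

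\emph{Case 3: $H$ preserves neither a non-trivial free splitting nor a non-trivial $\Zmax$-splitting.} Then $H$ is pure or admits the dynamical decomposition $\Ud_H$ of Theorem~\ref{thm_dynamic_dec}. If $\Ud_H$ is non-trivial, its active vertex groups $A_{\HF}$ carry a pure restriction with infinite (non-abelian) relative twist groups, and the same argument applies. If $H$ is pure with unique maximal invariant \afs\ $\calf$, then by \cite{HM,GH} $H$ contains an automorphism $\gamma$ fully irreducible relative to $\calf$; fully irreducible automorphisms have virtually cyclic centralizer in $\Out(F_N,\calf)$ (by \cite{BFH} / the analogue, via the invariant arational tree and its homothety group being virtually cyclic), so a finite-index subgroup of $\IA$ cannot centralize a non-trivial element here either — unless $\beta$ itself lies in that virtually cyclic centralizer and commutes with all of $H$, which forces $H$ virtually cyclic, contradicting that $H$ has finite index in $\IA$ (and $\IA$ is not virtually cyclic for $N\ge 3$).

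\textbf{Main obstacle.} The delicate point is Case~3 and, within it, ruling out that $\beta$ is centralized by the whole finite-index $H$ when $H$ is pure: one needs an input about centralizers of (relatively) fully irreducible automorphisms — namely that such centralizers in $\Out(F_N,\calf)$ are virtually cyclic, which follows from the uniqueness of the attracting/repelling pair of arational trees and the fact that the stabilizer of such a pair with the dilatation data is virtually cyclic (an application of the same circle of ideas as Proposition~\ref{prop:invariant-arational-tree} together with \cite[Remark~6.4]{GH}). Assembling the three cases uniformly — e.g.\ via Corollary~\ref{cor_nice} and Theorem~\ref{theo:reducibility}, which say that preserving a nice splitting / having a periodic proper free factor passes to the normalizer, hence is inherited by $H=C_{\IA}(\beta)$ from $\langle\beta\rangle$ when relevant — is the bookkeeping one must be careful with, but no genuinely new technique beyond Part~\ref{part2} is needed. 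Once the contradiction is reached in all cases, $\Out(F_N)$ is ICC.
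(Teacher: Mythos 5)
Your proposal takes a genuinely different route from the paper: the paper disposes of Lemma~\ref{lem_ICC} by citation (an element with finite conjugacy class is centralized by a finite-index subgroup, hence lies in the kernel of the map to the abstract commensurator, which is trivial by Farb--Handel/\cite{HW}), whereas you attempt a self-contained argument via the Part~\ref{part2} machinery. As written, your argument has a genuine gap at the very first reduction. If $\alpha$ is a non-trivial \emph{finite-order} outer automorphism with finite conjugacy class, then $\langle\alpha\rangle\cap\IA=\{1\}$ because $\IA$ is torsion-free, so no non-trivial power of $\alpha$ lies in $\IA$; the claim ``it suffices to show that no non-trivial element of $\IA$ is centralized by a finite-index subgroup of $\IA$'' therefore does not cover torsion elements, and ICC does require excluding those (a group with a non-trivial finite normal subgroup fails ICC while having no infinite-order offenders). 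This torsion case is exactly where the paper's cited commensurator-injectivity result is doing the work; to handle it along your lines you would instead have to show directly that a (possibly torsion) $\alpha$ centralizing a finite-index subgroup $H\subseteq\IA$ is trivial, e.g.\ by showing that the stabilizers of the attracting/repelling tree pairs of sufficiently many independent fully irreducible elements of $H$ intersect trivially --- an extra argument you have not supplied.

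A second problem is the key input you invoke in Case~3: the centralizer in $\Out(F_N,\calf)$ of an automorphism which is fully irreducible \emph{relative to} $\calf$ need not be virtually cyclic. In the situation of Example~\ref{ex:3-surfaces}, a partial pseudo-Anosov supported on $\Sigma_1$ (identity on $Q_1$) is fully irreducible relative to $\calf_1=\{[Q_1]\}$, yet it commutes with the twist about the boundary curve $c_1$, so its centralizer contains a copy of $\mathbb{Z}^2$; the ``uniqueness of the attracting/repelling pair'' does not give virtual cyclicity because the stabilizer of that pair can contain twists. Your argument can be repaired, but only after noticing a simplification you missed: since $H=C(\alpha)\cap\IA$ has \emph{finite index} in $\IA$, Theorem~\ref{theo:ia} forces the only $H$-invariant free factor system to be $\emptyset$ and $H$ preserves no non-trivial splitting at all, so Cases~1 and~2 are vacuous (their write-up also conflates ``a subgroup of $H$ has trivial center'' with the relevant statement, namely that $\beta\in Z(H)$), and the needed input is the \emph{absolute} theorem that centralizers of (atoroidal) fully irreducible automorphisms are virtually cyclic --- a true but external result not proved in this paper. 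With that input the clean finish is: pick two non-commensurable fully irreducibles in $H$, observe that $\beta$ lies in both of their centralizers, intersected with $\IA$ these are infinite cyclic, and deduce that the two fully irreducibles share a common power, a contradiction. In short, the infinite-order case can be salvaged with the absolute centralizer theorem, but the torsion case remains an unfilled hole in your reduction, which the paper's citation-based proof covers.
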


\begin{proof}[Proof of Theorem \ref{thm_superrigid}]
Since $\Out(F_N)$ is ICC,
the theorem follows from Theorem~\ref{theo:me}
together with the following statement, which shows that $\Out(F_N)$ is rigid with respect to action-type cocycles (Definition~\ref{de:rigid}).
\end{proof}

\begin{theo}\label{theo:two-cocycles}
For every $N\ge 3$, the group $\Out(F_N)$ is rigid with respect to action-type cocycles. 

More precisely, let $\calg$ be a measured groupoid over a base space $Y$, and let $\rho,\rho':\calg\to\IA$ be two action-type cocycles.

Then $\rho$ and $\rho'$ are $\Out(F_N)$-cohomologous, i.e.\ there exist a Borel map  
$\phi:Y\to\Out(F_N)$ and a conull Borel subset $Y^*\subseteq Y$ such that for all $g\in\calg_{|Y^*}$, one has $\rho'(g)=\phi(r(g))\rho(g)\phi(s(g))\m$. 
\end{theo}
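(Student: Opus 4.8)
The strategy follows the scheme initiated by Ivanov for mapping class groups and adapted by Kida, using all the machinery developed in Parts~II and~III of the paper. Starting from the two action-type cocycles $\rho,\rho':\calg\to\IA$, the goal is to construct a Borel map $\phi:Y\to\Out(F_N)$ intertwining them. The key idea is to produce, for almost every $y\in Y$, an element $\phi(y)\in\Out(F_N)$ by building a map at the level of the non-separating free splitting graph $\NSG$, and then invoking the rigidity result of Pandit \cite{Pan} that $\Out(F_N)$ is exactly the automorphism group of $\NSG$.

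First I would set up the groupoid-theoretic detection machinery. Fix a vertex $S\in\NS$, and consider the $(\calg,\rho)$-stabilizer $\calh_S:=\rho^{-1}(\Stab_{\IA}(S))$. By Theorem~\ref{theo:non-characterization}(2), the pair $(\calg,\calh_S)$ satisfies Properties~$\PImax$, $\PII$, $\PIII$. Now apply Theorem~\ref{theo:non-characterization}(1) to the \emph{same} subgroupoid $\calh_S$ but with respect to the cocycle $\rho'$: one obtains a conull Borel subset $Y^*\subseteq Y$ and a countable partition $Y^*=\Dunion_i Y_i$ such that for every $i$, $(\calh_S)_{|Y_i}=\rho'^{-1}(\Stab_{\IA}(S_i))_{|Y_i}$ for some $S_i\in\NS\cup\FSii$. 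This yields, for almost every $y\in Y$, a map $\Psi_\rho(y)$ from $\NS$ to $\NS\cup\FSii\subseteq\FS$ (after restricting to a further partition where all these assignments are well-defined; the chain conditions of Part~II, together with the fact that all objects involved --- splittings, free factor systems --- live in countable sets, are what guarantee that only countably many partitions are needed). The map $\Psi_\rho(y)$ is injective: if $S\neq S'$, then $\calh_S\neq\calh_{S'}$ as subgroupoids (since $\rho$ has trivial kernel and $\Stab_{\IA}(S)\neq\Stab_{\IA}(S')$ for distinct non-separating free splittings, by the analogue of Lemma~\ref{lemma:unique-nice-invariant} or a direct argument), hence their $\rho'$-images determine distinct splittings. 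Moreover, using Theorem~\ref{theo:compatibility-ns}, the map $\Psi_\rho(y)$ preserves \emph{both} adjacency and non-adjacency in $\FSG$: compatibility of $S,S'\in\NS$ is equivalent to Properties $\Pcompi$ and $\Pcompii$ for the triple $(\calg;\calh_S,\calh_{S'})$, which is a statement symmetric in $\rho$ and $\rho'$ once one has the characterization, so $S,S'$ compatible $\iff$ $\Psi_\rho(y)(S),\Psi_\rho(y)(S')$ compatible.

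Next, by Proposition~\ref{prop:ns-fs}, since $\Psi_\rho(y):\NS\to\FS$ is injective and preserves adjacency and non-adjacency, its image is contained in $\NS$; so $\Psi_\rho(y)$ is a self-map of the vertex set of $\NSG$ preserving adjacency and non-adjacency. Running the symmetric construction with the roles of $\rho$ and $\rho'$ exchanged produces a map $\Psi_{\rho'}(y)$ in the other direction, and one checks (again using that the characterizations are intrinsic) that $\Psi_{\rho'}(y)$ and $\Psi_\rho(y)$ are mutually inverse, so $\Psi_\rho(y)$ is a bijection, i.e.\ a simplicial automorphism of $\NSG$. By Pandit's theorem \cite{Pan}, $\Psi_\rho(y)$ is induced by a unique element $\phi(y)\in\Out(F_N)$. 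One then verifies that $y\mapsto\phi(y)$ is Borel (the construction goes through countably many Borel partitions and countable-valued assignments, so measurability is routine), and finally checks the cocycle intertwining relation $\rho'(g)=\phi(r(g))\rho(g)\phi(s(g))^{-1}$ for a.e.\ arrow $g$: this follows because for such $g$ with $s(g)=x$, $r(g)=y$, conjugation by $\rho(g)$ sends $\Stab_{\IA}(S)$ to $\Stab_{\IA}(\rho(g)S)$, and tracing through the definition of $\Psi$ one gets $\phi(y)\rho(g)S=\rho'(g)\phi(x)S$ for every $S\in\NS$, hence $\phi(y)^{-1}\rho'(g)^{-1}\phi(y)\rho(g)$ acts trivially on $\NSG$, so is trivial in $\Out(F_N)$ by Pandit's theorem. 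Since $\rho,\rho'$ take values in $\IA\subseteq\Out(F_N)$, the conclusion follows.

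\textbf{Main obstacle.} The technical heart --- and the main obstacle --- is the bookkeeping needed to make all the assignments $S\mapsto S_i$ simultaneously well-defined on a \emph{single} countable partition of $Y$, rather than a partition depending on $S$. Because there are countably many $S\in\NS$, and for each the partition from Theorem~\ref{theo:non-characterization} depends on $S$, one must intersect countably many countable partitions, which is fine set-theoretically but requires care that no ``diagonal'' blow-up of infinitely many partitions at once is implicitly used; this is precisely where the \emph{chain conditions} (Propositions~\ref{prop:chain-FS2}, \ref{prop:chain-zmax2}, Corollary~\ref{coro:finitude-facteurs}) and the discreteness of the relevant spaces of splittings and factor systems enter in an essential way. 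A second, subtler point is verifying that $\Psi_\rho(y)$ and $\Psi_{\rho'}(y)$ are genuinely inverse to each other: this requires that the groupoid-theoretic characterization of ``$\calh$ is the stabilizer of a non-separating free splitting'' (Theorem~\ref{theo:non-characterization}) be truly two-sided, which is why Property $\PImax$ (maximality) is built into the statement --- without it, one could only recover the splitting up to the asymmetry coming from $\FSii$, and it is exactly Proposition~\ref{prop:ns-fs} that repairs this asymmetry by forcing the image back into $\NS$.
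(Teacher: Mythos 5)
Your proposal is correct and follows essentially the same route as the paper's proof: define the map $\Psi$ via Theorem~\ref{theo:non-characterization} applied to $\rho$ and then $\rho'$, use Theorem~\ref{theo:compatibility-ns} for adjacency and non-adjacency, Proposition~\ref{prop:ns-fs} to force the image into $\NS$, the symmetric construction to see $\Psi_y\in\Aut(\NSG)$, Pandit's theorem to get $\phi(y)\in\Out(F_N)$, and a conjugation argument on bisections for the cohomology relation. The only step stated too quickly is the injectivity of $\Psi_\rho(y)$: it is not enough that $\Stab_{\IA}(S)\neq\Stab_{\IA}(S')$ and that $\rho$ has trivial kernel; one needs that the preimage subgroupoids are not \emph{stably} equal on any positive-measure subset, which (as in Lemma~\ref{lemma:two-stab-different}) uses the action-type property together with an element of one stabilizer none of whose powers preserves the other splitting (Lemma~\ref{lemma:stabilizer-free-splitting}).
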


We are now left proving Theorem~\ref{theo:two-cocycles}.

\begin{lemma}\label{lemma:two-stab-different}
  Let $\calg$ be a measured groupoid over a base space $Y$, and let $\rho:\calg\to\IA$ be an action-type cocycle. Let $S_1,S_2\in\NS\cup\FSii$ be two distinct splittings and $H_1,H_2\subset\IA$ their stabilizers.

Then for every Borel subset $U\subseteq Y$ of positive measure, one has $\rho^{-1}(H_1)_{|U}\neq \rho^{-1}(H_2)_{|U}$. 
\end{lemma}

\begin{proof}
The splitting $S_1$ is acyclic (i.e.\ its quotient graph does not contain any terminal vertex with vertex group isomorphic to $\mathbb{Z}$), so Lemma~\ref{lemma:stabilizer-free-splitting} shows that there exists an element in $H_1$ none of whose powers belongs to $H_2$. As $\rho$ is action-type, this implies that $\rho\m(H_1)_{|U}$ is not contained in $\rho\m(H_2)_{|U}$ (see Remark~\ref{rk:action-type}), which concludes the proof. 
\end{proof}

Recall that the $(\calg,\rho)$-stabilizer of a Borel map $\sigma:Y\ra \FS$ is
the subgroupoid of $\calg$ consisting of all $g\in\calg$ such that $\sigma(r(g))=\rho(g)\sigma(s(g))$. Note that if $Y=\dunion Y_i$ is a countable Borel partition such that $\sigma$ is constant on $Y_i$ with value $S_i$,
and if $H_i\subset \IA$ denotes the stabilizer of $S_i$,
then the $(\calg,\rho)$-stabilizer of $\sigma$ stably coincides with the groupoid $\cup_i (\rho\m(H_i))_{|Y_i}$.

\begin{rk}\label{rk:two-stab-different}
  As a consequence of Lemma~\ref{lemma:two-stab-different}, if $\sigma,\sigma':Y\to\NS\cup\FSii$ are two Borel maps such that the $(\calg,\rho)$-stabilizers of $\sigma$ and $\sigma'$ are stably equal, then $\sigma$ and $\sigma'$ coincide almost everywhere.
\end{rk}

The following key lemma is based on the groupoidal (almost) characterization of stabilizers of non-separating free splittings of
Section \ref{sec:big}.

\begin{lemma}\label{lemma:rho-rho'}
Let $\calg$ be a measured groupoid over a base space $Y$, and let $\rho,\rho':\calg\to\IA$ be two action-type cocycles. 

For every Borel map $\sigma:Y\to\NS$, there exists an essentially unique Borel map $\sigma':Y\to\NS\cup\FSii$ such that the $(\calg,\rho)$-stabilizer of $\sigma$ is stably equal  to the $(\calg,\rho')$-stabilizer of $\sigma'$.  
\end{lemma}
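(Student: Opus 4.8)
The idea is to reduce, after partitioning the base space, to the case where $\sigma$ is a constant map with value some fixed $S\in\NS$, and then transport the groupoid-theoretic characterization of stabilizers of non-separating free splittings (Theorem~\ref{theo:non-characterization}) from the cocycle $\rho$ to the cocycle $\rho'$. First I would write $Y^*=\Dunion_{n\in\bbN}Y_n$ as a countable Borel partition of a conull set on which $\sigma$ takes the constant value $S_n\in\NS$; since uniqueness is a local statement and stably-equal groupoids are built piecewise, it suffices to treat each $Y_n$ separately, so we may assume $\sigma\equiv S$ on $Y$ with stabilizer $H_0=\Stab_{\IA}(S)$, and the $(\calg,\rho)$-stabilizer of $\sigma$ is $\calh:=\rho^{-1}(H_0)$.

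The heart of the argument is then: by Theorem~\ref{theo:non-characterization}(2) applied to the cocycle $\rho$, the pair $(\calg,\calh)$ satisfies Properties~$\PImax$, $\PII$ and $\PIII$ --- these are purely groupoid-theoretic conditions on $(\calg,\calh)$, not mentioning any cocycle. Now apply Theorem~\ref{theo:non-characterization}(1) to the cocycle $\rho'$: since $(\calg,\calh)$ satisfies $\PImax$, $\PII$, $\PIII$, there is a conull $Y^{**}\subseteq Y$ and a countable Borel partition $Y^{**}=\Dunion_{i\in I}Y_i$ such that for each $i$, $\calh_{|Y_i}=\rho'^{-1}(\Stab_{\IA}(S_i'))_{|Y_i}$ for some $S_i'\in\NS\cup\FSii$. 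Define $\sigma':Y\to\NS\cup\FSii$ by $\sigma'\equiv S_i'$ on $Y_i$. Then by construction the $(\calg,\rho')$-stabilizer of $\sigma'$ stably equals $\Dunion_i \rho'^{-1}(\Stab_{\IA}(S_i'))_{|Y_i}$, which stably equals $\Dunion_i \calh_{|Y_i}=\calh$, i.e.\ the $(\calg,\rho)$-stabilizer of $\sigma$. Measurability of $\sigma'$ is automatic from the Borel partition and the countability of $\NS\cup\FSii$. This gives existence.

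For uniqueness: suppose $\sigma_1',\sigma_2':Y\to\NS\cup\FSii$ both have $(\calg,\rho')$-stabilizer stably equal to $\calh$. Partition $Y$ (up to a conull set) into countably many Borel pieces on which both $\sigma_1'$ and $\sigma_2'$ are constant, say with values $T_1$ and $T_2$; on such a piece $U$ of positive measure, $\rho'^{-1}(\Stab_{\IA}(T_1))_{|U}$ and $\rho'^{-1}(\Stab_{\IA}(T_2))_{|U}$ are stably equal. By Lemma~\ref{lemma:two-stab-different} applied to the action-type cocycle $\rho'$ (in the contrapositive form recorded in Remark~\ref{rk:two-stab-different}), this forces $T_1=T_2$ on $U$ up to a null set; ranging over all pieces, $\sigma_1'=\sigma_2'$ almost everywhere. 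This completes the proof.

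\textbf{Main obstacle.} The only subtlety is bookkeeping: Theorem~\ref{theo:non-characterization}(1) outputs splittings in $\NS\cup\FSii$ rather than $\NS$, which is exactly why the target of $\sigma'$ is stated as $\NS\cup\FSii$ and not $\NS$, and one must be careful that the hypothesis ``$\calh_{|Y_i}$ is the $\rho$-stabilizer of a splitting in $\NS$'' in part~(2) of that theorem is what we are given, while part~(1) is applied to $\rho'$. There is nothing deep beyond correctly chaining the two halves of Theorem~\ref{theo:non-characterization} with the two cocycles in the right roles, together with the standard partition arguments that make ``stably equal'' behave well; no new estimate or construction is needed here, the work having been done in Sections~\ref{sec:big} and the preceding lemma.
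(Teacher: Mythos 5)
Your proof is correct and follows essentially the same route as the paper: existence by feeding part (2) of Theorem~\ref{theo:non-characterization} (for $\rho$) into part (1) (for $\rho'$) and assembling $\sigma'$ from the resulting countable partition, and uniqueness via Lemma~\ref{lemma:two-stab-different} as in Remark~\ref{rk:two-stab-different}. The only (harmless) cosmetic difference is that you first reduce to constant $\sigma$ by partitioning, whereas the paper applies part (2) directly with the partition given by the $\sigma$-preimages.
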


We say that $\sigma'$ is the \emph{$(\calg,\rho,\rho')$-image} of $\sigma$.
Equivalently, a Borel map $\sigma':Y\to\NS\cup\FSii$ is (up to measure $0$) the $(\calg,\rho,\rho')$-image of $\sigma$ if and only if
there exist a conull Borel subset $Y^*\subseteq Y$ and a partition $Y^*=\Dunion_{i\in I} Y_i$ into at most countably many Borel subsets such that for every $i\in I$, the maps $\sigma$ and $\sigma'$ are constant on $Y_i$ with respective values $S_i,S'_i$,
and denoting by $H_i,H'_i$ the respective stabilizers of $S_i,S'_i$ in $\IA$, one has $(\rho\m(H_i))_{|Y_i}=({\rho'}\m(H'_i))_{|Y_i}$.

Note that \emph{a priori}, there is a slight asymmetry between $\sigma$ and $\sigma'$ (coming from the asymmetry in Theorem~\ref{theo:non-characterization}): indeed $\sigma$ is assumed to take its values in $\NS$ while $\sigma'$ is assumed to take its values in $\NS\cup\FSii$. However, using the results from  Section~\ref{sec:maps}, we will see in the course of the proof of Theorem~\ref{theo:two-cocycles} that $\sigma'$ actually takes its values in $\NS$. 

\begin{proof}[Proof of Lemma \ref{lemma:rho-rho'}]
  The essential uniqueness of $\sigma'$ follows from Remark \ref{rk:two-stab-different}, so we now prove the existence of $\sigma'$.
  
  Let $\calh$ be the $(\calg,\rho)$-stabilizer of $\sigma$. By the second part of Theorem~\ref{theo:non-characterization} (applied to the cocycle $\rho$, with the partition given by $\sigma$-preimages of points in $\NS$), the pair $(\calg,\calh)$ satisfies Properties~$\PImax,~\PII$ and~$\PIII$. By the first part of Theorem~\ref{theo:non-characterization} (applied to the cocycle $\rho'$),
  there exist a conull subset $Y^*\subset Y$, a countable Borel partition $Y^*=\dunion Y_i$, and splittings $S'_i\in\NS\cup\FSii$,
  such that $\calh_{|Y_i}$ is equal to the $(\calg_{|Y_i},\rho')$-stabilizer of $S'_i$. 
  Consider the map $\sigma'$ assigning $S'_i$ to $y\in Y_i$ (and defined in an arbitrary way on $Y\setminus Y^*$).
  Then $\calh$ is stably  equal  to the $(\calg,\rho')$-stabilizer of $\sigma'$.
\end{proof}

\begin{rk}\label{rk_symmetry}
If $\sigma'$ is the $(\calg,\rho,\rho')$-image of $\sigma$, and if additionally $\sigma'$ takes its values in $\NS$, then by symmetry, $\sigma$ is the $(\calg,\rho',\rho)$-image of $\sigma'$. 
\end{rk}

\begin{rk} \label{rk_restriction} If $\sigma'$ is the $(\calg,\rho,\rho')$-image of $\sigma$, then for every Borel subset $U\subseteq Y$ of positive measure, 
$\sigma'_{|U}$ is the $(\calg_{|U},\rho,\rho')$-image of $\sigma_{|U}$.
\end{rk}

The following lemma is based on the groupoidal characterization of compatibility in Section \ref{sec:compatibility}.

\begin{lemma}\label{lemma:groupoid-isomorphism-ns-fs}
Let $\calg$ be a measured groupoid over a base space $Y$, and let $\rho,\rho':\calg\to\IA$ be two action-type cocycles. Consider $S_1,S_2,S'_1,S'_2\in\NS\cup\FSii$ with $S_1\neq S_2$ and $S'_1\neq S'_2$, and assume that for every $i\in\{1,2\}$, the $(\calg,\rho)$-stabilizer of $S_i$ is stably equal to the $(\calg,\rho')$-stabilizer of $S'_i$. 

Then $S_1$ is compatible with $S_2$ if and only if $S'_1$ is compatible with $S'_2$.
\end{lemma}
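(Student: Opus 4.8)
The statement is essentially a restatement of Theorem~\ref{theo:compatibility-ns} transported along the groupoid data: the hypothesis says that the $(\calg,\rho)$-stabilizer of $S_i$ and the $(\calg,\rho')$-stabilizer of $S'_i$ are stably equal (call this common subgroupoid $\calg_i$, so $\calg_i$ is the $(\calg,\rho)$-stabilizer of $S_i$ \emph{and} the $(\calg,\rho')$-stabilizer of $S'_i$, after restricting to a conull set and passing to a countable partition where all four maps are constant). By Theorem~\ref{theo:compatibility-ns}, the pair $(\calg;\calg_1,\calg_2)$ satisfies Properties~$\Pcompi$ and~$\Pcompii$ if and only if $S_1$ and $S_2$ are compatible; and the \emph{same} triple $(\calg;\calg_1,\calg_2)$ satisfies $\Pcompi$ and $\Pcompii$ if and only if $S'_1$ and $S'_2$ are compatible. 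The point is that $\Pcompi$ and $\Pcompii$ are purely groupoid-theoretic conditions on the triple $(\calg;\calg_1,\calg_2)$ that make no reference to the cocycle, so the two equivalences have the same left-hand side.

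\textbf{Key steps in order.} First I would reduce to a partitioned situation: since the $(\calg,\rho)$-stabilizer of $S_i$ is stably equal to the $(\calg,\rho')$-stabilizer of $S'_i$, there is a conull $Y^*\subseteq Y$ and a countable Borel partition $Y^*=\Dunion_{n}Y_n$ on which $S_1,S_2,S'_1,S'_2$ are all constant, say with values $S_{1,n},S_{2,n},S'_{1,n},S'_{2,n}$, and such that the $\rho$-stabilizer of $S_{i,n}$ restricted to $Y_n$ equals the $\rho'$-stabilizer of $S'_{i,n}$ restricted to $Y_n$. Since $S_1$ is compatible with $S_2$ if and only if $S_{1,n}$ is compatible with $S_{2,n}$ for every $n$ with $Y_n$ of positive measure (a one-edge free splitting is a fixed object, independent of $Y$), and likewise for the primed splittings, it suffices to prove the lemma after restricting to a single $Y_n$ of positive measure. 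So I may assume $S_1,S_2,S'_1,S'_2$ are fixed splittings and $\calg_i := \rho^{-1}(\Stab_\IA(S_i))$ stably equals $\rho'^{-1}(\Stab_\IA(S'_i))$ globally.

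Second, with $\calg_i$ as above, apply Theorem~\ref{theo:compatibility-ns} to the action-type cocycle $\rho$: $S_1$ and $S_2$ are compatible $\iff$ $(\calg;\calg_1,\calg_2)$ satisfies $\Pcompi$ and $\Pcompii$. Third, apply Theorem~\ref{theo:compatibility-ns} again, this time to the action-type cocycle $\rho'$ and the splittings $S'_1,S'_2$: since $\calg_i$ is also the $(\calg,\rho')$-stabilizer of $S'_i$ (up to stable equality, which is exactly the hypothesis one needs for the ``$\iff$'' in Theorem~\ref{theo:compatibility-ns}, Property~$\Pcompii$ being phrased in terms of stable containment), we get $S'_1$ and $S'_2$ are compatible $\iff$ $(\calg;\calg_1,\calg_2)$ satisfies $\Pcompi$ and $\Pcompii$. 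Chaining the two equivalences through the common condition on $(\calg;\calg_1,\calg_2)$ gives $S_1$ compatible with $S_2$ $\iff$ $S'_1$ compatible with $S'_2$, which is the claim.

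\textbf{Main obstacle.} The only genuine subtlety is the bookkeeping around ``stably equal'': I must check that the hypothesis ``$(\calg,\rho)$-stabilizer of $S_i$ is stably equal to $(\calg,\rho')$-stabilizer of $S'_i$'' is robust enough to feed into the statement of Theorem~\ref{theo:compatibility-ns} for \emph{both} cocycles — i.e.\ that $\Pcompi$ and $\Pcompii$, which are manifestly stable under restriction and stabilization (as noted right after their definitions), really only depend on the stable-equality class of the subgroupoids $\calg_1,\calg_2$, and that Theorem~\ref{theo:compatibility-ns} is insensitive to replacing a stabilizer by a stably equal subgroupoid. Both are true because Properties~$\PImax$, $\PII$, $\PIII$, $\Pcompi$, $\Pcompii$ are all preserved by restriction and stabilization, and because the statement of Theorem~\ref{theo:compatibility-ns} identifies compatibility with a property of the pair $(\calg;\calg_1,\calg_2)$; once this is observed, the proof is just the two-line chaining of equivalences above. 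No hard analysis is needed here — all the work has already been done in Sections~\ref{sec:big} and~\ref{sec:compatibility}.
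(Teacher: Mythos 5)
Your proposal is correct and takes essentially the same route as the paper: apply Theorem~\ref{theo:compatibility-ns} once for each cocycle and chain the two equivalences through the cocycle-free conditions $\Pcompi$ and $\Pcompii$, using that these only depend on the stable-equality classes of the subgroupoids (the paper phrases this by transferring the properties from $(\calg;\calh_1,\calh_2)$ to $(\calg;\calh'_1,\calh'_2)$ rather than restricting to a piece where the stabilizers literally coincide, but it is the same observation). The only quibble is that your initial partition/constancy reduction is vacuous, since $S_1,S_2,S'_1,S'_2$ are fixed splittings rather than Borel maps.
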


\begin{proof}
It suffices to assume that $S_1$ and $S_2$ are compatible and prove that so are $S'_1$ and $S'_2$. For every $i\in\{1,2\}$, 
let $\calh_i$ be the $(\calg,\rho)$-stabilizer of $S_i$, and let $\calh'_i$ be the $(\calg,\rho')$-stabilizer of $S'_i$. 

Theorem~\ref{theo:compatibility-ns} (applied to the cocycle $\rho$) ensures that the triple $(\calg;\calh_1,\calh_2)$ satisfies Properties~$\Pcompi$ and $\Pcompii$. As $\calh_i$ is stably equal to $\calh'_i$, we deduce that $(\calg;\calh'_1,\calh'_2)$ also satisfies Properties~$\Pcompi$ and $\Pcompii$. Therefore, the converse implication from Theorem~\ref{theo:compatibility-ns} (applied to the cocycle $\rho'$) ensures that $S'_1$ and $S'_2$ are compatible.    
\end{proof}

\begin{proof}[Proof of Theorem~\ref{theo:two-cocycles}]
  In this proof, given two 
  simplicial graphs $\mathbb{X}$ and $\mathbb{Y}$
  with vertex sets $V_{\mathbb{X}}$ and $V_{\mathbb{Y}}$, we denote by $\Inj(\mathbb{X}\to\mathbb{Y})$ the set of all injective maps from $V_{\mathbb{X}}$ to $V_{\mathbb{Y}}$ that preserve adjacency and non-adjacency.
  We denote by $\Aut(\mathbb{X})$ the group of all graph automorphisms of $\mathbb{X}$, i.e.\ maps in $\Inj(\mathbb{X}\to\mathbb{X})$ which are bijective.

  Before actually starting the proof, let us briefly explain the general strategy.
  We will use the graph $\FSG$ of free splittings and its subgraph $\NSG$ of non-separating free splittings.
  We will start by constructing a map $\Psi:Y\ra \Inj(\NSG\ra \FSG)$
  such that for every Borel map  $\sigma:Y\ra \NS$, the $(\calg,\rho,\rho')$-image $\sigma'$ of $\sigma$ is obtained by applying $\Psi$ to $\sigma$, i.e.\ $\sigma'(y)=\Psi(y)(\sigma(y))$.  
  There is also a symmetric map $\Psi'$ obtained by reversing the roles of $\rho$ and $\rho'$. Using the fact that $\Inj(\NSG\ra \FSG)=\Inj(\NSG\ra \NSG)$ (Proposition~\ref{prop:ns-fs}), we will actually deduce that $\Psi'(y)$ is (almost everywhere) an inverse of $\Psi(y)$. Therefore $\Psi$ takes its values in $\Aut(\NSG)$. By a theorem of Pandit (\cite{Pan}, which builds on work of Bridson and Vogtmann regarding the symmetries of Outer space \cite{BV2}), we know that $\Aut(\NSG)$ is isomorphic to $\Out(F_N)$. This will allow us to construct the map $\phi:Y\ra \Out(F_N)$ that conjugates $\rho$ to $\rho'$. 
  
 Let us now construct the map $\Psi$.
  Given $S\in\NS$, we let $\sigma_S^0:Y\to\NS$ be the constant map with value $S$, and $\sigma'_S:Y\to\NS\cup\FSii$ be the $(\calg,\rho,\rho')$-image of $\sigma^0_S$ (the map $\sigma'_S$ is not necessarily constant).
 This allows us to define a Borel map 
  \begin{align*}
    \tilde\Psi:Y\times \NS \ra &\  \NS\cup\FSii \\
    (y,S)\mapsto &\ \sigma'_S(y).
  \end{align*}
  Then for each $y\in Y$, we define $\Psi_y:\NS\ra\NS\cup\FSii$ by $\Psi_y=\tilde\Psi(y,\cdot)$. 
  
  We note that for every Borel map $\sigma:Y\ra \NS$, the
  pointwise evaluation
  \begin{align*}
    \Psi \bullet \sigma :Y\ra &\ \NS\cup\FSii\\
    y \mapsto& \ \Psi_y(\sigma(y))\ =\ \Tilde \Psi(y,\sigma(y)) 
\end{align*}
coincides almost everywhere with the $(\calg,\rho,\rho')$-image of $\sigma$.
Indeed, using countability of $\FS$, this  follows from the definition after partitioning $Y$ into countably many
Borel subsets on which $\sigma$ is constant.

 Let us prove that for a.e.\ $y\in Y$, the map $\Psi_y:\NS\ra\NS\cup\FSii$ is injective.
 Otherwise,
 there exists a Borel subset $U\subseteq Y$ of positive measure, two distinct splittings $S_1,S_2\in\NS$ and  $S'\in\NS\cup\FSii$ such that for all $y\in U$, one has $\Psi_y(S_1)=\Psi_y(S_2)=S'$.
 For every $i\in\{1,2\}$, let $H_i\subset\ia$ be the stabilizer of $S_i$, and let $H'\subset \ia$ be the stabilizer of $S'$. The fact that $\Psi_y(S_1)=\Psi_y(S_2)=S'$ for $y\in U$ ensures that $\rho^{-1}(H_1)_{|U}$ and $\rho^{-1}(H_2)_{|U}$ are both stably equal to $(\rho')^{-1}(H')_{|U}$ (see Remark \ref{rk_restriction}).
 This is a contradiction to Lemma~\ref{lemma:two-stab-different} which implies that $\rho^{-1}(H_1)_{|U}$ and $\rho^{-1}(H_2)_{|U}$ are not stably equal.

 Second, for a.e.\ $y\in Y$, the map $\Psi_y$ preserves adjacency and non-adjacency.
Indeed, given a pair of distinct splittings $S_1,S_2\in\NS$, up to partitioning $Y$,
 one may restrict to a Borel subset of positive measure
 $U\subset Y$ such that $S'_1:=\Psi_y(S_1)$ and
 $S'_2:=\Psi_y(S_2)$ do not depend on $y\in U$. 
 Applying Remark \ref{rk_restriction} and Lemma~\ref{lemma:groupoid-isomorphism-ns-fs} (applied to $\calg_{|U}$), we get that $S_1$ and $S_2$ are compatible
 if and only if $S'_1$ and $S'_2$ are.

  Until now, we have proved that for a.e.\ $y\in Y$, the map $\Psi_y$ belongs to $\Maps$.
  As recalled above (Proposition~\ref{prop:ns-fs}), $\Maps=\mathrm{Inj}(\NSG\to\NSG)$,
  so we get a Borel map
  \begin{align*}
    \Psi:Y \ra &\ \mathrm{Inj}(\NSG\to\NSG) \\
    y\mapsto &\ \Psi_y.
  \end{align*}
Note that in particular, 
 for any Borel map $\sigma:Y\to\NS$, the $(\calg,\rho,\rho')$-image  of $\sigma$ also takes its values  (essentially) in $\NS$.

 We finally claim that for a.e.\ $y\in Y$, the map $\Psi_y$ is actually an automorphism of $\NSG$.
 Indeed, let $\Psi':Y\ra \Inj(\NSG\to\NSG)$ be the map obtained in place of $\Psi$ by reversing the roles of $\rho$ and $\rho'$.
 The map $\Psi'$ has the property that for any Borel map $\sigma':Y\ra \NS$,
 the pointwise evaluation $\Psi'\bullet\sigma'$ is the $(\calg,\rho',\rho)$-image
 of $\sigma'$. Then for any  $S\in \NS$,
  the map $\sigma''_S:=\Psi'\bullet(\Psi\bullet\sigma^0_S)$ is the essentially unique map
  $\sigma''_S:Y\ra \NS\cup\FSii$ whose $(\calg,\rho)$-stabilizer
 stably coincides with the $(\calg,\rho)$-stabilizer of $\sigma^0_S$, so 
 $\sigma''_S=\sigma^0_S$ almost everywhere (see Remark \ref{rk_symmetry}).
 Since this holds for every $S\in\NS$, this means that for almost every $y\in Y$, we have $\Psi'_y\circ\Psi_y=\id_{\NSG}$,
 and for symmetrical reasons, $\Psi_y\circ\Psi'_y=\id_{\NSG}$ for almost every $y\in Y$, which proves our claim.

 We have thus constructed a Borel map $\Psi:Y\to\Aut(\NSG)$. Composing with the isomorphism $\Aut(\NSG)\ra\Out(F_N)$
 yields a map $\phi:Y\ra \Out(F_N)$ with the property that
 for every Borel map $\sigma:Y\ra \NS$, its $(\calg,\rho,\rho')$-image is $\sigma'=\Psi\bullet\sigma$ i.e.\
 $\sigma'(y)=\phi(y)\cdot\sigma(y)$ for almost every $y\in Y$.
 
 We now check that  there exists a conull Borel subset $Y^*\subseteq Y$ such that the desired cohomology relation
 $\rho'(g)=\phi(r(g))\rho(g)\phi(s(g))\m$ holds for all $g\in \calg_{|Y^*}$.
 
 Since $\calg$ is a countable union of bisections, it suffices to fix a bisection $B$ of $\calg$ and show that the above cohomology relation holds for almost every $g\in B$. Without loss of generality, 
 we can also assume that  there exist $\gamma,\gamma'\in\IA$ such that $\rho(B)=\{\gamma\}$ and $\rho'(B)=\{\gamma'\}$.
 Let $f:U\ra V$ be the partial isomorphism between Borel subsets of $Y$ associated to $B$. 
 It suffices to check that for almost every $x\in U$, one has 
 $$\gamma'=\phi(f(x))\gamma\phi(x)\m.$$
 Since the action of $\Out(F_N)$ on $\NS$ is faithful,
 it suffices to show that 
 for every Borel map $\sigma:U\to\NS$ and almost every $x\in U$, one has $$\gamma' \phi(x) \cdot\sigma(x) =\phi(f(x))\gamma\cdot\sigma(x)$$
 (it is even enough to prove it when $\sigma$ is constant).

 Let $\calh_U$ be the $(\calg_{|U},\rho)$-stabilizer of $\sigma$.
The $(\calg_{|U},\rho,\rho')$-image of $\sigma$ is $\sigma'=\Psi\bullet\sigma=\phi\cdot\sigma$
(i.e.\ $\sigma'(x)=\phi(x)\cdot\sigma(x)$ for almost every $x\in U$),
so the $(\calg_{|U},\rho')$-stabilizer of $\sigma'$ stably coincides with $\calh_{U}$.
 Conjugating by $B$, we see that 
 the $(\calg_{|V},\rho)$-stabilizer of the map $\sigma_V:=\gamma\cdot\sigma\circ f\m$ is  $B \calh_U B\m$.
 Similarly, the $(\calg_{|V},\rho')$-stabilizer of the map $\sigma'_V:=\gamma'\cdot\sigma'\circ f\m$
 stably coincides with $B \calh_U B\m$.
 It follows that $\sigma'_V$ is the $(\calg_{|V},\rho,\rho')$-image of $\sigma_V$, so
 for almost every $y\in V$,
 $$\sigma'_V(y)=\phi(y)\cdot\sigma_V(y).$$
 Replacing $\sigma'_V$ and $\sigma_V$ by their definition,
 we get that for almost every $y\in V$,
 $$\gamma'\cdot\sigma'(f\m(y))=\phi(y)\gamma\cdot\sigma(f\m(y))$$
 and applying this equality to $y=f(x)$ for almost every $x\in U$, we get
 $$\gamma' \cdot(\phi(x)\cdot\sigma(x))=\phi(f(x))\gamma\cdot\sigma(x)$$
as desired.
\end{proof}

We conclude this section by proving the remaining theorems from the introduction.

The following result slightly generalizes Theorem \ref{theo:intro-oe}.
With the terminology introduced in Section \ref{sec:oe-rigidity}, it says that
any ergodic free measure-preserving action of $\Gamma$ on a probability space
is OE-superrigid. 
\begin{theo}\label{theo:oe-out(fn)}
Let $N\ge 3$. Let $\Gamma\subset\Out(F_N)$ be a finite index subgroup, and $\Lambda$ a countable group. Consider two standard probability spaces $X_1,X_2$ and two ergodic free measure-preserving actions $\Gamma\actson X_1$, $\Lambda\actson X_2$.

If the actions $\Gamma\actson X_1$ and $\Lambda\actson X_2$ are stably orbit equivalent, then they are virtually conjugate.
\end{theo}

\begin{proof}[Proof of Theorems \ref{theo:oe-out(fn)} and \ref{theo:intro-oe}.]
The group $\Out(F_N)$ is ICC by Lemma~\ref{lem_ICC}.
Theorem~\ref{theo:two-cocycles} establishes that $\Out(F_N)$ is rigid with respect action-type cocycles.
Theorem~\ref{theo:oe-out(fn)} then follows from 
Theorem~\ref{theo:oe}. And Theorem~\ref{theo:intro-oe} follows from Remark~\ref{rk_aperiodic}. 
\end{proof}

Likewise, we complete the proof of Theorem~\ref{theo:lattice-embeddings} and Corollary~\ref{corintro:cayley} from the introduction. 

\begin{proof}[Proof of Theorem \ref{theo:lattice-embeddings} and Corollary \ref{corintro:cayley}]\label{proof:lattice}
The group $\Out(F_N)$ is ICC by Lemma~\ref{lem_ICC}.
Theorem~\ref{theo:two-cocycles} establishes that $\Out(F_N)$ is rigid with respect action-type cocycles.
The results then follow from Theorem~\ref{theo:lattice} and 
Corollary~\ref{cor:cayley}.
\end{proof}

\footnotesize

\bibliographystyle{alpha}
\bibliography{ME-bib}

 \begin{flushleft}
 Vincent Guirardel\\
 Univ Rennes,  CNRS, IRMAR - UMR 6625, F-35000 Rennes, France\\
 \emph{e-mail: }\texttt{vincent.guirardel@univ-rennes1.fr}\\[8mm]
 \end{flushleft}

\begin{flushleft}
Camille Horbez\\ 
Universit\'e Paris-Saclay, CNRS,  Laboratoire de math\'ematiques d'Orsay, 91405, Orsay, France \\
\emph{e-mail:~}\texttt{camille.horbez@universite-paris-saclay.fr}\\[4mm]
\end{flushleft}

\end{document}